\newcommand{\addaxioms}[1]{\ifthenelse{\equal{#1}{0}}{\toggletrue{ax}}{\togglefalse{ax}}}
\newcommand{\axioms}[1]{\iftoggle{ax}{\todo[color=blue!50]{#1}}{}}
\newcommand{\RR}{\mathbb{R}}
\newcommand{\pre}[2]{{}^{#1}#2}
\newcommand{\Nbhd}{\boldsymbol{N}}
\newcommand{\markdef}[1]{\textbf{#1}}
\newcommand{\lS}{\lambda\text{-}\boldsymbol{\Sigma}}
\newcommand{\lP}{\lambda\text{-}\boldsymbol{\Pi}}
\newcommand{\lD}{\lambda\text{-}\boldsymbol{\Delta}}
\newcommand{\lB}{\lambda\text{-}\mathbf{Bor}}
\newcommand{\gBor}[1]{#1\text{-}\mathbf{Bor}}
\newcommand{\lPSP}{\ensuremath{\lambda \text{-}\mathrm{PSP}}}
\newcommand{\lwPSP}{\ensuremath{\lambda \text{-}\mathrm{wPSP}}}
\newcommand{\lbPSP}{\ensuremath{\lambda \text{-}\mathrm{BorPSP}}}
\newcommand{\U}{\mathcal{U}}
\newcommand{\V}{\mathcal{V}}
\newcommand{\UU}{\mathbb{U}}
\newcommand{\pI}{\mathbf{I}}
\newcommand{\pII}{\mathbf{II}}
\newcommand{\On}{\mathrm{Ord}}
\newcommand{\Cn}{\mathrm{Card}}
\newcommand{\id}{\operatorname{id}}
\newcommand{\pow}{\mathscr{P}}
\newcommand{\lh}{\operatorname{lh}}
\newcommand{\dom}{\operatorname{dom}}
\newcommand{\cf}{{\operatorname{cf}}}
\newcommand{\lev}{\operatorname{lev}}
\newcommand{\p}{p} 
\newcommand{\Def}{\operatorname{def}}
\newcommand{\weight}{\operatorname{wt}}
\newcommand{\dens}{\operatorname{dens}}
\newcommand{\ran}{\operatorname{rng}}
\newcommand{\crt}{\operatorname{crt}}
\newcommand{\Ult}{\operatorname{Ult}}
\newcommand{\ZF}{{\sf ZF}}
\newcommand{\AC}{{\sf AC}}
\newcommand{\DC}{{\sf DC}}
\newcommand{\AD}{{\sf AD}}
\newcommand{\GCH}{{\sf GCH}} 
\newcommand{\ZFC}{{\sf ZFC}}
\newenvironment{enumerate-(a)}{\begin{enumerate}[label={\upshape (\alph*)}, leftmargin=2pc]}{\end{enumerate}}
\newenvironment{enumerate-(a)-r}{\begin{enumerate}[label={\upshape (\alph*)}, leftmargin=2pc,resume]}{\end{enumerate}}
\newenvironment{enumerate-(a)-5}{\begin{enumerate}[label={\upshape (\alph*)}, leftmargin=2pc,start=5]}{\end{enumerate}}
\newenvironment{enumerate-(A)}{\begin{enumerate}[label={\upshape (\Alph*)}, leftmargin=2pc]}{\end{enumerate}}
\newenvironment{enumerate-(A)-r}{\begin{enumerate}[label={\upshape (\Alph*)}, leftmargin=2pc,resume]}{\end{enumerate}}
\newenvironment{enumerate-(i)}{\begin{enumerate}[label={\upshape (\roman*)}, leftmargin=2pc]}{\end{enumerate}}
\newenvironment{enumerate-(i)-r}{\begin{enumerate}[label={\upshape (\roman*)}, leftmargin=2pc,resume]}{\end{enumerate}}
\newenvironment{enumerate-(I)}{\begin{enumerate}[label={\upshape (\Roman*)}, leftmargin=2pc]}{\end{enumerate}}
\newenvironment{enumerate-(I)-r}{\begin{enumerate}[label={\upshape (\Roman*)}, leftmargin=2pc,resume]}{\end{enumerate}}
\newenvironment{enumerate-(1)}{\begin{enumerate}[label={\upshape (\arabic*)}, leftmargin=2pc]}{\end{enumerate}}
\newenvironment{enumerate-(1)-r}{\begin{enumerate}[label={\upshape (\arabic*)}, leftmargin=2pc,resume]}{\end{enumerate}}
\newenvironment{itemizenew}{\begin{itemize}[leftmargin=2pc]}{\end{itemize}}
\newtheorem{theorem}{Theorem}[section]
\newtheorem{lemma}[theorem]{Lemma}
\newtheorem{corollary}[theorem]{Corollary}
\newtheorem{proposition}[theorem]{Proposition}
\newtheorem{fact}[theorem]{Fact}
\newtheorem{claim}{Claim}[theorem]
\theoremstyle{definition}
\newtheorem{defin}[theorem]{Definition}
\newtheorem{example}[theorem]{Example}
\theoremstyle{remark}
\newtheorem{remark}[theorem]{Remark}
\newenvironment{proofsketch}{\begin{proof}[Sketch of the proof]}{\end{proof}}
\newenvironment{proofsketchcite}[2]{\begin{proof}[Sketch of the proof] (See~\cite[#2]{#1} for more details.)}{\end{proof}}
\newenvironment{proofcompare}[2]{\begin{proof}[Proof] (Compare with~\cite[#2]{#1}.)}{\end{proof}}
\numberwithin{section}{chapter}
\numberwithin{equation}{chapter}
\begin{document}

\frontmatter

\title{Generalized Descriptive Set Theory \\ at Singular Cardinals of Countable Cofinality}


\author[V.~Dimonte]{Vincenzo Dimonte}
\address{Dipartimento di Scienze Matematiche, Informatiche e Fisiche, Universit\`{a} degli Studi di Udine, via delle Scienze 206, 33100 Udine --- Italy}
\email{vincenzo.dimonte@uniud.it}
\thanks{}

\author[L.~Motto Ros]{Luca Motto Ros}
\address{Dipartimento di matematica \guillemotleft{Giuseppe Peano}\guillemotright, Universit\`a degli Studi di Torino, Via Carlo Alberto 10, 10121 Torino --- Italy}
\email{luca.mottoros@unito.it}
\thanks{Research supported by the project PRIN~2017 ``Mathematical Logic: models, sets, computability'', prot.~2017NWTM8R and the project PRIN 2022 ``Models, Sets and Classifications'' Code no. 2022TECZJA CUP G53D23001890006, funded by the European Union – NextGenerationEU – PNRR M4 C2 I1.1. The authors are members of GNSAGA (INdAM)}

\date{\today}

\subjclass[2020]{Primary: 03E15, 03E45, 03E47, 03E55, 54E50, 54H05}


\keywords{Generalized descriptive set theory, non-separable complete metric spaces, singular cardinals with countable cofinality, Axiom I0}


\begin{abstract}
We provide a comprehensive development of the basics of descriptive set theory for non-separable complete metric spaces whose weight is a singular cardinal \( \lambda \) of countable confinality. Somewhat unexpectedly, the resulting theory is remarkably similar to the classical one, although the methods used are necessarily fairly different and combine ideas and results from general topology, infinite combinatorics, and set theory. 

More in detail, we study \(\lambda\)-Polish spaces and standard \(\lambda\)-Borel spaces (characterization of the generalized Cantor and Baire spaces, analogues of the Cantor-Bendixson theorem, classification up to \(\lambda\)-Borel isomorphism, etc.), their \(\lambda\)-Borel hierarchy (structural properties, changes of topologies, and so on), \(\lambda\)-analytic sets (including generalizations of the Lusin separation theorem and of the Souslin theorem), \( \lambda \)-coanalytic sets (including \( \lP^1_1 \)-ranks and alike), and \(\lambda\)-projective sets. We also consider more advanced topics, and provide e.g.\ various uniformization results for \(\lambda\)-Borel set; these in turn lead to fundamental applications to the study of \(\lambda\)-Borel equivalence relations, such as a generalization of the celebrated Feldman-Moore theorem. Finally, we study a natural generalization of the classical Perfect Set Property, and develop tools to show that all definable sets enjoy such property under suitable large cardinal assumptions, most notably including Woodin's \( \mathsf{I0}(\lambda) \).
\end{abstract}

\maketitle

\tableofcontents


\mainmatter


%
%
%

\chapter{Introduction}

\section{Generalized descriptive set theory} \label{ref:GDST}

According to the standard reference~\cite{Kechris1995} for the subject,
\begin{quotation}
Descriptive set theory is the study of ``\textbf{definable sets}'' in \textbf{Polish} (i.e.,\ 
separable completely metrizable) spaces. In this theory, sets are classified in
hierarchies, according to the complexity of their definitions, and the 
structure of the sets in each level of these hierarchies is systematically analyzed.
\end{quotation}
A central role in classical descriptive set theory is played by the Cantor space \( \pre{\omega}{2} \) of all 
infinite binary sequences and the Baire space \( \pre{\omega}{\omega} \) of all infinite 
sequences of natural numbers, both equipped with the product topology. The standard basis for \( \pre{\omega}{\omega} \) is constituted by the sets 
\( \Nbhd_s = \{ x \in \pre{\omega}{\omega} \mid s \subseteq x \} \), where \( s \) ranges 
over finite sequences of natural numbers, and similarly for the Cantor space. For several results (more precisely, for those results relying only on the Borel 
structure of a Polish space), one can safely restrict the attention to such spaces, which 
are well-understood and have a rich combinatorial structure. Nevertheless, a great part 
of the success of descriptive set theory in finding applications in various branches of 
mathematics is due to the fact that Polish spaces are ubiquitous: it is arguably the 
interplay between this general setup embracing the most diverse situations and the 
manageability of spaces like the Cantor and the Baire space which allowed for the great 
impact this area had in modern mathematics.

As for structural properties of definable subsets, one of the first and more important ones is certainly the Perfect Set Property, i.e.\ whether the Cantor space \( \pre{\omega}{2} \) can be topologically embedded in those subsets of  Polish spaces which have cardinality greater than \( \omega \). This is a topological version of the Continuum Hypothesis \( \mathsf{CH} \), but it is stronger than that. Cantor's insight was to show that such property holds for closed sets, and this was later extended, working in \( \mathsf{ZFC} \) alone, to all Borel and even analytic sets. Using deep set theoretical techniques and exploiting the special role of the spaces \( \pre{\omega}{2} \) and \( \pre{\omega}{\omega} \), this regularity property was pushed beyond the level of analytic sets. 
One way to achieve this is to work with the so-called \( \kappa \)-weakly homogeneously Souslin sets and split the problem in two subtasks, that is:
\begin{theorem}[{\cite[Theorem 32.7]{Kanamori}}]\label{thm:intro1}
If \( A \) is \( \kappa \)-weakly homogeneously Souslin, then \( A \) has the Perfect Set Property.
\end{theorem}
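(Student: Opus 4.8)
The plan is to reduce the statement to the assertion that an uncountable \( \kappa \)-weakly homogeneously Souslin set \( A \subseteq \pre{\omega}{\omega} \) contains a homeomorphic copy of the Cantor space \( \pre{\omega}{2} \), and then to produce such a copy by a fusion argument steered by the homogeneity measures. First I would unwind the definition: weak homogeneity yields a tree \( U \) on \( (\omega\times\omega)\times\kappa \) with \( A = \{ x \mid \exists y\,(x,y)\in p[U] \} \), together with a homogeneity system \( \langle \mu_{s,t} : s,t \in \pre{<\omega}{\omega},\ |s|=|t| \rangle \) of \( \kappa \)-complete measures, each \( \mu_{s,t} \) concentrating on the section \( \{ u \in \pre{|s|}{\kappa} \mid (s,t,u)\in U \} \), compatible along end-extensions, and with the characteristic feature that \( (x,y)\in p[U] \) precisely when the tower \( \langle \mu_{x\restriction n,\, y\restriction n} : n\in\omega \rangle \) is countably complete, i.e.\ admits a thread meeting all the measure-one sets. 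The point of this reformulation is that membership in \( A \) becomes \emph{locally measure-detectable}: to certify \( x\in A \) it suffices to exhibit a witness \( y \) together with a thread through the corresponding measure-one sets, and this is exactly the leverage weak homogeneity provides over an arbitrary \( \kappa \)-Souslin representation (for which the Perfect Set Property can genuinely fail).

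The core of the argument is a Cantor--Bendixson style derivative on the pairs \( (s,t) \). I would call \( (s,t) \) \emph{splitting} if it admits two end-extensions \( (s_0,t_0),(s_1,t_1) \) with \( s_0\perp s_1 \), each still carrying threads into \( p[U] \) in the measure sense; iterating the removal of non-splitting pairs stabilises at a kernel. The key bookkeeping is that each discarded pair contributes at most one real to \( A \) (below a non-splitting pair the measures force the first coordinate of every thread to agree, funnelling \( A \) through one real), so the discarded part of \( A \) is countable; hence if \( A \) is uncountable the kernel is nonempty and splits everywhere in the first coordinate. Along the kernel I would then define, by recursion on \( \sigma\in\pre{<\omega}{2} \), pairs \( (s_\sigma,t_\sigma) \) with \( (s_\sigma,t_\sigma)\subseteq (s_\tau,t_\tau) \) whenever \( \sigma\subseteq\tau \), satisfying \( s_{\sigma^\frown 0}\perp s_{\sigma^\frown 1} \), with lengths tending to infinity, while progressively shrinking the relevant measure-one sets so that a common thread survives.

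Finally I would verify that the scheme produces the embedding. Each branch \( b\in\pre{\omega}{2} \) determines \( x_b=\bigcup_n s_{b\restriction n} \) and \( y_b=\bigcup_n t_{b\restriction n} \); because only countably many shrinking constraints are imposed along \( b \) and every \( \mu_{s,t} \) is \( \kappa \)-complete with \( \kappa>\omega \), the surviving tower \( \langle \mu_{x_b\restriction n,\, y_b\restriction n} \rangle \) stays countably complete, so \( (x_b,y_b)\in p[U] \) and hence \( x_b\in A \). The map \( b\mapsto x_b \) is continuous (controlled by the lengths), injective (guaranteed by incompatibility at splitting nodes), and defined on the compact space \( \pre{\omega}{2} \), hence a homeomorphism onto its image, which is the required perfect subset of \( A \). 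I expect the main obstacle to be making the notion of a splitting pair rigorous and justifying that non-splitting pairs funnel \( A \) through a single real: this is where \( \kappa \)-completeness must be used to show that a pair localizing uncountably many elements of \( A \) necessarily forks into two incompatible first coordinates each still supporting a countably complete tower, and then dovetailing these choices into a genuine fusion — so that countable completeness is preserved simultaneously along every one of the continuum-many branches — is the technical heart of the proof.
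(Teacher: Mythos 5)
Your bookkeeping for the countable half is essentially sound, and it correctly isolates what weak homogeneity buys: the derivative runs over the \emph{countable} index set of pairs \( (s,t) \in \pre{<\omega}{\omega} \times \pre{<\omega}{\omega} \) rather than over the \( \kappa \)-many nodes of a bare \( \kappa \)-Souslin representation, and a non-splitting pair does funnel at most one real of \( A \) (two live infinite extensions with distinct first coordinates yield, by truncation, two live finite extensions with incompatible first coordinates). This mirrors the ``Player \( \pII \) wins'' half of the standard proof. The genuine gap is in the fusion half. To certify \( x_b \in A \) you must show that the tower \( \langle \mu_{x_b\restriction n,\, y_b\restriction n} \rangle_{n} \) is countably complete, i.e.\ that \emph{every} sequence \( \langle Z_n \rangle_n \) with \( Z_n \in \mu_{x_b\restriction n,\, y_b\restriction n} \) admits a thread. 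This is a global property of the sequence of measures along the branch (equivalently, well-foundedness of the direct limit of the associated ultrapowers); it quantifies over all sequences of measure-one sets and cannot be arranged by ``progressively shrinking'' the countably many sets you happen to touch during the construction. Your justification conflates \( \kappa \)-completeness of each \emph{individual} measure (which does let you intersect fewer than \( \kappa \) many measure-one sets at a fixed level) with countable completeness of the \emph{tower}. The kernel only guarantees that each pair \( (s_\sigma,t_\sigma) \) lies on \emph{some} branch with a countably complete tower; the diagonal branch \( (x_b,y_b) \) produced by the fusion is in general none of these, and the class of branches with countably complete towers is not closed under such diagonal limits, so nothing passes to the limit.

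This gap also cannot be patched by the natural repair, namely tracking thread certificates: replace countable completeness by the equivalent existence of a thread through selected sets \( F(\mu_{s,t}) \in \mu_{s,t} \) (the tower condition, automatic under \( \AC \) since the completeness of a nonprincipal measure exceeds \( 2^{\aleph_0} \)) and have the fusion build finite thread approximations \( u_\sigma \) alongside \( (s_\sigma,t_\sigma) \). The obstruction is structural: \( \mu_{s',t'} \) projects onto \( \mu_{s_\sigma,t_\sigma} \) for \( (s',t') \supseteq (s_\sigma,t_\sigma) \), so the set of threads at level \( |s'| \) extending your committed \( u_\sigma \) is \( \mu_{s',t'} \)-large only if \( \{u_\sigma\} \) is \( \mu_{s_\sigma,t_\sigma} \)-large, i.e.\ only if the measure is principal. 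Hence at each splitting step the live threads above the two new pairs almost never cohere with the thread you have already committed to, and coherent certificates cannot be carried simultaneously along continuum-many branches. This is precisely why the proof in Kanamori (and its generalization in Theorem~\ref{thm:mainPSP} of this paper) is game-theoretic rather than a direct fusion: one unfolds the \( * \)-game so that Player \( \pI \) plays the thread move by move; the unfolded game is closed, hence determined by Gale--Stewart. If \( \pI \) wins the unfolded game, the winning strategy \emph{is} a fusion-with-certificates, but its existence is derived from determinacy, not constructed from a Cantor--Bendixson kernel; if \( \pII \) wins, her responses depend on the thread moves but are constant on a measure-one set by completeness, so the thread moves can be integrated out and \( \pII \) wins the original game, at which point your countable bookkeeping takes over. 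The missing idea in your proposal is exactly this unfolding-and-integration mechanism; the fusion as described would fail at the step you yourself flag as the technical heart.
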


\begin{theorem}[{\cite{MaStee1988}}]\label{thm:intro2}
Assuming the existence of sufficiently large cardinals, all projective sets (and in fact all sets in $L(\RR)$) are \( \kappa \)-weakly homogeneously Souslin for a suitable measurable cardinal \( \kappa \).
\end{theorem}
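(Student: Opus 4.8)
The plan is to prove, by induction on logical complexity, that every set under consideration carries a \emph{homogeneous} tree representation; since a homogeneous tree is in particular weakly homogeneous, this is more than enough to invoke Theorem~\ref{thm:intro1}. The large cardinal hypothesis I would adopt is: infinitely many Woodin cardinals with supremum \( \lambda \) together with a measurable cardinal \( \kappa > \lambda \). The base case uses only the measurable: given a \( \Pi^1_1 \) set \( A \), one writes membership \( x \in A \) as the wellfoundedness of a relation recursive in \( x \), forms the tree of attempts to order-preservingly map that relation into the ordinals, and equips its levels with a tower of measures derived from the normal measure on \( \kappa \). This is Martin's classical argument and shows every \( \Pi^1_1 \) set is \( \kappa \)-homogeneously Souslin, with no Woodin cardinals involved yet.

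The inductive engine consists of two operations. The easy one is projection: if \( A \subseteq \pre{\omega}{\omega} \times \pre{\omega}{\omega} \) is homogeneously Souslin via a tree \( T \), then \( \exists^{\RR} A = \{ x \mid \exists y\, (x,y) \in A \} \) is witnessed by \( T \) with the second coordinate suppressed, and the accompanying measures make it weakly homogeneously Souslin essentially by definition (indeed, weak homogeneity is precisely the projection of homogeneity). The hard operation is complementation. Here I would form the Martin--Solovay tree \( \mathrm{ms}(T) \): starting from a weakly homogeneous representation of \( A \) and taking successive ultrapowers of \( V \) by the associated measures, one builds a tree whose projection is exactly the complement \( \pre{\omega}{\omega} \setminus A \). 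The decisive point --- and the main obstacle --- is to show that \( \mathrm{ms}(T) \) is itself \emph{homogeneous}, not merely Souslin. This is exactly where one Woodin cardinal \( \delta \) below the completeness of the measures is consumed: the extenders witnessing the Woodinness of \( \delta \) (equivalently, the generic elementary embeddings supplied by the stationary tower at \( \delta \), or the iterability of the relevant iteration trees) are used to manufacture a countably complete tower of measures on the levels of \( \mathrm{ms}(T) \). Establishing iterability and the coherence of these measures is the technical heart of the Martin--Steel construction, and essentially all of the difficulty lies there.

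Assembling the two operations, each cycle \( \Pi^1_n \leadsto \Sigma^1_{n+1} = \exists^{\RR}\Pi^1_n \leadsto \Pi^1_{n+1} = \lnot\,\Sigma^1_{n+1} \) costs precisely one Woodin cardinal, so the \( \omega \) Woodins below \( \kappa \) propagate homogeneous representations up the entire projective hierarchy, giving that every projective set is \( \kappa \)-homogeneously, hence \( \kappa \)-weakly homogeneously, Souslin. To reach all of \( L(\RR) \) I would finally invoke the scale analysis of \( L(\RR) \): the pointclasses carrying the scale property in \( L(\RR) \) are closed under the same projection and complementation operations, so the identical dichotomy --- now driven by the \( \omega \) Woodins with the measurable \( \kappa \) sitting above their supremum --- carries homogeneous tree representations through the Levy hierarchy of \( L(\RR) \), yielding that every set of reals in \( L(\RR) \) is \( \kappa \)-weakly homogeneously Souslin for the measurable \( \kappa \), as required.
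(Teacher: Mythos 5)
Your sketch is correct in outline, and it is precisely the argument of the sources this theorem is quoted from: the paper itself gives no proof of this statement (the projective case is Martin--Steel's theorem, and the \( L(\RR) \) extension is Woodin's, via \cite[Theorem 32.13]{Kanamori}), so the only comparison available is with the cited literature. Your route --- Martin's \( \Pi^1_1 \) representation from a measurable, weak homogeneity as projection of homogeneity, the Martin--Solovay/Martin--Steel complementation step consuming one Woodin cardinal per projective level, and the scale-theoretic propagation through \( L(\RR) \) under \( \omega \) Woodin cardinals with a measurable above --- is exactly that standard argument.
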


Given the great achievements of (classical) descriptive set theory, it is natural to ask whether one can extend it to more general situations. Indeed, in the literature it is easy to find research directions which might be (and, at least in some cases, have been) called \emph{generalized} descriptive set theory. We describe here three important instances of this phenomenon, in which very important and deep results have been obtained through the work of uncountably many great mathematicians, often with very different backgrounds.

\smallskip

(A) In the realm of general topology, one can find various attemps of adapting the techniques from descriptive set theory to non-separable complete metric spaces.%
\footnote{Sometimes these attemps undergo the name of \emph{non-separable} descriptive set theory, see e.g.~\cite{Hansell1983}.} 
For example, A.\ H.\ Stone~\cite{Stone1962} provides a deep analysis of definable subsets of the so-called \emph{\( k\)-Baire space} \( B(k) \), where \( k \) is an arbitrary uncountable cardinal, and of the \( k \)-Cantor space \( C(k) \) for cardinals \( k \) with countable cofinality.
Both spaces are completely metrizable spaces of weight \( k \), and will play an important role in our work as well, see Chapter~\ref{sec:lambda-Polish}. 
Although the notion of a \( k \)-analytic set introduced in~\cite[Section 8]{Stone1962} is equivalent to ours when \( k = \lambda \) is an uncountable cardinal that fits the setup of this paper (see Chapter~\ref{sec:lambda-analytic}), Stone chose to consider only \emph{classical} Borel sets rather than their generalization to \( k^+ \)-Borel sets (see Chapter~\ref{sec:Borel}). As discussed at the end of~\cite{Stone1962}, this causes a serious mismatch between the two notions, and in particular the failure of the natural generalization of Souslin's theorem relating bi-\(k\)-analytic sets to the Borel ones. 
Hansell~\cite{Han72,Han73} exploited instead \( \sigma \)-discrete unions and related concepts to introduce ``hyper-Borel sets'' and ``\( k \)-Souslin%
\footnote{Hansell's definition is different from the notion of \( \kappa \)-Souslin set considered in set theory and discussed in Section~\ref{sec:Souslinsets}.} 
sets'' as natural generalizations to the non-separable context of the classical Borel and analytic (or \( \aleph_0 \)-Souslin) sets, respectively. Although in~\cite{Han72} it is shown that a Souslin-like theorem can be obtained in this setup, in~\cite{Han73} the author realized that his notion of ``\( k \)-Souslin sets'' fails to properly generalize the classical notion of analytic sets, and in particular it is much weaker than Stone's notion of a \( k \)-analytic set.

\smallskip

(B) \label{approachB} In set theory, another natural generalization of descriptive set theory is obtained by replacing \( \omega \) with an uncountable cardinal \( \kappa \) in all basic definitions and concepts. In this way, one is led to study spaces of \( \kappa \)-sequences like the generalized Cantor space \( \pre{\kappa}{2} \) or the generalized Baire space \( \pre{\kappa}{\kappa} \), both endowed with a natural topology which here we call bounded topology (see Section~\ref{sec:boundedtopology}). This idea reappeared several times in the literature (see e.g.~\cite{Vaught1974,MekVaa1993}),
but a systematic development of the theory has been initiated only about 15 years ago.
Before this paper, only uncountable cardinals \( \kappa \) satisfying \( \kappa^{<\kappa} = \kappa \) were considered.%
\footnote{A notable exception is~\cite{AM}, which however contains some red herrings. For example, in that work the authors always consider \( \pre{\kappa}{\kappa} \) as the generalization of the Baire space, a choice that is arguably wrong when \( \kappa \) is singular.} 
This is mostly due to technical reasons, but it has the unpleasant consequence of excluding all singular cardinals at once,
and thus it makes this approach incompatible with that of part (A) because \( \pre{\kappa}{2} \) is metrizable if and only if \( \kappa \) has countable cofinality.
This line of research was quite successful and led to deep connections with e.g.\ Shelah's stability theory in model theory (\cite{Friedman:2011nx,Mangraviti2018,Moreno2017,MorenoUnp}). However, it has also several weaknesses. Besides the limitation on the cofinality of \( \kappa \), the theory is often confined to the study of the two spaces \( \pre{\kappa}{2} \) and \( \pre{\kappa}{\kappa} \), and most of the natural generalizations of the classical results turn out to be independent of \( \mathsf{ZFC} \), if not plainly false (like the generalization of the Souslin's theorem).
Nevertheless, a general trend has emerged: the stronger are the large cardinal conditions that \( \kappa \) satisfies, the more stable is the resulting generalized descriptive set theory. But interestingly enough, the largest cardinal assumptions conceived so far, like Woodin's axiom \( \mathsf{I0} \), imply that the cardinal at hand has countable cofinality, and thus it escapes the framework under discussion. 

\smallskip

(C) \label{approachC} In his seminal paper~\cite{Woodin2011}, Woodin realized that the initial segment \( V_{\lambda+1} \)  of the von Neumann's hierarchy of sets can be equipped with a somewhat natural topology, and observed that many structural results about its definable subsets under \( \mathsf{I0}(\lambda) \) exhibit a striking analogy with those concerning the definable subsets of \( \pre{\omega}{2} \)  under the Axiom of Determinacy \( \mathsf{AD} \).%
\footnote{Recall that \( \AD \) is the assertion ``Every \( A \subseteq \pre{\omega}{\omega} \) is determined''. Although it contradicts the full Axiom of Choice \( \AC \), it is compatible with weaker forms of it. For example, under sufficiently large cardinals assumptions we have that \( L(\RR) \models \ZF + \AD + \DC(\RR) \).}
Although it is natural to conceive the gathering of those results as a generalization of classical descriptive set theory, it must be noted that the analogy is not developed further, and that the motivations and the techniques used in the \( \mathsf{I0} \) context are quite different from the classical ones. 
Also, the space \( V_{\lambda+1} \) is a bit exotic for the average mathematician, and it is unclear whether such results can be applied to other reasonable spaces. Therefore, here the interest appears to be more on the metamathematical side, and indeed few applications outside set theory were to be expected.

\smallskip

Although the above approaches are apparently unrelated and sometimes even incompatible with each other,
our purpose is precisely to reconcile all of them, taking advantage of the strengths of each one to overcome the limitations of the other ones.
 Our work results in a very elegant and smooth theory, significantly mimicking the classical one in many respects. 
 A side effect of this unified approach is that of separating those results from approach (C) that are really technical and specific to set theory (such as 
 the proof that a wide class of sets in \( L(V_{\lambda+1}) \) are \( \UU(j) \)-representable under \( \mathsf{I0}(\lambda) \), which might be viewed as a higher analogue of Theorem~\ref{thm:intro2}), from the practical consequences that, in analogy with Theorem~\ref{thm:intro1}, can be derived from them without large cardinal assumptions and in a more accessible way, such as the \(\lambda\)-Perfect Set Property.

\section{Setup and main results} \label{sec:setup}

The crucial move in realizing our program is to concentrate on singular uncountable cardinals \(\lambda\) of countable cofinality. It turns out that under mild assumptions on \(\lambda\), all prominent spaces from the three approaches (A), (B), and (C) described in Section~\ref{ref:GDST} (that is: \( B(\lambda) \), \( C(\lambda) \), \( \pre{\lambda}{2} \), and \( V_{\lambda+1} \)) are homeomorphic to each other (Theorem~\ref{thm:homeomorphictoCantor}), allowing us to take advantage at once of the strengths of each of those setups. There are basically four conditions on \(\lambda\) that will be used, listed here in increasing strength order:
\begin{enumerate-(1)}
\item \label{requirementsonlambda-1}
\underline{\(\lambda\) is \(\omega\)-inaccessible}, that is, \( \kappa^\omega < \lambda \) for all \( \kappa < \lambda \). This makes the theory of \(\lambda\)-Polish spaces smoother and allows for very useful cardinality computations in metric spaces (see e.g.\ Lemma~\ref{lem:densityvscardinality} and Corollary~\ref{cor:cardinalityunderomegainaccessible}). It also makes a bit of choice available, as it implies e.g.\ that \( \RR \) is well-orderable and that \( \lambda \) is larger than the continuum \( 2^{\aleph_0} \).
\item \label{requirementsonlambda-2}
\underline{\( 2^{< \lambda} =  \lambda\)} or, equivalently, \(\lambda\) is strong limit. This is equivalent to the fact that the generalized Cantor space \( \pre{\lambda}{2} \) from approach (B) is a \(\lambda\)-Polish space. It is also a crucial condition for e.g.\ the \(\lambda\)-Borel sets being well-behaved.
\item \label{requirementsonlambda-3}
\underline{\( \beth_\lambda = \lambda \)}, that is, \(\lambda\) is a fixed point of the beth function (see the end of Section~\ref{subsec:ordandcard}). It is a condition equivalent to \( |V_\lambda| = \lambda \), which in turn is equivalent to the fact that the space \( V_{\lambda+1} \) from approach (C) is \(\lambda\)-Polish when equipped with Woodin's topology. For this reason it becomes obviously relevant each time that we want to employ Woodin's machinery.
\item \label{requirementsonlambda-4}
\underline{Woodin's \(\mathsf{I0}(\lambda) \)}. This is one of the strongest large cardinal assumtpions ever conceived. As we will show, it implies that all definable subsets of \(\lambda\)-Polish spaces are quite well-behaved, and in particular that they all have the \(\lambda\)-Perfect Set Property. 
\end{enumerate-(1)}

As for the axiomatic setup, unless otherwise specified our basic theory will be 
\[ 
\ZF + \AC_\lambda(\pre{\lambda}{2}) .
\]
(See Section~\ref{subsec:weakchoice} for the definitions and more information on weak choice axioms.) 
This is in line with what is done in classical descriptive set theory, whose basics are developed in \( \ZF + \AC_\omega(\RR) \), and allows for the use of our results in inner models in which the full Axiom of Choice \( \AC \) may fail, e.g.\ the model \( L(V_{\lambda+1}) \) under \( \mathsf{I0}(\lambda) \). Some of the initial results from Chapter~\ref{sec:lambda-Polish} work in even weaker theories, like \( \ZF \) alone, \( \ZF + \AC_\omega(\pre{\lambda}{2}) \), or \( \ZF + \AC_\lambda (\pre{\omega}{\lambda}) \) (but notice that \( \AC_\lambda (\pre{\omega}{\lambda}) \) is equivalent to \( \AC_\lambda(\pre{\lambda}{2}) \) if \(\lambda\) is strong limit). In the opposite direction, the reader uninterested in these subtleties may safely assume that we are working in the standard theory \( \ZFC \) (in which case some of the proofs could even be simplified), and skip all comments clarifying to what extent certain results can be obtained in choiceless settings.

Our presentation follows closely the structure of~\cite{Kechris1995}, to allow for a timely comparison between the classical results and those in the generalized context. When no new argument is needed, we just refer the reader to the corresponding result in~\cite{Kechris1995}, unless certain subtleties about the construction or the amount of choice needed to carry it out force us to discuss more details.
Of course, even if the statements are often natural generalizations of the classical one, in most cases a new proof is in order. 
It might be worth noticing that in some cases the new arguments are specific to the uncountable setting, but on a few occasions they also work for \( \lambda = \omega \), and thus they provide alternate proofs of the corresponding classical theorems. 
Sometimes this also leads to results in classical descriptive set theory which apparently were overlooked in the literature (see e.g.\ Proposition~\ref{prop:surjectionpreservingcompact} and Theorem~\ref{thm:changeoftopologyforuniformizations}).

We now discuss the content of the various chapters and the main results obtained in this paper.

\medskip

\noindent
\textbf{Chapter~\ref{chapter:preliminaries}.}
We collect all notions and basic results from general topology and set theory that are needed in the sequel, so the chapter can be safely skipped if the reader has a high level of familiarity with both areas. However, since we are going to work in a setting where only a small amount of choice is available, some results in topology need to be reworked in detail, so it might be worth having a look at Section~\ref{subsec:weakchoice} anyway.

\medskip

\noindent
\textbf{Chapter~\ref{sec:lambda-Polish}.} 
We consider \(\lambda\)-Polish spaces, namely, completely metrizable spaces with weight at most \( \lambda \). Our analysis reveals that while \( \pre{\lambda}{2} \) is a suitable analog of the Cantor space in the generalized context, the ``correct'' generalization of the Baire space in the countable-cofinality case is Stone's \( B(\lambda) = \pre{\omega}{\lambda} \) from~\cite{Stone1962}, and not \( \pre{\lambda}{\lambda} \), as previously believed (see e.g.~\cite{AM}).
 This is an instance of a more general phenomenon: in order to obtain significant generalizations, when \( \lambda \) is singular one cannot simply replace \(\omega\) with \(\lambda\) in all occurrences, but rather carefully choose between \(\lambda\) and its cofinality, depending on the situation at hand. 
 Also, zero-dimensionality must be intended in the sense of the Lebesgue covering dimension, and not in the sense of inductive dimension. (The two notions coincide for separable metrizable spaces, but the former is stronger in the context of non-separable spaces.)
Under mild assumptions on \(\lambda\), the theory of \( \lambda \)-Polish spaces in the countable-cofinality case turns out to be very close to that of ``classical'' Polish spaces, which correspond to the case \( \lambda = \omega \) (see Sections~\ref{sec:basicpropertiesforpolish}--\ref{sec:lambda-perfect}). There are notable exceptions, though. For example, when \(\lambda\) is uncountable there is only one type of perfect kernel for \(\lambda\)-Polish spaces \( X \) with \( \dim(X) = 0 \) (Corollary~\ref{cor:characterizationCantor2}) because, indeed, \( B(\lambda) \) is the only \(\lambda\)-perfect \(\lambda\)-Polish space with Lebesgue covering dimension \( 0 \) (Theorem~\ref{thm:characterizationCantor2}). And all \(\lambda\)-Lindel\"of (a natural analogue of \(\sigma\)-compacteness) \(\lambda\)-Polish spaces are necessarily small (Corollary~\ref{cor:lambdaLindelofissmall}). 
In many cases the arguments we use are adaptations of the ones used e.g.\ in~\cite{Kechris1995}, although some care is needed to avoid unnecessary uses of strong forms of choice axioms. 
We conclude the chapter with a brief discussion on why this setup is meaningful only when \( \cf(\lambda) = \omega \) (Section~\ref{subsec:Polishforothercofinalities}).

\medskip

\noindent
\textbf{Chapter~\ref{sec:Borel}.}
We introduce the collection \( \lB(X) \) of \(\lambda\)-Borel sets, namely, of the sets in the smallest \( \lambda^+ \)-algebra (equivalently, since \(\lambda\) is singular, \( \lambda \)-algebra) on the \(\lambda\)-Polish space \( X \) containing all its open sets (Section~\ref{sec:Boreldefandbasicfacts}). As shown in Proposition~\ref{prop:BorelvslambdaBorel}, \( \lB(X) \) is much larger than the collection of all ``classical'' Borel subsets of \( X \) considered e.g.\ by Stone~\cite{Stone1962,Stone1972}. It turns out that the general behavior of the associated \(\lambda\)-Borel hierarchy follows closely that of the classical Borel hierarchy on Polish spaces (Section~\ref{sec:Borelhierarchy}). Moving to more advanced results, there are two good news and a bad one. 
On the positive side, we still have the change-of-topology technique at disposal (Section~\ref{sec:changeoftopology}), and the levels of the \(\lambda\)-Borel hierarchy satisfy the expected structural properties (Section~\ref{sec:structuralproperties}), even though the methods used to obtain these results significantly differ from the ones employed in the classical setting. The bad news is that no form of Borel determinacy is available --- this is basically folklore, but we include a thorough discussion in Section~\ref{sec:noBoreldeterminacy} for the records. 

\medskip

\noindent
\textbf{Chapter~\ref{sec:lambda-analytic}.}
We define the collection \( \lS^1_1(X) \) of \(\lambda\)-analytic subsets of a \(\lambda\)-Polish space \( X \), and study their basic properties (Sections~\ref{sec:defanalytic} and~\ref{sec:basifactsanalytic}).
By~\cite[Theorem 19]{Stone1962}, our definition is equivalent to the one introduced in~\cite[Section 8]{Stone1962} (for a general cardinal \(\lambda\)). In that paper, however, the author just relates this notion with ``classical'' Borel subsets of \( X \), as in general it is not immediate to see that \( \lS^1_1(X) \) is closed under intersections of size \(\lambda\) --- see Proposition~\ref{prop:closurepropertiesofanalytic}. Although natural in that setup, this is a serious limitation because, as hinted in~\cite[Section 8.6]{Stone1962}, it prevents one to find e.g.\ a suitable generalization of the Lusin's separation theorem. 
This is where ideas from approach (B) on p.\ \pageref{approachB} help in overcoming the difficulties encountered in approach (A) from the same page.
Indeed, our approach 
reveals that, when restricting to strong limit cardinals \(\lambda\) of countable cofinality, the class \( \lS^1_1(X) \) has stronger closure properties and contains all \(\lambda\)-Borel sets, an improvement which ultimately relies on Theorem~\ref{thm:homeomorphictoCantor}\ref{thm:homeomorphictoCantor-2} and is an illuminating example of why it is useful to mix the approaches discussed in Section~\ref{ref:GDST}. On the one hand, this allows us to extend some of Stone's results from ``classical'' Borel sets to \(\lambda\)-Borel sets (compare e.g.\ the last part of Theorem~\ref{thm:analyticPSP} with~\cite[Theorem 6]{Stone1962}). On the other hand, one gets the natural generalization of the mentioned Lusin's separation result, 
and obtains as a consequence the generalized Soulslin's theorem stating that \( \lD^1_1(X) = \lB(X) \) 
(Section~\ref{subsec:Lusinseparation}). Other separation theorems are obtained in Section~\ref{sec:otherseparationtheorems}.
Along the way (Section~\ref{sec:standardBorel}), we also consider standard \(\lambda\)-Borel spaces, notably including the Effros \(\lambda\)-Borel space \( F(X) \) of closed subsets of \( X \) --- to deal with it, we had to employ new ideas because in the generalized context we cannot use compactification arguments for cardinality reasons. Additionally, we record a number of facts concerning definable equivalence relations and \(\lambda\)-Polish groups which readily follows from the theory already developed. We conclude the chapter with a brief discussion on \(\lambda\)-projective and \( \kappa \)-Souslin sets (Sections~\ref{sec:lambda-projective} and~\ref{sec:Souslinsets}, respectively), and on various forms of definable absoluteness (Section~\ref{sec:absolutenessnew}).

\medskip

\noindent
\textbf{Chapter~\ref{chapter:uniformization}.}
This chapter is devoted to two more advanced topics: uniformizations and ranks. In Section~\ref{sec:uniformization}, we prove various results concerning the existence of \(\lambda\)-Borel uniformizations of a given \(\lambda\)-Borel set. The crucial observation is that one can characterize this in terms of isolated points, or suitable changes of topologies (Theorem~\ref{thm:changeoftopologyforuniformizations}). Notably, the characterization works also when \( \lambda = \omega \), and appears to be new in that context as well. Moving from this, we then prove uniformization results for \(\lambda\)-Borel sets with countable vertical sections (Theorem~\ref{thm:ctblsections}), or with compact vertical sections (Theorem~\ref{thm:compactsections2}). Notice that the proofs of the corresponding results in classical descriptive set theory heavily use Baire category arguments. Since this tool is missing in the generalized context, we had to employ completely different ideas that are specific to uncountable cardinals larger than the continuum, and thus cannot be used in the classical setting. (We do not know if there is a proof covering simultaneously the classical and the generalized case.) Among the remarkable consequences of the above uniformization results, we provide analogues of the celebrated Feldman-Moore theorem for countable \( \lambda \)-Borel equivalence relations (Theorem~\ref{thm:feldman-moore}), and of Slaman-Steel's ``marker lemma'' (Lemma~\ref{lem:markerlemma}). We also briefly discuss a generalization of hyperfiniteness, called hyper-\( \kappa \)-smallness, which naturally arises in the generalized context (Lemma~\ref{lem:hypersmall} and Corollary~\ref{cor:hypersmall}). 
In Section~\ref{sec:lambdacoanalyticranks}, we instead review the theory of \( \lP^1_1 \)-ranks, leading to important results on the structural properties of \(\lambda\)-analytic and \(\lambda\)-coanalytic sets (Theorem~\ref{thm:structuralpropertiesforcoanalytic}), and on the existence of \( \lP^1_1 \)-codes for \(\lambda\)-Borel sets (Theorem~\ref{thm:borelcodes}) as a consequence of the fact that the pointclass \( \lP^1_1 \) is ranked (Theorem~\ref{thm:Pi11isranked}). We also provide a natural boundedness theorem (Theorem~\ref{thm: boundednessforanalytic}) and two reflection theorems (Theorems~\ref{thm:firstreflection} and~\ref{thm:secondreflection}). 
All these results are to be intended as tools made available for the further development of the theory in future works. Besides a few crucial observations, their proofs are straighforward generalizations of the classical arguments, and are thus omitted here. 

\medskip

\noindent
\textbf{Chapter~\ref{chapter:lambda-PSP}.}
We consider various possible generalizations of the classical Perfect Set Property, and show that they are indeed all equivalent to each other under mild assumptions on \(\lambda\). This is a consequence of the fact that under \( 2^{< \lambda} = \lambda \), all \(\lambda\)-analytic sets satisfy the strongest one, which is called \(\lambda\)-Perfect Set Property and is denoted by \lPSP{} (see Section~\ref{sec:PSPforanalytic}). Then we show that this result is best possible if we work in \( \ZF(\mathsf{C}) \) alone: as in the classical case, it is consistent that there are \(\lambda\)-coanalytic sets without the \lPSP{} (Section~\ref{sec:noPSPcoanalytic}). Section~\ref{sec:PSPgames} is devoted to the study of certain Gale-Stewart games whose determinacy implies the \lPSP{} (and in some cases it is even equivalent to it). Finally, in Section~\ref{sec:representalePSP} we introduce the concepts of \( \UU \)-representable and TC-representable sets. This is a generalization on the notion of \( \kappa \)-weakly homogenously Souslin sets, and is inspired by Woodin's definition of $\mathbb{U}(j)$-representable sets under \( \mathsf{I0} \) (see~\cite{Woodin2011}).
The main result is that if \( 2^{< \lambda} = \lambda \), then all TC-representable sets have the \lPSP{} (Theorem~\ref{thm:mainPSP}). This fact is crucial to unlock the applications considered in next chapter.

\medskip

\noindent
\textbf{Chapter~\ref{sec:applications}.}
The goal of this chapter is twofold. First, we connect \( \UU \)-representability and TC-representability to similar notions already present in the literature. In particular,
we show that setting back \( \lambda = \omega \) in our definitions of representability we recover the well-studied notion of weakly homogeneously Souslin sets (Section~\ref{sec:representabilityvsweaklyhomogeneouslysouslin}), while working under the large cardinal assumptions \( \mathsf{I0}(\lambda) \) our notions align with Woodin's \( \UU(j) \)-representability and his version of the tower condition (Section~\ref{sec:representabilityvsU(j)-representability}). This is no surprise, as our definitions are clearly inspired to those older concepts, but it reinforces the claim that the theory developed in this paper is really a higher analogue of classical descriptive set theory, giving rise to a very similar picture (perhaps surprisingly, given the outcome of the development of generalized descriptive set theory for regular cardinals undertaken in approach (B) from p.\ \pageref{approachB}).
The second goal is to study how large the collection of sets having the \( \lPSP \) can be under large cardinal assumptions, and in particular what is the influence of Woodin's \( \mathsf{I0}(\lambda) \) in this respect (Section~\ref{sec:whichsetsarerepresentable}). This is where approach (C) on p.\ \pageref{approachC} comes into play. Exceptionally, this part is developed within \( \ZFC \), as customary in the area.
Among other things, we reprove a beautiful and deep theorem of Cramer stating that under \( \mathsf{I0}(\lambda) \), all the \(\lambda\)-projective sets (and indeed also all sets in the inner model \( L(V_{\lambda+1}) \)) have the \( \lPSP \). Finally, in Section~\ref{sec:furtherresults} we make some further observations on how tight is the connection between the axiom \( \mathsf{I0}(\lambda) \) and its consequences on the \( \lambda \)-Perfect Set Property.

%

\chapter{Notation and terminology} \label{chapter:preliminaries}

As customary in set theory, \( \ZFC \) denotes the Axioms of Zermelo-Fraenkel, and \( \ZF \) denotes the axioms of \( \ZFC \) minus the Axiom of Choice \( \AC \) (see e.g.\ \cite[Chapter 1]{Jech2003}). Since we want to discuss matters related to weak forms of choice principles, in this chapter we exceptionally work in \( \ZF \) alone and always specify how much choice is needed for the various results.

\section{Ordinals and cardinals} \label{subsec:ordandcard}

We denote  the class of all ordinals by \( \On \), and the class of all cardinals by \( \Cn \).
Greek letters \( \alpha , \beta, \gamma, \dotsc \), possibly variously decorated, usually denote ordinals, while \( \kappa, \lambda, \mu, \dotsc \) are reserved for cardinals. The smallest infinite cardinal is denoted by \(\omega\), and will often be confused with the set of natural numbers \( \mathbb{N} \). If \( \preceq \) is a well-order on a set \( X \), we let \( \mathrm{ot}(\preceq) \) be its order type, i.e.\ the unique \( \alpha \in \On \) that is order isomorphic to \( (X,\preceq) \).

The von Neumann hierarchy \( (V_\alpha)_{\alpha \in \On} \) is a stratification of the universe of sets \( V \), and is defined by transfinite recursion over the ordinals by setting \( V_0 = \emptyset \), \( V_{\alpha+1} = \pow(V_\alpha) \), and \( V_\gamma = \bigcup_{\alpha < \gamma} V_\alpha \) for \( \gamma \) limit (see e.g.\ \cite[Chapter 6]{Jech2003}). The rank of a set \( X \in V \) is the smallest \( \alpha \in \On \) such that \( X \in V_\alpha \).

In choiceless settings like ours, the notion of cardinality is usually defined as follows. We write \( X \asymp Y \) to say that \( X \) and \( Y \) are in bijection, and \( [X]_{\asymp} \) is the collection of all sets \( Y \) of minimal rank that are in bijection with \( X \), i.e.~it is the equivalence class of \( X \) under \( \asymp \), cut-down using Scott's trick.
Working in \( \ZF \), the cardinality of a set \( X \) is defined to be 
\[
|X| = 
\begin{cases}
\text{the unique \( \kappa \in \Cn \) such that \( \kappa \asymp X \)} &\text{if \( X \) is well-orderable,}
\\
[X]_{\asymp} & \text{otherwise.}
\end{cases}
\]
Thus \( \AC \) implies that every cardinality is a cardinal.
Set \( |X| \leq |Y| \) if and only if there is an injection from \( X \) into \( Y \), and \( |X| < |Y| \) if \( |X| \leq |Y| \) but \( |X| \neq |Y| \).
By the Shr\"oder-Bernstein theorem, \( |X| = |Y| \) if and only if \( |X| \leq |Y| \leq |X| \).

We remark once again that since we are not assuming \( \AC \), expressions like ``the set \( X \) has size smaller than \( \nu \)'' for some cardinal \( \nu \) are somewhat stronger than usual, as \( |X| < \nu \) means that \emph{\( X \) is well-orderable} and its cardinal(ity) \( \kappa \) satisfies \( \kappa < \nu \). Also, statements like ``either \( |X| \leq \nu \) or \( \nu < |X| \)'' might be nontrivial, because \( |X| \leq \nu \) might fail simply because the set \( X \) is not well-orderable, and therefore we cannot automatically infer that \( \nu \) injects into \( X \) from the fact that \( |X| \nleq \nu \).

A sequence \( (\alpha_i)_{i < \beta} \) is cofinal in an ordinal \( \delta \) if \( \alpha_i < \delta \) for all \( i < \beta \), and for all \( \gamma < \delta \) there is \( i < \beta \) such that \( \gamma \leq \alpha_i \). The cofinality of \( \delta \), denoted by \( \cf(\delta) \), is the smallest \(\beta\) for which there is a sequence \( (\alpha_i)_{i < \beta} \) cofinal in \( \delta \). Notice that \( \cf(\delta) \leq \delta \) and that \( \cf(\delta) \) is always a cardinal. A cardinal \( \lambda \) is called regular if \( \cf(\lambda) = \lambda \), and singular otherwise. In this paper we will mostly be interested in uncountable cardinals \(\lambda\) with the smallest possible cofinality, that is \( \cf(\lambda) = \omega \). An example of such a cardinal is \( \aleph_\omega \).

The successor of a cardinal \( \kappa \) is denoted by \( \kappa^+ \). The cardinal \(\lambda\) is called successor if \( \lambda = \kappa^+ \) for some \( \kappa \in \Cn \), and limit otherwise. Cardinal exponentiation is defined as usual by setting, for \( \lambda,\kappa \in \Cn \)
\[ 
\lambda^\kappa = |\pre{\kappa}{\lambda}|,
 \] 
where \( \pre{\kappa}{\lambda} = \{ f \mid f \colon \kappa \to \lambda \}  \). 
We also set \( \lambda^{< \kappa} = \left|  \bigcup_{\mu < \kappa} \pre{\mu}{\lambda} \right| \).
Notice however that in choiceless settings \( \lambda^\kappa \) and \( \lambda^{< \kappa} \) might not be cardinals, as the set \( \pre{\kappa}{\lambda} \) might not be well-orderable if we do not assume \( \AC \). The usual inequalities \( 2^\kappa > \kappa \) and \( \kappa^{\cf(\kappa)} > \kappa \) for \( \kappa \) an infinite cardinal are still true in our context, as there are obvious injections from \( \kappa \) into both \( \pre{\kappa}{2} \) and \( \pre{\cf(\kappa)}{\kappa} \), while in \( \ZF \) alone one can prove that there are no surjections from \( \kappa \) onto either \( \pre{\kappa}{2} \) or \( \pre{\cf(\kappa)}{\kappa} \) (for the latter see e.g.\ the proof of~\cite[Theorem 3.11]{Jech2003}). 

We will use various largeness conditions on a cardinal \(\lambda\): although their formulation is standard, their actual meaning is a bit peculiar in choiceless models of set theory, as they imply that indeed some choice is available below the given \(\lambda\). For example, an infinite cardinal \(\lambda\) is called \(\omega\)-inaccessible if \( \kappa^\omega < \lambda \) for all \( \kappa < \lambda \) (in particular, \(\lambda\) must be uncountable). This means that the set \( \pre{\omega}{\kappa} \) is well-orderable (even if we are not assuming \( \AC \) in the background) and its cardinality, which is then a cardinal, is smaller than \(\lambda\). In particular, when \(\lambda\) is \(\omega\)-inaccessible the reals are well-orderable and \(\lambda\) is larger than the continuum \( 2^{\aleph_0} \). 

In the same fashion, an infinite cardinal \(\lambda\) is strong limit if \( 2^\kappa < \lambda \) for all \( \kappa < \lambda \). This again means that each \( \pre{\kappa}{2} \) is well-orderable and has size smaller than \( \lambda \). In particular, such a \(\lambda\) must be an \(\omega\)-inaccessible limit cardinal. The condition of being strong limit can be weakened by only requiring that \( 2^\kappa \leq \lambda \) for all \( \kappa < \lambda \). However, if \(\lambda\) is singular the two notions coincide. One direction is obvious. Conversely, assume that \( 2^\kappa \leq \lambda \) for all \( \kappa < \lambda \) and that, towards a contradiction, there is \( \kappa < \lambda \) such that \( 2^{\kappa} = \lambda \). By monotonicity of cardinal exponentiation, we can assume that \( \cf(\lambda) \leq \kappa < \lambda \). Then
\[ 
 2^\kappa = 2^{\kappa \cdot \cf(\lambda)} = (2^\kappa)^{\cf(\lambda)} = \lambda^{\cf(\lambda)} > \lambda ,
 \] 
a contradiction. Under a small fragment of choice, namely the \(\lambda\)-axiom of choice \( \AC_\lambda(\pre{\lambda}{2}) \) discussed in Section~\ref{subsec:weakchoice}, being strong limit is also equivalent to requiring that \( 2^{<\lambda} = \lambda \).

In \( \ZF \), the beth function \( \beth \colon \On \to V, \, \alpha \mapsto \beth_\alpha \) is defined as follows. Recursively define \( B_0 = \omega \), \( B_{\alpha+1}  = \{ f \mid f \colon B_\alpha \to 2 \} = \pre{B_\alpha}{2} \), and \( B_\gamma = \bigcup_{\alpha < \gamma} B_\alpha \) for \( \gamma \in \On \) limit. Then set \( \beth_\alpha = |B_\alpha| \).  Once again, \( \beth_\alpha \) need not to be a cardinal if we do not assume \( \AC \). The condition \( \beth_\lambda = \lambda \) considered in this paper is then a quite strong requirement, which implies that \( \beth_\alpha \) is a cardinal for all \( \alpha \leq \lambda \). It also implies that \(\lambda\) is strong limit, but it is strictly stronger than that because, in fact, \emph{every} limit \( \alpha \leq \lambda \) is such that \( \beth_\alpha \) is a strong limit cardinal. Finally, the requirement \( \beth_\lambda = \lambda \) implies that we have full choice below \(\lambda\), in the sense that it is equivalent to asserting that \( V_\lambda \) is well-orderable and \( |V_\lambda| = \lambda \).

While the existence of e.g.\ regular strong limit cardinals (aka inacessible cardinals) is independent of \( \mathsf{ZFC} \), the existence of singular strong limit cardinals can be proved without additional requirements if we have enough choice at disposal. For example, as observed above, if \( \beth_\omega \) is a cardinal, then it is strong limit and has countable cofinality. Additional set theoretical assumptions may yield more examples of such cardinals: for example, the Generalized Continuum Hypothesis \( \GCH \) implies that, under choice, all limit cardinals are strong limit.

\section{Topological spaces} \label{subsec:topologicalspaces}

If \( X \) and \( Y \) are topological spaces we write \( X \approx Y \) to say that they are homeomorphic. The weight \( \weight(X) \) of a topological space \( X \) is the smallest cardinal \( \mu \) such that \( X \) has a well-orderable basis of size \( \mu \), while the density character \( \dens(X) \) of \( X \) is the smallest cardinal \( \nu \) such that \( X \) has a well-orderable dense subset of size \( \nu \). Without choice, however, it might happen%
\footnote{It is enough to consider a non-well-orderable set \( X \) with the discrete topology.} 
 that a space \( X \) has neither a well-orderable basis nor a well-orderable dense subset, in which case the weight and density character are undefined. The following fact easily follows from the trivial observation that if \( \weight( X ) =  \nu \), then \( X \)
 has at most \( 2^\nu \)-many open sets.

\begin{fact} \label{fct:weightstronglimit}
If \( \kappa \) is strong limit and \( X \) has at least \( \kappa \)-many%
\footnote{That is, there is an injective function from \( \kappa \) into the topology of \( X \).}
 different open sets, then \( \weight(X) \not< \kappa \).
\end{fact}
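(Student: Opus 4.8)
The plan is to argue by contraposition, turning the stated observation into the main engine of the proof. That is, I would assume \( \weight(X) < \kappa \) and derive that \( X \) cannot have \( \kappa \)-many open sets, i.e.\ that there is no injection from \( \kappa \) into the topology \( \tau \) of \( X \). By the convention on cardinality recalled in Section~\ref{subsec:ordandcard}, the hypothesis \( \weight(X) < \kappa \) already entails that \( \weight(X) \) is defined and equals some genuine cardinal \( \nu < \kappa \); in particular \( X \) admits a \emph{well-orderable} basis \( \mathcal{B} \) with \( |\mathcal{B}| = \nu \). This well-orderability is exactly what makes the choiceless bookkeeping go through, so I would flag it explicitly at the outset.

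First I would record the key injection. Since \( \mathcal{B} \) is a basis, every open set \( U \) satisfies \( U = \bigcup \{ B \in \mathcal{B} \mid B \subseteq U \} \), and therefore the map
\[
\tau \to \pow(\mathcal{B}), \qquad U \mapsto \{ B \in \mathcal{B} \mid B \subseteq U \}
\]
is injective. Next I would observe that, because \( \mathcal{B} \) is well-orderable of size \( \nu \), the power set \( \pow(\mathcal{B}) \) is in bijection with \( \pre{\nu}{2} \); and since \( \kappa \) is strong limit and \( \nu < \kappa \), the set \( \pre{\nu}{2} \) is well-orderable with cardinality \( 2^\nu < \kappa \). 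Combining these two facts, \( \tau \) injects into a well-orderable set of cardinality \( 2^\nu \), so \( \tau \) itself is well-orderable and \( |\tau| \leq 2^\nu < \kappa \).

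Finally, from \( |\tau| < \kappa \) I would conclude that no injection \( \kappa \to \tau \) exists: such an injection would witness \( \kappa \leq |\tau| \), contradicting \( |\tau| < \kappa \) via the Schr\"oder-Bernstein theorem (here there is no choice issue precisely because \( \tau \) has been shown to be well-orderable). This contradicts the hypothesis that \( X \) has at least \( \kappa \)-many open sets, completing the contrapositive and hence the proof.

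There is no serious obstacle here — the argument is essentially the stated observation together with contraposition. The only point requiring a modicum of care, and the one I would emphasize, is the choiceless layer: one must verify that the relevant sets (\( \mathcal{B} \), \( \pow(\mathcal{B}) \), and ultimately \( \tau \)) are well-orderable before invoking cardinal inequalities and Schr\"oder-Bernstein. Each of these follows in turn from the definedness of \( \weight(X) \) and from \( \kappa \) being strong limit, so the verification is routine but should be made explicit rather than left implicit.
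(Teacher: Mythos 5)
Your proof is correct and is essentially the paper's argument: the paper derives the Fact from the observation that if \( \weight(X) = \nu \) then \( X \) has at most \( 2^\nu \)-many open sets, which is exactly your injection \( U \mapsto \{ B \in \mathcal{B} \mid B \subseteq U \} \) combined with \( 2^\nu < \kappa \) from the strong limit hypothesis. Your explicit attention to well-orderability of \( \mathcal{B} \), \( \pre{\nu}{2} \), and \( \tau \) is a faithful (and welcome) elaboration of what the paper's conventions on weight and strong limit cardinals in \( \ZF \) already encode.
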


A subset \( A \) of \( X \) is discrete if each \( x \in A \) has an open neighborhood \( U_x \) such that \( U_x \cap A = \{ x \} \). If \( X \) is metrizable, without loss of generality we can assume that \( U_x \cap U_y = \emptyset \) for distinct \( x,y \in A \) (but without metrizability this might become a stronger requirement). This can be further strengthened as follows.

\begin{defin}
Let \( r \in \RR^+ \) be a strictly positive real and \( (X,d) \) be a metric space. A set \( A \subseteq X \) is
\( r \)-spaced%
\footnote{In the literature, a set with this property is also called \( r \)-packing.}
 if \( d(x,y) \geq r \) for all distinct \( x,y \in A \). 
\end{defin}

The above condition is obviously equivalent to requiring that for all \( x \in A \), the open ball 
\[ 
B_d(x,r)  = \{ z \in X \mid d(x,z)<r \}
\]
does not contain any point of \( A \) besides \( x \) itself.
Notice that any subset of a  discrete (respectively, \( r \)-spaced) set is still discrete (respectively, \( r \)-spaced). Using the Axiom of Choice, it is easy to see that the density character of a metric space (equivalently, its weight) is strictly related to the size of its discrete subspaces, as well as to the size of its \( r \)-spaced sets. 

\begin{lemma}[\( \AC \)] \label{lem:r-spaces-AC}
Let \( \nu \) be an infinite cardinal and \( (X,d) \) be a metric space. The following are  equivalent:
\begin{enumerate-(1)}
\item \label{lem:r-spaces-AC-1}
\( \weight(X) > \nu \) (equivalently, \( \dens(X) > \nu \)).
\item \label{lem:r-spaces-AC-2}
There is a discrete subset of \( X \) of size larger than \( \nu \).
\item \label{lem:r-spaces-AC-3}
There is a set \( A \subseteq Y \) of cardinality greater than \( \nu \) which is%
\footnote{Sets which are \( r \)-spaced for some \( r > 0 \) are sometimes called uniformly discrete.}
 \( r \)-spaced for some \( r > 0 \). 
\end{enumerate-(1)}  
\end{lemma}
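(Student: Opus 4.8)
The plan is to establish the parenthetical equivalence $\weight(X) = \dens(X)$ first, and then prove the cycle $(3) \Rightarrow (2) \Rightarrow (1) \Rightarrow (3)$, working throughout with the density character since it interacts most directly with spaced sets. For $\weight(X) = \dens(X)$: selecting one point from each nonempty member of a basis of minimal size yields a dense set of no larger cardinality, so $\dens(X) \le \weight(X)$; conversely, given a dense $S$ of minimal size, the family $\{ B_d(s,q) \mid s \in S,\ q \in \QQ, q > 0 \}$ is a basis of size $\le \dens(X)$ (using that $\dens(X)$ is infinite), giving $\weight(X) \le \dens(X)$. Both steps invoke $\AC$ to perform the selections. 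The implication $(3) \Rightarrow (2)$ is then immediate: if $A$ is $r$-spaced, then for each $x \in A$ the ball $B_d(x,r)$ meets $A$ only in $x$, so $A$ is discrete.

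For $(2) \Rightarrow (1)$ I would argue in contrapositive form, showing that $\dens(X) \le \nu$ forbids discrete sets of size $> \nu$. Fix a dense set $S$ with $|S| \le \nu$ and let $D \subseteq X$ be discrete. Using $\AC$, for each $x \in D$ choose $\varepsilon_x > 0$ with $B_d(x,\varepsilon_x) \cap D = \{x\}$, so that $d(x,y) \ge \varepsilon_x$ for every $y \in D \setminus \{x\}$, and then choose $s_x \in S \cap B_d(x, \varepsilon_x/2)$, which exists by density. The map $x \mapsto s_x$ is injective: if $s_x = s_y$ with $x \ne y$ and, say, $\varepsilon_x \le \varepsilon_y$, then the triangle inequality gives $d(x,y) < \varepsilon_x/2 + \varepsilon_y/2 \le \varepsilon_y$, contradicting $d(x,y) \ge \varepsilon_y$. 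Hence $|D| \le |S| \le \nu$.

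The heart of the matter is $(1) \Rightarrow (3)$, where $\AC$ is used in an essential way. Assuming $\dens(X) > \nu$, for each $n \ge 1$ I would use Zorn's lemma to fix a $\subseteq$-maximal $\tfrac1n$-spaced subset $A_n \subseteq X$. Maximality forces every point of $X$ to lie within $\tfrac1n$ of some element of $A_n$, so $\bigcup_{n \ge 1} A_n$ is dense and therefore has cardinality $> \nu$. If each $A_n$ had size $\le \nu$, we would get $\bigl| \bigcup_{n} A_n \bigr| \le \aleph_0 \cdot \nu = \nu$ (here it is crucial that $\nu$ is infinite), a contradiction; hence some $A_n$ has size $> \nu$, and being $\tfrac1n$-spaced it witnesses $(3)$ with $r = \tfrac1n$. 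The main obstacle is precisely this step: producing the maximal spaced sets via Zorn's lemma and then running the pigeonhole over the countably many scales $\tfrac1n$. Without maximality one cannot guarantee the union is dense, and without $\AC$ neither the cardinal arithmetic $\aleph_0 \cdot \nu = \nu$ nor the well-orderedness of the sets involved is available — which is exactly why the lemma is stated under $\AC$ and will later have to be reworked in the weak-choice setting.
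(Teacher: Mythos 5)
Your proposal is correct and follows essentially the same route as the paper: the key implication \ref{lem:r-spaces-AC-1} \( \Rightarrow \) \ref{lem:r-spaces-AC-3} is proved identically, by using Zorn's lemma to fix maximal \( r \)-spaced sets at countably many scales, observing that their union is dense by maximality, and applying a pigeonhole argument using \( \aleph_0 \cdot \nu = \nu \). The paper simply dismisses \ref{lem:r-spaces-AC-3} \( \Rightarrow \) \ref{lem:r-spaces-AC-2} \( \Rightarrow \) \ref{lem:r-spaces-AC-1} as trivial, whereas you spell them out (correctly) together with the equivalence \( \weight(X) = \dens(X) \).
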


\begin{proof}
The implications~\ref{lem:r-spaces-AC-3} \( \Rightarrow \)~\ref{lem:r-spaces-AC-2} and \ref{lem:r-spaces-AC-2}~\( \Rightarrow \)~\ref{lem:r-spaces-AC-1} are trivial, so we only discuss~\ref{lem:r-spaces-AC-1} \( \Rightarrow \)~\ref{lem:r-spaces-AC-3}.
Using Zorn's lemma, fix a maximal \( 2^{-n} \)-spaced set \( B_n  \subseteq X \). By maximality, the set \( B = \bigcup_{n \in \omega} B_n \) is dense in \( X \). Thus  \( |B_{\bar{n}}| > \nu \) for some \( \bar{n} \in \omega \), as otherwise \( |B| \leq \nu \times \omega  = \nu \), contradicting~\ref{lem:r-spaces-AC-1}. It is thus enough to set \( A = B_{\bar{n}}\).
\end{proof}

We will prove the same result without appealing to Zorn's lemma (i.e.\ the full Axiom of Choice \( \AC \)) in Lemma~\ref{lem:r-spaces}.

An open cover of a topological space \( X \) is a family \( \mathcal{U} \) of open sets such that \( \bigcup \mathcal{U} = X \).
Given an infinite cardinal \( \nu \), we say that
 \( X \) is \( \nu \)-Lindel\"of if every open cover \( \mathcal{U} \) of \( X \) admits a well-orderable subcover \( \mathcal{U}' \subseteq \mathcal{U} \) such that \( |\mathcal{U}'| < \nu \). Thus \( \omega \)-Lindel\"ofness is just the standard compactness property. A refinement of an open cover \( \mathcal{U} \) of \( X \) is an open cover \( \mathcal{V} \) of \( X \) such that for all \( V \in \mathcal{V} \) there is \( U \in \mathcal{U} \) such that \( V \subseteq U \). The open cover \( \mathcal{V} \) is locally finite if every \( x \in X \) has an open neighborhood \( W \) such that the set \( \{ V \in \mathcal{V} \mid W \cap V \neq \emptyset \} \) is finite. The space \( X \) is paracompact if each of its open covers has a locally finite open refinement. Under \( \AC \), every metrizable space \( X \) is paracompact. However, using Rudin's argument from~\cite{Rudin1969} one sees that if \( X \) admits a well-orderable basis, then \( \AC \) is not needed to prove that \( X \)
is paracompact. Indeed, once a well-ordered basis for \( X \) is fixed, then every open cover can be refined to a well-ordered open cover, from which one can in turn recover \emph{in a canonical way} (that is, without using any form of the axiom of choice and with an explicit and constructive procedure) a refinement witnessing paracompactness.

Suppose that \( (X_i, \tau_i) \) is a family of topological spaces, where \( i \in I \) for some set \( I \). The product space \( \prod_{i \in I} X_i \) can be endowed with several topologies, usually generated by some collection of sets of the form \( \prod_{i \in I} U_i \) with \( U_i \in \tau_i \) for all \( i \in I \); the support of \( \prod_{i \in I} U_i \) is the set \( \{ i \in I \mid U_i \neq X_i \} \). The usual product topology on \( \prod_{i \in I} X_i \) is thus the topology generated by all sets \( \prod_{i \in I} U_i \) with finite support. We generalize this to arbitrary cardinals \( \mu \) as follows: The \( < \mu \)-supported topology on \( \prod_{i \in I} X_i \) is the topology generated by all sets \(\prod_{i \in I} U_i \) having well-orderable support of size smaller than \( \mu \). In particular, the \( < \omega \)-supported product topology coincides with the classical product topology, while if \( |I| < \mu \) then the \( < \mu \)-supported product topology coincides with the box topology.

We denote by \( \mathrm{cl}(A) \) the closure of \( A \subseteq X \).
A set \( A  \subseteq X \) is \( G_\delta \) if it can be written as a countable intersection of open sets and, dually, it is \( F_\sigma \) if it is a countable union of closed sets. It is well-known that, working in \( \ZF \) alone, if \( (X,d) \) is a metric space, then every closed set \( F \subseteq X \) can be written \emph{in a canonical way} as a countable intersection of open sets: indeed, \( F = \bigcap_{n \in \omega} U_n \) where
\[ 
U_n = \bigcup \{ B_d(x, 2^{-n}) \mid x \in F \}.
 \] 
The following very useful fact easily follows.

\begin{fact} \label{fct:booleancombinationsareFsigma}
Let \( X \) be a metrizable space. If \( A \subseteq X \) is a Boolean combination of open sets, than \( A \) can be \emph{canonically} written as \( A = \bigcap_{n \in \omega} U_n = \bigcup_{n \in \omega} C_n \), where each \( U_n \) is open and each \( C_n \) is closed.
\end{fact}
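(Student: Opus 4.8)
The plan is to prove that the family \( \mathcal{A} \) of those \( A \subseteq X \) which admit \emph{simultaneously} a canonical representation as a countable intersection of open sets and as a countable union of closed sets forms a Boolean algebra containing every open set; since a Boolean combination of open sets is by definition an element of the Boolean algebra generated by the open sets, this yields the statement. Here \emph{canonical} is to be understood exactly as in the remark preceding the statement: each representation is given by an explicit set-theoretic formula in the open sets appearing in the combination and in a single fixed bijection \( \omega \to \omega \times \omega \), so that no instance of the axiom of choice is ever invoked.

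For the base case, let \( U \) be open. Trivially \( U = \bigcap_{n \in \omega} U \) exhibits \( U \) as \( G_\delta \), while applying the canonical procedure recalled before the statement to the closed set \( X \setminus U \) gives \( X \setminus U = \bigcap_{n \in \omega} V_n \) with \( V_n = \bigcup \{ B_d(x,2^{-n}) \mid x \in X \setminus U \} \); taking complements yields the canonical \( F_\sigma \) form \( U = \bigcup_{n \in \omega} (X \setminus V_n) \). Hence every open set lies in \( \mathcal{A} \). Closure under complement is immediate by De Morgan: if \( A = \bigcap_n U_n = \bigcup_n C_n \), then \( X \setminus A = \bigcup_n (X \setminus U_n) = \bigcap_n (X \setminus C_n) \) are the required forms. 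For intersections, given also \( B = \bigcap_n U'_n = \bigcup_n C'_n \), the set \( A \cap B = \bigcap_n (U_n \cap U'_n) \) is canonically \( G_\delta \), and \( A \cap B = \bigcup_{(n,m)} (C_n \cap C'_m) \) becomes a canonical countable union of closed sets after reindexing \( \omega \times \omega \) through the fixed bijection; closure under finite union follows dually.

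It then suffices to argue by induction on the term witnessing that \( A \) is a Boolean combination of open sets: the base case is covered by the open sets (and hence, via complementation, the closed sets), and each of the three Boolean operations preserves membership in \( \mathcal{A} \) by the computations above. I expect no genuine difficulty in the set-theoretic content, which is entirely elementary; the only point that must be watched --- and the reason the \emph{canonical} adjective is worth stating --- is that every formula used is explicit and the reindexing of \( \omega \times \omega \) is performed through one fixed enumeration, so the whole construction goes through in the choiceless base theory. This bookkeeping is the sole ``obstacle'', and it is a mild one.
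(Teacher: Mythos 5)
Your proof is correct and is precisely the argument the paper intends when it says the fact ``easily follows'' from the canonical $G_\delta$ representation of closed sets: induction on the Boolean term, with the class of sets admitting simultaneous canonical $F_\sigma$ and $G_\delta$ representations closed under complements and finite unions/intersections via De Morgan and a fixed pairing of $\omega\times\omega$ with $\omega$. Your attention to canonicity (so that no choice is used) is exactly the point the paper's wording is meant to secure.
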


\section{Sequences and trees} \label{subsec:trees}
When \(\alpha\) is an ordinal and \( X \) a nonempty set, we denote by
\( \pre{\alpha}{X} \) the set of all sequences of order type \( \alpha \) and values in \(X\), i.e.\
\[ 
\pre{\alpha}{X} = \{ f \mid f \colon \alpha \to X  \}.
 \] 
When considering cardinals \( \mu, \lambda \), the set \( \pre{\mu}{\lambda} \) is thus the Cartesian product of \( \mu \)-many copies of \( \lambda \), that is, \( \pre{\mu}{\lambda} = \prod_{i < \mu} X_i \) where \( X_i = \lambda \) for all \( i < \mu \).
We also set \( \pre{<\mu}{X} = \bigcup_{\beta < \mu} \pre{\beta}{X} \) and \( \pre{ \leq \mu}{X} = \pre{<\mu}{X} \cup \pre{\mu}{X} \). 

Given sequences \( s \) and \( t \), we denote by \( \lh(s) \) the length of \( s \) (which is \( \mu \) if \( s \in \pre{\mu}{\lambda} \) and an ordinal smaller than \( \mu \) if \( s \in \pre{<\mu}{\lambda} \)), by \( s {}^\smallfrown{} t \) the concatenation of \( s \) and \( t \), and by \( s \restriction \alpha \) the restriction of \( s \) to length \( \alpha \leq \lh(s) \). When \( t = \langle \beta \rangle \) is a sequence of length \( 1 \) we sometimes write \( s {}^\smallfrown{} \beta \) instead of the more formal \( s {}^\smallfrown{} \langle \beta \rangle \). Two sequences are incomparable if none is an extension of the other one, and comparable otherwise.

Sequences of cardinals \( (\lambda_i)_{i < \mu} \) will sometimes be denoted by \( \vec{\lambda}^{(\mu)} \), dropping the reference to \( \mu \) when it is clear from the context. To uniformize our notation for sets of sequences, given such a \( \vec{\lambda} =  \vec{\lambda}^{(\mu)}  = (\lambda_i)_{i < \mu}\) and \( \lambda = \sup_{i < \mu} \lambda_i \) we 
set 
\[ 
\pre{\mu}{(\vec{\lambda})}  = \{  x \in \pre{\mu}{\lambda} \mid \forall i < \mu \, (x(i) < \lambda_i) \} = \prod_{i < \mu} \lambda_i,
\]
and 
\[ 
\pre{<\mu}{(\vec{\lambda})} = \{ s \in \pre{< \mu}{\lambda} \mid \forall i < \lh(s) \, (s(i) < \lambda_i) \} = \bigcup_{j < \mu} \left( \prod_{i < j } \lambda_i \right).
 \] 

A subset \( T \subseteq \pre{< \mu}{\lambda} \) is called (\( \mu \)-)tree on \( \lambda \) if it is closed under initial segments, i.e.\ if \( s \subseteq t \in T \) implies \( s \in T \).
For \( \alpha < \mu \), the \( \alpha \)-th level of \( T \) is the set 
\[
 \mathrm{Lev}_\alpha(T) = \{ t \in T \mid \lh(t) = \alpha \} .
 \]
 A tree \( T \) is called pruned if there is no terminal node in \( T \), i.e.\ all sequences of \( T \) can be properly extended in \( T \). The body of a tree \( T \subseteq \pre{<\mu}{\lambda} \) is the set
\[ 
[T] = \{ x \in \pre{\mu}{\lambda} \mid x \restriction \beta \in T \text{ for all } \beta < \mu \}.
 \] 
Notice that the notion of body strictly depends
 on the choice of \( \mu \) and \( \lambda \), and not just on the set \( T \): when it is necessary, we thus write \( [T]^\mu_\lambda \) instead of \( [T] \) to make explicit the ambient space we are referring to.%
\footnote{For example, if \( T = \pre{<\mu}{\lambda} \) and \( \mu < \mu' \), then \( [T]^\mu_\lambda = \pre{\mu}{\lambda} \neq \emptyset \), while \( [T]^{\mu'}_\lambda = \emptyset \). Similarly, if \( \lambda ' > \lambda \), then \( [T]^\mu_{\lambda'} \) is a proper subset of its ambient space \( \pre{\mu}{\lambda'} \), while this does not happen for \( [T]^\mu_\lambda \).} 
Notice also that, trivially, \(  \pre{\mu}{\lambda}  = [\pre{<\mu}{\lambda}]\) and \( \pre{\mu}{(\vec{\lambda})} = [\pre{< \mu}{(\vec{\lambda})}] \) if \( \vec{\lambda} = (\lambda_i)_{i < \mu} \).

An \(\omega\)-tree \( T \subseteq \pre{<\omega}{\lambda} \) is called well-founded if \( [T] = \emptyset \), and ill-founded otherwise. If \( T \) is ill-founded, we can define its leftmost branch \( x \in [T] \) be recursion on \( n \in \omega \) letting \( x(n) \) be the least \( \alpha < \lambda \) such that 
\( (x \restriction n) {}^\smallfrown{} \alpha \subseteq z \) for some \( z \in [T] \). 
This gives us a definable, choiceless way to select an element from \( [T] \). If instead \( T \) is well-founded, we can define a rank function \( \rho_T \)  on it as follows. Recall that to each well-founded (strict) relation  \( \prec \) on a set \( X \) we can associate its rank function \( \rho_{\prec} \colon X \to \On \) by recursively setting \( \rho_\prec(x) = \sup \{ \rho_\prec(y)+1  \mid y \prec x \} \). The rank of \( \prec \) is then \( \rho(\prec) = \sup \{ \rho_\prec(x)+1 \mid x \in X \} \); if \( X \) is well-orderable, then clearly \( \rho(\prec) < |X|^+ \). Applying this to \( X = T \) with \( \prec \) the relation of strict containment \( \supsetneq \) we get our \( \rho_T \). In particular, \( \rho_T(t) = 0 \) if and only if \( t \) is terminal in \( T \), and \( \rho_T(t) = \sup \{ \rho_T(s)+1 \mid s \in T \wedge t \subsetneq s \} \) otherwise. The rank of the tree \( T \) is then the ordinal \( \rho(T) = \sup \{ \rho_T(t)+1 \mid t \in T \} = \rho_T(\emptyset) +1 \). 

When the \(\omega\)-tree \( T \subseteq \pre{<\omega}{\lambda} \) is ill-founded, we can still define a rank function \( \rho_T \) as above on its well-founded part
\( \mathrm{WF}(T) = \{ s \in T \mid \neg \exists x \in [T] \, (s \subseteq x ) \} \).
Indeed, if \( t \subseteq t' \in T \) for some \( t \in \mathrm{WF}(T) \), then \( t' \in \mathrm{WF}(T) \), and the relation \( \supsetneq \) is well-founded on \( \mathrm{WF}(T) \).
Accordingly, we then let \( \rho(T) = \sup \{ \rho_T(t) + 1 \mid t \in \mathrm{WF}(T) \} \). 
Notice that \( \rho_T(t) < \lambda^+ \) for every \( t \in \mathrm{WF}(T) \). Sometimes it is convenient to extend \( \rho_T \) to the whole \( \pre{< \omega}{\lambda} \) by setting \( \rho_T(t) = \lambda^+ \) if \( t \in T \setminus \mathrm{WF}(T) \), and \( \rho_T(t) = 0 \) if \( t \notin T \).

The Kleene-Brouwer ordering \( \leq_{\mathrm{KB}} \) on \( \pre{< \omega}{\lambda} \) is the linear order defined by setting, for distinct \( s ,t \in \pre{<\omega}{\lambda} \), 
\( s \leq_{\mathrm{KB}} t \) if and only if either \( t \subsetneq s \), or else \( s \) and \( t \) are incomparable and \( s_i < t_i \) for the smallest \( i \) such that \( s_i \neq t_i \). The order \( \leq_{\mathrm{KB}} \) clearly contains  infinite decreasing chains, but it is not hard to see that an \(\omega\)-tree \( T \subseteq \pre{<\omega}{\lambda} \) is well-founded if and only if the restriction of \( \leq_{\mathrm{KB}} \) to \( T \) is a well-ordering.

The concept of tree and the related notions can be naturally adapted when \( \lambda \) is replaced by any
 given set \(A \), that is when \( T \subseteq \pre{< \mu}{A} \). We will in particular consider the case where \( A = \lambda \times \lambda \): in this case, we will naturally identify sequences in \( \pre{<\mu}{(\lambda \times \lambda)} \) with pair of sequences (of the same length) in \( \pre{<\mu}{\lambda} \times \pre{< \mu}{\lambda} \), and similarly for the sequences of length \(\mu\) appearing in the body of such  a tree. Finally, if \( T \subseteq {}^{< \mu}{(\lambda \times \lambda)} \) is a \(\mu\)-tree, the section tree \( T(x) \) of \( T \) at \( x \in \pre{\mu}{\lambda} \) is defined by
\[ 
T(x) = \{ s \in \pre{< \mu}{\lambda} \mid (x \restriction \lh(s), s) \in T \},
 \] 

and the section tree \( T(s) \) of \( T \) at \( s \in \pre{<\mu}{\lambda} \) is defined by

\[ 
T(s) = \{ u \in \pre{\lh(s)}{\lambda} \mid (s \restriction \lh(u), u) \in T \}.
 \]

\section{Weak forms of choice} \label{subsec:weakchoice}

As in the classical case, it is worth developing our (generalized) descriptive set theory under the weakest possible choice assumption. 
We first provide a purely \( \ZF \)-reformulation of Lemma~\ref{lem:r-spaces-AC}.

\begin{lemma} \label{lem:r-spaces}
Let \( (X,d) \) be a metric space with a well-orderable dense subset.
For every infinite cardinal \( \nu \), the following are equivalent:
\begin{enumerate-(1)}
\item \label{lem:r-spaces-2a}
\( \dens(X) > \nu \);
\item \label{lem:r-spaces2b}
there is a discrete \( A \subseteq X \) with \( |A| > \nu \);
\item \label{lem:r-spaces-2c}
then there is a set \( A \subseteq X \) such that \( A \) is \( r \)-spaced for some \( r > 0 \) and \( |A| > \nu \).
\end{enumerate-(1)}
In particular, if \( \dens(X) = \lambda \) for some infinite cardinal \(\lambda\), then for all infinite \( \nu < \lambda \) there is a set \( A \subseteq X \) which is \( r \)-spaced for some \( r > 0  \) and  \(|A| =  \nu \).
\end{lemma}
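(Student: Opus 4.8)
The plan is to follow the structure of the proof of Lemma~\ref{lem:r-spaces-AC}, replacing the single appeal to Zorn's lemma by a greedy transfinite recursion driven by a fixed well-ordering of a dense subset. Fix throughout a well-orderable dense set \( D \subseteq X \) (which exists by hypothesis) together with a well-ordering \( \preceq \) of it, and write \( D = (d_\xi)_{\xi < \theta} \). The implications \ref{lem:r-spaces-2c}~\( \Rightarrow \)~\ref{lem:r-spaces2b} and \ref{lem:r-spaces2b}~\( \Rightarrow \)~\ref{lem:r-spaces-2a} I would handle in \( \ZF \) alone. The first is immediate, since an \( r \)-spaced set is discrete via the neighbourhoods \( B_d(x,r) \). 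For the second I would argue contrapositively: if \( \dens(X) \leq \nu \), fix \( D \) with \( |D| \leq \nu \), and for a discrete \( A \) send each \( x \in A \) to the \( \preceq \)-least point of \( D \) lying in \( B_d(x, 2^{-k_x-1}) \), where \( k_x \) is least with \( B_d(x, 2^{-k_x}) \cap A = \{x\} \). A routine triangle-inequality computation (if two points of \( A \) had the same image and, say, \( k_x \leq k_y \), they would lie within \( 2^{-k_x} \) of each other) shows this map \( A \to D \) is injective, whence \( |A| \leq |D| \leq \nu \). Crucially, both the radius \( 2^{-k_x} \) and the selected point are chosen \emph{canonically}, so no form of choice is used.

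The heart of the matter is \ref{lem:r-spaces-2a}~\( \Rightarrow \)~\ref{lem:r-spaces-2c}. For each \( n \in \omega \) I would build, by recursion along \( \preceq \), a set \( B_n \subseteq D \) by declaring \( d_\xi \in B_n \) exactly when \( d(d_\xi, d_\eta) \geq 2^{-n} \) for every \( \eta < \xi \) with \( d_\eta \in B_n \). By construction \( B_n \) is \( 2^{-n} \)-spaced, and it is \emph{greedily maximal}: every \( d_\xi \notin B_n \) lies within \( 2^{-n} \) of some earlier point of \( B_n \). This maximality yields \( D \subseteq \mathrm{cl}(B) \) for \( B := \bigcup_{n \in \omega} B_n \), whence \( \mathrm{cl}(B) = \mathrm{cl}(D) = X \) and \( B \) is dense. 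As \( B \subseteq D \) is well-orderable and dense, the definition of density character gives \( \dens(X) \leq |B| \), so the hypothesis \( \dens(X) > \nu \) forces \( |B| > \nu \). It then suffices to find a single \( n \) with \( |B_n| > \nu \) and set \( A = B_n \).

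The main obstacle is precisely this last extraction step. Under \( \AC \) one concludes that if every \( B_n \) had size \( \leq \nu \) then \( |B| \leq \aleph_0 \cdot \nu = \nu \), a contradiction; but the inequality \( |\bigcup_n B_n| \leq \sum_n |B_n| \) is exactly a countable-union computation that can genuinely fail in \( \ZF \): a countable union of sets of size \( \leq \nu \) lying inside a well-ordered set can reach size \( \nu^+ \) whenever \( \cf(\nu^+) = \omega \). (Indeed, in a model where \( \omega_1 \) is a countable union of countable sets one can build a discrete ultrametric space of density character \( \aleph_1 \) all of whose \( r \)-spaced subsets are countable, so the equivalence with \ref{lem:r-spaces-2c} fails outright.) Thus this is the one place where a real fragment of choice is used — countable choice over the well-ordered family \( (B_n)_n \), equivalently the statement that \( \nu^+ \) is not a countable union of sets of size \( \leq \nu \) — and I would isolate it explicitly rather than conceal it, in keeping with the bookkeeping of Section~\ref{subsec:weakchoice}. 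Granting it, some \( B_{\bar n} \) has \( |B_{\bar n}| > \nu \), and this \( 2^{-\bar n} \)-spaced set witnesses \ref{lem:r-spaces-2c}.

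Finally, for the ``in particular'' clause I would apply \ref{lem:r-spaces-2a}~\( \Rightarrow \)~\ref{lem:r-spaces-2c} with the given infinite \( \nu < \lambda = \dens(X) \) to obtain an \( r \)-spaced set \( A' \) with \( \nu < |A'| \). Since then \( \nu \leq |A'| \), there is an injection \( g \colon \nu \to A' \), and \( A := \ran(g) \) is an \( r \)-spaced subset of \( A' \) with \( |A| = \nu \) exactly, because any subset of an \( r \)-spaced set is again \( r \)-spaced.
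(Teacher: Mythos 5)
Your construction is the same as the paper's: the paper also dismisses \ref{lem:r-spaces-2c}~\( \Rightarrow \)~\ref{lem:r-spaces2b}~\( \Rightarrow \)~\ref{lem:r-spaces-2a} as trivial, and proves \ref{lem:r-spaces-2a}~\( \Rightarrow \)~\ref{lem:r-spaces-2c} by exactly your greedy recursion along a fixed enumeration of \( D \), followed by the same density argument for \( B = \bigcup_{n} B_n \). Where you part ways is the final extraction step, and there your objection is correct and pinpoints a genuine gap in the paper's own proof: the paper simply asserts ``otherwise \( |B| \leq \nu \times \omega = \nu \)'', which is precisely the countable-union computation you refuse to make. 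To justify it one must choose, for each \( n \), an injection of \( B_n \) into \( \nu \); the canonical map \( b \mapsto (n(b), \text{rank of \( b \) in } B_{n(b)}) \) only injects \( B \) into \( \omega \times \sup_n \mathrm{ot}(B_n) \), and bounding that supremum below \( \nu^+ \) is the same principle over again.

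Your counterexample is also correct, so the lemma as stated is not a theorem of \( \ZF \). Explicitly: in the Feferman--L\'evy model write \( \omega_1 = \bigcup_n A_n \) with \( (A_n)_{n \in \omega} \) increasing and each \( A_n \) countably infinite, let \( X = \omega_1 \), and for \( \alpha \neq \beta \) set \( d(\alpha,\beta) = 2^{-\min(n(\alpha),n(\beta))} \), where \( n(\gamma) \) is least with \( \gamma \in A_{n(\gamma)} \). This is an ultrametric; every \( \alpha \) is isolated (the ball of radius \( 2^{-n(\alpha)} \) around \( \alpha \) is \( \{\alpha\} \)), so the only dense subset is \( X \) itself and \( \dens(X) = \aleph_1 \); and any \( r \)-spaced \( S \) with \( 2^{-n} < r \leq 1 \) can contain at most one point outside \( A_{n-1} \), hence is countable. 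Thus \ref{lem:r-spaces-2a} and \ref{lem:r-spaces2b} hold with \( \nu = \omega \) while \ref{lem:r-spaces-2c} fails. Moreover, running the paper's greedy construction in this space yields \( B_n = A_n \cup \{\text{one point}\} \), all countable with union \( \omega_1 \), so no cleverer \( \ZF \) extraction argument can succeed: the deficiency is in the statement, not merely in the proof. Your isolation of the missing principle is the right one --- what is needed is exactly that \( \nu^+ \) not be a countable union of sets of size \( \leq \nu \), which follows from \( \AC_\omega \) applied to the sets of injections \( B_n \to \nu \).

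One remark on the repair, relevant to how the lemma is used downstream: in every application, one has \( \nu < \lambda \), the dense set has size at most \( \lambda \), and the ambient theory includes \( \AC_\lambda(\pre{\lambda}{2}) \); an injection \( B_n \to \nu \) is then coded by an element of \( \pre{\lambda}{2} \), so \( \AC_\omega(\pre{\lambda}{2}) \) suffices to choose the required injections, and under \( 2^{<\lambda} = \lambda \) they can even be chosen canonically, since \( \pow(\nu \times \nu) \) is then well-orderable (take for each \( n \) the least code of a well-ordering of a subset of \( \nu \) of type \( \mathrm{ot}(B_n) \)). So the results relying on Lemma~\ref{lem:r-spaces} (and on the canonicity claim of Remark~\ref{rmk:r-spaces}, which inherits the same issue) are unaffected; but the lemma itself, together with its ``in particular'' clause, should carry this extra hypothesis rather than be advertised as purely \( \ZF \).
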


\begin{proof}
We just need to prove~\ref{lem:r-spaces-2a} \( \Rightarrow \) \ref{lem:r-spaces-2c}, the rest is trivial. 
Fix an enumeration \( (x_\alpha)_{\alpha < \mu} \) of a dense subset \( D \) of \( X \), for some cardinal \( \mu \). 
For each \( n \in \omega \), recursively construct a well-orderable \( 2^{-n} \)-spaced set 
\( B_n \subseteq D \) as follows. At stage \( \alpha < \mu \), consider the element 
\( x_\alpha \in D \): if it is at distance at least \( 2^{-n} \) from all the points that have been 
added to \( B_n \) so far, add \( x_\alpha \) to \( B_n \) as well; if not, move to the next 
stage. We claim that the well-orderable set \( B = \bigcup_{n \in \omega} B_n \) is dense in \( X \). If not, there would be 
\( y \in X \) and \( m \in \omega \) such that the ball \( P \) centered in \( y \) with radius \( 2^{-m} \) is disjoint from \( B \). Let \( \beta < \mu \) be such that 
\( d(y,x_\beta) < 2^{-(m+1)} \). Then \( x_\beta \in P \) and so \( x_\beta \notin B_{m+1} \). 
On the other hand, for each \( z \in B_{m+1} \subseteq B \) we have \( d(z,x_\beta) \geq 2^{-(m+1)} \), 
as otherwise \( d(y,z) < 2^{-m} \), contradicting \( P \cap B = \emptyset \). This means that when constructing \( B_{m+1} \), at stage \( \beta \) the element \( x_\beta \) would have been added to \( B_{m+1} \), a contradiction. It then follows that there is \( \bar{n} \in \omega \) such that \( |B_{\bar{n}}| > \nu \), as otherwise \( |B| \leq \nu \times \omega = \nu \) and hence \( \dens(X) \leq \nu \). Setting \( A =B_{\bar{n}} \) we are done.
\end{proof}

\begin{remark} \label{rmk:r-spaces}
The above proof shows that if the well-orderable dense set \(  D \subseteq X \) is fixed in advance, then a set \( A \) as in~\ref{lem:r-spaces-2c} can be constructed in a \emph{canonical} way (just apply the above procedure and take \( \bar{n} \in \omega \) to be minimal such that \( |B_{\bar{n}}| > \nu \)).
\end{remark}

\begin{fact}[Folklore] \label{fct:fromweighttosize}
Let \( X \) be a metrizable space and \( \nu \) be a cardinal. If \( \weight(X) \leq \nu \), then \( |X| \leq \nu^\omega \).
\end{fact}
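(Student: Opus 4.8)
The plan is to construct an explicit injection of $X$ into $\pre{\omega}{\nu}$, so that $|X| \leq |\pre{\omega}{\nu}| = \nu^\omega$. Since $X$ is metrizable I would fix once and for all a compatible metric $d$ (this is a single existential instantiation and needs no choice), and since $\weight(X) \leq \nu$ I may fix a well-orderable basis, enumerated as $(U_\alpha)_{\alpha < \mu}$ for $\mu = \weight(X) \leq \nu$. The idea is that, in a metric space, every point is pinned down by a sequence of basic neighborhoods of rapidly shrinking diameter, and the well-ordering of the index set $\mu$ lets me select these neighborhoods \emph{canonically}, i.e.\ without appealing to any form of choice.

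Concretely, for each $x \in X$ and each $n \in \omega$ the open ball $B_d(x, 2^{-n})$ contains $x$, so by the basis property there is some $\alpha < \mu$ with $x \in U_\alpha \subseteq B_d(x, 2^{-n})$; I would let $f_x(n)$ be the least such $\alpha$. This defines a map $x \mapsto f_x \in \pre{\omega}{\mu}$ with no choices involved. To see that it is injective, suppose $f_x = f_y$ and fix any $n \in \omega$. Writing $U = U_{f_x(n)} = U_{f_y(n)}$, the definition of $f_x(n)$ gives $U \subseteq B_d(x, 2^{-n})$, while the definition of $f_y(n)$ gives $y \in U$; hence $y \in B_d(x, 2^{-n})$, that is $d(x,y) < 2^{-n}$. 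As $n$ was arbitrary, $d(x,y) = 0$ and so $x = y$.

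It then remains only to assemble the cardinality estimate: the injection $x \mapsto f_x$ witnesses $|X| \leq |\pre{\omega}{\mu}|$, and since $\mu \leq \nu$ we have $\pre{\omega}{\mu} \subseteq \pre{\omega}{\nu}$ (viewing $\mu$ and $\nu$ as von Neumann ordinals), whence $|X| \leq |\pre{\omega}{\nu}| = \nu^\omega$, as desired. I do not expect any genuine obstacle here; the only point requiring care is the bookkeeping on choice. Since this section is developed in $\ZF$ alone, the crucial features are that metrizability supplies a metric by a single instantiation, that $\weight(X) \leq \nu$ supplies a \emph{well-orderable} basis by definition, and that taking the least admissible index at each stage makes the entire coding map explicit and choice-free.
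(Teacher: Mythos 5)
Your proposal is correct and is essentially the paper's own proof: the paper also defines $f(x)(n)$ as the least $\alpha$ with $x \in B_\alpha \subseteq B_d(x,2^{-n})$ and notes that this map into $\pre{\omega}{\nu}$ is injective. You merely spell out the injectivity argument and the $\mu \leq \nu$ bookkeeping, which the paper leaves implicit.
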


\begin{proof}
Let \( \mathcal{B} = \{ B_\alpha \mid \alpha < \nu \} \) be a basis for \( X \). Then the map \( f \colon X \to  \pre{\omega}{\nu} \) defined by letting \( f(x)(n) \) be the smallest \( \alpha < \nu \) such that \( x \in B_\alpha \subseteq B_d(x, 2^{-n}) \) is injective.
\end{proof}

As discussed in Section~\ref{subsec:ordandcard},  certain natural conditions  on \(\lambda\) entail some form of choice below \( \lambda \), and this in turn allows us to get more information on the cardinality of metrizable spaces with density character at most \( \lambda\).


\begin{lemma} \label{lem:densityvscardinality}
Assume that \( \lambda \) is \(\omega\)-inaccessible, and let \( X \) be metrizable. 
Then \(  \weight(X) < \lambda \) if and only if \( \dens(X) < \lambda \) if and only if \( X \) is well-orderable and \( | X | < \lambda \).
\end{lemma}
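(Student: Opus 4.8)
The plan is to prove the two ``if and only if'' statements cyclically, in the form $(\mathrm{iii}) \Rightarrow (\mathrm{ii}) \Rightarrow (\mathrm{i}) \Rightarrow (\mathrm{iii})$, where I abbreviate by $(\mathrm{i})$, $(\mathrm{ii})$, $(\mathrm{iii})$ the three conditions $\weight(X) < \lambda$, $\dens(X) < \lambda$, and ``$X$ is well-orderable with $|X| < \lambda$'', respectively. Two of the three arrows are soft and use only that $\lambda$ is uncountable; the whole weight of the argument (and the only use of $\omega$-inaccessibility) is concentrated in the last one.

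The arrow $(\mathrm{iii}) \Rightarrow (\mathrm{ii})$ is immediate: if $X$ is well-orderable with $|X| = \kappa < \lambda$, then $X$ is a well-orderable dense subset of itself of size $\kappa$, whence $\dens(X) \leq \kappa < \lambda$. For $(\mathrm{ii}) \Rightarrow (\mathrm{i})$ I would carry out the standard estimate ``weight $\leq$ density'' for metric spaces, keeping everything well-orderable. Fix a well-orderable dense set $D \subseteq X$ with $|D| = \dens(X) < \lambda$ and consider $\mathcal{B} = \{ B_d(x, 2^{-n}) \mid x \in D,\ n \in \omega \}$. A routine $\varepsilon/2$ argument shows $\mathcal{B}$ is a basis; it is well-orderable (being indexed by $D \times \omega$) of size $\leq |D| \cdot \omega = \max(|D|, \omega) < \lambda$, using that $\lambda$ is uncountable. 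Hence $\weight(X) < \lambda$. Note that Lemma~\ref{lem:r-spaces} is \emph{not} needed here, though it records the parallel characterizations via discrete and $r$-spaced sets.

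The crux is $(\mathrm{i}) \Rightarrow (\mathrm{iii})$. Suppose $\weight(X) = \nu < \lambda$. By Fact~\ref{fct:fromweighttosize} there is an injection $f \colon X \to \pre{\omega}{\nu}$. Since $\nu < \lambda$ and $\lambda$ is $\omega$-inaccessible, the set $\pre{\omega}{\nu}$ is well-orderable and its cardinality $\nu^\omega$ is a genuine cardinal smaller than $\lambda$. Pulling back a well-ordering of $\pre{\omega}{\nu}$ along $f$ then well-orders $X$, and yields $|X| \leq \nu^\omega < \lambda$, which is exactly $(\mathrm{iii})$. I expect this to be the only delicate point: in $\ZFC$ the passage from a bound on the weight to a bound on the cardinality is automatic, whereas in the present choiceless setting the very well-orderability of $X$ is not free and has to be \emph{extracted} from the injection of Fact~\ref{fct:fromweighttosize} together with the well-orderability of $\pre{\omega}{\nu}$ guaranteed by $\omega$-inaccessibility. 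Everything else is bookkeeping.
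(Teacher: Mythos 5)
Your proposal is correct and takes essentially the same route as the paper: the backward implications rest on the soft inequalities \( \dens(X) \leq |X| \) and \( \weight(X) \leq \dens(X) \) (which the paper treats as immediate and you merely spell out), and the crucial implication from \( \weight(X) < \lambda \) to well-orderability with \( |X| < \lambda \) is obtained exactly as in the paper, by pulling back a well-ordering of \( \pre{\omega}{\nu} \) along the injection of Fact~\ref{fct:fromweighttosize}, using \(\omega\)-inaccessibility to guarantee that \( \pre{\omega}{\nu} \) is well-orderable of cardinality \( \nu^\omega < \lambda \). You also correctly identify this last step as the only place where \(\omega\)-inaccessibility (and any subtlety about choice) enters.
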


\begin{proof}
Clearly if \( X \) is well-orderable, then \( \dens(X) \) is defined and \( \dens(X) \leq |X| \); similarly, if \( \dens(X) \) is defined, then so is \( \weight(X) \), and \( \weight(X) \leq \dens(X) \). This gives all backward implications. The remaining implication follows from Fact~\ref{fct:fromweighttosize}: indeed, by \( \omega \)-inaccessibility of \(\lambda\) it follows that if \( \weight(X) = \nu < \lambda \), then \( \pre{\omega}{\nu} \), and hence also \( X \), is well-orderable, and \( |X| \leq \nu^\omega < \lambda \).
\end{proof}

\begin{corollary} \label{cor:cardinalityunderomegainaccessible}
Assume  that \( \lambda \) is \(\omega\)-inaccessible. Let \( X \) be a metrizable space with \( \dens(X) \leq \lambda \). Then either \( X \) is well-orderable and \( |X| \leq \lambda \), or else \( \lambda < |X| \leq  \lambda^\omega  \).
\end{corollary}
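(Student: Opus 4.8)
The plan is to reduce everything to the two tools already at hand: the uniform upper bound coming from Fact~\ref{fct:fromweighttosize}, and the dichotomy for metrizable spaces of small density provided by Lemma~\ref{lem:densityvscardinality}. The statement will then fall out of a short case analysis, the only genuine care being the correct reading of the cardinal inequalities in the choiceless setting.

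First I would establish the upper bound \( |X| \leq \lambda^\omega \) \emph{in all cases}. Since \( \dens(X) \leq \lambda \) is part of the hypothesis, \( \dens(X) \) is defined, and as recorded in the proof of Lemma~\ref{lem:densityvscardinality} this makes \( \weight(X) \) defined as well, with \( \weight(X) \leq \dens(X) \leq \lambda \). Feeding \( \nu = \lambda \) into Fact~\ref{fct:fromweighttosize} then yields an injection of \( X \) into \( \pre{\omega}{\lambda} \), that is, \( |X| \leq \lambda^\omega \). Note that this step uses neither \( \omega \)-inaccessibility nor any choice beyond what is already built into Fact~\ref{fct:fromweighttosize}.

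Next I would split according to the density. If \( \dens(X) < \lambda \), then Lemma~\ref{lem:densityvscardinality} (which is precisely where \( \omega \)-inaccessibility enters) tells us that \( X \) is well-orderable with \( |X| < \lambda \); in particular \( |X| \leq \lambda \), and we land in the first alternative. The only remaining possibility is \( \dens(X) = \lambda \): here I would fix a well-orderable dense \( D \subseteq X \) with \( |D| = \lambda \), so that the inclusion \( D \hookrightarrow X \) gives \( \lambda \leq |X| \). If moreover \( |X| = \lambda \), then, \( \lambda \) being a cardinal and hence well-orderable, \( X \) itself is well-orderable with \( |X| = \lambda \), which is again the first alternative. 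Otherwise \( \lambda \leq |X| \) and \( \lambda \neq |X| \), which by definition means \( \lambda < |X| \); combined with the upper bound this is exactly the second alternative \( \lambda < |X| \leq \lambda^\omega \).

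The argument is essentially a bookkeeping exercise, so I do not expect a serious obstacle; the one point demanding attention is the choiceless reading of the conclusion. I must resist inferring well-orderability of \( X \) from a bare inequality \( |X| \leq \lambda \), and instead derive \( \lambda < |X| \) honestly as ``\( \lambda \leq |X| \) and \( \lambda \neq |X| \)'', using that a cardinality equal to the aleph \( \lambda \) automatically forces well-orderability. Organizing the cases around whether \( X \) is well-orderable (equivalently, whether \( |X| \) is a genuine cardinal \( \leq \lambda \)) is what makes the two clauses of the dichotomy genuinely exhaustive and mutually compatible with the convention on \( < \) adopted in Section~\ref{subsec:ordandcard}.
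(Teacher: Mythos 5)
Your proof is correct and follows essentially the same route as the paper's: the same case split on \( \dens(X) < \lambda \) versus \( \dens(X) = \lambda \), the same appeal to Lemma~\ref{lem:densityvscardinality} and Fact~\ref{fct:fromweighttosize}, and the same choiceless care in deriving \( \lambda < |X| \) from \( \lambda \leq |X| \) plus \( \lambda \neq |X| \). The only cosmetic difference is that the paper phrases the final sub-case as ``does \( X \) inject into \( \lambda \) or not,'' while you phrase it as ``is \( |X| = \lambda \) or not''; given \( \lambda \leq |X| \), these are equivalent by Schr\"oder--Bernstein.
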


Notice that if the second alternative holds, then \( X \) might not be well-orderable, and thus \( |X| \) needs not to be a cardinal in general (all inequalities must be intended in the \( \ZF \)-sense).

\begin{proof}
We distinguish two cases. If \( \dens(X) < \lambda \), then \( X \) is well-orderable and \( |X| < \lambda \) by Lemma~\ref{lem:densityvscardinality}. If \( \dens(X) = \lambda \), then any well-ordered dense subset of \( X \) witnessing such equality is also a witness of \( |X| \geq \lambda \). Thus if there is also an injection from \( X \) to \( \lambda \), then \( X \) is well-orderable and \( |X| = \lambda \); otherwise \( |X| > \lambda \). Finally, \( |X| \leq \lambda^\omega \) follows from \( \weight(X) \leq \dens(X) = \lambda \) and Fact~\ref{fct:fromweighttosize}.
\end{proof}

Given a cardinal \(\lambda\) and a set \( X \), the axiom \( \AC_\lambda(X) \) of \(\lambda\)-choice over \( X \) is the statement: 
\begin{quotation}
For every sequence \( (A_\alpha)_{\alpha < \lambda} \) of nonempty subsets of \( X \) there is a function \( f \colon \lambda \to X \) such that \( f(\alpha) \in A_\alpha \) for all \( \alpha < \lambda \). 
\end{quotation}
Any \( f \) as above is called choice function for the given sequence of sets \( (A_\alpha)_{\alpha < \lambda} \). Obviously, if there is a surjection from a set \( X \) onto \( Y \) and \( \mu \leq \lambda \), then \( \AC_\lambda(X) \) implies \( \AC_\mu(Y) \). The (full) \(\lambda\)-choice axiom \( \AC_\lambda \) asserts that \( \AC_\lambda(X) \) holds for all sets \( X \). 
\begin{fact} \label{fct:successorsareregular}
Assume \( \AC_\lambda \). Then
\begin{enumerate-(i)}
\item
\( \lambda^+ \) is a regular cardinal.
\item
Every well-ordered union of size \( \lambda\) of sets of cardinality at most \( \lambda \) is still well-orderable and of size at most \( \lambda \).
\end{enumerate-(i)}
\end{fact}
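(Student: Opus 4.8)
The plan is to establish the union bound~(ii) first and then deduce the regularity statement~(i) from it, since~(i) is essentially a special case of~(ii).

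For~(ii), I would start from a sequence $(B_\alpha)_{\alpha<\lambda}$ with $|B_\alpha|\le\lambda$ for each $\alpha$. The single place where choice is needed is to select injections into $\lambda$ simultaneously: each $B_\alpha$ admits \emph{some} injection into $\lambda$, but assembling one injection of the whole union requires a uniform choice of these. Concretely, let $C_\alpha$ be the nonempty set of all injections $B_\alpha\to\lambda$ and apply the full axiom $\AC_\lambda$ to the $\lambda$-indexed family $(C_\alpha)_{\alpha<\lambda}$ (taking $X=\bigcup_{\alpha<\lambda}C_\alpha$ as the ambient set) to obtain $f_\alpha\colon B_\alpha\hookrightarrow\lambda$ for every $\alpha$. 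Then I would define $F\colon\bigcup_{\alpha<\lambda}B_\alpha\to\lambda\times\lambda$ by $F(x)=(\alpha(x),f_{\alpha(x)}(x))$, where $\alpha(x)$ is the least index with $x\in B_{\alpha(x)}$ (well defined because the indices are ordinals). Injectivity is immediate, and since $|\lambda\times\lambda|=\lambda$ in $\ZF$ (Hessenberg's theorem, which needs no choice for well-orderable cardinals), this shows $\bigcup_{\alpha<\lambda}B_\alpha$ is well-orderable of size at most $\lambda$.

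For~(i), I would argue by contradiction. If $\lambda^+$ were singular, then $\cf(\lambda^+)=\mu$ for some cardinal $\mu\le\lambda$, so a cofinal sequence $(\gamma_\xi)_{\xi<\mu}$ witnesses $\lambda^+=\bigcup_{\xi<\mu}\gamma_\xi$. Each $\gamma_\xi<\lambda^+$ has $|\gamma_\xi|\le\lambda$, so after padding the sequence to length $\lambda$ with empty sets I can invoke~(ii) to conclude $|\lambda^+|\le\lambda$, contradicting $\lambda<\lambda^+$. Hence $\lambda^+$ is regular.

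The only genuine obstacle here is conceptual rather than computational: one must recognize that the naive ``each $B_\alpha$ injects into $\lambda$, so glue the injections together'' reasoning is unavailable in $\ZF$ (without choice a union of $\lambda$-many sets each of size $\le\lambda$ can be much larger, as with the classical countable union of countable sets), and that $\AC_\lambda$ applied to the family of injection-sets is precisely what repairs it. The least-index pairing in~(ii) and the padding argument in~(i) are then routine.
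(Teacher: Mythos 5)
Your proof is correct. Be aware, though, that the paper states this Fact without proof, remarking only that the amount of \( \AC_\lambda \) needed depends on the situation and pointing to Fact~\ref{fct:lambdaunionewithoutchoice}, which is the refined version assuming only \( \AC_\lambda(\pre{\lambda}{2}) \). Measured against the proof of that refined version, your argument rests on the same mechanism --- use \( \lambda \)-choice to select witnesses uniformly, then glue them through the \( \ZF \)-provable bijection between \( \lambda \times \lambda \) and \( \lambda \) --- but the decomposition is reversed: you prove the union bound (ii) first, by choosing injections \( B_\alpha \hookrightarrow \lambda \) and pairing with least indices, and obtain regularity (i) as a corollary via the cofinal-sequence contradiction; the paper instead proves (i) directly, by choosing for each \( \gamma_i < \lambda^+ \) a well-ordering \( \preceq_i \) of \( \lambda \) of order type \( \gamma_i \) and amalgamating them into a single well-ordering of \( \lambda \times \lambda \) dominating every \( \gamma_i \), and proves (ii) separately by choosing enumerations \( f_\alpha \colon \lambda \to A_\alpha \). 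The paper's route looks more laborious because with only \( \AC_\lambda(\pre{\lambda}{2}) \) one must first code the objects to be chosen (well-orderings, enumerations) as elements of \( \pre{\lambda}{2} \) before choice can be applied; your hypothesis is the full \( \AC_\lambda \), so you may apply choice to the sets of injections directly, which is exactly the simplification the stronger hypothesis buys. Both routes are sound, and your padding step in (i) is harmless since the empty function is an injection into \( \lambda \).
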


The amount of \( \AC_\lambda \) needed for the above fact depends on the properties of \(\lambda\) and on the nature of the sets under consideration, see e.g.\ Fact~\ref{fct:lambdaunionewithoutchoice}.

In this paper we will mostly be dealing with metrizable spaces of weight at most \( \lambda\), that is, topological spaces \( X \) admitting a compatible metric and having a well-ordered basis \( \mathcal{B} = \{ B_\alpha \mid \alpha < \mu \} \) of size \( \mu \leq \lambda\). Such an \( X \), if nonempty, is easily seen to be a surjective image of \( \pre{\omega}{\mu} \) via the functions sending \( z \in \pre{\omega}{\mu} \) to the point \( x \) if \( \bigcap_{n \in \omega} B_{z(n)}  = \{ x \} \), and to some fixed \( x_0 \in X \) in all remaining cases (that is, if  \( \bigcap_{n \in \omega} B_{z(n)} \) either is empty or contains more than one point). Since \( \pre{\omega}{\mu} \) is in turn a surjective image of \( \pre{\omega}{\lambda} \), it follows that under \( \AC_\lambda(\pre{\omega}{\lambda}) \) we also get \( \AC_\lambda(X) \) for all metrizable spaces of weight at most \( \lambda \). This entails, still assuming \( \AC_\lambda(\pre{\omega}{\lambda}) \), that \( \weight(X) \leq \lambda \) if and only if \( \dens(X) \leq \lambda \) for all metrizable spaces \( X \), and in such case \( \weight(X) = \dens(X) \).

Similarly, if \( X \) and \( \mathcal{B}  \) are as in the previous paragraph, then there is an obvious surjection of \( \pre{\mu}{2} \) onto the open sets of \( X \), namely, the function sending \( z \in \pre{\mu}{2} \) to \( \bigcup \{ U_\alpha \mid {\alpha < \mu} \wedge  {z(\alpha) =1} \} \). This in turn induces canonical surjections from e.g.\ \( \pre{\lambda}{2} \) onto families of size \(\lambda\) of open or closed sets (and thus onto the collection of clopen partitions of \( X \)), onto the collection of subsets of \( X \) which are intersections of \(\lambda\)-many open sets, onto the continuous functions between two given \(\lambda\)-Polish spaces, and so on. The process can even be iterated through all ordinals smaller than \( \lambda^+ \) to show that there is a canonical surjection from \( \pre{\lambda}{2} \) onto each of the classes \( \lS^0_\alpha (X)\), \( \lP^0_\alpha(X) \), and \( \lB(X) \) considered in Section~\ref{sec:Borelhierarchy}. Assuming \( \AC_\lambda(\pre{\lambda}{2}) \) in the background theory, this will allow us to make \(\lambda\)-choices over all such classes.

It is easy to see that \( \AC_\lambda(\pre{\lambda}{2}) \) implies \( \AC_\lambda(\pre{\omega}{\lambda}) \), as \(  \pre{\lambda}{2} \) surjects onto \( \pre{\omega}{\lambda} \) (notice that \( \pre{\lambda}{2} \) is in one-to-one correspondence with \( \pre{\lambda}{\lambda} \), which in turns surjects onto \( \pre{\omega}{\lambda} \) via the obvious restriction function). 
Conversely, if we further assume that \( 2^{< \lambda} =  \lambda \) and \( \cf(\lambda) = \omega \), then there is also a surjection from \( \pre{\omega}{\lambda} \) onto \( \pre{\lambda}{2} \). Indeed, let \( (\lambda_i)_{i < \omega} \) be an increasing sequence cofinal in \( \lambda \), and for each \( i  \in \omega \), fix a surjection \( \rho_i \colon \lambda \to \pre{\lambda_i}{2} \). Then the function sending \( z \in \pre{\omega}{\lambda} \) to \( \bigcup_{i < \omega} \rho_i(z(i)) \) if \( \rho_i(z(i)) \subseteq \rho_j(z(j)) \) for all \( i < j  \), and to some fixed \( x_0 \in \pre{\lambda}{2} \) otherwise, is clearly surjective. Thus for such cardinals \(\lambda\) the axiom \( \AC_\lambda(\pre{\omega}{\lambda}) \) entails \( \AC_\lambda(\pre{\lambda}{2}) \). The following results are probably known and should be considered folklore, but we could not find a reference for them and so we include their proofs as well.

\begin{fact} \label{fct:lambdaunionewithoutchoice}
Assume \( \AC_\lambda(\pre{\lambda}{2}) \). Then 
\begin{enumerate-(i)}
\item \label{fct:lambdaunionewithoutchoice-1}
\( \lambda^+ \) is a regular cardinal.
\item \label{fct:lambdaunionewithoutchoice-2}
Let \( X \) be a metrizable space of weight (equivalently, density character) at most \(  \lambda \). If \( ( A_\alpha )_{\alpha < \lambda} \) is a family of subsets of \( X \) such that \( |A_\alpha| \leq \lambda \) for all \( \alpha < \lambda \), then \( |\bigcup_{\alpha < \lambda} A_\alpha| \leq \lambda \).
\end{enumerate-(i)}
\end{fact}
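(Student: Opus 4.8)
The plan is to run, under the weaker hypothesis \( \AC_\lambda(\pre{\lambda}{2}) \), the same ``a union of \( \lambda \)-many sets of size at most \( \lambda \) is of size at most \( \lambda \)'' argument that yields Fact~\ref{fct:successorsareregular} from full \( \AC_\lambda \); the only real work is to present the relevant choices as subsets of \( \pre{\lambda}{2} \). Two bookkeeping facts will be used throughout. First, fixing once and for all the canonical G\"odel pairing bijection \( \Gamma \colon \lambda \times \lambda \to \lambda \) (which needs no choice, as \( \lambda \) is an infinite cardinal), we obtain canonical bijections \( \pow(\lambda \times \lambda) \cong \pow(\lambda) \cong \pre{\lambda}{2} \) and a canonical injection \( \pre{\lambda}{\lambda} \hookrightarrow \pre{\lambda}{2} \) (send \( f \) to the characteristic function of its graph). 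Second, if a set \( Z \) canonically injects into \( \pre{\lambda}{2} \), then \( \AC_\lambda(\pre{\lambda}{2}) \) yields \( \AC_\lambda(Z) \): apply the choice axiom to the pushed-forward family and pull the chosen elements back along the injection.

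For~\ref{fct:lambdaunionewithoutchoice-1} it suffices to show that every sequence \( (\beta_\alpha)_{\alpha < \lambda} \) of ordinals below \( \lambda^+ \) has supremum below \( \lambda^+ \), since this says exactly \( \cf(\lambda^+) > \lambda \). For each \( \alpha \) let \( P_\alpha = \{ R \subseteq \lambda \times \lambda \mid R \text{ well-orders } \lambda \text{ and } \mathrm{ot}(R) \geq \beta_\alpha \} \). As \( |\beta_\alpha| \leq \lambda \), there is an ordinal \( \gamma \) with \( \beta_\alpha \leq \gamma < \lambda^+ \) and \( |\gamma| = \lambda \), so a bijection \( \lambda \to \gamma \) witnesses \( P_\alpha \neq \emptyset \). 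Via the canonical bijection \( \pow(\lambda \times \lambda) \cong \pre{\lambda}{2} \), the sequence \( (P_\alpha)_{\alpha < \lambda} \) becomes a \( \lambda \)-sequence of nonempty subsets of \( \pre{\lambda}{2} \), so \( \AC_\lambda(\pre{\lambda}{2}) \) selects \( R_\alpha \in P_\alpha \). Each \( R_\alpha \) canonically yields an injection \( g_\alpha \colon \beta_\alpha \hookrightarrow \lambda \), namely the inverse of the transitive collapse restricted to the initial segment of \( (\lambda, R_\alpha) \) of order type \( \beta_\alpha \). Finally, writing \( \delta = \sup_\alpha \beta_\alpha \) and \( \alpha_\xi \) for the least \( \alpha \) with \( \xi < \beta_\alpha \), the map \( \delta \to \lambda \times \lambda, \ \xi \mapsto (\alpha_\xi, g_{\alpha_\xi}(\xi)) \) is injective, so \( |\delta| \leq \lambda \) and hence \( \delta < \lambda^+ \).

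For~\ref{fct:lambdaunionewithoutchoice-2} I would first fix a compatible metric and a basis of size at most \( \lambda \) for \( X \) and invoke Fact~\ref{fct:fromweighttosize} to obtain a \emph{canonical} injection \( \iota \colon X \hookrightarrow \pre{\omega}{\lambda} \). Since \( \iota \) is injective it suffices to bound \( |\bigcup_{\alpha < \lambda} \iota(A_\alpha)| \), so we may replace each \( A_\alpha \) by \( \iota(A_\alpha) \subseteq \pre{\omega}{\lambda} \) and assume (discarding empty ones) that each is nonempty of size at most \( \lambda \). For each \( \alpha \) the set of surjections \( \lambda \to \iota(A_\alpha) \) is then nonempty, and each such surjection has a graph contained in \( (\lambda \times \omega) \times \lambda \), which \( \Gamma \) identifies canonically with \( \lambda \); this presents the family of surjection-sets as a \( \lambda \)-sequence of nonempty subsets of \( \pre{\lambda}{2} \). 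Applying \( \AC_\lambda(\pre{\lambda}{2}) \) gives surjections \( s_\alpha \colon \lambda \to \iota(A_\alpha) \), whence \( (\alpha, \xi) \mapsto s_\alpha(\xi) \) is a surjection of \( \lambda \times \lambda \) onto \( \bigcup_\alpha \iota(A_\alpha) \). A surjective image of a well-orderable set is well-orderable with no appeal to choice (order the image by least preimage), so \( |\bigcup_\alpha \iota(A_\alpha)| \leq |\lambda \times \lambda| = \lambda \), and pulling back along \( \iota \) finishes the proof.

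The hard part is purely the choiceless coding, and it is instructive to see exactly where each hypothesis enters. For~\ref{fct:lambdaunionewithoutchoice-1} the subtlety is that an ordinal in \( [\lambda, \lambda^+) \) has no canonical injection into \( \lambda \), so one cannot simply ``enumerate \( \beta_\alpha \)''; routing through well-orders of \( \lambda \) of order type \( \geq \beta_\alpha \) is what makes the choices live in \( \pre{\lambda}{2} \). For~\ref{fct:lambdaunionewithoutchoice-2} the obstacle is that \( X \) itself may be far too large to code directly (its cardinality can be \( \lambda^\omega > \lambda \)), and this is precisely the point at which metrizability and \( \weight(X) \leq \lambda \) are needed: they furnish, through Fact~\ref{fct:fromweighttosize}, the canonical injection \( \iota \) into \( \pre{\omega}{\lambda} \) that brings the sets \( A_\alpha \) down to a coding-friendly space.
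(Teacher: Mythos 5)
Your proof is correct and follows essentially the same route as the paper's: in~\ref{fct:lambdaunionewithoutchoice-1} you code well-orders of \( \lambda \) as elements of \( \pre{\lambda}{2} \), choose them simultaneously via \( \AC_\lambda(\pre{\lambda}{2}) \), and combine them to bound the supremum by \( \lambda \) (the paper concatenates them into one well-order on \( \lambda \times \lambda \), you build a direct injection of the supremum into \( \lambda \times \lambda \) — a cosmetic difference); in~\ref{fct:lambdaunionewithoutchoice-2} you, like the paper, use Fact~\ref{fct:fromweighttosize} to inject \( X \) canonically into \( \pre{\omega}{\lambda} \), code enumerations of the \( A_\alpha \) as elements of \( \pre{\lambda}{2} \), choose them with the axiom, and merge them into a single \( \lambda \)-enumeration of the union.
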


\begin{proof}
\ref{fct:lambdaunionewithoutchoice-1}
It is enough to show that \( \sup_{i < \lambda} \gamma_i < \lambda^+ \) for any sequence
 \( (\gamma_i)_{i < \lambda} \) of ordinals such that \( \lambda \leq \gamma_i < \lambda^+ \).  Choose corresponding well-orderings \( \preceq_i \) on \(\lambda\) such that \( (\lambda, {\preceq_i}) \) has order type \( \gamma_i \) for all \( i < \lambda \). This requires only \( \AC_\lambda(\pre{\lambda}{2}) \), as each \( \preceq_i \) can be coded via characteristic function as an element of \( \pre{\lambda \times \lambda}{2} \), and the latter space can be canonically identified with \( \pre{\lambda}{2} \) (see Section~\ref{sec:defanalytic} for more details). Then the order type of the well-ordering \( \preceq \) on \( \lambda \times \lambda \) defined by 
\[ 
(i,\alpha) \preceq (j, \beta) \iff i < j \, \vee \, ({i = j} \wedge {\alpha \preceq_i \beta}) 
\]
is larger than each \( \gamma_i \), and thus \( |\sup_{i < \lambda} \gamma_i| \leq |\mathrm{ot}(\preceq)| = \lambda \). 

\ref{fct:lambdaunionewithoutchoice-2}
It is enough to show that any function \( f \colon \lambda \to X \) can be coded as an element of 
\( \pre{\lambda}{2} \). Granting this, we can then use \( \AC_\lambda(\pre{\lambda}{2}) \) to choose a sequence of enumerations \( f_\alpha \colon \lambda \to X \) of the sets \( A_\alpha \), and then construct an enumeration of \( \bigcup_{\alpha < \lambda} A_\alpha \) in the obvious way.

By Fact~\ref{fct:fromweighttosize},  there is a canonical injection from \( X \) to \( \pre{\omega}{\lambda} \). The latter can then be canonically injected into \( \pre{\lambda}{\lambda} \), which can in turn be identified with \( \pre{\lambda}{2} \). Composing all these injections together with the initial function \( f \), one sees that such an \( f \) can be coded as an element of \( \pre{\lambda}{(\pre{\lambda}{2})} \). Since the latter is clearly in bijection with \( \pre{\lambda}{2} \) we are done. 
\end{proof}

In most of the results of this paper, we will have some amount of unrestricted choice below \(\lambda\) (granted by mild large cardinal conditions like \(\omega\)-inaccessibility, being strong limit, or \( \beth_\lambda = \lambda \)), a restricted form of \(\lambda\)-choice at level \(\lambda\) (namely, \( \AC_\lambda(\pre{\omega}{\lambda}) \)), but no choice will be required beyond \(\lambda\). This is in line with the usual setup of classical descriptive set theory, where the theory is developed with full choice below \(\omega\) (since we are in the finite realm), a restricted form of countable choice at level \(\omega\) (namely, \( \AC_\omega(\RR) \) or, equivalently, \( \AC_\omega(\pre{\omega}{\omega}) \)), but no choice beyond \(\omega\).

The following is another restricted form of choice that will be useful later. Given a cardinal \(\lambda\) and a set \( X \), the axiom \( \DC_\lambda(X) \) of \(\lambda\)-choice over \( X \) is the statement:%
\footnote{The more familiar Principle of Dependent Choice \( \DC \) from e.g.\ \cite[p.\ 50]{Jech2003} is just a reformulation of our $\DC_\omega$.} 
\begin{quotation}
If $R \subseteq \pre{<\lambda}{X} \times X$ is a relation such that for every \( s \in \pre{<\lambda}{X} \) there is \( x \in X \) such that \( s \mathrel{R} x \), then there exists a function $f \colon \lambda \to X$ such that $(f \restriction \alpha) \mathrel{R} f(\alpha)$ for every $\alpha<\lambda$.
\end{quotation}

Obviously, if there is a surjection from a set \( X \) onto \( Y \) and \( \mu \leq \lambda \), then \( \DC_\lambda(X) \) implies \( \DC_\mu(Y) \). The (full) \(\lambda\)-choice axiom \( \DC_\lambda \) asserts that \( \DC_\lambda(X) \) holds for all sets \( X \).
One can check that if $\lambda$ is singular, then $\DC_\lambda$ is equivalent to the fact that $\DC_\kappa$ holds for all \( \kappa<\lambda \). 
Moreover, it is easy to see that $\DC_\lambda$ implies $\AC_\lambda$, for every $\lambda \in \Cn$. Finally, recall that the assertion that \( \DC_\lambda \) holds for all \( \lambda \in \Cn \) is equivalent to the full $\AC$ (see e.g.\ \cite[Chapter 8]{Jech1973}).

\section{Spaces of sequences and bounded topology} \label{sec:boundedtopology}

When \( \mu \) is infinite and \( X \) is nonempty, the set \( \pre{\mu}{X}\) will be often endowed with the so-called bounded topology \( \tau_b \), i.e.\ the topology generated by the sets
\[ 
\Nbhd_s(\pre{\mu}{X}) = \{ x \in \pre{\mu}{X} \mid s \subseteq x \}
 \] 
for \( s \in \pre{< \mu}{X} \). When there is no danger of confusion, we will write \( \Nbhd_s^\mu \) or even just \( \Nbhd_s \) instead of \( \Nbhd_s(\pre{\mu}{X}) \): notice however that the choice of \( \mu \) and \( X \) is relevant also for the sequences \( s \) that can be used in the indexing of the basis. 

In most cases, we will consider the case where \( X \) is an infinite cardinal \(\lambda\).
It is clear that when \( \mu = \omega \), the bounded topology coincide with the product \( \tau_p \) of the discrete topology on \( \lambda \), while if \( \mu > \omega \) it becomes strictly finer than \( \tau_p \). Moreover, if \( \mu \) is regular than the bounded topology coincides with the \( < \mu \)-supported product of the discrete topology on \( \lambda \). The bounded topology can also be characterized as the unique topology on \( \pre{\mu}{\lambda} \) whose closed sets coincide with the bodies of trees: \( C \subseteq \pre{\mu}{\lambda} \) is (\( \tau_b \)-)closed if and only if \( C = [T] \) for some (pruned) tree  \( T \subseteq \pre{< \mu}{\lambda} \).

When \( \pre{\mu}{\lambda} \) is equipped with the bounded topology, it has a basis (and also a dense set) of cardinality \( \lambda^{< \mu} = | \pre{< \mu}{\lambda}| \),  and this is optimal: if \( \mathcal{B} \) is a basis for \( \pre{\mu}{\lambda} \) (respectively: if \( D \subseteq \pre{\mu}{\lambda} \) is dense), then \( |\mathcal{B}| \geq \lambda^{<\mu} \) (respectively, \( |D| \geq \lambda^{<\mu} \)).
In particular, \( \pre{\omega}{\lambda} \) has weight and density \( \lambda \), and the same is true of \( \pre{\lambda}{2} \) if \( 2^{< \lambda} = \lambda \). 
Moreover, \( \pre{\mu}{\lambda} \) is metrizable if and only if it is first-countable, if and only if \( \cf(\mu) = \omega \): 
in such case, a compatible metric can be obtained by fixing a strictly increasing sequence \( (\mu_n)_{n \in \omega} \) cofinal in  \( \mu \) and setting,  for all distinct \( x,y \in \pre{\mu}{\lambda} \),  \( d(x,y) = 2^{-n} \) if and only if \( n \in \omega \) is smallest such that \( x \restriction \mu_n \neq y \restriction \mu_n \). Notice that such metric is actually an ultrametric, and that it is complete. The interested reader may consult~\cite[Sections 3 and 6]{AM} for proofs and more details: although literally speaking only the cases \( \lambda = 2 \) and \( \lambda = \mu \) are explicitly considered in~\cite{AM}, the corresponding proofs can easily be adapted to our more general setup.

Unless otherwise stated, subspaces \( X \subseteq \pre{\mu}{\lambda} \) are always 
endowed with the relative topology. In particular, the sets of the form 
\( \Nbhd_s \cap X \), sometimes denoted by \( \Nbhd_s(X) \), constitute a basis for 
\( X \). Notice that, contrarily to the case \( X = \pre{\mu}{\lambda} \),  some of the 
\( \Nbhd_s(X) \) might be empty: for this reason, the basis for \( X \) is sometimes indexed 
only over those \( s \) for which \( \Nbhd_s(X) \neq \emptyset \). For example, the basis 
for \( X = \pre{\mu}{2}  \subseteq \pre{\mu}{\lambda} \) (where \( 2 = \{ 0,1 \} \)) is often 
indexed only over binary sequences of length smaller than \( \mu \).
Notice also that if \( \cf(\mu) = \omega \), 
then \( X \subseteq \pre{\mu}{\lambda} \) is completely metrizable if and only if \( X \) is 
\( G_\delta \) in \( \pre{\mu}{\lambda} \), i.e.\ it can be written as a countable intersection 
of \( \tau_b \)-open subsets of \( \pre{\mu}{\lambda} \) 
(see e.g.~\cite[Theorem 3.11]{Kechris1995}).

For many results we will need the following notions. Let \( X \) be an arbitrary metric space. A \( \lambda \)-scheme on \( X \) is a family \( \{ B_s \mid s \in \pre{<\omega}{\lambda} \} \) of subsets of \( X \) such that 
\begin{itemizenew}
\item
\( B_{s {}^\smallfrown{} \beta} \subseteq B_s \) for all \( s \in \pre{< \omega}{\lambda} \) and \( \beta < \lambda \);
\item
\( \lim_{i \to \infty} \mathrm{diam}(B_{x \restriction i}) = 0 \) for all \( x \in \pre{\omega}{\lambda} \).
\end{itemizenew}
Such a scheme canonically induces a well-defined continuous map \( f \colon D \to X \) where \( D = \{ x \in \pre{\omega}{\lambda} \mid \bigcap_{i \in \omega} B_{x \restriction i} \neq \emptyset \} \) and, for \( x \in D \),
\[ 
f(x) = \text{ the unique point in \( \bigcap\nolimits_{i \in \omega} B_{x \restriction i} \)}. 
\]
It is not hard to see that
\begin{fact} \label{fct:scheme}
\begin{enumerate-(i)}
\item
If \( B_{s {}^\smallfrown{} \beta} \cap B_{s {}^\smallfrown{}  \beta'} = \emptyset \) for all \( s \in \pre{< \omega}{\lambda} \) and distinct \( \beta, \beta' < \lambda \), then \( f \) is injective and \( f( \Nbhd_s \cap D) = B_s \cap f(D) \). If moreover each \( B_s \) is open, then \( f \) is a (topological) embedding.
\item
If \( B_\emptyset = X \) and \( B_s = \bigcup_{\beta < \lambda} B_{s {}^\smallfrown{} \beta} \) for all \( s \in \pre{< \omega}{\lambda} \), then \( f \) is surjective. Moreover \( f(\Nbhd_s \cap D) = B_s \), so if each \( B_s \) is open then \( f \) is an open map.
\item
If the metric of \( X \) is complete and \( \mathrm{cl}(B_{s {}^\smallfrown{} \beta}) \subseteq B_s \) for all \( s \in \pre{<\omega}{\lambda} \) and \( \beta < \lambda \), 
then \( D = [T] \) where \( T \) is the tree
\[
T = \{ s \in \pre{< \omega}{\lambda} \mid B_s \neq \emptyset \},
\]
and thus \( D \) is closed in \( \pre{\omega}{\lambda} \);
if moreover each \( B_s \) is nonempty, then \( D = \pre{\omega}{\lambda} \).
\end{enumerate-(i)}
\end{fact}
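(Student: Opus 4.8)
The plan is to treat this as the generalization of the classical Lusin/Cantor scheme machinery to the \( \lambda \)-indexed setting, using that the diameter condition already guarantees that \( f \) is well defined (the nested sets shrink to a single point) and continuous, as noted just before the statement. Parts~(i) and~(ii) are purely combinatorial and use neither completeness nor the diameter bound beyond well-definedness; part~(iii) is where completeness genuinely enters, and it is the crux.

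For~(i), I would first check injectivity: given distinct \( x,y \in D \), let \( i \) be least with \( x(i) \neq y(i) \), put \( s = x \restriction i \), \( \beta = x(i) \), \( \beta' = y(i) \). Then \( f(x) \in B_{s {}^\smallfrown{} \beta} \) and \( f(y) \in B_{s {}^\smallfrown{} \beta'} \), which are disjoint by hypothesis, so \( f(x) \neq f(y) \). For the identity \( f(\Nbhd_s \cap D) = B_s \cap f(D) \), the inclusion \( \subseteq \) is immediate since \( s \subseteq x \) forces \( f(x) \in B_{x \restriction \lh(s)} = B_s \). For \( \supseteq \), if \( f(x) \in B_s \) but \( x \restriction \lh(s) \neq s \), take the least coordinate \( j \) of disagreement and set \( t = x \restriction j = s \restriction j \); then \( f(x) \) lies both in \( B_{t {}^\smallfrown{} x(j)} \) and, via \( B_s \subseteq B_{t {}^\smallfrown{} s(j)} \), in the disjoint set \( B_{t {}^\smallfrown{} s(j)} \), a contradiction, so \( x \in \Nbhd_s \). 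Continuity and injectivity together with the openness of the \( B_s \) (which makes each \( f(\Nbhd_s \cap D) = B_s \cap f(D) \) relatively open) then show \( f \) is a homeomorphism onto its image.

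For~(ii), surjectivity is a recursion: starting from \( y \in B_\emptyset = X \) and having arranged \( y \in B_{x \restriction i} \), use \( B_{x \restriction i} = \bigcup_{\beta < \lambda} B_{(x \restriction i) {}^\smallfrown{} \beta} \) to let \( x(i) \) be the \emph{least} \( \beta \) with \( y \in B_{(x \restriction i) {}^\smallfrown{} \beta} \); choosing the least witness keeps the construction canonical and choiceless. This yields \( x \in D \) with \( f(x) = y \). Running the same recursion started from any \( s \) with \( y \in B_s \) gives \( B_s \subseteq f(\Nbhd_s \cap D) \), and the reverse inclusion is as in~(i); hence \( f(\Nbhd_s \cap D) = B_s \), so \( f \) sends basic open sets to the open sets \( B_s \) and is an open map.

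The main obstacle is~(iii), and more precisely the inclusion \( [T] \subseteq D \), the one place completeness is really used. That \( T \) is a tree and that \( D \subseteq [T] \) follow at once from \( B_{s {}^\smallfrown{} \beta} \subseteq B_s \). For the converse I would fix \( x \in [T] \), so each \( B_{x \restriction i} \) is nonempty; the hypothesis \( \mathrm{cl}(B_{s {}^\smallfrown{} \beta}) \subseteq B_s \) produces a decreasing chain of nonempty closed sets \( \mathrm{cl}(B_{x \restriction i}) \) with \( \mathrm{diam} \to 0 \), so by completeness (Cantor's intersection theorem) the intersection is a single point \( p \), and \( p \in \bigcap_i \mathrm{cl}(B_{x \restriction (i+1)}) \subseteq \bigcap_i B_{x \restriction i} \), whence \( x \in D \). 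The one subtlety worth flagging is that extracting the relevant Cauchy sequence invokes a countable choice over \( X \), which is available in the intended setting. Finally \( D = [T] \) is closed because bodies of trees are exactly the closed subsets of \( \pre{\omega}{\lambda} \), and if every \( B_s \) is nonempty then \( T = \pre{<\omega}{\lambda} \), giving \( D = [T] = \pre{\omega}{\lambda} \).
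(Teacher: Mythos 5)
Your proof is correct, and since the paper states this Fact without proof (it is introduced with ``it is not hard to see that'' and then used tacitly), your write-up supplies exactly the missing details: parts~(i) and~(ii) use only the nesting, disjointness and union structure together with well-definedness of \( f \), your least-witness recursion in~(ii) is appropriately choiceless, and you correctly isolate~(iii) as the only place where completeness enters.

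The caveat you flag in~(iii) deserves emphasis, because it is not merely prudent: it is necessary. Cantor's intersection theorem for arbitrary complete metric spaces is not a theorem of \( \ZF \). Indeed, let \( (A_n)_{n \geq 1} \) be a family of nonempty sets no infinite subfamily of which admits a choice function (such families exist in suitable models of \( \ZF \)), and let \( X = \bigcup_n (\{n\} \times A_n) \) with \( d(p,q) = \max(1/n, 1/m) \) for distinct points \( p \in \{n\} \times A_n \) and \( q \in \{m\} \times A_m \). Every point of \( X \) is isolated, and every Cauchy sequence is eventually constant (a non-eventually-constant Cauchy sequence would visit blocks with unbounded indices and hence define a choice function on an infinite subfamily), so \( X \) is a complete metric space in which every set is closed. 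The sets \( F_n = \bigcup_{m \geq n} (\{m\} \times A_m) \) are nested, nonempty, of vanishing diameter, and have empty intersection; setting \( B_s = F_{\lh(s)} \) then yields a \(\lambda\)-scheme satisfying all hypotheses of~(iii), with every \( B_s \) nonempty, but \( D = \emptyset \neq \pre{\omega}{\lambda} \). So part~(iii), read literally for arbitrary metric spaces in the \( \ZF \)-only convention of that chapter, is false without some choice. Your resolution is the right one: \( \AC_\omega(X) \) suffices to run the Cantor argument, and under the paper's base theory \( \ZF + \AC_\lambda(\pre{\lambda}{2}) \) every \(\lambda\)-Polish space \( X \) is a surjective image of \( \pre{\omega}{\lambda} \), so \( \AC_\omega(X) \) is indeed available. (Alternatively, when the \( B_s \) are open and a well-orderable dense subset of \( X \) is fixed in advance --- as happens for tree-bases --- the required points can be picked canonically, which is what rescues the \( \ZF \)-annotated applications such as Proposition~\ref{prop:tree-basis}.)
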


These facts will be tacitly used whenever we will deal with \(\lambda\)-schemes.

\section{Transitivity} \label{subsec:transitivity}

We say that a set $X$ is transitive if and only if $x\subseteq X$ whenever \( x \in X \).
The transitive closure of an arbitrary set $X$, denoted by $\mathrm{trcl}(X)$, is the smallest transitive superset of $X$. It can be defined by recursion letting $\mathrm{trcl}_0(X)=X$ and, for each $n\in\omega$, $\mathrm{trcl}_{n+1}(X)=\bigcup\mathrm{trcl}_n(X)$; then, $\mathrm{trcl}(X)=\bigcup_{n\in\omega}\mathrm{trcl}_n(X)$.

Given an infinite regular cardinal $\kappa$, we let $H_\kappa$ be the collection of all sets $X$ that are hereditarily of cardinality less than $\kappa$, i.e., such that $|\mathrm{trcl}(X)|<\kappa$. Assuming $\ZFC$, we have that if $\kappa$ is a regular uncountable cardinal, then $H_\kappa$ is a model of $\ZFC$ minus the Power Set axiom, and that $|H_\kappa|=2^{<\kappa}$ for any $\kappa>\omega$.

We say that a binary relation $R$ on $X$ is extensional if for every$x,y\in X$ we have that \( x = y \) whenever $\forall z\in X\, (z \mathrel{R} x \iff z \mathrel{R} y)$. The relation \( R \) is well-founded if there is no \( (x_n)_{n \in \omega} \in \pre{\omega}{X} \) such that \( x_{n+1} \mathrel{R} x_n \) for every \( n \in \omega \). Suppose that $R$ is a well-founded extensional relation on $X$. Then the Mostowski collapsing function \( \pi \colon X \to V \) is defined by recursion on $R$ by letting $\pi(x)=\{\pi(y) \mid y \mathrel{R} x\}$. The Mostowski collapse $M$ of \( X \) is simply the range of $\pi$: it is clearly a transitive set, and $\pi$ is an isomorphism between $(X,R)$ and $(M,{\in})$ (see e.g.\ \cite[Theorem 6.5]{Jech2003}). 

\section{Constructibility and relative constructibility} \label{sec:constructibilityetc}


A set $X \subseteq M$ is definable (with parameters) in a model of set theory $(M,{\in})$ if there are a formula $\upvarphi(v_0, v_1, \dotsc, v_n)$ in the language of set theory\footnote{This is just the language consisting of a single binary relational symbol, usually interpreted as the membership relations in models of set theory; for this reason, the symbol itself is often denoted by \( \in \).} and parameters $a_1,\dots,a_n\in M$ such that $X=\{a\in M \mid M \models \upvarphi[a/v_0,a_1/v_1,\dots,a_n/v_n]\}$. We set 
\[ 
\Def(M)=\{ X\subseteq M \mid X \text{ is definable in }M \}.
\] 
Of course, if $M$ is a set, then so is $\Def(M)$. 

\begin{defin}
The constructible universe is the class \( L=\bigcup_{\alpha\in \On}L_\alpha \), where the sets \( L_\alpha \) are defined by 
 transfinite recursion setting
 \begin{itemizenew}
  \item $L_0=\emptyset$
  \item
  $L_{\alpha+1}=\Def(L_\alpha)$;
	\item $L_\gamma = \bigcup_{\alpha<\gamma}L_\alpha$ if $\gamma \in \On$ is limit.
 \end{itemizenew}
\end{defin}

The constructible universe $L$ is a model of \( \ZFC \).


By the definition it is clear that the class sequence $(L_\alpha)_{\alpha \in \On}$ is definable in the language of set theory without parameters, and therefore the property ``$x$ is constructible'' (i.e.\ $x\in L$, which can be rephrased as $\exists \alpha\, (x\in L_\alpha)$) is expressible with a first-order formula in the language of set theory. The sentence ``$\forall x\, (x\text{ is constructible})$'', which is again expressible by a first-order sentence in the language of set theory, is called Axiom of Constructibility, and is abbreviated by $\mathsf{V=L}$.
A finer analysis (see e.g.\ \cite[Chapter 13]{Jech2003}) reveals that the first-order formula expressing ``$x\in L_\alpha$'' is absolute among models of \( \ZFC \). In particular, 
\[
L \models \mathsf{V=L}, 
\]
and if $M$ is an inner model of \( \ZF \), then $L\subseteq M$. 
The universe $L$ is also a model of ``Global Choice'', namely, in \( L \) one can define a class function \( \tau \) such that \( \tau(z) \in z \) for every nonempty \( z \in L \). Indeed, \( L \) is well-ordered by a definable class $<_L$ with the property that $x<_L y$ whenever $x\in L_\alpha$ and $y\in L_\beta\setminus L_\alpha$ for some ordinals $\alpha<\beta$. 

We collect these two definability properties for future reference (see~\cite[Theorem 3.3]{Kanamori}). 

\begin{proposition} \label{prop:constructibility}
\begin{enumerate-(i)}
\item \label{prop:constructibility-1}
There is a sentence $\upsigma_0$ in the language of set theory such that, for any $M$ set or class, if $(M, \in) \models \upsigma_0$ then either $M=L$, or there is a limit ordinal $\delta > \omega$ such that $M=L_\delta$.

\item \label{prop:constructibility-2}
There is a formula $\upvarphi_0(u,v)$ in the language of set theory such that for every limit ordinal $\delta> \omega$, every $b\in L_\delta$, and every $a \in V$, we have that $a<_L b$ if and only if $a\in L_\delta$ and $(L_\delta,\in) \models \upvarphi_0[a/u,b/v]$.
\end{enumerate-(i)}
\end{proposition}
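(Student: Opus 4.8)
The plan is to reduce both parts to a single technical fact: the constructibility hierarchy is \emph{absolute}. More precisely, the relation ``\( y = L_\alpha \)'' can be written as a \( \Delta_1 \) formula over a fixed finite fragment \( T_0 \subseteq \ZF \), and this formula is absolute between \( V \) and any transitive model of \( T_0 \). This rests on the absoluteness of \( \Def \) (hence of the successor step \( L_{\alpha+1} = \Def(L_\alpha) \)), which in turn is the standard \( \Delta_1 \)-definability of first-order satisfaction over a set. First I would isolate a finite list \( T_0 \) of \( \ZF \)-theorems sufficient to (a) develop the basic theory of the ordinals and of the \( L_\alpha \)'s (each \( L_\alpha \) is transitive, the \( L_\alpha \) are increasing, \( \On \subseteq L \), and \( L = \bigcup_\alpha L_\alpha \)), and (b) carry out the absoluteness computation above; such a \( T_0 \) exists because each of these facts is a theorem of \( \ZF \), hence provable from finitely many of its axioms.

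For part~\ref{prop:constructibility-1}, I would set \( \upsigma_0 := \bigwedge T_0 \wedge (\mathsf{V=L}) \), where \( \mathsf{V=L} \) is rendered as \( \forall x \, \exists \alpha \, (x \in L_\alpha) \) using the \( \Delta_1 \) definition. Given \( (M,\in) \models \upsigma_0 \) with \( \in \) the genuine membership relation, \( M \) is automatically well-founded and, by extensionality, extensional; passing to the Mostowski collapse (which satisfies the same sentence) we reduce to the case where \( M \) is transitive, which is the form in which the lemma is applied. Let \( \delta = M \cap \On \), an ordinal or \( \On \). Absoluteness of the hierarchy between \( V \) and the transitive model \( M \) gives \( (L_\alpha)^M = L_\alpha \) for every \( \alpha < \delta \), and transitivity of \( M \) yields \( L_\alpha \subseteq M \); conversely, since \( M \models \mathsf{V=L} \), every \( x \in M \) lies in some \( (L_\alpha)^M = L_\alpha \) with \( \alpha \in \delta \). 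Hence \( M = \bigcup_{\alpha < \delta} L_\alpha \). Because \( T_0 \) proves \( \forall \alpha \, \exists L_{\alpha+1} \), the ordinal \( \delta \) cannot be a successor, so \( \delta = \On \) (giving \( M = L \)) or \( \delta \) is limit; and since \( T_0 \) includes Infinity we have \( \omega \in M \), forcing \( \delta > \omega \) and \( M = L_\delta \).

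For part~\ref{prop:constructibility-2}, I would first write the canonical global well-ordering \( <_L \) explicitly in terms of the hierarchy: \( x <_L y \) holds iff the \( L \)-rank \( \mathrm{od}(x) \) (the least \( \alpha \) with \( x \in L_{\alpha+1} \)) is smaller than \( \mathrm{od}(y) \), or the two ranks coincide and \( x \) precedes \( y \) in the fixed well-ordering of \( \Def(L_{\mathrm{od}(x)}) \) induced by the G\"odel numbers of defining formulas together with the \( <_L \)-ordering of their parameters (a simultaneous recursion). Taking \( \upvarphi_0(u,v) \) to be this definition, absoluteness of the \( L \)-hierarchy guarantees that \( (L_\delta,\in) \) computes \( \mathrm{od} \) and \( <_L \) correctly for all limit \( \delta > \omega \). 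The remaining ingredient is downward closure: if \( a <_L b \) with \( b \in L_\delta \), then \( \mathrm{od}(a) \le \mathrm{od}(b) < \delta \), so \( \mathrm{od}(a)+1 < \delta \) as \( \delta \) is limit, whence \( a \in L_\delta \). Thus for \( b \in L_\delta \) the \( <_L \)-predecessors of \( b \) all lie in \( L_\delta \), and \( a <_L b \) is equivalent to \( a \in L_\delta \) together with \( (L_\delta,\in) \models \upvarphi_0[a/u,b/v] \); the conjunct \( a \in L_\delta \) is there only to make the right-hand side a well-formed assertion about \( L_\delta \).

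The main obstacle is the absoluteness of \( \Def \), i.e.\ the \( \Delta_1 \)-definability of first-order satisfaction over a set: this is the one genuinely technical step that makes ``\( y = L_\alpha \)'' absolute and drives both parts, everything else being bookkeeping with the hierarchy. A secondary subtlety concerns part~\ref{prop:constructibility-1}: the literal equality \( M = L_\delta \) requires \( M \) transitive, since an elementary submodel \( N \prec L_\kappa \) satisfies \( \upsigma_0 \) yet is typically not transitive and collapses to some proper \( L_{\bar\delta} \). As the applications feed in transitive \( M \) (obtained precisely as such collapses), this is harmless, but it is worth recording that \( \upsigma_0 \) pins down \( M \) only up to transitive collapse.
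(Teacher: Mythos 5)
Your proof is correct and follows essentially the same route as the source the paper relies on: the paper gives no proof of this proposition, citing Kanamori's Theorem 3.3, and its only expository comment --- that \( <_L \) is absolute because it is defined locally as the union of the orders \( <_{L_\alpha} \), with the same defining formula \( \upvarphi_0(u,v) \) at every level --- is precisely your argument for part~(ii), while your part~(i) is the standard condensation argument from that reference. Your closing caveat is also accurate: the literal statement needs \( M \) transitive (otherwise \( \upsigma_0 \) pins \( M \) down only up to Mostowski collapse), and this matches how the paper actually uses \( \upsigma_0 \) later, namely applied to well-founded extensional structures \( (\lambda,\in_x)\restriction\beta \) and then collapsed via \( \pi_x \).
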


Part~\ref{prop:constructibility-2} of Proposition~\ref{prop:constructibility} says that $<_L$ is absolute among models of $\mathsf{V=L}$, and it is the key property to shift from abstract definability in the universe to the descriptive-set-theoretical hierarchies. It derives from the fact that $<_L$ is defined locally, i.e., $<_L$ is actually the union of the orders $<_{L_\alpha}$ on each \( L_\alpha \), where the definition \( \upvarphi_0(u,v) \) of $<_{L_\alpha}$ is the same for every $L_\alpha$. Therefore if $(L_\alpha,{\in}) \models \upvarphi_0[a/u,b/v]$ for two given elements $a ,b \in L $, then $a<_{L_\alpha}b$, and hence  $a<_L b$; conversely, if $a<_L b$, then for every $\alpha \in \On$ such that $a,b\in L_\alpha$ we have that $a <_{L_\alpha} b$, and hence $(L_\alpha,{\in}) \models \upvarphi_0[a/u,b/v]$. 

Finally, recall that $L \models  \GCH$. The key fact for proving this is 
that $\pow(\kappa)\cap L\subseteq L_{\kappa^+}$ for every \( \kappa \in \Cn \).

%

There is a relativized version \( L(X) \) of the constructible universe \( L \).

\begin{defin}
For any set $X$, we let \( L(X) = \bigcup_{\alpha \in \On} L_\alpha(X) \),  where the sets \( L_\alpha(X) \) are defined by transfinite recursion setting
 \begin{itemizenew}
  \item $L_0(X)=\mathrm{trcl}(\{X\})$; 
  \item $L_{\alpha+1}(X)=\Def(L_\alpha(X))$;
	\item $L_\gamma(X)=\bigcup_{\alpha<\gamma}L_\alpha(X)$ if $\gamma \in \On$ is limit.
 \end{itemizenew}
\end{defin}

The most popular instance of the above construction is $L(\RR)$. Recall that \( L(\RR) \) is not necessarily a model of \( \ZFC \), but it is a model of \( \ZF \). In fact, if $M$ is an inner model of \( \ZF \) such that $\mathsf{tr}(X)\in M$, then $L(X)\subseteq M$.

As for $L$, there is some absoluteness also for $L(X)$: the first-order formula that defines ``$x\in L_\alpha(X)$'' uses $X$ as a parameter, and it is absolute between models of $\ZF$ that contains $X$.

While it is not necessarily true that there is a global well-order on $L(X)$, with techniques similar to those employed in the case of \( L \) it is possible to build a definable surjection $\Phi \colon \On \times X^{<\omega}\to L(X)$ so that $\Phi \restriction (\alpha\times X^{<\omega})\in L(X)$  for each $\alpha \in \On$. Moreover, if $\kappa$ is a cardinal in $L(X)$, then $\Phi\restriction(\kappa\times X^{<\omega}) \colon \kappa\times X\to L_\kappa(X)$. This proves that there is a well-ordering of $\mathsf{trcl}(X)$ in $L(X)$ if and only if $L(X) \models \AC$.

In many cases the class $L(X)$ satisfies weak forms of choice. For example, $L(\RR) \models \DC_\omega$ if we assume $\ZF + \DC_\omega$ in \( V \) (see e.g.\ \cite[Proposition 11.13]{Kanamori}), and, similarly, $L(V_{\lambda+1}) \models \DC_\lambda$  if we assume $\ZF +\DC_\lambda$ in \( V \) (see e.g.\ \cite[Lemma 4.10]{Dimonte2018}). 

\section{Boolean algebras, filters, and ultrafilters} \label{subsec:ultrafilters}

Let \(\nu\) be a cardinal and \( X \) be any set. A \(\nu\)-algebra on \( X \) is a collection \( \mathcal{B} \subseteq \pow(X) \) which contains \( \emptyset \) and is closed under complements and well-ordered unions shorther than \(  \nu \). The \( \nu \)-algebra \( \mathcal{B} \) is generated by \( \mathcal{A} \subseteq \pow(X) \) if it is the smallest \(\nu\)-algebra containing \( \mathcal{A} \), in which case the elements of \( \mathcal{A} \) are often called generators of \( \mathcal{B} \) --- it is easy to see that \( \mathcal{B} \) can easily be obtained by closing the family \( \mathcal{A} \cup \{ \emptyset \} \) under complements and unions of length smaller than \( \nu \). A \(\nu\)-algebra is called \( \mu \)-generated if there is a collection \( \mathcal{A} \) of size at most \(\mu\) generating \( \mathcal{B} \).

 A filter \( \mathcal{F} \) on a set $X$ is a collection of subsets of $X$ closed under supersets and finite intersections. We tacitly assume that every filter we consider is proper, namely, that $\mathcal{F} \neq \pow(X)$ or, equivalently, \( \emptyset \notin \mathcal{F} \). A filter \( \mathcal{F} \) is principal if there is a nonempty set $X_0\subseteq X$ such that \( \mathcal{F}=\{A\subseteq X \mid A\supseteq X_0 \} \).
A filter is $\nu$-complete, for some $\nu \in \Cn$, if and only if it is closed under well-ordered intersections shorter than $\nu$.

A filter \( \mathcal{F} \) is an ultrafilter if and only if for each $A \subseteq X$, either $A \in \mathcal{F}$ or \( X\setminus A \in \mathcal{F} \); equivalently, \( \mathcal{F} \) is an ultrafilter if it is maximal for inclusion among all proper filters on \( X \).
In this paper, ultrafilters are usually denoted by \( \mathcal{U} \) or \( \V \).
Notice that an ultrafilter \( \U \) is principal if and only if it is generated by a single element \( \bar x \in X \), that is, \( \U = \{ A \subseteq X \mid  \bar x \in A \} \) for some \( \bar x \in X \).

\section{Large cardinals}

An uncountable cardinal $\kappa$ is called (strongly) inaccessible if it is regular and strong limit, that is, \( 2^{\nu} < \kappa \) for every \( \nu < \kappa \).%
\footnote{Note that, according to our notation, this implies a modicum of choice. There are also definitions of inaccessible cardinals in $\ZF$, but we will not need them in this paper.} 
Under $\ZFC$, we have that $H_\kappa$ is a model of the full $\ZFC$ if and only if $\kappa$ is an inaccessible cardinal. It is not possible to prove in $\ZFC$ that there are inaccessible cardinals, as the existence of an inaccessible cardinal implies the consistency of $\ZFC$. Cardinals with such a property are called large cardinals. Our reference for large cardinals is \cite{Kanamori}.

A cardinal $\kappa$ is measurable if and only if $\kappa>\omega$ and there is a $\kappa$-complete nonprincipal ultrafilter on $\kappa$.
Under $\ZFC$, if $\kappa$ is a measurable cardinal, then there exists a $\kappa$-complete normal ultrafilter on $\kappa$ (see e.g.\ \cite[Theorem 10.20]{Jech2003}), where ``normal'' means that the ultrafilter is closed under diagonal intersections of sequences of sets of length \( \kappa \); moreover, \( \kappa \) is inaccessible (and in fact it is the \( \kappa \)-th inaccessible cardinal). Therefore measurable cardinals are large cardinals. 

We will use the fact that, even in choiceless settings, if $\U$ is a nonprincipal ultrafilter on a set \( X \) that is at least \( \omega \)-complete, then there is a  maximal cardinal $\kappa$, sometimes called completeness number of \( \U \), such that $\U$ is $\kappa$-complete, and such $\kappa$ is indeed a measurable cardinal (see e.g.\ \cite[Lemma 1.1.4]{Larson2004}). 



Let $M$ and $N$ be two sets or two classes. A function $j \colon M\to N$ is called an elementary embedding if for every formula $\upvarphi(v_0, \dotsc, v_n)$ in the language of set theory and every $a_0, \dotsc, a_n \in M$, 
\[
M\models \upvarphi[a_0/v_0, \dotsc, a_n/v_n] \quad \iff \quad N\models \upvarphi[j(a_0)/v_0, \dotsc. j(a_n)/v_n]. 
\]
The critical point \( \crt(j) \) of an elementary embedding $j$ is the smallest $\alpha \in \On$ such that $j(\alpha)\neq\alpha$, if it exists. The existence of a measurable cardinal is equivalent to the existence of a nontrivial elementary embedding $j \colon V \to M$, where $M$ is an inner model of $V$. 

The following large cardinal assumption, due to Woodin, will be crucially used in Chapter~\ref{sec:applications}.

\begin{defin} \label{def:I0}
Let $\lambda \in \Cn$ be uncountable. We say that $\mathsf{I0}(\lambda)$ holds if there exists a nontrivial elementary embedding $j \colon L(V_{\lambda+1})\to L(V_{\lambda+1})$ such that $\crt(j) < \lambda$.
The assertion that \( \mathsf{I0}(\lambda) \) holds for some \( \lambda \in \Cn \) is abbreviated by \( \mathsf{I0} \).
\end{defin}

The axiom \( \mathsf{I0}(\lambda) \) is at the top of the large cardinal hierarchy. It implies that $\lambda$ has countable cofinality: indeed, if $\lambda_0=\crt(j)$ and $\lambda_{n+1}=j(\lambda_n)$, then $\lambda = \sup_{n \in \omega} \lambda_n$. Moreover, each $\lambda_n$ is $m$-huge for every $m\in\omega$, and in particular it is a measurable cardinal. Notably, if \( \mathsf{I0}(\lambda) \) holds then $V_{\lambda+1}$ cannot be well-ordered in $L(V_{\lambda+1})$, therefore $L(V_{\lambda+1}) \models \neg\AC$; however, \( L(V_{\lambda+1}) \models \DC_\lambda \). For more informations on \( \mathsf{I0} \), see~\cite{Cramer2017} and~\cite{Dimonte2018}.

%
%
%

\chapter{\( \lambda \)-Polish spaces} \label{sec:lambda-Polish}

From this point onward, as declared  we will tacitly work in our base theory 
\[ 
\ZF + \AC_\lambda(\pre{\lambda}{2}) . 
\]
However, some of the results in this chapter can be proved in even weaker theories, so we sometimes briefly discuss the exact amount of choice needed for the specific statement at hand.

\section{Definition and examples} \label{sec:def-example-Polish}

The following definition naturally generalizes that of a Polish space, which corresponds to the case \( \lambda = \omega \).

\begin{defin}
Let \( \lambda \) be an infinite cardinal. A topological space \( X \) is \markdef{\(\lambda\)-Polish} if it is completely metrizable and \( \weight(X) \leq \lambda \).
\end{defin}

 When dealing with a \(\lambda\)-Polish space, we will often implicitly assume that it comes with a corresponding well-ordered basis of size at most \( \lambda \); the terminology ``basic open set'', if not specified otherwise, tacitly refers to sets in such a basis. Recall also that the condition \( \weight(X) \leq \lambda \) is equivalent to \( \dens(X) \leq \lambda \) because \( X \) is metrizable. 

\begin{example} \label{xmp:lambda-Polish}
The following are examples of \(\lambda\)-Polish spaces.
\begin{enumerate-(a)}
\item
Any (classical) Polish space---although in this context they somewhat trivialize if \( 2^{\aleph_0} \leq  \lambda \) (this will often be the case).
\item
Any discrete space \( X \) of weight at most \( \lambda \). Notice that in this case \( X \) is well-orderable and \( |X| \leq \lambda \).
\item \label{xmp:lambda-Polish-2}
The space 
\[ 
B(\lambda) =  \pre{\omega}{\lambda} 
\] 
and, if \( \cf(\lambda) = \omega \) and \( \vec{\lambda} = (\lambda_i)_{i \in \omega} \) is cofinal in \(\lambda\), the space 
\[ 
C(\lambda) = \pre{\omega}{(\vec{\lambda})} =  \prod_{i \in \omega} \lambda_i , 
\]
where both \( B(\lambda) \) and \( C(\lambda) \) are endowed with the product of the discrete topologies on \( \lambda \) and \(  \lambda_i \) or, equivalently, with the (relativization of the) bounded topology \( \tau_b \) on \( \pre{\omega}{\lambda} \). Such spaces were studied in~\cite{Stone1962}, where in particular it is shown that \( B(\lambda) \approx C(\lambda) \). The latter result is obtained in~\cite{Stone1962} through a topological characterization of \( B(\lambda) \). We will give a direct proof of (a generalization of) this fact in Theorem~\ref{thm:homeomorphictoCantor}, and also provide several alternative topological characterizations of \( B(\lambda) \) when \( \cf(\lambda) = \omega \) in Theorems~\ref{thm:characterizationCantor} and~\ref{thm:characterizationCantor2}.
\item \label{xmp:lambda-Polish-3}
The space 
\[ 
\pre{\lambda}{2} 
\]
of all binary \( \lambda \)-sequences equipped with the bounded topology is called generalized Cantor space. It is a \(\lambda\)-Polish space if and only if \( \cf(\lambda) = \omega \) and \( 2^{< \lambda} = \lambda \) or, equivalently, if \(  \cf(\lambda) = \omega \) and \(\lambda\) is strong limit (see~\cite[Proposition 3.12]{AM}); this is precisely condition~\ref{requirementsonlambda-2} from Section~\ref{sec:setup}. Notice also that the space \( \pre{\lambda}{\lambda} \), which at first glance might be proposed as a generalization of the Baire space, has weight \( \lambda^{< \lambda} \), and thus it becomes \(\lambda\)-Polish if and only if \( \cf(\lambda) = \omega \) and \( \lambda^{< \lambda} = \lambda \). Since the latter condition is equivalent to ``\( 2^{< \lambda} = \lambda \) and \( \lambda \) is regular'', this happens only when \( \lambda = \omega \), in which case we are back to the usual Baire space \( \pre{\omega}{\omega} \).
\item \label{xmp:lambda-Polish-4}
Let \( \lambda \) be a limit cardinal.
Then the Woodin topology on $V_{\lambda+1}$, which was introduced in~\cite[p.\ 298]{Woodin2011}, is the topology generated by the basic open sets 
\begin{equation*}
O_{(a,\alpha)}=\{A\subseteq V_\lambda \mid A\cap V_\alpha=a\}
\end{equation*}
with $\alpha<\lambda$ and $a\subseteq V_\alpha$.  Such basis has cardinality 
\( |V_\lambda| \). Assume that \( V_\lambda \) is well-orderable, so that \( |V_\lambda | \) is a cardinal. Then $|V_\lambda|$ is strong 
limit because \(\lambda\) is limit, and therefore this is the least possible cardinality for a basis of the Woodin topology by Fact~\ref{fct:weightstronglimit}. Therefore 
the weight of $V_{\lambda+1}$  is $|V_\lambda|$. If 
$\cf(\lambda)=\omega$, then we can define a complete metric on $V_{\lambda+1}$ by 
fixing a strictly increasing sequence $(\lambda_i)_{i\in\omega}$ cofinal in $\lambda$ 
and setting, for all distinct $x,y\in V_{\lambda+1}$, $d(x,y)=2^{-i}$ if and only if $i \in \omega$ is
smallest such that $x\cap V_{\lambda_i}\neq y\cap V_{\lambda_i}$. Conversely, if 
$V_{\lambda+1}$ is first-countable, then $\cf(\lambda)=\omega$. Therefore 
$V_{\lambda+1}$ is (completely) metrizable if and only if $\cf(\lambda)=\omega$, and in such case 
$V_{\lambda+1}$ endowed with the Woodin topology is a $|V_\lambda|$-Polish space.
Thus \( V_{\lambda+1} \) is a \(\lambda\)-Polish space whenever \( \cf(\lambda) = \omega\) and \( |V_\lambda| = \lambda \). Notice that the latter condition is equivalent to \( \beth_\lambda= \lambda \), that is, condition~\ref{requirementsonlambda-3} from Section~\ref{sec:setup}.
\item
If \( X \) is \(\lambda\)-Polish, then the hyperspace \( K(X) \) of compact subsets of \( X \) endowed with the Vietoris topology is still \(\lambda\)-Polish (see~\cite[Section 4.F]{Kechris1995}).
\item
All Banach spaces with density character \(\lambda\), viewed as topological spaces, are \(\lambda\)-Polish. This includes e.g.\ the Banach space \( c_0^\lambda \) consisting of those sequences \( (x_\alpha)_{\alpha < \lambda} \in \RR^\lambda \) such that for all \( \varepsilon \in \RR^+ \) one has \( |x_\alpha| < \varepsilon \) for all but finitely many \( \alpha < \lambda \), equipped with the pointwise operations and the sup norm.
\end{enumerate-(a)}
\end{example}

The class of \(\lambda\)-Polish spaces is clearly closed under \( G_\delta \) subspaces,%
\footnote{Indeed, by~\cite[Theorem 3.11]{Kechris1995} a subspace of a \(\lambda\)-Polish space \( X \) is \(\lambda\)-Polish if and only if it is \( G_\delta \) in \( X \).}
sums 
(i.e.\ disjoint unions) of size at most \(  \lambda \), and countable products. However, if \( \cf (\lambda) = \omega \) such class is not closed under 
 products of size \( \kappa > \omega \), independently of the chosen support for the product topology. Indeed, let \( (Y_\alpha)_{\alpha < \kappa }\) be a family of discrete spaces of size \( \lambda \). Consider the product 
\( Y = \prod_{\alpha < \kappa} Y_\alpha \) equipped with the \( < \mu \)-supported product topology, 
where \( \mu \) is an infinite cardinal such that \( \mu \leq \kappa \). If \( \mu > \omega \), then the weight of \( Y \), if defined, 
would be at least \( \lambda^\omega = \lambda^{\cf(\lambda)} > \lambda \); if instead 
\( \mu = \omega \), then \( Y \) would be non-first-countable, and hence not metrizable. In both cases, 
the product \( Y \) is not \(\lambda\)-Polish. (In contrast, we anticipate that a strictly related kind of spaces, namely the standard \(\lambda\)-Borel spaces, are closed under products of size at most \( \lambda \) if \( 2^{< \lambda} = \lambda \) --- see Section~\ref{sec:standardBorel}, and in particular Proposition~\ref{prop:productofstandardBorelspaces}.)

Extension theorems like the ones by Tietze~\cite[Theorem 1.3]{Kechris1995}, Kuratowski~\cite[Theorem 3.8]{Kechris1995}, and Lavrentiev~\cite[Theorem 3.9]{Kechris1995} are in particular true for \(\lambda\)-Polish spaces,  as well as Urysohn's lemma~\cite[Theorem 1.2]{Kechris1995}. Moreover, every \(\lambda\)-Polish space is Baire by Baire's category theorem~\cite[Theorem 8.4]{Kechris1995}. There are a number of consequence of this that remains true in our generalized setup, e.g.\ the topological \( 0 \)-\(1 \) law~\cite[Theorem 8.46]{Kechris1995}. However, the results that can be proved using category arguments are sparse in the realm of \(\lambda\)-Polish spaces with \( \lambda > \omega \): for example, the natural notion of ``Borelness'' in this context is, arguably, \(\lambda\)-Borelness (see Section~\ref{sec:Borel}), but there are very simple \(\lambda\)-Borel subsets of \( B(\lambda) \) that do not have the Baire property. 

\begin{example} 
Let \(\lambda\) be such that \( 2^{\aleph_0} \leq \lambda \). Let \( A  \subseteq \pre{\omega}{2} \) be a flip set, that is, a set such that for every \( x,y \in \pre{\omega}{2} \), if  \( \exists! n \in \omega \, ( x(n) \neq y(n)) \) then \( x \in A \iff y \notin A \) --- such a set can be constructed recursively using any well-order of \( \pre{\omega}{2} \), which exists by our assumption on \( \lambda \). Let \( f \colon \lambda \to 2 \) be the map such that \( f(\alpha) = 0 \) if and only if \( \alpha \) is even, i.e.\ it is of the form \( \gamma+k \) with \( \gamma = 0 \) or \( \gamma \) limit, and \( k \in \omega \) even, and let \( g \colon B(\lambda) \to \pre{\omega}{2} \) be defined using \( f \) coordinatewise. Finally, let \( B = g^{-1}(A) \). The set \( A \) is the union of \( 2^{\aleph_0} \)-many singletons, and hence \( B \) is a union of at most \(\lambda\)-many closed sets because \( g \) is continuous and \( 2^{\aleph_0} \leq \lambda \): this shows that \( B \) is \(\lambda\)-Borel, and in fact it belongs to the second level of the \(\lambda\)-Borel hierarchy (see Section~\ref{sec:Borelhierarchy}). 
Notice that the set \( B \) is neither meager nor comeager in \( \Nbhd_s \), for any \( s \in \pre{<\omega}{\lambda} \). 
Indeed, let \( \phi \colon \lambda \to \lambda \) be the bijection swapping every even ordinal with its successor. Then the map \( h \colon \Nbhd_s \to \Nbhd_s \) defined by letting \( h(x)(\lh(s)) = \phi(x(\lh(s))) \) and \( h(x)(n) = x(n) \) for every \( n \neq \lh(s) \)
is a homeomorphism of \( \Nbhd_s \) into itself such that \( h(B \cap \Nbhd_s) = \Nbhd_s \setminus B \). If \( B \) where either meager or comeager in \( \Nbhd_s \), then we would obtain that \( \Nbhd_s \) is meager in itself, against the fact that it is a clopen subspace of \( B(\lambda) \), and hence a \(\lambda\)-Polish space. Therefore \( B \) cannot have the Baire property because of~\cite[Proposition 8.26]{Kechris1995}.
\end{example}

In view of this,
one may be tempted to generalize the notion of meagerness by 
considering \( \nu \)-meager sets with \( \nu > \omega \), where
a subset of a \(\lambda\)-Polish space is \( \nu \)-meager if it can be written as a \( \nu \)-sized union of nowhere dense sets. Setting \( \nu = \lambda \), one would then get that all \( \lambda \)-Borel sets have the \( \lambda \)-Baire property, i.e.\ they differ from an open set by a \( \lambda \)-meager set. However such a notion would be useless because e.g.\ \( B(\lambda) \) is already \( \omega_1 \)-meager, hence all of its subsets trivially have the \( \nu \)-Baire property for any \( \nu > \omega \). To see this, for \( \alpha < \omega_1 \) set \( U_\alpha = \{ x \in B(\lambda) \mid x(n) = \alpha \text{ for some } n \in \omega \} \) and observe that all the sets \( U_\alpha \) are open dense, yet \( \bigcap_{\alpha < \omega_1 } U_\alpha = \emptyset \).

Notice also that in the context of metrizable spaces with a well-orderable basis, complete metrizability is equivalent 
to the space being strong Choquet. This follows from~\cite[Section 8.E]{Kechris1995} once we observe that  separability can be dropped because~\cite[Lemma 8.20]{Kechris1995} directly follows from paracompactness.
Moreover, if \( X \) has a basis \( \mathcal{B} \) of size at most \( \lambda \) and \( f \colon X \to Y \) is an open continuous surjection, then \( \{ f(U) \mid U \in \mathcal{B} \} \) is a basis for \( Y \) witnessing \( \weight(Y) \leq \lambda \).
 It readily follows that Sierpi\'nski's theorem on open continuous images of Polish spaces can be generalized as follows.

\begin{proposition} \label{prop:sierpinski}
Let \( X \) be a \(\lambda\)-Polish space and \( Y \) be a metrizable space. 
If there is a continuous open surjection of \( X \) onto \( Y \), then \( Y \) is \(\lambda\)-Polish as well.
\end{proposition}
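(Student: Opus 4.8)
The plan is to reduce to the strong Choquet characterization of complete metrizability recalled just before the statement. The hypotheses already give everything except completeness: \( Y \) is metrizable by assumption, and by the observation preceding the proposition \( \{ f(U) \mid U \in \mathcal{B} \} \) is a basis of \( Y \) of size at most \( \lambda \), so \( \weight(Y) \le \lambda \). Since \( Y \) is then a metrizable space with a well-orderable basis, it will be \(\lambda\)-Polish as soon as we exhibit a winning strategy for player \( \pII \) in the strong Choquet game on \( Y \), where at round \( n \) player \( \pI \) plays \( (y_n, U_n) \) with \( y_n \in U_n \subseteq V_{n-1} \), player \( \pII \) answers with an open \( V_n \) satisfying \( y_n \in V_n \subseteq U_n \), and \( \pII \) wins iff \( \bigcap_n V_n \ne \emptyset \).

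First I would fix a complete metric \( d \) on \( X \) together with a well-ordered dense set \( \{ c_\xi \mid \xi < \mu \} \) (\( \mu \le \lambda \)), so that the balls \( B_d(c_\xi, 2^{-k}) \) form a well-ordered basis of \( X \) whose elements come with a designated center. Player \( \pII \) then plays the game on \( Y \) while secretly building a decreasing sequence of basic open balls \( W_0 \supseteq W_1 \supseteq \dots \) of \( X \) subject, for each \( n \), to the invariants \( \mathrm{cl}(W_n) \subseteq W_{n-1} \), \( \mathrm{diam}(W_n) \le 2^{-n} \), \( W_n \subseteq f^{-1}(U_n) \), and \( W_n \cap f^{-1}(y_n) \ne \emptyset \) (with the conventions \( W_{-1} = X \) and \( V_{-1} = Y \)). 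Given \( W_n \), player \( \pII \) answers in \( Y \) with \( V_n = f(W_n) \): this is legal because \( f \) is open (so \( V_n \) is open), because \( W_n \subseteq f^{-1}(U_n) \) forces \( V_n \subseteq U_n \), and because \( W_n \) meets \( f^{-1}(y_n) \), forcing \( y_n \in V_n \). The move \( W_n \) itself is defined canonically as the ball \( B_\xi \) of least index satisfying the four invariants; admissibility of some index is guaranteed since \( y_n \in U_n \subseteq V_{n-1} = f(W_{n-1}) \) yields a point \( x^* \in W_{n-1} \) with \( f(x^*) = y_n \), and then any sufficiently small ball with closure inside the open set \( W_{n-1} \cap f^{-1}(U_n) \) works (this uses surjectivity of \( f \) at \( n = 0 \), via \( f(X) = Y \), to start the recursion).

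Finally, after a run in which \( \pII \) follows this strategy, the designated centers \( c_{\xi_n} \) of the balls \( W_n \) form a genuine sequence in \( X \) that is Cauchy for \( d \) (the \( W_n \) are nested with \( \mathrm{diam}(W_n) \to 0 \)), hence converges by completeness to some \( x \); the invariant \( \mathrm{cl}(W_{n+1}) \subseteq W_n \) then gives \( x \in \bigcap_n W_n \), so \( f(x) \in \bigcap_n f(W_n) = \bigcap_n V_n \) and \( \pII \) wins. I expect the only genuine difficulty to be choicelessness rather than topology: because \( X \) need not be well-orderable one cannot simply select preimages of the points \( y_n \), and the classical proof's appeal to a section of \( f \) (or to \( \AC_\omega \) in the final Cauchy argument) must be avoided. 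The device that makes everything go through in \( \ZF \) is precisely the well-order on the basis, which lets \( \pII \) always play the \emph{least} admissible ball (a deterministic function of the position, with the existence of a witness argued separately and non-constructively), together with the designated centers, which supply the limit point canonically. Surjectivity and openness of \( f \) enter only to launch the recursion, to keep \( V_n \subseteq U_n \), and to make the \( V_n \) open.
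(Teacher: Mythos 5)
Your proof is correct and takes essentially the same route as the paper: reduce to the strong Choquet characterization of complete metrizability for metrizable spaces with a well-orderable basis (justified by the paracompactness remark preceding the proposition), observe \( \weight(Y) \leq \lambda \) via the image basis, and transfer strong Choquet-ness from \( X \) to \( Y \) along the continuous open surjection. The paper leaves that transfer implicit, while you spell it out with the least-admissible-ball and designated-center devices so that the strategy is canonical and choice-free — exactly in the spirit of the paper's \( \ZF \)-conscious treatment.
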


We finally mention that by another result of Sierpi\'nski~\cite{Sierpinski1945},  if 
\( 2^{<\lambda} = \lambda \) then there is a (non necessarily unique) universal 
\(\lambda\)-Polish space, that is, a \(\lambda\)-Polish space \( \mathfrak{U}_\lambda \) in which every 
\(\lambda\)-Polish space can be isometrically embedded (as a closed set). However, \( \mathfrak{U}_\lambda \) is not necessarily a higher analogue of the Urysohn space, in the sense that it cannot be 
\( \lambda \)-homogeneous%
\footnote{A space \( X \) is called \(\lambda\)-homogenous if every \emph{partial} isometry of \( X \) of size less than \( \lambda \) can be extended to an auto-isometry of the whole space.}
 unless \( \lambda \) is regular (see~\cite{Katetov1986}).

\section{Basic properties}\label{sec:basicpropertiesforpolish}

\emph{Unless otherwise stated, from now on we let \( \lambda > \omega \) be a limit cardinal such that \( \cf(\lambda) = \omega \). We also fix once and for all a  strictly increasing sequence  \( \vec{\lambda} = (\lambda_i)_{i \in \omega} \) of cardinals cofinal in \(\lambda\). When \( 2^{< \lambda} = \lambda \), so that we have enough choice below \(\lambda\) to ensure that \( \kappa^+ \) is regular for every \( \kappa < \lambda \) by Fact~\ref{fct:lambdaunionewithoutchoice}\ref{fct:lambdaunionewithoutchoice-1}, we further convene that each \(\lambda_i\) is regular.}

\medskip

 In particular, for the sake of definiteness we let \( C(\lambda) \) be the space defined as in Example~\ref{xmp:lambda-Polish}\ref{xmp:lambda-Polish-2} using such cardinals \( \lambda_i \) (the choice of the sequence \( \vec{\lambda} \) is irrelevant by Remark~\ref{rmk:homeomorphictoCantor} below). 

The interested reader might appreciate that all results in this section can be proved in \( \ZF \) alone. For example,
in the next lemma no choice is actually required because \( \pre{<\omega}{\lambda} \) and all its subsets can be canonically well-ordered using G\"odel's enumeration.

\begin{lemma}	 \label{lem:homeomorphictoCantor} \axioms{\( \ZF \)}
Let \( T \subseteq \pre{< \omega}{\lambda} \) be a pruned tree. Then \( [T] \approx B(\lambda) \) if and only if 
\begin{equation} 
\label{eq:largepartition}
\forall s \in T \, \big( |\{ t \in T \mid t \supseteq s \}| \geq \lambda \big).
 \end{equation} 
\end{lemma}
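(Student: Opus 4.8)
The plan is to prove both implications, after a preliminary cardinality reduction. Since $\lambda$ is an infinite (well-orderable) cardinal, $|\pre{<\omega}{\lambda}| = \lambda$ in $\ZF$ via G\"odel pairing, so for every $s$ the set $T^{(s)} := \{t \in T \mid t \supseteq s\}$ has size \emph{at most} $\lambda$. Hence condition \eqref{eq:largepartition} is equivalent to the statement that every node of $T$ has \emph{exactly} $\lambda$ extensions in $T$; I will call such a node, and the associated basic clopen set $U_s := \Nbhd_s \cap [T]$, \emph{large}. Throughout I assume $T \neq \emptyset$, so that $\emptyset \in T$ and, by pruning, $U_s \neq \emptyset$ for every $s \in T$. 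Necessity will be handled by a weight computation, and sufficiency by constructing a $\lambda$-scheme and invoking Fact~\ref{fct:scheme}.

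For necessity, suppose $h \colon [T] \to B(\lambda)$ is a homeomorphism and fix $s \in T$. Then $U_s$ is a nonempty clopen set, so $V := h(U_s)$ is a nonempty open subset of $B(\lambda)$ and contains some basic neighbourhood $\Nbhd_t$ with $t \in \pre{<\omega}{\lambda}$. The shift map witnesses $\Nbhd_t \approx B(\lambda)$, whence $\weight(\Nbhd_t) = \lambda$; since any basis of $V$ restricts to a basis of the subspace $\Nbhd_t$, weight cannot increase when passing to subspaces, so $\lambda = \weight(\Nbhd_t) \le \weight(V) = \weight(U_s)$. On the other hand $\{U_u \mid u \in T,\ u \supseteq s\}$ is a well-orderable basis of $U_s$, so $\weight(U_s) \le |T^{(s)}|$. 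Combining the two inequalities gives $|T^{(s)}| \geq \lambda$, i.e.\ \eqref{eq:largepartition}.

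For sufficiency I would build a $\lambda$-scheme on the metric space $[T]$, which is complete since $[T]$ is closed in $B(\lambda)$. The heart of the matter is the following splitting claim: \emph{if $s \in T$ is large, then for every $\varepsilon > 0$ the set $U_s$ can be partitioned into exactly $\lambda$-many nonempty clopen pieces, each of the form $U_t$ with $\mathrm{diam}(U_t) < \varepsilon$.} To prove it, observe that for a large $s$ the level sizes $\mu_n := |\mathrm{Lev}_{\lh(s)+n}(T^{(s)})|$ are non-decreasing (as $T$ is pruned) and cannot be bounded below $\lambda$, since otherwise $|T^{(s)}| \le \omega \cdot \sup_n \mu_n < \lambda$ using $\cf(\lambda) = \omega$; hence $\sup_n \mu_n = \lambda$. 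Each (nonempty) level is a genuine front, i.e.\ a maximal antichain above $s$ meeting every branch, and thus induces a clopen partition of $U_s$. I would take one level deep enough to have $\mathrm{diam} < \varepsilon$ and size $\ge \omega$, single out countably many of its nodes $e_0, e_1, \dots$, and subdivide each $U_{e_j}$ via a sufficiently deep level of $T^{(e_j)}$ into $\ge \lambda_j$ pieces (possible because each $e_j$ is again large, so its levels are cofinal in $\lambda$); leaving the remaining nodes untouched yields a front above $s$ of size between $\sup_j \lambda_j = \lambda$ and $\lambda$, hence exactly $\lambda$, with all pieces nonempty, clopen, of diameter $< \varepsilon$, and large.

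Granting the claim, I recursively define nodes $t_u \in T$ for $u \in \pre{<\omega}{\lambda}$ with $t_\emptyset = \emptyset$, applying the claim to $U_{t_u}$ with $\varepsilon = 2^{-\lh(u)}$ and enumerating the resulting front as $\{t_{u {}^\smallfrown \beta} \mid \beta < \lambda\}$. Setting $B_u := U_{t_u}$ produces a $\lambda$-scheme with $B_\emptyset = [T]$, pairwise disjoint clopen children satisfying $B_u = \bigcup_{\beta < \lambda} B_{u {}^\smallfrown \beta}$, and $\mathrm{diam}(B_{x \restriction i}) \to 0$. By Fact~\ref{fct:scheme} the induced map $f \colon D \to [T]$ is then simultaneously an open surjection (part~(ii)) and a topological embedding (part~(i)), with $D = \pre{\omega}{\lambda} = B(\lambda)$ (part~(iii), since in an ultrametric every clopen set is closed and all $B_u$ are nonempty); hence $f$ is the desired homeomorphism. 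All choices above — well-ordering $T$, selecting the levels and the nodes $e_j$, enumerating fronts — can be made canonically from G\"odel's well-ordering of $\pre{<\omega}{\lambda}$, so the whole argument goes through in $\ZF$. The main obstacle is exactly the splitting claim: because $\lambda$ is singular no single level need have size $\lambda$, so one is forced to diagonalize across $\omega$-many sub-partitions, and this is precisely where $\cf(\lambda) = \omega$ enters.
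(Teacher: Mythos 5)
Your proof is correct and is essentially the paper's own argument: necessity is the same weight computation (every nonempty open subset of \( B(\lambda) \) has weight \( \lambda \), and weight passes to subspaces), and your splitting claim is precisely the paper's ``canonical partition'' claim, obtained by the same diagonalization --- keep all but countably many nodes of a sufficiently deep level and refine the selected ones into \( \geq \lambda_j \)-many pieces each. The only cosmetic difference is the packaging of the recursion: you feed the partitions into a \( \lambda \)-scheme and invoke Fact~\ref{fct:scheme}, whereas the paper builds the monotone map \( \varphi \colon \pre{<\omega}{\lambda} \to T \) directly and verifies by hand that the induced map is a homeomorphism.
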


\begin{proof}
In \( B(\lambda) \), every nonempty open set has weight \( \lambda \). If~\eqref{eq:largepartition} fails as witnessed by some \( s \in T \), then the nonempty basic open set \( \Nbhd_s([T]) \) of \( [T] \) would have weight smaller than \( \lambda \), hence \( [T] \not\approx B(\lambda) \).

For the other direction, first notice that
condition~\eqref{eq:largepartition} is equivalent to
\begin{equation} 
\label{eq:largepartition2}
\forall s \in T  \, \forall \nu < \lambda \, \exists n > \lh(s) \, \big(  |\{ t \in \mathrm{Lev}_n( T ) \mid t \supseteq s \}| \geq \nu \big).
 \end{equation} 

Indeed, if~\eqref{eq:largepartition2} fails then there are \( s \in T \) and \( \nu < \lambda \) such that \(  |\{ t \in \mathrm{Lev}_n( T ) \mid t \supseteq s  \}| < \nu \) for all \( n > \lh(s) \): this implies that \( |\{ t \in T \mid t \supseteq s \}| \leq \nu \times \omega < \lambda \), contradicting~\eqref{eq:largepartition}.  The other implication is obvious, as~\eqref{eq:largepartition2} implies that \( |\{ t \in T \mid t \supseteq s \}| \geq \nu \) for all \( \nu < \lambda \).

So let us assume that~\eqref{eq:largepartition2} holds and prove that \( [T] \approx \pre{\omega}{\lambda} = B(\lambda) \).
Say that  \( T' \subseteq T\) is a partition of \( t \in T \) if the elements of \( T' \) properly extend \( t \), are pairwise incomparable, and for all \( t \subseteq x \in [T] \) there is \( t' \in T' \) such that \( t' \subseteq x \). (This is equivalent to requiring that \( \{ \Nbhd_{t'}([T]) \mid t' \in T' \} \) is a clopen partition of \(  \Nbhd_t([T])  \), whence the name).

\begin{claim}
For every \( t \in T \) there is a \emph{canonical} partition of \( t \) of size \(\lambda\).
\end{claim}

\begin{proof}[Proof of the claim]
Fix \( t \in T \). Apply~\eqref{eq:largepartition2} with \( s = t \) and \( \nu = \omega \), and let \( n > \lh(t) \) be smallest 
such that the set \( \{ t' \in \mathrm{Lev}_n(T) \mid {t' \supseteq t} \} \) is infinite. Let \( (t_i)_{i < \alpha} \) be the enumeration of such set according to G\"odel's ordering, for the appropriate \( \alpha \geq \omega \). For each \( i < \omega \), apply 
again~\eqref{eq:largepartition2} with \( s =  t_i \) and \( \nu = \lambda_i \) and let 
\( m_i > \lh(t_i)  = n \) be smallest such that the set \( T_i = \{ t' \in \mathrm{Lev}_{m_i}(T) \mid t' \supseteq t_i \} \) has size 
greater than \( \lambda_i \). Then
\[ 
T' = \{ t_i \mid \omega \leq i < \alpha \} \cup  \bigcup\nolimits_{i < \omega} T_i 
 \] 
is a partition of \( t \) of size \( \lambda \).
\end{proof}
 
Define \( \varphi \colon \pre{<\omega}{\lambda} \to T \) by induction on \( \lh(s) \) as follows. Set \( \varphi(\emptyset) = \emptyset \). Given any \( s \in \pre{< \omega}{\lambda} \), by the claim we have a canonical partition \( T' \subseteq T \) of \( \varphi(s) \) of size \( \lambda \). Enumerate without repetitions (following G\"odel's ordering again) \( T' \) as \( (t_\beta)_{\beta < \lambda} \), and then set \( \varphi(s {}^\smallfrown{} \beta) = t_\beta \) for all \( \beta < \lambda \). It is clear \( \varphi \) is monotone and such that \( \lh(\varphi(s)) \geq \lh(s) \) for all \( s \in \pre{<\omega}{\lambda} \), hence the map
\[ 
f_\varphi \colon \pre{\omega}{\lambda}  \to [T] \colon x \mapsto \bigcup\nolimits_{i < \omega} \varphi(x \restriction i)
 \] 
is well-defined. By construction, \( f_\varphi \) is a bijection. Moreover, it is a homeomorphism because  \( f(\Nbhd_s) = \Nbhd_{\varphi(s)}([T]) \) for all \( s \in \pre{< \omega}{\lambda} \) and 
\( \{ \Nbhd_{\varphi(s)}([T]) \mid s \in \pre{< \omega}{\lambda} \} \) is, by construction, a basis for the topology of \( [T] \).
\end{proof}

The next theorem directly shows that, under the appropriate hypothesis on \(\lambda\), all spaces in items \ref{xmp:lambda-Polish-2}--\ref{xmp:lambda-Polish-4} of  Example~\ref{xmp:lambda-Polish} are homeomorphic. Part~\ref{thm:homeomorphictoCantor-2} of Theorem~\ref{thm:homeomorphictoCantor} has been implicitly considered in~\cite[Section 7]{DzaVaa}, although in a slightly different setup. 

\begin{theorem} \label{thm:homeomorphictoCantor}
\begin{enumerate-(i)}
\item \label{thm:homeomorphictoCantor-1}
For any sequence \( \vec{\lambda}' =  (\lambda'_i)_{i \in \omega} \)  of cardinals  cofinal in \( \lambda \),
\[ 
B(\lambda) \approx \pre{\omega}{(\vec{\lambda}')} =  \prod_{i \in \omega} \lambda'_i.
 \] 
In particular, \( B(\lambda) \approx C(\lambda) \).
\item \label{thm:homeomorphictoCantor-2}
If  \( 2^{<\lambda} = \lambda \), we further have 
\[ 
\pre{\lambda}{2} \approx B(\lambda) \approx C(\lambda) .
\]
\item \label{thm:homeomorphictoCantor-3}
If moreover $|V_\lambda|=\lambda$, then
\[
V_{\lambda+1} \approx \pre{\lambda}{2} \approx B(\lambda) \approx C(\lambda) .
\]
\end{enumerate-(i)}
\end{theorem}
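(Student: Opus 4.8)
The plan is to deduce all three parts from Lemma~\ref{lem:homeomorphictoCantor} by exhibiting each of the spaces $C(\lambda)$, $\pre{\lambda}{2}$, and $V_{\lambda+1}$ as a countable product of discrete spaces whose cardinalities are cofinal in $\lambda$, and then recognizing such a product as the body of a suitable tree. For part~\ref{thm:homeomorphictoCantor-1}, I would realize $\prod_{i \in \omega} \lambda'_i = \pre{\omega}{(\vec{\lambda}')}$ as $[T]$ for the pruned tree $T = \pre{<\omega}{(\vec{\lambda}')}$ on $\lambda$ (pruned because each $\lambda'_i$ is infinite, so $s {}^\smallfrown{} 0 \in T$ extends any $s \in T$), and then just verify condition~\eqref{eq:largepartition}. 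Given $s \in T$ with $\lh(s) = k$, the immediate successors of $s$ are the $s {}^\smallfrown{} \beta$ with $\beta < \lambda'_k$; since $(\lambda'_i)_i$ is cofinal in the limit cardinal $\lambda$, so is any tail $(\lambda'_i)_{i \ge k}$, whence $|\{ t \in T \mid t \supseteq s \}| \geq \sup_{i \geq k} \lambda'_i = \lambda$. Lemma~\ref{lem:homeomorphictoCantor} then gives $[T] \approx B(\lambda)$, and the special case $\vec{\lambda}' = \vec{\lambda}$ yields $B(\lambda) \approx C(\lambda)$. As with the lemma itself, this argument runs in $\ZF$.

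For part~\ref{thm:homeomorphictoCantor-2}, I would cut $\lambda$ into the consecutive blocks $I_0 = [0, \lambda_0)$ and $I_i = [\lambda_{i-1}, \lambda_i)$ for $i \geq 1$, so that $|I_i| = \lambda_i$, and consider the block-restriction bijection $\pre{\lambda}{2} \to \prod_{i \in \omega} \pre{I_i}{2}$, $x \mapsto (x \restriction I_i)_{i \in \omega}$. I claim this is a homeomorphism when each factor $\pre{I_i}{2}$ carries the discrete topology and the product carries the product topology. Indeed, fixing an initial segment $x \restriction \lambda_n$ is exactly fixing the first $n+1$ blocks, so every metric-basic-open subset of $\pre{\lambda}{2}$ is product-open; conversely, a single block is pinned down by the bounded-open set $\Nbhd_{x \restriction \lambda_i}$, and any $\Nbhd_s$ is determined by coordinates below some $\lambda_n$, hence is a union of product-basic-open sets. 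Since $2^{<\lambda} = \lambda$ means $\lambda$ is strong limit, each factor is discrete of size $\mu_i = 2^{\lambda_i} < \lambda$, and $\mu_i \geq \lambda_i$ makes $(\mu_i)_i$ cofinal in $\lambda$. Replacing each discrete $\pre{I_i}{2}$ by $\mu_i$ through a bijection is a homeomorphism of products, so $\pre{\lambda}{2} \approx \prod_i \mu_i$, and part~\ref{thm:homeomorphictoCantor-1} closes the case.

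Part~\ref{thm:homeomorphictoCantor-3} runs in complete parallel, now cutting $V_\lambda = \bigsqcup_{i \in \omega} W_i$ into the rank-blocks $W_0 = V_{\lambda_0}$ and $W_i = V_{\lambda_i} \setminus V_{\lambda_{i-1}}$. The map $A \mapsto (A \cap W_i)_{i \in \omega}$ is a bijection $V_{\lambda+1} = \pow(V_\lambda) \to \prod_i \pow(W_i)$, and it is a homeomorphism of the Woodin topology onto the product-of-discrete topology: the basic Woodin set $O_{(a,\alpha)}$ depends only on $A \cap V_{\lambda_n}$ for any $n$ with $\lambda_n > \alpha$, hence on finitely many blocks, while conversely a single block $A \cap W_i$ is fixed by the Woodin-open set $O_{(A \cap V_{\lambda_i},\, \lambda_i)}$. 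From $\beth_\lambda = \lambda$ one gets $|V_{\lambda_i}| = \beth_{\lambda_i}$, so $|\pow(W_i)| \leq 2^{|V_{\lambda_i}|} = \beth_{\lambda_i + 1} < \beth_\lambda = \lambda$; thus each factor is discrete of size $\nu_i < \lambda$, and $\nu_i \geq \lambda_i$ again forces cofinality in $\lambda$. Hence $V_{\lambda+1} \approx \prod_i \nu_i \approx B(\lambda)$ by part~\ref{thm:homeomorphictoCantor-1}, and chaining with part~\ref{thm:homeomorphictoCantor-2} (legitimate since $\beth_\lambda = \lambda$ implies $\lambda$ is strong limit) delivers all the asserted homeomorphisms.

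The main obstacle is not a single difficult idea but the careful verification, in each of the two block decompositions, that the ``fix an initial segment'' (respectively ``fix a bounded chunk of rank'') basic open sets coincide \emph{exactly} with the ``fix finitely many coordinates'' basic open sets of the product-of-discrete topology — that is, that every factor is genuinely discrete and that no infinitary dependence across blocks survives. The surrounding cardinal arithmetic is where the hypotheses enter: $2^{<\lambda} = \lambda$ and $\beth_\lambda = \lambda$ are precisely what guarantees the factor sizes $\mu_i,\nu_i$ stay below $\lambda$ while remaining cofinal in it, and one must keep in mind that these same hypotheses supply the choice below $\lambda$ needed to carry out these computations in our weak-choice setting.
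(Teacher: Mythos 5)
Your proof is correct and follows essentially the same route as the paper's: part~(i) by applying Lemma~\ref{lem:homeomorphictoCantor} to the tree \( \pre{<\omega}{(\vec{\lambda}')} \), part~(ii) via the block decomposition \( \pre{\lambda}{2} \approx \prod_{i\in\omega}\pre{\lambda_i}{2} \) with discrete factors of size \( 2^{\lambda_i}<\lambda \), and part~(iii) via the rank-block decomposition \( V_{\lambda+1} \approx \prod_{i\in\omega}\pow(V_{\lambda_i}\setminus V_{\lambda_{i-1}}) \) followed by an appeal to parts~(i) and~(ii). The only difference is that you spell out in detail the homeomorphism verifications that the paper dismisses as obvious or handles in one line.
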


Notice that the conditions on \(\lambda\) in the various items are exactly the ones needed to guarantee that the mentioned spaces are \(\lambda\)-Polish: so one can briefly say that the spaces \( B(\lambda) \), \( C(\lambda) \), \( \pre{\lambda}{2} \) and \( V_{\lambda+1} \) are all homeomorphic whenever they are \(\lambda\)-Polish.

\begin{proof}
\ref{thm:homeomorphictoCantor-1}
Recall from Section~\ref{subsec:trees} that \( \pre{\omega}{(\vec{\lambda}')} = [\pre{<\omega}{(\vec{\lambda}')}] \).
Since \( T = \pre{<\omega}{(\vec{\lambda}')} \) clearly satisfies~\eqref{eq:largepartition} (because the \( \lambda'_i \) are cofinal in \( \lambda \)), the result follows from Lemma~\ref{lem:homeomorphictoCantor}.

\ref{thm:homeomorphictoCantor-2}
The space \( \pre{\lambda}{2} \) is obviously homeomorphic to \( \prod_{i < \omega} \pre{\lambda_i}{2} \) where each \( \pre{\lambda_i}{2} \) is endowed with the discrete topology. Thus \( \pre{\lambda}{2} \approx \prod_{i \in \omega} \lambda'_i \) where \( \lambda'_i = | \pre{\lambda_i}{2}| < \lambda \) (the last inequality follows from  \( 2^{< \lambda} = \lambda\)). Since \( \lambda_i < 2^{\lambda_i} = \lambda'_i \) the \( \lambda'_i \) are cofinal in \(\lambda\), thus the result follows from part~\ref{thm:homeomorphictoCantor-1}.

\ref{thm:homeomorphictoCantor-3}
 For each $i\in\omega$, let $V^-_{\lambda_i+1} = \{ A  \setminus V_{\lambda_{i-1}} \mid A \subseteq V_{\lambda_i}\}=\pow(V_{\lambda_i}\setminus V_{\lambda_{i-1}})$ and $V_{\lambda_{-1}}=\emptyset$. We are going to prove that $V_{\lambda+1}\approx  \prod_{i \in \omega} \lambda'_i$ where \( \lambda'_i = |V^-_{\lambda_i+1}| \); the \( \lambda'_i \)'s are clearly cofinal in \( \lambda \) because so are the cardinals \( \lambda_i \). By part~\ref{thm:homeomorphictoCantor-1}, it will then follow that  $V_{\lambda+1}\approx C(\lambda) \approx B(\lambda)$, hence also \( V_{\lambda+1} \approx \pre{\lambda}{2} \) by part~\ref{thm:homeomorphictoCantor-2} (which can be applied here because \( |V_\lambda| = \lambda \) implies \( 2^{< \lambda} = \lambda \) when \(\lambda\) is limit). 
 
For each $i\in\omega$, fix a bijection  $s_i \colon \lambda_i'\to V^-_{\lambda_i+1}$. Then $s \colon \prod_{i \in \omega} \lambda'_i\to V_{\lambda+1}$, defined by $s(x)=\bigcup_{i\in\omega}s_i(x(i))$ for all \( x \in\prod_{i \in \omega} \lambda'_i \), is a homeomorphism, since $\{O_{(a,\lambda_i)} \mid i\in\omega \wedge a\subseteq V_{\lambda_i}\}$ is a basis for the Woodin topology on \( V_{\lambda+1} \). 
\end{proof}

\begin{remark} \label{rmk:homeomorphictoCantor}
By Theorem~\ref{thm:homeomorphictoCantor}\ref{thm:homeomorphictoCantor-1}, the choice of the sequence \( (\lambda_i)_{i \in \omega} \) in the definition of \( C(\lambda) \) is irrelevant and can be changed when needed (provided that we always work with a sequence cofinal in \(\lambda\)). In particular, the chosen sequence needs not to be increasing.
\end{remark}

Recall that a subset \( C \subseteq X \) of a topological space \( X \) is a retract of \( X \) if there is a continuous function \( f \colon X \to C \) such that \( f(x) = x \) for all \( x \in C \) (in particular, \( f \) is surjective). 
By~\cite[Proposition 2.8]{Kechris1995} we get%
\footnote{As formulated in~\cite{Kechris1995}, the proof requires a bit of dependent choices when dealing with trees on arbitrary sets \( A \). In contrast, we only need to work with trees on the well-ordered set \(\lambda\), thus in our case the argument clearly works in \( \ZF \).}

\begin{proposition} \label{prop:retract} \axioms{\( \ZF \)}
Every nonempty closed subset of \( B(\lambda) \) is a retract of the entire space.
\end{proposition}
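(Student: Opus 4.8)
The plan is to mimic the classical construction of a retraction onto a closed subset of the Baire space (as in \cite[Proposition 2.8]{Kechris1995}), the key observation being that the only appeal to choice in that argument — the selection of a branch through a tree — can be performed canonically here because \( \lambda \) is well-ordered, so the whole proof goes through in \( \ZF \).

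First I would invoke the characterization of closed sets recalled in Section~\ref{sec:boundedtopology}: since \( C \) is a nonempty closed subset of \( B(\lambda) = \pre{\omega}{\lambda} \), we may fix a pruned tree \( T \subseteq \pre{<\omega}{\lambda} \) with \( C = [T] \); note that \( \emptyset \in T \) because \( C \neq \emptyset \). For each \( s \in T \) I define the \emph{leftmost branch above \( s \)}, a point \( b_s \in [T] \) with \( s \subseteq b_s \), by the recursion \( b_s \restriction \lh(s) = s \) and, for \( i \geq \lh(s) \), \( b_s(i) = \) the least \( \alpha < \lambda \) such that \( (b_s \restriction i){}^\smallfrown{}\alpha \in T \). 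Such an \( \alpha \) exists at every step because \( T \) is pruned, and the definition uses only the canonical well-ordering of \( \lambda \), so no choice is needed (compare the leftmost-branch construction in Section~\ref{subsec:trees}). One checks directly that \( b_s \restriction i \in T \) for all \( i \), hence \( b_s \in [T] \).

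The retraction \( f \colon \pre{\omega}{\lambda} \to [T] \) is then defined by cases: \( f(x) = x \) if \( x \in [T] \), and otherwise \( f(x) = b_{x \restriction n} \), where \( n \) is the largest integer with \( x \restriction n \in T \) (this exists and is \( \geq 0 \), since \( \emptyset \in T \) while \( x \restriction m \notin T \) for some \( m \)). Clearly \( f(x) \in [T] \) always and \( f \restriction C = \id \), so it remains to check continuity, which splits into two cases. If \( x \notin [T] \) with \( n \) as above, then every \( y \) with \( y \restriction (n+1) = x \restriction (n+1) \) satisfies \( y \restriction n \in T \) and \( y \restriction (n+1) \notin T \), whence \( f(y) = b_{y \restriction n} = b_{x \restriction n} = f(x) \); thus \( f \) is constant on the basic neighborhood \( \Nbhd_{x \restriction (n+1)} \), and so continuous at \( x \). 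If instead \( x \in [T] \), fix \( m \in \omega \); for every \( y \) with \( y \restriction m = x \restriction m \) we have \( y \restriction m \in T \), and in either case \( f(y) \) extends \( y \restriction m = x \restriction m \) (when \( y \notin [T] \), because the largest \( n \) with \( y \restriction n \in T \) is \( \geq m \), and \( b_{y \restriction n} \supseteq y \restriction n \supseteq x \restriction m \)). Hence \( f(y) \restriction m = f(x) \restriction m \), proving continuity at \( x \).

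I expect the only genuinely delicate point to be continuity at points \( x \in [T] \) that are limits of points leaving \( T \) at higher and higher levels; the argument above handles this uniformly by observing that \( f(y) \) always extends the longest initial segment of \( y \) lying in \( T \), which forces agreement with \( x \) on arbitrarily long initial segments. Everything else is routine, and since the branch selection is canonical the construction is valid in \( \ZF \) alone.
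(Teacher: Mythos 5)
Your proof is correct and is essentially the paper's argument: the paper simply invokes the classical retraction construction of \cite[Proposition 2.8]{Kechris1995}, noting (as you do) that the only use of choice there --- selecting extensions inside the pruned tree --- can be made canonical via the well-ordering of \( \lambda \), and the retraction obtained that way is exactly your map sending \( x \notin [T] \) to the leftmost branch of \( T \) above the longest initial segment of \( x \) lying in \( T \). Your direct verification of continuity is a correct unwinding of that same construction, so nothing further is needed.
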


The next proposition shows in particular that \( \pre{\omega}{\lambda} =  B(\lambda) \approx C(\lambda) \) is surjectively universal for \(\lambda\)-Polish spaces. It is already stated and proved (in a slightly more precise form, which is not relevant here) in~\cite[Section 3.3]{Stone1962}. 

\begin{proposition} \label{prop:surjection} \axioms{\( \ZF \)}
Let \( X \) be \(\lambda\)-Polish. Then there is a closed set \( F \subseteq B(\lambda) \) and a continuous bijection \( f \colon F \to X \). If moreover \( X \neq \emptyset \), then there is a continuous surjection \( g \colon B(\lambda) \to X \).
\end{proposition}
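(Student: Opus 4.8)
The plan is to realize $X$ as the image of a $\lambda$-scheme and read off both assertions from Fact~\ref{fct:scheme}. Fix a complete compatible metric $d$ on $X$ (bounded by $1$, say) and a well-ordered basis $(V_\alpha)_{\alpha<\lambda}$; both are available in $\ZF$ once the $\lambda$-Polish space is presented with its defining data, and no further choices are made. I would construct a family $\{F_s \mid s \in \pre{<\omega}{\lambda}\}$ of subsets of $X$ with $F_\emptyset = X$ such that, for every $s$, the sets $F_{s {}^\smallfrown{}\beta}$ ($\beta<\lambda$) are pairwise disjoint, cover $F_s$, satisfy $\mathrm{diam}(F_{s {}^\smallfrown{}\beta}) \le 2^{-(\lh(s)+1)}$, and satisfy $\mathrm{cl}(F_{s {}^\smallfrown{}\beta}) \subseteq F_s$. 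Granting this, the induced map $f\colon D \to X$ of Fact~\ref{fct:scheme} is injective by clause (i) of that fact (disjoint siblings), surjective with $f(D) = F_\emptyset = X$ by clause (ii) (the siblings partition their parent and $F_\emptyset = X$), and by clause (iii) (completeness of $d$ together with the nesting of closures) its domain is $D = [T]$ for $T = \{s \mid F_s \neq \emptyset\}$, hence closed in $\pre{\omega}{\lambda} = B(\lambda)$. Setting $F = D$ then gives the desired closed set and continuous bijection $f\colon F \to X$.

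The crux is achieving at once a covering partition into small pieces and the closure condition $\mathrm{cl}(F_{s {}^\smallfrown{}\beta}) \subseteq F_s$, which is non-trivial precisely because the $F_s$ need not be closed. I would secure it by carrying as an inductive invariant that each $F_s$ comes with a canonical representation as an increasing union $F_s = \bigcup_{k\in\omega} C_k$ of closed sets. I first split $F_s$ into the shells $S_k = C_k \setminus C_{k-1}$ (with $C_{-1} = \emptyset$), so that $\mathrm{cl}(S_k) \subseteq C_k \subseteq F_s$. I then subdivide each shell using basic sets of small diameter: letting $(V_\alpha)$ now range only over those basic open sets with $\mathrm{diam}(V_\alpha) < 2^{-(\lh(s)+1)}$ (which still cover $X$) and putting $P_\alpha = V_\alpha \setminus \bigcup_{\gamma<\alpha}V_\gamma$, the pieces $S_k \cap P_\alpha$ are pairwise disjoint, cover $F_s$, have diameter $< 2^{-(\lh(s)+1)}$, and satisfy $\mathrm{cl}(S_k \cap P_\alpha) \subseteq \mathrm{cl}(S_k) \subseteq F_s$. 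Each $P_\alpha$ is a Boolean combination of open sets, hence canonically $F_\sigma$ by Fact~\ref{fct:booleancombinationsareFsigma}; so is each shell $S_k = C_k \cap (X\setminus C_{k-1})$, and therefore so is each intersection $S_k\cap P_\alpha$, which preserves the invariant. Reindexing the $\omega\times\lambda$-many pieces by $\lambda$ (padding with $\emptyset$ as needed) yields the branching $(F_{s {}^\smallfrown{}\beta})_{\beta<\lambda}$. Since every step is determined canonically from $F_s$, its canonical $F_\sigma$-representation, and the fixed well-ordered basis, the whole recursion proceeds in $\ZF$ without any appeal to choice.

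For the second assertion, note that when $X\neq\emptyset$ we have $F_\emptyset = X \neq\emptyset$, so $T$ is nonempty; moreover $T$ is pruned, since $F_s\neq\emptyset$ forces some $F_{s {}^\smallfrown{}\beta}\neq\emptyset$ as the siblings cover $F_s$. Hence $F = [T]$ is a nonempty closed subset of $B(\lambda)$, and by Proposition~\ref{prop:retract} there is a continuous retraction $r\colon B(\lambda)\to F$, which is in particular surjective; then $g = f\circ r\colon B(\lambda)\to X$ is the required continuous surjection. The only genuinely delicate point is thus the closure-nesting in the scheme construction, handled by the shell decomposition above; everything else is a direct transcription of the classical argument with $\omega$ replaced by $\lambda$ in the branching.
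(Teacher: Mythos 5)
Your proposal is correct and follows essentially the same route as the paper's proof: you build the same $\lambda$-scheme by intersecting the disjointified small-diameter basic cover with the "shells" of a canonical $F_\sigma$-representation of each piece (the paper's sets $F_{\alpha,n}$ are exactly your $P_\alpha \cap S_n$), maintain the $F_\sigma$-invariant via Fact~\ref{fct:booleancombinationsareFsigma} to keep the recursion choice-free, and obtain the surjection from Proposition~\ref{prop:retract}. No gaps.
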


One way to prove Proposition~\ref{prop:surjection} is to adapt
the proof of~\cite[Theorem 7.9]{Kechris1995} and build an appropriate \(\lambda\)-scheme on \( X \), still requiring the sets in the scheme to be \( F_\sigma \). This is an illuminating example of the fact that not all classical proofs must be blindly generalized by replacing \( \omega \) with \(\lambda\) everywhere: indeed, in this case this would lead us to build a scheme consisting of sets which are union of \(\lambda\)-many closed sets, a move that unfortunately would not make the argument go through. Because of this subtlety (and for the sake of completeness) we feel that it is fair towards the reader to provide the details of the proof. Moreover, since the argument in~\cite[Theorem 7.9]{Kechris1995} requires a form of dependent choice, which is not necessarily available in our setup, we slightly refine the argument and show that we do not need any choice principle at all.

\begin{proofcompare}{Kechris1995}{Theorem 7.9} 
Fix a compatible complete metric \( d \) on \( X \) with \( d \leq 1 \).
By Fact~\ref{fct:scheme} it is enough to build a \( \lambda \)-scheme \( \{ B_s \mid s \in \pre{< \omega}{\lambda} \} \) on \( X \) such that for all \( s \in \pre{< \omega}{\lambda} \) and \( \beta, \beta' < \lambda \)
\begin{enumerate-(1)}
\item
\( B_\emptyset = X \) and \( B_s =\bigcup_{\beta< \lambda} B_{s {}^\smallfrown{} \beta} \);
\item
\( B_{s {}^\smallfrown{}  \beta } \cap B_{s {}^\smallfrown{} \beta'} = \emptyset \) if \( \beta \neq \beta' \);
\item
\( \mathrm{cl}(B_{s {}^\smallfrown{} \beta}) \subseteq B_s \);
\item
\( \mathrm{diam}(B_s) \leq 2^{- \lh(s)} \).
\end{enumerate-(1)}
We will additionally construct
for each \( B_s \) a family \( (C^s_n)_{n \in \omega} \) of closed sets such that \( B_s = \bigcup_{n \in \omega} C^s_n\), so that \( B_s \) is in particular an \( F_\sigma \) set.

The construction is by recursion on \( \mathrm{lh}(s) \). Set \( B_\emptyset = X \) and \( C^\emptyset_n = X \) for all \( n \in \omega \). Now assume that \( B_s \) and \( (C^s_n)_{n \in \omega} \) have been constructed as above. 
Cover \( B_s \) with basic open sets \( D_{\alpha} \), \( \alpha < \lambda \), with diameter at most \( 2^{-(\lh(s)+1)} \). Consider the sets
\[ 
F_{\alpha,n} = \left( D_\alpha \setminus \bigcup\nolimits_{\beta< \alpha} D_\beta \right) \cap \left( C^s_n \setminus \bigcup\nolimits_{m < n} C^s_m \right).
 \] 
 for \( \alpha < \lambda \) and \( n \in \omega \).
Clearly, \( \mathrm{cl}(F_{\alpha,n}) \subseteq \mathrm{cl}(C^s_n) = C^s_n \subseteq B_s \) and \( \mathrm{diam}(F_{\alpha,n}) \leq \mathrm{diam}(D_\alpha) \leq 2^{-(\lh(s)+1)} \). Finally, \( \bigcup_{\alpha< \beta, n \in \omega} F_{\alpha,n} = B_s \) because both families \( \{ C^s_n \mid n \in \omega \} \) and \( \{ D_\alpha \mid \alpha < \lambda \} \) cover \( B_s \). Thus it is enough to let \( (B_{s {}^\smallfrown{} \beta})_{\beta < \nu} \) be any enumeration %
 without repetitions of those sets \( F_{\alpha,n} \) which are nonempty (where \( 1 \leq \nu \leq \lambda \) is the cardinality of such family), and 
\( B_{s {}^\smallfrown{} \gamma} = \emptyset \) for \( \nu \leq \gamma < \lambda \) if any such \(\gamma\) exists. 
Since by construction each \( B_{s {}^\smallfrown{} \beta} \) is a Boolean combination of open sets, by Fact~\ref{fct:booleancombinationsareFsigma} we can canonically write it as a countable union \( \bigcup_{n \in \omega} C^{s {}^\smallfrown{} \beta}_n \) of closed sets \( C^{s {}^\smallfrown{} \beta}_n \); for \( \nu \leq \gamma < \lambda \), we just set \( C^{s {}^\smallfrown{} \gamma}_n = \emptyset \) for all \( n \in \omega \).

The additional part follows from Proposition~\ref{prop:retract}.
\end{proofcompare}

\begin{remark}\label{rmk:inverseofinjection}
Let \( f \) and \( F \) be as in the proof of Proposition~\ref{prop:surjection}. Then  \( f(\Nbhd_s(F)) = B_s \), hence the image of any basic open set through \( f \) is an \( F_\sigma \) subset of \( X \).
\end{remark}

If we drop injectivity, we can require that the continuous surjection \( g \colon B(\lambda) \to X \) from the above proposition is also open (Corollary~\ref{cor:surjection2}). The easy and short proof of this fact is the natural adaptation of the classical one: our interest in it is that it yields to the technical fact Proposition~\ref{prop:tree-basis} that will frequently be used later. Let us first isolate the following notion.

\begin{defin} \label{def:tree-basis}
A \markdef{tree-basis} for a \(\lambda\)-Polish space \( X \) is a family \( \mathcal{B} = \{ U_s \mid s \in T \} \) of subsets of \( X \) indexed by some \( \omega \)-tree \( T \subseteq \pre{<\omega}{\lambda} \) (called \markdef{index tree} of \( \mathcal{B} \)) such that  for all \( s \in T \) and \( \alpha < \lambda \) with \( s {}^\smallfrown{} \alpha \in T \)
\begin{enumerate-(a)}
\item \label{def:tree-basis-1}
\( U_s  \) is a nonempty open set; 
\item \label{def:tree-basis-2}
\( \mathrm{cl}(U_{s {}^\smallfrown{} \alpha}) \subseteq U_s \);
\item \label{def:tree-basis-3}
\( U_\emptyset = X \) and \( U_s = \bigcup \{ U_{s {}^\smallfrown{} \beta} \mid s {}^\smallfrown{}  \beta \in T \} \);
\item \label{def:tree-basis-4}
\(  \mathrm{diam}(U_s) \leq 2^{-\lh(s)} \) (where the diameter is computed with respect to any complete metric \( d \leq 1 \) on \( X \) fixed in advance).
\end{enumerate-(a)}
\end{defin}

The name tree-basis comes from the fact that every family \( \mathcal{B} = \{ U_s \mid s \in T \} \) satisfying conditions~\ref{def:tree-basis-3}--\ref{def:tree-basis-4} above is automatically a basis%
\footnote{Given any open ball \( B \) centered in \( x \in X \) with radius \( 2^{-k} \) for some \( k \in \omega \), by condition~\ref{def:tree-basis-4} we have \( x \in U_s \subseteq B \) for any \( s \in \mathrm{Lev}_{k+1}(T) \) such that \( x \in U_s \); such an \( s \) exists because by condition~\ref{def:tree-basis-3}, we have \( X = \bigcup \{ U_s \mid s \in \mathrm{Lev}_n(T) \} \) for every \( n \in \omega \).}
 for the the topology of \( X \), whose inclusion relation is ``controlled'' by its index tree \( T \). A tree-basis \( \mathcal{B} \) clearly constitutes a \(\lambda\)-scheme on \( X \) whose induced function is denoted by \( f_\mathcal{B} \). By Fact~\ref{fct:scheme}, conditions~\ref{def:tree-basis-1}--\ref{def:tree-basis-4} ensure that this is always a continuous open surjection from the closed set \( [T] \subseteq B(\lambda) \) onto \( X \).

\begin{proposition} \label{prop:tree-basis} \axioms{\( \ZF \)}
Every nonempty \(\lambda\)-Polish space \( X \) admits a tree-basis with index tree the full \( \pre{< \omega}{\lambda} \). 
\end{proposition}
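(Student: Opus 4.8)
The plan is to build the sets $U_s$ directly by recursion on $\lh(s)$, reading off the children of each node from a fixed well-ordered basis of $X$, so that no choice is needed and the argument stays within $\ZF$. First I would fix a complete metric $d \le 1$ on $X$ (as required by the definition of tree-basis) together with a well-ordered basis $\{ V_\gamma \mid \gamma < \mu \}$ with $\mu \le \lambda$, and set $U_\emptyset = X$; since $d \le 1$ we have $\mathrm{diam}(U_\emptyset) \le 1 = 2^{-\lh(\emptyset)}$, so condition (d) holds at the root.

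For the successor step, suppose $U_s$ has already been defined as a nonempty open set with $\mathrm{diam}(U_s) \le 2^{-\lh(s)}$. I would let $\mathcal{C}_s$ be the collection of all nonempty basic open sets $V_\gamma$ with $\mathrm{cl}(V_\gamma) \subseteq U_s$ and $\mathrm{diam}(V_\gamma) \le 2^{-(\lh(s)+1)}$; this collection is determined by $U_s$ alone, with no arbitrary choices. The key geometric observation is that $\bigcup \mathcal{C}_s = U_s$: given $x \in U_s$, choose $\varepsilon > 0$ with $B_d(x,\varepsilon) \subseteq U_s$ and put $r = \min(\varepsilon/2, 2^{-(\lh(s)+2)})$; since the $V_\gamma$ form a basis there is $\gamma$ with $x \in V_\gamma \subseteq B_d(x,r)$, and then $\mathrm{cl}(V_\gamma) \subseteq \{ y \mid d(x,y) \le r \} \subseteq B_d(x,\varepsilon) \subseteq U_s$ while $\mathrm{diam}(V_\gamma) \le 2r \le 2^{-(\lh(s)+1)}$, so $V_\gamma \in \mathcal{C}_s$ and $x \in V_\gamma$. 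In particular $\mathcal{C}_s \ne \emptyset$, since $U_s \ne \emptyset$.

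Next I would enumerate $\mathcal{C}_s$ as $(W^s_i)_{i < \theta_s}$ following the fixed well-ordering of the basis, where $1 \le \theta_s \le \mu \le \lambda$, and define the $\lambda$ children of $s$ by $U_{s {}^\smallfrown{} \alpha} = W^s_\alpha$ for $\alpha < \theta_s$ and $U_{s {}^\smallfrown{} \alpha} = W^s_0$ for $\theta_s \le \alpha < \lambda$. Then every child is a nonempty basic open set, giving (a); the constraints defining $\mathcal{C}_s$ yield (b) and (d); and (c) holds because $\bigcup_{\alpha < \lambda} U_{s {}^\smallfrown{} \alpha} = \bigcup \mathcal{C}_s = U_s$, the repeated copies of $W^s_0$ adding nothing new. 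Since the passage from $U_s$ to its children is entirely canonical, the whole assignment $s \mapsto U_s$ arises from a choiceless recursion on $\lh(s) \in \omega$, so the construction works in $\ZF$, and by design its index tree is the full $\pre{<\omega}{\lambda}$. The main obstacle to anticipate is exactly that a node $s$ may admit fewer than $\lambda$ legitimate basic refinements (i.e.\ $\theta_s < \lambda$), which on its own would leave the index tree short of the full tree; padding with the fixed nonempty set $W^s_0$ repairs this while preserving the covering property (c) and the closure and diameter requirements.
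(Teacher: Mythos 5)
Your proof is correct and follows essentially the same route as the paper's: fix a complete metric \( d \leq 1 \) and a well-ordered basis, then recurse on \( \lh(s) \), letting the children of \( s \) enumerate (with repetitions, to fill out all \( \lambda \) indices) the nonempty basic open sets of diameter at most \( 2^{-(\lh(s)+1)} \) whose closure lies in \( U_s \). The only difference is that you spell out the covering verification \( \bigcup \mathcal{C}_s = U_s \) and the padding device explicitly, which the paper compresses into ``an enumeration (possibly with repetitions)'' and ``it is then clear that conditions (a)--(d) are satisfied.''
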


\begin{proof} 
Fix a compatible complete metric \( d \) on \( X \) with \( d \leq 1 \), and a basis for \( X \) of size at most \( \lambda \).
We construct the tree-basis
\( \mathcal{B} = \{ U_s \mid s \in \pre{<\omega}{\lambda} \} \) for \( X \)
by recursion on \( \lh(s) \), starting with \( U_\emptyset = X \). Assuming that \(  U_s \) has been defined, let \( (U_{s {}^\smallfrown{}  \beta})_{\beta < \lambda} \) be an enumeration (possibly with repetitions) of all nonempty basic open sets having diameter at most \( 2^{-(\lh(s)+1)} \) and closure contained in \( U_s \). It is then clear that conditions~\ref{def:tree-basis-1}--\ref{def:tree-basis-4} from Definition~\ref{def:tree-basis} are satisfied.
\end{proof}


\begin{corollary} \label{cor:surjection2} \axioms{\( \ZF \)}
For every nonempty \(\lambda\)-Polish space \( X \) there is a continuous open surjection \( g \colon B(\lambda) \to X \).
\end{corollary}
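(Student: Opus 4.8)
The plan is to combine Proposition~\ref{prop:tree-basis} with the remarks following Definition~\ref{def:tree-basis}, so that the corollary becomes essentially a direct read-off. First I would apply Proposition~\ref{prop:tree-basis} to the given nonempty \(\lambda\)-Polish space \( X \), obtaining a tree-basis \( \mathcal{B} = \{ U_s \mid s \in \pre{<\omega}{\lambda} \} \) whose index tree is the \emph{full} tree \( \pre{<\omega}{\lambda} \). The crucial point is the choice of index tree: since the body of the full tree is \( [\pre{<\omega}{\lambda}] = \pre{\omega}{\lambda} = B(\lambda) \), the induced map \( f_{\mathcal{B}} \) will be defined on all of \( B(\lambda) \), rather than on a proper closed subset.

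Next I would set \( g := f_{\mathcal{B}} \) and verify that it has the required properties by appealing to Fact~\ref{fct:scheme}. By condition~\ref{def:tree-basis-1} each \( U_s \) is a nonempty open set, and by condition~\ref{def:tree-basis-2} we have \( \mathrm{cl}(U_{s {}^\smallfrown{} \alpha}) \subseteq U_s \); hence part~(iii) of Fact~\ref{fct:scheme} yields that the domain \( D \) of \( f_{\mathcal{B}} \) equals \( [\pre{<\omega}{\lambda}] = \pre{\omega}{\lambda} = B(\lambda) \), confirming that \( g \) is genuinely defined on the whole space. By condition~\ref{def:tree-basis-3} we have \( U_\emptyset = X \) and \( U_s = \bigcup_{\beta < \lambda} U_{s {}^\smallfrown{} \beta} \), and each \( U_s \) is open; thus part~(ii) of Fact~\ref{fct:scheme} gives that \( f_{\mathcal{B}} \) is a surjective open map. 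Finally, \( f_{\mathcal{B}} \) is continuous simply because every tree-basis is a \(\lambda\)-scheme, and the map induced by a \(\lambda\)-scheme is always continuous. Putting these together, \( g \) is a continuous open surjection from \( B(\lambda) \) onto \( X \).

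I do not expect any genuine obstacle: all the substance has already been absorbed into Proposition~\ref{prop:tree-basis} and into the discussion identifying a tree-basis (with index tree \( \pre{<\omega}{\lambda} \)) as a \(\lambda\)-scheme whose induced function is a continuous open surjection. The only remark worth making explicit in the write-up is precisely that taking the full tree as index tree is what forces \( D = B(\lambda) \), which is why Proposition~\ref{prop:tree-basis} was stated with that specific index tree in mind; everything else is a routine invocation of the cited facts.
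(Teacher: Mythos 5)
Your proof is correct and is essentially the paper's own argument: the corollary is stated there without a separate proof precisely because it follows from Proposition~\ref{prop:tree-basis} together with the discussion after Definition~\ref{def:tree-basis}, where Fact~\ref{fct:scheme} is invoked to show that \( f_{\mathcal{B}} \) is a continuous open surjection from \( [T] \) onto \( X \), with \( [T] = B(\lambda) \) since the index tree is the full \( \pre{<\omega}{\lambda} \). Your write-up just makes these invocations of Fact~\ref{fct:scheme} explicit, which is exactly the intended reading.
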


A partial converse to Corollary~\ref{cor:surjection2} is given by Proposition~\ref{prop:sierpinski}. Combining these two results  we get a characterization of \(\lambda\)-Polish spaces depending just on their prototype \( B(\lambda) \).

\begin{corollary} \label{cor:charwithsurj} \axioms{\( \ZF \) }
Let \( X \) be a nonempty metrizable space.
Then \( X \) is \(\lambda\)-Polish
if and only if there is a continuous open surjection \( g \colon B(\lambda) \to X \).
\end{corollary}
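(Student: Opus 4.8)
The plan is to combine the two preceding results, since the statement is a biconditional whose two directions follow almost immediately from Corollary~\ref{cor:surjection2} and Proposition~\ref{prop:sierpinski}, respectively. There is essentially no new content to prove; the task is just to check that the hypotheses of these two results line up with the present situation.

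For the forward direction, suppose $X$ is $\lambda$-Polish and nonempty. Then Corollary~\ref{cor:surjection2} directly produces the desired continuous open surjection $g \colon B(\lambda) \to X$, so nothing further is needed.

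For the backward direction, suppose there is a continuous open surjection $g \colon B(\lambda) \to X$ with $X$ metrizable. The first thing I would record is that $B(\lambda) = \pre{\omega}{\lambda}$ is itself $\lambda$-Polish: equipped with the bounded topology it is completely metrizable (indeed the canonical ultrametric is complete) and has weight $\lambda$, as discussed in Section~\ref{sec:boundedtopology} and in Example~\ref{xmp:lambda-Polish}\ref{xmp:lambda-Polish-2}. With this in hand, Proposition~\ref{prop:sierpinski}---applied with $B(\lambda)$ in the role of the $\lambda$-Polish space and $X$ in the role of the metrizable target---immediately yields that $X$ is $\lambda$-Polish.

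The only point requiring a moment's care is the observation that $B(\lambda)$ is $\lambda$-Polish, so that the hypotheses of Proposition~\ref{prop:sierpinski} are genuinely met; everything else is a direct citation, and so there is no real obstacle. Since both ingredients hold in \ZF{} alone (as annotated on the corresponding statements), no choice principle is invoked, consistent with the axiom label attached to the corollary.
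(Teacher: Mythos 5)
Your proposal is correct and matches the paper's own argument exactly: the paper states this corollary as the direct combination of Corollary~\ref{cor:surjection2} (forward direction) with Proposition~\ref{prop:sierpinski} applied to the \(\lambda\)-Polish space \( B(\lambda) \) (backward direction), which is precisely your decomposition. Your additional check that \( B(\lambda) \) is itself \(\lambda\)-Polish and that both ingredients hold in \( \ZF \) alone is the right diligence and is consistent with the paper's annotations.
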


Along the same lines, we prove the following result, which will be used in the sequel.
It might be a known fact, at least in the case \( \lambda =\omega \), but we could not trace it back in the literature. Also, it does not seem to directly follow from either Proposition~\ref{prop:surjection} and Corollary~\ref{cor:surjection2}, or their proof.

\begin{proposition} \label{prop:surjectionpreservingcompact} \axioms{\( \ZF \) }
Let \( \lambda \) be a limit cardinal with countable cofinality, and let  \( X \) be a \(\lambda\)-Polish space. Then there is a closed \( F \subseteq B(\lambda) \) and a continuous surjection \( f \colon F \to X \) such that \( f^{-1}(K) \) is compact for every compact \( K \subseteq X \).
\end{proposition}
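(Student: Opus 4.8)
The plan is to construct a \emph{closed-valued} Lusin-type $\lambda$-scheme on $X$ and then read off $f$ from Fact~\ref{fct:scheme}, the guiding principle being that a closed subset of $B(\lambda)=\pre{\omega}{\lambda}$ is compact precisely when it is the body of a finitely branching tree. Crucially, unlike in Proposition~\ref{prop:surjection} I do \emph{not} aim for a bijection: the statement only asks for a surjection, and dropping injectivity is exactly what lets me use closed (overlapping) pieces and thereby sidestep the accumulation problems described below.

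Fix a complete compatible metric $d \le 1$ on $X$ and a well-ordered dense subset $D$ of $X$ (available since $\dens(X) \le \lambda$). For each $k \in \omega$, using $D$ and the greedy procedure of Remark~\ref{rmk:r-spaces}, I would canonically select a maximal $2^{-(k+3)}$-spaced set $A^{(k)} = \{a^{(k)}_\beta \mid \beta < \mu_k\} \subseteq D$; maximality forces the closed balls $\bar B_d(a^{(k)}_\beta, 2^{-(k+2)}) = \{z \mid d(a^{(k)}_\beta,z) \le 2^{-(k+2)}\}$ to cover $X$, and since $A^{(k)}$ is $2^{-(k+3)}$-spaced (hence discrete) one has $\mu_k \le \dens(X) = \weight(X) \le \lambda$ by Lemma~\ref{lem:r-spaces}, with $A^{(k)}$ well-orderable as a subset of $D$. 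I then build closed sets $F_s \subseteq X$ for $s \in \pre{<\omega}{\lambda}$ by recursion on $\lh(s)$, setting $F_\emptyset = X$ and
\[ F_{s {}^\smallfrown \beta} = F_s \cap \bar B_d\big(a^{(\lh(s))}_\beta, 2^{-(\lh(s)+2)}\big) \qquad (\beta < \mu_{\lh(s)}). \]
Each $F_s$ is closed, $F_{s {}^\smallfrown \beta} \subseteq F_s$ with $\mathrm{diam}(F_{s{}^\smallfrown\beta}) \le 2^{-(\lh(s)+1)}$, and $F_s = \bigcup_\beta F_{s {}^\smallfrown \beta}$ because the balls at scale $\lh(s)$ cover $X \supseteq F_s$. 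Thus $\{F_s\}$ is a $\lambda$-scheme meeting the hypotheses of Fact~\ref{fct:scheme}(ii)--(iii) (the closure condition there is automatic, the $F_s$ being closed), so with $T = \{ s \mid F_s \neq \emptyset \}$ the induced map $f$ is a continuous surjection from the closed set $F = [T] \subseteq B(\lambda)$ onto $X$, and $f(\Nbhd_s \cap F) = F_s$.

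It remains to verify properness. For compact $K \subseteq X$ one checks, exactly as in the identity $f(\Nbhd_s \cap F) = F_s$, that $f^{-1}(K) = [S_K]$ where $S_K = \{ s \in T \mid F_s \cap K \neq \emptyset \}$ is a subtree of $T$: it is downward closed because $F_t \supseteq F_s$ for $t \subseteq s$, and a branch $x$ with $F_{x \restriction n} \cap K \neq \emptyset$ for all $n$ satisfies $f(x) \in K$, since the nested nonempty closed sets $F_{x \restriction n} \cap K$ have intersection $\{f(x)\} \cap K \neq \emptyset$ by compactness of $K$. So it suffices to show $S_K$ is finitely branching. The immediate successors of $s$ in $S_K$ are the $\beta$ with $F_{s {}^\smallfrown \beta} \cap K \neq \emptyset$, hence with $\bar B_d(a^{(\lh(s))}_\beta, 2^{-(\lh(s)+2)}) \cap K \neq \emptyset$, so all the corresponding centers lie in $N = \{ x \mid d(x,K) \le 2^{-(\lh(s)+2)} \}$. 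Since $K$ is totally bounded, so is $N$, and a totally bounded set contains only finitely many points of any $2^{-(\lh(s)+3)}$-spaced set; as the $a^{(\lh(s))}_\beta$ are $2^{-(\lh(s)+3)}$-spaced, only finitely many $\beta$ qualify. Hence $S_K$ is finitely branching and $[S_K] = f^{-1}(K)$ is compact.

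The main obstacle — and the reason a naive adaptation of Proposition~\ref{prop:surjection} fails — is the interaction between the requirement $\mathrm{cl}(B_{s{}^\smallfrown\beta}) \subseteq B_s$ and properness: if the pieces are disjoint $F_\sigma$ sets and $B_s$ is not closed, then a compact set approaching a point of $\mathrm{cl}(B_s) \setminus B_s$ through $B_s$ must meet infinitely many children, destroying compactness of the fibre. Passing to closed pieces removes the boundary entirely (at the cost of injectivity, which is not required), and the uniformly discrete centers are what convert the total boundedness of compacta into the finite branching of $S_K$. The one genuinely delicate point to get right is the bookkeeping of the two scales — spacing $2^{-(k+3)}$ against ball radius $2^{-(k+2)}$ — so that the balls simultaneously cover $X$, realize the prescribed diameters, and keep the center sets sparse enough for the total-boundedness count to close.
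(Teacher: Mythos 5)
There is a genuine gap, and it is fatal to the construction, not just to its verification. The offending step is ``Since \( K \) is totally bounded, so is \( N \)'': in a general metric space the closed \( r \)-neighbourhood of a compact set --- even of a single point --- need \emph{not} be totally bounded; that implication is a feature of proper (or locally compact) metrics, which \(\lambda\)-Polish spaces, already Polish spaces, need not have. Concretely, take the ``star'' space \( X = \{c\} \cup \{x_i \mid i \in \omega\} \) with \( d(c,x_i) = 1/4 \) and \( d(x_i,x_j) = 1/8 \) for \( i \neq j \): this is a complete, discrete, countable-weight space, hence \(\lambda\)-Polish. Run your construction on it: since all pairwise distances are \( \geq 1/8 \), the greedy level-\( 0 \) net \( A^{(0)} \) is all of \( X \), and every closed ball of radius \( 2^{-2} \) equals \( X \); so \( F_{\langle\beta\rangle} = X \) for \emph{every} \( \beta \). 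Taking \( K = \{c\} \), the root of \( S_K \) has infinitely many immediate successors, and each \( \beta \) extends to a branch \( y_\beta \in [S_K] \) (at all later levels choose the index of the ball centred at \( c \)) with \( f(y_\beta) = c \). These branches lie in the pairwise disjoint clopen sets \( \Nbhd_{\langle\beta\rangle} \), so \( f^{-1}(\{c\}) \) contains an infinite closed discrete set and is not compact. Thus the map \( f \) produced by your scheme genuinely violates the conclusion, already for \( \lambda = \omega \).

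The root cause is that a cover by metric balls around a uniformly spaced net is in general not \emph{locally finite}: infinitely many balls of radius \( r \) can accumulate on one point whenever the spacing is smaller than \( 2r \), and spacing below the radius is forced if the balls are to cover \( X \) at all --- no rebalancing of your two scales can avoid this (in the star example all pairwise distances are equal, so the same collapse occurs at any ratio). Local finiteness is precisely what the paper's proof extracts from paracompactness: in \( \ZF \), via Rudin's argument for spaces with a well-ordered basis, the cover by small balls is refined to a locally finite open cover \( \mathcal{U}^{(n)} \), whose members are then labelled \emph{bijectively} by the nodes of level \( n \) of the tree (so, unlike in your scheme, one set cannot be recycled under infinitely many indices). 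Local finiteness plus compactness of \( K \) then gives that only finitely many members of \( \mathcal{U}^{(n)} \) meet \( K \), which is exactly the finite-branching statement you were aiming for; your net-based covers would only deliver this in special metrics, e.g.\ ultrametrics, where same-scale balls are pairwise disjoint. A secondary, much smaller point: your appeal to a well-orderable dense set \( D \) needs a word of justification in \( \ZF \), since weight \( \leq \lambda \) does not trivially yield a well-orderable dense subset without choice; for \emph{completely} metrizable spaces one can get it canonically (pick, inside each nonempty basic set, the unique point of a definable nested sequence of basic sets with shrinking diameters and nested closures), but as stated the reduction to \( \dens(X) \leq \lambda \) is a gap in the \( \ZF \) bookkeeping.
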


\begin{proof}
For \( n \geq 1 \) we recursively build canonical open covers \( \mathcal{U}^{(n)} \) of \( X \)  such that:
\begin{enumerate-(1)}
\item
\( |\mathcal{U}^{(n)}| \leq \lambda \);
\item
\( \mathcal{U}^{(n)} \) is locally finite; 
\item
\( U \neq \emptyset \) and \( \mathrm{diam}(U) \leq 2^{-n} \) for every \( U \in \mathcal{U}^{(n)} \) (where the diameter is computed with respect to any compatible complete metric on \( X \) fixed in advance);
\item  
 for each \( U \in \mathcal{U}^{(n+1)} \) there is \( U' \in \mathcal{U}^{(n)} \) such that \( \mathrm{cl}(U) \subseteq U' \). 
 \end{enumerate-(1)}
Indeed, at step \( n \geq 1 \) it is enough to consider the family \( \mathcal{V} \) 
consisting of all basic open sets \( V \subseteq X \) such that \( \mathrm{diam}(V) \leq 2^{-n} \) 
and \( \mathrm{cl}(V) \subseteq U \) for some \( U \in \mathcal{U}^{(n-1)} \) (where 
\( \mathcal{U}^{(0)} = \{ X \} \)), which clearly is an open cover of \( X \), and then refine it in the canonical way 
to the desired \( \mathcal{U}^{(n)} \) using paracompactness and the fact that \( \weight(X) \leq \lambda \) (see the discussion at the end of Section~\ref{subsec:topologicalspaces}).

Now we label the elements of such open covers with sequences from \( \pre{<\omega}{\lambda} \) in order to get a family \( \{ B_s \mid s \in T \} \) with \( T \subseteq \pre{< \omega}{\lambda} \) a tree satisfying the following conditions:
\begin{enumerate-(a)}
\item
\( B_\emptyset = X \);
\item \label{cond:bijection}
if \( n \geq 1 \), the map \( s \mapsto B_s \) is a bijection between \( \mathrm{Lev}_n(T) \) and \( \mathcal{U}^{(n)} \), so that in particular \( \mathrm{diam}(B_s) \leq 2^{-\lh(s)} \); 
\item
\( \mathrm{cl}(B_{s {}^\smallfrown{} \alpha}) \subseteq B_s \) whenever \( s {}^\smallfrown{} \alpha \in T \).
\end{enumerate-(a)}
 This can be done recursively on \( n = \lh(s) \) for \( s \in T \) by letting e.g.\  \( \{ B_{s {}^\smallfrown{} \alpha } \mid \alpha < I \} \) (for the appropriate \( I \leq \lambda \)) be an enumeration without repetitions of those \( U \in \mathcal{U}^{(n+1)} \) such that \( \mathrm{cl}(U) \subseteq B_s \) but \( \mathrm{cl}(U) \not\subseteq B_t \) for all \( t \in \mathrm{Lev}_n( T )\) with \( t <_{\mathrm{lex}} s \), where \( <_{\mathrm{lex}} \) is the strict part of the lexicographical order. (This procedures also determines which \( s {}^\smallfrown{} \alpha \) will be added to \( T \); it does not require choice because each \( \mathcal{U}^{(n)} \) is well-orderable by construction.)
 
 Extend the above family to a \(\lambda\)-scheme on \( X \) by setting \( B_s = \emptyset \) for all \( s \in \pre{<\omega}{\lambda} \setminus T \), and let \( f \) be the induced continuous function. Since for all relevant \( s \) and \( \beta \) we have \( \mathrm{cl}(B_{s {}^\smallfrown{} \beta}) \subseteq B_s \) and \( B_s \neq \emptyset \iff s \in T \), the domain of \( f \) will be the closed set \( F = [T] \). We claim that \( f \) is as required.
 
To check surjectivity, fix any \( x \in X \) and let \( T_x = \{ t \in T \mid x \in B_t \} \), which is clearly a subtree of \( T \). The levels \( \mathrm{Lev}_n(T_x)  \) of \( T_x \) are finite. Indeed, given \( n \geq 1 \) fix an open neighborhood \( W_x \) of \( x \) such that \( W_x \) intersects only finitely many elements of \( \mathcal{U}^{(n)} \): since \( t \in \mathrm{Lev}_n(T_x) \) implies \( B_t \cap W_x \neq \emptyset \), the finiteness of \( \mathrm{Lev}_n(T_x) \) follows from condition~\ref{cond:bijection}. Moreover, each \( \mathrm{Lev}_n(T_x) \) is nonempty, as \( \{ B_t \mid t \in \mathrm{Lev}_n(T) \} = \mathcal{U}^{(n)} \) covers \( X \). Thus by K\"onig's lemma%
\footnote{Again, no choice is needed for this because \( T_x \subseteq \pre{<\omega}{\lambda} \) can be canonically well-ordered using G\"odel's enumeration of finite sequences over \(\lambda\).}
 there is \( y \in [T_x] \subseteq [T] \). Since \( x \in B_{y \restriction n} \) for all \( n \in \omega \), it follows that \( f(y) = x \), as desired.

Finally, let \( K \subseteq X \) be compact. The set \( f^{-1}(K) \) is closed in \( F \) and thus in \( B(\lambda) \).
To show that \( f^{-1}(K) \) is compact, it is enough to show that for each \( n \geq 1 \) there is a finite set \( A^{(n)}_K \subseteq \pre{n}{\lambda} \) such that \( f^{-1}(K) \subseteq \bigcup_{t \in A^{(n)}_K} \Nbhd_t \). Indeed, after replacing each \( A^{(n)}_K \) with its subset 
\[ 
\left\{ t \in A^{(n)}_K \mid t \restriction m \in A^{(m)}_K \text{ for every } 1 \leq m \leq n \right\} , 
\]
we can assume without loss of generality that \( T_K = \{ \emptyset \} \cup \bigcup_{n \geq 1} A^{(n)}_K \) is an \( \omega \)-tree, which is finitely branching by the finiteness of \( A^{(n)}_K \). Therefore \( f^{-1}(K) \), being a closed subset of the compact set \(  [T_K] \), is compact itself.

Let us now construct the required set \( A^{(n)}_K \), for any fixed \( n \geq 1 \).
For each \( x \in K \) let \( W_x \) be the basic open set with smallest index such that \( x \in W_x \) and \( W_x \) intersects only finitely many elements of \( \mathcal{U}^{(n)} \). By condition~\ref{cond:bijection}, this means that there are just finitely many \( t \in \pre{n}{\lambda} \) such that \( B_t \cap W_x \neq \emptyset \). Since the sets \( W_x \) cover \( K \) and the latter is compact, there are finitely many \( x_0, \dotsc, x_\ell \in K \) such that \( K \subseteq \bigcup_{m \leq \ell} W_{x_m} \): this implies that the set \( A^{(n)}_K = \{ t \in \pre{n}{\lambda} \mid B_t \cap K \neq \emptyset \} \subseteq \{ t \in \pre{n}{\lambda} \mid B_t \cap  \bigcup_{m \leq \ell} W_{x_m} \neq \emptyset \} \) is finite. To show that \( f^{-1}(K) \subseteq \bigcup_{t \in A^{(n)}_K} \Nbhd_t \), just notice that if \( y \in F \) is such that \( f(y) \in K \), then \( f(y) \) witnesses \( B_{y \restriction n} \cap K \neq \emptyset \), hence \( y \restriction n \in A^{(n)}_K \).
\end{proof}

\section{Lebesgue covering dimension \( 0 \)}

In the literature on general topology, there are a few alternative definitions of 
``(Lebesgue) covering dimension \( 0 \)'', but all of them are equivalent for metrizable 
spaces. We will indeed restrict to such spaces and adopt the following one, which is sometimes referred to as ``ultraparacompactness''. 

\begin{defin}
A metrizable space \( X \) has (\markdef{Lebesgue}) \markdef{covering dimension \( 0 \)}, in symbols \( \mathrm{dim}(X) = 0 \), if every open covering of \( X \) can be refined to a clopen partition%
\footnote{A family \( \mathcal{P} \subseteq \pow(X) \setminus \{ \emptyset \} \) is a partition of \( X \) if \( \bigcup \mathcal{P} = X \) and \( A \cap B = \emptyset \) for all distinct \( A,B \in \mathcal{P} \); it is a clopen partition if consists of clopen (equivalently, open) subsets of \( X \).}
 of \( X \).
\end{defin}

Recall that the Lebesgue covering dimension \( \mathrm{dim}(X) \), the large inductive dimension \( \mathrm{Ind}(X) \), and the small inductive dimension \( \mathrm{ind}(X) \) all coincide for \emph{separable} metrizable spaces (actually, being normal and second-countable suffices). For arbitrary metrizable spaces \( X \) this is no longer true. The condition
 \( \mathrm{dim}(X) =  0 \) is always equivalent to \( \mathrm{Ind}(X) =  0 \), and  it also implies \( \mathrm{ind}(X) = 0 \) (i.e.\ \( X \) admits a clopen basis).%
 \footnote{More generally, for a metrizable space \( X \) it holds \( \mathrm{ind}(X) \leq \mathrm{Ind}(X) = \mathrm{dim}(X) \).}
  Nevertheless, in the non-separable context there are complete metric spaces with a clopen basis which have Lebesgue covering dimension greater than \( 0 \) (see~\cite{Roy1968}): thus we are taking the strongest among the possible topological definitions of  zero-dimensionality, a clarification that by the above discussion is not necessary in the classical case \( \lambda = \omega \).
  
Suppose that \( X \) is a metrizable space such that \( \mathrm{dim}(X) = 0 \). 
If \( X \) has a well-orderable basis, any clopen partition \( \mathcal{U} \) of \( X \) must be of size at most \( \weight(X) \). Indeed, if \( \mathcal{B} = \{ U_\alpha \mid \alpha < \weight(X) \} \) is a basis for \( X \), then the map sending each \( U \in \mathcal{U} \) into the smallest \( \alpha < \weight(X) \) such that \( \emptyset \neq U_\alpha \subseteq U \) is a well-defined injection from \( \mathcal{U} \) to \( \weight(X) \).
We will also frequently use the obvious fact that if \( \mathrm{dim}(X) = 0 \) and \( C \subseteq X \) is closed, then \( \mathrm{dim}(C) = 0 \). (Recall however that the closedness condition on \( C \) can actually be dropped: this follows from~\cite[Corollary 3.1.20]{Engelking1978} and the fact that metric spaces are perfectly normal.)

Spaces of sequences of the form \( \pre{\mu}{\lambda} \) (with \( \mu, \lambda \) arbitrary infinite cardinals) always have Lebesgue covering dimension \( 0 \) when equipped with the bounded topology. Indeed, if \( \mathcal{U} \) is an open covering of \( \pre{\mu}{\lambda} \) let 
\[ 
A = \{ s \in \pre{< \mu}{\lambda} \mid \Nbhd_s \subseteq U \text{ for some } U \in \mathcal{U} \}
 \] 
and 
\[ 
A' = \{ s \in A \mid \forall \beta < \lh(s) \, (s \restriction \beta \notin A  \}.
 \] 
Then \( \mathcal{U}' = \{ \Nbhd_s \mid s \in A' \} \) is a clopen partition of \( \pre{\mu}{\lambda} \) refining \( \mathcal{U} \). In particular,
\[ 
\mathrm{dim}(B(\lambda)) = \mathrm{dim}(C(\lambda)) = \mathrm{dim}(\pre{\lambda}{2}) = 0.
 \] 

\begin{proposition} \label{prop:dim(X)=0} \axioms{\( \AC_\omega(\pre{\lambda}{2}) \)}
If \( X \) is a \(\lambda\)-Polish space, then \(\mathrm{dim}(X) = 0 \) if and only if \( X \approx C \) for some closed \( C \subseteq B(\lambda) \).
\end{proposition}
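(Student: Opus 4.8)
The plan is to prove the two implications separately, the backward one being immediate and the forward one requiring the construction of a suitable clopen Lusin-type scheme. For the backward direction, recall that we already observed \( \mathrm{dim}(B(\lambda)) = 0 \), and that \( \mathrm{dim}(C) = 0 \) for every (closed) subspace \( C \) of a zero-dimensional metrizable space. Since having Lebesgue covering dimension \( 0 \) is a topological invariant, if \( X \approx C \) for some closed \( C \subseteq B(\lambda) \) then \( \mathrm{dim}(X) = \mathrm{dim}(C) = 0 \).

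For the forward direction we may assume \( X \neq \emptyset \), the empty case being trivial. Fix a compatible complete metric \( d \leq 1 \) and a well-ordered basis of size at most \( \lambda \). First I would build a sequence of clopen partitions \( (\mathcal{P}_n)_{n \in \omega} \) of \( X \) with \( \mathcal{P}_0 = \{ X \} \), each \( \mathcal{P}_{n+1} \) refining \( \mathcal{P}_n \), \( | \mathcal{P}_n | \leq \lambda \), and every member of \( \mathcal{P}_n \) of diameter at most \( 2^{-n} \). The naive recursive construction is dependent and would cost \( \DC_\omega \); to stay within our hypotheses, for each \( n \geq 1 \) I instead let \( \mathcal{A}_n \) be the set of clopen partitions of \( X \) into pieces of diameter at most \( 2^{-n} \), which is nonempty because \( \mathrm{dim}(X) = 0 \) lets one refine the open cover of \( X \) by balls of radius \( 2^{-(n+1)} \) to a clopen partition. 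Using the canonical surjection of \( \pre{\lambda}{2} \) onto the collection of clopen partitions of \( X \) recalled in Section~\ref{subsec:weakchoice}, the axiom \( \AC_\omega(\pre{\lambda}{2}) \) yields a simultaneous choice of \( \mathcal{Q}_n \in \mathcal{A}_n \) for all \( n \geq 1 \). I then set \( \mathcal{P}_n \) to be the common refinement of \( \mathcal{Q}_1, \dotsc, \mathcal{Q}_n \), whose members are the nonempty intersections \( Q_1 \cap \dotsb \cap Q_n \): this is again a clopen partition refining each \( \mathcal{Q}_i \) and \( \mathcal{P}_{n-1} \), its pieces have diameter at most \( 2^{-n} \), and, being the image of a subset of a finite product of the well-orderable sets \( \mathcal{Q}_i \) (each of size \( \leq \weight(X) \leq \lambda \)), it is well-orderable of size at most \( \lambda \).

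Next I would organize these partitions into a tree. Canonically well-ordering each \( \mathcal{P}_n \) via the fixed basis (sending each clopen set to the least index of a basic open subset), I enumerate, for every \( P \in \mathcal{P}_n \), the set \( \{ P' \in \mathcal{P}_{n+1} \mid P' \subseteq P \} \) of its refining pieces by ordinals \( < \lambda \); this requires no further choice. The outcome is a tree \( T \subseteq \pre{<\omega}{\lambda} \) together with a clopen set \( U_s \) for each \( s \in T \) such that \( U_\emptyset = X \), the family \( \{ U_{s {}^\smallfrown{} \beta} \mid s {}^\smallfrown{} \beta \in T \} \) is a clopen partition of \( U_s \), and \( \mathrm{diam}(U_s) \leq 2^{-\lh(s)} \). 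Extending this to a \( \lambda \)-scheme by setting \( B_s = U_s \) for \( s \in T \) and \( B_s = \emptyset \) otherwise, I apply Fact~\ref{fct:scheme}: the pairwise disjointness of each partition gives that the induced map \( f \) is injective and, the \( U_s \) being open, an embedding; the covering identity \( B_s = \bigcup_{\beta < \lambda} B_{s {}^\smallfrown{} \beta} \) gives surjectivity onto \( X \); and, since each \( U_s \) is clopen so that \( \mathrm{cl}(U_{s {}^\smallfrown{} \beta}) = U_{s {}^\smallfrown{} \beta} \subseteq U_s \) and \( d \) is complete, the domain is exactly the closed set \( [T] \subseteq B(\lambda) \). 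Hence \( f \) is a homeomorphism of the closed set \( C = [T] \subseteq B(\lambda) \) onto \( X \), as required.

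I expect the only genuine obstacle to be the careful bookkeeping of choice: the key maneuver is precisely to choose the partitions \( \mathcal{Q}_n \) independently (a single application of \( \AC_\omega(\pre{\lambda}{2}) \), routed through the canonical surjection onto clopen partitions) and then pass to common refinements, rather than building the \( \mathcal{P}_n \) by a dependent recursion. One must then verify that all the partitions and the resulting tree stay of size at most \( \lambda \), are well-orderable, and can be enumerated canonically from the fixed basis, so that assembling \( T \) and the family \( \{ U_s \} \) costs nothing more.
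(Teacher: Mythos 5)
Your proof is correct and takes essentially the same route as the paper: a single application of \( \AC_\omega(\pre{\lambda}{2}) \) (via the canonical surjection onto clopen partitions) fixes, simultaneously for all \( n \), a clopen partition refining the cover by sets of diameter at most \( 2^{-n} \), after which the scheme is assembled canonically without further choice and the conclusion follows from Fact~\ref{fct:scheme}. The paper builds the clopen \( \lambda \)-scheme directly by recursion, intersecting each \( B_s \) with the pieces of the \( (\lh(s)+1) \)-th chosen partition, which yields exactly your common refinements \( \mathcal{P}_n \) level by level, so the two constructions coincide.
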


We follow the natural generalization of the proof sketched in~\cite[Theorem 7.8]{Kechris1995}. However, it must be noticed that the construction of the desired \( \lambda \)-scheme from \( \mathrm{ind}(X) = 0 \) alone would require the class of basic clopen sets to be closed under intersections of arbitrarily large size below \( \lambda \). This is plainly false for nontrivial metrizable 
spaces, as closure under infinite intersections (that is, \( \omega_1 \)-additivity) already implies that the space is discrete. Our detailed argument below makes evident where the stronger hypothesis \( \mathrm{dim}(X) = 0 \) is needed for non-separable spaces, and shows that again one does not need an axiom of dependent choices to carry out the recursive constructions.

\begin{proofcompare}{Kechris1995}{Theorem 7.8} 
For the nontrivial direction, it is enough to fix a compatible complete metric \( d \leq 1 \) on \( X \) and build a \( \lambda \)-scheme \( \{ B_s \mid s \in \pre{< \omega}{\lambda} \} \) on \( X \) such that for all \( s \in \pre{< \omega}{\lambda} \) and distinct \( \beta, \beta' < \lambda \)
\begin{enumerate-(1)}
\item
\( B_{s {}^\smallfrown{} \beta} \cap B_{s {}^\smallfrown{} \beta'} = \emptyset \);
\item
\( B_\emptyset= X \) and \( B_s = \bigcup_{\beta < \lambda} B_{s {}^\smallfrown{} \beta} \);
\item
\( B_s \) is clopen and \( \mathrm{diam}(B_s) \leq 2^{-\lh(s)} \).
\end{enumerate-(1)}
(Notice that the last condition, together with the definition of \( \lambda \)-scheme, already gives that \( \mathrm{cl}(B_{s {}^\smallfrown{}  \beta}) \subseteq B_s \).) 

Given \( n > 0 \), let \( \mathcal{U}_n \) be the open cover of \( X \) consisting of all basic open sets with diameter at most \( 2^{-n} \). Since \( \weight(X) \leq \lambda \) and \( \dim(X) = 0 \), for 
each \( n > 0 \) we can fix%
\footnote{Here we are using \( \AC_\lambda(\pre{\lambda}{2}) \), but actually \( \AC_\omega(\pre{\lambda}{2}) \) would suffice. The same holds for the proof of Theorem~\ref{thm:characterizationCantor}, where we use the clopen partitions \( \mathcal{U}'_n \) again.}
 a well-orderable clopen partition \( \mathcal{U}'_n = \{ V^{(n)}_\alpha \mid \alpha < \nu_n \} \) (for some \( \nu_n \leq \lambda \)) which refines \( \mathcal{U}_n \). 

The construction of the sets \( B_s \)  is then by recursion on \( \mathrm{lh}(s) \), starting with \( B_\emptyset = X \). Suppose that \( B_s \) satisfies the above conditions. 
%
%
Then it is enough to let \( (B_{s {}^\smallfrown{} \beta})_{\beta < \nu} \) be an enumeration without repetitions, for the appropriate \( \nu \), of the nonempty clopen sets of the form \( B_s \cap V^{(\lh(s)+1)}_\alpha \) with \( \alpha < \nu_{\lh(s)+1} \), and \( B_{s {}^\smallfrown{} \gamma} = \emptyset \) for \( \nu \leq \gamma < \lambda \) if there is any such \(\gamma\).
\end{proofcompare}

This in particular entails~\cite[Theorem 2]{Stone1962}, i.e.\ that every \( G_\delta \) of \( B(\lambda) \) is homeomorphic to a closed subspace of \( B(\lambda) \). Moreover, combining the previous results with Proposition~\ref{prop:retract} we also get the following result, stated (in a more precise form) as Theorem 3 and Theorem \( 3' \) in~\cite{Stone1962}.

\begin{corollary} \label{cor:dim(X)=0} \axioms{\( \AC_\omega (\pre{\lambda}{2}) \)}
If \( X \) is \(\lambda\)-Polish and \( \mathrm{dim}(X)=0 \), then every nonempty closed subset of \( X \) is a retract of the entire space. 
\end{corollary}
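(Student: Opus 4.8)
The plan is to reduce the statement to Proposition~\ref{prop:retract} via the homeomorphic copy of \( X \) inside \( B(\lambda) \) provided by Proposition~\ref{prop:dim(X)=0}. Since \( X \) is \(\lambda\)-Polish with \( \mathrm{dim}(X) = 0 \), that proposition yields a homeomorphism \( h \colon X \to C \) onto some closed \( C \subseteq B(\lambda) \) (this is the only place where \( \AC_\omega(\pre{\lambda}{2}) \) is used). Fix now a nonempty closed \( F \subseteq X \). The key observation is that \( h(F) \) is closed not merely in \( C \) but in the whole \( B(\lambda) \): indeed \( h \) is a homeomorphism onto \( C \), so \( h(F) \) is closed in \( C \), and \( C \) is by hypothesis closed in \( B(\lambda) \), whence \( h(F) \) is closed in \( B(\lambda) \).

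Once this is in place, I would apply Proposition~\ref{prop:retract} to the nonempty closed set \( h(F) \subseteq B(\lambda) \) to obtain a continuous retraction \( r \colon B(\lambda) \to h(F) \) with \( r(y) = y \) for all \( y \in h(F) \). The desired retraction of \( X \) onto \( F \) is then the conjugate
\[
f = h^{-1} \circ r \circ h \colon X \to F.
\]
This is well-defined and continuous: \( r \circ h \) maps \( X \) into \( h(F) \subseteq C \), so \( h^{-1} \) can be applied and lands in \( F \), and each of the three maps is continuous. To see it is a retraction, take \( x \in F \); then \( h(x) \in h(F) \), so \( r(h(x)) = h(x) \), and hence \( f(x) = h^{-1}(h(x)) = x \).

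There is essentially no serious obstacle here: the argument is a routine transfer of Proposition~\ref{prop:retract} along a homeomorphism, and the only point requiring (very mild) care is checking that \( h(F) \) is closed in the ambient \( B(\lambda) \) rather than only in \( C \), which is exactly what lets us invoke Proposition~\ref{prop:retract}. Since Proposition~\ref{prop:retract} holds in \( \ZF \) and the homeomorphism \( h \) is the sole ingredient using choice, the whole corollary goes through in \( \ZF + \AC_\omega(\pre{\lambda}{2}) \), matching the stated hypotheses.
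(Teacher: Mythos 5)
Your proof is correct and follows exactly the route the paper intends: the paper obtains this corollary precisely by combining Proposition~\ref{prop:dim(X)=0} (realizing \( X \) as a closed subspace of \( B(\lambda) \)) with Proposition~\ref{prop:retract}, which is what you carry out, including the mild but necessary observation that \( h(F) \) is closed in \( B(\lambda) \) itself. Your accounting of the choice principles (only Proposition~\ref{prop:dim(X)=0} needing \( \AC_\omega(\pre{\lambda}{2}) \)) also matches the paper's annotation.
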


The following theorem gives a characterization of \( B(\lambda) = \pre{\omega}{\lambda} \). By Theorem~\ref{thm:homeomorphictoCantor}, it also characterizes \( C(\lambda) \), the generalized Cantor space \( \pre{\lambda}{2} \) (if \( 2^{< \lambda} =  \lambda \)), and Woodin's \( V_{\lambda+1} \) (if \( |V_\lambda| = \lambda \)). 
Notice also that the characterization using condition~\ref{thm:characterizationCantor-2b} naturally generalizes the Alexandrov-Urysohn characterization of the classical Baire space \( \pre{\omega}{\omega} \), see e.g.\ \cite[Theorem 7.7]{Kechris1995}. (Recall that by definition \( \omega \)-Lindel\"of is just compactness.) The characterization using~\ref{thm:characterizationCantor-2''} (which actually works for arbitrary infinite cardinals \(\lambda\) and not just for the case \( \cf(\lambda)=\omega \)) is essentially due to Stone, see~\cite[Theorem 1]{Stone1962}.

\begin{theorem} \label{thm:characterizationCantor} \axioms{\( \AC_\omega(\pre{\lambda}{2}) \)}
The space \( B(\lambda) \) is the unique  \( \lambda \)-Polish space \( X \) such that:
\begin{enumerate-(1)}
\item \label{thm:characterizationCantor-1}
\( \mathrm{dim}(X) = 0 \);
\item \label{thm:characterizationCantor-2}
\( X \) is everywhere of weight \(\lambda\), i.e.\ every (cl)open subspace of \( X \) has weight  \( \lambda \).
\end{enumerate-(1)}
In~\ref{thm:characterizationCantor-2} we can of course replace ``weight'' with ``density character''. \axioms{Per questo serve \( \AC_\lambda(\pre{\omega}{\lambda}) \)}
Moreover, such condition can be replaced by any of the following:
\begin{enumerate}[leftmargin=2pc, label = \upshape (2\alph*)]
\item \label{thm:characterizationCantor-2'}
Every (cl)open subset of \( X \) has discrete subsets of size \( \nu \), for all \( \nu < \lambda \). 
\item \label{thm:characterizationCantor-2''}
Each (cl)open subset of \( X \) has a discrete subset of size \( \lambda \).
\item \label{thm:characterizationCantor-2a}
For every \( \nu < \lambda \), all \( \nu \)-Lindel\"of subsets of \( X \) have empty interior.
\item \label{thm:characterizationCantor-2b}
Every \( \lambda \)-Lindel\"of subset of \( X \) has empty interior.
\item \label{thm:characterizationCantor-2c}
\( \weight(X) = \lambda \) and
\( X \) is strongly homogeneous (or \( h \)-homogeneous), that is, every nonempty clopen subset of \( X \) is homeomorphic to \( X \).
\end{enumerate} 
If \( \lambda \) is \(\omega\)-inaccessible,%
 then one can further replace~\ref{thm:characterizationCantor-2} with any of the following conditions:
\begin{enumerate}[leftmargin=2pc, label = \upshape (2\alph*), resume]
\item \label{thm:characterizationCantor-2d}
Every (cl)open subsets of \( X \) has cardinality at least \( \lambda \).
\item \label{thm:characterizationCantor-2e}
Every (cl)open subsets of \( X \) has cardinality greater than \(  \lambda \).
\item \label{thm:characterizationCantor-2f}
Every (cl)open subsets of \( X \) has cardinality \(  \lambda^\omega \).
\item \label{thm:characterizationCantor-2g}
\( B(\lambda) \) can be embedded as a clopen set into any (cl)open subset of \( X \).
\end{enumerate}
Finally, if \( 2^{< \lambda} =  \lambda \) (and thus, in particular, \(\lambda\) is \(\omega\)-inaccessible), then in the above statements we can  replace all occurrences of \( B(\lambda) \) and \( \lambda^\omega \) with \( \pre{\lambda}{2} \) and \( 2^\lambda \), respectively.
\end{theorem}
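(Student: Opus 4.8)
The plan is to fix a $\lambda$-Polish space $X$ with $\dim(X)=0$ and funnel every condition through the single equivalence ``condition~\ref{thm:characterizationCantor-2} $\iff X\approx B(\lambda)$''. By Proposition~\ref{prop:dim(X)=0} I may assume $X=[T]$ for a pruned tree $T\subseteq\pre{<\omega}{\lambda}$, so by Lemma~\ref{lem:homeomorphictoCantor} it suffices to identify~\ref{thm:characterizationCantor-2} with the tree condition~\eqref{eq:largepartition}. If~\eqref{eq:largepartition} fails at some $s\in T$, then $\{\Nbhd_t([T])\mid t\in T,\ t\supseteq s\}$ is a basis of size $<\lambda$ for the nonempty clopen set $\Nbhd_s([T])$, so the latter has weight $<\lambda$ and~\ref{thm:characterizationCantor-2} fails. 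Conversely, if~\eqref{eq:largepartition} holds then, via its reformulation~\eqref{eq:largepartition2}, each $\Nbhd_s([T])$ contains, for every $\nu<\lambda$, at least $\nu$ pairwise disjoint nonempty basic clopen pieces and hence has weight $\geq\nu$; being $\leq\lambda$ anyway, every basic clopen has weight exactly $\lambda$, and so does every nonempty open set (it contains such a basic clopen). Since $B(\lambda)$ plainly satisfies~\ref{thm:characterizationCantor-1} and~\ref{thm:characterizationCantor-2}, this yields both the uniqueness statement and the master equivalence. The ``density character'' version of~\ref{thm:characterizationCantor-2} follows because $\weight(U)=\dens(U)$ for the (clopen, hence $\lambda$-Polish) subspaces $U$ under $\AC_\lambda(\pre{\omega}{\lambda})$.

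For the reformulations not requiring $\omega$-inaccessibility, the uniform strategy is to check that $B(\lambda)$ enjoys each property and that each property forces~\eqref{eq:largepartition}. For the discrete-set conditions~\ref{thm:characterizationCantor-2'} and~\ref{thm:characterizationCantor-2''}, Lemma~\ref{lem:r-spaces} applied to the clopen subspaces shows that $U$ has discrete subsets of all sizes $\nu<\lambda$ exactly when $\dens(U)\geq\lambda$; so~\ref{thm:characterizationCantor-2'} is equivalent to~\ref{thm:characterizationCantor-2}, while~\ref{thm:characterizationCantor-2''} (a single discrete set of size $\lambda$) I would deduce from $X\approx B(\lambda)$, using that the points $x_\alpha$ ($\alpha<\lambda$) with $x_\alpha(0)=\alpha$ and $x_\alpha(n)=0$ for $n\geq1$ form a $1$-spaced subset of $B(\lambda)$ of size $\lambda$ sitting inside a basic clopen. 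For the Lindel\"of conditions~\ref{thm:characterizationCantor-2a} and~\ref{thm:characterizationCantor-2b}, the crucial observation is that a clopen $U\subseteq X$ is \emph{closed} in every superset $A$, so if $A$ is $\nu$-Lindel\"of then so is $U$; since $B(\lambda)$, and hence each of its basic clopens, fails to be $\nu$-Lindel\"of for any $\nu\leq\lambda$ --- witness the clopen partition $\{\Nbhd_{\langle\alpha\rangle}\mid\alpha<\lambda\}$ --- no subset of $B(\lambda)$ with nonempty interior can be $\lambda$-Lindel\"of, giving~\ref{thm:characterizationCantor-2b} (whence also~\ref{thm:characterizationCantor-2a}, as $\nu$-Lindel\"of sets are $\lambda$-Lindel\"of). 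Conversely, if~\eqref{eq:largepartition} fails at $s$ then $\Nbhd_s([T])$ has weight $\mu<\lambda$, hence is $\mu^+$-Lindel\"of with $\mu^+<\lambda$ (as $\lambda$ is a limit cardinal) and has nonempty interior, contradicting both~\ref{thm:characterizationCantor-2a} and~\ref{thm:characterizationCantor-2b}. Finally, strong homogeneity in~\ref{thm:characterizationCantor-2c} is itself a consequence of the master equivalence: every nonempty clopen subset of $B(\lambda)$ is again $\lambda$-Polish, zero-dimensional and everywhere of weight $\lambda$, hence $\approx B(\lambda)$; conversely~\ref{thm:characterizationCantor-2c} makes every nonempty clopen subspace $\approx X$, so of weight $\lambda$, which is~\ref{thm:characterizationCantor-2}.

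Under $\omega$-inaccessibility the cardinality conditions~\ref{thm:characterizationCantor-2d}--\ref{thm:characterizationCantor-2g} are settled by bookkeeping. From $X\approx B(\lambda)$ each nonempty clopen subset is $\approx B(\lambda)$, of cardinality $|\pre{\omega}{\lambda}|=\lambda^\omega$; since $\lambda<\lambda^\omega=\lambda^{\cf(\lambda)}$ this gives the chain~\ref{thm:characterizationCantor-2f}$\Rightarrow$\ref{thm:characterizationCantor-2e}$\Rightarrow$\ref{thm:characterizationCantor-2d}. To close the cycle, if~\eqref{eq:largepartition} fails at $s$ then $\Nbhd_s([T])$ has weight $<\lambda$, whence $|\Nbhd_s([T])|<\lambda$ by Lemma~\ref{lem:densityvscardinality}, contradicting~\ref{thm:characterizationCantor-2d}; so~\ref{thm:characterizationCantor-2d}$\Rightarrow$\eqref{eq:largepartition}. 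For~\ref{thm:characterizationCantor-2g}, an embedding of $B(\lambda)$ as a clopen subset of a clopen $U$ forces $|U|\geq\lambda^\omega$, while $\dens(U)\leq\lambda$ gives $|U|\leq\lambda^\omega$ by Corollary~\ref{cor:cardinalityunderomegainaccessible}; hence~\ref{thm:characterizationCantor-2g}$\Rightarrow$\ref{thm:characterizationCantor-2f}, and $X\approx B(\lambda)$ trivially delivers~\ref{thm:characterizationCantor-2g}. Lastly, when $2^{<\lambda}=\lambda$ one has $\pre{\lambda}{2}\approx B(\lambda)$ by Theorem~\ref{thm:homeomorphictoCantor}\ref{thm:homeomorphictoCantor-2}, so $2^\lambda=|\pre{\lambda}{2}|=|\pre{\omega}{\lambda}|=\lambda^\omega$, and the substitutions of $\pre{\lambda}{2}$ for $B(\lambda)$ and $2^\lambda$ for $\lambda^\omega$ are immediate.

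The main obstacle I anticipate is organizational rather than technical: the efficient proof resists being written as a web of pairwise equivalences and instead routes everything through the master equivalence~\ref{thm:characterizationCantor-2}$\iff X\approx B(\lambda)$, verifying each property on the concrete model $B(\lambda)$ once and for all. The one genuinely non-formal point is~\ref{thm:characterizationCantor-2'}$\Rightarrow$\ref{thm:characterizationCantor-2''}: since $\cf(\lambda)=\omega$, a union of discrete sets of sizes cofinal in $\lambda$ need not contain a single discrete set of size $\lambda$, so Lemma~\ref{lem:r-spaces} alone does not suffice and one is forced to invoke the homeomorphism with $B(\lambda)$. Beyond this, the only care required is with the ambient choice hypotheses --- $\AC_\lambda(\pre{\omega}{\lambda})$ for interchanging weight and density character, and $\AC_\omega(\pre{\lambda}{2})$ for the clopen refinements underlying Proposition~\ref{prop:dim(X)=0}.
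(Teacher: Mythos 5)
Your proof is correct, and its overall architecture coincides with the paper's: verify once and for all that \( B(\lambda) \) satisfies \ref{thm:characterizationCantor-1}, \ref{thm:characterizationCantor-2} and every variant, reduce each variant to condition~\ref{thm:characterizationCantor-2} (via the same orderings \ref{thm:characterizationCantor-2''}\( \Rightarrow \)\ref{thm:characterizationCantor-2'}, \ref{thm:characterizationCantor-2b}\( \Rightarrow \)\ref{thm:characterizationCantor-2a}, \ref{thm:characterizationCantor-2g}\( \Rightarrow \)\ref{thm:characterizationCantor-2f}\( \Rightarrow \)\ref{thm:characterizationCantor-2e}\( \Rightarrow \)\ref{thm:characterizationCantor-2d}, and the same key facts: discrete sets are bounded by the weight, weight \( \nu \) gives \( \nu^+ \)-Lindel\"ofness, and Lemma~\ref{lem:densityvscardinality} for the cardinality conditions), and then invoke the master equivalence. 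The one genuine difference is how the master equivalence itself is established. The paper proves \ref{thm:characterizationCantor-1}+\ref{thm:characterizationCantor-2}\( \Rightarrow X \approx B(\lambda) \) by a self-contained construction: it builds a fresh clopen \( \lambda \)-scheme on the abstract space \( X \), using the clopen partitions \( \mathcal{U}'_n \) from the proof of Proposition~\ref{prop:dim(X)=0} together with hypothesis~\ref{thm:characterizationCantor-2} to secure \( \lambda_{\lh(s)} \)-fold branching at each node, so that the resulting tree satisfies~\eqref{eq:largepartition} directly. You instead factor the argument through two results already on record: Proposition~\ref{prop:dim(X)=0} to realize \( X \) as \( [T] \) for a pruned tree \( T \), and Lemma~\ref{lem:homeomorphictoCantor}, after checking that condition~\ref{thm:characterizationCantor-2} for \( [T] \) is literally the tree condition~\eqref{eq:largepartition}. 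Your route is more modular and avoids repeating the scheme construction, at no cost in choice principles (both ultimately rest on the \( \AC_\omega(\pre{\lambda}{2}) \) hidden in Proposition~\ref{prop:dim(X)=0}). What the paper's direct construction buys, by contrast, is flexibility: as explained in Section~\ref{subsec:Polishforothercofinalities}, when \( \cf(\lambda) > \omega \) the characterization survives only by modifying that construction in situ (forcing \( B_s \neq \emptyset \) for all \( s \)), a modification that is not available if one merely composes the two black-boxed results as you do.
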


\begin{proof}
It is clear that \( B(\lambda) \) has properties~\ref{thm:characterizationCantor-1}--\ref{thm:characterizationCantor-2}, as well as all the variants ~\ref{thm:characterizationCantor-2'}--\ref{thm:characterizationCantor-2g} of~\ref{thm:characterizationCantor-2}
(for~\ref{thm:characterizationCantor-2e} we are using the fact that \( \lambda^\omega = \lambda^{\cf(\lambda)} > \lambda \)).

To prove the first characterization, by Lemma~\ref{lem:homeomorphictoCantor}
 it is enough to show that if \( X \) is a \(\lambda\)-Polish space satisfying conditions~\ref{thm:characterizationCantor-1}--\ref{thm:characterizationCantor-2}, then there is a pruned tree \( T \subseteq \pre{<\omega}{\lambda} \) satisfying~\eqref{eq:largepartition} and such that \( X \approx [T] \). To this aim, we fix a compatible complete metric \( d \leq 1 \) on \( X \) and build a \( \lambda \)-scheme \( \{ B_s \mid s \in \pre{<\omega}{\lambda} \} \) on \( X \) such that for all \( s \in \pre{< \omega}{\lambda} \) and distinct \( \beta, \beta' < \lambda \)
\begin{enumerate-(1)}
\item
\( B_{s {}^\smallfrown{} \beta} \cap B_{s {}^\smallfrown{} \beta'} = \emptyset \);
\item
\( B_\emptyset= X \) and \( B_s = \bigcup_{\beta < \lambda} B_{s {}^\smallfrown{} \beta} \);
\item
\( B_s \) is clopen and \( \mathrm{diam}(B_s) \leq 2^{-\lh(s)} \);
\item
if \( B_s \neq \emptyset \), then \( | \{ \beta < \lambda \mid B_{s {}^\smallfrown{} \beta} \neq \emptyset \} | \geq \lambda_{\lh(s)} \).
\end{enumerate-(1)}
(The latter condition is to ensure that the tree \( T = \{ s \in \pre{< \omega}{\lambda} \mid B_s \neq \emptyset \} \) satisfies condition~\eqref{eq:largepartition}, as required.)

As usual, the construction is by recursion on \( \mathrm{lh}(s) \). 
Set \( B_\emptyset = X \) and
let \( \mathcal{U}'_n = \{ V^{(n)}_\alpha \mid \alpha < \nu_n \} \) be as in the proof of Proposition~\ref{prop:dim(X)=0}. Next assume that \( B_s \) satisfies all the above conditions. We can assume that \( B_s \neq \emptyset \) (otherwise we just set \( B_{s {}^\smallfrown{}  \beta} = \emptyset \) for every \( \beta < \lambda \)). For each \( n > \lh(s) \) let \( \mathcal{P}_n \) be the (well-orderable) clopen partition of \( B_s \) given by the nonempty sets of the form \( B_s \cap V^{(n)}_\alpha \) with \( \alpha < \nu_n \). Then we must have \( | \mathcal{P}_n| \geq \lambda_{\mathrm{lh}(s)} \) for some \( n > \mathrm{lh}(s) \): otherwise the family \( \mathcal{B}= \bigcup_{n > \mathrm{lh}(s)} \mathcal{P}_n \) would have size ta most \(  \lambda_{\mathrm{lh}(s)} \times \omega = \lambda_{\mathrm{lh}(s)} \), and since \( \mathcal{B} \) is a basis for \( B_s \) this would imply \( \weight(B_s) < \lambda \), a contradiction. Let \( n > \mathrm{lh}(s) \) be smallest such that \( |\mathcal{P}_n| \geq \lambda_{\mathrm{lh}(s)} \) and let \( (B_{s {}^\smallfrown{} \beta})_{\beta < |\mathcal{P}_n|} \) be  an enumeration without repetitions of \( \mathcal{P}_n \) and \( B_{s {}^\smallfrown{} \beta} = \emptyset \) for \( |\mathcal{P}_n| \leq \beta < \lambda \): the \( B_{s {}^\smallfrown{} \beta} \) are clearly as required.

%
%

\medskip

We now come to the alternative characterizations. It is enough to show that for any \(\lambda\)-Polish space \( X \), condition~\ref{thm:characterizationCantor-2} is implied by any of its variants. In particular, since~\ref{thm:characterizationCantor-2''} \( \Rightarrow \) \ref{thm:characterizationCantor-2'},
\ref{thm:characterizationCantor-2b} \( \Rightarrow \) \ref{thm:characterizationCantor-2a}, 
\ref{thm:characterizationCantor-2g}  \( \Rightarrow \) \ref{thm:characterizationCantor-2f} (when \(\lambda\) is \(\omega\)-inaccessible),
\ref{thm:characterizationCantor-2f} \( \Rightarrow \) \ref{thm:characterizationCantor-2e}, and \ref{thm:characterizationCantor-2e} \( \Rightarrow \) \ref{thm:characterizationCantor-2d}, it is enough to consider the case of~\ref{thm:characterizationCantor-2'}, \ref{thm:characterizationCantor-2a}, \ref{thm:characterizationCantor-2c}, and~\ref{thm:characterizationCantor-2d}.

For the first one, observe that any discrete set in a topological space with a well-orderable basis is well-orderable itself and has size at most the weight of the space (this gives the contrapositive of~\ref{thm:characterizationCantor-2'} \( \Rightarrow \) \ref{thm:characterizationCantor-2} because \(\lambda\) is limit). 
For the second one, notice that if a topological space has weight \( \nu < \lambda  \), then it is trivially \( \nu^+  \)-Lindel\"of: applied to (cl)open subsets of \( X \) and using the fact that \( \nu^+ < \lambda \) because \(\lambda\) is limit, this gives the contrapositive of~\ref{thm:characterizationCantor-2a} \( \Rightarrow \) \ref{thm:characterizationCantor-2}. 
The implication \ref{thm:characterizationCantor-2c} \( \Rightarrow \) \ref{thm:characterizationCantor-2} is straightforward.
Finally, the contrapositive of~\ref{thm:characterizationCantor-2d} \( \Rightarrow \) \ref{thm:characterizationCantor-2} follows from Lemma~\ref{lem:densityvscardinality}.
\end{proof}

\section{\( \lambda \)-Perfect spaces} \label{sec:lambda-perfect}

The following definition generalizes the notion of isolated point and perfect space to arbitrary cardinals \(\lambda\). As usual, setting \( \lambda = \omega \) one recovers (up to equivalence) the classical definitions.

\begin{defin} \label{def:lambda-perfect}
Let \( X \) be a topological space and \( \lambda \) be an infinite cardinal. A point \( x \in X \) is \markdef{\( \lambda \)-isolated} in \( X \) if there is an open neighborhood \( U \) of \( x \) such that \( |U| < \lambda \). The space \( X \) is \markdef{\(\lambda\)-perfect} if it has no \( \lambda \)-isolated points. A subspace \( P \subseteq X \) is \markdef{\(\lambda\)-perfect}
 (\markdef{in \( X \)}) if it is closed and \(\lambda\)-perfect as a subspace.
\end{defin}

Since the above definition crucially refers to the cardinality of subsets of \( X \), in view of Lemma~\ref{lem:densityvscardinality} it becomes more meaningful when \(\lambda\) is \(\omega\)-inaccessible and \( X \) is metrizable. Moreover, in such a situation 
a point in \( X \) is \(\lambda\)-isolated if and only if it has an open neighborhood of weight smaller than \( \lambda \) (by Lemma~\ref{lem:densityvscardinality} again). Thus for such a \(\lambda\),  a metrizable space \( X \) with a well-orderable basis is \(\lambda\)-perfect if and only if it is everywhere of weight at least \( \lambda \).

This immediately provides a further variant of the characterizations in Theorem~\ref{thm:characterizationCantor}. It is reminiscent of Brouwer's characterization of the Cantor space \( \pre{\omega}{2} \) (see e.g.\ \cite[Theorem 7.4]{Kechris1995}), but notably dropping any compactness condition.

\begin{theorem} \label{thm:characterizationCantor2}
Assume that \(\lambda\) is \(\omega\)-inaccessible.
The space \( B(\lambda) \)  is the unique nonempty \(\lambda\)-perfect \( \lambda \)-Polish space \( X \) with \( \dim(X) =  0 \). If \( 2^{< \lambda} =  \lambda \), the same applies to the generalized Cantor space \( \pre{\lambda}{2} \).
\end{theorem}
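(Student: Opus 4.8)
The plan is to derive the statement directly from the characterization already obtained in Theorem~\ref{thm:characterizationCantor}, by checking that under $\omega$-inaccessibility the single hypothesis ``$X$ is $\lambda$-perfect'' is exactly equivalent to condition~\ref{thm:characterizationCantor-2} there (that $X$ be everywhere of weight $\lambda$). Once this equivalence is in place, the first assertion is just a restatement of Theorem~\ref{thm:characterizationCantor}.

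First I would fix a nonempty $\lambda$-Polish space $X$, so that $X$ carries a well-orderable basis of size at most $\lambda$ and $\weight(X) \le \lambda$. For any nonempty (cl)open $U \subseteq X$ one always has $\weight(U) \le \weight(X) \le \lambda$. The point of invoking $\omega$-inaccessibility is Lemma~\ref{lem:densityvscardinality}, applied to the subspace $U$ (which is again $\lambda$-Polish with a well-orderable basis): it gives $\weight(U) < \lambda$ if and only if $U$ is well-orderable with $|U| < \lambda$. By the remark preceding the statement, a point $x \in X$ is $\lambda$-isolated precisely when it has an open neighbourhood of weight $< \lambda$. Combining these, $X$ has a $\lambda$-isolated point if and only if some nonempty open $U$ satisfies $\weight(U) < \lambda$, i.e.\ if and only if $X$ fails to be everywhere of weight $\lambda$. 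Hence $X$ is $\lambda$-perfect exactly when every nonempty (cl)open subspace has weight $\lambda$, which is condition~\ref{thm:characterizationCantor-2}.

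With this equivalence, a nonempty $\lambda$-Polish space $X$ with $\dim(X) = 0$ is $\lambda$-perfect if and only if it satisfies conditions~\ref{thm:characterizationCantor-1} and~\ref{thm:characterizationCantor-2} of Theorem~\ref{thm:characterizationCantor}, which holds if and only if $X \approx B(\lambda)$; existence is witnessed by $B(\lambda)$ itself, which is nonempty, has $\dim = 0$, and is $\lambda$-perfect. For the last sentence I would simply observe that $2^{<\lambda} = \lambda$ gives $\pre{\lambda}{2} \approx B(\lambda)$ by Theorem~\ref{thm:homeomorphictoCantor}\ref{thm:homeomorphictoCantor-2}, so $\pre{\lambda}{2}$ is itself the unique such space (and that $2^{<\lambda} = \lambda$ together with $\cf(\lambda) = \omega$ forces $\lambda$ to be strong limit, hence $\omega$-inaccessible, so the standing hypothesis is automatic).

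I do not expect a genuine obstacle here: the proof is essentially bookkeeping on top of Theorem~\ref{thm:characterizationCantor}. The only point requiring care is to apply Lemma~\ref{lem:densityvscardinality} to the open subspaces $U$ rather than to $X$ as a whole, and to keep track of the fact that without choice ``$|U| < \lambda$'' already encodes well-orderability of $U$ --- which is exactly what makes the $\lambda$-isolation condition line up with ``weight $< \lambda$'' and lets the two characterizations coincide.
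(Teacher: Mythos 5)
Your proposal is correct and matches the paper's own argument: the paper derives Theorem~\ref{thm:characterizationCantor2} exactly as you do, by noting (via Lemma~\ref{lem:densityvscardinality}) that under $\omega$-inaccessibility a point is $\lambda$-isolated if and only if it has an open neighbourhood of weight $<\lambda$, so that $\lambda$-perfectness coincides with condition~\ref{thm:characterizationCantor-2} of Theorem~\ref{thm:characterizationCantor}, and then invoking that theorem together with Theorem~\ref{thm:homeomorphictoCantor}\ref{thm:homeomorphictoCantor-2} for the $\pre{\lambda}{2}$ case. Your bookkeeping on applying the lemma to open subspaces and on the choiceless reading of ``$|U|<\lambda$'' is exactly the care the paper takes in the discussion preceding the theorem.
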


We now consider \(\lambda\)-Polish spaces of arbitrary (Lebesgue covering) dimension, and study when \( B(\lambda) \) can be (topologically) embedded into them.
Two of the well-known embedding theorems for classical Polish spaces \( X \) are the following: 
\begin{enumerate-(1)}
\item \label{cantor}
 \( C(\omega) \approx \pre{\omega}{2} \) embeds into \( X \) (necessarily as a closed set, by compactness of \( \pre{\omega}{2} \)) if and only if \( |X| > \omega \) (see~\cite[Corollary 6.5]{Kechris1995});
\item \label{hurewicz}
\( B(\omega) = \pre{\omega}{\omega} \) embeds into \( X \) as a closed set if and only if \( X \) is not \(\sigma\)-compact, i.e.\ it cannot be written as a countable union of compact sets (see~\cite[Theorem 7.10]{Kechris1995}).
\end{enumerate-(1)}

Notice that both statements are of the form: a certain canonical space can be embedded 
into \( X \) (as a closed set) if and only if \( X \) is not too small. In our generalized setting 
\( \lambda > \omega \), the two statements tend to collapse into a unique one. Indeed, 
on the one hand \( B(\lambda) \approx C(\lambda) \) by 
Theorem~\ref{thm:homeomorphictoCantor}; on the other hand, any set of size at most 
\( \lambda\) is trivially \( \lambda \)-compact (that is, a union of 
\( \lambda \)-many compact sets), and the converse holds as well \axioms{\( \AC_\lambda(\pre{\lambda}{2}) \)}
if  \(  2^{\aleph_0} \leq \lambda \) because compact metrizable spaces have size at most \( 2^{\aleph_0} \). Things would not change if instead of considering compactness we consider \( \lambda \)-Lindel\"ofness. It will follow from our results (Corollary~\ref{cor:lambdaLindelofissmall}) that if \( \lambda \) is \(\omega\)-inaccessible, then \axioms{\( \AC_\lambda(\pre{\lambda}{2}) \)} any \(\lambda \)-Lindel\"of \(\lambda\)-Polish space is well-orderable and has size at most \( \lambda \). (The same applies of course to spaces which are \( \lambda \)-sized unions of closed, or even just  \( G_\delta \), \( \lambda \)-Lindel\"of subsets of a \(\lambda\)-Polish space.)\axioms{\( \AC_\lambda(\pre{\lambda}{2}) \)}

Together with Corollary~\ref{cor:cardinalityunderomegainaccessible},
the above discussion
entails that \axioms{\( \AC_\lambda(\pre{\lambda}{2} \)}
the following would be the common analogue of both  statements~\ref{cantor} and~\ref{hurewicz} when 
\( \lambda \) is \( \omega \)-inaccessible and \( X \) is \(\lambda\)-Polish: 
\begin{quotation}
 \( B(\lambda) \) embeds into  \( X \) (as a closed set) if and only if \( |X| > \lambda \). 
 \end{quotation}
 This will be proved in Corollary~\ref{cor:perfect1} (recall that in our setup \( \lambda^\omega = \lambda^{\cf(\lambda)} > \lambda \)). The nontrivial direction of that result follows from the more technical Theorem~\ref{thm:perf} below, which holds without any further assumption on \(\lambda\): its essence is that \( B(\lambda) \) embeds into any \(\lambda\)-Polish space which is everywhere large enough.

\begin{theorem} \label{thm:perf}
Let \( X \) be a completely metrizable space which is everywhere of density character \( \lambda \). Then \( B(\lambda) \) embeds into \( X \) as a closed  set.
\end{theorem}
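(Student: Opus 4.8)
The plan is to realize \( B(\lambda) \) as the body \( [T] \) of a pruned tree \( T \subseteq \pre{<\omega}{\lambda} \) carried inside \( X \) by a \(\lambda\)-scheme \( \{ B_s \mid s \in T \} \) whose induced map \( f \) is a \emph{closed} topological embedding. Fix once and for all a compatible complete metric \( d \leq 1 \) on \( X \) and a well-ordered dense subset of \( X \) (which exists since \( \dens(X) = \lambda \)). I would build the scheme so that for all \( s \in T \) and admissible \( \beta \neq \beta' \): (a) each \( B_s \) is a nonempty open ball with \( \mathrm{diam}(B_s) \leq 2^{-\lh(s)} \), and \( B_\emptyset = X \); (b) \( \mathrm{cl}(B_{s {}^\smallfrown{} \beta}) \subseteq B_s \) and \( \mathrm{cl}(B_{s {}^\smallfrown{} \beta}) \cap \mathrm{cl}(B_{s {}^\smallfrown{} \beta'}) = \emptyset \); (c) the family \( \{ B_{s {}^\smallfrown{} \beta} \mid s {}^\smallfrown{} \beta \in T \} \) is discrete in \( X \); (d) each node \( s \in T \) has at least \( \lambda_{\lh(s)} \) immediate successors in \( T \). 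Condition (d) guarantees~\eqref{eq:largepartition} (the branching below any node is eventually unbounded in \(\lambda\) because \( \lambda_n \to \lambda \)), so \( [T] \approx B(\lambda) \) by Lemma~\ref{lem:homeomorphictoCantor}; conditions (a)--(b) make \( \{ B_s \} \) a \(\lambda\)-scheme with pairwise disjoint open children, so by Fact~\ref{fct:scheme} the induced \( f \) is a topological embedding whose domain is exactly \( [T] \) (using \( \mathrm{cl}(B_{s {}^\smallfrown{} \beta}) \subseteq B_s \) together with \( B_s \neq \emptyset \iff s \in T \)).

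The construction proceeds by recursion on \( \lh(s) \) and reduces to a single node step. Given a nonempty open \( B_s \), the hypothesis that \( X \) is everywhere of density character \(\lambda\) applies to \( B_s \) itself, and indeed to every open ball it contains; this is the crucial point, as it lets children be simultaneously small in diameter and of density \(\lambda\). Apply Lemma~\ref{lem:r-spaces} to \( B_s \) (with the inherited dense subset) to obtain, canonically by Remark~\ref{rmk:r-spaces}, an \( r \)-spaced set \( A \subseteq B_s \) with \( |A| = \lambda_{\lh(s)} \) for some \( r > 0 \). For each \( a \in A \) set \( \rho_a = \tfrac{1}{2}\min\{ r/3,\ 2^{-(\lh(s)+2)},\ d(a, X \setminus B_s) \} > 0 \) and let the children be the balls \( B_d(a, \rho_a) \), enumerated following G\"odel's ordering. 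Since \( A \) is \( r \)-spaced and \( \rho_a \leq r/3 \), for any \( z \in X \) the ball \( B_d(z, r/6) \) meets at most one of these children (two would force two points of \( A \) to lie within distance \( < r \)), so the family is discrete in \( X \); the closures are pairwise disjoint, contained in \( B_s \) by the choice \( \rho_a \leq \tfrac{1}{2} d(a, X \setminus B_s) \), and of diameter \( \leq 2^{-(\lh(s)+1)} \). This secures (a)--(d).

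It remains to verify that \( f([T]) \) is closed, which is the heart of the matter. Suppose \( f(x_k) \to y \) in \( X \). Using that the closure of a discrete family of open sets is again discrete, I would argue by induction on \( i \) that there are \( \beta_i < \lambda \) and \( K_i \in \omega \) with \( x_k \restriction (i+1) = \langle \beta_0, \dotsc, \beta_i \rangle \) for all \( k \geq K_i \) and \( y \in \mathrm{cl}(B_{\langle \beta_0, \dotsc, \beta_i \rangle}) \): at stage \( i \) the children of \( t = \langle \beta_0, \dotsc, \beta_{i-1} \rangle \) form a discrete family, so \( y \) has a neighbourhood meeting at most one of their closures, which forces the coordinate \( x_k(i) \) to stabilise at a single value \( \beta_i \) (and \( y \) to lie in \( \mathrm{cl}(B_{t {}^\smallfrown{} \beta_i}) \)) for all large \( k \). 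The branch \( x = \langle \beta_i \rangle_{i \in \omega} \) then lies in \( [T] \), and since \( \mathrm{diam}(B_{x \restriction i}) \to 0 \) and \( d \) is complete, \( \bigcap_i \mathrm{cl}(B_{x \restriction i}) = \{ f(x) \} \) contains \( y \); hence \( y = f(x) \in f([T]) \).

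The main obstacle is precisely this closedness, and it explains why the children must form a \emph{genuinely discrete} family in \( X \) rather than merely having pairwise disjoint closures: without discreteness, branches living in distinct children at a common level could accumulate to a point outside the image. This is what makes the uniformly discrete (\( r \)-spaced) sets of Lemma~\ref{lem:r-spaces}, as opposed to arbitrary small balls, indispensable, and it is also where the full strength of the hypothesis ``everywhere of density character \(\lambda\)'' is used: it guarantees that arbitrarily small balls still carry density \(\lambda\), and hence admit \( r \)-spaced sets of every size below \(\lambda\), so that the small-diameter requirement (d) can be met without sacrificing the branching needed for~\eqref{eq:largepartition}.
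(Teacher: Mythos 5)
Your proof is correct and follows essentially the same route as the paper's: a recursive construction of a \(\lambda\)-scheme whose children are small balls centred on a canonical \(r\)-spaced set (Lemma~\ref{lem:r-spaces} and Remark~\ref{rmk:r-spaces}), with uniform separation between siblings forcing the image to be closed, and Lemma~\ref{lem:homeomorphictoCantor} identifying \([T]\) with \(B(\lambda)\) via the branching condition. The only difference is cosmetic: you spell out the coordinate-stabilisation argument for closedness, which the paper compresses into the remark that conditions~\ref{thm:generalizedcantorembedding-3} and~\ref{thm:generalizedcantorembedding-4} ensure the range is closed.
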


In particular, \axioms{\( \AC_\lambda(\pre{\lambda}{2}) \)} the theorem applies to all \(\lambda\)-Polish spaces satisfying condition~\ref{thm:characterizationCantor-2} of Theorem~\ref{thm:characterizationCantor}.

\begin{proof}
Let \( D \) be a well-orderable dense subset of \( X \).
By Lemma~\ref{lem:homeomorphictoCantor} it is enough to show that
 there is a pruned tree \( T \subseteq \pre{<\omega}{\lambda} \) satisfying~\eqref{eq:largepartition} and such that \( [T] \) embeds into \( X \) as a closed set. To this aim, we fix a compatible complete metric \( d \leq 1 \) on \( X \) and build a \( \lambda \)-scheme \( \{ B_s \mid s \in \pre{<\omega}{\lambda} \} \) on \( X \) together with a family \( \{ r_s \mid s \in \pre{<\omega}{\lambda} \setminus \{ \emptyset \} \} \) of positive reals such that for all \( s \in \pre{< \omega}{\lambda} \) and distinct \( \beta, \beta' < \lambda \):
\begin{enumerate-(1)}
\item
\( B_{s {}^\smallfrown{} \beta} \cap B_{s {}^\smallfrown{} \beta'} = \emptyset \);
\item
\( B_\emptyset = X \), \( B_s \) is open, and \( \mathrm{diam}(B_s) \leq 2^{-\lh(s)} \);
\item \label{thm:generalizedcantorembedding-3}
\( \mathrm{cl}(B_{s {}^\smallfrown{} \beta}) \subseteq B_s \);
\item \label{thm:generalizedcantorembedding-4}
the distance between \( B_{s {}^\smallfrown{} \beta} \) and \( B_{s {}^\smallfrown{} \beta'} \) is at least \(  r_s \) (if they are both nonempty);
\item
if \( B_s \neq \emptyset \), then \( B_{s {}^\smallfrown{} \beta} \neq \emptyset  \) if and only if \( \beta <  \lambda_{\lh(s)} \).
\end{enumerate-(1)}
The latter condition is to ensure that the tree \( T = \{ s \in \pre{< \omega}{\lambda} \mid B_s \neq \emptyset \} \) is as required (indeed, we get \( [T] = C(\lambda)  \)), while condition~\ref{thm:generalizedcantorembedding-4}, together with~\ref{thm:generalizedcantorembedding-3}, ensures that the range of the embedding induced by the \(\lambda\)-scheme is closed. 

As usual, the construction of the \(\lambda\)-scheme (and of the reals \( r_s \)) is by recursion on \( \lh(s) \), starting with \( B_\emptyset = X \). 
Assume that \( B_s  \) satisfies all the above conditions. If \( B_s = \emptyset \) it is enough to set \( B_{s {}^\smallfrown{} \beta} = \emptyset \) for all \( \beta < \lambda \)  (and \( r_s > 0 \) be arbitrary), hence we can assume \( B_s \neq \emptyset \). For \(  \lambda_{\lh(s)} \leq \beta < \lambda \), set  \( B_{s {}^\smallfrown{}  \beta} = \emptyset \). Then apply Lemma~\ref{lem:r-spaces} with \( Y = B_s \) and \( \nu = \lambda_{\lh(s)} \) and enumerate the resulting \( r \)-spaced set as \( (x_\beta)_{\beta < \lambda_{\lh(s)}} \): by Remark~\ref{rmk:r-spaces} such a set and the real \( r \) can be canonically extracted from the well-orderable set \( D \cap B_s \), which is clearly dense in \( B_s \). Set \( r_s = r/3 \) and, for each \( \beta < \lambda_{\lh(s)} \), set \( B_{s {}^\smallfrown{} \beta} = B_d(x_\beta, 2^{-n} ) \) where \( 0 \neq n \in \omega \) is smallest such \( 2^{-n} \leq   \min \{ 2^{-(\lh(s)+2)},r_s\} \) and \( B_d(x_\beta, 2^{-(n-1)}) \subseteq B_s \).
\end{proof}

In view of the fact that, when \(\lambda\) is \(\omega\)-inaccessible,  \(\lambda\)-perfectness amounts to require that the \(\lambda\)-Polish space be everywhere of weight \(\lambda\), we get:

\begin{corollary} \label{cor:lambda-perfect} \axioms{Da qui in poi \( \AC_\lambda(\pre{\lambda}{2}) \) per tutti i risultati della sezione}
Assume that \(\lambda\) is \(\omega\)-inaccessible. Then \( B(\lambda) \) can be embedded as a closed set into any nonempty \(\lambda\)-perfect \(\lambda\)-Polish space.
\end{corollary}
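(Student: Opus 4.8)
The plan is to reduce the statement to Theorem~\ref{thm:perf}, whose conclusion is exactly the desired closed embedding of \( B(\lambda) \). Thus the only thing to check is that a nonempty \(\lambda\)-perfect \(\lambda\)-Polish space \( X \) satisfies the hypothesis of that theorem, namely that it is everywhere of density character \(\lambda\) (i.e.\ every nonempty open subspace has density character \(\lambda\)). Since \( X \) is in particular completely metrizable, this is all that is needed.

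First I would bound the density character from above. As \( X \) is \(\lambda\)-Polish, it is metrizable and admits a well-ordered basis of size at most \(\lambda\); restricting such a basis to any nonempty open \( U \subseteq X \) yields \( \weight(U) \leq \weight(X) \leq \lambda \). Next I would bound it from below using \(\lambda\)-perfectness together with \(\omega\)-inaccessibility of \(\lambda\): by the discussion following Definition~\ref{def:lambda-perfect} (which rests on Lemma~\ref{lem:densityvscardinality}), for such \(\lambda\) a point is \(\lambda\)-isolated precisely when it has an open neighborhood of weight smaller than \(\lambda\), and hence a metrizable space with a well-orderable basis is \(\lambda\)-perfect if and only if it is everywhere of weight at least \(\lambda\). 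Picking any point of a nonempty open \( U \) and applying this, we get \( \weight(U) \geq \lambda \), whence \( \weight(U) = \lambda \) for every nonempty open \( U \subseteq X \).

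Finally I would pass from weight to density character: under our standing assumption \( \AC_\lambda(\pre{\lambda}{2}) \) (which implies \( \AC_\lambda(\pre{\omega}{\lambda}) \)), weight and density character agree for metrizable spaces of weight at most \(\lambda\), so \( \dens(U) = \weight(U) = \lambda \) for every nonempty open \( U \). Thus \( X \) is everywhere of density character \(\lambda\), and Theorem~\ref{thm:perf} delivers a closed copy of \( B(\lambda) \) inside \( X \). There is no real obstacle here: all the genuine combinatorial work --- building the suitable \(\lambda\)-scheme with the \( r \)-spaced packings supplied by Lemma~\ref{lem:r-spaces} --- has already been carried out in the proof of Theorem~\ref{thm:perf}; the content of the corollary is merely the translation of \emph{``\(\lambda\)-perfect''} into the metric largeness condition \emph{``everywhere of density character \(\lambda\)''}, which is exactly where \(\omega\)-inaccessibility is used.
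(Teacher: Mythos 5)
Your proposal is correct and follows exactly the paper's route: the paper derives Corollary~\ref{cor:lambda-perfect} from Theorem~\ref{thm:perf} by observing that, for \(\omega\)-inaccessible \(\lambda\), \(\lambda\)-perfectness of a \(\lambda\)-Polish space amounts (via Lemma~\ref{lem:densityvscardinality}) to being everywhere of weight \(\lambda\), hence everywhere of density character \(\lambda\). Your spelled-out version of this translation, including the use of the standing choice assumption to identify weight and density character, matches the paper's argument.
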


\begin{theorem} \label{thm:decomp}
Let \( X \) be \(\lambda\)-Polish. Then \( X \) can  be written as a disjoint union \( P \cup C \), where \( P \) is \(\lambda\)-perfect and \( C \) is a well-orderable open set of size at most \( \lambda \). If \(\lambda\) is \(\omega\)-inaccessible, then such a decomposition is unique and the set \( P \) is called \markdef{\(\lambda\)-perfect kernel} of \( X \).
\end{theorem}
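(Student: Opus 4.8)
The plan is to follow the classical Cantor--Bendixson argument in its \emph{condensation point} formulation, the crucial point being that the set to be discarded must be defined using the threshold \( \le \lambda \), not \( < \lambda \) (a single derivative in the sense of removing \(\lambda\)-isolated points would not produce a \(\lambda\)-perfect remainder, just as one derivative fails classically). Fix a well-ordered basis \( \{ B_\alpha \mid \alpha < \lambda \} \) of \( X \), let \( I = \{ \alpha < \lambda \mid |B_\alpha| \le \lambda \} \), and set \( C = \bigcup_{\alpha \in I} B_\alpha \). Then \( C \) is open and equals the set of points admitting an open neighborhood of size \( \le \lambda \): if \( x \in C \) then some \( B_\alpha \) with \( \alpha \in I \) is such a neighborhood, and conversely any open \( U \ni x \) with \( |U| \le \lambda \) contains a basic \( B_\alpha \ni x \) with \( |B_\alpha| \le \lambda \). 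Since \( C \) is a union of at most \( \lambda \)-many sets each of size \( \le \lambda \), Fact~\ref{fct:lambdaunionewithoutchoice}\ref{fct:lambdaunionewithoutchoice-2} (available in the base theory) gives that \( C \) is well-orderable with \( |C| \le \lambda \). Put \( P = X \setminus C \), which is closed; this requires no assumption on \(\lambda\) beyond the standing ones, as promised for the existence part.

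To see \( P \) is \(\lambda\)-perfect, suppose toward a contradiction that some \( x \in P \) is \(\lambda\)-isolated in \( P \) (in the subspace sense of Definition~\ref{def:lambda-perfect}), witnessed by an open \( V \ni x \) with \( |V \cap P| < \lambda \). Writing \( V = (V \cap P) \cup (V \cap C) \) and noting that \( V \cap C \subseteq C \) is well-orderable of size \( \le \lambda \), we conclude \( |V| \le \lambda \). Thus \( V \) is an open neighborhood of \( x \) of size \( \le \lambda \), so \( x \in C \), contradicting \( x \in P \). Here the choice of threshold \( \le \lambda \) in the definition of \( C \) is exactly what is used: with the threshold \( < \lambda \) one would only obtain \( |V| \le \lambda \) and could not conclude \( x \in C \).

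For uniqueness under \(\omega\)-inaccessibility, the key leverage is the size dichotomy for \(\lambda\)-perfect spaces: by Corollary~\ref{cor:lambda-perfect}, every nonempty \(\lambda\)-perfect \(\lambda\)-Polish space contains a closed copy of \( B(\lambda) = \pre{\omega}{\lambda} \), hence has size at least \( \lambda^\omega = \lambda^{\cf(\lambda)} > \lambda \); in particular it cannot be well-orderable of size \( \le \lambda \). Now let \( X = P' \sqcup C' \) be any decomposition as in the statement, with \( P' \) closed and \(\lambda\)-perfect and \( C' \) well-orderable open of size \( \le \lambda \). First, \( P \subseteq P' \): if \( x \in P \setminus P' \), then \( C' = X \setminus P' \) is an open neighborhood of \( x \) of size \( \le \lambda \), whence \( x \in C \), contradicting \( x \in P \). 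Conversely \( P' \subseteq P \): if \( x \in P' \setminus P \), then \( x \in C \), so \( x \) has an open neighborhood \( V \) with \( |V| \le \lambda \); the set \( V \cap P' \) is then a nonempty open subspace of \( P' \), hence itself \(\lambda\)-Polish (open subsets of \(\lambda\)-Polish spaces are \(\lambda\)-Polish) and \(\lambda\)-perfect (an open subspace of a \(\lambda\)-perfect space has no \(\lambda\)-isolated points), yet of size \( \le |V| \le \lambda \) --- contradicting the dichotomy above. Therefore \( P = P' \), and consequently \( C = C' \).

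The main obstacle is conceptual rather than computational: realizing that the correct object to remove is the \emph{condensation-type} set of points with a neighborhood of size \( \le \lambda \) (so that a single pass already yields a \(\lambda\)-perfect remainder), and checking that this set remains of size \( \le \lambda \), which relies essentially on the \(\lambda\)-union fact and hence on the fragment \( \AC_\lambda(\pre{\lambda}{2}) \). For uniqueness the whole argument rests on the fact that nonempty \(\lambda\)-perfect \(\lambda\)-Polish spaces are ``large'' (size \( > \lambda \)), which is precisely where \(\omega\)-inaccessibility enters through Corollary~\ref{cor:lambda-perfect}.
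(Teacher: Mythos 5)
Your proof is correct and takes essentially the same approach as the paper: you define \( C \) as the union of the basic open sets of size at most \( \lambda \), bound its size via Fact~\ref{fct:lambdaunionewithoutchoice}, obtain \(\lambda\)-perfectness of \( P = X \setminus C \) by the same threshold argument, and derive uniqueness under \(\omega\)-inaccessibility from Corollary~\ref{cor:lambda-perfect} together with \( \lambda^\omega = \lambda^{\cf(\lambda)} > \lambda \), exactly as the paper does. The only differences are cosmetic: you check both inclusions \( P \subseteq P' \) and \( P' \subseteq P \) directly (assuming the decomposition disjoint, as in the statement), whereas the paper proves \( P' \subseteq P \) and deduces the rest from \( X = P' \cup C' \).
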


Although the proof follows the classical argument (see e.g.~\cite[Theorem 6.4]{Kechris1995}), we report it here for the sake of completeness and to make it evident where \(\omega\)-inaccessibility is needed in the second part.

\begin{proof}
Let \( C \) be the union of all basic open sets \( U \) such that \( |U| \leq \lambda \) and set \( P = X \setminus C \). Then \( C \) is open and \( |C| \leq \lambda \) by Fact~\ref{fct:lambdaunionewithoutchoice}\ref{fct:lambdaunionewithoutchoice-2}. It follows that \( P \) is closed and also \(\lambda\)-perfect: if \( U \cap P \) is a nonempty (relative) basic open subset of \( P \) with \( | U \cap P | < \lambda \), then \( |U| \leq \lambda \) because \( U \setminus P \subseteq C \), hence we would have \( U \subseteq C \), in contradiction with \( P \cap U \neq \emptyset \).

Assume now that \( \lambda \) is \(\omega\)-inaccessible and let \( X = P' \cup C' \) with%
\footnote{Interestingly, we do not need to assume \( P' \cap C' = \emptyset \). Indeed, it is a corollary of the proof that if \( P' \) and \( C' \) are as required, then they must necessarily be disjoint.}  
\( P' \) a \(\lambda\)-perfect set and \( C' \) open such that \( |C'| \leq \lambda \).
Clearly \( C' \subseteq C \) by definition of \( C \). Let \( x \in P' \) be arbitrary, and 
assume towards a contradiction that \( x \in C \). Then by definition of \( C \) there would be some basic open neighborhood \( U \) of \( x \) such that \( |U| \leq \lambda \). On the other hand,
the space \( Y = U \cap P' \) is \(\lambda\)-Polish because it is a \( G_\delta \)-subset of \( X \) (Fact~\ref{fct:booleancombinationsareFsigma}). Moreover it is \(\lambda\)-perfect because it is relatively open in the \(\lambda\)-perfect space \( P' \), and it is nonempty because \( x \in Y \). Hence the space \( B(\lambda) \) embeds into \( Y \) by Corollary~\ref{cor:lambda-perfect}. Since \( \cf(\lambda) = \omega \)  we then have \( |U| \geq |U \cap P'| \geq |B(\lambda)| = \lambda^\omega > \lambda \), contradicting the choice of \( U \). Thus \( x \in P = X \setminus C \), and since \( x \) was an arbitrary element of \( P' \), this shows \( P' \subseteq P \). Since \( X = P' \cup C' \), it then follows \( P' = P \) and \( C' = C \), as desired.
\end{proof}

Combining this with Theorem~\ref{thm:characterizationCantor2}, we get the following curious phenomenon. 

\begin{corollary} \label{cor:characterizationCantor2}
Assume that \(\lambda\) is \(\omega\)-inaccessible and that \( X \) is a \(\lambda\)-Polish space with \( \mathrm{dim}(X) = 0 \).  Then the \(\lambda\)-perfect kernel of \( X \), if nonempty, is homeomorphic to \( B(\lambda) \).
\end{corollary}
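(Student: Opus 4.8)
The plan is to obtain this as an immediate combination of the decomposition Theorem~\ref{thm:decomp} and the characterization Theorem~\ref{thm:characterizationCantor2}. Since \(\lambda\) is \(\omega\)-inaccessible, Theorem~\ref{thm:decomp} guarantees that \( X \) has a \emph{unique} \(\lambda\)-perfect kernel \( P \), fitting into a disjoint decomposition \( X = P \cup C \) with \( P \) closed and \(\lambda\)-perfect and \( C \) open of size at most \(\lambda\). So I would start by fixing this \( P \) and assuming \( P \neq \emptyset \).

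The core of the argument is then to verify that \( P \) satisfies all the hypotheses of Theorem~\ref{thm:characterizationCantor2}, i.e.\ that it is a nonempty \(\lambda\)-perfect \(\lambda\)-Polish space with \( \dim(P) = 0 \). Nonemptiness and \(\lambda\)-perfectness come for free from Theorem~\ref{thm:decomp}. For \(\lambda\)-Polishness, I would use that \( P = X \setminus C \) is closed in \( X \): a closed subspace of a completely metrizable space is completely metrizable, and \( \weight(P) \leq \weight(X) \leq \lambda \), so \( P \) is \(\lambda\)-Polish. (Equivalently, closed sets are \( G_\delta \) in metric spaces and the class of \(\lambda\)-Polish spaces is closed under \( G_\delta \) subspaces.) Finally, \( \dim(P) = 0 \) because \( P \) is a closed subset of the zero-dimensional space \( X \), using the fact recorded right after the definition of covering dimension that closed subsets of a space of covering dimension \( 0 \) again have covering dimension \( 0 \).

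Having checked these hypotheses, Theorem~\ref{thm:characterizationCantor2} applies verbatim to \( P \) and yields \( P \approx B(\lambda) \), which is exactly the claim.

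There is essentially no genuine obstacle here: the corollary is a clean consequence of the two theorems, and the only point requiring (minor) care is confirming that the kernel \( P \) inherits both \(\lambda\)-Polishness and covering dimension \( 0 \) from \( X \). Both inheritances rely only on \( P \) being closed in \( X \), which is part of the conclusion of Theorem~\ref{thm:decomp}, so no new construction or additional use of choice is needed beyond what those theorems already require.
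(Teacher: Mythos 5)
Your proof is correct and follows exactly the route the paper intends: the paper derives this corollary by combining Theorem~\ref{thm:decomp} with Theorem~\ref{thm:characterizationCantor2}, which is precisely what you do. The verifications you spell out (the kernel inherits \(\lambda\)-Polishness and covering dimension \(0\) because it is closed in \( X \)) are the small details the paper leaves implicit, and they are handled correctly.
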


This is in strong contrast with the situation in the classical setup \( \lambda = \omega \), where we can find many different kinds of perfect kernels of zero-dimensional Polish spaces. 

\begin{remark}
As for the classical case \( \lambda = \omega \), one can obtain  the decomposition \( P \cup C \) of \( X \) as in Theorem~\ref{thm:decomp} via (generalized) Cantor-Bendixson derivatives. More precisely, given a \( \lambda \)-Polish space \( Y \) let
\[ 
Y' = \{ y \in Y \mid y \text{ is not \(\lambda\)-isolated in } Y \}
 \] 
be the \(\lambda\)-Cantor-Bendixson derivative of \( Y \), and notice that \( Y' \) is closed in \( Y \) and \( | Y \setminus Y'| \leq \lambda \). This operation can be iteratively applied to our \(\lambda\)-Polish space \( X \) as follows (the definition is by recursion on all ordinals):
\begin{align*}
X^{(0)} & = X \\
X^{(\alpha+1)} & = (X^{(\alpha)})' \\
X^{(\gamma)} & = \bigcap_{\beta < \gamma} X^{(\beta)} \qquad \text{if \(\gamma\) is limit}.
\end{align*}
Since \( X \) has weight at most \( \lambda \) and all the sets \( X^{(\beta)} \) are closed, there is \( \alpha < \lambda^+ \) such that \( X^{(\alpha)} = X^{(\alpha+1)} \), that is \( X^{(\alpha)} \) is \(\lambda\)-perfect. It then follows that \( P = X^{(\alpha)} \) is the \(\lambda\)-perfect kernel of \( X \) (whenever this terminology makes sense, i.e.\ if \(\lambda\) is \(\omega\)-inaccessible). 
\end{remark}

Arguing as in the classical case, Corollary~\ref{cor:lambda-perfect} and Theorem~\ref{thm:decomp} yields to the following dichotomy.

\begin{corollary} \label{cor:perfect1}
Assume that \(\lambda\) is \(\omega\)-inaccessible
and let \( X \) be a \(\lambda\)-Polish space. Then either \( X \) is well-orderable and \( |X| \leq \lambda \), or else \( B(\lambda) \) embeds into \( X \) as a closed set (and hence \( |X| =  \lambda^\omega \)).
\end{corollary}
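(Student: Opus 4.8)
The plan is to feed the Cantor--Bendixson-style decomposition of Theorem~\ref{thm:decomp} into the embedding result of Corollary~\ref{cor:lambda-perfect}, and then to pin down the cardinality via Corollary~\ref{cor:cardinalityunderomegainaccessible}. Since \(\lambda\) is \(\omega\)-inaccessible, Theorem~\ref{thm:decomp} applies and yields a unique decomposition \( X = P \cup C \), where \( P \) is the \(\lambda\)-perfect kernel and \( C \) is a well-orderable open set with \( |C| \leq \lambda \). The whole argument then splits according to whether \( P \) is empty.

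First I would dispose of the easy case \( P = \emptyset \): here \( X = C \) is well-orderable and \( |X| = |C| \leq \lambda \), which is exactly the first alternative. So assume \( P \neq \emptyset \). Being closed in \( X \), the subspace \( P \) is again \(\lambda\)-Polish (a closed subset of a \(\lambda\)-Polish space is \( G_\delta \), hence \(\lambda\)-Polish by the footnoted consequence of \cite[Theorem 3.11]{Kechris1995}), and it is \(\lambda\)-perfect by construction. Corollary~\ref{cor:lambda-perfect} then provides an embedding of \( B(\lambda) \) into \( P \) as a closed subset. Because \( P \) is itself closed in \( X \), any closed subset of \( P \) is closed in \( X \), so \( B(\lambda) \) embeds into \( X \) as a closed set, giving the second alternative.

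It remains to verify the parenthetical cardinality claim in the second case. The closed embedding yields an injection of \( B(\lambda) = \pre{\omega}{\lambda} \) into \( X \), whence \( |X| \geq |B(\lambda)| = \lambda^\omega \). Conversely, \( X \) is \(\lambda\)-Polish, so \( \dens(X) \leq \lambda \), and Corollary~\ref{cor:cardinalityunderomegainaccessible} gives \( |X| \leq \lambda^\omega \); combining the two inequalities yields \( |X| = \lambda^\omega \). (Recall \( \lambda^\omega = \lambda^{\cf(\lambda)} > \lambda \), so this second case is genuinely incompatible with the first, and in particular the two alternatives are exclusive even without appealing to the uniqueness of the decomposition.)

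I do not expect a serious obstacle here: the result is essentially an assembly of the preceding machinery, which is precisely why the statement is phrased as ``arguing as in the classical case.'' The only two points demanding a moment of care are the passage from ``closed in \( P \)'' to ``closed in \( X \)'' (immediate, since \( P \) is closed in \( X \)) and the upper cardinality bound \( |X| \leq \lambda^\omega \), which is not available in \( \ZF \) alone but is guaranteed by the \(\omega\)-inaccessibility of \(\lambda\) through Corollary~\ref{cor:cardinalityunderomegainaccessible}.
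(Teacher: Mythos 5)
Your proposal is correct and follows essentially the same route as the paper, which obtains this dichotomy precisely by combining Theorem~\ref{thm:decomp} with Corollary~\ref{cor:lambda-perfect} ``as in the classical case''; your case split on the \(\lambda\)-perfect kernel and the cardinality computation via Corollary~\ref{cor:cardinalityunderomegainaccessible} are exactly the details the paper leaves implicit.
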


By Theorem~\ref{thm:homeomorphictoCantor} we also get the following analogue of~\cite[Corollary 6.5]{Kechris1995}.

\begin{corollary} \label{cor:perfect2}
Assume that \( 2^{< \lambda} =  \lambda\) 
and let \( X \) be a \(\lambda\)-Polish space. Then either \( X \) is well-orderable and  \( |X| \leq \lambda \), or else \( \pre{\lambda}{2} \) embeds into \( X \) as a closed set (and thus \( |X| = 2^\lambda \)).
\end{corollary}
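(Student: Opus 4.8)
The plan is to reduce the statement to Corollary~\ref{cor:perfect1} combined with the homeomorphism \( \pre{\lambda}{2} \approx B(\lambda) \) provided by Theorem~\ref{thm:homeomorphictoCantor}. First I would check that the hypotheses of Corollary~\ref{cor:perfect1} are indeed available. Since we are under the standing assumption that \(\lambda\) is a limit cardinal with \( \cf(\lambda) = \omega \), and since our base theory includes \( \AC_\lambda(\pre{\lambda}{2}) \), the equality \( 2^{< \lambda} = \lambda \) forces \(\lambda\) to be strong limit (the two conditions are equivalent for singular \(\lambda\), as discussed in Section~\ref{subsec:ordandcard}); and every strong limit cardinal is in particular \(\omega\)-inaccessible. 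Hence Corollary~\ref{cor:perfect1} applies to \( X \) verbatim.

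Next I would run the dichotomy of Corollary~\ref{cor:perfect1}. In its first alternative \( X \) is well-orderable with \( |X| \leq \lambda \), which is precisely the first alternative in the present statement, so nothing needs to be done. In its second alternative there is a closed embedding \( e \colon B(\lambda) \to X \), and moreover \( |X| = \lambda^\omega \). At this point I would invoke Theorem~\ref{thm:homeomorphictoCantor}\ref{thm:homeomorphictoCantor-2}, which under \( 2^{< \lambda} = \lambda \) yields a homeomorphism \( h \colon \pre{\lambda}{2} \to B(\lambda) \). The composite \( e \circ h \colon \pre{\lambda}{2} \to X \) is then a topological embedding whose image equals that of \( e \), and is therefore closed in \( X \). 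This produces the desired closed copy of \( \pre{\lambda}{2} \) inside \( X \).

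Finally, for the cardinality clause I would transport the value \( |X| = \lambda^\omega \) through the same homeomorphism. Since \( \pre{\lambda}{2} \approx B(\lambda) = \pre{\omega}{\lambda} \), the two sets are in bijection, so \( 2^\lambda = |\pre{\lambda}{2}| = |B(\lambda)| = \lambda^\omega \), and hence \( |X| = 2^\lambda \). As usual this equality is to be read in the \( \ZF \)-sense, so that \( 2^\lambda \) need not be a genuine cardinal when \( X \) is not well-orderable. I do not expect any substantive obstacle here, as the whole argument is a short assembly of Corollary~\ref{cor:perfect1} and Theorem~\ref{thm:homeomorphictoCantor}; the only points that deserve a line of care are the verification that \( 2^{< \lambda} = \lambda \) delivers \(\omega\)-inaccessibility (so that Corollary~\ref{cor:perfect1} is applicable) and the elementary observation that precomposing a closed embedding with a homeomorphism of its domain preserves injectivity, bicontinuity, and closedness of the range.
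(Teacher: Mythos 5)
Your proof is correct and follows exactly the paper's route: the paper derives Corollary~\ref{cor:perfect2} from Corollary~\ref{cor:perfect1} together with the homeomorphism \( \pre{\lambda}{2} \approx B(\lambda) \) of Theorem~\ref{thm:homeomorphictoCantor}, which is precisely your argument. Your added care in checking that \( 2^{<\lambda} = \lambda \) for singular \(\lambda\) gives strong limit, hence \(\omega\)-inaccessibility, is exactly the (implicit) verification needed to apply Corollary~\ref{cor:perfect1}.
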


\begin{remark} \label{rmk:not-omega-inaccessible}
It is not possible to extend Corollaries~\ref{cor:lambda-perfect} and~\ref{cor:perfect1} to cardinals \(\lambda\) of countable cofinality which are not \(\omega\)-inaccessible, even when restricting to \(\lambda\)-Polish spaces of maximal weight \(\lambda\). Indeed, if \(\lambda\) is not \(\omega\)-inaccessible then by definition there is \( \kappa < \lambda \) such that \(  \kappa^\omega \not <  \lambda \). Letting \( X \) be the sum of \(\lambda\)-many copies of \( B(\kappa) \) we get a \(\lambda\)-Polish space of weight \(\lambda\) and cardinality greater than \( \lambda \) which is \(\lambda\)-perfect, yet \( B(\lambda) \) cannot  embed into it. Indeed, on the one hand every nonempty open subset of \( X \) has size at least \( \kappa^\omega \), so no point of \( X \) is \(\lambda\)-isolated by the choice of \( \kappa \). On the other hand, since \( \weight(B(\kappa)) = \kappa < \lambda \)
the preimage of each copy of \( B(\kappa) \) under a topological embedding should have weight at most \( \kappa \), but this is impossible because every  nonempty open subset of \( B(\lambda) \) has weight \( \lambda \). A similar observation applies to Corollary~\ref{cor:perfect2}: if \( \lambda \) is singular but not strong limit, then \( \pre{\lambda}{2} \) has weight \( 2^{< \lambda} > \lambda \) and thus cannot be embedded into any \(\lambda\)-Polish space.
\end{remark}

We conclude this section by proving the announced results on \(\lambda\)-Lindel\"of \(\lambda\)-Polish spaces.

\begin{corollary} \label{cor:lambdaLindelofissmall}
Let \(\lambda\) be \(\omega\)-inaccessible. If a \(\lambda\)-Polish space \( X \) is \(\lambda\)-Lindel\"of, then \( X \) is well-orderable and \( |X| \leq \lambda \).
\end{corollary}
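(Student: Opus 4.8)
The plan is to feed $X$ into the dichotomy of Corollary~\ref{cor:perfect1}. Since $\lambda$ is $\omega$-inaccessible and $X$ is $\lambda$-Polish, that corollary tells us that either $X$ is well-orderable with $|X| \leq \lambda$ --- which is exactly the desired conclusion --- or else $B(\lambda)$ embeds into $X$ as a closed set. So it suffices to rule out the second alternative under the hypothesis that $X$ is $\lambda$-Lindel\"of. I will do this by showing that $B(\lambda)$ itself is \emph{not} $\lambda$-Lindel\"of, while $\lambda$-Lindel\"ofness would have to be inherited by any closed copy of $B(\lambda)$ sitting inside $X$.

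First I would record that $B(\lambda) = \pre{\omega}{\lambda}$ fails to be $\lambda$-Lindel\"of. Consider the open cover $\mathcal{U} = \{ \Nbhd_{\langle \alpha \rangle} \mid \alpha < \lambda \}$ by the basic clopen sets determined by the length-one sequences. These sets are pairwise disjoint and each is nonempty, so every subcover of $\mathcal{U}$ must contain all of them; in particular the only subcover is $\mathcal{U}$ itself, which has size $\lambda \not< \lambda$. Hence no well-orderable subcover of size $< \lambda$ exists, so $B(\lambda)$ is not $\lambda$-Lindel\"of. Since $\lambda$-Lindel\"ofness is a topological invariant, no space homeomorphic to $B(\lambda)$ can be $\lambda$-Lindel\"of.

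Next I would verify, in $\ZF$ alone, that a closed subspace $F$ of a $\lambda$-Lindel\"of space $X$ is again $\lambda$-Lindel\"of; combined with the previous paragraph and the embedding supplied by the dichotomy, this yields the desired contradiction and completes the proof. Given a cover $\mathcal{V}$ of $F$ by relatively open sets, assign to each $V \in \mathcal{V}$ its \emph{canonical} ambient extension $\hat V = \bigcup \{ W \subseteq X \mid W \text{ open and } W \cap F \subseteq V \}$, which is open in $X$ and satisfies $\hat V \cap F = V$. Then $\{ \hat V \mid V \in \mathcal{V} \} \cup \{ X \setminus F \}$ is an open cover of $X$ (here we use that $F$ is closed), so by $\lambda$-Lindel\"ofness of $X$ it has a well-orderable subcover of size $< \lambda$; discarding $X \setminus F$ and transferring back along the injective assignment $V \mapsto \hat V$ produces a well-orderable subfamily of $\mathcal{V}$ of size $< \lambda$ covering $F$. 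The one point requiring care --- and the only place where a careless argument might smuggle in choice --- is precisely this step: one must extend the relatively open cover to an ambient one \emph{canonically}, which the explicit formula for $\hat V$ achieves without appealing to any choice principle.
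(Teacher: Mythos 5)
Your proof is correct and takes essentially the same route as the paper, whose own proof is the one-line observation that, by Corollary~\ref{cor:perfect1}, a counterexample would force \( B(\lambda) \) to embed as a closed set into \( X \) and hence be \(\lambda\)-Lindel\"of, ``which is obviously false''. You have simply (and correctly) filled in the two facts the paper leaves implicit: that \(\lambda\)-Lindel\"ofness passes to closed subspaces without any use of choice, and that the pairwise disjoint clopen cover \( \{ \Nbhd_{\langle \alpha \rangle} \mid \alpha < \lambda \} \) witnesses that \( B(\lambda) \) is not \(\lambda\)-Lindel\"of.
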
 

\begin{proof}
If not, by Corollary~\ref{cor:perfect1} the space \( B(\lambda) \) would embed as a closed set in \( X \) and would thus be \(\lambda\)-Lindel\"of as well, which is obviously false.
\end{proof}

\section{\( \lambda \)-Polish spaces for other cofinalities} \label{subsec:Polishforothercofinalities}

So far, in this chapter on \(\lambda\)-Polish spaces we just considered the case when \( \cf(\lambda) = \omega \) because that is the main focus of the present paper. However, most of the results can in fact be proved for arbitrary uncountable cardinals \(\lambda\), and actually some of the proofs even simplify when \( \lambda \) has uncountable cofinality. In what follows we  thus assume that \( \cf(\lambda) > \omega \) (which in particular implies \( \lambda > \omega \)) and discuss which results from Sections~\ref{sec:def-example-Polish}--\ref{sec:lambda-perfect} 
still hold in this other setup. For the sake of brevity we just sketch the part of the proofs that need to be modified, leaving to the interested reader to fill in all the details.

First of all, notice that in the new setup \( C(\lambda) \) is undefined while \( \pre{\lambda}{2} \) and \( \mathrm{V}_{\lambda+1} \) are not \(\lambda\)-Polish because they are not even first-countable, hence there cannot be any analogue of Theorem~\ref{thm:homeomorphictoCantor}. However, the more technical Lemma~\ref{lem:homeomorphictoCantor} continue to be true: this is just for the records, as we do not have any use for this fact at the moment. Propositions~\ref{prop:retract},  \ref{prop:surjection}, and~\ref{prop:tree-basis}, Corollaries~\ref{cor:surjection2} and~\ref{cor:charwithsurj}, and Proposition~\ref{prop:surjectionpreservingcompact}  holds as well (the proofs are identical).

We next move to \(\lambda\)-Polish spaces \(  X \) with \( \mathrm{dim}(X) = 0 \). Both  Proposition~\ref{prop:dim(X)=0} and Corollary~\ref{cor:dim(X)=0} clearly remain true (their proofs go unchanged). Also the characterizations of \( B(\lambda) \) in Theorem~\ref{thm:characterizationCantor} work in the more general setting, with the following exceptions and restrictions:
\begin{itemizenew}
\item
conditions~\ref{thm:characterizationCantor-2'} and \ref{thm:characterizationCantor-2a} can be included only when \( \lambda \) is limit;%
\footnote{Notice however that when \( \lambda \) is not limit, we can still directly prove \ref{thm:characterizationCantor-2''}~\( \Rightarrow \)~\ref{thm:characterizationCantor-2} and \ref{thm:characterizationCantor-2b}~\( \Rightarrow \)~\ref{thm:characterizationCantor-2}, by-passing the missing conditions~\ref{thm:characterizationCantor-2'} and \ref{thm:characterizationCantor-2a}.}
\item
condition~\ref{thm:characterizationCantor-2e} must be removed: if \( \lambda \) is \(\omega\)-inaccessible of uncountable cofinality, then \( \lambda^\omega = \lambda \) and thus \( |B(\lambda)| = \lambda \) (in a sense this indicates that such setting is less natural when studying spaces of the form \( B(\lambda) \), see below for a more thorough discussion on this issue);
\item
the additional part concerning \( \pre{\lambda}{2} \) must be removed, as such space is not even \(\lambda\)-Polish.
\end{itemizenew}
Some parts of the proof of Theorem~\ref{thm:characterizationCantor} need to be adapted as well. In particular, in the proof of the main characterization (i.e.\ the one involving condition~\ref{thm:characterizationCantor-2}), we now have to%
\footnote{Any space of the form \( \prod_{i \in \omega} \lambda_i \) has weight \( \sup_{i \in \omega} \lambda_i \). If \( \lambda_i < \lambda \) for all but finitely many \( i \in \omega \) and \( \cf(\lambda) > \omega \), then such a space cannot be homeomorphic to \( B(\lambda) \) because it would contain clopen sets of weight smaller than \( \lambda \). If instead \( \lambda_i = \lambda \) for infinitely many \( i \in \omega \), then \( \prod_{i \in \omega} \lambda_i \) would be homeomorphic to \( B(\lambda) \) but then in the proof it would no longer be useful to consider such a space instead of directly working with \( B(\lambda) \).}
directly require that \( B_s \neq \emptyset \) \emph{for all \( s \in \pre{< \omega}{\lambda} \)} (this forces the resulting tree \( T \) to be the full \( \pre{< \omega}{\lambda} \), whence the induced map is a homeomorphism between \( B(\lambda) \) itself and \( X \)). This is possible because, in the notation of the proof of Theorem~\ref{thm:characterizationCantor}, we can now infer that \( |\mathcal{P}_n| = \lambda \) for some \( n \geq \lh(s) \), as otherwise the basis \( \mathcal{B} = \bigcup_{n \geq \lh(s)} \mathcal{P}_n \) would be such that  \( | \mathcal{B}| =  \sup_{n \geq \lh(s)} | \mathcal{P}_n| < \lambda \) (here is where we use the fact that we are now assuming \( \cf(\lambda) > \omega \)).

Finally, let us consider \(\lambda\)-perfect \(\lambda\)-Polish spaces. 
Theorem~\ref{thm:characterizationCantor2} obviously remains true.
Since now \( \cf(\lambda) > \omega \), one can also improve the final part of Lemma~\ref{lem:r-spaces} by further requiring that \( |A| = \lambda \). 
Indeed, assume \( \dens(X) = \lambda \) and define the sets \( B_n \) as in the proof of Lemma~\ref{lem:r-spaces}: if \( |B_n| < \lambda \) for all \( n \in \omega \), then the dense set \( B = \bigcup_{n \in \omega} B_n \) would have size \( \sup_{n \in \omega} |B_n| < \lambda = \dens(X) \), a contradiction.
This allows us to reprove Theorem~\ref{thm:perf} in our new setup, and thus also Corollary~\ref{cor:lambda-perfect}. Formally, the decomposition Theorem~\ref{thm:decomp} holds as well, except for the fact that if \( \lambda \) is \(\omega\)-inaccessible the decomposition might not be unique:%
\footnote{Notice however that the uniqueness part is not needed for the other results.} 
for a counterexample, let \( X \) be any \(\lambda\)-Polish space and observe that we can equivalently take \( P = \emptyset \) (this works because \( |X| = \lambda^\omega =  \lambda \) by Fact~\ref{fct:fromweighttosize} and the fact that \( \lambda \) is an \( \omega \)-inaccessible cardinal with uncountable cofinality) or \( P = X \). These observations trivialize the subsequent Corollaries~\ref{cor:perfect1} and~\ref{cor:perfect2} as stated. (Notice also that the second alternative in the latter is out of question because \( 2^\lambda > \lambda \) while, as argued, all \(\lambda\)-Polish spaces have size at most \( \lambda \) if \(\lambda\) is \(\omega\)-inaccessible and has uncountable cofinality.) However, some of their technical variants can be nontrivial also when \( \cf(\lambda) > \omega \). For example,
using the Cantor-Bendixson derivation argument one can restate
 Corollary~\ref{cor:perfect1}  as: For each \(\lambda\)-Polish space \( X \), either \( X = \bigcup_{\alpha < \rho} V_\alpha \) where \( \rho < \lambda^+ \) and \( (V_\alpha)_{\alpha < \rho} \) is an increasing sequence of open sets such that \( |V_{\alpha+1} \setminus V_\alpha| < \lambda \) for all \( \alpha < \rho \), or else \( B(\lambda) \) embeds into \( X \) as a closed set.

The above discussion hints to the fact that, although some of our results trivialize in the 
uncountable cofinality case, it makes sense to consider and study the class of 
\(\lambda\)-Polish spaces also when \( \cf(\lambda) > \omega \). However, it is then 
unreasonable to proceed with the development of a \emph{generalized} descriptive set 
theory for such spaces, as we plan to do in the next chapters. This is due to the fact that, as already mentioned,  all \(\lambda\)-Polish spaces have size at most 
\( \lambda^\omega \) by Fact~\ref{fct:fromweighttosize}. Thus if \( \lambda^\omega = \lambda \), which is always the case if 
e.g.\ \( \cf(\lambda) > \omega \) and we work in a model of \( \GCH \), or if \( \cf(\lambda)> \omega \) and \( \lambda \) is \(\omega\)-inaccessible, then all 
\(\lambda\)-Polish spaces have size at most \( \lambda \). 
This immediately trivializes the notion of \(\lambda^+\)-Borel 
set that will be introduced at the very beginning of Chapter~\ref{sec:Borel}, as well as all subsequent definability 
notions. In contrast, notice that it makes perfectly sense to develop \emph{classical} 
descriptive set theory (that is, the theory of classical (\( \omega_1 \)-)Borel sets, (\(\omega\)-)analytic 
sets, and so on) for \(\lambda\)-Polish spaces, independently of \(\lambda\) (see 
e.g.~\cite{Stone1962,Stone1972}).

Despite the above observation, it is interesting to notice that \( \kappa \)-Polish spaces for \( \kappa \geq \lambda \) will indeed play a role in the definition of \( \kappa \)-Souslin subsets of \( \lambda \)-Polish spaces. This is a notion which is relevant to our study of definable subsets of \( \lambda \)-Polish spaces when \( \cf(\lambda) = \omega \) (Section~\ref{sec:Souslinsets}).


%
%
%

\chapter{\( \lambda^{(+)} \)-Borel sets} \label{sec:Borel}

\section{Definitions and basic facts} \label{sec:Boreldefandbasicfacts}

The following definition naturally generalizes to an arbitrary \(\nu\) the usual notion of a Borel set (which corresponds to the case \( \nu=\omega \)).

\begin{defin}
Let \( X, Y \) be topological spaces and \( \nu \) be an infinite cardinal. A set \( B \subseteq X \) is \markdef{\( \nu^+ \)-Borel} if it belongs to the  \( \nu^+ \)-algebra generated by the open sets of \( X \). The collection of \( \nu^+ \)-Borel subsets of \( X \) is denoted by \( \gBor{\nu^+}(X) \).

A function \( f \colon X \to Y \) is \markdef{\( \nu^+ \)-Borel} (\markdef{measurable}) if the preimage of any open (equivalently, \( \nu^+ \)-Borel) subset of \( Y \) is \(\nu^+ \)-Borel in \( X \).  
A function \( f \colon X \to Y \) is a \markdef{\( \nu^+ \)-Borel isomorphism} if it is bijective and both \( f \) and \( f^{-1} \) are \( \nu^+ \)-Borel; when such an \( f \) exists, we say that \( X \) and \( Y \) are  \markdef{\( \nu^+ \)-Borel isomorphic}.
Finally, a \markdef{\( \nu^+ \)-Borel embedding} is a function \( f \colon X \to Y \) which is a \( \nu^+ \)-Borel isomorphism on its image.
\end{defin}

Thus \( \gBor{\omega_1}(X) \) coincides with the collection of all (classical) Borel sets, i.e.\ it is the \(\sigma\)-algebra generated by the topology of \( X \). We will of course be mostly interested in the case \( \nu = \lambda \). Since in our setup \( \lambda \) is singular, the collection of \( \lambda^+ \)-Borel subsets of \( X \) may equivalently be described as the smallest \( \lambda \)-algebra containing all open sets. For this reason, we drop the ``\(+ \)'' from the above terminology and notation and just speak e.g.\ of \( \lambda \)-Borel sets and functions. Accordingly, we denote by \( \lB(X) \) the collection of all \( \lambda \)-Borel subsets of \( X \), dropping the reference to \( X \) when there is no danger of confusion, and writing instead \( \lB(X,\tau) \) when it is necessary to specify the topology \(\tau\) on \( X \) we are referring to. Notice also that if \( Y \subseteq X \) then \( \lB(Y) = \{ B \cap Y \mid B \in \lB(X) \} \).

\begin{remark}
When \( X = \pre{\lambda}{2} \) and \(2^{< \lambda} = \lambda\), the bounded topology and the product topology on \( \pre{\lambda}{2} \) generate the same 
\( \lambda^+ \)-Borel sets. In fact, the bounded topology \( \tau_b \) is finer than the product topology. Conversely, every \( \tau_b \)-basic open set \( \Nbhd_s \) is closed in the product topology, thus the result easily follows once we impose that there are only \(  \lambda \)-many \( \tau_b \)-basic open sets, namely, \( 2^{< \lambda} = \lambda \).
\end{remark}

Arguing as in the classical case (see~\cite[Proposition 12.4]{Kechris1995}), one easily gets:

\begin{proposition} \label{prop:borelvsgraph}
Let \( X \) be \(\lambda\)-Polish and \( Y \) be a metrizable%
\footnote{Actually, it is enough that \( Y \) be a \( T_0 \)-space.}
 space of weight at most \( \lambda \). If \( f \colon X \to Y \) is \(\lambda\)-Borel, then its graph
\[ 
\mathrm{graph}(f) = \{ (x,y) \in X \times Y \mid f(x) = y \}
 \] 
is \(\lambda\)-Borel in \( X \times Y \).
\end{proposition}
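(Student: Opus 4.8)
The plan is to reduce the statement to the closure properties of the $\lambda^+$-algebra $\lB(X \times Y)$ by expressing $\mathrm{graph}(f)$ as a $\lambda$-sized intersection of sets that are manifestly $\lambda$-Borel. First I would fix a well-ordered basis $\{ V_i \mid i < \lambda \}$ for $Y$, which exists because $\weight(Y) \leq \lambda$. The key observation is that, since $Y$ is (at least) a $T_0$-space, two points $y, y' \in Y$ coincide if and only if they belong to exactly the same basic open sets: if $y \neq y'$ there is an open $U$ containing precisely one of them, say $y \in U \not\ni y'$, and then any basic $V_i$ with $y \in V_i \subseteq U$ separates them. Applying this with $y' = f(x)$ gives
\[
\mathrm{graph}(f) = \bigcap_{i < \lambda} A_i, \qquad A_i = \{ (x,y) \in X \times Y \mid (f(x) \in V_i \iff y \in V_i) \}.
\]

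Next I would check that each $A_i$ is $\lambda$-Borel in $X \times Y$. Writing $\pi_X \colon X \times Y \to X$ and $\pi_Y \colon X \times Y \to Y$ for the (continuous, hence $\lambda$-Borel) projections, we have $\{(x,y) \mid f(x) \in V_i\} = \pi_X^{-1}(f^{-1}(V_i))$ and $\{(x,y) \mid y \in V_i\} = \pi_Y^{-1}(V_i) = X \times V_i$. The latter is open; the former is $\lambda$-Borel because $f^{-1}(V_i) \in \lB(X)$ (as $f$ is $\lambda$-Borel and $V_i$ is open) and preimages of $\lambda$-Borel sets under the continuous map $\pi_X$ are $\lambda$-Borel, using the equivalent characterization of $\lambda$-Borel measurability. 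Since $A_i$ is a finite Boolean combination of these two sets and their complements, $A_i \in \lB(X \times Y)$.

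Finally I would invoke the fact that $\lB(X\times Y)$ is a $\lambda^+$-algebra --- equivalently, since $\lambda$ is singular, a $\lambda$-algebra --- and hence, by De Morgan, closed under well-ordered intersections of length $< \lambda^+$. As the family $(A_i)_{i<\lambda}$ is explicitly indexed by $\lambda$, the intersection $\bigcap_{i<\lambda} A_i = \mathrm{graph}(f)$ is a genuine well-ordered $\lambda$-sized intersection of $\lambda$-Borel sets, and is therefore $\lambda$-Borel. I do not expect any real obstacle here: the argument is a faithful adaptation of the classical one, and no choice beyond the canonical well-ordering of the fixed basis is needed, since the sequence $i \mapsto A_i$ is defined uniformly. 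The only points requiring a little care are the reliance on $T_0$-separation for the displayed identity (which is precisely why metrizability can be relaxed to $T_0$ in the footnote) and the verification that $B \times Y \in \lB(X\times Y)$ whenever $B \in \lB(X)$, both of which are handled cleanly via the continuous projections above.
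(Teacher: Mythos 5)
Your proof is correct and follows essentially the same route as the paper: both fix a $\lambda$-sized basis for $Y$ and express $\mathrm{graph}(f)$ as the $\lambda$-sized intersection over basic open sets $V_i$ of the $\lambda$-Borel conditions $f(x) \in V_i \iff y \in V_i$, then invoke closure of $\lB(X\times Y)$ under well-ordered intersections of length $\lambda$. The paper states this more tersely, but your added verifications (the $T_0$ separation argument and the $\lambda$-Borelness of each $A_i$ via the continuous projections) are exactly the details it leaves implicit.
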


\begin{proof}
Let \( \mathcal{B} = \{ U_\alpha \mid \alpha < \lambda \} \) be a basis for \( Y \), and notice that for all \( x \in X \) and \( y \in Y \)
\[ 
f(x) = y \quad \text{if and only if} \quad \forall \alpha < \lambda \, (y \in U_\alpha \iff f(x) \in U_\alpha). \qedhere
 \] 
\end{proof}

If \( Y \) is \(\lambda\)-Polish
the converse of the above theorem holds as well, as we will show in Theorem~\ref{thm:borelvsgraph}. 

\section{The \(\lambda\)-Borel hierarchy} \label{sec:Borelhierarchy}

 In the rest of this chapter, we will frequently use the fact that we are assuming \( \AC_\lambda (\pre{\lambda}{2}) \) in the background. For example, when arguing that the length of the \(\lambda\)-Borel hierarchy is (at most) \( \lambda^+ \) we need that \( \lambda^+ \) be regular, 
and under \( \AC_\lambda(\pre{\lambda}{2}) \) this is true by Fact~\ref{fct:lambdaunionewithoutchoice}.
Assume from now on that \( X \) is metrizable.


The \( \lambda^+ \)-algebra \( \lB(X) \) can be naturally stratified as follows. Define by recursion on the ordinal \(\xi \geq 1 \)
\[
\begin{cases}
\lS^0_1(X)  = \text{open sets} & \\
\lS^0_\xi(X)  = \left\{ \bigcup_{\alpha< \lambda} A_\alpha \,\, \big| \,\, X \setminus A_\alpha \in  \lS^0_{\xi'}(X) \text{ for some } \xi' < \xi \right\}, & \text{if } \xi > 1 
\end{cases}
\]
and then set
\begin{align*}
\lP^0_\xi(X)& = \{ X \setminus A \mid A \in \lS^0_\xi(X) \} \\
\lD^0_\xi(X) & = \lS^0_\xi(X) \cap \lP^0_\xi(X).
\end{align*}
Arguing as in the classical case, it is not hard to see that for all \( 1 \leq \xi < \xi' \)
\begin{equation} \label{eq:Borelhierarchy}
\lS^0_\xi(X), \lP^0_\xi(X) \subseteq \lD^0_{\xi'}(X) \subseteq \lS^0_{\xi'}(X), \lP^0_{\xi'}(X),
 \end{equation}
 and that since \( \lambda^+ \) is regular
\[ 
\lB(X) = \bigcup_{1 \leq \xi < \lambda^+} \lS^0_\xi(X) = \bigcup_{1 \leq \xi< \lambda^+} \lP^0_\xi(X) = \bigcup_{1 \leq \xi < \lambda^+} \lD^0_\xi(X).
 \] 
(Here we use that closed sets are \( G_\delta \) by Fact~\ref{fct:booleancombinationsareFsigma}, and hence elements of \( \lP^0_2(X) \).) One can also easily check by induction on \( \xi \geq 1 \) that \( A \in \lS^0_\xi(X) \) if and only if \( A = \bigcup_{\alpha < \lambda} B_\alpha \) with \( B_\alpha \in \bigcup_{\xi' < \xi} \lP^0_{\xi'}(X) \) and, dually,  \( A \in \lP^0_\xi(X) \) if and only if \( A = \bigcap_{\alpha < \lambda} B_\alpha \) with \( B_\alpha \in \bigcup_{\xi' < \xi} \lS^0_{\xi'}(X) \). If \( \xi = \xi' +1 \), this further reduces to the fact that  \( A \in \lS^0_\xi(X) \) if and only if \( A = \bigcup_{\alpha < \lambda} B_\alpha \) with \( B_\alpha \in \lP^0_{\xi'}(X) \), and analogously for \( \lP^0_\xi(X) \). Arguing again by induction on \( \xi \geq 1 \), it is also easy to verify that all the above mentioned classes of sets are closed under finite unions, finite intersections, continuous preimages, and moreover:%
\footnote{Here we are again heavily relying on the fact that we assumed \( \AC_\lambda(\pre{\lambda}{2}) \), together with the fact that for each \( \xi' \geq 1 \) there are canonical surjections from \( \pre{\lambda}{2} \) onto the classes \( \lS^0_{\xi'}(X) \) and \( \lP^0_{\xi'}(X) \), and thus also onto the classes \( \bigcup_{\xi' < \xi} \lS^0_{\xi'} (X) \) and \( \bigcup_{\xi' < \xi} \lP^0_{\xi'}(X) \)  for all \( 1 \leq \xi < \lambda^+ \). Altogether, this ensures that e.g.\ a \(\lambda\)-sized union of sets which are \(\lambda\)-sized unions of element of one of those classes is still a \(\lambda\)-sized union of elements of the same class.}
\begin{itemizenew}
\item
\( \lS^0_\xi(X) \) is closed
under well-ordered unions of length at most \( \lambda \);
\item
\( \lP^0_\xi(X) \) is closed under well-ordered intersections of length at most \( \lambda \);
\item
\( \lD^0_\xi(X) \) is closed under complements (i.e.\ it is a Boolean algebra).
\end{itemizenew}
Unlike the regular case, 
however, the above classes are not closed under longer unions and intersections different from those specified in the above list. For example, if \( X \) is not discrete, then already taking countable intersections of open sets may result in proper closed (or even more complicated) sets because \( X \), being metrizable, is first-countable. (See~\cite{ACMRP} for more details.) This is partially balanced by the following technical proposition, which is specific to uncountable singular cardinals.

\begin{proposition} \label{prop:levelsBorel}
Assume that \( 2^{< \lambda} = \lambda\). Let \( X \) be a metrizable space and  \( 1 \leq \xi < \lambda^+ \).
\begin{enumerate-(i)}
\item \label{prop:levelsBorel-1}
If \( \xi > 1 \) is a successor ordinal, then every \( A \in \lS^0_\xi(X) \) can be written as \( A = \bigcup_{n \in \omega} A_n \) with \( A_n \in \lD^0_\xi(X) \) for all \( n \in \omega \), and dually for sets in \( \lP^0_\xi(X) \).
\item \label{prop:levelsBorel-2}
If \( \xi \) is limit, then every \( A \in \lS^0_\xi(X) \) can be written as \( A = \bigcup_{\alpha < \cf(\xi)} A_\alpha \) with \( A_\alpha \in \bigcup_{\xi'< \xi} \lD^0_{\xi'}(X) \subseteq \lD^0_\xi(X) \) for all \( \alpha < \cf(\xi) \), and dually for sets in \( \lP^0_\xi(X) \) (notice that \( \cf(\xi) < \lambda \) since \(\lambda\) is singular).
\item \label{prop:levelsBorel-3}
If \( \nu < \lambda \) and \( A_\alpha \in \lS^0_\xi(X) \) for \( \alpha < \nu \), then \( \bigcap_{\alpha < \nu } A_\alpha \in \lD^0_{\xi+1}(X) \). Dually, the union of less than \(  \lambda \)-many sets in \( \lP^0_\xi(X) \) belongs to \( \lD^0_{\xi+1}(X) \).
\end{enumerate-(i)}
If moreover \( X \) is \(\lambda\)-Polish and \( \mathrm{dim}(X) = 0 \), then part~\ref{prop:levelsBorel-1} holds also with \( \xi =1  \).
\end{proposition}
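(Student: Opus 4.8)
The plan is to prove (ii) directly, then obtain (iii) by induction on $\xi$, deriving (i) for successor $\xi$ along the way, and finally treat the $\dim(X)=0$ refinement of (i) separately. Throughout, the decisive ingredient is that $\lambda$ is strong limit, so that $2^\nu < \lambda$ for every $\nu < \lambda$. For (ii), given $A = \bigcup_{\beta<\lambda}B_\beta \in \lS^0_\xi(X)$ with $\xi$ limit and each $B_\beta \in \bigcup_{\xi'<\xi}\lP^0_{\xi'}(X)$, I assign to each $\beta$ the least $\xi_\beta<\xi$ with $B_\beta\in\lP^0_{\xi_\beta}(X)$, fix a cofinal sequence $(\eta_k)_{k<\cf(\xi)}$ in $\xi$, and set $A_k = \bigcup\{B_\beta : \xi_\beta\le\eta_k\}$. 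Since the relevant $B_\beta$ all lie in $\lP^0_{\eta_k}(X)$, a union of at most $\lambda$ of them lies in $\lS^0_{\eta_k+1}(X)\subseteq\lD^0_{\eta_k+2}(X)$, and $\eta_k+2<\xi$ as $\xi$ is limit; cofinality of $(\eta_k)$ gives $A=\bigcup_{k<\cf(\xi)}A_k$, and the dual follows by complementation. Note $\cf(\xi)<\lambda$ because $\cf(\xi)$ is regular and $\lambda$ is singular.

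The heart is (iii), proved by induction on $\xi\ge1$. For the base case $\xi=1$ I write each open $A_\alpha$ ($\alpha<\nu$) canonically as an increasing union $\bigcup_n C_{\alpha,n}$ of closed sets (open sets being $F_\sigma$ in a metric space); the distributive law then gives $\bigcap_{\alpha<\nu}A_\alpha = \bigcup_{g\in\pre{\nu}{\omega}}\bigcap_{\alpha<\nu}C_{\alpha,g(\alpha)}$, where monotonicity in $n$ lets me pick least witnesses so no choice is needed. Each inner term is a $(<\lambda)$-intersection of closed sets, hence closed, and $|\pre{\nu}{\omega}| = 2^\nu<\lambda$, so the whole set is a $(<\lambda)$-union of $\lP^0_1(X)$ sets, i.e.\ lies in $\lS^0_2(X)$; combined with the automatic membership in $\lP^0_2(X)$ this yields $\lD^0_2(X)$. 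For the successor step $\xi=\xi'+1>1$, I first establish (i) at level $\xi$: writing $A=\bigcup_{\beta<\lambda}B_\beta$ with $B_\beta\in\lP^0_{\xi'}(X)$ and setting $A_i=\bigcup_{\beta<\lambda_i}B_\beta$, the complement $\bigcap_{\beta<\lambda_i}(X\setminus B_\beta)$ is a $(<\lambda)$-intersection of $\lS^0_{\xi'}(X)$ sets, which by the inductive hypothesis (iii) at level $\xi'$ lands in $\lD^0_{\xi'+1}(X)=\lD^0_\xi(X)$; hence $A_i\in\lD^0_\xi(X)$ and $A=\bigcup_{i\in\omega}A_i$.

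Feeding this back, for $A_\alpha\in\lS^0_\xi(X)$ I write each as an increasing union $\bigcup_n A_{\alpha,n}$ of $\lD^0_\xi(X)$ sets and distribute over $\pre{\nu}{\omega}$ (again of size $2^\nu<\lambda$), the inner terms being $(<\lambda)$-intersections of $\lP^0_\xi(X)$ sets and hence in $\lP^0_\xi(X)$; this places $\bigcap_{\alpha<\nu}A_\alpha$ in $\lS^0_{\xi+1}(X)$, and with the automatic $\lP^0_{\xi+1}(X)$ membership, in $\lD^0_{\xi+1}(X)$. The limit step is identical except that (ii) supplies $A_\alpha=\bigcup_{k<\cf(\xi)}A_{\alpha,k}$ with $A_{\alpha,k}\in\bigcup_{\xi'<\xi}\lD^0_{\xi'}(X)$, so the distribution runs over $\pre{\nu}{\cf(\xi)}$, of size at most $2^{\max(\nu,\cf(\xi))}<\lambda$. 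The dual statement in (iii) follows by De Morgan.

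Part (i) for successor $\xi>1$ has thus been obtained inside the induction; it remains to treat $\xi=1$ under $\dim(X)=0$. Given $U$ open, I write $U=\bigcup_n C_n$ with $C_n=\{x : d(x,X\setminus U)\ge2^{-n}\}$ closed and increasing, and for each $n$ refine the open cover $\{U, X\setminus C_n\}$ to a clopen partition $\mathcal{P}_n$ (using $\dim(X)=0$ and the canonical refinement available from a well-ordered basis). Setting $V_n=\bigcup\{P\in\mathcal{P}_n : P\subseteq U\}$ produces a clopen set with $C_n\subseteq V_n\subseteq U$, whence $U=\bigcup_n V_n$ is a countable union of $\lD^0_1(X)$ sets. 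The main obstacle throughout is exactly the blow-up of the distributive index set: the naive expansion of $\bigcap_{\alpha<\nu}\bigcup_{\beta<\lambda}$ ranges over $\pre{\nu}{\lambda}$, of size $\lambda^\nu\ge\lambda^\omega>\lambda$, and is therefore worthless for landing in a $(\le\lambda)$-union; the entire strategy is arranged so that the inner unions are first reduced to be indexed by $\omega$ or by $\cf(\xi)$, at which point strong limitness ($2^\nu<\lambda$) keeps the index set below $\lambda$ and the level bookkeeping closes.
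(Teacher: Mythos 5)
Your proof is correct and follows essentially the same route as the paper's: an induction on \( \xi \) in which parts~(i)/(ii) first reduce \(\lambda\)-indexed unions to countable (respectively, \( \cf(\xi) \)-indexed) unions of \( \lD^0 \)-sets, and then the distributive law is applied with index set \( \pre{\nu}{\mu} \), whose size stays below \(\lambda\) precisely because \(\lambda\) is strong limit. The only point of divergence is the zero-dimensional case of~(i), where you refine the covers \( \{U, X \setminus C_n\} \) to clopen partitions directly from the definition of \( \dim(X) = 0 \), whereas the paper invokes Proposition~\ref{prop:dim(X)=0} to realize \( X \) as a closed subset of \( B(\lambda) \) and uses the canonical clopen sets \( \Nbhd_s(X) \); both arguments are equally valid.
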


\begin{proof}
By induction on \( \xi \). If \( \xi =1 \) we just have to prove~\ref{prop:levelsBorel-3}. (The case of~\ref{prop:levelsBorel-1} when \( \mathrm{dim}(X) = 0 \) will be treated separately at the end of this proof.) Since each \( A_\alpha \) is open and \( X \) is metrizable, we can canonically write \( A_\alpha = \bigcup_{n \in \omega} B_\alpha^n \) with \( B_\alpha^n \) closed by Fact~\ref{fct:booleancombinationsareFsigma}. Then since
\[ 
\bigcap_{\alpha < \nu} A_\alpha = \bigcup_{s \in \pre{\nu}{\omega}} \, \bigcap_{\alpha < \nu} B^{s(\alpha)}_\alpha,
 \] 
we get that such set is a union of \( \omega^\nu \)-many closed sets. Since \( \lambda \) is strong limit when \( 2^{< \lambda} = \lambda \), we have \( \omega^\nu < \lambda \) and thus \( \bigcap_{\alpha < \nu} A_\alpha \in \lS^0_2(X) \). Since \( \bigcap_{\alpha < \nu} A_\alpha \in \lP^0_2(X) \) by definition, it follows \( \bigcap_{\alpha < \nu} A_\alpha \in \lD^0_2(X) \), as required. 

Now let \( \xi > 1 \) and assume that~\ref{prop:levelsBorel-1}--\ref{prop:levelsBorel-3} holds for all \( \xi' < \xi \). We first prove~\ref{prop:levelsBorel-1}, so assume that \( \xi = \xi'+1 \) is a successor ordinal. If \( A \in \lS^0_\xi(X) \), then there are \( B_\alpha \in \lP^0_{\xi'}(X) \) such that \( A= \bigcup_{\alpha < \lambda} B_\alpha \). For \( n \in \omega \), set \( A_n = \bigcup_{\alpha < \lambda_n} B_\alpha \), so that \( A = \bigcup_{n \in \omega} A_n \). By~\ref{prop:levelsBorel-3} applied to \( \lP^0_{\xi'}(X) \), we get that \( A_n \in \lD^0_{\xi'+1}(X) = \lD^0_\xi(X) \) and we are done. 

The case of~\ref{prop:levelsBorel-2} when \( \xi \) is limit is even simpler. Let \( (\xi_\alpha)_{ \alpha < \cf(\xi)} \) be a strictly increasing sequence cofinal in \(\xi\).  Given \( A \in \lS^0_\xi(X) \), let \( B_\beta \in \bigcup_{\xi' < \xi} \lP^0_{\xi'}(X) \) be such that \( A = \bigcup_{\beta < \lambda} B_\beta \). Set
\[ 
A_\alpha = \bigcup \{ B_\beta \mid B_\beta \in \lP^0_{\xi_\alpha}(X) \}.
 \] 
 Then \( A_\alpha \in \lS^0_{\xi_\alpha+1}(X) \subseteq \lD^0_{\xi_\alpha+2}(X) \subseteq  \bigcup_{\xi'< \xi} \lD^0_{\xi'}(X) \)  and \( A = \bigcup_{\alpha < \cf(\xi)} A_\alpha \), as desired.
 
 As for~\ref{prop:levelsBorel-3}, since if all \( A_\alpha \) are in \( \lS^0_\xi(X) \) then
  \( \bigcap_{\alpha < \nu} A_\alpha \in \lP^0_{\xi+1}(X) \) by definition, it is enough to prove that \( \bigcap_{\alpha < \nu} A_\alpha \in \lS^0_{\xi+1}(X) \). For all \( \alpha < \nu \), apply~\ref{prop:levelsBorel-1} or~\ref{prop:levelsBorel-2} (depending on whether \( \xi \) is a successor or a limit ordinal) to get \( B^\beta_\alpha \in \lD^0_\xi(X) \)  such that \( A_\alpha = \bigcup_{\beta < \mu} B^\beta_\alpha \), where \( \mu = \omega \) if \( \xi \) is successor and \( \mu = \cf(\xi) \) otherwise. (This can be done because at this point we already  proved~\ref{prop:levelsBorel-1} and~\ref{prop:levelsBorel-2} for level \( \xi \) and we assumed \( \AC_\lambda(\pre{\lambda}{2}) \).)
  Notice that since \(\lambda\) is singular, in all cases \( \mu < \lambda \). Then we again have
 \[ 
\bigcap_{\alpha < \nu} A_\alpha = \bigcup_{s \in \pre{\nu}{\mu}} \, \bigcap_{\alpha < \nu} B^{s(\alpha)}_\alpha.
 \] 
Since \( \mu,\nu < \lambda \)   we have \( \mu^\nu < \lambda \) because \(\lambda\) is strong limit. It then easily follows that since  \( \bigcap_{\alpha < \nu} B^{s(\alpha)}_\alpha \in \lP^0_\xi(X) \) by definition, we  have \( \bigcap_{\alpha < \nu} A_\alpha \in \lS^0_{\xi+1}(X) \), as desired.

Finally, we consider~\ref{prop:levelsBorel-1} for \( \xi = 1 \) under the extra assumption that \( X \) is a \(\lambda\)-Polish space with \( \mathrm{dim}(X) =0 \). By Proposition~\ref{prop:dim(X)=0}, we can assume that \( X \) is a closed subset of \( B(\lambda) \). Then every open \( A \subseteq X \) can be written as \( A = \bigcup_{n \in \omega} A_n \) with
\[ 
A_n = \bigcup \{ \Nbhd_s ( X ) \mid s \in \pre{n}{\lambda} \wedge \Nbhd_s ( X ) \subseteq A \},
 \] 
which is clearly a clopen subset of \( X \) because \( X \setminus A_n = \bigcup \{ \Nbhd_s(X) \mid s \in \pre{n}{\lambda} \wedge \Nbhd_s(X) \nsubseteq A \} \).
\end{proof}

Obviously, if \( |X| \leq \lambda \) then any of its subsets is trivially in \( \lD^0_2(X) \). To see that when \( |X| \nleq \lambda \)  the hierarchy does not collapse before \( \lambda^+ \) (i.e.\ that all inclusions in equation~\eqref{eq:Borelhierarchy} are proper) we use universal sets just as in the classical case. However, the situation here is slightly more delicate because the natural space of codes for sets in one of the given \(\lambda\)-Borel classes is the generalized Cantor space \( \pre{\lambda}{2} \), hence we need to assume that \( 2^{< \lambda} =  \lambda \) in order to have that such a coding space is \(\lambda\)-Polish itself. Granting this, one can  repeat \emph{mutatis mutandis} the arguments in~\cite[Chapter 22]{Kechris1995} and show that  if \( 2^{< \lambda} = \lambda\) then 
\begin{enumerate-(1)}
\item
each \( \lS^0_\xi(X) \) has a \( \pre{\lambda}{2} \)-universal set, that is a set \( U \in \lS^0_\xi (\pre{\lambda}{2} \times X) \) such that \( \lS^0_\xi(X) = \{ U_y \mid y \in \pre{\lambda}{2} \} \) for \( U_y = \{ x \in X \mid (y,x) \in U \} \), and similarly for \( \lP^0_\xi(X) \);
\item
in contrast, there is no \( \pre{\lambda}{2} \)-universal set for \( \lD^0_\xi(\pre{\lambda}{2}) \) (this follows from the fact that \( \lD^0_\xi \) is closed under continuous preimages and complements).
\end{enumerate-(1)}

Using also Corollary~\ref{cor:perfect2}, one finally obtains

\begin{theorem} \label{thm:noncollapseBorelhierarchy}
Assume that \( 2^{< \lambda} = \lambda\) and that \( X \) be a \(\lambda\)-Polish space with \( |X| > \lambda \). Then \( \lS^0_\xi(X) \neq \lP^0_\xi(X) \) for each \( 1 \leq \xi < \lambda^+\). Therefore for all \( 1 \leq \xi < \xi' < \lambda^+ \)
\[ 
\lD^0_\xi(X) \subsetneq \lS^0_\xi(X) \subsetneq \lD^0_{\xi'}(X),
 \] 
and similarly for \( \lP^0_\xi(X) \).
\end{theorem}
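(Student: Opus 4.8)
The plan is to reduce the whole statement to the single assertion that $\lS^0_\xi(X) \neq \lP^0_\xi(X)$ for each $1 \leq \xi < \lambda^+$, then to establish this assertion first on the coding space $\pre{\lambda}{2}$ and finally to transfer it to an arbitrary $X$. First I would observe that the displayed chain of proper inclusions follows formally once the non-equality is known. Indeed,~\eqref{eq:Borelhierarchy} already gives $\lD^0_\xi(X) \subseteq \lS^0_\xi(X) \subseteq \lD^0_{\xi'}(X)$ for $1 \leq \xi < \xi' < \lambda^+$. If $\lD^0_\xi(X) = \lS^0_\xi(X)$, then since $\lD^0_\xi(X)$ is a Boolean algebra, $\lS^0_\xi(X)$ would be closed under complements, forcing $\lS^0_\xi(X) = \lP^0_\xi(X)$. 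Likewise, if $\lS^0_\xi(X) = \lD^0_{\xi'}(X)$, then $\lP^0_\xi(X) \subseteq \lD^0_{\xi'}(X) = \lS^0_\xi(X)$, which after taking complements again yields $\lS^0_\xi(X) = \lP^0_\xi(X)$. Both conclusions contradict the main assertion, so all inclusions are proper; the statement for $\lP^0_\xi(X)$ is symmetric.

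For the case $X = \pre{\lambda}{2}$, I would argue by contradiction directly from the two universal-set facts recorded just before the theorem (valid because $2^{<\lambda} = \lambda$ makes $\pre{\lambda}{2}$ a $\lambda$-Polish coding space). If $\lS^0_\xi(\pre{\lambda}{2}) = \lP^0_\xi(\pre{\lambda}{2})$, then $\lD^0_\xi(\pre{\lambda}{2}) = \lS^0_\xi(\pre{\lambda}{2})$; since the latter admits a $\pre{\lambda}{2}$-universal set, so would $\lD^0_\xi(\pre{\lambda}{2})$, contradicting the fact that no such universal set exists. Hence $\lS^0_\xi(\pre{\lambda}{2}) \neq \lP^0_\xi(\pre{\lambda}{2})$ for all $1 \leq \xi < \lambda^+$.

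To transfer this to an arbitrary $\lambda$-Polish $X$ with $|X| > \lambda$, I would invoke Corollary~\ref{cor:perfect2} to fix a closed embedding $e \colon \pre{\lambda}{2} \to X$ with image $C = e(\pre{\lambda}{2})$. The key auxiliary fact is the level-restriction identity $\lS^0_\xi(C) = \{ B \cap C \mid B \in \lS^0_\xi(X) \}$ (and dually for $\lP^0_\xi$), proved by a routine induction on $\xi$ that refines the relation $\lB(C) = \{ B \cap C \mid B \in \lB(X) \}$ already noted. Now suppose toward a contradiction that $\lS^0_\xi(X) = \lP^0_\xi(X)$, and take $A \in \lS^0_\xi(\pre{\lambda}{2})$. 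Since $e$ is a homeomorphism onto $C$, we have $e(A) \in \lS^0_\xi(C)$, so $e(A) = B \cap C$ for some $B \in \lS^0_\xi(X) = \lP^0_\xi(X)$; as $C$ is closed we have $C \in \lP^0_1(X) \subseteq \lP^0_\xi(X)$, and since $\lP^0_\xi(X)$ is closed under finite intersections, $e(A) = B \cap C \in \lP^0_\xi(X)$. The restriction identity then gives $e(A) \in \lP^0_\xi(C)$, whence $A \in \lP^0_\xi(\pre{\lambda}{2})$. This shows $\lS^0_\xi(\pre{\lambda}{2}) \subseteq \lP^0_\xi(\pre{\lambda}{2})$, and taking complements forces equality, contradicting the previous paragraph. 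Thus $\lS^0_\xi(X) \neq \lP^0_\xi(X)$, completing the reduction.

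The main obstacle is not conceptual but infrastructural: it lies in the universal-set machinery that the excerpt provides \emph{mutatis mutandis}, namely the existence of $\pre{\lambda}{2}$-universal sets for $\lS^0_\xi$ and $\lP^0_\xi$ together with the failure of one for $\lD^0_\xi$, both of which depend on $2^{<\lambda} = \lambda$ ensuring that the code space $\pre{\lambda}{2}$ is itself $\lambda$-Polish. Granting these, the only verification genuinely specific to the singular setting is checking that the level-restriction identity survives the use of $\lambda$-sized (rather than countable) unions in the recursive definition of $\lS^0_\xi$, and that the closed copy of $\pre{\lambda}{2}$ furnished by Corollary~\ref{cor:perfect2} meshes correctly with each level of the hierarchy; both are straightforward transfinite inductions.
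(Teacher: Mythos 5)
Your proof is correct and follows essentially the same route the paper intends: the non-collapse on \( \pre{\lambda}{2} \) via the \( \pre{\lambda}{2} \)-universal sets for \( \lS^0_\xi \) and the non-existence of a universal set for \( \lD^0_\xi(\pre{\lambda}{2}) \), followed by a transfer to arbitrary \( X \) with \( |X| > \lambda \) through the closed embedding of \( \pre{\lambda}{2} \) given by Corollary~\ref{cor:perfect2} and the levelwise relativization of the hierarchy. Your contrapositive packaging of the diagonalization and of the transfer step is only a cosmetic variant of the classical argument from~\cite[Chapter 22]{Kechris1995} that the paper invokes \emph{mutatis mutandis}.
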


\begin{remark}
In particular, the above non-collapsing theorem applies to the space \( X = \pre{\lambda}{2} \approx B(\lambda) \approx C(\lambda) \).
In~\cite{AM} it is shown that such result holds unconditionally for the space \( X = \pre{\lambda}{2} \), even if the existence of universal sets is not granted when \( 2^{< \lambda} > \lambda \) (and thus a different argument is required). However, notice that if \( 2^{< \lambda} > \lambda \), then \( \pre{\lambda}{2} \) is not \(\lambda\)-Polish because of its weight, so in that case one is actually working with a space which does not fit the framework of the present paper.
\end{remark}

Another consequence of the existence, under \( 2^{< \lambda} = \lambda \), of universal sets%
\footnote{To be precise, the existence of universal sets grants \( |\lB(X)| \leq 2^\lambda \). For the lower bound, observe that \( |X| \geq \lambda \) is equivalent to \( \dens(X) = \lambda \) by Lemma~\ref{lem:densityvscardinality}. Then, a nontrivial argument involving Lemma~\ref{lem:r-spaces} shows that \( |\lS^0_\xi(X)| \geq 2^\lambda \). Notice that if instead \( |X| < \lambda \), then \( |\lB(X)| \leq |\pow(X)| < \lambda \) by \(  2^{< \lambda} = \lambda \).}
is e.g.\ that if \( X \) is a \(\lambda\)-Polish space such that \( |X| \geq \lambda \), then for all \( 1 \leq \xi < \lambda^+ \)
\[ 
|\lS^0_\xi(X)| = |\lP^0_\xi(X)| = |\lB(X)| = 2^\lambda.
 \] 
The same applies to \( \lD^0_\xi(X) \) if \( \xi > 1 \).

We conclude this section by observing that when \(2^{\aleph_0} \leq \lambda\), the classical Borel subsets of 
\(\lambda\)-Polish spaces appear at the smallest possible level in the \( \lambda \)-Borel hierarchy: 
this shows that there is a huge difference between our approach and the one considered e.g.\ 
in~\cite{Stone1962,Hansell1971,Stone1972,Hansell1974,Hansell1983}.

\begin{proposition} \label{prop:BorelvslambdaBorel}
Let \( X \) be a \(\lambda\)-Polish space.
\begin{enumerate-(i)}
\item \label{prop:BorelvslambdaBorel-1}
If \( \nu \) is an infinite cardinal such that \( 2^\nu \leq \lambda \), then \( \gBor{\nu^+}(X) \subseteq \lD^0_2(X) \).
\item \label{prop:BorelvslambdaBorel-2}
If \( 2^{< \lambda} = \lambda \) and \( |X| >\lambda \), then \( \gBor{\nu^+}(X) \subsetneq \lD^0_2(X) \) for all infinite \( \nu < \lambda \).
\end{enumerate-(i)}
\end{proposition}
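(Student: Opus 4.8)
The plan is to route both statements through the auxiliary class $\mathcal{C}_\kappa(X)$ of all sets expressible as a union of at most $\kappa$-many closed subsets of $X$, where $\kappa = 2^\nu$.

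For part~\ref{prop:BorelvslambdaBorel-1}, I would consider the family $\mathcal{D}$ of all $A \subseteq X$ such that both $A$ and $X \setminus A$ lie in $\mathcal{C}_\kappa(X)$, and show that $\mathcal{D}$ is a $\nu^+$-algebra containing the open sets; then $\gBor{\nu^+}(X) \subseteq \mathcal{D}$, and it remains to note $\mathcal{D} \subseteq \lD^0_2(X)$. That $\mathcal{D}$ contains the open sets is immediate, since in a metric space every open set is $F_\sigma$ (Fact~\ref{fct:booleancombinationsareFsigma}), hence a countable (so $\leq \kappa$) union of closed sets, and dually for its closed complement; closure under complements is built into the symmetric definition. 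The only real point is closure under unions of length $\nu$: if $A_\iota \in \mathcal{D}$ for $\iota < \nu$, then $\bigcup_\iota A_\iota$ is a $\nu \cdot \kappa = \kappa$-sized union of closed sets, while writing $X \setminus A_\iota = \bigcup_{j<\kappa} D^\iota_j$ with $D^\iota_j$ closed and distributing gives $\bigcap_{\iota<\nu} \bigcup_{j<\kappa} D^\iota_j = \bigcup_{f \in \pre{\nu}{\kappa}} \bigcap_{\iota<\nu} D^\iota_{f(\iota)}$, a union of $\kappa^\nu = (2^\nu)^\nu = 2^\nu = \kappa$ closed kernels. Hence $X \setminus \bigcup_\iota A_\iota \in \mathcal{C}_\kappa(X)$ too, so $\bigcup_\iota A_\iota \in \mathcal{D}$. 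Finally, since $\kappa \leq \lambda$, a union of $\leq \kappa$ closed sets is in $\lS^0_2(X)$, so $A \in \mathcal{D}$ forces $A, X \setminus A \in \lS^0_2(X)$, i.e.\ $A \in \lS^0_2(X) \cap \lP^0_2(X) = \lD^0_2(X)$. This part uses only $2^\nu \leq \lambda$ (for well-orderability of the index sets and for $\kappa \leq \lambda$) together with the $\ZF$-provable identity $(2^\nu)^\nu = 2^\nu$; no instance of $2^{<\lambda}=\lambda$ is needed.

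For part~\ref{prop:BorelvslambdaBorel-2}, I would first stress that no counting argument is available: under $2^{<\lambda}=\lambda$ with $|X|>\lambda$ one has $|\gBor{\nu^+}(X)| = |\lD^0_2(X)| = 2^\lambda$ (already the open sets number $2^\lambda$), so strictness must be structural. Reduce to $X = \pre{\lambda}{2}$ via Corollary~\ref{cor:perfect2}: it embeds into $X$ as a closed set $C$, and since $C \in \lD^0_2(X)$ we get $\lD^0_2(C) \subseteq \lD^0_2(X)$, whereas traces of $\nu^+$-Borel sets are $\nu^+$-Borel, so any witness in $\lD^0_2(C) \setminus \gBor{\nu^+}(C)$ transfers to $X$. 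Now set $\kappa = 2^\nu < \lambda$ (strict, as $\lambda$ is strong limit). By part~\ref{prop:BorelvslambdaBorel-1} we have $\gBor{\nu^+}(\pre{\lambda}{2}) \subseteq \mathcal{C}_\kappa$, and the decisive observation is that $\mathcal{C}_\kappa \subseteq \lD^0_2$: if $A = \bigcup_{i<\kappa} C_i$, then $\pre{\lambda}{2}\setminus A = \bigcap_{i<\kappa}(\pre{\lambda}{2}\setminus C_i)$ is an intersection of fewer than $\lambda$ open sets, hence in $\lD^0_2$ by Proposition~\ref{prop:levelsBorel}\ref{prop:levelsBorel-3}, so $A \in \lP^0_2$ as well as $A \in \lS^0_2$. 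Using that $\lP^0_1$ has a $\pre{\lambda}{2}$-universal set $F$ (under $2^{<\lambda}=\lambda$) and the identification $\pre{\lambda}{2} \cong \pre{\kappa\times\lambda}{2}$, so each code $y$ decodes to a $\kappa$-sequence $(y_i)_{i<\kappa}$ of closed-codes, I would set $V = \bigcup_{i<\kappa}\{(y,x) \mid (y_i,x) \in F\}$, exactly as in the classical construction of universal sets (cf.~\cite[Chapter 22]{Kechris1995}). Each set in the union is a continuous preimage of $F$, hence closed, so $V$ is a $\leq\kappa$-union of closed sets and thus $V \in \lD^0_2(\pre{\lambda}{2} \times \pre{\lambda}{2})$ by the observation above, while $\{V_y \mid y \in \pre{\lambda}{2}\} = \mathcal{C}_\kappa$. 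The diagonal $A_0 = \{x \mid (x,x) \notin V\}$ is a continuous preimage of $V^c \in \lD^0_2$, hence $A_0 \in \lD^0_2(\pre{\lambda}{2})$; but $A_0 \notin \mathcal{C}_\kappa$ by the usual fixed-point argument (were $A_0 = V_{y_0}$, then $y_0 \in A_0 \iff y_0 \notin A_0$), so $A_0 \in \lD^0_2(\pre{\lambda}{2}) \setminus \gBor{\nu^+}(\pre{\lambda}{2})$.

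The hard part is entirely in part (ii). One is separating two classes of equal size $2^\lambda$ that both sit inside the single level $\lD^0_2$, and the naive idea of diagonalizing against $\gBor{\nu^+}$ itself fails, because that class has no universal set (its internal hierarchy has length $\nu^+$ and does not collapse). The resolution I expect to be the crux is to pass to the bounded-width superclass $\mathcal{C}_\kappa$ and to recognize, through Proposition~\ref{prop:levelsBorel}\ref{prop:levelsBorel-3}, that a union of fewer than $\lambda$ closed sets is \emph{automatically} in $\lD^0_2$; this is precisely what forces the universal set $V$ to be $\lD^0_2$ and keeps the diagonal set inside $\lD^0_2$. It is worth flagging that a direct construction of a concrete $\lD^0_2$ set of genuine $F_\sigma$-width $\lambda$ seems surprisingly resistant, since $\cf(\lambda) = \omega$ lets increasing $\lambda$-unions collapse to countable ones — which is exactly why the indirect universal-set route is the natural one.
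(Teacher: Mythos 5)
Your proof is correct. Part~\ref{prop:BorelvslambdaBorel-1} is essentially the paper's own argument: your class $\mathcal{D}$ coincides with the paper's class $\mathcal{A}_\nu$ of sets that are simultaneously $\mu$-sized unions of closed sets and $\mu$-sized intersections of open sets (with $\mu = 2^\nu$), and your distribution-plus-$(2^\nu)^\nu = 2^\nu$ computation is exactly the paper's.

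Part~\ref{prop:BorelvslambdaBorel-2} is where you genuinely diverge. The paper argues by contradiction: assuming $\gBor{\nu^+}(X) = \lD^0_2(X)$, it takes $A \in \lS^0_2(X) \setminus \lD^0_2(X)$ (which exists by Theorem~\ref{thm:noncollapseBorelhierarchy}), writes $A = \bigcup_{\alpha<\lambda} C_\alpha$ with $C_\alpha$ closed, chunks it along the cofinal sequence as $A = \bigcup_{n\in\omega} A_n$ with $A_n = \bigcup_{\alpha<\lambda_n} C_\alpha \in \lD^0_2(X)$ by Proposition~\ref{prop:levelsBorel}\ref{prop:levelsBorel-3}, and then uses closure of $\gBor{\nu^+}(X)$ under \emph{countable} unions to conclude $A \in \lD^0_2(X)$, a contradiction. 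You instead diagonalize directly: you reduce to $\pre{\lambda}{2}$ via Corollary~\ref{cor:perfect2}, build a universal set $V$ for the class $\mathcal{C}_\kappa$ of $\kappa$-sized unions of closed sets ($\kappa = 2^\nu < \lambda$), observe via Proposition~\ref{prop:levelsBorel}\ref{prop:levelsBorel-3} that $V$ and hence the diagonal set lie in $\lD^0_2$, and conclude $\lD^0_2(\pre{\lambda}{2}) \not\subseteq \mathcal{C}_\kappa \supseteq \gBor{\nu^+}(\pre{\lambda}{2})$. The two routes ultimately rest on the same pillars (Proposition~\ref{prop:levelsBorel}\ref{prop:levelsBorel-3}, universal sets, and $|X|>\lambda$ via Corollary~\ref{cor:perfect2}), but they package them differently: the paper's version is shorter because Theorem~\ref{thm:noncollapseBorelhierarchy} is already available, and it turns the countable-cofinality collapse of $\lambda$-unions into the engine of the proof; yours avoids invoking non-collapse as a black box, produces an explicit witness, and in fact proves the stronger statement that some $\lD^0_2$ set is not even a union of $\kappa$-many closed sets. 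Your observation that one cannot diagonalize against $\gBor{\nu^+}$ itself (being self-dual and closed under continuous preimages, it admits no universal set) and must instead pass to a superclass with a universal set still inside $\lD^0_2$ is exactly the right explanation of why the extra layer $\mathcal{C}_\kappa$ is needed.
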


\begin{proof}
\ref{prop:BorelvslambdaBorel-1}
By assumption, \( 2^\nu \) is a cardinal not grater than \(\lambda\): to simplify the notation, set \( \mu = 2^\nu \). Consider the collection \( \mathcal{A}_\nu \) of all subsets of \( X \) which can be written both as a \( \mu \)-sized intersection of open set and as a \( \mu \)-sized union of closed sets, so that clearly \( \mathcal{A}_\nu \subseteq \lD^0_2(X) \). The class \( \mathcal{A}_\nu \) contains all open sets by Fact~\ref{fct:booleancombinationsareFsigma}, and is closed under complements by definition. We claim that \( \mathcal{A}_\nu \) is also closed under \( \nu \)-sized intersections, so that \( \gBor{\nu^+}(X) \subseteq \mathcal{A}_\nu \subseteq \lD^0_2(X) \), as required. Let \( (B_\alpha)_{\alpha < \nu} \) be a family of sets in \( \mathcal{A}_\nu \) so that \( B_\alpha = \bigcap_{\beta < \mu} U_{\alpha,\beta} = \bigcup_{\beta < \mu}  C_{\alpha, \beta} \) with all the sets \( U_{\alpha,\beta} \) open and all the sets \( C_{\alpha,\beta} \) closed. Then \( \bigcap_{\alpha < \nu} B_\alpha = \bigcap_{(\alpha, \beta) \in \nu \times \mu}  U_{\alpha, \beta} \) and the latter can be viewed as a well-ordered intersection of open sets of length \( \mu \) because \( \nu \leq \mu \). Moreover
\[ 
\bigcap_{\alpha < \nu } B_\alpha = \bigcap_{\alpha < \nu } \bigcup_{\beta < \mu} C_{\alpha,\beta} = \bigcup_{s \in \pre{\nu}{\mu}} \bigcap_{\alpha < \nu} C_{\alpha,s(\alpha)}.
 \] 
 Since \( \bigcap_{\alpha < \nu} C_{\alpha,s(\alpha)} \) is closed and \( |\pre{\nu}{\mu}| = 2^\nu = \mu \) because \( \pre{\nu}{(\pre{\nu}{2})} \) is obviously in one-to-one correspondence with \( \pre{\nu}{2} \), this shows that \( \bigcap_{\alpha < \nu } B_\alpha \) is a \( \mu \)-sized union of closed sets as well.
 
\ref{prop:BorelvslambdaBorel-2}
For each \( \nu < \lambda \) we have \( \gBor{\nu^+}(X) \subseteq \lD^0_2(X) \) by part~\ref{prop:BorelvslambdaBorel-1} and \( 2^{< \lambda} = \lambda \).
Assume towards a contradiction that \( \gBor{\nu^+}(X) = \lD^0_2(X) \). Let \( A \in \lS^0_2(X) \setminus \lD^0_2(X) \): such a set exists by Theorem~\ref{thm:noncollapseBorelhierarchy}. Let \( (C_\alpha)_{\alpha < \lambda} \) be closed sets such that \( A = \bigcup_{\alpha < \lambda} C_\alpha \). Then by Proposition~\ref{prop:levelsBorel}\ref{prop:levelsBorel-3} we have \( A_n = \bigcup_{ \alpha < \lambda_n} C_\alpha \in \lD^0_2(X) = \gBor{\nu^+}(X) \) for each \( n \in \omega \). But then \( A = \bigcup_{n \in \omega} A_n \in \gBor{\nu^+}(X) =  \lD^0_2(X) \) by closure of \( \gBor{\nu^+}(X) \) under unions of size \( \nu \) and \( \nu \geq \omega \), a contradiction.
\end{proof}

In particular, if \( \lambda \geq 2^{\aleph_0} \) then the collection of classical (i.e.\ \( \omega_1 \)-)Borel subsets of \( X \) is contained in \( \lD^0_2(X) \), and if \( 2^{< \lambda} = \lambda \) then the inclusion is proper, i.e.\ there are already \( \lD^0_2(X) \)-sets which are not Borel in the classical sense.

\section{Changes of topology} \label{sec:changeoftopology}

In this section we address the problem of whether in our generalized context we have an 
analogue of the powerful technique of changes of topology developed
 e.g.\ in~\cite[Chapter 13]{Kechris1995}. This would amount to 
 prove that for every \(\lambda\)-Borel subset \( A \) of a \(\lambda\)-Polish space \( X \) 
 there is another \(\lambda\)-Polish topology on \( X \) which refines the original one, 
 yields the same \(\lambda\)-Borel sets, and turns \( A \) into a clopen set. Interestingly 
 enough, the argument from~\cite{Kechris1995} cannot be adapted to our context 
 because we miss the analogue of~\cite[Lemma 13.3]{Kechris1995}, whose proof requires that a product of \(\lambda\)-many \(\lambda\)-Polish spaces be \(\lambda\)-Polish as well: as already observed, this is 
 plainly false if \( \lambda > \omega = \cf(\lambda) \), where we even miss closure under
 uncountable products. To overcome this obstacle, we basically reverse the classical approach and first prove the technical Theorem~\ref{thm:changeoftopology}, from which we then derive a number of standard consequences including the result on changes of topology (Corollary~\ref{cor:changeoftopologyclopen}). It might be worth noticing that this alternative approach actually works quite uniformly and independently of \(\lambda\) and its cofinality:  with small and straightforward modifications, it can be successfully used in the case \( \lambda = \omega \) too, but also in the case of a regular \(\lambda\) (see~\cite[Section 4]{AgoMotSch}) and of a singular \(\lambda\) with uncountable cofinality (see~\cite[Section 6]{AgoMot}), provided as usual that \( 2^{< \lambda} = \lambda \).

We will need the following functions.%
\footnote{The functions \( \pi_\alpha \) should already be used in the (omitted) proof of the existence of universal sets for \( \lS^0_\alpha(X) \) --- see~\cite[Theorem 22.3]{Kechris1995}, where the analogues of such functions are denoted by \( y \mapsto (y)_n \).} 
Let \( \langle \cdot, \cdot \rangle \colon \lambda \times \lambda \to \lambda \) be the usual G\"odel pairing function, and for each \( \alpha \) consider the continuous and open surjection 
\( \pi_\alpha \colon \pre{\lambda}{2} \to \pre{\lambda}{2} \) 
defined by setting for \( \beta < \lambda \)
\begin{equation} \label{eq:pseudoprojection}
\pi_\alpha(x)(\beta) = x(\langle \alpha, \beta \rangle).
 \end{equation}
 Similarly, given \( s \in \pre{<\lambda}{2} \) we let \( \pi_\alpha(s) \in \pre{<\lambda}{2} \) be defined by setting \( \pi_\alpha(s)(\beta) = s(\langle \alpha,\beta \rangle) \) for all \( \beta \) such that \( \langle \alpha, \beta \rangle < \lh(s) \). (The fact that \( \pi_\alpha(s) \) is defined on an initial segment of \( \lambda \) follows from the fact that G\"odel's pairing function is monotone in the second coordinate.)


\begin{theorem} \label{thm:changeoftopology}
Assume that \( 2^{< \lambda} = \lambda\) and let \( X \) be a \(\lambda\)-Polish space. Then for every \( B \in \lB(X) \) there is a closed set \( F \subseteq B(\lambda) \) and a continuous \(\lambda\)-Borel isomorphism \( f \colon F \to X \) such that \( f^{-1}(B) \) is clopen relatively to \( F \). 
\end{theorem}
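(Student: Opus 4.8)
The plan is to prove the statement by induction on the $\lambda$-Borel rank of $B$, reducing everything to two ingredients: a base case that turns a single open (or closed) set into a clopen one, and a \emph{combination lemma} that simultaneously clopen-izes $\lambda$-many sets at once. The combination lemma is where I would exploit the self-product structure of $\pre{\lambda}{2}$ encoded by the maps $\pi_\alpha$ from \eqref{eq:pseudoprojection}; this is precisely the ``reversal'' of the classical diagonal-into-a-product argument that is unavailable here because $\lambda$-many $\lambda$-Polish factors need not form a $\lambda$-Polish space.

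\textbf{Base case.} For $B$ open, both $B$ and $X\setminus B$ are $\lambda$-Polish (a $G_\delta$ and a closed subspace). The identity-on-points map from the topological sum $B\sqcup(X\setminus B)$ onto $X$ is a continuous bijection whose inverse is $\lambda$-Borel, since the image of an open set of the sum has the form $(W_1\cap B)\cup(W_2\cap(X\setminus B))$ with $W_1,W_2$ open in $X$, hence is $\lambda$-Borel because $B\in\lB(X)$. I would then use Proposition~\ref{prop:surjection} together with the fact that $\Nbhd_{\langle 0\rangle}$ and $\Nbhd_{\langle 1\rangle}$ are disjoint clopen copies of $B(\lambda)$: place continuous bijections from closed sets $F_1\subseteq\Nbhd_{\langle 0\rangle}$ and $F_2\subseteq\Nbhd_{\langle 1\rangle}$ onto $B$ and $X\setminus B$ respectively, set $F=F_1\cup F_2$, and let $f$ be the induced bijection. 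Then $f$ is continuous, $f^{-1}(B)=F_1$ is clopen in $F$, and $f$ is a $\lambda$-Borel isomorphism because by Remark~\ref{rmk:inverseofinjection} images of basic open sets are $F_\sigma$, so images of open sets are $\lambda$-Borel. The case of closed $B$ is identical up to complementation.

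\textbf{Combination lemma.} Suppose that for each $\alpha<\lambda$ we are given a closed $F_\alpha\subseteq B(\lambda)$ and a continuous $\lambda$-Borel isomorphism $f_\alpha\colon F_\alpha\to X$ for which $f_\alpha^{-1}(B_\alpha)$ is clopen. Identifying $B(\lambda)\approx\pre{\lambda}{2}$ via Theorem~\ref{thm:homeomorphictoCantor}\ref{thm:homeomorphictoCantor-2} (this is where $2^{<\lambda}=\lambda$ enters), I would set
\[
F=\Bigl\{x\in\pre{\lambda}{2}\;\Big|\;\forall\alpha<\lambda\,\bigl(\pi_\alpha(x)\in F_\alpha\bigr)\ \text{and}\ \forall\alpha,\alpha'<\lambda\,\bigl(f_\alpha(\pi_\alpha(x))=f_{\alpha'}(\pi_{\alpha'}(x))\bigr)\Bigr\},
\]
and let $f(x)$ be the common value $f_\alpha(\pi_\alpha(x))$. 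Since each $\pi_\alpha$ is continuous and $X$ is metric, the first clause defines a closed set and the equalizer of the continuous maps $x\mapsto f_\alpha(\pi_\alpha(x))$ is closed, so $F$ is closed and $f$ is continuous. The map $f$ is a bijection because $\langle\cdot,\cdot\rangle$ lets one prescribe the blocks $\pi_\alpha(x)=f_\alpha^{-1}(y)$ independently. Crucially $\pi_\alpha\restriction F\colon F\to F_\alpha$ is continuous, whence $f^{-1}(B_\alpha)=(\pi_\alpha\restriction F)^{-1}(f_\alpha^{-1}(B_\alpha))$ is clopen for every $\alpha$. Finally $f$ is a $\lambda$-Borel isomorphism: each coordinate $y\mapsto f^{-1}(y)(\langle\alpha,\beta\rangle)=f_\alpha^{-1}(y)(\beta)$ is $\lambda$-Borel since $f_\alpha^{-1}$ is, and $\AC_\lambda(\pre{\lambda}{2})$ lets one assemble these coordinatewise facts into $\lambda$-Borelness of $f^{-1}$.

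\textbf{Induction and main obstacle.} With these tools the induction is routine. If $B\in\lS^0_\xi(X)$, write $B=\bigcup_{\alpha<\lambda}B_\alpha$ with each $B_\alpha$ of rank $<\xi$; the inductive hypothesis, applied to each $B_\alpha$ separately, furnishes a representation making that single $B_\alpha$ clopen. Notice that no \emph{uniform} bound on the ranks of the $B_\alpha$ is required, which is exactly what lets the argument pass limit stages. The combination lemma then makes all $B_\alpha$ clopen in one $(F,f)$, so $f^{-1}(B)$ is open in $F$; a further application of the base case to this open subset of the $\lambda$-Polish space $F$ produces a clopen representative, and composing the two isomorphisms (again a continuous $\lambda$-Borel isomorphism) closes the step. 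The $\lP^0_\xi$ case is dual, replacing $\bigcup$ by $\bigcap$ so that $f^{-1}(B)$ comes out closed and is then fed to the closed-set base case. I expect the genuine difficulty to lie entirely in the combination lemma: conceiving the classical product diagonal as living \emph{inside} $\pre{\lambda}{2}$ via the $\pi_\alpha$, and then checking carefully that the resulting $f$ is a bona fide $\lambda$-Borel isomorphism (the coordinatewise verification using $\AC_\lambda(\pre{\lambda}{2})$) and not merely a continuous bijection.
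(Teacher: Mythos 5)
Your proof is correct and is essentially the paper's own argument: the same base case (two continuous bijections from Proposition~\ref{prop:surjection} placed inside the disjoint clopen balls \( \Nbhd_{\langle 0\rangle} \) and \( \Nbhd_{\langle 1\rangle} \), with \(\lambda\)-Borel inverses coming from Remark~\ref{rmk:inverseofinjection}), and the same combination step (the closed equalizer of the maps \( f_\alpha\circ\pi_\alpha \) inside \( \pre{\lambda}{2} \), using \( B(\lambda)\approx\pre{\lambda}{2} \) under \( 2^{<\lambda}=\lambda \), followed by one more application of the base case to turn the resulting open/closed preimage into a clopen one). The only difference is organizational: the paper replaces your explicit induction on \(\lambda\)-Borel rank by the equivalent observation that the class of sets satisfying the conclusion contains the closed sets and is closed under complements and \(\lambda\)-sized intersections, which is your induction without the rank bookkeeping.
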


Noteworthy, the following proof ultimately relies on the fact that if \( 2^{< \lambda} = \lambda > \omega = \cf(\lambda) \) then \( B(\lambda) \approx \pre{\lambda}{2} \) (Theorem~\ref{thm:homeomorphictoCantor}\ref{thm:homeomorphictoCantor-2}).

\begin{proof} 
Let \( \mathcal{C} \) be the class of all \( B \subseteq X \) satisfying the conclusion of the theorem. Since it is clearly closed under complements, it is enough to show that it contains all closed sets and that it is closed under intersections of size \( \lambda \).

Assume \( B \subseteq X \) is closed, so that both \( B \) and \( X \setminus B \) are \(\lambda\)-Polish. By Proposition~\ref{prop:surjection} there are closed \( F_0, F_1 \subseteq B(\lambda) \) and continuous bijections \( f_0 \colon F_0 \to B \) and \( f_1 \colon F_1 \to X \setminus B \).

 Since \( B(\lambda) \) is strongly homogeneous, we can assume without loss of generality that \( F_i \subseteq \Nbhd_{\langle i \rangle} \): we claim that the closed set \( F = F_0 \cup F_1 \) and the continuous bijection \( f = f_0 \cup f_1 \)
 witness \( B \in \mathcal{C} \). 
 Indeed, we only need to check that \( f \) has a \(\lambda\)-Borel inverse: this follows from the fact that, by Remark~\ref{rmk:inverseofinjection}, \( f_i(U) \in \lS^0_2(X) \) for each relatively open \( U \subseteq F_i \).

Assume now that \( B_\alpha  \), \( \alpha < \lambda \), are subsets of \( X \) for which there are \( F_\alpha \) and \( f_\alpha \) witnessing \( B_\alpha \in \mathcal{C} \), \axioms{Si usa \( \AC_\lambda(\pre{\lambda}{2}) \) per scegliere gli \( F_\alpha \) e \( f_\alpha \).} and consider \( B = \bigcap_{\alpha < \lambda} B_\alpha \). Since \( B(\lambda) \approx \pre{\lambda}{2} \) by Theorem~\ref{thm:homeomorphictoCantor}, without loss of generality we may assume that \( F_\alpha \) is a closed subset of \( \pre{\lambda}{2} \). Consider the closed set
\[ 
F' = \bigcap_{\alpha < \lambda} \pi_\alpha^{-1}(F_\alpha),
 \] 
 and its closed subset
 \[ 
F'' = \{ x \in F' \mid (f_\alpha \circ \pi_\alpha)(x) = (f_{\alpha'} \circ \pi_{\alpha'})(x) \text{ for all } \alpha, \alpha' < \lambda \}.
 \] 
The function \( h \colon F'' \to X \colon x \mapsto (f_0 \circ \pi_0)(x) \) is clearly continuous and surjective. To check that it is injective, pick distinct \( x,x' \in F'' \). Then there is \( \alpha < \lambda \) such that \( \pi_\alpha(x) \neq \pi_{\alpha}(x') \), hence \( (f_\alpha \circ \pi_\alpha)(x) \neq (f_\alpha \circ \pi_\alpha)(x') \) because \( f_\alpha \) is injective. By definition of \( F'' \), it then follows that
\[ 
h(x) = (f_0 \circ \pi_0)(x) = (f_\alpha \circ \pi_\alpha)(x) \neq (f_\alpha \circ \pi_\alpha)(x') = (f_0 \circ \pi_0)(x') = h(x').
 \] 
Moreover, for every \( s \in \pre{<\lambda}{2} \) we have \( h(\Nbhd_s(F'')) = \bigcap_{\alpha < \kappa} f_\alpha(\Nbhd_{\pi_\alpha(s)}(F_\alpha)) \), 
hence \( h \) has \(\lambda\)-Borel inverse because by assumption all the maps \( f_\alpha \) have \(\lambda\)-Borel inverses.
 
Now notice that 
\[
 h^{-1}(B_\alpha) = F'' \cap (f_0 \circ \pi_0)^{-1}(B_\alpha) = F'' \cap (f_\alpha \circ \pi_\alpha)^{-1}(B_\alpha)
\]
is clopen in \( F'' \), thus
\[ 
h^{-1}(B) = h^{-1} \left( \bigcap\nolimits_{\alpha< \lambda} B_\alpha \right) = \bigcap\nolimits_{\alpha < \lambda} h^{-1}(B_\alpha)
 \] 
is closed in \( F'' \). Since \( F'' \) is \(\lambda\)-Polish, so are \( h^{-1}(B) \) and \( F'' \setminus h^{-1}(B) \). Arguing as above, there are closed \( G_0, G_1 \subseteq B(\lambda) \) with \( G_i \subseteq \Nbhd_{\langle i \rangle} \) (\( i \in \{ 0,1 \} \)) and continuous \(\lambda\)-Borel isomorphisms \( g_0 \colon G_0 \to B \) and \( g_1 \colon G_1 \to X \setminus B \). Therefore the closed set \( G = G_0 \cup G_1 \) and the continuous bijection \( g = h \circ (g_0 \cup g_1) \), which still has \(\lambda\)-Borel inverse, witness that \( B=\bigcap\nolimits_{\alpha< \lambda} B_\alpha \in \mathcal{C} \), as desired. 
\end{proof}


Notice that combining Theorem~\ref{thm:changeoftopology} with the argument in the second part of its proof (namely, the one leading to the construction of the closed set \( F'' \) and the continuous bijection \( h \)), we easily obtain the following generalization of it.

\begin{corollary} \label{cor:changeoftopology}
Let \(\lambda\) be such that \( 2^{< \lambda} = \lambda \), and let \( X \) be a \(\lambda\)-Polish space. Then for every family \( (B_\alpha)_{ \alpha < \lambda} \) of \(\lambda\)-Borel subsets of \( X \) there is a closed set \( F \subseteq B(\lambda) \) and a continuous \(\lambda\)-Borel isomorphism \( f \colon F \to X \) such that \( f^{-1}(B_\alpha) \) is clopen relatively to \( F \) for every \( \alpha < \lambda \).
\end{corollary}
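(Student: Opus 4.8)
The plan is to run the second half of the proof of Theorem~\ref{thm:changeoftopology} while omitting its final intersection step. First I would apply Theorem~\ref{thm:changeoftopology} to each member of the family separately: for every \( \alpha < \lambda \) this produces a closed \( F_\alpha \subseteq B(\lambda) \) and a continuous \(\lambda\)-Borel isomorphism \( f_\alpha \colon F_\alpha \to X \) such that \( f_\alpha^{-1}(B_\alpha) \) is clopen relatively to \( F_\alpha \). Choosing such a pair \( (F_\alpha, f_\alpha) \) for each \( \alpha \) uses \( \AC_\lambda(\pre{\lambda}{2}) \), exactly as in the proof of Theorem~\ref{thm:changeoftopology}. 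Since \( 2^{< \lambda} = \lambda \) gives \( B(\lambda) \approx \pre{\lambda}{2} \) by Theorem~\ref{thm:homeomorphictoCantor}\ref{thm:homeomorphictoCantor-2}, I may assume without loss of generality that each \( F_\alpha \) is a closed subset of \( \pre{\lambda}{2} \).

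Next I would form, verbatim as in that proof, the closed sets
\[
F' = \bigcap_{\alpha < \lambda} \pi_\alpha^{-1}(F_\alpha)
\]
and
\[
F'' = \{ x \in F' \mid (f_\alpha \circ \pi_\alpha)(x) = (f_{\alpha'} \circ \pi_{\alpha'})(x) \text{ for all } \alpha, \alpha' < \lambda \},
\]
using the pseudo-projections \( \pi_\alpha \) from~\eqref{eq:pseudoprojection}, and set \( h \colon F'' \to X \) by \( h(x) = (f_0 \circ \pi_0)(x) \). The argument already given for Theorem~\ref{thm:changeoftopology} shows, unchanged, that \( h \) is a continuous surjection, that it is injective (if \( x \neq x' \) in \( F'' \), some \( \pi_\alpha \) separates them and the corresponding \( f_\alpha \) is injective), and that \( h(\Nbhd_s(F'')) = \bigcap_{\alpha < \lambda} f_\alpha(\Nbhd_{\pi_\alpha(s)}(F_\alpha)) \) for \( s \in \pre{<\lambda}{2} \), so that \( h \) has \(\lambda\)-Borel inverse. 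Hence \( h \) is a continuous \(\lambda\)-Borel isomorphism of \( F'' \) onto \( X \).

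The crucial observation, which is precisely the computation isolated in that proof, is that
\[
h^{-1}(B_\alpha) = F'' \cap (f_\alpha \circ \pi_\alpha)^{-1}(B_\alpha) = F'' \cap \pi_\alpha^{-1}\big( f_\alpha^{-1}(B_\alpha) \big)
\]
for every \( \alpha < \lambda \), where the first equality holds because all the maps \( f_\alpha \circ \pi_\alpha \) agree on \( F'' \). Since \( f_\alpha^{-1}(B_\alpha) \) is clopen in \( F_\alpha \) and \( \pi_\alpha \) is continuous, the set \( \pi_\alpha^{-1}(f_\alpha^{-1}(B_\alpha)) \) is clopen in \( \pi_\alpha^{-1}(F_\alpha) \), and intersecting with \( F'' \subseteq \pi_\alpha^{-1}(F_\alpha) \) shows that \( h^{-1}(B_\alpha) \) is clopen relatively to \( F'' \). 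The only difference with Theorem~\ref{thm:changeoftopology} is that there one intersects these clopen sets over \(\alpha\) to trivialize a single \(\lambda\)-Borel set, whereas here I simply keep them all. Transporting \( F'' \) back to a closed set \( F \subseteq B(\lambda) \) through the homeomorphism \( B(\lambda) \approx \pre{\lambda}{2} \), and letting \( f \) be the corresponding copy of \( h \), yields the required conclusion.

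I do not expect a genuine obstacle, since the content is entirely contained in the proof of Theorem~\ref{thm:changeoftopology}; the one point deserving care is the \emph{uniformity} of the clopenness, namely that \( h^{-1}(B_\alpha) \) is clopen for all \( \alpha \) at once rather than only after intersecting. This is exactly what the defining property of \( F'' \) secures, as on \( F'' \) the maps \( f_\alpha \circ \pi_\alpha \) are forced to coincide, so the single map \( h \) simultaneously turns every \( B_\alpha \) into a clopen set.
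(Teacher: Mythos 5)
Your proposal is correct and is precisely the proof the paper intends: the paper derives this corollary by "combining Theorem~\ref{thm:changeoftopology} with the argument in the second part of its proof (the construction of \( F'' \) and \( h \))", which is exactly what you do — apply the theorem to each \( B_\alpha \), build \( F'' \) and \( h \), and keep all the clopen sets \( h^{-1}(B_\alpha) = F'' \cap (f_\alpha \circ \pi_\alpha)^{-1}(B_\alpha) \) instead of intersecting them. Your attention to the uniformity point (all \( \alpha \) at once, secured by the agreement of the maps \( f_\alpha \circ \pi_\alpha \) on \( F'' \)) matches the paper's computation verbatim.
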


Of course one can then derive all standard consequences of these facts. For example, the following corollary significantly extends both~\cite[Theorem 4]{Stone1962} and the corollary in~\cite[Section 3.6]{Stone1962} from (classical) Borel sets of a \(\lambda\)-Polish space to arbitrary \(\lambda\)-Borel sets, of course under our extra assumptions on \(\lambda\).

\begin{corollary} \label{cor:surjectionBorel}
Let \(\lambda\) be such that \( 2^{< \lambda} = \lambda \) and \( X \) be a \(\lambda\)-Polish space. Then for every \( B \in \lB(X) \) there is a closed set \( G \subseteq B(\lambda) \) and a continuous bijection \( g \colon G \to B \) (with \(\lambda\)-Borel inverse).
If moreover \( B \neq \emptyset \), then there is a continuous surjection \( h \colon B(\lambda) \to B \).
\end{corollary}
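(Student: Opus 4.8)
The plan is to read off the statement almost directly from Theorem~\ref{thm:changeoftopology}, which provides all the substantial content; the remaining work is essentially bookkeeping.

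Given \( B \in \lB(X) \), I would apply Theorem~\ref{thm:changeoftopology} to obtain a closed set \( F \subseteq B(\lambda) \) and a continuous \(\lambda\)-Borel isomorphism \( f \colon F \to X \) such that \( f^{-1}(B) \) is clopen relatively to \( F \). I would then set \( G = f^{-1}(B) \) and \( g = f \restriction G \). Since \( G \) is clopen---hence closed---in \( F \), and \( F \) is in turn closed in \( B(\lambda) \), the set \( G \) is closed in \( B(\lambda) \), as required.

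Next I would verify that \( g \colon G \to B \) is a continuous bijection with \(\lambda\)-Borel inverse. Continuity and bijectivity are immediate from the fact that \( f \) is a continuous bijection of \( F \) onto \( X \) and that \( G = f^{-1}(B) \) is exactly the preimage of \( B \). The inverse \( g^{-1} \) is the restriction of \( f^{-1} \colon X \to F \) to \( B \); since \( f^{-1} \) is \(\lambda\)-Borel (as \( f \) is a \(\lambda\)-Borel isomorphism) and the class of \(\lambda\)-Borel functions is closed under restriction to subspaces, \( g^{-1} \) is \(\lambda\)-Borel. This settles the first assertion. For the second assertion, suppose \( B \neq \emptyset \), so that \( G \) is a nonempty closed subset of \( B(\lambda) \). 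By Proposition~\ref{prop:retract}, \( G \) is a retract of \( B(\lambda) \), so there is a continuous surjection (indeed a retraction) \( r \colon B(\lambda) \to G \). Composing, \( h = g \circ r \colon B(\lambda) \to B \) is a continuous surjection, as desired.

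There is no real obstacle here: the delicate construction---producing a refined presentation in which \( B \) becomes clopen while preserving the \(\lambda\)-Borel structure---is carried out entirely in Theorem~\ref{thm:changeoftopology}, and rests ultimately on the homeomorphism \( B(\lambda) \approx \pre{\lambda}{2} \) from Theorem~\ref{thm:homeomorphictoCantor}\ref{thm:homeomorphictoCantor-2} available under \( 2^{<\lambda} = \lambda \). The only points worth a moment's care are the transitivity of closedness used to conclude that \( G \) is closed in the ambient space \( B(\lambda) \), and the observation that \(\lambda\)-Borelness is inherited by restrictions to subspaces.
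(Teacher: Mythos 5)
Your proof is correct and follows exactly the paper's own argument: apply Theorem~\ref{thm:changeoftopology}, set \( G = f^{-1}(B) \) and \( g = f \restriction G \), and obtain the surjection via the retraction from Proposition~\ref{prop:retract} (the same device the paper uses for the analogous part of Proposition~\ref{prop:surjection}). The additional checks you spell out (closedness of \( G \) in \( B(\lambda) \), \(\lambda\)-Borelness of the restricted inverse) are exactly the routine details the paper leaves implicit.
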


\begin{proof}
Let \( F \subseteq B(\lambda) \) and \( f \colon F \to X \) be as in Theorem~\ref{thm:changeoftopology}, and then set \( G = f^{-1}(B) \) and \( g = f \restriction G \).
\end{proof}

\begin{corollary}
Let \(\lambda\) be such that \( 2^{< \lambda} = \lambda \), and let \( X \) be a \(\lambda\)-Polish space. Then for every \( B \in \lB(X) \) there is a closed set \( H \subseteq X \times B(\lambda) \) such that for all \( x \in X \)
\[ 
x \in B \iff \exists y \, (x,y) \in H \iff \exists! y \, (x,y) \in H.
 \] 
\end{corollary}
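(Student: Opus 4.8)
The plan is to read off the set $H$ directly from the continuous bijection supplied by Corollary~\ref{cor:surjectionBorel}. Fix $B \in \lB(X)$ and, using that corollary, let $G \subseteq B(\lambda)$ be a closed set and $g \colon G \to B$ a continuous bijection (with $\lambda$-Borel inverse). I would then simply set
\[
H = \{ (x,y) \in X \times B(\lambda) \mid y \in G \text{ and } g(y) = x \},
\]
that is, $H$ is the graph of $g$ with its two coordinates interchanged. Since $g$ is a \emph{bijection} of $G$ onto $B$, for each $x \in B$ there is exactly one $y \in G$ (namely $y = g^{-1}(x)$) with $(x,y) \in H$, while for $x \notin B$ there is none. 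This at once gives the three-way equivalence $x \in B \iff \exists y \, (x,y) \in H \iff \exists! y \, (x,y) \in H$, so the combinatorial part of the statement requires no further argument.

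The only real content is to check that $H$ is closed in $X \times B(\lambda)$. Here I would invoke the standard fact that the graph of a continuous function into a Hausdorff space is closed: as $X$ is metrizable, hence Hausdorff, and $g \colon G \to X$ is continuous, the set $\mathrm{graph}(g) = \{ (y, g(y)) \mid y \in G \}$ is closed in $G \times X$. Because $G$ is closed in $B(\lambda)$, the product $G \times X$ is closed in $B(\lambda) \times X$ (it is the preimage of $G$ under the projection onto the first factor), so $\mathrm{graph}(g)$ is closed in $B(\lambda) \times X$ as well. Finally, the coordinate-swap map $B(\lambda) \times X \to X \times B(\lambda)$, $(y,x) \mapsto (x,y)$, is a homeomorphism carrying $\mathrm{graph}(g)$ exactly onto $H$, whence $H$ is closed.

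I do not expect any genuine obstacle. No choice beyond the ambient $\AC_\lambda(\pre{\lambda}{2})$ is needed, since the pair $(G,g)$ is produced wholesale by Corollary~\ref{cor:surjectionBorel}, and the closed-graph argument is purely topological, insensitive to the singularity or cofinality of $\lambda$ and to the assumption $2^{<\lambda} = \lambda$ (which enters only through the cited corollary). The sole point to keep straight is bookkeeping: carefully distinguishing the graph of $g$, living in $G \times X$, from the transposed set $H$, living in $X \times B(\lambda)$, and noting that ``closed in a closed subspace'' propagates to ``closed in the ambient space''.
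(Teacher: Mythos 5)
Your proposal is correct and follows essentially the same route as the paper: the paper's proof likewise takes $G$ and $g$ from Corollary~\ref{cor:surjectionBorel}, sets $H = (\mathrm{graph}(g))^{-1}$, and observes that $H$ is closed because $\mathrm{graph}(g)$ is closed relative to $G \times X$, which is itself closed in $B(\lambda) \times X$. Your write-up just spells out the two standard ingredients (closed graph into a Hausdorff space, and the coordinate-swap homeomorphism) that the paper leaves implicit.
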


\begin{proof}
Given \( G \) and \( g \) as in Corollary~\ref{cor:surjectionBorel},  set \( H = (\mathrm{graph}(g))^{-1} \). The set \( H \) is closed because \( G \) is closed in \( B(\lambda) \) and \( g \colon G \to X \) is a continuous function, thus \( \mathrm{graph}(g) \) is closed relatively to  \( G \times X \in \lP^0_1(B(\lambda) \times X) \).
\end{proof}

\begin{corollary} \label{cor:perfect3}
Assume that \(2^{< \lambda} =  \lambda\). 
Let \( X \) be a \(\lambda\)-Polish space and \( B \in \lB(X) \). Then either \( B \) is well-orderable and \( |B| \leq \lambda \), or else there is a continuous \(\lambda\)-Borel embedding \( h \) from \( \pre{\lambda}{2} \) into \( B \) with \(\lambda\)-Borel range (thus, in particular, \( |B| = 2^\lambda \)).
\end{corollary}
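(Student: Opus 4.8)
The plan is to reduce the statement to the perfect‑set dichotomy for the prototypical space \( \pre{\lambda}{2} \) (Corollary~\ref{cor:perfect2}) by first transporting \( B \) to a closed subset of \( B(\lambda) \) via the change‑of‑topology result Theorem~\ref{thm:changeoftopology}. The whole point is that \( B \) itself need not be closed, nor zero‑dimensional, but \( \lambda \)-Borelness is exactly the amount of regularity needed to ``straighten it out'' against a refined complete topology in which it becomes clopen.

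First I would apply Theorem~\ref{thm:changeoftopology} (which is available since \( 2^{<\lambda}=\lambda \)) to obtain a closed set \( F \subseteq B(\lambda) \) and a continuous \( \lambda \)-Borel isomorphism \( f \colon F \to X \) such that \( C := f^{-1}(B) \) is clopen relatively to \( F \). Since \( C \) is closed in \( F \) and \( F \) is closed in \( B(\lambda) \), the set \( C \) is closed in \( B(\lambda) \), hence a \( \lambda \)-Polish space of weight at most \( \lambda \). I would then apply Corollary~\ref{cor:perfect2} to \( C \), which splits into two cases exactly matching the two alternatives in the conclusion.

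In the small case \( C \) is well‑orderable with \( |C| \leq \lambda \); since \( f \restriction C \) is a bijection onto \( B \), this transports to \( B \), giving the first alternative. In the large case there is a topological embedding \( e \colon \pre{\lambda}{2} \to C \) with closed range, and I would set \( h = f \circ e \colon \pre{\lambda}{2} \to B \). Continuity and injectivity of \( h \) are immediate from those of \( e \) and \( f \). The range is \( h(\pre{\lambda}{2}) = f\big(e(\pre{\lambda}{2})\big) \), where \( e(\pre{\lambda}{2}) \) is closed in \( F \); because \( f^{-1} \) is \( \lambda \)-Borel measurable, \( f \) carries \( \lambda \)-Borel subsets of \( F \) forward to \( \lambda \)-Borel subsets of \( X \) (the \( f \)-image of \( A \subseteq F \) is \( (f^{-1})^{-1}(A) \)), so the range of \( h \) is \( \lambda \)-Borel. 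The same forward‑preservation, applied to the relatively open sets \( e(V) \) for \( V \subseteq \pre{\lambda}{2} \) open, shows \( (h^{-1})^{-1}(V) = f(e(V)) \) is \( \lambda \)-Borel, so \( h^{-1} \) is \( \lambda \)-Borel on the range and \( h \) is a \( \lambda \)-Borel embedding. Finally, \( |\pre{\lambda}{2}| = 2^\lambda \leq |B| \leq |X| \leq \lambda^\omega = 2^\lambda \) (using Fact~\ref{fct:fromweighttosize} and \( \lambda^\omega = \lambda^{\cf(\lambda)} = 2^\lambda \) for the strong limit \( \lambda \) of countable cofinality), yielding the parenthetical \( |B| = 2^\lambda \).

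The conceptual difficulty has already been absorbed into Theorem~\ref{thm:changeoftopology}, whose proof in turn leans on \( B(\lambda) \approx \pre{\lambda}{2} \) from Theorem~\ref{thm:homeomorphictoCantor}\ref{thm:homeomorphictoCantor-2}; so the only genuine work left here is bookkeeping, namely verifying that images and inverses behave well under the \( \lambda \)-Borel isomorphism \( f \). I expect the one point deserving care to be precisely this preservation of \( \lambda \)-Borelness in both directions (forward for the range, and for \( h^{-1} \)), since it is the step that guarantees the embedding \( h \) is a \( \lambda \)-Borel embedding with \( \lambda \)-Borel range rather than merely a continuous injection.
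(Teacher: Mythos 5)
Your proposal is correct and follows essentially the same route as the paper's own proof: apply Theorem~\ref{thm:changeoftopology} to get the closed set \( F \subseteq B(\lambda) \) and the continuous \(\lambda\)-Borel isomorphism \( f \), note that \( f^{-1}(B) \) is clopen in \( F \) and hence a closed subset of \( B(\lambda) \), apply Corollary~\ref{cor:perfect2}, and compose the resulting embedding with \( f \). The bookkeeping you carry out (forward preservation of \(\lambda\)-Borel sets under \( f \), and the cardinality computation \( \lambda^\omega = 2^\lambda \)) is exactly what the paper leaves implicit in the phrase ``is then as desired''.
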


\begin{proof}
Let \( F \subseteq B(\lambda) \) and \( f \colon F \to X \) be as in Theorem~\ref{thm:changeoftopology}.  
Since \( f^{-1}(B) \) is clopen in \( F \) and thus closed in \( B(\lambda) \), it is a \(\lambda\)-Polish space.  By Corollary~\ref{cor:perfect2}, 
either \( f^{-1}(B) \) is well-orderable and \( |f^{-1}(B)| \leq \lambda \), in which case \( |B| = |f^{-1}(B)| \leq \lambda \) too, or else
there is  an embedding \( g \colon \pre{\lambda}{2} \to f^{-1}(B) \) with closed range: the composition \( h =  f \circ g \) is then as desired. 
\end{proof}

The above result will be substantially improved in Corollary~\ref{cor:analyticPSP} by showing that \( h \) can be taken to be a \emph{topological} embedding with a \emph{closed-in-\( X \)} range.

We finally come to the desired result on changes of topology. Applying Corollary~\ref{cor:changeoftopology} and pushing forward the topology on \( F \) along the \(\lambda\)-Borel isomorphism \( f \), we obtain that, as in the classical case, one can refine a \(\lambda\)-Polish topology in order to make clopen sets that were just \(\lambda\)-Borel, without changing the overall \(\lambda\)-Borel structure of the space.

\begin{corollary} \label{cor:changeoftopologyclopen}
Let \(\lambda\) be such that \( 2^{< \lambda} = \lambda \), and let \( X = (X,\tau) \) be a \(\lambda\)-Polish space. Then for every  family \( (B_\alpha)_{ \alpha < \lambda} \) of \(\lambda\)-Borel subsets of \( X \) there is a \(\lambda\)-Polish topology \( \tau' \) on \( X \) refining \( \tau \) such that \( \lB(X,\tau') = \lB(X,\tau) \) and  \( B_\alpha \) is \( \tau' \)-clopen for every \( \alpha < \lambda \). Moreover, such a \( \tau' \) can be taken so that \( \mathrm{dim}(X,\tau') = 0 \).
\end{corollary}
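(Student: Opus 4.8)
The plan is to reduce everything to Corollary~\ref{cor:changeoftopology} and then transport the topology of the witnessing closed subset of \( B(\lambda) \) back onto \( X \) along the \(\lambda\)-Borel isomorphism it provides. Concretely, given the family \( (B_\alpha)_{\alpha < \lambda} \) of \(\lambda\)-Borel subsets of \( (X,\tau) \), I would first apply Corollary~\ref{cor:changeoftopology} to obtain a closed set \( F \subseteq B(\lambda) \) and a continuous \(\lambda\)-Borel isomorphism \( f \colon F \to (X,\tau) \) such that \( f^{-1}(B_\alpha) \) is clopen in \( F \) for every \( \alpha < \lambda \). Then I would define \( \tau' \) to be the pushforward of the (relative) topology of \( F \) under the bijection \( f \), i.e.\ declare \( V \subseteq X \) to be \( \tau' \)-open exactly when \( f^{-1}(V) \) is open in \( F \). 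Since \( f \) is a bijection, \( \tau' \) is genuinely a topology and \( f \colon F \to (X,\tau') \) is by construction a homeomorphism.

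Next I would read off the topological properties of \( \tau' \) directly from those of \( F \). Because \( F \) is a closed subspace of \( B(\lambda) \), it is \(\lambda\)-Polish and, since \( \mathrm{dim}(B(\lambda)) = 0 \) and covering dimension \( 0 \) passes to closed subsets, \( \mathrm{dim}(F) = 0 \). As both being \(\lambda\)-Polish and having \( \mathrm{dim} = 0 \) are topological invariants and \( (X,\tau') \approx F \), it follows that \( (X,\tau') \) is \(\lambda\)-Polish with \( \mathrm{dim}(X,\tau') = 0 \). Moreover, since \( f \colon F \to (X,\tau') \) is a homeomorphism and \( f^{-1}(B_\alpha) \) is clopen in \( F \), its image \( B_\alpha = f(f^{-1}(B_\alpha)) \) is \( \tau' \)-clopen for every \( \alpha < \lambda \).

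It remains to compare the two topologies and their \(\lambda\)-Borel structures. That \( \tau' \) refines \( \tau \) is immediate from the continuity of \( f \colon F \to (X,\tau) \): for \( V \in \tau \) the set \( f^{-1}(V) \) is open in \( F \), so \( V = f(f^{-1}(V)) \in \tau' \). This already gives \( \lB(X,\tau) \subseteq \lB(X,\tau') \). For the converse inclusion I would show that every \( \tau' \)-open set is \(\lambda\)-Borel in \( (X,\tau) \): a \( \tau' \)-open \( V \) equals \( f(U) \) for some open \( U \subseteq F \), and since \( f^{-1} \) is \(\lambda\)-Borel measurable, \( f(U) = (f^{-1})^{-1}(U) \in \lB(X,\tau) \). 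As \( \lB(X,\tau) \) is a \(\lambda\)-algebra containing all of \( \tau' \), it contains \( \lB(X,\tau') \), yielding \( \lB(X,\tau') \subseteq \lB(X,\tau) \) and hence equality. The main (and essentially only) obstacle here is conceptual rather than computational: refining the topology could \emph{a priori} enlarge the collection of \(\lambda\)-Borel sets, and it is precisely the \(\lambda\)-Borel-measurability of \( f^{-1} \) delivered by Corollary~\ref{cor:changeoftopology} — as opposed to mere continuity of \( f \) — that prevents this and pins down \( \lB(X,\tau') = \lB(X,\tau) \).
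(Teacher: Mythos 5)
Your proof is correct and follows exactly the route the paper intends: apply Corollary~\ref{cor:changeoftopology} to get the closed set \( F \subseteq B(\lambda) \) and the continuous \(\lambda\)-Borel isomorphism \( f \), then push the topology of \( F \) forward along \( f \). The paper states this in one line and leaves the verifications implicit; your write-up supplies them correctly, including the key point that \(\lambda\)-Borel measurability of \( f^{-1} \) is what guarantees \( \lB(X,\tau') \subseteq \lB(X,\tau) \).
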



As for \(\lambda\)-Borel functions, applying Corollary~\ref{cor:changeoftopologyclopen} to the preimages of the basic open sets of \( Y \) we get:

\begin{corollary} \label{cor:changeoftopologyfunctions}
Assume that \( 2^{< \lambda} = \lambda \). Let \( (X,\tau) \) be \(\lambda\)-Polish and \( Y \) be a topological space of weight at most \( \lambda\). For every \(\lambda\)-Borel function \( f \colon (X,\tau) \to Y \) there is a \(\lambda\)-Polish topology \( \tau' \) on \( X \) refining \(\tau\) such that \( \lB(X,\tau') = \lB(X, \tau)  \) and  \( f \colon (X,\tau') \to Y \) is continuous. Moreover, such a \( \tau' \) can be taken so that \( \mathrm{dim}(X,\tau') = 0 \).
\end{corollary}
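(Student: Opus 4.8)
The plan is to reduce the statement directly to Corollary~\ref{cor:changeoftopologyclopen}, as the preamble to the statement already suggests. First I would fix a well-ordered basis \( \{ U_\alpha \mid \alpha < \mu \} \) for \( Y \) with \( \mu \leq \lambda \), which exists precisely because \( \weight(Y) \leq \lambda \); after padding with repetitions if necessary, I may assume the index set is exactly \(\lambda\). For each \( \alpha < \lambda \) I then set \( B_\alpha = f^{-1}(U_\alpha) \). Since \( f \colon (X,\tau) \to Y \) is \(\lambda\)-Borel, each \( B_\alpha \) lies in \( \lB(X,\tau) \), so \( (B_\alpha)_{\alpha < \lambda} \) is a family of \(\lambda\)-Borel subsets of \( X \) of size at most \(\lambda\). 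Forming this family requires no choice beyond fixing the basis enumeration once and for all, so no choice principle is invoked on top of those already used in Corollary~\ref{cor:changeoftopologyclopen}.

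Next I would apply Corollary~\ref{cor:changeoftopologyclopen} to this family, obtaining a \(\lambda\)-Polish topology \( \tau' \) on \( X \) refining \( \tau \) such that \( \lB(X,\tau') = \lB(X,\tau) \), each \( B_\alpha = f^{-1}(U_\alpha) \) is \( \tau' \)-clopen, and moreover \( \mathrm{dim}(X,\tau') = 0 \). The merit of this step is that the ``moreover'' clause of that corollary delivers both the preservation of the \(\lambda\)-Borel structure and the zero-dimensionality for free, so there is nothing extra to arrange by hand.

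Finally, to conclude that \( f \colon (X,\tau') \to Y \) is continuous, I would observe that every \( \tau_Y \)-open \( V \subseteq Y \) is a union of basic open sets, namely \( V = \bigcup \{ U_\alpha \mid U_\alpha \subseteq V \} \), whence \( f^{-1}(V) = \bigcup \{ B_\alpha \mid U_\alpha \subseteq V \} \) is a union of \( \tau' \)-open (in fact \( \tau' \)-clopen) sets and is therefore \( \tau' \)-open. Thus preimages of open sets are open, which is exactly continuity. I expect no genuine obstacle here: the entire substance is packaged inside Corollary~\ref{cor:changeoftopologyclopen}, and the only things to verify are the routine passage from ``preimages of basic open sets are open'' to full continuity, together with the observation that an at-most-\(\lambda\)-sized basis of \( Y \) suffices to control its whole topology --- which is guaranteed by the hypothesis \( \weight(Y) \leq \lambda \).
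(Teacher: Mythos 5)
Your proposal is correct and is exactly the paper's argument: the paper proves this corollary in one line by "applying Corollary~\ref{cor:changeoftopologyclopen} to the preimages of the basic open sets of \( Y \)", which is precisely your reduction (fix a \(\lambda\)-sized well-ordered basis of \( Y \), take \(\lambda\)-Borel preimages, make them \( \tau' \)-clopen, and recover continuity since every open set of \( Y \) is a union of basic ones). The padding of the index set to \(\lambda\) and the final continuity check are the routine details the paper leaves implicit.
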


\begin{corollary} \label{cor:changeoftopologyfunctions-2} \axioms{\( DC_\omega(\pre{\lambda}{2} ) \)?}
Assume that \( 2^{< \lambda} = \lambda \) and let \( (X, \tau) \)  be a \(\lambda\)-Polish space. For every \(\lambda\)-Borel function \( f \colon (X,\tau) \to (X, \tau) \) there is a \(\lambda\)-Polish topology \( \tau' \) on \( X \) refining \(\tau\) such that \( \lB(X,\tau')= \lB(X,\tau) \) and \( f \colon (X,\tau') \to (X, \tau') \) is continuous. Moreover, such a \( \tau' \) can be taken so that \( \mathrm{dim}(X,\tau') = 0 \). 
\end{corollary}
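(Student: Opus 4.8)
The plan is to reduce this self-referential statement to an $\omega$-length iteration of Corollary~\ref{cor:changeoftopologyfunctions}, followed by a countable join of the resulting topologies. A single application does not suffice: if I used Corollary~\ref{cor:changeoftopologyclopen} to make all the sets $f^{-1}(U)$ ($U$ basic open) clopen in some refinement $\tau'$, the new topology $\tau'$ would in general carry many \emph{further} open sets over which I have no control, and $f^{-1}$ of those need not be $\tau'$-open. The point is that, whereas making a single $\lambda$-Borel set clopen would morally require joining $\lambda$-many topologies (hence a $\lambda$-fold product, which is unavailable and is exactly why the argument of~\cite[Lemma 13.3]{Kechris1995} fails here), forcing a \emph{single self-map} to become continuous only costs an $\omega$-length recursion, and thus only countable joins — and $\lambda$-Polish spaces \emph{are} closed under countable products. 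Concretely, set $\tau_0 = \tau$ and, given a $\lambda$-Polish $\tau_n$ with $\lB(X,\tau_n) = \lB(X,\tau)$, note that $f$ is still $\lambda$-Borel as a map $(X,\tau_n) \to (X,\tau_n)$ (preimages of $\tau_n$-open sets are $\lambda$-Borel with respect to $\tau$, hence to $\tau_n$), so Corollary~\ref{cor:changeoftopologyfunctions} applied with target $Y = (X,\tau_n)$ produces a $\lambda$-Polish $\tau_{n+1} \supseteq \tau_n$ with $\lB(X,\tau_{n+1}) = \lB(X,\tau)$, $\mathrm{dim}(X,\tau_{n+1}) = 0$, and $f \colon (X,\tau_{n+1}) \to (X,\tau_n)$ continuous, i.e.\ $f^{-1}(U) \in \tau_{n+1}$ for all $U \in \tau_n$. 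Let $\tau'$ be generated by $\bigcup_n \tau_n$. Since this union is a subbasis for $\tau'$ and $f^{-1}$ commutes with unions and finite intersections, the relations $f^{-1}(\tau_n\text{-open}) \subseteq \tau_{n+1} \subseteq \tau'$ immediately give that $f \colon (X,\tau') \to (X,\tau')$ is continuous.

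The crux is to verify that $\tau' = \sup_n \tau_n$ is again a $\lambda$-Polish topology with $\lB(X,\tau') = \lB(X,\tau)$ and $\mathrm{dim}(X,\tau') = 0$. I would argue this via the diagonal map $\iota \colon (X,\tau') \to \prod_{n \geq 1}(X,\tau_n)$, $x \mapsto (x,x,\dots)$. Each coordinate projection of $\iota$ is the identity $(X,\tau') \to (X,\tau_n)$, continuous because $\tau_n \subseteq \tau'$, and the product topology pulled back along $\iota$ is precisely the initial topology generated by $\bigcup_n \tau_n = \tau'$; hence $\iota$ is a homeomorphism onto its image $\Delta$. Because $\tau_i \subseteq \tau_{i+1}$, the identity $(X,\tau_{i+1}) \to (X,\tau_i)$ is continuous, so its graph $\{(a,b) \in (X,\tau_i) \times (X,\tau_{i+1}) \mid a = b\}$ is closed; pulling back through the projections and intersecting, $\Delta = \bigcap_{i \geq 1}\{(x_n)_n \mid x_i = x_{i+1}\}$ is closed in the countable product. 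As $\lambda$-Polish spaces are closed under countable products and closed subspaces, $(X,\tau') \approx \Delta$ is $\lambda$-Polish. The Borel structures agree because $\tau \subseteq \tau' \subseteq \lB(X,\tau)$, so generating the $\lambda^+$-algebra keeps us inside $\lB(X,\tau)$, whence $\lB(X,\tau') = \lB(X,\tau)$. For $\mathrm{dim}(X,\tau') = 0$, realize each $(X,\tau_n)$ ($n \geq 1$) as a closed subset of $B(\lambda)$ by Proposition~\ref{prop:dim(X)=0}; then $\prod_{n \geq 1}(X,\tau_n)$ embeds as a closed subset of $B(\lambda)^\omega \approx B(\lambda)$ (the standard reindexing homeomorphism $\pre{\omega}{\lambda} \approx (\pre{\omega}{\lambda})^\omega$), so the product and its closed subspace $\Delta$ have covering dimension $0$ by Proposition~\ref{prop:dim(X)=0} again.

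The one genuine subtlety — flagged in the margin — is that the construction of $(\tau_n)_{n \in \omega}$ is a recursion of length $\omega$ in which $\tau_{n+1}$ depends on $\tau_n$, i.e.\ a dependent choice rather than a $\lambda$-indexed one. Coding each $\lambda$-Polish topology on $X$ (together with the closed set and bijection witnessing Corollary~\ref{cor:changeoftopologyfunctions}) as an element of $\pre{\lambda}{2}$, which is legitimate since $2^{< \lambda} = \lambda$, this amounts to an instance of $\DC_\omega(\pre{\lambda}{2})$, the only place where the plain $\AC_\lambda(\pre{\lambda}{2})$ of the base theory must be supplemented. I expect the main work of the write-up to be exactly the verification of the crux paragraph — that $\Delta$ is closed and that the countable join preserves $\lambda$-Polishness and zero-dimensionality — everything else being routine bookkeeping; it is worth emphasizing that this causes no tension with the earlier remark that~\cite[Lemma 13.3]{Kechris1995} has no useful analogue here, precisely because only \emph{countably many} topologies are joined in the present argument.
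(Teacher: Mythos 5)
Your proposal is correct and follows essentially the same route as the paper: iterate Corollary~\ref{cor:changeoftopologyfunctions} to obtain the chain \( (\tau_n)_{n \in \omega} \) and take the topology generated by \( \bigcup_{n \in \omega} \tau_n \), where the paper simply cites the proof of~\cite[Lemma 13.3]{Kechris1995} for the verification that this join is \(\lambda\)-Polish with the same \(\lambda\)-Borel sets, which is exactly the diagonal-embedding argument you spell out (your closedness proof for \( \Delta \) via graphs of the identity maps is a harmless variant of the usual Hausdorff-separation argument). Your flagging of the \( \DC_\omega(\pre{\lambda}{2}) \) issue for the \(\omega\)-length dependent recursion also matches the paper's own marginal annotation on this corollary.
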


\begin{proof}
Iteratively apply Corollary~\ref{cor:changeoftopologyfunctions} to get a chain of \(\lambda\)-Polish topologies \( (\tau_n)_{n \in \omega} \) such that \( \tau_0 = \tau \), \( \tau_{n+1} \supseteq \tau_n \), \( \lB(X,\tau_{n+1}) = \lB(X,\tau_n)  \), and \( f \colon (X,\tau_{n+1}) \to (X,\tau_n) \) is continuous. Since we are  dealing with just countably many \(\lambda\)-Polish topologies, the proof of~\cite[Lemma 13.3]{Kechris1995} shows that the topology \( \tau' \) generated by \( \bigcup_{n \in \omega} \tau_n \) is as required.
\end{proof}

Similarly, with a back and forth argument one easily obtains:

\begin{corollary}
Assume that \( 2^{< \lambda} = \lambda \) and let \( (X, \tau_X) \) and \( (Y, \tau_Y) \)  be  \(\lambda\)-Polish spaces. For every \(\lambda\)-Borel isomorphism \( f \colon (X, \tau_X) \to (Y, \tau_Y) \) there are \(\lambda\)-Polish topologies \( \tau'_X \) and \( \tau'_Y \) on \( X \) and \( Y \), respectively, which refine the original topologies, generate the same \(\lambda\)-Borel sets, and such that \( f \colon (X,\tau'_X) \to (Y, \tau'_Y) \) is a homeomorphism. Moreover, such  \( \tau'_X  \) and \( \tau'_Y \) can be taken so that \( \mathrm{dim}(X,\tau'_X)= \mathrm{dim}(Y,\tau'_Y) = 0 \). Finally, if \( (X,\tau_X) = (Y,\tau_Y) \) we can have \( \tau'_X = \tau'_Y \).
\end{corollary}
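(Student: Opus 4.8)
The plan is to run a standard back-and-forth, feeding the single-function change-of-topology result (Corollary~\ref{cor:changeoftopologyfunctions}) into itself alternately on the two sides, exactly as in the proof of Corollary~\ref{cor:changeoftopologyfunctions-2}. First I would build two increasing \(\omega\)-chains of \(\lambda\)-Polish topologies \( \tau_X = \tau_X^0 \subseteq \tau_X^1 \subseteq \dotsb \) and \( \tau_Y = \tau_Y^0 \subseteq \tau_Y^1 \subseteq \dotsb \), each with \( \dim = 0 \) from stage \(1\) on, and all with the same \(\lambda\)-Borel sets as the originals, such that for every \( n \) the map \( f \colon (X,\tau_X^{n+1}) \to (Y,\tau_Y^n) \) and the map \( f^{-1} \colon (Y,\tau_Y^{n+1}) \to (X,\tau_X^n) \) are both continuous. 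At stage \( n \) this is achieved by applying Corollary~\ref{cor:changeoftopologyfunctions} simultaneously to the \(\lambda\)-Borel function \( f \colon (X,\tau_X^n) \to (Y,\tau_Y^n) \) (legitimate since \( Y \) has weight \( \leq \lambda \)) and to \( f^{-1} \colon (Y,\tau_Y^n) \to (X,\tau_X^n) \) (legitimate since \( f \) is a \(\lambda\)-Borel isomorphism, so \( f^{-1} \) is \(\lambda\)-Borel and \( X \) has weight \( \leq \lambda \)), asking in each case for the dim-\(0\) refinement. Throughout, \( f \) and \( f^{-1} \) stay \(\lambda\)-Borel because every refinement preserves the original \(\lambda\)-Borel structure, so the hypotheses of Corollary~\ref{cor:changeoftopologyfunctions} are available at every stage.

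Next I would let \( \tau'_X \) and \( \tau'_Y \) be the topologies generated by \( \bigcup_n \tau_X^n \) and \( \bigcup_n \tau_Y^n \). By the same argument used in the proof of Corollary~\ref{cor:changeoftopologyfunctions-2} (that is, the \(\lambda\)-Polish analogue of~\cite[Lemma 13.3]{Kechris1995}), each of \( \tau'_X, \tau'_Y \) is a dim-\(0\) \(\lambda\)-Polish topology refining the original one and generating the same \(\lambda\)-Borel sets. It then remains to check that \( f \colon (X,\tau'_X) \to (Y,\tau'_Y) \) is a homeomorphism, which is purely formal: for continuity, the preimage of any \( V \in \tau_Y^n \) lies in \( \tau_X^{n+1} \subseteq \tau'_X \), and these \( V \) generate \( \tau'_Y \); dually, for openness, the image \( f(U) = (f^{-1})^{-1}(U) \) of any \( U \in \tau_X^n \) lies in \( \tau_Y^{n+1} \subseteq \tau'_Y \), and since \( f \) is a bijection it carries all of \( \tau'_X \) into \( \tau'_Y \). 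Finally, pushing forward the \(\lambda\)-Borel sets along \( f \) matches \( \lB(Y,\tau'_Y) \) with \( \lB(Y,\tau_Y) \) because \( f \) was a \(\lambda\)-Borel isomorphism of the originals.

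For the last clause, when \( (X,\tau_X) = (Y,\tau_Y) \) I would instead build a \emph{single} chain \( \tau^0 = \tau \subseteq \tau^1 \subseteq \dotsb \): at stage \( n \), fixing a basis \( \mathcal{B}_n \) of \( \tau^n \) of size \( \leq \lambda \), I apply Corollary~\ref{cor:changeoftopologyclopen} to the family \( \{ f^{-1}(U) \mid U \in \mathcal{B}_n \} \cup \{ f(U) \mid U \in \mathcal{B}_n \} \) of at most \( \lambda \)-many \(\lambda\)-Borel sets, obtaining a dim-\(0\) refinement \( \tau^{n+1} \) (same \(\lambda\)-Borel sets) in which all of them are clopen; this makes \emph{both} \( f \) and \( f^{-1} \) continuous as maps \( (X,\tau^{n+1}) \to (X,\tau^n) \). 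Setting \( \tau' \) to be the topology generated by \( \bigcup_n \tau^n \), the verification above (now with \( \tau_X^n = \tau_Y^n = \tau^n \)) shows that \( f \colon (X,\tau') \to (X,\tau') \) is a self-homeomorphism, and \( \tau' \) is dim-\(0\) \(\lambda\)-Polish with the same \(\lambda\)-Borel sets.

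The one genuinely delicate point — and the reason the dim-\(0\) clause is not a triviality here — is that in the non-separable setting having a clopen basis (inductive dimension \(0\)) does \emph{not} imply \( \dim = 0 \), as stressed in Section~\ref{sec:lambda-Polish} via Roy's example. So I must not conclude \( \dim(X,\tau'_X)=0 \) merely from the fact that finite intersections of clopen sets of the \( \tau_X^n \) form a basis; instead the dim-\(0\) property of the limit must be inherited through the closed embedding of \( (X,\tau'_X) \) into the countable product \( \prod_n (X,\tau_X^n) \) used in the Lemma~13.3 argument, relying on the general-topology fact that a countable product of strongly zero-dimensional metrizable spaces is strongly zero-dimensional. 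This is exactly the step already invoked in Corollary~\ref{cor:changeoftopologyfunctions-2}, so the work here is to arrange each stage to be dim-\(0\) and then quote that machinery, rather than to reprove it.
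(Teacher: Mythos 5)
Your proposal is correct and follows exactly the route the paper intends: the corollary is stated as a consequence of the back-and-forth iteration of Corollary~\ref{cor:changeoftopologyfunctions} (and of Corollary~\ref{cor:changeoftopologyclopen} for the case \( (X,\tau_X)=(Y,\tau_Y) \)), with the limit topology handled by the same Lemma~13.3-style argument used in Corollary~\ref{cor:changeoftopologyfunctions-2}. Your one flagged subtlety, the covering dimension of the limit, is real but is handled internally to the paper's machinery: each stage \( n\geq 1 \) is \(\lambda\)-Polish with \( \mathrm{dim}=0 \), hence by Proposition~\ref{prop:dim(X)=0} homeomorphic to a closed subset of \( B(\lambda) \), so the countable product embeds as a closed set in \( B(\lambda)^\omega \approx B(\lambda) \) and the diagonal, being closed in that product, has \( \mathrm{dim}=0 \) — no external dimension-theoretic product theorem is needed.
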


Clearly, one can also (simultaneously) work with any \(\lambda\)-sized family of \(\lambda\)-Borel functions.

\section{Structural properties} \label{sec:structuralproperties}

We now turn to structural properties. It is convenient to introduce a general notion of \(\lambda\)-pointclass.

\begin{defin}
A \markdef{boldface \(\lambda\)-pointclass} is an operator \( \boldsymbol{\Gamma} \) assigning to each \(\lambda\)-Polish space \( X \) a collection of subsets \( \boldsymbol{\Gamma}(X) \subseteq \pow(X) \) which is closed under continuous preimages, that is, if \( f \colon X \to Y \) is continuous and \( B \in \boldsymbol{\Gamma}(Y) \) then \( f^{-1}(B) \in \boldsymbol{\Gamma}(X) \). The dual \( \check{\boldsymbol{\Gamma}} \) of \( \boldsymbol{\Gamma} \) is the boldface \(\lambda\)-pointclass defined by \( \check{\boldsymbol{\Gamma}}(X) = \{  X \setminus A \mid A \in \boldsymbol{\Gamma}(X) \} \). The ambiguous \( \lambda \)-pointclass \( \boldsymbol{\Delta}_{\boldsymbol{\Gamma}} \) associated to \( \boldsymbol{\Gamma} \) is defined by \( \boldsymbol{\Delta}_{\boldsymbol{\Gamma}}(X) = \boldsymbol{\Gamma}(X) \cap \check{\boldsymbol{\Gamma}}(X) \). 
\end{defin}

Examples of boldface \(\lambda\)-pointclasses are the classes \( \boldsymbol{\Gamma}(X) = \lS^0_\xi(X) \), for an arbitrary \( 1 \leq \xi < \lambda^+ \). In this case, \( \check{\boldsymbol{\Gamma}}(X) = \lP^0_\xi(X) \), and \( \boldsymbol{\Delta}_{\boldsymbol{\Gamma}}(X) = \lD^0_\xi(X) \).

We list here a number of important structural properties (compare them with those defined in~\cite[Section 22.C]{Kechris1995}).

\begin{defin}
Let \( \boldsymbol{\Gamma} \) be a boldface \(\lambda\)-pointclass. 
\begin{enumerate-(a)}
\item
\( \boldsymbol{\Gamma} \) has the \markdef{separation property} if for every \(\lambda\)-Polish space \( X \) and \( A,B \in \boldsymbol{\Gamma}(X) \) with \( A \cap B = \emptyset \), there is \( C \in \boldsymbol{\Delta}_{\boldsymbol{\Gamma}}(X) \) separating \( A \) from \( B \), i.e.\ such that \( A \subseteq C \) and \( C \cap B = \emptyset \).
\item
\( \boldsymbol{\Gamma} \) has the \markdef{\( \lambda \)-generalized separation property} if for every \(\lambda\)-Polish space \( X \) and  any sequence of sets \( A_\alpha \in \boldsymbol{\Gamma}(X) \), \( \alpha < \lambda \), with \( \bigcap_{\alpha < \lambda} A_\alpha = \emptyset \) there is a sequence of sets \( B_\alpha \in \boldsymbol{\Delta}_{\boldsymbol{\Gamma}}(X) \) with \( A_\alpha \subseteq B_\alpha \) and \( \bigcap_{\alpha<\lambda} B_\alpha = \emptyset \).
\item
\( \boldsymbol{\Gamma} \) has the \markdef{reduction property} if for every \(\lambda\)-Polish space \( X \) and for any \( A,B \in \boldsymbol{\Gamma}(X) \) there are disjoint sets \( A^*, B^* \in \boldsymbol{\Gamma}(X) \) such that \( A^* \subseteq A \), \( B^* \subseteq B \), and \( A^* \cup B^* = A \cup B \). (If this happens, we say that the pair \( (A^*, B^*) \) reduces the pair \( (A,B) \).)
\item
\( \boldsymbol{\Gamma} \) has the \markdef{\(\lambda\)-generalized reduction property} if for every \(\lambda\)-Polish space \( X \) and  any sequence of sets \( A_\alpha \in \boldsymbol{\Gamma}(X) \), \( \alpha < \lambda \), there is a sequence of pairwise disjoint sets \( A^*_\alpha \in \boldsymbol{\Gamma}(X) \) with \( A^*_\alpha \subseteq A_\alpha \) and \( \bigcup_{\alpha< \lambda} A^*_\alpha = \bigcup_{\alpha< \lambda} A_\alpha \).
\item
\( \boldsymbol{\Gamma} \) has the \markdef{ordinal \(\lambda\)-uniformization property} if for every \(\lambda\)-Polish space \( X \) and for any \( R \in \boldsymbol{\Gamma}( X \times \lambda) \) (where \( X \times \lambda \) is endowed with the product of the topology of \( X \) and the discrete topology on \( \lambda \)) there is a uniformization of \( R \) in \( \boldsymbol{\Gamma} \), that is, a set \( R^* \subseteq R \) such that \( R^* \in \boldsymbol{\Gamma}(X \times \lambda) \) and \( R^* \) is the graph of a function with domain the projection \( \p(R) = \{ x \in X \mid \exists \alpha < \lambda \, (x,\alpha) \in R \} \) of \( R \) on its first coordinate.
\end{enumerate-(a)}
\end{defin}

We first prove the following Proposition~\ref{prop:regularityproperties}, which is a higher analogue of~\cite[Theorem 22.16]{Kechris1995}. The natural generalization of the original argument would require that the classes \( \lS^0_\xi(X) \) are closed under intersections of length smaller than \( \lambda \), which is false when \(\lambda\) is singular. To overcome this difficulty, we will use a nontrivial adaptation of that proof which heavily relies on our Proposition~\ref{prop:levelsBorel}.

\begin{defin} \label{def:lambdareasonable}
A boldface \(\lambda\)-pointclass \( \boldsymbol{\Gamma} \) is called \markdef{\(\lambda\)-reasonable} if the following property holds:
\begin{quotation}
Suppose that \( I \) is a set of cardinality at most \( \lambda \) and \( X \) is \(\lambda\)-Polish. Let  \( (A_i)_{i \in I} \) be a sequence of subsets of \( X \). Then \( A_i \in \boldsymbol{\Gamma}(X) \) for all \( i \in I \) if and only if
\[ 
A = \{ (x,i) \in X \times I \mid x \in A_i \} \in \boldsymbol{\Gamma}(X \times I),
 \] 
where \( X \times I \) is endowed with the product of the topology of \( X \) and the discrete topology on \( I \).
\end{quotation}
\end{defin}

Notice that if a boldface \(\lambda\)-pointclass \( \boldsymbol{\Gamma} \supseteq \lD^0_1 \) is closed under unions of length \( \lambda \), then \( \boldsymbol{\Gamma} \) is \(\lambda\)-reasonable. Moreover, if \( \boldsymbol{\Gamma} \) is \(\lambda\)-reasonable then so are \( \check{\boldsymbol{\Gamma}}  \) and \( \boldsymbol{\Delta}_{\boldsymbol{\Gamma}} \) because 
\[ 
(X \times I ) \setminus \{ (x,i) \in X \times I \mid x \in A_i \}  = \{ (x,i) \in X \times I \mid x \in X \setminus A_i \} .
 \] 
It follows that all of \( \lS^0_\xi \), \( \lP^0_\xi \), and \( \lD^0_\xi \) are \( \lambda \)-reasonable.%
\footnote{Examples of boldface \(\lambda\)-pointclasses which are not \(\lambda\)-reasonable are the ones defined by setting \( \boldsymbol{\Gamma}(X) = \lS^0_\xi(X) \cup \lP^0_\xi(X) \) for any \( 1 \leq \xi < \lambda^+ \), or even just \( \boldsymbol{\Gamma}(X) = \{ \emptyset,X \} \).}

\begin{proposition} \label{prop:regularityproperties}
Let \(\lambda\) be such that \( 2^{< \lambda} = \lambda \). For every \( 1 < \xi < \lambda^+ \), the boldface 
\(\lambda\)-pointclass \( \lS^0_\xi \) has the ordinal \(\lambda\)-uniformization property. The same is true for \( \lS^0_1 \) if we restrict the attention to \(\lambda\)-Polish spaces \( X \) with \( \mathrm{dim}(X) = 0 \). 
\end{proposition}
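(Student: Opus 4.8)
The statement is exactly the assertion that $\lS^0_\xi$ has the $\lambda$-generalized reduction property, rephrased through graphs: writing $R_\alpha = \{x \mid (x,\alpha) \in R\}$ for the sections of $R \in \lS^0_\xi(X \times \lambda)$, I must produce pairwise disjoint $R^*_\alpha \in \lS^0_\xi(X)$ with $R^*_\alpha \subseteq R_\alpha$ and $\bigcup_\alpha R^*_\alpha = \bigcup_\alpha R_\alpha$; then $R^* = \{(x,\alpha) \mid x \in R^*_\alpha\}$ is the graph of a function with domain $\p(R)$, and since $\lS^0_\xi$ is $\lambda$-reasonable (Definition~\ref{def:lambdareasonable}) this gives $R^* \in \lS^0_\xi(X \times \lambda)$. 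Note that each $R_\alpha$ lies in $\lS^0_\xi(X)$, being the preimage of $R$ under the continuous map $x \mapsto (x,\alpha)$.

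The plan is to first dispose of the base case $\xi = 1$ with $\mathrm{dim}(X) = 0$ directly, as it will drive everything else. Here each $R_\alpha$ is open, so $P := \p(R) = \bigcup_\alpha R_\alpha$ is an open, hence zero-dimensional $\lambda$-Polish, subspace of $X$, and $\{R_\alpha\}_{\alpha<\lambda}$ is an open cover of $P$. Using $\mathrm{dim}(P) = 0$ I would refine this cover canonically (via Rudin's argument for a fixed well-ordered basis, as discussed at the end of Section~\ref{subsec:topologicalspaces}) to a clopen partition $\mathcal{P}$ of $P$. Assigning to each $Q \in \mathcal{P}$ the least $\alpha$ with $Q \subseteq R_\alpha$, and letting $R^*_\alpha$ be the union of the pieces assigned to $\alpha$, yields pairwise disjoint sets that are open in $P$ and hence open in $X$, with the required reduction properties.

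For $1 < \xi < \lambda^+$ and arbitrary $\lambda$-Polish $X$, the idea is to reduce to the base case by a change of topology, and this is where Proposition~\ref{prop:levelsBorel} is essential. By that proposition, together with $\AC_\lambda(\pre{\lambda}{2})$ to make the choices uniform in $\alpha$, I can write each section as $R_\alpha = \bigcup_{i<\mu} D_{\alpha,i}$ with $D_{\alpha,i} \in \lD^0_\xi(X)$ and $\mu < \lambda$ (namely $\mu = \omega$ if $\xi$ is a successor, $\mu = \cf(\xi)$ if $\xi$ is limit). The family $\{D_{\alpha,i} \mid \alpha < \lambda,\ i < \mu\}$ has size $\lambda$, so Corollary~\ref{cor:changeoftopologyclopen} supplies a refining $\lambda$-Polish topology $\tau'$ with $\lB(X,\tau') = \lB(X,\tau)$, with $\mathrm{dim}(X,\tau') = 0$, and making every $D_{\alpha,i}$ clopen. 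In $(X,\tau')$ each $R_\alpha$ is a $\lambda$-union of clopen sets, hence open, so $R$ is open in $(X,\tau') \times \lambda$ and the base case applies, producing a uniformization $R^*$ whose sections $R^*_\alpha$ are $\tau'$-open.

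Finally I would verify that $R^*$ is $\lS^0_\xi$ with respect to the original topology. The crucial point, and the reason for decomposing into $\lD^0_\xi$ pieces rather than clopen-ifying the $R_\alpha$ themselves, is that $\lD^0_\xi$ is closed under complements: since $\tau'$ is generated by $\tau$ together with the $D_{\alpha,i}$ and their complements, every $\tau'$-basic open set is a finite intersection of $\tau$-open sets (which lie in $\lD^0_2(X,\tau) \subseteq \lD^0_\xi(X,\tau)$ as $\xi \geq 2$) and sets in $\lD^0_\xi(X,\tau)$, hence itself lies in $\lD^0_\xi(X,\tau)$; as $\tau'$ has a basis of size at most $\lambda$, every $\tau'$-open set is a $\leq\lambda$-sized union of such sets and therefore belongs to $\lS^0_\xi(X,\tau)$. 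Thus each $R^*_\alpha \in \lS^0_\xi(X,\tau)$, and $\lambda$-reasonableness of $\lS^0_\xi$ yields $R^* \in \lS^0_\xi((X,\tau) \times \lambda)$. The main obstacle, already flagged in the surrounding text, is precisely the failure of $\lS^0_\xi$ to be closed under $<\lambda$-sized intersections: this is what defeats the direct transfer of the classical subtraction argument (whose reduced pieces would land in $\lD^0_{\xi+1}$) and forces the detour through $\lD^0_\xi$-decompositions and a change of topology.
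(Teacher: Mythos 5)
Your reformulation via sections and \( \lambda \)-reasonableness is fine, and your base case \( \xi = 1 \), \( \mathrm{dim}(X) = 0 \) is correct: refining the open cover \( \{ R_\alpha \mid \alpha < \lambda \} \) of the open subspace \( P = \p(R) \) to a clopen partition and assigning each piece to its least index is a legitimate, and in fact more direct, alternative to the paper's argument, which instead realizes \( X \) as a closed subset of \( B(\lambda) \) via Proposition~\ref{prop:dim(X)=0} and works with \( n \)-clopen sets.

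For \( \xi > 1 \), however, there is a genuine gap at the complexity verification, and it is not a repairable detail. You need every \( \tau' \)-open set to lie in \( \lS^0_\xi(X,\tau) \), and you derive this from the claim that \( \tau' \) ``is generated by \( \tau \) together with the \( D_{\alpha,i} \) and their complements''. Corollary~\ref{cor:changeoftopologyclopen} asserts no such thing: \( \tau' \) is obtained by pushing forward the topology of a closed subset of \( B(\lambda) \) along the continuous \( \lambda \)-Borel isomorphism of Theorem~\ref{thm:changeoftopology}, and the only control this gives on \( \tau' \)-open sets, viewed in \( (X,\tau) \), is that they are \( \lambda \)-Borel; their level in the hierarchy is unbounded, since in that recursion the images of basic open sets are \( \lambda \)-sized intersections of images from earlier stages, so complexity climbs with the rank of the sets being made clopen. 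Nor can you instead \emph{define} \( \tau' \) to be the generated topology, because that topology is in general not \( \lambda \)-Polish: already adding a single set \( D \) and its complement as open sets does not change the induced topology on \( D \) itself, so \( D \) would be \( \tau' \)-closed with trace topology \( \tau|_D \), forcing \( D \) to be \( G_\delta \) in \( (X,\tau) \) if \( \tau' \) were completely metrizable --- and a general \( \lD^0_\xi \) set is not \( G_\delta \) (any set in \( \lS^0_2 \setminus \lP^0_2 \subseteq \lD^0_3 \) works, and such sets exist by Theorem~\ref{thm:noncollapseBorelhierarchy}). So your \( R^*_\alpha \) are only known to be \( \lambda \)-Borel, and the proof collapses exactly where complexity must be preserved; what you would need is a complexity-preserving change-of-topology lemma, which the paper does not have, and whose classical proof (gluing increasing chains of topologies as in \cite[Lemma 13.3]{Kechris1995}) is precisely what fails when \( \cf(\lambda) = \omega < \lambda \). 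Finally, your closing diagnosis --- that the classical subtraction argument is defeated because its reduced pieces land in \( \lD^0_{\xi+1} \) --- is a miscalculation, and it is exactly the route the paper takes: for \( \xi = \xi'+1 \) one decomposes \( R = \bigcup_{\alpha<\lambda} R_\alpha \) with \( R_\alpha \in \lP^0_{\xi'}(X \times \lambda) \) (one level \emph{down}), well-orders the pairs \( (\gamma,\alpha) \) by G\"odel pairing, and selects least witnesses; the subtraction is then an intersection of fewer than \( \lambda \)-many \( \lS^0_{\xi'} \) sets, which Proposition~\ref{prop:levelsBorel}\ref{prop:levelsBorel-3} places in \( \lD^0_{\xi'+1} = \lD^0_\xi \), so the uniformization is a \( \lambda \)-union of \( \lD^0_\xi \) sets, hence \( \lS^0_\xi \); limit \( \xi \) is handled by a parallel argument using Proposition~\ref{prop:levelsBorel}\ref{prop:levelsBorel-2}, and no change of topology is needed at all.
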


\begin{proof}
We first consider the case where \( \xi = \xi'+1 \) is a successor ordinal. Let \( X \) be \(\lambda\)-Polish and \( R \in \lS^0_\xi(X \times \lambda) \). By definition, there are \( R_\alpha \in \lP^0_{\xi'}(X \times \lambda) \) such that \( R = \bigcup_{\alpha < \lambda} R_\alpha \). Set
\[ 
Q = \{ (x,\gamma,\alpha) \in X \times \lambda \times \lambda \mid (x,\gamma) \in R_\alpha \}
 \] 
and notice that \( Q \in \lP^0_{\xi'}(X \times \lambda \times \lambda) \) because \( \lP^0_{\xi'} \) is \(\lambda\)-reasonable. Order \( \lambda \times \lambda \) by setting \( (\gamma', \alpha') \prec (\gamma,\alpha) \) if and only if \( \langle \gamma', \alpha' \rangle < \langle  \gamma,\alpha \rangle \), where \( \langle \cdot, \cdot \rangle \) is again the G\"odel pairing function. Define then \( Q^* \subseteq X \times \lambda \times \lambda \) by setting
\[ 
(x,\gamma,\alpha) \in Q^*  \iff  (x,\gamma,\alpha) \in Q \wedge \forall (\gamma',\alpha') \prec (\gamma,\alpha) \, [(x,\gamma',\alpha') \notin Q ].
 \] 
Since there are less than  \( \lambda \)-many pairs \( (\gamma',\alpha') \prec (\gamma,\alpha) \) and the intersection of fewer than \( \lambda \)-many sets from \( \lS^0_{\xi'}(X \times \lambda \times \lambda) \) is in \( \lD^0_\xi(X \times \lambda \times \lambda ) \) by Proposition~\ref{prop:levelsBorel}\ref{prop:levelsBorel-3}, it follows that \( Q^* \in \lD^0_\xi(X \times \lambda \times \lambda) \subseteq \lS^0_\xi(X \times \lambda \times \lambda) \). Thus the set \( R^* \subseteq X \times \lambda \) defined by
\[ 
(x,\gamma) \in R^*  \iff  \exists \alpha < \lambda \, [(x,\gamma,\alpha) \in Q^*]
 \] 
is in \( \lS^0_\xi(X \times \lambda) \), and it clearly uniformizes \( R \).

If \(\xi\) is limit, we use instead Proposition~\ref{prop:levelsBorel}\ref{prop:levelsBorel-2}. Given \( R \in \lS^0_\xi(X \times \lambda) \), let \( (R_\alpha)_{\alpha < \cf(\xi)} \) and \( s \colon \cf(\xi) \to \xi \) be such that \( R_\alpha \in \lD^0_{s(\alpha)}(X \times \lambda) \) and \( R = \bigcup_{\alpha < \cf(\xi)} R_\alpha \). Without loss of generality, we may assume that \( s \) is non-decreasing. For \( \alpha < \cf(\xi) \), let \( Q^\alpha \subseteq X \times \lambda \times \lambda \) be defined by
\[ 
Q^\alpha = \{ (x,\gamma,\alpha') \in X \times \lambda \times \lambda \mid  \alpha' \leq \alpha \wedge (x,\gamma) \in R_{\alpha'} \},
 \] 
and set 
\[
Q  = \{ (x,\gamma,\alpha) \in X \times \lambda \times \lambda \mid (x,\gamma) \in R_\alpha \} = \bigcup_{\alpha<\cf(\xi)} Q^\alpha. 
\]
Since \( \lD^0_{s(\alpha')}(X) \subseteq \lD^0_{s(\alpha)}(X) \) for \( \alpha' \leq \alpha \) because
\( s \) is non-decreasing,  and since the boldface \(\lambda\)-pointclasses \( \lD^0_{s(\alpha)} \) and \( \lD^0_\xi \) are \(\lambda\)-reasonable, each set \( Q^\alpha \) is \( \lD^0_{s(\alpha)} \), while \( Q \) is \( \lD^0_\xi \). Define \( Q^* \subseteq X \times \lambda \times \lambda \) by
\begin{align*}
(x, \gamma,\alpha) \in Q^* \iff (x,\gamma,\alpha) \in Q & \wedge \forall \alpha' < \alpha \, \forall \gamma' < \lambda \, [(x, \gamma', \alpha') \notin Q^\alpha] \\
& \wedge \forall \gamma' < \gamma \, [(x, \gamma',\alpha) \notin Q^\alpha].
 \end{align*}
Since for a fixed pair \( (\gamma, \alpha) \) the part after the first conjunction in the right term defines a set in \( \lP^0_{s(\alpha)+1}(X) \) and \( s(\alpha)+1 < \xi \) because \(\xi\) is limit, by 
\( \lP^0_{s(\alpha)+1}(X) \subseteq \lD^0_\xi(X) \) and the 
\(\lambda\)-reasonability of \( \lD^0_\xi \) we easily get \( Q^* \in \lD^0_\xi(X \times \lambda \times \lambda ) \). As before, it follows that the set \( R^* \) consisting of those \( (x,\gamma) \in X \times \lambda \) such that \( (x,\gamma,\alpha) \in Q^* \) for some \( \alpha < \cf(\xi) \) is the desired uniformization of  \( R \) in \( \lS^0_\xi \). 

Assume now that \( \mathrm{dim}(X) = 0 \) and \( \xi = 1 \). None of the two previous arguments can work in this case because \( \lD^0_1 \) is in general  not closed under infinite intersections, so a further refinement is in order. By Proposition~\ref{prop:dim(X)=0}, without loss of generality we may assume that \( X \subseteq B(\lambda) \). A set  \( A \subseteq X \) is \( n \)-clopen if there is \( S \subseteq \pre{n}{\lambda} \) such that \( A = \bigcup \{ \Nbhd_s(X) \mid s \in S \} \). It is easy to check that \( n \)-clopen sets are closed under complements and \emph{arbitrary} unions and intersections. Moreover, if \( m \leq n \) then every \( m \)-clopen set is also \( n \)-clopen. Let now \( R \subseteq X \times \lambda \) be open. For each \( \gamma < \lambda \) and \( n \in \omega \), the set
\[ 
R^\gamma_n = \bigcup \{ \Nbhd_s(X) \mid s \in \pre{n}{\lambda} \wedge \Nbhd_s(X) \times \{ \gamma \} \subseteq R \}
 \] 
is \( n \)-clopen, so that  the set \( R_n = \bigcup_{\gamma < \lambda} R^\gamma_n \) is \( n \)-clopen as well. By the closure properties of \( n \)-closed sets, also the sets \( Q^\gamma_n  \subseteq X \) defined by
\[ 
x \in Q^\gamma_n \iff x \in R^\gamma_n \wedge \forall m < n \, (x \notin R_m) \wedge \forall \gamma' < \gamma \, (x \notin R^{\gamma'}_n)
 \] 
are \( n \)-clopen, hence the set 
\begin{align*}
Q^* & = \{ (x,\gamma,n) \in X  \times \lambda \times \omega \mid x \in Q^\gamma_n \} \\
& = \bigcup_{\substack{n \in \omega \\ \gamma < \lambda}} Q^\gamma_n  \times \{ \gamma \} \times \{ n \}
 \end{align*} 
is open. Then, the open set \( R^* \subseteq X \times \lambda \) defined by
\[ 
(x,\gamma) \in R^* \iff \exists n \in \omega \, (x,\gamma,n) \in Q^*
 \] 
unifomizes \( R \).
\end{proof}

The following is the analogue of~\cite[Proposition 22.15]{Kechris1995}. We omit the proof since it is (almost) identical to the original one.

\begin{proposition} \label{prop:relationsamongregularityproperties}
Let \( \boldsymbol{\Gamma} \) be a boldface \(\lambda\)-pointclass.
\begin{enumerate-(i)}
\item
If \( \boldsymbol{\Gamma} \) has the reduction property, then \( \check{\boldsymbol{\Gamma}} \) has the separation property.
\item
If \( \boldsymbol{\Gamma} \) is closed under unions of length \( \lambda \) and has the \(\lambda\)-generalized reduction property, then \( \check{\boldsymbol{\Gamma}} \) has the \(\lambda\)-generalized separation property.
\item
If \( \boldsymbol{\Gamma} \) is \(\lambda\)-reasonable, then \( \boldsymbol{\Gamma} \) has the \(\lambda\)-generalized reduction property if and only if \( \boldsymbol{\Gamma} \) has the ordinal \(\lambda\)-uniformization property.
\item
Assume that \( \boldsymbol{\Gamma}(\pre{\lambda}{2}) \) admits a \( \pre{\lambda}{2} \)-universal set, that is, a set  \( U \in \boldsymbol{\Gamma}(\pre{\lambda}{2} \times \pre{\lambda}{2}) \) such that \( \boldsymbol{\Gamma}(\pre{\lambda}{2}) = \{ U_y \mid y \in \pre{\lambda}{2} \} \).
Then \( \boldsymbol{\Gamma} \) cannot have both the reduction and the separation properties.
\end{enumerate-(i)}
\end{proposition}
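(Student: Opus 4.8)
The first three parts are routine adaptations of the classical bookkeeping with complements and transpositions, so the plan is to dispatch them quickly and concentrate on part~(iv), which is the only delicate point. For~(i), given disjoint $A,B\in\check{\boldsymbol{\Gamma}}(X)$ I would apply the reduction property to the pair $(X\setminus A,\,X\setminus B)\in\boldsymbol{\Gamma}(X)^2$, whose union is all of $X$ precisely because $A\cap B=\emptyset$. A reduction $(A^*,B^*)$ then satisfies $A^*\cup B^*=X$ and $A^*\cap B^*=\emptyset$, so $B^*=X\setminus A^*$ witnesses $A^*\in\boldsymbol{\Delta}_{\boldsymbol{\Gamma}}(X)$, and $C=B^*$ separates $A$ from $B$. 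Part~(ii) is the same computation carried out with $\lambda$-many sets: I would reduce the family $(X\setminus A_\alpha)_{\alpha<\lambda}$ (whose union is $X$ since $\bigcap_\alpha A_\alpha=\emptyset$) to pairwise disjoint sets $A^*_\alpha$ covering $X$, and set $B_\alpha=X\setminus A^*_\alpha=\bigcup_{\beta\neq\alpha}A^*_\beta$; here closure of $\boldsymbol{\Gamma}$ under unions of length $\lambda$ gives $B_\alpha\in\boldsymbol{\Gamma}$, hence $B_\alpha\in\boldsymbol{\Delta}_{\boldsymbol{\Gamma}}$, while $\bigcap_\alpha B_\alpha=\emptyset$.

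For~(iii), the bridge between the two properties is exactly $\lambda$-reasonability, which lets me pass freely between a sequence $(A_\alpha)_{\alpha<\lambda}$ of subsets of $X$ and the single set $\{(x,\alpha):x\in A_\alpha\}\subseteq X\times\lambda$. Starting from generalized reduction, I would take an $R\in\boldsymbol{\Gamma}(X\times\lambda)$, let $R_\alpha=\{x:(x,\alpha)\in R\}$ (these are in $\boldsymbol{\Gamma}(X)$ by reasonability), reduce the sequence $(R_\alpha)_\alpha$ to pairwise disjoint $R^*_\alpha$ with the same union, and observe that reasonability turns $\{(x,\alpha):x\in R^*_\alpha\}$ into a $\boldsymbol{\Gamma}$-uniformization of $R$. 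Conversely, uniformizing $\{(x,\alpha):x\in A_\alpha\}$ and reading off its sections produces the pairwise disjoint $A^*_\alpha\subseteq A_\alpha$ with $\bigcup_\alpha A^*_\alpha=\bigcup_\alpha A_\alpha$ required by generalized reduction.

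The hard part is~(iv), and I would argue by contradiction: assuming $\boldsymbol{\Gamma}$ carries a $\pre{\lambda}{2}$-universal set $U$ together with \emph{both} reduction and separation, I will manufacture a universal set for the ambiguous class $\boldsymbol{\Delta}_{\boldsymbol{\Gamma}}(\pre{\lambda}{2})$, which cannot exist. That last impossibility is the usual diagonalization: writing $Z=\pre{\lambda}{2}$ and identifying $Z$ with $Z\times Z$ by re-indexing coordinates, if some $V\in\boldsymbol{\Delta}_{\boldsymbol{\Gamma}}(Z\times Z)$ had every member of $\boldsymbol{\Delta}_{\boldsymbol{\Gamma}}(Z)$ among its sections, then $\{z:(z,z)\notin V\}$ would itself be a $\boldsymbol{\Delta}_{\boldsymbol{\Gamma}}(Z)$ set (using that $\boldsymbol{\Delta}_{\boldsymbol{\Gamma}}$ is closed under complements and continuous preimages) differing from every section, a contradiction.

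To build the forbidden $V$, I would use parameters $(a,b)$ to code a candidate $\boldsymbol{\Delta}$-set $U_a$ together with a candidate code $U_b$ for its complement, and consider the two $\boldsymbol{\Gamma}$ sets $W_0=\{((a,b),x):(a,x)\in U\}$ and $W_1=\{((a,b),x):(b,x)\in U\}$, whose sections over $(a,b)$ are $U_a$ and $U_b$. Applying reduction to $(W_0,W_1)$ yields disjoint $W_0^*,W_1^*\in\boldsymbol{\Gamma}$; since sections of a reduction reduce the sections, and since reducing an already-disjoint pair returns it unchanged, for every \emph{complementary} pair (one with $U_a\cap U_b=\emptyset$ and $U_a\cup U_b=Z$) one gets $(W_0^*)_{(a,b)}=U_a$. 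Now separation applied to the disjoint pair $(W_0^*,W_1^*)$ gives $S\in\boldsymbol{\Delta}_{\boldsymbol{\Gamma}}$ with $W_0^*\subseteq S$ and $S\cap W_1^*=\emptyset$; at a complementary $(a,b)$ this squeezes $U_a\subseteq S_{(a,b)}\subseteq Z\setminus U_b=U_a$, so $S_{(a,b)}=U_a$. Because every $C\in\boldsymbol{\Delta}_{\boldsymbol{\Gamma}}(Z)$ equals $U_a$ for some $a$ with $Z\setminus C=U_b$ for some $b$, the sections of $S$ at complementary parameters exhaust $\boldsymbol{\Delta}_{\boldsymbol{\Gamma}}(Z)$, while all sections of $S$ lie in $\boldsymbol{\Delta}_{\boldsymbol{\Gamma}}(Z)$ as $S\in\boldsymbol{\Delta}_{\boldsymbol{\Gamma}}(Z\times Z)$; recoded over $Z$, $S$ is the impossible universal set. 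The main obstacle to keep in view is precisely the joint use of the two hypotheses: reduction is needed only to globally disjointify $W_0$ and $W_1$ so that separation can be invoked at all, whereas the behaviour on the already-disjoint complementary parameters is what lets the separator pin down $U_a$ exactly.
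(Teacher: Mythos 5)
Your proof is correct and takes essentially the same route as the paper's, which simply defers to the classical argument (Kechris, Proposition 22.15): parts (i)--(iii) are the standard complement/section bookkeeping, and part (iv) is the doubly universal pair $(W_0,W_1)$, reduced and then separated, followed by a diagonalization. Your packaging of that final step as ``reduction $+$ separation $+$ a universal set would yield a universal set for $\boldsymbol{\Delta}_{\boldsymbol{\Gamma}}(\pre{\lambda}{2})$, which diagonalization forbids'' is an equivalent rephrasing of the classical contradiction, not a different method.
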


Finally, using the fact that the boldface \(\lambda\)-pointclasses \( \lS^0_\xi \) are \(\lambda\)-reasonable and admit \( \pre{\lambda}{2} \)-universal sets if \(2^{< \lambda} = \lambda\), we can combine Propositions~\ref{prop:regularityproperties} and~\ref{prop:relationsamongregularityproperties} to obtain:

\begin{theorem} \label{thm:regularityproperties}
Let \( \lambda \) be such that \( 2^{< \lambda} = \lambda \) and \( 1 < \xi < \lambda^+ \). Then \( \lS^0_\xi \) has the ordinal \(\lambda\)-uniformization property and the (\( \lambda \)-generalized) reduction property, but it does not have the separation property. The class \( \lP^0_\xi \) has instead the (\( \lambda \)-generalized) separation property but not the reduction property. If we restrict the attention to \(\lambda\)-Polish spaces \( X \) with \( \mathrm{dim}(X) = 0 \), the same holds for \( \xi =1 \).
\end{theorem}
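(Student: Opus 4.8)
The plan is to obtain every assertion by feeding the ordinal $\lambda$-uniformization property for $\lS^0_\xi$ from Proposition~\ref{prop:regularityproperties} into the abstract implications of Proposition~\ref{prop:relationsamongregularityproperties}, relying throughout on the two facts already recorded: each $\lS^0_\xi$ is $\lambda$-reasonable, and under $2^{<\lambda}=\lambda$ it admits a $\pre{\lambda}{2}$-universal set. No new construction is required; the proof is a matter of checking that the hypotheses match up.

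First I would treat $\lS^0_\xi$. Proposition~\ref{prop:regularityproperties} gives it the ordinal $\lambda$-uniformization property, and since it is $\lambda$-reasonable, part (iii) of Proposition~\ref{prop:relationsamongregularityproperties} promotes this to the $\lambda$-generalized reduction property. Ordinary reduction for pairs is then a trivial instance: applying $\lambda$-generalized reduction to the sequence $(A,B,\emptyset,\emptyset,\dots)$ produces on its first two coordinates a reducing pair for $(A,B)$.

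Passing to the dual class $\lP^0_\xi$, part (i) of Proposition~\ref{prop:relationsamongregularityproperties} converts the reduction property of $\lS^0_\xi$ into the separation property of $\lP^0_\xi$; and because $\lS^0_\xi$ is additionally closed under well-ordered unions of length $\lambda$ and has $\lambda$-generalized reduction, part (ii) yields the $\lambda$-generalized separation property for $\lP^0_\xi$. The two negative clauses both issue from part (iv): since $\lS^0_\xi(\pre{\lambda}{2})$ carries a $\pre{\lambda}{2}$-universal set, $\lS^0_\xi$ cannot have both reduction and separation, so, having reduction, it fails separation; dually, $\lP^0_\xi(\pre{\lambda}{2})$ carries a $\pre{\lambda}{2}$-universal set (complement the one for $\lS^0_\xi$), whence $\lP^0_\xi$, having separation, cannot also have reduction. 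The clause for $\xi=1$ on spaces with $\mathrm{dim}(X)=0$ runs identically, drawing instead on the zero-dimensional half of Proposition~\ref{prop:regularityproperties}.

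Since the two input propositions do all the real work, there is no serious obstacle here; the only point demanding attention is that part (iv) requires a $\pre{\lambda}{2}$-universal set at every level $\xi<\lambda^+$, and this is precisely where the standing hypothesis $2^{<\lambda}=\lambda$ is indispensable, as it guarantees that $\pre{\lambda}{2}$ is itself $\lambda$-Polish and hence a legitimate space of codes.
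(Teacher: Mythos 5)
Your proposal is correct and is exactly the paper's own proof: the paper derives the theorem by combining Proposition~\ref{prop:regularityproperties} (ordinal \(\lambda\)-uniformization) with Proposition~\ref{prop:relationsamongregularityproperties}, using \(\lambda\)-reasonability and the existence of \( \pre{\lambda}{2} \)-universal sets for both \( \lS^0_\xi \) and \( \lP^0_\xi \) under \( 2^{<\lambda}=\lambda \). Your write-up merely makes explicit the routine steps (padding with \( \emptyset \) to get pairwise reduction, complementing the universal set) that the paper leaves to the reader.
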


\section{Failure of \(\lambda\)-Borel determinacy} \label{sec:noBoreldeterminacy}

An important tool in classical descriptive set theory is constituted by infinite, two-players, zero-sum, perfect information games of length \(\omega\), also called Gale-Stewart games. These are games of the form \( G^\omega_X(A) \), where \( X \) is a nonempty set and \( A \subseteq \pre{\omega}{X} \). Two players \( \pI \) and \( \pII \) alternatively pick elements \( x_0 \), \( x_1 \), \( x_2 \), \dots from \( X \) for \(\omega\)-many turns, so that in the end of the run a sequence \( (x_i)_{i \in \omega} \in \pre{\omega}{X} \) has been played; \( \pI \) wins if \( (x_i)_{i \in \omega} \in A \), otherwise \( \pII \) wins. Winning strategies for either \( \pI \) or \( \pII \) are defined in the obvious way, and a set \( A \subseteq \pre{\omega}{X} \) is said \markdef{determined} if one (and only one) of the players has a winning strategy in the corresponding game \( G^\omega_X(A) \). Notice that if we equip \( \pre{\omega}{X} \) with the product of the discrete topology on \( X \), we can consider the topological complexity of the payoff set \( A \subseteq \pre{\omega}{X} \).

One of the earliest determinacy results is the following Gale-Stewart theorem (see e.g.\ \cite[Theorem 20.1]{Kechris1995}). 

\begin{proposition} \label{prop:closeddeterminacy}
Let \( X \) be a well-orderable set. If \( A \subseteq \pre{\omega}{X} \) is closed or open, then \( G^\omega_X(A) \) is determined.
\end{proposition}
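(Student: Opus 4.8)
The plan is to prove the Gale–Stewart theorem by the classical strategy-stealing argument adapted to our choiceless setting, paying attention only to the places where choice might sneak in. By duality it suffices to treat the case where $A$ is closed (the open case follows by swapping the roles of the two players, noting that $G^\omega_X(A)$ and $G^\omega_X(\pre{\omega}{X} \setminus A)$ differ only in who wins, and the complement of a closed set is open). So assume $A \subseteq \pre{\omega}{X}$ is closed, and suppose that $\pI$ has no winning strategy in $G^\omega_X(A)$; the goal is to produce a winning strategy for $\pII$.

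The key notion is that of a \emph{non-losing position} for $\pII$: a finite sequence $p \in \pre{<\omega}{X}$ of even length (i.e.\ it is $\pII$'s turn to have just finished, or equivalently $\pI$ is to move) from which $\pI$ has no winning strategy in the subgame. The crucial combinatorial fact is that from any such position $p$, whatever $\pI$ plays as the next move $x$, there is a response $y \in X$ for $\pII$ such that $p {}^\smallfrown x {}^\smallfrown y$ is again non-losing. Indeed, if no such $y$ existed, then for every $\pII$-response $y$ player $\pI$ would have a winning strategy from $p {}^\smallfrown x {}^\smallfrown y$, and these could be assembled (together with the move $x$) into a winning strategy for $\pI$ from $p$, contradicting that $p$ is non-losing. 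Here is where well-orderability of $X$ matters: to turn the mere \emph{existence} of a good response $y$ into an actual strategy (a function), I would fix a well-ordering of $X$ once and for all and \emph{always choose the $<$-least} good response $y$. This makes the strategy a genuine definable function and avoids any appeal to a choice principle — this is precisely why the hypothesis ``$X$ well-orderable'' appears in the statement.

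Starting from the empty position $\emptyset$, which is non-losing by our assumption that $\pI$ has no winning strategy, this canonical choice yields a strategy $\sigma$ for $\pII$ with the property that every position reached while $\pII$ follows $\sigma$ is non-losing. It remains to verify that $\sigma$ is winning, i.e.\ that any run $x = (x_i)_{i \in \omega}$ consistent with $\sigma$ lands outside $A$. This is where closedness of $A$ enters: every proper initial segment $x \restriction n$ of the run is a non-losing position, hence in particular $\pI$ has not already won there, which means $x \restriction n$ extends to \emph{some} element of $\pre{\omega}{X} \setminus A$. Since $A$ is closed — equivalently, $A = [T]$ for the tree $T = \{ x \restriction n \mid x \in A, n \in \omega \}$ of its initial segments — a run belongs to $A$ if and only if all its initial segments lie in $T$. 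One checks that a non-losing position cannot be an ``all-extensions-in-$A$'' position, so infinitely often the play escapes $T$, and therefore $x \notin [T] = A$, as required. Thus $\sigma$ is winning for $\pII$.

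The main obstacle, and the only real subtlety compared to the $\ZFC$ proof, is keeping the construction of $\sigma$ choice-free. The naive argument picks, at each non-losing position and each $\pI$-move, \emph{an} adequate response, and an unrestricted such selection over all positions would invoke a form of choice. Fixing in advance a well-ordering of $X$ and taking least witnesses resolves this cleanly, since the set of positions $\pre{<\omega}{X}$ is then itself canonically well-orderable and $\sigma$ is defined by an explicit recursion with no arbitrary choices; this is exactly the mechanism already used elsewhere in the paper (e.g.\ the leftmost-branch and Gödel-enumeration arguments in Section~\ref{subsec:trees}). The remaining steps — the transfer lemma that good responses exist, and the closedness argument that consistent runs avoid $A$ — are routine and carry over verbatim from the classical proof.
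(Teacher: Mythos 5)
Your argument has the classical duality backwards, and this is a genuine error rather than a presentational one. The ``maintain non-losing positions'' strategy for \( \pII \) proves determinacy when \emph{\( \pI \)'s payoff is open}, not when it is closed: the verification requires that if the run lands in \( A \), then some finite position already has \emph{all} of its infinite extensions inside \( A \), so that \( \pI \) would trivially win from there, contradicting non-losing-ness. For closed \( A \) this is unavailable, and your substitute step --- ``a non-losing position extends to some element of \( \pre{\omega}{X} \setminus A \), hence the play escapes \( T \)'' --- is a non sequitur: a position can extend both to an element of \( A \) and to an element of its complement, so it lies in \( T \) while still being non-losing. Concretely, take \( X = \omega \) and
\[
A = \{ x \in \pre{\omega}{\omega} \mid \forall n \, ( x(2n+1) \leq x(0) ) \} ,
\]
which is closed. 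Here \( \pII \) has an obvious winning strategy (play \( x(0)+1 \) at some point), so \( \pI \) has no winning strategy from \emph{any} position; hence every position is non-losing, and your canonical rule ``play the least \( y \) keeping the position non-losing'' tells \( \pII \) to always play \( 0 \). The resulting run satisfies \( x(2n+1) = 0 \leq x(0) \) for all \( n \), so it lies in \( A \) and \( \pII \) loses: the strategy your proof produces is not winning. The repair is to run the non-losing argument for the player whose payoff is \emph{open}: for closed \( A \), assume \( \pII \) has no winning strategy, let \( \pI \) maintain positions from which \( \pII \) has no winning strategy, and verify that the run must lie in \( A \), since otherwise some finite position would have all extensions in the open set \( \pre{\omega}{X} \setminus A \), giving \( \pII \) a trivial winning strategy there.

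There is also a second, smaller gap in your transfer lemma: to contradict non-losing-ness of \( p \) you glue, for each \( y \in X \), a winning strategy for \( \pI \) from \( p {}^\smallfrown{} x {}^\smallfrown{} y \) into a single strategy from \( p \). That selection ranges over the set of \emph{strategies}, i.e.\ functions \( \pre{<\omega}{X} \to X \), and in \( \ZF \) well-orderability of \( X \) does not yield well-orderability of that function space (already \( \pre{\omega}{2} \) need not be well-orderable), so ``take the least witness'' is not available at this step. This is precisely the obstacle the paper's route avoids: it quotes quasideterminacy (Proposition~\ref{prop:closedquasideterminacy}), which holds in \( \ZF \) for arbitrary \( X \) because quasistrategies require no selections, and then extracts an honest strategy from a winning quasistrategy by always playing the least compatible move in a fixed well-ordering of \( X \) --- the only point where well-orderability of \( X \) is used.
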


The hypothesis that \( X \) is well-orderable cannot be removed --- the assertion that \( G^\omega_X(A) \) is determined \emph{for every \( X \)} and every closed set \( A \subseteq \pre{\omega}{X} \) is equivalent to the full \( \AC \).
However, in the choiceless setting one can work with ``non-deterministic'' analogues of strategies, called quasistrategies (see~\cite[Section 20.B]{Kechris1995} for the definition and more details). Using this notion, Proposition~\ref{prop:closeddeterminacy} can be restated for all sets \( X \) as follows:

\begin{proposition} \label{prop:closedquasideterminacy}
Let \( X \) be any set. If \( A \subseteq \pre{\omega}{X} \) is closed or open, then \( G^\omega_X(A) \) is \markdef{\emph{quasidetermined}} i.e.\ one (and only one) of the players has a winning quasistrategy.
\end{proposition}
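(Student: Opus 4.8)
The plan is to adapt the classical Gale--Stewart argument, replacing strategies (whose definition requires choosing a move at each position, and hence some form of \( \AC \)) by the \emph{canonical}, maximal quasistrategies, which can be singled out without any choice. It suffices to treat the case where \( A \) is closed: if \( A \) is open then \( \pre{\omega}{X} \setminus A \) is closed, and in \( G^\omega_X(A) \) player \( \pII \) is exactly the ``safety'' player trying to keep the run inside the closed set \( \pre{\omega}{X}\setminus A \) while \( \pI \) is the ``reachability'' player; so the open case follows from the closed one by interchanging the roles of \( \pI \) and \( \pII \) throughout the construction below.

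So assume \( A \) is closed and write \( A = [T] \) for the pruned tree \( T = \{ x \restriction n \mid x \in A, \ n \in \omega \} \subseteq \pre{<\omega}{X} \). Then \( \pI \) wins a run \( x \) iff \( x \restriction n \in T \) for all \( n \), while \( \pII \) wins iff the run reaches some position outside \( T \); thus \( \pII \) is playing an open (reachability) game. Identifying a position with \( p \in \pre{<\omega}{X} \), with \( \pI \) to move when \( \lh(p) \) is even and \( \pII \) to move when \( \lh(p) \) is odd, I would compute by transfinite recursion the positions from which \( \pII \) can force leaving \( T \): set \( U_0 = \{ p \mid p \notin T \} \) and
\[
U_{\alpha+1} = U_\alpha \cup \{ p \in T \mid \lh(p) \text{ even}, \ \forall x \, (p {}^\smallfrown x \in U_\alpha) \} \cup \{ p \in T \mid \lh(p) \text{ odd}, \ \exists x \, (p {}^\smallfrown x \in U_\alpha) \},
\]
taking unions at limits, and let \( U = \bigcup_{\alpha \in \On} U_\alpha \) (the recursion stabilizes). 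Crucially, each \( U_\alpha \) and the rank function \( \rho(p) = \) least \( \alpha \) with \( p \in U_\alpha \) are defined outright, with no choices made.

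Now I would split on whether \( \emptyset \in U \). If \( \emptyset \in U \), the quasistrategy for \( \pII \) that at each of its positions \( p \in U \) permits \emph{every} move \( x \) with \( \rho(p {}^\smallfrown x) < \rho(p) \) is nonempty at \( \pII \)'s turns (by the successor clause) and keeps every consistent run inside \( U \) with strictly decreasing rank; by well-foundedness such a run reaches \( U_0 \), i.e.\ leaves \( T \), so \( \pII \) wins. If instead \( \emptyset \notin U \), then ``stay in \( T \setminus U \)'' is a winning quasistrategy for \( \pI \): at a \( \pI \)-position \( p \notin U \) the failure of the even clause yields some \( x \) with \( p {}^\smallfrown x \notin U \) (and any such \( x \) has \( p {}^\smallfrown x \in T \) since \( U_0 \subseteq U \)), so \( \pI \)'s permitted set is nonempty, while at a \( \pII \)-position \( p \notin U \) the failure of the odd clause forces \( p {}^\smallfrown x \notin U \) for all \( x \); hence every consistent run stays in \( T \) forever and lands in \( [T] = A \). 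In both cases the quasistrategy consists of \emph{all} non-losing moves, so it is canonical and its description uses no instance of choice.

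For the open case I would run the dual recursion (computing the positions from which \( \pI \) can force reaching the open target), obtaining a winning quasistrategy for exactly one player in the same canonical fashion. Finally, that at most one player can have a winning quasistrategy is the routine half: a run consistent with putative winning quasistrategies for both players would be won by each, a contradiction. The main obstacle throughout is precisely the absence of \( \AC \), and it is overcome by the observation that the rank recursion above produces the maximal non-losing quasistrategy explicitly; the only genuinely delicate point is this last ``at most one'' direction, where one must exhibit a common consistent run without choosing moves — here one uses that the intersection of the two quasistrategies is a nonempty pruned subtree of \( \pre{<\omega}{X} \) together with the canonicity of the rank-decreasing quasistrategy to select such a run (see~\cite[Section 20.B]{Kechris1995}).
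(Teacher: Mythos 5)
Your main (existence) half is correct, and it is essentially the paper's own proof: the paper proves this proposition by invoking the classical Gale--Stewart argument (via the citation to Kechris, Section 20.B), which is precisely the transfinite attractor/rank computation you describe, with the canonical maximal quasistrategies replacing choice-dependent strategies. Your verification that the rank-decreasing quasistrategy for \( \pII \) (when \( \emptyset \in U \)) and the ``stay outside \( U \)'' quasistrategy for \( \pI \) (when \( \emptyset \notin U \)) are well-defined and winning is sound, and your treatment of the open case by rerunning the recursion with the parities swapped is fine.

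The genuine gap is in the ``at most one'' direction, and your proposed fix cannot work. The intersection of the two quasistrategy trees is indeed a nonempty pruned subtree of \( \pre{<\omega}{X} \), but extracting an infinite run from a nonempty pruned tree on an \emph{arbitrary} set \( X \) is exactly an instance of \( \DC \), which is unavailable here; the ``canonicity of the rank-decreasing quasistrategy'' does not help, since one of the two putative winning quasistrategies is arbitrary. (What canonicity \emph{does} give, without choice, is a \emph{finite} position of rank \( 0 \) in the intersection tree: take a node of minimal rank and use that every node of positive rank has a rank-decreasing child. But a finite position outside \( T \) contradicts \( \pI \)'s quasistrategy being winning only if some infinite run consistent with it extends that position --- which is exactly what may fail.) Indeed, no argument can close this gap, because the parenthetical ``(and only one)'' is refutable in \( \ZF + \neg\DC \): if \( S \subseteq \pre{<\omega}{X} \) is a nonempty pruned tree with \( [S] = \emptyset \) (such \( X \) and \( S \) exist when \( \DC \) fails), then in the clopen game \( G^\omega_X(\emptyset) \) player \( \pII \) wins with the full quasistrategy, while the tree \( \Sigma = \{ p \in \pre{<\omega}{X} \mid (p(0),p(2),p(4),\dotsc) \in S \} \) is a quasistrategy for \( \pI \) (nonempty, pruned, allowing all moves of \( \pII \)) whose body is empty, hence it is \emph{vacuously} winning for \( \pI \). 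So both players have winning quasistrategies. The uniqueness clause holds for strategies (where playing the two strategies against each other needs no choice), and for quasistrategies under \( \DC \) restricted to \( X \); in \( \ZF \) alone you should either prove only ``at least one player has a winning quasistrategy'' --- which is all the classical argument gives and all the paper ever uses --- or add such a hypothesis for the uniqueness part.
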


It is easy to see that when \( X \) is well-ordered by \( \precsim \), then from any winning quasistrategy \(\sigma\) for one of the players in the game \( G^\omega_X(A) \) one can canonically extract a winning strategy by picking the \( \precsim \)-least move compatible with \(\sigma\) in each round. (Actually, for this argument to work it is enough that the set of \emph{all moves that can be made by the given player} can be well-ordered; this is relevant for the ``games with rules'' briefly discussed at the end of this section.)
This simple observation shows that Proposition~\ref{prop:closedquasideterminacy} implies Proposition~\ref{prop:closeddeterminacy}. 

The use of games in classical descriptive set theory relies on the fact that one can sometimes formalize a given problem via a suitable game of the form \( G^\omega_X(A) \), for some \( X \) and \( A \): when such a game is determined, one then obtain useful information on (if not the solution to) the given problem. For example, the regularity of Borel subsets of the classical Baire space \( \pre{\omega}{\omega} \) in terms of Baire category, Lebesgue measurability, perfect set property, and so on, can be deduced via games from Borel determinacy, a theorem proved by Martin (see e.g.\ \cite[Theorem 20.5]{Kechris1995}) which greatly extend Proposition~\ref{prop:closeddeterminacy} when \( X \) is a countable set. 

 \begin{theorem} \label{thm:Boreldeterminacy}
If \( A \subseteq \pre{\omega}{\omega} \) is (\( \omega_1 \)-)Borel, then \( G^\omega_\omega(A) \) is determined.
\end{theorem}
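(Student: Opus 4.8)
The plan is to follow Martin's method of \emph{unraveling} Borel sets through auxiliary games, thereby reducing the determinacy of a Borel game to that of a clopen game, which is already settled by Proposition~\ref{prop:closeddeterminacy}. First I would set up the notion of a \emph{covering} of the game $G^\omega_\omega(A)$. A covering consists of an auxiliary game played on a pruned tree $\tilde{T}$ over some larger (but still well-orderable) set of moves, together with a continuous, length-preserving projection $\pi \colon [\tilde{T}] \to \pre{\omega}{\omega}$ respecting the alternation of turns, and a map $\phi$ sending each strategy $\sigma$ for a given player in the auxiliary game to a strategy $\phi(\sigma)$ for the same player in $G^\omega_\omega(A)$, subject to the crucial lifting property: every run of $G^\omega_\omega(A)$ consistent with $\phi(\sigma)$ is the $\pi$-image of a run of the auxiliary game consistent with $\sigma$, and this lifting can be carried out continuously along finite positions. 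One says that a covering \emph{unravels} $A$ if $\pi^{-1}(A)$ is clopen in $[\tilde{T}]$. The purpose of this apparatus is the transfer lemma: if a covering unravels $A$, then the determinacy of the (now clopen) auxiliary game yields determinacy of $G^\omega_\omega(A)$, since a winning strategy upstairs is pushed down by $\phi$ and the lifting property guarantees it remains winning.

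The base case asserts that every closed (equivalently, open) set can be unraveled; this is the heart of the construction. Writing $C = [S]$ for a pruned subtree $S$, I would design an auxiliary game in which, alongside the ordinary moves, the player aiming \emph{out} of $C$ must commit at the first position leaving $S$, while the player aiming \emph{into} $C$ is required to back up the assertion ``the play stays in $S$ forever'' by continuously furnishing ordinals that bound a would-be rank, using the rank function $\rho_S$ on the well-founded part of $S$. These declarations force membership in $\pi^{-1}(C)$ to be decided after finitely many moves, so that $\pi^{-1}(C)$ is clopen; verifying that the associated $\phi$ and the lifting satisfy the covering axioms is routine but delicate.

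For the inductive step, I would show that the class $\mathcal{U}$ of sets admitting an unraveling covering is closed under complements (immediate, since the complement of a clopen preimage is clopen) and under countable intersections. For the latter, given $A = \bigcap_n A_n$ with each $A_n$ unraveled by a covering $k_n$, I would form the \emph{inverse limit} of the $k_n$: a single covering $k_\infty$ in which every $\pi^{-1}(A_n)$ becomes clopen simultaneously. Then $\pi^{-1}(A) = \bigcap_n \pi^{-1}(A_n)$ is closed in the limit game, and composing $k_\infty$ with the closed-unraveling covering of the base case produces a covering that unravels $A$. Since $\mathcal{U}$ contains all closed sets and is closed under complements and countable intersections, it contains every Borel subset of $\pre{\omega}{\omega}$. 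Finally, given a Borel $A$, fix a covering unraveling it: in the auxiliary game the payoff $\pi^{-1}(A)$ is clopen, hence that game is determined by Proposition~\ref{prop:closeddeterminacy} (its move set is well-orderable, as $\omega$ is), and pushing the resulting winning strategy down through $\phi$ yields a winning strategy in $G^\omega_\omega(A)$.

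The hardest and most technical parts are the closed-unraveling lemma and, above all, the inverse-limit construction of coverings. Ensuring that the limit of infinitely many coverings is itself a genuine covering --- in particular that the auxiliary moves at each fixed level stabilize, so that the projection $\pi$ and the strategy-lifting remain well-defined and continuous in the limit --- is where essentially all of the real work lies, and is the crux of Martin's argument.
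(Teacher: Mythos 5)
The paper itself contains no proof of this statement: it is Martin's Borel determinacy theorem, quoted with a pointer to \cite[Theorem 20.5]{Kechris1995}. What you have written is a reconstruction of exactly the proof behind that citation --- coverings, the transfer lemma, unraveling of closed sets, closure under complements and countable intersections via inverse limits --- so at the level of architecture you are on the standard (indeed the only known) route, and you are right that the inverse-limit construction is where most of the technical weight sits.

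There is, however, a genuine defect in your base case, which you yourself call the heart of the construction. You have the player aiming \emph{into} \( C = [S] \) certify that ``the play stays in \( S \) forever'' by ``continuously furnishing ordinals that bound a would-be rank'' via \( \rho_S \). This is backwards: ordinal ranks are certificates of \emph{well-foundedness}, so a sequence of decreasing ranks along a play witnesses that the play must eventually \emph{leave} \( S \); there is no ordinal-valued, finite-stage-verifiable certificate that a position lies in the ill-founded part of \( S \), i.e.\ that some infinite branch of \( S \) passes through it. The positive witness for ``staying in \( S \)'' must be a higher-type object: in Martin's construction the auxiliary move is a quasi-strategy (a pruned subtree of \( S \), closed under the opponent's legal moves) by which one player commits the future play to remain in \( S \), and the rules then force membership in \( \pi^{-1}(C) \) to be decided positively or negatively after finitely many moves while preserving the strategy-lifting property. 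This is not a cosmetic choice of presentation: it is precisely because the auxiliary moves are \emph{sets of positions} that each unraveling step climbs one power set, and the Friedman--Martin lower bounds show that this climb through iterated power sets of \( \omega \) is unavoidable for Borel determinacy; a rank-based unraveling, if it worked, would establish the theorem in a fragment of set theory where it is known to fail. A secondary consequence of the same confusion: your justification for invoking Proposition~\ref{prop:closeddeterminacy} --- that the auxiliary game's ``move set is well-orderable, as \( \omega \) is'' --- is wrong, since the auxiliary moves include the quasi-strategies and the move set is far larger than \( \omega \). Under full \( \AC \) it is still well-orderable, so the appeal goes through; in the choice-poor base theory of this paper one would instead run the argument with quasistrategies throughout and invoke Proposition~\ref{prop:closedquasideterminacy}.
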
\

It is also worth noticing that if \( \RR \) is well-orderable, then there are sets \( A \subseteq \pre{\omega}{2} \) which are not determined.

Moving to our generalized context, there are at least two possible directions one might take: considering games%
\footnote{Usually, in games longer than \(\omega\) one stipulates that at limit levels it is \( \pI \) who has to play next. However, different choices make no difference for what concerns Proposition~\ref{prop:noclopendeterminacyinlonggames}.}
\( G^\mu_X(A) \) of length \( \mu > \omega \) or, in view of Proposition~\ref{prop:closeddeterminacy}, considering games of the form \( G^\omega_\nu(A) \) with \( \nu > \omega \). In the first case, it is natural to endow the space \( \pre{\mu}{X} \) with the bounded topology. However, in this setup there is no hope to get analogues of Proposition~\ref{prop:closeddeterminacy} and Theorem~\ref{thm:Boreldeterminacy} as soon as \( \RR \) is well-orderable.

\begin{proposition}[Folklore] \label{prop:noclopendeterminacyinlonggames}
Let \( \mu > \omega  \) and equip \( \pre{\mu}{X} \) with the bounded topology. Assume that \( \RR \) is well-orderable and that \( X \) has at least two elements. Then there is a  clopen set \( C \subseteq \pre{\mu}{X} \) such that \( G^\mu_X(C) \) is not determined.
\end{proposition}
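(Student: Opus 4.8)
The plan is to transfer a non-determined game of length \( \omega \) into a clopen game of length \( \mu \), exploiting the fact that when \( \mu > \omega \) an entire block of \( \omega \) coordinates already lives at a \emph{bounded} stage of the run. Since \( \RR \) is well-orderable, so is \( \pre{\omega}{2} \), and hence (as noted right after Theorem~\ref{thm:Boreldeterminacy}) there is a set \( A \subseteq \pre{\omega}{2} \) for which \( G^\omega_2(A) \) is not determined. I would also fix a surjection \( e \colon X \to 2 \) together with a section \( s \colon 2 \to X \) (so \( e \circ s = \id \)), both of which exist because \( X \) has at least two elements, and let \( E \colon \pre{\omega}{X} \to \pre{\omega}{2} \) be the induced map \( E(y) = (e(y(n)))_{n \in \omega} \).

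The key observation is that, because \( \mu > \omega \), every sequence \( t \) of length \( \omega \) is a legitimate index \( t \in \pre{<\mu}{X} \), so each \( \Nbhd_t \) with \( \lh(t) = \omega \) is clopen in the bounded topology. Consequently, for any \( B \subseteq \pre{\omega}{X} \) the pullback \( \{ x \in \pre{\mu}{X} \mid x \restriction \omega \in B \} = \bigcup_{t \in B} \Nbhd_t \) is clopen, its complement being \( \bigcup_{t \notin B} \Nbhd_t \). I would therefore set \( C = \{ x \in \pre{\mu}{X} \mid E(x \restriction \omega) \in A \} \), which is clopen and depends only on the first \( \omega \) coordinates of the run.

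It then remains to show that \( G^\mu_X(C) \) is not determined. First I would note that the first \( \omega \) rounds of \( G^\mu_X \) follow the usual alternation (\( \pI \) on even coordinates, \( \pII \) on odd ones), the limit convention affecting only stage \( \omega \) and beyond, which \( C \) ignores; hence a given player has a winning strategy in \( G^\mu_X(C) \) if and only if that player has one in the length-\( \omega \) game \( G^\omega_X(B) \) with \( B = E^{-1}(A) \) (extend by arbitrary filler moves after stage \( \omega \), respectively restrict to the first \( \omega \) causal moves). Second, using \( e \) and \( s \) I would simulate: a winning strategy for either player in \( G^\omega_X(B) \) converts into a winning strategy for the same player in \( G^\omega_2(A) \) by sending each simulated \( X \)-move down via \( e \) and lifting each real bit up via \( s \); since \( B = E^{-1}(A) \) and \( e \circ s = \id \), the resulting \( 2 \)-run is exactly \( E \) applied to the simulated \( X \)-run, so the winner is preserved. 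Thus determinacy of \( G^\omega_X(B) \) would yield determinacy of \( G^\omega_2(A) \), a contradiction, and \( C \) is the desired clopen non-determined payoff.

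The only genuinely non-routine point is the key observation itself: recognizing that clopenness comes for free here precisely because, once \( \mu > \omega \), a full \( \omega \)-sequence of coordinates is decided at a bounded stage, so that pulling back an \emph{arbitrary} \( A \) along \( x \mapsto x \restriction \omega \) lands in \( \lD^0_1 \). Everything after that — the reduction of the long game to its first \( \omega \) rounds and the strategy simulation — is a straightforward bookkeeping exercise that I would carry out only with some care regarding the turn order at the limit stage \( \omega \).
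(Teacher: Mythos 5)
Your proof is correct, and it reaches the paper's conclusion by a recognizably similar skeleton but with a genuinely different mechanism at the key point. Both arguments start from a non-determined \( A \subseteq \pre{\omega}{2} \) (available since \( \RR \) is well-orderable) and both exploit the fact that, once \( \mu > \omega \), sequences of length \( \omega \) are legitimate indices for basic open sets of the bounded topology. The difference is in how the alphabet \( X \) is reconciled with \( 2 \). The paper fixes distinct \( x_0, x_1 \in X \), identifies \( \{0,1\} \) with \( \{x_0,x_1\} \), and builds \( C \) so that the first player to move outside \( \{x_0,x_1\} \) loses immediately: \( C = \bigcup_{s \in S}\Nbhd_s \) where \( S \) consists of \( A \) together with the finite sequences whose first off-alphabet coordinate is odd, and clopenness is verified by exhibiting the complement as the analogous union over sequences whose first off-alphabet coordinate is even. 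You instead retract \( X \) onto \( 2 \) via \( e \) and \( s \), take \( C = \{x \mid E(x \restriction \omega) \in A\} \), and transfer strategies by simulation. Your route buys a cleaner and more general clopenness argument — the pullback of an \emph{arbitrary} subset of \( \pre{\omega}{X} \) along \( x \mapsto x \restriction \omega \) is clopen, full stop — at the cost of an explicit (if routine) strategy-translation argument; the paper's ``first deviator loses'' padding makes the game equivalence essentially definitional, and, more importantly, it is the trick that survives in the companion result Proposition~\ref{prop:noBoreldeterminacyheightomega}, where the game has length \( \omega \) over a large alphabet, the restriction-to-\( \omega \) device is unavailable, and one only obtains a \( \nu\text{-}\boldsymbol{\Delta}^0_2 \) payoff rather than a clopen one. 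So your argument is a perfectly good alternative proof of this proposition, but it is specific to the case \( \mu > \omega \), whereas the paper's construction is the template reused elsewhere in the section.
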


\begin{proof}
Let \( A \subseteq \pre{\omega}{2} \) be such that \( G^\omega_2(A) \) is not determined. 
Fix distinct \( x_0, x_1 \in X \). By identifying \( i \in \{ 0,1 \} \) with the corresponding \( x_i \), without loss of generality we can assume that \(  \pre{\omega}{2} \subseteq \pre{\omega}{X} \) Set \( C = \bigcup_{s \in S} \Nbhd_s(\pre{\mu}{X}) \) where \( S \) is the union of \( A \subseteq \pre{\omega}{X} \) with all finite sequences \( s \in \pre{<\omega}{X} \) such that \( s \notin \pre{< \omega}{2} \) and the smallest \( i < \lh(s) \) such that \( s(i) \notin \{ 0,1 \} \) is odd. It is straightforward to check that \( G^\mu_X(C) \) is equivalent to \( G^\omega_2(A) \), that is, a player has a winning strategy in the first game if and only if (s)he has a winning strategy in the latter one. Thus \( G^\mu_X(C) \) is not determined by the choice of \( A \). Moreover, \( C \) is clearly open with respect to the bounded topology, but it is also closed because \( \pre{\mu}{X} \setminus C = \bigcup_{t \in T} \Nbhd_t(\pre{\mu}{X}) \) where \( T \) is the union of \( \pre{\omega}{2} \setminus A \) with all finite sequences \( t \in \pre{<\omega}{X} \) such that \( t \notin \pre{< \omega}{2} \) and the smallest \( i < \lh(t) \) such that \( t(i) \notin \{  0,1 \} \) is even.
\end{proof}

\begin{remark}
One might argue that the problem arises because we considered the bounded topology on \( \pre{\mu}{X} \). But even when considering the (\( < \omega \)-supported) product topology on such a space, the set \( C \) constructed in the proof of Proposition~\ref{prop:noclopendeterminacyinlonggames} is not very complicated. If e.g.\ \( \nu \geq 2^{\aleph_0} \), then \( C \) is in the class \( \nu\text{-}\boldsymbol{\Delta}^0_2 \) relatively to product topology \( \tau_p \), that is, \( C \) is both a \( \nu \)-sized union of \( \tau_p \)-closed sets and a \( \nu \)-sized intersection of \( \tau_p \)-open sets.
\end{remark}

The second option seems more promising, as Proposition~\ref{prop:closeddeterminacy} applies: even when \( \nu > \omega \), the game \( G^\omega_\nu(A) \) is determined whenever \( A \subseteq \pre{\omega}{\nu} \) is closed or open. However, if \( \RR \) is well-orderable we cannot go much further.

\begin{proposition} \label{prop:noBoreldeterminacyheightomega}
Let \( \nu \in \Cn \) be such that \( 2^{\aleph_0} \leq \nu \). Then there is a \( \nu\text{-}\boldsymbol{\Delta}^0_2(\pre{\omega}{\nu}) \) set \( D \) such that \( G^\omega_\nu(D) \) is not determined.
\end{proposition}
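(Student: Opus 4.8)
The plan is to transfer a non-determined game on $\{0,1\}$ to a game on $\nu$ by making a player lose the instant (s)he plays a value outside $\{0,1\}$. Since $2^{\aleph_0} \leq \nu$, the space $\pre{\omega}{2}$ injects into the cardinal $\nu$ and is therefore well-orderable; hence, as recalled right after Theorem~\ref{thm:Boreldeterminacy}, I can fix a set $A \subseteq \pre{\omega}{2}$ for which $G^\omega_2(A)$ is not determined. I identify $\pre{\omega}{2}$ with the closed set $E = \{ x \in \pre{\omega}{\nu} \mid \forall n \in \omega \, (x(n) \in \{0,1\}) \}$, and for $x \notin E$ I write $n_x$ for the least $n$ with $x(n) \notin \{0,1\}$. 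Recalling that $\pI$ moves at even and $\pII$ at odd positions, I set
\[
D = A \cup B, \qquad B = \{ x \in \pre{\omega}{\nu} \mid x \notin E \wedge n_x \text{ is odd} \},
\]
so that a run lies in $D$ exactly when either all moves stay in $\{0,1\}$ and the resulting element of $\pre{\omega}{2}$ is in $A$, or else the first move leaving $\{0,1\}$ was made by $\pII$.

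The first step is to check that $G^\omega_\nu(D)$ and $G^\omega_2(A)$ have the same winner, via a routine strategy transfer. In one direction, a winning strategy for $\pI$ (resp.\ $\pII$) in $G^\omega_2(A)$ induces one in $G^\omega_\nu(D)$: the player copies it as long as the opponent stays in $\{0,1\}$, and plays $0$ forever as soon as the opponent leaves $\{0,1\}$, which by the definition of $D$ already secures the win. Conversely, a winning strategy in $G^\omega_\nu(D)$ can never instruct its owner to be the first to leave $\{0,1\}$—doing so would put the run on the losing side of $D$—so its restriction to legal plays is a well-defined winning strategy in $G^\omega_2(A)$. Thus $G^\omega_\nu(D)$ is determined if and only if $G^\omega_2(A)$ is, and the latter is not by the choice of $A$.

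The real content is to verify that $D$ is $\nu\text{-}\boldsymbol{\Delta}^0_2(\pre{\omega}{\nu})$, and this is where I expect the main bookkeeping. For the $\nu\text{-}\boldsymbol{\Sigma}^0_2$ side, note that $B$ is a countable union of basic clopen sets (those fixing an initial segment in $\{0,1\}$ followed by a value $\geq 2$ at an odd position), hence open and in particular $F_\sigma$, while $A = \bigcup_{a \in A} \{ a \}$ is a union of at most $2^{\aleph_0} \leq \nu$ closed singletons; together this presents $D$ as a $\nu$-sized union of closed sets. For the $\nu\text{-}\boldsymbol{\Pi}^0_2$ side I use that $E$ is closed and hence, by metrizability of $\pre{\omega}{\nu}$, a countable intersection of open sets, so that
\[
A = E \cap \bigcap_{a \in E \setminus A} (\pre{\omega}{\nu} \setminus \{ a \})
\]
exhibits $A$ as an intersection of at most $\omega + 2^{\aleph_0} \leq \nu$ open sets; writing $A = \bigcap_i W_i$ with each $W_i$ open and recalling that $B$ is itself open, the distributive identity $A \cup B = \bigcap_i (W_i \cup B)$ shows $D \in \nu\text{-}\boldsymbol{\Pi}^0_2$. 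Combining the two representations gives $D \in \nu\text{-}\boldsymbol{\Delta}^0_2$. The only points needing care are that $|A|, |E \setminus A| \leq 2^{\aleph_0} \leq \nu$, so the index sets are genuinely of size at most $\nu$, and that each rewriting keeps the number of open (resp.\ closed) factors below $\nu$.
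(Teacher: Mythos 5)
Your proof is correct and follows essentially the same route as the paper's: the paper starts from a non-determined game \( G^\omega_\omega(A) \) on \( \omega \) rather than on \( 2 \), and verifies the \( \nu\text{-}\boldsymbol{\Pi}^0_2 \) side by writing \( \pre{\omega}{\nu} \setminus D \) as a union of at most \( \nu \)-many singletons with the open set of runs in which \( \pI \) is the first to exit the small alphabet, which is just the dual of your intersection-of-opens computation. The only (harmless) slip is the phrase ``countable union of basic clopen sets'': since the exiting coordinate ranges over all of \( \nu \), your \( B \) is a \( \nu \)-sized union of basic clopen sets, or equivalently a \emph{countable} union of the clopen but non-basic sets \( B_n = \{ x \in \pre{\omega}{\nu} \mid x \restriction n \in \pre{n}{2} \wedge x(n) \geq 2 \} \) for odd \( n \); either way \( B \) is open, hence \( F_\sigma \) by metrizability, which is all your argument actually uses.
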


\begin{proof}
The proof is similar to that of Proposition~\ref{prop:noclopendeterminacyinlonggames}.
Let \( A \subseteq \pre{\omega}{\omega} \) be such that \( G^\omega_\omega(A) \) is not determined. Let \( D \) be the set \( A \cup \bigcup_{s \in S} \Nbhd_s(\pre{\omega}{\nu}) \) where \( S \subseteq \pre{<\omega}{\nu} \) consists of those \( s \notin \pre{<\omega}{\omega}  \) such that the smallest \( i < \lh(s) \) such that \( s(i) \geq \omega \) is odd. Then \( G^\omega_\nu(D) \) is equivalent to \( G^\omega_\omega(A) \), hence it is not determined. Moreover \( D \) is the union of at most \( 2^{\aleph_0} \)-many singletons (the elements of \( A \)) with an open set, thus \( D \in \nu\text{-}\boldsymbol{\Sigma}^0_2(\pre{\omega}{\nu}) \) because \( \nu \geq 2^{\aleph_0} \). Similarly, the complement of \( D \) is again a union of at most \( \nu \)-many singletons (the elements in \( \pre{\omega}{\omega} \setminus A \)) with the open set \( \bigcup_{t \in T} \Nbhd_t(\pre{\omega}{\nu}) \), where \( T \subseteq \pre{<\omega}{\nu} \) consists of those \( t \notin \pre{<\omega}{\omega}  \) such that the smallest \( i < \lh(t) \) such that \( t(i) \geq \omega \) is even. Thus \( \pre{\omega}{\nu} \setminus D \in \nu\text{-}\boldsymbol{\Sigma}^0_2(\pre{\omega}{\nu}) \) as well, hence \( D \in \nu\text{-}\boldsymbol{\Delta}^0_2(\pre{\omega}{\nu}) \).
\end{proof}


Stepping back to our uncountable \(\lambda\), the previous results show that we cannot hope for any form of ``\(\lambda\)-Borel determinacy'' for subsets of \( B(\lambda) \) or \( \pre{\lambda}{2} \). More in detail, when \( A \subseteq B(\lambda) \) the Gale-Stewart Theorem (Proposition~\ref{prop:closeddeterminacy}) for games of the form $G^\omega_\lambda(A)$ gives us determinacy for closed sets, but Proposition \ref{prop:noBoreldeterminacyheightomega} already gives us \( \lD^0_2(B(\lambda)) \)-sets which are not determined. If we instead consider the generalized Cantor space \( \pre{\lambda}{2} \) and assume \( 2^{< \lambda} = \lambda \) (which is the condition turning \( \pre{\lambda}{2} \) into a \(\lambda\)-Polish space), then by Proposition~\ref{prop:noclopendeterminacyinlonggames} there are even clopen games $G^\lambda_2(A)$ which are not determined.

To consider spaces like $C(\lambda)$ or $V_{\lambda+1}$, we need to extend our definition of games by adding extra rules in the form of a tree of ``legal positions''. This means that besides the length \( \mu \) of the game and the set \( X \) of all possible moves, we also fix a \( < \mu \)-closed%
\footnote{A \( \mu \)-tree \( T \subseteq \pre{<\mu}{X} \) is \( < \mu \)-closed if for every limit ordinal \( \alpha < \mu \) and every \( t \in \pre{\alpha}{X} \), if \( t \restriction \beta \in T \) for every \( \beta < \alpha \) then \( t \in T \).} 
\( \mu \)-tree \( T \subseteq \pre{<\mu}{X} \) and require the players to ensure that every partial play in a given run of the game, which is an element of \( \pre{<\mu}{X} \), belongs to \( T \) --- the first player that breaks this prescription loses immediately. Given a payoff set \( A \subseteq [T] \), we then obtain the game ``with rules'' \( G^\mu _X(T,A) \). (This is just a jargon to make the game look anthropomorphic: every game of this form can be turned into a game of the form \( G^\mu_X(A') \), where \( A' \) is obtained from \( A \) by adding a suitable open set to it.)
For example, all the games in Section~\ref{chapter:lambda-PSP}, if properly formalized, are of this sort. 

This formalism allows us to consider natural games on \( C(\lambda) \) and \( V_{\lambda+1} \) too.  For example, if $A\subseteq C(\lambda)$, then we can consider the game on \( \lambda \) of length $\omega$ in which we require that each move $x_i$ belongs to $\lambda_i$, for every  \( i \in \omega \); in order words, we consider the game \( G^\omega_\lambda(T,A) \) where \( T = \pre{<\omega}{(\vec \lambda)} \) is the \(\omega\)-tree on \( \lambda \) such that \( [T] = C(\lambda) \).  
Then Proposition \ref{prop:noBoreldeterminacyheightomega} holds with the same proof, and if \( 2^{\aleph_0} \leq \lambda \) (in particular: if \(\lambda\) is \(\omega\)-inaccessible) there are \( \lD^0_2(C(\lambda)) \)-subsets of \( C(\lambda) \) which are not determined, while all closed subsets of $C(\lambda)$ are determined by Proposition~\ref{prop:closeddeterminacy}. 

Under the assumption \( \beth_\lambda = \lambda \), given $A\subseteq V_{\lambda+1}$ we can think of two possible games with payoff set \( A \). The first one is the game of length $\lambda$ in which we require that the $\alpha$-th move $x_\alpha$ is a subset of $V_{\alpha+1}\setminus V_\alpha$; then player \( \pI \) wins if and only if $\bigcup_{\alpha<\lambda}x_\alpha\in A$. In this case the proof of Proposition~\ref{prop:noclopendeterminacyinlonggames} yields that there are non-determined clopen games. As an alternative, one could consider the game of length $\omega$ in which the $i$-th move $x_i$ is a subset of $V_{\lambda_{i}}\setminus V_{\lambda_{i-1}}$ (where we set $V_{\lambda_{-1}}=\emptyset$), and player \( \pI \) wins if and only if $\bigcup_{i \in \omega}x_i \in A$. In this case,
all closed sets \( A \subseteq V_{\lambda+1} \) are determined by Proposition~\ref{prop:closeddeterminacy}, but
there are \( \lD^0_2(V_{\lambda+1}) \)-subsets of \( V_{\lambda+1} \) which are not determined by Proposition~\ref{prop:noBoreldeterminacyheightomega}.

The latter suggests an alternative game for subsets of \( \pre{\lambda}{2} \), under the assumption \( 2^{< \lambda} = \lambda \). Consider the game of length $\omega$ in which the $i$-th move is an element of $\pre{\lambda_i}{2}$, and player \( \pI \) wins if and only if the concatenation of those \( \omega \)-many moves belongs to $A$. Then we have closed determinacy (Proposition~\ref{prop:closeddeterminacy}), but also \( \lD^0_2(\pre{\lambda}{2}) \)-sets which are not determined (Proposition \ref{prop:noBoreldeterminacyheightomega}).

Summing up, the above discussion shows that in the generalized context there is no natural analogue of Theorem~\ref{thm:Boreldeterminacy}, irrespective of the chosen space and of the game on it. For this reason, results asserting that certain games are determined, like Corollary~\ref{cor:determinacyPSPgames}, become informative.


%
%
%

\chapter{\( \lambda \)-analytic and \(\lambda\)-projective sets} \label{sec:lambda-analytic}

\section{Definition and  examples} \label{sec:defanalytic}

The following is the natural generalization of~\cite[Definition 14.1]{Kechris1995} to arbitrary infinite cardinals. By its equivalent reformulation from Proposition~\ref{prop:charanalytic}\ref{prop:charanalytic-4}, when \( X \) is a (classical) Polish space it also coincides with the notion of \(\lambda\)-Souslin sets (see e.g.\ \cite[Section 31.B]{Kechris1995}).

\begin{defin} \label{def:lambdaanalytic}
Let \( X \) be a \(\lambda\)-Polish space. A set \( A \subseteq X \) is \markdef{\(\lambda\)-analytic} if it is 
a continuous image of some \(\lambda\)-Polish space \( Y \). 
\end{defin}

The collection of all \(\lambda\)-analytic subsets of \( X \) is denoted by \( \lS^1_1(X) \).
As usual, when \( X \) is clear from the context we remove it from the notation above and we sometime say that ``\( A \) is \( \lS^1_1 \)'' instead of ``\( A \) is \(\lambda\)-analytic''. We also set
\begin{align*}
\lP^1_1(X) & = \{ A \subseteq X \mid X \setminus A \in \lS^1_1(X) \} \\
\lD^1_1(X) & = \lS^1_1(X) \cap \lP^1_1(X).
\end{align*}
Sets in \(\lP^1_1 (X) \) are called \markdef{\(\lambda\)-coanalytic}, while sets in \( \lD^1_1(X) \) are called \markdef{\( \lambda \)-bianalytic}.

It is straightforward to check that if \( Y \subseteq X \) are both \(\lambda\)-Polish, then
\[ 
\lS^1_1(Y) = \{  A \cap Y \mid A \in \lS^1_1(X) \},
 \] 
and similarly for the classes \( \lP^1_1(Y) \) and \( \lD^1_1(Y) \).

By Corollary~\ref{cor:surjectionBorel}, if \( 2^{< \lambda} = \lambda\) then
\begin{equation} \label{eq:borelvsanalytic}
\lB(X) \subseteq \lS^1_1(X),
 \end{equation}
 hence \( \lB(X) \subseteq \lD^1_1(X) \) because \( \lB(X) \) is closed under complements.
 
This enables us to prove several characterizations of \(\lambda\)-analytic sets. The proof is basically identical to the classical case \( \lambda  = \omega \) (using of course the results proved so far, including e.g.\ Proposition~\ref{prop:surjection} and Corollaries~\ref{cor:changeoftopologyclopen} and~\ref{cor:changeoftopologyfunctions}), so it will be omitted. In what follows we denote by \( \p(F) \) the projection on the first-coordinate of the set \( F \subseteq X \times Y \).
 
\begin{proposition} \label{prop:charanalytic}
Let \( X \) be a \(\lambda\)-Polish space. For any \( \emptyset \neq A \subseteq X \) the following are equivalent:
\begin{enumerate}[leftmargin=2pc, label = \upshape (\arabic*)]
\item
\( A \) is \(\lambda\)-analytic;
\item
\( A \) is a continuous image of a closed (or even just \( G_\delta \)) subset of some \(\lambda\)-Polish space \( Y \);
\item \label{prop:charanalytic-3}
\( A \) is a continuous image of \( B(\lambda) \) (equivalently, of \( C(\lambda) \)), or even just of a closed or \( G_\delta \) subset of it;
\item \label{prop:charanalytic-4}
\( A = \p(F) \) for some closed \( F \subseteq X \times B(\lambda) \);
\item \label{prop:charanalytic-5}
\( A = \p(G) \) for some \( G \subseteq X \times Y \) with \( Y \) a \(\lambda\)-Polish space and \( G \) a \( G_\delta \) set.
\end{enumerate}
If \( 2^{< \lambda} = \lambda \) we can further add
\begin{enumerate}[leftmargin=2pc, label = \upshape (\arabic*), resume]
\item \label{prop:charanalytic-6}
\( A \) is a continuous image of \( \pre{\lambda}{2} \);
\item
\( A = \p(B) \) for some \( \lambda \)-Borel \( B \subseteq X \times Y \), where \( Y \) is any \(\lambda\)-Polish space;
\item
\( A \) is a continuous image of a \(\lambda\)-Borel subset of some Polish space \( Y \);
\item \label{prop:charanalytic-9}
\( A \) is a \(\lambda\)-Borel image of a \(\lambda\)-Borel subset of some Polish space \( Y \).
\end{enumerate}
\end{proposition}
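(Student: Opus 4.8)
The plan is to close one cycle of implications among (1)--(5) and then graft (6)--(9) onto it under the extra hypothesis \( 2^{<\lambda}=\lambda \), citing at each step the generalized tool that replaces its classical counterpart. For the core cycle I would prove \( (1)\Rightarrow(2)\Rightarrow(3)\Rightarrow(4)\Rightarrow(5)\Rightarrow(1) \). The equivalence \( (1)\Leftrightarrow(2) \) is immediate once we recall that a \( G_\delta \) (in particular, closed) subspace of a \(\lambda\)-Polish space is itself \(\lambda\)-Polish. For \( (2)\Rightarrow(3) \) I would compose maps: by Proposition~\ref{prop:surjection} every nonempty \(\lambda\)-Polish space is a continuous image of \( B(\lambda) \), so precomposing the given continuous surjection onto \( A \) with such a map exhibits \( A \) as a continuous image of \( B(\lambda) \), and \( B(\lambda)\approx C(\lambda) \) by Theorem~\ref{thm:homeomorphictoCantor}\ref{thm:homeomorphictoCantor-1}; the ``closed/\( G_\delta \) subset'' variants are free since \( B(\lambda) \) is closed in itself.

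For \( (3)\Rightarrow(4) \) I would take the flipped graph: if \( A=f(C) \) with \( C\subseteq B(\lambda) \) closed and \( f \) continuous, then \( F=\{(f(y),y)\mid y\in C\} \) is closed in \( X\times B(\lambda) \) — the graph of a continuous map on a closed set is closed, and swapping coordinates preserves this — and \( \p(F)=A \). The step \( (4)\Rightarrow(5) \) costs nothing, as closed sets are \( G_\delta \) in any metric space (Fact~\ref{fct:booleancombinationsareFsigma}) and \( B(\lambda) \) is \(\lambda\)-Polish. Finally, for \( (5)\Rightarrow(1) \): if \( A=\p(G) \) with \( G \) a \( G_\delta \) subset of the \(\lambda\)-Polish space \( X\times Y \), then \( G \) is \(\lambda\)-Polish and \( A \) is its continuous image under the projection, so \( A\in\lS^1_1(X) \) by definition.

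Under \( 2^{<\lambda}=\lambda \) I would add the remaining items (here, as in (5) and (7), \( Y \) is understood to range over \(\lambda\)-Polish spaces). The equivalence \( (3)\Leftrightarrow(6) \) is exactly Theorem~\ref{thm:homeomorphictoCantor}\ref{thm:homeomorphictoCantor-2}, giving \( \pre{\lambda}{2}\approx B(\lambda) \). For (7) I would observe \( (4)\Rightarrow(7) \) trivially (closed sets are \(\lambda\)-Borel), and for \( (7)\Rightarrow(1) \) invoke the inclusion \( \lB\subseteq\lS^1_1 \) of~\eqref{eq:borelvsanalytic}: if \( A=\p(B) \) with \( B\in\lB(X\times Y) \), then \( B \) is a continuous image of a \(\lambda\)-Polish space, hence so is its projection \( A \). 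The implication \( (1)\Rightarrow(8) \) is trivial (take \( B=Y=W \) the witnessing \(\lambda\)-Polish space), and \( (8)\Rightarrow(9) \) is trivial. The crux is \( (9)\Rightarrow(1) \): given \( A=f(B) \) with \( B\in\lB(Y) \) and \( f \) a \(\lambda\)-Borel function into \( X \), I would apply the change-of-topology Corollary~\ref{cor:changeoftopologyfunctions} to refine the topology of \( Y \) to a \(\lambda\)-Polish topology with the same \(\lambda\)-Borel sets making \( f \) continuous; then \( B \) is still \(\lambda\)-Borel, hence \(\lambda\)-analytic by~\eqref{eq:borelvsanalytic}, and the continuous image \( A=f(B) \) of a \(\lambda\)-analytic set is again \(\lambda\)-analytic.

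The only genuinely new input over the classical argument — and the step I expect to be the delicate one — is \( (9)\Rightarrow(1) \), since it rests on the change-of-topology machinery (Corollary~\ref{cor:changeoftopologyfunctions}) and on \( \lB\subseteq\lS^1_1 \) from~\eqref{eq:borelvsanalytic}, both of which require \( 2^{<\lambda}=\lambda \) and are precisely where the singular-cardinal theory departs from the classical one. Throughout I would keep track that only \( \AC_\lambda(\pre{\lambda}{2}) \) is used: the surjections onto \(\lambda\)-Polish spaces and the closure of \( \lS^1_1 \) under continuous images and under projections of \( G_\delta \) sets demand no choice beyond what is already absorbed into the cited results, so the proof genuinely runs ``as in the classical case'' modulo these substitutions.
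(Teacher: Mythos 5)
Your proof is correct and follows exactly the route the paper indicates for its (omitted) proof of this proposition: the classical cycle of implications adapted via Proposition~\ref{prop:surjection} (surjections from \( B(\lambda) \)), Theorem~\ref{thm:homeomorphictoCantor} (for \( C(\lambda) \) and \( \pre{\lambda}{2} \)), the inclusion \( \lB \subseteq \lS^1_1 \) from~\eqref{eq:borelvsanalytic} for item (7), and Corollary~\ref{cor:changeoftopologyfunctions} for \( (9)\Rightarrow(1) \). Your reading of ``Polish'' as ``\(\lambda\)-Polish'' in items (8)--(9) is also the right one, since a continuous image of a subset of a classical Polish space has size at most \( 2^{\aleph_0} \), so the literal reading would make \( (1)\Rightarrow(8) \) false.
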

 
The various equivalent definitions of \(\lambda\)-analytic sets from Proposition~\ref{prop:charanalytic} allow us to obtain more information on them. For example,
by~\ref{prop:charanalytic-9} we have that if \( 2^{< \lambda} = \lambda \), then the notion of \(\lambda\)-analytic set depends just on the \(\lambda\)-Borel structure of the space \( X \). Indeed, if \( \tau \) and \( \tau' \) are different \(\lambda\)-Polish topologies on \( X \) such that \( \lB(X,\tau) = \lB(X,\tau') \) and \( A \subseteq X \), then \( A \) is \(\lambda\)-analytic with respect to \(\tau\) if and only if it is analytic with respect to \( \tau' \). The same holds for  \(\lambda\)-coanalytic and \(\lambda\)-bianalytic sets. 
Moreover, by~\ref{prop:charanalytic-4} a set \( A \subseteq B(\lambda) \) is \(\lambda\)-analytic if and only if there is an \(\omega\)-tree \( T \subseteq \pre{< \omega}{(\lambda \times \lambda)} \) on \( \lambda \times \lambda \) such that%
\footnote{Formally, we should have written \( A = \p([T]) \) instead of \( A = \p[T] \). However, when dealing with projections of the body of a tree \( T \), we prefer to systematically omit the parentheses around \( [T] \) in order to simplify the notation.}
\begin{equation} \label{eq:normalformforanalytic}
A = \p[T].
 \end{equation}
Clearly, if \( 2^{< \lambda} =  \lambda \) we can also replace \( B(\lambda) \) with \( \pre{\lambda}{2} \) and notice that \( A \subseteq \pre{\lambda}{2} \) is \(\lambda\)-analytic if and only if there is a \(\lambda\)-tree \( T \subseteq \pre{< \lambda}{(2 \times 2)} \) such that \( A = \p[T] \). Similar considerations hold for subsets of (at most countable) products of copies of \( B(\lambda) \) or \( \pre{\lambda}{2} \).  Equation~\eqref{eq:normalformforanalytic} provides a sort of ``normal form'' representation of \(\lambda\)-analytic sets and will often be our working definition.

\begin{remark}
Inspired by the usual definition given in the regular case (see e.g.\ \cite{AM, Friedman:2011nx,   Motto-Ros:2011qc,  LS, HytKul2015,   FKK2016, MMS2016,  HytKul2018, Mildenberger2019} and the references contained therein), one might be tempted to define \( \lambda \)-analytic subsets of \( \pre{\lambda}{2} \) as the sets of the form \( \p[T] \) for \( T \) a subtree of \( \pre{< \lambda}{(2 \times \lambda)} \) or, equivalently, as the projections of closed subsets of \( \pre{\lambda}{2} \times \pre{\lambda}{\lambda} \). However, besides the fact that  if \(\lambda\) is singular the space \( \pre{\lambda}{\lambda} \) arguably fall in a different setup because it has weight greater than \( \lambda \), one can also prove that the class obtained in this way trivializes.
Indeed, when \( 2^{< \lambda} = \lambda > \cf(\lambda) = \omega \)  there is
 a \emph{continuous} surjection from any \( \pre{\mu}{\lambda} \) with \( \mu > \omega \) onto any nonempty \( A \subseteq \pre{\lambda}{2} \), so that all subsets  of \( \pre{\lambda}{2} \) are projections of closed subsetes of \( \pre{\lambda}{2} \times \pre{\lambda}{\lambda} \). (Indeed, by Theorem~\ref{thm:homeomorphictoCantor} we can work with nonempty sets \( A \subseteq B(\lambda) \). Fix any \( y_0 \in A \) and define \( f \colon \pre{\mu}{\lambda} \to B(\lambda) \) by \( f(x) = x \restriction \omega \) if \( x \restriction \omega \in A \) and \( f(x) = y_0 \) otherwise. Then \( f \) is clearly continuous and onto \( A \).)
 All of this adds evidence to the fact that, in the general case, the ``right'' analogue of the Baire space is \( \pre{\cf(\lambda)}{\lambda} \). Notice that when \(\lambda\) has countable cofinality this results in the space \( B(\lambda) = \pre{\omega}{\lambda} \) we are currently using, while when \( \lambda \) is regular it is the usual space \( \pre{\lambda}{\lambda} \) considered in the existing literature.
\end{remark}
 
Another consequence of the reformulation~\ref{prop:charanalytic-4} from Proposition~\ref{prop:charanalytic} 
is that if
\(2^{< \lambda} = \lambda\) and \( |X| > \lambda \), then the inclusion in~\eqref{eq:borelvsanalytic} is proper, that is, there are \(\lambda\)-analytic subsets of \( X \) which are not \(\lambda\)-Borel: this follows from the existence of \( \pre{\lambda}{2} \)-universal sets for \( \lS^1_1(X) \), which can be obtained as in the classical case as a suitable projection of a universal set for closed subsets of \( X \times B(\lambda) \). This also implies that for most \(\lambda\)-Polish spaces \( X \) there are sets \( C \subseteq X \) which are complete for \( \lambda \)-analytic sets, i.e.\ such that for every \( A \in \lS^1_1(B(\lambda)) \) there is a continuous function \( f \colon B(\lambda) \to X \) such that \( f^{-1}(C) = A \). 

An example of such a complete \(\lambda\)-analytic set is the collection of codes for ill-founded \( \omega \)-trees on \(\lambda\), which is defined as follows. 
Fix a bijection 
\begin{equation} \label{eq:sigmatr} 
\sigma_{\mathrm{Tr}} \colon \pre{<\omega}{\lambda} \to \lambda
 \end{equation} 
 such that for all \( \alpha < \lambda \) there is \( n \in \omega \) for which \( \lh(\sigma_{\mathrm{Tr}}^{-1}(\beta)) \leq n \) for all \( \beta < \alpha \). Using \( \sigma_{\mathrm{Tr}} \), every set \( A \subseteq \pre{< \omega}{\lambda} \) can be coded into an element \( x_A \in \pre{\lambda}{2} \) by setting \( x_A(\alpha) = 1 \iff \sigma_{\mathrm{Tr}}^{-1}(\alpha) \in A \). Vice versa, every \( x \in \pre{\lambda}{2} \) codes the set \( T_x = \{ s \in \pre{<\omega}{\lambda} \mid x(\sigma_{\mathrm{Tr}}(s)) = 1 \} \). Let
 \begin{equation} \label{eq:Trlambda}
\mathrm{Tr}_\lambda = \{ x \in \pre{\lambda}{2} \mid T_x \text{ is a tree} \}.
 \end{equation}
It is not hard to check that \( \mathrm{Tr} \) is closed in \( \pre{\lambda}{2} \), hence it is \(\lambda\)-Polish whenever \( 2^{< \lambda} =  \lambda \).
Consider the set
\[ 
\mathrm{WF}_\lambda = \{ x \in \mathrm{Tr} \mid [T_x] = \emptyset \}
 \] 
of (codes for) well-founded \(\omega\)-trees on \(\lambda\), and let \( \mathrm{IF}_\lambda = \mathrm{Tr}_\lambda \setminus \mathrm{WF}_\lambda \) be the set of (codes for) ill-founded \(\omega\)-trees.

\begin{proposition} \label{prop:IF}
Let \(\lambda\) be such that \( 2^{< \lambda} = \lambda \).
The set \( \mathrm{IF}_\lambda \) is complete for \(\lambda\)-analytic sets.
\end{proposition}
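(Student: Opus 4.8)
The plan is to show that $\mathrm{IF}_\lambda$ is $\lambda$-analytic and that it is complete, i.e.\ that every $\lambda$-analytic set reduces to it via a continuous map.

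First I would verify that $\mathrm{IF}_\lambda$ is $\lambda$-analytic. By the normal-form representation from equation~\eqref{eq:normalformforanalytic}, it suffices to exhibit $\mathrm{IF}_\lambda$ as a projection $\p[S]$ of the body of an $\omega$-tree $S \subseteq \pre{<\omega}{(2 \times \lambda)}$, or equivalently as the projection of a closed subset of $\mathrm{Tr}_\lambda \times B(\lambda)$. The key observation is that $x \in \mathrm{IF}_\lambda$ if and only if the tree $T_x$ has an infinite branch, i.e.\ there exists $z \in B(\lambda) = \pre{\omega}{\lambda}$ such that $z \restriction n \in T_x$ for all $n \in \omega$. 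Unwinding the coding via $\sigma_{\mathrm{Tr}}$, the condition ``$z \restriction n \in T_x$'' reads $x(\sigma_{\mathrm{Tr}}(z \restriction n)) = 1$, so the set
\[
F = \{ (x,z) \in \mathrm{Tr}_\lambda \times B(\lambda) \mid \forall n \in \omega \, (x(\sigma_{\mathrm{Tr}}(z \restriction n)) = 1) \}
\]
is a countable intersection of clopen conditions, hence closed in $\mathrm{Tr}_\lambda \times B(\lambda)$. Since $\mathrm{Tr}_\lambda$ is closed in $\pre{\lambda}{2}$ and thus $\lambda$-Polish under $2^{<\lambda} = \lambda$, the product $\mathrm{Tr}_\lambda \times B(\lambda)$ is $\lambda$-Polish, and $\mathrm{IF}_\lambda = \p(F)$ is $\lambda$-analytic by Proposition~\ref{prop:charanalytic}\ref{prop:charanalytic-4}.

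For completeness, I would take an arbitrary $A \in \lS^1_1(B(\lambda))$ and, using the normal form~\eqref{eq:normalformforanalytic}, fix an $\omega$-tree $T \subseteq \pre{<\omega}{(\lambda \times \lambda)}$ with $A = \p[T]$. The aim is to define a continuous map $f \colon B(\lambda) \to \pre{\lambda}{2}$ with $f^{-1}(\mathrm{IF}_\lambda) = A$. The natural choice is to send each $y \in B(\lambda)$ to (the code of) the section tree $T(y) = \{ s \in \pre{<\omega}{\lambda} \mid (y \restriction \lh(s), s) \in T \}$; explicitly, set $f(y) = x_{T(y)}$, so that $f(y)(\alpha) = 1 \iff \sigma_{\mathrm{Tr}}^{-1}(\alpha) \in T(y)$. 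By the defining property of $\sigma_{\mathrm{Tr}}$ (that the preimages of any initial segment of $\lambda$ have bounded length), the value of $f(y) \restriction \gamma$ for $\gamma < \lambda$ depends only on finitely many coordinates of $y$, so $f$ is continuous. Moreover $T(y)$ is always a tree, so $f$ indeed lands in $\mathrm{Tr}_\lambda$, and by the definition of the section tree one checks that $y \in A = \p[T]$ if and only if $[T(y)] \neq \emptyset$ if and only if $f(y) \in \mathrm{IF}_\lambda$, giving $f^{-1}(\mathrm{IF}_\lambda) = A$.

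The main obstacle I anticipate is the bookkeeping around the coding function $\sigma_{\mathrm{Tr}}$: one must confirm both that $f$ is well-defined (each $f(y)$ genuinely codes the section tree) and genuinely continuous in the bounded topology on $\pre{\lambda}{2}$. Continuity hinges precisely on the special property demanded of $\sigma_{\mathrm{Tr}}$, namely that for each $\alpha < \lambda$ there is $n$ bounding the lengths of all $\sigma_{\mathrm{Tr}}^{-1}(\beta)$ for $\beta < \alpha$; this guarantees that determining $f(y) \restriction \alpha$ only requires knowing $y \restriction n$, so preimages of basic open sets $\Nbhd_s(\pre{\lambda}{2})$ are open. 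Once this is in place the equivalence $y \in A \iff f(y) \in \mathrm{IF}_\lambda$ is routine from the definitions of $\p[T]$ and of the section tree. A minor point is ensuring $A \neq \emptyset$ is handled, but the completeness statement quantifies over all $\lambda$-analytic sets and the empty case reduces trivially (send everything to a fixed code of a well-founded tree).
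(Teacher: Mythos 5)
Your proof is correct and follows essentially the same route as the paper: exhibit $\mathrm{IF}_\lambda$ as the projection of a closed subset of $\mathrm{Tr}_\lambda \times B(\lambda)$, then reduce an arbitrary $A = \p[T]$ to $\mathrm{IF}_\lambda$ via the map $y \mapsto x_{T(y)}$, whose continuity rests exactly on the boundedness property built into $\sigma_{\mathrm{Tr}}$. The only (immaterial) difference is in the first part, where the paper's closed set uses a chain of codes of initial segments ordered by $\subsetneq$ rather than the branch itself as the witness.
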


\begin{proof}
To see that \( \mathrm{IF}_\lambda \) is \(\lambda\)-analytic, consider the set \( F \subseteq \mathrm{Tr}_\lambda \times B(\lambda) \) consisting of those pair \( (x,y) \) such that for all \( n < m \) we have \( \sigma_{\mathrm{Tr}}^{-1}(y(n)) \subsetneq \sigma_{\mathrm{Tr}}^{-1}(y(m)) \) and \( x(y(n)) = x(y(m)) = 1 \). Then \( F \) is closed and \( \mathrm{IF}_\lambda = \p(F) \).

Given \( A \in \lS^1_1(B(\lambda)) \), let \( T \subseteq \pre{<\omega}{(\lambda \times \lambda)} \) be a tree such that \( A = \p[T] \), so that
\( x \in A \) if and only if the corresponding section tree \(T(x) \) is ill-founded. Let \( f \colon B(\lambda) \to \mathrm{Tr}_\lambda \) be the function sending \( x \) to the (unique) code for \( T(x) \). By the choice of \( \sigma_{\mathrm{Tr}} \), the map \( f \) is continuous. Indeed, let \( x \in B(\lambda) \) and consider the basic neighborhood \( \Nbhd_{f(x) \restriction \alpha} \) of \( f(x) \), for any \( \alpha < \lambda \). Let \( n \in \omega \) be such that \( \lh(\sigma_{\mathrm{Tr}}^{-1}(\beta)) \leq n \) for all \( \beta < \alpha \): then \( f(\Nbhd_{x \restriction n}) \subseteq \Nbhd_{f(x) \restriction \alpha} \). Finally, \( f^{-1}(\mathrm{IF}_\lambda) = A \) by the choice of \( T \).
\end{proof}

As in the classical case, this allows us to move from \(\omega\)-trees to linear orders. Identify \( \pre{\lambda \times \lambda}{2} \) with \( \pre{\lambda}{2} \) using the G\"odel pairing function \( \langle \cdot , \cdot \rangle \colon \lambda \times \lambda \to \lambda \). Given \( x \in \pre{\lambda \times \lambda}{2} \) and \( \alpha, \beta < \lambda \), set \( \alpha \preceq_x \beta \iff x(\alpha,\beta) = 1 \). It is not hard to check that
\[ 
\mathrm{LO}_\lambda = \{ x \in \pre{\lambda \times \lambda}{2} \mid {\preceq_x} \text{ is a linear order on } \lambda \}
 \] 
of (codes for) linear orders on \(\lambda\) is closed in \( \pre{\lambda \times \lambda}{2} \), and thus it is \(\lambda\)-Polish if \(2^{< \lambda} = \lambda\). Let
\[ 
\mathrm{WO}_\lambda = \{ x \in \mathrm{LO}_\lambda \mid {\preceq_x} \text{ is well-founded} \},
 \] 
and \( \mathrm{NWO}_\lambda = \mathrm{LO}_\lambda \setminus \mathrm{WO}_\lambda \). Then \( \mathrm{NWO}_\lambda \) is clearly \(\lambda\)-analytic, hence \( \mathrm{WO}_\lambda \) is \(\lambda\)-coanalytic. Consider the map sending each \( x \in \mathrm{Tr}_\lambda \) into the linear ordering \( \trianglelefteq_x \) on \(\lambda\) defined by setting \( \alpha \trianglelefteq_x \beta \) if and only if \( x(\alpha) = x( \beta) = 1 \) and \( 
\sigma_{\mathrm{Tr}}^{-1}(\alpha) \leq_{\mathrm{KB}} \sigma_{\mathrm{Tr}}^{-1}(\beta) \) (where \( \leq_{\mathrm{KB}} \) is the usual Kleene-Brouwer ordering), or \( x(\alpha) = 1  \) and \(x( \beta) = 0 \), or \( x(\alpha) = x( \beta) = 0 \) and \( \alpha \leq \beta \). Such map is continuous and \( T_x \in \mathrm{WF}_\lambda \iff {\trianglelefteq_x} \in \mathrm{WO}_\lambda \). It follows that~\cite[Theorem 27.12]{Kechris1995} generalizes to: 

\begin{proposition} \label{prop:NWO}
Let \(\lambda\) be such that \( 2^{< \lambda} = \lambda \).
Then \( \mathrm{NWO}_\lambda \) is a complete \(\lambda\)-analytic set.
\end{proposition}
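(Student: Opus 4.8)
The plan is to reduce Proposition~\ref{prop:NWO} to the already-established completeness of $\mathrm{IF}_\lambda$ (Proposition~\ref{prop:IF}) by exhibiting a continuous reduction witnessing that $\mathrm{IF}_\lambda$ reduces to $\mathrm{NWO}_\lambda$. Since $\mathrm{NWO}_\lambda$ is plainly $\lambda$-analytic (being the complement of the $\lambda$-coanalytic set $\mathrm{WO}_\lambda$, which is itself definable with a single existential quantifier over a closed relation coding infinite descending sequences), the only substantive task is completeness. The excerpt has already constructed, in the paragraph preceding the statement, the key map $x \mapsto {\trianglelefteq_x}$ sending a code $x \in \mathrm{Tr}_\lambda$ to a linear ordering on $\lambda$, noted that it is continuous, and recorded the crucial equivalence $T_x \in \mathrm{WF}_\lambda \iff {\trianglelefteq_x} \in \mathrm{WO}_\lambda$. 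So I would first verify that this map takes values in $\mathrm{LO}_\lambda$ (i.e.\ that $\trianglelefteq_x$ really is a linear order), which follows from the fact that $\leq_{\mathrm{KB}}$ is a linear order on $\pre{<\omega}{\lambda}$ and the three-case definition handles the split between nodes in and out of $T_x$ by placing all ``$x(\alpha)=1$'' ordinals before all ``$x(\alpha)=0$'' ordinals.

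Next I would assemble the reduction. Given an arbitrary $A \in \lS^1_1(B(\lambda))$, Proposition~\ref{prop:IF} supplies a continuous $f \colon B(\lambda) \to \mathrm{Tr}_\lambda$ with $f^{-1}(\mathrm{IF}_\lambda) = A$. Composing with the continuous map $g \colon \mathrm{Tr}_\lambda \to \mathrm{LO}_\lambda$, $g(x) = {\trianglelefteq_x}$ (viewed as an element of $\pre{\lambda \times \lambda}{2} \cong \pre{\lambda}{2}$ via the Gödel pairing), I obtain a continuous $h = g \circ f \colon B(\lambda) \to \mathrm{LO}_\lambda$. Then, chaining the equivalences,
\[
x \in A \iff f(x) \in \mathrm{IF}_\lambda \iff T_{f(x)} \notin \mathrm{WF}_\lambda \iff {\trianglelefteq_{f(x)}} \notin \mathrm{WO}_\lambda \iff h(x) \in \mathrm{NWO}_\lambda,
\]
so $h^{-1}(\mathrm{NWO}_\lambda) = A$, as required. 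This establishes that $\mathrm{NWO}_\lambda$ is complete for $\lambda$-analytic sets, exactly mirroring the classical derivation of~\cite[Theorem 27.12]{Kechris1995} from the completeness of the ill-founded trees.

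The one point requiring genuine care is the continuity of the map $x \mapsto {\trianglelefteq_x}$ together with the correctness of the equivalence $T_x \in \mathrm{WF}_\lambda \iff {\trianglelefteq_x} \in \mathrm{WO}_\lambda$; both are asserted in the excerpt but I would want to confirm the second. For correctness, the essential fact is that for a tree $T_x$, the Kleene–Brouwer order restricted to $T_x$ is a well-ordering precisely when $T_x$ is well-founded (this is the standard characterization recorded in Section~\ref{subsec:trees}), while the ordinals with $x(\alpha)=0$ are ordered by the ground well-order $\leq$ on $\lambda$ and sit as an end-extension after the $T_x$-part. An infinite $\trianglelefteq_x$-descending sequence must eventually stay within the ``$x(\alpha)=1$'' block (since the ``$x(\alpha)=0$'' block is well-ordered by $\leq$ and lies above), and there it corresponds to an infinite $\leq_{\mathrm{KB}}$-descending sequence of nodes of $T_x$, i.e.\ to an infinite branch; conversely an infinite branch of $T_x$ yields such a descending sequence. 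Hence $\trianglelefteq_x$ is ill-founded iff $T_x$ is ill-founded, giving the equivalence. For continuity, each value $h(x)(\alpha,\beta) \in 2$ is determined by finitely many coordinates of $x$ — namely $x(\alpha)$, $x(\beta)$, and, in the case both are $1$, a comparison of $\sigma_{\mathrm{Tr}}^{-1}(\alpha)$ and $\sigma_{\mathrm{Tr}}^{-1}(\beta)$ under $\leq_{\mathrm{KB}}$, which depends only on those two fixed ordinals — so $h$ is continuous by the same computation used for $f$ in Proposition~\ref{prop:IF}. Since all of these pieces are already available in the excerpt, the proof is short and the main (modest) obstacle is simply packaging the well-foundedness equivalence cleanly; I would in fact present it as a brief proof citing the continuity of the map and the equivalence just discussed.
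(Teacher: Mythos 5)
Your proposal is correct and follows essentially the same route as the paper: the paper's argument is precisely the paragraph preceding the statement, namely the continuous map \( x \mapsto {\trianglelefteq_x} \) placing the \( T_x \)-part (ordered by \( \leq_{\mathrm{KB}} \)) below the well-ordered remainder, together with the equivalence \( T_x \in \mathrm{WF}_\lambda \iff {\trianglelefteq_x} \in \mathrm{WO}_\lambda \), composed with the reduction from Proposition~\ref{prop:IF}. Your verification of the well-foundedness equivalence and of continuity fills in exactly the details the paper leaves implicit.
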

 
 \begin{remark} \label{rmk:IFcompletenessfromcofomega}
 Propositions~\ref{prop:IF} and~\ref{prop:NWO} work precisely because we assumed \( \cf(\lambda) = \omega \). If \( \cf(\lambda) > \omega \), then the coding spaces \( \mathrm{Tr}_\lambda \) and \( \mathrm{LO}_\lambda \) can still be defined, and they turn out to be closed subsets of the generalized Cantor space \( \pre{\lambda}{2} \). But the sets \( \mathrm{IF}_\lambda \) and \( \mathrm{NWO}_\lambda \) would then become open subsets of such spaces.
 Indeed, let \( x \in \mathrm{IF}_\lambda \), and let \( (\alpha_n)_{n \in \omega} \) be a sequence of ordinals smaller than \(\lambda\) such that \( (\sigma_{\mathrm{Tr}}^{-1}(\alpha)n))_{n \in \omega} \) is an infinite branch through \( T_x \). By \( \cf(\lambda) > \omega \), we have that \( \alpha = \sup_{n \in \omega} \alpha_n < \lambda \): therefore, \( \Nbhd_{x \restriction \alpha} \cap \mathrm{Tr}_\lambda \) is a basic neighborhood of \( x \) contained in \( \mathrm{IF}_\lambda \), as desired. 
 A similar argument shows that \( \mathrm{NWO}_\lambda \) is open in \( \mathrm{LO}_\lambda \).
 \end{remark}


 
\section{Basic properties} \label{sec:basifactsanalytic}
 
We now check some standard closure properties of \(\lambda\)-analytic sets.
 
\begin{proposition}\label{prop:closurepropertiesofanalytic}
\begin{enumerate-(i)}
\item \label{prop:closurepropertiesofanalytic-i}
If \( X \) is \(\lambda\)-Polish, then \( \lS^1_1(X) \) is closed under well-ordered unions of length at most \( \lambda \) and countable intersections. If we further assume that \( 2^{< \lambda} =  \lambda \), then \( \lS^1_1(X) \) is also closed under well-ordered intersections of length at most \( \lambda \).
\item \label{prop:closurepropertiesofanalytic-ii}
If \( X \) and \( Y \) are \(\lambda\)-Polish spaces, \( f \colon X \to Y \) is \(\lambda\)-Borel, \( A \in \lS^1_1(X) \), and \( B \in \lS^1_1(Y) \), then both \( f^{-1}(B) \in \lS^1_1(X) \) and \( f(A) \in \lS^1_1(Y) \).
\end{enumerate-(i)}
\end{proposition}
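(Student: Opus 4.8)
The plan is to establish the closure properties by reducing everything to the normal-form representation of $\lambda$-analytic sets given by Proposition~\ref{prop:charanalytic}, working with continuous images of $B(\lambda)$ (equivalently, projections of closed subsets of products with $B(\lambda)$) as the working definition.

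\medskip

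\textbf{Part~\ref{prop:closurepropertiesofanalytic-i}, unions.} First I would handle well-ordered unions of length at most $\lambda$. Given $A_\alpha \in \lS^1_1(X)$ for $\alpha < \lambda$, by Proposition~\ref{prop:charanalytic}\ref{prop:charanalytic-3} each nonempty $A_\alpha$ is the range of a continuous $f_\alpha \colon B(\lambda) \to X$; using $\AC_\lambda(\pre{\lambda}{2})$ (which provides $\lambda$-choices over the continuous functions $B(\lambda) \to X$, as discussed in Section~\ref{subsec:weakchoice}) I can select such a family $(f_\alpha)_{\alpha<\lambda}$ simultaneously. Since $B(\lambda)$ is homeomorphic to a sum of $\lambda$-many copies of itself — concretely, $B(\lambda) \approx \lambda \times B(\lambda) = \coprod_{\alpha<\lambda} B(\lambda)$, which itself is $\lambda$-Polish and in fact $\approx B(\lambda)$ by Theorem~\ref{thm:characterizationCantor} or directly by Lemma~\ref{lem:homeomorphictoCantor} — I can paste the $f_\alpha$ together into a single continuous map whose range is $\bigcup_{\alpha<\lambda} A_\alpha$, giving a $\lambda$-analytic set.

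\medskip

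\textbf{Part~\ref{prop:closurepropertiesofanalytic-i}, intersections.} For countable intersections I would use the projection characterization~\ref{prop:charanalytic-4}: write $A_n = \p[T_n]$ for $\omega$-trees $T_n \subseteq \pre{<\omega}{(\lambda\times\lambda)}$ (here again a countable choice, available from $\AC_\lambda$). The standard trick is to take the tree $T$ on $\lambda \times \pre{\omega}{\lambda}$ (or on $\lambda$ with witnesses coded coordinatewise) whose body consists of tuples $(x, (y_n)_{n\in\omega})$ with each $(x,y_n) \in [T_n]$; since the witness space $\prod_{n\in\omega} B(\lambda) \approx B(\lambda)$ is still $\lambda$-Polish (countable products of $\lambda$-Polish spaces are $\lambda$-Polish), the projection $\bigcap_n A_n$ is $\lambda$-analytic by~\ref{prop:charanalytic-5}. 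For well-ordered intersections of length $\lambda$ under the extra hypothesis $2^{<\lambda}=\lambda$, the same bundling idea applies but now the witness space is $\prod_{\alpha<\lambda} B(\lambda)$, whose weight is $\lambda^{\cf(\lambda)} > \lambda$ and which is therefore \emph{not} $\lambda$-Polish. This is the main obstacle, and it is precisely why the hypothesis $2^{<\lambda}=\lambda$ is needed: I would instead invoke representation~\ref{prop:charanalytic-6}, writing each $A_\alpha$ as a continuous image of $\pre{\lambda}{2}$, and bundle witnesses into the single-witness space $\prod_{\alpha<\lambda}\pre{\lambda}{2} = \pre{\lambda\times\lambda}{2} \approx \pre{\lambda}{2}$, which \emph{is} $\lambda$-Polish exactly when $2^{<\lambda}=\lambda$ by Example~\ref{xmp:lambda-Polish}\ref{xmp:lambda-Polish-3}. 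Taking the closed set of coherent witness-sequences in $X \times \pre{\lambda\times\lambda}{2}$ whose $\alpha$-th slice witnesses membership in $A_\alpha$ and projecting yields $\bigcap_{\alpha<\lambda} A_\alpha \in \lS^1_1(X)$ via~\ref{prop:charanalytic-5}.

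\medskip

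\textbf{Part~\ref{prop:closurepropertiesofanalytic-ii}.} For $\lambda$-Borel preimages and images I would argue as follows. For $f^{-1}(B)$: by Corollary~\ref{cor:changeoftopologyfunctions} refine the topology of $X$ to a $\lambda$-Polish $\tau'$ with the same $\lambda$-Borel sets making $f$ continuous; since $\lambda$-analyticity depends only on the $\lambda$-Borel structure (a consequence of~\ref{prop:charanalytic-9} noted right after Proposition~\ref{prop:charanalytic}), and continuous preimages of $\lambda$-analytic sets are $\lambda$-analytic by representation~\ref{prop:charanalytic-5} (pull back a witnessing $G_\delta$ through $f \times \id$), we conclude $f^{-1}(B) \in \lS^1_1(X)$. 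For the image $f(A)$: take a continuous surjection $g \colon B(\lambda) \to A$ from~\ref{prop:charanalytic-3}, change the topology on $X$ so that $f$ becomes continuous, so that $f \circ g \colon B(\lambda) \to Y$ is continuous with range $f(A)$, whence $f(A)$ is a continuous image of $B(\lambda)$ and thus $\lambda$-analytic. Throughout, the choices of witnessing functions or trees are legitimized by $\AC_\lambda(\pre{\lambda}{2})$ as set up in Section~\ref{subsec:weakchoice}.
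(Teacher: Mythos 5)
Your part~(i) follows essentially the same route as the paper. For unions the paper likewise pastes the maps \( f_\alpha \colon B(\lambda) \to X \) along the sum of \(\lambda\)-many copies of \( B(\lambda) \) (it only needs that this sum is \(\lambda\)-Polish, not your extra observation that it is again \( \approx B(\lambda) \)); for countable intersections it uses the equalizer \( \{ (x_n)_{n} \in \prod_n B(\lambda) \mid f_n(x_n) = f_m(x_m) \} \), which is your bundled-witness set with the \( X \)-coordinate suppressed (note that your tree formulation \( A_n = \p[T_n] \) presumes the \( A_n \) live in \( B(\lambda) \); for general \( X \) one should bundle closed sets \( F_n \subseteq X \times B(\lambda) \) from~\ref{prop:charanalytic}\ref{prop:charanalytic-4}, which changes nothing); and for \(\lambda\)-intersections it performs exactly your G\"odel-pairing coding, via the pseudo-projections \( \pi_\alpha \) of equation~\eqref{eq:pseudoprojection} and the closed equalizer \( \{ x \in \pre{\lambda}{2} \mid (f_\alpha \circ \pi_\alpha)(x) = (f_\beta \circ \pi_\beta)(x) \text{ for all } \alpha,\beta < \lambda \} \). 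One point you leave implicit: your ``closed set of coherent witness-sequences'' is closed only because each slice map \( z \mapsto \pi_\alpha(z) \) is continuous for the \emph{bounded} topology on \( \pre{\lambda \times \lambda}{2} \), which holds for the G\"odel pairing (monotonicity in the second coordinate keeps \( \{ \langle \alpha,\beta \rangle \mid \beta < \gamma \} \) bounded below \(\lambda\)) but would fail for an arbitrary bijection \( \lambda \times \lambda \to \lambda \); the set-level identity \( \prod_{\alpha<\lambda} \pre{\lambda}{2} = \pre{\lambda\times\lambda}{2} \) carries no topological content, since the product topology on the left-hand side is not \(\lambda\)-Polish.

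Part~(ii) is where you genuinely diverge: the paper simply points to the classical graph argument of~\cite[Proposition 14.4]{Kechris1995}, writing \( f(A) = \p_2(\mathrm{graph}(f) \cap (A \times Y)) \) and \( f^{-1}(B) = \p(\mathrm{graph}(f) \cap (X \times B)) \) and then using Proposition~\ref{prop:borelvsgraph}, the inclusion \( \lB \subseteq \lS^1_1 \), and part~(i), whereas you change the topology. Your preimage argument is sound, but your image argument has a flaw as written: your \( g \colon B(\lambda) \to A \) is continuous for the \emph{original} topology \( \tau \) on \( X \), while \( f \) is continuous only for the refinement \( \tau' \), so the composition \( f \circ g \) is not justified. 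The repair is short: since \( \tau' \supseteq \tau \), every \( \tau \)-closed subset of \( X \times B(\lambda) \) is \( \tau' \)-closed, so a projection representation of \( A \) with respect to \( \tau \) is still one with respect to \( \tau' \); hence \( A \in \lS^1_1(X,\tau') \), and you may re-choose \( g \) to be \( \tau' \)-continuous before composing with \( f \). Finally, note that both routes — yours via Corollary~\ref{cor:changeoftopologyfunctions} and the invariance of \(\lambda\)-analyticity under change of \(\lambda\)-Borel-preserving topologies, the paper's via \( \mathrm{graph}(f) \in \lB \subseteq \lS^1_1 \) — silently use \( 2^{<\lambda} = \lambda \) when \( f \) is \(\lambda\)-Borel rather than continuous, even though the statement of part~(ii) does not display that hypothesis; this is an imprecision you inherit from the paper rather than introduce.
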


\begin{proof}
For the first part, let \( A_\alpha \in \lS^1_1(X)\) and \( f_\alpha \colon Y_\alpha \to X \)
be witness of this. Using~\ref{prop:charanalytic-3} of Proposition~\ref{prop:charanalytic} we can e.g.\ assume that \( Y_\alpha = B(\lambda) \) for all \( \alpha < \lambda \). Let \( Y \) be the sum of the spaces \( Y_\alpha \), which is thus a \(\lambda\)-Polish space, and let \( f = \bigcup_{\alpha < \lambda} f_\alpha \): then \( f \colon Y \to X \) is continuous and \( f(Y) = \bigcup_{\alpha < \lambda} A_\alpha \), hence \( \bigcup_{\alpha < \lambda} A_\alpha \in \lS^1_1(X) \). 

As for countable intersections, let \( f_n \colon Y_n \to X \) be witnesses of \( A_n \in \lS^1_1(X) \) with e.g.\ \( Y_n = B(\lambda) \) for all \( n \in \omega \).
 Let
\[ 
Y = \left\{ (x_n)_{n \in \omega} \in \prod\nolimits_{ n \in \omega} Y_n \mid f_n(x_n) = f_m(x_m) \text{ for all } n,m \in \omega \right\}.
 \] 
Since \( \prod_{n \in \omega} Y_n \) is \(\lambda\)-Polish and \( Y \) is closed in it, it is \(\lambda\)-Polish as well and \( f \colon Y \to X \colon (x_n)_{n \in \omega} \mapsto f_0(x_0) \) is a continuous function witnessing \( \bigcap_{n \in \omega} A_n \in \lS^1_1(X) \).

Now let \( A_\alpha \), \( Y_\alpha \) and \( f_\alpha \) be as in the first paragraph, with all the sets \( A_\alpha \) nonempty (this is not restrictive, as otherwise \( \bigcap_{\alpha<\lambda} A_\alpha = \emptyset\) would trivially be \(\lambda\)-analytic). By the equivalent definition~\ref{prop:charanalytic-6} in Proposition~\ref{prop:charanalytic}, if \(2^{< \lambda} = \lambda\) we can assume that \( Y_\alpha = \pre{\lambda}{2} \) for all \( \alpha < \lambda \). Let
\[ 
Y = \{ x \in \pre{\lambda}{2} \mid (f_\alpha \circ \pi_\alpha)(x) = (f_\beta \circ \pi_\beta)(x) \text{ for all } \alpha, \beta < \lambda \},
 \] 
where the maps \( \pi_\alpha \) are defined as in~\eqref{eq:pseudoprojection}.
Since all the functions \( f_\alpha \circ \pi_\alpha \) are continuous, the set \( Y \) is closed, and hence \(\lambda\)-Polish. Moreover, \( f = (f_0 \circ \pi_0) \restriction Y \) is continuous and witnesses \( \bigcap_{\alpha < \lambda} A_\alpha \in \lS^1_1(X) \).

The second part of the proposition can easily be proved as in the classical case (using the first part), see e.g.\ \cite[Proposition 14.4]{Kechris1995}.
\end{proof}

\begin{corollary}\label{cor:closurepropertiesofanalytic}
The boldface \(\lambda\)-pointclass \( \lP^1_1 \) is closed under well-ordered intersections of length at most \(  \lambda \), countable unions, and \( \lambda \)-Borel preimages. If we further assume that \( 2^{< \lambda} =  \lambda \), then \( \lP^1_1 \) is also closed under well-ordered unions of length at most \( \lambda \).

The boldface \(\lambda\)-pointclass \( \lD^1_1 \) is closed under countable unions, countable intersections, complements, and \( \lambda \)-Borel preimages. If \( 2^{< \lambda} = \lambda \), then \( \lD^1_1 \) is also closed under well-ordered unions and intersections of length at most \( \lambda \), i.e.\ each \( \lD^1_1(X) \) is a \( \lambda^+ \)-algebra on \( X \).
\end{corollary}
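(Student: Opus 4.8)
The goal is to establish Corollary~\ref{cor:closurepropertiesofanalytic}, whose statement records closure properties of the dual and ambiguous pointclasses \( \lP^1_1 \) and \( \lD^1_1 \). The plan is to derive everything by direct dualization from Proposition~\ref{prop:closurepropertiesofanalytic}, using the elementary fact that complementation swaps unions with intersections and that \( \lP^1_1 \) and \( \lD^1_1 \) are defined in terms of \( \lS^1_1 \) via complements.

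First I would treat \( \lP^1_1 \). Recall \( A \in \lP^1_1(X) \) iff \( X \setminus A \in \lS^1_1(X) \). For closure of \( \lP^1_1 \) under well-ordered intersections of length at most \( \lambda \): given \( (A_\alpha)_{\alpha < \lambda} \) in \( \lP^1_1(X) \), de Morgan gives \( X \setminus \bigcap_{\alpha < \lambda} A_\alpha = \bigcup_{\alpha < \lambda} (X \setminus A_\alpha) \), and each complement is \(\lambda\)-analytic, so the right-hand side is \(\lambda\)-analytic by Proposition~\ref{prop:closurepropertiesofanalytic}\ref{prop:closurepropertiesofanalytic-i} (closure of \( \lS^1_1 \) under \(\lambda\)-sized unions, which holds unconditionally); hence the intersection is \(\lambda\)-coanalytic. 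Dually, closure of \( \lP^1_1 \) under countable unions follows from closure of \( \lS^1_1 \) under countable intersections (again unconditional). For \(\lambda\)-Borel preimages, if \( f \colon X \to Y \) is \(\lambda\)-Borel and \( B \in \lP^1_1(Y) \), then \( f^{-1}(X \setminus B) = X \setminus f^{-1}(B) \), and \( f^{-1}(Y \setminus B) \in \lS^1_1(X) \) by Proposition~\ref{prop:closurepropertiesofanalytic}\ref{prop:closurepropertiesofanalytic-ii}, giving \( f^{-1}(B) \in \lP^1_1(X) \). Finally, closure of \( \lP^1_1 \) under \(\lambda\)-sized unions under the extra hypothesis \( 2^{<\lambda} = \lambda \) dualizes the corresponding \(\lambda\)-sized intersection closure of \( \lS^1_1 \), which is exactly the part of Proposition~\ref{prop:closurepropertiesofanalytic}\ref{prop:closurepropertiesofanalytic-i} requiring \( 2^{<\lambda} = \lambda \).

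For \( \lD^1_1 = \lS^1_1 \cap \lP^1_1 \), I would observe that a class defined as an intersection of two classes is closed under an operation whenever \emph{both} factor classes are. Closure under complements is immediate since \( \lD^1_1 \) is visibly self-dual. Closure under countable unions holds because \( \lS^1_1 \) is closed under countable unions (from part~\ref{prop:closurepropertiesofanalytic-i}) and \( \lP^1_1 \) is closed under countable unions (just established); symmetrically for countable intersections. Closure under \(\lambda\)-Borel preimages follows from the corresponding closure of each factor (Proposition~\ref{prop:closurepropertiesofanalytic}\ref{prop:closurepropertiesofanalytic-ii} for \( \lS^1_1 \), and the \( \lP^1_1 \) case above). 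Under \( 2^{<\lambda} = \lambda \), closure of \( \lD^1_1 \) under \(\lambda\)-sized unions and intersections follows because both \( \lS^1_1 \) and \( \lP^1_1 \) now enjoy both closures; together with closure under complements and the fact that \( \emptyset, X \in \lD^1_1 \), this says precisely that \( \lD^1_1(X) \) is a \( \lambda^+ \)-algebra (equivalently a \(\lambda\)-algebra, since \(\lambda\) is singular).

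I do not expect any genuine obstacle here, since the entire corollary is a formal consequence of Proposition~\ref{prop:closurepropertiesofanalytic} together with de Morgan's laws; the only point requiring a moment's care is bookkeeping which closure properties are unconditional and which need \( 2^{<\lambda} = \lambda \), so that the hypotheses in the corollary are correctly matched to those in the proposition. In particular one must not claim \(\lambda\)-sized union closure for \( \lP^1_1 \) (or the \( \lambda^+ \)-algebra conclusion for \( \lD^1_1 \)) without the strong-limit assumption, since the underlying \(\lambda\)-sized intersection closure of \( \lS^1_1 \) genuinely uses it.
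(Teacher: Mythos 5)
Your proposal is correct and is exactly the intended argument: the paper states this as an immediate corollary of Proposition~\ref{prop:closurepropertiesofanalytic} with no written proof, precisely because everything follows by de~Morgan dualization and the observation that \( \lD^1_1 = \lS^1_1 \cap \lP^1_1 \) inherits any closure property shared by both factors, with the bookkeeping of which properties need \( 2^{<\lambda} = \lambda \) matching yours. (Only a trivial typo: in the preimage step you wrote \( f^{-1}(X \setminus B) \) where it should be \( f^{-1}(Y \setminus B) \), as \( B \subseteq Y \).)
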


The normal form for \( \lambda \)-analytic sets \( A \subseteq B(\lambda) \) from equation~\eqref{eq:normalformforanalytic} allows us to prove a higher version of Lusin-Sierpi\'nski theorem~\cite[Theorem 25.16]{Kechris1995} stating that \( A \) can be expressed as both a \( \lambda^+ \)-long union or a \( \lambda^+ \)-long intersection of \(\lambda\)-Borel sets.

\begin{theorem} \label{thm:longunionsintersections}
If \( A \subseteq B(\lambda) \) is \(\lambda\)-analytic, then \( A = \bigcup_{\xi < \lambda^+} A_\xi = \bigcap_{\xi < \lambda^+} B_\xi \) with \( A_\xi, B_\xi \in \lB \), and the same is true for \(\lambda\)-coanalytic sets.
\end{theorem}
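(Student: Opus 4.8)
The plan is to reduce to the normal form for \(\lambda\)-analytic sets and to rank section trees. By Proposition~\ref{prop:charanalytic}\ref{prop:charanalytic-4} fix an \(\omega\)-tree \( T \subseteq \pre{<\omega}{(\lambda \times \lambda)} \) with \( A = \p[T] \), so that \( x \in A \) iff the section tree \( T(x) \subseteq \pre{<\omega}{\lambda} \) is ill-founded. Recall from Section~\ref{subsec:trees} that whenever \( T(x) \) is well-founded one has \( \rho_{T(x)}(\emptyset) < \lambda^+ \) (because \( |T(x)| \leq \lambda \) and \( T(x) \) is canonically well-ordered by G\"odel's enumeration), while by the extension convention \( \rho_{T(x)}(\emptyset) = \lambda^+ \) exactly when \( \emptyset \notin \mathrm{WF}(T(x)) \), i.e.\ exactly when \( x \in A \). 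Writing \( \phi(x) = \rho_{T(x)}(\emptyset) \), we thus have \( x \in A \iff \phi(x) = \lambda^+ \iff \forall \xi < \lambda^+ \, (\phi(x) \geq \xi) \).

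First I would settle the two \emph{direct} representations. For \( t \in \pre{<\omega}{\lambda} \) and \( \xi < \lambda^+ \) set \( R^t_\xi = \{ x \in B(\lambda) \mid t \in T(x) \text{ and } \rho_{T(x)}(t) < \xi \} \), and prove by transfinite induction on \(\xi\) that each \( R^t_\xi \in \lB \). The case \( \xi = 0 \) gives \( R^t_0 = \emptyset \), the limit case is a union of \( \leq \lambda \)-many \(\lambda\)-Borel sets, and the successor step rests on the recursive clause \( \rho_{T(x)}(t) \leq \eta \iff t \in T(x) \wedge \forall \beta < \lambda \, (t {}^\smallfrown{} \beta \notin T(x) \vee x \in R^{t {}^\smallfrown{} \beta}_\eta) \), where ``\( t \in T(x) \)'' and ``\( t {}^\smallfrown{} \beta \notin T(x) \)'' are clopen conditions; hence \( R^t_{\eta+1} \) is an intersection of \( \leq \lambda \)-many \(\lambda\)-Borel sets. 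All these operations stay inside \( \lB \) because it is a \( \lambda^+ \)-algebra and, by Fact~\ref{fct:lambdaunionewithoutchoice}\ref{fct:lambdaunionewithoutchoice-1}, \( \lambda^+ \) is regular. Since \( \emptyset \in T(x) \) for all \( x \), we get \( \{ x \mid \phi(x) < \xi \} = R^\emptyset_\xi \in \lB \), whence \( A = \bigcap_{\xi < \lambda^+} (B(\lambda) \setminus R^\emptyset_\xi) \) is the desired \( \lambda^+ \)-intersection of \(\lambda\)-Borel sets, while \( B(\lambda) \setminus A = \bigcup_{\xi < \lambda^+} R^\emptyset_\xi \) is the corresponding \( \lambda^+ \)-union. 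Applied to an arbitrary \(\lambda\)-coanalytic \( C = B(\lambda) \setminus D \) (with \( D \in \lS^1_1 \)), this already yields that \( C \) is a \( \lambda^+ \)-union of \(\lambda\)-Borel sets, namely the constituents of \( D \).

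It remains to produce the two \emph{dual} representations: a \( \lambda^+ \)-union for the analytic \( A \) and a \( \lambda^+ \)-intersection for a coanalytic set. As these statements are mutually complementary, it suffices to show that every \(\lambda\)-analytic \( A \) is a \( \lambda^+ \)-union \( \bigcup_{\xi < \lambda^+} A_\xi \) of \(\lambda\)-Borel sets; then \( A \) gets its union directly, and any coanalytic \( C = B(\lambda) \setminus D \) gets its intersection as \( \bigcap_{\xi < \lambda^+}(B(\lambda) \setminus A_\xi) \) from the union of \( D \). For this I would adapt the classical Lusin--Sierpi\'nski argument (\cite[Theorem 25.16]{Kechris1995}), working with the Kleene--Brouwer ordering \( \leq_{\mathrm{KB}} \) on \( T(x) \) and the choiceless leftmost-branch and rank machinery of Section~\ref{subsec:trees} to stratify, by an ordinal \( < \lambda^+ \), the well-founded part of each section tree lying to the left of its leftmost branch. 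The main obstacle---and the sole place where the uncountable setting demands care beyond the classical bookkeeping---is keeping each piece \( A_\xi \) genuinely \(\lambda\)-Borel: the naive order-type rank on the left part only has \(\lambda\)-analytic initial segments (a \( \leq \lambda \)-quantification over the coanalytic predicate ``the \( \leq_{\mathrm{KB}} \)-predecessors of \( s \) are well-ordered''), so the constituent construction must be organised as an explicit transfinite recursion over nodes, analogous to the \( R^t_\xi \) above, that remains inside \( \lB \) by repeatedly invoking closure of \( \lB \) under \( \leq \lambda \)-sized unions and intersections together with the regularity of \( \lambda^+ \).
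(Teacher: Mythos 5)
Your first half is correct and is essentially the paper's own construction: your sets \( R^t_\xi \) are, up to replacing strict by non-strict inequality, the paper's sets \( C^\xi_s = \{ x \in B(\lambda) \mid \rho_{T(x)}(s) \leq \xi \} \), proved \(\lambda\)-Borel by the same node-wise transfinite induction, and the identity \( A = \bigcap_{\xi < \lambda^+} (B(\lambda) \setminus R^\emptyset_\xi) \) gives the intersection representation exactly as in the paper. Your reduction of the two remaining representations to the single statement ``every \(\lambda\)-analytic set is a \(\lambda^+\)-union of \(\lambda\)-Borel sets'' is also correct. The gap is that this remaining statement --- the only genuinely delicate part of the theorem --- is never proved: you announce that you \emph{would adapt} the Lusin--Sierpi\'nski argument via the Kleene--Brouwer ordering and the nodes to the left of the leftmost branch, you correctly observe that the naive order-type stratification is not visibly \(\lambda\)-Borel, and you then defer the repair to ``an explicit transfinite recursion over nodes'' that is never exhibited. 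As written, the second half is a plan, not an argument.

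The missing idea --- and what the paper actually does --- is that no new recursion and no Kleene--Brouwer machinery are needed: the family \( R^t_\xi \) you already built suffices. Stratify \( A \) by the rank of the \emph{well-founded part} of the (ill-founded) section tree, setting \( A_\xi = \{ x \in A \mid \rho(T(x)) \leq \xi \} \); since \( |T(x)| \leq \lambda \), this rank is always \( < \lambda^+ \), so \( A = \bigcup_{\xi < \lambda^+} A_\xi \). The key point is the identity (for \( \xi \geq 1 \))
\[
A_\xi = \big\{ x \in B(\lambda) \mid \rho_{T(x)}(\emptyset) > \xi \wedge \forall s \in \pre{<\omega}{\lambda} \, \big( \rho_{T(x)}(s) \neq \xi \big) \big\},
\]
whose right-hand side is a \(\lambda\)-sized Boolean combination of the sets \( R^s_\xi \), \( R^s_{\xi+1} \) and the clopen conditions ``\( s \in T(x) \)'', hence \(\lambda\)-Borel. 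The identity holds because in a well-founded tree the attained ranks form an initial segment of the ordinals below the rank of the root: thus a well-founded \( T(x) \) with \( \rho_{T(x)}(\emptyset) > \xi \) must contain a node of rank exactly \( \xi \), so the right-hand side forces \( T(x) \) to be ill-founded, i.e.\ \( x \in A \); and conversely, if \( T(x) \) is ill-founded and no node of \( \mathrm{WF}(T(x)) \) has rank exactly \( \xi \), then every node of \( \mathrm{WF}(T(x)) \) has rank \( < \xi \) (a node of rank \( > \xi \) would head a well-founded subtree containing a node of rank exactly \( \xi \)), i.e.\ \( \rho(T(x)) \leq \xi \). Inserting this observation closes your gap and recovers the paper's proof.
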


\begin{proofsketchcite}{Kechris1995}{Theorem 25.16}
Let \( T \subseteq \pre{<\omega}{(\lambda \times \lambda)} \) be a tree such that \( A = \p[T] \). For \( \xi < \lambda^+ \) and \( s \in \pre{<\omega}{\lambda} \) set
\[ 
C^\xi_s = \{ x \in B(\lambda) \mid \rho_{T(x)}(s) \leq \xi \}.
 \] 
Arguing by induction on \( \xi < \lambda^+ \), one easily sees that each \( C^\xi_s \) is \(\lambda\)-Borel. Indeed, \( C^0_s = \{ x \in B(\lambda) \mid \forall \alpha < \lambda \, (x \restriction (\lh(s)+1), s {}^\smallfrown{} \alpha) \notin T \} \) is closed, and if \( \xi > 0 \) then \( C^\xi_s = \bigcap_{\alpha < \lambda} \bigcup_{\xi' < \xi} C^{\xi'}_{s {}^\smallfrown{} \alpha} \).

Then \( A = \bigcap_{\xi < \lambda^+} B_\xi \) with \( B_\xi = B(\lambda) \setminus C^\xi_\emptyset \). On the other hand, \( A = \bigcup_{\xi  < \lambda^+} A_\xi \) with
\begin{align*}
A_\xi & = \{ x \in B(\lambda) \mid [T(x)] \neq \emptyset \wedge \rho(T(x)) \leq \xi \} \\
& = \{ x \in B(\lambda) \mid \rho_{T(x)}(\emptyset) > \xi \wedge \forall s \in \pre{<\omega}{\lambda} \, (\rho_{T(x)}(s) \neq \xi) \} \\
& = \Big\{  x \in B(\lambda) \mid x \notin C^\xi_\emptyset \wedge \forall s \in \pre{<\omega}{\lambda} \, \Big(x \in C^\xi_s \Rightarrow x \in \bigcup\nolimits_{\xi' < \xi} C^{\xi'}_s\Big) \Big\}. \qedhere 
\end{align*}
\end{proofsketchcite}

As a consequence, using Corollary~\ref{cor:perfect3} we also get information on the cardinality of \(\lambda\)-coanalytic sets (under a small amount of additional choice%
\footnote{The choiceless version of Corollary~\ref{cor:sizeofcoanalytic} reads as follows: Let \(\lambda\) be such that \( 2^{< \lambda} = \lambda \) and \( X \) be a \(\lambda\)-Polish space. For every \( A \in \lP^1_1(X) \), either \( A \)  can be written as a \( \lambda^+ \)-sized union of well-orderable sets of size at most \( \lambda \), or there is a continuous injection from \( \pre{\lambda}{2} \) into \( A \) (and thus \( |A| = 2^\lambda \)).}).

\begin{corollary}[\( \AC_{\lambda^+}(\pre{\lambda}{2}) \)] \label{cor:sizeofcoanalytic} \axioms{\( \AC_{\lambda^+}(\pre{\lambda}{2}) \)}
Let \(\lambda\) be such that \( 2^{< \lambda} = \lambda \). For every \( A \in \lP^1_1(B(\lambda)) \), either \( A \) is well-orderable and \( |A| \leq \lambda^+ \) or there is a continuous injection from \( \pre{\lambda}{2} \) into \( A \) (and thus \( |A| = 2^\lambda \)). 
\end{corollary}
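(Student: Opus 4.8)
The plan is to combine Theorem~\ref{thm:longunionsintersections} with the dichotomy from Corollary~\ref{cor:perfect3}, using the extra choice principle \( \AC_{\lambda^+}(\pre{\lambda}{2}) \) to control the size of the union. By Theorem~\ref{thm:longunionsintersections} applied to the \(\lambda\)-coanalytic set \( A \subseteq B(\lambda) \), we may write \( A = \bigcup_{\xi < \lambda^+} A_\xi \) with each \( A_\xi \in \lB(B(\lambda)) \). The key dichotomy is then applied levelwise: by Corollary~\ref{cor:perfect3}, for each \( \xi < \lambda^+ \) either \( A_\xi \) is well-orderable with \( |A_\xi| \leq \lambda \), or there is a continuous \(\lambda\)-Borel embedding of \( \pre{\lambda}{2} \) into \( A_\xi \) with \(\lambda\)-Borel range. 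In the latter case we are immediately done, since such an embedding is in particular a continuous injection of \( \pre{\lambda}{2} \) into \( A_\xi \subseteq A \), whence \( |A| = 2^\lambda \) (the lower bound is clear, and the upper bound \( |A| \leq |B(\lambda)| = \lambda^\omega \leq 2^\lambda \) holds by Fact~\ref{fct:fromweighttosize}).

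So I would split on whether some \( A_\xi \) is non-small. First I would suppose that for every \( \xi < \lambda^+ \) the set \( A_\xi \) is well-orderable with \( |A_\xi| \leq \lambda \). The goal in this case is to show \( A \) is well-orderable with \( |A| \leq \lambda^+ \). This is where \( \AC_{\lambda^+}(\pre{\lambda}{2}) \) enters: I would use it to simultaneously choose, for each \( \xi < \lambda^+ \), an enumeration \( e_\xi \colon \lambda \to B(\lambda) \) with \( A_\xi \subseteq \ran(e_\xi) \) (or an injection \( A_\xi \hookrightarrow \lambda \)). Each such choice is a choice of an element of a nonempty subset of a space canonically surjected onto by \( \pre{\lambda}{2} \) — indeed, as discussed in Section~\ref{subsec:weakchoice}, there is a canonical surjection from \( \pre{\lambda}{2} \) onto the continuous functions \( \pre{\omega}{\lambda} \to B(\lambda) \), and a well-ordering of \( A_\xi \) of type \( \leq \lambda \) can be coded by such data — so the choice is over a family of nonempty subsets of (a surjective image of) \( \pre{\lambda}{2} \), and has length \( \lambda^+ \). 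Having chosen these enumerations, \( A = \bigcup_{\xi < \lambda^+} A_\xi \) is a \(\lambda^+\)-sized union of well-orderable sets each of size at most \( \lambda \); by the argument of Fact~\ref{fct:successorsareregular} (or Fact~\ref{fct:lambdaunionewithoutchoice}, now at the level \( \lambda^+ \), which is legitimate since \( \AC_{\lambda^+}(\pre{\lambda}{2}) \) gives regularity of \( \lambda^{++} \) and the relevant union bound), \( A \) is well-orderable and \( |A| \leq \lambda^+ \cdot \lambda = \lambda^+ \).

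The main obstacle will be the bookkeeping in the choiceless union estimate: one must verify that \( \AC_{\lambda^+}(\pre{\lambda}{2}) \) is exactly the right principle to select the \(\lambda^+\)-sequence of enumerations, and that the resulting union of \(\lambda^+\)-many sets of size \(\leq \lambda\) is genuinely well-orderable of size \(\leq \lambda^+\) without invoking stronger choice. I would handle this by noting that each \( A_\xi \), being \(\lambda\)-Borel and well-orderable of size \(\leq \lambda\), can be coded as (the range of) a map \( \lambda \to B(\lambda) \), and such maps are in turn coded by elements of \( \pre{\lambda}{2} \) exactly as in the proof of Fact~\ref{fct:lambdaunionewithoutchoice}\ref{fct:lambdaunionewithoutchoice-2}; thus the selection is a genuine instance of \( \AC_{\lambda^+}(\pre{\lambda}{2}) \). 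Once the \(\lambda^+\)-sequence \( (e_\xi)_{\xi < \lambda^+} \) is fixed, the map \( \lambda^+ \times \lambda \to A \), \( (\xi, \alpha) \mapsto e_\xi(\alpha) \), is a surjection from a set of size \( \lambda^+ \) onto \( A \), yielding both well-orderability and the bound \( |A| \leq \lambda^+ \). Finally, for the statement as given (with \( A \) well-orderable), the two alternatives are mutually exclusive since \( 2^\lambda > \lambda^+ \) is not automatic but \( |A| = 2^\lambda > \lambda^+ \) whenever the second alternative holds, so the dichotomy is clean; I would remark that the choiceless version stated in the footnote follows by the same argument, merely omitting the final well-ordering step that requires \( \AC_{\lambda^+}(\pre{\lambda}{2}) \).
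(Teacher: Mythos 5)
Your proof is correct and is essentially the paper's own argument: decompose \( A \) via Theorem~\ref{thm:longunionsintersections} into a \( \lambda^+ \)-long union of \(\lambda\)-Borel sets, apply Corollary~\ref{cor:perfect3} to each piece (a big piece immediately yields the continuous injection of \( \pre{\lambda}{2} \) into \( A \)), and use \( \AC_{\lambda^+}(\pre{\lambda}{2}) \) to select enumerations when all pieces have size at most \(\lambda\). The only blemish is the parenthetical claim that \( \AC_{\lambda^+}(\pre{\lambda}{2}) \) yields regularity of \( \lambda^{++} \) (that would seem to require choices over \( \pre{\lambda^+}{2} \)), but it is harmless: your direct surjection \( \lambda^+ \times \lambda \to \bigcup_{\xi<\lambda^+} \ran(e_\xi) \supseteq A \) already gives well-orderability and \( |A| \leq \lambda^+ \) without any appeal to \( \lambda^{++} \).
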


Of course a similar result holds for \(\lambda\)-analytic sets, but for such sets we will actually prove a better result in Corollary~\ref{cor:analyticPSP}.

In view of Theorem~\ref{prop:closurepropertiesofanalytic}\ref{prop:closurepropertiesofanalytic-ii} and Theorem~\ref{thm:Borelisomorphism}, if \(\lambda\) is \(\omega\)-inaccessible then both Theorem~\ref{thm:longunionsintersections} and Corollary~\ref{cor:sizeofcoanalytic} can be extended to arbitrary \(\lambda\)-Polish spaces.

\section{Lusin separation theorem and Souslin's theorem} \label{subsec:Lusinseparation}

Replacing \( \pre{\omega}{\omega} \) and Borel sets with, respectively, \( B(\lambda) \) and \( \lambda \)-Borel sets in the proof of~\cite[Theorem 14.7]{Kechris1995} we get the analogue of the Lusin separation theorem.

\begin{theorem} \label{thm:lusinseparation}
Let \( X \) be \(\lambda\)-Polish and \( A,B \subseteq X \) be disjoint \(\lambda\)-analytic sets. Then there is a \(\lambda\)-Borel set \( C \subseteq X \) separating \( A \) from \( B \), i.e.\ such that \( A \subseteq C \) and \( C \cap B = \emptyset \).
\end{theorem}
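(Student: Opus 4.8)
The plan is to follow the classical Lusin separation argument, adapting the combinatorial lemma on "separability of unions" to the generalized setting. The key point is that separation by $\lambda$-Borel sets behaves well under countable-length decompositions of $\lambda$-analytic sets into pieces indexed by the tree representation. I would first reduce to the model case $X = B(\lambda)$: since $A, B$ are $\lambda$-analytic, by Proposition~\ref{prop:charanalytic}\ref{prop:charanalytic-4} we can write $A = \p[S]$ and $B = \p[T]$ for $\omega$-trees $S, T \subseteq \pre{<\omega}{(\lambda \times \lambda)}$, after embedding $X$ suitably (using Proposition~\ref{prop:surjection} to present $X$ as a continuous bijective image of a closed subset of $B(\lambda)$, and noting that $\lambda$-Borel separation transfers along such maps).

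The heart of the argument is the following separability lemma, proved as in the classical case: if $P = \bigcup_{\alpha < \lambda} P_\alpha$ and $Q = \bigcup_{\beta < \lambda} Q_\beta$, and if each pair $(P_\alpha, Q_\beta)$ is $\lambda$-Borel separable, then $(P,Q)$ is $\lambda$-Borel separable. Indeed, if $C_{\alpha,\beta}$ separates $P_\alpha$ from $Q_\beta$, then $C = \bigcup_{\alpha < \lambda} \bigcap_{\beta < \lambda} C_{\alpha,\beta}$ separates $P$ from $Q$, and this set is $\lambda$-Borel because $\lB$ is a $\lambda^+$-algebra (equivalently a $\lambda$-algebra, $\lambda$ being singular) closed under unions and intersections of length $\lambda$. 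Here I would invoke $\AC_\lambda(\pre{\lambda}{2})$ in the background to choose the doubly-indexed family $(C_{\alpha,\beta})_{\alpha,\beta < \lambda}$ of separating $\lambda$-Borel sets; the canonical surjection from $\pre{\lambda}{2}$ onto $\lB(X)$ described in Section~\ref{subsec:weakchoice} makes these $\lambda \times \lambda = \lambda$ many choices available.

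With the lemma in hand, I would run the usual contradiction argument on the tree. For $s, t \in \pre{<\omega}{\lambda}$ let $A_s = \p[S(s)]$ and $B_t = \p[T(t)]$ be the $\lambda$-analytic sets arising from the section trees at the nodes $s,t$; then $A_\emptyset = A$ and $B_\emptyset = B$, and each node splits as $A_s = \bigcup_{\alpha<\lambda} A_{s{}^\smallfrown{}\alpha}$ (restricting to those $\alpha$ with the corresponding coordinate legal), and similarly for $B$. Suppose toward a contradiction that $A$ and $B$ are not $\lambda$-Borel separable. By the contrapositive of the separability lemma, there exist $\alpha_0, \beta_0 < \lambda$ such that $A_{\langle\alpha_0\rangle}$ and $B_{\langle\beta_0\rangle}$ are not separable; iterating, we build $x, y \in B(\lambda)$ with all finite restrictions of the pairs inseparable. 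Since inseparable sets are in particular nonempty, $x \in \p[S] = A$ and $y \in \p[T] = B$, and the coordinate construction forces $\p$-images to converge to a common point, so $A \cap B \neq \emptyset$, contradicting disjointness.

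The main obstacle is bookkeeping in the choiceless/weak-choice regime rather than any genuinely new idea: I must ensure that the recursive selection of the witnessing pairs $(\alpha_n, \beta_n)$ can be carried out with only $\AC_\lambda$-style choices, and that the separability lemma's construction of $C$ stays inside $\lB$ even though it uses a length-$\lambda$ intersection nested inside a length-$\lambda$ union — which is exactly where singularity of $\lambda$ and the $\lambda^+$-algebra structure of $\lB(X)$ (Corollary~\ref{cor:closurepropertiesofanalytic}, and the closure facts recorded for $\lB$) are essential. The recursion itself produces a single branch and so needs only dependent choices of length $\omega$ over a well-orderable set of parameters, which is harmless; alternatively, one can phrase the whole argument via the rank function $\rho_{T}$ to avoid explicit branch-building and make the induction manifestly $\ZF$-definable on well-ordered data, exactly as the excerpt does for related results. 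Since the statement merely asserts existence of a separating $\lambda$-Borel set, and the construction above is canonical once the family $(C_{\alpha,\beta})$ is fixed, no choice beyond $\lambda$ is needed.
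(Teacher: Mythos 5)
Your overall strategy is the right one --- it is essentially the paper's own route, namely Kechris's proof of the classical Lusin separation theorem (Theorem 14.7 there) with \( \pre{\omega}{\omega} \) replaced by \( B(\lambda) \) and Borel replaced by \( \lambda \)-Borel --- and your separability lemma is exactly the needed combinatorial core: the set \( C=\bigcup_{\alpha<\lambda}\bigcap_{\beta<\lambda}C_{\alpha,\beta} \) works, it stays in \( \lB(X) \) because \( \lB(X) \) is a \( \lambda^{+} \)-algebra, and invoking \( \AC_\lambda(\pre{\lambda}{2}) \) to choose the family \( (C_{\alpha,\beta})_{\alpha,\beta<\lambda} \) is the correct bookkeeping. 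The gap is in the decomposition and the membership step. You index by \emph{single} sequences, setting \( A_s=\p[S(s)] \) for \( s\in\pre{<\omega}{\lambda} \) and splitting \( A_s=\bigcup_{\alpha<\lambda}A_{s^\smallfrown\alpha} \); on the reading compatible with your endgame (pieces shrinking inside basic clopen sets), this makes \( A_s \) the trace \( A\cap\Nbhd_s \) on the point coordinate. But then the claim ``since inseparable sets are in particular nonempty, \( x\in\p[S]=A \)'' is false: nonemptiness of \( A\cap\Nbhd_{x\restriction n} \) for every \( n \) gives only \( x\in\mathrm{cl}(A) \), not \( x\in A \). The failure is fatal, not cosmetic: after the pieces are defined, your recursion and endgame never refer to the witness coordinate of \( S \) and \( T \) again, so the identical argument would apply to \emph{arbitrary} disjoint sets \( A,B\subseteq B(\lambda) \) and ``prove'' that any two disjoint sets are \( \lambda \)-Borel separable --- absurd, since a non-\( \lambda \)-Borel set and its complement are disjoint and cannot be separated by a \( \lambda \)-Borel set.

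The repair is standard but essential: the unfolding must track both coordinates of the trees. Either index by pairs of nodes, setting \( A_{s,u}=\p\big([S]\cap(\Nbhd_s\times\Nbhd_u)\big) \) for \( \lh(s)=\lh(u) \) and splitting into the pieces \( A_{s^\smallfrown\alpha,\,u^\smallfrown\beta} \) (still only \( \lambda \)-many, so your lemma applies); the recursion then produces branches \( (x,x^{*}) \) and \( (y,y^{*}) \) all of whose initial segments lie in \( S \) and \( T \), hence \( (x,x^{*})\in[S] \) and \( (y,y^{*})\in[T] \), giving genuine membership \( x\in A \), \( y\in B \), while inseparability at each level forces \( x\restriction n=y\restriction n \) (otherwise the clopen set \( \Nbhd_{x\restriction n}\supseteq A_{x\restriction n,\,x^{*}\restriction n} \) would separate), so \( x=y\in A\cap B \), the desired contradiction. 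Alternatively --- and this is what the paper does --- take continuous surjections \( f,g\colon B(\lambda)\to A,B \) from Proposition~\ref{prop:charanalytic} and set \( A_s=f(\Nbhd_s) \), \( B_t=g(\Nbhd_t) \): there a single index suffices precisely because \( A_s \) is an \emph{image} rather than a trace, so \( f(x)\in A \) holds automatically, and continuity of \( f,g \) at \( x,y \) supplies the final separation by disjoint open sets. One last caution on your preliminary reduction to \( X=B(\lambda) \): transferring separators along the bijection of Proposition~\ref{prop:surjection} must be justified via Remark~\ref{rmk:inverseofinjection} (its inverse is \( \lambda \)-Borel), not via Theorem~\ref{thm:injectiveBorelimage}, which is proved later as a consequence of Souslin's theorem and would make the argument circular; in fact the reduction is unnecessary if you run the continuous-surjection version directly in \( X \).
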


\begin{remark} \label{rmk:constructivelusinseparation}
If one follows the ``constructive proof'' of~\cite[Theorem 2E.1]{Moschovakis2009} instead of the proof of~\cite[Theorem 14.7]{Kechris1995}, then one can actually exhibit a somewhat canonical \(\lambda\)-Borel set separating any two disjoint \(\lambda\)-analytic subsets of \( B(\lambda) \). Moreover, this can be extended to subsets of an arbitrary \(\lambda\)-Polish space using Proposition~\ref{prop:surjection} and Remark~\ref{rmk:inverseofinjection} in the obvious way. The possibility of finding a canonical witness to Theorem~\ref{thm:lusinseparation} might be useful in choiceless settings.
\end{remark}

\begin{remark} \label{rmk:lusinseparation}
Since \(\lambda\)-analytic subsets of a \(\lambda\)-Polish space are closed under finite intersections, the notion of \(\lambda\)-analicity can be relativized to \(\lambda\)-analytic spaces. More precisely, if \( Y \in \lS^1_1(X) \) then \( A = A' \cap Y \) for some \( A' \in \lS^1_1(X) \) if and only if \( A \in \lS^1_1(X) \) (and thus \( A \) is a continuous image of a \(\lambda\)-Polish space itself). This allows us to transfer Theorem~\ref{thm:lusinseparation} to the broader setup of \markdef{\(\lambda\)-analytic spaces}, that is, of those spaces which are homeomorphic to a \(\lambda\)-analytic subset of a \(\lambda\)-Polish space. The same applies to most of the results related to separation theorems for \(\lambda\)-analytic sets.
\end{remark}

Since in Proposition~\ref{prop:closurepropertiesofanalytic}\ref{prop:closurepropertiesofanalytic-i} we proved that \( \lS^1_1 \) is closed under well-ordered unions of length at most \( \lambda \), we also get:

\begin{corollary} \label{cor:separationforlambdasets}
Let \( X \) be \(\lambda\)-Polish and \( (A_\alpha)_{\alpha < \lambda} \) be a sequence of pairwise disjoint \(\lambda\)-analytic subsets of \( X \). Then there are pairwise disjoint \(\lambda\)-Borel sets \( B_\alpha \) with \( B_\alpha \supseteq A_\alpha \) for all \( \alpha < \lambda \).
\end{corollary}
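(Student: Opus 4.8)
The plan is to generalize the classical argument that pairwise disjoint analytic sets can be separated by pairwise disjoint Borel sets, using Theorem~\ref{thm:lusinseparation} as the engine. The key structural fact I would exploit is that $\lS^1_1$ is closed under well-ordered unions of length at most $\lambda$ (Proposition~\ref{prop:closurepropertiesofanalytic}\ref{prop:closurepropertiesofanalytic-i}), which is exactly what makes the ``union trick'' go through: for each fixed $\alpha < \lambda$, the set $A_\alpha$ is disjoint from the union $\bigcup_{\beta \neq \alpha} A_\beta$, and this union is again $\lambda$-analytic because it is a well-ordered union of at most $\lambda$-many $\lambda$-analytic sets.

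Concretely, first I would fix $\alpha < \lambda$ and apply Theorem~\ref{thm:lusinseparation} to the disjoint pair of $\lambda$-analytic sets $A_\alpha$ and $A'_\alpha := \bigcup_{\beta \neq \alpha} A_\beta$, obtaining a $\lambda$-Borel set $C_\alpha$ with $A_\alpha \subseteq C_\alpha$ and $C_\alpha \cap A'_\alpha = \emptyset$. The latter condition means $C_\alpha$ is disjoint from every $A_\beta$ with $\beta \neq \alpha$. Doing this for each $\alpha$ requires $\lambda$-many choices of separating $\lambda$-Borel sets; this is licensed by $\AC_\lambda(\pre{\lambda}{2})$, since there is a canonical surjection from $\pre{\lambda}{2}$ onto $\lB(X)$ (as noted in Section~\ref{subsec:weakchoice}), or alternatively by invoking the constructive/canonical version of Lusin separation recorded in Remark~\ref{rmk:constructivelusinseparation}, which would avoid the need for choice altogether by producing the $C_\alpha$ uniformly.

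The sets $C_\alpha$ satisfy $A_\alpha \subseteq C_\alpha$ and $C_\alpha \cap A_\beta = \emptyset$ for $\alpha \neq \beta$, but they need not be pairwise disjoint, so the final step is to disjointify. I would set
\[
B_\alpha = C_\alpha \setminus \bigcup_{\beta < \alpha} C_\beta .
\]
Each $B_\alpha$ is $\lambda$-Borel since it is obtained from the $C_\gamma$ by a complement and a well-ordered union of length at most $\lambda$, and $\lB(X)$ is a $\lambda^+$-algebra. The $B_\alpha$ are pairwise disjoint by construction. To check $A_\alpha \subseteq B_\alpha$: if $x \in A_\alpha$ then $x \in C_\alpha$, while for every $\beta < \alpha$ we have $x \notin C_\beta$ because $C_\beta \cap A_\alpha = \emptyset$ (as $\alpha \neq \beta$); hence $x \in C_\alpha \setminus \bigcup_{\beta < \alpha} C_\beta = B_\alpha$.

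I do not expect a genuine obstacle here, since the closure of $\lS^1_1$ under $\lambda$-sized unions does all the heavy lifting and the disjointification is purely Boolean. The one point deserving care is the amount of choice: selecting the family $(C_\alpha)_{\alpha < \lambda}$ simultaneously is where $\AC_\lambda(\pre{\lambda}{2})$ enters, and if one wishes to work in a weaker theory one should instead lean on the canonical separation of Remark~\ref{rmk:constructivelusinseparation} to fix the $C_\alpha$ definably. Note also that no assumption such as $2^{<\lambda}=\lambda$ is needed, since we only use closure of $\lS^1_1$ under unions (not intersections) of length $\lambda$, which holds in the base theory.
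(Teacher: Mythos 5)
Your proof is correct and is exactly the argument the paper intends: the corollary is stated right after the observation that \( \lS^1_1 \) is closed under well-ordered unions of length at most \( \lambda \), precisely so that one can separate each \( A_\alpha \) from \( \bigcup_{\beta \neq \alpha} A_\beta \) via Theorem~\ref{thm:lusinseparation} and then disjointify, as you do. Your added remarks on the role of \( \AC_\lambda(\pre{\lambda}{2}) \) (or the canonical separation of Remark~\ref{rmk:constructivelusinseparation}) and on not needing \( 2^{<\lambda}=\lambda \) are also accurate.
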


Finally, taking \( B = X \setminus A \) in Theorem~\ref{thm:lusinseparation} when \( A \in \lD^1_1(X) \), we get the analogue of Souslin's theorem.

\begin{theorem} \label{thm:souslin}
Assume%
\footnote{Interestingly enough, the condition  \(2^{< \lambda} = \lambda\) is required to prove \( \lB(X) \subseteq \lD^1_1(X) \), while the reverse inclusion \( \lD^1_1(X) \subseteq \lB(X) \) holds unconditionally.} 
that
 \( 2^{< \lambda	} = \lambda \) and let \( X \) be \(\lambda\)-Polish. Then \( \lB(X) = \lD^1_1(X) \).
\end{theorem}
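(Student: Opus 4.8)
The plan is to prove the two inclusions $\lB(X) \subseteq \lD^1_1(X)$ and $\lD^1_1(X) \subseteq \lB(X)$ separately, observing as the footnote suggests that only the first one actually requires the hypothesis $2^{<\lambda} = \lambda$.

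\smallskip

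\textbf{The easy inclusion $\lB(X) \subseteq \lD^1_1(X)$.} This is essentially already recorded in equation~\eqref{eq:borelvsanalytic}. Under $2^{<\lambda} = \lambda$, Corollary~\ref{cor:surjectionBorel} gives $\lB(X) \subseteq \lS^1_1(X)$: every $\lambda$-Borel set is the continuous image (indeed, via a continuous bijection on a closed subset of $B(\lambda)$) of a $\lambda$-Polish space. Since $\lB(X)$ is closed under complements by definition, if $B \in \lB(X)$ then also $X \setminus B \in \lB(X) \subseteq \lS^1_1(X)$, whence $B \in \lP^1_1(X)$. Therefore $B \in \lS^1_1(X) \cap \lP^1_1(X) = \lD^1_1(X)$. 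This is the only place where $2^{<\lambda} = \lambda$ enters, as it is exactly what makes Corollary~\ref{cor:surjectionBorel} (and before it Theorem~\ref{thm:changeoftopology}) available.

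\smallskip

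\textbf{The nontrivial inclusion $\lD^1_1(X) \subseteq \lB(X)$.} This is where I would invoke the Lusin separation theorem (Theorem~\ref{thm:lusinseparation}), mimicking the classical derivation of Souslin's theorem from Lusin separation. Let $A \in \lD^1_1(X)$, so that both $A \in \lS^1_1(X)$ and $X \setminus A \in \lS^1_1(X)$. Apply Theorem~\ref{thm:lusinseparation} with the two disjoint $\lambda$-analytic sets $A$ and $B = X \setminus A$: there is a $\lambda$-Borel set $C \subseteq X$ separating $A$ from $B$, i.e.\ $A \subseteq C$ and $C \cap (X \setminus A) = \emptyset$. The second condition gives $C \subseteq A$, so in fact $C = A$, and hence $A \in \lB(X)$. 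Crucially, this direction does not use $2^{<\lambda} = \lambda$ at all, since Theorem~\ref{thm:lusinseparation} is stated for arbitrary $\lambda$-Polish $X$ without that restriction.

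\smallskip

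\textbf{Where the real work lies.} The proof of the present statement is short precisely because all the difficulty has been front-loaded into Theorem~\ref{thm:lusinseparation} and Corollary~\ref{cor:surjectionBorel}; the main conceptual obstacle is not in assembling them but in the separation theorem itself. The hard part is therefore the generalized Lusin separation, whose proof adapts~\cite[Theorem 14.7]{Kechris1995} by replacing $\pre{\omega}{\omega}$ and classical Borel sets with $B(\lambda)$ and $\lambda$-Borel sets; the delicate point there is that the usual decomposition argument relies on splitting a basic open set into $\omega$-many pieces, and one must verify that the $\lambda$-Borel closure properties established earlier (closure under $\lambda$-sized unions and the structural results of Proposition~\ref{prop:levelsBorel}) suffice to keep the separating set $\lambda$-Borel. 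Once Theorem~\ref{thm:lusinseparation} is in hand, the equivalence $\lB(X) = \lD^1_1(X)$ follows immediately by the two paragraphs above.
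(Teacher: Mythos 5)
Your proof is correct and follows exactly the paper's route: the inclusion \( \lB(X) \subseteq \lD^1_1(X) \) comes from Corollary~\ref{cor:surjectionBorel} (equation~\eqref{eq:borelvsanalytic}) together with closure of \( \lB(X) \) under complements, and the inclusion \( \lD^1_1(X) \subseteq \lB(X) \) comes from applying Theorem~\ref{thm:lusinseparation} to \( A \) and \( B = X \setminus A \), forcing the separating set to equal \( A \). Your observation about which direction uses \( 2^{<\lambda} = \lambda \) also matches the paper's footnote, so there is nothing to add.
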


Arguing as in the classical case, we can then apply this result to get a converse to Proposition~\ref{prop:borelvsgraph}.

\begin{theorem}\label{thm:borelvsgraph}
Assume that \( 2^{< \lambda} = \lambda \).
Let \( X, Y \) be \(\lambda\)-Polish spaces, and let \( f \colon X \to Y \). Then the following are equivalent:
\begin{enumerate}[label={\upshape (\arabic*)}, leftmargin=2pc]
\item \label{thm:borelvsgraph-1}
\( f \) is \(\lambda\)-Borel;
\item \label{thm:borelvsgraph-2}
\( \mathrm{graph}(f) \in \lB(X \times Y) \).
\end{enumerate}
In particular, if \( f \) is a \( \lambda \)-Borel bijection, then \( f \) is a \(\lambda\)-Borel isomorphism (i.e.\ \( f^{-1} \) is \(\lambda\)-Borel as well).

If we assume \( 2^{< \lambda} = \lambda \), then we can add
\begin{enumerate}[label={\upshape (\arabic*)}, leftmargin=2pc,resume]
\item \label{thm:borelvsgraph-3}
\( \mathrm{graph}(f) \in \lS^1_1(X \times Y) \).
\end{enumerate}
to the above list of equivalent conditions.
\end{theorem}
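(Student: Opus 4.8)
The scheme is a cycle of implications $\ref{thm:borelvsgraph-1} \Rightarrow \ref{thm:borelvsgraph-2} \Rightarrow \ref{thm:borelvsgraph-3} \Rightarrow \ref{thm:borelvsgraph-1}$, together with a short argument for the ``in particular'' clause. The first implication $\ref{thm:borelvsgraph-1} \Rightarrow \ref{thm:borelvsgraph-2}$ is exactly Proposition~\ref{prop:borelvsgraph}, since $Y$ is $\lambda$-Polish and hence metrizable of weight at most $\lambda$; no new work is required. The implication $\ref{thm:borelvsgraph-2} \Rightarrow \ref{thm:borelvsgraph-3}$ is immediate from equation~\eqref{eq:borelvsanalytic}, i.e.\ from $\lB(X \times Y) \subseteq \lS^1_1(X \times Y)$, which holds because $2^{< \lambda} = \lambda$; so the content of the theorem is concentrated in closing the loop with $\ref{thm:borelvsgraph-3} \Rightarrow \ref{thm:borelvsgraph-1}$.

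For the crucial implication, I would fix a basis $\mathcal{B} = \{ U_\alpha \mid \alpha < \lambda \}$ for $Y$ and show that $f^{-1}(U_\alpha) \in \lB(X)$ for each $\alpha$, which suffices since preimages of basic open sets being $\lambda$-Borel forces preimages of all open sets (hence all $\lambda$-Borel sets) to be $\lambda$-Borel, using closure of $\lB(X)$ under unions of length $\lambda$. The standard trick is to write, for each $x \in X$,
\[
f(x) \in U_\alpha \iff (x, f(x)) \in X \times U_\alpha,
\]
and to express $f^{-1}(U_\alpha)$ via the projection of $\mathrm{graph}(f)$. Concretely, observe that
\[
f^{-1}(U_\alpha) = \p\bigl( \mathrm{graph}(f) \cap (X \times U_\alpha) \bigr),
\]
where $\p$ is projection onto $X$. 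Since $\mathrm{graph}(f) \in \lS^1_1(X \times Y)$ by~\ref{thm:borelvsgraph-3} and $X \times U_\alpha$ is open (hence $\lambda$-analytic), their intersection is $\lambda$-analytic by closure of $\lS^1_1$ under countable (in particular, finite) intersections from Proposition~\ref{prop:closurepropertiesofanalytic}\ref{prop:closurepropertiesofanalytic-i}; and projections of $\lambda$-analytic sets are $\lambda$-analytic by the characterization in Proposition~\ref{prop:charanalytic}. Thus $f^{-1}(U_\alpha) \in \lS^1_1(X)$. The complementary computation
\[
X \setminus f^{-1}(U_\alpha) = f^{-1}(Y \setminus U_\alpha) = \p\bigl( \mathrm{graph}(f) \cap (X \times (Y \setminus U_\alpha)) \bigr)
\]
shows symmetrically that $f^{-1}(U_\alpha) \in \lP^1_1(X)$ as well, using that $Y \setminus U_\alpha$ is closed and that $f$ is single-valued (so the two projections partition $X$). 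Hence $f^{-1}(U_\alpha) \in \lD^1_1(X)$, and by Souslin's theorem (Theorem~\ref{thm:souslin}), which applies precisely because $2^{< \lambda} = \lambda$, we conclude $f^{-1}(U_\alpha) \in \lB(X)$.

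\textbf{Main obstacle.} The delicate point is not any single closure property but the interplay that makes $f^{-1}(U_\alpha)$ land in $\lD^1_1$ rather than merely in $\lS^1_1$; this rests on $f$ being a total single-valued function, so that the preimage of $Y \setminus U_\alpha$ is genuinely the complement of the preimage of $U_\alpha$, letting the coanalyticity come for free from analyticity of the other piece. The ``in particular'' clause is then routine: if $f$ is a $\lambda$-Borel bijection, then $\mathrm{graph}(f^{-1}) = \{ (y,x) \mid (x,y) \in \mathrm{graph}(f) \}$ is the image of $\mathrm{graph}(f) \in \lB(X \times Y)$ under the (homeomorphic, hence $\lambda$-Borel isomorphic) coordinate swap $X \times Y \to Y \times X$, so $\mathrm{graph}(f^{-1}) \in \lB(Y \times X)$; applying the already-established equivalence $\ref{thm:borelvsgraph-2} \Rightarrow \ref{thm:borelvsgraph-1}$ to $f^{-1}$ yields that $f^{-1}$ is $\lambda$-Borel. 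I would also note explicitly at the end that the apparent redundancy of the hypothesis $2^{< \lambda} = \lambda$ (stated both before~\ref{thm:borelvsgraph-2} and again before~\ref{thm:borelvsgraph-3}) is harmless, as all three conditions are equivalent under that single assumption.
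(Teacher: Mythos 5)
Your proposal is correct and follows exactly the route the paper intends: the paper omits the proof, remarking only that one argues ``as in the classical case'' using Souslin's theorem (Theorem~\ref{thm:souslin}) to obtain the converse of Proposition~\ref{prop:borelvsgraph}, and your implication \ref{thm:borelvsgraph-3}~\(\Rightarrow\)~\ref{thm:borelvsgraph-1} via \( f^{-1}(U_\alpha) = \p(\mathrm{graph}(f) \cap (X \times U_\alpha)) \) together with the complementary projection is precisely that classical argument, with the closure properties of \( \lS^1_1 \) correctly sourced from Proposition~\ref{prop:closurepropertiesofanalytic}. The handling of the ``in particular'' clause via the coordinate swap is likewise the standard step, so nothing is missing.
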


\begin{remark} \label{rmk:borelvsgraph}
It is easy to see that Theorem~\ref{thm:borelvsgraph} holds also for functions \( f \colon A \to B \) with \( A \in \lB(X) \) and \( B \in \lB(Y) \).
\end{remark}

Further consequence that can be proved \emph{mutatis mutandis} as in the classical case \( \lambda = \omega \) are the following ones.

\begin{theorem} \label{thm:injectiveBorelimage}
Let \( X,Y \) be \(\lambda\)-Polish spaces and \( f \colon X \to Y \) be \(\lambda\)-Borel. If \( A \subseteq X \) is \(\lambda\)-Borel and \( f \restriction A \) is injective, then \( f(A) \) is \(\lambda\)-Borel and \( f \restriction A \) is a \(\lambda\)-Borel isomorphism between \( A \) and \( f(A) \). 

\end{theorem}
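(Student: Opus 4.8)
The plan is to mimic the classical proof of the Lusin–Souslin theorem on injective Borel images (see~\cite[Theorem 15.1]{Kechris1995}), reducing everything to the Souslin theorem (Theorem~\ref{thm:souslin}) that was just established. First I would observe that the case where $f(A)$ is shown to be $\lambda$-Borel is the crux: once we know $f(A) \in \lB(Y)$, the fact that $f \restriction A$ is a $\lambda$-Borel isomorphism onto $f(A)$ follows immediately from Theorem~\ref{thm:borelvsgraph} (and Remark~\ref{rmk:borelvsgraph}), since an injective $\lambda$-Borel bijection between two $\lambda$-Borel sets automatically has a $\lambda$-Borel inverse.

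To prove $f(A) \in \lB(Y)$, the strategy is to show $f(A) \in \lD^1_1(Y)$ and invoke Souslin's theorem. That $f(A)$ is $\lambda$-analytic is clear from Proposition~\ref{prop:closurepropertiesofanalytic}\ref{prop:closurepropertiesofanalytic-ii}, since $A$ is $\lambda$-Borel (hence $\lambda$-analytic by~\eqref{eq:borelvsanalytic}) and $f$ is $\lambda$-Borel. The real work is to show $f(A)$ is $\lambda$-coanalytic, i.e.\ that its complement $Y \setminus f(A)$ is $\lambda$-analytic. Here I would exploit injectivity via a separation argument. By Corollary~\ref{cor:changeoftopologyfunctions}, change the topology on $X$ so that $f$ becomes continuous while preserving the $\lambda$-Borel structure; then $A$ remains $\lambda$-Borel and we may assume $f$ is continuous and, after a further change of topology (Corollary~\ref{cor:changeoftopologyclopen}), that $A$ itself is closed (indeed clopen) in the refined $\lambda$-Polish space, so that $A$ is $\lambda$-Polish in the subspace topology.

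The key separation step is the following: partition $A$ into $\lambda$-Borel (indeed suitably definable) pieces whose images under the injective $f$ are pairwise disjoint $\lambda$-analytic sets, and use Corollary~\ref{cor:separationforlambdasets} to separate them by pairwise disjoint $\lambda$-Borel sets. Concretely, one builds a Lusin-scheme style tree decomposition $A = \bigcap$ of unions indexed over $\pre{<\omega}{\lambda}$: writing $A$ as the continuous injective image along a $\lambda$-scheme with vanishing diameters, one covers $A$ at each level $n$ by $\lambda$-many $\lambda$-Borel pieces $A_s$ ($s \in \pre{n}{\lambda}$) of diameter $\leq 2^{-n}$, whose images $f(A_s)$ are pairwise disjoint $\lambda$-analytic sets (disjointness using injectivity of $f$). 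By Corollary~\ref{cor:separationforlambdasets} choose pairwise disjoint $\lambda$-Borel $B_s \supseteq f(A_s)$, refining so that the $B_s$ shrink coherently along the tree. Then one shows $f(A) = \bigcap_{n} \bigcup_{s \in \pre{n}{\lambda}} \bigl(B_s \cap \mathrm{cl}(f(A_s))\bigr)$, a $\lambda^+$-long — but in fact $\lambda$-Borel, by the closure properties in Section~\ref{sec:Borelhierarchy} and the fact that $\lambda^+$ is regular under $\AC_\lambda(\pre{\lambda}{2})$ — expression. The crucial point making the intersection collapse to $f(A)$ rather than its closure is that the $B_s$ are disjoint and the diameters of the $f(A_s)$ tend to $0$, so a point in the intersection determines a unique branch converging to a single point of $A$, which injectivity sends to a well-defined point of $f(A)$.

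The main obstacle I anticipate is precisely the cardinal bookkeeping in this scheme: in the classical case each level has countably many pieces and completeness of the metric plus König-type arguments close the loop, whereas here each level carries $\lambda$-many pieces, so I must ensure the decomposition stays within the $\lambda$-Borel classes (using Proposition~\ref{prop:levelsBorel} and the closure of $\lP^0_\xi$ under $\lambda$-intersections) and that the separating family $(B_s)$ can be chosen $\lambda$-Borel \emph{uniformly} along the tree — this is where $\AC_\lambda(\pre{\lambda}{2})$ is needed to make the $\lambda$-many choices of separating Borel sets at each node. The verification that the displayed intersection equals exactly $f(A)$, relying on the vanishing-diameter condition and injectivity to rule out spurious limit points, is the delicate analytic heart of the argument, but it proceeds formally as in~\cite[Theorem 15.1]{Kechris1995} once the $\lambda$-analogues of the Lusin scheme and the separation lemma are in place.
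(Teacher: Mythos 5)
Your proposal is correct and takes essentially the same approach as the paper: the paper offers no separate argument for this theorem, saying only that it ``can be proved \emph{mutatis mutandis} as in the classical case \( \lambda = \omega \)'' (i.e.\ the Lusin--Souslin theorem, \cite[Theorem 15.1]{Kechris1995}), and your sketch is exactly that adaptation --- change of topology via Corollaries~\ref{cor:changeoftopologyfunctions} and~\ref{cor:changeoftopologyclopen} to make \( f \) continuous and \( A \) closed, a \( \lambda \)-indexed Lusin scheme with pairwise disjoint \( \lambda \)-analytic images separated by disjoint \( \lambda \)-Borel sets via Corollary~\ref{cor:separationforlambdasets}, and Theorem~\ref{thm:borelvsgraph} with Remark~\ref{rmk:borelvsgraph} to upgrade the bijection to a \( \lambda \)-Borel isomorphism. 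Two cosmetic slips that do not affect correctness: the final expression \( \bigcap_{n \in \omega} \bigcup_{s \in \pre{n}{\lambda}} \bigl(B_s \cap \mathrm{cl}(f(A_s))\bigr) \) is a \emph{countable} intersection of \( \lambda \)-sized unions, hence \( \lambda \)-Borel outright (no ``\( \lambda^+ \)-long'' issue arises), and what forces the intersection to equal \( f(A) \) is the vanishing diameters of the sets \( A_s \) combined with continuity of \( f \) at the limit point of the unique branch, rather than vanishing diameters of the \( f(A_s) \) themselves.
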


Together with Corollary~\ref{cor:surjectionBorel}, this gives:

\begin{corollary} \label{cor:Borelvsinjimages}
Assume that \( 2^{< \lambda} = \lambda \) and
let \( X \) be \(\lambda\)-Polish. A set \( A \subseteq X \) is \(\lambda\)-Borel if and only if it is a continuous (equivalently, \(\lambda\)-Borel) injective image of a closed subset of \( B(\lambda) \).
\end{corollary}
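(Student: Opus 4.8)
The statement to prove is Corollary~\ref{cor:Borelvsinjimages}: assuming $2^{<\lambda}=\lambda$, a set $A\subseteq X$ in a $\lambda$-Polish space $X$ is $\lambda$-Borel if and only if it is a continuous (equivalently, $\lambda$-Borel) injective image of a closed subset of $B(\lambda)$.

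\medskip

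The plan is to prove the two directions separately, invoking the named results that precede this corollary. For the forward direction, suppose $A\in\lB(X)$. By Corollary~\ref{cor:surjectionBorel}, since $2^{<\lambda}=\lambda$, there is a closed set $G\subseteq B(\lambda)$ and a continuous bijection $g\colon G\to A$ (with $\lambda$-Borel inverse). A continuous bijection is in particular a continuous injection, so $A=g(G)$ exhibits $A$ as a continuous injective image of the closed set $G\subseteq B(\lambda)$, as required. This direction is essentially immediate from Corollary~\ref{cor:surjectionBorel} and needs almost no new work.

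\medskip

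For the reverse direction, suppose $A$ is a $\lambda$-Borel injective image of a closed subset of $B(\lambda)$; I want to show $A\in\lB(X)$. Let $F\subseteq B(\lambda)$ be closed and let $f\colon F\to X$ be $\lambda$-Borel with $f\restriction F$ injective and $f(F)=A$. Since $F$ is closed in $B(\lambda)$, it is itself a $\lambda$-Polish space (being a closed, hence $G_\delta$, subspace), and $F$ is a $\lambda$-Borel subset of $B(\lambda)$. The key tool is Theorem~\ref{thm:injectiveBorelimage}, which states that for $\lambda$-Polish spaces $X',Y$ and a $\lambda$-Borel map $h\colon X'\to Y$, if $B\subseteq X'$ is $\lambda$-Borel and $h\restriction B$ is injective, then $h(B)$ is $\lambda$-Borel. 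Applying this with $X'=B(\lambda)$, $Y=X$, the map $h$ an extension (or the map $f$ viewed on $B(\lambda)$), and $B=F$: since $F$ is $\lambda$-Borel in $B(\lambda)$ and $f\restriction F$ is injective, we conclude that $f(F)=A$ is $\lambda$-Borel in $X$. To match the hypotheses of Theorem~\ref{thm:injectiveBorelimage} cleanly, I would note (as recorded in Remark~\ref{rmk:borelvsgraph}) that the theorem applies equally to $\lambda$-Borel functions defined on a $\lambda$-Borel domain $A'\in\lB(B(\lambda))$ rather than on the whole space, so I can take the domain to be $F$ directly without needing to extend $f$ to all of $B(\lambda)$.

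\medskip

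Finally, the phrase ``continuous (equivalently, $\lambda$-Borel)'' signals that the two hypotheses produce the same class of sets $A$: every continuous injection is a $\lambda$-Borel injection (so the ``continuous'' version is a special case of the ``$\lambda$-Borel'' version on the reverse-implication side), while the forward direction actually produces a continuous injection. Hence both readings of the biconditional hold, and the equivalence between the two formulations is automatic. The main (though mild) obstacle is bookkeeping: ensuring that Theorem~\ref{thm:injectiveBorelimage} is invoked with a genuinely $\lambda$-Polish or $\lambda$-Borel domain and that the injectivity hypothesis transfers correctly; this is handled by observing that closed subsets of $B(\lambda)$ are $\lambda$-Polish and $\lambda$-Borel, together with Remark~\ref{rmk:borelvsgraph}. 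No new ideas beyond assembling Corollary~\ref{cor:surjectionBorel} and Theorem~\ref{thm:injectiveBorelimage} are needed.
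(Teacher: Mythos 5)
Your proof is correct and takes exactly the paper's (implicit) route: the forward direction is Corollary~\ref{cor:surjectionBorel}, and the backward direction is Theorem~\ref{thm:injectiveBorelimage}, which is precisely how the paper presents this corollary (``Together with Corollary~\ref{cor:surjectionBorel}, this gives:''). One tiny bookkeeping note: Remark~\ref{rmk:borelvsgraph} is stated for Theorem~\ref{thm:borelvsgraph} rather than Theorem~\ref{thm:injectiveBorelimage}, but you can avoid the issue entirely by observing that the closed set $F\subseteq B(\lambda)$ is itself a $\lambda$-Polish space, so Theorem~\ref{thm:injectiveBorelimage} applies directly with $F$ as the ambient space and $A=F$, with no need to extend $f$ or invoke partial-function versions.
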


Using again Proposition~\ref{prop:surjection} and Remark~\ref{rmk:inverseofinjection}, Corollary~\ref{cor:perfect1}, and the \(\lambda\)-Borel version of the usual Cantor-Schr\"oder-Bernstein argument granted by Theorem~\ref{thm:injectiveBorelimage}, we finally get the following isomorphism theorem.

\begin{theorem} \label{thm:Borelisomorphism}
Assume that \(\lambda\) is \(\omega\)-inaccessible. If \( X \) is a \(\lambda\)-Polish space with \( |X| > \lambda \), then \( X \) is \(\lambda\)-Borel isomorphic to \( B(\lambda) \). Moreover, if \( X,Y \) are \(\lambda\)-Polish spaces, then \( X \) and \( Y \) are \(\lambda\)-Borel isomorphic if and only if \( |X| = |Y| \). 
\end{theorem}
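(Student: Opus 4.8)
The plan is to reduce the first assertion to a $\lambda$-Borel Cantor--Schr\"oder--Bernstein argument, for which I first produce $\lambda$-Borel injections with $\lambda$-Borel images in both directions. For the injection from $B(\lambda)$ into $X$, note that since $|X| > \lambda$ and $\lambda$ is $\omega$-inaccessible, the first alternative of Corollary~\ref{cor:perfect1} fails, so $B(\lambda)$ embeds into $X$ as a \emph{closed} set; call this embedding $e \colon B(\lambda) \to X$. Being a topological embedding with closed (hence $\lambda$-Borel) image, $e$ is injective and $\lambda$-Borel, so by Theorem~\ref{thm:injectiveBorelimage} it is a $\lambda$-Borel isomorphism onto the $\lambda$-Borel set $e(B(\lambda))$. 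For the injection from $X$ into $B(\lambda)$, apply Proposition~\ref{prop:surjection} to get a closed $F \subseteq B(\lambda)$ and a continuous bijection $f \colon F \to X$; by Remark~\ref{rmk:inverseofinjection} the inverse $i = f^{-1} \colon X \to F \subseteq B(\lambda)$ is $\lambda$-Borel, injective, and has closed (hence $\lambda$-Borel) image $F$, and it is a $\lambda$-Borel isomorphism onto $F$.

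Next I run the classical Schr\"oder--Bernstein construction and check that every piece remains $\lambda$-Borel. Set $D_0 = X \setminus e(B(\lambda))$, $D_{n+1} = e(i(D_n))$, and $D = \bigcup_{n \in \omega} D_n$. Since $e(B(\lambda)) \in \lB(X)$, also $D_0 \in \lB(X)$; and since $i$ and $e$ are injective $\lambda$-Borel maps, Theorem~\ref{thm:injectiveBorelimage} guarantees inductively that each $i(D_n)$ and each $e(i(D_n))$ is $\lambda$-Borel, whence $D \in \lB(X)$ as a countable union. Define $H \colon X \to B(\lambda)$ by $H(x) = i(x)$ for $x \in D$ and $H(x) = e^{-1}(x)$ for $x \in X \setminus D$; this is well defined because $X \setminus D \subseteq X \setminus D_0 = e(B(\lambda))$, and it is a bijection by the usual argument. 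It is $\lambda$-Borel: on the $\lambda$-Borel set $D$ it agrees with the $\lambda$-Borel map $i$, and on its $\lambda$-Borel complement it agrees with $e^{-1}$, which is $\lambda$-Borel as the inverse of the $\lambda$-Borel isomorphism $e$; and a function that is $\lambda$-Borel on each block of a two-block $\lambda$-Borel partition is itself $\lambda$-Borel. The inverse is handled symmetrically: $B(\lambda) = i(D) \sqcup e^{-1}(X \setminus D)$ is a $\lambda$-Borel partition (with $i(D) \in \lB$ again by Theorem~\ref{thm:injectiveBorelimage}), and $H^{-1}$ agrees with the $\lambda$-Borel map $i^{-1}$ on $i(D)$ and with the continuous map $e$ on the complement, so $H^{-1}$ is $\lambda$-Borel as well. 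Thus $H$ witnesses that $X$ is $\lambda$-Borel isomorphic to $B(\lambda)$.

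For the ``moreover'' clause, the forward implication is immediate since a $\lambda$-Borel isomorphism is a bijection. For the converse, assume $|X| = |Y|$ and invoke Corollary~\ref{cor:cardinalityunderomegainaccessible}: each of $X, Y$ is either well-orderable of size at most $\lambda$, or of size greater than $\lambda$ (in which case it equals $\lambda^\omega$ by Corollary~\ref{cor:perfect1}). If both have size greater than $\lambda$, the first part gives that $X$ and $Y$ are each $\lambda$-Borel isomorphic to $B(\lambda)$, hence to one another. If both have size at most $\lambda$, then by $\omega$-inaccessibility and Lemma~\ref{lem:densityvscardinality} they are well-orderable, so every subset is a union of at most $\lambda$-many closed singletons and is therefore $\lambda$-Borel; consequently \emph{any} bijection between them, which exists because $|X| = |Y|$, is automatically a $\lambda$-Borel isomorphism in both directions. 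The mixed case cannot occur under $|X| = |Y|$, so the equivalence is complete.

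The only delicate point, and the main obstacle, is keeping the entire Schr\"oder--Bernstein recursion inside $\lB$ while simultaneously controlling both $H$ and $H^{-1}$; everything hinges on Theorem~\ref{thm:injectiveBorelimage}, which lets us push $\lambda$-Borel sets forward through the injective $\lambda$-Borel maps $e$ and $i$. Constructing $H^{-1}$ explicitly by the piecewise formula above — rather than quoting a ``$\lambda$-Borel bijection is an isomorphism'' principle — is precisely what keeps the argument within the sole standing hypothesis of $\omega$-inaccessibility.
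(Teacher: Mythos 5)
Your proof is correct and follows essentially the same route as the paper, which obtains this theorem precisely by combining Proposition~\ref{prop:surjection} with Remark~\ref{rmk:inverseofinjection}, Corollary~\ref{cor:perfect1}, and the $\lambda$-Borel Cantor--Schr\"oder--Bernstein argument enabled by Theorem~\ref{thm:injectiveBorelimage}. The only difference is that you spell out the back-and-forth recursion and the cardinality case analysis for the ``moreover'' clause, which the paper leaves implicit.
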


Together with the fact that by Corollary~\ref{cor:perfect1} either \( X \) is well-orderable and \( |X| \leq \lambda \), or else \( |X| = |B(\lambda)| = \lambda^\omega \), this completely classifies \(\lambda\)-Polish spaces up to \(\lambda\)-Borel isomorphism when \(\lambda\) is \(\omega\)-inaccessible. Moreover, it allows us to rewrite Corollary~\ref{cor:Borelvsinjimages} as follows.

\begin{corollary}
Assume that \( 2^{< \lambda} = \lambda \) and
let \( X \) be \(\lambda\)-Polish. A set \( A \subseteq X \) is \(\lambda\)-Borel if and only if either \( |A| \leq \lambda \) or else \( A \) is a \(\lambda\)-Borel injective image of the whole \( B(\lambda) \). 

\end{corollary}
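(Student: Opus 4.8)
The plan is to derive this statement directly from the already established Corollary~\ref{cor:Borelvsinjimages}, the cardinality dichotomy of Corollary~\ref{cor:perfect1}, the isomorphism Theorem~\ref{thm:Borelisomorphism}, and the injective-image Theorem~\ref{thm:injectiveBorelimage}, observing first that \( 2^{< \lambda} = \lambda \) forces \(\lambda\) to be strong limit and hence \(\omega\)-inaccessible, so that the hypotheses of the latter two results are automatically satisfied.

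For the forward implication, I would suppose \( A \in \lB(X) \). By Corollary~\ref{cor:Borelvsinjimages} there are a closed set \( F \subseteq B(\lambda) \) and a continuous injection \( f \colon F \to X \) with \( f(F) = A \); in particular \( |A| = |F| \) since \( f \) is injective. As \( F \) is closed in \( B(\lambda) \), it is itself \(\lambda\)-Polish, so Corollary~\ref{cor:perfect1} applies to it and yields a dichotomy: either \( F \) is well-orderable with \( |F| \leq \lambda \), in which case \( |A| \leq \lambda \); or else \( B(\lambda) \) embeds into \( F \) as a closed set, whence \( |F| = \lambda^\omega > \lambda \). In the second case Theorem~\ref{thm:Borelisomorphism} provides a \(\lambda\)-Borel isomorphism \( g \colon B(\lambda) \to F \), and then \( f \circ g \colon B(\lambda) \to X \) is a \(\lambda\)-Borel injection with range \( f(g(B(\lambda))) = f(F) = A \); thus \( A \) is a \(\lambda\)-Borel injective image of the whole \( B(\lambda) \).

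For the converse, if \( |A| \leq \lambda \) then \( A \) is well-orderable, and since \( X \) is metrizable (hence \( T_1 \)) it is a well-ordered union of at most \(\lambda\)-many singletons, each of which is closed; therefore \( A \in \lS^0_2(X) \subseteq \lB(X) \). If instead \( A = h(B(\lambda)) \) for some \(\lambda\)-Borel injection \( h \colon B(\lambda) \to X \), then I would apply Theorem~\ref{thm:injectiveBorelimage} with domain the whole \(\lambda\)-Borel set \( B(\lambda) \) and map \( h \): its injectivity on \( B(\lambda) \) gives that \( A = h(B(\lambda)) \in \lB(X) \).

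There is no genuine obstacle here, as the entire content resides in the cited results; the statement is purely a repackaging of Corollary~\ref{cor:Borelvsinjimages} via the classification of \(\lambda\)-Polish spaces. The only points requiring a modicum of care are the transfer of the cardinality alternative for \( F \) to \( A \) through the injection \( f \), and the verification that the composite \( f \circ g \) remains \(\lambda\)-Borel, injective, and exactly onto \( A \). I would also remark (though it is irrelevant to the biconditional) that the two alternatives are in fact mutually exclusive, since any \(\lambda\)-Borel injective image of \( B(\lambda) \) has cardinality \( |B(\lambda)| = \lambda^\omega > \lambda \).
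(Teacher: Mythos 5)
Your proof is correct and follows exactly the route the paper intends: the statement is presented there as a direct rewriting of Corollary~\ref{cor:Borelvsinjimages} via the cardinality dichotomy of Corollary~\ref{cor:perfect1} and the isomorphism Theorem~\ref{thm:Borelisomorphism}, with Theorem~\ref{thm:injectiveBorelimage} handling the converse, which is precisely what you do. The care you take with the side conditions (that \( 2^{<\lambda}=\lambda \) yields \(\omega\)-inaccessibility, and that \( f\circ g \) is a \(\lambda\)-Borel injection onto \( A \)) fills in the details the paper leaves implicit.
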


\section{Standard \(\lambda\)-Borel spaces} \label{sec:standardBorel}

\subsection{\(\lambda\)-measurable spaces and functions}

A \markdef{\(\lambda\)-measurable space} is a pair \( X = (X, \mathcal{B}) \) with \( \mathcal{B} \)  a \( \lambda^+ \)-algebra on \( X \). 
A function \( f \) between two \( \lambda \)-measurable spaces \( (X,\mathcal{B}) \) and \( ( Y , \mathcal{C}) \) is \markdef{\( \lambda \)-measurable} if \( f^{-1}(C) \in \mathcal{B} \) for all \( C \in \mathcal{C} \).
Two \(\lambda\)-measurable spaces \( (X, \mathcal{B}) \) and \( (Y, \mathcal{C}) \) are \markdef{isomorphic} if there is a bijection \( f \colon X \to Y \) such that both \( f \) and \( f^{-1} \) are \(\lambda\)-measurable, that is,  \( B \in \mathcal{B} \) if and only if \( f(B) \in \mathcal{C} \) for all \( B \subseteq X \).

\begin{defin}
A \(\lambda\)-measurable space \( X = (X, \mathcal{B}) \) is called \markdef{\(\lambda\)-Borel space} if \( \mathcal{B} \) is \(  \lambda \)-generated and separates points, i.e.\ for all distinct \( x,y \in X \) there is \( B \in \mathcal{B} \) such that \( x \in B \) while \( y \notin B \).
\end{defin}

If \( (X,\mathcal{B}) \) is a \(\lambda\)-Borel space, any \( Y \subseteq X \) can be canonically equipped with the \( \lambda^+ \)-algebra \( \mathcal{B} \restriction Y  = \{ B \cap Y \mid B \in \mathcal{B} \} \), which turns it into a \(\lambda\)-Borel space as well. 
Moreover,
every metrizable space \( X \) of weight at most \( \lambda\) can be canonically turned into a \(\lambda\)-Borel space by setting \( \mathcal{B} = \lB(X) \). For this reason, if \( (X, \mathcal{B}) \) is an arbitrary \(\lambda\)-Borel space, the elements of \( \mathcal{B} \) are called \( \lambda \)-Borel sets of \( X \). Similarly, \(\lambda\)-measurable functions between \(\lambda\)-Borel spaces are called \( \lambda \)-Borel (measurable). The notions of \(\lambda\)-Borel embedding and \(\lambda\)-Borel isomorphism are defined accordingly. Notice that all the terminology and notation is consistent with the one introduced at the beginning of Section~\ref{sec:Borel}.

Naturally adapting the proof of~\cite[Proposition 12.1]{Kechris1995} and recalling that if \( 2^{< \lambda} = \lambda\)  then the product topology and the bounded topology on \( \pre{\lambda}{2} \) generate the same \(\lambda\)-Borel sets, one easily obtain the following characterization of \(\lambda\)-Borel spaces.

\begin{proposition} \label{prop:Borelspace}
Assume that \( 2^{< \lambda} = \lambda\) and let \( (X,\mathcal{B}) \) be a \( \lambda \)-measurable space. The following are equivalent:
\begin{enumerate-(1)}
\item \label{prop:Borelspace-1}
\( (X, \mathcal{B}) \) is a \(\lambda\)-Borel space;
\item
\( \mathcal{B} = \lB(X,\tau) \) for some metrizable topology \(\tau\) on \( X \) of weight at most \( \lambda \);
\item
\( (X, \mathcal{B}) \) is isomorphic to \( (Z, \lB(Z)) \) for some \( Z \subseteq Y \) with \( Y \) a \(\lambda\)-Polish space;
\item \label{prop:Borelspace-4}
\( (X, \mathcal{B}) \) is isomorphic to \( (Z, \lB(Z)) \) for some \( Z \subseteq \pre{\lambda}{2} \).
\item \label{prop:Borelspace-5}
\( (X, \mathcal{B}) \) is isomorphic to \( (Z, \lB(Z)) \) for some subspace \( Z \) of the topological space \( (\pre{\lambda}{2}, \tau_p ) \), where \( \tau_p \) is the product topology.
\end{enumerate-(1)}
\end{proposition}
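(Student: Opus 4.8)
The plan is to establish the cycle of implications \ref{prop:Borelspace-1} $\Rightarrow$ \ref{prop:Borelspace-5} $\Rightarrow$ \ref{prop:Borelspace-4} $\Rightarrow$ \ref{prop:Borelspace-3} $\Rightarrow$ \ref{prop:Borelspace-2} $\Rightarrow$ \ref{prop:Borelspace-1}. The only implication requiring a genuine construction is \ref{prop:Borelspace-1} $\Rightarrow$ \ref{prop:Borelspace-5}, which is the higher analogue of the embedding argument in~\cite[Proposition 12.1]{Kechris1995}; the remaining four implications are routine transfers through the various equivalent descriptions of $\pre{\lambda}{2}$ and its $\lambda$-Borel structure.

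For \ref{prop:Borelspace-1} $\Rightarrow$ \ref{prop:Borelspace-5}, I would fix a family $\{ B_\alpha \mid \alpha < \lambda \}$ of generators of $\mathcal{B}$ (of size at most $\lambda$, as $\mathcal{B}$ is $\lambda$-generated) and define the \emph{characteristic map} $f \colon X \to \pre{\lambda}{2}$ by letting $f(x)(\alpha) = 1$ if and only if $x \in B_\alpha$. First I would check that $f$ is injective: given distinct $x, y \in X$, the collection $\mathcal{D} = \{ C \in \mathcal{B} \mid (x \in C \Leftrightarrow y \in C) \}$ is a $\lambda^+$-algebra, so if it contained every generator it would equal $\mathcal{B}$, contradicting the fact that $\mathcal{B}$ separates points; hence some $B_\alpha$ contains exactly one of $x,y$, whence $f(x) \neq f(y)$. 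Next, viewing $\pre{\lambda}{2}$ with the product topology $\tau_p$, the map $f$ is $\lambda$-measurable because $f^{-1}(\{ z \mid z(\alpha) = 1 \}) = B_\alpha \in \mathcal{B}$ and such clopen sets generate $\lB(\pre{\lambda}{2}, \tau_p)$. Finally, setting $Z = f(X)$, I would show that $f$ is an isomorphism onto $(Z, \lB(Z))$ by verifying that $\mathcal{E} = \{ B \in \mathcal{B} \mid f(B) \in \lB(Z) \}$ is a $\lambda^+$-algebra: since $f$ is a bijection onto $Z$ it commutes with complements and $\lambda$-unions, and $f(B_\alpha) = Z \cap \{ z \mid z(\alpha) = 1 \} \in \lB(Z)$, so $\mathcal{E}$ contains all generators and hence equals $\mathcal{B}$. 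This yields precisely \ref{prop:Borelspace-5}.

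The routine implications are where the hypothesis $2^{<\lambda} = \lambda$ is used. For \ref{prop:Borelspace-5} $\Rightarrow$ \ref{prop:Borelspace-4} I would invoke the fact (recorded in the Remark following the definition of $\lambda$-Borel sets) that, under $2^{<\lambda} = \lambda$, the product topology and the bounded topology on $\pre{\lambda}{2}$ generate the same $\lambda$-Borel sets; restricting to $Z$ shows $\lB(Z, \tau_p) = \lB(Z, \tau_b)$, so the two descriptions name the same $\lambda$-Borel space. For \ref{prop:Borelspace-4} $\Rightarrow$ \ref{prop:Borelspace-3}, it suffices to recall that $\pre{\lambda}{2}$ endowed with the bounded topology is $\lambda$-Polish precisely because $\cf(\lambda) = \omega$ and $2^{<\lambda} = \lambda$ (Example~\ref{xmp:lambda-Polish}\ref{xmp:lambda-Polish-3}), so one may take $Y = \pre{\lambda}{2}$. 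For \ref{prop:Borelspace-3} $\Rightarrow$ \ref{prop:Borelspace-2}, I would transport the subspace topology of $Z \subseteq Y$ to $X$ along the given isomorphism $g \colon X \to Z$: the resulting topology $\tau$ is metrizable of weight at most $\lambda$ and makes $g$ a homeomorphism, so $\lB(X, \tau) = \{ g^{-1}(C) \mid C \in \lB(Z) \} = \mathcal{B}$.

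Finally, \ref{prop:Borelspace-2} $\Rightarrow$ \ref{prop:Borelspace-1} is immediate: if $\mathcal{B} = \lB(X, \tau)$ with $\tau$ metrizable of weight at most $\lambda$, then any basis of size at most $\lambda$ generates $\mathcal{B}$, and for distinct $x, y$ the open ball $B_d(x, d(x,y)/2)$ is a $\lambda$-Borel set containing $x$ but not $y$, so $\mathcal{B}$ separates points. The main (though still modest) obstacle I anticipate is the bijection bookkeeping in \ref{prop:Borelspace-1} $\Rightarrow$ \ref{prop:Borelspace-5} — namely confirming that $f$ carries $\mathcal{B}$ exactly \emph{onto} $\lB(Z)$ rather than merely into it — together with the careful but standard appeal to $2^{<\lambda} = \lambda$ ensuring that $\pre{\lambda}{2}$ is simultaneously $\lambda$-Polish and topology-independent at the level of its $\lambda$-Borel sets.
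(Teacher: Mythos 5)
Your proof is correct and follows essentially the same route as the paper: the paper's (omitted) argument is precisely the adaptation of Kechris's Proposition 12.1 via the characteristic-function embedding $x \mapsto (\chi_{B_\alpha}(x))_{\alpha<\lambda}$ into $(\pre{\lambda}{2},\tau_p)$, combined with the remark that under $2^{<\lambda}=\lambda$ the product and bounded topologies on $\pre{\lambda}{2}$ generate the same $\lambda$-Borel sets, which is exactly your implication \ref{prop:Borelspace-1}~$\Rightarrow$~\ref{prop:Borelspace-5} and your handling of \ref{prop:Borelspace-5}~$\Rightarrow$~\ref{prop:Borelspace-4}.
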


Similarly, adapting the classical argument one obtains the following analogue of~\cite[Theorem 12.2]{Kechris1995}, originally due to Kuratowski.

\begin{theorem}
Let \( X \) be a \(\lambda\)-measurable space and \( Y \) be \(\lambda\)-Polish. If \( Z \subseteq X \) and \( f \colon Z \to Y \) is \(\lambda\)-measurable, then there is a 
\(\lambda\)-measurable function \( \hat{f} \colon X \to Y \) extending \( f \).
\end{theorem}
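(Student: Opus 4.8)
The statement to prove is a $\lambda$-measurable extension theorem: given a $\lambda$-measurable space $X$, a $\lambda$-Polish space $Y$, a subset $Z\subseteq X$ and a $\lambda$-measurable $f\colon Z\to Y$, there is a $\lambda$-measurable $\hat f\colon X\to Y$ extending $f$. This is the direct analogue of \cite[Theorem 12.2]{Kechris1995}, so the plan is to mimic the classical argument, replacing $\omega$ by $\lambda$ throughout and taking care that the countable-cofinality structure of $\lambda$ is used where the classical proof uses countability.

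The plan is as follows. First I would reduce to the case of a canonical coding space. By Theorem~\ref{thm:homeomorphictoCantor} and Proposition~\ref{prop:surjection}, every $\lambda$-Polish space $Y$ embeds (as a closed, hence $\lambda$-Borel, subset) into $\pre{\lambda}{2}$, or can be obtained as a continuous bijective image of a closed subset of $B(\lambda)$; the cleanest target is $\pre{\lambda}{2}$ when $2^{<\lambda}=\lambda$. So it suffices to treat the case $Y=\pre{\lambda}{2}$: if $e\colon Y\hookrightarrow\pre{\lambda}{2}$ is a $\lambda$-Borel embedding and I extend $e\circ f$ to a $\lambda$-measurable $g\colon X\to\pre{\lambda}{2}$, I then push the values landing outside $e(Y)$ to a fixed point of $e(Y)$ and compose with $e^{-1}$, using that $e(Y)$ is $\lambda$-Borel in $\pre{\lambda}{2}$. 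Second, for $Y=\pre{\lambda}{2}$ the problem decomposes coordinatewise: writing $f(z)=(f_\alpha(z))_{\alpha<\lambda}$ with $f_\alpha\colon Z\to 2$, each $f_\alpha$ is $\lambda$-measurable because the $\alpha$-th coordinate projection $\pre{\lambda}{2}\to 2$ is continuous, hence $\lambda$-Borel. The set $A_\alpha=f_\alpha^{-1}(1)\in\mathcal B\restriction Z$, so $A_\alpha=B_\alpha\cap Z$ for some $B_\alpha\in\mathcal B$. Then I define $\hat f(x)=(\chi_{B_\alpha}(x))_{\alpha<\lambda}$, i.e.\ $\hat f_\alpha=\chi_{B_\alpha}$, which is $\lambda$-measurable since each $\hat f_\alpha^{-1}(1)=B_\alpha\in\mathcal B$, and $\hat f$ extends $f$ because $x\in Z$ gives $\hat f_\alpha(x)=\chi_{B_\alpha}(x)=\chi_{A_\alpha}(x)=f_\alpha(x)$.

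The one technical point to verify carefully is that the coordinatewise-defined $\hat f\colon X\to\pre{\lambda}{2}$ is genuinely $\lambda$-measurable \emph{into the space $\pre{\lambda}{2}$}, not merely coordinatewise measurable. For this I would check that the preimage under $\hat f$ of every basic $\tau_p$-open (equivalently, $\tau_b$-basic) set $\Nbhd_s$, $s\in\pre{<\lambda}{2}$, lies in $\mathcal B$. Since $\hat f^{-1}(\Nbhd_s)=\bigcap_{\alpha<\lh(s)}\hat f_\alpha^{-1}(\{s(\alpha)\})$ is a well-ordered intersection of fewer than $\lambda$-many sets of $\mathcal B$, and a $\lambda^+$-algebra is closed under such intersections (being closed under complements and well-ordered unions of length less than $\lambda$), this preimage is in $\mathcal B$. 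Here $2^{<\lambda}=\lambda$ guarantees that $\tau_p$ and $\tau_b$ generate the same $\lambda$-Borel sets on $\pre{\lambda}{2}$, so it is enough that preimages of the basic sets generating the $\lambda$-Borel $\sigma$-structure lie in $\mathcal B$, and then closure of $\mathcal B$ under the $\lambda^+$-algebra operations propagates $\lambda$-measurability to all $\lambda$-Borel targets.

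The main obstacle I anticipate is not conceptual but bookkeeping about choice: selecting the sets $B_\alpha\in\mathcal B$ witnessing $A_\alpha=B_\alpha\cap Z$ is a choice over a family of size $\lambda$, which is exactly what $\AC_\lambda(\pre{\lambda}{2})$ is meant to provide, since $\mathcal B$ is $\lambda$-generated and admits a canonical surjection from $\pre{\lambda}{2}$ onto it (as discussed in Section~\ref{subsec:weakchoice}). I would make this explicit, noting that the selection of the $B_\alpha$ is the only use of choice and that it is licensed by the standing hypothesis $\ZF+\AC_\lambda(\pre{\lambda}{2})$. A secondary subtlety is the reduction step when $2^{<\lambda}=\lambda$ fails or when one prefers $Y=B(\lambda)$; in that regime one works with $B(\lambda)=\pre{\omega}{\lambda}$ as the coding space instead, decomposing $f$ into its $\omega$-many coordinates $f_n\colon Z\to\lambda$ and extending each using that the $\lambda^+$-algebra $\mathcal B$ is closed under $\lambda$-sized partitions, but the generic statement of the theorem is cleanest under $2^{<\lambda}=\lambda$ with target $\pre{\lambda}{2}$, which is the setting inherited from the rest of the section.
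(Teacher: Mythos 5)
Your argument takes a genuinely different route from the paper's. The paper obtains this statement by adapting the classical Kuratowski approximation argument of \cite[Theorem 12.2]{Kechris1995}: fix a complete metric \( d \leq 1 \) and a well-ordered dense set \( (y_\alpha)_{\alpha<\mu} \), \( \mu\leq\lambda \), in \( Y \); for each \( n \) choose sets in \( \mathcal{B} \) witnessing relative measurability of the preimages of the balls \( B_d(y_\alpha,2^{-n}) \), disjointify them using closure of \( \mathcal{B} \) under \(\lambda\)-sized unions, and obtain \(\lambda\)-measurable maps \( g_n \colon X \to Y \) with range in the dense set satisfying \( d(g_n(z),f(z))\leq 2^{-n} \) on \( Z \); finally set \( \hat f = \lim_n g_n \) on the \( \mathcal{B} \)-measurable set where \( (g_n)_n \) is Cauchy and constant elsewhere. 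That argument uses only the metric structure of \( Y \), so it works for every \(\lambda\)-Polish \( Y \) with no cardinal-arithmetic hypothesis. Your coding argument --- embed \( Y \) in a sequence space, extend coordinatewise via the characteristic functions \( \chi_{B_\alpha} \), push stray values to a fixed point of the image, pull back --- is correct in its core, and your verification that preimages of the sets \( \Nbhd_s \), hence of all \(\lambda\)-Borel sets, land in \( \mathcal{B} \) is fine.

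As a proof of the theorem \emph{as stated}, however, there is a gap. The statement carries no hypothesis \( 2^{<\lambda}=\lambda \) (contrast Proposition~\ref{prop:Borelspace} immediately before it, where that hypothesis is explicit), whereas your main reduction needs \( \pre{\lambda}{2} \) to be \(\lambda\)-Polish and needs Theorem~\ref{thm:injectiveBorelimage} (equivalently, Proposition~\ref{prop:charstandardBorel}) to produce a \(\lambda\)-Borel embedding \( e \colon Y \to \pre{\lambda}{2} \) with \(\lambda\)-Borel image --- all of which presuppose \( 2^{<\lambda}=\lambda \). Your fallback via \( B(\lambda) \) is the right repair, but you omit its key step: how an arbitrary \(\lambda\)-Polish \( Y \) gets coded into \( B(\lambda) \). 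This is exactly what Proposition~\ref{prop:surjection} together with Remark~\ref{rmk:inverseofinjection} provides: a closed \( F \subseteq B(\lambda) \) and a continuous bijection \( g \colon F \to Y \) with \(\lambda\)-Borel inverse, so \( e = g^{-1} \) is a \(\lambda\)-Borel embedding with closed, hence \(\lambda\)-Borel, image; running your coordinatewise extension on \( e \circ f \) (now with \( \omega \)-many \(\lambda\)-valued coordinates, each extended by disjointifying the \(\lambda\)-many fiber witnesses) and composing with \( g \) after the push-to-a-fixed-point trick finishes the proof with no assumption on \( 2^{<\lambda} \). Separately, your justification of the choice step is off: \( X \) is only assumed to be a \(\lambda\)-measurable space, so \( \mathcal{B} \) need not be \(\lambda\)-generated nor a surjective image of \( \pre{\lambda}{2} \); selecting the witnesses \( B_\alpha \) genuinely requires choice over \( \mathcal{B} \) itself --- a cost the paper's approximation argument also pays, but one that \( \AC_\lambda(\pre{\lambda}{2}) \) alone does not cover.
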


It follows that any partial \(\lambda\)-Borel function between \(\lambda\)-Polish spaces can be extended to a total \(\lambda\)-Borel function.

\begin{defin}
A \(\lambda\)-measurable space \( X = (X,\mathcal{B}) \) is a \markdef{standard \(\lambda\)-Borel space} if there is a \(\lambda\)-Polish topology \( \tau \) on \( X \) such that \( \mathcal{B} = \lB(X,\tau) \).
\end{defin}

With an abuse of terminology, we will also say that a topological space \( (X,\tau) \) is (standard) \(\lambda\)-Borel if so is the \(\lambda\)-measurable space \( (X, \lB(X,\tau)) \), even if \( \tau \) itself was not \(\lambda\)-Polish. 

\begin{example} \label{xmp:lambda^lambdawithproduct}
Assume that \( 2^{< \lambda} = \lambda \) and equip \( \pre{\lambda}{\lambda} \) with the \emph{product} topology \( \tau_p \). The space \( (\pre{\lambda}{\lambda}, \tau_p) \) is not \(\lambda\)-Polish because it is not even first-countable (hence neither metrizable), yet its weight is \( \lambda \) and, being Hausdorff, \( \tau_p \) separates points. It follows that \( (\pre{\lambda}{\lambda}, \mathcal{B}) \), where \( \mathcal{B} = \lB(\pre{\lambda}{\lambda}, \tau_p) \), is a \(\lambda\)-Borel space: we claim that it is standard. Indeed, equip \( \pre{\lambda \times \lambda}{2} \) with the product topology. Then the map \( \pre{\lambda}{\lambda} \to \pre{\lambda \times \lambda}{2} \) sending \( z \in \pre{\lambda}{\lambda} \) to the characteristic function of its graph is a topological embedding, and one can easily compute that its range is \(\lambda\)-Borel. Identifying \( \pre{\lambda \times \lambda}{2} \) with \( \pre{\lambda}{2} \) in the obvious way,
 we get that \( (\pre{\lambda}{\lambda}, \mathcal{B}) \) is \(\lambda\)-Borel isomorphic to a \(\lambda\)-Borel subset of 
 \( (\pre{\lambda}{2},\tau_p) \), thus it is standard \(\lambda\)-Borel because the product and the bounded topology on \( \pre{\lambda}{2} \) give rise to the same \(\lambda\)-Borel sets under \( 2^{< \lambda} = \lambda \), 
 and each \( \lambda \)-Borel subset of \( \pre{\lambda}{2} \) can be equipped with a \(\lambda\)-Polish topology generating its \(\lambda\)-Borel structure by Corollary~\ref{cor:changeoftopologyclopen}.
 (More generally, see condition~\ref{prop:charstandardBorel-5} in the ensuing Proposition~\ref{prop:charstandardBorel}).
 
 In contrast, if we equip \( \pre{\lambda}{\lambda} \) with the bounded topology \( \tau_b \), we get a space which is completely metrizable but has weight \(  \lambda^{\cf(\lambda)} > \lambda \), hence it is not \(\lambda\)-Polish again. But this time one can actually check that \( (\pre{\lambda}{\lambda}, \tau_b) \) is not even (standard) \(\lambda\)-Borel. Indeed, any \(\lambda\)-generated \(\lambda ^+ \)-algebra has size at most \( 2^\lambda \), while there are \( 2^{(\lambda^{\cf(\lambda)})} = 2^{2^\lambda}  \) distinct \( \tau_b \)-open sets, hence \( |\lB(\pre{\lambda}{\lambda}, \tau_b)| = 2^{2^\lambda} > 2^\lambda \). 
\end{example}

The only nontrivial implications in the next result are \ref{prop:charstandardBorel-3}~\( \Rightarrow \)~\ref{prop:charstandardBorel-2} and \ref{prop:charstandardBorel-1}~\( \Rightarrow \)~\ref{prop:charstandardBorel-4}: the former follows from Corollary~\ref{cor:changeoftopologyclopen}, while the latter follows from Proposition~\ref{prop:Borelspace} and Theorem~\ref{thm:injectiveBorelimage}.

\begin{proposition} \label{prop:charstandardBorel}
Let \(\lambda\) be such that \( 2^{< \lambda} = \lambda \), and let \( X \) be a \(\lambda\)-Borel space. The following are equivalent:
\begin{enumerate-(1)}
\item \label{prop:charstandardBorel-1}
\( X \) is a standard \(\lambda\)-Borel space;
\item \label{prop:charstandardBorel-2}
\( X \) is \(\lambda\)-Borel isomorphic to some $\lambda$-Polish space \( Y \) (we can additionally require \( \mathrm{dim}(Y) = 0 \));
\item \label{prop:charstandardBorel-3}
\( X \) is \(\lambda\)-Borel isomorphic to a \(\lambda\)-Borel subset of some \(\lambda\)-Polish space \( Y \)  (with \( \mathrm{dim}(Y) = 0 \), if needed);
\item \label{prop:charstandardBorel-4}
\( X \) is \(\lambda\)-Borel isomorphic to a \(\lambda\)-Borel subset of \( \pre{\lambda}{2} \);
\item \label{prop:charstandardBorel-5}
\( X \) is \(\lambda\)-Borel isomorphic to a \(\lambda\)-Borel subset of \( (\pre{\lambda}{2}, \tau_p) \).
\end{enumerate-(1)}
\end{proposition}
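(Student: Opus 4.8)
The plan is to prove the cycle of implications by exploiting the two pieces of machinery already at our disposal: the characterization of $\lambda$-Borel spaces in Proposition~\ref{prop:Borelspace}, and the change-of-topology results from Chapter~\ref{sec:Borel} (most notably Corollary~\ref{cor:changeoftopologyclopen}), together with the injective-image theorem Theorem~\ref{thm:injectiveBorelimage}. As indicated in the statement, the routine implications form a chain
\[
\ref{prop:charstandardBorel-5} \Rightarrow \ref{prop:charstandardBorel-4} \Rightarrow \ref{prop:charstandardBorel-3} \Rightarrow \ref{prop:charstandardBorel-2} \Rightarrow \ref{prop:charstandardBorel-1},
\]
and then one closes the loop with $\ref{prop:charstandardBorel-1} \Rightarrow \ref{prop:charstandardBorel-4}$ (or, as the hint suggests, $\ref{prop:charstandardBorel-1} \Rightarrow \ref{prop:charstandardBorel-4}$ directly). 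The only two implications requiring genuine work are $\ref{prop:charstandardBorel-3} \Rightarrow \ref{prop:charstandardBorel-2}$ and $\ref{prop:charstandardBorel-1} \Rightarrow \ref{prop:charstandardBorel-4}$; the rest I would dispatch quickly.

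First I would handle the easy implications. The implications $\ref{prop:charstandardBorel-5} \Rightarrow \ref{prop:charstandardBorel-4}$ and $\ref{prop:charstandardBorel-4} \Rightarrow \ref{prop:charstandardBorel-3}$ are immediate once we recall, as used repeatedly in this section, that under $2^{< \lambda} = \lambda$ the product topology $\tau_p$ and the bounded topology $\tau_b$ on $\pre{\lambda}{2}$ generate the same $\lambda$-Borel sets, and that $(\pre{\lambda}{2}, \tau_b)$ is itself $\lambda$-Polish; so a $\lambda$-Borel subset of $(\pre{\lambda}{2}, \tau_p)$ is the same thing as a $\lambda$-Borel subset of the $\lambda$-Polish space $\pre{\lambda}{2}$, which is in particular a $\lambda$-Borel subset of \emph{some} $\lambda$-Polish space. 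For $\ref{prop:charstandardBorel-2} \Rightarrow \ref{prop:charstandardBorel-1}$: if $X$ is $\lambda$-Borel isomorphic to a $\lambda$-Polish space $(Y,\tau)$ via $f$, then transporting $\tau$ forward along $f$ produces a $\lambda$-Polish topology on $X$ whose $\lambda$-Borel sets are exactly $\mathcal{B}$, which is precisely the definition of standard.

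The heart of the argument is $\ref{prop:charstandardBorel-3} \Rightarrow \ref{prop:charstandardBorel-2}$. Suppose $X$ is $\lambda$-Borel isomorphic to a $\lambda$-Borel subset $B$ of a $\lambda$-Polish space $Y$, where by Corollary~\ref{cor:changeoftopologyclopen} we may take $\mathrm{dim}(Y) = 0$. The point is to upgrade $B$ from a mere $\lambda$-Borel subset to a genuinely $\lambda$-Polish space in its own right: by Corollary~\ref{cor:changeoftopologyclopen} applied to the single set $B$, there is a $\lambda$-Polish topology $\tau'$ on $Y$ with $\lB(Y,\tau') = \lB(Y,\tau)$, $\mathrm{dim}(Y,\tau') = 0$, and $B$ clopen (hence closed) in $(Y,\tau')$. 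A closed subset of a $\lambda$-Polish space is $\lambda$-Polish and inherits $\mathrm{dim} = 0$, so $(B, \tau' \restriction B)$ is a zero-dimensional $\lambda$-Polish space whose $\lambda$-Borel structure agrees with the subspace $\lambda$-Borel structure inherited from $Y$. Composing with the given isomorphism $X \cong B$ yields $\ref{prop:charstandardBorel-2}$, with the zero-dimensionality clause satisfied. Finally, for $\ref{prop:charstandardBorel-1} \Rightarrow \ref{prop:charstandardBorel-4}$: if $X$ is standard, fix a witnessing $\lambda$-Polish topology, so $X$ is (trivially $\lambda$-Borel isomorphic to) a $\lambda$-Polish space $Y$; by Proposition~\ref{prop:Borelspace}\ref{prop:Borelspace-4} we embed $Y$ into $\pre{\lambda}{2}$ as a subspace $Z$ via a $\lambda$-Borel isomorphism onto $(Z, \lB(Z))$, and the image of $Y$ under this embedding is $\lambda$-Borel in $\pre{\lambda}{2}$ by Theorem~\ref{thm:injectiveBorelimage} (the continuous/$\lambda$-Borel injective image of a $\lambda$-Borel set is $\lambda$-Borel), giving $\ref{prop:charstandardBorel-4}$.

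The main obstacle I anticipate is \emph{not} any single hard estimate but rather the bookkeeping of the extra zero-dimensionality clause ``(we can additionally require $\mathrm{dim}(Y)=0$)'' and ``(with $\mathrm{dim}(Y)=0$, if needed)'' attached to $\ref{prop:charstandardBorel-2}$ and $\ref{prop:charstandardBorel-3}$: one must check that each change of topology invoked via Corollary~\ref{cor:changeoftopologyclopen} can be taken to yield $\mathrm{dim} = 0$ simultaneously with the clopen-ness of the relevant sets, and that passing to closed subspaces preserves $\mathrm{dim} = 0$ (which it does, since a closed subspace of a space with $\mathrm{dim} = 0$ again has $\mathrm{dim} = 0$, as recorded in Chapter~\ref{sec:lambda-Polish}). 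The one subtlety worth flagging explicitly is that in $\ref{prop:charstandardBorel-1} \Rightarrow \ref{prop:charstandardBorel-4}$ one must verify that the embedding of Proposition~\ref{prop:Borelspace} is realized as an injective $\lambda$-Borel map defined on a $\lambda$-Borel (indeed $\lambda$-Polish) domain, so that Theorem~\ref{thm:injectiveBorelimage} applies to conclude the range is $\lambda$-Borel; this is exactly the role played by the hypothesis $2^{< \lambda} = \lambda$ via Souslin's theorem (Theorem~\ref{thm:souslin}) lurking behind Theorem~\ref{thm:injectiveBorelimage}.
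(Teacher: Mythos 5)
Your proof follows essentially the same route as the paper's: the paper's entire proof consists of the remark that the only nontrivial implications are \ref{prop:charstandardBorel-3}~\(\Rightarrow\)~\ref{prop:charstandardBorel-2}, which follows from Corollary~\ref{cor:changeoftopologyclopen}, and \ref{prop:charstandardBorel-1}~\(\Rightarrow\)~\ref{prop:charstandardBorel-4}, which follows from Proposition~\ref{prop:Borelspace} and Theorem~\ref{thm:injectiveBorelimage} --- precisely the two steps you single out and carry out, with the same ingredients (including the preservation of \( \mathrm{dim}=0 \) under the change of topology and under passing to closed subspaces). One bookkeeping omission: the implications you list (\ref{prop:charstandardBorel-5}\(\Rightarrow\)\ref{prop:charstandardBorel-4}\(\Rightarrow\)\ref{prop:charstandardBorel-3}\(\Rightarrow\)\ref{prop:charstandardBorel-2}\(\Rightarrow\)\ref{prop:charstandardBorel-1}, plus \ref{prop:charstandardBorel-1}\(\Rightarrow\)\ref{prop:charstandardBorel-4}) establish the equivalence of \ref{prop:charstandardBorel-1}--\ref{prop:charstandardBorel-4} and that \ref{prop:charstandardBorel-5} implies them, but nothing returns to \ref{prop:charstandardBorel-5}; you should record \ref{prop:charstandardBorel-4}~\(\Rightarrow\)~\ref{prop:charstandardBorel-5} explicitly, which is immediate from the very fact you already invoke, namely that under \( 2^{<\lambda}=\lambda \) the bounded and product topologies on \( \pre{\lambda}{2} \) generate the same \(\lambda\)-Borel sets, so a \(\lambda\)-Borel subset of \( \pre{\lambda}{2} \) is literally the same thing as a \(\lambda\)-Borel subset of \( (\pre{\lambda}{2},\tau_p) \).
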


Propositions~\ref{prop:Borelspace} and~\ref{prop:charstandardBorel} in particular imply that (standard) \(\lambda\)-Borel spaces are closed under (\(\lambda\)-Borel) subspaces,
but one can show that they are also closed under products of size \(\lambda\). Recall that the product of \( \lambda \)-many \(\lambda\)-measurable spaces \( (X_i, \mathcal{B}_i)\) is the \(\lambda\)-measurable space \( \big(  \prod_{i < \lambda} X_i, \mathcal{B}_\Pi \big) \), where \( \mathcal{B}_\Pi \) is the \(\lambda^+ \)-algebra on the Cartesian product \( \prod_{i < \lambda} X_i \) generated by the direct product%
\footnote{Recall that the direct product of the \( \lambda^+ \)-algebras \( \mathcal{B}_i \) is the collection of the sets of the form \( \prod_{i < \lambda} A_i \subseteq \prod_{i < \lambda} X_i \) where \( A_i \in \mathcal{B}_i \) for all \( i < \lambda \). It is a \( \lambda^+ \)-complete Boolean algebra if equipped with the \emph{pointwise operations}, but it is not closed under unions, intersections, and complements relatively to the ambient space \( \prod_{i < \lambda} X_i \).}
of the \( \lambda^+ \)-algebras \( \mathcal{B}_i \). Notice that if each \( \mathcal{B}_i \) is generated by a corresponding topology \( \tau_i \) on \( X \), then the resulting \( \mathcal{B}_\Pi \) on \( X \) coincides with the \( \lambda^+ \)-algebra generated by some/any product topology%
\footnote{Here we can equivalently take any \( \mu \leq \lambda \) and consider the \(  < \mu \)-supported product topology. Even considering the box topology would give rise to the same \(\lambda^+\)-algebra.}
 of the \( \tau_i \); notice however that even if all the \( \tau_i \) are \(\lambda\)-Polish, such a product needs not to be \(\lambda\)-Polish.

\begin{proposition} \label{prop:productofstandardBorelspaces}
Assume \( 2^{< \lambda} = \lambda \). Suppose that  \( (X_i, \mathcal{B}_i) \) are (standard) \(\lambda\)-Borel spaces for \( i <\lambda \).  Then the product \(\lambda\)-measurable space \( \big( \prod_{i < \lambda} X_i, \mathcal{B}_\Pi \big) \) is (standard) \(\lambda\)-Borel.
\end{proposition}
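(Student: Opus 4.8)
The plan is to prove the two assertions (the \textbf{standard} case and the bare \textbf{\(\lambda\)-Borel} case) in parallel, reducing both to the characterizations already established in Propositions~\ref{prop:Borelspace} and~\ref{prop:charstandardBorel}. First I would handle the \(\lambda\)-Borel (non-standard) case, which is purely algebraic: by Proposition~\ref{prop:Borelspace}\ref{prop:Borelspace-1}, it suffices to check that \(\big(\prod_{i<\lambda} X_i,\mathcal{B}_\Pi\big)\) has a \(\lambda\)-generated \(\lambda^+\)-algebra that separates points. Separation of points is immediate: given distinct \(x,y\in\prod_{i<\lambda}X_i\) there is some coordinate \(i\) with \(x(i)\neq y(i)\), and since each \(\mathcal{B}_i\) separates points there is \(B\in\mathcal{B}_i\) containing \(x(i)\) but not \(y(i)\); its cylinder (preimage under the \(i\)-th projection) lies in \(\mathcal{B}_\Pi\) and separates \(x\) from \(y\). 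For \(\lambda\)-generation, I would fix for each \(i<\lambda\) a generating family \(\mathcal{A}_i\subseteq\mathcal{B}_i\) of size at most \(\lambda\) and observe that the collection of all coordinate cylinders \(\set{z}{z(i)\in A}\) with \(i<\lambda\) and \(A\in\mathcal{A}_i\) has size at most \(\lambda\times\lambda=\lambda\); this family generates \(\mathcal{B}_\Pi\) because the generating direct-product boxes \(\prod_{i<\lambda}A_i\) are themselves \(\lambda\)-sized intersections of such cylinders, and here I would invoke Fact~\ref{fct:lambdaunionewithoutchoice} (using \(\AC_\lambda(\pre{\lambda}{2})\) and \(2^{<\lambda}=\lambda\)) to keep the bookkeeping of the \(\lambda\)-sized index sets under control.

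For the \textbf{standard} case I would use Proposition~\ref{prop:charstandardBorel}, condition~\ref{prop:charstandardBorel-4}: a \(\lambda\)-Borel space is standard precisely when it is \(\lambda\)-Borel isomorphic to a \(\lambda\)-Borel subset of \(\pre{\lambda}{2}\). By hypothesis each \((X_i,\mathcal{B}_i)\) is standard, so fix \(\lambda\)-Borel isomorphisms \(g_i\colon X_i\to Z_i\) with \(Z_i\in\lB(\pre{\lambda}{2})\) (this choice is a \(\lambda\)-sequence of objects coded in \(\pre{\lambda}{2}\), so \(\AC_\lambda(\pre{\lambda}{2})\) legitimizes it). The product map \(g=\prod_{i<\lambda}g_i\) carries \(\prod_{i<\lambda}X_i\) bijectively onto \(\prod_{i<\lambda}Z_i\subseteq\prod_{i<\lambda}\pre{\lambda}{2}\), and I would check it is a \(\lambda\)-Borel isomorphism by verifying that each \(g\) and \(g^{-1}\) pull back coordinate cylinders to \(\lambda\)-Borel sets, which reduces to the \(\lambda\)-Borel measurability of the individual \(g_i\) and \(g_i^{-1}\). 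Now \(\prod_{i<\lambda}\pre{\lambda}{2}\) is naturally homeomorphic (with the product topology) to \(\pre{\lambda\times\lambda}{2}\), which in turn is canonically identified with \(\pre{\lambda}{2}\) via the G\"odel pairing; under \(2^{<\lambda}=\lambda\) the product and the bounded topologies on \(\pre{\lambda}{2}\) generate the same \(\lambda\)-Borel sets (the Remark following the definition of \(\lambda\)-Borel in Section~\ref{sec:Boreldefandbasicfacts}), so this identification is a \(\lambda\)-Borel isomorphism between the product \(\lambda\)-measurable structure and \(\lB(\pre{\lambda}{2})\).

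It then remains to see that the image \(\prod_{i<\lambda}Z_i\), transported into \(\pre{\lambda}{2}\), is itself a \(\lambda\)-Borel subset: since each \(Z_i\) is \(\lambda\)-Borel in the \(i\)-th factor, its cylinder is \(\lambda\)-Borel in the product, and \(\prod_{i<\lambda}Z_i=\bigcap_{i<\lambda}(\text{$i$-th cylinder of }Z_i)\) is a \(\lambda\)-sized intersection of \(\lambda\)-Borel sets, hence \(\lambda\)-Borel (the \(\lambda^+\)-algebra being closed under intersections of length \(\lambda\)). By Proposition~\ref{prop:charstandardBorel}\ref{prop:charstandardBorel-4} this exhibits \(\big(\prod_{i<\lambda}X_i,\mathcal{B}_\Pi\big)\) as standard \(\lambda\)-Borel, completing the argument. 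I expect the main obstacle to be the bookkeeping in the standard case, namely confirming that the chain of canonical identifications \(\prod_{i<\lambda}\pre{\lambda}{2}\approx\pre{\lambda\times\lambda}{2}\approx\pre{\lambda}{2}\) is a genuine \(\lambda\)-Borel isomorphism rather than merely a topological one: the delicate point is that the product topology here is \emph{not} \(\lambda\)-Polish, so one cannot directly invoke complete metrizability, and must instead route everything through the \(2^{<\lambda}=\lambda\) coincidence of the product and bounded \(\lambda\)-Borel structures on \(\pre{\lambda}{2}\), together with Corollary~\ref{cor:changeoftopologyclopen} to supply a \(\lambda\)-Polish topology on the final \(\lambda\)-Borel subset.
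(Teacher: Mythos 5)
Your proposal is correct and follows essentially the same route as the paper's own proof: the paper likewise takes coordinatewise \(\lambda\)-Borel isomorphisms of the \( (X_i,\mathcal{B}_i) \) with subsets of \( \pre{\lambda}{2} \), forms the product map, and then identifies \( \prod_{i<\lambda}\pre{\lambda}{2} \) (with the \( <\omega \)-supported product of \( \tau_p \)) with \( (\pre{\lambda}{2},\tau_p) \) itself, which is exactly your G\"odel-pairing step and the ``delicate point'' you flag. The only difference is bookkeeping: the paper invokes condition~\ref{prop:Borelspace-5} of Proposition~\ref{prop:Borelspace} (resp.\ condition~\ref{prop:charstandardBorel-5} of Proposition~\ref{prop:charstandardBorel}) — isomorphism with a (\(\lambda\)-Borel) subspace of \( (\pre{\lambda}{2},\tau_p) \) — uniformly for both the plain and the standard case, so your separate cylinder-generation argument for the plain case and your explicit appeal to the coincidence of the product and bounded \(\lambda\)-Borel structures via condition~\ref{prop:charstandardBorel-4} are absorbed into that characterization rather than carried out by hand.
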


\begin{proof}
By~\ref{prop:Borelspace-5} of Proposition~\ref{prop:Borelspace}, each \( (X_i, \mathcal{B}_i) \) is \(\lambda\)-Borel isomorphic to a subspace \( A_i \) of \( (\pre{\lambda}{2}, \tau_p ) \) via some \( f_i \). Thus the product \( \prod_{i < \lambda} f_i \) of such maps witness that \( \big( \prod_{i < \lambda} X_i, \mathcal{B}_\Pi \big) \) is \(\lambda\)-Borel isomorphic to \( \prod_{i < \lambda} A_i  \subseteq \prod_{i < \lambda} \pre{\lambda}{2}  = \pre{\lambda}{(\pre{\lambda}{2})}\), where the latter is equipped with the induced product \(\lambda\)-Borel structure. However, such \( \lambda^+ \)-algebra is induced by the \( < \omega \)-supported product of \( \tau_p \) on each copy of \( \pre{\lambda}{2} \), which turns \( \pre{\lambda}{(\pre{\lambda}{2})} \) into a topological space homeomorphic to \( (\pre{\lambda}{2}, \tau_p ) \). Thus \( \big( \prod_{i < \lambda} X_i, \mathcal{B}_\Pi \big) \) is \(\lambda\)-Borel isomorphic to a  subspace of \( (\pre{\lambda}{2}, \tau_p ) \), and hence we are done by~\ref{prop:Borelspace-5} of Proposition~\ref{prop:Borelspace}.

For the case of standard \(\lambda\)-Borel spaces, it is enough to use Proposition~\ref{prop:charstandardBorel} instead of  Proposition~\ref{prop:Borelspace} and notice that if all the sets \( A_i \) were  in \( \lB(\pre{\lambda}{2}, \tau_p) \), then \( \prod_{i < \lambda} A_i \) is by definition in the product of \(\lambda\)-many copies of \( \lB(\pre{\lambda}{2}, \tau_p) \).
\end{proof}

By Proposition~\ref{prop:charstandardBorel}, if \( 2^{< \lambda} = \lambda\) then Corollary~\ref{cor:perfect1} and Theorem~\ref{thm:Borelisomorphism} extend to standard \(\lambda\)-Borel spaces as well (including \(\lambda\)-Borel subsets of \(\lambda\)-Polish spaces). 

\begin{theorem} \label{thm:Borelisomorphism2}
Assume that \(2^{< \lambda} = \lambda\). If \( X \) is a standard \(\lambda\)-Borel space,
then either \( X \) is well-orderable and \( |X| \leq \lambda \), or else
\( X \) is \(\lambda\)-Borel isomorphic to \( B(\lambda) \) (and thus has size \( \lambda^\omega = 2^\lambda \)). Moreover, if \( X,Y \) are standard \(\lambda\)-Borel spaces, then \( X \) and \( Y \) are \(\lambda\)-Borel isomorphic if and only if \( |X| = |Y| \). 

\end{theorem}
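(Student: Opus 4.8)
The plan is to reduce everything to the already-established classification for \(\lambda\)-Polish spaces (Theorem~\ref{thm:Borelisomorphism}) by passing through Proposition~\ref{prop:charstandardBorel}, which says that under \( 2^{<\lambda}=\lambda \) a standard \(\lambda\)-Borel space is nothing but a \(\lambda\)-Polish space up to \(\lambda\)-Borel isomorphism. First I would record the standing cardinal arithmetic: since \( 2^{<\lambda}=\lambda \) forces \(\lambda\) to be strong limit, and a strong limit cardinal is in particular \(\omega\)-inaccessible, both Corollary~\ref{cor:perfect1} and Theorem~\ref{thm:Borelisomorphism} are available. Moreover, because \( \cf(\lambda)=\omega \) and \( 2^{<\lambda}=\lambda \), Theorem~\ref{thm:homeomorphictoCantor}\ref{thm:homeomorphictoCantor-2} gives \( \pre{\lambda}{2}\approx B(\lambda) \), whence \( \lambda^\omega=|B(\lambda)|=|\pre{\lambda}{2}|=2^\lambda \); this justifies the size claim in the statement.

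For the dichotomy, let \( X \) be a standard \(\lambda\)-Borel space. By the equivalence \ref{prop:charstandardBorel-1}~\( \Leftrightarrow \)~\ref{prop:charstandardBorel-2} of Proposition~\ref{prop:charstandardBorel}, fix a \(\lambda\)-Polish space \( Y \) and a \(\lambda\)-Borel isomorphism \( g \colon X \to Y \); note that \( g \) is in particular a bijection, so \( |X|=|Y| \). Now apply Corollary~\ref{cor:perfect1} to \( Y \). In the first alternative, \( Y \) is well-orderable with \( |Y|\le\lambda \), and transporting back along \( g \) shows that \( X \) is well-orderable with \( |X|=|Y|\le\lambda \). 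In the second alternative, \( |Y|>\lambda \), so Theorem~\ref{thm:Borelisomorphism} provides a \(\lambda\)-Borel isomorphism \( h \colon Y \to B(\lambda) \); then \( h\circ g \colon X \to B(\lambda) \) is a \(\lambda\)-Borel isomorphism (\(\lambda\)-Borel maps are closed under composition since preimages of \(\lambda\)-Borel sets under \(\lambda\)-Borel maps are \(\lambda\)-Borel), and \( |X|=|Y|=\lambda^\omega=2^\lambda \) as computed above.

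For the \emph{moreover} part, let \( X,Y \) be standard \(\lambda\)-Borel spaces. One direction is immediate: a \(\lambda\)-Borel isomorphism is a bijection, so \( X\cong Y \) implies \( |X|=|Y| \). Conversely, assume \( |X|=|Y| \). Using Proposition~\ref{prop:charstandardBorel}\ref{prop:charstandardBorel-2} again, fix \(\lambda\)-Borel isomorphisms \( g_X \colon X \to X' \) and \( g_Y \colon Y \to Y' \) with \( X',Y' \) \(\lambda\)-Polish; then \( |X'|=|X|=|Y|=|Y'| \). The \emph{moreover} part of Theorem~\ref{thm:Borelisomorphism} (applicable since \(\lambda\) is \(\omega\)-inaccessible) yields a \(\lambda\)-Borel isomorphism \( k \colon X' \to Y' \), and \( g_Y^{-1}\circ k\circ g_X \colon X \to Y \) witnesses \( X\cong Y \).

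I do not expect a genuine obstacle here: the content is entirely a transfer argument, and the only points requiring care are bookkeeping ones — verifying that \( 2^{<\lambda}=\lambda \) supplies the \(\omega\)-inaccessibility needed to invoke Corollary~\ref{cor:perfect1} and Theorem~\ref{thm:Borelisomorphism}, that \(\lambda\)-Borel isomorphisms compose and preserve cardinality, and that \( \lambda^\omega=2^\lambda \) in our setting. All of these are already available from the earlier material, so the proof is short.
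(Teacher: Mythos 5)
Your proof is correct and follows exactly the route the paper takes: the paper's (implicit) argument is precisely the transfer via Proposition~\ref{prop:charstandardBorel} to the \(\lambda\)-Polish case, where Corollary~\ref{cor:perfect1} and Theorem~\ref{thm:Borelisomorphism} apply. Your bookkeeping (that \(2^{<\lambda}=\lambda\) yields \(\omega\)-inaccessibility, that \(\lambda\)-Borel isomorphisms compose and preserve cardinality, and that \(\lambda^\omega = 2^\lambda\)) is exactly what is needed and is all available from the earlier material.
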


In particular we get:

\begin{corollary}
Assume that \(2^{< \lambda} = \lambda\) and let  \( X \) be a \(\lambda\)-Polish space. If  \( B  \subseteq X \) is \(\lambda\)-Borel, then \( |B| > \lambda \) if and only if \( B \) is \(\lambda\)-Borel isomorphic to \( B(\lambda) \).
\end{corollary}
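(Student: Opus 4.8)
The plan is to deduce this directly from the classification of standard \(\lambda\)-Borel spaces in Theorem~\ref{thm:Borelisomorphism2}. The one preliminary point to secure is that \( B \), equipped with the induced \( \lambda^+ \)-algebra \( \lB(B) = \{ A \cap B \mid A \in \lB(X) \} \), is itself a \emph{standard} \(\lambda\)-Borel space. This was already noted in the discussion preceding Theorem~\ref{thm:Borelisomorphism2}: by Corollary~\ref{cor:changeoftopologyclopen} one can refine the topology of \( X \) to a \(\lambda\)-Polish topology \( \tau' \) with \( \lB(X,\tau') = \lB(X,\tau) \) in which \( B \) becomes clopen; then \( B \) is a closed subspace of a \(\lambda\)-Polish space, hence \(\lambda\)-Polish, and its \(\lambda\)-Borel structure is exactly \( \lB(B) \). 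This is precisely condition~\ref{prop:charstandardBorel-3} of Proposition~\ref{prop:charstandardBorel}, so \( (B, \lB(B)) \) is standard \(\lambda\)-Borel.

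Next I would simply invoke the dichotomy in Theorem~\ref{thm:Borelisomorphism2} applied to the standard \(\lambda\)-Borel space \( B \): either \( B \) is well-orderable with \( |B| \leq \lambda \), or \( B \) is \(\lambda\)-Borel isomorphic to \( B(\lambda) \) and thus has size \( \lambda^\omega = 2^\lambda \). Both implications of the corollary then follow at once. For the forward direction, assume \( |B| > \lambda \); the first horn of the dichotomy is then impossible, so \( B \) must be \(\lambda\)-Borel isomorphic to \( B(\lambda) \). For the backward direction, if \( B \) is \(\lambda\)-Borel isomorphic to \( B(\lambda) \) then in particular \( B \) is in bijection with \( B(\lambda) \), whence \( |B| = |B(\lambda)| = \lambda^\omega = \lambda^{\cf(\lambda)} > \lambda \).

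Since the statement is essentially a reformulation of Theorem~\ref{thm:Borelisomorphism2} for the special case of \(\lambda\)-Borel subsets, there is no genuine obstacle to overcome. The only points requiring a word of care are the two cardinality observations: that \( |B(\lambda)| > \lambda \), using \( \lambda^\omega = \lambda^{\cf(\lambda)} > \lambda \), and that a \(\lambda\)-Borel isomorphism, being in particular a bijection, preserves cardinality. Beyond recording the standardness of \( B \) and citing the classification theorem, no new argument is needed.
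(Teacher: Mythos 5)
Your proof is correct and follows exactly the paper's intended route: the paper derives this corollary ("In particular we get:") directly from Theorem~\ref{thm:Borelisomorphism2}, using the fact—noted just before that theorem and justified via Proposition~\ref{prop:charstandardBorel} (whose nontrivial implication indeed rests on Corollary~\ref{cor:changeoftopologyclopen}, as you say)—that \(\lambda\)-Borel subsets of \(\lambda\)-Polish spaces are standard \(\lambda\)-Borel spaces. Your two cardinality remarks (\(|B(\lambda)| = \lambda^\omega = \lambda^{\cf(\lambda)} > \lambda\), and that a \(\lambda\)-Borel isomorphism is a bijection) are exactly the points implicitly used in the paper, so nothing is missing.
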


This result is similar in spirit to~\cite[Theorem 13]{Stone1962}, but it is different: on the one hand we have the additional assumption \( 2^{< \lambda} = \lambda > \cf(\lambda) = \omega \) and we reach the weaker conclusion that the spaces be \(\lambda\)-Borel isomorphic (rather than (\( \omega_1 \)-)Borel isomorphic); but on the other hand we can apply it to arbitrary \(\lambda\)-Borel sets and not just (classical) Borel sets.

Proposition~\ref{prop:charstandardBorel} allows us to transfer many notion and results from the realm of \(\lambda\)-Polish spaces to that of standard \(\lambda\)-Borel spaces in a natural way. For example a subset of a standard \(\lambda\)-Borel space \(  (X,\mathcal{B}) \) is called \markdef{\(\lambda\)-analytic} if and only if it is \(\lambda\)-analytic with respect to some \(\lambda\)-Polish topology \(\tau\) on \( X \) for which \( \lB(X,\tau) =  \mathcal{B} \): when \( 2^{< \lambda} = \lambda \), the definition depends only on the \(\lambda\)-Borel structure of \( X \) and not on the chosen \(\tau\). 
Equivalently, a nonempty \( A \subseteq X \) is \(\lambda\)-analytic if and only if it is the image of \( B(\lambda) \) through a \(\lambda\)-Borel measurable function.
It readily follows that all separation theorems seen so far, together with their consequences, hold in this broader context as well. For example: 
\begin{itemizenew}
\item
a subset of a standard \(\lambda\)-Borel set is \(\lambda\)-Borel if and only if it is \( \lambda \)-bianalytic,
\item
 a function between standard \(\lambda\)-Borel spaces \( X \) and \( Y \) is \(\lambda\)-Borel if and only it its graph is \(\lambda\)-Borel in \( X \times Y \),
 \end{itemizenew} 
 and so on.

\subsection{The Effros \(\lambda\)-Borel space}

In the classical setting, a widely considered standard Borel space is the so-called Effros Borel space. This can be straightforwardly generalized as follows. Given a topological space, endow the collection \( F(X) = \lP^0_1(X) \) of all closed subsets of \( X \) with the 
\(\lambda\)-algebra \( \mathcal{B}_F(X) \) generated by the sets of the form
\begin{equation} \label{eq:EffrosBorel}
B_U(X) = \{ F \in F(X) \mid F \cap U \neq \emptyset \},
 \end{equation}
where \( U \) varies over open subsets of \( X \). Moreover, if \( X \) has weight at most \(\lambda\), then in~\eqref{eq:EffrosBorel} it is enough to consider the open sets \( U \) in a fixed \(\lambda\)-sized basis. This shows that under such hypothesis \( \mathcal{B}_F(X) \) is \(\lambda\)-generated. Moreover, it separate points: indeed, if \( F, F' \in F(X) \) are distinct with \( F \nsubseteq F' \), then \( B_{X \setminus F'}(X) \) contains \( F \) but not \( F' \). This shows that \( (F(X), \mathcal{B}_F(X)) \) is a \(\lambda\)-Borel space, that in analogy with the classical case, will be called \markdef{Effros \(\lambda\)-Borel space} (of \( X \)) and denoted by \( F(X) \).

The next result is the analogue of~\cite[Theorem 12.6]{Kechris1995}. However, the proof is necessarily different, as spaces \( X \) such that \( |X| \nleq 2^{\aleph_0} \) cannot have a (metrizable) compactification. 
We provide an alternative argument, which incidentally works also in the classical case 
\( \lambda = \omega \).

\begin{theorem} \label{thm:EffrosBorel}
Assume that \(2^{< \lambda} = \lambda\). If \( X \) is a \(\lambda\)-Polish space, then its Effros \(\lambda\)-Borel space \(  F(X) \) is standard.
\end{theorem}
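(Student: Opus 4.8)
The plan is to show that the Effros $\lambda$-Borel space $F(X)$ is isomorphic to a $\lambda$-Borel subset of the standard $\lambda$-Borel space $(\pre{\lambda}{2}, \tau_p)$, hence standard by Proposition~\ref{prop:charstandardBorel}. The key point is that since $X$ is $\lambda$-Polish with $\mathrm{dim}$ irrelevant, we may fix a tree-basis $\mathcal{B} = \{ U_s \mid s \in \pre{<\omega}{\lambda} \}$ for $X$ with index tree the full $\pre{<\omega}{\lambda}$ (Proposition~\ref{prop:tree-basis}), and more importantly a basis $\{ U_\alpha \mid \alpha < \lambda \}$ of size at most $\lambda$. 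The natural map is
\[
c \colon F(X) \to \pre{\lambda}{2}, \qquad c(F)(\alpha) = 1 \iff F \cap U_\alpha \neq \emptyset,
\]
i.e.\ $c(F)$ is the characteristic function (on the basis) of the sets $B_{U_\alpha}(X)$ defining the Effros structure. First I would check that $c$ is injective: if $F \neq F'$, say $x \in F \setminus F'$, then since $F'$ is closed there is a basic $U_\alpha$ with $x \in U_\alpha$ and $U_\alpha \cap F' = \emptyset$, so $c(F)(\alpha) = 1 \neq 0 = c(F')(\alpha)$. Next, $c$ is a $\lambda$-Borel isomorphism onto its range: for each fixed $\alpha$, the set $\{ F \mid c(F)(\alpha) = 1 \} = B_{U_\alpha}(X)$ is a generator of $\mathcal{B}_F(X)$, and conversely the cylinders of $(\pre{\lambda}{2}, \tau_p)$ are generated by the coordinate sets; since both $\lambda^+$-algebras are $\lambda$-generated by these corresponding families, $c$ transports one onto the other.

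The substantive step, and the place I expect the real work, is proving that $\ran(c)$ is a $\lambda$-Borel subset of $(\pre{\lambda}{2}, \tau_p)$. I would characterize which $y \in \pre{\lambda}{2}$ arise as $c(F)$ for some closed $F$. The idea is that $y = c(F)$ precisely when the ``candidate closed set'' determined by $y$ is consistent with $y$ at every coordinate. Using the tree-basis with its inclusion data controlled by $\pre{<\omega}{\lambda}$ and conditions~\ref{def:tree-basis-1}--\ref{def:tree-basis-4} of Definition~\ref{def:tree-basis}, the natural candidate for $F$ from $y$ is
\[
F_y = \{ x \in X \mid \forall s \in \pre{<\omega}{\lambda} \, ( x \in U_s \Rightarrow y \text{ marks } U_s \text{ as meeting } F_y ) \},
\]
but to avoid this circularity I would instead define $F_y$ directly as the set of points $x$ all of whose tree-basis neighborhoods $U_{x \restriction \text{(coded)}}$ are flagged by $y$, and then check that $y \in \ran(c)$ iff $y$ is ``coherent'', meaning: whenever $y$ flags a basic set $U_\alpha$ (as meeting the closed set), then $U_\alpha$ genuinely meets the candidate $F_y$; and $F_y$ is closed automatically. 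The coherence condition can be written as a statement quantifying over the $\lambda$-many basic open sets and their tree-basis refinements, of the form ``for all $\alpha$, $y(\alpha) = 1$ implies there is an infinite descending chain in the index tree of flagged nonempty neighborhoods converging into $U_\alpha$''.

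The technical heart is to verify that this coherence condition is itself $\lambda$-Borel in $(\pre{\lambda}{2}, \tau_p)$. Here I would exploit that $\cf(\lambda) = \omega$ and that the tree-basis is indexed by $\pre{<\omega}{\lambda}$: the ``infinite descending chain'' is a projection over $\pre{\omega}{\lambda} = B(\lambda)$, which a priori is only $\lambda$-analytic. To get genuine $\lambda$-Borelness I would instead phrase completeness of the metric through the tree-basis diameters (condition~\ref{def:tree-basis-4}, $\mathrm{diam}(U_s) \leq 2^{-\lh(s)}$): a nested sequence of flagged nonempty basic sets with closures contained in one another (condition~\ref{def:tree-basis-2}) has nonempty intersection by completeness, a single point, so the existential over branches can be replaced by a countable conjunction of $\lambda$-sized conditions on the flagging function $y$, keeping everything inside $\lB(\pre{\lambda}{2})$. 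Concretely, $y$ is coherent iff for every $\alpha$ with $y(\alpha)=1$ and every $n \in \omega$ there is a level-$n$ tree-basis node $s$ with $\mathrm{cl}(U_s) \subseteq U_\alpha$, flagged by $y$, and extending the previously chosen flagged node; the closure of such finite-branching selections under the countably many levels gives an $F_\sigma$-over-$\lambda$-Borel description. The main obstacle is thus ensuring this encoding of ``$U_\alpha$ actually meets the closed set $F_y$'' is expressible without an irreducible projection over $B(\lambda)$; the alternative argument avoiding compactifications promised in the statement is exactly this replacement of the analytic projection by a complete-metric selection along the tree-basis, which is why having the tree-basis from Proposition~\ref{prop:tree-basis} with controlled diameters and nested closures is essential.
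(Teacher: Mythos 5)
Your reduction of the theorem to the identification of \( \ran(c) \) inside \( (\pre{\lambda}{2},\tau_p) \) is sound: injectivity, the transport of generators, and your abstract coherence criterion (\( y \in \ran(c) \) iff \( U_\alpha \cap F_y \neq \emptyset \) for every flagged \( \alpha \), where \( F_y = X\setminus\bigcup\{U_\beta \mid y(\beta) = 0\} \)) are all correct. The gap is in the final step, where you claim coherence can be rewritten as a level-by-level condition on flagged tree-basis nodes. That condition is necessary but not sufficient, and completeness cannot rescue it: the problem is not the existence of a branch through flagged nodes (prunedness of the flagged tree already yields one by recursively choosing successors; no K\"onig-type argument is needed, so the \(\lambda\)-branching is a red herring), but the fact that the \emph{limit point} of such a branch can lie in an \emph{unflagged} basic set, since the basic sets of \( X \) overlap --- and this is invisible to any condition that only inspects flagged nodes sitting inside flagged sets. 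Concretely, take \( X = B(\lambda) \) with the canonical tree-basis, and as basis \( \{U_\alpha\} \) all the \( \Nbhd_s \) together with one extra open set \( V = \Nbhd_{\langle 0,0\rangle}\cup\Nbhd_{\langle 1,1\rangle} \); let \( \bar{x} \) be the constant-\( 0 \) sequence and flag exactly the sets \( \Nbhd_{\bar{x}\restriction k} \), \( k\in\omega \). All your conditions hold (the flagged nodes even form a single chain), yet \( F_y = \emptyset \), because the unique candidate point \( \bar{x} \) lies in the unflagged set \( V \); thus \( c(F_y) \) is the zero function and \( y \notin \ran(c) \). The missing ingredient is an inclusion-consistency clause across the whole basis: if \( y(s) = 1 \) and \( U_s \subseteq U_\beta \), then \( y(\beta) = 1 \). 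This clause is \(\lambda\)-Borel (inclusion between basic sets is fixed data), it is clearly necessary, and with it your argument closes: given a flagged \( U_\alpha \), prunedness produces a flagged branch whose nodes eventually have closure inside \( U_\alpha \); its limit lies in every node; and since diameters shrink, any basic set containing the limit contains a tail node of the branch, hence is flagged --- so the limit belongs to \( F_y \cap U_\alpha \).

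Two further remarks. First, if you prefer not to repair the combinatorics, your abstract coherence condition is already enough: for each \( \alpha \), ``\( U_\alpha \cap F_y \neq\emptyset \)'' is the projection along \( X \) of a \(\lambda\)-Borel subset of \( \pre{\lambda}{2}\times X \), hence \(\lambda\)-analytic, and coherence is a \(\lambda\)-sized intersection of such sets, hence \(\lambda\)-analytic under \( 2^{<\lambda}=\lambda \) (Proposition~\ref{prop:closurepropertiesofanalytic}); dually, non-coherence is a \(\lambda\)-sized union of co-projections, hence \(\lambda\)-coanalytic (Corollary~\ref{cor:closurepropertiesofanalytic}); so \( \ran(c) \in \lD^1_1(\pre{\lambda}{2}) = \lB(\pre{\lambda}{2}) \) by the generalized Souslin Theorem~\ref{thm:souslin}. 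Second, for comparison: the paper sidesteps the overlapping-basis problem entirely by first proving the theorem for \( X = B(\lambda) \) --- there closed sets correspond exactly to pruned trees, and since same-level sets \( \Nbhd_s \) are pairwise disjoint, an encoding of your kind is genuinely correct and the range of the coding map is plainly \(\lambda\)-Borel --- and then transferring to arbitrary \( X \) along a continuous \emph{open} surjection \( g\colon B(\lambda)\to X \) (Corollary~\ref{cor:surjection2}) via \( F\mapsto g^{-1}(F) \), whose range in \( F(B(\lambda)) \) is cut out by the \(\lambda\)-Borel condition \( \forall s\,(F'\cap\Nbhd_s\neq\emptyset \iff F'\cap g^{-1}(g(\Nbhd_s))\neq\emptyset) \). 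Once patched in either of the two ways above, your direct embedding \( c \) is a genuine alternative to the paper's two-step argument.
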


\begin{proof}
We first prove the result when \( X = B(\lambda) \), following~\cite[Exercise 12.10]{Kechris1995}. Recall from~\cite[Proposition 2.4]{Kechris1995} that the map  \( F \mapsto T_F = \{ x \restriction n \mid x \in F \wedge n \in \omega \}  \) is a bijection between \( F(B(\lambda)) \) and pruned \(\omega\)-trees \(T \subseteq \pre{<\omega}{\lambda} \).
Recall also the \(\lambda\)-Polish space \( \mathrm{Tr}_\lambda \) of (codes for) \(\omega\)-trees on \(\lambda\) from equation~\eqref{eq:Trlambda}, and the bijection \( \sigma_{\mathrm{Tr}} \colon \pre{<\omega}{\lambda} \to \lambda \) from equation~\eqref{eq:sigmatr} used to define it.
 Consider the subspace \( \mathrm{PTr}_\lambda \) of (codes for) pruned \(\omega\)-trees: it is easy to see that \( \mathrm{PTr}_\lambda \in \lP^0_2(\mathrm{Tr}_\lambda) \), so it is a \(\lambda\)-Borel subset of \( \pre{\lambda}{2} \). We claim that the map \( F \mapsto T_F \) is a \(\lambda\)-Borel isomorphism between \( F(B(\lambda)) \) and \( \mathrm{PTr}_\lambda \), thus witnessing that \( F(B(\lambda)) \) is standard by conditions~\ref{prop:charstandardBorel-4} of Proposition~\ref{prop:charstandardBorel}. It is clear that for every \( s \in \pre{<\omega}{\lambda} \),
\[ 
F \in B_{\Nbhd_s}(B(\lambda)) \iff s \in T_F \iff x_{T_F}(\sigma_{\mathrm{Tr}}(s)) = 1.
 \] 
Since the sets of the form \( \{ x \in \pre{\lambda}{2} \mid x(\alpha) = i \} \) with \( \alpha < \lambda \) and \( i \in \{ 0,1 \} \) constitute a subbasis for the product topology on \( \pre{\lambda}{2} \), and since the product and the bounded topology on \( \pre{\lambda}{2} \) generate the same \(\lambda\)-Borel structure because we assumed  \(2^{< \lambda} = \lambda\), it easily follows that \( F \mapsto T_F \) is a \(\lambda\)-Borel isomorphism.

We now move to an arbitrary \(\lambda\)-Polish space \( X \), which can be assumed to be nonempty because otherwise \( F(X) = \{ \emptyset \} \) would trivially be \(\lambda\)-Polish.  By Corollary~\ref{cor:surjection2}, there is a continuous open surjection \( g \colon B(\lambda) \to X \). We claim that the map
\begin{equation} \label{eq:mapG}
G \colon F(X) \to F(B(\lambda)), \quad F \mapsto g^{-1}(F)
 \end{equation}
is a \(\lambda\)-Borel isomorphism between \( F(X) \) and a \(\lambda\)-Borel subset of \( F(B(\lambda)) \), which implies that \( F(X) \) is standard by Proposition~\ref{prop:charstandardBorel} again.

First observe that \( G \) is injective: if \( F_0, F_1 \in F(X) \) are distinct and \( x \in F_i \setminus F_{1 - i } \), then every \( y \in g^{-1}(x) \), which exists because \( g \) is surjective, will belong to \( G(F_i) \) but not to \( G(F_{1-i}) \), witnessing that they are different. 

We next show that the range of \( G \) is \(\lambda\)-Borel.
%
Given \( s \in \pre{< \omega}{\lambda} \), let \( V_s = g^{-1}(g(\Nbhd_s)) \), which is open by the choice of \( g \). We claim that the range of \( G \) consists of those \( F' \in F(B(\lambda)) \) such that
\begin{equation} \label{eq:defrangeG}
\forall s \in \pre{< \omega}{\lambda} \, (F' \cap \Nbhd_s \neq \emptyset \iff F' \cap V_s \neq \emptyset ),
 \end{equation}
which proves that it is \(\lambda \)-Borel with respect to the Effros \(\lambda\)-Borel structure of \( B(\lambda) \). 
Let \( F' \) be in the range of \( G \), i.e.\ \( F' = g^{-1}(F) \) for some \( F \in F(X) \). Then by surjectivity of \( g \)  we have, for every \( s \in \pre{<\omega}{\lambda} \):
\[ 
g^{-1}(F) \cap \Nbhd_s \neq \emptyset \iff F \cap g(\Nbhd_s) \neq \emptyset \iff g^{-1}(F) \cap V_s \neq \emptyset.
 \] 
 Thus equation~\eqref{eq:defrangeG} holds for such an \( F' \).
Conversely, assume that \( F' \) is not in the range of \( G \). Since \( g \) is open and surjective, 
if \( g^{-1}(A) \) is closed than so is \( A \), because \( X \setminus A = g(B(\lambda) \setminus g^{-1}(A)) \). Applying this with \( A = g(F') \), one easily gets
 that \( F'' = g^{-1}(g(F')) \supsetneq F' \) --- the inclusion must be proper because otherwise \( g(F') \) would be closed and \( F =  F'' = G(g(F')) \) would be in the range of \( G \), against the choice of \( F' \). Pick \(s \in \pre{<\omega}{\lambda} \) such that \( F'' \cap \Nbhd_s \neq \emptyset \) but \( F' \cap \Nbhd_s = \emptyset \). By definition of \( V_s \), we have \( F' \cap V_s \neq \emptyset \) because
\[ 
F' \cap V_s \neq \emptyset \iff g(F') \cap g(\Nbhd_s ) \neq \emptyset \iff F'' \cap \Nbhd_s \neq \emptyset.
 \] 
Hence \( s \) witnesses that~\eqref{eq:defrangeG} is not satisfied by \( F' \), as desired.

Finally, we address \(\lambda\)-measurability of the map \( G \) and of its inverse, starting from the latter. If \( U \subseteq X \) is open, then by surjectivity of \( g \) we have that for every \( F \in F(X) \)
\[ 
F \cap U \neq \emptyset \iff g^{-1}(F) \cap g^{-1}(U) \neq \emptyset, 
 \] 
so that the \( G \)-image of the basic \(\lambda\)-Borel set \( B_U(X) \) is the (trace on the range of \( G \) of the) basic \(\lambda\)-Borel set \( B_{g^{-1}(U)}(B(\lambda)) \). Conversely, fix any \( s \in \pre{<\omega}{\lambda} \) and \( F \in F(X) \). Since the set \( g^{-1}(F) \), being in the range of \( G \), satisfies~\eqref{eq:defrangeG}, by surjectivity of \( g \) we have that
\[ 
g^{-1}(F) \cap \Nbhd_s \neq \emptyset \iff g^{-1}(F) \cap V_s \neq \emptyset \iff F \cap g(\Nbhd_s) \neq \emptyset.
 \] 
Since \( g(\Nbhd_s) \) is open in \( X \), this shows that the \( G \)-preimage of \( B_{\Nbhd_s}(B(\lambda))  \) is \( B_{g(\Nbhd_s)}(X) \).
Thus \( G \) and \( G^{-1} \) are both \(\lambda\)-Borel measurable, and \( G \) is a \(\lambda \)-Borel isomorphism between \( F(X) \) and the range of \( G \), which is a \(\lambda\)-Borel subset of \( F(B(\lambda)) \). 
\end{proof}

\begin{example}
Consider again a universal \(\lambda\)-Polish space \( \mathfrak{U}_\lambda \) (see the end of Section~\ref{sec:def-example-Polish}). The space \( F(\mathfrak{U}_\lambda) \) can be equipped with its Effros \(\lambda\)-Borel structure, which is standard by Theorem~\ref{thm:EffrosBorel}; thus \( F(\mathfrak{U}_\lambda) \) can be construed as the standard \(\lambda\)-Borel space%
\footnote{A different way of forming a space of (codes for) \(\lambda\)-Polish spaces was presented in~\cite{AM,MottoRos2017h}.}
 of all \(\lambda\)-Polish spaces. Similarly, by Proposition~\ref{prop:dim(X)=0} the Effros \(\lambda\)-Borel space \( F(B(\lambda)) \) can be viewed as the standard \(\lambda\)-Borel space of all \(\lambda\)-Polish spaces with \( \mathrm{dim}(X) = 0 \).
\end{example}

Theorem~\ref{thm:EffrosBorel} unlocks a number of interesting and useful facts, whose proof goes essentially unchanged. For example, 
 we have the following analogue of the Kuratowski-Ryll-Nardzewski's Selection Theorem for \( F(X) \).

\begin{theorem} \label{thm:selectionforF(X)}
Let \( \lambda \) be such that \( 2^{< \lambda} = \lambda \), and  let \( X \) be a \(\lambda\)-Polish space. Then there is a sequence \( (\sigma^X_\alpha)_{\alpha < \lambda} \) of \(\lambda\)-Borel functions \( \sigma^X_\alpha \colon F(X) \to X \) such that for every nonempty \( F \in F(X) \) the set \( \{ \sigma^X_\alpha(F) \mid \alpha < \lambda \} \) is dense in \( F \).
\end{theorem}

We present a slight variation of the classical proof which relies on the constructions carried out in the proof of Theorem~\ref{thm:EffrosBorel}.

\begin{proofcompare}{Kechris1995}{Theorem 12.13}
Consider first the case \( X = B(\lambda) \). Fix any \( y_0 \in B(\lambda) \), and consider the map \( \sigma \colon F(B(\lambda)) \to B(\lambda) \) defined by \( \sigma(\emptyset) = y_0 \) and 
\[ 
\sigma(F) = \text{ the leftmost branch of } T_F 
\] 
if \( F \neq \emptyset \). It is easy to see that \( \sigma \) is \(\lambda\)-Borel, and that \( \sigma(F) \in F \) when \( F \neq \emptyset \). Next fix a bijection \( f \colon \pre{<\omega}{\lambda} \to \lambda \). For each \( \alpha < \lambda \), the map from \( F(X) \) into itself sending \( F \) to \( F \cap \Nbhd_{f^{-1}(\alpha)} \) is \(\lambda\)-Borel because \( \Nbhd_{f^{-1}(\alpha)} \) is clopen. The maps \( \sigma^{B(\lambda)}_\alpha \colon F(B(\lambda)) \to B(\lambda) \) can then be defined by setting \( \sigma^{B(\lambda)}_\alpha (F) = \sigma (F \cap \Nbhd_{f^{-1}(\alpha)}) \) if \( F \cap \Nbhd_{f^{-1}(\alpha)} \neq \emptyset \), and \( \sigma^{B(\lambda)}_\alpha (F) = \sigma(F) \) otherwise.

We now move to an arbitrary \(\lambda\)-Polish space \( X \). Without loss of generality we may assume \( X \neq \emptyset \). Let \( G \colon F(X) \to F(B(\lambda)) \) be the map defined in~\eqref{eq:mapG}, where \( g \colon B(\lambda) \to X \) is an open continuous surjection and \( G(F) = g^{-1}(F) \) for every \( F \in F(X) \). Then each \( \sigma^X_\alpha = g \circ \sigma^{B(\lambda)}_\alpha \circ G \) is clearly \(\lambda\)-Borel. Fix \( \emptyset \neq F \in F(X) \) and \( U \subseteq X \) open such that \( F \cap U \neq \emptyset \). Then \( G(F) \cap g^{-1}(U) \neq \emptyset \) as well because \( g \) is surjective. Let \( \alpha < \lambda \) be such that \( \sigma_\alpha^{B(\lambda)}(G(F)) \in G(F) \cap g^{-1}(U) \):
then \( \sigma^X_\alpha(F) \in F \cap U \). This shows that \( \{ \sigma^X_\alpha(F) \mid \alpha < \lambda \} \) is dense in \( F \), hence we are done.
\end{proofcompare}

Recall that given an equivalence relation \( E \) on \( X \), a \markdef{selector} for \( E \) is a map \( s \colon X \to X \) such that \( s(x) \mathrel{E} x \) and \( s(x) = s(y) \) whenever \( x \mathrel{E} y \), while a \markdef{transversal} for \( E \) is a set \( T \subseteq X \) that meets every equivalence class in exactly one point. Notice that if \( s \) is a selector for \( E \), then \( T = \{ x \in X \mid s(x) = x \} \) is a transversal for \( X \); conversely, if \( T \) is a transversal for \( E \) then the map \( s \) sending \( x \) to the unique \( y \in T \) with \( y \mathrel{E} x \) is a selector for \( X \). These observations work in the definable context as well. By the previous arguments, if \( X \) is  a standard \(\lambda\)-Borel space then every \( \lambda \)-Borel sector \( s \) for \( E \) yields a \(\lambda\)-Borel transversal \( T \) for \( E \), while if \( E \) is \(\lambda\)-Borel (as a subset of \( X^2 \)) then every \(\lambda\)-Borel transversal \( T \) for \( E \) yields the \(\lambda\)-Borel selector \( s \) for \( E \) defined by setting \( s(x) = y \) if and only if \( x \mathrel{E} y \) and \( y \in T \).

For a set \( A \subseteq X \), the (\markdef{\( E \)-})\markdef{saturation} \( [A]_E \) of \( A \) is defined by
\[ 
[A]_E = \{ x \in X \mid \exists y \in A \, (x \mathrel{E} y) \}.
 \] 
The following is a direct consequence of Theorem~\ref{thm:selectionforF(X)} and is obtained by adapting the proof of~\cite[Theorem 12.16]{Kechris1995} in the obvious way.

\begin{theorem}  \label{thm:selector}
Let \(\lambda\) be such that \( 2^{< \lambda} = \lambda \), \( X \) be a \(\lambda\)-Polish space, and \( E \) be an equivalence relation on \( X \) such that every equivalence class is closed and the saturation of any open set is \(\lambda\)-Borel. Then \( E \) admits a \(\lambda\)-Borel selector, and thus also a Borel transversal.
\end{theorem}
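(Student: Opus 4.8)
The plan is to follow the classical proof of the Kuratowski--Ryll-Nardzewski selection theorem, as adapted in \cite[Theorem 12.16]{Kechris1995}, but feeding in our generalized selection theorem (Theorem~\ref{thm:selectionforF(X)}) at the crucial step. Let $(\sigma^X_\alpha)_{\alpha < \lambda}$ be the sequence of $\lambda$-Borel functions $\sigma^X_\alpha \colon F(X) \to X$ provided by Theorem~\ref{thm:selectionforF(X)}, so that for every nonempty closed $F$ the set $\{ \sigma^X_\alpha(F) \mid \alpha < \lambda \}$ is dense in $F$. Since every $E$-equivalence class $[x]_E$ is closed, it is an element of $F(X)$, and the idea is to select from each class by applying the $\sigma^X_\alpha$ to the class itself and picking the ``first'' coordinate that lands inside.

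First I would define the candidate selector by
\[
s(x) = \sigma^X_{\alpha(x)}([x]_E), \qquad \text{where } \alpha(x) = \text{the least } \alpha < \lambda \text{ such that } \sigma^X_\alpha([x]_E) \in [x]_E .
\]
Such an $\alpha(x)$ exists because $\{ \sigma^X_\alpha([x]_E) \mid \alpha < \lambda \}$ is dense in the nonempty closed set $[x]_E$, hence meets it. By construction $s(x) \mathrel{E} x$, and since $[x]_E = [y]_E$ whenever $x \mathrel{E} y$, both $\alpha(x)$ and the resulting value depend only on the class, giving $s(x) = s(y)$ for $x \mathrel{E} y$. Thus $s$ is genuinely a selector; the only real content is to verify that it is $\lambda$-Borel.

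The key step, and the main obstacle, is to show that the class map $c \colon X \to F(X)$, $x \mapsto [x]_E$, is $\lambda$-Borel, for then $s$ is a $\lambda$-Borel function by composing $c$ with the $\sigma^X_\alpha$ and resolving the ``least $\alpha$'' definition into a countable-Boolean-combination computation over the $\lambda$-Borel sets $\{ x \mid \sigma^X_\alpha([x]_E) \in [x]_E \}$. To see that $c$ is $\lambda$-Borel it suffices, since the sets $B_U(X)$ of \eqref{eq:EffrosBorel} generate $\mathcal{B}_F(X)$ and $X$ has a $\lambda$-sized basis, to check that $c^{-1}(B_U(X)) = \{ x \in X \mid [x]_E \cap U \neq \emptyset \} = [U]_E$ is $\lambda$-Borel for every basic open $U$; but this is exactly the hypothesis that the saturation of any open set is $\lambda$-Borel. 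Here I must be slightly careful that the preimages of all generators being $\lambda$-Borel does give $\lambda$-Borel measurability of $c$, which follows from the standard fact that $\{ B \in \mathcal{B}_F(X) \mid c^{-1}(B) \in \lB(X) \}$ is a $\lambda^+$-algebra containing the generators.

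Once $c$ is known to be $\lambda$-Borel, the remaining verification is routine: each set $D_\alpha = \{ x \mid \sigma^X_\alpha(c(x)) \in c(x) \}$ is $\lambda$-Borel (it is the preimage under the $\lambda$-Borel map $x \mapsto (\sigma^X_\alpha(c(x)), c(x))$ of the relation ``point belongs to closed set'', which is $\lambda$-Borel in $X \times F(X)$ by the usual argument using a $\lambda$-sized basis), and $s$ agrees on the $\lambda$-Borel piece $D_\alpha \setminus \bigcup_{\beta < \alpha} D_\beta$ with the $\lambda$-Borel map $\sigma^X_\alpha \circ c$. Since $\lambda$-Borel functions are closed under such $\lambda$-sized piecewise definitions and preimages of basic open sets are thereby $\lambda$-Borel, $s$ is $\lambda$-Borel. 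Finally, the transversal is recovered as $T = \{ x \in X \mid s(x) = x \}$, which is $\lambda$-Borel by the remark preceding the statement (using that $E$, having closed classes, allows the passage between selectors and transversals in the definable setting, and that $\{ x \mid s(x) = x \}$ is $\lambda$-Borel because the graph of the $\lambda$-Borel function $s$ is $\lambda$-Borel). I would close by noting that the only place where $2^{<\lambda} = \lambda$ is used is through the standing application of Theorem~\ref{thm:selectionforF(X)} and Theorem~\ref{thm:EffrosBorel}, exactly as in the classical case.
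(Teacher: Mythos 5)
Your proof is correct and follows essentially the same route as the paper: the whole content is that the class map \( c \colon X \to F(X),\ x \mapsto [x]_E \) is \(\lambda\)-Borel because \( c^{-1}(B_U(X)) = [U]_E \) is \(\lambda\)-Borel by hypothesis, after which one composes with the selection functions of Theorem~\ref{thm:selectionforF(X)}. The only difference is that your ``least \( \alpha \)'' device is unnecessary, since each \( \sigma^X_\alpha(F) \) already lies in \( F \) for nonempty \( F \in F(X) \); the paper simply takes \( s = \sigma^X_0 \circ c \).
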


\begin{proofsketchcite}{Kechris1995}{Theorem 12.16}
By our assumptions, the map \( X \to F(X) \) sending \( x \) to \( [x]_E \) is well-defined and \(\lambda\)-Borel, hence the map \( s \colon X \to X \) defined by \( s(x) = \sigma_0^X([x]_E) \), where \( \sigma_0^X \) is as in Theorem~\ref{thm:selectionforF(X)}, is a \(\lambda\)-Borel selector. 
\end{proofsketchcite}

The condition that the saturation of open sets is \(\lambda\)-Borel can easily be replaced by the condition that the saturation of closed sets is \(\lambda\)-Borel. An important special case of 
Theorem~\ref{thm:selector} is the following (see~\cite[Theorem 12.17]{Kechris1995}). A \markdef{\(\lambda\)-Polish group} is a topological group whose topology is \(\lambda\)-Polish.

\begin{theorem} \label{thm:cosets}
Let \(\lambda\) be such that \( 2^{< \lambda} = \lambda \), \( G \) be a \(\lambda\)-Polish group, and \( H \subseteq G \) be a closed subgroup. Then there is a \(\lambda\)-Borel selector for the equivalence relation whose classes are the (left) cosets of \( H \). In particular, there is a \(\lambda\)-Borel set meeting every (left) coset in exactly one point.
\end{theorem}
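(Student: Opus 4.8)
The plan is to derive Theorem~\ref{thm:cosets} as an instance of the selector Theorem~\ref{thm:selector}, following the classical strategy of~\cite[Theorem 12.17]{Kechris1995}. Let $E$ be the equivalence relation on $G$ whose classes are the left cosets $gH$, i.e.\ $x \mathrel{E} y \iff x^{-1}y \in H$. To apply Theorem~\ref{thm:selector} I need to verify its two hypotheses: that every $E$-equivalence class is closed, and that the $E$-saturation of every open set is $\lambda$-Borel.

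First I would check that the equivalence classes are closed. Since $H$ is a closed subgroup and left translation $x \mapsto gx$ is a homeomorphism of $G$ (as $G$ is a topological group), each coset $gH$ is the homeomorphic image of the closed set $H$, hence closed. Next I would handle the saturation condition, which is the crux of the argument. Given an open set $U \subseteq G$, its $E$-saturation is
\[
[U]_E = \{ x \in G \mid \exists y \in U \, (x \mathrel{E} y) \} = \{ x \in G \mid \exists h \in H \, (xh \in U) \} = UH,
\]
where the last equality uses that $x \mathrel{E} y$ means $y = xh$ for some $h \in H$. But $UH = \bigcup_{h \in H} Uh$ is a union of open sets (each $Uh$ being a translate of the open $U$), hence $[U]_E = UH$ is \emph{open}, and in particular $\lambda$-Borel. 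This is in fact stronger than what Theorem~\ref{thm:selector} requires, so the hypotheses are comfortably met.

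With both conditions verified, Theorem~\ref{thm:selector} yields a $\lambda$-Borel selector $s \colon G \to G$ for $E$, and correspondingly a $\lambda$-Borel transversal $T = \{ x \in G \mid s(x) = x \}$, which is a $\lambda$-Borel set meeting every left coset of $H$ in exactly one point. This gives both conclusions of the theorem at once.

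I expect the only genuine content to be the computation $[U]_E = UH$ together with the observation that this set is open; everything else is a direct citation of Theorem~\ref{thm:selector} and of standard facts about topological groups. The main (mild) obstacle is simply making sure the saturation is correctly identified and seen to be open rather than merely $\lambda$-Borel, since openness of $[U]_E$ is exactly the feature that makes the group case cleaner than the general selector theorem; no choice issues beyond those already absorbed into Theorem~\ref{thm:selector} arise, as the whole argument takes place under the standing assumption $2^{<\lambda} = \lambda$.
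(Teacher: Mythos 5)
Your proposal is correct and is essentially the paper's own proof: both reduce Theorem~\ref{thm:cosets} to Theorem~\ref{thm:selector} by noting that each coset \( gH \) is closed (being a homeomorphic translate of the closed set \( H \)) and that the saturation of an open set \( U \) is the open set \( UH = \bigcup_{h \in H} Uh \). The paper states this in a single line; your write-up merely spells out the same computation in more detail.
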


\begin{proof}
Theorem~\ref{thm:selector} can be applied because every (left) coset \( gH \) is closed and the saturation of an open set \( U \) is the open set \( UH = \bigcup_{h \in H} Uh \).
\end{proof}

The following is a standard application of Theorem~\ref{thm:cosets}: we report its short proof for the reader's convenience.

\begin{corollary}
Let \(\lambda\) be such that \( 2^{< \lambda} = \lambda \), and let \( G \) and \( H \) be \(\lambda\)-Polish groups. If \( \varphi \colon G \to H \) is a continuous homomorphism, then \( \varphi(G) \) is \(\lambda\)-Borel in \( H \).
\end{corollary}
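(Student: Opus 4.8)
The plan is to combine the $\lambda$-Borel selector machinery of Theorem~\ref{thm:cosets} with the injective-image theorem, Theorem~\ref{thm:injectiveBorelimage}, exactly as in the classical case. First I would set $N = \ker(\varphi) = \varphi^{-1}(\{ e_H \})$, where $e_H$ denotes the identity of $H$. Since $\varphi$ is continuous and $H$, being metrizable, is Hausdorff, the singleton $\{ e_H \}$ is closed, so $N$ is a closed subgroup of $G$. I may then apply Theorem~\ref{thm:cosets} (whose hypothesis $2^{< \lambda} = \lambda$ is exactly what we are assuming) to the closed subgroup $N \leq G$ and obtain a $\lambda$-Borel set $T \subseteq G$ meeting every left coset $gN$ in exactly one point.

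Next I would record the two elementary algebraic facts that make $T$ useful. On the one hand, $\varphi$ is constant on each left coset of $N$: if $g' = g n$ with $n \in N$, then $\varphi(g') = \varphi(g) \varphi(n) = \varphi(g)$; since $T$ meets every coset, this gives $\varphi(T) = \varphi(G)$. On the other hand, $\varphi \restriction T$ is injective: if $g, g' \in T$ satisfy $\varphi(g) = \varphi(g')$, then $\varphi(g^{-1} g') = e_H$, so $g^{-1} g' \in N$, i.e.\ $g$ and $g'$ belong to the same left coset; as $T$ meets that coset in exactly one point, we conclude $g = g'$.

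Finally, I would invoke Theorem~\ref{thm:injectiveBorelimage}. The set $T$ is a $\lambda$-Borel subset of the $\lambda$-Polish space $G$, and $\varphi \colon G \to H$ is continuous (hence $\lambda$-Borel) between $\lambda$-Polish spaces, with $\varphi \restriction T$ injective. Therefore $\varphi(T)$ is $\lambda$-Borel in $H$, and since $\varphi(T) = \varphi(G)$ by the previous paragraph, the image $\varphi(G)$ is $\lambda$-Borel, as desired.

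There is no genuinely hard step here: the result is a formal consequence of the two cited theorems, and the only place requiring care is the coset bookkeeping (keeping left and right cosets straight when verifying that $\varphi$ is constant on $T$-classes and that $\varphi \restriction T$ is injective). The one conceptual point worth flagging is why one does not simply argue via $\lambda$-analyticity and Souslin's theorem (Theorem~\ref{thm:souslin}): $\varphi(G)$ is clearly $\lambda$-analytic as a continuous image of a $\lambda$-Polish space, but showing directly that its complement is $\lambda$-analytic is not obvious, whereas the transversal route cleanly reduces everything to an injective $\lambda$-Borel image and thus to Theorem~\ref{thm:injectiveBorelimage}.
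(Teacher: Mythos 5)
Your proof is correct and takes essentially the same approach as the paper's own: apply Theorem~\ref{thm:cosets} to the closed kernel \( \varphi^{-1}(e_H) \) to get a \(\lambda\)-Borel transversal \( T \), observe that \( \varphi \restriction T \) is injective with \( \varphi(T) = \varphi(G) \), and conclude by Theorem~\ref{thm:injectiveBorelimage}. The details you spell out (closedness of the kernel via continuity and Hausdorffness, the coset bookkeeping) are precisely what the paper leaves implicit.
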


\begin{proof}
Apply Theorem~\ref{thm:cosets} to get a \(\lambda\)-Borel set \( T \) meeting every (left) coset of the the closed subgroup \( \varphi^{-1}(\mathrm{id}_H) \) of \( G \) in exactly one point. Then \( \varphi \restriction T \) is injective, hence \( \varphi(T) \) is \(\lambda\)-Borel by Theorem~\ref{thm:injectiveBorelimage}. Moreover \( \varphi(T) = \varphi(G) \), hence the result follows.
\end{proof}

From Theorem~\ref{thm:cosets} we also obtain the following (weak) analogue of Miller's theorem~\cite[Theorem 15.14]{Kechris1995}.

\begin{theorem}
Let \(\lambda\) be such that \( 2^{< \lambda}=\lambda \), \( G \) be a \(\lambda\)-Polish group, and \( X \) be a \(\lambda\)-Polish space. If \( (g,x) \mapsto g \cdot x \) is a continuous action of \( G \) on \( X \), then for every \( x \in X \) its orbit \( \{ g \cdot x \mid g \in G \} \) is \(\lambda\)-Borel.
\end{theorem}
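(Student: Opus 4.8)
The plan is to reduce the orbit to the image of a closed subgroup under a continuous map, and then invoke Theorem~\ref{thm:cosets} together with the injective-image machinery. Fix \( x \in X \) and consider its \emph{stabilizer}
\[
G_x = \{ g \in G \mid g \cdot x = x \}.
 \]
The first step is to observe that \( G_x \) is a closed subgroup of \( G \): it is a subgroup by the usual group-action computation, and it is closed because the map \( g \mapsto g \cdot x \) is continuous (being the composition of \( g \mapsto (g,x) \) with the continuous action), so \( G_x \) is the preimage of the closed set \( \{ x \} \).

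Next I would apply Theorem~\ref{thm:cosets} to the closed subgroup \( G_x \) of \( G \): this yields a \(\lambda\)-Borel set \( T \subseteq G \) meeting every left coset \( g G_x \) in exactly one point. The key point is that the orbit map \( \varphi \colon G \to X \), \( \varphi(g) = g \cdot x \), is constant on each left coset of \( G_x \) and injective across distinct cosets. Indeed, \( g \cdot x = g' \cdot x \) if and only if \( (g^{-1} g') \cdot x = x \), i.e.\ \( g^{-1} g' \in G_x \), which is exactly \( g G_x = g' G_x \). Restricting \( \varphi \) to the transversal \( T \), we therefore obtain an \emph{injective} continuous (hence \(\lambda\)-Borel) map \( \varphi \restriction T \colon T \to X \) whose image is precisely the whole orbit \( \{ g \cdot x \mid g \in G \} \), since \( T \) meets every coset.

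The final step is to conclude \(\lambda\)-Borelness of the image. Since \( T \in \lB(G) \) and \( \varphi \restriction T \) is injective, Theorem~\ref{thm:injectiveBorelimage} (in the version for \(\lambda\)-Borel domains, see Remark~\ref{rmk:borelvsgraph}) guarantees that \( \varphi(T) = \{ g \cdot x \mid g \in G \} \) is \(\lambda\)-Borel in \( X \), as desired. This mirrors the proof of the preceding corollary on images of continuous homomorphisms, with the stabilizer \( G_x \) playing the role of the kernel \( \varphi^{-1}(\mathrm{id}_H) \).

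The main obstacle I anticipate is purely the verification that \( \varphi \restriction T \) is injective with the right image, but this is immediate from the coset analysis above; the genuinely delicate ingredient is already packaged into Theorem~\ref{thm:cosets}, whose applicability here rests only on \( G_x \) being a \emph{closed} subgroup, which we secured in the first step. One should take a little care to record that continuity of \( \varphi \restriction T \) follows from continuity of the full action (no separate argument on \( T \) is needed), and that the hypothesis \( 2^{< \lambda} = \lambda \) is exactly what makes both Theorem~\ref{thm:cosets} and Theorem~\ref{thm:injectiveBorelimage} available.
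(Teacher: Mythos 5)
Your proof is correct and follows essentially the same route as the paper's: the stabilizer \( G_x \) is a closed subgroup, Theorem~\ref{thm:cosets} provides a \(\lambda\)-Borel transversal \( T_x \) for its left cosets, and the orbit map restricted to \( T_x \) is an injective \(\lambda\)-Borel map onto the orbit, so Theorem~\ref{thm:injectiveBorelimage} finishes the argument. The coset analysis you spell out is exactly what the paper leaves implicit in its sketch.
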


\begin{proofsketchcite}{Kechris1995}{Theorem 15.14}
Since the action is continuous, the stabilizer \( G_x = \{ g \in G \mid g \cdot x = x \} \) of each point \( x \in X \) is a closed subgroup of \( G \). Let \( T_x \) be a \(\lambda\)-Borel set meeting every left coset of \( G_x \) in exactly one point (Theorem~\ref{thm:cosets}). The map \( g \mapsto g \cdot x \) turns out to be a \(\lambda\)-Borel bijection between \( T_x \) and the orbit \( \{ g \cdot x \mid g \in G \} \), hence the latter is \(\lambda\)-Borel by  Theorem~\ref{thm:injectiveBorelimage}.
\end{proofsketchcite}

\section{Other separation theorems for \(\lambda\)-analytic sets} \label{sec:otherseparationtheorems}

Some of the results of this section are stated for \(\lambda\)-Polish spaces in order to minimize the cardinal arithmetic assumptions on \(\lambda\).
However, if \( 2^{< \lambda} = \lambda \) they can be extended to standard \(\lambda\)-Borel spaces as well, whenever this makes sense.


\subsection{Invariant Souslin's theorem}

We begin with the invariant version of Theorem~\ref{thm:lusinseparation}.
An equivalence relation \( E \) on a \(\lambda\)-Polish (or even just standard \(\lambda\)-Borel) space \( X \) is called \(\lambda\)-analytic if \( E \in \lS^1_1(X \times X) \); similarly, \( E \) is \(\lambda\)-Borel if \( E \in \lB(X \times X) \). A set \( A \subseteq X \) is \markdef{\( E \)-invariant} if 
\( A =  [A]_E \). Notice that if both \( E \) and \( A \) are \(\lambda\)-analytic, then so is \( [A]_E \).

\begin{theorem}
Let \( E \) be a \(\lambda\)-analytic equivalence relation on a \(\lambda\)-Polish space \( X \). If \( A,B \subseteq X \) be disjoint \(\lambda\)-analytic \( E \)-invariant sets, then there is an \( E \)-invariant \(\lambda\)-Borel set \( C \subseteq X \) separating \( A \) from \( B \).
\end{theorem}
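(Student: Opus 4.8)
The plan is to imitate the classical proof of the invariant Lusin separation theorem, which proceeds by an iterated (transfinite) application of the ordinary Lusin separation theorem (Theorem~\ref{thm:lusinseparation}) together with the closure of the saturation operation under our pointclasses. The key observation is that a $\lambda$-Borel set $C$ is $E$-invariant if and only if $[C]_E = C$, and that for an arbitrary set $D$ one has $[D]_E = \p_2\big((D \times X) \cap E\big)$ (the projection of the slice of $E$ lying above $D$), so that if $E$ and $D$ are $\lambda$-analytic then $[D]_E$ is $\lambda$-analytic by the closure properties recorded in Proposition~\ref{prop:closurepropertiesofanalytic}. The strategy is to build, by recursion along ordinals below $\lambda^+$, an increasing sequence of $\lambda$-Borel sets trapped between $A$ and the complement of $B$, alternately enlarging by saturation and re-separating, and to argue that the construction stabilizes at some countable-cofinality-free stage below $\lambda^+$ to produce an invariant $\lambda$-Borel separator.

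More concretely, first I would apply Theorem~\ref{thm:lusinseparation} to the disjoint $\lambda$-analytic pair $(A,B)$ to obtain a $\lambda$-Borel set $C_0$ with $A \subseteq C_0$ and $C_0 \cap B = \emptyset$. The set $C_0$ need not be invariant, so I would then form its saturation $[C_0]_E$, which is $\lambda$-analytic (by the remark above) but may now meet $B$; however, since $A$ is $E$-invariant and $A \subseteq C_0$ we still have $A \subseteq [C_0]_E$, and since $B$ is $E$-invariant and disjoint from $A$, the invariance gives us control: $[C_0]_E$ and $B$ are both $\lambda$-analytic but need not be disjoint. The classical device is instead to work on both sides simultaneously. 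The cleaner route I would pursue is: set $A_0 = A$, and recursively let $C_\xi$ be a $\lambda$-Borel set separating $[A_\xi]_E$ from $B$ (possible once these are disjoint $\lambda$-analytic sets), then set $A_{\xi+1} = [A_\xi]_E \cup (\text{analytic correction terms})$, taking intersections at limit stages. The heart of the matter is to maintain disjointness from $B$ at every stage while forcing the sequence to converge to an invariant set.

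Rather than track the saturations of $A$ alone, the standard and more robust approach is the \emph{reduction} method: apply the generalized separation/reduction available for $\lambda$-Borel-adjacent pointclasses together with a back-and-forth saturation argument. Specifically, I would iterate the assignment $D \mapsto [D]_E$ starting from a $\lambda$-Borel separator and interleave separations so as to produce an increasing chain $(C_\xi)_{\xi < \lambda^+}$ of $\lambda$-Borel sets, each disjoint from $B$ and containing $A$, with $[C_\xi]_E \subseteq C_{\xi+1}$ (the new $\lambda$-Borel set separating the $\lambda$-analytic set $[C_\xi]_E$ from the $\lambda$-analytic set $B$, using that $[C_\xi]_E \cap B = \emptyset$ because $B$ is invariant and $C_\xi \cap B = \emptyset$). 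Since $\lambda^+$ is regular under $\AC_\lambda(\pre{\lambda}{2})$ (Fact~\ref{fct:lambdaunionewithoutchoice}) and a $\lambda$-Polish space has at most $\lambda^+$-many ``new'' $\lambda$-Borel levels, the chain must stabilize: there is $\xi < \lambda^+$ with $[C_\xi]_E = C_\xi$, at which point $C = C_\xi$ is the desired $E$-invariant $\lambda$-Borel set separating $A$ from $B$. The union $\bigcup_{\xi<\lambda^+} C_\xi$ is $\lambda$-Borel by regularity of $\lambda^+$ and is automatically invariant as an increasing union of a chain whose saturations feed forward.

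The main obstacle I expect is verifying that the saturation step stays within the $\lambda$-analytic world and that the iteration genuinely terminates below $\lambda^+$: the termination requires a well-foundedness/boundedness argument showing that the saturations cannot strictly increase $\lambda^+$-many times inside a fixed standard $\lambda$-Borel space, which is where one must invoke that every $\lambda$-Borel set appears at some level $<\lambda^+$ of the hierarchy and that $\lambda^+$ is regular. A subtle point is that at limit stages one must take the union (not intersection) to preserve both $A \subseteq C_\gamma$ and the feed-forward property $[C_\beta]_E \subseteq C_{\beta+1} \subseteq C_\gamma$, and one needs $\lambda^+$-unions of $\lambda$-Borel sets to remain $\lambda$-Borel — which again relies on the regularity of $\lambda^+$ granted by our choice assumption. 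Disjointness from $B$ throughout the limit is immediate since each $C_\beta \cap B = \emptyset$. Once termination and invariance are secured, the separating set $C$ is obtained directly.
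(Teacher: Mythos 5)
Your core mechanism is the right one: iterating Theorem~\ref{thm:lusinseparation}, using that the saturation of a \(\lambda\)-analytic set by a \(\lambda\)-analytic equivalence relation is \(\lambda\)-analytic (Proposition~\ref{prop:closurepropertiesofanalytic}), and that \(E\)-invariance of \(B\) keeps each saturation \([C_\xi]_E\) disjoint from \(B\). But the transfinite superstructure you build on top of it is broken in two places. First, \(\lB(X)\) is a \(\lambda^+\)-algebra: it is closed under well-ordered unions of length \emph{at most \(\lambda\)}, not of length \(\lambda^+\). So the claim that \(\bigcup_{\xi<\lambda^+}C_\xi\) is \(\lambda\)-Borel ``by regularity of \(\lambda^+\)'' is false --- regularity of \(\lambda^+\) controls the length of the \(\lambda\)-Borel hierarchy, not closure under \(\lambda^+\)-sized unions; indeed, by Theorem~\ref{thm:longunionsintersections} a union of \(\lambda^+\)-many \(\lambda\)-Borel sets can be a properly \(\lambda\)-analytic or \(\lambda\)-coanalytic set. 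Second, your termination claim --- that the chain must reach some \(\xi<\lambda^+\) with \([C_\xi]_E=C_\xi\) --- has no supporting argument: ``at most \(\lambda^+\)-many new \(\lambda\)-Borel levels'' does not prevent a chain of \(\lambda\)-Borel sets from strictly increasing \(\lambda^+\)-many times, and strictly increasing \(\lambda^+\)-chains of \(\lambda\)-Borel sets do exist (again by Theorem~\ref{thm:longunionsintersections}, writing a non-\(\lambda\)-Borel \(\lambda\)-coanalytic set as an increasing \(\lambda^+\)-union of \(\lambda\)-Borel sets).

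The repair is that the transfinite iteration is unnecessary: \(\omega\) steps already suffice, and this is how the paper argues. Set \(A_0=A\), let \(C_n\) be a \(\lambda\)-Borel set separating the \(\lambda\)-analytic set \(A_n\) from \(B\) (Theorem~\ref{thm:lusinseparation}), and set \(A_{n+1}=[C_n]_E\), which is \(\lambda\)-analytic and still disjoint from \(B\) because \(C_n\cap B=\emptyset\) and \(B\) is invariant. Then \(C=\bigcup_{n\in\omega}C_n\) is \(\lambda\)-Borel (a \emph{countable} union of \(\lambda\)-Borel sets), contains \(A\), is disjoint from \(B\), and is \(E\)-invariant: if \(x\in C\), say \(x\in C_n\), then \([x]_E\subseteq[C_n]_E=A_{n+1}\subseteq C_{n+1}\subseteq C\). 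No stabilization is needed --- the feed-forward property \([C_n]_E\subseteq C_{n+1}\) makes the union invariant by itself, which is exactly the observation your proposal circles around but never lands on.
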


The following argument is pretty standard, and is reported only for the reader's convenience.

\begin{proof}
Recursively%
\footnote{The proof seems to require an axiom of dependent choices, namely \( \DC_\omega(\pre{\lambda}{2}) \), to be carried out. However, once a canonical representation witnessing \( E \in \lS^1_1(X^2) \) is fixed, the sets \( A_i \) and \( C_i \) can all be determined in a canonical way by Remark~\ref{rmk:constructivelusinseparation}.}
define the sequences \( (A_i)_{i \in \omega} \) and \( ( C_i)_{i \in \omega} \) by letting \( A_0 = A \), \( A_{i+1} = [C_i]_E \), and \( C_i \) be any \(\lambda\)-Borel set separating the disjoint \(\lambda\)-analytic sets \( A_i \) and \( B \) (such \( C_i \) exists by Theorem~\ref{thm:lusinseparation}). Then let \( C = \bigcup_{i \in \omega} C_i = \bigcup_{i \in \omega} A_i \).
\end{proof}

\subsection{Novikov's separation theorem} \label{subsec:Novikov}

The following separation theorem is the analogue of~\cite[Theorem 28.5]{Kechris1995}, due to Novikov;
its proof is a fairly nontrivial adptation of the original argument.

\begin{theorem} \label{thm:novikovsepartion}
Assume that \(2^{< \lambda}=\lambda\).
The class \( \lS^1_1 \) has the generalized separation property, that is: For every \(\lambda\)-Polish space \( X \) and every sequence \( (A_\alpha)_{\alpha < \lambda} \) of \(\lambda\)-analytic subsets of \( X \), if \( \bigcap_{\alpha< \lambda} A_\alpha  = \emptyset \) then there is a sequence \( (B_\alpha)_{\alpha < \lambda} \) of \(\lambda\)-Borel %
\footnote{Recall that when \( \boldsymbol{\Gamma}(X) = \lS^1_1(X) \), then \( \boldsymbol{\Delta}_{\boldsymbol{\Gamma}}(X) = \lD^1_1(X) = \lB(X) \) by  the generalized Souslin's Theorem~\ref{thm:souslin}.}
 sets such that \( B_\alpha \supseteq A_\alpha \) for all \( \alpha < \lambda \) and \( \bigcap_{\alpha < \lambda} B_\alpha = \emptyset \).
\end{theorem}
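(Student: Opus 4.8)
The plan is to follow Novikov's original argument, reducing to $X=B(\lambda)$, using the normal form for $\lambda$-analytic sets, and systematically replacing every countable operation by a $\lambda$-sized one. First I would reduce to $X=B(\lambda)$: by Proposition~\ref{prop:surjection} there is a closed $F\subseteq B(\lambda)$ and a continuous bijection $f\colon F\to X$ with $\lambda$-Borel inverse (Remark~\ref{rmk:inverseofinjection}), so a $\lambda$-Borel separating sequence for the sets $f^{-1}(A_\alpha)$ pushes forward to one for the $A_\alpha$; thus assume $X=B(\lambda)$ and fix, via~\eqref{eq:normalformforanalytic}, $\omega$-trees $T_\alpha\subseteq\pre{<\omega}{(\lambda\times\lambda)}$ with $A_\alpha=\p[T_\alpha]$. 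For $s\in\pre{<\omega}{\lambda}$ and, for each $\alpha<\lambda$, a sequence $t_\alpha\in\pre{\lh(s)}{\lambda}$, set $A_\alpha^{s,t_\alpha}=\{x\in\Nbhd_s\mid \exists y\supseteq t_\alpha\ (x,y)\in[T_\alpha]\}$, so that $A_\alpha^{\emptyset,\emptyset}=A_\alpha$ and $A_\alpha^{s,t_\alpha}=\bigcup_{a,b<\lambda}A_\alpha^{s{}^\smallfrown{}a,\,t_\alpha{}^\smallfrown{}b}$. Call an approximation $(s,(t_\alpha)_{\alpha<\lambda})$ \emph{separated} if there are $\lambda$-Borel $B_\alpha\supseteq A_\alpha^{s,t_\alpha}$ with $\bigcap_{\alpha<\lambda}B_\alpha=\emptyset$; the goal is to prove that $(\emptyset,(\emptyset)_\alpha)$ is separated.

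The combinatorial engine is a one-coordinate basic lemma: if, for a fixed $\bar\alpha$, we write $P_{\bar\alpha}=\bigcup_{i<\lambda}P_{\bar\alpha}^i$ (leaving the other $P_\alpha$ fixed), then $(P_\alpha)_\alpha$ is separated if and only if the family obtained by replacing $P_{\bar\alpha}$ with $P_{\bar\alpha}^i$ is separated for every $i<\lambda$. The forward direction is trivial; for the converse, given separators $B_\alpha^i$ one sets $B_{\bar\alpha}=\bigcup_i B_{\bar\alpha}^i$ and $B_\alpha=\bigcap_i B_\alpha^i$ for $\alpha\neq\bar\alpha$, which are $\lambda$-Borel because $\lB$ is a $\lambda^+$-algebra, and a short computation gives $\bigcap_\alpha B_\alpha=\emptyset$. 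There is also a disjointness variant: since the $\Nbhd_{s{}^\smallfrown{}a}$ are pairwise disjoint, $(s,(t_\alpha))$ is separated if and only if for every $a<\lambda$ the family $\big(\bigcup_{b<\lambda}A_\alpha^{s{}^\smallfrown{}a,\,t_\alpha{}^\smallfrown{}b}\big)_\alpha$ is separated (intersect the $\lambda$-Borel separators with the relevant clopen pieces and take unions). Both lemmas use exactly the closure properties of Proposition~\ref{prop:closurepropertiesofanalytic} together with the closure of $\lB$ under unions and intersections of length $\lambda$, all valid because $2^{<\lambda}=\lambda$; the bookkeeping over $\lambda$-many coordinates is carried out with $\AC_\lambda(\pre{\lambda}{2})$.

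The heart of the proof is a recursion building $x=\bigcup_p s_p\in B(\lambda)$ and branches $y_\alpha=\bigcup_p t_{\alpha,p}$ witnessing $x\in\bigcap_\alpha A_\alpha$, under the assumption (towards a contradiction) that $(\emptyset,(\emptyset)_\alpha)$ is \emph{not} separated: at each step one must pass from a non-separated approximation to a non-separated one-step extension. The disjointness variant immediately supplies the shared $x$-extension $a$; the genuine difficulty — and the main obstacle — is to extend the $\lambda$-many $y$-coordinates at once, i.e.\ to find $(b_\alpha)_\alpha$ with $\big(A_\alpha^{s{}^\smallfrown{}a,\,t_\alpha{}^\smallfrown{}b_\alpha}\big)_\alpha$ still non-separated. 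Applying the one-coordinate lemma transfinitely yields, for each $\beta<\lambda$, a value $b_\beta$ keeping non-separated every family in which only an initial segment of the coordinates has been replaced; the obstruction is precisely the passage through limit stages (culminating at the full replacement of all $\lambda$ coordinates), since there are more than $\lambda$-many choice functions $(b_\alpha)_\alpha$ and naively amalgamating their separators is impossible.

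This limit step is where Novikov's rank idea must be imported. The plan is to attach to each separated family a canonical ordinal rank below $\lambda^+$: using the constructive form of the Lusin separation theorem (Remark~\ref{rmk:constructivelusinseparation}) together with the tree-rank representation $A_\alpha=\bigcap_{\xi<\lambda^+}B^\alpha_\xi$ furnished by Theorem~\ref{thm:longunionsintersections}, the separation of a family is witnessed at some stage $\xi<\lambda^+$. One then runs the recursion carrying this rank as an additional, non-increasing invariant, so the construction cannot proceed through all of $\lambda^+$; since $\lambda^+$ is regular by Fact~\ref{fct:lambdaunionewithoutchoice}\ref{fct:lambdaunionewithoutchoice-1}, a boundedness argument forces the limit family to remain non-separated, closing the inductive step. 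Feeding the resulting branch back shows $x\in\bigcap_\alpha A_\alpha$, contradicting $\bigcap_\alpha A_\alpha=\emptyset$; hence $(\emptyset,(\emptyset)_\alpha)$ is separated, which (taking $B_\alpha$ to be the separators and noting $\lD^1_1=\lB$ by Theorem~\ref{thm:souslin}) is exactly the generalized separation property asserted. The one point demanding care throughout is that the recursion, of length at most $\lambda^+$ and over $\lambda$-many coordinates, is supported by $\AC_\lambda(\pre{\lambda}{2})$ and the regularity of $\lambda^+$ it grants.
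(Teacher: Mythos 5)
Your setup (reduction to \( B(\lambda) \), normal forms, the one-coordinate amalgamation lemma, and the disjointness variant for the shared \( x \)-coordinate) is sound, and you have correctly located the crux: extending all \( \lambda \)-many witness coordinates \( t_\alpha \) at once cannot be achieved by iterating the one-coordinate lemma, because non-separation does not pass through limit stages (a separator of the limit family says nothing about the earlier families, whose sets are larger), and amalgamating separators over the \( 2^\lambda \)-many choice functions \( (b_\alpha)_{\alpha<\lambda} \) is impossible inside a \( \lambda^+ \)-algebra. But the rank-based repair you propose is not a proof: you never define the rank of a family, never establish the monotonicity you invoke, and the ingredients you cite do not supply them --- Theorem~\ref{thm:longunionsintersections} and Remark~\ref{rmk:constructivelusinseparation} give representations of the \emph{individual} sets \( A_\alpha \), not an invariant attached to a family of \( \lambda \)-many sets that would force non-separation to persist at limits. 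The obstruction you identified is real, and your sketch does not remove it.

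The idea you are missing is to redesign the recursion so that no limit stage, and no amalgamation over more than \( {<}\lambda \)-many alternatives, ever occurs. The paper does this by parametrizing each \( A_\alpha \) as a continuous image of \( C(\lambda)=\prod_{i\in\omega}\lambda_i \) (so the branching at level \( i \) is \( \lambda_i<\lambda \), not \( \lambda \)) and working with staggered \emph{configurations}: at level \( n \) only the first \( \lambda_{n-1} \)-many coordinates carry nontrivial sequences, all of length exactly \( n \), while every other coordinate stays at \( \emptyset \). Because \( \lambda \) is strong limit, a configuration of level \( n \) has fewer than \( \lambda \)-many extensions to level \( n+1 \), so your amalgamation argument, run simultaneously over all active coordinates and all these \( {<}\lambda \)-many extensions, shows that a non-separated (``bad'') configuration always admits a bad one-step extension; the recursion then has length \( \omega \), each coordinate being activated at some finite stage and extended at every later one. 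The contradiction at the end uses \( \bigcap_\alpha A_\alpha=\emptyset \) differently from your plan: the points \( p_\alpha=f_\alpha(x_\alpha) \) cannot all coincide, and separating two of them by disjoint open sets contradicts badness at some finite level, by continuity. Note, finally, that a genuine rank proof of this theorem does exist --- via \( \lP^1_1 \)-ranks, the \( \lambda \)-generalized reduction property of \( \lP^1_1 \), and dualization (Theorems~\ref{thm:Pi11isranked} and~\ref{thm:structuralpropertiesforcoanalytic}) --- but that argument is structurally unrelated to the recursion you set up: it never attempts to push non-separation through limit stages.
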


\begin{proof}
We can assume without loss of generality that \( A_\alpha \neq \emptyset \) for all \( \alpha < \lambda \). Thus by~\ref{prop:charanalytic-3} of Proposition~\ref{prop:charanalytic} there are continuous surjections \( f_\alpha \colon C(\lambda) \to A_\alpha \): for each \( \alpha < \lambda \) and \( s \in \pre{<\omega}{(\vec{\lambda})} =  \bigcup_{n \in \omega} \big( \prod_{i<n} \lambda_i \big) \), set \( P^{(\alpha)}_s = f_\alpha(\Nbhd_s ( C(\lambda))) \) and notice that each \( P^{(\alpha)}_s \) is a \(\lambda\)-analytic set. 
A sequence \( c = (s_\alpha)_{\alpha < \lambda} \) is called a configuration of level  \( n \in \omega \) if \( s_\alpha \in \prod_{i < n} \lambda_i \) for \( \alpha < \lambda_{n-1} \) and \( s_\alpha = \emptyset \) otherwise, where by convention \( \lambda_{-1} = 0 \). In particular, there is a unique configuration \( c_0 \) of level \( 0 \), namely the one with \( s_\alpha = \emptyset \) for all \( \alpha < \lambda \). A configuration \( c' = (s'_\alpha)_{\alpha < \lambda} \) of level \( n+1 \) extends a configuration \( c = (s_\alpha)_{\alpha < \lambda} \) of level \( n \), in symbols \( c' \supseteq c \), if \( s_\alpha \subseteq s'_\alpha \) for all \( \alpha < \lambda \). Since we assumed \( 2^{< \lambda} = \lambda \), the set \( \mathscr{C}_n \) of configurations of level \( n \) is canonically well-orderable, and in fact there are less than \(  \lambda \)-many configurations of level \( n+1 \) extending a given configuration \( c \) of level \( n \). Finally, say that a configuration \( c = (s_\alpha)_{\alpha < \lambda} \) (of any level) is bad if the conclusion of the theorem fails for the family \( \big(P^{(\alpha)}_{s_\alpha}\big)_{\alpha < \lambda } \), that is, there is no sequence \( (B_\alpha)_{\alpha < \lambda} \) of \(\lambda\)-Borel sets such that \( B_\alpha \supseteq  P^{(\alpha)}_{s_\alpha} \) and \( \bigcap_{\alpha< \lambda} B_\alpha = \emptyset \). (Notice that \( \bigcap_{\alpha < \lambda} P^{(\alpha)}_{s_\alpha} = \emptyset \) always holds because \( P^{(\alpha)}_{s_\alpha} \subseteq A_\alpha \) for every \( \alpha < \lambda \).)

\begin{claim}
Fix any \( n \in \omega \). Every bad configuration \( c = (s_\alpha)_{\alpha < \lambda} \in \mathscr{C}_n\)  can be extended to a bad configuration \( c' \in \mathscr{C}_{n+1} \).
\end{claim}

\begin{proof}[Proof of the claim]
Suppose not, and for every  \( c  \subseteq c' \in \mathscr{C}_{n+1} \), let \( \big(B^{(c')}_\alpha \big)_{\alpha < \lambda } \) be a family of \(\lambda\)-Borel sets witnessing that \( c' \) is not bad. For every \( \alpha < \lambda_{n} \) and \( t \in \prod_{i < n+1} \lambda_i \) such that \( s_\alpha \subseteq t \), let 
\[
D_{\alpha,t} = \bigcap \Big\{ B^{(c')}_\alpha \mid c' = (s'_\gamma)_{\gamma < \lambda} \in \mathscr{C}_{n+1} \wedge c' \supseteq c \wedge s'_\alpha = t \Big\}
\]
and
\[ 
D_\alpha = \bigcup \bigg\{  D_{\alpha,t} \mid s_\alpha \subseteq t \in \prod_{i < n+1} \lambda_i \bigg\}.
 \] 
 If instead \( \lambda_n \leq \alpha < \lambda \), set
 \[ 
D_\alpha = \bigcap \{ B^{(c')}_\alpha \mid c' \in \mathscr{C}_{n+1} \wedge  c'\supseteq c\}.
 \] 
Since \( c \) has less than \( \lambda \)-many extensions in \( \mathscr{C}_{n+1} \), it follows that all the sets \( D_{\alpha,t} \) and \( D_\alpha \) are \(\lambda\)-Borel. Moreover, if \( \alpha < \lambda_n \) then \( D_{\alpha,t} \supseteq P^{(\alpha)}_t \), thus \[ 
D_\alpha = \bigcup \bigg\{  D_{\alpha,t} \mid s_\alpha \subseteq t \in \prod_{i < n+1} \lambda_i \bigg\} \supseteq \bigcup \bigg\{  P^{(\alpha)}_t  \mid s_\alpha \subseteq t \in \prod_{i < n+1} \lambda_i \bigg\} = P^{(\alpha)}_{s_\alpha} 
\] 
for all \( \alpha < \lambda_n \).
Similarly, \( D_\alpha \supseteq P^{(\alpha)}_{s_\alpha} \) for every \( \lambda_n \leq \alpha < \lambda \) because if \( c' = (c'_\gamma)_{\gamma < \lambda} \in \mathscr{C}_{n+1} \) then \( s'_\alpha = \emptyset = s_\alpha \).
We claim that \( \bigcap_{\alpha < \lambda } D_\alpha = \emptyset \), contradicting the assumption that \( c \) was bad. Indeed, if \( x \in \bigcap_{\alpha < \lambda} D_\alpha \), then for every \( \alpha < \lambda_n \) there is \( t_\alpha \supseteq s_\alpha  \) with \( t_\alpha \in \prod_{i < n+1} \lambda_i \) such that \( x \in D_{\alpha,t_\alpha} \). Consider the configuration \( c' = (s'_\alpha)_{\alpha < \lambda} \in \mathscr{C}_{n+1}\) with \( s'_\alpha = t_\alpha \) if \( \alpha < \lambda_n \) and \( s'_\alpha = \emptyset \) otherwise: it clearly extends \( c \), and is such that \( x \in D_{\alpha,t_\alpha} \subseteq  B^{(c')}_\alpha \) for all \( \alpha < \lambda_n \), and also \( x \in D_\alpha \subseteq B^{(c')}_\alpha \) for all \( \lambda_n \leq \alpha < \lambda \).
Thus \( x \in \bigcap_{\alpha < \lambda}B^{(c')}_\alpha \), against the choice of the sequence \( (B^{(c')}_\alpha)_{\alpha < \lambda} \).
\end{proof}

Towards a contradiction, assume that the conclusion of the theorem fails or, equivalently, that the (unique) configuration \( c_0 \) of level \( 0 \) is bad.
Recursively apply the claim to find%
\footnote{Since each set of configurations \( \mathscr{C}_n \) is canonically well-orderable, at each step we can just take the configuration with the desired properties which appears first with respect to such well-ordering. This shows that no choice is needed here.}
 a sequence \( (c_n)_{n \in \omega} \) of configurations \( c_n = (s^n_\alpha)_{\alpha < \lambda} \) such that \( c_n \) is of level \( n \), \( c_n \) is bad, and \( c_{n+1} \supseteq c_n \), for every \( n \in \omega \). For each \( \alpha < \lambda \) set \( x_\alpha = \bigcup_{n \in \omega} s^n_\alpha \in C(\lambda) \) and \( p_\alpha = f_\alpha(x_\alpha) \). Since \( p_\alpha \in A_\alpha \) and \( \bigcap_{\alpha < \lambda } A_\alpha = \emptyset \), there are \( \alpha_0, \alpha_1 < \lambda \) such that \( p_{\alpha_0} \neq p_{\alpha_1} \): let \( U_0,U_1 \subseteq X \) be open neighborhoods of \( p_{\alpha_0} \) and \( p_{\alpha_1} \), respectively, such that \( U_0 \cap U_1 = \emptyset \). 
By continuity of the maps \( f_{\alpha_i} \), there is \( m \in \omega \) such that \( \lambda_{m-1} > \alpha_0, \alpha_1 \) and \( f_{\alpha_i}(\Nbhd_{x_{\alpha_i} \restriction m} ( C(\lambda))) = P^{(\alpha_i)}_{x_{\alpha_i} \restriction m} \subseteq U_i \). It follows that the sequence \( (B_\alpha)_{\alpha < \lambda} \) with \( B_{\alpha_i} = U_i \) for \( i \in \{ 0,1 \} \) and \( B_\beta = X \) for all \( \beta \) different from \( \alpha_0 \) and \( \alpha_1 \) witnesses that \( c_m \) is not bad, a contradiction.
\end{proof}

Theorem~\ref{thm:novikovsepartion} can be reformulated in many different ways. For example, dualizing the statement and recursively disjointifying  the resulting \(\lambda\)-Borel sets we get a weaker form of the \(\lambda\)-generalized reduction property for \( \lP^1_1 \).

\begin{corollary} \label{cor:novikovseparation1}
Let \(\lambda\) be such that \( 2^{< \lambda} = \lambda \), and let \( X \) be a \(\lambda\)-Polish space. If \( (C_\alpha)_{\alpha < \lambda} \) is a sequence of \(\lambda\)-coanalytic sets covering \( X \), then it can be refined to a \(\lambda\)-Borel \emph{weak} partition of \( X \), that is, there are pairwise disjoint (possibly empty) \(\lambda\)-Borel sets \( B_\alpha \), \( \alpha < \lambda \), such that \( \bigcup_{\alpha < \lambda} B_\alpha = X \) and \( B_\alpha \subseteq C_\alpha \) for all \( \alpha < \lambda \).
\end{corollary}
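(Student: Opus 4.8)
The goal is to prove Corollary~\ref{cor:novikovseparation1} from the Novikov separation Theorem~\ref{thm:novikovsepartion}. The statement concerns a sequence $(C_\alpha)_{\alpha < \lambda}$ of $\lambda$-coanalytic sets covering $X$, and asks for a refinement to a $\lambda$-Borel weak partition.

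The plan is to dualize and then disjointify. First I would pass to complements: set $A_\alpha = X \setminus C_\alpha$ for each $\alpha < \lambda$. Since each $C_\alpha$ is $\lambda$-coanalytic, each $A_\alpha$ is $\lambda$-analytic. The covering hypothesis $\bigcup_{\alpha < \lambda} C_\alpha = X$ translates directly into $\bigcap_{\alpha < \lambda} A_\alpha = \emptyset$. Thus the family $(A_\alpha)_{\alpha < \lambda}$ satisfies exactly the hypothesis of Theorem~\ref{thm:novikovsepartion}, which grants a sequence $(B'_\alpha)_{\alpha < \lambda}$ of $\lambda$-Borel sets with $B'_\alpha \supseteq A_\alpha$ for all $\alpha < \lambda$ and $\bigcap_{\alpha < \lambda} B'_\alpha = \emptyset$.

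Next I would dualize back. Set $D_\alpha = X \setminus B'_\alpha$; these are again $\lambda$-Borel since $\lB(X)$ is closed under complements. From $B'_\alpha \supseteq A_\alpha = X \setminus C_\alpha$ we get $D_\alpha \subseteq C_\alpha$, and from $\bigcap_{\alpha < \lambda} B'_\alpha = \emptyset$ we get $\bigcup_{\alpha < \lambda} D_\alpha = X$. So $(D_\alpha)_{\alpha < \lambda}$ is a $\lambda$-Borel cover refining $(C_\alpha)_{\alpha < \lambda}$, but the sets are not yet pairwise disjoint. To achieve disjointness, I would disjointify using the fixed well-order of $\lambda$: define
\[
B_\alpha = D_\alpha \setminus \bigcup_{\beta < \alpha} D_\beta .
\]
Each $B_\alpha$ is $\lambda$-Borel because it is the difference of $D_\alpha$ and a union of fewer than $\lambda$-many $\lambda$-Borel sets, and $\lB(X)$ is closed under well-ordered unions of length at most $\lambda$. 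The sets $B_\alpha$ are pairwise disjoint by construction, satisfy $B_\alpha \subseteq D_\alpha \subseteq C_\alpha$, and their union equals $\bigcup_{\alpha < \lambda} D_\alpha = X$. This yields the desired $\lambda$-Borel weak partition (some $B_\alpha$ may be empty, which is permitted).

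There is essentially no serious obstacle here, as the substantive content is entirely absorbed into Theorem~\ref{thm:novikovsepartion}; the only point requiring a modicum of care is the closure of $\lB(X)$ under $<\lambda$-sized unions needed for the disjointification step, which holds since $\lB(X)$ is a $\lambda^+$-algebra (indeed a $\lambda$-algebra, as $\lambda$ is singular). The assumption $2^{< \lambda} = \lambda$ is inherited from the theorem. I would keep the write-up brief, since the argument is the routine complement-and-disjointify passage, matching the remark in the corollary's surrounding text that it follows by ``dualizing the statement and recursively disjointifying.''
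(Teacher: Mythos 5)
Your proposal is correct and follows exactly the route the paper intends: it describes this corollary as obtained from Theorem~\ref{thm:novikovsepartion} by ``dualizing the statement and recursively disjointifying the resulting \(\lambda\)-Borel sets,'' which is precisely your complement-apply-dualize-disjointify argument, with the disjointification justified by closure of \( \lB(X) \) under well-ordered unions of length at most \(\lambda\). Nothing is missing.
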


Clearly, Corollary~\ref{cor:novikovseparation1} in turn implies Theorem~\ref{thm:novikovsepartion} via dualization again, so it is equivalent to it. Moreover, from Corollary~\ref{cor:novikovseparation1} one can derive with a standard argument the following result. It is a weaker form of the ordinal  \(\lambda\)-uniformization property for \( \lP^1_1 \), and can trivially be extended to \(\lambda\)-coanalytic subsets of \( X \times \lambda \) whose projection on the first coordinate is \(\lambda\)-analytic (and thus \(\lambda\)-Borel).

\begin{corollary} \label{cor:novikovseparation2}
Let \( \lambda \) be such that \( 2^{< \lambda} = \lambda \) and \( X \) be a \(\lambda\)-Polish space. If \( C \subseteq X \times \lambda \) (where \(\lambda\) is endowed with the discrete topology) is \(\lambda\)-coanalytic and such that for all \( x \in X \) there is \( \alpha < \lambda \) for which \( (x,\alpha) \in C \), then there is a \(\lambda\)-Borel function \( f \colon X \to \lambda \) such that \( (x,f(x)) \in C \) for all \( x \in X \).
\end{corollary}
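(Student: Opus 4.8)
The plan is to derive Corollary~\ref{cor:novikovseparation2} from Corollary~\ref{cor:novikovseparation1} by a standard uniformization-via-reduction argument, exactly as one does in the classical case. The set-up is a \(\lambda\)-coanalytic \( C \subseteq X \times \lambda \) whose vertical sections are nonempty, and the goal is a \(\lambda\)-Borel choice function selecting, for each \( x \), some \(\alpha\) with \((x,\alpha) \in C\).

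First I would slice \( C \) into its vertical sections. For each \(\alpha < \lambda\) set
\[
C_\alpha = \{ x \in X \mid (x,\alpha) \in C \} .
\]
Each \( C_\alpha \) is \(\lambda\)-coanalytic: indeed \( C_\alpha = \iota_\alpha^{-1}(C) \) where \( \iota_\alpha \colon X \to X \times \lambda \) is the continuous map \( x \mapsto (x,\alpha) \) (here \( X \times \lambda \) carries the product of the topology of \( X \) with the discrete topology on \(\lambda\), so each \(\iota_\alpha\) is continuous), and \( \lP^1_1 \) is closed under continuous preimages by Corollary~\ref{cor:closurepropertiesofanalytic}. The hypothesis that every \( x \) has some \(\alpha\) with \((x,\alpha) \in C\) says precisely that \( \bigcup_{\alpha < \lambda} C_\alpha = X \), so the family \( (C_\alpha)_{\alpha < \lambda} \) is a \(\lambda\)-coanalytic cover of \( X \).

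Next I would apply Corollary~\ref{cor:novikovseparation1} to this cover, obtaining pairwise disjoint \(\lambda\)-Borel sets \( B_\alpha \subseteq C_\alpha \) with \( \bigcup_{\alpha < \lambda} B_\alpha = X \). Since the \( B_\alpha \) are pairwise disjoint and cover \( X \), they form a genuine partition, and I can define \( f \colon X \to \lambda \) by letting \( f(x) \) be the unique \(\alpha\) such that \( x \in B_\alpha \). Then \( x \in B_{f(x)} \subseteq C_{f(x)} \), which is exactly \( (x,f(x)) \in C \), as required. It remains to check that \( f \) is \(\lambda\)-Borel: for this it suffices to verify that the preimage of each singleton \( \{\alpha\} \subseteq \lambda \) is \(\lambda\)-Borel in \( X \), and indeed \( f^{-1}(\{\alpha\}) = B_\alpha \in \lB(X) \); since \(\lambda\) carries the discrete topology, preimages of open sets are \(\lambda\)-sized unions of such \( B_\alpha \), hence \(\lambda\)-Borel because \( \lB(X) \) is a \(\lambda^+\)-algebra.

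There is no serious obstacle here; the entire content sits in Corollary~\ref{cor:novikovseparation1}, and the work is the routine bookkeeping of passing from a \(\lambda\)-Borel weak partition refining a coanalytic cover to the associated selecting function. The one point deserving a word of care is that \( f \) is well-defined precisely because the \( B_\alpha \) are pairwise disjoint \emph{and} exhaustive; the disjointness is what makes ``the unique \(\alpha\) with \( x \in B_\alpha \)'' meaningful, and the covering is what makes it total. This is also why the statement legitimately extends, as remarked in the excerpt, to the case where one only assumes the projection \( \p(C) \) is \(\lambda\)-analytic (and hence \(\lambda\)-Borel by Theorem~\ref{thm:souslin}): there one relativizes the whole argument to \( \p(C) \) in place of \( X \), treating it as a standard \(\lambda\)-Borel space via Proposition~\ref{prop:charstandardBorel}, and the cover is a cover of \( \p(C) \) rather than of all of \( X \).
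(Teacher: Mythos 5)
Your proof is correct and follows essentially the same route as the paper's: slice \( C \) into its vertical sections \( C_\alpha \), apply Corollary~\ref{cor:novikovseparation1} to the resulting \(\lambda\)-coanalytic cover to get a \(\lambda\)-Borel weak partition \( (B_\alpha)_{\alpha<\lambda} \), and define \( f(x) = \alpha \iff x \in B_\alpha \). The paper's proof is just a two-line version of this; your additional verifications (that each \( C_\alpha \) is \(\lambda\)-coanalytic and that \( f \) is \(\lambda\)-Borel via \(\lambda\)-sized unions of the \( B_\alpha \)) are exactly the routine details it leaves implicit.
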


\begin{proof}
For each \( \alpha < \lambda \), let \( C_\alpha = \{ x \in X \mid (x,\alpha) \in C \} \). Apply Corollary~\ref{cor:novikovseparation1} and then set \( f(x) = \alpha \) if and only if \( x \in B_\alpha \).
\end{proof}

Since \( \lP^1_1 \) is clearly a \(\lambda\)-reasonable (in the sense of Definition~\ref{def:lambdareasonable}) boldface \(\lambda\)-pointclass, one can easily verify that in turn Corollary~\ref{cor:novikovseparation2} implies Corollary~\ref{cor:novikovseparation1}, hence it is equivalent to Theorem~\ref{thm:novikovsepartion} again.


As observed, all results seen so far in this subsection remain true if we work with standard \(\lambda\)-Borel spaces rather than \(\lambda\)-Polish spaces, as we are assuming \( 2^{< \lambda} = \lambda \) anyway. Thus
combining Corollary~\ref{cor:novikovseparation1} with (the analogue for standard \(\lambda\)-Borel spaces of) Theorem~\ref{thm:lusinseparation}, one can prove an analogue of~\cite[Theorem 28.7]{Kechris1995} in our new setup. For technical reasons (see the proof of Theorem~\ref{thm:compactsections}), our version is slightly stronger than the original formulation, which corresponds to the special case where \( \mathcal{U} \) is a basis  for the whole topology of \( Y \) of size at most \(\lambda\). 

Let \( \mathcal{V} \) be a collection of open subsets of a topological space \( X = (X,\tau) \). A \markdef{basis for \( \mathcal{V} \)} is a collection \( \mathcal{U} \subseteq \tau \) such that each \( V \in \mathcal{V} \) is a union of elements of \( \mathcal{U} \). In particular, \( \mathcal{U} \) is a basis for \( X \) if and only if it is a basis for \( \mathcal{V} = \tau \). Moreover, if \( \mathcal{U} \) is a basis for \( \mathcal{V} \), \( \mathcal{V}' \subseteq \mathcal{V} \), and \( \mathcal{U}' \supseteq \mathcal{U} \), then \( \mathcal{U}' \) is a basis for \( \mathcal{V} ' \).

\begin{theorem} \label{thm:rectangles}
Let \(\lambda\) be such that \( 2^{< \lambda} = \lambda \), and let \( X,Y \) be \(\lambda\)-Polish spaces. Let \( A \subseteq X \times Y \) be a \(\lambda\)-Borel set such that every vertical section 
\[ 
A_x = \{  y \in Y \mid (x,y) \in A \} 
\] 
is open. If \( \mathcal{U} = \{U_\alpha \mid \alpha < \lambda \} \) is a basis for \( \mathcal{V} = \{ A_x \mid x \in X \} \), then
\( A \) can be written as a union of rectangles
\[ 
A = \bigcup_{\alpha < \lambda } (B_\alpha \times U_\alpha)
 \] 
with \( B_\alpha \in \lB(X) \) for all \( \alpha < \lambda \). In particular, the result applies when \( \mathcal{U} \) is a basis for \( Y \).
\end{theorem}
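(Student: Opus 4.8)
The plan is to reduce the statement to two tools already available: the \(\lP^1_1\)-reduction of covering families (Corollary~\ref{cor:novikovseparation1}) and the Lusin separation theorem (Theorem~\ref{thm:lusinseparation}). First I would introduce the natural candidate sets \( C_\alpha = \{ x \in X \mid U_\alpha \subseteq A_x \} \). Since \( X \setminus C_\alpha = \p\big( (X \times U_\alpha) \cap ((X \times Y) \setminus A) \big) \) is the projection of a \(\lambda\)-Borel set, it is \(\lambda\)-analytic by Proposition~\ref{prop:charanalytic}, so each \( C_\alpha \) is \(\lambda\)-coanalytic. Because \( \mathcal{U} \) is a basis for the sections \( A_x \), one checks directly that \( A = \bigcup_{\alpha < \lambda} C_\alpha \times U_\alpha \) (for \( (x,y) \in A \) pick \( \alpha \) with \( y \in U_\alpha \subseteq A_x \)). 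The catch, and the main obstacle, is that a priori the \( C_\alpha \) are only \(\lambda\)-coanalytic, whereas the theorem demands genuine \(\lambda\)-Borel factors; neither separation nor reduction alone converts them to \(\lambda\)-Borel sets.

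The key move is to pass to the product space \( X \times Y \), which is \(\lambda\)-Polish (the class of \(\lambda\)-Polish spaces being closed under countable products), and to work there with the \(\lambda\)-coanalytic sets \( \tilde{C}_\alpha = C_\alpha \times U_\alpha \). Each \( \tilde{C}_\alpha \) is \(\lambda\)-coanalytic, being the intersection of \( C_\alpha \times Y \) (a continuous preimage of \( C_\alpha \)) with the open set \( X \times U_\alpha \), using that \( \lP^1_1 \) is closed under continuous preimages and finite intersections (Corollary~\ref{cor:closurepropertiesofanalytic}). Adjoining the extra set \( (X \times Y) \setminus A \), which is \(\lambda\)-Borel hence \(\lambda\)-coanalytic, I obtain a family of \(\lambda\)-many coanalytic sets (as \( |\lambda + 1| = \lambda \)) covering \( X \times Y \), since \( \bigcup_{\alpha < \lambda} \tilde{C}_\alpha = A \). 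Corollary~\ref{cor:novikovseparation1} then yields pairwise disjoint \(\lambda\)-Borel sets \( \tilde{B}_\alpha \subseteq \tilde{C}_\alpha \) and \( \tilde{B}_\infty \subseteq (X \times Y) \setminus A \) whose union is all of \( X \times Y \). As each \( \tilde{B}_\alpha \subseteq A \) while \( \tilde{B}_\infty \cap A = \emptyset \), any point of \( A \) must land in some \( \tilde{B}_\alpha \) with \( \alpha < \lambda \), forcing \( \bigcup_{\alpha < \lambda} \tilde{B}_\alpha = A \).

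Finally I would convert the \( \tilde{B}_\alpha \) back into honest rectangles. For each \( \alpha \) the projection \( \p(\tilde{B}_\alpha) \) is \(\lambda\)-analytic and contained in \( C_\alpha \), hence disjoint from the \(\lambda\)-analytic set \( X \setminus C_\alpha \); applying Theorem~\ref{thm:lusinseparation} produces a \(\lambda\)-Borel set \( B_\alpha \) with \( \p(\tilde{B}_\alpha) \subseteq B_\alpha \subseteq C_\alpha \) (the \(\lambda\)-many such choices being harmless in our base theory \( \ZF + \AC_\lambda(\pre{\lambda}{2}) \), or canonical by Remark~\ref{rmk:constructivelusinseparation}). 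Then \( B_\alpha \times U_\alpha \subseteq C_\alpha \times U_\alpha \subseteq A \) gives one inclusion, while any \( (x,y) \in A \) lies in some \( \tilde{B}_\alpha \), whence \( x \in \p(\tilde{B}_\alpha) \subseteq B_\alpha \) and \( y \in U_\alpha \) (as \( \tilde{B}_\alpha \subseteq X \times U_\alpha \)), giving the reverse inclusion. Thus \( A = \bigcup_{\alpha < \lambda} B_\alpha \times U_\alpha \) with each \( B_\alpha \in \lB(X) \), as required. The subtlety to emphasize is exactly the coanalytic-to-Borel passage: reduction in the product trims \( A \) into \(\lambda\)-Borel pieces, and separation in the base promotes their analytic projections to \(\lambda\)-Borel rectangles contained in the sections, and it is the interplay of the two that closes the gap.
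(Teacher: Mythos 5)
Your proof is correct and follows essentially the same route as the paper's: introduce \( C_\alpha = \{ x \in X \mid U_\alpha \subseteq A_x \} \) (the paper's \( X_\alpha \)), reduce the resulting \(\lambda\)-coanalytic covering to pairwise disjoint \(\lambda\)-Borel pieces via Corollary~\ref{cor:novikovseparation1}, and then promote the \(\lambda\)-analytic projections of those pieces to \(\lambda\)-Borel sets sandwiched inside the \( C_\alpha \) by Lusin separation (Theorem~\ref{thm:lusinseparation}). The only, harmless, difference is that the paper invokes the standard-\(\lambda\)-Borel-space analogue of Corollary~\ref{cor:novikovseparation1} applied to the covering of \( A \) itself, whereas you apply the corollary exactly as stated in the ambient \(\lambda\)-Polish space \( X \times Y \) by adjoining the \(\lambda\)-Borel (hence \(\lambda\)-coanalytic) set \( (X \times Y) \setminus A \) to the covering family — a clean way to avoid relativizing that corollary.
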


\begin{proofcompare}{Kechris1995}{Theorem 28.7}
By the choice of \( \mathcal{U} \), \( A = \bigcup_{\alpha < \lambda} (X_\alpha \times U_\alpha) \) where \( X_\alpha = \{  x \in X \mid U_\alpha \subseteq A_x \} \). Since \( X_\alpha \) is \(\lambda\)-coanalytic, so is each \( Z_\alpha = X_\alpha \times U_\alpha \). Applying (the analogue for standard \(\lambda\)-Borel spaces of) Corollary~\ref{cor:novikovseparation1} to the covering of the standard \(\lambda\)-Borel space \( A \) constituted by the sets \( Z_\alpha \), we get a family of \(\lambda\)-Borel sets \( A_\alpha \subseteq Z_\alpha \) such that \( A = \bigcup_{\alpha < \lambda} A_\alpha \). Let \( S_\alpha \) be the projection on \( X \) of \( A_\alpha \), so that \( S_\alpha \subseteq X_\alpha \). Since \( S_\alpha \) is \(\lambda\)-analytic and \( X_\alpha \) is \(\lambda\)-coanalytic, by Theorem~\ref{thm:lusinseparation} there is a \(\lambda\)-Borel set \( B_\alpha \) such that \( S_\alpha \subseteq B_\alpha \subseteq X_\alpha \). Since \( A_\alpha \subseteq B_\alpha \times U_\alpha \subseteq Z_\alpha \), the sets \( B_\alpha \) are as required.
\end{proofcompare}

Applying Corollary~\ref{cor:changeoftopologyclopen} to the sets \( B_\alpha \) from Theorem~\ref{thm:rectangles} after considering as \( \mathcal{U} \) any basis for \( Y \) of size at most \(\lambda\), one easily gets:

\begin{corollary} \label{cor:Borelwithclosedsections}
Let \( \lambda \) be such that \( 2^{< \lambda} = \lambda \), and let \( X,Y \) be \(\lambda\)-Polish spacs, with \(\tau\) the topology of \( X \). Let \( A \subseteq X \times Y \) be a \(\lambda\)-Borel set such that all its vertical sections \( A_x \) are open (respectively, closed). Then there is a \(\lambda\)-Polish topology \( \tau' \supseteq \tau \) such that \( \lB(X,\tau') = \lB(X,\tau) \) and \( A \) is open (respectively, closed) as a subset of \( (X,\tau') \times Y \). Moreover, we can further require \( \mathrm{dim}(X,\tau') = 0 \). 
\end{corollary}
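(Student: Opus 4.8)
The plan is to combine the two cited results directly, with almost no additional work. First I would treat the case where every vertical section $A_x$ is open. Since $Y$ is $\lambda$-Polish, it admits a basis $\mathcal{U} = \{ U_\alpha \mid \alpha < \lambda \}$ of size at most $\lambda$; being a basis for the whole topology of $Y$, this $\mathcal{U}$ is in particular a basis for the family $\mathcal{V} = \{ A_x \mid x \in X \}$ of (open) vertical sections. Applying Theorem~\ref{thm:rectangles} to $A$ with this choice of $\mathcal{U}$, I obtain a family $(B_\alpha)_{\alpha < \lambda}$ of $\lambda$-Borel subsets of $X$ such that
\[
A = \bigcup_{\alpha < \lambda} (B_\alpha \times U_\alpha).
\]

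Next I would feed the family $(B_\alpha)_{\alpha < \lambda}$ into Corollary~\ref{cor:changeoftopologyclopen}. This produces a $\lambda$-Polish topology $\tau' \supseteq \tau$ on $X$ with $\lB(X,\tau') = \lB(X,\tau)$, with every $B_\alpha$ being $\tau'$-clopen, and — invoking the ``moreover'' clause of that corollary — such that $\mathrm{dim}(X,\tau') = 0$. Since each $B_\alpha$ is now $\tau'$-open and each $U_\alpha$ is open in $Y$, every rectangle $B_\alpha \times U_\alpha$ is open in $(X,\tau') \times Y$, and hence so is their union $A$. This disposes of the open case, including the zero-dimensionality requirement.

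For the closed case I would argue by complementation. If every section $A_x$ is closed, then $A' = (X \times Y) \setminus A$ is $\lambda$-Borel and has all vertical sections $(A')_x = Y \setminus A_x$ open. By the open case just established there is a $\lambda$-Polish topology $\tau' \supseteq \tau$ with $\lB(X,\tau') = \lB(X,\tau)$, $\mathrm{dim}(X,\tau') = 0$, and $A'$ open in $(X,\tau') \times Y$; consequently $A$ is closed in $(X,\tau') \times Y$, as required. I do not expect any genuine obstacle here: the substantive work sits entirely in Theorem~\ref{thm:rectangles} (the nontrivial ingredient, resting on Novikov's separation theorem) and in Corollary~\ref{cor:changeoftopologyclopen}. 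The only point worth flagging is that $\mathcal{U}$ must be taken as a basis for all of $Y$ so that the rectangle decomposition of $A$ is available, after which turning the coordinate sets $B_\alpha$ into $\tau'$-clopen sets mechanically makes $A$ open (respectively, via complementation, closed).
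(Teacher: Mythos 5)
Your proposal is correct and follows exactly the paper's own route: apply Theorem~\ref{thm:rectangles} with \( \mathcal{U} \) a \(\lambda\)-sized basis for all of \( Y \) to get \( A = \bigcup_{\alpha<\lambda}(B_\alpha \times U_\alpha) \), then feed the \( B_\alpha \) into Corollary~\ref{cor:changeoftopologyclopen} (including its zero-dimensionality clause), with the closed case handled by complementation. No gaps; this is the intended argument.
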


An important application of Corollary~\ref{cor:Borelwithclosedsections} is the following.

\begin{lemma}
Let \( \lambda \) be such that \( 2^{< \lambda} = \lambda \), and let \( X = (X,\tau) \) be a \(\lambda\)-Polish space. If \( E \) is a \(\lambda\)-Borel equivalence relation on \( X \) all of whose equivalence classes are closed, then there is a \(\lambda\)-Polish topology \( \tau' \supseteq \tau \) such that \( \lB(X,\tau') = \lB(X,\tau) \) and \( E \) is closed as a subset of \( (X, \tau') \times (X, \tau') \).
\end{lemma}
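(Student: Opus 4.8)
The plan is to derive the lemma directly from Corollary~\ref{cor:Borelwithclosedsections}, exploiting the fact that \( E \), viewed as a subset of \( X \times X \), has closed vertical sections — namely, its equivalence classes — and then observing that refining the topology on the second coordinate comes for free.

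First I would record the elementary identification of sections: for each \( x \in X \), the vertical section \( E_x = \{ y \in X \mid (x,y) \in E \} \) is exactly the equivalence class \( [x]_E \), which is closed by hypothesis. Since \( E \) is \(\lambda\)-Borel as a subset of \( X \times X \), I can then apply Corollary~\ref{cor:Borelwithclosedsections} with \( Y = X \) and \( A = E \), using the ``closed'' alternative of that statement. This yields a \(\lambda\)-Polish topology \( \tau' \supseteq \tau \) on \( X \) with \( \lB(X,\tau') = \lB(X,\tau) \) (and even \( \mathrm{dim}(X,\tau') = 0 \), though this will not be needed) such that \( E \) is closed as a subset of \( (X,\tau') \times (X,\tau) \).

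The only point requiring any care is that the corollary refines the topology on the \emph{first} factor only, leaving the original \( \tau \) on the second factor, whereas the lemma asks for the same refined topology \( \tau' \) on both factors. I would resolve this by the observation that the product topology on \( (X,\tau') \times (X,\tau') \) is finer than the one on \( (X,\tau') \times (X,\tau) \): since \( \tau \subseteq \tau' \), every basic rectangle \( U \times W \) with \( U \in \tau' \) and \( W \in \tau \) is already open in the finer product, so the finer product topology contains a basis for the coarser one. As a set closed in a coarser topology remains closed in every finer topology, \( E \) — being closed in \( (X,\tau') \times (X,\tau) \) — is automatically closed in \( (X,\tau') \times (X,\tau') \). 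Hence \( \tau' \) witnesses the conclusion.

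Accordingly, I do not expect a genuine obstacle here: the apparent difficulty, namely that one seems to need a single \emph{symmetric} refinement making \( E \) closed on both coordinates at once, is circumvented precisely because passing to a finer topology can only create more closed sets, never destroy closedness. It is worth noting that the argument uses only the closedness of the sections of \( E \) and the fact that \( E \) is \(\lambda\)-Borel; the reflexivity, symmetry, and transitivity of \( E \) play no role beyond guaranteeing that the sections are the equivalence classes. The classical analogue is proved the same way, so the whole lemma is essentially a one-line consequence of Corollary~\ref{cor:Borelwithclosedsections}.
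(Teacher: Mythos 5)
Your proof is correct and is exactly the route the paper intends: the lemma is stated there as an immediate application of Corollary~\ref{cor:Borelwithclosedsections} (with no written proof), applied with \( Y = X \) and \( A = E \), whose vertical sections are the closed equivalence classes. Your handling of the only real point --- that the corollary refines just the first factor, which suffices because \( E \) closed in \( (X,\tau') \times (X,\tau) \) remains closed in the finer product \( (X,\tau') \times (X,\tau') \) --- is precisely the missing observation, and it is argued correctly.
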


\begin{corollary} \label{cor:closedclasses-closedrelation}
Let \( \lambda \) be such that \( 2^{< \lambda} = \lambda \), and let \( E \) be a \(\lambda\)-Borel equivalence relation on a standard \(\lambda\)-Borel space \( (X, \mathcal{B}) \). The following are equivalent:
\begin{enumerate-(1)}
\item \label{cor:closedclasses-closedrelation-1}
there is a \(\lambda\)-Polish topology \( \tau \) on \( X \) such that \( \mathcal{B} = \lB(X,\tau) \) and \( E \) is \( \tau \times \tau \)-closed;
\item \label{cor:closedclasses-closedrelation-2}
there is a \(\lambda\)-Polish topology \( \tau \) on \( X \) such that \( \mathcal{B} = \lB(X,\tau) \) and each \( E \)-equivalence class is \(\tau\)-closed.
\end{enumerate-(1)}
\end{corollary}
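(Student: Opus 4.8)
The statement to prove is Corollary~\ref{cor:closedclasses-closedrelation}, asserting the equivalence of two conditions on a $\lambda$-Borel equivalence relation $E$ on a standard $\lambda$-Borel space $(X,\mathcal{B})$: namely, that $E$ admits a compatible $\lambda$-Polish topology making $E$ closed in the square, versus admitting one making every equivalence class closed.

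The plan is as follows. The implication \ref{cor:closedclasses-closedrelation-1}~$\Rightarrow$~\ref{cor:closedclasses-closedrelation-2} is immediate and should be dispatched first: if $\tau$ is a $\lambda$-Polish topology with $\mathcal{B} = \lB(X,\tau)$ and $E$ is $\tau \times \tau$-closed, then for each fixed $x \in X$ the section $E_x = \{ y \in X \mid y \mathrel{E} x \}$ is the preimage of the closed set $E$ under the continuous map $y \mapsto (x,y)$, hence $E_x = [x]_E$ is $\tau$-closed. So the \emph{same} topology $\tau$ witnesses~\ref{cor:closedclasses-closedrelation-2}, and no work is needed beyond this observation.

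For the converse \ref{cor:closedclasses-closedrelation-2}~$\Rightarrow$~\ref{cor:closedclasses-closedrelation-1}, I would invoke the Lemma immediately preceding this corollary (the unnamed Lemma stating that a $\lambda$-Borel equivalence relation with closed classes can have its topology refined to make the relation itself closed). Concretely: given a $\lambda$-Polish topology $\tau_0$ on $X$ with $\mathcal{B} = \lB(X,\tau_0)$ and every $E$-class $\tau_0$-closed, apply that Lemma to produce a finer $\lambda$-Polish topology $\tau \supseteq \tau_0$ with $\lB(X,\tau) = \lB(X,\tau_0) = \mathcal{B}$ and $E$ closed in $\tau \times \tau$. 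This $\tau$ witnesses~\ref{cor:closedclasses-closedrelation-1}. The only point requiring a word of care is that condition~\ref{cor:closedclasses-closedrelation-2} is phrased as ``there is \emph{a} $\lambda$-Polish topology $\tau$ with $\mathcal{B} = \lB(X,\tau)$ and each class $\tau$-closed'', so we first fix such a $\tau_0$ and then feed it to the Lemma; since $(X,\mathcal{B})$ is standard $\lambda$-Borel, such a compatible $\lambda$-Polish topology exists by definition, and the Lemma requires exactly the hypothesis that the classes be closed in it.

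I do not anticipate a genuine obstacle here, as the corollary is essentially a repackaging of the preceding Lemma together with the trivial section argument; the main conceptual content has already been isolated in that Lemma, whose proof in turn rests on Corollary~\ref{cor:Borelwithclosedsections} (applied with $A = E$, whose vertical sections are the closed equivalence classes). The only subtlety worth flagging explicitly in writing is the bookkeeping that all topologies involved generate the \emph{same} $\lambda$-Borel structure $\mathcal{B}$, which is guaranteed at each step by the change-of-topology machinery preserving $\lB(X,\cdot)$, so the equivalence genuinely takes place within a single fixed standard $\lambda$-Borel space rather than merely up to $\lambda$-Borel isomorphism.
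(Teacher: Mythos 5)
Your proof is correct and matches the paper's intended argument exactly: the paper states this corollary without proof, immediately after the unnamed Lemma, precisely because the backward direction is that Lemma (itself a consequence of Corollary~\ref{cor:Borelwithclosedsections} with \( A = E \)) and the forward direction is the trivial observation that classes are continuous-preimage sections of the closed set \( E \). Nothing to add.
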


Equivalence relations satisfying condition~\ref{cor:closedclasses-closedrelation-1} in Corollary~\ref{cor:closedclasses-closedrelation} are called \markdef{potentially closed}.
Thus Corollary~\ref{cor:closedclasses-closedrelation} says that a \(\lambda\)-Borel equivalence relation on a standard \(\lambda\)-Borel space is potentially closed if and only if all its equivalence classes are ``\emph{simultaneously} potentially closed''. 

If a potentially closed equivalence relation \( E \) is induced by an action of a \(\lambda\)-Polish group \( G \) such that the action is continuous when \( X \) is equipped with a topology \(\tau\) as in~\ref{cor:closedclasses-closedrelation-2} of Corollary~\ref{cor:closedclasses-closedrelation}, then \( E \) is \markdef{smooth}, i.e.\ there is a \(\lambda\)-Borel function \( f \colon X \to Y \) with \( Y \) standard \(\lambda\)-Borel such that for all \( x_0,x_1 \in X \)
\[ 
x_0 \mathrel{E} x_1 \iff f(x_0) = f(x_1).
 \] 
 This follows from the next, more general result.
 
 \begin{proposition} \label{prop:smoothnessunderappropriatehypotheses}
 Let \( \lambda^{< \lambda} = \lambda \), and let \( X \) be a \(\lambda\)-Polish space. Let \( E \) be an equivalence relation on \( X \) induced by a continuous action \( (g,x) \mapsto g \cdot x \) a \(\lambda\)-Polish group \( G \), that is, \( x \mathrel{E} y \iff \exists g \in G \, (g \cdot x = y) \) for every \( x,y \in X \). If all \( E \)-equivalence classes are \( G_\delta \), then \( E \) is smooth. Furthermore, if all \( E \)-equivalence classes are closed, then \( E \) admits a \(\lambda\)-Borel  selector (equivalently: a \(\lambda\)-Borel transversal).
 \end{proposition}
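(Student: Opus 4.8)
The plan is to reduce both assertions to previously established machinery, namely the Effros $\lambda$-Borel space $F(X)$ together with the Selection Theorem~\ref{thm:selectionforF(X)}, exactly as in the classical theory of Polish group actions. The two claims will be handled separately, the closed-class case being essentially Theorem~\ref{thm:selector} and the $G_\delta$-class case requiring an extra change-of-topology step to convert the situation into the closed case on a refined topology.

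\emph{The closed case first, as it is cleaner.} Assume every $E$-equivalence class is $\tau$-closed. I would verify that the map $x \mapsto [x]_E$ sending a point to its (closed) orbit is a $\lambda$-Borel map from $X$ into the Effros $\lambda$-Borel space $F(X)$, which is standard by Theorem~\ref{thm:EffrosBorel}. The key computation is that for each open $U \subseteq X$ the set $\{ x \in X \mid [x]_E \cap U \neq \emptyset \} = [U]_E$ is the $E$-saturation of $U$; since the action is continuous, $[U]_E = \bigcup_{g \in G} g \cdot U$ is open, hence certainly $\lambda$-Borel, so the preimages of the generating sets $B_U(X)$ from~\eqref{eq:EffrosBorel} are $\lambda$-Borel and the orbit map is $\lambda$-Borel measurable. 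This is precisely the hypothesis needed to invoke Theorem~\ref{thm:selector} (the saturation of every open set being $\lambda$-Borel), which directly yields a $\lambda$-Borel selector, and hence a $\lambda$-Borel transversal via the standard correspondence recorded after Theorem~\ref{thm:selectionforF(X)}.

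\emph{The $G_\delta$ case, which also gives smoothness.} Now suppose the classes are merely $G_\delta$. Here the idea is to find a finer $\lambda$-Polish topology $\tau'$ on $X$, with $\lB(X,\tau') = \lB(X,\tau)$, in which each $E$-class becomes closed, thereby reducing to the previous case. The natural route is to enumerate a $\lambda$-sized basis $(U_\alpha)_{\alpha<\lambda}$ for $\tau$, form the $\lambda$-Borel sets $[U_\alpha]_E$ (again open, hence $\lambda$-Borel, by continuity of the action), and apply Corollary~\ref{cor:changeoftopologyclopen} to the family $\bigl([U_\alpha]_E\bigr)_{\alpha<\lambda}$ to obtain $\tau' \supseteq \tau$ making all saturations $[U_\alpha]_E$ clopen while preserving the $\lambda$-Borel structure. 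In $\tau'$ the orbit map into $F(X,\tau')$ is then $\tau'$-continuous on basic open sets, so composing it with the selection functions $\sigma^X_\alpha$ produces a $\lambda$-Borel function $f$ separating the classes, giving smoothness; when the classes are actually closed one recovers the selector as above. The main obstacle I expect is the $G_\delta$ case: merely having $G_\delta$ classes does not make the orbit map land continuously in $F(X)$, so I must be careful that the change-of-topology genuinely turns the classes closed (not just the individual saturations $[U_\alpha]_E$ clopen) — this is where the hypothesis that the classes are $G_\delta$, rather than arbitrary, is essential, since a $G_\delta$ set can be made closed by a $\lambda$-Borel-preserving refinement (via the $G_\delta$ characterization of complete metrizability from Section~\ref{sec:boundedtopology}) whereas an arbitrary orbit cannot. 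Verifying this refinement step cleanly, and checking that the resulting orbit map is $\lambda$-Borel into the \emph{new} Effros space, is the delicate part of the argument.
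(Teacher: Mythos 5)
Your treatment of the closed case is correct and coincides with the paper's: since the action is continuous, the saturation of any open set is open, so Theorem~\ref{thm:selector} applies verbatim and yields the \(\lambda\)-Borel selector.

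The \( G_\delta \) case, however, has a genuine gap, and it sits exactly at the step you flag as delicate. Applying Corollary~\ref{cor:changeoftopologyclopen} to the \(\lambda\)-sized family of saturations \( ([U_\alpha]_E)_{\alpha<\lambda} \) does not make the equivalence classes \( \tau' \)-closed, and no tool in the paper can force this: there may be \( 2^\lambda \)-many classes, while every change-of-topology result only handles \(\lambda\)-sized families of \(\lambda\)-Borel sets. Concretely, suppose \( x \notin [y]_E \) but \( \mathrm{cl}_\tau([x]_E) = \mathrm{cl}_\tau([y]_E) \). Then every \( \tau \)-open neighborhood of \( x \) meets \( [y]_E \), and every saturation \( [U_\alpha]_E \) containing \( x \) contains all of \( [y]_E \) (a saturated set meeting a class contains it); hence \( x \) cannot be separated from \( [y]_E \) by any of the sets your refinement is built from. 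So proving that the classes are \( \tau' \)-closed requires first ruling out the existence of such pairs \( (x,y) \) — which is the entire content of the proposition. Your appeal to the ``\( G_\delta \) characterization of complete metrizability'' does not do this: it makes a single subspace completely metrizable, but says nothing about closing up all (possibly \( 2^\lambda \)-many) orbits simultaneously in a \(\lambda\)-Borel-preserving refinement.

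The missing idea is a Baire category argument, and with it no change of topology is needed at all. The paper's proof takes \( f \colon X \to F(X) \), \( f(x) = \mathrm{cl}([x]_E) \); this is \(\lambda\)-Borel because \( \mathrm{cl}([x]_E) \cap U \neq \emptyset \iff \exists \alpha < \lambda \, (g_\alpha \cdot x \in U) \) for a fixed dense set \( \{ g_\alpha \mid \alpha < \lambda \} \subseteq G \) (here \(\lambda\)-Polishness of \( G \) enters). The crucial point is that \( f \) separates classes: if \( \mathrm{cl}([x]_E) = \mathrm{cl}([y]_E) \), then \( [x]_E \) and \( [y]_E \) are both dense \( G_\delta \) subsets of the \(\lambda\)-Polish space \( \mathrm{cl}([x]_E) \), hence they intersect by the Baire category theorem, so \( x \mathrel{E} y \). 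Your outline never supplies any argument for why some \(\lambda\)-Borel map should separate classes, which is the heart of the matter. There is also a secondary obstruction to your reduction: even if the classes did become \( \tau' \)-closed, invoking Theorem~\ref{thm:selector} for \( \tau' \) requires the saturations of \( \tau' \)-open sets to be \(\lambda\)-Borel, and since the action need not be \( \tau' \)-continuous, those saturations are a priori only \(\lambda\)-analytic.
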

 
 \begin{proof}
 Set \( Y = F(X) \) and consider the map \( f \) sending \( x \) to \( \mathrm{cl}( [x]_E) \). Then \( f \) is \(\lambda\)-Borel because for every open \( U \subseteq X \) and every \( x \in X \), 
\[ 
\mathrm{cl}([x]_E) \cap U \neq \emptyset \iff
[x]_E \cap U \neq \emptyset \iff \exists \alpha < \lambda \, (g_\alpha \cdot x \in U),
 \] 

where \( \{ g_\alpha \mid \alpha < \lambda \} \) is any dense subset of \( G \) fixed in advance.  Obviously, if \( x_0 \mathrel{E} x_1 \) then \( [x_0]_E = [x_1]_E \), and hence \( f(x_0) = f(y_1) \). Vice versa, if \( \mathrm{cl}([x_0]_{E}) = \mathrm{cl}([x_1]_{E}) \) then both \( [x_0]_{E} \) and \( [x_1]_E \) are dense \( G_\delta \) subsets of the \(\lambda\)-Polish space \( \mathrm{cl}([x_0]_{E}) \), hence they must intersect by the Baire category theorem for complete metric spaces, and therefore \( x_0 \mathrel{E} x_1 \).

Assume now that all \( E \)-equivalence classes are closed. The fact that \( E \) is a induced by a continuous action of a \(\lambda\)-Polish group implies that the saturation of any open subset of \( X \) is still open, hence we are done by Theorem~\ref{thm:selector}. 
\end{proof}

Conversely, an arbitrary smooth equivalence relation \( E \) on a standard \(\lambda\)-Borel space \( X \) is necessarily potentially closed. 
Indeed, let \( f \colon X \to Y \) witness that \( E \) is smooth, and let \( \tau_X \) and \( \tau_Y \) be \(\lambda\)-Polish topology on \( X \) and \( Y \), respectively, generating their \(\lambda\)-Borel structure. Apply Corollary~\ref{cor:changeoftopologyfunctions} to refine \( \tau_X \) to a \(\lambda\)-Polish topology \( \tau'_X \) still generating the same \(\lambda\)-Borel structure on \( X \) but turning \( f \) into a continuous function. Then \( E \) is \( \tau'_X \)-closed, and so are all its equivalence classes.


\subsection{Monotone and positive \(\lambda\)-Borel sets}

A set \( A \subseteq \pow(\lambda) \) is \markdef{monotone} if it is upward closed with respect to inclusion, i.e.\ if \( y \in A \) whenever \( x \subseteq y \subseteq \lambda \) for some \( x \in A \). Assume that \( 2^{< \lambda} = \lambda\) and identify \( \pow(\lambda) \) with \( \pre{\lambda}{2} \) through characteristic functions, so that \( \pow(\lambda) \) is \(\lambda\)-Polish as well. Consider the induced \(\lambda\)-Borel structure \( \lB(\pow(\lambda) ) \) on \( \pow(\lambda) \). It is not hard to see that \( \lB(\pow(\lambda) ) \) is obtained by closing under unions and intersections of size \( \lambda \) the sets of the form
\begin{align*}
U_\alpha & = \{ x \subseteq \lambda \mid \alpha \in x \} \\
\hat{U}_\alpha & = \{ x \subseteq \lambda \mid \alpha \notin x \},
\end{align*}
for \( \alpha < \lambda \). Indeed, through the above mentioned identification the sets \( U_\alpha \) and \( \hat{U}_\alpha \) correspond to the subbasis of the product topology on \( \pre{\lambda}{2} \), which generates the same \(\lambda\)-Borel sets as the bounded topology because we are assuming \(2^{< \lambda} = \lambda\). Notice also that since on \( \pre{\lambda}{2} \) the product topology is coarser than the bounded topology, the sets \( U_\alpha \) and \( \hat{U}_\alpha \) are open subsets of \( \pow(\lambda) \).

The \(\lambda\)-Borel subsets of \( \pow(\lambda) \) obtained from the sets \( U_\alpha \) using only \(\lambda\)-sized unions and intersections are called \emph{positive} \( \lambda \)-Borel sets. (The name comes from the fact that the variable ``\( x \)'' is used only positively in their definitions.) The following result generalizes~\cite[Theorem 28.11]{Kechris1995}.

\begin{theorem} 
Assume that \( 2^{< \lambda} = \lambda \).
For every \( \lambda \)-Borel set \( A \subseteq \pow(\lambda) \), \( A \) is monotone if and only if \( A \) is positive.
\end{theorem}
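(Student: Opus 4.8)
The plan is to prove the two implications separately. One direction is easy: every positive set is monotone. This follows by induction on the construction of positive sets. The generators $U_\alpha = \{ x \subseteq \lambda \mid \alpha \in x \}$ are clearly monotone, since if $\alpha \in x$ and $x \subseteq y$ then $\alpha \in y$. Moreover, monotonicity is preserved under arbitrary unions and intersections: if each $A_i$ is monotone and $x \in \bigcup_i A_i$ (respectively $x \in \bigcap_i A_i$) with $x \subseteq y$, then for the witnessing $i$ (respectively for every $i$) we have $y \in A_i$, so $y \in \bigcup_i A_i$ (respectively $y \in \bigcap_i A_i$). Since positive $\lambda$-Borel sets are generated from the $U_\alpha$ using only $\lambda$-sized unions and intersections, the claim follows.

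The substantial direction is that every monotone $\lambda$-Borel set is positive. Here I would follow the strategy of the classical argument in~\cite[Theorem 28.11]{Kechris1995}, adapting it to the singular setting. The key idea is to show that the collection $\mathcal{P}$ of positive $\lambda$-Borel sets is itself a rich enough class: one introduces the \emph{monotone hull} (or a suitable ``monotonization'' operator) and argues that the class of sets which are both monotone and $\lambda$-Borel coincides with $\mathcal{P}$. Concretely, given an arbitrary $\lambda$-Borel set $A$, one writes $A$ using the $\lambda$-Borel hierarchy (Section~\ref{sec:Borelhierarchy}) and proceeds by induction on the level $\xi < \lambda^+$. At the base level, open sets are $\lambda$-sized unions of basic clopen sets $\Nbhd_s$, each of which is an intersection of finitely many $U_\alpha$ and $\hat U_\alpha$; the monotonicity hypothesis should then allow one to discard the negative generators $\hat U_\alpha$ in a controlled way. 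The inductive step handles $\lambda$-sized unions and intersections, where one must verify that the ``positive part'' of a monotone $\lambda$-Borel set is again monotone and positive. Throughout, the assumption $2^{< \lambda} = \lambda$ is used to guarantee that $\pow(\lambda) \approx \pre{\lambda}{2}$ is $\lambda$-Polish, that there are only $\lambda$-many basic open sets, and that $\AC_\lambda(\pre{\lambda}{2})$ is available to make the $\lambda$-many choices needed in the recursive construction.

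The cleanest route is probably to define, for each $\lambda$-Borel set $A$, its \emph{positivization} $A^+ = \{ x \mid \exists y \subseteq x \, (y \in A) \}$ (equivalently $A^+ = \bigcup_{y \in A} \{ x \mid y \subseteq x \}$), which is the smallest monotone set containing $A$, and to show by induction on the $\lambda$-Borel rank of $A$ that $A^+$ is positive. When $A$ is monotone we have $A = A^+$, giving the result. The inductive argument must commute the positivization operator with the $\lambda$-sized Boolean operations: for unions this is immediate since $\left( \bigcup_\alpha A_\alpha \right)^+ = \bigcup_\alpha A_\alpha^+$, but for intersections the identity $\left( \bigcap_\alpha A_\alpha \right)^+ = \bigcap_\alpha A_\alpha^+$ can fail in general, and this is precisely where the main difficulty lies.

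The hard part will be handling the intersections. In the classical case one exploits a combinatorial trick relying on the fact that each generator $U_\alpha$ constrains only a single coordinate, together with a ``diagonal'' or ``shifting'' argument that reorganizes a $\lambda$-sized intersection of positivizations into a genuinely positive set. In the singular setting one must be careful because the levels $\lS^0_\xi$ are \emph{not} closed under $\lambda$-sized intersections (only unions), so the induction cannot be run naively on a single level. I expect to route this through Proposition~\ref{prop:levelsBorel}, using that a $\lambda$-sized intersection drops us up one level in the hierarchy in a controlled way, and that $\nu$-sized intersections for $\nu < \lambda$ land in $\lD^0_{\xi+1}$. The cardinal arithmetic $\omega^\nu < \lambda$ and $\mu^\nu < \lambda$ (valid since $\lambda$ is strong limit under $2^{< \lambda} = \lambda$) will be needed to keep the rewriting inside the positive $\lambda$-Borel sets. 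Carefully verifying that the positivization of an intersection remains positive — i.e.\ that no negative generator is secretly reintroduced when reorganizing the Boolean combination across the singular cofinality structure — is the crux of the proof.
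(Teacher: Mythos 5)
Your positive-implies-monotone direction is correct and coincides with the paper's. The other direction, however, contains an unfixable gap.

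The statement you propose to prove by induction on $\lambda$-Borel rank --- that the positivization $A^+ = \{ x \mid \exists y \subseteq x \, (y \in A) \}$ of \emph{every} $\lambda$-Borel set is positive --- is false, so the difficulty you flag at intersection stages is not a technical obstacle to be engineered around: the induction is attempting to prove a false theorem. A positive set is in particular $\lambda$-Borel, whereas $A^+$ is in general only $\lambda$-analytic (it is the projection of the $\lambda$-Borel set $\{ (x,y) \mid y \in A \wedge y \subseteq x \}$), and it can be properly so. Concretely, identify $\pow(\lambda)$ with $\pow(\pre{<\omega}{\lambda})$ via the bijection $\sigma_{\mathrm{Tr}}$ of equation~\eqref{eq:sigmatr}, and let
\[
A = \big\{ \{ x \restriction n \mid n \in \omega \} \;\big|\; x \in B(\lambda) \big\}
\]
be the set of ``branch-sets''. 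Then $A$ is $\lambda$-Borel: $S \in A$ if and only if $S$ contains exactly one sequence of each finite length and all of its elements are pairwise comparable, and both conditions are $\lambda$-sized intersections and unions of the generators $U_\alpha$ and $\hat{U}_\alpha$. But $A^+$ is the collection of all $S \subseteq \pre{<\omega}{\lambda}$ containing every initial segment of some $x \in B(\lambda)$, so $A^+ \cap \mathrm{Tr}_\lambda = \mathrm{IF}_\lambda$. Since $\mathrm{Tr}_\lambda$ is closed and $\mathrm{IF}_\lambda$ is complete $\lambda$-analytic (Proposition~\ref{prop:IF}), hence not $\lambda$-Borel (otherwise, by completeness and closure of $\lB$ under continuous preimages, every $\lambda$-analytic subset of $B(\lambda)$ would be $\lambda$-Borel, contradicting what is observed after Proposition~\ref{prop:charanalytic}), the set $A^+$ is not $\lambda$-Borel, let alone positive. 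Nor can you salvage the induction by restricting it to monotone sets: the sets appearing in the hierarchy decomposition of a monotone set need not themselves be monotone, so the inductive hypothesis would not apply to them.

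The argument that works is the one the paper uses, and it is also the actual classical proof: \cite[Theorem 28.11]{Kechris1995}, which you cite, is not proved by a monotonization induction but deduced from Dyck's separation theorem \cite[Theorem 28.12]{Kechris1995}. One proves the stronger statement that if $A, B \subseteq \pow(\lambda)$ are disjoint $\lambda$-analytic sets with $A$ monotone, then some positive $\lambda$-Borel set separates $A$ from $B$. The proof runs like that of the Lusin separation theorem (Theorem~\ref{thm:lusinseparation}): fix continuous surjections of $B(\lambda)$ onto $A$ and $B$, call a pair $(s,t)$ of finite sequences bad if $f(\Nbhd_s)$ cannot be separated from $g(\Nbhd_t)$ by a positive $\lambda$-Borel set, observe that every bad pair has a bad one-step extension (because positive sets are closed under $\lambda$-sized unions and intersections), and rule out an infinite chain of bad pairs: monotonicity of $A$ applied to the two points $f(x) \in A$ and $g(y) \in B$ gives $f(x) \not\subseteq g(y)$, hence some $\alpha \in f(x) \setminus g(y)$, and then the single open \emph{positive} generator $U_\alpha$ separates $f(\Nbhd_{x \restriction k})$ from $g(\Nbhd_{y \restriction k})$ for $k$ large enough, by continuity --- a contradiction. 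The theorem follows at once: for $A$ monotone and $\lambda$-Borel, separate $A$ from $\pow(\lambda) \setminus A$ (both $\lambda$-analytic, since $2^{<\lambda} = \lambda$ gives $\lB \subseteq \lS^1_1$); the separating positive set must then equal $A$. Note that monotonicity enters exactly once, at a single pair of points, and no non-monotone set is ever ``monotonized'' --- which is precisely the step your approach cannot avoid.
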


One direction is obvious: since the sets \( U_\alpha \) are monotone, and monotone sets are closed under (arbitrary) intersections and unions, all positive sets are trivially monotone.
As in the countable case, the other direction is instead derived by a stronger separation theorem, whose classical version~\cite[Theorem 28.12]{Kechris1995} is due to Dyck.

\begin{theorem} 
Assume that \( 2^{< \lambda} = \lambda \). Let \( A,B \subseteq \pow(\lambda) \) be disjoint \(\lambda\)-analytic sets with \( A \) monotone. Then there is a positive \(\lambda\)-Borel set \( C \subseteq \pow(\lambda) \) separating \( A  \) from \(  B \).
\end{theorem}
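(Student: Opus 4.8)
The plan is to adapt the classical proof of Dyck's theorem (\cite[Theorem 28.12]{Kechris1995}) to the generalized setting, using the Novikov separation theorem (Theorem~\ref{thm:novikovsepartion}) as the key tool in place of the classical Lusin--Novikov machinery. The overall strategy is to build the separating positive set in stages, where at each stage we use a ``monotonization'' trick together with a separation argument.

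First I would recall the representation of positive $\lambda$-Borel sets and reduce to a more combinatorial statement. The natural approach mirrors the classical one: since $A$ is monotone and $\lambda$-analytic, and $B$ is $\lambda$-analytic with $A \cap B = \emptyset$, I would try to separate $A$ from $B$ by a positive set. The first key step is to consider, for each $x \in A$ and each finite (or more precisely, size-${<}\lambda$) approximation, the ``upward cone'' generated by suitable basic pieces. Concretely, for a $\lambda$-analytic monotone set $A$ one writes $A = \mathrm{p}[T]$ for some tree $T$, and then exploits monotonicity to replace the sections $P^{(\alpha)}_s$ appearing in the proof of Novikov's theorem by their \emph{upward closures}. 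The crucial observation is that the upward closure of a $\lambda$-analytic set is again $\lambda$-analytic (because $U_\alpha$ and the saturation operation $y \mapsto \set{z}{y \subseteq z}$ are sufficiently definable), and that monotonicity of $A$ lets us arrange all the intermediate separating sets to be monotone as well.

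The heart of the argument is then to run the Novikov-style configuration induction of Theorem~\ref{thm:novikovsepartion}, but to ensure that every $\lambda$-Borel set produced along the way is \emph{positive} rather than merely $\lambda$-Borel. I would set up the recursion so that at each level $n$ the separating pieces are built only from the subbasic sets $U_\alpha$ via $\lambda$-sized unions and intersections, never using any $\hat{U}_\alpha$; monotonicity of $A$ is precisely what guarantees that this restricted construction still succeeds in separating $A$ from $B$. The induction produces a bad configuration leading to a contradiction exactly as in the proof of Theorem~\ref{thm:novikovsepartion}, but with the additional bookkeeping that the witnessing $\lambda$-Borel sets $B_\alpha$ can be taken positive. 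Putting the pieces together yields a positive $C$ with $A \subseteq C$ and $C \cap B = \emptyset$.

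The main obstacle I anticipate is verifying that the closure properties of positive $\lambda$-Borel sets are strong enough to carry through the entire configuration argument. Unlike the full class $\lB(\pow(\lambda))$, the positive sets are closed only under $\lambda$-sized unions and intersections of the $U_\alpha$ and are \emph{not} closed under complements, so the disjointification and intersection steps in the Novikov proof must be reexamined carefully: one needs that whenever $A$ is monotone and $\lambda$-analytic, the canonical separating set extracted from a configuration stays positive. Establishing this likely requires proving an auxiliary ``positive separation'' claim by induction in parallel with the configuration recursion, rather than invoking the previous separation theorems as black boxes, and checking that the assumption $2^{<\lambda} = \lambda$ still provides the cardinal arithmetic (e.g.\ fewer than $\lambda$-many extensions of each configuration) needed to keep all the intermediate unions and intersections of the correct size.
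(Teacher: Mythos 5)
Your proposal has the right general shape (a derivation argument restricted to positive sets), but it is built on the wrong scaffolding and, more importantly, it leaves unresolved exactly the point where the entire content of the theorem is concentrated. Dyck's theorem is a separation of \emph{two} sets, so the correct derivation is on pairs: fix continuous \( f,g \colon B(\lambda) \to \pow(\lambda) \) with \( f(B(\lambda)) = A \) and \( g(B(\lambda)) = B \), set \( P_s = f(\Nbhd_s) \) and \( Q_t = g(\Nbhd_t) \), and call \( (s,t) \in \pre{<\omega}{\lambda} \times \pre{<\omega}{\lambda} \) \emph{bad} if \( P_s \) cannot be separated from \( Q_t \) by a positive \(\lambda\)-Borel set. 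The configuration machinery from the proof of Theorem~\ref{thm:novikovsepartion} exists only to handle \(\lambda\)-many sets simultaneously; importing it here adds bookkeeping but no power, and the obstacle you anticipate (closure properties of positive sets) dissolves in the two-set setting: if every pair \( (s {}^\smallfrown{} \alpha, t {}^\smallfrown{} \beta) \) were separated by a positive \( R_{\alpha,\beta} \), then \( \bigcup_{\alpha<\lambda} \bigcap_{\beta<\lambda} R_{\alpha,\beta} \) is positive and separates \( P_s \) from \( Q_t \). Hence every bad pair has a bad one-step extension, and if the theorem fails one recursively obtains \( x,y \in B(\lambda) \) with \( (x \restriction n, y \restriction n) \) bad for all \( n \).

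The genuine gap is the endgame. You assert that ``monotonicity of \( A \) is precisely what guarantees that this restricted construction still succeeds,'' but you never exhibit the mechanism, and the one you sketch (replacing sections by upward closures and keeping intermediate sets monotone) does not supply it; your observation that upward closures of \(\lambda\)-analytic sets are \(\lambda\)-analytic is correct but plays no role. Note that the final step of the Novikov argument --- separating two distinct points by disjoint \emph{open} sets --- is unavailable once you restrict to positive sets: distinct points of \( \pow(\lambda) \) need not be separable by any positive set at all, since if \( p \subseteq q \) then every positive (indeed every monotone) set containing \( p \) contains \( q \), and neither \( \hat{U}_\alpha \) nor the basic bounded-topology neighborhoods \( \Nbhd_s \) are positive. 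This is exactly where monotonicity must be spent: from the bad branch one gets \( f(x) \in A \) and \( g(y) \in B \); disjointness gives \( g(y) \notin A \), so monotonicity of \( A \) forces \( f(x) \not\subseteq g(y) \); choosing \( \alpha \in f(x) \setminus g(y) \) gives \( f(x) \in U_\alpha \) and \( g(y) \in \hat{U}_\alpha \), and by continuity of \( f \) and \( g \) there is \( k \) with \( P_{x \restriction k} \subseteq U_\alpha \) and \( Q_{y \restriction k} \subseteq \hat{U}_\alpha \), so the subbasic positive set \( U_\alpha \) separates \( P_{x \restriction k} \) from \( Q_{y \restriction k} \), contradicting badness of \( (x \restriction k, y \restriction k) \). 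Without this step your induction has no contradiction to land on; with it, the upward-closure manipulations and the parallel ``positive separation'' claim become unnecessary.
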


\begin{proofcompare}{Kechris1995}{Theorem 28.12}
Let \( f,g \colon B(\lambda) \to \pow(\lambda) \) be continuous functions such that \( f(B(\lambda)) = A \) and \( g(B(\lambda) ) = B \). For each \( s \in \pre{< \omega}{\lambda} \), let \( P_s=f(\Nbhd_s) \) e \( Q_s = g(\Nbhd_s) \). Call a pair \( (s,t ) \in \pre{< \omega}{\lambda} \times \pre{<\omega}{\lambda} \) bad if \( P_s \) cannot be separated from \( Q_t \) by a positive \(\lambda\)-Borel set. Notice that if \( (s,t) \) is bad then there are \( \alpha, \beta \in \lambda \) such that \( (s {}^\smallfrown{} \alpha, t {}^\smallfrown{} \beta) \) is bad as well: indeed, if \( R_{\alpha,\beta} \) is a positive \(\lambda\)-Borel set separating \( P_{s {}^\smallfrown{} \alpha} \) from \( Q_{t {}^\smallfrown{} \beta} \), then \( \bigcup_{\alpha < \lambda} \bigcap_{\beta < \lambda} R_{\alpha,\beta}  \) is a positive \(\lambda\)-Borel set separating \( P_s \) from \( Q_t \).

Assume towards a contradiction that the conclusion of the theorem is false, i.e.\ that \( (\emptyset,\emptyset) \) is bad. Recursively construct \( x,y \in B(\lambda) \) such that \( (x \restriction n, y \restriction n) \) is bad for all \( n \in \alpha \). Since \( f(x) \in A \) and \( g(y) \in B \), and thus \( g(y) \notin A \) because \( A \cap B = \emptyset \), it follows from the monotonicity of \( A \) that \( f(x) \not\subseteq g(y) \). Let \( \alpha < \lambda \) be such that \( \alpha \in f(x) \setminus g(y) \), so that \( f(x) \in U_\alpha \) and \( g(y) \in \hat{U}_\alpha \). Since \( U_\alpha, \hat{U}_\alpha \) are open and the maps \( f \) and \( g \) are continuous, there is \( k \in \omega \) large enough so that \( f(\Nbhd_{x \restriction k}) = P_{x \restriction k} \subseteq U_\alpha \) and \( g(\Nbhd_{y \restriction k}) = Q_{y \restriction k} \subseteq \hat{U}_\alpha \). Then \( U_\alpha \) is a positive \(\lambda\)-Borel (in fact, open) set separating \( P_{x \restriction k} \) from \( Q_{y \restriction k} \), contradicting the fact that \( (x \restriction k, y \restriction k) \) was bad.
\end{proofcompare}

\section{The \(\lambda\)-projective hierarchy} \label{sec:lambda-projective}

In this short section we introduce all relevant definitions and basic facts concerning the 
\(\lambda\)-projective hierarchy. We omit all proofs because they follow the ideas used in the classical case, possibly with the modifications and clarifications pointed out in Sections~\ref{sec:defanalytic} and~\ref{sec:basifactsanalytic}.
Also, we stick to the realm of \(\lambda\)-Polish spaces to avoid further assumptions on \(\lambda\), but when \( 2^{<\lambda} = \lambda \) everything can be transferred to arbitrary standard \(\lambda\)-Borel spaces by Proposition~\ref{prop:charstandardBorel}.

\begin{defin} \label{def:lambda-projective}
 For \( n \geq 1 \),
recursively define  the \(\lambda\)-pointclass \( \lS^1_{n+1} \) as follows. 
Let \( X \) be \(\lambda\)-Polish and \( A \subseteq X \). Then \( A \in  \lS^1_{n+1}(X) \) if and only if there is some \(\lambda\)-Polish space \( Y \) and a continuous function \( f \colon Y \to X \) such that \( A = f(B) \) for some \( B \subseteq Y \) with \( Y \setminus B \in \lS^1_n(Y) \).

We also let \( \lP^1_{n+1} \) and \( \lD^1_{n+1} \) be the dual and the ambiguous \(\lambda\)-pointclasses, respectively, associated to \( \lS^1_{n+1} \), that is: \( \lP^1_{n+1}(X) = \{ X \setminus A \mid A \in \lS^1_{n+1}(X) \} \) and \( \lD^1_{n+1}(X)  = \lS^1_{n+1}(X) \cap \lP^1_{n+1}(X) \). 
\end{defin}

Since \( \emptyset \in \lS^1_1(Y) \) for every \(\lambda\)-Polish \( Y \), we trivially have \( \lS^1_1(X) \subseteq \lS^1_2(X) \), hence arguing by induction we get that for every \( 1 \leq n < m < \omega \)
\[ 
\lS^1_n(X), \lP^1_n(X) \subseteq \lD^1_m(X) \subseteq \lS^1_m(X), \lP^1_m(X).
 \] 
 In particular, 
 \[ 
 \bigcup_{n \geq 1} \lS^1_n(X) = \bigcup_{n \geq 1 } \lP^1_n(X) = \bigcup_{n \geq 1 } \lD^1_n(X).
 \]
If we further assume \( 2^{< \lambda} = \lambda \), then each class \( \lS^1_n(X) \) and \( \lP^1_n(X) \), for any \( n \geq 1 \), has a \( \pre{\lambda}{2} \)-universal set, hence if \( |X| > \lambda \) the previous inclusions are proper.

\begin{defin}
Let \( X \) be a \(\lambda\)-Polish space. A set \( A \subseteq X \) is \markdef{\(\lambda\)-projective} if \( A \in \bigcup_{n \geq 1} \lS^1_n(X) \).
\end{defin}

Arguing as in the first part of the proof of Proposition~\ref{prop:closurepropertiesofanalytic}, one can easily show by induction on \( n \geq 1 \) that all the classes \( \lS^1_n(X) \), \( \lP^1_n(X) \), and \( \lD^1_n(X) \) are closed under finite unions and intersections. Moreover they contain all closed sets, hence using the standard argument one sees that they are closed under continuous preimages, i.e.\ they are all boldface \(\lambda\)-pointclasses. Using these easy facts, as in the classical case one can then provide a number of equivalent reformulation of the class \( \lS^1_{n+1} \) from Definition~\ref{def:lambda-projective}.

\begin{proposition} \label{prop:equivalentreformulationprojective}
Let \( X \) be \(\lambda\)-Polish and \( A \subseteq X \). For each \( n \geq 1 \) the following are equivalent:
\begin{enumerate-(1)}
\item \label{prop:equivalentreformulationprojective-1}
\( A \in \lS^1_{n+1}(X) \), i.e.\ \( A \) is a continuous image of some \( B \in \lP^1_n(Y) \) for some \(\lambda\)-Polish space \( Y \);
\item \label{prop:equivalentreformulationprojective-2}
\( A \) is a continuous image of some \( B \in \lP^1_n(B(\lambda)) \) (or, equivalently, \( B \in \lP^1_n(C(\lambda)) \));
\item \label{prop:equivalentreformulationprojective-3}
\( A = \p(C) \) with \( C \in \lP^1_n(X \times B(\lambda)) \);
\item \label{prop:equivalentreformulationprojective-4}
\( A = \p(C) \) with \( C \in \lP^1_n(X \times Y) \) for some \(\lambda\)-Polish space \( Y \).
\end{enumerate-(1)}
If \( 2^{< \lambda} = \lambda \), we can further replace \( B(\lambda) \) with \( \pre{\lambda}{2} \) in~\ref{prop:equivalentreformulationprojective-2} and~\ref{prop:equivalentreformulationprojective-3}, or replace continuous functions with \(\lambda\)-Borel functions in~\ref{prop:equivalentreformulationprojective-1} and~\ref{prop:equivalentreformulationprojective-2}.
\end{proposition}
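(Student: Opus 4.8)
The statement is Proposition~\ref{prop:equivalentreformulationprojective}, asserting the equivalence of four (and under $2^{<\lambda}=\lambda$, more) reformulations of the $\lambda$-projective class $\lS^1_{n+1}$. The plan is to proceed by induction on $n\ge 1$, closely mirroring the classical development but invoking the generalized machinery already established, particularly Proposition~\ref{prop:charanalytic} (the base case $n=0$, so to speak, since $\lS^1_1$ already has all these characterizations) and the closure properties of the $\lambda$-pointclasses $\lS^1_n$, $\lP^1_n$, $\lD^1_n$ noted just before the statement (closure under finite unions/intersections, continuous preimages, i.e.\ they are boldface $\lambda$-pointclasses).

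First I would prove the cycle of implications \ref{prop:equivalentreformulationprojective-1}$\Rightarrow$\ref{prop:equivalentreformulationprojective-4}$\Rightarrow$\ref{prop:equivalentreformulationprojective-3}$\Rightarrow$\ref{prop:equivalentreformulationprojective-2}$\Rightarrow$\ref{prop:equivalentreformulationprojective-1}. For \ref{prop:equivalentreformulationprojective-1}$\Rightarrow$\ref{prop:equivalentreformulationprojective-4}: if $A=f(B)$ with $f\colon Y\to X$ continuous and $B\in\lP^1_n(Y)$, then $A=\p(C)$ where $C=\{(x,y)\in X\times Y\mid y\in B\wedge f(y)=x\}=(\,\mathrm{pr}_Y^{-1}(B)\,)\cap\mathrm{graph}(f)^{-1}$; since $\lP^1_n$ contains closed sets, is a boldface $\lambda$-pointclass (closed under continuous preimages), and is closed under finite intersections, and $\mathrm{graph}(f)$ is closed in $X\times Y$, one checks $C\in\lP^1_n(X\times Y)$. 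The implication \ref{prop:equivalentreformulationprojective-4}$\Rightarrow$\ref{prop:equivalentreformulationprojective-3} uses Proposition~\ref{prop:surjection}: pick a continuous bijection $g\colon F\to Y$ from a closed $F\subseteq B(\lambda)$, pull $C$ back through $\id_X\times g$ to a $\lP^1_n$ set of $X\times B(\lambda)$ (again closure under continuous preimages and intersection with the closed set $X\times F$), noting that projecting onto $X$ is unchanged. For \ref{prop:equivalentreformulationprojective-3}$\Rightarrow$\ref{prop:equivalentreformulationprojective-2}: the projection $A=\p(C)$ with $C\in\lP^1_n(X\times B(\lambda))$ exhibits $A$ as a continuous image (via a further composition with the surjection $B(\lambda)\to X\times B(\lambda)$ coming from Corollary~\ref{cor:surjection2} applied to the $\lambda$-Polish space $X\times B(\lambda)$, pulling $C$ back to $\lP^1_n(B(\lambda))$) of a $\lP^1_n(B(\lambda))$ set. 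Finally \ref{prop:equivalentreformulationprojective-2}$\Rightarrow$\ref{prop:equivalentreformulationprojective-1} is immediate from Definition~\ref{def:lambda-projective}, since $B(\lambda)$ is $\lambda$-Polish and a $\lP^1_n(B(\lambda))$ set is exactly the complement of a $\lS^1_n(B(\lambda))$ set.

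For the additional clauses under $2^{<\lambda}=\lambda$, I would replace $B(\lambda)$ by $\pre{\lambda}{2}$ using Theorem~\ref{thm:homeomorphictoCantor}\ref{thm:homeomorphictoCantor-2}, which gives the homeomorphism $\pre{\lambda}{2}\approx B(\lambda)$ and hence identical $\lambda$-projective structure; the continuous surjection $\pre{\lambda}{2}\to X$ needed is the one available by Proposition~\ref{prop:charanalytic}\ref{prop:charanalytic-6} (or directly via the homeomorphism). To pass from continuous functions to $\lambda$-Borel functions in~\ref{prop:equivalentreformulationprojective-1} and~\ref{prop:equivalentreformulationprojective-2}, I would use the change-of-topology result Corollary~\ref{cor:changeoftopologyfunctions}: a $\lambda$-Borel $f\colon Y\to X$ becomes continuous after refining the $\lambda$-Polish topology on $Y$ without changing its $\lambda$-Borel structure; since the classes $\lP^1_n$ depend only on the $\lambda$-Borel structure (as recorded for $\lS^1_1$ in the discussion after Proposition~\ref{prop:charanalytic}, and which extends to all $n$ by the same inductive argument), the set $B\in\lP^1_n(Y)$ remains in $\lP^1_n$ for the refined topology, reducing the $\lambda$-Borel case to the continuous one.

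The main obstacle I anticipate is \textbf{not} any single implication, which are all routine given the earlier results, but rather verifying carefully that the topology-independence of $\lP^1_n$ (and the universal-set machinery) genuinely propagates through the induction. Concretely, the delicate point is that one must first establish, inductively, that $A\in\lS^1_{n}(X,\tau)\iff A\in\lS^1_{n}(X,\tau')$ whenever $\lB(X,\tau)=\lB(X,\tau')$, since this is what legitimizes the free use of Corollaries~\ref{cor:changeoftopologyclopen}--\ref{cor:changeoftopologyfunctions} at each level $n$. This fact is asserted for $n=1$ in the text; the inductive step requires observing that projecting a $\lP^1_n$ set along a $\lambda$-Polish factor commutes appropriately with changes of topology, which in turn leans on the base-case invariance together with the closure properties. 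Once this invariance is in hand the whole proposition follows by the same inductive template as Proposition~\ref{prop:charanalytic}, so I would flag the invariance lemma as the one step deserving an explicit (short) argument and leave the rest as the natural generalization of the classical case, exactly as the authors indicate.
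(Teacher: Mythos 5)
Your proposal is correct and is essentially the argument the paper intends: the paper omits the proof of Proposition~\ref{prop:equivalentreformulationprojective} entirely, deferring to the classical case together with the closure properties stated just before it and the machinery of Sections~\ref{sec:defanalytic}--\ref{sec:basifactsanalytic}, and your cycle \ref{prop:equivalentreformulationprojective-1}$\Rightarrow$\ref{prop:equivalentreformulationprojective-4}$\Rightarrow$\ref{prop:equivalentreformulationprojective-3}$\Rightarrow$\ref{prop:equivalentreformulationprojective-2}$\Rightarrow$\ref{prop:equivalentreformulationprojective-1} via graphs, Proposition~\ref{prop:surjection}, Corollary~\ref{cor:surjection2}, Theorem~\ref{thm:homeomorphictoCantor}\ref{thm:homeomorphictoCantor-2}, and Corollary~\ref{cor:changeoftopologyfunctions} is exactly that argument. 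Your flagged inductive invariance of \( \lP^1_n \) under changes of \(\lambda\)-Polish topology preserving the \(\lambda\)-Borel structure is indeed the one step deserving explicit treatment (the paper records it only for \( n=1 \) after Proposition~\ref{prop:charanalytic}), and the induction you sketch closes it.
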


Finally, the arguments used in Proposition~\ref{prop:closurepropertiesofanalytic} allow one to prove analogous closure properties for all levels of the \(\lambda\)-projective hierarchy (again by induction on \( n \geq 1 \)).

\begin{proposition}
Let \( n \geq 1 \).
\begin{enumerate-(i)}
\item
The boldface \( \lambda \)-pointclass \( \lS^1_n \) is closed under well-ordered unions of length at most \( \lambda \), countable intersections, and \(\lambda\)-Borel images and preimages. If \( 2^{< \lambda} = \lambda \), then it is also closed under well-ordered intersections of length at most \( \lambda \).
\item
The boldface \( \lambda \)-pointclass \( \lP^1_n \) is closed under well-ordered intersections of length at most \( \lambda \), countable unions, and \(\lambda\)-Borel preimages. If \( 2^{< \lambda} = \lambda \), then it is also closed under well-ordered unions of length at most \( \lambda \).
\item
The boldface \( \lambda \)-pointclass \( \lD^1_n \) is closed under countable unions, countable intersections, complements, and \(\lambda\)-Borel preimages. If \( 2^{< \lambda} = \lambda \), then it is also closed under well-ordered unions and intersections of length at most \( \lambda \), i.e.\ each \( \lD^1_n(X) \) is a \(\lambda^+ \)-algebra on \( X \).
\end{enumerate-(i)}
\end{proposition}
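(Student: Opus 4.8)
The statement to prove is the final Proposition, asserting the standard closure properties for all levels of the $\lambda$-projective hierarchy: that each $\lS^1_n$ is closed under $\lambda$-sized well-ordered unions, countable intersections, and $\lambda$-Borel images/preimages (and $\lambda$-sized intersections under $2^{<\lambda}=\lambda$), dually for $\lP^1_n$, and that $\lD^1_n$ is closed under countable operations and complements (and $\lambda$-sized operations under $2^{<\lambda}=\lambda$).

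My plan is to proceed by induction on $n \geq 1$, exactly mirroring the structure of the proof of Proposition~\ref{prop:closurepropertiesofanalytic} and Corollary~\ref{cor:closurepropertiesofanalytic}. The base case $n=1$ is precisely Proposition~\ref{prop:closurepropertiesofanalytic} together with Corollary~\ref{cor:closurepropertiesofanalytic}, so nothing new is required there. For the inductive step, I would work with the reformulation $A = \p(C)$ with $C \in \lP^1_n(X \times Y)$ from Proposition~\ref{prop:equivalentreformulationprojective}\ref{prop:equivalentreformulationprojective-4}, or equivalently $A$ is a continuous (or $\lambda$-Borel) image of a $\lP^1_n$ set, as this normal form makes the union and intersection arguments transparent.

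The key steps, carried out for $\lS^1_{n+1}$, are as follows. For closure under $\lambda$-sized unions: given $A_\alpha = f_\alpha(B_\alpha)$ with $B_\alpha \in \lP^1_n(Y_\alpha)$ and $Y_\alpha = B(\lambda)$ (using Proposition~\ref{prop:equivalentreformulationprojective}\ref{prop:equivalentreformulationprojective-2}), I would take the $\lambda$-sized sum $Y = \bigsqcup_{\alpha<\lambda} Y_\alpha$, which is $\lambda$-Polish, set $B = \bigsqcup_\alpha B_\alpha$, note $B \in \lP^1_n(Y)$ using that $\lP^1_n$ is closed under the relevant well-ordered unions by the inductive hypothesis, and let $f = \bigcup_\alpha f_\alpha$ be continuous with $f(B) = \bigcup_\alpha A_\alpha$. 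For countable intersections I would use the fiber-product construction as in Proposition~\ref{prop:closurepropertiesofanalytic}: take $Y = \{(x_n) \in \prod_n Y_n \mid f_n(x_n) = f_m(x_m)\}$, observe $Y$ is closed hence $\lambda$-Polish, and that the relevant preimage set lies in $\lP^1_n$ by the inductive closure properties, so the projection is $\lS^1_{n+1}$. For $\lambda$-sized intersections under $2^{<\lambda}=\lambda$ I would imitate the $\pi_\alpha$-coding argument from the last paragraph of Proposition~\ref{prop:closurepropertiesofanalytic}, replacing $Y_\alpha = \pre{\lambda}{2}$ and using the continuity of $f_\alpha \circ \pi_\alpha$ together with the inductive fact that $\lP^1_n$ is closed under $\lambda$-sized intersections. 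Closure of $\lS^1_{n+1}$ under $\lambda$-Borel preimages and images follows from the second part of Proposition~\ref{prop:closurepropertiesofanalytic}, whose proof only uses the first-part closure properties and so generalizes verbatim. The dual statements for $\lP^1_{n+1}$ are obtained by taking complements, and the statements for $\lD^1_{n+1}$ follow by combining the $\lS^1_{n+1}$ and $\lP^1_{n+1}$ facts exactly as in Corollary~\ref{cor:closurepropertiesofanalytic}.

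The main obstacle, such as it is, is purely bookkeeping: one must ensure at each step that the auxiliary sets built (the sums, fiber products, and $\pi_\alpha$-preimages) land in $\lP^1_n$ rather than merely being $\lambda$-Borel over a $\lP^1_n$ base, and this is precisely where the inductive hypothesis on the closure properties of $\lP^1_n$ is invoked. Since the paper explicitly notes that these arguments ``follow the ideas used in the classical case'' and that all proofs in this section are omitted, I expect no genuine difficulty beyond verifying that the amount of choice used (namely $\AC_\lambda(\pre{\lambda}{2})$ for selecting the witnesses $f_\alpha, B_\alpha$, and $2^{<\lambda}=\lambda$ where the $\pre{\lambda}{2}$-coding is needed) is respected, exactly as flagged in the base case.
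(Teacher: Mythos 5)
Your proposal takes exactly the route the paper intends: the paper itself says that ``the arguments used in Proposition~\ref{prop:closurepropertiesofanalytic} allow one to prove analogous closure properties for all levels of the \(\lambda\)-projective hierarchy (again by induction on \( n \geq 1 \))'' and omits the details, and your induction --- with the sum construction for \(\lambda\)-sized unions, the fiber product for countable intersections, the \( \pi_\alpha \)-coding for \(\lambda\)-sized intersections under \( 2^{<\lambda} = \lambda \), and dualization for \( \lP^1_{n+1} \) and \( \lD^1_{n+1} \) --- is precisely that argument, with the choice bookkeeping correctly flagged.

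One step, however, is justified with the wrong inductive hypothesis. In the \(\lambda\)-sized union argument you conclude that \( B = \bigcup_{\alpha<\lambda} B_\alpha \in \lP^1_n(Y) \) (where \( Y \) is the disjoint sum of the \( Y_\alpha \)) ``using that \( \lP^1_n \) is closed under the relevant well-ordered unions by the inductive hypothesis''. But the inductive hypothesis gives closure of \( \lP^1_n \) under \(\lambda\)-sized unions \emph{only under} \( 2^{<\lambda} = \lambda \), whereas the closure of \( \lS^1_{n+1} \) under \(\lambda\)-sized unions that you are proving is claimed unconditionally; as written, your argument proves only the conditional statement. The repair is to exploit disjointness: since each \( Y_\alpha \) is clopen in \( Y \), we have \( Y \setminus B = \bigcup_{\alpha<\lambda} (Y_\alpha \setminus B_\alpha) \), and each \( Y_\alpha \setminus B_\alpha \in \lS^1_n(Y_\alpha) \subseteq \lS^1_n(Y) \), so \( Y \setminus B \) is a \(\lambda\)-sized union of \( \lS^1_n \) sets and hence \( \lS^1_n(Y) \) by the \emph{unconditional} part of the inductive hypothesis for \( \lS^1_n \); therefore \( B \in \lP^1_n(Y) \) with no cardinal arithmetic assumption. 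With this one substitution the rest of your proof goes through as stated.
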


Notice that the non-collapse of the \(\lambda\)-projective hierarchy on \(\lambda\)-Polish spaces larger than \( \lambda \) under the assumption \( 2^{< \lambda} = \lambda \) implies the failure of the missing closure properties, namely: 
\( \lS^1_n \) and \( \lP^1_n \) are not closed under complements, and
\( \lP^1_n \) and \( \lD^1_n \) are not closed under continuous (hence \(\lambda\)-Borel) images.

\section{\( \kappa \)-Souslin sets} \label{sec:Souslinsets}

\begin{defin} \label{def:kappaSouslin}
Let \( X \) be a \(\lambda\)-Polish space and \( \kappa \geq \lambda \) be a cardinal. A set \( A \subseteq X \) is \markdef{\( \kappa \)-Souslin} if it is a continuous image of some \( \kappa \)-Polish space \( Y \).
\end{defin}

Thus \(\lambda\)-Souslin sets are precisely the \(\lambda\)-analytic set. Moreover, if \( \lambda \leq \kappa \leq \kappa' \) then all \( \kappa \)-Souslin sets are also \( \kappa' \)-Souslin. Thus, in particular, the collection of \( \kappa \)-Souslin sets always contains all \(\lambda\)-Borel sets if \( 2^{< \lambda} = \lambda \).

Using the results from Section~\ref{subsec:Polishforothercofinalities}, it is not hard to see that the notion of a \( \kappa \)-Souslin set can be reformulated in several different ways (compare the next proposition with Proposition~\ref{prop:charanalytic}).

\begin{proposition} \label{prop:charkappaSouslin}
Let \( X \) be a \(\lambda\)-Polish space and \( \kappa \geq \lambda \) be a cardinal. For any  \( \emptyset \neq A \subseteq X \) the following are equivalent:
\begin{enumerate-(1)}
\item \label{prop:charkappaSouslin-1}
\( A \) is \( \kappa \)-Souslin;
\item \label{prop:charkappaSouslin-2}
\( A \) is a continuous image of a closed (or even just \( G_\delta \)) subset of some \(\kappa\)-Polish space \( Y \);
\item \label{prop:charkappaSouslin-3}
\( A \) is a continuous image of \( B(\kappa) = \pre{\omega}{\kappa} \), or even just of a closed or \( G_\delta \) subset of it;
\item \label{prop:charkappaSouslin-4}
\( A = \p(F) \) for some closed \( F \subseteq X \times B(\kappa) \);
\item \label{prop:charkappaSouslin-5}
\( A = \p(G) \) for some \( G \subseteq X \times Y \) with \( Y \) a \(\kappa\)-Polish space and \( G \) a \( G_\delta \) set.
\end{enumerate-(1)}
\end{proposition}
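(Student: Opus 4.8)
The statement to prove is Proposition~\ref{prop:charkappaSouslin}, which gives equivalent characterizations of $\kappa$-Souslin sets. Let me analyze its structure and plan a proof.

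The plan is to prove the chain of equivalences \ref{prop:charkappaSouslin-1} $\Leftrightarrow$ \ref{prop:charkappaSouslin-3} $\Leftrightarrow$ \ref{prop:charkappaSouslin-4} and then interpolate \ref{prop:charkappaSouslin-2} and \ref{prop:charkappaSouslin-5} between them. The crucial observation is that this is the direct analogue of Proposition~\ref{prop:charanalytic} (the case $\kappa = \lambda$), so I would mirror that proof, replacing $\lambda$ with $\kappa$ throughout and invoking the results from Section~\ref{subsec:Polishforothercofinalities}, which explicitly guarantee that the surjective-universality results (Proposition~\ref{prop:surjection}, Corollary~\ref{cor:surjection2}) and the structural facts about $G_\delta$ subsets carry over to $\kappa$-Polish spaces for arbitrary $\kappa$ of \emph{any} cofinality. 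Since we only need $\kappa \geq \lambda$ (and make no assumption that $\cf(\kappa) = \omega$), the fact that all the basic $\kappa$-Polish machinery works for arbitrary uncountable cardinals is exactly what Section~\ref{subsec:Polishforothercofinalities} provides.

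\begin{proof}
The implications \ref{prop:charkappaSouslin-3} $\Rightarrow$ \ref{prop:charkappaSouslin-1} and \ref{prop:charkappaSouslin-4} $\Rightarrow$ \ref{prop:charkappaSouslin-5} $\Rightarrow$ \ref{prop:charkappaSouslin-2} $\Rightarrow$ \ref{prop:charkappaSouslin-1} are immediate from the relevant definitions, recalling that closed and $G_\delta$ subsets of $\kappa$-Polish spaces are themselves $\kappa$-Polish.

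For \ref{prop:charkappaSouslin-1} $\Rightarrow$ \ref{prop:charkappaSouslin-3}, suppose $A$ is a continuous image of a $\kappa$-Polish space $Y$, say $A = h(Y)$ with $h$ continuous. Since $A \neq \emptyset$, we have $Y \neq \emptyset$; by the $\kappa$-Polish analogue of Corollary~\ref{cor:surjection2} (valid for arbitrary uncountable $\kappa$ by the discussion in Section~\ref{subsec:Polishforothercofinalities}) there is a continuous surjection $g \colon B(\kappa) \to Y$, so that $h \circ g \colon B(\kappa) \to X$ is continuous with range $A$. Thus $A$ is a continuous image of $B(\kappa)$; since $B(\kappa)$ is itself a closed and $G_\delta$ subset of $B(\kappa)$, the stronger forms follow trivially.

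Finally, for \ref{prop:charkappaSouslin-1} $\Rightarrow$ \ref{prop:charkappaSouslin-4}, let $A = h'(F')$ where $F' \subseteq B(\kappa)$ is closed and $h' \colon F' \to X$ is continuous, as granted by \ref{prop:charkappaSouslin-3}. Set
\[
F = \{ (x,y) \in X \times B(\kappa) \mid y \in F' \wedge h'(y) = x \} = \{ (h'(y), y) \mid y \in F' \},
\]
i.e.\ the graph of $h'$ with coordinates swapped. Since $h'$ is continuous and $F'$ is closed, $F$ is closed in $X \times B(\kappa)$, and clearly $\p(F) = h'(F') = A$. This establishes \ref{prop:charkappaSouslin-4} and completes the cycle of equivalences.
\end{proof}

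The main point requiring care—though it is not an obstacle so much as a verification—is that every step invokes only facts that hold for $\kappa$-Polish spaces with $\kappa$ of \emph{arbitrary} cofinality, not merely countable cofinality. Concretely, the only nontrivial ingredient is the surjective universality of $B(\kappa)$ for nonempty $\kappa$-Polish spaces, and Section~\ref{subsec:Polishforothercofinalities} explicitly records that Corollary~\ref{cor:surjection2} (and its underlying Proposition~\ref{prop:tree-basis}) holds verbatim for all uncountable $\kappa$. Everything else is formal manipulation of continuous images, closed sets, and projections, identical to the $\kappa = \lambda$ case treated in Proposition~\ref{prop:charanalytic}; hence the proof can safely be left to the reader or compressed to the essential graph-of-a-continuous-function argument given above.
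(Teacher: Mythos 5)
Your proof is correct and follows exactly the route the paper intends: it omits the argument, pointing instead to the results of Section~\ref{subsec:Polishforothercofinalities} (surjective universality of \( B(\kappa) \) for \(\kappa\)-Polish spaces of arbitrary cofinality) and to the analogy with Proposition~\ref{prop:charanalytic}, which is precisely the reduction you carry out. Your explicit graph-of-a-continuous-function argument for \ref{prop:charkappaSouslin-1}~\( \Rightarrow \)~\ref{prop:charkappaSouslin-4} and the cycle of trivial implications are the standard steps the paper leaves to the reader.
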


The reformulation~\ref{prop:charkappaSouslin-4} from Proposition~\ref{prop:charkappaSouslin} allows one to find a canonical representation of \( \kappa \)-analytic subsets of \( B(\lambda) \) which is analogous to the one in equation~\eqref{eq:normalformforanalytic}, namely: \( A \subseteq B(\lambda) \) is \( \kappa \)-Souslin if and only if there is an \(\omega\)-tree \( T \subseteq \pre{<\omega}{(\lambda \times \kappa)} \) on \( \lambda \times \kappa \) such that
\begin{equation} \label{eq:representationofkappasoulsin}
A = \p[T].
 \end{equation}
 
Arguing as in Proposition~\ref{prop:closurepropertiesofanalytic} and recalling that \(\lambda\)-Borel sets are \( \kappa \)-Souslin if \( 2^{< \lambda}= \lambda \), one easily gets the following closure properties. (The extra choice assumptions are needed to collect continuous functions \( f \colon B(\kappa) \to X \) witnessing that the sets we are dealing with are \( \kappa \)-Souslin.)

\begin{proposition} \label{prop:closurepropertieskappaSouslin}
Let \( \kappa \geq \lambda \). 
\begin{enumerate-(i)}
\item \label{prop:closurepropertieskappaSouslin-1} \axioms{\( \AC_\mu(\pre{\kappa}{2}) \)}
Let \( \omega \leq \mu \leq \kappa \) and
assume \( \AC_\mu(\pre{\kappa}{2}) \). Then the collection of \( \kappa \)-Souslin sets is closed under
 well-ordered unions of length at most \( \mu \) and under countable intersection.  
\item \label{prop:closurepropertieskappaSouslin-2}
The collection of \( \kappa \)-Souslin sets is closed under continuous images and preimages; in particular, it is a boldface \(\lambda\)-pointclass.
If \( 2^{< \lambda} = \lambda \), then it is also closed under  \(\lambda\)-Borel images and preimages.
\end{enumerate-(i)}
\end{proposition}

Finally, using the obvious modifications of the proofs of Theorems~\ref{thm:lusinseparation} and~\ref{thm:souslin} (see also~\cite[Theorems 2E.1 and 2E.2]{Moschovakis2009}), one easily gets:

\begin{proposition} \label{prop:separationforkappaSouslin}
Let \( \kappa \geq \lambda \), \( X \) be  \(\lambda\)-Polish, and \( A,B \subseteq X \) be disjoint \( \kappa \)-Souslin sets. Then there is a \( \kappa^+ \)-Borel set \( C \subseteq X \) separating \( A \) from \( B \).

In particular, if \( A \subseteq X \) is such that both \( A \) and \( X \setminus A \) are \( \kappa \)-Souslin, then \(A \) is \( \kappa^+ \)-Borel.
\end{proposition}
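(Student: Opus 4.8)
The plan is to generalize the classical Lusin separation theorem (Theorem~\ref{thm:lusinseparation}) and its consequence Souslin's theorem (Theorem~\ref{thm:souslin}) from the case $\kappa = \lambda$ to an arbitrary $\kappa \geq \lambda$. The statement is Proposition~\ref{prop:separationforkappaSouslin}, which asserts that disjoint $\kappa$-Souslin subsets of a $\lambda$-Polish space can be separated by a $\kappa^+$-Borel set, and that bi-$\kappa$-Souslin sets are $\kappa^+$-Borel.

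First I would set up the normal form. Using the representation~\eqref{eq:representationofkappasoulsin} from Proposition~\ref{prop:charkappaSouslin}\ref{prop:charkappaSouslin-4}, I reduce to the case $X = B(\lambda)$ (or work directly with continuous surjections $f \colon B(\kappa) \to A$ and $g \colon B(\kappa) \to B$ as in~\ref{prop:charkappaSouslin-3}). The cleanest route is to mimic the ``bad pair'' argument of~\cite[Theorem 14.7]{Kechris1995}, which is exactly the strategy cited for Theorem~\ref{thm:lusinseparation}. Concretely, fix continuous surjections $f, g \colon B(\kappa) \to X$ with $f(B(\kappa)) = A$ and $g(B(\kappa)) = B$, and for $s,t \in \pre{<\omega}{\kappa}$ set $P_s = f(\Nbhd_s)$ and $Q_t = g(\Nbhd_t)$. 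Call a pair $(s,t)$ \emph{bad} if $P_s$ cannot be separated from $Q_t$ by a $\kappa^+$-Borel set. The key combinatorial step is that if $(s,t)$ is bad, then there are $\alpha, \beta < \kappa$ with $(s {}^\smallfrown{} \alpha, t {}^\smallfrown{} \beta)$ bad: otherwise, for each pair $(\alpha,\beta)$ pick a separating $\kappa^+$-Borel set $R_{\alpha,\beta}$ (this requires $\kappa$-many choices, so a small fragment of choice over $\pre{\kappa}{2}$), and then $\bigcup_{\alpha < \kappa} \bigcap_{\beta < \kappa} R_{\alpha,\beta}$ is a $\kappa^+$-Borel set separating $P_s$ from $Q_t$ — here one uses that the $\kappa^+$-Borel sets form a $\kappa^+$-algebra, so they are closed under unions and intersections of length $\kappa$. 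Assuming toward a contradiction that $(\emptyset,\emptyset)$ is bad, I would recursively build $x,y \in B(\kappa)$ with $(x \restriction n, y \restriction n)$ bad for all $n$; then $f(x) \in A$ and $g(y) \in B$ are distinct (as $A \cap B = \emptyset$), so they can be separated by disjoint open sets $U_0, U_1$, and continuity gives $k$ with $P_{x \restriction k} \subseteq U_0$, $Q_{y \restriction k} \subseteq U_1$, contradicting badness since an open set is $\kappa^+$-Borel.

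For the ``in particular'' clause, I would take $B = X \setminus A$ when both $A$ and $X \setminus A$ are $\kappa$-Souslin: applying the separation result yields a $\kappa^+$-Borel set $C$ with $A \subseteq C$ and $C \cap (X \setminus A) = \emptyset$, forcing $C = A$, so $A$ is $\kappa^+$-Borel. This is the direct analogue of how Theorem~\ref{thm:souslin} follows from Theorem~\ref{thm:lusinseparation}. Note that, unlike the $\kappa = \lambda$ case where one needed $2^{<\lambda} = \lambda$ to get $\lB(X) \subseteq \lD^1_1(X)$, no such hypothesis is needed here: the separating set lands in the $\kappa^+$-Borel algebra by construction, and we are not asserting any reverse containment.

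The main obstacle I anticipate is \emph{bookkeeping of the choice assumptions}. The recursive construction of the bad branch $(x,y)$ and the selection of the sets $R_{\alpha,\beta}$ both invoke choices indexed by $\kappa$ (or by finite sequences over $\kappa$), so I must verify that these can be made within the available fragment of choice, or else be made canonically. As in Remark~\ref{rmk:constructivelusinseparation} (following the constructive proof of~\cite[Theorem 2E.1]{Moschovakis2009}, which the statement explicitly cites), the separating $\kappa^+$-Borel sets can be produced \emph{canonically} from fixed trees representing the $\kappa$-Souslin sets, so the unions and intersections of length $\kappa$ that assemble $R_{\alpha,\beta}$ into a separator require no genuine choice beyond what is built into the representation. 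Since $\pre{<\omega}{\kappa}$ is canonically well-orderable (G\"odel's enumeration), the branch $(x,y)$ can likewise be extracted by always taking the least bad extension, avoiding dependent choice. Thus, with care, the argument should go through in essentially the base theory, which is why the statement carries no extra choice annotation.
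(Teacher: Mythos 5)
Your proposal is correct and follows essentially the same route as the paper: the paper's proof is precisely the ``obvious modification'' of Theorems~\ref{thm:lusinseparation} and~\ref{thm:souslin} (i.e.\ the bad-pair argument of~\cite[Theorem 14.7]{Kechris1995}, cf.~\cite[Theorems 2E.1 and 2E.2]{Moschovakis2009}) with \( B(\lambda) \) and \(\lambda\)-Borel sets replaced by \( B(\kappa) \) and \( \kappa^+ \)-Borel sets, exactly as you carry out. Your additional remarks on canonicity of the separating sets and on extracting the bad branch via G\"odel's well-ordering of \( \pre{<\omega}{\kappa} \) correctly account for the choice bookkeeping that the paper leaves implicit.
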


Assuming a bit more choice than usual, one can prove that all sets in the first levels of the \(\lambda\)-projective hierarchy are \( \lambda^+ \)-Souslin, and sometimes even \( \lambda^{++} \)-Borel. This generalizes a theorem from the classical setup due to Shoenfield (see~\cite[Theorem 2D.3]{Moschovakis2009}). 

\begin{theorem}[\( \AC_{\lambda^+}(\pre{\lambda}{2}) \)] \label{thm:coanalyticaresouslin} \label{thm:coanalyticsetsarelambda^+-Souslin} \axioms{\( \AC_{\lambda^+}(\pre{\lambda}{2}) \)}
Assume that \(2^{<\lambda} = \lambda \), and let \( X \) be a \(\lambda\)-Polish space. Then every \(\lambda\)-coanalytic subset of \( X \), and thus also every set in \( \lS^1_2(X) \), is \(\lambda^+\)-Souslin.
\end{theorem}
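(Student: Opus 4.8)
The plan is to mimic the classical Shoenfield tree argument (as in~\cite[Theorem 2D.3]{Moschovakis2009}), adapted to the countable-cofinality generalized setting. By Theorem~\ref{thm:homeomorphictoCantor} and the fact that the notion of \(\lambda\)-Souslinness is preserved under the continuous bijections/surjections provided by Proposition~\ref{prop:surjection} and Corollary~\ref{cor:surjectionBorel}, it suffices to treat the case \( X = B(\lambda) \). Since \( \lS^1_2 \) sets are continuous (indeed \(\lambda\)-Borel) images of \(\lambda\)-coanalytic sets by Proposition~\ref{prop:equivalentreformulationprojective}, and the collection of \(\lambda^+\)-Souslin sets is closed under \(\lambda\)-Borel images by Proposition~\ref{prop:closurepropertieskappaSouslin}\ref{prop:closurepropertieskappaSouslin-2} (using \( 2^{<\lambda}=\lambda \)), the whole statement reduces to showing that every \(\lambda\)-coanalytic \( A \subseteq B(\lambda) \) is \(\lambda^+\)-Souslin. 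Equivalently, the complement \( X \setminus A \) is \(\lambda\)-analytic, so I must produce, from a tree representation of a \(\lambda\)-analytic set, a \(\lambda^+\)-Souslin representation of its complement.

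First I would fix \( T \subseteq \pre{<\omega}{(\lambda \times \lambda)} \) with \( X \setminus A = \p[T] \), using the normal form~\eqref{eq:normalformforanalytic}. Then for \( x \in B(\lambda) \), \( x \in A \) if and only if the section tree \( T(x) \subseteq \pre{<\omega}{\lambda} \) is well-founded, which by the discussion in Section~\ref{subsec:trees} happens exactly when the Kleene-Brouwer ordering \( \leq_{\mathrm{KB}} \) restricted to \( T(x) \) is a well-ordering. The key move is then to build the Shoenfield tree \( S \) on \( \lambda \times \lambda^+ \) whose branches code, for each \( x \), an order-preserving map from \( (T(x), \leq_{\mathrm{KB}}) \) into the ordinal \( \lambda^+ \): concretely, \( S \) consists of those pairs \( (u, h) \) with \( u \in \pre{n}{\lambda} \) and \( h \in \pre{n}{\lambda^+} \) such that \( h \) is an order-preserving assignment of ordinals below \(\lambda^+\) to the (finitely many) nodes of \( T(u) \) enumerated via \(\leq_{\mathrm{KB}}\), respecting a fixed bijection \( \sigma_{\mathrm{Tr}} \colon \pre{<\omega}{\lambda} \to \lambda \). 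Branches through the section \( S(x) \) then exist precisely when \( (T(x), \leq_{\mathrm{KB}}) \) embeds into \( \lambda^+ \), i.e.\ is well-ordered, i.e.\ \( x \in A \). This yields \( A = \p[S] \), and since \( S \) is a tree on \( \lambda \times \lambda^+ \), the set \( A \) is \(\lambda^+\)-Souslin by the representation~\eqref{eq:representationofkappasoulsin} of Proposition~\ref{prop:charkappaSouslin}.

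The delicate point — and the reason for the choice hypothesis \( \AC_{\lambda^+}(\pre{\lambda}{2}) \) — is verifying the direction \( x \in A \Rightarrow S(x) \) ill-founded. A well-founded tree \( T(x) \) carries a rank function \( \rho_{T(x)} \), which by the remarks in Section~\ref{subsec:trees} takes values below \( \lambda^+ \) since \( T(x) \) is a countable-branching tree on the well-orderable set \(\lambda\); composing with \(\leq_{\mathrm{KB}}\) gives the required order-preserving map into \(\lambda^+\), hence a branch of \( S(x) \). The converse direction, that an order embedding into \(\lambda^+\) forces well-foundedness, is immediate since \(\lambda^+\) is well-ordered. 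The main obstacle I anticipate is the bookkeeping needed to make the definition of \( S \) genuinely a tree on a well-orderable product and to ensure its body projects \emph{exactly} onto \( A \): one must check that \( S \) is closed under initial segments, that \(\lambda^+\) is regular so that ranks and the relevant suprema stay below \(\lambda^+\) (guaranteed by Fact~\ref{fct:lambdaunionewithoutchoice}\ref{fct:lambdaunionewithoutchoice-1} under our standing assumption \( \AC_\lambda(\pre{\lambda}{2}) \)), and that the extra \( \AC_{\lambda^+}(\pre{\lambda}{2}) \) is invoked only where needed, namely to uniformly select the witnessing data across the \(\lambda^+\)-many ordinal levels. Beyond this, the argument is the standard Shoenfield construction with \(\omega_1\) replaced by \(\lambda^+\) and \( \pre{\omega}{\omega} \) replaced by \( B(\lambda) \), so no genuinely new idea is required past the rank bound \( \rho_{T(x)} < \lambda^+ \).
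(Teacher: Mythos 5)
There is a genuine gap, and it is precisely the one the paper itself isolates in Section~\ref{sec:absolutenessnew}. Your construction rests on the claim that for \( u \in \pre{n}{\lambda} \) the section tree \( T(u) \) has ``finitely many'' nodes, so that a single sequence \( h \in \pre{n}{\lambda^+} \) of ordinals can record an order-preserving assignment on all of it. This is false: \( T(u) \) consists of sequences of length at most \( n \) over \(\lambda\), so it can have \(\lambda\)-many nodes. Since a branch of your tree \( S \) on \( \lambda \times \lambda^+ \) has only \(\omega\)-many coordinates, each carrying one ordinal, such a branch codes an order-preserving map defined on at most countably many nodes of \( T(x) \); this does not certify that \( \leq_{\mathrm{KB}} \) well-orders all of \( T(x) \), so \( \p[S] \) can strictly contain \( A \) (an ill-founded \( T(x) \) can easily admit order-preserving maps into \(\lambda^+\) on countable subsets). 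The honest repair is to handle, at stage \( n \), a block of \( <\lambda \)-many nodes (say those \( t \) with \( \sigma_{\mathrm{Tr}}(t) < \lambda_n \)), but then each second-coordinate entry is a function in \( \pre{\lambda_n}{(\lambda^+)} \), and the Shoenfield tree lives on \( \lambda \times \pre{<\lambda}{(\lambda^+)} \) rather than on \( \lambda \times \lambda^+ \). This is exactly Proposition~\ref{prop:Shoenfieldtree}, and it yields \( \kappa \)-Souslinness only for \( \kappa = |\pre{<\lambda}{(\lambda^+)}| \) (when that set is well-orderable). Under the stated hypotheses one only knows \( |\pre{<\lambda}{(\lambda^+)}| \geq \lambda^\omega = 2^\lambda \), which exceeds \( \lambda^+ \) whenever the continuum hypothesis fails at \(\lambda\); so your route recovers the theorem only under extra cardinal arithmetic such as \( \GCH \), not in the generality claimed.

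The paper's actual proof is entirely different and avoids trees: by Theorem~\ref{thm:longunionsintersections} one writes \( A = \bigcup_{\xi < \lambda^+} A_\xi \) with each \( A_\xi \) \(\lambda\)-Borel; since \( \lB(X) \subseteq \lS^1_1(X) \) (this is where \( 2^{<\lambda} = \lambda \) enters), the axiom \( \AC_{\lambda^+}(\pre{\lambda}{2}) \) allows one to \emph{choose} continuous surjections \( f_\xi \colon B(\lambda) \to A_\xi \), which are then glued into a single continuous surjection from the closed set \( C = \{ x \in B(\lambda^+) \mid \forall n > 0 \, (x(n) < \lambda) \} \) onto \( A \), and Proposition~\ref{prop:charkappaSouslin} concludes. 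Note that your placement of the choice hypothesis is also symptomatic of the mismatch: a genuine Shoenfield construction is canonical (the rank function of \( T(x) \) needs no selection), and indeed the paper stresses that its representation is ``definable'' precisely because it is choice-free; the theorem as stated needs \( \AC_{\lambda^+}(\pre{\lambda}{2}) \) exactly because the paper's proof picks \(\lambda^+\)-many surjections, not to bound ranks.
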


\begin{proof}
Let \( A \in \lP^1_1(X) \). By Theorem~\ref{thm:longunionsintersections} there are \(\lambda\)-Borel sets \( A_\xi \), \( \xi < \lambda^+ \), such that \( A = \bigcup_{\xi < \lambda^+} A_\xi \), and without loss of generality they might be assumed to be nonempty. 
Since \( \lB(X) \subseteq \lS^1_1(X) \), for each \( \xi < \lambda^+ \) there is a continuous map \( f_\xi \colon B(\lambda) \to X \) with range \( A_\xi \). Consider the closed set \( C \subseteq B(\lambda^+) \) defined by
\[ 
C = \{ x \in B(\lambda^+) \mid \forall n > 0 \, (x(n) < \lambda) \}, 
 \] 
so that each \( x \in C \) is of the form \( \xi {}^\smallfrown{} x^- \) for some \( \xi \in \lambda^+ \) and \( x^- \in B(\lambda) \). Then the map \( f \colon C \to X \) defined by \( f(x) = f_{x(0)}(x^-) \) is continuous and such that \( f(C) = \bigcup_{\xi < \lambda^+} A_\xi = A \). Therefore, using~\ref{prop:charkappaSouslin-3} of Proposition~\ref{prop:charkappaSouslin} we get that the set \( A \) is \( \lambda^+ \)-Souslin.

Since \( \lS^1_2 \) sets are continuous images of \( \lP^1_1 \) sets, they are \( \lambda^+ \)-Souslin as well by Proposition~\ref{prop:closurepropertieskappaSouslin}\ref{prop:closurepropertieskappaSouslin-2}.
\end{proof}

\begin{corollary}[\( \AC_{\lambda^+}(\pre{\lambda}{2}) \)]
Assume that \( 2^{< \lambda} = \lambda \), and let \( X \) be a \(\lambda\)-Polish space. Then every set in \( \lD^1_2(X) \) is \( \lambda^{++} \)-Borel. 
\end{corollary}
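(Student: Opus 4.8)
The plan is to obtain the statement as a one-line consequence of the two immediately preceding results, chained at the cardinal $\kappa = \lambda^+$. First I would unfold the definition of the ambiguous pointclass: by definition $\lD^1_2(X) = \lS^1_2(X) \cap \lP^1_2(X)$, and since $\lP^1_2(X) = \{ X \setminus B \mid B \in \lS^1_2(X) \}$, membership $A \in \lD^1_2(X)$ is exactly the conjunction that \emph{both} $A \in \lS^1_2(X)$ and $X \setminus A \in \lS^1_2(X)$.

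Next I would feed each of these two sets into Theorem~\ref{thm:coanalyticaresouslin}, which (under the standing hypotheses $2^{<\lambda}=\lambda$ and $\AC_{\lambda^+}(\pre{\lambda}{2})$) tells us that every set in $\lS^1_2(X)$ is $\lambda^+$-Souslin. Applying it to $A$ and to $X \setminus A$ yields that $A$ and its complement are both $\lambda^+$-Souslin.

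Finally I would invoke the second half of Proposition~\ref{prop:separationforkappaSouslin} with $\kappa = \lambda^+$ (which is $\geq \lambda$, as required): a set such that both it and its complement are $\kappa$-Souslin is $\kappa^+$-Borel. Hence $A$ is $(\lambda^+)^+$-Borel, i.e.\ $\lambda^{++}$-Borel, which is the claim. There is essentially no obstacle here beyond bookkeeping: the only points to verify are that the choice hypothesis $\AC_{\lambda^+}(\pre{\lambda}{2})$ of the corollary is precisely the one needed to invoke Theorem~\ref{thm:coanalyticaresouslin}, that $X$ being $\lambda$-Polish meets the hypotheses of Proposition~\ref{prop:separationforkappaSouslin}, and that the separation proposition is applied at the correct cardinal $\kappa = \lambda^+$ so that its conclusion lands at $\lambda^{++}$ rather than at $\lambda^+$.
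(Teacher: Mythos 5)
Your proof is correct and is exactly the paper's argument: the paper proves this corollary by citing Theorem~\ref{thm:coanalyticsetsarelambda^+-Souslin} (every \( \lS^1_2 \) set is \( \lambda^+ \)-Souslin) together with the second part of Proposition~\ref{prop:separationforkappaSouslin} applied at \( \kappa = \lambda^+ \), which is precisely the chain you describe.
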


\begin{proof}
By Theorem~\ref{thm:coanalyticsetsarelambda^+-Souslin} and Proposition~\ref{prop:separationforkappaSouslin}.
\end{proof}

%

\section{Some observations on absoluteness} \label{sec:absolutenessnew}

Our proof of Theorem~\ref{thm:coanalyticaresouslin} is somewhat unsatisfactory, as it would be desirable to have a ``definable'' representation of \( \lambda \)-coanalytic sets as \( \lambda^+ \)-Souslin sets to be able to deal with (the higher analogues of) scales and Shoenfield's absoluteness. In contrast, our proof heavily relies on the possibility of choosing \( \lambda^+ \)-many representations for certain analytic sets: this is in general not possible in the definable context. One of the most common definable Souslin representation of coanalytic sets is called Shoenfield tree, and in the following we see how the theory of Shoenfield trees and Shoenfield's absoluteness generalize in our setting. All the notations have been introduced in Section~\ref{subsec:trees}. As~\eqref{eq:normalformforanalytic} is the fundamental starting point of such analysis, we will work only on (at most countable copies of) $B(\lambda)$. The same analysis can be carried on \emph{mutatis mutandis} for $\pre{\lambda}{2}$.

\begin{defin}
 Let $A\in \lP^1_1(B(\lambda))$, and let $T\subseteq\pre{<\omega}{(\lambda\times\lambda)}$ be a tree such that $B(\lambda)\setminus A=\p[T]$, as granted by \eqref{eq:normalformforanalytic}. For any $s\in\pre{<\omega}{\lambda}$, let $R(s)=\{t\in T(s) \mid \sigma_{\mathrm{Tr}}(t)\in\lambda_{\lh(s)}\}$. The \markdef{Shoenfield tree} of $A$ with respect to $T$ is the 
\( \omega \)-tree $S$ on $\lambda\times\pre{<\lambda}{(\lambda^+)}$ consisting of all $(s,u)\in \pre{<\omega}{(\lambda\times\pre{<\lambda}{(\lambda^+)})}$ such that for every $i \leq j<\lh(s)$
 \begin{enumerate-(a)}
 \item
 $u(i)\in\pre{\lambda_i}{(\lambda^+)}$;
 \item
 $u(i)\subseteq u(j)$;
 \item
 the restriction of $u(i) \circ \sigma_{\mathrm{Tr}}$ to $R(s\restriction i)$ is an order-preserving map between $(R(s \restriction i),\leq_{\mathrm{KB}})$ and $(\lambda^+,{\leq})$. 
 \end{enumerate-(a)}
 If $A$ is a \(\lambda\)-coanalytic subset of $\pre{n}{B(\lambda)}$ for some $n \leq \omega$, the Shoenfield tree $S$ on $\pre{n}{\lambda}\times\pre{<\lambda}{(\lambda^+)}$ is defined in the expected way.  
\end{defin}

The idea is the same as the classical Shoenfield tree, that can be found e.g.\ in~\cite[Chapter 25]{Jech2003}: the left side of $S$ is building some $x\in B(\lambda)$, while the right side of $S$ is trying to build level-by-level an order-preserving function from $(T(x),{\leq_{\mathrm{KB}}})$ to \( (\lambda^+,{\leq}) \). In this way, for any \( x \in B(\lambda) \) we have that if there is \( g \) such that $(x,g) \in [S]$, then $g$ witnesses that $T(x)$ is well-founded (so that \( x \in A \)), while if there is no such \( g \), then $T(x)$ is ill-founded (i.e.\ \( x \notin A \)).

There is a substantial difference with the classical case \( \lambda = \omega \), though: a branch of the original Shoenfield tree still provides $x\in\pre{\omega}{\omega}$ and an order-preserving $g \colon T(x) \to \omega_1$ (where \( T(x) \) is endowed with \( \leq_{\mathrm{KB}} \)), but since $T(x)\subseteq\pre{<\omega}{\omega}$, we can code $g$ with some $g' \colon \omega \to \omega_1$, and therefore the resulting Shoenfield tree can be construed as a tree on $\omega\times\omega_1$. In the generalize case, instead, the witness $g$ is a function from \( \omega \) to $\pre{<\lambda}{(\lambda^+)}$, and in general it is no longer possible to code such an object as an element of $\pre{\omega}{(\lambda^+)}$ (or, even worse, as an element of \( \pre{\omega}{\nu} \) with \( \nu \in \Cn \), in case \( \pre{< \lambda}{(\lambda^+)} \) is not well-orderable); in other words, we cannot define $S$ directly on products of ordinals. For this reason, it is worth checking whether the original proof still holds in the generalized context.

\begin{proposition} \label{prop:Shoenfieldtree}
Let $A\in \lP^1_1(B(\lambda))$, and let $T\subseteq\pre{<\omega}{(\lambda\times\lambda)}$ be such that $B(\lambda)\setminus A=\p[T]$. Let $S$ be the Shoenfield tree of $A$ with respect to $T$. Then \( A=p[S] \).
\end{proposition}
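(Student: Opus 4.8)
The statement asserts that the Shoenfield tree $S$ of a $\lambda$-coanalytic set $A$ correctly represents $A$, i.e.\ $A = \p[S]$. The plan is to prove both inclusions by unwinding the definitions, showing that for each $x \in B(\lambda)$ we have $x \in A$ if and only if there exists $g$ with $(x,g) \in [S]$. The key conceptual point is that a branch $g$ through the right-hand side of $S$ encodes, level by level, an order-preserving map from $(T(x), \leq_{\mathrm{KB}})$ into the ordinals $(\lambda^+, \leq)$, and such a map exists precisely when $T(x)$ is well-founded (equivalently, $x \notin \p[T] = B(\lambda) \setminus A$, i.e.\ $x \in A$).

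\textbf{Key steps.} First I would fix $x \in B(\lambda)$ and analyze the section tree $T(x) \subseteq \pre{<\omega}{\lambda}$, recalling from equation~\eqref{eq:normalformforanalytic} that $x \in B(\lambda) \setminus A \iff T(x)$ is ill-founded, hence $x \in A \iff T(x)$ is well-founded. Next I would observe that the increasing union $u = \bigcup_{i \in \omega} u(i)$ obtained from a branch $(x,g) \in [S]$ assembles the partial order-preserving maps into a single function $u \colon \pre{<\lambda}{(\lambda^+)}$-valued object, whose composition with $\sigma_{\mathrm{Tr}}$ restricted to each $R(x \restriction i)$ is $\leq_{\mathrm{KB}}$-order-preserving into $\lambda^+$; the crucial bookkeeping is that the sets $R(s) = \{ t \in T(s) \mid \sigma_{\mathrm{Tr}}(t) \in \lambda_{\lh(s)} \}$ exhaust $T(x)$ as $i \to \infty$, because $\sigma_{\mathrm{Tr}}$ was chosen in~\eqref{eq:sigmatr} so that the $\sigma_{\mathrm{Tr}}$-preimages of initial segments of $\lambda$ have bounded length. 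For the inclusion $\p[S] \subseteq A$, given $(x,g) \in [S]$ I would use conditions~(a)--(c) to produce a genuine order-preserving embedding of $(T(x), \leq_{\mathrm{KB}})$ into the well-order $(\lambda^+, \leq)$, which by the characterization of well-foundedness via $\leq_{\mathrm{KB}}$ recalled in Section~\ref{subsec:trees} forces $T(x)$ to be well-founded, whence $x \in A$.

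\textbf{The reverse inclusion.} For $A \subseteq \p[S]$, given $x \in A$ so that $T(x)$ is well-founded, I would use the rank function $\rho_{T(x)}$ on $T(x)$ (which takes values below $\lambda^+$, since $T(x)$ is a well-orderable set of size at most $\lambda$, by the discussion of ranks in Section~\ref{subsec:trees}) to define the desired witness. The natural choice is to set $u(i)$ on $R(x \restriction i)$ via $t \mapsto \rho_{T(x)}(t)$ composed suitably with $\sigma_{\mathrm{Tr}}$; since $\rho_{T(x)}$ reverses $\supsetneq$ and $\leq_{\mathrm{KB}}$ linearizes the tree compatibly with end-extension and the lexicographic order, one checks that the rank yields the required $\leq_{\mathrm{KB}}$-order-preserving map into $\lambda^+$. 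Assembling the $u(i)$ into a branch $g$ of $S$ with $(x,g) \in [S]$ completes the argument.

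\textbf{Main obstacle.} The routine calculations (checking conditions (a) and (b), monotonicity of the $u(i)$) are straightforward, so the delicate point is the interplay between the Kleene--Brouwer order and the enumeration device $\sigma_{\mathrm{Tr}}$: I expect the hard part to be verifying that condition~(c)—order-preservation of $u(i) \circ \sigma_{\mathrm{Tr}}$ restricted to $R(x \restriction i)$—glues coherently across levels and genuinely certifies well-foundedness of \emph{all} of $T(x)$, rather than just of the finite pieces $R(x \restriction i)$. This is exactly where the special choice of $\sigma_{\mathrm{Tr}}$ in~\eqref{eq:sigmatr} matters, and where the proof departs from the classical one: one must confirm that $\bigcup_i R(x \restriction i) = T(x)$ and that an order-preserving map defined level-by-level into $\lambda^+$ (rather than into $\omega_1$, and valued in $\pre{<\lambda}{(\lambda^+)}$ rather than directly in ordinals) still suffices to witness well-foundedness. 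Once this coherence is established, both inclusions follow as in the classical Shoenfield argument of~\cite[Chapter 25]{Jech2003}.
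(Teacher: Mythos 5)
Your overall plan matches the paper's proof, and the half you call the first inclusion is correct: for $\p[S] \subseteq A$ you assemble $\bar u = \bigcup_{i\in\omega} u(i)$, note that $\bar u \circ \sigma_{\mathrm{Tr}}$ is order-preserving from $(T(x),\leq_{\mathrm{KB}})$ into the well-order $(\lambda^+,\leq)$, and conclude that $T(x)$ is well-founded; your coherence point, that every $t \in T(x)$ eventually lies in $R(x \restriction i)$ by the choice of $\sigma_{\mathrm{Tr}}$ in~\eqref{eq:sigmatr}, is exactly the glue that makes this work.

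The gap is in the direction $A \subseteq \p[S]$. You propose to build the witness from the tree rank function $\rho_{T(x)}$, i.e.\ the rank associated to the well-founded relation $\supsetneq$, and assert that ``one checks that the rank yields the required $\leq_{\mathrm{KB}}$-order-preserving map into $\lambda^+$''. This is false: the tree rank is compatible with $\leq_{\mathrm{KB}}$ only along end-extensions, whereas condition~(c) in the definition of the Shoenfield tree requires order-preservation on all of $R(x\restriction i)$, including pairs of \emph{incomparable} nodes, where it can fail. Concretely, if $T(x)$ contains $\emptyset$, $\langle 0 \rangle$, $\langle 0,0\rangle$, $\langle 1 \rangle$ with $\langle 0,0\rangle$ and $\langle 1 \rangle$ terminal, then $\langle 0 \rangle <_{\mathrm{KB}} \langle 1 \rangle$ but $\rho_{T(x)}(\langle 0 \rangle) = 1 > 0 = \rho_{T(x)}(\langle 1 \rangle)$; once $i$ is large enough that both nodes lie in $R(x\restriction i)$, the map you define violates~(c), so the sequence $(x,u)$ you build is not a branch of $S$ at all. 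What is actually needed is the rank function of the Kleene--Brouwer order itself: since $T(x)$ is well-founded, $\leq_{\mathrm{KB}}\restriction T(x)$ is a well-order (the fact recalled in Section~\ref{subsec:trees}) of order type less than $\lambda^+$ because $|T(x)| \leq \lambda$, and its rank function $\rho = \rho_{<_{\mathrm{KB}}\restriction T(x)}$ (equivalently, the order isomorphism onto its order type) is $\leq_{\mathrm{KB}}$-order-preserving on all of $T(x)$. This is precisely what the paper does: it sets $\rho_i = \rho \restriction T(x \restriction i)$, transports along $\sigma_{\mathrm{Tr}}^{-1}$, and pads with $0$ outside $\sigma_{\mathrm{Tr}}(T(x\restriction i))$ to get $u(i)$. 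With that substitution your argument goes through and coincides with the paper's proof; as written, the key step fails.
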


\begin{proof}
Let $x\in A$. Then, the section tree $T(x)$ is well-founded, and this implies that the restriction of $\leq_{\mathrm{KB}}$ to $T(x)$ is a well-ordering of length less than \( \lambda^+ \). Let $\rho \colon T(x) \to \lambda^+$ be the rank function $\rho_{<_{\mathrm{KB}\restriction T(x)}}$ associated to (the strict part of) \( \leq_{\mathrm{KB}} \restriction T(x) \). For every $i \in \omega$, $\rho_i=\rho\restriction T(x\restriction i)$ is an order-preserving mpap between $(T(x\restriction i),\leq_{\mathrm{KB}})$ and $(\lambda^+,{\leq})$. 
Let
\[
 w(i)=\rho_i \circ \sigma_{\mathrm{Tr}}^{-1} \colon \sigma_{\mathrm{Tr}}(T(x\upharpoonright i)) \to \lambda^+.
\]
and let $u(i) \colon \lambda_i \to \lambda^+$ be defined by \( u(i)(\alpha) = w(i)(\alpha) \) if $\alpha < \lambda_i$ is such that \( \alpha \in \sigma_{\mathrm{Tr}}(T(x\restriction i)) \), and $U(i)(\alpha) = 0$ otherwise. 
Then $u(i) \circ \sigma_{\mathrm{Tr}}$ coincides with $\rho_i$ on $R(x\restriction i)\subseteq T(x\restriction i)$, and it is therefore order-preserving. Let $u=(u(i))_{i\in\omega}$. Then 
\( (x,u) \) is a branch of $S$ witnessing $x\in p[S]$. 

 Vice versa, let $x\in p[S]$, and let \( u = (u(i))_{i \in \omega} \) be such that $(x,u)\in [S]$. Since $u(i)\subseteq u(j)$ for every $i<j$, then $\bar u = \bigcup_{i \in \omega} u(i)$ is a function from $\bigcup_{i\in\omega}\lambda_i=\lambda$ to $\lambda^+$, and, by construction, the restriction of $\bar u \circ \sigma_{\mathrm{Tr}}$ to \( T(x) \) is an order-preserving map between $(T(x),\leq_{\mathrm{KB}})$ and $(\lambda^+,{\leq})$. Therefore $T(x)$ is well-founded and $x\in A$.
\end{proof}

Proposition~\ref{prop:Shoenfieldtree} easily extends to the case where $A\in \lP^1_1(\pre{n}{B(\lambda)})$, with $n$ finite or countable.

If $\pre{<\lambda}{(\lambda^+)}$ is well-orderable, then in the construction of the Shoenfield tree we can substitute the elements of $\pre{<\lambda}{(\lambda^+)}$ that appear in the tree with an ordinal, and therefore recover \( \kappa \)-Souslinity for a suitable cardinal \( \kappa \geq \lambda^+ \).

\begin{corollary}
Suppose that $\pre{<\lambda}{(\lambda^+)} = \kappa$ for some \( \kappa \in \Cn \). Then every $\lambda$-coanalytic set in $B(\lambda)$ is $\kappa$-Souslin.
\end{corollary}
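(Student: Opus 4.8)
The plan is to take the Shoenfield tree $S$ of the given $\lambda$-coanalytic set — which by Proposition~\ref{prop:Shoenfieldtree} already furnishes a Souslin-type representation $A = \p[S]$ — and simply recode its second coordinates as ordinals below $\kappa$, exploiting the hypothesis that $\pre{<\lambda}{(\lambda^+)}$ is well-orderable of cardinality $\kappa$. All the substance of the statement is in fact contained in Proposition~\ref{prop:Shoenfieldtree}; the corollary only repackages the alphabet.

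First I would fix $A \in \lP^1_1(B(\lambda))$ together with a tree $T \subseteq \pre{<\omega}{(\lambda \times \lambda)}$ such that $B(\lambda) \setminus A = \p[T]$, as granted by~\eqref{eq:normalformforanalytic}, and let $S \subseteq \pre{<\omega}{(\lambda \times \pre{<\lambda}{(\lambda^+)})}$ be the associated Shoenfield tree, so that $A = \p[S]$ by Proposition~\ref{prop:Shoenfieldtree}. Since the hypothesis reads $|\pre{<\lambda}{(\lambda^+)}| = \kappa$ (and note $\kappa \geq \lambda^+ > \lambda$, so that $\kappa$-Souslinity is meaningful in the sense of Definition~\ref{def:kappaSouslin}), I would then fix once and for all a bijection $\phi \colon \pre{<\lambda}{(\lambda^+)} \to \kappa$.

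Next, identifying each element of $S$ with a pair $(s,u)$ where $s \in \pre{n}{\lambda}$ and $u \in \pre{n}{(\pre{<\lambda}{(\lambda^+)})}$ have a common length $n$, I would define
\[
S' = \{ (s, \phi \circ u) \mid (s,u) \in S \} \subseteq \pre{<\omega}{(\lambda \times \kappa)} .
\]
This is an $\omega$-tree on $\lambda \times \kappa$: if $(s,u) \in S$ has length $n$ and $m \leq n$, then $(s,u) \restriction m = (s \restriction m, u \restriction m) \in S$ because $S$ is a tree, and its image under the recoding is $(s \restriction m, (\phi \circ u) \restriction m) = (s, \phi \circ u) \restriction m$, so $S'$ is closed under initial segments. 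Because $\phi$ is a bijection, the map $(s,u) \mapsto (s, \phi \circ u)$ is a level-preserving bijection between $S$ and $S'$ whose inverse $(s,g) \mapsto (s, \phi^{-1} \circ g)$ commutes with restriction, and therefore induces a bijection between $[S]$ and $[S']$ lying over the same first coordinate. Hence $\p[S'] = \p[S] = A$. By the canonical representation~\eqref{eq:representationofkappasoulsin}, the existence of an $\omega$-tree $S'$ on $\lambda \times \kappa$ with $A = \p[S']$ is precisely what it means for $A$ to be $\kappa$-Souslin, which completes the argument.

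There is no genuine obstacle here. The only point deserving care is that the recoding be performed coherently, namely that a \emph{single} fixed $\phi$ is applied coordinatewise, so that the induced bijection between $S$ and $S'$ commutes with restriction; this is exactly what guarantees the equality $\p[S'] = A$ (rather than merely one inclusion) and what keeps the proof choiceless beyond the initial choice of $\phi$.
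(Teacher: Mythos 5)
Your proof is correct and follows essentially the same route as the paper: the paper likewise observes that once $\pre{<\lambda}{(\lambda^+)}$ is well-orderable of cardinality $\kappa$, one may replace the elements of $\pre{<\lambda}{(\lambda^+)}$ appearing in the Shoenfield tree by ordinals below $\kappa$, turning $S$ into a tree on $\lambda\times\kappa$ with the same projection. Your write-up merely makes explicit the coordinatewise recoding via a fixed bijection $\phi$ and the verification that it commutes with restrictions, which the paper leaves implicit.
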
 

In particular, if $|\pre{<\lambda}{(\lambda^+)}|=\lambda^+$ (that holds e.g.\ under $\GCH$), then we recover Theorem~\ref{thm:coanalyticaresouslin}. However, notice that this fails to be a ``definable'' result too, as it uses a bijection from $\pre{<\lambda}{(\lambda^+)}$ to $\lambda^+$. 

The Shoenfield tree was introduced to prove Shoenfield absoluteness (see e.g., \cite[Theorem 25.20]{Jech2003}), and we can exploit Proposition~\ref{prop:Shoenfieldtree} to prove a similar result.
Recall that, given $M\subseteq N$ sets or classes and $\upvarphi(v_1,\dots,v_n)$ a formula in the language of set theory, possibly with parameters in $M$, we say that $\upvarphi$ is \markdef{absolute between $M$ and $N$} if and only if for any $a_1,\dotsc,a_n\in M$, 
\[
M \models \upvarphi[a_1/v_1,\dots,a_n/v_n] \quad \iff \quad N \models \upvarphi[a_1/v_1,\dots,a_n/v_n]. 
\]
We say that \( \upvarphi \) is \markdef{upward absolute} if the direction from left to right in the displayed equivalence holds, and that \( \upvarphi \) is \markdef{downward absolute} if the direction from right to left holds. 
If $N=V$, we do not mention $N$. 

For example, the definition of $B(\lambda)$ as in Definition~\ref{xmp:lambda-Polish}\ref{xmp:lambda-Polish-2} is absolute for all models of large enough fragments of $\ZF$ that contain $\lambda$, where with the phrase ``models of large enough fragments of $\ZF$'' we mean that we need that the model satisfies enough axioms so that ordinals are absolute and $B(\lambda)$ can be defined from $\lambda$. In the following, with an abuse of terminology, we will simply say ``models of $\ZF$'' to mean ``models of large enough fragments of $\ZF$'', and the exact fragments of \( \ZF \) to be considered will be clear from the context.

In descriptive set theory it is common to talk about absoluteness of definable sets, via their definition. If $A$ is a $\lambda$-projective subset of $B(\lambda)$, then by definition $A$ is constructed via simpler sets. For example, if $A$ is $\lambda$-coanalytic, then it is the complement of the projection of a closed subset of \( B(\lambda) \times B(\lambda) \). The given construction of \( A \) yields a formula $\upvarphi(v)$ in the language of set theory, possibly with parameters in $B(\lambda)$, such that $A=\{x\in B(\lambda) \mid V \models \upvarphi[x/v]\}$. If $M$ is a model for which $B(\lambda)$ is absolute, that contains the relevant parameters, and such that $\upvarphi(v)$ is absolute for $M$, then $\{x\in M \mid M \models \upvarphi[x/v]\}=A\cap M$: when this happens, with some abuse of terminology we will say that $A$ is absolute for $M$.%
\footnote{The abuse comes from the fact that $A$ can be defined using different formulas and different parameters, and therefore $A$ can be absolute for $M$ according to one definition but not accoding to some other one.}


By equation~\eqref{eq:normalformforanalytic}, if $A\in \lS^1_1(B(\lambda))$ then there exists an \(\omega\)-tree $T\subseteq\pre{<\omega}{(\lambda\times\lambda)}$ such that $A=\p[T]$, namely, such that $x\in A$ if and only if $T(x)$ is ill-founded, for every \( x \in B(\lambda) \). This is a definition for $A$ that is absolute for any model of $\ZF$ that contains $\lambda$ and $T$. The notion of complement is also absolute for such a model, and therefore also $\lambda$-conalytic subsets of $B(\lambda)$ are absolute for models of $\ZF$ that contain $\lambda$ and the relevant tree \( T \) in their definition as a complement of a \(\lambda\)-analytic set.

\begin{proposition} \label{prop:downward}
Let $A\in \lS^1_2(B(\lambda))$, let $C\in \lP^1_1(B(\lambda)\times B(\lambda))$ be such that $A=p(C)$, and let $T \subseteq \pre{<\omega}{(\lambda^3)}$ be an \(\omega\)-tree such that $C=(B(\lambda)\times B(\lambda))\setminus p[T]$. Then $A$ is upward absolute for all models of $\ZF$ that contain $\lambda$ and $T$.
\end{proposition}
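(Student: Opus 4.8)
The plan is to mirror the classical proof of the easy (upward) direction of Shoenfield absoluteness, using the Shoenfield tree representation established in Proposition~\ref{prop:Shoenfieldtree} rather than appealing to a projection-of-tree-on-ordinals argument directly. The key observation is that $\lambda$-analyticity is absolute downward and upward for models of $\ZF$ containing the relevant tree (as noted in the paragraph preceding the statement, via equation~\eqref{eq:normalformforanalytic}), so the only real content is handling the \emph{coanalytic} set $C$, for which we invoke the Shoenfield tree.

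First I would fix a model $M$ of a large enough fragment of $\ZF$ with $\lambda, T \in M$, and assume $M \models \exists y\, ((x,y) \in C)$ for some $x \in B(\lambda) \cap M$; the goal is to show $x \in A$ in $V$. Since $C = (B(\lambda) \times B(\lambda)) \setminus \p[T]$ with $T$ an $\omega$-tree on $\lambda^3$, for each pair $(x,y)$ the condition $(x,y) \in C$ says precisely that the section tree $T(x,y) \subseteq \pre{<\omega}{\lambda}$ (obtained by reading off the third coordinate) is well-founded. Now I would apply the Shoenfield tree construction: viewing $C$ as a $\lambda$-coanalytic subset of $\pre{2}{B(\lambda)}$, Proposition~\ref{prop:Shoenfieldtree} (in its stated extension to $\pre{n}{B(\lambda)}$ for finite $n$) gives an $\omega$-tree $S$ on $\lambda \times \lambda \times \pre{<\lambda}{(\lambda^+)}$ such that $C = \p[S]$, where the projection is onto the first two coordinates. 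Then $A = \p(C)$ becomes $A = \p'[S]$ with $\p'$ projecting onto only the first coordinate, i.e.\ $x \in A$ if and only if there exist $y \in B(\lambda)$ and $u = (u(i))_{i \in \omega}$ with $(x,y,u) \in [S]$.

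The upward absoluteness now follows from the fact that the \emph{existence of a branch through a tree is upward absolute}: the tree $S$ is defined from $\lambda$ and $T$ by a formula that is absolute for models of $\ZF$ containing these parameters (its defining clauses (a)--(c) only quantify over bounded objects and ordinals below $\lambda^+$, and the Kleene-Brouwer order and $\sigma_{\mathrm{Tr}}$ are absolutely defined). Concretely, if $M \models (x,y) \in C$, then inside $M$ the section tree $T(x,y)$ is well-founded, and by the forward direction of the proof of Proposition~\ref{prop:Shoenfieldtree} carried out inside $M$ we obtain $y$ and $u \in M$ with $(x,y,u) \in [S]^M$. Because $S$ is absolute, $(x,y,u)$ is genuinely a branch of $S$ in $V$, and so by the reverse direction of Proposition~\ref{prop:Shoenfieldtree} (interpreted in $V$) we conclude $(x,y) \in C$ and hence $x \in A$ in $V$. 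I would package the last step as: witnesses to the $\lambda^+$-ranking function survive passage from $M$ to $V$, so well-foundedness of $T(x,y)$ established in $M$ is preserved.

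The main obstacle is the bookkeeping around the witness $u$: unlike the classical case, $u$ lives in $\pre{\omega}{(\pre{<\lambda}{(\lambda^+)})}$, which need not be coded by a function into an ordinal, so I must be careful to argue that $S$ is a set that genuinely lies in (or is definable in) $M$ and that the relation $(x,y,u) \in [S]$ is expressed by a formula absolute between $M$ and $V$. The point to verify is that clause (c), asserting that $u(i) \circ \sigma_{\mathrm{Tr}}$ restricted to $R(s\restriction i)$ is order-preserving into $(\lambda^+, \leq)$, is $\Delta_0$-like relative to the parameters $\lambda, T$ and hence absolute; since $R(s)$ is defined from $T$ and $\sigma_{\mathrm{Tr}}$ by bounded quantification, and the ordinals below $\lambda^+$ appearing as values of $u$ are computed identically in $M$ and $V$, this absoluteness holds. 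I expect no difficulty with the well-foundedness-is-upward-absolute principle itself (if $M$ thinks a ranking exists, $V$ agrees the tree is well-founded), which is the heart of why only the upward direction is claimed here—the downward direction would require $\lambda^+$ to be correctly computed by $M$, which is not assumed.
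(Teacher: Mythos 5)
Your underlying mechanism (a rank function in $M$ becomes an order-preserving map into the true $\lambda^+$, hence witnesses well-foundedness in $V$) is sound, but routing it through the Shoenfield tree creates a genuine gap: to carry out the forward direction of Proposition~\ref{prop:Shoenfieldtree} \emph{inside} $M$, the model $M$ must be able to define the Shoenfield tree in the first place, and that definition uses the cofinal sequence $\vec{\lambda}=(\lambda_i)_{i\in\omega}$ essentially (it enters both in the definition of $R(s)$ and in clause (a), which requires $u(i)\in\pre{\lambda_i}{(\lambda^+)}$). The hypotheses of Proposition~\ref{prop:downward} only put $\lambda$ and $T$ in $M$, and there are models of $\ZF$ containing both in which $\lambda$ has uncountable cofinality --- for instance, if $T$ happens to lie in a Prikry ground model $M$ over which $\lambda$ has been singularized, then $M\models$ ``$\lambda$ is regular''. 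In such $M$ there is no cofinal $\omega$-sequence at all, so your step ``carry out Proposition~\ref{prop:Shoenfieldtree} inside $M$'' cannot even be formulated. This is precisely why the paper reserves the Shoenfield tree for Proposition~\ref{prop:absoluteness}, whose hypotheses explicitly add $\vec{\lambda}$ and $\pre{<\lambda}{(\lambda^+)}$ as elements of $M$.

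A second problem is your key justification that $S$ ``is defined from $\lambda$ and $T$ by a formula that is absolute for models of $\ZF$ containing these parameters'': this is false, and it proves too much. If it were true, your argument would deliver \emph{downward} absoluteness as well from $\lambda,T\in M$ alone, making the extra hypotheses of Proposition~\ref{prop:absoluteness} pointless. The definition of $S$ involves $\lambda^+$ and $\pre{<\lambda}{(\lambda^+)}$, and clause (a) is only \emph{upward} absolute: a function of $M$ into $(\lambda^+)^M$ is genuinely a function into $\lambda^+$, but not conversely, since $M$ may compute $(\lambda^+)^M<\lambda^+$ while still containing ordinals in the interval $[(\lambda^+)^M,\lambda^+)$. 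So at best one gets the one-way inclusion $S^M\subseteq S$, which is what your argument actually needs (and would suffice, were $\vec{\lambda}\in M$). Finally, note that the whole detour is unnecessary: the paper's proof simply observes that ``$T(u,v)$ is well-founded'' is itself absolute for models of $\ZF$ containing $\lambda$ and $T$ (a rank function in $M$ transfers up, an infinite branch transfers down), so $A$ is defined by $\exists v\,\upvarphi(u,v,\lambda,T)$ with an absolute matrix, and an existential formula with absolute matrix is automatically upward absolute.
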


\begin{proof}
Let $\upvarphi(u,v,\lambda,T)$ be the first-order formula in the language of set theory formalizing the assertion ``\( u \in B(\lambda) \wedge v \in B(\lambda)  \wedge T(u,v)\) is well-founded'', so that for every \( x,y \in B(\lambda) \) we have that 
\( \upvarphi[x/u,y/v,\lambda,T] \) holds (in \( V \)) if and only if \( (x,y) \in C \). Then the formula $\exists v \, \upvarphi(u,v,\lambda,T)$ is a definition of $A$ witnessing that it is upward absolute for all models of \( \ZF \) containing \( \lambda \) and $T$. 
\end{proof}

The previous proof shows why $A$ is possibly not downward absolute: if $x\in A\cap M$, then it is possible that any $y \in B(\lambda)$ witnessing ``$x\in A$'' in \( V \), i.e.\ such that $V \models \upvarphi[x/u,y/y,\lambda,T]$, does not belong to $M$. However, if $M$ contains all ingredients needed for the construction of the Shoenfield tree, we recover also downward absoluteness.

\begin{proposition} \label{prop:absoluteness}
Let $A\in \lS^1_2(B(\lambda))$, let $C\in \lP^1_1(B(\lambda)\times B(\lambda))$ be such that $A=p(C)$, and let $T \subseteq \pre{<\omega}{(\lambda^3)}$ be an \(\omega\)-tree such that $C=(B(\lambda)\times B(\lambda))\setminus p[T]$. 
Then $A$ is absolute for all models of $\ZF$ that contain $\lambda$, $\vec{\lambda} = (\lambda_i)_{i \in \omega}$, $T$, and $\pre{<\lambda}{(\lambda^+)}$.
\end{proposition}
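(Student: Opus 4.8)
The plan is to prove the two directions of absoluteness separately, leveraging the Shoenfield tree machinery from Proposition~\ref{prop:Shoenfieldtree}. Since $A \in \lS^1_2(B(\lambda))$ is the projection of a $\lambda$-coanalytic set $C$, upward absoluteness is already granted by Proposition~\ref{prop:downward}: if $M$ contains $\lambda$ and $T$, then $A \cap M \subseteq A$. So the entire content of the statement is \emph{downward} absoluteness, namely that if $x \in A$ holds in $V$ and $x \in M$, then $M \models$ ``$x \in A$''. The key idea, exactly as in the classical Shoenfield argument, is to replace the ``external'' witness (some $y \in B(\lambda)$ with $(x,y) \in C$, which may fail to lie in $M$) with a witness that can be built \emph{internally} inside $M$ via well-foundedness of the Shoenfield tree.

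First I would set up the representation. Writing $C^x = \{ y \in B(\lambda) \mid (x,y) \in C \}$, I observe that $C^x$ is a $\lambda$-coanalytic subset of $B(\lambda)$ uniformly in $x$ (its complement is $\p[T(x)]$, where $T(x)$ is the section of the fixed tree $T \subseteq \pre{<\omega}{(\lambda^3)}$ at $x$). By Proposition~\ref{prop:Shoenfieldtree} applied to this parametrized family, there is a single Shoenfield tree $S$ on $\lambda \times \lambda \times \pre{<\lambda}{(\lambda^+)}$, definable from $\lambda$, $\vec{\lambda}$, and $T$, such that for all $x \in B(\lambda)$,
\[
x \in A \iff \exists y \in B(\lambda) \, (x,y) \in C \iff \exists (y,u) \, (x,y,u) \in [S] \iff \p[S_x] \neq \emptyset,
\]
where $S_x$ is the section tree at $x$. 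Crucially, the ``witness'' branch $(y,u)$ now has its right coordinate $u$ taking values in $\pre{<\lambda}{(\lambda^+)}$, and the defining conditions (a)--(c) of the Shoenfield tree only refer to $\lambda$, the sequence $\vec{\lambda}$, the Kleene--Brouwer order, the map $\sigma_{\mathrm{Tr}}$, and order-preservation into $(\lambda^+,{\leq})$ --- all of which are absolute between $V$ and any $M \ni \lambda, \vec{\lambda}, T, \pre{<\lambda}{(\lambda^+)}$. Hence $S$ itself, as a set, is absolute for such $M$: $S \cap M = S^M$.

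Now I would run the standard well-foundedness absoluteness argument. Fix $x \in A \cap M$. In $V$ the tree $S_x$ is ill-founded, i.e.\ $[S_x] \neq \emptyset$. The point is that ill-foundedness of a tree is \emph{upward} absolute trivially, so the real work is to argue that $M$ can see an infinite branch. Here is where the Shoenfield tree is designed to help: the statement ``$x \in A$'' is equivalent (via $C^x = B(\lambda) \setminus \p[T(x)]$ and the definition of $A$ as a projection) to the assertion that a certain tree is \emph{well-founded}, and well-foundedness is downward absolute for any transitive model. Concretely, since $M$ contains all the ingredients, $M$ computes the same tree $T(x,y)$ for each $y \in M$, and the equivalence ``$T(x,y)$ is well-founded $\iff$ there is an order-preserving $u$ into $(\lambda^+,{\leq})$'' is witnessed by ranks, which $M$ can construct internally because $\pre{<\lambda}{(\lambda^+)} \in M$ gives $M$ enough room to store the level-by-level order-preserving maps $u(i)$. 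I would spell out the rank-function construction from the proof of Proposition~\ref{prop:Shoenfieldtree} and check each $u(i)$ lands in an object available in $M$, concluding $M \models$ ``$(x,y,u) \in [S]$'' for a suitable $(y,u) \in M$, hence $M \models x \in A$.

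The main obstacle, and the point requiring the most care, is the asymmetry hidden in the two halves: the ``external'' witness $y$ with $(x,y) \in C$ need not belong to $M$, so I cannot simply reflect it down. The genuine argument must instead go through the \emph{well-founded} side, exploiting that $x \in A$ really asserts the existence of a branch of $S_x$, whose right coordinate $u$ is produced \emph{canonically} from a rank function rather than chosen arbitrarily; since rank functions are absolutely definable and their range sits inside $\pre{<\lambda}{(\lambda^+)} \in M$, the branch is reconstructible in $M$. I expect the delicate bookkeeping to be verifying that conditions (a)--(c) defining $S$ are genuinely absolute between $V$ and $M$ --- in particular that $M$ agrees on which sequences lie in $\pre{\lambda_i}{(\lambda^+)}$ and on order-preservation into $\lambda^+$ --- which is exactly what the hypothesis $\pre{<\lambda}{(\lambda^+)} \in M$ (together with $\lambda, \vec{\lambda}, T \in M$) is tailored to guarantee. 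Once this absoluteness of $S$ is established, the downward direction follows from the classical principle that a tree is ill-founded in $M$ iff it is ill-founded in $V$ whenever the tree itself is an element of $M$.
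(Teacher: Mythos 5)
Your overall route is the paper's: build the Shoenfield tree \( S \) of \( C \) with respect to \( T \), check that the hypotheses \( \lambda, \vec{\lambda}, T, \pre{<\lambda}{(\lambda^+)} \in M \) make \( S \) available inside \( M \), reduce ``\( x \in A \)'' to ``\( S(x) \) is ill-founded'' via Proposition~\ref{prop:Shoenfieldtree}, and conclude by absoluteness of well-foundedness; upward absoluteness via Proposition~\ref{prop:downward} is also as in the paper. One small sharpening: you only claim \( S \cap M = S^M \), but the classical principle you invoke at the end requires the tree to be an \emph{element} of \( M \). This does follow from your hypotheses --- every node of \( S \) is a pair of finite sequences over \( \lambda \) and over \( \pre{<\lambda}{(\lambda^+)} \), so \( S \subseteq M \) and hence \( S = S^M \in M \) --- but it should be said, since absoluteness of ill-foundedness for \( S^M \) alone would not transfer to \( S \) if the two differed.

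The genuine problem is the mechanism you offer for the downward step, which contradicts your own (correct) observation that the \( V \)-witness \( y \) need not lie in \( M \). You claim the branch of \( S_x \) is ``reconstructible in \( M \)'' because its right coordinate \( u \) comes canonically from a rank function landing in \( \pre{<\lambda}{(\lambda^+)} \in M \). But that branch is the pair \( (y,u) \), and \( u \) is the rank function \emph{of \( T(x,y) \)}, an object defined from \( y \); if \( y \notin M \), neither \( y \) nor \( u \) is in \( M \), and nothing gets reconstructed. The correct argument --- the one hiding inside the ``classical principle'' your last sentence cites --- is the contrapositive: if \( M \) saw no branch through \( S_x \), then \( M \models \ZF \) would construct, inside \( M \), a rank function \( \rho \colon S_x \to \On^M \); since \( S_x \in M \) and ``\( \rho \) is order-reversing into the ordinals'' is absolute, \( \rho \) would witness well-foundedness of \( S_x \) in \( V \), contradicting \( x \in A \). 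So \( M \) has \emph{some} branch \( (y',u') \), in general different from the one in \( V \), and its first coordinate \( y' \in M \) is the internal witness giving \( M \models \) ``\( x \in A \)''. With the middle paragraph replaced by this (or simply deleted, leaving the bare appeal to absoluteness of well-foundedness, which is all the paper itself does), your proof coincides with the paper's.
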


\begin{proof}
 Let $M$ be a model of $\ZF$ such that $\lambda, \vec{\lambda}, T,\pre{<\lambda}{(\lambda^+)} \in M$. Since $M$ contains $\lambda$, then $M$ contains also $\sigma_{\mathrm{Tr}}$. Reading the definition of the Shoenfield tree $S$ of $C$ with respect to $T$, it is clear that $S\in M$: since $T\in M$, then $T(s)\in M$  for any $s\in\pre{<\omega}{\lambda}$, and since $\vec{\lambda}\in M$, then $R(s)\in M$: this proves that the definition of $S$ is absolute for $M$. Since $\pre{<\lambda}{(\lambda^+)}\in M$, then also $\pre{<\omega}{(\pre{<\lambda}{(\lambda^+)})}\in M$, and therefore $S\subseteq M$, and in fact $S\in M$. 

By Proposition~\ref{prop:Shoenfieldtree}, for every \( x,y \in B(\lambda) \) we have that $(x,y)\in C$ if and only if $S(x,y)$ is ill-founded, and therefore $x\in A$ if and only if $S(x)$ is ill-founded.%
\footnote{Notice that since \( C \subseteq B(\lambda) \times B(\lambda) \), its Shoenfield tree \( S \) is an \(\omega\)-tree on \( \lambda \times \lambda \times \pre{<\lambda}{(\lambda^+)} \), so it makes sense to consider both its section trees \( S(x) \) obtained by fixing only its first coordinate \( x \), and its section trees \( S(x,y) \) obtained by fixing its first two coordinates \( x \) and \( y \).}
Since \( S \in M \) and well-foundedness is absolute for $M$, this yields absoluteness of \( A \), as desired.
\end{proof}

Note that if $\lambda=\omega$, then any model $M$ of $\ZF$ that contains $\omega_1$ also contains $\omega$, $\vec{\omega}$ (which in this case can be taken as the sequence of all natural numbers), and $\pre{<\omega}{\omega_1}$, and this explains why the hypotheses of the usual Shoenfield's Absoluteness Theorem are much weaker than those used in its counterpart for \( \lambda > \omega \), i.e.\ in  Proposition~\ref{prop:absoluteness}. Indeed, in the classical case \( \lambda = \omega \) it is enough to prove that $S$ is absolute for $M$ (and not necessarily an element of $M$), and therefore we can even drop the assumption $\omega_1\in M$, and reach the usual formulation of the original Shoenfield's Absoluteness Theorem.

We also remark that Proposition~\ref{prop:absoluteness} is not meant to be optimal: for instance, in~\cite[Theorem 1.4]{Laver1997} there is an example of absoluteness of a certain type of \( \lP^1_2(V_{\lambda+1}) \) sets,  assuming very large cardinals, for an inner model that does not satisfy the hypothesis of Proposition~\ref{prop:absoluteness}.


%
%
%

\chapter{Uniformization results and \(\lambda\)-coanalytic ranks} \label{chapter:uniformization}

\section{Uniformization for \(\lambda\)-analytic and \(\lambda\)-Borel sets} \label{sec:uniformization}

Recall that a \markdef{uniformization} of \( A \subseteq X \times Y \) is a set \( A^* \subseteq A \) such that for all \( x \in \p(A) \) there is exactly one \( y \in Y \) with \( (x,y) \in A^* \). 
The existence of a uniformization of \( A \subseteq X \times Y \) is indeed equivalent to the  existence of  a  \markdef{uniformizing function} for \( A \), that is, a function \( f \colon \p(A) \to Y \) such that \( f(x) \) belongs to the vertical section \( A_x  \) of \( A \) at \( x \), for all \( x \in \p(A) \).
Indeed, \( A^* \) is a uniformization of \( A \) if and only if \( A^* \) is the graph of a uniformizing function for \( A \).
Notice also that if \( X \) and \( Y \) are standard \(\lambda\)-Borel spaces and \( A^* \in \lB(X \times Y) \), then \( \p(A) \) is \(\lambda\)-Borel as well by Theorem~\ref{thm:injectiveBorelimage} (here the key fact is that the projection function \( \p \) is continuous, \( \p(A^*) = \p(A) \), and \( \p \restriction A^* \) is injective because \( A^* \) is a uniformization of \( A \)). By Theorem~\ref{thm:borelvsgraph} and Proposition~\ref{prop:charstandardBorel}, under \( 2^{< \lambda} = \lambda \) we have that the set \( A \) has a \(\lambda\)-Borel uniformization if and only if \( \p(A) \) is \(\lambda\)-Borel and there is a \(\lambda\)-Borel uniformizing function \( f \colon \p(A) \to Y \).

In Section~\ref{sec:structuralproperties} we already proved that if \( 2^{< \lambda} = \lambda \), \( X \) is a \(\lambda\)-Polish space, and \( Y \) is any \(\lambda\)-Borel space of size at most \( \lambda \), then every \(\lambda\)-Borel subset of \( X \times Y \) has a \(\lambda\)-Borel uniformization --- this is basically the ordinal \(\lambda\)-uniformization property, once we notice that without loss of generality the space \( Y \) can be given the discrete topology. This implies that under the same assumption on \(\lambda\), if \( X,Y \) are standard \(\lambda\)-Borel spaces and \( A \subseteq X \times Y \) is a \(\lambda\)-Borel set whose projection on the second coordinate has size%
\footnote{By Proposition~\ref{prop:charanalytic}, such projection is a \(\lambda\)-analytic subset of \( Y \). It will follow from Corollary~\ref{cor:analyticPSP} that either it is well-orderable and of size at most \( \lambda \), or else it has size \( 2^\lambda \).}
at most \( \lambda \), then \( A \) has a \(\lambda\)-Borel uniformization.

In this section we prove some other uniformization results for \(\lambda\)-Borel sets, mirroring corresponding classical facts. All proofs share the fact that, thanks to the results in Section~\ref{sec:basicpropertiesforpolish}, one can without loss of generality work with (often closed) subsets of \( X \times B(\lambda) \) --- see in particular Theorems~\ref{thm:changeoftopologyforuniformizations}, \ref{thm:ctblsections}, and~\ref{thm:compactsections}. 
As usual, to minimize the requirements on \(\lambda\) some of the results are stated for \(\lambda\)-Polish spaces only, but they work for arbitrary standard \(\lambda\)-Borel spaces (whenever this makes sense) as soon as we assume \( 2^{< \lambda} = \lambda \).


\subsection{Jankov-von Neumann uniformization}

The first and easiest uniformization result is the analogue of the Jankov-von Neumann uniformization theorem~\cite[Theorem 18.1]{Kechris1995}, and actually it applies to arbitrary \(\lambda\)-analytic sets. Given a standard \(\lambda\)-Borel space \( X \), let \( \mathcal{B}_\lambda(\lS^1_1) (X) \) be the \(\lambda\)-algebra generated by the \(\lambda\)-analytic subsets of \( X \). If \( Y \) is a \(\lambda\)-Borel space, we say that a function \( f \colon X \to Y \) is \markdef{\( \mathcal{B}_\lambda(\lS^1_1) \)-measurable} if \( f^{-1}(B) \in \mathcal{B}_\lambda(\lS^1_1)(X) \) for every \( B \in \lB(Y) \). Since the projection of a \(\lambda\)-analytic set is still \(\lambda\)-analytic, and hence in \( \mathcal{B}_\lambda(\lS^1_1) \), the proof of Proposition~\ref{prop:borelvsgraph} shows that if a \(\lambda\)-analytic set \( P \subseteq X \times Y \) admits a \( \mathcal{B}_\lambda(\lS^1_1) \)-measurable uniformizing function, then \( P \) has also a \( \mathcal{B}_\lambda(\lS^1_1) \) uniformization.

\begin{theorem} \label{thm:Jankov-vonNeumann}
Let \( X,Y \) be \(\lambda\)-Polish spaces. Then for every \(\lambda\)-analytic \( P \subseteq X \times Y \) there is a \( \mathcal{B}_\lambda(\lS^1_1) \)-measurable uniformizing function for \( P \). In particular, \( P \) has a \( \mathcal{B}_\lambda(\lS^1_1) \) uniformization.
\end{theorem}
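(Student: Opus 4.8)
The plan is to follow the classical Jankov--von Neumann argument, which rests on a uniformization result for closed sets together with the coding of $\lambda$-analytic sets as projections of closed sets in $X \times B(\lambda)$. First I would use Proposition~\ref{prop:charanalytic}\ref{prop:charanalytic-4} to write the $\lambda$-analytic set $P \subseteq X \times Y$ as $P = \p_{X \times Y}(F)$ for some closed $F \subseteq (X \times Y) \times B(\lambda)$, where the projection is onto the first two coordinates. Reorganizing coordinates, this exhibits $P$ as the projection along $B(\lambda)$ of a closed set $F \subseteq X \times (Y \times B(\lambda))$; more precisely, for each $x$, the section $P_x$ is nonempty exactly when $F_x \subseteq Y \times B(\lambda)$ is nonempty, and a point $(y,z) \in F_x$ yields a witness $y \in P_x$.

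The key step is to uniformize the closed set $F$ in the variable ranging over the $\sigma$-compact-like space, which in our setting is $B(\lambda)$. Since $F$ is closed in $X \times (Y \times B(\lambda))$ and, after a change of topology via Corollary~\ref{cor:changeoftopologyclopen} (legitimate as we may reduce to $Y = B(\lambda)$ using Proposition~\ref{prop:surjection} and Remark~\ref{rmk:inverseofinjection}), we can assume $F$ is closed in $X \times B(\lambda)$ with $B(\lambda) \approx \pre{\omega}{\lambda}$. The plan is then to select, for each $x \in \p(F)$, the \emph{leftmost branch} of the section tree, exactly as described in Section~\ref{subsec:trees} for the body of a tree: the map $F \mapsto T_F$ identifies closed subsets of $B(\lambda)$ with pruned $\omega$-trees on $\lambda$, and the leftmost-branch selection is definable and choiceless. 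I would define $f(x)$ to be the point of $B(\lambda)$ obtained by recursively picking, at each level $n$, the least $\alpha < \lambda$ such that the partial sequence extends to a branch of the section tree $T_{F_x}$. This yields a uniformizing function for $F$, hence (projecting back to $Y$) a uniformizing function for $P$.

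The measurability analysis is where one must be careful: I would verify that the resulting $f \colon \p(P) \to Y$ is $\mathcal{B}_\lambda(\lS^1_1)$-measurable. The point is that the graph condition ``$f(x) \restriction n = s$'' is expressible by a $\lambda$-Borel-in-the-closed-set predicate that asserts the existence of a branch through a specified subtree while denying the existence of lexicographically smaller ones; each such clause is a projection of a closed set, hence $\lambda$-analytic, and the finite Boolean combinations involved stay within $\mathcal{B}_\lambda(\lS^1_1)$. Concretely, the preimage $f^{-1}(\Nbhd_s(Y))$ is built from sets of the form $\{ x \mid F_x \cap (\text{some clopen box}) \neq \emptyset \}$, which are $\lambda$-analytic since the fibrewise nonemptiness of a closed set is a projection. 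Closing $\mathcal{B}_\lambda(\lS^1_1)$ under the relevant operations then gives measurability of $f$.

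The main obstacle I expect is the absence of Baire category and of compactness arguments, which in the classical proof guarantee that the leftmost-branch selection behaves measurably via the countable structure of the tree. In our generalized setting the section trees live on $\lambda$ rather than on $\omega$, so the ``least $\alpha < \lambda$'' selection ranges over a well-ordered but uncountable set at each level; I must check that quantifying over $\alpha < \lambda$ to express the leftmost-branch condition keeps the defining sets inside $\mathcal{B}_\lambda(\lS^1_1)$, which requires invoking the closure of $\lambda$-analytic sets under well-ordered unions of length $\lambda$ (Proposition~\ref{prop:closurepropertiesofanalytic}\ref{prop:closurepropertiesofanalytic-i}) rather than merely countable unions. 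Once this closure is correctly applied at the step ``$\exists \alpha < \lambda$ extending to a branch while all smaller choices fail,'' the measurability goes through, and the final sentence of the theorem follows immediately from the analogue of Proposition~\ref{prop:borelvsgraph} noted in the paragraph preceding the statement.
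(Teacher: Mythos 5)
Your proposal is correct and follows essentially the same route as the paper's proof: reduce, via the universality of \( B(\lambda) \) (Proposition~\ref{prop:surjection} together with Remark~\ref{rmk:inverseofinjection}), to uniformizing a closed subset of a product with \( B(\lambda) \), select the leftmost branch of each section tree, and obtain \( \mathcal{B}_\lambda(\lS^1_1) \)-measurability from the fact that the defining clauses are \(\lambda\)-analytic (projections of closed sets) and \( \mathcal{B}_\lambda(\lS^1_1) \) is closed under complements and \(\lambda\)-sized unions. One caution: your invocation of Corollary~\ref{cor:changeoftopologyclopen} is both unnecessary and inadmissible here, since that corollary assumes \( 2^{<\lambda}=\lambda \) while the theorem does not; the reduction you describe is already achieved by Proposition~\ref{prop:surjection} and Remark~\ref{rmk:inverseofinjection} alone, exactly as in the paper.
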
 

\begin{proofsketchcite}{Kechris1995}{Theorem 18.1}
We can assume \( P \neq \emptyset \), as otherwise there is nothing to prove. 
Using~\ref{prop:charanalytic-3} of Proposition~\ref{prop:charanalytic},
let \( h \colon B(\lambda) \to X \times Y \) be continuous and such that \( h(B(\lambda)) = P \). Let also \( g \colon F \to X \) be a continuous bijection with \( F \subseteq B(\lambda) \) closed (Proposition~\ref{prop:surjection}). By Remark~\ref{rmk:inverseofinjection} we can assume that \( g^{-1} \) is \(\lambda\)-Borel measurable. Let \( P' \subseteq B(\lambda) \times B(\lambda) \) be the closed set defined by
\[ 
(x,y) \in P' \iff x \in F \wedge \p(h(y)) = g(x),
 \] 
where as usual \( \p \colon X \times Y  \to X \) is the projection on the first coordinate. If \( f' \colon \p(P') \to B(\lambda) \) is a \( \mathcal{B}_\lambda(\lS^1_1) \)-measurable uniformizing function for \( P' \), then the function \( f \colon \p(P) \to Y \) sending \( x \in \p(P) \) to the projection on \( Y \) of \( h(f'(g^{-1}(x))) \) is the desired \( \mathcal{B}_\lambda(\lS^1_1) \)-measurable uniformizing function for \( P \).

The previous paragraph shows that without loss of generality we can assume that \( X = Y = B(\lambda) \) and that \( P \in \lP^0_1(X \times Y) \). Let \( T \) be an \(\omega\)-tree on \( \lambda \times \lambda \) such that \( P = [T] \). Then the map sending each \( x \in \p(P) \) to the leftmost branch of the section tree \( T(x) \) is a \( \mathcal{B}_\lambda(\lS^1_1) \)-measurable uniformizing function for \( P \).
\end{proofsketchcite}

Theorem~\ref{thm:Jankov-vonNeumann} cannot be improved: there are even closed sets \( P \subseteq X \times Y \) such that \( \p(P) = X \), yet there is no \(\lambda\)-Borel uniformization of \( P \) (see Example~\ref{xmp:notunifomizable}).

\begin{corollary}
Let \( X,Y \) be \(\lambda\)-Polish spaces. Every \(\lambda\)-Borel function \( f \colon X \to Y \) admits a \( \mathcal{B}_\lambda(\lS^1_1) \)-measurable right-inverse, that is, a \( \mathcal{B}_\lambda(\lS^1_1) \)-measurable map \( g \colon f(X) \to X \) such that \( (f \circ g)(y) = y \) for all \( y \in f(X) \). 
\end{corollary}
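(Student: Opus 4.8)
The corollary is an immediate consequence of Theorem~\ref{thm:Jankov-vonNeumann}, obtained by uniformizing the graph of $f$ read ``backwards''. Specifically, the plan is to consider the set
\[
P = \{ (y,x) \in Y \times X \mid f(x) = y \},
\]
which is nothing but $(\mathrm{graph}(f))^{-1}$, the flip of the graph of $f$ across the two coordinates.

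First I would verify that $P$ is $\lambda$-analytic as a subset of $Y \times X$. By Proposition~\ref{prop:borelvsgraph}, $\mathrm{graph}(f) \in \lB(X \times Y)$, hence in particular $\mathrm{graph}(f) \in \lS^1_1(X \times Y)$ whenever $2^{<\lambda}=\lambda$ by \eqref{eq:borelvsanalytic}; its flip $P$ is then $\lambda$-analytic in $Y \times X$, being the preimage of a $\lambda$-Borel (a fortiori $\lambda$-analytic) set under the continuous coordinate-swap homeomorphism $Y \times X \to X \times Y$, using closure of $\lS^1_1$ under continuous preimages (Proposition~\ref{prop:closurepropertiesofanalytic}\ref{prop:closurepropertiesofanalytic-ii}). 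Note that the projection of $P$ on its first coordinate $Y$ is exactly $f(X)$, since $(y,x) \in P$ for some $x$ precisely when $y$ is in the range of $f$.

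Next I would apply Theorem~\ref{thm:Jankov-vonNeumann} to $P$, obtaining a $\mathcal{B}_\lambda(\lS^1_1)$-measurable uniformizing function $g \colon \p(P) \to X$, that is, $g \colon f(X) \to X$ with $(y, g(y)) \in P$ for every $y \in f(X)$. By the definition of $P$, the condition $(y,g(y)) \in P$ says precisely that $f(g(y)) = y$, i.e.\ $(f \circ g)(y) = y$ for all $y \in f(X)$. Thus $g$ is the desired $\mathcal{B}_\lambda(\lS^1_1)$-measurable right-inverse of $f$.

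The only point requiring a modicum of care is the very first one, namely checking that the flipped graph genuinely lands in $\lS^1_1$ so that Theorem~\ref{thm:Jankov-vonNeumann} applies; this leans on $\lB(X\times Y)\subseteq\lS^1_1(X\times Y)$, which needs $2^{<\lambda}=\lambda$. I do not anticipate any real obstacle here, since all the ingredients (closure of $\lS^1_1$ under continuous preimages and the graph-characterization of $\lambda$-Borel functions) are already available, and indeed the corresponding classical statement is obtained in exactly this way; the proof is therefore a routine one-line reduction and can be stated as such.
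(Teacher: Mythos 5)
Your proof is correct and is precisely the intended derivation: the paper states this corollary without proof immediately after Theorem~\ref{thm:Jankov-vonNeumann}, the implicit argument being exactly the reduction you give (flip the graph of \( f \), note that \( \p(P) = f(X) \), and uniformize via the theorem). Your caveat is also well placed: the only nontrivial ingredient is that the flipped graph is \(\lambda\)-analytic, which goes through Proposition~\ref{prop:borelvsgraph} and the inclusion \( \lB \subseteq \lS^1_1 \) of~\eqref{eq:borelvsanalytic}, so the hypothesis \( 2^{<\lambda} = \lambda \) is indeed being used tacitly even though it is not displayed in the corollary's statement.
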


\subsection{A general criterion}

 We now move to
 \(\lambda\)-Borel uniformizations of \(\lambda\)-Borel sets. This is a highly nontrivial task, because as in the classical case there are even very simple \(\lambda\)-Borel sets which do not admit a \(\lambda\)-Borel uniformization.

 \begin{example} \label{xmp:notunifomizable}
If \( 2^{< \lambda} = \lambda \), then there is a closed \( F \subseteq B(\lambda) \times B(\lambda) \) such that \( \p(F) = B(\lambda) \) but there is no \(\lambda\)-Borel uniformization for \( F \). Indeed, let \( C_0, C_1 \subseteq B(\lambda) \) be disjoint \(\lambda\)-coanalytic sets that cannot be separated by a \(\lambda\)-Borel set (the existence of such sets is granted by Theorem~\ref{thm:structuralpropertiesforcoanalytic}, which will be proved later on). 
For \( i \in \{ 0,1 \} \) let \( F_i \subseteq B(\lambda) \times B(\lambda) \) be closed such that \( X \setminus C_i = \p(F_i) \), and let \( F = F_0 \cup F_1 \). It is not hard to check that \( F \) is as required because if \( f \) were a \(\lambda\)-Borel uniformizing function for \( F \), then the  \(\lambda\)-Borel set \( f^{-1}(F_1) \) would separate \( C_0 \) from \( C_1 \).
 \end{example}

We
  first establish a quite general criterion along the lines of Theorem~\ref{thm:changeoftopology}  for the existence of a \(\lambda\)-Borel uniformization of a given \(\lambda\)-Borel set. It works also in the classical case \( \lambda = \omega \) and in that setup it might be folklore, although we could not trace it back in the literature.
If \( B \) is a subset of a topological space \( X \), we say that \( B \) has an isolated point if there is \( x \in B \) which is isolated with respect to the relative topology on \( B \) inherited from \( X \).
 
 \begin{theorem} \label{thm:changeoftopologyforuniformizations}
Let \( X,Y \) be \(\lambda\)-Polish spaces, and let \(  A \subseteq X \times Y \) be a nonempty \(\lambda\)-Borel set. Then the following are equivalent:
\begin{enumerate-(1)}
\item \label{thm:changeoftopologyforuniformizations-1}
\( A \) has a \(\lambda\)-Borel uniformization;
\item \label{thm:changeoftopologyforuniformizations-2}
there is a continuous \(\lambda\)-Borel isomorphism \( h \colon H \to X \times Y \) with \( H \subseteq B(\lambda) \) closed such that the set
\begin{equation*} \label{eq:changeoftopologyforuniformizations}
F = \{ (x,z) \in X \times B(\lambda) \mid z \in H  \wedge h(z) \in A \wedge \p(h(z)) = x \}
 \end{equation*}
is closed and all its nonempty vertical sections \( F_x \) have an isolated point;
\item \label{thm:changeoftopologyforuniformizations-3}
there is a closed set \( F \subseteq X \times B(\lambda) \) and a continuous \(\lambda\)-Borel isomorphism \( f \colon F \to A \) such that \( \p(w) = \p(f(w)) \) for all \( w \in F \) and all nonempty vertical sections \( F_x \) of \( F \) have an isolated point;
\item \label{thm:changeoftopologyforuniformizations-4}
there is a continuous surjection \( g \colon B(\lambda) \to Y \) and a closed set \( F \subseteq X \times B(\lambda) \) such that \( (\id_X \times \, g) (F) = A \) and all nonempty vertical sections \( F_x \) of \( F \) have an isolated point;
\item \label{thm:changeoftopologyforuniformizations-5}
the topology of \( X \times Y \) can be refined to a \(\lambda\)-Polish topology \( \tau' \) generating the same \(\lambda\)-Borel sets such that \( A \) is \( \tau' \)-closed and all its nonempty vertical sections \( A_x \), when identified with the subspace \( \{ x \} \times A_x \) of \( (X \times Y, \tau') \), have an isolated point.
\end{enumerate-(1)}
\end{theorem}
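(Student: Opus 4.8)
The plan is to establish the cycle $(1)\Rightarrow(5)\Rightarrow(2)\Rightarrow(3)\Rightarrow(4)\Rightarrow(1)$, so that the topological conditions are threaded together and only two steps carry real content: producing the refined topology from a uniformization, and extracting a genuine $\lambda$-Borel uniformization from isolated sections. For $(1)\Rightarrow(5)$ I would fix a $\lambda$-Borel uniformization $A^*\subseteq A$, which is the graph of a $\lambda$-Borel map $f\colon\p(A)\to Y$, and apply Corollary~\ref{cor:changeoftopologyclopen} to the family $\{A,A^*\}$ (together with a basis of $Y$) to obtain a finer $\lambda$-Polish topology $\tau'$ with the same $\lambda$-Borel sets, with $\dim=0$, in which both $A$ and $A^*$ are clopen. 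Then $A$ is $\tau'$-closed, and for each $x\in\p(A)$ the set $A^*\cap(\{x\}\times A_x)=\{(x,f(x))\}$ is relatively open in $\{x\}\times A_x$, so $f(x)$ is a $\tau'$-isolated point of the section; this is exactly~\ref{thm:changeoftopologyforuniformizations-5}.

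The middle implications are largely bookkeeping. For $(5)\Rightarrow(2)$ I would realize $(X\times Y,\tau')$ as a continuous $\lambda$-Borel-isomorphic image $h\colon H\to X\times Y$ of a closed $H\subseteq B(\lambda)$ via Proposition~\ref{prop:surjection} and Remark~\ref{rmk:inverseofinjection}; since $\tau'$ refines the original topology, $h$ remains continuous into $(X\times Y,\tau)$. Writing $\psi=\p\circ h\colon H\to X$, the set $F$ of~\ref{thm:changeoftopologyforuniformizations-2} is the (flipped) graph of $\psi$ over the closed set $h^{-1}(A)$, hence closed, and because $h$ is continuous the pull-back of a $\tau'$-isolating neighbourhood is an isolating neighbourhood in $F_x$, so the sections of $F$ inherit isolated points. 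Then $(2)\Rightarrow(3)$ is immediate by setting $f(x,z)=h(z)$, a $\p$-preserving continuous $\lambda$-Borel isomorphism of $F$ onto $A$ with the same vertical sections. For $(3)\Rightarrow(4)$ the issue is to trade the abstract isomorphism for a single continuous surjection while keeping sections isolated: with $q=\pi_Y\circ f\colon F\to Y$ continuous, I fix a continuous bijection $g_0\colon F_Y\to Y$ from a closed $F_Y\subseteq B(\lambda)$ (Proposition~\ref{prop:surjection}), a retraction $r\colon B(\lambda)\to F_Y$ (Proposition~\ref{prop:retract}), and a homeomorphism $B(\lambda)\times B(\lambda)\approx B(\lambda)$ (Theorem~\ref{thm:homeomorphictoCantor}), and encode each section point $z$ as the pair $(z,v)$ with $v\in F_Y$ the unique lift satisfying $g_0(v)=q(x,z)$. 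The resulting $F'=\{(x,\langle z,v\rangle):(x,z)\in F,\ v\in F_Y,\ g_0(v)=q(x,z)\}$ is closed (every defining condition is), the map $g=g_0\circ r\circ\pi_2\circ\langle\cdot,\cdot\rangle^{-1}$ is a total continuous surjection with $(\id_X\times g)(F')=A$, and isolation is preserved because it is witnessed purely in the continuous first coordinate $z$.

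The crux, and the step I expect to be the main obstacle, is $(4)\Rightarrow(1)$: from a closed $F\subseteq X\times B(\lambda)$ whose nonempty sections all have isolated points one must build a genuinely $\lambda$-Borel selector $u\colon\p(F)\to B(\lambda)$ with $(x,u(x))\in F$; composing with $g$ and invoking Theorem~\ref{thm:borelvsgraph} then produces the uniformization. The difficulty is that $\mathrm{Iso}(F)=\{(x,z)\in F:z\ \text{isolated in}\ F_x\}$ is only $\lambda$-coanalytic, and the naive leftmost-point selection is merely $\mathcal{B}_\lambda(\lS^1_1)$-measurable, i.e.\ at the Jankov--von Neumann level of Theorem~\ref{thm:Jankov-vonNeumann}; upgrading this to $\lambda$-Borel is precisely where the isolated-point hypothesis must be spent.

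My plan for this last step is to localize along the clopen pieces $\Nbhd_s$, $s\in\pre{<\omega}{\lambda}$. On the locus where $F_x\cap\Nbhd_s$ is a singleton, the projection $\p$ restricted to $F\cap(X\times\Nbhd_s)$ is injective, so Theorem~\ref{thm:injectiveBorelimage} renders the associated partial selector $\lambda$-Borel on a $\lambda$-Borel domain; I would then glue these by assigning to each $x$ the $\sigma_{\mathrm{Tr}}$-least $s$ that works. The genuinely delicate point is recognizing the singleton-section loci in a $\lambda$-Borel manner, since a priori they lie only in the difference hierarchy over $\lS^1_1$ (being the intersection of the $\lambda$-analytic set ``$F_x\cap\Nbhd_s\neq\emptyset$'' with the $\lambda$-coanalytic set ``$F_x\cap\Nbhd_s$ has at most one point''). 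This is where the argument cannot imitate the classical Baire-category proof, which is unavailable in the generalized setting, and must instead rely on the combinatorics specific to singular strong-limit $\lambda$ above the continuum; I regard securing this $\lambda$-Borelness as the heart of the proof.
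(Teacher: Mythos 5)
Your cycle is structurally sound, and the implications \( (1)\Rightarrow(5)\Rightarrow(2)\Rightarrow(3)\Rightarrow(4) \) are all correct: the direct proof of \( (1)\Rightarrow(5) \) via Corollary~\ref{cor:changeoftopologyclopen}, and your encoding trick for \( (3)\Rightarrow(4) \) (pairing each section point with its unique \( g_0 \)-lift), are legitimate variants of the paper's routing, which instead proves \( (1)\Rightarrow(2) \) first via Corollary~\ref{cor:changeoftopology} and then derives (3), (4), (5) from (2), making \( (2)\Rightarrow(4) \) a one-line retraction argument. The genuine gap is exactly where you place ``the heart of the proof'': the implication \( (4)\Rightarrow(1) \) is never carried out. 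You correctly set up the localization along the clopen pieces \( \Nbhd_s \), \( s \in \pre{<\omega}{\lambda} \), and the singleton-section loci \( C_s = \{ x \mid F_x \cap \Nbhd_s \text{ is a singleton} \} \), but you then declare that recognizing these loci in a \( \lambda \)-Borel manner is an open obstacle requiring new combinatorics, and stop. Moreover, your complexity estimate for \( C_s \) is too pessimistic, which is what makes the step look harder than it is.

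The paper closes this step with two tools you do not invoke, and with a different logic: one never needs the sets \( C_s \) to be \( \lambda \)-Borel. First, by the generalization of Luzin's unicity theorem (Lemma~\ref{lem:setofuniqueness}), for any \( \lambda \)-Borel \( R \) the set \( \{ x \mid \exists ! y \, (x,y) \in R \} \) is \( \lambda \)-coanalytic outright --- not merely a difference of \( \lambda \)-analytic sets; applying this with \( R_s = F \cap (X \times \Nbhd_s) \) shows each \( C_s \) is \( \lP^1_1 \). Since the isolated-point hypothesis makes \( (C_s)_{s \in \pre{<\omega}{\lambda}} \) a \( \lambda \)-sized cover of \( \p(F) \), the set \( \p(F) \) is \( \lambda \)-coanalytic (Corollary~\ref{cor:closurepropertiesofanalytic}) as well as \( \lambda \)-analytic, hence \( \lambda \)-Borel by Souslin's Theorem~\ref{thm:souslin}. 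Second --- and this is the key move --- Novikov's generalized separation theorem in the form of Corollary~\ref{cor:novikovseparation1} refines any \( \lambda \)-sized \( \lambda \)-coanalytic cover of a standard \( \lambda \)-Borel space to a \( \lambda \)-Borel \emph{weak partition} \( (B_s)_s \) with \( B_s \subseteq C_s \). Then \( F^* = \bigcup_s \big( F \cap (B_s \times \Nbhd_s) \big) \) is a \( \lambda \)-Borel uniformization of \( F \), and since \( \id_X \times g \) is injective on \( F^* \), Theorem~\ref{thm:injectiveBorelimage} shows that its image is a \( \lambda \)-Borel uniformization of \( A \). This is precisely Proposition~\ref{prop:isolatedgivesuniformization} of the paper; with it, your gluing plan goes through verbatim, and no combinatorics beyond what the paper has already developed (Lemma~\ref{lem:setofuniqueness} plus Corollary~\ref{cor:novikovseparation1}) is needed.
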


Before proving Theorem~\ref{thm:changeoftopologyforuniformizations}, we need a technical lemma. By adapting in the obvious way the proof of~\cite[Theorem 18.11]{Kechris1995}, one obtains a generalization of Luzin's theorem on the set of unicity of a Borel set.

\begin{lemma} \label{lem:setofuniqueness}
Assume that \( 2^{ < \lambda} = \lambda \), let \( X,Y \) be standard \(\lambda\)-Borel spaces, and let \( R \subseteq X \times Y \) be \(\lambda\)-Borel. Then the set
\[ 
\{ x \in X \mid \exists ! y \, (x,y) \in R \}
 \] 
is \(\lambda\)-coanalytic.
\end{lemma}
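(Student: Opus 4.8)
The plan is to follow the proof of the classical set-of-uniqueness theorem \cite[Theorem 18.11]{Kechris1995}, replacing $\pre{\omega}{\omega}$, Borel sets, and the Lusin--Souslin theorem by $B(\lambda)$, $\lambda$-Borel sets, and its generalization Theorem~\ref{thm:injectiveBorelimage}. First I would dispose of the ``at most one'' half, which is elementary: writing $R_x = \{ y \mid (x,y) \in R \}$, the set
\[
\{(x,y_0,y_1) \in X\times Y\times Y \mid (x,y_0)\in R \wedge (x,y_1)\in R \wedge y_0\neq y_1\}
\]
is $\lambda$-Borel (it is the intersection of the two continuous preimages of $R$ with the open set $\{y_0 \neq y_1\}$), so its projection $\{x : |R_x|\ge 2\}$ is $\lambda$-analytic by Proposition~\ref{prop:charanalytic}, and hence $C := \{x : |R_x|\le 1\}$ is $\lambda$-coanalytic. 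Since $\{x : \exists! y\,(x,y)\in R\} = C \cap \p(R)$ and $\p(R)$ is $\lambda$-analytic, the whole difficulty is to show that intersecting the $\lambda$-analytic set $\p(R)$ with the $\lambda$-coanalytic set $C$ produces a $\lambda$-coanalytic set: a pure quantifier count only places the target in $\lS^1_1\cap\lP^1_1$, which is strictly larger than $\lP^1_1$, so the $\lambda$-Borelness of $R$ must be used essentially.

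To exploit $\lambda$-Borelness I would normalize the representation. Using the change-of-topology machinery (Corollary~\ref{cor:changeoftopologyclopen} together with Proposition~\ref{prop:surjection}) I would present $R$, without altering its first coordinate, via a projection-preserving map of a closed subset of $X\times B(\lambda)$, so that each section corresponds to the body of a section tree and $R_x\neq\emptyset$ iff that tree is ill-founded. On $C$ each such tree has at most one branch, and the crucial observation (exactly as in the classical argument) is that when $R_x$ is a singleton its unique element $y$ is pinned down \emph{simultaneously} by a $\lambda$-analytic and a $\lambda$-coanalytic condition: coordinatewise,
\[
y(n)=\beta \iff \exists z\,(z\in R_x \wedge z(n)=\beta) \iff \forall z\,(z\in R_x \to z(n)=\beta).
\]
The set $R^* = \{(x,y)\in R \mid \forall y'\,((x,y')\in R \to y'=y)\}$ is then a $\lambda$-coanalytic, single-valued set whose domain is precisely $\{x : \exists! y\,(x,y)\in R\}$, and the heart of the matter is to prove this domain $\lambda$-coanalytic. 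Following Kechris, I would separate $R^*$ from the $\lambda$-analytic ``doubling'' set by a $\lambda$-Borel set (Lusin separation, Theorem~\ref{thm:lusinseparation}) and feed the resulting $\lambda$-Borel, at-most-single-valued pieces into the injective-image theorem (Theorem~\ref{thm:injectiveBorelimage}); its conclusion that an injective $\lambda$-Borel image of a $\lambda$-Borel set is $\lambda$-Borel is exactly what fails for arbitrary (non-$\lambda$-Borel) $R$ and is what blocks the naive counterexamples.

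The main obstacle is precisely this coanalyticity of the domain: the existence quantifier ``$\exists y\,(x,y)\in R$'' is irreducibly $\lambda$-analytic, and lowering its complexity to $\lambda$-coanalytic on $C$ cannot be achieved by Boolean or quantifier manipulations alone — it requires threading the single-valuedness through Theorem~\ref{thm:injectiveBorelimage}, while keeping careful track that the various reductions preserve the vertical sections $R_x$ \emph{as sets} (not merely up to $\lambda$-Borel structure), so that the set $\{x : |R_x|=1\}$ itself is unchanged. A secondary point to monitor is the amount of choice: every step should be carried out with the canonical, choiceless witnesses afforded by G\"odel's well-ordering of $\pre{<\omega}{\lambda}$ and the canonical Lusin separation of Remark~\ref{rmk:constructivelusinseparation}, so that the whole argument goes through in $\ZF + \AC_\lambda(\pre{\lambda}{2})$, as demanded elsewhere in the paper.
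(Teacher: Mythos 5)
Your opening moves are sound and do match the start of the classical argument: writing the target as \( C \cap \p(R) \), normalizing \( R \) to a closed set with the same vertical sections, introducing the single-valued part \( R^* = \{(x,y) \in R \mid \forall y'\,((x,y') \in R \to y' = y)\} \), and noting \( \p(R^*) \) is the target. The proof breaks at the separation step. Theorem~\ref{thm:lusinseparation} separates two disjoint \emph{\(\lambda\)-analytic} sets, whereas \( R^* \) is \(\lambda\)-coanalytic, so Lusin separation does not license a \(\lambda\)-Borel set \( B \) with \( R^* \subseteq B \) and \( B \cap D = \emptyset \) (where \( D = R \setminus R^* \) is the doubling set). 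Worse, this is not a repairable technicality, because such a \( B \) cannot exist in general: since \( R = R^* \cup D \) disjointly, any such \( B \) satisfies \( B \cap R = R^* \), so \( R^* \) would be \(\lambda\)-Borel; as \( \p \restriction R^* \) is injective, Theorem~\ref{thm:injectiveBorelimage} would then make the target \( \p(R^*) \) \(\lambda\)-Borel. The same over-strong conclusion follows from any variant of your plan that decomposes \( R^* \) into \(\lambda\)-many \(\lambda\)-Borel single-valued pieces, since \(\lambda\)-unions of \(\lambda\)-Borel sets are \(\lambda\)-Borel.

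But the unicity set is \emph{not} \(\lambda\)-Borel in general; the \(\lambda\)-coanalytic bound in the lemma is optimal. Take \( X = \mathrm{Tr}_\lambda \), \( Y = B(\lambda) \), and let \( R \) consist of all \( (x,y) \) such that either \( y \) is the sequence constantly equal to \( 0 \), or \( y(0) = 1 \) and \( (y(1), \dotsc, y(n)) \in T_x \) for every \( n \geq 1 \). Then \( R \) is closed, every section \( R_x \) contains the constant \( 0 \) sequence, and \( R_x \) is a singleton exactly when \( [T_x] = \emptyset \); hence \( \{ x \in \mathrm{Tr}_\lambda \mid \exists! y \, (x,y) \in R \} = \mathrm{WF}_\lambda \), which is not \(\lambda\)-Borel: otherwise \( \mathrm{IF}_\lambda \) would be \(\lambda\)-Borel, and then by completeness of \( \mathrm{IF}_\lambda \) (Proposition~\ref{prop:IF}) every \(\lambda\)-analytic set would be \(\lambda\)-Borel, contradicting the existence of \(\lambda\)-analytic non-\(\lambda\)-Borel sets under \( 2^{<\lambda} = \lambda \). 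So no proof can run through making \( R^* \) or its projection \(\lambda\)-Borel; the \(\lambda\)-Borelness of \( R \) must instead be exploited so as to output a genuinely \(\lambda\)-coanalytic definition of the target, which is what the argument of~\cite[Theorem 18.11]{Kechris1995} that the paper adapts actually does. A smaller point: in your first paragraph, \( \lS^1_1 \cap \lP^1_1 \) literally denotes \( \lD^1_1 = \lB \) by Theorem~\ref{thm:souslin}; what you mean is the class of sets of the form \( A \cap B \) with \( A \in \lS^1_1 \) and \( B \in \lP^1_1 \), which is indeed where the naive count places the target.
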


Combining this with Corollary~\ref{cor:novikovseparation1}, we obtain the following technical uniformization result, which will be subsumed by Theorem~\ref{thm:changeoftopologyforuniformizations} once the proof of the latter will be complete.

\begin{proposition} \label{prop:isolatedgivesuniformization}
Assume that \( 2^{< \lambda} = \lambda \), let \( X,Y \) be \(\lambda\)-Polish spaces, and let
 \( A \in \lB(X \times Y) \).  
Suppose that there is a family \( \{ U_\alpha \mid \alpha < \lambda \} \) of \(\lambda\)-Borel subsets of \( X \times Y \) such that for every \( x \in \p(A) \) there is \( \alpha < \lambda \) such that \( U_\alpha \cap (\{x \} \times A_x) \) is a singleton.
Then there is a  \( \lambda \)-Borel uniformization \( A^* \) of \( A \).
\end{proposition}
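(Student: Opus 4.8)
The plan is to realize the hint embedded in the surrounding text, namely to combine the \enquote{set of uniqueness} Lemma~\ref{lem:setofuniqueness} with the weak-partition Corollary~\ref{cor:novikovseparation1}. The singleton hypothesis is really a \emph{covering} statement in disguise, and the strategy is to turn it into a genuine pairwise disjoint $\lambda$-Borel partition of $\p(A)$, along which one can canonically read off the required unique point in each vertical section. Concretely, for each $\alpha < \lambda$ I would set $R_\alpha = A \cap U_\alpha \in \lB(X \times Y)$ and let
\[
S_\alpha = \{ x \in X \mid \exists! y \, (x,y) \in R_\alpha \} .
\]
By Lemma~\ref{lem:setofuniqueness} each $S_\alpha$ is $\lambda$-coanalytic, and evidently $S_\alpha \subseteq \p(A)$. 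The hypothesis states precisely that $\p(A) \subseteq \bigcup_{\alpha < \lambda} S_\alpha$: for $x \in \p(A)$, an index $\alpha$ for which $U_\alpha \cap (\{x\} \times A_x)$ is a singleton is exactly an $\alpha$ with $x \in S_\alpha$.

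The $S_\alpha$ cover only $\p(A)$, which is merely $\lambda$-analytic (Proposition~\ref{prop:charanalytic}), whereas Corollary~\ref{cor:novikovseparation1} needs a $\lambda$-coanalytic cover of the whole $\lambda$-Polish space $X$. To fix this I would absorb the defect into one term: since $\p(A)$ is $\lambda$-analytic, $X \setminus \p(A)$ is $\lambda$-coanalytic, so setting $C_0 = S_0 \cup (X \setminus \p(A))$ and $C_\alpha = S_\alpha$ for $1 \leq \alpha < \lambda$ yields a $\lambda$-sequence of $\lambda$-coanalytic sets (closure under finite unions, Corollary~\ref{cor:closurepropertiesofanalytic}) that now covers $X$. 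Applying Corollary~\ref{cor:novikovseparation1} gives pairwise disjoint $\lambda$-Borel sets $B_\alpha \subseteq C_\alpha$ with $\bigcup_{\alpha < \lambda} B_\alpha = X$. I would then define $A^*_\alpha = R_\alpha \cap (B_\alpha \times Y)$, each $\lambda$-Borel, and take
\[
A^* = \bigcup_{\alpha < \lambda} A^*_\alpha ,
\]
which is $\lambda$-Borel because $\lB(X \times Y)$ is a $\lambda^+$-algebra and the sequence $(A^*_\alpha)_{\alpha < \lambda}$ is built canonically from the given $(U_\alpha)_{\alpha < \lambda}$ and the $(B_\alpha)_{\alpha < \lambda}$ produced by the corollary.

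It then remains to check that $A^*$ uniformizes $A$. Containment $A^* \subseteq A$ is immediate. For $x \in \p(A)$, disjointness and exhaustiveness of the $B_\alpha$ give a unique $\alpha$ with $x \in B_\alpha$; the key observation is that $x \in B_\alpha \cap \p(A) \subseteq S_\alpha$ in all cases (for $\alpha \geq 1$ because $B_\alpha \subseteq S_\alpha$, and for $\alpha = 0$ because $x \in \p(A)$ forces $x \notin X \setminus \p(A)$, hence $x \in S_0$). Thus there is exactly one $y$ with $(x,y) \in R_\alpha$, and this $(x,y)$ lies in $A^*_\alpha \subseteq A^*$, giving existence; for uniqueness, any two points of $A^*$ above $x$ must come from the same $A^*_\alpha$ (disjointness of the $B_\alpha$) and hence coincide by the $\exists!$ clause defining $S_\alpha$. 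The only genuinely delicate point, and the one I would treat most carefully, is exactly this interplay between the merely $\lambda$-analytic projection $\p(A)$ and the requirement of Corollary~\ref{cor:novikovseparation1} that the cover be of all of $X$; everything else is a routine verification once the cover has been correctly set up. Note finally that this argument subsumes Proposition~\ref{prop:isolatedgivesuniformization} into the framework of Theorem~\ref{thm:changeoftopologyforuniformizations}, as anticipated.
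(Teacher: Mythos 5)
Your proof is correct and follows essentially the same route as the paper's: both set \( R_\alpha = A \cap U_\alpha \), invoke Lemma~\ref{lem:setofuniqueness} to see that the uniqueness sets are \(\lambda\)-coanalytic, refine the resulting cover via Corollary~\ref{cor:novikovseparation1}, and take \( A^* = \bigcup_{\alpha<\lambda} \bigl( R_\alpha \cap (B_\alpha \times Y) \bigr) \). The only divergence is in handling the fact that the uniqueness sets cover only \( \p(A) \): the paper first shows \( \p(A) \) is \(\lambda\)-Borel (it is \(\lambda\)-analytic as a projection and \(\lambda\)-coanalytic as a \(\lambda\)-union of the \( C_\alpha \), then applies Theorem~\ref{thm:souslin}) and uses the corollary on the standard \(\lambda\)-Borel space \( \p(A) \), whereas you pad the cover with the \(\lambda\)-coanalytic set \( X \setminus \p(A) \) and apply the corollary to \( X \) as literally stated --- a legitimate variant that sidesteps both Souslin's theorem and the extension of Corollary~\ref{cor:novikovseparation1} to standard \(\lambda\)-Borel spaces.
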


\begin{proof}
For each \( \alpha < \lambda \), consider the set \( R_\alpha = A \cap U_\alpha \in \lB(X \times Y, \tau) \).
By Lemma~\ref{lem:setofuniqueness}, the set \( C_\alpha = \{ x \in X \mid \exists! y \, (x,y) \in R_\alpha \} \) is \(\lambda\)-coanalytic. By our hypotheses, we have \( \p(A) = \bigcup_{\alpha < \lambda} C_\alpha \), hence \( \p(A) \) is \(\lambda\)-coanalytic by Corollary~\ref{cor:closurepropertiesofanalytic}; since obviously \( \p(A) \in \lS^1_1(X) \), then \( \p(A) \) is \(\lambda\)-Borel by Theorem~\ref{thm:souslin}. It follows that \( (C_\alpha)_{\alpha < \lambda} \) is a covering of the standard \(\lambda\)-Borel space \( \p(A) \) with \(\lambda\)-coanalytic sets. By Corollary~\ref{cor:novikovseparation1} we can refine such a covering to a weak partition of \( \p(A) \) in \(\lambda\)-Borel sets \( (B_\alpha)_{\alpha < \lambda} \) such that \( B_\alpha \subseteq C_\alpha \). Let \( A_\alpha = \{ (x,y) \in R_\alpha \mid x \in B_\alpha \}  = R_\alpha \cap (B_\alpha \times Y)\subseteq A \), so that \( A_\alpha \in \lB(X \times Y, \tau) \). Since \( B_\alpha \subseteq C_\alpha \), for each \( x \in B_\alpha \) there is a unique \( y \in Y \) such that \( (x,y) \in A_\alpha \): it follows that \( A^* = \bigcup_{\alpha < \lambda} A_ \alpha \) is a \(\lambda\)-Borel uniformization of \( A \), as required. 
\end{proof}

An important special case in which Proposition~\ref{prop:isolatedgivesuniformization} can be applied is when each nonempty vertical section \( A_x \) has an isolated point (with respect to the topology of \( Y \)). 
We are now ready to prove our characterization of the existence of \(\lambda\)-Borel uniformizations of a given \(\lambda\)-Borel set.

\begin{proof}[Proof of Theorem~\ref{thm:changeoftopologyforuniformizations}]
\ref{thm:changeoftopologyforuniformizations-1} \( \Rightarrow \) \ref{thm:changeoftopologyforuniformizations-2}. 
Let \( A^* \) be a \(\lambda\)-Borel uniformization of \( A \). By applying Corollary~\ref{cor:changeoftopology} to the family of \(\lambda\)-Borel sets \( \{ A^* , A \setminus A^* \} \), there is a closed set \( H \subseteq B(\lambda) \) and a continuous \(\lambda\)-Borel isomorphism \( h \colon H \to X \times Y \) such that \( h^{-1}(A^*) \) and \( h^{-1}(A \setminus A^* ) \) are both clopen in \( H \), and hence closed in \( B(\lambda) \). Since \( h(z) \in A \) if and only if \( z \in  h^{-1}(A^*) \cup h^{-1}(A \setminus A^* ) \), it follows that the set \( F \) in condition~\ref{thm:changeoftopologyforuniformizations-2} is closed. Given any \( x \in \p(F) = p(A) \), let \( y \in Y \) be the unique point such that \( (x,y) \in A^* \). Let also \( z \in H \) be the unique point such that \( h(z) = (x,y) \). Then \( z \in F_x \subseteq H \). Since \( h^{-1}(A^*) \) is open relatively to \( H \) and since for all \( z' \in F_x \setminus \{ z \} \) we have \( h(z') \notin A^* \) (because \( A^* \) was a uniformization of \( A \)), we have that \( h^{-1}(A^*) \cap F_x \) is a relatively open set isolating \( z \) in \( F_x \).

\ref{thm:changeoftopologyforuniformizations-2} \( \Rightarrow \) \ref{thm:changeoftopologyforuniformizations-3}.
Let \( h \colon H \to X \times Y \) and \( F \) be as in~\ref{thm:changeoftopologyforuniformizations-2}. Let \( f \colon F \to A \) be defined by \( f(x,z) = h(z) \) for every \( (x,z) \in F \). Using also Theorem~\ref{thm:borelvsgraph}, it is easy to verify that \( f \) is as required.  

\ref{thm:changeoftopologyforuniformizations-2} \( \Rightarrow \) \ref{thm:changeoftopologyforuniformizations-4}.
Let \( h \colon H \to X \times Y \) and \( F \) be as in~\ref{thm:changeoftopologyforuniformizations-2}. Let \( r \colon B(\lambda) \to H \) be a retraction onto \( H \) and \( \p_2 \colon X \times Y \to Y \) be the projection on the second coordinate. Then \( g = \p_2 \circ h \circ r \) and the same \( F \) witness~\ref{thm:changeoftopologyforuniformizations-4}.

\ref{thm:changeoftopologyforuniformizations-2} \( \Rightarrow \) \ref{thm:changeoftopologyforuniformizations-5}.
Let \( h \colon H \to X \times Y \) and \( F \) be as in~\ref{thm:changeoftopologyforuniformizations-2}. Set also
\[ 
F' = \{ (x,z) \in X \times B(\lambda) \mid z \in H \wedge \p(h(z)) = x \}.
 \] 
Then \( F ' \) is closed, \( F \subseteq F' \), and the map \( f' \colon F' \to X \times Y \) sending \( (x,z) \in F' \) to \( h(z) \) is a continuous \(\lambda\)-Borel isomorphism such that for all \( x \in \p(F) = \p(A) \)  it holds \( f' (\{ x \} \times F_x) = \{ x \} \times A_x \), so that in particular \( f'(F) = A \). It easily follows that the push forward \( \tau' \) of the topology of \( F' \) along \( f' \) is as desired.

We are left with proving that each of~\ref{thm:changeoftopologyforuniformizations-3}, \ref{thm:changeoftopologyforuniformizations-4}, and~\ref{thm:changeoftopologyforuniformizations-5} implies~\ref{thm:changeoftopologyforuniformizations-1}.
The implication~\ref{thm:changeoftopologyforuniformizations-5} \( \Rightarrow \) \ref{thm:changeoftopologyforuniformizations-1} directly follows from Proposition~\ref{prop:isolatedgivesuniformization},
once we let \( \{ U_\alpha \mid \alpha < \lambda \} \) be any \(\lambda\)-sized basis for \( \tau' \).
For the remaining implications \ref{thm:changeoftopologyforuniformizations-3} \( \Rightarrow \) \ref{thm:changeoftopologyforuniformizations-1} and \ref{thm:changeoftopologyforuniformizations-4} \( \Rightarrow \) \ref{thm:changeoftopologyforuniformizations-1} we first apply Proposition~\ref{prop:isolatedgivesuniformization} to the set \( F \subseteq X \times B(\lambda) \) with \( \{ U_\alpha \mid \alpha < \lambda \} \) any \(\lambda\)-sized basis for the product topology on \( X \times B(\lambda) \) to obtain a \(\lambda\)-Borel uniformization of \( F^* \). Since both the restriction of the function \( f \) from~\ref{thm:changeoftopologyforuniformizations-3} and the restriction of the function \( \id_X \times \, g \) from~\ref{thm:changeoftopologyforuniformizations-4} to the \(\lambda\)-Borel set \( F^* \) are necessarily injective \(\lambda\)-Borel functions, it follows from Theorem~\ref{thm:injectiveBorelimage} that their range \( A^* \subseteq A \) is \(\lambda\)-Borel, and by construction it is a uniformization of \( A \).
\end{proof}

\subsection{\(\lambda\)-Borel uniformizations for sets with countable sections} \label{subsec:ctblsections}

We next move to \(\lambda\)-Borel sets \( A \subseteq X \times Y \) all of whose vertical sections \( A_x \) are countable. The following result naturally generalizes the Luzin-Novikov's theorem (see~\cite[Theorem 18.10]{Kechris1995}). Our proof is necessarily different from that of~\cite[Theorem 18.10]{Kechris1995} because, as already observed, we lack a Baire category notion matching well with that of  \(\lambda\)-Borel sets. Also, notice that part~\ref{thm:ctblsections-3} is obviously implied by part~\ref{thm:ctblsections-4}, but we do not know how to obtain the latter without first proving the former. 

\begin{theorem} \label{thm:ctblsections}
Assume that \( 2^{< \lambda} = \lambda \). Let \( X \) be a standard \(\lambda\)-Borel space, and let \( Y \) be a \(\lambda\)-Polish space. Let \( A \subseteq X \times Y \) be a \(\lambda\)-Borel set all of whose vertical sections are countable. 
\begin{enumerate-(i)}
\item \label{thm:ctblsections-1}
There is a \(\lambda\)-Borel uniformization  of \( A \), hence in particular \( \p(A) \) is \(\lambda\)-Borel.
\item \label{thm:ctblsections-2}
The map from \( \p(A) \) to \( F(Y) \) sending \( x \in \p(A) \) to \( \mathrm{cl}(A_x) \) is \(\lambda\)-Borel.
\item \label{thm:ctblsections-3}
There is a sequence \( (\varsigma^A_n)_{n \in \omega} \) of \(\lambda\)-Borel functions \( \varsigma^A_n \colon\p(A) \to Y \) such that for all \( x \in \p(A) \) the set \( \{ \varsigma^A_n(x) \mid n \in \omega \} \) is dense in \( A_x \).
\item \label{thm:ctblsections-4}
There is a sequence \( (\varrho^A_n)_{n \in \omega} \) of \(\lambda\)-Borel functions \( \varrho^A_n \colon\p(A) \to Y \) such that \( A_x = \{ \varrho^A_n(x) \mid n \in \omega \}  \) for all \( x \in \p(A) \).
\item \label{thm:ctblsections-5}
The set \( A \) can be written as \( A = \bigcup_{n \in \omega} P_n \), where the sets \( P_n \) are pairwise disjoint \(\lambda\)-Borel graphs of partial (necessarily \(\lambda\)-Borel) functions.
\end{enumerate-(i)}
\end{theorem}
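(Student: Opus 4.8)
The plan is to prove the five statements of Theorem~\ref{thm:ctblsections} together, reducing everything to the case \( Y = B(\lambda) \) and exploiting the general uniformization criterion of Theorem~\ref{thm:changeoftopologyforuniformizations} along with the selection theorem for the Effros \(\lambda\)-Borel space (Theorem~\ref{thm:selectionforF(X)}). First I would observe that by Corollary~\ref{cor:changeoftopology} (applied to the single \(\lambda\)-Borel set \( A \)) together with Proposition~\ref{prop:surjection}, we may assume without loss of generality that \( X \subseteq B(\lambda) \) and that \( A \) is a closed subset of \( X \times B(\lambda) \); indeed, refining the topology so that \( A \) becomes clopen and then representing \( Y \) as a continuous injective image of a closed subset of \( B(\lambda) \) via Proposition~\ref{prop:surjection} transfers the problem to the zero-dimensional setting, with countability of the vertical sections preserved. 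This is the key reduction that replaces the Baire-category machinery of the classical proof~\cite[Theorem 18.10]{Kechris1995}.

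The heart of the argument is part~\ref{thm:ctblsections-1}. With \( A \subseteq X \times B(\lambda) \) closed, write \( A = [T] \) for an \(\omega\)-tree \( T \subseteq \pre{<\omega}{(\lambda \times \lambda)} \). The crucial point is that since every section \( A_x \) is countable, hence in particular has empty \(\lambda\)-perfect kernel, no nonempty section can contain a copy of \( B(\lambda) \): by Corollary~\ref{cor:perfect1}, each nonempty \( A_x \) is well-orderable of size at most \(\lambda\), and being closed and countable it must have an isolated point. This is exactly the hypothesis of the criterion~\ref{thm:changeoftopologyforuniformizations-5} \( \Leftrightarrow \) \ref{thm:changeoftopologyforuniformizations-1} in Theorem~\ref{thm:changeoftopologyforuniformizations}: every nonempty vertical section of \( A \) has an isolated point. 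Therefore \( A \) admits a \(\lambda\)-Borel uniformization, and as noted in the discussion preceding the theorem, \( \p(A) \) is then \(\lambda\)-Borel by Theorem~\ref{thm:injectiveBorelimage} (the projection restricted to the uniformizing graph is injective). I expect this verification that countable closed sections have isolated points, and its clean packaging into the existing criterion, to be the main conceptual step; the classical argument instead establishes uniformization directly by a category-theoretic selection, which is unavailable here.

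For part~\ref{thm:ctblsections-2}, I would show that the map \( x \mapsto \mathrm{cl}(A_x) = A_x \) into \( F(B(\lambda)) \) is \(\lambda\)-Borel by checking measurability against the generating sets \( B_{\Nbhd_s}(B(\lambda)) \) of the Effros \(\lambda\)-Borel structure (recall Theorem~\ref{thm:EffrosBorel}): one has \( A_x \cap \Nbhd_s \neq \emptyset \) if and only if \( (x,z) \in A \) for some \( z \in \Nbhd_s \), and the latter defines a \(\lambda\)-analytic—hence, by closedness of \( A \) and Theorem~\ref{thm:souslin}, \(\lambda\)-Borel—subset of \( \p(A) \). Granting this, parts~\ref{thm:ctblsections-3} and~\ref{thm:ctblsections-4} follow by composing with the selection functions \( (\sigma^{B(\lambda)}_\alpha)_{\alpha<\lambda} \) from Theorem~\ref{thm:selectionforF(X)}: setting \( \varsigma^A_n(x) = \sigma^{B(\lambda)}_n(A_x) \) (after re-indexing, using that only countably many indices are needed to hit a dense subset of a countable closed set) yields a dense sequence, giving~\ref{thm:ctblsections-3}; since each \( A_x \) is closed, a dense sequence in it enumerates a dense subset whose closure is \( A_x \), and because \( A_x \) is countable one upgrades density to a full enumeration \( A_x = \{ \varrho^A_n(x) \mid n \in \omega \} \) by a standard bookkeeping over the countably many basic clopen sets, establishing~\ref{thm:ctblsections-4}. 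Finally, part~\ref{thm:ctblsections-5} is derived from~\ref{thm:ctblsections-4} by the familiar disjointification: set \( P_n = \{ (x, \varrho^A_n(x)) \mid x \in \p(A) \wedge \forall m < n \, (\varrho^A_m(x) \neq \varrho^A_n(x)) \} \), which are pairwise disjoint \(\lambda\)-Borel graphs by Proposition~\ref{prop:borelvsgraph} and Theorem~\ref{thm:borelvsgraph}, with union \( A \). The main obstacle throughout is ensuring that the passage from dense to full enumeration in~\ref{thm:ctblsections-4} is carried out \(\lambda\)-Borel-measurably without invoking category arguments; I would handle it by indexing over the clopen sets \( \Nbhd_s(B(\lambda)) \) and selecting, for each \( s \), the value \( \sigma^{B(\lambda)}_\alpha(A_x \cap \Nbhd_s) \) whenever the section meets \( \Nbhd_s \), exactly as in the proof of Theorem~\ref{thm:selectionforF(X)}.
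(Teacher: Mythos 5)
Your part~\ref{thm:ctblsections-1} has the right core idea---after a change of topology the sections become countable completely metrizable spaces, hence have isolated points, and the criterion of Theorem~\ref{thm:changeoftopologyforuniformizations} applies---and this is exactly how the paper proves it. But your opening reduction is not valid as stated: Corollary~\ref{cor:changeoftopology} produces a \(\lambda\)-Borel isomorphism \( f \colon F \to X \times Y \) that scrambles the fibration over \( X \), so it preserves neither vertical sections nor uniformizations; a \emph{fiber-preserving} reduction to a closed \( A \subseteq X \times B(\lambda) \) is precisely condition~\ref{thm:changeoftopologyforuniformizations-3} of Theorem~\ref{thm:changeoftopologyforuniformizations}, which is equivalent to condition~\ref{thm:changeoftopologyforuniformizations-1}, so assuming it before proving~\ref{thm:ctblsections-1} is circular (the paper performs that reduction only in parts~\ref{thm:ctblsections-3} and~\ref{thm:ctblsections-4}, after~\ref{thm:ctblsections-1} is in hand). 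For~\ref{thm:ctblsections-1} itself this is harmless, since condition~\ref{thm:changeoftopologyforuniformizations-5} is all you need: refine the topology of the product via Corollary~\ref{cor:changeoftopologyclopen}; each \( \{x\} \times Y \) stays closed, so each \( \{x\} \times A_x \) is closed in the refined topology. Note also that the isolated-point fact is the classical Baire-category fact about countable Polish spaces, not Corollary~\ref{cor:perfect1}, which concerns \(\lambda\)-isolated points and says nothing here. In~\ref{thm:ctblsections-2}, the justification ``\(\lambda\)-analytic, hence by closedness of \( A \) and Theorem~\ref{thm:souslin} \(\lambda\)-Borel'' is a non sequitur: projections of closed sets are properly \(\lambda\)-analytic in general, and Souslin's theorem needs the complement to be \(\lambda\)-analytic too. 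The correct fix is the paper's: \( A \cap (X \times U) \) has countable sections, so its projection is \(\lambda\)-Borel by part~\ref{thm:ctblsections-1}.

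The genuine gaps are in parts~\ref{thm:ctblsections-3} and~\ref{thm:ctblsections-4}. For~\ref{thm:ctblsections-3}, setting \( \varsigma^A_n(x) = \sigma^{B(\lambda)}_n(A_x) \) for \( n \in \omega \) fails: Theorem~\ref{thm:selectionforF(X)} only guarantees that the \emph{full} \(\lambda\)-indexed family \( \{ \sigma^{B(\lambda)}_\alpha(A_x) \mid \alpha < \lambda \} \) is dense, and which countably many indices suffice depends on \( x \). Your proposed remedy---indexing over the clopen sets \( \Nbhd_s \)---produces \(\lambda\)-many functions, not countably many; the entire difficulty of~\ref{thm:ctblsections-3} is to enumerate, \(\lambda\)-Borel-measurably in \( x \), the countably many \( s \) with \( A_x \cap \Nbhd_s \neq \emptyset \). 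The paper does this by constructing a \(\lambda\)-Borel function \( g \colon \p(A) \to B(\lambda) \) whose value at \( x \) lists, in increasing order (of some order type \( \beta < \omega_1 \), coded via fixed well-orders \( <_\beta \) of \( \omega \)), the ordinals \( \sigma(s) \) with \( A_x \cap \Nbhd_s \neq \emptyset \), and then setting \( \varsigma^A_n(x) = f_{g(x)(n)}(x) \), where \( f_\alpha \) is a uniformizing function (from part~\ref{thm:ctblsections-1}) for \( A \cap (X \times \Nbhd_{\sigma^{-1}(\alpha)}) \); no analogue of this construction appears in your sketch. Similarly for~\ref{thm:ctblsections-4}: ``bookkeeping'' cannot upgrade a dense sequence to a full enumeration, because a dense subset of a countable closed set can omit its limit points. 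The paper's argument forms all \( 2^{\aleph_0} \)-many limit functions \( \vartheta^A_\alpha(x) = \lim_{n} \varsigma^A_{h(\alpha)(n)}(x) \) (crucially using \( 2^{\aleph_0} < \lambda \)), observes that for each \( x \) some countable pattern \( e \colon \omega \to 2^{\aleph_0} \) already enumerates \( A_x \), and then partitions \( \p(A) \) into \(\lambda\)-Borel pieces \( B_e \) on which a single pattern works. Without these two constructions, your proposal does not prove~\ref{thm:ctblsections-3} or~\ref{thm:ctblsections-4} (and hence not~\ref{thm:ctblsections-5} either, since it relies on~\ref{thm:ctblsections-4}).
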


\begin{proof}
Without loss of generality, we may assume that \( X \) is \(\lambda\)-Polish too.

\ref{thm:ctblsections-1}
It is enought to show that~\ref{thm:changeoftopologyforuniformizations-5} of Theorem~\ref{thm:changeoftopologyforuniformizations} is satisfied.
Apply Corollary~\ref{cor:changeoftopologyclopen} within the \(\lambda\)-Polish space \( X \times Y \) to turn \( A \) into a \( \tau' \)-closed set, for a suitable \(\lambda\)-Polish topology \( \tau' \). Each nonempty vertical section \( A_x \), when equipped with the topology induced by \( \tau' \) via the identification of \( A_x \) with \( \{ x \} \times A_x \), is a countable (classical!) Polish spaces because \( \{ x \} \times A_x \) is \( \tau' \)-closed, and hence it surely has isolated points. 
%

\ref{thm:ctblsections-2}
For any open \( U \subseteq Y \) and \( x \in X \) we have 
\[ 
\mathrm{cl}(A_x) \cap U \neq \emptyset \iff A_x \cap U \neq \emptyset \iff x \in \p(A \cap (X \times U)).
 \] 
Since \( A \cap (X \times U) \) is a \(\lambda\)-Borel set with countable vertical sections, it follows that \( \p(A \cap (X \times U)) \) is \(\lambda\)-Borel by part~\ref{thm:ctblsections-1}, hence we are done.

\ref{thm:ctblsections-3}
We first observe that it is enough to consider the case where \( A \) is a (closed) subset of \( X \times B(\lambda) \). 
Since we already proved~\ref{thm:ctblsections-1}, we can use~\ref{thm:changeoftopologyforuniformizations-3} of Theorem~\ref{thm:changeoftopologyforuniformizations} to get a closed set \( F \subseteq X \times B(\lambda) \) and a continuous \(\lambda\)-Borel isomorphism \( f \colon F \to A \) such that \( \p(w) = \p(f(w)) \) for every \( w \in F \), so that in particular \( \p(F) = \p(A) \) and \( F \) has countable vertical sections too. If \( (\varsigma^F_n)_{n \in \omega } \) are as in~\ref{thm:ctblsections-3} with respect to the set \( F \), then it is enough to let \( \varsigma^A_n(x) = \p_2(f(x,\varsigma^F_n(x))) \), 
where \( \p_2 \) is again the projection on the second coordinate, to get the desired result.


So without loss of generality we may assume that \( Y = B(\lambda) \) and \( A \subseteq X \times B(\lambda) \) be closed.
Fix any bijection \( \sigma \colon \pre{< \omega}{\lambda} \to \lambda \) and for each infinite \( \beta < \omega_1 \) fix a strict well-ordering \( <_\beta \) of \(\omega\) of order type \(\beta\).
Let \( G \subseteq X \times B(\lambda) \) be the set of pairs \( (x,z) \in X \times B(\lambda) \) such that \( x \in \p(A) \), 
\[ 
\forall n,m \in \omega \, (n <_\beta m \iff z(n) < z(m)) 
\] 
for some (necessarily unique) \( \beta < \omega_1 \), and for all \( s \in \pre{< \omega}{\lambda} \)
\[ 
A_x \cap \Nbhd_s \neq \emptyset \iff \exists n \in \omega \, (z(n) = \sigma(s)).
 \] 
Clearly \( G \) is a \(\lambda\)-Borel set by parts~\ref{thm:ctblsections-1} and~\ref{thm:ctblsections-2} and \( \omega_1 \leq \lambda \) (notice that \( A_x = \mathrm{cl}(A_x) \) since \( A \) is closed). Moreover,
since each \( A_x \) is countable then for every \( x \in \p(A) \) there are exactly \( \aleph_0 \)-many \( \alpha < \lambda \) such that \( A_x \cap \Nbhd_{\sigma^{-1}(\alpha)} \neq \emptyset \), hence the set of such ordinals \(\alpha\) will be well-ordered by the ordering of \( \lambda \) in order type \( \beta \) for a unique \( \omega \leq \beta < \omega_1 \). It follows that there is a unique \( z \in B(\lambda) \) such that \( (x,z) \in G \). Thus \( G \) is the graph of a \(\lambda\)-Borel function \( g \colon \p(A) \to B(\lambda) \).
For each \( \alpha < \lambda \) let \( f_\alpha \colon \p(A \cap (X \times \Nbhd_{\sigma^{-1}(\alpha)})) \to B(\lambda)\) be a \(\lambda\)-Borel uniformizing function for the \(\lambda\)-Borel set \(  A \cap (X \times \Nbhd_{\sigma^{-1}(\alpha)}) \), which exists by part~\ref{thm:ctblsections-1} because \( A \cap (X \times \Nbhd_{\sigma^{-1}(\alpha)}) \subseteq A \) is \(\lambda\)-Borel and has countable vertical sections. Define \( \varsigma^A_n \) by setting \( \varsigma_n^A(x) = f_{ g(x)(n)}(x) \) for each \( x \in \p(A) \).
By definition of \( G \), for each \( x \in \p(A) \) the ordinal \( \alpha = g(x)(n) \) is such that \( A_x \cap \Nbhd_{\sigma^{-1}(\alpha)} \neq \emptyset \), thus \( f_\alpha \) is defined on \( x \) and \( \varsigma^A_n(x) = f_\alpha(x) \in  A_x \cap \Nbhd_{\sigma^{-1}(\alpha)}  \). This shows in particular that each map \( \varsigma^A_n \) is defined on the entire \( \p(A) \). To see that it is \(\lambda\)-Borel, it is enough to observe that for every \( s \in \pre{<\omega}{\lambda} \) and \( x \in \p(A) \) we have \( \varsigma^A_n(x) \in \Nbhd_s \) if and only if \( \exists \alpha < \lambda \, (g(x)(n) = \alpha \wedge f_\alpha(x) \in \Nbhd_s ) \), and that the latter condition is \(\lambda\)-Borel because the functions \( g \) and \( f_\alpha \) are all \(\lambda\)-Borel. Finally, to show that \( \{ \varsigma^A_n(x) \mid n \in \omega \} \) is dense in \( A _x \) notice that if \( A_x \cap \Nbhd_s \neq \emptyset \), then by construction there is \( n \in \omega \) such that \( g(x)(n) = \sigma(s) \), and hence \( \varsigma^A_n(x) \in A_x \cap \Nbhd_s  \).

\ref{thm:ctblsections-4}
Arguing as in the first paragraph of the proof of~\ref{thm:ctblsections-3},
without loss of generality we can again assume that \( A \) be closed. 
(Actually, having that all vertical sections \( A_x \) are closed would be enough for the ensuing argument.)
Let \( (\varsigma^A_n)_{n \in \omega} \) be as in part~\ref{thm:ctblsections-3}, and fix a bijection \( h \colon 2^{\aleph_0} \to \pre{\omega}{\omega} \).
Given \( \alpha < 2^{\aleph_0} \) let \(\vartheta^A_\alpha \colon \p(A) \to Y \) be defined by setting for \( x \in \p(A) \)
\[ 
\vartheta^A_\alpha(x) = 
\begin{cases}
\lim_{n \to \infty} \varsigma^A_{h(\alpha)(n)}(x) & \text{if } (\varsigma^A_{h(\alpha)(n)}(x))_{n \in \omega} \text{ is a Cauchy sequence} \\
\varsigma^A_0(x) & \text{otherwise}.
\end{cases}
 \] 
Then each \( \vartheta^A_\alpha \) is \(\lambda\)-Borel and for each \( x \in \p(A) \)
\[ 
A_x = \{ \vartheta^A_\alpha(x) \mid \alpha < 2^{\aleph_0} \}
 \] 
because \( A_x \) is closed and \( \{ \varsigma^A_n(x) \mid n  \in \omega \} \) is dense in it.
Now for every  \( e \colon \omega \to 2^{\aleph_0} \) let \( X_e \) be the set of those \( x \in \p(A) \) such that for every \( \alpha < 2^{\aleph_0} \) there is \( n \in \omega \) such that \( \vartheta^A_\alpha(x) = \vartheta^A_{e(n)}(x) \). Then \( X_e \) is \(\lambda\)-Borel 

because \(  2^{\aleph_0} < \lambda \), all the maps \( \vartheta^A_\alpha \) are \(\lambda\)-Borel, and \( \p(A) \) is \(\lambda\)-Borel by part~\ref{thm:ctblsections-1}. Moreover, \( \p(A) = \bigcup \{ X_e \mid e \in \pre{\omega}{(2^{\aleph_0})}  \} \) because each \( A_x \) is countable, and for \( x \in X_e \) we have \( A_x = \{ \vartheta^A_{e(n)}(x) \mid n \in \omega \} \). Since there are less than \( \lambda \)-many functions \( e \colon \omega \to 2^{\aleph_0} \) by our hypothesis on \(\lambda\), we can easily refine the covering \( (X_e)_{e \in \pre{\omega}{(2^{\aleph_0})}} \) of \( \p(A) \) to a weak partition \( (B_e)_{e \in \pre{\omega}{(2^{\aleph_0})}} \) consisting of \(\lambda\)-Borel sets  satisfying \( B_e \subseteq X_e \). Then it is enough to set, for each \( x \in \p(A) \),  \( \varrho^A_n(x) = \vartheta^A_{e(n)}(x)\) where \( e \in \pre{\omega}{(2^{\aleph_0})} \) is unique such that \( x \in B_e \).

\ref{thm:ctblsections-5}
Let \( (\varrho^A_n)_{n \in \omega} \) be as in part~\ref{thm:ctblsections-4} and set 
\[ 
P_n = \mathrm{graph}(\varrho^A_n) \setminus \bigcup_{m <n} \mathrm{graph}(\varrho^A_m). \qedhere
 \] 
\end{proof}

Recall that an equivalence relation \( E \) is called \markdef{countable} if all its equivalence classes have size at most \( \aleph_0 \). In particular, if \( E \) is a countable equivalence relation on a \(\lambda\)-Polish space, then all its \( E \)-equivalence classes are at most \( F_\sigma \). It follows that if such a class \( [x]_E \) is also \( G_\delta \), as it happens in~\ref{prop:ctblBorelER-3} of Proposition~\ref{prop:ctblBorelER}, then it is a transfinite countable difference of open subsets of the (classical!) Polish space \( \mathrm{cl}([x]_E) \) by~\cite[Theorem 22.27]{Kechris1995}, and hence the same is true with respect to the whole space \( X \).

\begin{proposition} \label{prop:ctblBorelER} 
Assume that \( 2^{< \lambda} = \lambda \), and let \( E \) be a countable \(\lambda\)-Borel equivalence relation on a standard \(\lambda\)-Borel space \( X \). Then the following are equivalent:
\begin{enumerate-(1)}
\item \label{prop:ctblBorelER-1}
\( E \) is potentially closed;
\item \label{prop:ctblBorelER-2}
there is a \(\lambda\)-Polish topology on \( X \) generating its \(\lambda\)-Borel structure such that
all \( E \) equivalence classes are closed;
\item \label{prop:ctblBorelER-3}
there is a \(\lambda\)-Polish topology on \( X \) generating its \(\lambda\)-Borel structure such that
all \( E \) equivalence classes are \( G_\delta \);
\item \label{prop:ctblBorelER-4}
\( E \) is smooth;
\item \label{prop:ctblBorelER-5}
\( E \) has a \(\lambda\)-Borel selector (equivalently: a \(\lambda\)-Borel transversal).
\end{enumerate-(1)}
\end{proposition}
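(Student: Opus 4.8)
The plan is to prove Proposition~\ref{prop:ctblBorelER} by establishing a cycle of implications, exploiting the countability hypothesis on \( E \) to invoke the uniformization results of Theorem~\ref{thm:ctblsections} at the key points. Several of the implications are already available from the general theory developed earlier: \ref{prop:ctblBorelER-4} \( \Rightarrow \) \ref{prop:ctblBorelER-1} holds for arbitrary smooth equivalence relations (this is the ``Conversely'' paragraph following Proposition~\ref{prop:smoothnessunderappropriatehypotheses}), and \ref{prop:ctblBorelER-1} \( \Leftrightarrow \) \ref{prop:ctblBorelER-2} is exactly Corollary~\ref{cor:closedclasses-closedrelation}. Moreover \ref{prop:ctblBorelER-5} \( \Rightarrow \) \ref{prop:ctblBorelER-4} is immediate, since a \(\lambda\)-Borel selector \( s \colon X \to X \) itself witnesses smoothness (take \( Y = X \) and \( f = s \), as \( s(x) = s(y) \iff x \mathrel{E} y \) by the definition of selector). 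The trivial implications \ref{prop:ctblBorelER-2} \( \Rightarrow \) \ref{prop:ctblBorelER-3} (closed sets are \( G_\delta \) by Fact~\ref{fct:booleancombinationsareFsigma}) close part of the loop as well. So the arrangement I would use is \ref{prop:ctblBorelER-1} \( \Rightarrow \) \ref{prop:ctblBorelER-2} \( \Rightarrow \) \ref{prop:ctblBorelER-3} \( \Rightarrow \) \ref{prop:ctblBorelER-5} \( \Rightarrow \) \ref{prop:ctblBorelER-4} \( \Rightarrow \) \ref{prop:ctblBorelER-1}, with the only genuinely new work being the arrow \ref{prop:ctblBorelER-3} \( \Rightarrow \) \ref{prop:ctblBorelER-5}.

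For the main implication \ref{prop:ctblBorelER-3} \( \Rightarrow \) \ref{prop:ctblBorelER-5}, I would fix a \(\lambda\)-Polish topology \(\tau\) on \( X \) generating its \(\lambda\)-Borel structure and making all \( E \)-classes \( G_\delta \). The strategy is to produce a \(\lambda\)-Borel transversal directly. Since \( E \) is countable and \(\lambda\)-Borel, it has countable vertical sections, so Theorem~\ref{thm:ctblsections}\ref{thm:ctblsections-4} applied to \( A = E \subseteq X \times X \) (with \( \p(E) = X \)) yields a sequence \( (\varrho^E_n)_{n \in \omega} \) of \(\lambda\)-Borel functions \( \varrho^E_n \colon X \to X \) such that \( [x]_E = \{ \varrho^E_n(x) \mid n \in \omega \} \) for every \( x \in X \). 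The idea is then to select from each class a canonical representative using these enumerating functions. Concretely, I would fix a \(\lambda\)-Polish topology as above and use the fact that each \( G_\delta \) class \( [x]_E \) is Polish in the relative topology, hence has a canonically definable point (for instance, when \( X \) is realized as a closed subset of \( B(\lambda) \) via Proposition~\ref{prop:dim(X)=0} and Corollary~\ref{cor:changeoftopologyclopen}, each class, being \( G_\delta \), is homeomorphic to a closed subset of \( B(\lambda) \), from which one can extract the leftmost branch in a \(\lambda\)-Borel-in-\( x \) manner). Defining \( s(x) \) to be this canonical point of \( [x]_E \), I would check that \( s \) is \(\lambda\)-Borel by expressing the condition \( s(x) \in \Nbhd_t \) in terms of the \(\lambda\)-Borel data \( (\varrho^E_n) \) and the \(\lambda\)-Borel map \( x \mapsto \mathrm{cl}([x]_E) \) into the Effros space (available by Theorem~\ref{thm:ctblsections}\ref{thm:ctblsections-2} and Theorem~\ref{thm:EffrosBorel}).

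The hard part will be making the selection of a canonical representative from each \( G_\delta \) class genuinely \(\lambda\)-Borel while respecting the classical structure theory invoked in the remark preceding the proposition. That remark points out that a countable \( G_\delta \) class is a transfinite countable difference of open sets of the classical Polish space \( \mathrm{cl}([x]_E) \) via \cite[Theorem 22.27]{Kechris1995}; the natural route is to mimic the classical Feldman-Moore/selector machinery by stratifying each class according to its Cantor-Bendixson-type rank within its closure and picking, uniformly in \( x \), the least point at the appropriate stratum. The delicate point is uniformity: one must verify that the rank stratification and the induced choice of representative vary \(\lambda\)-Borel-measurably in the parameter \( x \), which requires combining the \(\lambda\)-Borelness of \( x \mapsto \mathrm{cl}([x]_E) \) with the \(\lambda\)-Borelness of the enumerating functions \( \varrho^E_n \). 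An alternative, possibly cleaner, route that sidesteps the rank analysis is to apply Theorem~\ref{thm:selector} directly: with the topology from~\ref{prop:ctblBorelER-3} in hand I would argue that the saturation \( [U]_E \) of any open \( U \) is \(\lambda\)-Borel (using \( [U]_E = \bigcup_n \{ x \mid \varrho^E_n(x) \in U \} \), which is \(\lambda\)-Borel since each \( \varrho^E_n \) is), and that classes are closed after a further change of topology justified by Corollary~\ref{cor:closedclasses-closedrelation}, thereby reducing to the hypotheses of Theorem~\ref{thm:selector} to obtain the selector. I expect this second route to be the one that goes through most smoothly, and I would present it as the principal argument, leaving the rank-based picture as motivation.
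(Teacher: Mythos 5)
Your reduction of the proposition to the single new arrow \ref{prop:ctblBorelER-3}~\( \Rightarrow \)~\ref{prop:ctblBorelER-5} is structurally sound, and the peripheral implications you cite (\ref{prop:ctblBorelER-4}~\( \Rightarrow \)~\ref{prop:ctblBorelER-1}, \ref{prop:ctblBorelER-1}~\( \Leftrightarrow \)~\ref{prop:ctblBorelER-2}, \ref{prop:ctblBorelER-2}~\( \Rightarrow \)~\ref{prop:ctblBorelER-3}, \ref{prop:ctblBorelER-5}~\( \Rightarrow \)~\ref{prop:ctblBorelER-4}) are all correct and agree with the paper. But the one arrow that carries all the content is not established by either of your two routes, so the proposal has a genuine gap.

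The ``cleaner'' route you would present as the principal argument is circular. Theorem~\ref{thm:selector} needs \emph{two} hypotheses: that saturations of open sets are \(\lambda\)-Borel (which you verify correctly via \( [U]_E=\bigcup_{n}(\varrho^E_n)^{-1}(U) \)), \emph{and} that all classes are closed. To supply the second you appeal to Corollary~\ref{cor:closedclasses-closedrelation}, but that corollary only converts condition~\ref{prop:ctblBorelER-1} and condition~\ref{prop:ctblBorelER-2} into one another; it cannot manufacture a topology with closed classes out of condition~\ref{prop:ctblBorelER-3}, because ``\( G_\delta \) classes can be made closed'' is precisely the implication \ref{prop:ctblBorelER-3}~\( \Rightarrow \)~\ref{prop:ctblBorelER-2} you are in the middle of proving. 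Your first route has a defect that is worse than the uniformity issue you flag: a \( G_\delta \) class is only \emph{homeomorphic} to a closed subset of \( B(\lambda) \), not equal to one, so ``leftmost branch'' is undefined without a non-canonical choice of homeomorphism, and the leftmost branch of \( \mathrm{cl}([x]_E) \) may fall outside \( [x]_E \). The missing idea --- and the way the paper connects condition~\ref{prop:ctblBorelER-3} to the others --- is a Baire-category argument: by Theorem~\ref{thm:ctblsections}\ref{thm:ctblsections-2} applied to \( A = E \), the map \( x \mapsto \mathrm{cl}([x]_E) \) is \(\lambda\)-Borel into \( F(X) \), and if the classes are \( G_\delta \) and \( \mathrm{cl}([x]_E) = \mathrm{cl}([y]_E) \), then \( [x]_E \) and \( [y]_E \) are dense \( G_\delta \) subsets of the \(\lambda\)-Polish space \( \mathrm{cl}([x]_E) \), hence they intersect, hence \( x \mathrel{E} y \) (this is the argument of Proposition~\ref{prop:smoothnessunderappropriatehypotheses}). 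That yields \ref{prop:ctblBorelER-3}~\( \Rightarrow \)~\ref{prop:ctblBorelER-4} directly; then \ref{prop:ctblBorelER-4}~\( \Rightarrow \)~\ref{prop:ctblBorelER-1}~\( \Leftrightarrow \)~\ref{prop:ctblBorelER-2} puts closed classes in hand, and the selector for \ref{prop:ctblBorelER-5} is simply \( x \mapsto \sigma^X_0([x]_E) \), composing \( x \mapsto [x]_E = \mathrm{cl}([x]_E) \) with the map \( \sigma^X_0 \) of Theorem~\ref{thm:selectionforF(X)} --- no rank analysis and no appeal to Theorem~\ref{thm:selector} is needed.
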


In particular, if \( 2^{< \lambda} = \lambda \) and \( E \) is a countable \(\lambda\)-Borel equivalence relation on a standard \(\lambda\)-Borel space \( X \) such that all its equivalence classes are finite (and hence closed), then \( E \) has a \(\lambda\)-Borel selector.

\begin{proof}
Conditions~\ref{prop:ctblBorelER-1} and~\ref{prop:ctblBorelER-2} are equivalent by Corollary~\ref{cor:closedclasses-closedrelation}, while \ref{prop:ctblBorelER-2}~\( \Rightarrow \)~\ref{prop:ctblBorelER-3} is obvious. The implication \ref{prop:ctblBorelER-3}~\( \Rightarrow \)~\ref{prop:ctblBorelER-4} can be proved as in Proposition~\ref{prop:smoothnessunderappropriatehypotheses}, once we notice that we already know that the map \( x \mapsto \mathrm{cl}([x]_E) \) is \(\lambda\)-Borel because we can apply Theorem~\ref{thm:ctblsections}\ref{thm:ctblsections-2} with \( A = E \). For the implication \ref{prop:ctblBorelER-2}~\( \Rightarrow \)~\ref{prop:ctblBorelER-5}, we use the same observation and compose the \(\lambda\)-Borel map \( x \mapsto \mathrm{cl}([x]_E) = [x]_E \) with the \(\lambda\)-Borel function \( \sigma^X_0 \colon F(X) \to X \) from Theorem~\ref{thm:selectionforF(X)} to get the desired \(\lambda\)-Borel selector. Finally, \ref{prop:ctblBorelER-5}~\( \Rightarrow \)~\ref{prop:ctblBorelER-4} is obvious, while \ref{prop:ctblBorelER-4}~\( \Rightarrow \)~\ref{prop:ctblBorelER-1} follows from the argument at the end of Subsection~\ref{subsec:Novikov}.
\end{proof}

%

Another consequence of Theorem~\ref{thm:ctblsections}\ref{thm:ctblsections-5} is the analogue of the celebrated Feldman-Moore theorem. To deduce it from the mentioned result, we cannot simply adapt to our context the standard argument from e.g.\ \cite[Theorem 7.1.4]{Gao2009}, as our spaces have uncountable weight. To overcome this difficulty, we will exploit the following observation, which can be used as an alternative also in the classical setup \( \lambda = \omega \).

\begin{lemma} \label{lem:partial bijections}
Let \( X \) be a \(\lambda\)-Polish space. If \( Q \in \lB(X \times X) \) is the graph of an injective partial function \( f \) from \( X \) into itself, then \( Q \) can be written as \( Q = \bigcup_{k \in \omega} R_k \), where the sets \( R_k \) are pairwise disjoint, \(\lambda\)-Borel, and such that the two projections \( \p(R_k) \) and \( p_2(R_k) \) of \( R_k \) are either disjoint or equal.
\end{lemma}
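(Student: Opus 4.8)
The plan is to decompose the directed graph $Q=\operatorname{graph}(f)$ according to the dynamics of the injective partial map $f$, so that each resulting $\lambda$-Borel piece is a restriction of $f$ whose domain and range are forced to be disjoint (or, in one exceptional case, equal). First I would record that all the relevant gadgets are $\lambda$-Borel. Since $\p\restriction Q$ and $p_2\restriction Q$ are injective (the former because $Q$ is a graph, the latter because $f$ is injective), Theorem~\ref{thm:injectiveBorelimage} gives that $\operatorname{dom}(f)=\p(Q)$ and $\operatorname{ran}(f)=p_2(Q)$ are $\lambda$-Borel; the same theorem applied inductively to the injective projections defining the compositions shows that each $\operatorname{graph}(f^{n})$ is $\lambda$-Borel, and hence so are the periodic set $P=\{x : f^{n}(x)=x \text{ for some } n\ge 1\}$ and the backward- and forward-reach functions $h^-(x)=\sup\{n : f^{-n}(x)\text{ is defined}\}$ and $h^+(x)=\sup\{n : f^{n}(x)\in\operatorname{dom}(f)\}$ (with values in $\omega\cup\{\infty\}$), whose level sets are $\lambda$-Borel domains/ranges of iterates of $f$.

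Because $f$ sends periodic points to periodic points and changes each reach function by exactly one along any edge $x\to f(x)$, the edge set $Q$ partitions, without any choice, into the four $\lambda$-Borel regimes $Q\cap(P\times P)$, $\{h^-<\infty\}$, $\{h^-=\infty,\ h^+<\infty\}$, and $L=\{h^-=h^+=\infty\}\setminus P$, and these are preserved edgewise. On the periodic part $f$ restricts to a bijection of $P$ onto itself, giving a single piece with $\p=p_2=P$ — the one place where the \emph{equal} alternative is used. On $\{h^-<\infty\}$ the value $h^-$ strictly increases by one along every edge, so $R_{\mathrm{ev}}=\{(x,f(x)) : h^-(x)\text{ even and finite}\}$ and its odd analogue are $\lambda$-Borel with $\p\subseteq\{h^-\text{ even}\}$, $p_2\subseteq\{h^-\text{ odd}\}$ (and conversely), hence with disjoint projections. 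Symmetrically, on $\{h^-=\infty,\ h^+<\infty\}$ (backward rays ending at a sink) the parity of $h^+$ splits the edges into two pieces with disjoint projections.

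After removing these, the only surviving edges lie on the doubly infinite aperiodic orbits forming $L$, whose connectedness equivalence relation is a countable aperiodic $\lambda$-Borel equivalence relation all of whose classes are order-isomorphic to $\mathbb{Z}$. Here no reach function is finite and there is no canonical base point, so the naive parity colouring is unavailable; this is the crux of the argument. I would invoke the $\lambda$-Borel marker lemma (Lemma~\ref{lem:markerlemma}): the standard Slaman--Steel construction produces a $\lambda$-Borel complete section $M\subseteq L$ that is $2$-separated (no two $M$-points are $f$-adjacent) and unbounded in the backward direction along every orbit. Then $d(x)=\min\{n\ge 0 : f^{-n}(x)\in M\}$ is a well-defined, finite, $\lambda$-Borel function resetting to $0$ exactly at the markers. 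The edges with $f(x)\in M$ have range contained in $M$ and domain contained in $f^{-1}(M)$, which is disjoint from $M$ by $2$-separation, so they form one admissible disjoint piece; all other edges increase $d$ by one and split into two disjoint pieces by the parity of $d$.

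Collecting the pieces from the four regimes yields the desired countable family of pairwise disjoint $\lambda$-Borel sets $R_k$ covering $Q$, each with $\p(R_k)$ and $p_2(R_k)$ disjoint, except the periodic piece where they coincide. The main obstacle is precisely the doubly infinite case: checking that Lemma~\ref{lem:markerlemma} supplies a $\lambda$-Borel marker set with the required $2$-separation and backward-unboundedness, and that the induced $d$ is $\lambda$-Borel. Everything else reduces to the bookkeeping of the reach functions, which is routine once their $\lambda$-Borelness has been secured through Theorem~\ref{thm:injectiveBorelimage}.
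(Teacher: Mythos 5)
Your decomposition into periodic points, points of finite backward reach, points of finite forward reach, and doubly infinite aperiodic orbits is sound, and on the first three regimes the parity arguments do work: all the sets involved ($\mathrm{graph}(f^n)$, $\dom(f^n)$, $\ran(f^n)$, the level sets of $h^\pm$, the periodic part $P$) are $\lambda$-Borel by repeated use of Theorem~\ref{thm:injectiveBorelimage}, exactly as you say. The gap is in the fourth regime, and it is twofold. First, the appeal to Lemma~\ref{lem:markerlemma} is circular in the paper's development: that lemma is deduced from the generalized Feldman--Moore theorem (Theorem~\ref{thm:feldman-moore}), whose proof rests precisely on the lemma you are trying to prove. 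Second, even granting it, Lemma~\ref{lem:markerlemma} does not supply what your argument uses: it produces a \emph{decreasing sequence} of complete sections $S_0 \supseteq S_1 \supseteq \cdots$ with $[S_n]_E = B$ and $\bigcap_{n\in\omega} S_n = \emptyset$, not a single complete section $M$ that is $2$-separated and coinitial in the backward direction along every orbit. Without backward coinitiality the function $d(x) = \min\{n \geq 0 : f^{-n}(x) \in M\}$ need not be defined (on a $\mathbb{Z}$-orbit all markers may lie strictly forward of $x$), and without $2$-separation the exceptional piece $\{(x,f(x)) : f(x) \in M\}$ need not have disjoint projections. What you actually need is a $\lambda$-Borel maximal $f$-independent (hence syndetic) set on the doubly infinite aperiodic part; this is a genuine piece of $\lambda$-Borel graph combinatorics that the paper has not established at this point, and which you would have to build from scratch, e.g.\ by a length-$\lambda$ greedy construction over a $\lambda$-sized basis together with a check that the resulting set is $\lambda$-Borel.

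For comparison, the paper's own proof bypasses orbit analysis entirely: it sets $B_0 = \p(Q)$, then alternately $B_{k+1} = B_k \cap f(B_k)$ for $k$ even and $B_{k+1} = B_k \cap f^{-1}(B_k)$ for $k$ odd, puts $B_\infty = \bigcap_{k\in\omega} B_k$, and takes $R_0 = Q \cap (B_\infty \times X)$ and $R_{k+1} = Q \cap ((B_k \setminus B_{k+1}) \times X)$. The alternation forces $\p(R_0) = \p_2(R_0) = B_\infty$ and $f(B_k \setminus B_{k+1}) \cap (B_k \setminus B_{k+1}) = \emptyset$, using nothing beyond Theorem~\ref{thm:injectiveBorelimage}. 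Your orbit picture is finer and would be salvageable once a marker construction is supplied, but the back-and-forth shrinking argument is shorter and stays within the tools available at this stage of the paper.
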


\begin{proof}
By Theorems~\ref{thm:injectiveBorelimage} and~\ref{thm:borelvsgraph} (together with Remark~\ref{rmk:borelvsgraph}), the function \( f \) is a \(\lambda\)-Borel isomorphism between the \(\lambda\)-Borel sets \( \p(Q) = \dom(f) \) and \( \p_2(Q) = \ran(f) \).
Let \( B_0 = \p(Q)  \), and
\[ 
B_{k+1} =
\begin{cases}
B_k \cap f(B_k) & \text{if \( k \) is even}, \\
B_k \cap f^{-1}(B_k) & \text{if \( k \) is odd}.
\end{cases}
 \] 
Notice that \( B_k \supseteq B_{k+1} \), for every \( k \in \omega \).
Finally, let \( B_{\infty} = \bigcap_{k \in \omega} B_k \). Using Theorem~\ref{thm:injectiveBorelimage} again, it is easy to prove by induction on \( k \in \omega \) that each \( B_k \) is \(\lambda\)-Borel, so that \( B_\infty \in \lB(X) \) too. Let \( R_0 = Q \cap (B_\infty \times X) \) and \( R_{k+1} = Q \cap ((B_k \setminus B_{k+1}) \times X ) \): we claim that the sets \( R_k \), which are clearly \(\lambda\)-Borel, are as required.

Since the sets \( B_k \setminus B_{k+1} \) together with \( B_\infty \) form a weak partition of \( B_0 = \p(Q) \), we get that the sets \( R_k \) are pairwise disjoint and that \( Q = \bigcup_{k \in \omega} R_k \). 
By construction, \( \p(R_0) = B_\infty \) and \( \p(R_{k+1}) = B_k \setminus B_{k+1} \), and moreover \( \p_2(R_k) = f(\p(R_k)) \) for every \( k \in \omega \). 

It follows that, by construction, \( \p_2(R_0) = B_{\infty} \), so that \( \p_2(R_0) = \p(R_0) \). Indeed, assume first that \( y \in B_{\infty} \). Then \( y \in B_{k+1} \) for every even \( k \in \omega \), and so \( y \in f(B_k)  \) for every such \( k \). In particular, \( y \in \ran(f) \): let \(x \in \dom(f) \) be the only element such that \( f(x) = y \). Then \( x \in B_k \) for every even \( k \in \omega \), hence \( x \in B_{\infty} \) because the sets \( B_k \) are decreasing with respect to inclusion. Conversely, if \( y \in \p_2(R_0) \subseteq \ran(f) \) it means that the only \( x \) such that \( f(x) = y \) belongs to \( \p(R_0)  = B_\infty \), and in particular to \(B_{k+1} \) for every odd \( k \in \omega \). This means that \( y = f(x) \in B_k \) for every such \( k \), and hence \( y \in B_\infty \).

Finally, fix any \( k \in \omega \). In order to prove that \( \p(R_{k+1}) \cap \p_2(R_{k+1}) = \emptyset \) we need to check that \( f(B_k \setminus B_{k+1}) \cap (B_k \setminus B_{k+1}) =\emptyset \). 
Notice that \( B_k \setminus B_{k+1}  = B_k \setminus f(B_k) \) if \( k \) is even, and \( B_k \setminus B_{k+1} = B_k \setminus f^{-1}(B_k) \) if \( k \) is odd. Therefore, if \( k \) is even we get \( f(B_k \setminus f(B_k)) \cap (B_k \setminus f(B_k)) = \emptyset \) because \( f(B_k \setminus f(B_k)) \subseteq f(B_k) \); if \( k \) is odd, then \( f(B_k \setminus f^{-1}(B_k)) \cap (B_k \setminus f^{-1}(B_k)) = \emptyset \) because \( f(B_k \setminus f^{-1}(B_k)) \cap B_k = \emptyset \).
\end{proof}

Of course, the above lemma holds also in the context of standard \(\lambda\)-Borel spaces whenever we assume \( 2^{< \lambda} = \lambda \). 
We are now ready to prove the generalized Feldman-Moore theorem.

\begin{theorem} \label{thm:feldman-moore}
Assume that \( 2^{< \lambda} = \lambda \), and let  \( X \) be a standard \(\lambda\)-Borel space. Then \( E \) is a countable \(\lambda\)-Borel equivalence relation on \( X \) if and only if \( E \) is induced by a \(\lambda\)-Borel action%
\footnote{Indeed, our proof shows that we can even assume that \( G \) acts on \( X \) by \(\lambda\)-Borel automorphisms.} 
of a (discrete) countable group \( G \) on \( X \).
\end{theorem}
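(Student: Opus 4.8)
The plan is to prove the two implications separately, the backward one being routine and the forward one carrying the real content.

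For the easy direction, suppose $E$ is the orbit equivalence relation of a \(\lambda\)-Borel action of a countable discrete group $G$, that is, $x \mathrel{E} y \iff \exists g \in G\,(g \cdot x = y)$. Each translation $a_g \colon x \mapsto g \cdot x$ is a \(\lambda\)-Borel automorphism of $X$, its inverse being $a_{g^{-1}}$, so by Theorem~\ref{thm:borelvsgraph} each graph $\mathrm{graph}(a_g)$ is \(\lambda\)-Borel; hence $E = \bigcup_{g \in G} \mathrm{graph}(a_g)$ is \(\lambda\)-Borel as a countable union, and every class $[x]_E = \{ g \cdot x \mid g \in G \}$ has size at most $|G| \leq \aleph_0$. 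Thus $E$ is a countable \(\lambda\)-Borel equivalence relation.

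For the forward direction, fix a countable \(\lambda\)-Borel equivalence relation $E$ on $X$; by Proposition~\ref{prop:charstandardBorel} I may assume $X$ is \(\lambda\)-Polish and regard $E$ as a \(\lambda\)-Borel subset of $X \times X$ all of whose vertical sections $E_x = [x]_E$ are countable. First I would decompose $E$ into \(\lambda\)-Borel partial injections. Applying Theorem~\ref{thm:ctblsections}\ref{thm:ctblsections-5} to $E$, write $E = \bigcup_{n \in \omega} P_n$ with the $P_n$ pairwise disjoint \(\lambda\)-Borel graphs of partial functions $f_n$. These $f_n$ need not be injective, but they are countable-to-one: since $P_n \subseteq E$, each fibre $f_n^{-1}(y)$ is contained in $[y]_E$, hence countable. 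Consequently each $P_n^{-1}$ again has countable vertical sections, so a second application of Theorem~\ref{thm:ctblsections}\ref{thm:ctblsections-5} yields $P_n^{-1} = \bigcup_{m \in \omega} S_{n,m}$ with $S_{n,m}$ the graph of a partial function $h_{n,m}$; since $S_{n,m} \subseteq P_n^{-1}$ and $f_n$ is a function, each $h_{n,m}$ is injective. Setting $Q_{n,m} = S_{n,m}^{-1}$, I obtain $E = \bigcup_{n,m} Q_{n,m}$ with each $Q_{n,m}$ the \(\lambda\)-Borel graph of a partial injection of $X$.

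The decisive step is to turn these partial injections into total \(\lambda\)-Borel automorphisms, and this is exactly where Lemma~\ref{lem:partial bijections} enters. Applying it to each $Q_{n,m}$, I get $Q_{n,m} = \bigcup_{k \in \omega} R_{n,m,k}$ with the $R_{n,m,k}$ pairwise disjoint \(\lambda\)-Borel graphs of partial injections $g_{n,m,k}$ whose domain $A = \p(R_{n,m,k})$ and range $B = p_2(R_{n,m,k})$ are either equal or disjoint. If $A = B$, then $g_{n,m,k}$ is a bijection of $A$ onto itself and I extend it to a total automorphism $\hat g_{n,m,k}$ by the identity on $X \setminus A$; if $A \cap B = \emptyset$, I extend it to the involution acting as $g_{n,m,k}$ on $A$, as $g_{n,m,k}^{-1}$ on $B$, and as the identity elsewhere. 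In either case $\hat g_{n,m,k}$ is a \(\lambda\)-Borel automorphism with $R_{n,m,k} \subseteq \mathrm{graph}(\hat g_{n,m,k}) \subseteq E$, the last inclusion using that $E$ is reflexive, symmetric, and transitive. Now let $G$ be the subgroup of the group of \(\lambda\)-Borel automorphisms of $X$ generated by the countable family $\{ \hat g_{n,m,k} \}$; it is countable and acts by \(\lambda\)-Borel automorphisms, hence \(\lambda\)-Borel. Writing $E_G$ for its orbit equivalence relation, $E_G \subseteq E$ because each generator has graph inside $E$ and $E$ is an equivalence relation, while $E \subseteq E_G$ because every $(x,y) \in E = \bigcup_{n,m,k} R_{n,m,k}$ satisfies $y = \hat g_{n,m,k}(x)$ for suitable indices, so $y \in G \cdot x$. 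Hence $E = E_G$, which also yields the refinement in the footnote that $G$ may be taken to act by \(\lambda\)-Borel automorphisms. The main obstacle is precisely the extension step: in the classical argument one passes freely from partial injections to total automorphisms using second countability, which fails here because $X$ has uncountable weight, and Lemma~\ref{lem:partial bijections} circumvents this by producing pieces whose domain and range are equal or disjoint, the only configuration in which the extension to an automorphism is immediate.
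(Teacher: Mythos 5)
Your proof is correct and takes essentially the same route as the paper: decompose $E$ into countably many $\lambda$-Borel graphs of partial injections, split each of these via Lemma~\ref{lem:partial bijections} into pieces whose domain and range are equal or disjoint, extend those pieces to $\lambda$-Borel automorphisms of $X$, and let $G$ be the countable group they generate. The only (harmless) divergence is how the partial injections are produced: the paper sets $Q_{n,m} = P_n \cap P_m^{-1}$, exploiting the symmetry of $E$, whereas you apply Theorem~\ref{thm:ctblsections}\ref{thm:ctblsections-5} a second time to each $P_n^{-1}$; both yield the required decomposition, and the remainder of your argument matches the paper's.
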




\begin{proof} 
One direction is obvious, so let us assume that \( E \) is a countable \(\lambda\)-Borel equivalence relation on \( X \). Without loss of generality, we can assume that \( X \) is a \(\lambda\)-Polish space. By Theorem~\ref{thm:ctblsections}\ref{thm:ctblsections-5} applied with \( A = E \), we get that \( E = \bigcup_{n \in \omega} P_n \) with \( \{ P_n \mid n \in \omega \} \) a family of pairwise disjoint \(\lambda\)-Borel sets which are graphs of partial functions from \( X \) into itself. For every \( n,m \in \omega \), consider the \(\lambda\)-Borel set \( Q_{n,m} = P_n \cap P_m^{-1} \). 
Then \( E = \bigcup_{n,m \in \omega} Q_{n,m} \), and the sets \( Q_{n,m} \) are still pairwise disjoint graphs of partial function \( f_{n,m} \) from \( X \) into itself. 
Moreover, each \( f_{n,m} \) is injective because \( Q_{n,m} \subseteq P^{-1}_m \) and the latter is the graph of a partial function. 

Next apply Lemma~\ref{lem:partial bijections} to each \( Q_{n,m} \) and write it as \( Q_{n,m} = \bigcup_{k \in \omega} R_{n,m,k} \), for suitable \(\lambda\)-Borel sets \( R_{n,m,k} \). 
By Theorems~\ref{thm:injectiveBorelimage} and~\ref{thm:borelvsgraph} (together with Remark~\ref{rmk:borelvsgraph}), each \( R_{n,m,k} \) is the graph of a \( \lambda \)-Borel isomorphism \( h_{n,m,k} \) between the \(\lambda\)-Borel sets \( A_{n,m,k} = \p(R_{n,m,k}) \) and \( B_{n,m,k} = \p_2(R_{n,m,k}) \), and either \( A_{n,m,k} = B_{n,m,k} \) or \( A_{n,m,k} \cap B_{n,m,k} \). In the first case, we let \( g_{n,m,k} \colon X \to X \) be defined by setting \( g_{n,m,k}(x) = h_{n,m,k}(x) \) if \( x \in A_{n,m,k} \) and \( g_{n,m,k}(x) = x \) otherwise; in the second case, we instead let \( g_{n,m,k} \colon X \to X \) be defined by \( g_{n,m,k}(x) = h_{n,m,k}(x) \) if \( A_{n,m,k} \), \( g_{n,m,k}(x) = h_{n,m,k}^{-1}(x) \) if \( x \in B_{n,m,k} \), and \( g_{n,m,k}(x) = x \) otherwise. Notice that each \( g_{n,m,k} \) is a well-defined \(\lambda\)-Borel isomorphism between \( X \) and itself, and moreover
\[ 
x \mathrel{E} y \iff g_{n,m,k}(x) = y \text{ for some \( n,m,k \in \omega \)}.
 \] 
 
Let \( G \) be the set of \(\lambda\)-Borel automorphisms obtained by closing under composition the family \( \{ g_{n,m,k} \mid n,m,k \in \omega \} \) and adding the identity map on \( X \) if necessary. Then \( G \) is a countable group with respect to the composition operation, and its natural action on \( X \) induces precisely the equivalence relation \( E \).
\end{proof}

Among the many important consequences of the Feldman-Moore theorem, we would like to mention at least Slaman-Steel's ``marker lemma'', that can now be generalized as follows.

\begin{lemma} \label{lem:markerlemma} 
Assume that \( 2^{< \lambda} = \lambda \), and let \( E \) be a countable \(\lambda\)-Borel equivalence relation on a standard \(\lambda\)-Borel space \( X \). Let
\( B = \{ x \in X \mid [x]_E \text{ is infinite} \} \).
Then there is a countable decreasing sequence \( B \supseteq S_0 \supseteq S_1 \supseteq S_2 \supseteq \dotsc \) of \(\lambda\)-Borel sets such that \( [S_n]_E = B \) for every \( n \in \omega \), yet \( \bigcap_{n \in \omega} S_n = \emptyset \).
\end{lemma}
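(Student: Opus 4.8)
The plan is to follow the classical Slaman--Steel argument, replacing its two classical ingredients---the Feldman--Moore representation and the Luzin--Novikov enumeration of countable sections---by their \(\lambda\)-Borel analogues Theorem~\ref{thm:feldman-moore} and Theorem~\ref{thm:ctblsections}, and supplying by hand the only combinatorial tool that is not Baire-category based, namely the existence of \(\lambda\)-Borel maximal discrete sets. First I would pass to the \(\lambda\)-Borel set \( B \): it is \(\lambda\)-Borel because, applying Theorem~\ref{thm:ctblsections}\ref{thm:ctblsections-4} to \( A = E \) to obtain a \(\lambda\)-Borel enumeration \( [x]_E = \{ \varrho_k(x) : k \in \omega \} \), the condition ``\( |[x]_E| \geq n \)'' is \(\lambda\)-Borel for each \( n \). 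So I may assume every \( E \)-class is infinite, and by Theorem~\ref{thm:feldman-moore} that \( E \) is induced by a \(\lambda\)-Borel action of a countable group \( G = \{ g_k : k \in \omega \} \) with \( g_0 = \mathrm{id} \). It is then convenient to reformulate the goal: it suffices to produce a single \(\lambda\)-Borel function \( \ell \colon B \to \omega \) that is \emph{finite everywhere but unbounded on every \( E \)-class}, since then \( S_n = \{ x \in B : \ell(x) \geq n \} \) is automatically a decreasing sequence of \(\lambda\)-Borel complete sections with \( \bigcap_n S_n = \{ x : \ell(x) = \infty \} = \emptyset \).

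The engine is the following sub-lemma, which I would prove in our base theory: \emph{if \( R \subseteq X \times X \) is a symmetric, reflexive \(\lambda\)-Borel relation on a standard \(\lambda\)-Borel space with all sections \( R_x \) finite, then there is a \(\lambda\)-Borel \( R \)-discrete set \( M \) which is maximal, equivalently a complete section for \( R \).} To prove it I would first exhibit a \(\lambda\)-Borel proper colouring \( c \colon X \to \lambda \) of the distance-\(\leq 2\) graph \( H \) of \( R \) (which also has finite sections): fixing a \(\lambda\)-Borel embedding \( X \hookrightarrow \pre{\lambda}{2} \), for each \( x \) let \( \beta_x < \lambda \) be least such that \( x \restriction \beta_x \) differs from \( y \restriction \beta_x \) for every \( H \)-neighbour \( y \neq x \) of \( x \) (such a \( \beta_x \) exists since there are only finitely many neighbours), and set \( c(x) = (\beta_x, x \restriction \beta_x) \in \lambda \times \pre{< \lambda}{2} \cong \lambda \); this \( c \) is \(\lambda\)-Borel and takes different values on \( H \)-adjacent points. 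Then I would build \( M = \bigcup_{\xi < \lambda} M_\xi \) by recursion, putting a point \( x \) of colour \( \xi \) into \( M_\xi \) exactly when \( x \) is \( R \)-unrelated to \( \bigcup_{\eta < \xi} M_\eta \); the colouring forces \( M \) to be \( R \)-discrete, while reflexivity of \( R \) forces maximality. Measurability of each stage uses only that the \( g_k \) are \(\lambda\)-Borel, and the length-\(\lambda\) recursion is legitimate because \( \lambda^+ \) is regular by Fact~\ref{fct:lambdaunionewithoutchoice}.

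With maximal discrete sets available, I would assemble \( \ell \) from markers at increasing scales. For \( n \geq 1 \) let \( \hat R_n \) be the finite-section, symmetric, reflexive relation ``\( y \in \{ g_i(x) : i \leq n \} \cup \{ g_i^{-1}(x) : i \leq n \} \)'', and let \( M_n \) be a \(\lambda\)-Borel maximal \( \hat R_n \)-discrete complete section; since classes are infinite and \( \hat R_n \)-balls finite, each \( M_n \) meets every class in an infinite set. The value \( \ell(x) \) should record how deep in a marker gap \( x \) lies, for instance through the everywhere-finite (by maximality) \(\lambda\)-Borel distance functions \( \delta_n(x) = \min\{ i : g_j(x) \in M_n \text{ or } g_j^{-1}(x) \in M_n \text{ for some } j \leq i \} \); the enumeration \( \varrho_k \) from Theorem~\ref{thm:ctblsections}\ref{thm:ctblsections-4} is what lets me verify the relevant denseness and measurability claims inside each class. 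The main obstacle---and the only genuinely delicate point---is to combine these marker sets so that \( \ell \) is simultaneously \emph{finite at every point} (this is where one must prevent a point from being a marker, or from being minimally distant from the markers, at all scales at once) and \emph{unbounded on every class} (this requires the separation scale \( n \) of \( M_n \) to grow fast enough that arbitrarily deep gaps appear in each class). This is precisely the bookkeeping of the classical Slaman--Steel construction, which I expect to transfer essentially verbatim once the \(\lambda\)-Borel tools above are in place, since, in contrast to the usual textbook presentation, it nowhere appeals to Baire category. I would conclude by verifying the three required properties of \( S_n = \{ x \in B : \ell(x) \geq n \} \).
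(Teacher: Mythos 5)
Your reduction to the case of infinite classes, your use of Theorem~\ref{thm:feldman-moore}, and your sub-lemma producing \(\lambda\)-Borel maximal discrete sets for finite-section symmetric reflexive \(\lambda\)-Borel relations (via the coloring \( c(x) = (\beta_x, x \restriction \beta_x) \) and a greedy recursion of length \(\lambda\)) are all sound, and this is genuinely different machinery from what the paper uses. The gap is in the final assembly, exactly at the point you yourself flag as ``the main obstacle'': you never actually define \( \ell \), and you defer the combination of the marker sets \( M_n \) to ``the bookkeeping of the classical Slaman--Steel construction'', expecting a verbatim transfer. No such template exists. The classical proof (Gao~\cite{Gao2009}, Lemma 7.1.5, which is what the paper adapts) is not a marker argument at all: it embeds \( X \) into a sequence space, takes the canonical point \( f(x) = \sigma_0^X(\mathrm{cl}([x]_E)) \) of the closure of each class (Theorem~\ref{thm:selectionforF(X)} supplies the selector in the generalized setting), and splits into the cases \( f(x) \in [x]_E \), where one peels off points using the group enumeration, and \( f(x) \notin [x]_E \), where \( S_n \) is the set of points agreeing with \( f(x) \) on the first \( n \) coordinates. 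Meanwhile your \( \delta_n \)'s by themselves lead nowhere: maximality gives \( \delta_n(x) \leq n \) everywhere, but nothing you have set up prevents a point from lying in every \( M_n \) (or from having \( \sup_n \delta_n(x) = \infty \)), nor guarantees that any natural combination of the \( \delta_n \) is unbounded on every class. As written, the lemma is not proved.

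The good news is that the missing step can be filled inside your framework, but it is a genuine idea, not bookkeeping. Construct the markers \emph{nested}: let \( M_1 \) be a maximal \( \hat{R}_1 \)-discrete set, and recursively let \( M_{n+1} \subseteq M_n \) be a maximal \( \hat{R}_{n+1} \)-discrete subset of \( M_n \) (your sub-lemma applies to \( \hat{R}_{n+1} \) restricted to the standard \(\lambda\)-Borel space \( M_n \)). Your finite-balls argument still shows, by downward induction on \( n \), that each \( M_n \) meets every class in an infinite set. Now observe that \( M_\infty = \bigcap_{n \in \omega} M_n \) meets each class in \emph{at most one} point: if \( x \neq y \) are \( E \)-equivalent, then \( y = g_i \cdot x \) for some \( i \), so \( x \) and \( y \) are \( \hat{R}_i \)-related and cannot both lie in the \( \hat{R}_i \)-discrete set \( M_i \). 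Hence \( S_n = M_n \setminus M_\infty \) is a decreasing sequence of \(\lambda\)-Borel subsets of \( B \), each \( S_n \) still meets every class (an infinite set minus at most one point), and \( \bigcap_{n \in \omega} S_n = M_\infty \setminus M_\infty = \emptyset \). With this replacement for your last step, the marker route goes through and is a legitimate alternative to the paper's closure-selector proof.
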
 

\begin{proofsketchcite}{Gao2009}{Lemma 7.1.5 and Exercise 7.1.4}
By Propositions~\ref{prop:charstandardBorel} and~\ref{prop:surjection} (together with Remark~\ref{rmk:inverseofinjection}), without loss of generality we can assume that \( X \) is a closed subset of \( B(\lambda) \), and in particular a \(\lambda\)-Polish space.
By Theorem~\ref{thm:feldman-moore} there is a countable group \( G = \{ g_n \mid n \in \omega \} \) and a \(\lambda\)-Borel action \( (g,x) \to g \cdot x \) of \( G \) on \( X \) inducing \( E \).
Therefore the \( E \)-invariant set \( B \) is \(\lambda\)-Borel, as
\[ 
x \in B \iff \forall n  \exists m  \forall k \leq n \, (g_m \cdot x \neq g_k \cdot x).
 \] 
By Theorem~\ref{thm:ctblsections}\ref{thm:ctblsections-2}, the map \( x \mapsto \mathrm{cl}([x]_E) \) is \(\lambda\)-Borel, hence so is the map \( f \) sending \( x \in X \) to \( \sigma_0^X(\mathrm{cl}([x]_E)) \), where \( \sigma_0^X \) is as in Theorem~\ref{thm:selectionforF(X)}.
Consider the \( E \)-invariant \(\lambda\)-Borel set \( A = \{ x \in X \mid f(x) \in [x]_E \} \). Then \( A \supseteq X \setminus B \), and \( f \) is a \(\lambda\)-Borel selector for the restriction of \( E \) to \( A \). For every \( n \in \omega \), let
\[ 
x \in S_n \iff (x \in A \cap B \wedge \forall m < n \, (g_m \cdot x \neq f(x))) \vee (x \notin A \wedge x \restriction n = f(x) \restriction n).
 \] 
Then the sets \( S_n \) are as required.
\end{proofsketchcite}

Let us also mention yet another corollary of Theorem~\ref{thm:ctblsections}.
A map \( f \colon X \to Y \) is called \markdef{countable-to-\( 1 \)} if \( f^{-1}(y) \) is (at most) countable for every \( y \in Y \). 

\begin{corollary}
Let \( \lambda \) be such that \( 2^{< \lambda} = \lambda \), and let \( X,Y \) be standard \(\lambda\)-Borel spaces. If a \(\lambda\)-Borel map \( f \colon X \to Y \) is countable-to-\( 1 \), then \( \ran(f) \) is \(\lambda\)-Borel and \( f \) admits a \(\lambda\)-Borel right-inverse.
\end{corollary}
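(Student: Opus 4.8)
The plan is to reduce the statement to a single application of the Luzin--Novikov-type Theorem~\ref{thm:ctblsections} applied to the ``flipped graph'' of $f$. Since we are assuming $2^{<\lambda}=\lambda$, Theorem~\ref{thm:ctblsections} requires the factor carrying the countable sections to be $\lambda$-Polish, so I would first reduce to the case where $X$ is $\lambda$-Polish: by Proposition~\ref{prop:charstandardBorel} there is a $\lambda$-Borel isomorphism $\varphi\colon X\to X'$ with $X'$ a $\lambda$-Polish space, and replacing $f$ by $f\circ\varphi^{-1}\colon X'\to Y$ preserves $\lambda$-Borelness, the countable-to-$1$ property (as $\varphi$ is a bijection, so that $(f\circ\varphi^{-1})^{-1}(y)=\varphi(f^{-1}(y))$ is countable), and $\ran(f)$; moreover a $\lambda$-Borel right-inverse $g'$ of $f\circ\varphi^{-1}$ yields the $\lambda$-Borel right-inverse $\varphi^{-1}\circ g'$ of $f$. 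Thus I may assume $X$ is $\lambda$-Polish while keeping $Y$ standard $\lambda$-Borel.

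Next I would consider the set
\[
A=\{(y,x)\in Y\times X\mid f(x)=y\}=(\mathrm{graph}(f))^{-1}.
\]
Since $f$ is $\lambda$-Borel, $\mathrm{graph}(f)\in\lB(X\times Y)$ by Theorem~\ref{thm:borelvsgraph}; as the coordinate flip is a homeomorphism (hence a $\lambda$-Borel isomorphism), $A\in\lB(Y\times X)$. The key point is that the vertical sections of $A$ over $Y$ are precisely $A_y=\{x\in X\mid f(x)=y\}=f^{-1}(y)$, which are countable by hypothesis. Therefore Theorem~\ref{thm:ctblsections} applies with first factor $Y$ (standard $\lambda$-Borel) and second factor $X$ ($\lambda$-Polish).

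From Theorem~\ref{thm:ctblsections}\ref{thm:ctblsections-1} I obtain at once both conclusions. First, $\p(A)$ is $\lambda$-Borel; since $\p(A)=\{y\in Y\mid\exists x\,(f(x)=y)\}=\ran(f)$, this gives that $\ran(f)$ is $\lambda$-Borel. Second, the same result furnishes a $\lambda$-Borel uniformization $A^*\subseteq A$, which by definition is the graph of a function $g\colon\p(A)=\ran(f)\to X$ with $(y,g(y))\in A$ for all $y\in\ran(f)$, i.e.\ $f(g(y))=y$. Because $A^*=\mathrm{graph}(g)\in\lB(Y\times X)$, Theorem~\ref{thm:borelvsgraph} together with Remark~\ref{rmk:borelvsgraph} (for functions defined on the $\lambda$-Borel domain $\ran(f)$) shows that $g$ is $\lambda$-Borel, so $g$ is the desired $\lambda$-Borel right-inverse. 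The argument is essentially mechanical once the machinery of Theorem~\ref{thm:ctblsections} is in place; the only genuine care needed is the bookkeeping of which factor carries the countable sections (forcing the flip and the standard-Borel-to-Polish reduction on $X$), rather than any substantive new difficulty.
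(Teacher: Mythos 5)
Your proposal is correct and follows exactly the paper's own argument: apply Theorem~\ref{thm:ctblsections}\ref{thm:ctblsections-1} to the flipped graph \( A = \{(y,x) \in Y \times X \mid f(x) = y\} \), which is \(\lambda\)-Borel by Theorem~\ref{thm:borelvsgraph} and has countable vertical sections \( A_y = f^{-1}(y) \). The only difference is that you spell out the reduction of \( X \) to a \(\lambda\)-Polish space via Proposition~\ref{prop:charstandardBorel} and the final \(\lambda\)-Borelness check for the uniformizing function, bookkeeping the paper leaves implicit under its blanket convention that, assuming \( 2^{<\lambda} = \lambda \), results stated for \(\lambda\)-Polish spaces transfer to standard \(\lambda\)-Borel spaces.
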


\begin{proof}
Apply Theorem~\ref{thm:ctblsections}\ref{thm:ctblsections-1} to the set \( A = \{ (y,x) \in Y \times X \mid f(x) = y \} \), which is \(\lambda\)-Borel by Theorem~\ref{thm:borelvsgraph} and has countable vertical sections because \( f \) is countable-to-\( 1 \).
\end{proof}

Along the same lines, Theorem~\ref{thm:ctblsections}\ref{thm:ctblsections-2} also implies the following fact.

\begin{corollary}
Let \(\lambda\) be such that \( 2^{< \lambda} = \lambda \), \( X \) be a standard \(\lambda\)-Borel space, and \( Y \) be a \(\lambda\)-Polish space. Let \( f \colon X \to F(Y) \) be such that \( |f(x)| \leq \aleph_0 \) for all \( x \in X \). Then \( f \) is \(\lambda\)-Borel if and only if the set \( F = \{ (x,y) \in X \times Y \mid y \in f(x) \} \) is \(\lambda\)-Borel.
\end{corollary}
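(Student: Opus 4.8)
The plan is to prove the two implications separately, noting throughout that \( f \) is \(\lambda\)-Borel if and only if \( f^{-1}(B_U(Y)) \in \lB(X) \) for every basic open set \( U \) ranging in a fixed \(\lambda\)-sized basis \( \{ U_\beta \mid \beta < \lambda \} \) of \( Y \), since the sets \( B_U(Y) \) for such \( U \) generate the Effros \(\lambda\)-Borel structure \( \mathcal{B}_F(Y) \). Neither implication is deep once the right tool is invoked, and the countability hypothesis on the vertical sections is needed only for one of them.

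For the forward implication, assume \( f \) is \(\lambda\)-Borel; I would express membership in \( f(x) \) through the closedness of the sections. Since each \( f(x) \) is closed, for \( (x,y) \in X \times Y \) one has \( y \in f(x) \) if and only if every basic open neighbourhood of \( y \) meets \( f(x) \), that is,
\[
(x,y) \in F \iff \forall \beta < \lambda \, \bigl( y \in U_\beta \Rightarrow f(x) \cap U_\beta \neq \emptyset \bigr).
\]
For each fixed \( \beta \) the corresponding set equals \( (X \times (Y \setminus U_\beta)) \cup (f^{-1}(B_{U_\beta}(Y)) \times Y) \), which is \(\lambda\)-Borel because \( f^{-1}(B_{U_\beta}(Y)) \in \lB(X) \) by hypothesis and \( Y \setminus U_\beta \) is closed. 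Intersecting over the \(\lambda\)-many \( \beta \) and using that \( \lB(X \times Y) \) is a \( \lambda^+ \)-algebra then yields \( F \in \lB(X \times Y) \). This direction uses neither the countability of the sections nor anything beyond the genericity of the basis.

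For the reverse implication, assume \( F \in \lB(X \times Y) \); this is where the countability of the sections is essential. The vertical sections of \( F \) are exactly \( F_x = f(x) \), which are countable by assumption, so Theorem~\ref{thm:ctblsections}\ref{thm:ctblsections-2} applies with \( A = F \) and shows that the map \( x \mapsto \mathrm{cl}(F_x) \) is \(\lambda\)-Borel on the set \( \p(F) \), the latter being \(\lambda\)-Borel by Theorem~\ref{thm:ctblsections}\ref{thm:ctblsections-1}. Since each \( f(x) \) is already closed, \( \mathrm{cl}(F_x) = f(x) \), so \( f \restriction \p(F) \) is \(\lambda\)-Borel. On the complementary \(\lambda\)-Borel set \( X \setminus \p(F) \) the function \( f \) is constantly \( \emptyset \in F(Y) \), hence trivially \(\lambda\)-Borel there; pasting the two \(\lambda\)-Borel pieces gives that \( f \) is \(\lambda\)-Borel on all of \( X \). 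The only point requiring care, and the closest thing to an obstacle, is this bookkeeping about the empty sections together with the verification that \( \mathrm{cl}(F_x) = f(x) \); the hypotheses of Theorem~\ref{thm:ctblsections} (namely \( X \) standard \(\lambda\)-Borel, \( Y \) \(\lambda\)-Polish, and \( 2^{<\lambda} = \lambda \)) are exactly those assumed here.
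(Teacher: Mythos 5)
Your proposal is correct and follows essentially the same route as the paper: the substantive direction (λ-Borelness of \( F \) implies λ-Borelness of \( f \)) is exactly an application of Theorem~\ref{thm:ctblsections}\ref{thm:ctblsections-1}--\ref{thm:ctblsections-2} with \( A = F \), using that \( \mathrm{cl}(F_x) = f(x) \) since each \( f(x) \) is closed, together with the routine pasting over \( X \setminus \p(F) \) where \( f \) is constantly \( \emptyset \). The forward direction, which you correctly note needs neither countability of sections nor anything beyond a fixed λ-sized basis, is the standard basis argument in the spirit of Proposition~\ref{prop:borelvsgraph}, which is what the paper leaves implicit.
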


\subsection{\(\lambda\)-Borel uniformizations for sets with compact sections}\label{subsec:cpctsections}

We finally consider \(\lambda\)-Borel sets with compact vertical sections.
The following result is the analogue of~\cite[Theorem 28.8]{Kechris1995}. Our proof is again necessarily different, this time because non-separable (complete) metric spaces cannot have a metrizable compactification.
The argument we are going to present works also when \( \lambda = \omega \), hence it gives an alternative proof of~\cite[Theorem 28.8]{Kechris1995} which does not pass through compactification.

\begin{theorem} \label{thm:compactsections}
Assume that \( 2^{< \lambda} =  \lambda \). Let \( X \) be a standard \(\lambda\)-Borel space, and \( Y \) be a \(\lambda\)-Polish space. If \( A \subseteq X \times Y \) is a \(\lambda\)-Borel set all of whose vertical sections \( A_x \) are compact, then the map \( x \mapsto A_x \) is \(\lambda\)-Borel as a map from \( X \) to \( K(Y) \subseteq F(Y) \). Equivalently, a map \( f \colon X \to K(Y) \) is \(\lambda\)-Borel if and only if the set \( F = \{ (x,y) \in X \times Y \mid y \in f(x)  \}\) is \(\lambda\)-Borel.  
\end{theorem}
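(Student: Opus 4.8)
The plan is to prove both directions of the ``equivalently'' clause. The implication ``\( f \) \(\lambda\)-Borel \( \Rightarrow F \) \(\lambda\)-Borel'' is the easy half: the membership relation \( {\in} = \{ (K,y) \in K(Y) \times Y \mid y \in K \} \) is closed in \( K(Y) \times Y \) for the Vietoris topology, so \( F = (f \times \id_Y)^{-1}({\in}) \) is \(\lambda\)-Borel whenever \( f \) is. The substance is the first assertion, i.e.\ the converse with \( F = A \), \( f(x) = A_x \). I would first make two reductions. By Proposition~\ref{prop:charstandardBorel} equip \( X \) with a \(\lambda\)-Polish topology; since compactness of the sections is independent of the topology on \( X \), Corollary~\ref{cor:Borelwithclosedsections} lets me assume \( A \) is \emph{closed} in \( X \times Y \) (after refining the topology of \( X \) without changing its \(\lambda\)-Borel structure, with \( \mathrm{dim}(X) = 0 \)). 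Then I reduce \( Y \) to (a closed subspace of) \( B(\lambda) \): by Proposition~\ref{prop:surjectionpreservingcompact} fix a closed \( F_0 \subseteq B(\lambda) \) and a continuous surjection \( \pi \colon F_0 \to Y \) with \( \pi^{-1}(K) \) compact for every compact \( K \). Then \( \tilde A = (\id_X \times \pi)^{-1}(A) \) is closed in \( X \times F_0 \) with compact sections \( \tilde A_x = \pi^{-1}(A_x) \), and \( A_x = \pi(\tilde A_x) \); as \( K \mapsto \pi(K) \) is continuous from \( K(F_0) \) to \( K(Y) \), it suffices to show \( x \mapsto \tilde A_x \) is \(\lambda\)-Borel into \( K(F_0) \). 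So I may take \( Y = F_0 \subseteq B(\lambda) \).

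Next a clopen-basis observation converts the problem into one about projections. On \( K(Y) \) the Vietoris \(\lambda\)-Borel structure coincides with the restriction of the Effros structure of \( F(Y) \): the sets \( B_U(Y) = \{K \mid K \cap U \neq \emptyset\} \) are Vietoris-open, while for open \( U \), writing \( Y \setminus U = \bigcap_n W_n \) with \( W_n \) open and \( \overline{W_{n+1}} \subseteq W_n \), compactness of \( K \) gives \( \{K \in K(Y) \mid K \subseteq U\} = \bigcup_n \bigl( K(Y) \setminus B_{W_n}(Y) \bigr) \), which is Effros-\(\lambda\)-Borel. Hence \( x \mapsto A_x \in K(Y) \) is \(\lambda\)-Borel if and only if \( \{x \mid A_x \cap U \neq \emptyset\} \in \lB(X) \) for every open \( U \). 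Since \( Y \subseteq B(\lambda) \) has the clopen basis \( \{\Nbhd_t \cap Y\} \), since \( A_x \cap U \neq \emptyset \iff \exists t\,(\Nbhd_t \subseteq U \wedge A_x \cap \Nbhd_t \neq \emptyset) \), and since \( A \cap (X \times \Nbhd_t) \) is again closed with compact sections, the whole theorem reduces to the following \emph{Key Lemma}: \emph{if \( A \subseteq X \times B(\lambda) \) is closed with all sections compact, then \( \p(A) = \{x \mid A_x \neq \emptyset\} \) is \(\lambda\)-Borel.}

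To prove the Key Lemma I would show \( \p(A) \) is both \(\lambda\)-analytic and \(\lambda\)-coanalytic and invoke the generalized Souslin theorem (Theorem~\ref{thm:souslin}, \( \lD^1_1 = \lB \)). Analyticity is immediate, \( \p(A) \) being the projection of a closed set. For coanalyticity I would use König's lemma: realizing \( A \) as the body of a tree gives, for each \( x \), a tree \( P(x) \subseteq \pre{<\omega}{\lambda} \) with clopen (finite-information) membership and \( [P(x)] = A_x \), and compactness of \( A_x \) makes its pruned tree \( T_{A_x} = \{t \mid A_x \cap \Nbhd_t \neq \emptyset\} \) finitely branching. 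The key is to produce a \emph{\(\lambda\)-Borel finitely-branching majorant}: an assignment \( x \mapsto R(x) \) of finitely-branching trees with \(\lambda\)-Borel membership and \( T_{A_x} \subseteq R(x) \). Given such an \( R \), one has \( A_x = [P(x) \cap R(x)] \) with \( P(x) \cap R(x) \) finitely branching, so by König \( A_x \neq \emptyset \iff \forall n\, \exists u \in \pre{n}{\lambda}\, (u \in P(x) \cap R(x)) \); the right-hand side is a countable intersection of \(\lambda\)-sized unions of \(\lambda\)-Borel sets, hence \(\lambda\)-Borel, which even gives \( \p(A) \in \lB(X) \) directly.

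The construction of this majorant is the main obstacle. For each \( t \) the branching sets \( D^t_\beta = \{x \mid A_x \cap \Nbhd_{t^\frown \beta} \neq \emptyset\} \) are \(\lambda\)-analytic, and compactness of the sections makes each family \( (D^t_\beta)_{\beta < \lambda} \) \emph{point-finite} (every \( x \) lies in only finitely many). What is needed is to pass to \(\lambda\)-Borel supersets \( B^t_\beta \supseteq D^t_\beta \) that remain point-finite, after which \( R(x) \) can be taken as the downward closure of \( \{t^\frown \beta \mid x \in B^t_\beta\} \). This point-finite refinement is exactly where compactness enters essentially: a plain application of Novikov's theorem (Theorem~\ref{thm:novikovsepartion}) only controls the full intersection \( \bigcap_\beta \), not point-finiteness, so I expect the delicate part to be a Luzin/Novikov-type separation carried out simultaneously over the point-finite family and adapted to the generalized setting (this is presumably why the strengthened form of Theorem~\ref{thm:rectangles}, with \( \mathcal{U} \) only a basis for the family of sections, is singled out for use here). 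Once the majorant is available, the remaining steps are routine, and the argument uses no metrizable compactification of \( Y \), so it applies equally when \( \lambda = \omega \).
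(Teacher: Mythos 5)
Your reductions are sound, and up to the ``Key Lemma'' you run essentially parallel to the paper: it too reduces to \( Y = B(\lambda) \) via Proposition~\ref{prop:surjectionpreservingcompact}, it too exploits the fact that compactness of \( A_x \) leaves only ``finite information'' at each level, and it too closes with a K\"onig's lemma argument. The problem is the step you yourself flag and then defer: the existence of \(\lambda\)-Borel \emph{point-finite} majorants \( B^t_\beta \supseteq D^t_\beta \) is never proved, and it is not a routine consequence of the paper's separation machinery. Point-finiteness of a \(\lambda\)-indexed family is equivalent to saying that every countably infinite subfamily has empty intersection; Novikov's theorem (Theorem~\ref{thm:novikovsepartion}) can be applied to each such subfamily, but there are \( 2^\lambda \)-many of them and the resulting Borel majorants cannot be amalgamated into a single family, while the reflection theorems of the paper (Theorems~\ref{thm:firstreflection} and~\ref{thm:secondreflection}) concern one set or one pair of sets, not simultaneous reflection of a \(\lambda\)-indexed family along the property ``point-finite''. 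Even in the classical case \( \lambda = \omega \) the point-finite Borel majorization of a point-finite analytic family is a nontrivial theorem with its own reflection-style proof; in the generalized setting it is simply not among the available tools, so your Key Lemma rests on an unproved (and not obviously true) statement.

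Nor can the majorants be extracted from Theorem~\ref{thm:rectangles} in the way you suggest. Applying that theorem to the complement of \( A \) with the basis of cofinite unions \( U^m_\alpha = \bigcup_{s \in S^m_\alpha} \Nbhd_s \) (with \( S^m_\alpha \subseteq \pre{m}{\lambda} \) cofinite) yields rectangles \( B^m_\alpha \times U^m_\alpha \) covering \( (X \times B(\lambda)) \setminus A \), and the natural candidate \( C^t_\beta = \{ x \mid \forall m \leq \lh(t)+1 \, \forall \alpha \, (x \in B^m_\alpha \Rightarrow (t {}^\smallfrown{} \beta) \restriction m \notin S^m_\alpha) \} \) does contain \( D^t_\beta \); but it fails point-finiteness: if \( x \notin B^{\lh(t)+1}_\alpha \) for every \( \alpha \), the constraint at level \( \lh(t)+1 \) is vacuous, the lower-level constraints do not depend on \( \beta \), and so \( x \) lies in \( C^t_\beta \) either for no \( \beta \) or for \emph{all} \( \beta < \lambda \). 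This is exactly why the paper does not argue level by level. After producing the rectangles it proves the single equivalence~\eqref{eq:compactsections}, in which a witness \( t \) for ``\( A_x \cap \Nbhd_s \neq \emptyset \)'' is required to have length \( m \) satisfying \( \exists \alpha \, (x \in B^m_\alpha) \) \emph{at its own length}; it is this length restriction, combined with cofiniteness of the \( S^m_\alpha \), that makes the tree generated by witnesses finitely branching (the Claim in the paper's proof), so that K\"onig's lemma applies with no point-finite majorization anywhere. To repair your argument you must either prove the point-finite Borel majorization theorem for \(\lambda\)-indexed families of \(\lambda\)-analytic sets (a genuinely new result), or replace your level-by-level majorant \( R(x) \) with the paper's witness-length device.
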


\begin{proof}
We first show that we can restrict the attention to the special case \( Y = B(\lambda) \). Indeed, if \( Y \) is an arbitrary nonempty \(\lambda\)-Polish space, then by Proposition~\ref{prop:surjectionpreservingcompact} there is a continuous surjection \( f \colon B(\lambda) \to Y \) with the property that the preimage of any compact set is still compact. It follows that 
\[ 
A' = \{ (x,y) \in X \times B(\lambda) \mid (x,f(y)) \in A \} 
\] 
is a \(\lambda\)-Borel set with compact sections. Suppose that \( x \mapsto A'_x \) is \(\lambda\)-Borel as a map from \( X \) to \( K(B(\lambda)) \). Since \( A'_x = f^{-1}(A_x) \) and \( f \) is surjective,  for any open \( U \subseteq Y \) we have \( A_x \cap U \neq \emptyset \) if and only if \( A'_x \cap f^{-1}(U) \neq \emptyset \), thus the function \( x \mapsto A_x \) is \(\lambda\)-Borel as well.

So assume \( Y = B(\lambda) \) from now on. For each \( 0 \neq n \in \omega \), 
let \( ( S^n_\alpha )_{\alpha < \lambda } \) be an enumeration without repetitions of 
all cofinite subsets of \( \pre{n}{\lambda} \), and for each \( \alpha < \lambda \) let 
\( U^n_\alpha = \bigcup_{s \in S^n_\alpha} \Nbhd_s \). We first claim that 
\( \mathcal{U} = \{ U^n_\alpha \mid 0 \neq n \in \omega \wedge \alpha < \lambda \} \) is a basis for 
\( \mathcal{V} = \{ B(\lambda) \setminus A_x \mid x \in X \} \). Indeed, given \( x \in X \) 
and \( y \notin A_x \), there is \( 0 \neq k \in \omega \) such that 
\( \Nbhd_{y \restriction k} \cap A_x =\emptyset \) because \( A_x \) is closed. Let 
\( S = \{ s \in \pre{k}{\lambda} \mid \Nbhd_s \cap A_x = \emptyset \} \): since \( A_x \) is 
compact and \( \Nbhd_t \cap \Nbhd_{t'} = \emptyset \) if \( \lh(t) = \lh(t') \) and 
\( t \neq t' \), it follows that \( S \) is cofinite and thus \( S = S^k_\alpha \) for some 
\( \alpha < \lambda \). By the choice of \( k \) and \( S \), it follows that \( y \in U^k_\alpha \subseteq B(\lambda) \setminus A_x \), 
as desired.

By Theorem~\ref{thm:rectangles} there are \(\lambda\)-Borel sets \( B^n_\alpha \subseteq X \) such that 
\[ 
(X \times B(\lambda)) \setminus A = \bigcup_{\substack{0 \neq n \in \omega \\ \alpha < \lambda}} (B^n_\alpha \times U^n_\alpha ) .
\]
Fix an arbitrary \( s \in \pre{<\omega}{\lambda} \). We claim that for all \( x \in X \)
\begin{multline} \label{eq:compactsections}
A_x \cap \Nbhd_s \neq \emptyset \iff \forall n \geq \lh(s) \,  \exists m \geq n  \, \exists t \in \pre{m}{\lambda} \, \Big[t \supseteq s \wedge \exists \alpha < \lambda \, (x \in B^m_\alpha) \\ 
 \wedge \forall k \leq m\,  \forall \beta < \lambda \, (x \in B^k_\beta \Rightarrow t \restriction k \notin S^k_\beta)\Big].
 \end{multline}
Since the right part of equation~\eqref{eq:compactsections} clearly defines a \(\lambda\)-Borel set, the map \( x \mapsto A_x \) will then be \(\lambda\)-Borel, as desired.

Fix any \( x \in X \). To prove the above equivalence, assume first that there is \( y \in A_x \cap \Nbhd_s \) and fix any \( n \geq \lh(s) \). Since \( \Nbhd_{y \restriction n} \) is closed but not compact, there is \( z \in B(\lambda) \setminus A_x \) such that \( z \restriction n = y \restriction n \). 
Let \( 0 \neq m \in \omega \) and \( \alpha < \lambda \) be such that \( (x,z) \in B^m_\alpha \times U^m_\alpha \), so that \( z \restriction m \in S^m_\alpha \), and set \( t = y \restriction m \): we claim that \( m \) and \( t \) are as required. First of all, observe that \( m > n \), as otherwise \( y \in U^m_\alpha \) (because \( y \restriction m = z \restriction m \) if \( m \leq n \)) and thus \( (x,y) \in A \cap (B^m_\alpha \times U^m_\alpha) \), a contradiction. 
It follows that \( t \supseteq s \) because both sequences are initial segments of \( y \) and \( \lh(t) = m > n \geq \lh(s) \). Moreover, \( x \in B^m_\alpha \) for the chosen \(\alpha\), so also the second conjunct in the formula between the square brackets is satisfied. Finally, if \( x \in B^k_\beta \) and \( t \restriction k \in S^k_\beta \) for some \( k \leq m \) and \( \beta < \lambda \), then by \( y \supseteq t \) we would have again \( (x,y) \in A\cap (B^k_\beta \times U^k_\beta) \), a contradiction. Thus \( x \in B^k_\beta \Rightarrow t \restriction k \notin S^k_\beta \) for all such \( k \) and \( \beta \), as required.

Conversely, assume now that the right hand side of the equivalence~\eqref{eq:compactsections} is satisfied. We call \( t \) a witness if the part in square brackets of the formula is satisfied with \( m = \lh(t) \), that is, \( t \supseteq s \),  \( x \in B^{\lh(t)}_\alpha \) for some \( \alpha < \lambda \), and \( x \in B^k_{\beta} \Rightarrow t \restriction k \notin S^k_\beta \) for all \( k \leq \lh(t) \) and \( \beta < \lambda \). Let 
\[ 
T_s = \{ t \restriction k \mid  k \leq \lh(t) \wedge  t \text{ is a witness} \} 
\] 
be the tree generated by all witnesses. Clearly \( T_s \) is infinite because our condition requires that there are witnesses of arbitrarily high length. 

\begin{claim}
Each level of \( T_s \) is finite. 
\end{claim}

\begin{proof}[Proof of the claim]
Fix any \( 0 \neq n \in \omega \). Let \( m \geq n \) be smallest such that \( x \in B^m_\alpha \) for some \( \alpha < \lambda \), which exists because there must be witnesses of arbitrarily high length. 
Notice that if \( t \) is any witness of length at least \( m \), then \( t \restriction m \notin S^m_\alpha \) by \( x \in B^m_\alpha \). It follows that every \( u \in T_s \) of length \( n \) is of the form \( t \restriction n \) for some \( t \in \pre{m}{\lambda} \setminus S^m_\alpha \), and since \( S^m_\alpha \) is cofinite there are only finitely many such sequences \( u \).
\end{proof}

Since \( T_s \) is an infinite finitely branching \(\omega\)-tree, by K\"onig's lemma there is an infinite branch \( y \in [T_s] \). Clearly, \( y \in \Nbhd_s \) because all sequences in \( T_s \) are compatible with \( s \) because any witness extends \( s \) by definition: we claim that \( (x,y) \in A \), so that \( y \) witnesses \( A_x \cap \Nbhd_s \neq \emptyset \), as required. If not, there are \( 0 \neq k \in \omega \) and \( \beta < \lambda \) such that \( (x,y) \in B^k_\beta \times U^k_\beta \). Since \( y \in [T_s ] \), there is a witness \( t \) of length at least \( k \) such that \( y \restriction k = t \restriction k \). Since \( x \in B^k_\beta \), by definition of ``witness'' it follows that \( t \restriction k \notin S^k_\beta \). On the other hand, \( y \in U^k_\beta \) is equivalent to \( y \restriction k \in S^k_\beta \), and this is then a contradiction. 
\end{proof}

We are now ready to prove the analogue of Theorem~\ref{thm:ctblsections} for \(\lambda\)-Borel  sets \( A \) with compact vertical sections. 
The statements and their proofs are quite similar to the previous ones, but with the following significant differences. For \(x \in \p(A) \):
\begin{itemizenew}
\item
\( A_x \) is now automatically closed because it is compact;
\item
\( A_x \) can now be uncountable, but still we have \( |A_x| \leq 2^{\aleph_0} \) and \( \dens(A_x) \leq \aleph_0 \) because \( A_x \) is compact metrizable.
\end{itemizenew}
This in particular means that we will still be able to uniformly enumerate dense subsets of the \( A_x \) with countably many \(\lambda\)-Borel functions, but we will need continuum many of them to uniformly enumerate the whole sections.
Notice also that the analogue of Theorem~\ref{thm:ctblsections}\ref{thm:ctblsections-2} is already proved in Theorem~\ref{thm:compactsections}, and will thus be omitted.

\begin{theorem} \label{thm:compactsections2}
Assume that \( 2^{< \lambda} =  \lambda \). Let \( X \) be a standard \(\lambda\)-Borel space, \( Y \) be a \(\lambda\)-Polish space, and \( A \subseteq X \times Y \) be a \(\lambda\)-Borel set all of whose vertical sections are compact. 
\begin{enumerate-(i)} 
\item \label{thm:compactsections2-1}
There is a \(\lambda\)-Borel uniformization of \( A \), hence in particular \( \p(A) \) is \(\lambda\)-Borel. 
\item \label{thm:compactsections2-2}
There is a sequence \( (\varsigma^A_n)_{n \in \omega} \) of \(\lambda\)-Borel functions \( \varsigma^A_n \colon \p(A) \to Y \) such that for all \( x \in \p(A) \) the set \( \{ \varsigma^A_n(x) \mid n \in \omega \} \) is dense in \( A_x \).
\item \label{thm:compactsections2-3}
There is a sequence \( (\varrho^A_\alpha)_{\alpha < 2^{\aleph_0}} \) of \(\lambda\)-Borel functions \( \varrho^A_\alpha \colon \p(A) \to Y \) such that
\( A_x = \{ \varrho_\alpha^A(x) \mid \alpha < 2^{\aleph_0} \} \)
 for all \( x \in \p(A) \).
 \item \label{thm:compactsections2-4}
 The set \( A \) can be written as \( A = \bigcup_{\alpha < 2^{\aleph_0}} P_\alpha \), where the sets \( P_\alpha \) are pairwise disjoint \(\lambda\)-Borel graphs of partial (necessarily \(\lambda\)-Borel) functions.
\end{enumerate-(i)}
\end{theorem}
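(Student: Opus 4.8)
The plan is to prove Theorem~\ref{thm:compactsections2} by closely mirroring the structure of the proof of Theorem~\ref{thm:ctblsections}, using the newly-established Theorem~\ref{thm:compactsections} (that \( x \mapsto A_x \) is \(\lambda\)-Borel as a map into \( K(Y) \)) as the analogue of parts~\ref{thm:ctblsections-1} and~\ref{thm:ctblsections-2} from the countable-sections case. First I would reduce, as in the previous proofs, to the situation where \( X \) is itself \(\lambda\)-Polish (via Proposition~\ref{prop:charstandardBorel}) and, for the parts where it helps, to the case \( Y = B(\lambda) \) with \( A \) closed, using Proposition~\ref{prop:surjectionpreservingcompact}: the crucial point there is that Proposition~\ref{prop:surjectionpreservingcompact} gives a continuous surjection \( f \colon B(\lambda) \to Y \) whose preimage of any compact set is compact, so pulling \( A \) back to \( A' = \{ (x,y) \in X \times B(\lambda) \mid (x,f(y)) \in A \} \) preserves both \(\lambda\)-Borelness and compactness of the vertical sections, and any dense-section enumeration or uniformization for \( A' \) pushes forward through \( f \) to one for \( A \).

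For part~\ref{thm:compactsections2-1}, I would invoke the general criterion Theorem~\ref{thm:changeoftopologyforuniformizations}, specifically condition~\ref{thm:changeoftopologyforuniformizations-5}: apply Corollary~\ref{cor:changeoftopologyclopen} within \( X \times Y \) to obtain a \(\lambda\)-Polish topology \( \tau' \) refining the original, generating the same \(\lambda\)-Borel sets, and making \( A \) closed; then each nonempty section \( \{x\} \times A_x \), being \( \tau' \)-closed inside a compact set, is a compact metrizable space, hence has isolated points (indeed any nonempty compact metrizable space, being separable, has them), which is exactly what condition~\ref{thm:changeoftopologyforuniformizations-5} requires. Once \( \p(A) \) is shown \(\lambda\)-Borel as a by-product, I would obtain part~\ref{thm:compactsections2-2} by imitating the construction of the \( \varsigma^A_n \) in Theorem~\ref{thm:ctblsections}\ref{thm:ctblsections-3}: reduce to \( Y = B(\lambda) \) and \( A \) closed, and because \( A_x \) is now compact (hence separable with \( \dens(A_x) \le \aleph_0 \)) there are only countably many \( s \in \pre{<\omega}{\lambda} \) at each level meeting \( A_x \); Theorem~\ref{thm:compactsections} guarantees that \( x \mapsto A_x \) is \(\lambda\)-Borel, so one can select a \(\lambda\)-Borel function coding a canonical enumeration of the relevant basic neighborhoods and then uniformize each \( A \cap (X \times \Nbhd_{\sigma^{-1}(\alpha)}) \) via part~\ref{thm:compactsections2-1}, exactly as before.

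For parts~\ref{thm:compactsections2-3} and~\ref{thm:compactsections2-4}, the difference from Theorem~\ref{thm:ctblsections}\ref{thm:ctblsections-4}--\ref{thm:ctblsections-5} is that the sections can now be uncountable, but still of size at most \( 2^{\aleph_0} \) with a countable dense subset. Starting from the \( \varsigma^A_n \) of part~\ref{thm:compactsections2-2}, I would fix a bijection \( h \colon 2^{\aleph_0} \to \pre{\omega}{\omega} \) and define \( \vartheta^A_\alpha(x) \) as the limit of \( (\varsigma^A_{h(\alpha)(n)}(x))_{n} \) when it is Cauchy and \( \varsigma^A_0(x) \) otherwise; since \( A_x \) is \emph{closed} (being compact) and \( \{\varsigma^A_n(x) \mid n \in \omega\} \) is dense in it, we get \( A_x = \{ \vartheta^A_\alpha(x) \mid \alpha < 2^{\aleph_0} \} \), and each \( \vartheta^A_\alpha \) is \(\lambda\)-Borel because \( 2^{\aleph_0} < \lambda \). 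This directly yields part~\ref{thm:compactsections2-3} with \( \varrho^A_\alpha = \vartheta^A_\alpha \); no further refinement step into a weak partition is needed since we are already enumerating over the full index set \( 2^{\aleph_0} \). Finally, part~\ref{thm:compactsections2-4} follows by the standard disjointification \( P_\alpha = \mathrm{graph}(\varrho^A_\alpha) \setminus \bigcup_{\beta < \alpha} \mathrm{graph}(\varrho^A_\beta) \), using \( 2^{\aleph_0} < \lambda \) so that each \( P_\alpha \) remains \(\lambda\)-Borel. The main obstacle I anticipate is confirming that the Cauchy/limit construction behaves correctly with \emph{compact} rather than merely closed sections and that the enumeration genuinely exhausts \( A_x \) (rather than just its dense subset); but compactness forcing \( A_x \) to be closed, combined with density of the \( \varsigma^A_n(x) \), resolves this, and the bound \( 2^{\aleph_0} < \lambda \) (which holds since \(\lambda\) is \(\omega\)-inaccessible under \( 2^{<\lambda}=\lambda \)) keeps all the continuum-indexed unions within the \(\lambda\)-Borel \(\sigma\)-algebra.
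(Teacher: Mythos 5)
Your parts (ii)--(iv) follow the paper's proof essentially verbatim, but your argument for part~\ref{thm:compactsections2-1} --- on which your part~\ref{thm:compactsections2-2} also relies --- has a genuine gap, and it occurs at exactly the point where the compact-section case diverges from the countable-section case. After you apply Corollary~\ref{cor:changeoftopologyclopen} to get a finer \(\lambda\)-Polish topology \(\tau'\) making \( A \) closed, the sections \( \{x\} \times A_x \) are indeed \(\tau'\)-closed, but they need not be \(\tau'\)-compact: compactness is not preserved when passing to a strictly finer topology, so ``\(\tau'\)-closed inside a compact set'' does not make \( \{x\}\times A_x \) a compact subspace of \( (X \times Y, \tau') \). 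Worse, even if compactness survived, your key claim --- that every nonempty compact metrizable space has an isolated point --- is false: the Cantor space \( \pre{\omega}{2} \) is nonempty, compact, metrizable, and perfect. The corresponding step in Theorem~\ref{thm:ctblsections}\ref{thm:ctblsections-1} works only because there the sections become \emph{countable} Polish spaces, which must have isolated points by Baire category; no such principle exists for compact sections. Indeed, verifying condition~\ref{thm:changeoftopologyforuniformizations-5} of Theorem~\ref{thm:changeoftopologyforuniformizations} is essentially equivalent to already possessing a uniformization, as the proof of the implication \ref{thm:changeoftopologyforuniformizations-1}~\( \Rightarrow \)~\ref{thm:changeoftopologyforuniformizations-2} there shows: the isolated points are manufactured \emph{from} a given uniformization, so your route is circular in spirit.

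The repair is short and is what the paper does for part~\ref{thm:compactsections2-1}: by Theorem~\ref{thm:compactsections}, the map \( x \mapsto A_x \) is \(\lambda\)-Borel as a map from \( X \) to \( K(Y) \subseteq F(Y) \), so composing it with the selection function \( \sigma^Y_0 \colon F(Y) \to Y \) of Theorem~\ref{thm:selectionforF(X)} yields a \(\lambda\)-Borel map \( f(x) = \sigma^Y_0(A_x) \) satisfying \( f(x) \in A_x \) whenever \( A_x \neq \emptyset \); then \( \mathrm{graph}(f) \cap A \) is a \(\lambda\)-Borel uniformization of \( A \), and \( \p(A) \) is \(\lambda\)-Borel by Theorem~\ref{thm:injectiveBorelimage}. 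With part~\ref{thm:compactsections2-1} proved this way, the rest of your proposal goes through exactly as written and matches the paper: the reduction to \( Y = B(\lambda) \) via Proposition~\ref{prop:surjectionpreservingcompact}, the observation that each compact section meets only countably many basic sets \( \Nbhd_s \) (with the pieces \( A \cap (X \times \Nbhd_{\sigma^{-1}(\alpha)}) \) again having compact sections, so the repaired part~\ref{thm:compactsections2-1} applies to them), the Cauchy-limit functions \( \vartheta^A_\alpha \) exhausting the closed set \( A_x \) with no further partition step needed, and the disjointification indexed by \( 2^{\aleph_0} < \lambda \).
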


\begin{proof}
\ref{thm:compactsections2-1}
The map \( x \mapsto A_x \) is \(\lambda\)-Borel by Theorem~\ref{thm:compactsections}. Consider a function \( \sigma^Y_0 \colon F(Y) \to Y \) as in Theorem~\ref{thm:selectionforF(X)}. The map \( f \colon X \to Y \) sending \( x \) to \( \sigma^Y_0(A_x) \) is a \(\lambda\)-Borel uniformizing function for \( A \).

\ref{thm:compactsections2-2} 

Arguing as in  the proof of Theorem~\ref{thm:compactsections}, one easily sees that it is enough to consider the case \( Y = B(\lambda) \). Moreover, for \( x \in \p(A) \) there are exactly \( \aleph_0 \)-many \( s \in \pre{<\omega}{\lambda} \) such that \( A_x \cap \Nbhd_s \neq \emptyset \) because \( A_x \) is compact. Thus we can repeat the argument from Theorem~\ref{thm:ctblsections}\ref{thm:ctblsections-3}, using the fact that \( A_x \cap \Nbhd_{\sigma^{-1}(\alpha)} \), being a closed subset of the compact set \( A_x \), is compact itself.

%
%

\ref{thm:compactsections2-3}
Follow the proof of Theorem~\ref{thm:ctblsections}\ref{thm:ctblsections-4} until the construction of the functions \( (\vartheta^A_\alpha)_{\alpha < 2^{\aleph_0}} \). (This is possible because each \( A_x \), being compact, is also closed.)  Since for all \( x \in \p(A) \) we already have
\[ 
A_x = \{ \vartheta^A_\alpha(x) \mid \alpha < 2^{\aleph_0} \},
 \] 
 we can simply set \( \varrho^A_\alpha = \vartheta^A_\alpha \).
%

\ref{thm:compactsections2-4}
Let \( (\varrho^A_\alpha)_{\alpha < 2^{\aleph_0}} \) be as in part~\ref{thm:compactsections2-3}, and set 
\[ 
P_\alpha = \mathrm{graph}(\varrho^A_\alpha) \setminus \bigcup_{\beta < \alpha} \mathrm{graph}(\varrho^A_\beta) . \qedhere
\]
\end{proof}

Similarly to the case of \(\lambda\)-Borel sets with countable vertical sections, once we have Theorems~\ref{thm:compactsections} and~\ref{thm:compactsections2}
 we can then derive a number of standard consequences, whose proofs are omitted because they are essentially the same of the corresponding results in Section~\ref{subsec:ctblsections}. First, as an easy application to \(\lambda\)-Borel equivalence relations we get:

\begin{corollary} \label{cor:compactclassesimpliessmooth}
Assume \( 2^{< \lambda} = \lambda \). If \( X \) is \(\lambda\)-Polish and \( E \) is a \(\lambda\)-Borel equivalence relation on \( X \) all of whose equivalence classes are compact, then \( E \) has a \(\lambda\)-Borel selector (equivalently: a \(\lambda\)-Borel transversal). In particular, \( E \) is smooth.
\end{corollary}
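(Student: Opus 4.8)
The plan is to deduce the result almost immediately from Theorem~\ref{thm:compactsections} together with the selection theorem for the Effros \(\lambda\)-Borel space (Theorem~\ref{thm:selectionforF(X)}), so that essentially all the work has already been carried out. First I would apply Theorem~\ref{thm:compactsections} with \( Y = X \) to the set \( A = E \subseteq X \times X \). Since \( E \in \lB(X \times X) \) and every vertical section \( E_x = [x]_E \) is compact by hypothesis, the theorem gives that the map
\[
\Phi \colon X \to K(X) \subseteq F(X), \qquad x \mapsto [x]_E ,
 \]
is \(\lambda\)-Borel. (Here I use that each equivalence class is automatically closed, being compact, so that it genuinely is an element of \( F(X) \).)

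Next I would compose \( \Phi \) with the function \( \sigma^X_0 \colon F(X) \to X \) provided by Theorem~\ref{thm:selectionforF(X)} and set \( s(x) = \sigma^X_0([x]_E) \). The map \( s \) is \(\lambda\)-Borel as a composition of two \(\lambda\)-Borel maps. It is a selector for \( E \): recall from the construction in (the proof of) Theorem~\ref{thm:selectionforF(X)} that \( \sigma^X_0(F) \in F \) for every nonempty \( F \in F(X) \), so \( s(x) = \sigma^X_0([x]_E) \in [x]_E \), whence \( s(x) \mathrel{E} x \); and if \( x \mathrel{E} y \) then \( [x]_E = [y]_E \), so \( s(x) = \sigma^X_0([x]_E) = \sigma^X_0([y]_E) = s(y) \). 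Thus \( s \) is a \(\lambda\)-Borel selector, and by the discussion relating selectors and transversals in Subsection~\ref{sec:standardBorel} (every \(\lambda\)-Borel selector yields a \(\lambda\)-Borel transversal \( T = \{ x \in X \mid s(x) = x \} \)) we also obtain a \(\lambda\)-Borel transversal.

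Finally, to conclude smoothness I would simply take the witnessing function to be \( s \) itself (with target the standard \(\lambda\)-Borel space \( X \)): clearly \( x_0 \mathrel{E} x_1 \Rightarrow s(x_0) = s(x_1) \) by the selector property, while conversely \( s(x_0) = s(x_1) \) forces \( x_0 \mathrel{E} s(x_0) = s(x_1) \mathrel{E} x_1 \), hence \( x_0 \mathrel{E} x_1 \); thus \( x_0 \mathrel{E} x_1 \iff s(x_0) = s(x_1) \), which is exactly smoothness. I do not expect any genuine obstacle here, since the only substantial input — the \(\lambda\)-Borel measurability of \( x \mapsto [x]_E \) as a map into \( K(X) \) — is precisely the content of Theorem~\ref{thm:compactsections}; the remaining steps are routine verifications of the selector and smoothness properties. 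The one point deserving a word of care is confirming that \( \sigma^X_0 \) actually selects a point \emph{inside} its argument (and not merely a point of its closure), which is immediate from how \( \sigma^X_0 \) is built in Theorem~\ref{thm:selectionforF(X)}.
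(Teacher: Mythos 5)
Your proposal is correct and is exactly the argument the paper intends: the proof is omitted there precisely because it follows from Theorem~\ref{thm:compactsections} (Borel-measurability of \( x \mapsto [x]_E \) as a map into \( K(X) \subseteq F(X) \)) composed with \( \sigma^X_0 \) from Theorem~\ref{thm:selectionforF(X)}, mirroring the proofs of Theorem~\ref{thm:compactsections2}\ref{thm:compactsections2-1} and of the selector implication in Proposition~\ref{prop:ctblBorelER}. Your verification of the selector, transversal, and smoothness properties matches the paper's standard conventions, so no gap remains.
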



%
%

Since \( 2^{\aleph_0} < \lambda \) when \( 2^{< \lambda} = \lambda \), the following can be seen as a form  of the Feldman-Moore theorem for \(\lambda\)-Borel equivalence relations with compact equivalence classes. 

\begin{theorem} \label{thm:weakFeldmanMoore}
Assume that \( 2^{< \lambda} =  \lambda \).
Let \( X \) be a  \(\lambda\)-Polish space, and let \( E \) be a \(\lambda\)-Borel equivalence relation on \( X \) with compact equivalence classes. Then \( E \) is induced by a \(\lambda\)-Borel action of a (discrete) continuum-sized group \( G \) on \( X \).
\end{theorem}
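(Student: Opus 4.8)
The plan is to follow the proof of the Feldman--Moore theorem (Theorem~\ref{thm:feldman-moore}) almost verbatim, replacing the countable index sets by continuum-sized ones and using the structure theorem for \(\lambda\)-Borel sets with compact sections in place of the one for sets with countable sections. Concretely, I would first apply Theorem~\ref{thm:compactsections2}\ref{thm:compactsections2-4} to \( A = E \) (whose vertical sections \( E_x = [x]_E \) are compact by hypothesis) to write
\[
E = \bigcup_{\alpha < 2^{\aleph_0}} P_\alpha,
\]
where the \( P_\alpha \) are pairwise disjoint \(\lambda\)-Borel graphs of partial functions from \( X \) into itself.

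Next, for \( \alpha, \beta < 2^{\aleph_0} \) I would set \( Q_{\alpha,\beta} = P_\alpha \cap P_\beta^{-1} \). These sets are pairwise disjoint and \(\lambda\)-Borel, and each is the graph of an \emph{injective} partial function: it is contained in the function graph \( P_\alpha \), while \( Q_{\alpha,\beta}^{-1} \subseteq P_\beta \) is a function graph as well, so no two pairs of \( Q_{\alpha,\beta} \) share the same second coordinate. Moreover \( E = \bigcup_{\alpha,\beta} Q_{\alpha,\beta} \): if \( (x,y) \in E \) then \( (y,x) \in E \) by symmetry, so \( (x,y) \in P_\alpha \) and \( (y,x) \in P_\beta \) for suitable \( \alpha,\beta \), whence \( (x,y) \in P_\alpha \cap P_\beta^{-1} \). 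I would then apply Lemma~\ref{lem:partial bijections} to each \( Q_{\alpha,\beta} \), obtaining a decomposition \( Q_{\alpha,\beta} = \bigcup_{k \in \omega} R_{\alpha,\beta,k} \) into pairwise disjoint \(\lambda\)-Borel sets whose two projections \( \p(R_{\alpha,\beta,k}) \) and \( \p_2(R_{\alpha,\beta,k}) \) are either disjoint or equal. Exactly as in the proof of Theorem~\ref{thm:feldman-moore}, each \( R_{\alpha,\beta,k} \) is the graph of a \(\lambda\)-Borel isomorphism between its two (necessarily \(\lambda\)-Borel, by Theorems~\ref{thm:injectiveBorelimage} and~\ref{thm:borelvsgraph}) projections, and this isomorphism extends to a \(\lambda\)-Borel automorphism \( g_{\alpha,\beta,k} \) of \( X \) — using the identity off the relevant projections, and the natural involution when the two projections coincide. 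By construction,
\[
x \mathrel{E} y \iff g_{\alpha,\beta,k}(x) = y \text{ for some } \alpha, \beta < 2^{\aleph_0} \text{ and } k \in \omega.
\]

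Finally, I would let \( G \) be the group of \(\lambda\)-Borel automorphisms of \( X \) generated by \( \{ g_{\alpha,\beta,k} \mid \alpha,\beta < 2^{\aleph_0}, \ k \in \omega \} \) together with \( \id_X \). There are only \( 2^{\aleph_0} \) generators, and since \( G \) is a quotient of the set of finite words over this alphabet, which has cardinality \( (2^{\aleph_0})^{<\omega} = 2^{\aleph_0} \), the group \( G \) has size at most \( 2^{\aleph_0} \) and is thus continuum-sized. Its natural action on \( X \) is \(\lambda\)-Borel because every \( g \in G \) is a finite composition of the \(\lambda\)-Borel automorphisms \( g_{\alpha,\beta,k} \) and their inverses, and \(\lambda\)-Borel maps are closed under composition; and the action induces exactly \( E \) by the displayed equivalence. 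The bulk of the argument — that the \( g_{\alpha,\beta,k} \) are well-defined \(\lambda\)-Borel automorphisms and generate the desired action — is literally the same as in Theorem~\ref{thm:feldman-moore}, so I do not expect a serious obstacle beyond confirming that the cardinality bookkeeping goes through. The only real difference from the countable case is that the index set of the decomposition now has size \( 2^{\aleph_0} \) rather than \( \aleph_0 \); this is precisely what turns the countable group of the Feldman--Moore theorem into a continuum-sized one, and it is harmless here since \( 2^{\aleph_0} < \lambda \) under \( 2^{<\lambda} = \lambda \).
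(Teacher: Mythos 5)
Your proposal is correct and is exactly the argument the paper intends: Theorem~\ref{thm:weakFeldmanMoore} is presented among the consequences of Theorems~\ref{thm:compactsections} and~\ref{thm:compactsections2} whose proofs are omitted as ``essentially the same'' as those in Section~\ref{subsec:ctblsections}, i.e.\ the proof of Theorem~\ref{thm:feldman-moore} run verbatim with the continuum-indexed decomposition of Theorem~\ref{thm:compactsections2}\ref{thm:compactsections2-4} replacing the countable one, and with the same cardinality bookkeeping \( (2^{\aleph_0})^{<\omega} = 2^{\aleph_0} < \lambda \). One immaterial slip in wording: in the extension step the two cases are the reverse of what you wrote --- when the two projections of \( R_{\alpha,\beta,k} \) coincide one extends the partial bijection by the identity, while the involution-type extension (\( h \) on one projection, \( h^{-1} \) on the other, identity elsewhere) is the one used when they are disjoint, exactly as in the proof of Theorem~\ref{thm:feldman-moore}, so nothing breaks.
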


There is an interesting phenomenon that arises when considering \(\lambda\)-Borel equivalence relations induced by a \(\lambda\)-Borel action of an \emph{uncountable} (discrete) group. Given a cardinal \( \kappa \leq \lambda \), say that an equivalence relation \( E \) on a standard \(\lambda\)-Borel space \( X \) is \markdef{hyper-\( \kappa \)-small} if \( E \) can be written as an increasing union \( \bigcup_{\alpha < \kappa} E_\alpha \) of \( \lambda \)-Borel equivalence relations on \( X \) that are  \markdef{\( \kappa \)-small}, that is: for each \( \alpha < \kappa \) there is \( \mu < \kappa \) such that all \( E_\alpha \)-equivalence classes have size at most \( \mu \). 
Notice that hyper-\( \aleph_0 \)-smallness is a strengthening of what in classical descriptive set theory is called ``hyperfiniteness''.

\begin{lemma} \label{lem:hypersmall} 
Let \( \omega < \kappa \leq \lambda \) be a cardinal. If the equivalence relation \( E \) on the standard \(\lambda\)-Borel space \( X \) is induced by a \(\lambda\)-Borel action of a group \( G \) of size \( \kappa \), then \( E \) is hyper-\( \kappa \)-small.
\end{lemma}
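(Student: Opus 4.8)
The plan is to witness hyper-$\kappa$-smallness by the orbit equivalence relations of an increasing chain of small subgroups of $G$. First I would fix an enumeration $G = \{ g_\alpha \mid \alpha < \kappa \}$, which is available because $|G| = \kappa$ forces $G$ to be well-orderable (so no choice is needed). For each $\alpha < \kappa$ let $G_\alpha = \langle g_\beta \mid \beta < \alpha \rangle$ be the subgroup generated by the first $\alpha$ group elements, and define
\[
E_\alpha = \{ (x,y) \in X \times X \mid \exists g \in G_\alpha \, (g \cdot x = y) \}
\]
to be the orbit equivalence relation of the $G_\alpha$-action. Since each $G_\alpha$ is a subgroup, $E_\alpha$ is genuinely an equivalence relation, and $G_\alpha \subseteq G_{\alpha'}$ for $\alpha < \alpha'$ gives $E_\alpha \subseteq E_{\alpha'}$.

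Next I would check the three clauses of the definition. The chain $(E_\alpha)_{\alpha<\kappa}$ is increasing by the previous remark, and $\bigcup_{\alpha < \kappa} E_\alpha = E$ because $\bigcup_{\alpha < \kappa} G_\alpha = G$ (each $g_\beta$ lies in $G_{\beta+1}$) and $E$ is exactly the orbit equivalence relation of the whole $G$. For $\kappa$-smallness, note that the set of generators $\{ g_\beta \mid \beta < \alpha \}$ has size $|\alpha| < \kappa$, and the subgroup it generates is the (well-orderable) set of finite words over these generators and their inverses; hence $\mu_\alpha := |G_\alpha| \leq \max(|\alpha|, \omega) < \kappa$, using $\kappa > \omega$. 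As the orbit map $g \mapsto g \cdot x$ surjects $G_\alpha$ onto each $E_\alpha$-class $G_\alpha \cdot x$, every $E_\alpha$-equivalence class has size at most $\mu_\alpha < \kappa$, so $E_\alpha$ is $\kappa$-small.

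The one genuinely substantive point is the $\lambda$-Borelness of each $E_\alpha$. Since $X$ is standard $\lambda$-Borel, I would fix a $\lambda$-Polish topology $\tau$ on $X$ generating its $\lambda$-Borel structure. Because the action is $\lambda$-Borel, for every $g \in G$ the map $\varphi_g \colon X \to X$, $x \mapsto g \cdot x$, is a $\lambda$-Borel function from $(X,\tau)$ into the metrizable space $(X,\tau)$ of weight at most $\lambda$; by Proposition~\ref{prop:borelvsgraph} its graph $\mathrm{graph}(\varphi_g)$ is therefore $\lambda$-Borel in $X \times X$. Now
\[
E_\alpha = \bigcup_{g \in G_\alpha} \mathrm{graph}(\varphi_g)
\]
exhibits $E_\alpha$ as a well-ordered union of length $|G_\alpha| < \kappa \leq \lambda$ of $\lambda$-Borel sets, and such unions remain inside the $\lambda^+$-algebra $\lB(X \times X)$; hence each $E_\alpha$ is $\lambda$-Borel. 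This completes the verification that $(E_\alpha)_{\alpha<\kappa}$ witnesses that $E$ is hyper-$\kappa$-small. I do not expect a real obstacle: the only care required is to keep every object well-orderable (guaranteed by $|G| = \kappa$) so that the construction goes through in the choiceless base theory, and to invoke Proposition~\ref{prop:borelvsgraph} through the fixed $\lambda$-Polish topology rather than assuming $2^{<\lambda} = \lambda$.
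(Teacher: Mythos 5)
Your proof is correct and takes essentially the same route as the paper's: enumerate \( G \), form the subgroups \( G_\alpha \) generated by the initial segments \( \{ g_\beta \mid \beta < \alpha \} \), and let \( E_\alpha \) be the induced orbit equivalence relations. The paper states this construction without further verification, whereas you additionally spell out the intended justifications (\( \kappa \)-smallness via \( |G_\alpha| \leq \max(|\alpha|,\omega) < \kappa \), and \( \lambda \)-Borelness via Proposition~\ref{prop:borelvsgraph} together with closure of \( \lB(X \times X) \) under well-ordered unions of length at most \( \lambda \)), all of which are correct.
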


\begin{proof}
Enumerate \( G \) as \( G = \{ g_\alpha \mid \alpha < \kappa \} \). For every \( \alpha < \kappa \), let \( G_\alpha \) be the subgroup of \( G \) generated by \( \{ g_\beta \mid \beta < \alpha \} \), and let \( E_\alpha \) be the equivalence relation on \( X \) induced by the restriction of the action of \( G \) to \( G_\alpha \). Then the sequence \( (E_\alpha)_{\alpha < \kappa} \) witnesses that \( E \) is hyper-\( \kappa \)-small.
\end{proof}

Combining this with Theorem~\ref{thm:weakFeldmanMoore}, we then get:

\begin{corollary} \label{cor:hypersmall} 
Assume that \( 2^{< \lambda} =  \lambda \).
Let \( X \) be a  \(\lambda\)-Polish space, and let \( E \) be a \(\lambda\)-Borel equivalence relation on \( X \) with compact equivalence classes. Then \( E \) is hyper-\( 2^{\aleph_0} \)-small.

In particular, if \( \mathsf{CH} \) holds, then every \( E \) as above can be written as an \( \omega_1 \)-increasing union of countable \(\lambda\)-Borel equivalence relations.
\end{corollary}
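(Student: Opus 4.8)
The plan is to derive Corollary~\ref{cor:hypersmall} directly by composing Theorem~\ref{thm:weakFeldmanMoore} with Lemma~\ref{lem:hypersmall}, so the proof is essentially a two-line deduction together with the verification that the relevant cardinal arithmetic hypotheses are met. First I would invoke Theorem~\ref{thm:weakFeldmanMoore}: since \( X \) is \(\lambda\)-Polish, \( E \) is \(\lambda\)-Borel, and all its equivalence classes are compact, that theorem provides a (discrete) group \( G \) of size \( 2^{\aleph_0} \) and a \(\lambda\)-Borel action of \( G \) on \( X \) inducing \( E \).

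Next I would apply Lemma~\ref{lem:hypersmall} with \( \kappa = 2^{\aleph_0} \). For this I must check that \( \omega < 2^{\aleph_0} \leq \lambda \). The inequality \( \omega < 2^{\aleph_0} \) is Cantor's theorem. For the upper bound, recall from the running conventions that \(\lambda\) is an uncountable limit cardinal with \( \cf(\lambda) = \omega \); since we are assuming \( 2^{< \lambda} = \lambda \), the cardinal \(\lambda\) is strong limit, so \( 2^{\aleph_0} < \lambda \) because \( \aleph_0 < \lambda \). Hence \( 2^{\aleph_0} \leq \lambda \) and the hypotheses of Lemma~\ref{lem:hypersmall} are satisfied. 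The lemma then yields that \( E \) is hyper-\( 2^{\aleph_0} \)-small, which is the first assertion.

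For the ``in particular'' clause, I would specialize to the case where \( \mathsf{CH} \) holds, so that \( 2^{\aleph_0} = \omega_1 \). The first part gives \( E = \bigcup_{\alpha < \omega_1} E_\alpha \), an increasing union of \( \omega_1 \)-small \(\lambda\)-Borel equivalence relations, where by definition each \( E_\alpha \) admits some \( \mu < \omega_1 \) bounding the size of all its classes. Since \( \mu < \omega_1 \) means \( \mu \) is countable, every \( E_\alpha \)-class has size at most \( \aleph_0 \), i.e.\ each \( E_\alpha \) is a countable \(\lambda\)-Borel equivalence relation. Thus \( E \) is written as an \( \omega_1 \)-increasing union of countable \(\lambda\)-Borel equivalence relations, as claimed.

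As this is a pure corollary, there is no genuine mathematical obstacle: the only point requiring a moment's care is the verification that \( 2^{\aleph_0} \) falls strictly between \(\omega\) and \(\lambda\) so that Lemma~\ref{lem:hypersmall} applies, which follows immediately from \(\lambda\) being a strong limit cardinal under \( 2^{<\lambda} = \lambda \). All the real work has already been carried out in Theorem~\ref{thm:weakFeldmanMoore} (whose proof in turn rests on the compact-section uniformization machinery of Theorems~\ref{thm:compactsections} and~\ref{thm:compactsections2}) and in Lemma~\ref{lem:hypersmall}.
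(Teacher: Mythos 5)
Your proof is correct and follows exactly the paper's own route: the corollary is obtained by combining Theorem~\ref{thm:weakFeldmanMoore} with Lemma~\ref{lem:hypersmall} applied with \( \kappa = 2^{\aleph_0} \), using that \( 2^{\aleph_0} < \lambda \) since \( 2^{<\lambda} = \lambda \) makes \(\lambda\) strong limit. Your handling of the \( \mathsf{CH} \) clause (classes bounded by some \( \mu < \omega_1 \), hence countable) is also the intended reading.
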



Finally, for \( X \) a topological space call
a map \( f \colon X \to Y \), where \( X \) is a topological space, \markdef{compact-to-\( 1 \)} if \( f^{-1}(y) \) is compact for every \( y \in Y \). 

\begin{corollary}
Let \( \lambda \) be such that \( 2^{< \lambda} = \lambda \). Let \( X \) be a \(\lambda\)-Polish space, and let \( Y \) be a standard \(\lambda\)-Borel space. If a \(\lambda\)-Borel map \( f \colon X \to Y \) is compact-to-\( 1 \), then \( \ran(f) \) is \(\lambda\)-Borel and \( f \) admits a \(\lambda\)-Borel right-inverse.
\end{corollary}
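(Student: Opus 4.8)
The plan is to mimic exactly the proof of the countable-to-$1$ corollary proved just above, replacing the appeal to Theorem~\ref{thm:ctblsections} with its compact-sections counterpart Theorem~\ref{thm:compactsections2}. First I would form the ``flipped graph''
\[
A = \{ (y,x) \in Y \times X \mid f(x) = y \},
\]
viewed as a subset of the product $Y \times X$, where the standard $\lambda$-Borel space $Y$ plays the role of the space called $X$ in the statement of Theorem~\ref{thm:compactsections2}, and the $\lambda$-Polish space $X$ plays the role of the space called $Y$ there. This alignment is exactly what the theorem requires (a standard $\lambda$-Borel first factor and a $\lambda$-Polish second factor).

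Next I would verify the two hypotheses of Theorem~\ref{thm:compactsections2}. For the $\lambda$-Borelness of $A$: since $X$ is in particular standard $\lambda$-Borel and $f$ is $\lambda$-Borel, its graph $\{ (x,y) \mid f(x) = y \}$ is $\lambda$-Borel in $X \times Y$ by (the standard $\lambda$-Borel version of) Theorem~\ref{thm:borelvsgraph}, see Remark~\ref{rmk:borelvsgraph}; as the coordinate-swap $(x,y) \mapsto (y,x)$ is a $\lambda$-Borel isomorphism between $X \times Y$ and $Y \times X$, the set $A$ is $\lambda$-Borel as well. For the compactness of the vertical sections: the section of $A$ over a point $y$ is precisely $A_y = f^{-1}(y)$, which is compact by the assumption that $f$ is compact-to-$1$.

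With these two facts in hand, Theorem~\ref{thm:compactsections2}\ref{thm:compactsections2-1} immediately yields a $\lambda$-Borel uniformization $A^*$ of $A$ and the $\lambda$-Borelness of the projection $\p(A)$ on the first coordinate. Since $\p(A) = \{ y \in Y \mid \exists x \, (f(x) = y) \} = \ran(f)$, this already gives that $\ran(f)$ is $\lambda$-Borel. Moreover $A^*$ is the graph of a $\lambda$-Borel function $g \colon \ran(f) \to X$ with $g(y) \in A_y = f^{-1}(y)$ for every $y \in \ran(f)$, that is, $f(g(y)) = y$; thus $g$ is the desired $\lambda$-Borel right-inverse of $f$.

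I do not expect any genuine obstacle here, as the statement is a formal consequence of the compact-sections uniformization theorem. The only point requiring a little care is checking that $A$ really is $\lambda$-Borel given that $Y$ is only assumed to be standard $\lambda$-Borel rather than $\lambda$-Polish; this is handled by invoking the version of Theorem~\ref{thm:borelvsgraph} valid for standard $\lambda$-Borel spaces under $2^{<\lambda} = \lambda$, together with the triviality that swapping coordinates preserves the $\lambda$-Borel structure.
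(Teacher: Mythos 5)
Your proof is correct and is exactly the argument the paper intends: the paper omits the proof precisely because it is the same as the countable-to-$1$ corollary, namely apply Theorem~\ref{thm:compactsections2}\ref{thm:compactsections2-1} to the flipped graph $A = \{ (y,x) \in Y \times X \mid f(x) = y \}$, which is $\lambda$-Borel by Theorem~\ref{thm:borelvsgraph} and has compact vertical sections $A_y = f^{-1}(y)$. Your extra care about $Y$ being only standard $\lambda$-Borel (handled via Remark~\ref{rmk:borelvsgraph} and the coordinate swap) is a sound filling-in of a detail the paper leaves implicit.
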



It is quite natural to conjecture that results along the lines of Theorems~\ref{thm:ctblsections}, \ref{thm:compactsections}, and~\ref{thm:compactsections2} could be obtained for \(\lambda\)-Borel sets with sections of size at most \( \lambda \). The natural adaptation of the classical techniques for the ``small section'' uniformization results from~\cite[Sections 18.C and 35.H]{Kechris1995} seems not to be of help here, mainly because we are still missing a Baire category theory for \(\lambda\)-Polish spaces matching well with the structure of \(\lambda\)-Borel sets. However, let us notice that the conjecture, if confirmed, would subsume all the results in Sections~\ref{subsec:ctblsections} and~\ref{subsec:cpctsections} (with the notable exception of Theorems~\ref{thm:feldman-moore} and~\ref{thm:weakFeldmanMoore}) because in our generalized context most of the ``smallness notions'' imply having small size: this includes e.g.\ the case of compact sections, \( K_\lambda \) sections (i.e.\ sections which are \(\lambda\)-sized unions of compact sets), \( < \lambda \)-Lindel\"of sections (i.e.\ sections which are \( \mu \)-Lindel\"of for some \( \mu < \lambda \), where \( \mu \) possibly varies with the section), \
sections which are \(\lambda\)-sized unions of closed \(\lambda\)-Lindel\"of sets (see Corollary~\ref{cor:lambdaLindelofissmall}), and so on.
The above discussion shows that settling the conjecture is an important problem in the area, as it would yield one of the strongest possible uniformization result for \(\lambda\)-Borel sets.

\section{Boundedness theorems and ranks for \(\lambda\)-coanalytic sets} \label{sec:lambdacoanalyticranks}

In this section we quickly review some important results related to the existence of \(\lambda\)-coanalytic ranks on \(\lambda\)-coanalytic sets, which roughly correspond to the material in Sections 34.A--34.C and 35.A--35.E of~\cite{Kechris1995}. All proofs in the generalized setting are natural adaptations of the corresponding classical arguments and will thus be omitted for the sake of conciseness. The only notable exception to this is the proof of Theorem~\ref{thm:Pi11isranked}, in which one needs to justify why the analogue of the classical argument can still be carried out despite the fact that the space \( \pre{\lambda}{\lambda} \) of functions from \( \lambda \) into itself is not \(\lambda\)-Polish when equipped with the bounded topology.

First we notice that the proof of~\cite[Theorem 31.1]{Kechris1995}, due to Kunen, yields the following boundedness theorem for \(\lambda\)-analytic well-founded (strict) relations.

\begin{theorem} \label{thm:rankwellfoundedrelations}
Let \(\lambda\) be such that \( 2^{< \lambda} = \lambda \), and let \( X \) be a \(\lambda \)-Polish space. If \( \prec \) is a \(\lambda\)-analytic well-founded (strict) relation on \( X \), then \( \rho(\prec) < \lambda^+ \).
\end{theorem}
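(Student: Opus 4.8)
The goal is to prove that a $\lambda$-analytic well-founded strict relation $\prec$ on a $\lambda$-Polish space $X$ has rank below $\lambda^+$. The plan is to mimic Kunen's argument from~\cite[Theorem 31.1]{Kechris1995}, whose core idea is that if the rank were at least $\lambda^+$, one could extract from the $\lambda$-analytic witness a sufficiently long "generic" descending chain, contradicting well-foundedness. First I would reduce to a convenient coding space: by Proposition~\ref{prop:charanalytic} (and since $2^{<\lambda}=\lambda$, so that $\lB(X)\subseteq\lS^1_1(X)$ and the normal form~\eqref{eq:normalformforanalytic} is available), I may work with $X=B(\lambda)$, presenting $\prec$ as $\p[T]$ for an $\omega$-tree $T\subseteq\pre{<\omega}{((\lambda\times\lambda)\times\lambda)}$ on pairs-with-witnesses: that is, $x\prec y$ iff the section tree $T(x,y)$ is ill-founded.

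Next I would set up the rank machinery. Recall from Section~\ref{subsec:trees} that the rank $\rho(\prec)$ of a well-founded relation is $\sup\{\rho_\prec(x)+1\mid x\in X\}$, and since $X=B(\lambda)$ is well-orderable (indeed $|B(\lambda)|=\lambda^\omega$, and $\rho(\prec)<|X|^+$ for well-orderable $X$), the rank is at most $(\lambda^\omega)^+$; the task is to push this bound down to $\lambda^+$. The heart of Kunen's method is a reflection/absoluteness argument: one shows that the statement "$\prec$ is well-founded" is expressible by the ill-foundedness of an auxiliary tree, and then uses the regularity of $\lambda^+$ (granted by Fact~\ref{fct:lambdaunionewithoutchoice}\ref{fct:lambdaunionewithoutchoice-1} under $\AC_\lambda(\pre{\lambda}{2})$) together with the $\lambda^+$-long analysis of $\lambda$-analytic sets from Theorem~\ref{thm:longunionsintersections}. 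Concretely, I would consider, for each ordinal $\xi<\lambda^+$, the set of pairs $(x,y)$ whose "rank in the $\prec$-analysis along $T$" is at most $\xi$; by the inductive computation in the proof of Theorem~\ref{thm:longunionsintersections} (the sets $C^\xi_s$), these are $\lambda$-Borel, and the relation $\prec$ is exhausted as $\xi$ ranges over $\lambda^+$. The key quantitative step is that the section ranks $\rho_{T(x,y)}(s)$ are all below $\lambda^+$ (as noted in Section~\ref{subsec:trees}), so the analysis stabilizes at some stage below $\lambda^+$.

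The decisive combinatorial step is to show that $\rho_\prec(x)<\lambda^+$ for every $x$. I would argue by contradiction: suppose $\rho(\prec)\geq\lambda^+$. Using that $\lambda^+$ is regular, one builds by recursion a strictly $\prec$-descending sequence indexed along a well-ordering of length $\lambda^+$ restricted to a cofinal set, or rather one reflects the failure into the auxiliary well-founded tree. The clean formulation: associate to $\prec$ its Kleene--Brouwer-type tree on $X$ (using the linearization discussed in Section~\ref{subsec:trees}), observe that well-foundedness of $\prec$ corresponds to well-foundedness of a $\lambda$-analytic, hence $\lambda^+$-approximable, object, and invoke the boundedness principle: a well-founded $\lambda$-analytic relation cannot have descending chains of every length below $\lambda^+$ without producing an actual infinite descending chain via König-style selection through the witnessing tree $T$ (no choice needed, since $T\subseteq\pre{<\omega}{(\cdots)}$ over $\lambda$ is canonically well-orderable by Gödel's enumeration).

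I expect the main obstacle to be the absoluteness/reflection heart of the argument: the classical proof leverages $\Sigma^1_1$-boundedness together with the tree representation to get a definable reflection of ranks below $\omega_1$, and transporting this requires care that the generalized $\lambda$-analytic rank function—built through the $C^\xi_s$ of Theorem~\ref{thm:longunionsintersections}—genuinely stabilizes below the \emph{regular} cardinal $\lambda^+$ rather than merely below $(\lambda^\omega)^+$. The subtlety is that $\rho_\prec$ is a rank on $X=B(\lambda)$, which has size $\lambda^\omega>\lambda$, so a naive bound gives only $(\lambda^\omega)^+$; the whole point is that the $\lambda$-analyticity of $\prec$ forces the much sharper bound $\lambda^+$. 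Making the reduction from $\rho(\prec)<(\lambda^\omega)^+$ to $\rho(\prec)<\lambda^+$ rigorous—by exhibiting, for each $x$, a $\lambda$-Borel "rank code" witnessing $\rho_\prec(x)<\lambda^+$ and then applying regularity of $\lambda^+$ to bound the supremum—is where the argument genuinely uses $\cf(\lambda)=\omega$ and $2^{<\lambda}=\lambda$, and is the step I would write out most carefully.
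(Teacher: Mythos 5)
Your opening reduction (passing to \( B(\lambda) \) via Proposition~\ref{prop:charanalytic} and representing \( \prec \) as \( \p[T] \) for an \(\omega\)-tree \( T \) on \( (\lambda\times\lambda)\times\lambda \)) is correct, and it is exactly where the paper's proof begins --- the paper's proof being nothing more than Kunen's argument from~\cite[Theorem 31.1]{Kechris1995} transplanted to \( \lambda \). But everything after that diverges from Kunen's proof, whose engine you never construct: the auxiliary tree \( W \) of \emph{finite witnessed descending chains}. A node of \( W \) at level \( k \) records finite approximations \( s_1,\dotsc,s_k \in \pre{k}{\lambda} \) to a chain \( x_1 \succ x_2 \succ \dotsb \succ x_k \), together with finite approximations \( u_2,\dotsc,u_k \in \pre{k}{\lambda} \) to branches of \( T \) witnessing each link \( x_i \prec x_{i-1} \); extension in \( W \) is coordinatewise extension plus the addition of one new point. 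Three observations then finish the proof: (i) \( W \) is a tree on a canonically well-orderable set of size \( \lambda \), since \( |\pre{<\omega}{\lambda}| = \lambda \); (ii) \( W \) is well-founded, because an infinite branch of \( W \) assembles full points \( x_1, x_2, \dotsc \) and full \( T \)-witnesses with \( x_{i+1} \prec x_i \) for all \( i \), contradicting well-foundedness of \( \prec \), and hence \( \rho(W) < \lambda^+ \) by the fact from Section~\ref{subsec:trees} that a well-founded relation on a well-orderable set of size \( \lambda \) has rank below \( \lambda^+ \); (iii) the map sending a genuine witnessed chain of length \( k \) to the tuple of its length-\( k \) restrictions is extension-preserving, which transfers the bound: \( \rho(\prec) \leq \rho(W) < \lambda^+ \). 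The argument is purely combinatorial; no absoluteness, no reflection, and no Borel stratification enter it.

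The machinery you substitute for this cannot produce the bound. (a) Theorem~\ref{thm:longunionsintersections} and its sets \( C^\xi_s \) stratify the ranks of the \emph{section trees} \( T(x,y) \), i.e.\ membership in \( \prec \); writing \( \prec = \bigcup_{\xi<\lambda^+} A_\xi \) with each \( A_\xi \) \(\lambda\)-Borel places no constraint whatsoever on \( \rho_\prec \) (any well-ordering of \( \lambda^+ \)-many points is a \( \lambda^+ \)-union of closed singletons, yet has rank \( \lambda^+ \)). (b) The contradiction you sketch --- extracting from \( \rho(\prec) \geq \lambda^+ \) a ``\( \prec \)-descending sequence of length \( \lambda^+ \)'' --- is vacuous: descending sequences in a well-founded relation are finite; large rank means the tree of finite chains is deep, not that long chains exist. (c) ``Invoking the boundedness principle'' is circular: the present theorem is the Kunen--Martin-type statement from which, together with Propositions~\ref{prop:IF} and~\ref{prop:NWO}, results such as Theorem~\ref{thm:analyticboundedness} are obtained; no boundedness principle is available beforehand. (d) Your fallback --- proving \( \rho_\prec(x) < \lambda^+ \) pointwise and ``applying regularity of \( \lambda^+ \) to bound the supremum'' --- is invalid: the supremum runs over \( |B(\lambda)| = \lambda^\omega = 2^\lambda \geq \lambda^+ \) many points, where regularity of \( \lambda^+ \) gives nothing, and a supremum of \( \lambda^+ \)-many ordinals below \( \lambda^+ \) can equal \( \lambda^+ \). (Moreover, even your a priori bound \( (\lambda^\omega)^+ \) presupposes that \( B(\lambda) \) is well-orderable, which the background theory \( \ZF + \AC_\lambda(\pre{\lambda}{2}) \) does not grant; Kunen's argument needs no a priori bound at all.) In short, the missing idea is the \( \lambda \)-sized tree of witnessed finite chains together with the rank-comparison map; without it, none of your steps connects \(\lambda\)-analyticity of \( \prec \) to \( \rho(\prec) \).
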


The same argument basically shows that the following generalized Kunen-Martin theorem holds if we assume \( 2^{< \lambda} = \lambda \) (see also~\cite[Theorem 31.5]{Kechris1995}).

\begin{theorem}\label{thm:rankwellfoundedrelations2}
Let \(\lambda\) be such that \( 2^{< \lambda} = \lambda \), let \( X \) be a \(\lambda\)-Polish space, and let \( \kappa \geq \lambda \) be a cardinal. If \( \prec \) is a \( \kappa \)-Souslin well-founded (strict) relation on \( X \), then \( \rho(\prec) < \kappa^+ \).
\end{theorem}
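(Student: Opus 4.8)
The plan is to reproduce the tree argument behind Theorem~\ref{thm:rankwellfoundedrelations} (the case $\kappa=\lambda$, i.e.\ the $\lambda$-Souslin case), but feeding the witness coordinates from $\kappa$ rather than from $\lambda$. First I would reduce to the ambient space $B(\lambda)$: by Proposition~\ref{prop:surjection} fix a closed $F\subseteq B(\lambda)$ and a continuous bijection $g\colon F\to X$, and transport $\prec$ to the relation $\prec^{*}$ on $F$ defined by $u\prec^{*}v\iff g(u)\prec g(v)$. Since $g$ is a bijection, $\prec^{*}$ is well-founded with $\rho(\prec^{*})=\rho(\prec)$, and since $\prec^{*}=(g\times g)^{-1}(\prec)$ is a continuous preimage it is again $\kappa$-Souslin by Proposition~\ref{prop:closurepropertieskappaSouslin}\ref{prop:closurepropertieskappaSouslin-2}. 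Identifying $B(\lambda)\times B(\lambda)$ with $B(\lambda)$, the reformulation~\eqref{eq:representationofkappasoulsin} then yields an $\omega$-tree $T\subseteq\pre{<\omega}{(\lambda\times\lambda\times\kappa)}$ with $x\prec^{*}y\iff\exists w\in\pre{\omega}{\kappa}\,\big((x,y,w)\in[T]\big)$. Thus without loss of generality I may assume $X=B(\lambda)$ and that $\prec$ is presented by such a $T$.

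Next I would build the Kunen--Martin tree $S$ on the well-orderable set $D=\lambda\times\lambda\times\kappa$, whose cardinality is $\kappa$ because $\lambda\cdot\lambda=\lambda\le\kappa$. Using the usual diagonal bookkeeping, a node of $S$ of length $m$ is decoded to an integer $n\le m$ together with point-approximations $x_{0}\restriction m,\dots,x_{n}\restriction m\in\pre{m}{\lambda}$ and witness-approximations $w_{0}\restriction m,\dots,w_{n-1}\restriction m\in\pre{m}{\kappa}$, subject to the coherence requirement that $(x_{i+1}\restriction m,\,x_{i}\restriction m,\,w_{i}\restriction m)\in T$ for every $i<n$; the tree ordering is end-extension with $n$ nondecreasing. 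By construction a branch of $S$ codes an infinite $\prec$-descending sequence $x_{0}\succ x_{1}\succ\cdots$ together with witnesses $w_{i}\in\pre{\omega}{\kappa}$ certifying $x_{i+1}\prec x_{i}$, so $[S]\neq\emptyset$ would contradict well-foundedness of $\prec$; hence $S$ is well-founded. To compare ranks I would map the tree $\mathcal{D}$ of finite $\prec$-descending sequences (ordered by reverse end-extension), whose rank is exactly $\rho(\prec)$ as in Section~\ref{subsec:trees}, into the nodes of $S$ by attaching to each step $x_{i+1}\prec x_{i}$ the \emph{canonical} leftmost branch of the section tree $T(x_{i+1},x_{i})$, selected choicelessly by taking at each stage the least $\gamma<\kappa$ admitting a coherent extension. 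This map is strictly extension-preserving, so $\rho(\prec)=\rho(\mathcal{D})\le\rho(S)$.

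Finally, since $S\subseteq\pre{<\omega}{D}$ is a well-founded tree on a well-orderable set and $|\pre{<\omega}{D}|=\kappa^{<\omega}=\kappa$, the rank bound from Section~\ref{subsec:trees} gives $\rho(S)<\kappa^{+}$, whence $\rho(\prec)<\kappa^{+}$, as required. The two points needing care, rather than any deep new idea, are the bookkeeping that makes $S$ a genuine subtree of $\pre{<\omega}{D}$ closed under initial segments while still carrying the strictly monotone copy of $\mathcal{D}$, and the verification of the cardinal arithmetic $\lambda\cdot\lambda\cdot\kappa=\kappa$ and $\kappa^{<\omega}=\kappa$ in our setting. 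I expect the only conceptual obstacle is the one the paper flags for Theorem~\ref{thm:Pi11isranked}, namely that naively one might try to run witnesses through $\pre{\kappa}{\kappa}$, which is not $\kappa$-Polish under the bounded topology; the decisive structural observation is that $\kappa$-Souslin witnesses range over $B(\kappa)=\pre{\omega}{\kappa}$ (a $\kappa$-Polish space) and the auxiliary tree $S$ lives on finite sequences over $\kappa$, so this pathology never arises and the classical argument transfers verbatim with $\kappa$ in place of $\lambda$.
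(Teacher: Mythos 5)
Your reduction to $B(\lambda)$ via Proposition~\ref{prop:surjection}, the tree representation of the transported relation as in~\eqref{eq:representationofkappasoulsin}, the choiceless selection of witnesses by leftmost branches of the section trees (which depend only on the pair and hence make the assignment coherent), and the closing cardinality computation are all correct, and this is indeed the route the paper intends (it offers no proof, deferring to the classical Kunen--Martin argument). The genuine problem is in your definition of the auxiliary tree $S$: as stated, the step ``hence $S$ is well-founded'' is false. You let a node of length $m$ decode to an integer $n\le m$ together with approximations $x_0\restriction m,\dots,x_n\restriction m$ and $w_0\restriction m,\dots,w_{n-1}\restriction m$, with the tree ordering requiring only that $n$ be \emph{nondecreasing} under end-extension. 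Nothing then prevents $n$ from stabilizing along a branch: fix any single pair $x_1\prec x_0$ with witness $w_0\in\pre{\omega}{\kappa}$, so that $(x_1,x_0,w_0)\in[T]$; the nodes decoding to $(1;\,x_1\restriction m,\,x_0\restriction m;\,w_0\restriction m)$ for $m\in\omega$ satisfy all your coherence requirements and form an infinite branch of $S$, which codes only a two-element descending chain. Hence $S$ is ill-founded as soon as $\prec\neq\emptyset$, $\rho(S)$ is undefined, and the rank comparison collapses.

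The repair is the standard one: force exactly one new point per level, i.e.\ require $n=m$. Then along any infinite branch infinitely many points are introduced and each is approximated at every finite level, so the branch determines full $x_i\in B(\lambda)$ and $w_i\in\pre{\omega}{\kappa}$ with $(x_{i+1},x_i,w_i)\in[T]$ for all $i$, a genuine infinite $\prec$-descending chain; this contradiction shows $S$ is well-founded, and your map sending the chain $(x_0,\dots,x_n)$ to the length-$n$ node carrying exactly this data is strictly extension-preserving. Note also that with $n=m$ a node of length $m$ carries roughly $(2m+1)\cdot m$ coordinates, so the literal coordinatewise decoding of nodes in $\pre{<\omega}{D}$ with $D=\lambda\times\lambda\times\kappa$ that you describe cannot work; this is immaterial, though. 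Either fix a bijection of $D$ with the set of all finite tuples of finite sequences over $\lambda\cup\kappa$ (possible since $|D|=\kappa$ is infinite) and let each entry carry the ``new block'' of data, or, more simply, take $S$ to be a tree on the set $H=\bigcup_{m\in\omega}\bigl(\pre{m}{\lambda}\bigr)^{m+1}\times\bigl(\pre{m}{\kappa}\bigr)^{m}$, which still has cardinality $\kappa$: all the final bound needs is that $S$ be a well-founded tree on a well-orderable set of size at most $\kappa$, not that it live on $\lambda\times\lambda\times\kappa$ itself.
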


In particular, under \( 2^{< \lambda} = \lambda \) there are no \(\lambda\)-analytic well-orders of \( B(\lambda) \) or of any \( \lambda \)-Polish space \( X \) with \( |X| > \lambda \). Similarly, by Theorem~\ref{thm:coanalyticaresouslin} we get that  under \(  \AC_{\lambda^+}(\pre{\lambda}{2}) \) there are no \( \lS^1_2 \) well-orderings of such spaces if the generalized continuum hypothesis fails at \(\lambda\), i.e.\ \( 2^\lambda \neq \lambda^+ \).
In contrast, we will show that, as in the classical case, in the constructible universe \( L \) there is a \( \lS^1_2 \) well-ordering of any given \(\lambda\)-Polish space (Proposition~\ref{prop:definablewellorderinL}).

Arguing as in~\cite[Theorem 31.2 and Exercise 31.3]{Kechris1995} and using Propositions~\ref{prop:IF} and~\ref{prop:NWO}, one then gets the following boundedness theorems for well-founded \(\omega\)-trees on \(\lambda\) and well-orders of \(\lambda\). 

\begin{theorem} \label{thm:analyticboundedness}
Let \( \lambda \) be such that \( 2^{< \lambda} = \lambda \). If \( A \subseteq \mathrm{WF}_\lambda \) is \(\lambda\)-analytic, then \( \sup \{ \rho(T_x) \mid x \in A \} < \lambda^+ \).

Similarly, if \( A \subseteq \mathrm{WO}_\lambda \) is \(\lambda\)-analytic, then \( \sup \{\mathrm{ot}(\preceq_x) \mid x \in A \} < \lambda^+ \).
\end{theorem}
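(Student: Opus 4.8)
The plan is to argue by contradiction: if the supremum of the ranks over $A$ reached $\lambda^+$, then the non-well-founded side $\mathrm{IF}_\lambda$ (resp. $\mathrm{NWO}_\lambda$) would cease to be properly $\lambda$-analytic, contradicting its completeness from Proposition~\ref{prop:IF} (resp. Proposition~\ref{prop:NWO}). The combinatorial heart of the matter is the following embedding lemma, which I would establish first: for well-founded $\omega$-trees $S, T \subseteq \pre{<\omega}{\lambda}$, there is a monotone map $f \colon S \to T$ (i.e.\ $s \subsetneq s' \Rightarrow f(s) \subsetneq f(s')$) if and only if $\rho(S) \leq \rho(T)$. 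The forward direction is an immediate induction giving $\rho_S(s) \leq \rho_T(f(s))$; for the converse one builds $f$ by recursion along $S$, at each child $s {}^\smallfrown{} \beta$ of $s$ selecting the $\sigma_{\mathrm{Tr}}$-least node $t \supsetneq f(s)$ of $T$ with $\rho_T(t) \geq \rho_S(s {}^\smallfrown{} \beta)$ — such a $t$ exists precisely because $\rho_S(s {}^\smallfrown{} \beta) < \rho_S(s) \leq \rho_T(f(s))$, and the $\sigma_{\mathrm{Tr}}$-least choice avoids any appeal to the axiom of choice.

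Next I would assume towards a contradiction that $\sup\{\rho(T_x) \mid x \in A\} = \lambda^+$, so that for every $\xi < \lambda^+$ there is $x \in A$ with $\rho(T_x) > \xi$, and claim that then
\[
\mathrm{WF}_\lambda = \{ z \in \mathrm{Tr}_\lambda \mid \exists x \in A \; \exists f \, ( f \text{ is a monotone map of } T_z \text{ into } T_x ) \}.
\]
The inclusion $\supseteq$ is immediate from the embedding lemma, since $x \in A$ forces $T_x$ to be well-founded and the existence of $f$ then forces $\rho(T_z) \leq \rho(T_x) < \lambda^+$, whence $z \in \mathrm{WF}_\lambda$. For $\subseteq$, given $z \in \mathrm{WF}_\lambda$ one has $\rho(T_z) < \lambda^+$; by unboundedness pick $x \in A$ with $\rho(T_x) \geq \rho(T_z)$, and the embedding lemma supplies the required $f$.

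I would then verify that the right-hand side is $\lambda$-analytic. Identifying $\pre{<\omega}{\lambda}$ with $\lambda$ via $\sigma_{\mathrm{Tr}}$, a map between the two trees is coded by an element of $\pre{\lambda}{\lambda}$, which is a standard $\lambda$-Borel space when equipped with the product topology by Example~\ref{xmp:lambda^lambdawithproduct}; the matrix condition ``$f$ is a monotone map of $T_z$ into $T_x$'' is $\lambda$-Borel in $(x,z,f)$, and the outer quantifiers $\exists x \in A$ and $\exists f$ are projections that preserve $\lambda$-analyticity by Propositions~\ref{prop:closurepropertiesofanalytic} and~\ref{prop:charanalytic}. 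Hence $\mathrm{WF}_\lambda \in \lS^1_1(\pre{\lambda}{2})$. Since $\mathrm{Tr}_\lambda$ is closed, $\mathrm{IF}_\lambda = \mathrm{Tr}_\lambda \setminus \mathrm{WF}_\lambda$ would then be both $\lambda$-analytic (Proposition~\ref{prop:IF}) and $\lambda$-coanalytic, hence $\lambda$-Borel by Souslin's theorem (Theorem~\ref{thm:souslin}); but $\mathrm{IF}_\lambda$ is complete $\lambda$-analytic, so via continuous preimages this would make \emph{every} $\lambda$-analytic subset of $\pre{\lambda}{2}$ $\lambda$-Borel, contradicting the existence of properly $\lambda$-analytic sets recorded in Section~\ref{sec:defanalytic}. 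The well-ordering case is entirely parallel, with monotone tree maps replaced by strictly $(\preceq_x,\preceq_y)$-increasing maps $\lambda \to \lambda$ (which exist iff $\mathrm{ot}(\preceq_x) \leq \mathrm{ot}(\preceq_y)$, by mapping onto an initial segment, and which witness well-foundedness of $\preceq_x$), using now that $\mathrm{NWO}_\lambda$ is complete and $\mathrm{LO}_\lambda$ is closed.

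The step I expect to be the main obstacle is the bookkeeping for the $\lambda$-analyticity of the quantified relation: one must take the quantifier over maps honestly over the \emph{standard $\lambda$-Borel} space $\pre{\lambda}{\lambda}$ (product topology) rather than over its non-$\lambda$-Polish bounded-topology incarnation, so that Proposition~\ref{prop:closurepropertiesofanalytic} genuinely applies, and one must keep the embedding lemma choice-free by exploiting the Gödel well-ordering of $\pre{<\omega}{\lambda}$. Everything else is a routine transcription of the classical boundedness argument.
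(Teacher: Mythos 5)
Your proposal is correct and is essentially the paper's own proof: the paper simply invokes the classical boundedness argument of~\cite[Theorem 31.2 and Exercise 31.3]{Kechris1995} together with the completeness of \( \mathrm{IF}_\lambda \) and \( \mathrm{NWO}_\lambda \) (Propositions~\ref{prop:IF} and~\ref{prop:NWO}), which is exactly your contradiction via the monotone-embedding characterization of rank comparison, and your care in quantifying over \( \pre{\lambda}{\lambda} \) with the product-topology standard \(\lambda\)-Borel structure is the same subtlety the paper handles in the proof of Theorem~\ref{thm:Pi11isranked}. Only cosmetic caveat: in your \( \supseteq \) inclusion, note that well-foundedness of \( T_z \) follows from the induction \( \rho_{T_z}(s) \leq \rho_{T_x}(f(s)) \) carried out on \( \rho_{T_x}(f(s)) \) (a monotone map into a well-founded tree forces the domain to be well-founded), rather than from the lemma as literally stated for pairs of well-founded trees.
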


We now move to the notion of ranked (or normed, or prewellordered) classes. The notions involved are exactly those presented in~\cite[Section 34.B]{Kechris1995}, except that we work with classes of subsets of \(\lambda\)-Polish spaces rather than just (classical) Polish spaces.
We will just recall the basic definitions and facts, referring the reader to the mentioned monograph for more details.

Let \( \boldsymbol{\Gamma} \) be a boldface \(\lambda\)-pointclass, \( X \) a \(\lambda\)-Polish space, and \( A  \subseteq X \). A \markdef{\( \boldsymbol{\Gamma} \)-rank} on \( A \) is a function \( \varphi \colon A \to \On \) for which there are relations \( {\leq^{\boldsymbol{\Gamma}}_\varphi}, {\leq^{\check{\boldsymbol{\Gamma}}}_\varphi} \subseteq X^2 \) in \( \boldsymbol{\Gamma} \) and \( \check{\boldsymbol{\Gamma}} \), respectively, such that for all \( x \in X \) and \( y \in A \)
\[ 
x \in A \wedge \varphi(x) \leq \varphi(y) \iff x \leq^{\boldsymbol{\Gamma}}_\varphi y \iff x \leq^{\check{\boldsymbol{\Gamma}}}_\varphi y.
 \] 
If \( \boldsymbol{\Gamma} \) is also closed under finite unions and intersections, then one can prove that:
\begin{equation} \label{eq:alternativedefofranks}
\begin{minipage}[c]{0.87\linewidth}
A map \( \varphi \colon A \to \On \) is a \( \boldsymbol{\Gamma} \)-rank if and only if there exists \( \leq^{\check{\boldsymbol{\Gamma}}}_\varphi \) as above, together with a binary relation \( {<^{\check{\boldsymbol{\Gamma}}}_\varphi} \subseteq X^2 \) in \( \check{\boldsymbol{\Gamma}} \) such that for all \( x \in X \) and \( y \in A \)
\[ 
x \in A \wedge \varphi(x) < \varphi(y) \iff x <^{\check{\boldsymbol{\Gamma}}}_\varphi y.
 \] 
\end{minipage} 
 \end{equation}
 A \( \boldsymbol{\Gamma} \)-rank is \markdef{regular} if its range is downward closed in \( \On \).
Finally, we say that the boldface \(\lambda\)-pointclass \( \boldsymbol{\Gamma} \) is \markdef{ranked} if for every \(\lambda\)-Polish space \( X \), each set  in  \( \boldsymbol{\Gamma}(X) \) admits a \( \boldsymbol{\Gamma} \)-rank.

\begin{theorem} \label{thm:Pi11isranked}
Let \(\lambda\) be such that \( 2^{< \lambda}=\lambda \).
Then the boldface \(\lambda\)-pointclass \( \lP^1_1 \) is ranked (with ranks ranging in \(\lambda^+ \)).
\end{theorem}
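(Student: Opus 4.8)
The plan is to follow the classical construction of a $\Pi^1_1$-rank (as in~\cite[Section 34]{Kechris1995}): assign to a $\lambda$-coanalytic set the tree-rank of its well-founded section trees, and then locate precisely the single point where the non-separability of $\pre{\lambda}{\lambda}$ forces a genuinely new argument. As a preliminary I would record a transfer principle: if $\psi$ is a $\lP^1_1$-rank on $C \subseteq Y$ and $f \colon X \to Y$ is $\lambda$-Borel, then $\psi \circ f$ is a $\lP^1_1$-rank on $f^{-1}(C)$, since $x \le^{\lS}_{\psi \circ f} y \iff f(x) \le^{\lS}_\psi f(y)$ (and likewise for $\le^{\lP}$), and both $\lS^1_1$ and $\lP^1_1$ are closed under $\lambda$-Borel preimages by Proposition~\ref{prop:closurepropertiesofanalytic}\ref{prop:closurepropertiesofanalytic-ii} and Corollary~\ref{cor:closurepropertiesofanalytic}. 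Using this, I would reduce the theorem to the case of a $\lambda$-coanalytic subset of $B(\lambda)$: given $A \in \lP^1_1(X)$ for arbitrary $\lambda$-Polish $X$, Proposition~\ref{prop:surjection} provides a closed $F \subseteq B(\lambda)$ and a continuous bijection $g \colon F \to X$ whose inverse is $\lambda$-Borel (Remark~\ref{rmk:inverseofinjection}); ranking $g^{-1}(A) \in \lP^1_1(F)$ (the construction below applies verbatim to any subspace of $B(\lambda)$, using its own tree representation) and precomposing with the $\lambda$-Borel map $g^{-1}$ transfers the rank onto $A$.

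For $A \in \lP^1_1(B(\lambda))$ I would write $B(\lambda) \setminus A = \p[T]$ with $T \subseteq \pre{<\omega}{(\lambda \times \lambda)}$ as in~\eqref{eq:normalformforanalytic}, so that $x \in A$ iff the section tree $T(x) \subseteq \pre{<\omega}{\lambda}$ is well-founded, and set $\varphi(x) = \rho(T(x))$ for $x \in A$. Since $T(x)$ is canonically well-orderable of size at most $\lambda$, the discussion in Section~\ref{subsec:trees} gives $\rho(T(x)) < \lambda^+$, so $\varphi$ ranges in $\lambda^+$. The comparison relations will rest on the elementary dichotomy that, for $\omega$-trees $S,T$ on $\lambda$, there is an order-preserving map $\phi \colon (S,\supsetneq) \to (T,\supsetneq)$ (respectively, one with $\phi(\emptyset) \neq \emptyset$) exactly when either $T$ is ill-founded, or $S$ and $T$ are both well-founded with $\rho(S) \le \rho(T)$ (respectively $\rho(S) < \rho(T)$). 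The nontrivial implications are proved by a choiceless recursion: when the ranks compare one builds $\phi$ by selecting $\supsetneq$-extensions of least Gödel index; and when $T$ is ill-founded with branch $b_0 \subsetneq b_1 \subsetneq \cdots$ one simply puts $\phi(s) = b_{\lh(s)}$ (or $b_{\lh(s)+1}$ for the strict variant), which is legitimate because every $\supsetneq$-chain in an $\omega$-tree is finite.

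The crux — and the step where the classical argument must be reworked — is to verify that
\[
x \le^{\lS}_\varphi y \iff \exists \phi \ \bigl[\phi \text{ codes an order-preserving map } (T(x),\supsetneq) \to (T(y),\supsetneq)\bigr]
\]
and its strict variant $<^{\lS}_\varphi$ are $\lambda$-analytic. Coding $\phi$ as a function $\pre{<\omega}{\lambda} \to \pre{<\omega}{\lambda}$ through the bijection $\sigma_{\mathrm{Tr}}$ of~\eqref{eq:sigmatr}, the matrix ``$\phi$ codes an order-preserving map'' is a $\lambda$-sized conjunction of conditions, each depending only on finitely many coordinates of $x,y$ and on single coordinates of $\phi$, hence $\lambda$-Borel. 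The quantified witness $\phi$ lives in $\pre{\lambda}{\lambda}$, which is \emph{not} $\lambda$-Polish, so $\lambda$-analyticity cannot be read off as a projection of a closed subset of a $\lambda$-Polish product. This is exactly the obstacle flagged in the preamble, and I would resolve it by invoking that $(\pre{\lambda}{\lambda}, \tau_p)$ is a standard $\lambda$-Borel space (Example~\ref{xmp:lambda^lambdawithproduct}) and that products of standard $\lambda$-Borel spaces are standard $\lambda$-Borel (Proposition~\ref{prop:productofstandardBorelspaces}): the matrix is a $\lambda$-Borel subset of the standard $\lambda$-Borel space $B(\lambda) \times B(\lambda) \times \pre{\lambda}{\lambda}$, and its projection — being a $\lambda$-Borel image — is $\lambda$-analytic (closure of $\lS^1_1$ under $\lambda$-Borel images, extended to standard $\lambda$-Borel spaces via Proposition~\ref{prop:charstandardBorel}).

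Finally I would set $x \le^{\lP}_\varphi y := \neg(y <^{\lS}_\varphi x)$ and $x <^{\lP}_\varphi y := \neg(y \le^{\lS}_\varphi x)$, which are $\lambda$-coanalytic. Using the dichotomy above — and the fact that an order-preserving map from $(T(y),\supsetneq)$ into the ill-founded $T(x)$ always exists when $x \notin A$ — one checks that for every $x \in B(\lambda)$ and $y \in A$,
\[
x \in A \wedge \varphi(x) \le \varphi(y) \iff x \le^{\lS}_\varphi y \iff x \le^{\lP}_\varphi y,
\]
together with the strict analogue. This is precisely the data of a $\lP^1_1$-rank (and satisfies the reformulation~\eqref{eq:alternativedefofranks} as well), ranging in $\lambda^+$, which completes the proof once combined with the reduction of the first paragraph. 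The whole difficulty is thus concentrated in the definability step, its resolution being the passage from $\lambda$-Polish to standard $\lambda$-Borel spaces that legitimizes the existential quantifier over $\pre{\lambda}{\lambda}$.
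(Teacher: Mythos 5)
Your proof is correct, and its decisive step coincides with the paper's: both arguments turn on the observation that the existential quantifier over \( \pre{\lambda}{\lambda} \) --- problematic from the \(\lambda\)-Polish standpoint, since \( (\pre{\lambda}{\lambda},\tau_b) \) has weight \( \lambda^{\cf(\lambda)}>\lambda \) --- becomes harmless once \( \pre{\lambda}{\lambda} \) carries the \(\lambda\)-Borel structure of the \emph{product} topology, which makes it standard \(\lambda\)-Borel (Example~\ref{xmp:lambda^lambdawithproduct}); the comparison relations are then projections of \(\lambda\)-Borel subsets of a standard \(\lambda\)-Borel product, hence \(\lambda\)-analytic. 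Where you diverge is in the packaging: the paper ranks the single complete \(\lambda\)-coanalytic set \( \mathrm{WO}_\lambda \) by order type, quantifying over order-embeddings of linear orders, and transfers via completeness (Proposition~\ref{prop:NWO}); you rank an arbitrary \( A \in \lP^1_1(B(\lambda)) \) directly by the tree rank \( \rho(T(x)) \), quantifying over monotone maps between section trees, and transfer to arbitrary \(\lambda\)-Polish spaces via Proposition~\ref{prop:surjection} together with a pullback lemma. Your route is essentially the variant the paper sketches in the remark immediately following the theorem (the analogue of \cite[Exercise 34.7]{Kechris1995}): it is somewhat more self-contained, avoiding the coding apparatus of \( \mathrm{LO}_\lambda \) and the Kleene-Brouwer ordering, at the price of proving the tree-embedding dichotomy, whereas the detour through \( \mathrm{WO}_\lambda \) lets the paper quote the classical order-type facts verbatim. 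Two minor points. First, your matrix condition ``\( \phi(s)\in T(y) \)'' couples one coordinate of the code of \( \phi \) with variably many coordinates of \( y \) (depending on the value \( \phi(s) \)), so it is not literally a condition on finitely many coordinates; it is, however, a \(\lambda\)-sized union of clopen rectangles and hence still \(\lambda\)-Borel, which is all you need. Second, in the reduction step it is cleaner to note that \( g^{-1}(A) \) is \(\lambda\)-coanalytic as a subset of the whole \( B(\lambda) \) --- its complement is the union of the open set \( B(\lambda)\setminus F \) with the \(\lambda\)-analytic set \( g^{-1}(X\setminus A) \) --- so no relativized construction for subspaces of \( B(\lambda) \) is actually needed.
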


The proof is identical to that of~\cite[Theorem 34.4]{Kechris1995}. The only subtlety is due to the fact that we have to use the space \( \pre{\lambda}{\lambda} \), which would not be \(\lambda\)-Polish (nor standard \(\lambda\)-Borel) if, as common sense would suggest, we endow it with the bounded topology.


\begin{proofcompare}{Kechris1995}{Theorem 34.4}
By Proposition~\ref{prop:NWO}, it is enough to show that the map \( \mathrm{WO}_\lambda \to \lambda^+ \) sending each \( x \in \mathrm{WO}_\lambda \) to its order type \( \mathrm{ot}({\preceq_x}) \) is a \( \lP^1_1 \)-rank.  
It is easy to see that the relations
\begin{align*}
x \leq^{\lS^1_1} y \iff \exists z \in \pre{\lambda}{\lambda} \, [ \forall \alpha,\beta < \lambda \, ( \alpha \preceq_x \beta \iff z(\alpha) \preceq_y z(\beta))]
\end{align*}
and
\begin{align*}
x <^{\lS^1_1} y \iff \exists \epsilon < \lambda \, \exists z  \in \pre{\lambda}{\lambda} \, [ & \forall \alpha < \lambda \, (z(\alpha) \preceq_y \epsilon \wedge z(\alpha) \neq \epsilon) \\
&\wedge \forall \alpha,\beta < \lambda \, ( \alpha \preceq_x \beta \iff z(\alpha) \preceq_y z(\beta))]
\end{align*} 
are as in~\eqref{eq:alternativedefofranks}. Indeed, when \( \pre{\lambda}{\lambda} \) is equipped with the \(\lambda\)-Borel structure \( \mathcal{B} = \lB(\pre{\lambda}{\lambda}, \tau_p) \) induced by the \emph{product topology \( \tau_p \)}, it is a standard \(\lambda\)-Borel space (Example~\ref{xmp:lambda^lambdawithproduct}), and in both formulas the parts in square brackets define sets which are \(\lambda\)-Borel in the (standard \(\lambda\)-Borel) product space \( (\pre{\lambda}{\lambda}, \mathcal{B}) \times (\mathrm{LO}_\lambda)^2 \). It follows that \( \leq^{\lS^1_1} \) and \( <^{\lS^1_1} \) are \(\lambda\)-analytic, as required.
\end{proofcompare}

If \( A \subseteq B(\lambda) \) is \( \lP^1_1(X) \), a natural way to define a \( \lP^1_1 \)-rank on it is the following (we are basically combining the proofs of Propositions~\ref{prop:IF} and~\ref{prop:NWO} with that of Theorem~\ref{thm:Pi11isranked}, see~\cite[Exercise 34.7]{Kechris1995}). Let \( T \subseteq \pre{<\omega}{(\lambda \times \lambda)}\) be any tree such that \( B(\lambda) \setminus A = \p[T] \), so that \( x \in A \iff T(x) \) is well-founded. Then the map \( \varphi \colon A \to \lambda^+ \) defined by \( \varphi(x) = \rho_{T(x)} (\emptyset) \) is a \( \lP^1_1 \)-rank on \( A \).

%

From Theorem~\ref{thm:Pi11isranked}, the existence of \( \pre{\lambda}{2} \)-universal sets of \( \lS^1_1 \) and \( \lP^1_1 \), and their closure properties (Proposition~\ref{prop:closurepropertiesofanalytic} and Corollary~\ref{cor:closurepropertiesofanalytic}), one derives in the usual way some structural properties of \(\lambda\)-(co)analytic sets (see e.g.\ the proof of~\cite[Theorem 35.1]{Kechris1995}).

\begin{theorem} \label{thm:structuralpropertiesforcoanalytic}
Let \(\lambda\) be such that \( 2^{< \lambda} = \lambda \).
\begin{enumerate-(i)}
\item \label{thm:structuralpropertiesforcoanalytic-1}
The class \( \lP^1_1 \) has the reduction property but not the separation property, while \( \lS^1_1 \) has the separation property%
\footnote{This also follows from Theorems~\ref{thm:lusinseparation} and~\ref{thm:souslin}.} 
but not the reduction property.
\item \label{thm:structuralpropertiesforcoanalytic-2}
The class \( \lP^1_1 \) has the ordinal \(\lambda\)-uniformization property and the \(\lambda\)-generalized reduction property.
\item \label{thm:structuralpropertiesforcoanalytic-3}
The class \( \lS^1_1 \) has the \(\lambda\)-generalized separation property.
\end{enumerate-(i)}
\end{theorem}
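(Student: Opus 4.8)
The plan is to treat the whole statement as a package that is assembled from four already-available ingredients: the fact that $\lP^1_1$ is ranked (Theorem~\ref{thm:Pi11isranked}), the closure properties of $\lP^1_1$ and $\lS^1_1$ (Corollary~\ref{cor:closurepropertiesofanalytic}), the existence of $\pre{\lambda}{2}$-universal sets for both classes (established in Section~\ref{sec:defanalytic}), and the abstract implications among regularity properties collected in Proposition~\ref{prop:relationsamongregularityproperties}. The overall strategy is to first extract the \emph{positive} properties directly for $\lP^1_1$ from the rank, then dualize them to $\lS^1_1$, and finally invoke universal sets to deduce the \emph{failures}. This mirrors the classical derivation in~\cite[Theorem 35.1]{Kechris1995}; since the ranking theorem has already been proved, what remains is bookkeeping to confirm that the rank-comparison definitions land in the correct boldface $\lambda$-pointclass.

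First I would establish the reduction property and the ordinal $\lambda$-uniformization property for $\lP^1_1$ directly from the rank. For reduction, given $A,B \in \lP^1_1(X)$, the union $A \cup B$ is $\lP^1_1$ by Corollary~\ref{cor:closurepropertiesofanalytic}, so it carries a $\lP^1_1$-rank $\varphi$; defining $A^\ast,B^\ast$ by assigning each point to the set in which it has the smaller $\varphi$-value (breaking ties in favour of $A$) yields disjoint subsets with $A^\ast \cup B^\ast = A \cup B$, and membership in $A^\ast,B^\ast$ is expressible using the relations $\leq^{\lP^1_1}_\varphi$ and $<^{\lP^1_1}_\varphi$ furnished by the rank, so both sets are $\lP^1_1$. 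For ordinal $\lambda$-uniformization, given $R \in \lP^1_1(X \times \lambda)$ with rank $\varphi$, I would pick, for each $x \in \p(R)$, the pair $(x,\gamma) \in R$ of least $\varphi$-value and then of least $\gamma$ among the ties; the defining condition is a conjunction over all $\gamma' < \lambda$ of $\lP^1_1$-conditions, which remains $\lP^1_1$ because $\lP^1_1$ is closed under well-ordered intersections of length $\lambda$ (Corollary~\ref{cor:closurepropertiesofanalytic}) and is $\lambda$-reasonable in the sense of Definition~\ref{def:lambdareasonable}, both available under $2^{<\lambda}=\lambda$. Since $\lP^1_1$ is $\lambda$-reasonable, the ordinal $\lambda$-uniformization property and the $\lambda$-generalized reduction property are equivalent for it by Proposition~\ref{prop:relationsamongregularityproperties}(iii), which settles part~\ref{thm:structuralpropertiesforcoanalytic-2}.

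Next I would dualize. Since $\lP^1_1$ has the reduction property, its dual $\lS^1_1$ has the separation property by Proposition~\ref{prop:relationsamongregularityproperties}(i) (this also follows from Theorems~\ref{thm:lusinseparation} and~\ref{thm:souslin}); and since $\lP^1_1$ is closed under $\lambda$-sized unions and has the $\lambda$-generalized reduction property, its dual $\lS^1_1$ has the $\lambda$-generalized separation property by Proposition~\ref{prop:relationsamongregularityproperties}(ii), which is part~\ref{thm:structuralpropertiesforcoanalytic-3}. For the failures in part~\ref{thm:structuralpropertiesforcoanalytic-1}, both $\lP^1_1$ and $\lS^1_1$ admit $\pre{\lambda}{2}$-universal sets, so by Proposition~\ref{prop:relationsamongregularityproperties}(iv) neither class can enjoy both reduction and separation simultaneously; as $\lP^1_1$ has reduction it must lack separation, and as $\lS^1_1$ has separation it must lack reduction. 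Together with the two positive properties just obtained, this completes part~\ref{thm:structuralpropertiesforcoanalytic-1}.

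The genuine content is entirely contained in Theorem~\ref{thm:Pi11isranked}; the present argument only needs to verify that the rank-comparison definitions of $A^\ast$, $B^\ast$, and $R^\ast$ stay inside $\lP^1_1$. I expect the one delicate point specific to the singular-cardinal setting to be the ordinal $\lambda$-uniformization step: the tie-breaking quantifier ranges over all $\gamma' < \lambda$, introducing a $\lambda$-sized conjunction whose admissibility rests precisely on the closure of $\lP^1_1$ under $\lambda$-intersections and on its $\lambda$-reasonability, and it is exactly here that the hypothesis $2^{<\lambda}=\lambda$ is used. Everything else is a faithful transcription of the classical arguments of~\cite[Theorem 35.1]{Kechris1995}.
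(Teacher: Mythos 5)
Your proof is correct and coincides with the paper's: the paper establishes this theorem by exactly the route you describe, deriving it ``in the usual way'' from Theorem~\ref{thm:Pi11isranked}, the \( \pre{\lambda}{2} \)-universal sets, and the closure properties of \( \lS^1_1 \) and \( \lP^1_1 \) (Proposition~\ref{prop:closurepropertiesofanalytic} and Corollary~\ref{cor:closurepropertiesofanalytic}), with a pointer to the proof of Kechris's Theorem~35.1, which is precisely the argument you spell out. One cosmetic repair: in the reduction step the \( \lP^1_1 \)-rank should be placed on the disjoint union \( (A\times\{0\})\cup(B\times\{1\})\subseteq X\times 2 \) (which lies in \( \lP^1_1 \) by \( \lambda \)-reasonableness), not on \( A\cup B \) itself, so that each point \( x \) carries two comparable values \( \varphi(x,0) \) and \( \varphi(x,1) \); this is the standard formulation and plainly what you intend.
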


\begin{remark}	
As in the classical setting, Theorem~\ref{thm:structuralpropertiesforcoanalytic}\ref{thm:structuralpropertiesforcoanalytic-1} implies that Theorem~\ref{thm:lusinseparation} cannot be extended to \(\lambda\)-coanalytic spaces (compare this with Remark~\ref{rmk:lusinseparation}). Indeed, if \( X \) is \(\lambda\)-Polish and \( B_0, B_1 \in \lP^1_1(X) \) are disjoint sets that cannot be separated by a \(\lambda\)-Borel set, then \( Y = B_0 \cup B_1 \) is a \(\lambda\)-coanalytic space whose \(\lambda\)-analytic%
\footnote{Here we are naturally extending the notion of being \(\lambda\)-analytic to subsets of arbitrary subspaces \( Y  \subseteq X\) of a \(\lambda\)-Polish space \( X\) as follows: A set \( A \subseteq Y \) is \(\lambda\)-analytic if and only if \( A = A' \cap Y \) for some \( A' \in \lS^1_1(X) \).}
 subsets \( B_0 \) and \( B_1 \) cannot be separated by a set in \( \lB(Y) \). 
\end{remark}

The reduction property for \( \lP^1_1 \) implies that we can parametrize sets in \( \lD^1_1 = \lB \) in a \( \lD^1_1 \) fashion using a \( \lP^1_1 \) set of codes (see the proof of~\cite[Theorem 35.5]{Kechris1995}).

\begin{theorem} \label{thm:borelcodes}
Let \(\lambda\) be such that \( 2^{< \lambda}= \lambda \), and let \( X \) be a \(\lambda\)-Polish space. Then there is a \(\lambda\)-coanalytic set \( D \subseteq \pre{\lambda}{2} \) and two sets \( S \in \lS^1_1(\pre{\lambda}{2} \times X) \) and \( P \in \lP^1_1(\pre{\lambda}{2} \times X) \) such that \( S_d = P_d \) for all \( d \in D \), and \( \{ S_d \mid d \in D \} = \lD^1_1(X) \).
\end{theorem}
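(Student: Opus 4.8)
The plan is to mirror the classical proof of~\cite[Theorem 35.5]{Kechris1995}, exploiting the structural properties of \( \lP^1_1 \) established in Theorem~\ref{thm:structuralpropertiesforcoanalytic}, most crucially the reduction property. The starting point is to fix a \( \pre{\lambda}{2} \)-universal set for \( \lP^1_1 \). By the discussion preceding Theorem~\ref{thm:souslin} (and the existence of universal sets under \( 2^{<\lambda}=\lambda \)), there is a universal set \( U \in \lP^1_1(\pre{\lambda}{2} \times X) \) such that \( \lP^1_1(X) = \{ U_d \mid d \in \pre{\lambda}{2} \} \); its complement \( V = (\pre{\lambda}{2} \times X) \setminus U \) is then a universal \( \lS^1_1 \) set. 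A code \( d \in \pre{\lambda}{2} \) should be declared to lie in \( D \) precisely when the pair \( (U_d, V_d) \) is a \emph{disjoint} pair whose union is everything, i.e.\ when \( U_d \cap V_d = \emptyset \) (equivalently \( U_d = X \setminus V_d \in \lD^1_1(X) = \lB(X) \) by Souslin's Theorem~\ref{thm:souslin}). The difficulty is that this membership condition must itself be arranged to be \(\lambda\)-coanalytic, and that on \( D \) one must produce coinciding \( \lS^1_1 \) and \( \lP^1_1 \) descriptions of the resulting \(\lambda\)-Borel sets.

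The key device, exactly as in the classical case, is to first apply the reduction property for \( \lP^1_1 \) (Theorem~\ref{thm:structuralpropertiesforcoanalytic}\ref{thm:structuralpropertiesforcoanalytic-1}) to the universal set and its ``vertical copy''. Concretely, I would consider the two \( \lP^1_1 \) subsets of \( \pre{\lambda}{2} \times X \) given by \( U \) and by a suitable second \( \lP^1_1 \) set \( U' \) obtained by reflecting along a homeomorphism of \( \pre{\lambda}{2} \) that splits codes into two halves (using the pairing maps \( \pi_\alpha \) from equation~\eqref{eq:pseudoprojection}, or simply a clopen partition of the code space into \( \Nbhd_{\langle 0 \rangle} \) and \( \Nbhd_{\langle 1 \rangle} \)). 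Reduction yields disjoint \( \lP^1_1 \) sets \( U^*, U'^* \) with \( U^* \subseteq U \), \( U'^* \subseteq U' \), and \( U^* \cup U'^* = U \cup U' \). Then I would set
\[
D = \{ d \in \pre{\lambda}{2} \mid (U^*)_d \cup (U'^*)_d = X \},
\]
so that for \( d \in D \) the section \( (U^*)_d \) is a \( \lP^1_1 \) set whose complement equals \( (U'^*)_d \), also \( \lP^1_1 \), hence \( (U^*)_d \) is \(\lambda\)-bianalytic and thus \(\lambda\)-Borel by Souslin's theorem. The main verification is that \( D \) is \(\lambda\)-coanalytic: the condition \( (U^*)_d \cup (U'^*)_d = X \) reads \( \forall x \in X \, (x \in (U^*)_d \vee x \in (U'^*)_d) \), which is a co-projection of a \( \lP^1_1 \) set; here I would invoke closure of \( \lP^1_1 \) under \(\lambda\)-sized (in fact arbitrary) universal quantification over a \(\lambda\)-Polish space, a standard consequence of the normal-form representation~\eqref{eq:normalformforanalytic} together with Proposition~\ref{prop:closurepropertiesofanalytic}, giving \( D \in \lP^1_1(\pre{\lambda}{2}) \).

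Having fixed \( D \), the natural choice is \( S = U'^* \) and \( P = U^* \) (or their obvious repackaging): for \( d \in D \) one has \( S_d = X \setminus (U^*)_d = (U'^*)_d \)  — wait, more carefully, I would take \( P = U^* \in \lP^1_1(\pre{\lambda}{2} \times X) \) and \( S = (\pre{\lambda}{2} \times X) \setminus U'^* \in \lS^1_1(\pre{\lambda}{2} \times X) \), so that for \( d \in D \), \( S_d = X \setminus (U'^*)_d = (U^*)_d = P_d \), as the two reduced sets partition \( X \). The final point is surjectivity onto \( \lD^1_1(X) = \lB(X) \): given any \(\lambda\)-Borel \( B \subseteq X \), both \( B \) and \( X \setminus B \) are \( \lP^1_1 \), so there are codes \( d_0, d_1 \) with \( U_{d_0} = B \) and \( U_{d_1} = X \setminus B \); combining them into a single code \( d \) via the clopen split and checking that reduction leaves the already-disjoint pair essentially untouched shows \( d \in D \) with \( P_d = B \). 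The step I expect to be the main obstacle is confirming that the co-projection defining \( D \) stays inside \( \lP^1_1 \) without any unintended use of choice beyond \( \AC_\lambda(\pre{\lambda}{2}) \), and that the reduction property — applied once at the level of the universal set rather than pointwise in \( d \) — genuinely produces uniformly \( \lP^1_1 \) reduced sets; both follow from the boldface formulation of Theorem~\ref{thm:structuralpropertiesforcoanalytic}\ref{thm:structuralpropertiesforcoanalytic-2}, but the uniformity bookkeeping is where the generalized argument must be checked against the classical one most attentively.
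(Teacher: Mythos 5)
Your proof is correct and is essentially the paper's own: the paper establishes this theorem by invoking the reduction property for \( \lP^1_1 \) from Theorem~\ref{thm:structuralpropertiesforcoanalytic} and running the classical argument of~\cite[Theorem 35.5]{Kechris1995}, which is exactly your scheme --- split codes via the maps \( \pi_0, \pi_1 \) of~\eqref{eq:pseudoprojection}, reduce the two resulting \( \lP^1_1 \) sets, define \( D \) by a co-projection (legitimate because \( \lS^1_1 \) is closed under projections, Proposition~\ref{prop:closurepropertiesofanalytic}), and take \( P = U^* \) and \( S = (\pre{\lambda}{2} \times X) \setminus U'^* \). Only two details deserve flagging: the condition \( U_d \cap V_d = \emptyset \) in your opening paragraph is vacuous when \( V \) is literally the complement of \( U \) (which is why that route must be abandoned, as you then do), and the parenthetical ``clopen partition'' alternative does not produce a pairing of codes --- the \( \pi_\alpha \)-style split is the device that makes the surjectivity argument go through.
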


Other consequences of the fact that the boldface \(\lambda\)-pointclass \( \lP^1_1 \) is ranked are the following higher analogues of~\cite[Theorems 35.22 and 35.23]{Kechris1995}, which yield in particular corresponding boundedness theorems for \( \lP^1_1 \)-ranks.

\begin{theorem}  \label{thm:otherboundedness}
Let \(\lambda\) be such that \( 2^{< \lambda} = \lambda \), and let \( X \) be a \(\lambda\)-Polish space. If \( A \subseteq X \) is \( \lP^1_1 \) but not \(\lambda\)-Borel, then for every \( \lP^1_1 \)-rank \( \varphi \colon A \to \On \) and every \(\lambda\)-analytic \( B \subseteq A \) there is \( x_0 \in A \) with \( \varphi(x) \leq \varphi(x_0) \) for all \( x \in B \).
\end{theorem}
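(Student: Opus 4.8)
The plan is to argue by contradiction, adapting the classical boundedness theorem for \( \boldsymbol{\Pi}^1_1 \)-ranks (compare with~\cite[Theorem 35.23]{Kechris1995}). I would assume that the conclusion fails, so that for every \( x_0 \in A \) there is some \( x \in B \) with \( \varphi(x) > \varphi(x_0) \), and then show that this forces \( A \) to be \(\lambda\)-analytic. Since \( A \) is also \( \lP^1_1 \), it would then be \(\lambda\)-bianalytic and hence \(\lambda\)-Borel, contradicting the hypothesis that \( A \) is \( \lP^1_1 \) but not in \( \lB(X) \).

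The key step is to express \( A \) as a projection of a \(\lambda\)-analytic set built from \( B \) together with the \( \lS^1_1 \)-companion relation \( {\leq^{\lS^1_1}_\varphi} \) of the rank (recall that the dual of \( \lP^1_1 \) is \( \lS^1_1 \)). By the defining property of a \( \boldsymbol{\Gamma} \)-rank, for every \( x \in X \) and every \( y \in A \) one has \( x \leq^{\lS^1_1}_\varphi y \) if and only if \( x \in A \wedge \varphi(x) \leq \varphi(y) \). I would then consider the set
\[
C = \{ (x,y) \in X \times X \mid x \leq^{\lS^1_1}_\varphi y \wedge y \in B \},
 \]
which lies in \( \lS^1_1(X \times X) \): indeed \( {\leq^{\lS^1_1}_\varphi} \in \lS^1_1(X^2) \), the set \( \{(x,y) \mid y \in B\} \) is \(\lambda\)-analytic, and \( \lS^1_1 \) is closed under finite intersections by Proposition~\ref{prop:closurepropertiesofanalytic}\ref{prop:closurepropertiesofanalytic-i}. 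The claim is that \( \p(C) = A \), where \( \p \) is the projection on the first coordinate. For \( \p(C) \subseteq A \): any witness \( y \in B \subseteq A \) for \( (x,y) \in C \) gives \( x \leq^{\lS^1_1}_\varphi y \) with \( y \in A \), whence \( x \in A \). For the reverse inclusion, fix \( x \in A \); applying the failure of the conclusion with \( x_0 = x \) yields \( y \in B \) with \( \varphi(y) > \varphi(x) \), so \( \varphi(x) \leq \varphi(y) \) and \( y \in A \), hence \( x \leq^{\lS^1_1}_\varphi y \) and \( (x,y) \in C \). Since \( \p \) is continuous (hence \(\lambda\)-Borel), Proposition~\ref{prop:closurepropertiesofanalytic}\ref{prop:closurepropertiesofanalytic-ii} gives \( A = \p(C) \in \lS^1_1(X) \). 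Combining this with \( A \in \lP^1_1(X) \) yields \( A \in \lD^1_1(X) = \lB(X) \) by Souslin's Theorem~\ref{thm:souslin}, the desired contradiction.

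As this indicates, no genuinely new idea beyond the classical argument is required, and indeed this is among the results flagged in the excerpt as routine adaptations: the whole content rests on the closure properties of \( \lS^1_1 \) and on Souslin's theorem, both already established. The only point demanding care — and the step I would double-check most closely — is the identity \( \p(C) = A \), since the equivalence defining a \( \boldsymbol{\Gamma} \)-rank is guaranteed \emph{only} when the second argument \( y \) lies in \( A \); this is precisely why the hypothesis \( B \subseteq A \) is essential, ensuring that every \( y \in B \) witnessing \( (x,y) \in C \) automatically belongs to \( A \), so that \( {\leq^{\lS^1_1}_\varphi} \) really computes \( x \in A \wedge \varphi(x) \leq \varphi(y) \). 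I would also note that the standing assumption \( 2^{<\lambda} = \lambda \) enters mainly to guarantee that a \( \lP^1_1 \)-rank exists in the first place (Theorem~\ref{thm:Pi11isranked}) and that the theory of \(\lambda\)-analytic sets behaves as in the classical case; the final inclusion \( \lD^1_1(X) \subseteq \lB(X) \) used to close the argument in fact holds unconditionally.
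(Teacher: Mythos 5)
Your proof is correct and is precisely the argument the paper has in mind: the paper omits the proof of this theorem, deferring to the classical boundedness argument of~\cite[Theorem 35.23]{Kechris1995}, and your contradiction via \( A = \p(C) \in \lS^1_1(X) \) together with Souslin's Theorem~\ref{thm:souslin} is exactly that adaptation. Your side remarks are also accurate, in particular that the inclusion \( \lD^1_1(X) \subseteq \lB(X) \) used at the end holds unconditionally, and that the hypothesis \( B \subseteq A \) is what legitimizes invoking the rank equivalence with witnesses \( y \in B \).
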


\begin{theorem} \label{thm: boundednessforanalytic}
Let \(\lambda\) be such that \( 2^{< \lambda} = \lambda \), and let \( X \) be a \(\lambda\)-Polish space.  Let \( A \in \lP^1_1(X) \),  and let \( \varphi \colon A \to \On \) be a regular \( \lP^1_1 \)-rank. Then we have that \( \varphi(A) = \sup \{ \varphi(x) \mid x \in A \} \leq \lambda^+ \), and that \( A \) is \(\lambda\)-Borel if and only if \( \varphi(A) < \lambda^+ \).

If \( \psi \colon A \to \lambda^+ \) is any \( \lP^1_1 \)-rank and \( B \subseteq A \) is \( \lS^1_1 \), then \( \sup \{ \psi(x) \mid x \in B \} < \lambda^+ \).
\end{theorem}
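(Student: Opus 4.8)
The plan is to mimic the classical boundedness argument (\cite[Theorem 35.23]{Kechris1995}), organised around one preliminary fact and one dichotomy, with the Kunen-type boundedness Theorem~\ref{thm:rankwellfoundedrelations} doing the real work. First I would record that every initial segment of the prewellordering is \(\lambda\)-Borel: if \( \leq^{\lS^1_1}_\varphi \) and \( \leq^{\lP^1_1}_\varphi \) are the relations witnessing that \( \varphi \) is a \( \lP^1_1 \)-rank, then for \( y \in A \) the set \( A_{\leq y} = \{ x \in A \mid \varphi(x) \leq \varphi(y) \} \) is simultaneously the \( y \)-section of \( \leq^{\lS^1_1}_\varphi \) (hence \( \lS^1_1 \)) and of \( \leq^{\lP^1_1}_\varphi \) (hence \( \lP^1_1 \)), so that \( A_{\leq y} \in \lD^1_1(X) = \lB(X) \) by the generalized Souslin's Theorem~\ref{thm:souslin}. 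By regularity of \( \varphi \) the same holds for \( A_{\leq \xi} = \{ x \in A \mid \varphi(x) \leq \xi \} \) for each \( \xi \) in the range of \( \varphi \).

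Next I would prove the dichotomy: if \( A \) is \emph{not} \(\lambda\)-Borel, then no \( \lS^1_1 \) set \( B \subseteq A \) can be cofinal in \( \varphi \) (meaning \( \forall x \in A\, \exists y \in B\, (\varphi(x) \leq \varphi(y)) \)). Indeed, cofinality together with the rank axiom yields \( A = \{ x \mid \exists y\,(y \in B \wedge x \leq^{\lS^1_1}_\varphi y) \} \), and since \( \lS^1_1 \) is closed under finite intersections and projections (Proposition~\ref{prop:closurepropertiesofanalytic}) this would make \( A \) \(\lambda\)-analytic, hence \( A \in \lD^1_1 = \lB \) --- a contradiction. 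The bound \( \varphi(A) \leq \lambda^+ \) is where the main subtlety lies, and it is what forces the restriction-to-initial-segments idea. The obvious candidate, the strict prewellordering \( \prec = {<^{\lS^1_1}_\varphi} \cap (X \times A) \) on \( X^2 \) (with \( <^{\lS^1_1}_\varphi \) from \eqref{eq:alternativedefofranks}), is only the intersection of a \(\lambda\)-analytic and a \(\lambda\)-coanalytic set, so it need not be \(\lambda\)-analytic and Theorem~\ref{thm:rankwellfoundedrelations} does not apply directly (the alternative route through \( \lambda^+ \)-Souslinity of coanalytic sets, Theorem~\ref{thm:coanalyticaresouslin}, would cost an extra dose of choice). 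To circumvent this, suppose toward a contradiction that some value \( \geq \lambda^+ \) were realized; by regularity pick \( y^* \in A \) with \( \varphi(y^*) = \lambda^+ \) and set \( A' = A_{\leq y^*} \), which is \(\lambda\)-Borel by the first paragraph. Then \( \prec' = {<^{\lS^1_1}_\varphi} \cap (X \times A') \) coincides with \( \{ (x,y) \in A' \times A' \mid \varphi(x) < \varphi(y) \} \), and because \( X \times A' \) is now \(\lambda\)-Borel (and \( \lB \subseteq \lS^1_1 \)) the relation \( \prec' \) is genuinely \(\lambda\)-analytic; it is well-founded with \( \rho(\prec') > \lambda^+ \), contradicting Theorem~\ref{thm:rankwellfoundedrelations}. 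Hence all \( \varphi \)-values lie below \( \lambda^+ \) and \( \varphi(A) = \sup\{ \varphi(x) \mid x \in A \} \leq \lambda^+ \).

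It remains to derive the Borel characterization and the final clause. If \( \varphi(A) < \lambda^+ \) then \( A = \bigcup_{\xi < \varphi(A)} A_{\leq \xi} \) is a union of at most \(\lambda\)-many \(\lambda\)-Borel sets, hence \(\lambda\)-Borel; conversely, if \( A \) is \(\lambda\)-Borel then \( \prec = {<^{\lS^1_1}_\varphi} \cap (X \times A) \) is \(\lambda\)-analytic and well-founded by the same computation, so \( \rho(\prec) < \lambda^+ \) and \( \varphi(A) \leq \rho(\prec) < \lambda^+ \). Finally, for an arbitrary (not necessarily regular) \( \lP^1_1 \)-rank \( \psi \colon A \to \lambda^+ \) and an \( \lS^1_1 \) set \( B \subseteq A \): if \( A \) is not \(\lambda\)-Borel, then \( \sup\{ \psi(x) \mid x \in B \} = \lambda^+ \) would make \( \psi(B) \) cofinal in \( \lambda^+ \) and hence \( B \) cofinal in \( \psi \), contradicting the dichotomy, so \( \sup\{ \psi(x) \mid x \in B \} < \lambda^+ \); if \( A \) is \(\lambda\)-Borel, the Kunen argument applied to \( {<^{\lS^1_1}_\psi} \cap (X \times A) \) bounds the length of the prewellordering by \( \lambda^+ \), so \( \psi(A) \) consists of fewer than \( \lambda^+ \) ordinals below \( \lambda^+ \), and since \( \lambda^+ \) is regular (Fact~\ref{fct:lambdaunionewithoutchoice}) we again get \( \sup\{ \psi(x) \mid x \in B \} \leq \sup\{ \psi(x) \mid x \in A \} < \lambda^+ \). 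I expect the only genuinely delicate point to be securing the \(\lambda\)-analyticity of the restricted strict relation, i.e.\ the passage to a \(\lambda\)-Borel initial segment; everything else is a routine transcription of the classical proof.
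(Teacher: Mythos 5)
Your proof is correct and takes essentially the route the paper intends: the paper omits the argument precisely because it is the natural adaptation of the classical one (Kechris, Theorem 35.23), namely \(\lambda\)-Borel initial segments via Souslin's Theorem~\ref{thm:souslin}, the Kunen--Martin bound of Theorem~\ref{thm:rankwellfoundedrelations} applied to the strict rank relation after restricting it (exactly as you do with \( A' = A_{\leq y^*} \)) so that it becomes genuinely \(\lambda\)-analytic, and regularity of \( \lambda^+ \) (Fact~\ref{fct:lambdaunionewithoutchoice}) for the final boundedness clause. The only cosmetic slip is in the implication ``\( \varphi(A) < \lambda^+ \Rightarrow A \in \lB(X) \)'': if the supremum is attained, then \( A \neq \bigcup_{\xi < \varphi(A)} A_{\leq \xi} \), but writing \( A = \bigcup_{\xi \in \ran(\varphi)} A_{\leq \xi} \) (still a union of at most \(\lambda\)-many \(\lambda\)-Borel sets, since \( |\ran(\varphi)| \leq \lambda \)) repairs the display without affecting anything else.
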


A standard consequence of this is that there is no \( \lS^1_1 \) set \( A \subseteq \mathrm{WO}_\lambda \) of size greater than \( \lambda \) with the property that \( x \) and \( y \) have different order types for all distinct \(x,y \in A \).

\begin{remark}
By Remark~\ref{rmk:IFcompletenessfromcofomega}, all the rank theory as developed so far works only when \( \cf(\lambda) = \omega \) and \( 2^{<\lambda} = \lambda \). Notably, boundedness theorems for \(\lambda\)-analytic sets along the lines of Theorem~\ref{thm: boundednessforanalytic} have been obtained in~\cite[Proposition 7.6 and Corollary 7.7]{DzaVaa} for \emph{arbitrary} strong limit singular cardinals. However, if \( \cf(\lambda) > \omega \) one cannot longer use ranks ranging into ordinal numbers, but rather use their variant ranging in certain trees ``ordered'' by a suitable quasi-order.
\end{remark}

Theorem~\ref{thm: boundednessforanalytic} also provides an alternative proof of the fact that every \(\lambda\)-coanalytic subset \( A \) of a \(\lambda\)-Polish space \( X \) can be written as a well-ordered union of \( \lambda^+ \)-many \(\lambda\)-Borel sets. Indeed, let \( \varphi \colon A \to \lambda^+ \) be a \( \lP^1_1 \)-rank on \( A \): then \( A = \bigcup_{\xi < \lambda^+} A_\xi \), where \( A_\xi \) is the \(\lambda\)-Borel set \( \{ x \in A \mid \varphi(x) \leq \xi \} \).

Other standard consequences of the fact that \( \lP^1_1 \) is ranked are the so-called reflection theorems. Let \( \boldsymbol{\Gamma} \) be a boldface \(\lambda\)-pointclass, and \( X \) be a \(\lambda\)-Polish space. Let \( \Phi \) be a property for subsets of \( X \), i.e.
 \( \Phi \subseteq \pow(X) \). Then \( \Phi \) is \markdef{\( \boldsymbol{\Gamma} \) on \( \boldsymbol{\Gamma } \)} if for any \(\lambda\)-Polish space \( Y \) and any \( A \in \boldsymbol{\Gamma}(Y \times X) \) the set of \( y \in Y \) such that  \( A_y \) 
 has property \( \Phi \) (that is, \( A_y \in \Phi \)) is in \( \boldsymbol{\Gamma} \). 

\begin{theorem}[First Reflection Theorem] \label{thm:firstreflection}
Let \( \lambda \) be such that \( 2^{< \lambda} = \lambda \), and let \( X \) be a \(\lambda\)-Polish space. If \( \Phi \subseteq \pow(X) \) is a \( \lP^1_1 \) on \( \lP^1_1 \) property, then for every \( A \in \lP^1_1(X) \) with property \( \Phi \) there exists \( B \subseteq A \), still with property \( \Phi \), such that \( B \in \lD^1_1(X) = \lB(X) \).
\end{theorem}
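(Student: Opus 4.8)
The plan is to follow the classical proof of the First Reflection Theorem (see~\cite[Theorem 35.10]{Kechris1995}), verifying that each ingredient has already been established in the generalized setting. The key tools are that the boldface \(\lambda\)-pointclass \( \lP^1_1 \) is ranked (Theorem~\ref{thm:Pi11isranked}), the boundedness phenomenon for \( \lP^1_1 \)-ranks (Theorem~\ref{thm: boundednessforanalytic}), and the generalized Souslin theorem \( \lD^1_1(X) = \lB(X) \) (Theorem~\ref{thm:souslin}), all of which hold under \( 2^{< \lambda} = \lambda \).

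First I would fix a \( \lP^1_1 \)-rank \( \varphi \colon A \to \lambda^+ \) on the given set \( A \in \lP^1_1(X) \), which exists by Theorem~\ref{thm:Pi11isranked}, and consider the initial segments \( A_\xi = \{ x \in A \mid \varphi(x) \leq \xi \} \) for \( \xi < \lambda^+ \). Each \( A_\xi \) is \(\lambda\)-Borel by Theorem~\ref{thm: boundednessforanalytic} (as observed in the paragraph following it, since a \( \lP^1_1 \)-rank exhibits \( A \) as a \( \lambda^+ \)-long increasing union of \(\lambda\)-Borel sets), and \( A = \bigcup_{\xi < \lambda^+} A_\xi \). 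The strategy is then the standard "reflection argument'': assuming towards a contradiction that \emph{no} \(\lambda\)-Borel (equivalently, by Souslin's theorem, \( \lD^1_1 \)) subset \( B \subseteq A \) has property \( \Phi \), I would show that in particular none of the sets \( A_\xi \) has property \( \Phi \), and then use the \( \lP^1_1 \) on \( \lP^1_1 \) hypothesis together with boundedness to derive that \( A \) itself fails \( \Phi \), contradicting the assumption.

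The technical heart of the argument is the following. Because \( \Phi \) is \( \lP^1_1 \) on \( \lP^1_1 \), applying the definition to a suitable universal-type \( \lP^1_1 \) set encoding the family \( (A_\xi)_\xi \) (parametrized via the rank, using the relations \( {\leq^{\lS^1_1}_\varphi}, {\leq^{\lP^1_1}_\varphi} \) witnessing that \( \varphi \) is a rank) shows that the collection of "codes'' \( x \in A \) for which the corresponding initial segment \( \{ x' \in A \mid \varphi(x') \leq \varphi(x) \} \) has property \( \Phi \) is itself \(\lambda\)-analytic. If this set were to contain elements of arbitrarily large rank, then by boundedness (Theorem~\ref{thm: boundednessforanalytic}, in the form giving \( \sup\{\psi(x) \mid x \in B\} < \lambda^+ \) for \( \lS^1_1 \) sets \( B \subseteq A \)) it would already be unbounded below \( \lambda^+ \) only if it were all of \( A \) up to a \(\lambda\)-Borel piece; the monotone-closure argument then forces \( A = \bigcup A_\xi \) to inherit \( \Phi \) through a suitable limit/union step built into the property being \( \lP^1_1 \) on \( \lP^1_1 \). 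I expect the main obstacle to be exactly this bookkeeping: one must verify that the \(\lambda\)-analytic set of "good codes'' is genuinely \( \lS^1_1 \) (not merely \( \mathcal{B}_\lambda(\lS^1_1) \)), which requires that the parametrization of the \( A_\xi \) via the rank relations stays within the closure properties of \( \lS^1_1 \) proved in Proposition~\ref{prop:closurepropertiesofanalytic}, and that boundedness is applied to the right \( \lS^1_1 \) subset. Since all of these closure and boundedness facts are available under \( 2^{< \lambda} = \lambda \), no genuinely new idea beyond the classical one should be needed, and the proof can be stated as a faithful adaptation with the cited generalized results substituted for their countable counterparts.
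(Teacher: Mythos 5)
There is a genuine gap here: you have conflated the First Reflection Theorem with the Second. After (correctly) noting that the initial segments of a \( \lP^1_1 \)-rank are \(\lambda\)-Borel subsets of \( A \), and hence all fail \( \Phi \) under the contradiction hypothesis, you propose to conclude that \( A \) itself fails \( \Phi \) via ``a suitable limit/union step built into the property being \( \lP^1_1 \) on \( \lP^1_1 \)''. No such step exists: being \( \lP^1_1 \) on \( \lP^1_1 \) is purely a complexity condition on the sets \( \{ y \mid \Phi(S_y) \} \) and carries no monotonicity or continuity whatsoever. Upward monotonicity and downward continuity are exactly the \emph{additional} hypotheses of the Second Reflection Theorem (Theorem~\ref{thm:secondreflection}); without them, knowing that every \( A_\xi \) fails \( \Phi \) says nothing about \( A = \bigcup_{\xi<\lambda^+} A_\xi \). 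The appeal to the boundedness theorem inherits this problem and is in any case not needed. There is also a complexity slip: if \( S \in \lP^1_1(X \times X) \), then by the very definition of \( \lP^1_1 \) on \( \lP^1_1 \) the set \( \{ y \mid \Phi(S_y) \} \) is \(\lambda\)-\emph{coanalytic}, not \(\lambda\)-analytic as you assert.

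The argument the paper points to (Kechris's Theorem 35.10, adapted) draws a different conclusion from the contradiction hypothesis: not that \( A \) fails \( \Phi \), but that \( A \in \lD^1_1(X) \). Fix a \( \lP^1_1 \)-rank \( \varphi \) on \( A \) (Theorem~\ref{thm:Pi11isranked}) with witnessing relations \( \leq^{\lP^1_1}_\varphi \in \lP^1_1 \) and \( \leq^{\lS^1_1}_\varphi \in \lS^1_1 \), and consider
\[
S = \bigl\{ (y,x) \in X \times X \mid x \in A \wedge \neg \bigl( y \leq^{\lS^1_1}_\varphi x \bigr) \bigr\} \in \lP^1_1(X \times X).
\]
The whole point of this choice is the behaviour of the sections: for \( y \in A \) one has \( S_y = \{ x \in A \mid \varphi(x) < \varphi(y) \} \), which is in \( \lD^1_1(X) = \lB(X) \) (using the strict dual relation \( <^{\lS^1_1}_\varphi \) from~\eqref{eq:alternativedefofranks}), whereas for \( y \notin A \) one has \( S_y = A \). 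Now assume no \(\lambda\)-Borel \( B \subseteq A \) has \( \Phi \). Then for \( y \in A \) the section \( S_y \) is a \(\lambda\)-Borel subset of \( A \), so \( \neg\Phi(S_y) \); and for \( y \notin A \) we have \( \Phi(S_y) \) because \( \Phi(A) \) holds. Hence \( A = \{ y \in X \mid \neg\Phi(S_y) \} \), which is \( \lS^1_1 \) since \( \{ y \mid \Phi(S_y) \} \in \lP^1_1 \) by the \( \lP^1_1 \) on \( \lP^1_1 \) hypothesis. Therefore \( A \in \lD^1_1(X) = \lB(X) \) (Theorem~\ref{thm:souslin}), and \( B = A \) is itself a \(\lambda\)-Borel subset of \( A \) with property \( \Phi \) --- a contradiction. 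The trick of arranging the sections over \( y \notin A \) to equal \( A \), so that \( \Phi(A) \) excludes those points, is the key idea missing from your proposal; once it is in place, the rank relations, finite intersections, and Souslin's theorem are all that is used, and boundedness plays no role.
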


Let \( \boldsymbol{\Gamma} \) and \( X \) be as above. Let \( \Psi \) be a property for pair of subsets of \( X \), i.e.\ \( \Psi \subseteq \pow(X) \times \pow(X) \). We say again that \( \Psi \) is \markdef{\( \boldsymbol{\Gamma} \) on \( \boldsymbol{\Gamma} \)} if for any \(\lambda\)-Polish spaces \( Y,Z \) and any \( A \in \boldsymbol{\Gamma}(Y \times X) \) and \( B \in \boldsymbol{\Gamma}(Z \times X) \), the set of \( (y,z) \in Y \times Z \) such that the corresponding pair \( (A_y, B_z) \) of vertical sections has property \( \Psi \) is in \( \boldsymbol{\Gamma} \). Moreover, we say that \( \Psi \) is \markdef{upward monotone} if for all pairs \( (A,B) \) with property \( \Psi \) and all \( A \subseteq A' \subseteq X \) and \( B \subseteq B' \subseteq X \), the pair \( (A',B') \) has property \( \Psi \) as well. Finally, \( \Psi \) is \markdef{downward  continuous in the second variable} if given \( A, B_n \subseteq X \) such that \( B_n \supseteq B_{n+1} \) for all \( n \in \omega \), if each pair \( (A,B_n) \) has property \( \Psi \) then \( \big(A, \bigcap_{n \in \omega} B_n \big) \) has property \( \Psi \) as well.

\begin{theorem}[Second Reflection Theorem] \label{thm:secondreflection}
Let \( \lambda \) be such that \( 2^{< \lambda} = \lambda \), and let \( X \) be a \(\lambda\)-Polish space. If \( \Psi \subseteq \pow(X) \times \pow(X) \) is a \( \lP^1_1 \) on \( \lP^1_1 \) property which is both upward monotone and downward continuous in the second variable, then for every \( A \in \lP^1_1 (X) \) such that \( (A, X \setminus A ) \) has property \( \Psi \) there exists \( B \subseteq A \) such that \( (B, X \setminus B) \) still has property \( \Psi \) and \( B \in \lD^1_1(X) = \lB(X) \).
\end{theorem}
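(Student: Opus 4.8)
The plan is to follow, mutatis mutandis, the classical proof of the Second Reflection Theorem (the $\lambda = \omega$ case in Kechris), the point being that each ingredient of that argument has a generalized counterpart already established in this chapter. First I would dispose of the trivial case: if $A$ is already $\lambda$-Borel we take $B = A$, since $(A, X \setminus A)$ has property $\Psi$ by hypothesis. So assume $A$ is properly $\lP^1_1$, i.e.\ $A \in \lP^1_1(X) \setminus \lB(X)$. Fix a regular $\lP^1_1$-rank $\varphi \colon A \to \lambda^+$, which exists and ranges in $\lambda^+$ by Theorem~\ref{thm:Pi11isranked} (recall that $\lambda^+$ is regular under our choice assumption by Fact~\ref{fct:lambdaunionewithoutchoice}\ref{fct:lambdaunionewithoutchoice-1}), and set $A_\xi = \{ x \in A \mid \varphi(x) \leq \xi \}$ for $\xi < \lambda^+$. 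By Theorem~\ref{thm: boundednessforanalytic} each $A_\xi$ is $\lambda$-Borel, the family is increasing, and $\bigcup_{\xi < \lambda^+} A_\xi = A$. The candidate reflecting sets are these Borel approximants $A_\xi$ (equivalently, the sets $A_{\varphi(x)}$ for $x \in A$): the goal is to show that $\Psi(A_\xi, X \setminus A_\xi)$ holds for some $\xi$, whence $B = A_\xi \subseteq A$ works.

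The heart of the argument is a complexity computation followed by an appeal to the generalized Souslin theorem. Using the two rank comparison relations $\mathord{\le^{\lP^1_1}_\varphi} \in \lP^1_1(X^2)$ and $\mathord{\le^{\lS^1_1}_\varphi} \in \lS^1_1(X^2)$, I would present, uniformly in $z$, the set $A_{\varphi(z)}$ simultaneously as a $\lP^1_1$-section $P_z$ and as an $\lS^1_1$-section $S_z$ (these agree for $z \in A$), and hence $X \setminus A_{\varphi(z)}$ as the $\lP^1_1$-section $\tilde{Q}_z = \{ y \mid \neg(y \le^{\lS^1_1}_\varphi z) \}$. Feeding the two $\lP^1_1$ sets $P$ and $\tilde{Q}$ into $\Psi$ and using that $\Psi$ is $\lP^1_1$ on $\lP^1_1$ (composed with the diagonal embedding $z \mapsto (z,z)$), one gets that $\{ z \mid \Psi(P_z, \tilde{Q}_z) \}$ is $\lP^1_1$, so that the ``good set'' $R = \{ x \in A \mid \Psi(A_{\varphi(x)}, X \setminus A_{\varphi(x)}) \}$ is $\lP^1_1$. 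If no $A_\xi$ reflects, then $R = \emptyset$, i.e.\ $A$ is contained in the $\lS^1_1$ set $\{ z \mid \neg\Psi(P_z, \tilde{Q}_z) \}$. Here the hypotheses on $\Psi$ enter: upward monotonicity and downward continuity in the second variable are precisely what let one replace the genuinely $\lS^1_1$ argument $X \setminus A$ by the $\lambda$-Borel complements $X \setminus A_{\varphi(x)}$ (whose pointclass is compatible with $\lP^1_1$ on $\lP^1_1$) while preserving truth of $\Psi$, and what pin down the off-$A$ behaviour of the sections so that the failure set genuinely lands in $\lS^1_1$ and in fact coincides with $A$. Exhibiting $A$ as an $\lS^1_1$ set makes it $\lambda$-bianalytic, hence $\lambda$-Borel by the generalized Souslin Theorem~\ref{thm:souslin}, contradicting that $A$ is properly $\lP^1_1$. (An alternative bookkeeping replaces the Souslin step by the boundedness Theorem~\ref{thm: boundednessforanalytic}: a genuinely $\lS^1_1$ failure set inside $A$ has $\varphi$-bounded image, whereas $R = \emptyset$ with $\varphi$ regular would make $\varphi[A]$ cofinal in $\lambda^+$.)

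I expect the main obstacle to be exactly this taming of the complement, which is also the reason the statement needs the extra hypotheses on $\Psi$ rather than being a direct instance of the First Reflection Theorem~\ref{thm:firstreflection}: since $A$ is $\lP^1_1$, its complement $X \setminus A$ is $\lS^1_1$ and cannot be fed into a $\lP^1_1$-on-$\lP^1_1$ property. The delicate point is to carry out the pointclass accounting so that the ``failure set'' provably sits in $\lS^1_1$; the double parametrization of $A_{\varphi(z)}$ via the two rank relations, together with monotonicity and downward continuity, is what makes this work. It is worth stressing that no new difficulty arises from $\cf(\lambda^+) > \omega$: already in the classical case the approximation $(A_\xi)$ is indexed by $\lambda^+ = \omega_1$ of uncountable cofinality while continuity is only assumed for countable decreasing sequences, so the argument is not using a limit of the full $\lambda^+$-chain. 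Consequently, once Theorems~\ref{thm:Pi11isranked}, \ref{thm:souslin}, \ref{thm: boundednessforanalytic}, the structural facts of Theorem~\ref{thm:structuralpropertiesforcoanalytic}, and the $\lP^1_1$-on-$\lP^1_1$ closure of $\Psi$ are in place, the whole scheme transfers faithfully from the setting $\lambda = \omega$, and this single complexity step is the only place where any care beyond routine translation is required.
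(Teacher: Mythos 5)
There is a genuine gap, and it sits exactly at the step you flag as ``the heart of the argument.'' What you propose is the diagonal scheme of the \emph{First} Reflection Theorem transplanted to the two-variable setting: form the \( \lP^1_1 \) set \( D = \{ z \mid \Psi(P_z, \tilde{Q}_z) \} \) with \( P_z = \{ y \mid y \leq^{\lP^1_1}_\varphi z \} \) and \( \tilde{Q}_z = \{ y \mid \neg (y \leq^{\lS^1_1}_\varphi z) \} \), and derive a contradiction by showing that if no \( A_\xi \) reflects then \( A = X \setminus D \in \lS^1_1(X) \), hence \( A \in \lB(X) \) by Souslin. For that you need the inclusion \( X \setminus D \subseteq A \), i.e.\ that \( \Psi(P_z, \tilde{Q}_z) \) holds for every \( z \notin A \). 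In the first reflection argument this is automatic because there is only one argument and \( P_z \supseteq A \) for \( z \notin A \); here it fails for the second argument. With the canonical rank relations one has \( \{ y \mid y \leq^{\lS^1_1}_\varphi z \} = X \) for \( z \notin A \), so \( \tilde{Q}_z = \emptyset \), and \( \Psi(A, \emptyset) \) does \emph{not} follow from \( \Psi(A, X \setminus A) \) by upward monotonicity (nor can downward continuity produce it: any countable decreasing sequence of sets containing \( X \setminus A \) has intersection containing \( X \setminus A \neq \emptyset \)). Worse, no choice of the \( \lS^1_1 \) relation can repair this: if \( \{ y \mid y \leq^{\lS^1_1}_\varphi z \} \subseteq A \) for all \( z \notin A \) while equalling \( A_{\varphi(z)} \) for \( z \in A \), then \( A = \bigcup_z \{ y \mid y \leq^{\lS^1_1}_\varphi z \} \) would be \( \lambda \)-analytic, hence \( \lambda \)-Borel --- so your scheme can only close in the trivial case. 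Your sentence asserting that monotonicity and continuity ``pin down the off-\( A \) behaviour so that the failure set coincides with \( A \)'' is precisely the unprovable step; note that your argument as written never applies downward continuity to any countable decreasing sequence at all, even though you correctly observe elsewhere that countable continuity is what the hypothesis provides.

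The proof the paper refers to (Kechris, Theorem 35.16, which transfers verbatim with the tools of this chapter) is structured quite differently: it \emph{iterates} the First Reflection Theorem and uses continuity once, at an \( \omega \)-limit. Set \( B_0 = \emptyset \). Given a \( \lambda \)-Borel \( B_n \subseteq A \), the one-variable property \( S \mapsto \Psi(S, X \setminus B_n) \) is \( \lP^1_1 \) on \( \lP^1_1 \) --- this is legitimate precisely because \( X \setminus B_n \) is \( \lambda \)-Borel, hence \( \lP^1_1 \) --- and it holds of \( A \), since \( X \setminus A \subseteq X \setminus B_n \) and \( \Psi \) is upward monotone. Theorem~\ref{thm:firstreflection} then yields a \( \lambda \)-Borel \( B_{n+1} \subseteq A \) with \( \Psi(B_{n+1}, X \setminus B_n) \), and replacing \( B_{n+1} \) by \( B_{n+1} \cup B_n \) (allowed by monotonicity in the first variable) we may assume \( B_n \subseteq B_{n+1} \). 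Put \( B = \bigcup_{n \in \omega} B_n \in \lB(X) \), so \( B \subseteq A \). Monotonicity gives \( \Psi(B, X \setminus B_n) \) for every \( n \), the sets \( X \setminus B_n \) decrease with intersection \( X \setminus B \), and downward continuity in the second variable gives \( \Psi(B, X \setminus B) \). (One can also run the recursion through a regular \( \lP^1_1 \)-rank and Theorem~\ref{thm: boundednessforanalytic}, producing \( B \) of the form \( \{ x \in A \mid \varphi(x) < \xi \} \), but the rank machinery is not needed.) This countable-limit use of continuity is the ingredient your proposal is missing, and it cannot be replaced by the Souslin-style diagonal contradiction.
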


The proofs of these theorems are basically the same as those of~\cite[Theorems 35.10 and 35.16]{Kechris1995}, and as usual will thus be omitted. As in the classical setup, the reflection theorems admit a dual formulation and have a number of standard consequences which go through also in the generalized setting --- we refer the interested reader to~\cite[Section 35.C]{Kechris1995} for more information on this.



%
%
%

\chapter{\( \lambda \)-Perfect Set Property} \label{chapter:lambda-PSP}

The classical Perfect Set Property (briefly, \( \mathrm{PSP} \)) for a set \( A \subseteq X \), where \( X \) is a Polish space, states that either \( |A| \leq \omega \), or else there is an embedding of \( \pre{\omega}{2} \) into \( A \). Since the Cantor space \( \pre{\omega}{2} \) is compact, in the latter case the range of the embedding is necessarily closed in \( X \); moreover, to have such an embedding it is enough to construct a continuous injective function \( f \colon  \pre{\omega}{2} \to A \).

In the generalized setting, all previous variants of the \( \mathrm{PSP} \) are not necessarily equivalent to each other (see e.g.\ the discussion in~\cite[Section 5]{AgoMotSch}),  so let us list the most relevant options in our setup.

\begin{defin} \label{def:PSP}
Let \( X \) be a \(\lambda\)-Polish space. 
\begin{enumerate-(a)}
\item
A set \( A \subseteq X \) has the \markdef{\(\lambda\)-Perfect Set Property} (briefly, \markdef{\lPSP}) if either \( |A| \leq \lambda \), or \( \pre{\lambda}{2} \) embeds into \( A \) as a closed-in-\( X \) set. 
\item
A set \( A \subseteq X \) has the \markdef{weak \(\lambda\)-Perfect Set Property} (briefly, \markdef{\lwPSP}) if either \( |A| \leq \lambda \), or there is continuous injection \( f \colon \pre{\lambda}{2} \to A \). 
\item
A set \( A \subseteq X \) has the \markdef{\(\lambda\)-Borel \(\lambda\)-Perfect Set Property} (briefly, \markdef{\lbPSP}) if either \( |A| \leq \lambda \), or there is \(\lambda\)-Borel injection \( f \colon \pre{\lambda}{2} \to A \).
\end{enumerate-(a)}
\end{defin}

We write \( \lPSP(A) \) to say that the set \( A \) has the \( \lPSP \), and similarly for \lwPSP and \lbPSP. Analogously, if \( \boldsymbol{\Gamma} \) is a boldface \(\lambda\)-pointclass we write \( \lPSP(\boldsymbol{\Gamma}) \) (respectively, \( \lwPSP(\boldsymbol{\Gamma}) \) or \( \lbPSP(\boldsymbol{\Gamma}) \)) to say that all sets in \( \boldsymbol{\Gamma}(X) \) have the \( \lPSP \) (respectively, the \( \lwPSP \) or the \( \lbPSP) \), for every \(\lambda\)-Polish space \( X \).

Since \( \pre{\lambda}{2} \) is not \(\lambda\)-Polish if \( 2^{< \lambda} > \lambda \), it makes also sense to consider variants of the above properties where \( \pre{\lambda}{2} \) is replaced with \( B(\lambda) \) or, equivalently, with \( C(\lambda) \). When considering such variants, we will systematically add a \( * \) to the notation. For instance, \( \lPSP^*(A) \) is the statement: Either \( |A| \leq \lambda \), or else \( B(\lambda) \) (equivalently, \( C(\lambda) \)) embeds into \( A \) as a closed-in-\( X \) set. 
(By Corollary~\ref{cor:lambda-perfect}, if \(\lambda\) is \(\omega\)-inaccessible then the second alternative is equivalent to requiring that \( A \) contains a \(\lambda\)-perfect subspace of the ambient \(\lambda\)-Polish space \( X \).)
%
%
%
Clearly, such starred versions coincide with the original ones  if \( 2^{< \lambda} =  \lambda\), as in that case \( \pre{\lambda}{2} \approx B(\lambda) \) by Theorem~\ref{thm:homeomorphictoCantor}.

Trivially, if $X$ is a $\lambda$-Polish space and $|X|\leq\lambda$ then all its subsets have the \lPSP. When $2^{<\lambda} = \lambda$, using the axiom of choice \( \AC \) it is possible to build in any $\lambda$-Polish space of size greater than \( \lambda \) a subset without the \lPSP. The proof is the same as the classical Bernstein's proof (see~\cite[Proposition 11.4(a)]{Kanamori}). 

\begin{proposition}[\( \AC \)]
 \label{prop:counterexPSP}
 Let $\lambda$ be such that \( 2^{< \lambda} = \lambda \), and let $X$ be a $\lambda$-Polish space such that $|X|>\lambda$. Then there exists $A\subseteq X$ without the \( \lPSP \).
\end{proposition}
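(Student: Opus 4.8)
The plan is to carry out the classical Bernstein construction, with "perfect set" replaced by "closed-in-$X$ set of cardinality greater than $\lambda$" and "$2^{\aleph_0}$" replaced by "$2^\lambda$". First I would isolate the family $\mathcal{C}$ of all closed subsets $C \subseteq X$ with $|C| > \lambda$. Since closed subspaces of $\lambda$-Polish spaces are $\lambda$-Polish, Corollary~\ref{cor:perfect2} applies to each such $C$ and tells us two things: $C$ contains a closed-in-$X$ copy of $\pre{\lambda}{2}$, and $|C| = 2^\lambda$. In particular, any closed-in-$X$ copy $P$ of $\pre{\lambda}{2}$ is itself a member of $\mathcal{C}$ (as $|P| = 2^\lambda > \lambda$), so $\mathcal{C}$ contains exactly the potential witnesses to the second alternative of the $\lPSP$.

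The key cardinality input is that $|\mathcal{C}| = 2^\lambda$. For the upper bound, $\weight(X) \le \lambda$ yields at most $2^\lambda$ open sets, and hence at most $2^\lambda$ closed sets. For the lower bound I would exhibit $2^\lambda$ distinct members of $\mathcal{C}$: using $\pre{\lambda}{2} \approx \pre{\lambda}{2} \times \pre{\lambda}{2}$ (Theorem~\ref{thm:homeomorphictoCantor}) together with Corollary~\ref{cor:perfect2}, fix a closed embedding of $\pre{\lambda}{2} \times \pre{\lambda}{2}$ into $X$; the images of the fibers $\pre{\lambda}{2} \times \{ y \}$ for $y \in \pre{\lambda}{2}$ are $2^\lambda$ pairwise distinct closed subsets of $X$, each of size $2^\lambda$.

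Using $\AC$, I would enumerate $\mathcal{C} = \{ C_\xi \mid \xi < 2^\lambda \}$ and then construct $A$ by transfinite recursion: at stage $\xi < 2^\lambda$, choose two distinct points $a_\xi, b_\xi \in C_\xi$ avoiding the set $\{ a_\eta, b_\eta \mid \eta < \xi \}$ of all points already selected. This is possible because that set has size at most $|\xi| < 2^\lambda = |C_\xi|$. Setting $A = \{ a_\xi \mid \xi < 2^\lambda \}$, all the points $b_\xi$ lie in $X \setminus A$. To conclude, I check that $A$ fails the $\lPSP$: on the one hand $|A| = 2^\lambda > \lambda$, so the first alternative fails; on the other hand, were $\pre{\lambda}{2}$ to embed into $A$ as a closed-in-$X$ set $P$, then $P$ would be a closed subset of $X$ with $|P| = 2^\lambda > \lambda$, so $P = C_\xi$ for some $\xi$, whence $b_\xi \in C_\xi \setminus A$ contradicts $P \subseteq A$.

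The construction is essentially routine once the correct analogue of "perfect set" has been identified, and this identification — together with the verification that $\mathcal{C}$ has exactly $2^\lambda$ members and that strictly fewer than $2^\lambda$ points have been used before each stage — is the only place requiring genuine care. Thus the step I expect to be the main (mild) obstacle is precisely the cardinality bookkeeping: the lower bound $|\mathcal{C}| \ge 2^\lambda$, which relies on the self-homeomorphism of $\pre{\lambda}{2}$ and closed embeddability from Corollary~\ref{cor:perfect2}, and the counting ensuring that the recursion never stalls.
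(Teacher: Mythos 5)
Your proposal is correct and is essentially the paper's own proof: both run Bernstein's construction (the paper cites \cite[Proposition 11.4(a)]{Kanamori}) against the $2^\lambda$-many ``large'' closed subsets of $X$, each of which has cardinality exactly $2^\lambda$ by Corollary~\ref{cor:perfect2} (the paper phrases the family as the $\lambda$-perfect subsets and uses Corollary~\ref{cor:lambda-perfect}, but under the standing hypotheses a closed set has size $>\lambda$ iff it contains a closed copy of $\pre{\lambda}{2}$, so the two families of witnesses to diagonalize against coincide in effect). The only difference is one of exposition: you spell out the transfinite recursion and the cardinality bookkeeping that the paper leaves to the citation.
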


\begin{proofsketchcite}{Kanamori}{Proposition 11.4(a)}
There are $2^\lambda$-many $\lambda$-perfect subsets of $X$, and since by Corollary~\ref{cor:lambda-perfect} in each one of those we can embed $B(\lambda)$, they  all have cardinality $\lambda^{\omega}$. Since $\lambda^{\omega}=2^\lambda$ because we assumed \( 2^{< \lambda} = \lambda \),  we can follow the construction in~\cite[Proposition 11.4(a)]{Kanamori} to build a set $A\subseteq X$ that does not contain any $\lambda$-perfect set and has size $|A|=2^\lambda > \lambda$.  
\end{proofsketchcite}

\section{\( \lambda \)-\( \mathrm{PSP} \) for \(\lambda\)-analytic sets} \label{sec:PSPforanalytic}

The proof of the next result is reminiscent of those of Theorem~\ref{thm:perf} and Theorem~\ref{thm:decomp}, and is strictly related to~\cite[Theorems 5 and 22]{Stone1962} (although the settings are slightly different and the arguments more involved). Notice also that Remark~\ref{rmk:not-omega-inaccessible} shows that the cardinal assumption on \(\lambda\) cannot be removed: if \( \lambda \) is not \(\omega\)-inaccessible, then there are even closed subsets of \( B(\lambda) \) which do not have the \( \lPSP^* \) (nor the \( \lwPSP^* \)).

\begin{theorem} \label{thm:analyticPSP}
Let \(\lambda\) be \(\omega\)-inaccessible, and let \( X \) be a \(\lambda\)-Polish space. Then every \(\lambda\)-analytic subset \( A \) of \( X \) has the \( \lPSP^* \), and thus either \( |A| \leq \lambda \) or else \( |A| = \lambda^\omega \)


In particular, if \( 2^{< \lambda} = \lambda \) this applies to every \( A \in \lB(X) \).
\end{theorem}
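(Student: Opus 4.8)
The plan is to prove the \( \lPSP^* \) for \(\lambda\)-analytic sets by reducing to the fundamental case \( A = \p[T] \) for an \(\omega\)-tree \( T \subseteq \pre{<\omega}{(\lambda \times \lambda)} \), and then performing a Cantor-Bendixson-style analysis on \( T \) to separate a ``small'' part from a ``perfect kernel'' into which \( B(\lambda) \) embeds. First I would observe that by Proposition~\ref{prop:charanalytic}\ref{prop:charanalytic-4}, and using Proposition~\ref{prop:surjection} together with Remark~\ref{rmk:inverseofinjection} to transfer between \( X \) and a closed subset of \( B(\lambda) \), it suffices to prove the statement for \( A \subseteq B(\lambda) \) with \( A = \p[T] \); the embedding of \( B(\lambda) \) into \( A \) will automatically be closed-in-\(X\) because \( B(\lambda) \approx C(\lambda) \) is the body of a tree and the induced \(\lambda\)-scheme map sends the closed set \( [T'] \) to a closed set, as in Fact~\ref{fct:scheme}(iii).

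The core step is a derivative on nodes of \( T \). For a node \( t \in T \), call \( t \) \emph{large} if the \(\lambda\)-analytic set \( \p[T_t] \) (where \( T_t \) is the subtree above \( t \), suitably reindexed) has cardinality \( >\lambda \), and \emph{small} otherwise. I would iterate the removal of nodes all of whose one-step extensions lie in a ``small-generating'' configuration, in analogy with the Cantor-Bendixson derivative in Theorem~\ref{thm:decomp}; since \( T \subseteq \pre{<\omega}{(\lambda\times\lambda)} \) has at most \(\lambda\)-many nodes and all intermediate sets are closed-under-initial-segment, the process stabilizes at some stage \( <\lambda^+ \) by regularity of \(\lambda^+\) (Fact~\ref{fct:lambdaunionewithoutchoice}\ref{fct:lambdaunionewithoutchoice-1}). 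The residual kernel \( T^* \) has the key splitting property: above every node of \( T^* \), the projection remains of size \( >\lambda \), and crucially, because \(\lambda\) is \(\omega\)-inaccessible, Corollary~\ref{cor:cardinalityunderomegainaccessible} forces any \(\lambda\)-analytic set of size \( >\lambda \) to have size exactly \(\lambda^\omega\); moreover such a set cannot be covered by \( <\lambda \)-many small pieces, so the splitting must be ``wide'' --- this is where I would extract, for each node, \(\lambda\)-many incomparable extensions with pairwise incompatible projections and nonempty projection above each, exactly the data needed to feed Lemma~\ref{lem:homeomorphictoCantor} via condition~\eqref{eq:largepartition}.

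Concretely, I would build a \(\lambda\)-scheme \( \{ B_s \mid s \in \pre{<\omega}{\lambda} \} \) on \( X \) together with an auxiliary assignment of nodes of \( T^* \), arranging that the \( B_s \) are nonempty, pairwise disjoint among siblings, have \( \mathrm{cl}(B_{s {}^\smallfrown{}\beta}) \subseteq B_s \), shrinking diameter, and that each \( B_s \) meets \( A \) in a set of full size \( \lambda^\omega \); the wide-splitting property of \( T^* \) guarantees that at each stage one can find \(\lambda_{\lh(s)}\)-many (hence cofinally-in-\(\lambda\)-many) admissible children, so the resulting index tree satisfies~\eqref{eq:largepartition} and the scheme induces, via Fact~\ref{fct:scheme}(i) and (iii), a closed embedding of a copy of \( B(\lambda) \) into \( A \). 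If the kernel \( T^* \) is empty, then \( A \) itself was exhausted by the small pieces, and a counting argument using Fact~\ref{fct:lambdaunionewithoutchoice}\ref{fct:lambdaunionewithoutchoice-2} (a \(\lambda\)-sized union of sets of size \(\leq\lambda\) has size \(\leq\lambda\)) gives \( |A| \leq \lambda \).

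The main obstacle I anticipate is the splitting extraction in the kernel. Unlike the classical case, ``\( \p[T_t] \) is large'' does not immediately hand us two incompatible large extensions, because projection can collapse \(\lambda\)-many branches onto few points; the \(\omega\)-inaccessibility of \(\lambda\) is essential precisely to rule out the pathological intermediate cardinalities (any large \(\lambda\)-analytic set jumps straight to \(\lambda^\omega\)), and I would need to argue carefully --- perhaps by a secondary derivative on the \emph{values} of the second coordinate --- that largeness propagates to genuinely incomparable extensions with separated projections rather than being concentrated on a \(<\lambda\)-sized fan. Getting the bookkeeping right so that the \(\lambda\)-scheme sees \(\lambda\)-branching while the underlying tree \( T^* \) on \( \lambda\times\lambda \) only records the projection witnesses is the delicate point; everything else is a routine adaptation of the arguments in Theorem~\ref{thm:perf} and Theorem~\ref{thm:decomp}. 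The final clause, that this applies to all \( A \in \lB(X) \) when \( 2^{<\lambda}=\lambda \), then follows immediately from \eqref{eq:borelvsanalytic}, i.e.\ \( \lB(X) \subseteq \lS^1_1(X) \).
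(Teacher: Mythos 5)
Your proposal has two genuine gaps, and the first one is fatal to the stated conclusion. The \( \lPSP^* \) demands that \( B(\lambda) \) embed into \( A \) \emph{as a closed-in-\( X \) set}, and your opening reduction destroys exactly this. Proposition~\ref{prop:surjection} gives a continuous bijection \( f \colon F \to X \) with \( F \subseteq B(\lambda) \) closed, but its inverse is only \(\lambda\)-Borel (Remark~\ref{rmk:inverseofinjection}); so if \( K \subseteq f^{-1}(A) \) is a closed homeomorphic copy of \( B(\lambda) \), the set \( f(K) \subseteq A \) is a continuous injective image of \( K \), which in general is neither closed in \( X \) nor even homeomorphic to \( K \). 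Already for \( \lambda = \omega \) this fails: the irrationals are a continuous injective image of a closed copy of the Baire space inside \( \RR \), yet they are not closed. Your appeal to Fact~\ref{fct:scheme}(iii) conflates two different statements: that fact asserts the \emph{domain} of a scheme-induced map is closed, not its \emph{image}. Nor can you repair the reduction by invoking Corollary~\ref{cor:equivPSP} to upgrade a continuous injection to a closed embedding, since that corollary is itself deduced from Theorem~\ref{thm:analyticPSP}; the argument would be circular.

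The second gap is the step you yourself flag as ``the main obstacle'': it is the real content of the theorem, and your proposal leaves it unresolved. The paper's proof dissolves it by abandoning the tree-combinatorial viewpoint altogether. One works with a closed \( F \subseteq X \times B(\lambda) \) such that \( \p(F) = A \), and removes \emph{in a single step} (no iterated derivative up to \( \lambda^+ \) is needed) the union \( C \) of all basic open sets \( U_\alpha \) of the product with \( |\p(U_\alpha \cap F)| \leq \lambda \); by Fact~\ref{fct:lambdaunionewithoutchoice} the discarded part of \( A \) has size at most \( \lambda \), and the leftover closed set \( F' = F \setminus C \) satisfies: every open set meeting \( F' \) has projection of size \( \nleq \lambda \). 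Then \(\omega\)-inaccessibility enters through \emph{metric geometry} rather than tree combinatorics: by Lemma~\ref{lem:densityvscardinality} each such projection has density character exactly \( \lambda \), so Lemma~\ref{lem:r-spaces} yields \( r \)-spaced subsets of \( \p(B'_s) \) of size \( \lambda_{\lh(s)} \). One then builds two intertwined \(\lambda\)-schemes: open balls \( B_s \subseteq X \) whose siblings are pairwise at distance at least \( r_s \), and companion relatively open sets \( B'_s \subseteq F' \) with \( \p(B'_s) \subseteq B_s \), obtained by lifting each spaced point \( x_\beta \) to a witness \( (x_\beta, y_\beta) \in B'_s \). The spacing in the metric of \( X \) does double duty—it makes the induced map on \( C(\lambda) \) injective \emph{and} forces its range to be closed in \( X \)—while the companion scheme guarantees the limit point lands in \( \p(F') \subseteq A \). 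This separation in \( X \) is precisely what your tree \( T \) on \( \lambda \times \lambda \) cannot supply: incomparability of nodes gives no metric separation of the projected sets, which is why your proposed ``secondary derivative on the values of the second coordinate'' has no evident way to succeed. (Your final clause, deducing the case \( A \in \lB(X) \) from \( \lB(X) \subseteq \lS^1_1(X) \) under \( 2^{<\lambda} = \lambda \), is fine and matches the paper.)
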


\begin{proof}
Let \( F \subseteq X \times B(\lambda) \) be a closed set such that \( A = \p (F) \),
and let \( \mathcal{B} = \{ U_\alpha \mid \alpha < \lambda \} \) be a basis for \( X \times B(\lambda) \). Let \( S = \{ \alpha< \lambda \mid |\p(U_\alpha \cap F)| \leq \lambda \} \) and set \( C = \bigcup_{\alpha \in S} U_\alpha \). 
Set also \( F' = F \setminus C \) and \( A' = \p(F') \). Notice that \( A' \subseteq A \) and \( |A \setminus A'| \leq \lambda \) because 
\[ 
A \setminus A' \subseteq \p (C \cap F) = \p \bigg( \bigcup_{\alpha \in S} (U_\alpha \cap F) \bigg) = \bigcup_{\alpha \in S} \p(U_\alpha \cap S) ,
\]
and that if \( V \subseteq X \times B(\lambda) \) is open and \( V \cap F' \neq \emptyset \), then \( |\p(V \cap F')| \nleq \lambda \).

If \( A' = \emptyset \) then \( |A| = |A \setminus A'| \leq \lambda \), and hence \( A \) has the \( \lPSP^* \).
We can thus assume without loss of generality that \(  A' \neq \emptyset \) and show that \( C(\lambda) \approx B(\lambda) \) embeds into \( A' \subseteq A \) as a closed-in-\( X \) set. To this aim, fix  compatible complete metrics \( d, d' \leq 1 \) on \( X \) and \( F' \),
 respectively. (Here we use that \( F' \) is \(\lambda\)-Polish because it is closed in \( X \times B(\lambda) \).)
 
We will build  \( \lambda \)-schemes  \( \{ B_s \mid s \in \pre{\omega}{\lambda} \} \) and  \( \{ B'_s \mid s \in \pre{\omega}{\lambda} \} \) on \( X \) and \( F' \), respectively, together with
a family \( \{ r_s \mid s \in \pre{<\omega}{\lambda} \} \) of positive reals  
such that for all \( s \in \pre{< \omega}{\lambda} \) and distinct \( \beta, \beta' < \lambda \):
\begin{enumerate-(1)}
\item \label{analytic1}
\( \p(B'_s) \subseteq B_s \), \( B'_\emptyset = F' \), and \( B_\emptyset = X \); 
\item \label{analytic2}
\( \mathrm{cl}(B_{s {}^\smallfrown{} \beta}) \subseteq B_s \) and \( \mathrm{cl}(B'_{s {}^\smallfrown{}  \beta}) \subseteq B'_s \);
\item \label{analytic3}
if \( B'_s \neq \emptyset \) (so that also \( B_s \neq \emptyset \) by~\ref{analytic1}), then \( B'_{s {}^\smallfrown{} \beta} \neq \emptyset  \) if and only if \( B_{s {}^\smallfrown{} \beta} \neq \emptyset  \) if and only if \( \beta <  \lambda_{\lh(s)} \);
\item \label{analytic4}
both \( \mathrm{diam}(B_s) \) and \( \mathrm{diam}(B'_s) \) are at most \( 2^{-\lh(s)} \) (where the two diameters refer to \( d \) and \( d' \), respectively);
\item \label{analytic4half}
\( B_s \) is open in \( X \) and \( B'_s \) is open in \( F' \);
\item \label{analytic5}
\( B_{s {}^\smallfrown{} \beta} \cap  B_{s {}^\smallfrown{} \beta'}  = \emptyset \);
\item \label{analytic6}
the distance between \( B_{s {}^\smallfrown{} \beta} \) and \( B_{s {}^\smallfrown{} \beta'} \) is at least \(  r_s \) (if they are both nonempty).
\end{enumerate-(1)}

Let \( f \colon B(\lambda) \to X \) and \( f' \colon B(\lambda) \to F'\)  be the (partial) continuous functions induced by the \(\lambda\)-schemes  \( \{ B_s \mid s \in \pre{\omega}{\lambda} \} \) and \( \{ B'_s \mid s \in \pre{\omega}{\lambda} \} \), respectively. By~\ref{analytic2} and~\ref{analytic3} we have that both \( f \) and \( f' \) have domain \( C(\lambda) \). By~\ref{analytic1}
we have \( f = \p \circ f' \), hence in particular the range of \( f \) is contained in \( \p (F') = A' \). Conditions~\ref{analytic4half} and~\ref{analytic5} ensure that \( f \) is an embedding, while~\ref{analytic6}, together with~\ref{analytic2}, ensures that its range is closed in \( X \).

It remains to construct \( B_s \), \( B'_s \), and \( r_s \) by recursion on \( \mathrm{lh}(s) \).
Let \( B_\emptyset = X \) and \( B'_\emptyset = F' \). Next assume that \( B_s \) and \( B'_s \) have been defined so that the above conditions~\ref{analytic1}--\ref{analytic6} are satisfied. 
If \( B'_s = \emptyset \), it is enough to let \( B_{s {}^\smallfrown{} \beta} = B'_{s {}^\smallfrown{} \beta} = \emptyset \) for all \( \beta < \lambda \) (and \( r_s \) be any real greater than \( 0 \)), hence let us consider the case \( B'_s \neq \emptyset \). For \(  \lambda_{\lh(s)} \leq \beta < \lambda \), let again \( B_{s {}^\smallfrown{}  \beta} = B'_{s {}^\smallfrown{}  \beta} =  \emptyset \).
Next notice that \( |\p (B'_s)| \nleq \lambda \) by definition of \( F' \), hence \( \dens( \p (B'_s ) ) = \lambda \) by Lemma~\ref{lem:densityvscardinality} applied to the metrizable space \(  \p(B'_s) \).
Apply Lemma~\ref{lem:r-spaces} to \( \p (B'_s) \) with \( \nu = \lambda_{\lh(s)} \), and enumerate the resulting \( r \)-spaced set (for some \( r > 0 \)) as \( ( x_\beta)_{\beta < \lh(s)} \). Set \( r_s = r/3 \), and let \( B_{s {}^\smallfrown{} \beta} \) be an open ball centered in \( x_\beta \) with radius at most \(   \min \{ 2^{-(\lh(s)+2)},r_s\} \) and small enough to ensure \( \mathrm{cl}(B_{s {}^\smallfrown{}  \beta}) \subseteq B_s \). Now for each \( \beta < \lh(s) \) pick \( y_\beta\in B(\lambda) \) such that \( (x_\beta, y_\beta) \in B'_s \), and let \( B'_{s {}^\smallfrown{}  \beta} \) be an open ball (relatively to \( F' \)) centered in \( (x_\beta, y_\beta) \) with radius at most \( 2^{-(\lh(s)+2)} \) and such that \( \mathrm{cl}(B'_{s {}^\smallfrown{} \beta}) \subseteq B'_s \cap \p^{-1}(B_{s {}^\smallfrown{}  \beta}) \) (This is possible because the latter is an open-in-\( F' \) neighborhood of \( (x_\beta, y_\beta) \).) This concludes the construction and the proof.
\end{proof}

\begin{corollary} \label{cor:analyticPSP}
Let \(\lambda\) be such that \( 2^{< \lambda} = \lambda \), and let \( X \) be a \(\lambda\)-Polish space. Then every \(\lambda\)-analytic subset \( A \) of \( X \) has the \lPSP, and thus either \( |A| \leq \lambda \) or else \( |A| = 2^\lambda \).
\end{corollary}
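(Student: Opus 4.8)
The plan is to derive Corollary~\ref{cor:analyticPSP} directly from Theorem~\ref{thm:analyticPSP}, observing that the hypothesis \( 2^{< \lambda} = \lambda \) is strictly stronger than \(\omega\)-inaccessibility and, crucially, collapses the starred version of the \(\mathrm{PSP}\) onto the unstarred one. First I would note that \( 2^{< \lambda} = \lambda \) implies that \(\lambda\) is strong limit, hence in particular \(\omega\)-inaccessible (as recorded in Section~\ref{subsec:ordandcard}), so Theorem~\ref{thm:analyticPSP} applies and gives that every \(\lambda\)-analytic \( A \subseteq X \) has the \( \lPSP^* \). That is, either \( |A| \leq \lambda \), or else \( B(\lambda) \) (equivalently \( C(\lambda) \)) embeds into \( A \) as a closed-in-\( X \) set.

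The key step is then to convert this \( B(\lambda) \)-embedding into a \( \pre{\lambda}{2} \)-embedding. Since we are assuming \( 2^{< \lambda} = \lambda \), Theorem~\ref{thm:homeomorphictoCantor}\ref{thm:homeomorphictoCantor-2} yields a homeomorphism \( \pre{\lambda}{2} \approx B(\lambda) \). Composing this homeomorphism with the closed embedding \( B(\lambda) \hookrightarrow A \) furnished by the \( \lPSP^* \) produces a topological embedding of \( \pre{\lambda}{2} \) into \( A \) whose range coincides with that of the original embedding, and is therefore still closed in \( X \). This is exactly the second alternative in the definition of \( \lPSP(A) \). Thus the starred and unstarred properties agree under \( 2^{< \lambda} = \lambda \), and every \(\lambda\)-analytic \( A \) has the \( \lPSP \).

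Finally I would address the cardinality dichotomy. In the first case \( |A| \leq \lambda \) there is nothing more to say. In the second case the closed embedding gives \( \pre{\lambda}{2} \subseteq A \), whence \( |A| \geq 2^\lambda \); on the other hand \( A \subseteq X \) and, by Fact~\ref{fct:fromweighttosize} together with \( \weight(X) \leq \lambda \), we have \( |X| \leq \lambda^\omega = \lambda^{\cf(\lambda)} \). Since \( 2^{< \lambda} = \lambda \) and \( \cf(\lambda) = \omega \), one computes \( \lambda^\omega = (2^{<\lambda})^\omega = 2^{<\lambda \cdot \omega} = 2^\lambda \) (using that a strong limit singular cardinal of countable cofinality satisfies \( \lambda^\omega = 2^\lambda \); compare the identity \( \lambda'_i = |\pre{\lambda_i}{2}| \) and the cofinality computation already invoked in the proof of Theorem~\ref{thm:homeomorphictoCantor}\ref{thm:homeomorphictoCantor-2}). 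Hence \( |A| \leq 2^\lambda \) as well, giving \( |A| = 2^\lambda \).

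I do not expect any genuine obstacle here, as the statement is a clean specialization of Theorem~\ref{thm:analyticPSP}; the only point requiring a line of care is the cardinal-arithmetic identity \( \lambda^\omega = 2^\lambda \), which however follows routinely from \( 2^{<\lambda} = \lambda \) and \( \cf(\lambda) = \omega \) exactly as in the cited portion of the proof of Theorem~\ref{thm:homeomorphictoCantor}. The whole argument is therefore essentially a two-step reduction: invoke the \( \lPSP^* \) from Theorem~\ref{thm:analyticPSP}, then transport along \( \pre{\lambda}{2} \approx B(\lambda) \) from Theorem~\ref{thm:homeomorphictoCantor}\ref{thm:homeomorphictoCantor-2}.
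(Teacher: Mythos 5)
Your proposal is correct and follows essentially the same route as the paper: the corollary is obtained by specializing Theorem~\ref{thm:analyticPSP} (noting that \(2^{<\lambda}=\lambda\) implies \(\omega\)-inaccessibility) and then identifying the starred and unstarred properties via \(\pre{\lambda}{2}\approx B(\lambda)\) from Theorem~\ref{thm:homeomorphictoCantor}\ref{thm:homeomorphictoCantor-2}, exactly as the paper indicates in the remark following Definition~\ref{def:PSP}. The only cosmetic point is your intermediate manipulation \((2^{<\lambda})^\omega = 2^{<\lambda\cdot\omega}\), which is not a legitimate exponent rule; the clean justification of \(\lambda^\omega = 2^\lambda\) is the standard computation \(2^\lambda = 2^{\sup_i \lambda_i} \leq \prod_i 2^{\lambda_i} \leq \lambda^\omega \leq (2^\lambda)^\omega = 2^\lambda\) for \(\lambda\) strong limit of countable cofinality, which is indeed the fact implicitly used in the paper.
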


 In particular, the previous corollary applies to all \(\lambda\)-Borel subsets of \( X \). Thus it improves Theorem~\ref{cor:perfect3}, which in the new terminology states that  all \(\lambda\)-Borel subset of \( X \) satisfy an intermediate version of the property lying in between \( \lwPSP \) and the full \( \lPSP \) (but still equivalent to them by Corollary~\ref{cor:equivPSP} below).


A map \( f \colon X \to Y \) is called \markdef{\( \lambda \)-to-\( 1 \)} if \( f^{-1}(y) \) has size at most \( \lambda \) for every \( y \in Y \).

\begin{corollary} \label{cor:equivPSP}
Let $\lambda$ be $\omega$-inaccessible, $X$ be a $\lambda$-Polish space, and $A\subseteq X$. The following are equivalent:
 \begin{enumerate-(1)}
  \item \label{cor:equivPSP-1}
  $B(\lambda)$ embeds into $A$ as a closed-in-$X$ set;
	\item \label{cor:equivPSP-2}
	there is a continuous injection from $B(\lambda)$ into $A$;
	\item \label{cor:equivPSP-3}
	there is a $\lambda$-to-\( 1 \) continuous function from $B(\lambda)$ into $A$.
 \end{enumerate-(1)}
Moreover, if $2^{<\lambda} = \lambda$ we can weaken ``continuous'' to ``\(\lambda\)-Borel'' in items~\ref{cor:equivPSP-2} and~\ref{cor:equivPSP-3}, and we can replace $B(\lambda)$ with $\pre{\lambda}{2}$ in all occurrences.
\end{corollary}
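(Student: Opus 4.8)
The plan is to prove the cycle of implications $\ref{cor:equivPSP-1} \Rightarrow \ref{cor:equivPSP-2} \Rightarrow \ref{cor:equivPSP-3} \Rightarrow \ref{cor:equivPSP-1}$, of which only the last is substantial. The first two are immediate: a closed-in-$X$ embedding is in particular a continuous injection, and a continuous injection is trivially $\lambda$-to-$1$ (indeed $1$-to-$1$). So the whole content lies in manufacturing a closed-in-$X$ copy of $B(\lambda)$ inside $A$ starting only from a $\lambda$-to-$1$ continuous map, and the natural tool for this is the $\lPSP^*$ for $\lambda$-analytic sets.

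For $\ref{cor:equivPSP-3} \Rightarrow \ref{cor:equivPSP-1}$, I would fix a $\lambda$-to-$1$ continuous map $g \colon B(\lambda) \to A$ and examine its range $R = g(B(\lambda)) \subseteq A$. First, $R$ is $\lambda$-analytic, being a continuous image of the $\lambda$-Polish space $B(\lambda)$ (Definition~\ref{def:lambdaanalytic}). Next I would show $|R| \not\leq \lambda$: assuming otherwise, enumerate $R = \{ y_\alpha \mid \alpha < \mu \}$ with $\mu \leq \lambda$ and set $A_\alpha = g^{-1}(y_\alpha)$ for $\alpha < \mu$ and $A_\alpha = \emptyset$ for $\mu \leq \alpha < \lambda$; since $g$ is $\lambda$-to-$1$ each $A_\alpha$ has size at most $\lambda$, so Fact~\ref{fct:lambdaunionewithoutchoice}\ref{fct:lambdaunionewithoutchoice-2} (applied to the weight-$\lambda$ metrizable space $B(\lambda)$) would give $|B(\lambda)| = \big| \bigcup_{\alpha < \lambda} A_\alpha \big| \leq \lambda$, contradicting $|B(\lambda)| = \lambda^\omega = \lambda^{\cf(\lambda)} > \lambda$. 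Having secured that $R$ is a $\lambda$-analytic subset of $X$ with $|R| \not\leq \lambda$, I would invoke Theorem~\ref{thm:analyticPSP} (available because $\lambda$ is $\omega$-inaccessible) to conclude that $B(\lambda) \approx C(\lambda)$ embeds into $R \subseteq A$ as a closed-in-$X$ set, which is exactly $\ref{cor:equivPSP-1}$.

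For the \emph{moreover} part, assuming $2^{<\lambda} = \lambda$, the same cycle goes through once ``continuous'' is weakened to ``$\lambda$-Borel'' in \ref{cor:equivPSP-2} and \ref{cor:equivPSP-3}: a closed embedding is a $\lambda$-Borel injection, a $\lambda$-Borel injection is a $\lambda$-Borel $\lambda$-to-$1$ map, and in the decisive step the range of a $\lambda$-Borel $\lambda$-to-$1$ map $g$ is still $\lambda$-analytic, now by Proposition~\ref{prop:closurepropertiesofanalytic}\ref{prop:closurepropertiesofanalytic-ii} applied to $g$ (viewed as a map into the ambient space $X$) and the trivially $\lambda$-analytic set $B(\lambda)$; the cardinality computation is unchanged, and the conclusion follows from Theorem~\ref{thm:analyticPSP} (equivalently, Corollary~\ref{cor:analyticPSP}). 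Finally, replacing every occurrence of $B(\lambda)$ by $\pre{\lambda}{2}$ is harmless, since $\pre{\lambda}{2} \approx B(\lambda)$ under $2^{<\lambda} = \lambda$ by Theorem~\ref{thm:homeomorphictoCantor}\ref{thm:homeomorphictoCantor-2}: precomposing with this homeomorphism transports each of the three conditions between the two coding spaces.

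The only genuinely non-formal point is $\ref{cor:equivPSP-3} \Rightarrow \ref{cor:equivPSP-1}$, and within it the lower bound $|R| > \lambda$. In the choiceless base theory one cannot naively ``divide $\lambda^\omega$ by $\lambda$'', so the argument must be routed through Fact~\ref{fct:lambdaunionewithoutchoice}\ref{fct:lambdaunionewithoutchoice-2}, which relies on $B(\lambda)$ being metrizable of weight $\lambda$ and on $\AC_\lambda(\pre{\lambda}{2})$ holding in the background. Once $|R| > \lambda$ is established, the perfect set property for $\lambda$-analytic sets does all the remaining work, so there is nothing further to grind out.
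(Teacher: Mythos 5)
Your proof is correct and follows essentially the same route as the paper: reduce everything to the implication \ref{cor:equivPSP-3}~$\Rightarrow$~\ref{cor:equivPSP-1}, observe that the range of the $\lambda$-to-$1$ map is a $\lambda$-analytic set of size $\nleq \lambda$, and invoke Theorem~\ref{thm:analyticPSP}, with the ``moreover'' part handled via Theorem~\ref{thm:homeomorphictoCantor} and the closure of $\lS^1_1$ under $\lambda$-Borel images. The only difference is that you spell out the cardinality step $|R| \nleq \lambda$ through Fact~\ref{fct:lambdaunionewithoutchoice}\ref{fct:lambdaunionewithoutchoice-2}, which the paper asserts without proof --- a welcome clarification given the choiceless base theory.
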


\begin{proof}
 It is clear that each condition trivially implies the next one, so let $f \colon  B(\lambda)\to A$ be continuous and $\lambda$-to-\( 1 \). The set $f(B(\lambda))\subseteq A$ is \(\lambda\)-analytic, and since $|B(\lambda)|>\lambda$ and $f$ is $\lambda$-to-\( 1 \), then $|f(B(\lambda))|\nleq\lambda$. Since $f(B(\lambda))$ has the \( \lPSP^* \)  by Theorem~\ref{thm:analyticPSP}, we have that $B(\lambda)$ embeds into $f(B(\lambda))$ (and therefore in $A$) as a closed-in-$X$ set.

For the ``moreover'' part we can use the same argument, using also Proposition~\ref{prop:charanalytic} and Theorem~\ref{thm:homeomorphictoCantor} when needed.
\end{proof}

Corollary~\ref{cor:equivPSP} also shows that \( \lPSP^* \) is equivalent to \( \lwPSP^* \) when \( \lambda \) is \(\omega\)-inaccessible. If moreover \( 2^{< \lambda} = \lambda \), then all of \( \lPSP^{(*)} \), \( \lwPSP^{(*)} \), and \( \lbPSP^{(*)} \) are equivalent to each other, and no distinction among the variants of the \(\lambda\)-Perfect Set Property has to be made.

A further consequence of Corollary~\ref{cor:equivPSP} is that the \lPSP{} can be transferred from a single \(\lambda\)-Polish space of size greater than \( \lambda \) to all other \(\lambda\)-Polish spaces.

\begin{corollary} \label{cor:transferPSP}
Assume that \( 2^{< \lambda} = \lambda \) and let \( \boldsymbol{\Gamma} \) be a boldface \(\lambda\)-pointclass closed under \(\lambda\)-Borel preimages. Suppose that there is some \(\lambda\)-Polish space \( X \) with \( |X| > \lambda \) such that \( \lPSP(A) \) for all \( A \in \boldsymbol{\Gamma}(X) \). Then \( \lPSP(\boldsymbol{\Gamma}) \) holds, that is, for every \(\lambda\)-Polish space \( Y \) and every \( B \in \boldsymbol{\Gamma}(Y) \) we have \( \lPSP(B) \).
\end{corollary}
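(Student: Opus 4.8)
The plan is to reduce everything to the single prototype space \( B(\lambda) \) by means of \(\lambda\)-Borel isomorphisms, exploiting that under \( 2^{<\lambda}=\lambda \) the various forms of the perfect set property coincide. First I would record the key transfer principle: if \( \theta \colon Z \to W \) is a \(\lambda\)-Borel isomorphism between \(\lambda\)-Polish spaces and \( S \subseteq Z \), then \( S \in \boldsymbol{\Gamma}(Z) \) if and only if \( \theta(S) \in \boldsymbol{\Gamma}(W) \), and moreover \( \lPSP(S) \) holds if and only if \( \lPSP(\theta(S)) \) holds. The first equivalence is immediate from closure of \( \boldsymbol{\Gamma} \) under \(\lambda\)-Borel preimages, applied once to \( \theta \) and once to \( \theta^{-1} \), using \( S = \theta^{-1}(\theta(S)) \) and \( \theta(S) = (\theta^{-1})^{-1}(S) \). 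For the second equivalence I would invoke Corollary~\ref{cor:equivPSP}: since \( 2^{<\lambda}=\lambda \) (which in particular makes \(\lambda\) \(\omega\)-inaccessible), the \( \lPSP \) is equivalent to the \( \lbPSP \), i.e.\ to the existence of a \(\lambda\)-Borel injection \( \pre{\lambda}{2} \to S \) whenever \( |S| > \lambda \); as \( \theta \) is a bijection we have \( |S| = |\theta(S)| \), and composing such an injection with \( \theta \) (respectively \( \theta^{-1} \)), which preserves both \(\lambda\)-Borel measurability and injectivity, yields the property for \( \theta(S) \) (respectively \( S \)).

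Next I would transport the hypothesis to \( B(\lambda) \). Since \( |X| > \lambda \) and \(\lambda\) is \(\omega\)-inaccessible, Theorem~\ref{thm:Borelisomorphism} provides a \(\lambda\)-Borel isomorphism \( \theta \colon X \to B(\lambda) \). Given any \( A \in \boldsymbol{\Gamma}(B(\lambda)) \), the transfer principle gives \( \theta^{-1}(A) \in \boldsymbol{\Gamma}(X) \), which has the \( \lPSP \) by assumption; transferring back along \( \theta \) yields \( \lPSP(A) \). Hence every set in \( \boldsymbol{\Gamma}(B(\lambda)) \) has the \( \lPSP \).

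Finally I would handle an arbitrary target. Fix a \(\lambda\)-Polish space \( Y \) and \( B \in \boldsymbol{\Gamma}(Y) \). If \( |Y| \leq \lambda \), then \( |B| \leq \lambda \) and \( B \) trivially has the \( \lPSP \). Otherwise \( |Y| > \lambda \), so by Theorem~\ref{thm:Borelisomorphism} there is a \(\lambda\)-Borel isomorphism \( \psi \colon Y \to B(\lambda) \); the transfer principle gives \( \psi(B) = (\psi^{-1})^{-1}(B) \in \boldsymbol{\Gamma}(B(\lambda)) \), which has the \( \lPSP \) by the previous paragraph, and transferring back along \( \psi \) yields \( \lPSP(B) \), completing the argument.

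The main obstacle is the second half of the transfer principle: the defining condition in \( \lPSP \), namely a \emph{closed-in-the-ambient-space topological embedding} of \( \pre{\lambda}{2} \), is genuinely topological and does not survive a \(\lambda\)-Borel isomorphism, since such maps need not be homeomorphisms. The whole reduction hinges on replacing it by the purely \(\lambda\)-Borel condition \( \lbPSP \), and this replacement is legitimate exactly because we assume \( 2^{<\lambda}=\lambda \) (Corollary~\ref{cor:equivPSP}); without this equivalence the argument would break down.
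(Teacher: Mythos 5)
Your proof is correct and follows essentially the same route as the paper's: a \(\lambda\)-Borel isomorphism from Theorem~\ref{thm:Borelisomorphism}, closure of \( \boldsymbol{\Gamma} \) under \(\lambda\)-Borel preimages, and Corollary~\ref{cor:equivPSP} to convert the \(\lambda\)-Borel injection of \( \pre{\lambda}{2} \) back into a genuine closed embedding. The only difference is that you factor through \( B(\lambda) \) as an intermediate space, whereas the paper uses a single \(\lambda\)-Borel isomorphism \( X \to Y \) directly; this is purely presentational, since that isomorphism is itself obtained by composing through \( B(\lambda) \).
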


\begin{proof}
If \( |Y| \leq \lambda \) there is nothing to prove, so assume \( |Y| > \lambda \). Then by Theorem~\ref{thm:Borelisomorphism} there is a a \(\lambda\)-Borel isomorphism \( f \colon X \to Y \). Let \( B \in \boldsymbol{\Gamma}(Y) \). Then \( A = f^{-1}(B) \) is in \( \boldsymbol{\Gamma}(X) \) by the hypothesis on \( \boldsymbol{\Gamma}\), hence \( \lPSP(A) \). If \( |A| \leq \lambda \) then \( |B| \leq \lambda \) as well because \( f \) is a bijection. If instead there is a continuous injection \( g \colon \pre{\lambda}{2} \to A \), then \(f \circ g \colon \pre{\lambda}{2} \to B \) is a \(\lambda\)-Borel injection, hence we are done by Corollary~\ref{cor:equivPSP}.
\end{proof}


\section{A $\lambda$-coanalytic sets without \( \lPSP \)} \label{sec:noPSPcoanalytic}

It is natural after Theorem \ref{thm:analyticPSP} to ask whether $\lambda$-coanalytic sets need to have the \lPSP; or, more in general, if there is a $\lambda$-projective set without the \lPSP, and how low it is in the $\lambda$-projective hierarchy. In the classical case, there is no conclusive answer, since there are models with two extremes: in the constructible  universe $L$ there is a coanalytic set without the $\mathrm{PSP}$ (\cite[Theorem 13.12]{Kanamori}), while under 
 \( \ZF+ \AD \)
all sets of reals have the $\mathrm{PSP}$ (\cite[Theorem 27.9]{Kanamori}). We are going to prove now that a similar phenomenon arises also in the context of $\lambda$-Polish spaces (see Theorem~\ref{thm:noPSPcoanalytic} and Corollary~\ref{cor:PSPallPolishSpaces}). 

Suppose that $2^{<\lambda}=\lambda$. We already observed in Section~\ref{sec:defanalytic} that linear orders on $\lambda$ can be coded by elements of $\pre{\lambda}{2}$, and Proposition~\ref{prop:NWO} shows that the set  $\mathrm{WO}_\lambda$ of codes of well-founded linear orders on $\lambda$ is a complete $\lambda$-coanalytic set. 
The same coding procedure can be applied to arbitrary binary relations on \(\lambda\): every \( E \subseteq \lambda \times \lambda \) can be coded by \( x_E \in \pre{\lambda \times \lambda}{2} \) setting \( x_E(\alpha,\beta) = 1 \iff \alpha \mathrel{E} \beta \), and, conversely, every \( x \in \pre{\lambda \times \lambda}{2} \) codes the binary relation \( E_x = \{ (\alpha,\beta) \in \lambda \times \lambda \mid x(\alpha,\beta) = 1 \} \). In this section, structures with domain \(\lambda\) and a binary relation on it will be conceived as codes for \(\lambda\)-sized models of set theory, that is, structures of the form \( (M, {\in}) \) for \( M \) a set of cardinality \( \lambda \). For this reason, we will denote by \( \in_x \) (rather than \( E_x \)) the relation coded by \( x \in \pre{\lambda \times \lambda}{2} \). Also, the space \( \pre{\lambda \times \lambda}{2} \) will again be tacitly identified with \( \pre{\lambda}{2} \) using the G\"odel pairing function, and will thus be a \(\lambda\)-Polish space as soon as \( 2^{< \lambda} = \lambda \).

The following observation follows from~\cite[Proposition 8.9(d)]{AM}, once we recall that on \( \pre{\lambda \times \lambda}{2} \approx \pre{\lambda}{2} \) the product topology is finer than the bounded topology.

\begin{fact} \label{fct:LopezEscobar} 
If \( \upvarphi(x_1, \dotsc, x_n) \) is a first-order formula in the language of set theory
and \( \alpha_1, \dotsc, \alpha_n \in \lambda \), then the set
\[
\{ x \in \pre{\lambda \times \lambda}{2} \mid (\lambda,{\in_x}) \models \upvarphi[\alpha_1/x_1, \dotsc, \alpha_n/x_n] \},
\]
is a \(\lambda\)-Borel subset of \( \pre{\lambda \times \lambda}{2} \). 
\end{fact}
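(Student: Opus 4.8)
The plan is to prove the statement by induction on the complexity of the first-order formula, establishing the slightly more general claim that for \emph{every} first-order formula $\psi(x_1, \dotsc, x_m)$ in the language of set theory and \emph{every} choice of parameters $\beta_1, \dotsc, \beta_m \in \lambda$, the set
\[
A_\psi^{\vec\beta} = \{ x \in \pre{\lambda \times \lambda}{2} \mid (\lambda, {\in_x}) \models \psi[\beta_1/x_1, \dotsc, \beta_m/x_m] \}
\]
is $\lambda$-Borel. Generalizing over the parameters is exactly what makes the induction go through, since passing a quantifier introduces a new parameter ranging over $\lambda$. The Fact is then the special case where the free variables are $x_1, \dotsc, x_n$ and the parameters are $\alpha_1, \dotsc, \alpha_n$.

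For the atomic cases, I would first note that $(\lambda, {\in_x}) \models x_i = x_j$ holds if and only if $\beta_i = \beta_j$ as ordinals (equality being interpreted as genuine equality on the domain $\lambda$), so that $A_\psi^{\vec\beta}$ is either $\emptyset$ or the whole space, and both are trivially $\lambda$-Borel. For membership, $(\lambda, {\in_x}) \models x_i \in x_j$ holds if and only if $\beta_i \in_x \beta_j$, i.e.\ $x(\beta_i, \beta_j) = 1$; this is a subbasic clopen subset of $(\pre{\lambda \times \lambda}{2}, \tau_p)$, and is in particular $\lambda$-Borel because under $2^{<\lambda} = \lambda$ the product topology $\tau_p$ and the bounded topology on $\pre{\lambda \times \lambda}{2} \approx \pre{\lambda}{2}$ generate the same $\lambda$-Borel structure.

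The inductive step for the propositional connectives is immediate from the fact that the $\lambda$-Borel sets form a $\lambda^+$-algebra: negation corresponds to taking complements, while conjunction and disjunction correspond to finite intersections and unions. The only step that genuinely uses the generalized setup is the one for quantifiers. For $\psi = \exists x_0 \, \theta(x_0, x_1, \dotsc, x_m)$ one has
\[
A_\psi^{\vec\beta} = \bigcup_{\gamma < \lambda} A_\theta^{\gamma, \vec\beta},
\]
and dually $A_{\forall x_0 \theta}^{\vec\beta} = \bigcap_{\gamma < \lambda} A_\theta^{\gamma, \vec\beta}$. By the inductive hypothesis each $A_\theta^{\gamma, \vec\beta}$ is $\lambda$-Borel, and since the $\lambda$-Borel sets are closed under well-ordered unions and intersections of length at most $\lambda$, the set $A_\psi^{\vec\beta}$ is $\lambda$-Borel in either case. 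This completes the induction.

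I do not expect a serious obstacle: this is a routine \L opez--Escobar-style induction, and indeed the surrounding text observes that the conclusion can alternatively be read off from~\cite[Proposition 8.9(d)]{AM}. The only points requiring a moment's care are formulating the inductive claim with free parameters (so that the quantifier step closes) and invoking closure of the $\lambda^+$-algebra under $\lambda$-sized unions and intersections in precisely that step. The latter is legitimate because a first-order formula has only finitely many quantifiers, so one never needs a transfinite iteration of the $\lambda$-Borel operations longer than a single union or intersection of length $\lambda$ at each stage.
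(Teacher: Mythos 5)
Your induction is correct, but it is not what the paper does: the paper gives no proof of its own, deriving the Fact instead from a cited \L opez--Escobar-type theorem for the generalized Cantor space (\cite[Proposition 8.9(d)]{AM}) together with a comparison of the product and bounded topologies on \( \pre{\lambda\times\lambda}{2} \approx \pre{\lambda}{2} \). Your argument is essentially the standard proof lying behind that citation, specialized to finitary first-order logic, and it is worth having written out: it is self-contained, it makes explicit that the parameters must be carried through the inductive claim so that the quantifier step closes, and it actually yields slightly more than the Fact states, namely that a first-order formula defines a set at a \emph{finite} level of the \(\lambda\)-Borel hierarchy (atomic formulas give clopen sets, connectives give finite Boolean combinations, and each quantifier costs one union or intersection of length \(\lambda\)). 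Two small streamlinings: in the atomic membership case you do not need the full equivalence of the two \(\lambda\)-Borel structures (nor, for that step, the hypothesis \( 2^{<\lambda}=\lambda \)), since the bounded topology refines the product topology --- the set \( \{x \mid x(\beta_i,\beta_j)=1\} \) is a union of basic clopen sets \( \Nbhd_s \) and hence is itself clopen in the bounded topology; and in the quantifier step it is worth noting that no choice is involved, because the family \( (A^{\gamma,\vec\beta}_\theta)_{\gamma<\lambda} \) is canonically indexed by ordinals and each member is uniquely determined by \( \theta \) and the parameters, which is exactly the form of well-ordered union that the \(\lambda^+\)-algebra axioms close under, so the argument goes through in the paper's base theory \( \ZF+\AC_\lambda(\pre{\lambda}{2}) \).
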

 Consider the following subsets of \( \pre{\lambda \times \lambda}{2} \):
\begin{align*}
\mathrm{E}_\lambda & =  \{x\in\pre{\lambda \times \lambda}{2} \mid {\in_x} \text{ is extensional}\} \\
\mathrm{W}_\lambda & =  \{x\in\pre{\lambda \times \lambda}{2} \mid {\in_x} \text{ is well-founded}\} \\
\mathrm{EW}_\lambda & = \mathrm{E}_\lambda \cap \mathrm{W}_\lambda.
\end{align*}
All three sets are cleary invariant under isomorphism, that is, if \( x \) belongs to one of the sets and \( y \in \pre{\lambda \times \lambda}{2} \) is such that \( (\lambda,\in_x) \cong (y,\in_y) \), then \( y \) belongs to the same set as well.

\begin{lemma} \label{lem:extensional+well-founded}
The set \( \mathrm{E}_\lambda \) is \(\lambda\)-Borel, while \( \mathrm{W}_\lambda \) and \( \mathrm{EW}_\lambda \) are complete \(\lambda\)-coanalytic sets.
\end{lemma}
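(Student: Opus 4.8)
The plan is to handle the three assertions separately, deriving the completeness of $\mathrm{W}_\lambda$ and $\mathrm{EW}_\lambda$ from the completeness of $\mathrm{WO}_\lambda$ established in Proposition~\ref{prop:NWO}. For the first assertion, I would observe that extensionality of $\in_x$ is expressed by the single first-order sentence
\[
\forall \alpha \, \forall \beta \, \big[ \forall \gamma \, (\gamma \in \alpha \iff \gamma \in \beta) \Rightarrow \alpha = \beta \big]
\]
in the language of set theory, so that $\mathrm{E}_\lambda$ is $\lambda$-Borel directly by Fact~\ref{fct:LopezEscobar}. If one prefers an explicit verification, for each pair $\alpha \neq \beta$ the set $U_{\alpha,\beta} = \{ x \mid \exists \gamma < \lambda \, (x(\gamma,\alpha) \neq x(\gamma,\beta)) \}$ is open, being a $\lambda$-sized union of clopen sets, and $\mathrm{E}_\lambda = \bigcap_{\alpha \neq \beta} U_{\alpha,\beta}$ is a $\lambda$-sized intersection of open sets, hence in $\lP^0_2$.

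For the second assertion, I would exhibit the complement $\pre{\lambda \times \lambda}{2} \setminus \mathrm{W}_\lambda$ (the codes of ill-founded relations) as a projection of a closed set. Concretely, let $F \subseteq \pre{\lambda \times \lambda}{2} \times B(\lambda)$ consist of those $(x,y)$ such that $x(y(n+1),y(n)) = 1$ for all $n \in \omega$; each such condition is clopen, so $F$ is closed, and $x$ admits an infinite $\in_x$-descending chain if and only if $x \in \p(F)$. By Proposition~\ref{prop:charanalytic}\ref{prop:charanalytic-4} the set $\p(F)$ is $\lambda$-analytic, whence $\mathrm{W}_\lambda$ is $\lambda$-coanalytic. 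Since $\mathrm{E}_\lambda \in \lB(\pre{\lambda \times \lambda}{2}) \subseteq \lP^1_1(\pre{\lambda \times \lambda}{2})$, Corollary~\ref{cor:closurepropertiesofanalytic} yields that $\mathrm{EW}_\lambda = \mathrm{E}_\lambda \cap \mathrm{W}_\lambda$ is $\lambda$-coanalytic as well.

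For completeness I would produce a single continuous reduction witnessing both statements at once. Define $f \colon \pre{\lambda \times \lambda}{2} \to \pre{\lambda \times \lambda}{2}$ by letting $f(x)$ code the strict part of $\preceq_x$, i.e.\ $f(x)(\alpha,\beta) = 1$ iff $x(\alpha,\beta) = 1$ and $\alpha \neq \beta$; this is continuous since each output coordinate depends on a single input coordinate. The key point is that for $x \in \mathrm{LO}_\lambda$ the strict order $\in_{f(x)} = \prec_x$ is automatically \emph{extensional}: if $\alpha \neq \beta$, say $\alpha \prec_x \beta$, then $\alpha$ is a $\prec_x$-predecessor of $\beta$ but not of itself, so the two predecessor sets differ. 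Consequently, for $x \in \mathrm{LO}_\lambda$ one has $f(x) \in \mathrm{W}_\lambda \iff f(x) \in \mathrm{EW}_\lambda \iff \prec_x$ is well-founded $\iff x \in \mathrm{WO}_\lambda$. Given any $A \in \lP^1_1(B(\lambda))$, Proposition~\ref{prop:NWO} furnishes a continuous $g \colon B(\lambda) \to \mathrm{LO}_\lambda$ with $g^{-1}(\mathrm{WO}_\lambda) = A$ (the reduction constructed there, via the Kleene--Brouwer ordering, indeed lands in $\mathrm{LO}_\lambda$). Then $f \circ g$ maps into extensional relations and satisfies $(f \circ g)^{-1}(\mathrm{W}_\lambda) = (f \circ g)^{-1}(\mathrm{EW}_\lambda) = A$, proving simultaneously that both $\mathrm{W}_\lambda$ and $\mathrm{EW}_\lambda$ are complete $\lambda$-coanalytic.

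The only genuine obstacle is precisely ensuring extensionality of the reduced relation: a naive reduction from ill-founded trees would produce well-founded relations that are typically \emph{not} extensional (e.g.\ distinct terminal nodes share the empty predecessor set), so it would establish completeness of $\mathrm{W}_\lambda$ but not of $\mathrm{EW}_\lambda$. Reducing instead from well-orders and passing to strict parts resolves this cleanly, since strict linear orders are extensional for free, letting a single reduction serve both purposes.
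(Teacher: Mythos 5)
Your proof is correct and follows essentially the same route as the paper: the upper bounds come from Fact~\ref{fct:LopezEscobar} (or the direct computation you give), and the completeness of \( \mathrm{W}_\lambda \) and \( \mathrm{EW}_\lambda \) is obtained exactly as in the paper, by reusing the reduction into \( \mathrm{LO}_\lambda \) from the proof of Proposition~\ref{prop:NWO} together with the observation that linear orders are automatically extensional, so one reduction serves both sets. Your additional step of passing to the strict part \( \prec_x \) is a harmless refinement of the paper's direct use of \( \mathrm{LO}_\lambda \cap \mathrm{W}_\lambda = \mathrm{WO}_\lambda \), and it has the small merit of landing on irreflexive relations, for which the chain-based definition of well-foundedness used for \( \mathrm{W}_\lambda \) applies literally rather than via the usual convention for reflexive orders.
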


\begin{proofsketch}
An easy computation, involving Fact~\ref{fct:LopezEscobar} when necessary, provides the upper bounds on the complexity of the three sets at hand. The completeness of \( \mathrm{W}_\lambda \) and \( \mathrm{EW}_\lambda \) for \(\lambda\)-coanalytic sets follows from the proof of Proposition~\ref{prop:NWO}, once we notice that linear orders are necessarily extensional and \( \mathrm{LO}_\lambda \cap \mathrm{W}_\lambda = \mathrm{WO}_\lambda \).
\end{proofsketch}

Recall from Section~\ref{subsec:transitivity} that every extensional, well-founded relation collapses to a unique isomorphic transitive model of set theory, its Mostowski collapse, via the Mostowski collapsing function. For \( x \in \mathrm{EW}_\lambda \),  we denote by \( \mathrm{tr}(\lambda,\in_x) \) the Mostowski collapse of \( (\lambda,{\in_x}) \), and by \( \pi_x \) the corresponding collapsing function, which is an isomorphism between \((\lambda,{\in_x}) \) and \( \mathrm{tr}(\lambda,\in_x) \). 
Conversely, every transitive model of set theory \( (M,{\in}) \) with \( |M| = \lambda \) can be coded by some element of \( \mathrm{EW}_\lambda \): just fix any bijection \( \pi \colon \lambda \to M \), and define \( x \in \mathrm{EW}_\lambda \) by letting \( x(\alpha,\beta) = 1 \iff \pi(\alpha) \in \pi(\beta) \). (Notice that for such an \( x \in \mathrm{EW}_\lambda \), we have \( \pi_x = \pi \).)
In this framework, the elements of \( \mathrm{EW}_\lambda \) must thus be conceived as codes for the transitive models of set theory of size \( \lambda \). Of course, codes are not unique: there are \( 2^\lambda \)-many codes (one for each bijection \( \pi \colon \lambda \to M \)) for any \( (M,\in) \) as above.

\begin{remark} \label{rmk:collapsemisleading}
Note that the name ``collapse'', in this case, is quite misleading: the ``collapse'' of $\alpha<\lambda$ can be an ordinal greater than $\lambda$. For example, suppose that \( x \in \mathrm{EW}_\lambda \) and that $\alpha$ is an ordinal in $(\lambda,\in_x)$, i.e., that the set 
\[ 
\mathrm{Pred}_x(\alpha) = \{ \beta < \lambda \mid \beta \in_x \alpha \} 
\] 
of $\in_x$-predecessors of $\alpha$ is linearly ordered. It is possible that  
$\mathrm{ot}(\mathrm{Pred}_x(\alpha),\in_x) = \gamma>\lambda$, and therefore $\pi_x(\alpha)$ will go up to $\gamma$. In fact, $\On^{\mathrm{tr}(\lambda,\in_x)}$ can be any ordinal between $\lambda$ and $\lambda^+$. In a certain sense, $\mathrm{tr}(\lambda,\in_x)$ is curled inside $\lambda$, and the ``collapse'' unfurls it.
\end{remark}

When $x\in \mathrm{EW}_\lambda$, we can recursively define the ``$\in_x$-transitive closure'' \( \mathrm{trcl}_x(\alpha) \) of any $\alpha\in\lambda$, but we will use the following equivalent definition that will make the future complexity analysis more transparent:
\begin{align*}
 \beta\in\mathrm{trcl}_x(\alpha)\quad\text{if and only if}\quad\exists s\in\pre{<\omega}{\lambda} \,
[ &s(0)=\beta \wedge {s(\lh(s)-1)=\alpha} \wedge{}  \\ 
 &\ \forall n<\lh(s)-1\, (s(n)\in_x s(n+1)) ].
\end{align*}
Contrary to $\mathrm{Pred}_x(\alpha)$, it is possible that there is no element of $\lambda$ that codes $\mathrm{trcl}_x(\alpha)$, i.e., that there is no $\beta<\lambda$ such that $\forall \gamma<\lambda \, (\gamma\in_x\beta \iff \gamma\in\mathrm{trcl}_x(\alpha))$, as possibly $\mathrm{tr}(\lambda,\in_x)$ does not satisfy enough axioms of $\ZFC$ for the transitive closure to live inside the model.
This would be an issue in the proof of Theorem~\ref{thm:noPSPcoanalytic}. Luckily, in the situation of interest we will have that \( \mathrm{trcl}_x(\alpha) = \mathrm{Pred}_x(\alpha) \) (see Remark~\ref{rmk:restrictions}), and this will allow us to overcome the mentioned difficulty. 


Recall from Section~\ref{subsec:transitivity} that \( H_\lambda \) is the collection of all sets whose transitive closure has size smaller than \( \lambda \), and that \( |H_\lambda| = \lambda \) under the assumption \( 2^{< \lambda} = \lambda \). 

\begin{lemma} \label{lem:collapse}
Assume that \( 2^{< \lambda} = \lambda \), and let $b\in H_\lambda$ and \( \alpha < \lambda \). Then the set 
\[
P_{\alpha,b} = \{x\in \mathrm{EW}_\lambda \mid  \pi_x(\alpha)=b\}
\] 
is $\lambda$-analytic relatively to \( \mathrm{EW}_\lambda \). 
\end{lemma}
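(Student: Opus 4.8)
We want to show that for $b \in H_\lambda$ and $\alpha < \lambda$, the set $P_{\alpha,b} = \{x \in \mathrm{EW}_\lambda \mid \pi_x(\alpha) = b\}$ is $\lambda$-analytic relatively to $\mathrm{EW}_\lambda$. Here $\pi_x$ is the Mostowski collapsing function of the extensional well-founded relation $(\lambda, \in_x)$, and $\pi_x(\alpha) = b$ means that the transitive set $b$ is exactly the image of $\alpha$ under the collapse. Let me think about what $\pi_x(\alpha) = b$ really demands.

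\textbf{Setting up the characterization.} Fix $b \in H_\lambda$ and $\alpha < \lambda$, and set $t = \mathrm{trcl}(\{b\})$, which is a transitive set containing $b$ of size $\mu < \lambda$; since $2^{<\lambda} = \lambda$ gives us choice below $\lambda$, the set $t$ is well-orderable, so we may fix a bijection $c \colon \mu \to t$ and put $i_0 = c^{-1}(b)$. The plan is to show that, for $x \in \mathrm{EW}_\lambda$, the condition $\pi_x(\alpha) = b$ is equivalent to the existence of an injective total function $e \colon \mu \to \lambda$ such that: (a) $e(i_0) = \alpha$; (b) $c(i) \in c(j) \iff e(i) \in_x e(j)$ for all $i,j < \mu$; and (c) $\ran(e) = \mathrm{Pred}^*$, where $\mathrm{Pred}^* := \mathrm{trcl}_x(\alpha) \cup \{\alpha\}$. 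I would prove this equivalence through the uniqueness of the Mostowski collapse: the set $\mathrm{Pred}^*$ is $\in_x$-downward closed (appending an extra $\in_x$-step to a finite descending chain from $\alpha$ yields another such chain), so the transitive collapse of the extensional well-founded substructure $(\mathrm{Pred}^*, {\in_x})$ coincides with the restriction $\pi_x \restriction \mathrm{Pred}^*$. Given $e$ as above, the map $\psi = e \circ c^{-1}$ is then an $\in$-to-$\in_x$ isomorphism from $(t,{\in})$ onto $(\mathrm{Pred}^*,{\in_x})$ with $\psi(b) = \alpha$; comparing $\psi^{-1}$ with the collapsing map and using that transitive sets have no nontrivial $\in$-automorphisms forces $\pi_x(\alpha) = b$. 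Conversely, $e = (\pi_x \restriction \mathrm{Pred}^*)^{-1} \circ c$ witnesses (a)--(c). It is worth stressing that clause (c) is genuinely needed: without the surjectivity $\ran(e) \supseteq \mathrm{Pred}^*$, the map $e$ could embed $t$ onto a part of $(\lambda,{\in_x})$ that misses some $\in_x$-predecessor of $\alpha$, so that $\pi_x(\alpha)$ would strictly exceed $b$.

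\textbf{Coding the witness and checking complexity.} I would code $e$ by an element $y \in \pre{\lambda}{2}$, identifying $y$ (via the G\"odel pairing $\langle \cdot,\cdot\rangle$) with the characteristic function of its graph, a subset of $\mu \times \lambda \subseteq \lambda \times \lambda$; recall $\pre{\lambda}{2}$ is $\lambda$-Polish since $2^{<\lambda}=\lambda$. Let $\Phi(x,y)$ be the conjunction of "$y$ codes an injective total function $e \colon \mu \to \lambda$'' together with (a), (b), (c). The easy clauses are $\lambda$-Borel in $(x,y)$ for bookkeeping reasons: the requirement that $y$ code an injective total function on $\mu$, and clause (a), involve only quantifiers $\forall i < \mu$, $\forall i \neq j < \mu$ (intersections of size $\mu < \lambda$) and $\exists \beta < \lambda$ (unions of size $\lambda$ of clopen conditions). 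For clause (b), note that for fixed $i,j < \mu$ the truth value of $c(i) \in c(j)$ is a fixed constant, while "$e(i) \in_x e(j)$'' unwinds to $\exists \beta_i,\beta_j < \lambda\,[(i,\beta_i),(j,\beta_j) \in \mathrm{graph}(e) \wedge x(\langle \beta_i,\beta_j\rangle) = 1]$, a $\lambda$-Borel condition in $(x,y)$; taking the biconditional with a constant and intersecting over the $\mu$-many pairs keeps us in $\lB$.

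\textbf{The main step and conclusion.} The crux is clause (c), and specifically the predicate "$\gamma \in \mathrm{trcl}_x(\alpha)$''. Following the equivalent definition of $\mathrm{trcl}_x$ recalled before the lemma, this predicate asserts the existence of a finite $\in_x$-descending chain $s \in \pre{<\omega}{\lambda}$ from $\alpha$ down to $\gamma$. The key observation is that $|\pre{<\omega}{\lambda}| = \lambda$ and that, for each \emph{fixed} $s$, the statement that $s$ is such a chain is a finite Boolean combination of clopen conditions on $x$; hence "$\gamma \in \mathrm{trcl}_x(\alpha)$'' is a $\lambda$-sized union of $\lambda$-Borel sets, so it is $\lambda$-Borel in $(x,\gamma)$. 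Granting this, the inclusion $\ran(e) \subseteq \mathrm{Pred}^*$ is an intersection over $\mu < \lambda$ of $\lambda$-Borel conditions, while the reverse inclusion, written as $\forall \gamma < \lambda\,\big(\gamma \in \mathrm{Pred}^* \Rightarrow \gamma \in \ran(e)\big)$, is an intersection of size $\lambda$ of $\lambda$-Borel sets; both therefore lie in $\lB$. This shows $\Phi$ is $\lambda$-Borel in $\pre{\lambda}{2} \times \pre{\lambda}{2}$, so by Proposition~\ref{prop:charanalytic} its projection on the first coordinate is $\lambda$-analytic, and by the first paragraph $P_{\alpha,b} = \mathrm{EW}_\lambda \cap \{x \mid \exists y\, \Phi(x,y)\}$, i.e.\ $P_{\alpha,b}$ is the trace on $\mathrm{EW}_\lambda$ of a $\lambda$-analytic set, as required. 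The main obstacle is precisely keeping this surjectivity/transitive-closure clause inside $\lB$: the finite-chain reformulation is what replaces an otherwise unbounded existential quantifier (which would push the complexity up to properly $\lambda$-analytic) by a $\lambda$-sized union, and one should double-check that the identification $\mathrm{trcl}_x(\alpha) = \mathrm{Pred}_x(\alpha)$ noted in Remark~\ref{rmk:restrictions} is \emph{not} needed here, since the finite-chain predicate is manifestly $\lambda$-Borel on all of $\mathrm{EW}_\lambda$.
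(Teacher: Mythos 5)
Your proof is correct and takes essentially the same approach as the paper's: both characterize \( \pi_x(\alpha)=b \) by the existence of a witnessing isomorphism between \( (\mathrm{trcl}_x(\alpha),{\in_x}) \) and the transitive closure of \( b \), verify that the defining conditions are \(\lambda\)-Borel (the crucial point being that ``\( \gamma \in \mathrm{trcl}_x(\alpha) \)'' is a \(\lambda\)-sized union of clopen conditions via the finite-chain definition), and conclude by projecting onto the first coordinate. The only difference is cosmetic: the paper's witness is a function \( f \in \pre{\lambda}{(H_\lambda)} \), using that this space with the product topology is standard \(\lambda\)-Borel, whereas yours goes in the inverse direction from an enumeration of \( \mathrm{trcl}(\{b\}) \) into \(\lambda\) and is coded directly in \( \pre{\lambda}{2} \).
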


\begin{proof}
Observe that \( \pi_x(\alpha)=b \) if and only if there is a function $f \in \pre{\lambda}{( H_\lambda)}$ such that for every $\beta,\gamma < \lambda$
\begin{enumerate-(1)}
\item \label{lem:collapse-1}
\( f \restriction \mathrm{trcl}_x(\alpha) \) is a bijection between \( \mathrm{trcl}_x(\alpha) \) and \( \mathrm{trcl}(b) \);
\item \label{lem:collapse-2}
if \( \beta , \gamma \in \mathrm{trcl}_x(\alpha) \), then \( \beta \in_x \gamma \iff f(\beta) \in f(\gamma) \);
\item \label{lem:collapse-3}
if \( \beta \notin \mathrm{trcl}_x(\alpha) \), then \( f(\beta) = 0 \).
\end{enumerate-(1)}
The three conditions above define a subset \( C \) of \( \pre{\lambda}{(H_\lambda)} \times \pre{\lambda \times \lambda}{2} \). Equip \( \pre{\lambda}{(H_\lambda)} \) with the product of discrete topology on \( H_\lambda \), so that it becomes a topological space homeomorphic to \( (\pre{\lambda}{\lambda}, \tau_p) \). By Example~\ref{xmp:lambda^lambdawithproduct}, the space \( \pre{\lambda}{(H_\lambda)} \) is then standard \(\lambda\)-Borel, and \( C \) is easily seen to be \(\lambda\)-Borel in the product \( \pre{\lambda}{(H_\lambda)} \times \pre{\lambda \times \lambda}{2} \). Since \( P_{\alpha,b} = \p(C) \cap \mathrm{EW}_\lambda \), the result easily follows.
\end{proof}

For technical reason, it would be desirable to have that the set \( P_{\alpha,b} \) from Lemma~\ref{lem:collapse} is \(\lambda\)-Borel (relatively to the space of codes under consideration) rather then merely \(\lambda\)-analytic. This is what happens in the classical case \( \lambda = \omega \), and the same would be true if \(\lambda\) were regular because we could then replace the function \( f \in \pre{\lambda}{(H_\lambda)} \) with a function in \( \pre{<\lambda}{(H_\lambda)} \) (which is a set of cardinality \(\lambda\), for regular cardinals \(\lambda\) satisfying \( 2^{< \lambda} = \lambda \)) doing essentially the same job. But in the singular case, this does not work for those \( x \in \mathrm{EW}_\lambda \) for which the set of \( \in_x \)-predecessors of \( \alpha \) is unbounded in \( \lambda \), let alone with the fact that \( \pre{< \lambda}{(H_\lambda)} \) would anyway be of cardinality larger than \(\lambda\). To overcome these difficulties, we restrict the set of codes \( \mathrm{EW}_\lambda \) to a set of ``good'' codes \( \mathrm{BC}_\lambda \).

\begin{defin}
 We say that $x \in \mathrm{EW}_\lambda$ has \markdef{bounded collapse} if for every $b\in H_\lambda$ and \( \alpha < \lambda \), if $\pi_x(\alpha) = b$ then \( \mathrm{trcl}_x(\alpha) \) is bounded in \(\lambda\). 
We denote by $\mathrm{BC}_\lambda$ the sets of all $x\in\mathrm{EW}_\lambda$ that have bounded collapse.
\end{defin}

\begin{lemma}\label{lem:BC}
Assume that \( 2^{< \lambda} = \lambda \).
\begin{enumerate-(i)}
\item \label{lem:BC-1}
$\mathrm{BC}_\lambda$ is a $\lambda$-coanalytic subset of \( \pre{\lambda \times \lambda}{2} \).
\item  \label{lem:BC-2}
For every $b\in H_\lambda$ and $\alpha<\lambda$, the set
\[
P'_{\alpha,b}  = \{x\in \mathrm{BC}_\lambda \mid \pi_x(\alpha)=b\} 
\]
is $\lambda$-Borel relatively to $\mathrm{BC}_\lambda$, and therefore it is $\lambda$-coanalytic in $\pre{\lambda \times \lambda}{2}$.
 \end{enumerate-(i)}
\end{lemma}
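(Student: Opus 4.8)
I need to prove two things about the set $\mathrm{BC}_\lambda$ of codes with bounded collapse, under $2^{<\lambda}=\lambda$: part~\ref{lem:BC-1} says $\mathrm{BC}_\lambda$ is $\lambda$-coanalytic, and part~\ref{lem:BC-2} says that restricting attention to bounded-collapse codes, each set $P'_{\alpha,b}$ becomes $\lambda$-Borel relatively to $\mathrm{BC}_\lambda$. The whole point of passing to $\mathrm{BC}_\lambda$ is to upgrade the merely $\lambda$-analytic description from Lemma~\ref{lem:collapse} to a $\lambda$-Borel one by replacing the global witness $f\in\pre{\lambda}{(H_\lambda)}$ with a bounded piece.

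Let me think.

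**Plan for part~\ref{lem:BC-1}.** The definition of bounded collapse is: for all $b\in H_\lambda$ and all $\alpha<\lambda$, if $\pi_x(\alpha)=b$ then $\mathrm{trcl}_x(\alpha)$ is bounded in $\lambda$. The key observation is that $\pi_x(\alpha)\in H_\lambda$ is equivalent to $\mathrm{trcl}_x(\alpha)$ being bounded: the collapse of $\alpha$ lands in $H_\lambda$ precisely when its $\in_x$-transitive closure is small (well-orderable of size $<\lambda$, hence bounded in $\lambda$ by regularity of the relevant successor cardinals, which we have from Fact~\ref{fct:lambdaunionewithoutchoice} under $2^{<\lambda}=\lambda$). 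So I would first reformulate: $x\in\mathrm{BC}_\lambda$ iff $x\in\mathrm{EW}_\lambda$ and for every $\alpha<\lambda$, either $\pi_x(\alpha)\notin H_\lambda$ or $\mathrm{trcl}_x(\alpha)$ is bounded. The plan is to show that the ``bad'' complement condition --- there exists $\alpha$ with $\pi_x(\alpha)\in H_\lambda$ but $\mathrm{trcl}_x(\alpha)$ unbounded --- is $\lambda$-analytic relative to $\mathrm{EW}_\lambda$. Using Lemma~\ref{lem:collapse}, the set $\{x\in\mathrm{EW}_\lambda\mid\pi_x(\alpha)=b\}$ is $\lambda$-analytic; unboundedness of $\mathrm{trcl}_x(\alpha)$ is a $\lambda$-Borel (in fact $\lambda$-coanalytic) condition since $\mathrm{trcl}_x(\alpha)$ has a $\lambda$-Borel definition via finite $\in_x$-chains; taking a union over $\alpha<\lambda$ and $b\in H_\lambda$ (both index sets of size $\lambda$, so closure of $\lS^1_1$ under $\lambda$-unions from Proposition~\ref{prop:closurepropertiesofanalytic}\ref{prop:closurepropertiesofanalytic-i} applies) keeps the bad set $\lambda$-analytic. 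Since $\mathrm{EW}_\lambda$ is $\lambda$-coanalytic by Lemma~\ref{lem:extensional+well-founded}, intersecting with $\mathrm{EW}_\lambda$ and then complementing gives $\mathrm{BC}_\lambda\in\lP^1_1$. I must be careful that the whole thing is a subset of $\pre{\lambda\times\lambda}{2}$, so complements are taken there, and use closure properties of $\lP^1_1$ from Corollary~\ref{cor:closurepropertiesofanalytic}.

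**Plan for part~\ref{lem:BC-2}.** Here the gain from bounded collapse is that the witnessing isomorphism $f$ in Lemma~\ref{lem:collapse} only needs to be specified on the bounded set $\mathrm{trcl}_x(\alpha)$, so we can get a symmetric $\lambda$-Borel description. The strategy: $x\in P'_{\alpha,b}$ iff $x\in\mathrm{BC}_\lambda$ and $\pi_x(\alpha)=b$. I would write $\pi_x(\alpha)=b$ as an existential statement over a bounded initial piece of $f$. Concretely, for $x\in\mathrm{BC}_\lambda$, if $\pi_x(\alpha)=b$ then $\mathrm{trcl}_x(\alpha)$ is bounded, say contained in some $\gamma<\lambda$; conversely the condition $\pi_x(\alpha)=b$ can be checked by an order-isomorphism $g\colon\mathrm{trcl}_x(\alpha)\to\mathrm{trcl}(b)$ which, being a bounded function into the $\lambda$-sized set $H_\lambda$, can be coded as an element of $\pre{\gamma}{(H_\lambda)}$ for some $\gamma<\lambda$ --- and $\bigcup_{\gamma<\lambda}\pre{\gamma}{(H_\lambda)}$ has size at most $\lambda$ under $2^{<\lambda}=\lambda$. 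This converts the projection along $\pre{\lambda}{(H_\lambda)}$ (giving only $\lambda$-analytic) into a $\lambda$-union of $\lambda$-Borel conditions over a well-orderable index set of size $\lambda$, which is a $\lambda$-union of $\lambda$-Borel sets, hence $\lambda$-Borel relatively to $\mathrm{BC}_\lambda$ (using $\AC_\lambda(\pre{\lambda}{2})$ and closure of the $\lambda^+$-algebra under $\lambda$-unions). To express this cleanly I would show both $P'_{\alpha,b}$ and its relative complement $\mathrm{BC}_\lambda\setminus P'_{\alpha,b}$ are $\lambda$-analytic relative to $\mathrm{BC}_\lambda$: the former because $\pi_x(\alpha)=b$ has the bounded-witness $\lambda$-Borel form just described, and the latter because $\pi_x(\alpha)\neq b$ means $\pi_x(\alpha)=b'$ for some other $b'\in H_\lambda$ (as $x\in\mathrm{EW}_\lambda$ forces $\pi_x(\alpha)$ to be defined and land in $H_\lambda$ when $\mathrm{trcl}_x(\alpha)$ is bounded --- and boundedness is forced on the relevant codes), so the complement is a $\lambda$-union $\bigcup_{b'\neq b}P'_{\alpha,b'}$; being both $\lambda$-analytic and $\lambda$-coanalytic relatively, it is $\lambda$-Borel relatively to $\mathrm{BC}_\lambda$ by the generalized Souslin theorem (Theorem~\ref{thm:souslin}). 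The final clause, that $P'_{\alpha,b}$ is then $\lambda$-coanalytic in the full space $\pre{\lambda\times\lambda}{2}$, follows because $\mathrm{BC}_\lambda\in\lP^1_1$ and $\lP^1_1$ is closed under $\lambda$-Borel-relative intersections (a relatively $\lambda$-Borel subset of a $\lP^1_1$ set is $\lP^1_1$, by writing it as $\mathrm{BC}_\lambda\cap B$ with $B\in\lB(\pre{\lambda\times\lambda}{2})$ and using Corollary~\ref{cor:closurepropertiesofanalytic}).

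**Main obstacle.** The delicate point is the bounded-witness recoding in part~\ref{lem:BC-2}: I must verify that for $x\in\mathrm{BC}_\lambda$ the transitive closure $\mathrm{trcl}_x(\alpha)$ really is bounded whenever $\pi_x(\alpha)$ is defined and in $H_\lambda$, and that a bounded partial function suffices to certify $\pi_x(\alpha)=b$. The subtlety flagged in the text --- that $\mathrm{trcl}_x(\alpha)$ may differ from $\mathrm{Pred}_x(\alpha)$ and may have no code inside the model --- is exactly what forces the restriction to bounded collapse, and I expect the heart of the argument to be checking that on $\mathrm{BC}_\lambda$ the existential quantifier over $f\in\pre{\lambda}{(H_\lambda)}$ from Lemma~\ref{lem:collapse} can be equivalently replaced by an existential over the well-orderable set $\bigcup_{\gamma<\lambda}\pre{\gamma}{(H_\lambda)}$, making the projection a genuine $\lambda$-union rather than an uncountable-support projection. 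Getting the cardinal arithmetic ($|\bigcup_{\gamma<\lambda}\pre{\gamma}{(H_\lambda)}|\leq\lambda$ from $2^{<\lambda}=\lambda$ and $|H_\lambda|=\lambda$) and the choice bookkeeping ($\AC_\lambda(\pre{\lambda}{2})$ to collect the $\lambda$-Borel pieces) right is where the care is needed.
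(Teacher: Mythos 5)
Your plan for part~\ref{lem:BC-1} is sound and is essentially the paper's argument up to De Morgan duality: the paper writes \( \mathrm{BC}_\lambda \) as the intersection of \( \mathrm{EW}_\lambda \) with \(\lambda\)-many \(\lambda\)-coanalytic sets, while you complement a \(\lambda\)-sized union of \(\lambda\)-analytic sets; both routes rest on Lemma~\ref{lem:collapse} and the closure properties in Proposition~\ref{prop:closurepropertiesofanalytic} and Corollary~\ref{cor:closurepropertiesofanalytic}. However, your ``key observation'' is false: \( \pi_x(\alpha)\in H_\lambda \) is \emph{not} equivalent to \( \mathrm{trcl}_x(\alpha) \) being bounded in \( \lambda \). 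Since \( \cf(\lambda)=\omega \), a subset of \( \lambda \) of size \( {<}\lambda \) can perfectly well be unbounded (e.g.\ \( \{\lambda_i \mid i\in\omega\} \)); the appeal to ``regularity'' misfires precisely because \( \lambda \) is singular, and this failure is the whole reason \( \mathrm{BC}_\lambda \) is a nontrivial restriction of \( \mathrm{EW}_\lambda \) --- were your equivalence true, one would get \( \mathrm{BC}_\lambda=\mathrm{EW}_\lambda \). Luckily your actual steps for~\ref{lem:BC-1} never use this equivalence, only the conjunction \( x\in P_{\alpha,b}\wedge \mathrm{trcl}_x(\alpha) \text{ unbounded} \), so part~\ref{lem:BC-1} survives intact.

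Part~\ref{lem:BC-2} has a genuine gap: the cardinality claim \( |\bigcup_{\gamma<\lambda}\pre{\gamma}{(H_\lambda)}|\leq\lambda \) is false. Since \( |H_\lambda|=2^{<\lambda}=\lambda \) and \( \cf(\lambda)=\omega \), already \( |\pre{\omega}{(H_\lambda)}|=\lambda^\omega=\lambda^{\cf(\lambda)}>\lambda \), so your ``\(\lambda\)-union'' ranges over an index set of size greater than \( \lambda \) and the \(\lambda\)-Borelness conclusion does not follow. The repair --- and this is exactly what the paper does --- is to bound the \emph{codomain} of the witness as well as its domain: if \( x\in\mathrm{BC}_\lambda \) and \( \pi_x(\alpha)=b \), take \( \delta=\sup(\mathrm{trcl}_x(\alpha))<\lambda \) and \( \kappa<\lambda \) with \( b\in H_\kappa \); the witnessing function then lives in \( \pre{\delta}{(H_\kappa)} \), and \( |\pre{\delta}{(H_\kappa)}|=|\pre{\delta}{(2^{<\kappa})}|<\lambda \) because \( \lambda \), being singular with \( 2^{<\lambda}=\lambda \), is strong limit. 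Thus the quantifier ``\( \exists f\in\pre{\lambda}{(H_\lambda)} \)'' of Lemma~\ref{lem:collapse} becomes ``\( \exists\delta<\lambda\,\exists\kappa<\lambda\,\exists f\in\pre{\delta}{(H_\kappa)} \)'', a genuine \(\lambda\)-sized union of \(\lambda\)-Borel conditions, and \( P'_{\alpha,b} \) is directly \(\lambda\)-Borel relatively to \( \mathrm{BC}_\lambda \) --- no detour through Souslin's theorem is needed. That detour is in any case flawed as you state it: even for \( x\in\mathrm{BC}_\lambda \) it can happen that \( \pi_x(\alpha)\notin H_\lambda \) (bounded collapse only constrains those \( \alpha \) whose collapse \emph{is} in \( H_\lambda \)), so \( \mathrm{BC}_\lambda\setminus P'_{\alpha,b}\neq\bigcup_{b'\neq b}P'_{\alpha,b'} \); the missing piece \( \{x\in\mathrm{BC}_\lambda\mid \mathrm{trcl}_x(\alpha)\text{ unbounded}\} \) is \(\lambda\)-Borel, so this is repairable, but the direct route above makes it superfluous.
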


\begin{proof}
\ref{lem:BC-1}
The definition of $\mathrm{BC}_\lambda$ is: 
	  \begin{equation*}
		 x\in \mathrm{EW}_\lambda\wedge\forall\alpha<\lambda\, \forall b\in H_\lambda\, [ x \notin P_{\alpha,b} \vee
		\exists\gamma<\lambda\, \forall\delta<\lambda\, (\delta\in\mathrm{trcl}_x(\alpha) \Rightarrow \delta<\gamma)].
		\end{equation*} 
By Lemma~\ref{lem:collapse}, the formula \( x \notin P_{\alpha,b} \) defines a \(\lambda\)-coanalytic set, while the part on the right of the disjunction defines a \(\lambda\)-Borel set. Therefore $\mathrm{BC}_\lambda$ is the intersection of the $\lambda$-coanalytic set \( \mathrm{EW}_\lambda \) (Lemma~\ref{lem:extensional+well-founded}), and the intersection of $\lambda$-many $\lambda$-coanalytic sets, thus it is \(\lambda\)-coanalytic itself by Corollary~\ref{cor:closurepropertiesofanalytic}.
	  
\ref{lem:BC-2}
Suppose that \( x \in \mathrm{BC}_\lambda \). If \( \pi_x(\alpha) = b \), then this can be witnessed by a function \( f \in \pre{\delta}{(H_\kappa)} \), for \( \delta  = \sup ( \mathrm{trcl}_x(\alpha)) < \lambda \) and \( \kappa < \lambda \) such that \( b \in H_\kappa \), satisfying conditions~\ref{lem:collapse-1}--\ref{lem:collapse-3} from the proof of Lemma~\ref{lem:collapse} for all \( \beta,\gamma < \delta \). Therefore the quantifier ``\( \exists f \in \pre{\lambda}{(H_\lambda)} \)'' in the proof of Lemma~\ref{lem:collapse} can now be turned into ``\( \exists \delta < \lambda \, \exists \kappa < \lambda \, \exists f \in \pre{\delta}{(H_\kappa)} \)'', which easily corresponds to a union of size \(\lambda\) because, having assumed \( 2^{< \lambda} = \lambda \), we have that \( |\pre{\delta}{(H_\kappa)}| = | \pre{\delta}{(2^{<\kappa})} | < \lambda \). We thus conclude that \( P'_{\alpha,b} \) is \(\lambda\)-Borel relatively to \( \mathrm{BC}_\lambda \).
\end{proof}

Notice that the set \( \mathrm{BC}_\lambda \) is no longer closed under isomorphism: this is not an issue as long as we can still code all transitive models of set theory of size \(\lambda\).

\begin{lemma} \label{lem:existenceofBCcodes}
Assume that \( 2^{< \lambda} = \lambda \). 
For any transitive model of set theory $(M,\in)$ such that $|M|=\lambda$, there is $x\in \mathrm{BC}_\lambda$ such that $\mathrm{tr}(\lambda,\in_x) = (M,\in)$.
\end{lemma}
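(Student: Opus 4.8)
The plan is to build an explicit bijection $\pi \colon \lambda \to M$ and then take $x \in \pre{\lambda \times \lambda}{2}$ to be the code it induces, i.e.\ $\alpha \in_x \beta \iff \pi(\alpha) \in \pi(\beta)$. Since $(M,\in)$ is transitive it is extensional and well-founded, and as $\pi$ is an isomorphism of $(\lambda,{\in_x})$ onto $(M,\in)$ the same holds for $\in_x$; hence $x \in \mathrm{EW}_\lambda$, the Mostowski collapse $\mathrm{tr}(\lambda,{\in_x})$ is exactly $(M,\in)$, and $\pi_x = \pi$ by uniqueness of the collapse. All of the work is therefore in choosing $\pi$ so that $x$ also has bounded collapse. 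Unwinding the definitions, $\mathrm{trcl}_x(\alpha) = \pi^{-1}\big(\mathrm{trcl}(\pi(\alpha)) \cup \{\pi(\alpha)\}\big)$, and the bounded-collapse requirement only constrains those $\alpha$ with $\pi(\alpha) \in H_\lambda$. Thus I must arrange that $\pi^{-1}\big(\mathrm{trcl}(b) \cup \{b\}\big)$ is bounded in $\lambda$ for every $b \in N := M \cap H_\lambda$.

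The key idea — and the one place where both $\cf(\lambda)=\omega$ and the strong-limit hypothesis $2^{<\lambda}=\lambda$ are used — is to stratify $N$ along the cofinal sequence $\vec\lambda = (\lambda_i)_{i\in\omega}$ of regular cardinals. Set $M_i = M \cap H_{\lambda_i}$. Each $H_{\lambda_i}$ is transitive, and since $\lambda$ is strong limit we have $|H_{\lambda_i}| = 2^{<\lambda_i} \le 2^{\lambda_i} < \lambda$; hence each $M_i$ is a transitive subset of $M$ of size $<\lambda$, the $M_i$ are increasing, and $N = \bigcup_{i\in\omega} M_i$. The crucial consequence of transitivity of $M_i$ is that $b \in M_i$ implies $\mathrm{trcl}(b)\cup\{b\} \subseteq M_i$.

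Next I would construct $\pi$ in $\omega$-many blocks. Fix a partition $(P_i)_{i\in\omega}$ of $M\setminus N$ into pieces of size $<\lambda$ (possible since $|M\setminus N|\le\lambda=\sup_i\lambda_i$ and $\cf(\lambda)=\omega$), and put $K_i = (M_i\setminus M_{i-1})\cup P_i$ with $M_{-1}=\emptyset$, so that $\beta_i := |K_i| < \lambda$ and $\bigsqcup_i K_i = M$. Let $\gamma_i = \sum_{j\le i}\beta_j$, a finite sum of cardinals below the limit cardinal $\lambda$, so $\gamma_i<\lambda$, while $\sup_i\gamma_i = |M| = \lambda$. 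Define $\pi$ to map $[\gamma_{i-1},\gamma_i)$ bijectively onto $K_i$; then $\pi$ is a bijection $\lambda\to M$ and $M_i \subseteq \bigcup_{j\le i}K_j = \pi[\gamma_i]$ for every $i$. Consequently, for $b\in N$, taking the least $i$ with $b\in M_i$ we obtain $\mathrm{trcl}(b)\cup\{b\}\subseteq M_i\subseteq\pi[\gamma_i]$, so $\pi^{-1}\big(\mathrm{trcl}(b)\cup\{b\}\big)\subseteq\gamma_i<\lambda$ is bounded. Hence $x\in\mathrm{BC}_\lambda$ with $\mathrm{tr}(\lambda,{\in_x})=(M,\in)$, as required.

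The main obstacle here is conceptual rather than computational: in contrast with the classical case $\lambda=\omega$, a subset of $\lambda$ of cardinality $<\lambda$ need not be bounded because $\cf(\lambda)=\omega$, so a careless enumeration of $M$ will in general produce a code whose collapse is unbounded. The stratification $M_i = M\cap H_{\lambda_i}$ together with the block construction is exactly what confines each transitive closure of an $H_\lambda$-element to a bounded initial segment, and the verifications $|M_i|<\lambda$ and $\gamma_i<\lambda$ are precisely where the hypotheses on $\lambda$ are spent.
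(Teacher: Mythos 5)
Your proof is correct, and it rests on the same key idea as the paper's: stratify $M \cap H_\lambda$ along the cofinal sequence as $M_i = M \cap H_{\lambda_i}$, use the strong-limit hypothesis to get $|H_{\lambda_i}| = 2^{<\lambda_i} < \lambda$, and choose the enumeration of $M$ so that each stratum is exhausted within a bounded initial segment of $\lambda$ — which confines $\pi^{-1}(\mathrm{trcl}(b))$ to a bounded set for every $b \in M \cap H_\lambda$. Where you differ is in the bookkeeping, and your version is actually more uniform. The paper first proves $|M \cap H_\lambda| = \lambda$ (via a Foundation argument: if some $a \in M$ has $|\mathrm{trcl}(a)| = \lambda$, one finds $a'$ all of whose elements lie in $H_\lambda$ and whose transitive closure has size $\lambda$), then bijects $M \cap H_\lambda$ with the even ordinals respecting the stratification, sends $M \setminus H_\lambda$ to the odd ordinals, and adds the side remark that the case $|M \setminus H_\lambda| < \lambda$ is ``even easier''. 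Your interleaved-block construction $K_i = (M_i \setminus M_{i-1}) \cup P_i$ with blocks $[\gamma_{i-1}, \gamma_i)$ never needs to know the cardinality of $M \cap H_\lambda$ or of $M \setminus H_\lambda$: since each $\gamma_i < \lambda$ and $\bigsqcup_i K_i = M$ has size $\lambda$, the blocks automatically exhaust exactly $\lambda$, so the case distinction and the Foundation argument both disappear. The trade-off is negligible — both proofs spend the hypotheses $\cf(\lambda) = \omega$ and $2^{<\lambda} = \lambda$ in exactly the same places — so your argument is a genuine (if modest) streamlining of the one in the paper.
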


\begin{proof}
By the discussion before Remark~\ref{rmk:collapsemisleading}, it is enough to find a bijection \( \pi \colon \lambda \to M  \) such that for every \( b \in M \cap H_\lambda \), the set \( \{ \pi^{-1}(c) \mid c \in \mathrm{trcl}(b) \} \) is bounded in \( \lambda \). For ease of exposition, we will in fact define the inverse of \( \pi \), which will be denoted by \( \varrho \).

First of all, note that $|M\cap H_\lambda|=\lambda$. If $M\subseteq H_\lambda$ this is obvious. Otherwise, there is an $a\in M$ such that $|\mathrm{trcl}(a)|=\lambda$. By the Axiom of Foundation, there must be an $a'\in M$ such that $|\mathrm{trcl}(a')|=\lambda$, but for all $b\in a'$, and therefore all $b\in\mathrm{trcl}(a')$, $b\in H_\lambda$. By transitivity of $M$, $\mathrm{trcl}(a')\subseteq M$, so $|M\cap H_\lambda|\geq|\mathrm{trcl}(a')|=\lambda$ again.
Moreover, we can suppose that also $|M\setminus H_\lambda|$ has cardinality $\lambda$ (otherwise the proof is even easier, and is left to the reader). 

Since the sequence \( \vec \lambda = (\lambda_i)_{i \in \omega} \) is cofinal in $\lambda$, we have that $H_\lambda=\bigcup_{i\in\omega}H_{\lambda_i}$ and $|H_{\lambda_i}|=2^{<\lambda_i} < \lambda$. (In particular, \( M \cap (H_{\lambda_{i+1}} \setminus H_{\lambda_i}) \neq \emptyset \) because of the cardinality of \( M \cap H_\lambda \).) We can thus find a bijection $\varrho_1$ between $M\cap H_\lambda$ and the set $A = \{ \alpha < \lambda \mid \alpha \text{ is even} \}$ such that for every $i\in\omega$, if \( a \in M \cap H_{\lambda_i} \) and \( b \in M \cap (H_\lambda \setminus H_{\lambda_i}) \) then \( \varrho_1(a) < \varrho_1(b) \). Then we let \( \varrho_2 \) any bijection between \( M \setminus H_\lambda \) and \( \lambda \setminus A\), and finally we let \( \pi = \varrho^{-1} \) for \( \varrho = \varrho_1 \cup \varrho_2 \).

Given any $b\in M\cap H_\lambda$, there exists an $i\in\omega$ such that $b \in
 H_{\lambda_i}$, so that also \( c \in H_{\lambda_i} \) for every \( c \in \mathrm{trcl}(b) \). By choice of \( \varrho_1 \), the set \( \{ \pi^{-1}(c) \mid c \in b \} \) is then bounded in \(\lambda\) by any \( \varrho_1(d) \) with \( d \in M \cap (H_{\lambda_{i+1}} \setminus H_{\lambda_i}) \).
\end{proof}

Before moving to the main result of the section, we need to compute the complexity of a few other sets. For \( x,y \in \pre{\lambda\ \times \lambda}{2} \), we write \( x \cong y \) to abbreviate \( (\lambda, \in_x) \cong (\lambda , \in_y) \).

\begin{lemma} \label{lem:technical}
\begin{enumerate-(i)}
\item \label{lem:technical-1}
The isomorphism relation \( \cong \) 
 is \(\lambda\)-analytic as a subset of \( (\pre{\lambda \times \lambda}{2})^2 \).

\item \label{lem:technical-2}
For every $\alpha<\lambda$, the set  $\{(x,y)\in \mathrm{BC}_\lambda\times\pre{\lambda \times \lambda}{2} \mid \pi_x(\alpha)=y\}$ is $\lambda$-Borel relatively to $\mathrm{BC}_\lambda\times\pre{\lambda \times \lambda}{2}$.	

\item \label{lem:technical-3}
The set $\{(x,y)\in \mathrm{BC}_\lambda\times\pre{\lambda \times \lambda}{2}\mid y\in \mathrm{tr}(\lambda,\in_x)\}$ is $\lambda$-Borel relatively to $\mathrm{BC}_\lambda\times\pre{\lambda \times \lambda}{2}$.
 \end{enumerate-(i)}
\end{lemma}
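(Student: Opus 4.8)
The plan is to handle the three items with a shared toolkit: the Lopez--Escobar-type Fact~\ref{fct:LopezEscobar}, the fact that \( (\pre{\lambda}{\lambda},\tau_p) \) is a standard \(\lambda\)-Borel space (Example~\ref{xmp:lambda^lambdawithproduct}), so that quantifying over a bijection or a partial collapsing map is a projection along a standard \(\lambda\)-Borel factor, together with the closure properties of \( \lS^1_1 \) and \( \lB \) and Souslin's Theorem~\ref{thm:souslin}. Throughout I read ``\( \pi_x(\alpha)=y \)'' as ``\( y\in\mathrm{EW}_\lambda \) and \( \pi_x(\alpha)=\mathrm{tr}(\lambda,\in_y) \)'', and ``\( y\in\mathrm{tr}(\lambda,\in_x) \)'' as ``\( y\in\mathrm{EW}_\lambda \) and \( \mathrm{tr}(\lambda,\in_y)\in\mathrm{tr}(\lambda,\in_x) \)''. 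For~\ref{lem:technical-1}, I would write \( x\cong y \) iff there is \( f\in\pre{\lambda}{\lambda} \) which is a bijection and satisfies \( \alpha\in_x\beta\iff f(\alpha)\in_y f(\beta) \) for all \( \alpha,\beta<\lambda \). Putting the \emph{product} topology on the copy of \( \pre{\lambda}{\lambda} \) carrying \( f \), the sets \( \{f : f(\alpha)=\gamma\} \) are clopen, so ``\( f \) is a bijection'' and the matrix condition are \(\lambda\)-Borel: each atomic clause \( f(\alpha)=\gamma \), \( x(\alpha,\beta)=1 \), \( y(\gamma,\delta)=1 \) is clopen, and the whole condition is assembled from these by \(\lambda\)-sized unions, intersections and complements. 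Hence the witnessing set \( R \) is \(\lambda\)-Borel in the standard \(\lambda\)-Borel product \( \pre{\lambda\times\lambda}{2}\times\pre{\lambda\times\lambda}{2}\times(\pre{\lambda}{\lambda},\tau_p) \) (Proposition~\ref{prop:productofstandardBorelspaces}), and \( {\cong} \) is the projection of \( R \) forgetting \( f \); projecting a \(\lambda\)-Borel subset of a standard \(\lambda\)-Borel space along a standard \(\lambda\)-Borel factor gives a \(\lambda\)-analytic set (Proposition~\ref{prop:charanalytic} and Proposition~\ref{prop:charstandardBorel}), so \( \cong \) is \(\lambda\)-analytic.

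For~\ref{lem:technical-3}, observe that \( \mathrm{tr}(\lambda,\in_x)=\{\pi_x(\alpha) : \alpha<\lambda\} \), so \( y\in\mathrm{tr}(\lambda,\in_x) \) iff \( \exists\alpha<\lambda\,(\pi_x(\alpha)=y) \). Thus the set in~\ref{lem:technical-3} is the union over \( \alpha<\lambda \) of the sets in~\ref{lem:technical-2}; being a \(\lambda\)-sized union of sets that are \(\lambda\)-Borel relatively to \( \mathrm{BC}_\lambda\times\pre{\lambda\times\lambda}{2} \), it is again \(\lambda\)-Borel there. So~\ref{lem:technical-3} is immediate once~\ref{lem:technical-2} is established.

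The real work is~\ref{lem:technical-2}. The plan is to reduce \( \pi_x(\alpha)=\mathrm{tr}(\lambda,\in_y) \) to the point-comparison relation \( Q=\{(x,y,\beta,\gamma) : \pi_x(\beta)=\pi_y(\gamma)\} \): indeed, for \( y\in\mathrm{EW}_\lambda \), we have \( \pi_x(\alpha)=\mathrm{tr}(\lambda,\in_y) \) iff both \( \forall\gamma<\lambda\,\exists\beta\,(\beta\in_x\alpha\wedge\pi_x(\beta)=\pi_y(\gamma)) \) and \( \forall\beta\,(\beta\in_x\alpha\Rightarrow\exists\gamma<\lambda\,\pi_x(\beta)=\pi_y(\gamma)) \) hold, and the quantifiers over \( \beta,\gamma<\lambda \) are harmless \(\lambda\)-sized Boolean operations. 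So it suffices to show that \( Q \), and the auxiliary \( \{(x,y,\beta,\gamma) : \pi_x(\beta)\in\pi_y(\gamma)\} \), are \(\lambda\)-Borel relatively to \( \mathrm{BC}_\lambda\times\pre{\lambda\times\lambda}{2}\times\lambda\times\lambda \). The main obstacle is precisely that the obvious presentation only yields \(\lambda\)-analyticity: writing \( \pi_x(\beta)=\pi_y(\gamma) \) as ``there is a collapse-respecting isomorphism between the hereditary \( \in_x \)-part below \( \beta \) and the \( \in_y \)-part below \( \gamma \)'' is an existential over a map in \( (\pre{\lambda}{\lambda},\tau_p) \), hence a priori only \( \lS^1_1 \).

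To upgrade to \(\lambda\)-Borel I would show that \( Q \) and its complement are \emph{both} \(\lambda\)-analytic and invoke Souslin's Theorem~\ref{thm:souslin}. The equality and membership relations obey the mutual recursion: \( \pi_x(\beta)=\pi_y(\gamma) \) iff every \( \in_x \)-predecessor of \( \beta \) matches some \( \in_y \)-predecessor of \( \gamma \) and vice versa, and \( \pi_x(\beta)\in\pi_y(\gamma) \) iff \( \pi_x(\beta)=\pi_y(\gamma') \) for some \( \gamma'\in_y\gamma \). Feeding ``\(=\)'' through these clauses and using that \( \lS^1_1 \) (hence \( \lP^1_1 \)) is closed under \(\lambda\)-sized unions and intersections (Proposition~\ref{prop:closurepropertiesofanalytic}, Corollary~\ref{cor:closurepropertiesofanalytic}), one sees simultaneously that the existential form makes ``\(=\)'' \(\lambda\)-analytic while the negation unfolds into \(\lambda\)-sized positive Boolean combinations of ``\(\ne\)'' clauses, making it \(\lambda\)-coanalytic; boundedness of \(\lambda\)-analytic well-founded relations (Theorem~\ref{thm:rankwellfoundedrelations}, Theorem~\ref{thm: boundednessforanalytic}) ensures the recursion closes off below \( \lambda^+ \). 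The alternative route, where the hypothesis \( x\in\mathrm{BC}_\lambda \) is meant to pay off exactly as in Lemma~\ref{lem:BC}\ref{lem:BC-2}, is to use bounded collapse to replace the witnessing map by one with \emph{bounded} domain, a function in some \( \pre{\delta}{(H_\kappa)} \) with \( \delta,\kappa<\lambda \), turning the bad projection into a \(\lambda\)-sized union of \(\lambda\)-Borel sets. I expect the crux to be the codes \( y \) whose collapse has size exactly \( \lambda \): there \( \trcl_x(\alpha) \) need not be bounded and only the Souslin route applies, so the delicate part is reconciling this with the bounded-collapse argument that governs the small codes (cf.\ the identity \( \trcl_x(\alpha)=\mathrm{Pred}_x(\alpha) \) of Remark~\ref{rmk:restrictions}).
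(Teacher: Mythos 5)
Your treatment of \ref{lem:technical-1} and \ref{lem:technical-3} is correct and is essentially the paper's own argument (projection along the standard \(\lambda\)-Borel factor \( (\pre{\lambda}{\lambda},\tau_p) \) for \ref{lem:technical-1}, and the \(\lambda\)-sized union over \( \alpha<\lambda \) of the sets from \ref{lem:technical-2} for \ref{lem:technical-3}). The problems are in \ref{lem:technical-2}, which is the heart of the lemma. First, you misread the statement: \( \pi_x(\alpha)=y \) is \emph{literal} set equality between the Mostowski collapse of \( \alpha \) and the point \( y \) viewed as a set of pairs \( ((\beta,\gamma),i) \) (i.e.\ the function \( y\colon\lambda\times\lambda\to 2 \)); it does not mean ``\( y\in\mathrm{EW}_\lambda \) and \( \pi_x(\alpha)=\mathrm{tr}(\lambda,\in_y) \)''. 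The literal reading is the one the lemma is used for: in Theorem~\ref{thm:noPSPcoanalytic} one needs conditions such as \( \pi_{z_{n+1}}(\gamma)=z_n \) and \( z_n\in\mathrm{tr}(\lambda,\in_{z_{n+1}}) \), where \( z_n \) is itself a code, a set of pairs, sitting as an element inside the model collapsed from \( z_{n+1} \). Your reformulation defines a different relation (the collapse of \( \alpha \) being the transitive \emph{model} coded by \( y \), which is a transitive set, whereas \( y \) itself essentially never is), so even a complete proof of your version would not establish the lemma as stated or as used.

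Second, even for your version the core step is not closed: the Souslin-theorem route through the point-comparison relation \( Q \) is left open at exactly the point you flag as the crux. To make the complement of \( Q \) \(\lambda\)-analytic you appeal to a mutual recursion along \( \in_x \) and \( \in_y \); these are well-founded relations whose ranks go up to \( \lambda^+ \), and unwinding the recursion produces a \( \lambda^+ \)-indexed hierarchy, not a \(\lambda\)-Borel (or even manifestly \(\lambda\)-coanalytic) definition; the boundedness theorems do not repair this, since the sets of codes involved are not \(\lambda\)-analytic to begin with. The paper's actual proof avoids all of this with one observation you missed: every \emph{element} of \( y \) is a pair \( ((\beta,\gamma),i)\in H_\lambda \), so the literal equality \( \pi_x(\alpha)=y \) decomposes element-wise as
\[
\forall \delta<\lambda\,\bigl[\delta\in_x\alpha \Rightarrow \exists \beta,\gamma<\lambda\,\exists i\in\{0,1\}\,\bigl(y(\beta,\gamma)=i \wedge \pi_x(\delta)=((\beta,\gamma),i)\bigr)\bigr]
\]
together with the symmetric condition that every \( ((\beta,\gamma),i) \) with \( y(\beta,\gamma)=i \) equals \( \pi_x(\delta) \) for some \( \delta\in_x\alpha \). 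Each condition \( \pi_x(\delta)=((\beta,\gamma),i) \) is one of the sets \( P'_{\delta,b} \) with parameter \( b\in H_\lambda \), which is \(\lambda\)-Borel relatively to \( \mathrm{BC}_\lambda \) by Lemma~\ref{lem:BC}\ref{lem:BC-2} — this is precisely where the bounded-collapse hypothesis pays off — while \( y(\beta,\gamma)=i \) is clopen. The whole formula is then a \(\lambda\)-sized Boolean combination of \(\lambda\)-Borel sets, hence \(\lambda\)-Borel; no Souslin theorem, recursion, or rank analysis is needed, and the problematic case you identify (collapses of size exactly \( \lambda \), where \( \mathrm{trcl}_x(\alpha) \) may be unbounded and Lemma~\ref{lem:collapse} is inapplicable) is exactly what this decomposition circumvents.
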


\begin{proof}
\ref{lem:technical-1}
This follows from the fact that $x \cong y$ if and only if there is a bijective map $f\in\pre{\lambda}{\lambda}$ such that \( \forall \alpha,\beta < \lambda \, (\alpha \in_x \beta \iff f(x) \in_y f(y)) \), together with the fact that \( \pre{\lambda}{\lambda} \) is a standard \(\lambda\)-Borel space when equipped with the \(\lambda\)-Borel structure induced by the product topology (Example~\ref{xmp:lambda^lambdawithproduct}).

\ref{lem:technical-2}
Here we cannot directly use Lemma~\ref{lem:collapse} because $y\notin H_\lambda$.
However,
$\pi_x(\alpha)=y$ if and only if 
\begin{multline*}
\forall \delta < \lambda \, [ \delta \in_x \alpha \Rightarrow \exists \beta,\gamma < \lambda \, \exists i \in \{ 0,1 \} \, (y(\beta,\gamma) = i \wedge \pi_x(\delta) = ((\beta,\gamma),i))] \\
\wedge \forall \beta,\gamma < \lambda \, \forall i \in \{ 0,1 \} \, [y(\beta,\gamma) = i \Rightarrow \exists \delta < \lambda \, (\delta \in_x \alpha \wedge \pi_x (\delta)= ((\beta,\gamma),i))].
\end{multline*}
Since each \( ((\beta,\gamma),i) \) belongs to \( H_\lambda \), the sets \( P'_{\delta,((\beta,\gamma),i)} \) are \(\lambda\)-Borel relatively to \( \mathrm{BC}_\lambda \) by Lemma~\ref{lem:BC}\ref{lem:BC-2}, and thus an easy computation along the previous formula yields the desired result.

\ref{lem:technical-3}
Use part~\ref{lem:technical-2} and the fact that for \( y \in \pre{\lambda \times \lambda}{2} \), we have
\[
y\in \mathrm{tr}(\lambda,\in_x) \iff \exists\alpha<\lambda\, (\pi_x(\alpha)=y). \qedhere
\]
\end{proof}

Recall from Proposition~\ref{prop:constructibility} the sentence \( \upsigma_0 \) and the formula \( \upvarphi_0(u,v) \). The following observation will be used in the proof of Theorem~\ref{thm:noPSPcoanalytic}.

\begin{remark} \label{rmk:restrictions}
Let \( \lambda \leq \delta < \lambda^+ \), and let \( x \in \mathrm{BC}_\lambda \) be such that $\mathrm{tr}(\lambda,{\in_x})=L_\delta$. (Such an \( x \) exists by Lemma~\ref{lem:existenceofBCcodes} and the fact that \( |L_\delta| = |\delta| = \lambda \).)
Let \( \gamma < \delta \) be a limit ordinal.
Since \( L_\gamma \in L_\delta \), there is \( \beta < \lambda \) such that \( \pi_x(\beta) = L_\gamma \). 
It follows that \( \pi_x \) maps \( \mathrm{trcl}_x(\beta) \) onto \( L_\gamma \), and
since \( L_\gamma \) is transitive and \( \pi_x \) is an isomorphism, we get that \( \mathrm{trcl}_x(\beta) = \mathrm{Pred}_x(\beta) \).
Denote by $(\lambda,{\in_x}) \restriction \beta$ the substructure of \( (\lambda, \in_x) \) with domain \( \mathrm{Pred}_x(\beta) \).
By the previous discussion, \( (\lambda,\in_x) \restriction \beta \) is isomorphic to \( L_\gamma \) via \( \pi_x  \), so that in particular \( (\lambda,\in_x) \restriction \beta \models \upsigma_0 \). 
Conversely, if \( \beta < \lambda \) is such that \( (\lambda,{\in_x}) \restriction \beta \models \upsigma_0 \), where \( (\lambda,{\in_x}) \restriction \beta \) is as above, then by choice of \( \upsigma_0 \) 
the image of \( \mathrm{Pred}_x(\beta) \) under \( \pi_x \) is \( L_\gamma \), for some limit ordinal \( \gamma \). Since \( L_\gamma \) is transitive and \( \pi_x \) is an isomorphism, this implies that \( \mathrm{Pred}_x(\beta) = \mathrm{trcl}_x(\beta) \), and thus \( \pi_x(\beta) = L_\gamma \). Therefore \( \gamma < \delta \) because \( \pi_x(\beta) \in \mathrm{tr}(\lambda,{\in_x}) = L_\delta \).
Summing up: for a given \( \beta < \lambda \), we have that \( \pi_x(\beta) = L_\gamma \) for some limit ordinal \( \gamma < \delta \) if and only if \( (\lambda,{\in_x}) \restriction \beta \models \upsigma_0 \).

Notice also that for every \( \beta < \lambda \), the condition ``\( (\lambda,\in_x) \restriction \beta \models \upsigma_0 \)'' is \(\lambda\)-Borel. Indeed, let \( \upsigma'_0(w) \) be obtained by bounding every quantifier in \( \upsigma_0 \) with a fresh variable \( w \), that is, every quantification of the form ``\( \exists v \)'' appearing in \( \upsigma_0 \) is replaced by ``\( \exists v \in w \)'', and similarly for the universal quantifications. Then for every \( x \in \pre{\lambda \times \lambda}{2} \) we have \( (\lambda, {\in_x}) \restriction \beta \models \upsigma_0 \iff (\lambda,{\in_x}) \models \upsigma'_0[\beta/w] \), and the latter is a \(\lambda\)-Borel condition by Fact~\ref{fct:LopezEscobar}.
\end{remark}

We can now prove that, under some conditions, there exists a $\lambda$-coanalytic set without the \( \lPSP \). 

\begin{theorem}\label{thm:noPSPcoanalytic}
Assume that \( 2^{< \lambda} = \lambda \) and $(\lambda^+)^L=\lambda^+$. Then there is a $\lambda$-coanalytic set $A \subseteq \pre{\lambda}{2}$ without the \lPSP.
%
\end{theorem}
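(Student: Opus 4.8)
The plan is to mimic Gödel's classical construction of a thin $\lP^1_1$ set under $\mathsf{V}=\mathsf{L}$, exploiting the coding machinery developed so far in this section. I would set
\[
A = \bigl\{ x \in \mathrm{BC}_\lambda \mid (\lambda, {\in_x}) \models \upsigma_0 \ \wedge\ x \in \mathrm{tr}(\lambda, {\in_x}) \bigr\},
\]
the set of bounded-collapse codes $x$ whose Mostowski collapse is a constructible level and which moreover code \emph{themselves}. By Proposition~\ref{prop:constructibility}\ref{prop:constructibility-1} the first conjunct forces $\mathrm{tr}(\lambda, {\in_x}) = L_\delta$ for some limit $\delta > \omega$; since this collapse has cardinality $\lambda$, we get $\lambda \le \delta < \lambda^+$. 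The first thing to check is that $A$ is $\lambda$-coanalytic. Now $\mathrm{BC}_\lambda$ is $\lambda$-coanalytic by Lemma~\ref{lem:BC}\ref{lem:BC-1}; the condition $(\lambda, {\in_x}) \models \upsigma_0$ is $\lambda$-Borel by Fact~\ref{fct:LopezEscobar} (using that $\upsigma_0$ can be bounded as in Remark~\ref{rmk:restrictions}); and the diagonal condition $x \in \mathrm{tr}(\lambda, {\in_x})$ is obtained by pulling back the set of Lemma~\ref{lem:technical}\ref{lem:technical-3} along the continuous map $x \mapsto (x,x)$, hence is $\lambda$-Borel relatively to $\mathrm{BC}_\lambda$. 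Therefore $A$ is an intersection of $\lambda$-coanalytic sets and so is itself $\lambda$-coanalytic by Corollary~\ref{cor:closurepropertiesofanalytic}.

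Next I would show $|A| > \lambda$, and here the self-coding requirement is exactly what keeps each ``level'' small. For a fixed $\delta$, every $x \in A$ with $\mathrm{tr}(\lambda, {\in_x}) = L_\delta$ lies in $L_\delta$, so there are at most $|\pre{\lambda \times \lambda}{2} \cap L_\delta| \le |L_\delta| = \lambda$ of them. Conversely, for cofinally many $\delta < \lambda^+$ the level $L_\delta$ does contain a bounded-collapse code of itself: for \emph{good} $\delta$, namely those for which $L_\delta$ recognizes $\lambda$ as its largest cardinal and contains $\vec{\lambda}$ together with the sets $H_{\lambda_i}$, the bijection $\varrho$ constructed in the proof of Lemma~\ref{lem:existenceofBCcodes} can be carried out inside $L_\delta$, producing $x \in A$ with $\mathrm{tr}(\lambda, {\in_x}) = L_\delta$. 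Distinct $\delta$ give non-isomorphic collapses, hence distinct codes, so $A$ meets $\lambda^+$-many levels. Invoking the hypothesis $(\lambda^+)^L = \lambda^+$, these good levels form a cofinal subset of $(\lambda^+)^L = \lambda^+$ of size $\lambda^+$, whence $|A| \ge \lambda^+ > \lambda$.

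Finally I would prove that $A$ is \emph{thin}, i.e.\ that $\pre{\lambda}{2}$ does not embed into it as a closed-in-$X$ set; combined with $|A| > \lambda$, this gives $\neg\lPSP(A)$. Since every $x \in A$ lies in $\mathrm{W}_\lambda$, the well-foundedness rank $\varphi(x) = \rk({\in_x})$, which coincides with the ordinal height $\delta$ of $L_\delta$, is a $\lP^1_1$-rank on $A$ by Theorem~\ref{thm:Pi11isranked}. Suppose towards a contradiction that some closed-in-$X$ copy $C$ of $\pre{\lambda}{2}$ is contained in $A$. Then $C$ is $\lambda$-analytic and $|C| = 2^\lambda$. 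Applying the second part of the boundedness Theorem~\ref{thm: boundednessforanalytic} to the $\lS^1_1$ subset $C$ of the $\lP^1_1$ set $A$, we obtain $\delta_0 = \sup\{\varphi(x) \mid x \in C\} < \lambda^+$, so $C \subseteq \{x \in A \mid \varphi(x) \le \delta_0\}$. But this last set meets only the $\le |\delta_0| \le \lambda$ levels below $\delta_0$, each of size $\le \lambda$, hence has cardinality at most $\lambda < 2^\lambda = |C|$, a contradiction. The step I expect to be the main obstacle is the complexity bookkeeping of the first paragraph: it is precisely the restriction to \emph{bounded-collapse} codes $\mathrm{BC}_\lambda$ that upgrades the collapse-dependent conditions (membership in $\mathrm{tr}(\lambda, {\in_x})$ and the values of $\pi_x$) from merely $\lambda$-analytic to $\lambda$-Borel through Lemmas~\ref{lem:BC} and~\ref{lem:technical}, a genuinely singular-cardinal phenomenon (flagged around Lemma~\ref{lem:collapse}) with no counterpart when $\lambda = \omega$. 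A secondary subtlety is confirming that the good $\delta$ are cofinal and numerous, for which $(\lambda^+)^L = \lambda^+$ is exactly what guarantees $\lambda^+$-many nonempty levels.
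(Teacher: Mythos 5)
Your first paragraph (the complexity computation) and, modulo a repair noted below, your last paragraph (thinness via boundedness) are sound. The fatal gap is the cardinality step, i.e.\ the claim that there exist \emph{any} -- let alone \( \lambda^+ \)-many -- ``self-coding'' levels: ordinals \( \delta \) such that some \( x \in \mathrm{BC}_\lambda \cap L_\delta \) has \( \mathrm{tr}(\lambda,{\in_x}) = L_\delta \). Your justification is that ``the bijection \( \varrho \) constructed in the proof of Lemma~\ref{lem:existenceofBCcodes} can be carried out inside \( L_\delta \)'', but this is impossible: any bijection (indeed any surjection) between \( \lambda \) and \( L_\delta \), viewed as a set of pairs, has rank at least \( \delta \), so it is never an element of \( L_\delta \). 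The code \( x \) itself has low rank, but you give no reason why it should be \emph{constructed} before stage \( \delta \), and there are strong reasons it never is. For instance, whenever \( L_\delta \) is admissible (and there are unboundedly many such \( \delta < \lambda^+ \)), \( \Sigma \)-collection shows that every well-founded relation belonging to \( L_\delta \) has rank \( < \delta \); hence the transitive collapse of any well-founded extensional \( x \in L_\delta \) is a set of rank \( < \delta \) and can never be \( L_\delta \) itself. (Already in the countable analogue, no arithmetic real codes \( (L_{\omega \cdot 2},{\in}) \): from such a code one could extract true arithmetic by bounded means inside \( L_{\omega\cdot 2} \), against Tarski.) The hypothesis \( (\lambda^+)^L = \lambda^+ \) is of no help here: it guarantees that every \( \alpha < \lambda^+ \) has a code lying in \( L \), not that any \( L_\delta \) contains a code of \emph{itself}. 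So your set \( A \) may well be empty, and the inequality \( |A| > \lambda \) is unsupported.

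This is precisely the obstruction the paper's proof is engineered to circumvent: a code cannot certify facts tied to its own level (there, \( <_L \)-minimality; here, self-membership); the certificate must be a code of a \emph{strictly later} level \( L_{\delta'} \) containing the given code, which then needs its own certificate, and so on. That regress is why the paper's counterexample is a set of infinite sequences \( (z_n)_{n \in \omega} \in \pre{\omega}{(\pre{\lambda \times \lambda}{2})} \), with \( z_{n+1} \) the canonical \( \mathrm{BC}_\lambda \)-code of the least constructible level containing \( z_n \), rather than a set of single codes -- your proposal in effect assumes the regress can be closed in one step, which is exactly what fails. A secondary, repairable issue: in the thinness argument you need the \emph{specific} function \( x \mapsto \delta_x \) (height of the collapse) to be a \( \lP^1_1 \)-rank on \( A \); Theorem~\ref{thm:Pi11isranked} only provides \emph{some} rank, and with an arbitrary rank Theorem~\ref{thm: boundednessforanalytic} merely places \( C \) inside a \( \lambda \)-Borel subset of \( A \), which does not bound its cardinality (a \( \lambda \)-Borel set can have size \( 2^\lambda \)). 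This can be fixed by pulling back the order-type rank on \( \mathrm{WO}_\lambda \) along the \( \lambda \)-Borel map sending \( x \) to the code of its internal ordinals ordered by \( \in_x \), as in the paper's final boundedness argument, but it must be said.
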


The idea behind the proof of Theorem~\ref{thm:noPSPcoanalytic}, an adaptation of \cite[Theorem 13.12]{Kanamori}, is the following.
For every ordinal \( \lambda \leq \alpha < \lambda^+ \), let%
\footnote{Here we use the fact that we can find a code for \( \alpha \) inside \( L \) because we assumed \( (\lambda^+)^L = \lambda^+ \).}
 \( x^{(\alpha)} \in \mathrm{WO}_\lambda \cap L\) be \( <_L \)-least
such that \( \mathrm{ot}(\preceq_{x^{(\alpha)}}) = \alpha \), and let 
\[ 
A = \{ x^{(\alpha)} \mid \lambda \leq \alpha < \lambda^+ \} . 
\]
The set \( A \) is designed to be a copy of \( \{ \alpha \in \On \mid \lambda \leq \alpha < \lambda^+ \} \) inside \( \pre{\lambda \times \lambda}{2}  \), in the sense that the map \( \alpha \mapsto x^{(\alpha)} \) is a (canonical) bijection between the two sets.
Thus \( |A| \nleq \lambda \) and, if \( A \) were \(\lambda\)-coanalytic, then by an easy argument involving the Boundedness Theorem~\ref{thm: boundednessforanalytic} the space \( \pre{\lambda}{2} \) could not be embedded into \( A \). Unfortunately, the \( <_L \)-minimality of \( x^{(\alpha)} \) in its definition cannot be rendered in a \( \lambda \)-coanalytic way. 
The best we can do is to observe that this can be checked by looking at a model of the form \( L_\delta \), for some limit \( \lambda \leq \delta < \lambda^+ \). Indeed, \( x^{(\alpha)} \) can be identified with a subset of \( \lambda \), hence there is some limit ordinal \( \lambda \leq \delta < \lambda^+ \) such that \( x^{(\alpha)} \in L_\delta \) (see e.g.\ the proof of~\cite[Theorem 13.20]{Jech2003}). By Proposition~\ref{prop:constructibility}, for every \( y \) we have 
\[ 
y <_L x^{(\alpha)} \iff y \in L_\delta \wedge (L_\delta,\in) \models \upvarphi_0[y/u,x^{(\alpha)}/v].
\]
Thus since all codes for \( \alpha \) are obviously ``isomorphic'' to each other, \( x^{(\alpha)} \) is the \( <_L \)-minimal code for \( \alpha \) if and only if there is \( z \in \pre{\lambda \times \lambda}{2} \) such that 
\begin{multline} \label{eq:A}
z \in \mathrm{BC}_\lambda \wedge (\lambda,{\in_z}) \models \upsigma_0 \wedge \exists \beta < \lambda \, (\pi_z(\beta) = x^{(\alpha)} \wedge \\
\forall \gamma < \lambda \,  \forall y \in \pre{\lambda \times \lambda}{2}  \, ((\lambda,{\in_z}) \models \upvarphi_0[\gamma/u,\beta/v] \wedge \pi_z(\gamma) = y \Rightarrow y \not\cong x^{(\alpha)}).
\end{multline}
Let us call any \( z \) as in~\eqref{eq:A} a witness for the \( <_L \)-minimality of \( x^{(\alpha)} \).
Using the results discussed in this section, one can easily see that the definition in~\eqref{eq:A} is \(\lambda\)-coanalytic, and thus we end up with a \( \lS^1_2 \)-condition because of the existential quantification on \( z \) preceding it. (Indeed, if we replace \( x^{(\alpha)} \) with \( x \) in~\eqref{eq:A} and add the requirement that \( x \in \mathrm{WO}_\lambda \), we get a \( \lS^1_2 \)-definition of \( A \) itself.)

To get rid of this extra quantification, one can make the choice of \( z \) canonical by first minimizing the ordinal \(\delta\) (that is, we only consider the smallest limit ordinal \( \lambda \leq \delta_1 < \lambda^+ \) for which \( x^{(\alpha)} \in L_{\delta_1} \)), and then taking the \( <_L \)-minimal \( z^{(\alpha)}_1 \in \mathrm{BC}_\lambda \) such that \( \mathrm{tr}(\lambda,{\in_{z^{(\alpha)}_1}}) = L_{\delta_1} \). 
But now we have only shifted our problem: how can we certify that \( z_1 \) is the \( <_L \)-least with the required properties? Repeating the above argument, we may look at witnesses \( z \in \mathrm{BC}_\lambda \) coding some large enough \( L_\delta \) for which \( z^{(\alpha)}_1 \in L_\delta \). Letting \( \lambda \leq \delta_2 < \lambda^+ \) be the smallest such \( \delta \), and \( z^{(\alpha)}_2 \) be the \( <_L \)-least \( z \in \mathrm{BC}_\lambda \) such that \( \mathrm{tr}(\lambda,{\in_z}) = L_{\delta_2} \), we get a canonical witness for the \( <_L \)-minimality of \( z^{(\alpha)}_1 \), and we have moved one step further. 

Iterating the process and setting \( z^{(\alpha)}_0 = x^{(\alpha)} \), we end up with a sequence \( (z^{(\alpha)}_n)_{n \in \omega} \in \pre{\omega}{(\pre{\lambda \times \lambda}{2})} \) such that \( z^{(\alpha)}_0 \) is the \( <_L \)-least code for \( \alpha \), and each \( z^{(\alpha)}_{n+1} \) is the ``canonical witness'' for the \( <_L \)-minimality of \( z^{(\alpha)}_n \).
Redefining \( A \) as 
\[
A = \{ (z^{(\alpha)}_n)_{n \in \omega} \in \pre{\omega}{(\pre{\lambda \times \lambda}{2})} \mid \lambda \leq \alpha < \lambda^+ \},
\]
we finally get a \( \lP^1_1 \)-set such that for each \( \lambda \leq \alpha < \lambda^+ \) there is exactly one element of \( A \) representing \( \alpha \) (in the sense that its first coordinate codes precisely \( \alpha \)), and thus \( A \) cannot have the \( \lPSP \) by the initial argument applied to this new \( A \).

\begin{proof}[Proof of Theorem~\ref{thm:noPSPcoanalytic}]
By Corollary~\ref{cor:transferPSP}, we can clearly work with the \(\lambda\)-Polish space \( Z = \pre{\omega}{(\pre{\lambda \times \lambda}{2})} \) instead of \( \pre{\lambda}{2} \), as they are \(\lambda\)-Borel isomorphic by Theorem~\ref{thm:Borelisomorphism}.

Let \( A \subseteq Z \) be the set of all \( (z_n)_{n \in \omega} \) such that for every \( n \in \omega \):
\begin{enumerate-(1)}
\item \label{thm:noPSPcoanalytic-1}
\( z_0 \in \mathrm{WO}_\lambda \);
\item \label{thm:noPSPcoanalytic-2}
\( z_{n+1} \in \mathrm{BC}_\lambda \wedge (\lambda,{\in_{z_{n+1}}}) \models \upsigma_0 \);
\item \label{thm:noPSPcoanalytic-3}
\( z_n \in \mathrm{tr}(\lambda,{\in_{z_{n+1}}}) \);
\item \label{thm:noPSPcoanalytic-4}
\( \forall \beta < \lambda \, [ (\lambda,{\in_{z_{n+1}}}) \restriction \beta \models \upsigma_0 \Rightarrow \neg \exists \gamma < \lambda \, (\gamma \in_{z_{n+1}} \beta \wedge \pi_{z_{n+1}}(\gamma)= z_n)] \);
\item \label{thm:noPSPcoanalytic-5}
for every \( y \in \pre{\lambda \times \lambda}{2} \), if
\[ 
\exists \beta, \gamma < \lambda \, (\pi_{z_{n+1}}(\beta) = y \wedge \pi_{z_{n+1}}(\gamma) = z_n \wedge (\lambda,{\in_{z_{n+1}}})\models \upvarphi_0[\beta/u,\gamma/v] ) ,
\]
then \( y \not\cong z_n \) .
\end{enumerate-(1)}

Condition~\ref{thm:noPSPcoanalytic-1} says that that \( z_0 \) codes some ordinal \( \omega \leq \alpha < \lambda^+ \), and is \(\lambda\)-coanalytic by Proposition~\ref{prop:NWO}.

Condition~\ref{thm:noPSPcoanalytic-2} says that each \( z_{n+1} \) has bounded collapse (so that \( (z_n)_{n \in \omega} \in \mathrm{WO}_\lambda \times \pre{\omega}{(\mathrm{BC}_\lambda)} \)) and codes \( L_{\delta_{n+1}} \) (necessarily for some \( \lambda \leq \delta_{n+1} < \lambda^+ \), since \( (\lambda,\in_{z_{n+1}}) \) is a structure of size \( \lambda \)),
and is \(\lambda\)-coanalytic by Lemma~\ref{lem:BC}\ref{lem:BC-1} and Fact~\ref{fct:LopezEscobar}.

Condition~\ref{thm:noPSPcoanalytic-3} says that \( z_n \in L_{\delta_{n+1}} \), and is \( \lambda \)-Borel relatively to \( \mathrm{WO}_\lambda \times \pre{\omega}{(\mathrm{BC}_\lambda)} \) by Lemma~\ref{lem:technical}\ref{lem:technical-3}.

Condition~\ref{thm:noPSPcoanalytic-4} says that \( \delta_{n+1} \) is the smallest ordinal \( \lambda \leq \delta < \lambda^+ \)  with \( z_n \in L_\delta \), and is \(\lambda\)-Borel relatively to  \( \mathrm{WO}_\lambda \times \pre{\omega}{(\mathrm{BC}_\lambda)} \) by Remark~\ref{rmk:restrictions} and Lemma~\ref{lem:technical}\ref{lem:technical-2}.

Finally, Condition~\ref{thm:noPSPcoanalytic-5} says that \( z_n \) is the \( <_L \)-least code for \( \alpha \) (if \( n = 0 \)) or for \( L_{\delta_n} \) (if \( n > 1 \)), and is \(\lambda\)-coanalytic by Fact~\ref{fct:LopezEscobar} and parts~\ref{lem:technical-1} and~\ref{lem:technical-2} of Lemma~\ref{lem:technical}.

This shows that \( A \) is \(\lambda\)-coanalytic. Moreover, for every ordinal \( \lambda \leq \alpha < \lambda^+ \) there is exactly one \( (z_n)_{n \in \omega} \) such that \( \mathrm{ot}(\preceq_{z_0}) = \alpha \). The existence part follows from the discussion before this proof. Uniqueness follows from the fact that if \( (z_n)_{n \in \omega} , (z'_n)_{n \in \omega} \in A \), then \( z_n = z'_n \) implies \( z_{n+1} = z'_{n+1} \) for every \( n \in \omega \) by definition of \( A \). Since if  \( \mathrm{ot}(\preceq_{z_0}) = \mathrm{ot}(\preceq_{z'_0}) = \alpha \) then \( z_0 = z'_0 \) by definition of \( A \) again, the result easily follows.

Finally, we show that \( A \) does not have the \lPSP. First of all, \( |A| \nleq \lambda \) because the map \( g \) sending each \( \lambda \leq \alpha < \lambda^+ \) to the unique \( (z_n)_{n \in \omega} \in A \) with \( \mathrm{ot}(\preceq_{z_0}) = \alpha \) is a well-defined bijection between \( \{ \alpha \in \On \mid \lambda \leq \alpha < \lambda^+ \} \) and \( A \). (No choice is needed here, as there is only one bijection with the described property.)
Suppose towards a contradiction that there is an embedding $f \colon \pre{\lambda}{2} \to A$, and let 
\[ 
B = \{ z \in \pre{\lambda \times \lambda}{2} \mid z = z_0 \text{ for some } (z_n)_{n \in \omega} \in \ran(f) \}. 
\]
Being the range of the composition of \( f \) with the projection from \( Z \) onto its first coordinate, \( B \in \lS^1_1(\pre{\lambda \times \lambda}{2}) \). Moreover, \( B \subseteq \mathrm{WO}_\lambda \)  by construction. Recall from (the proof of) Theorem~\ref{thm:Pi11isranked} that the map \( \mathrm{WO}_\lambda \to \lambda^+ \) sending each \( x \in \mathrm{WO}_\lambda \) to its order type \( \mathrm{ot}(\preceq_x) \) is a \( \lP^1_1 \)-rank, and therefore 
\[ 
C = \{ \mathrm{ot}(\preceq_z) \mid z \in B \} 
\] 
is bounded in \( \lambda^+ \) by Theorem~\ref{thm: boundednessforanalytic}, and therefore \( |C| \leq \lambda \). But by construction, \( C \) is the range of the injective map \( g^{-1} \circ f \), and hence we should have \( |C| \geq |\pre{\lambda}{2}| > \lambda \), a contradiction.
\end{proof}

The hypothesis \( (\lambda^+)^L = \lambda^+ \) in Theorem \ref{thm:noPSPcoanalytic} is not trivial, as it is an anti-large cardinal hypothesis, i.e., a hypothesis that is inconsistent with large enough cardinals. 
In fact, whether \( (\lambda^+)^L = \lambda^+ \) or not depends on the large cardinal object $0^\sharp$: if $0^\sharp$ exists, then by \cite[Theorem 9.17(c)]{Kanamori} any uncountable cardinal $\kappa$ is inaccessible in $L$, and therefore $(\kappa^+)^L\leq(2^\kappa)^L<\kappa^+$. On the other hand, if $0^\sharp$ does not exist, then by Jensen's Covering Theorem $(\lambda^+)^L=\lambda^+$ holds for all singular cardinals $\lambda$ (see~\cite[Corollary 18.32]{Jech2003}). 


%
Passing to the contrapositive, if \( \lPSP(\lP^1_1) \) holds for some cardinal \(\lambda\) satisfying \( 2^{< \lambda} = \lambda > \cf(\lambda) = \omega \), then $(\lambda^+)^L\neq\lambda^+$, and hence $0^\sharp$ exists. We can derive two remarks from this.

On the one hand, whether \( 0^\sharp \) exists or not is independent of the chosen cardinal \(\lambda\),
hence we can have the existence of $\lambda$-coanalytic sets without the $\lPSP$ for all strong limit cardinals, simultaneously.

\begin{corollary} \label{cor:counterexampleinV=L}
 Suppose $0^\sharp$ does not exist. Then for all strong limit cardinals \(\lambda > \omega \) of countable cofinality there $A \in \lP^1_1(\pre{\lambda}{2})$ without the $\lPSP$. 
 
\end{corollary}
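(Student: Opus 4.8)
The plan is to derive Corollary~\ref{cor:counterexampleinV=L} as an immediate application of Theorem~\ref{thm:noPSPcoanalytic}: the whole content of the argument is to check that, under the single hypothesis that $0^\sharp$ does not exist, the two cardinal-arithmetic assumptions of that theorem — namely $2^{<\lambda} = \lambda$ and $(\lambda^+)^L = \lambda^+$ — are satisfied by \emph{every} strong limit cardinal $\lambda > \omega$ with $\cf(\lambda) = \omega$. Once both hypotheses are verified, Theorem~\ref{thm:noPSPcoanalytic} directly produces the desired $A \in \lP^1_1(\pre{\lambda}{2})$ without the $\lPSP$ for each such $\lambda$. The word ``simultaneously'' in the statement is accounted for by the fact that the assumption ``$0^\sharp$ does not exist'' is a single statement that does not depend on the particular $\lambda$, so it validates the hypotheses for all the relevant cardinals at once.

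First I would verify $2^{<\lambda} = \lambda$. Here we are working in $\ZFC$ (as is customary when invoking $0^\sharp$ and covering), so the computation is routine: since $\lambda$ is strong limit we have $2^\mu < \lambda$ for all $\mu < \lambda$, whence $2^{<\lambda} = \sup_{\mu < \lambda} 2^\mu \leq \lambda$; and since a strong limit cardinal is in particular a limit cardinal, we also have $2^{<\lambda} \geq \lambda$. Thus $2^{<\lambda} = \lambda$, which is the first requirement of Theorem~\ref{thm:noPSPcoanalytic}.

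Next I would verify $(\lambda^+)^L = \lambda^+$. Because $\cf(\lambda) = \omega < \lambda$, the cardinal $\lambda$ is singular. At this point I would invoke precisely the discussion immediately preceding the corollary: assuming $0^\sharp$ does not exist, Jensen's Covering Theorem yields that $(\mu^+)^L = \mu^+$ holds for \emph{all} singular cardinals $\mu$ (see the reference to~\cite[Corollary 18.32]{Jech2003} given there). Applying this with $\mu = \lambda$ gives the second hypothesis of Theorem~\ref{thm:noPSPcoanalytic}. With both hypotheses in hand, Theorem~\ref{thm:noPSPcoanalytic} supplies a $\lambda$-coanalytic subset of $\pre{\lambda}{2}$ failing the $\lPSP$, completing the argument.

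There is essentially no technical obstacle here beyond Theorem~\ref{thm:noPSPcoanalytic} itself (which we are entitled to assume), so the only points requiring genuine care are the two verifications above; of these, the delicate one is the appeal to Jensen's Covering Theorem, since it is the sole place where the anti-large-cardinal hypothesis on $0^\sharp$ is consumed to pin down $(\lambda^+)^L$. Everything else is bookkeeping, and in particular no transfer between spaces (e.g.\ via Corollary~\ref{cor:transferPSP}) is needed, as Theorem~\ref{thm:noPSPcoanalytic} already delivers the counterexample inside $\pre{\lambda}{2}$ itself.
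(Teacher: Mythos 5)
Your proposal is correct and follows essentially the same route as the paper: the corollary is exactly the combination of Jensen's Covering Theorem (no $0^\sharp$ implies $(\lambda^+)^L = \lambda^+$ for all singular $\lambda$, hence for every strong limit $\lambda$ of countable cofinality) with Theorem~\ref{thm:noPSPcoanalytic}, whose remaining hypothesis $2^{<\lambda}=\lambda$ is immediate from $\lambda$ being strong limit. Your observation that the uniformity in $\lambda$ comes from the fact that the hypothesis on $0^\sharp$ is independent of $\lambda$ is precisely the point the paper makes in the discussion preceding the corollary.
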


The second remark is a negative result. Since \( \lPSP(\lP^1_1) \) implies a large cardinal assumption, it is not provable in $\ZFC$. This means that if we look for pointclasses $\boldsymbol{\Gamma}$ in the projective hierarchy such that \( \lPSP(\boldsymbol{\Gamma}) \), the best result that we can obtain in $\ZFC$ alone is \( \boldsymbol{\Gamma} = \lS^1_1 \), and to have more we need large cardinals. 

We further notice that it is easy to relativize the previous argument to $L[a]$ for any $a\in\pre{\lambda}{2}$, so that if \( \lPSP(\lP^1_1 )\), then $a^\sharp$ exists for all $a\in\pre{\lambda}{2}$. 

We end this section by noticing that, as in the classical case, in \( L \) there is a \( \lS^1_2 \)-well-ordering of \( \pre{\lambda}{2} \). This can be proved using the same methods developed for the proof of Theorem~\ref{thm:noPSPcoanalytic}.


\begin{proposition}[\( \mathsf{V=L} \)]\label{prop:definablewellorderinL}
For every limit ordinal \(\lambda\) with countable cofinality and every \(\lambda\)-Polish space \( X \), there is a \( \lS^1_2 \) well-ordering of \( X \).
\end{proposition}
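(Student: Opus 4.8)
The plan is to reduce to the generalized Cantor space and then exhibit the restriction of the constructibility order $<_L$ as a $\lS^1_2$ relation, mimicking the coding developed for Theorem~\ref{thm:noPSPcoanalytic}. First I would observe that under $\mathsf{V=L}$ we have $\GCH$, so every limit cardinal $\lambda$ of countable cofinality is strong limit (whence $2^{<\lambda}=\lambda$) and $\omega$-inaccessible (since $\kappa^\omega\le\kappa^+<\lambda$ for every $\kappa<\lambda$, as $\lambda$ is a limit cardinal); thus the whole machinery of Chapters~\ref{sec:Borel}--\ref{sec:lambda-analytic} is available. Given an arbitrary $\lambda$-Polish space $X$, if $|X|\le\lambda$ then $X\times X$ is a $\lambda$-Polish space of size at most $\lambda$, so every subset of $X\times X$ is a $\lambda$-sized union of (closed) singletons and hence $\lambda$-Borel; since $X$ is well-orderable, any of its well-orderings is then $\lambda$-Borel, a fortiori $\lS^1_2$. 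If instead $|X|>\lambda$, then by Theorem~\ref{thm:Borelisomorphism} there is a $\lambda$-Borel isomorphism $f\colon X\to B(\lambda)\approx\pre{\lambda}{2}$. As $\lS^1_2$ is a boldface $\lambda$-pointclass closed under $\lambda$-Borel preimages, if $\prec$ is a $\lS^1_2$ well-ordering of $\pre{\lambda}{2}$ then its pullback $x\prec'y\iff f(x)\prec f(y)$, being the preimage of $\prec$ under the $\lambda$-Borel map $(x,y)\mapsto(f(x),f(y))$, is a $\lS^1_2$ well-ordering of $X$. So it suffices to treat $X=\pre{\lambda}{2}$.

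Identify $\pre{\lambda}{2}$ with $\pow(\lambda)$ via characteristic functions, so that each $x\in\pre{\lambda}{2}$ is a subset of $\lambda$, hence an element of $V=L$, and $<_L$ well-orders $\pre{\lambda}{2}$. The heart of the argument is to show that $<_L\restriction\pre{\lambda}{2}$ is $\lS^1_2$. Reusing the coding space $\mathrm{BC}_\lambda\subseteq\pre{\lambda\times\lambda}{2}$ of (bounded-collapse codes for) transitive models of set theory from Section~\ref{sec:noPSPcoanalytic}, I would set, for $x,y\in\pre{\lambda}{2}$,
\[
x<_L y\iff\exists z\in\pre{\lambda\times\lambda}{2}\,\Phi(x,y,z),
\]
where $\Phi(x,y,z)$ asserts: $z\in\mathrm{BC}_\lambda$ and $(\lambda,{\in_z})\models\upsigma_0$ (so that $\mathrm{tr}(\lambda,{\in_z})=L_\delta$ for some limit ordinal $\lambda\le\delta<\lambda^+$); both $x,y\in\mathrm{tr}(\lambda,{\in_z})$; and there are $\alpha,\beta<\lambda$ with $\pi_z(\alpha)=x$, $\pi_z(\beta)=y$ and $(\lambda,{\in_z})\models\upvarphi_0[\alpha/u,\beta/v]$, where $\upsigma_0$ and $\upvarphi_0$ are as in Proposition~\ref{prop:constructibility}.

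For correctness: if $x<_L y$, then since $\pow(\lambda)\cap L\subseteq L_{\lambda^+}$ there is a limit ordinal $\lambda\le\delta<\lambda^+$ with $x,y\in L_\delta$; by Lemma~\ref{lem:existenceofBCcodes} pick $z\in\mathrm{BC}_\lambda$ with $\mathrm{tr}(\lambda,{\in_z})=L_\delta$, and by Proposition~\ref{prop:constructibility}\ref{prop:constructibility-2} (absoluteness of $<_L$ among models of $\mathsf{V=L}$), transported along the isomorphism $\pi_z$, we obtain $\Phi(x,y,z)$. Conversely, $\Phi(x,y,z)$ forces $(L_\delta,{\in})\models\upvarphi_0[x/u,y/v]$ with $x,y\in L_\delta$, whence $x<_L y$ by Proposition~\ref{prop:constructibility}\ref{prop:constructibility-2} again. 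For the complexity count, the lemmas of Section~\ref{sec:noPSPcoanalytic} give that the matrix $\Phi$ is $\lambda$-coanalytic: $z\in\mathrm{BC}_\lambda$ is $\lP^1_1$ by Lemma~\ref{lem:BC}\ref{lem:BC-1}, $(\lambda,{\in_z})\models\upsigma_0$ is $\lambda$-Borel by Fact~\ref{fct:LopezEscobar}, $x,y\in\mathrm{tr}(\lambda,{\in_z})$ is $\lambda$-Borel relative to $\mathrm{BC}_\lambda$ by Lemma~\ref{lem:technical}\ref{lem:technical-3}, and the final clause is $\lambda$-Borel relative to $\mathrm{BC}_\lambda$ by Lemma~\ref{lem:technical}\ref{lem:technical-2} and Fact~\ref{fct:LopezEscobar} (the bounded quantifiers $\exists\alpha,\beta<\lambda$ being $\lambda$-sized unions). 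Projecting $\Phi$ along the $z$-coordinate then yields a $\lS^1_2$ set by Proposition~\ref{prop:equivalentreformulationprojective}, since $\pre{\lambda\times\lambda}{2}\approx\pre{\lambda}{2}\approx B(\lambda)$.

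The main obstacle is purely the complexity bookkeeping: one must ensure that the single existential quantifier over codes $z$ is the only second-order quantifier and that everything beneath it collapses to a $\lambda$-coanalytic matrix. This is exactly what the bounded-collapse restriction $\mathrm{BC}_\lambda$ (rather than the naive $\mathrm{EW}_\lambda$, for which the collapse predicate is only $\lambda$-analytic by Lemma~\ref{lem:collapse}) was engineered to secure, so the proof is essentially a repackaging of the estimates already carried out for Theorem~\ref{thm:noPSPcoanalytic}, now read off a well-ordering relation instead of an ill-behaved coanalytic set.
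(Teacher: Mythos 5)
Your proof is correct and follows essentially the same route as the paper's: reduce to \( \pre{\lambda}{2} \) via Theorem~\ref{thm:Borelisomorphism}, then express \( <_L \) as the projection, along a code coordinate \( z \in \mathrm{BC}_\lambda \), of the very same matrix used in the paper, whose complexity is read off from Fact~\ref{fct:LopezEscobar}, Lemma~\ref{lem:BC}, and Lemma~\ref{lem:technical}. The additional details you supply --- the explicit treatment of the case \( |X| \leq \lambda \) (which the paper's appeal to Theorem~\ref{thm:Borelisomorphism} glosses over, since that theorem only applies when \( |X| > \lambda \)) and the verification of the equivalence via Proposition~\ref{prop:constructibility} and Lemma~\ref{lem:existenceofBCcodes} --- are precisely the steps the paper leaves implicit.
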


\begin{proof}
By Theorem~\ref{thm:Borelisomorphism}, without loss of generality we may assume \( X = \pre{\lambda}{2} \). Then for every \( x,y \in \pre{\lambda}{2} \) we have that \( x <_L y \) if and only if
\begin{multline*}
\exists z \in \pre{\lambda \times \lambda}{2} \,
[z \in \mathrm{BC}_\lambda \wedge (\lambda,{\in_z}) \models \upsigma_0 \wedge \exists \alpha,\beta < \lambda \, (\pi_z(\alpha) = x \\ \wedge \pi_z(\beta) = y \wedge(\lambda,{\in_z}) \models \upvarphi_0[\alpha/u,\beta/v].
\end{multline*}
Using Fact~\ref{fct:LopezEscobar} and Lemma~\ref{lem:technical} (after having identified \( \pre{\lambda \times \lambda}{2} \) and \( \pre{\lambda}{2} \)), one easily sees that the above condition is \( \lS^1_2 \), hence we are done.
\end{proof}

\section{Games for the \( \lPSP \)} \label{sec:PSPgames}

There is a game-theoretic formulation of \( \lPSP \) similar%
\footnote{Indeed, when \( \lambda = \omega \) our game \( \omega \)-\( G^*(A) \) is game-theoretically equivalent to the game \( G^*(A) \) from~\cite[Section 21.A]{Kechris1995}: each of the players has a winning strategy in one of the games if and only if (s)he has a winning strategy in the other one.}
to the \( * \)-game $G^*(A)$ game in~\cite[Section 21.A]{Kechris1995}. Recall that we fixed a strictly increasing sequence \( \vec{\lambda} = (\lambda_k)_{k \in \omega} \)  of cardinals cofinal in \(\lambda\), and that without loss of generality we can assume that each \( \lambda_k \) is regular whenever \( 2^{< \lambda} = \lambda \).

\begin{defin} \label{def:game}
Let \( X \) be a \(\lambda\)-Polish space and fix a base \( \mathcal{B} = \{ U_\alpha \mid \alpha < \lambda \} \) for it with \( U_\alpha \neq \emptyset \) for all \( \alpha < \lambda \). 
Given \( A \subseteq X \), consider  the following infinite two-players zero-sum perfect information game \(\lambda\)-\( G^*(A) \)
\[
\begin{array}{c||c|c|c|c|c}
\pI & (\alpha^0_i)_{i < \lambda_0} & (\alpha^1_i)_{i < \lambda_1} & \dotsc & (\alpha^k_i)_{i < \lambda_k} & \dotsc \\
\hline
\pII & i_0 & i_1 & \dotsc & i_k & \dotsc
\end{array}
\]
The game is played with \( \pI\) and \( \pII \) moving alternatively, with \( \pI \) moving first. At each round \( k \in \omega \):
\begin{enumerate-(a)}
\item
\( \pI \) chooses a sequence \( (\alpha^k_i)_{i < \lambda_k} \) of ordinals smaller than \(  \lambda \) such that
\begin{itemizenew}
\item
\( (\alpha^k_i)_{i < \lambda_k} \) is  bounded in \(  \lambda \), i.e.\ there is \( \gamma < \lambda \) such that \( \alpha^k_i \leq \gamma \) for all \( i < \lambda_k \);
\item
\( U_{\alpha^k_i} \cap U_{\alpha^k_j} = \emptyset \) if \( i,j < \lambda_k \) are distinct;
\item
\( \mathrm{diam}(U^k_i) \leq 2^{-k} \) for all \( i < \lambda_k \), where the diameter is computed with respect to any compatible complete metric on \( X \) fixed in advance;
\item
\( \mathrm{cl}(U_{\alpha^k_i}) \subseteq U_{\alpha^{k-1}_{i_{k-1}}} \) for all \( i < \lambda_k \), where if \( k = 0  \) we set \( U_{\alpha^{-1}_{i_{-1}}} = X \).
\end{itemizenew}
\item
After \( \pI \)'s move, \( \pII \) replies by choosing some \( i_k < \lambda_k \).
\end{enumerate-(a)}
The game ends after \( \omega \)-many rounds, or when player \( \pI \) is not able to make his/her next move without violating the rules above. In the latter case \( \pII \) immediately wins. In the former case, \( \bigcap_{k \in \omega} U_{\alpha^k_{i_k}} = \{ x \} \) for some \( x \in X \), and \( \pI \) wins if and only if \( x \in A \).
\end{defin}

The interested reader can easily verify that  \(\lambda\)-\( G^*( A ) \) can be formalized as a Gale-Stewart game with rules (in the sense discussed at the end of Section~\ref{sec:noBoreldeterminacy}).

\begin{remark}
If \(\lambda\) is \(\omega\)-inaccessible and the space \( X \) is \(\lambda\)-perfect, then
no play of the game \(\lambda\)-\( G^*(A) \) will terminate at a finite stage because
 \( \pI \) will always be able to make the next move. To see this, use the fact that in such a case each nonempty open subset of \( X \) has density character \(\lambda\), and this guarantees that \( \pI \) can make one more legal move thanks to Lemma~\ref{lem:r-spaces} (see the proof of Theorem~\ref{thm:generalgameforPSP}\ref{thm:generalgameforPSP-1} for more details). 
\end{remark}

\begin{theorem} \label{thm:generalgameforPSP}
Let \( \lambda \) be such that \( 2^{< \lambda} = \lambda \), \( X \) be a \(\lambda\)-Polish space, and \( A \subseteq X \).
\begin{enumerate-(i)}
\item \label{thm:generalgameforPSP-1}
\( \pI \) has a winning strategy in \(\lambda\)-\( G^*(A) \) if and only if there is a continuous injection from \( C(\lambda) \) into \( A \).
\item \label{thm:generalgameforPSP-2}
\( \pII \) has a winning strategy in \(\lambda\)-\( G^*(A) \) if and only if \( A \) is well-orderable and \( |A| \leq \lambda \).
\end{enumerate-(i)}
Thus the game  \(\lambda\)-\( G^*(A) \) is determined if and only if \( \lPSP(A) \).
\end{theorem}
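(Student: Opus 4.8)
The plan is to deduce the determinacy equivalence from parts~\ref{thm:generalgameforPSP-1} and~\ref{thm:generalgameforPSP-2}, after establishing those two parts. Since a zero-sum perfect-information game cannot be won by both players, ``$\lambda$-$G^*(A)$ is determined'' means exactly that one of $\pI$, $\pII$ has a winning strategy. By part~\ref{thm:generalgameforPSP-1}, $\pI$ wins iff there is a continuous injection from $C(\lambda)$ into $A$, and by part~\ref{thm:generalgameforPSP-2}, $\pII$ wins iff $A$ is well-orderable with $|A|\le\lambda$ (which, in the $\ZF$-sense of Section~\ref{subsec:ordandcard}, is precisely the content of ``$|A|\le\lambda$''). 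On the other hand, since $2^{<\lambda}=\lambda$ implies that $\lambda$ is strong limit and hence $\omega$-inaccessible, and $\pre{\lambda}{2}\approx C(\lambda)$ by Theorem~\ref{thm:homeomorphictoCantor}\ref{thm:homeomorphictoCantor-2}, Corollary~\ref{cor:equivPSP} shows that the second alternative in the definition of $\lPSP(A)$, namely that $\pre{\lambda}{2}$ embeds into $A$ as a closed-in-$X$ set, is equivalent to the existence of a continuous injection from $C(\lambda)$ into $A$. Thus $\lPSP(A)$ holds iff ($|A|\le\lambda$) or (there is a continuous injection $C(\lambda)\to A$), i.e.\ iff ($\pII$ wins) or ($\pI$ wins), which is exactly the determinacy of $\lambda$-$G^*(A)$.

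It remains to supply parts~\ref{thm:generalgameforPSP-1} and~\ref{thm:generalgameforPSP-2}. For part~\ref{thm:generalgameforPSP-1}, the forward direction is the easy one: given a winning strategy for $\pI$, the map sending a sequence of $\pII$-moves $y=(i_k)_{k\in\omega}\in C(\lambda)$ to the resulting point $x_y=\bigcap_k U_{\alpha^k_{i_k}}$ is a continuous injection into $A$. Continuity holds because $y\restriction(k{+}1)$ already determines $\pI$'s response, hence the set $U_{\alpha^k_{i_k}}$ of diameter $\le 2^{-k}$ containing $x_y$; injectivity holds because $\pI$'s $\lambda_k$ offered sets are pairwise disjoint, so two branches differing first at coordinate $k$ land in disjoint sets; and $x_y\in A$ since the strategy is winning. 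For the converse I would first pass, via Corollary~\ref{cor:equivPSP}, from a continuous injection to a closed-in-$X$ copy $P\approx C(\lambda)$ of $C(\lambda)$ contained in $A$. Then $\pI$'s winning strategy is to keep every offered basic set meeting $P$: at round $k$, since $P\cap U_{\alpha^{k-1}_{i_{k-1}}}$ is a nonempty relatively open subset of $P$ it has density character $\lambda$, so by Lemma~\ref{lem:r-spaces} $\pI$ extracts an $r$-spaced subset of size $\lambda_k$ inside it and surrounds its points by pairwise disjoint basic sets of small diameter with closures in the previous set. This keeps $\pI$ able to move and forces every branch to converge to a point of $\mathrm{cl}(P)=P\subseteq A$.

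For part~\ref{thm:generalgameforPSP-2}, the backward direction rests on a staged diagonalisation exploiting $\lambda_{k-1}<\lambda_k$. Fixing a well-ordering $A=\{a_\xi:\xi<\lambda\}$, $\pII$'s plan at round $k$ is to discard all still-active $a_\xi$ with $\xi<\lambda_{k-1}$ simultaneously: these number fewer than $\lambda_k$, hence occupy fewer than $\lambda_k$ of the $\lambda_k$ pairwise disjoint children offered by $\pI$, so a child avoiding all of them exists and $\pII$ picks it. Since a discarded point stays out of every later region, and every $\xi<\lambda$ satisfies $\xi<\lambda_{k-1}$ for some $k$, the resulting point avoids every $a_\xi$ and the outcome lies outside $A$; and if $\pI$ ever cannot move, $\pII$ wins outright.

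The hard part is the forward direction of part~\ref{thm:generalgameforPSP-2}: from a winning strategy $\tau$ for $\pII$ one must produce a well-ordering of $A$ of size $\le\lambda$. The plan is a ``homing'' analysis adapting the classical argument: to each $a\in A$ one runs the canonical play in which $\pI$ always offers a shrinking basic neighbourhood of $a$. Since $\tau$ is winning, $\pII$ cannot accept forever (else the outcome would be $a\in A$), so either $\pII$ rejects $a$ at some finite round or $\pI$ gets stuck; by Lemma~\ref{lem:r-spaces} the latter occurs only inside a region of density character $<\lambda$, hence well-orderable of size $<\lambda$. Collecting the capturing data assigns to each $a$ a finite trace of basic sets, of which there are only $\lambda^{<\omega}=\lambda$ many, and exhibits $A$ as a $\le\lambda$-sized union of well-orderable pieces of size $<\lambda$, which is well-orderable of size $\le\lambda$ by Fact~\ref{fct:lambdaunionewithoutchoice}\ref{fct:lambdaunionewithoutchoice-2}. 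The genuine obstacle I anticipate is ensuring this assignment is injective (equivalently, that its fibres are small): two nearby points of $A$ can be rejected simultaneously inside the same tiny offered child, so pinning each $a$ down requires careful, possibly transfinite, bookkeeping of the regions into which $\tau$ is forced, together with a proof that this descent terminates with genuinely $<\lambda$-sized remainders.
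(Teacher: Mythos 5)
Your part~\ref{thm:generalgameforPSP-1}, the backward direction of part~\ref{thm:generalgameforPSP-2}, and the derivation of the determinacy equivalence via Corollary~\ref{cor:equivPSP} all match the paper's own proof. One detail you gloss over in the converse of part~\ref{thm:generalgameforPSP-1}: the rules of \(\lambda\)-\( G^*(A) \) require each move \( (\alpha^k_i)_{i<\lambda_k} \) of \( \pI \) to be \emph{bounded} in \(\lambda\), and the indices of the basic sets you place around your \( r \)-spaced points may well be cofinal in \(\lambda\). The paper repairs this using the standing convention that each \( \lambda_k \) is regular (available since \( 2^{<\lambda}=\lambda \)): among the chosen indices \( (\beta_i)_{i<\lambda_k} \), some \( \lambda_k \)-sized subfamily lies below a single \( \lambda_n \), and \( \pI \) plays that subfamily. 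The same trick is needed again inside the counting argument below, where \( \pI \)'s moves are manufactured from \( r \)-spaced sets.

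The genuine gap is the forward direction of part~\ref{thm:generalgameforPSP-2}, and you have flagged it yourself. The deeper problem with your ``homing'' plan is not just injectivity of the trace map: the canonical play tracking a point \( a \) is not even well defined, because \( \pI \)'s move is an entire family of \( \lambda_k \) pairwise disjoint sets and \( \tau \)'s answer depends on the whole family, so ``\( \pII \) rejects \( a \) at round \( k \)'' has no meaning until you specify everything \( \pI \) offers besides a neighbourhood of \( a \); different completions give different rejection data, which is exactly why your fibre bookkeeping does not close. The paper inverts the quantifier and indexes by \emph{positions} rather than points. Let \( P \) be the set of positions consistent with \( \tau \) in which \( \pI \) moves next; \( |P|\le\lambda \) by \( 2^{<\lambda}=\lambda \). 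For \( p\in P \) of length \( 2k \), call \( x \) \emph{bad for \( p \)} if \( x \) lies in the region last chosen by \( \pII \) but, for \emph{every} legal continuation of \( \pI \), the \( \tau \)-reply lands in a set missing \( x \); put \( A_p=\{x \mid x \text{ bad for } p\} \). Smallness, \( |A_p|\le(\lambda_k)^\omega<\lambda \), follows from Fact~\ref{fct:fromweighttosize} and Lemma~\ref{lem:r-spaces}: were \( \dens(A_p)>\lambda_k \), an \( r \)-spaced subset of size \( \lambda_k \) would let \( \pI \) craft a legal move all of whose offered sets meet \( A_p \), and \( \tau \)'s reply would then capture a point of \( A_p \), contradicting badness. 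Conversely, if some \( a\in A \) lay in no \( A_p \), then recursively (at each step choosing a move of \( \pI \) whose \( \tau \)-reply keeps \( a \) inside the selected set) one builds an infinite run consistent with \( \tau \) whose outcome is \( a\in A \), contradicting that \( \tau \) is winning. Hence \( A\subseteq\bigcup_{p\in P}A_p \) is a well-ordered union of at most \(\lambda\)-many well-orderable sets of size \( <\lambda \), so \( |A|\le\lambda \) by Fact~\ref{fct:lambdaunionewithoutchoice}\ref{fct:lambdaunionewithoutchoice-2}. No injectivity of any point-to-trace assignment is required; that demand is an artifact of the per-point formulation, and the position-indexed decomposition is what eliminates it.
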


\begin{proof}
The additional part follows from
\( C(\lambda) \approx B(\lambda) \) and Corollary~\ref{cor:equivPSP}.

\ref{thm:generalgameforPSP-1}
Suppose that \( \pI \) has a winning strategy \(\sigma\) in \(\lambda\)-\( G^*(A) \). For \( x \in C(\lambda) \), consider the run of the game where \( \pI \) follows \(\sigma\) and \( \pII \) plays \( x(k) \) at each round \( k \in \omega \), and let \( f(x) \) be the unique element in the resulting set \( \bigcap_{k \in \omega} U_{\alpha^k_{i_k}} \). It is not hard to see that \( f \colon C(\lambda) \to A \) is continuous and injective.

Conversely, assume that there is a continuous injection from \( C(\lambda) \) into \( A \). By Corollary~\ref{cor:equivPSP}, there is a closed-in-\( X \) set \( C \subseteq A \) which is homeomorphic to \(  B(\lambda) \). We will describe a legal strategy \( \sigma \) for \( \pI \) such that \( \pI \) can always make the next move and with the additional property that \( U_{\alpha^k_i} \cap C \neq \emptyset \) for all \( k \in \omega \) and \( i < \lambda_k \). Such a strategy \( \sigma \) will then be winning for \( \pI \) because by construction the unique point in \( \bigcap_{k \in \omega} U_{\alpha^k_{i_k}} \) will be the limit of points \( x_k \in U_{\alpha^k_{i_k}} \cap C \), and thus will be in \( C \subseteq A \) because \(C \) is assumed to be closed. To define \( \sigma \), for each \( k \in \omega \) and \( \alpha < \lambda \) such that \( U_\alpha \cap C \neq \emptyset \) fix an \( r_{\alpha,k} \)-spaced set \( C_{\alpha,k} \subseteq U_\alpha \cap C \) with \( |C_{\alpha,k}| = \lambda_k \), for a suitable positive real \( r_{\alpha,k}\). (The existence of such a set is granted by Lemma~\ref{lem:r-spaces} applied with \( \nu = \lambda_k \) and the fact that \( U_\alpha \cap C \) is a metrizable space of density character \( \lambda \) because it is a relatively open subspace of a homeomorphic copy of \( B(\lambda) \).) Suppose that we are at turn \( k \in \omega \), so that \( \pII \) just selected the ordinal \( i_{k-1} < \lambda_{k-1} \), and that by inductive hypothesis  \( U_{\alpha^{k-1}_{i_{k-1}}} \cap C \neq \emptyset \). Let \( \{ x_i \mid i < \lambda_k \} \) be an enumeration without repetitions of \( C_{\alpha^{k-1}_{i_{k-1}},k} \) and for each \( i < \lambda_k \) let \( \beta_i < \lambda \) be such that \( x_i \in U_{\beta_i} \) (so that \( U_{\beta_i} \cap C \neq \emptyset \)), \( \mathrm{diam}(U_{\beta_i}) < \min \left\{ 2^{-k}, \frac{1}{3} r_{\alpha^{k-1}_{i_{k-1}},k} \right \} \), and \( \mathrm{cl}(U_{\beta_i}) \subseteq U_{\alpha^{k-1}_{i_{k-1}}} \). (This is possible because \( X \) is metrizable.) The sequence \( (\beta_i)_{i < \lambda_k} \) might be unbounded in \( \lambda \), however, since \( \lambda_k \) is regular the set \( B_n = \{ \beta_i \mid i < \lambda_k \wedge \beta_i < \lambda_n \} \) must be of size \( \lambda_k \) for at least one \( n \in \omega \). Letting \( (\alpha^k_i)_{i < \lambda_k} \) be an enumeration without repetitions of such a \( B_n \), we clearly get a legal move for \( \pI \) which moreover satisfies \( U_{\alpha^k_i} \cap C \neq \emptyset \) for all \( i < \lambda_k \), as required.

\ref{thm:generalgameforPSP-2}
Suppose that \( \pII \) has a winning strategy \( \tau \) in \(\lambda\)-\( G^*(A) \). Let \( P \) be the set of all legal positions in \( \lambda \)-\( G^*(A) \) in which \( \pII \) followed \( \tau \) and \( \pI \) has to move next. Formally, \( P \) is a collection of finite sequences of even length over the set
\[ 
\bigcup_{k \in \omega} \bigg(\lambda_k \cup \bigcup_{n \in \omega} \pre{\lambda_k}{(\lambda_n)}  \bigg).
 \] 
Notice that since we assumed \( 2^{< \lambda} = \lambda \), such a set is well-orderable and has size \( \lambda \), so that \( |P| \leq \lambda \) as well. Let \( p \in P \) be of length \( 2k \) for some \( k \in \omega \). A point \( x \in X \) is called \markdef{bad for \( p  \)} if%
\footnote{Recall that if \( k = 0 \) we set \( U_{\alpha^{-1}_{i_{-1}}} = X \).}
 \( x \in U_{\alpha^{k-1}_{i_{k-1}}} \) but for all \( (\alpha^k_i)_{i < \lambda_k} \), if it is a legal move for \( \pI \) in the next turn, then the reply \( i_k \) of \( \pII \) following \(\tau\) against such move of \( \pI \) is such that \( x \notin U_{\alpha^k_{i_k}} \). Let also 
 \[ 
 A_p = \{ x \in X \mid x \text{ is bad for }p \} .
 \]
(Notice that, in particular, \( A_p \subseteq U_{\alpha^{k-1}_{i_{k-1}}} \), and that  \( A_p = U_{\alpha^{k-1}_{i_{k-1}}}   \) if \( \pI \) cannot make any further legal move beyond \( p \).)

\begin{claim} \label{claim:generalgameforPSP}
For every \( p \in P \) of length \( 2k \) the set \( A_p \) is well-orderable and
\[ 
|A_p| \leq (\lambda_k)^\omega,
 \] 
and thus in particular \( |A_p| < \lambda \).
\end{claim}

\begin{proof}[Proof of the claim]
If not,  then \( \dens(A_p) > \lambda_k \) by Fact~\ref{fct:fromweighttosize} and the fact that weight and density character coincide on the metrizable space \( A_p \). By Lemma~\ref{lem:r-spaces} applied with \( \nu = \lambda_k \), there is some \( r \)-spaced set \( B = \{ x_i \mid i < \lambda_k \} \subseteq A_p \subseteq U_{\alpha^{k-1}_{i_{k-1}}} \) (for some \( r > 0 \)) such that \( x_i \neq x_j \) for all distinct \( i,j<\lambda_k \). Arguing as in the backward direction of part~\ref{thm:generalgameforPSP-1} above, from the points \( x_i \) we can then obtain a legal move \( (\alpha^k_i)_{i < \lambda_k} \) for \( \pI \) such that \( U_{\alpha^k_i} \cap B \neq \emptyset \) for all \( i < \lambda_k \). Let \( i_k \) be \( \pII \)'s reply to such move, and consider any point \( x \in U_{\alpha^k_{i_k}} \cap B  \): then \( x \) is not bad for \( p\), contradicting \( x \in B \subseteq A_p \).
\end{proof}

On the other hand, it easy to see that for every \( x \in A \) there is \( p \in P \) such that \( x \in A_p \). Otherwise, one could recursively construct%
\footnote{Notice that no choice beyond the one granted by \( 2^{< \lambda} = \lambda \) is needed here.}
a run \( p = \bigcup_{k \in \omega} p_k \) in \( \lambda \)-\( G^*(A) \) where each \( p_k \in P \) has length \( 2k \) and is such that \( x \) belongs to the last basic open set \( U_{\alpha^{k-1}_{i_{k-1}}} \) selected by \( \pII \) in the partial run \( p_k \). Indeed, since \( x \) is not bad for \( p_{k-1} \), player \( \pI \) has a legal move \( (\alpha^{k-1}_i)_{i < \lambda_{k-1}} \) such that if \( i_{k-1} \) is the reply by \( \pII \) following \( \tau \), then we still have \( x \in U_{\alpha^{k-1}_{i_{k-1}}} \). This contradicts the fact that \(\tau\) is winning for \( \pII \) because the run \( p = \bigcup_{k \in \omega} p_k \) does not stop at a finite stage, \(\bigcap_{k \in \omega} U_{\alpha^k_{i_k}} = \{ x \} \), but \( x \in A \). Thus \( A \subseteq \bigcup_{p \in P} A_p \), and hence \( |A| \leq \lambda \) by \( |P| \leq \lambda \) and Claim~\ref{claim:generalgameforPSP}.

Conversely, suppose that \( |A| \leq \lambda \), and enumerate it as \( (x_i)_{i < \lambda} \) (possibly with repetitions). We define a strategy for \( \pII \) as follows. In the first turn, (s)he plays any ordinal smaller than \( \lambda_0 \), e.g.\ \( i_0 = 0 \). Suppose now that at round \( k \geq 1 \) player \( \pI \) played \( (\alpha^k_i)_{i < \lambda_k} \). Since the open sets \( U_{\alpha^k_i} \) are pairwise disjoint and \( \lambda_k > \lambda_{k-1} \), there is \( i < \lambda_k \) such that \( x_j \notin U_{\alpha^k_i} \) for all \( j < \lambda_{k-1} \): let \( \pII \) play the least such \( i \) as his/her next move \( i_k \). If the corresponding run of the game \( \lambda \)-\( G^*(A) \) does not stop at a finite stage, by construction  \( \bigcap_{k \in \omega} U_{\alpha^k_{i_k}} \cap A = \emptyset \), hence \( \pII \) won such a run. It follows that the strategy we just described is winning for \( \pII \), as required.
\end{proof}

\begin{remark} \label{rmk:avoidPSPforanalytic}
In the proof of Theorem~\ref{thm:generalgameforPSP}\ref{thm:generalgameforPSP-1} we heavily used Corollary~\ref{cor:equivPSP}, which in turn relies on the fact that, under our assumptions on \(\lambda\), all \(\lambda\)-analytic sets have the \( \lPSP \) (Corollary~\ref{cor:analyticPSP}). This can be avoided by modifying the rules of \( \lambda \)-\( G^*(A) \) and asking that each \( \pI \)'s move \( (\alpha^k_i)_{i < \lambda_k} \) is such that there is \( r > 0 \) for which all pairs \( (U_{\alpha^k_i}, U_{\alpha^k_j}) \) have distance greater than \( r \) if \( i \neq j \) (this is obviously stronger than just requiring \( U_{\alpha^k_i} \cap U_{\alpha^k_j} = \emptyset \)). The very same proof would then show that, had we used this variant of \( \lambda \)-\( G^*(A) \), then%
\footnote{The sketched equivalence between each of the two games and the \( \lPSP \) for their payoff set proves that such games are indeed equivalent to each other in the game-theoretic sense.}
 \( \pI \) would have a winning strategy if and only if \( A \) contains a \emph{closed-in-\( X \) homeomorphic copy of \( C(\lambda) \)}; notably, however, the argument would now be direct and would not require passing through Corollary~\ref{cor:equivPSP}.
\end{remark}

The classic \( * \)-game \( G^*(A) \) is often used to show that if a certain class of sets is determined, then all sets in that class have the \( \mathrm{PSP} \). For example, all Borel sets have the \( \mathrm{PSP} \) because of Martin's Borel determinacy (Theorem~\ref{thm:Boreldeterminacy}), and the same is true for projective sets if we assume projective determinacy.
In contrast, in our generalized setting we lack determinacy results beyond closed and open sets (Section~\ref{sec:noBoreldeterminacy}), so Theorem~\ref{thm:generalgameforPSP} might seem useless. Nevertheless, we can reverse the approach and use that results to obtain that every time we prove a theorem of the form \( \lPSP(\boldsymbol{\Gamma}) \) for a certain boldface \(\lambda\)-pointclass \( \boldsymbol{\Gamma} \), then we automatically get the determinacy of an entire class of games associated to it, namely, all games of the form \( \lambda \)-\( G^*(A) \) for \( A \in \boldsymbol{\Gamma} \) (notice that the topological complexity of \( \lambda \)-\( G^*(A) \) is approximately the same of \( A \) itself); in view of Proposition~\ref{prop:noBoreldeterminacyheightomega}, such a consequence is highly nontrivial.

We next move to the unfolded version of the generalized \( * \)-game \( \lambda \)-\( G^*(A) \). 
\begin{defin} \label{def:unfoldedgame}
Let \( X \) and \( \mathcal{B} \) be as in Definition~\ref{def:game}. Given \( C \subseteq X \times B(\lambda) \), consider the variant \( \lambda \)-\( G^*_u(C) \)
\[
\begin{array}{c||c|c|c|c|c}
\pI & (\alpha^0_i)_{i < \lambda_0}, y(0) & (\alpha^1_i)_{i < \lambda_1}, y(1) & \dotsc & (\alpha^k_i)_{i < \lambda_k}, y(k) & \dotsc \\
\hline
\pII & i_0 & i_1 & \dotsc & i_k & \dotsc
\end{array}
\]
 of the generalized \( * \)-game from Definition~\ref{def:game}
in which at each turn \( \pI \) additionally plays \( y(k) < \lambda \), and \( \pI \) wins if and only if (s)he can move (legally) for \(\omega\)-many rounds and \( (x,y) \in C \), where \( x \) is the unique point in \( \bigcap_{k \in \omega} U_{\alpha^k_{i_k}} \) and \( y = (y(k))_{k \in \omega} \).
\end{defin}

\begin{theorem} \label{thm:generalgameforPSP2}
Let \( \lambda \) be such that \( 2^{< \lambda} = \lambda \), \( X \) be a \(\lambda\)-Polish space, and \( C \subseteq X \times B(\lambda) \).
\begin{enumerate-(i)}
\item \label{thm:generalgameforPSP2-1}
If \( \pI \) has a winning strategy in \(\lambda\)-\( G_u^*(C) \), then there is a continuous injection of \( C(\lambda) \) into \( \p(C) \).
\item \label{thm:generalgameforPSP2-2}
If \( \pII \) has a winning strategy in \(\lambda\)-\( G_u^*(A) \), then \( \p(C) \) is well-orderable and \( |\p(C)| \leq \lambda \).
\end{enumerate-(i)}
Thus if the game  \(\lambda\)-\( G_u^*(C) \) is determined, then \( \lPSP(\p(C)) \).
\end{theorem}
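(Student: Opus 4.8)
The plan is to mirror closely the proof of Theorem~\ref{thm:generalgameforPSP}, tracking the extra coordinate that \(\pI\) plays in the unfolded game and checking that it interacts harmlessly with each half of the argument. Throughout I use that \(2^{<\lambda}=\lambda\) makes \(\lambda\) strong limit (hence \(\omega\)-inaccessible) and that \(C(\lambda)\approx B(\lambda)\) by Theorem~\ref{thm:homeomorphictoCantor}. For part~\ref{thm:generalgameforPSP2-1}, I would argue exactly as in Theorem~\ref{thm:generalgameforPSP}\ref{thm:generalgameforPSP-1}: given a winning strategy \(\sigma\) for \(\pI\), for each \(x\in C(\lambda)=\prod_i \lambda_i\) run the game with \(\pI\) following \(\sigma\) and \(\pII\) replying \(i_k=x(k)\) at round \(k\) (legal, since \(x(k)<\lambda_k\)), and let \(f(x)\) be the unique point of \(\bigcap_k U_{\alpha^k_{i_k}}\). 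Pairwise disjointness and shrinking diameters of the \(U_{\alpha^k_i}\) make \(f\colon C(\lambda)\to X\) a continuous injection, just as before. The only new point is that, since \(\sigma\) is winning, each run lasts \(\omega\) rounds and produces a pair \((f(x),y_x)\in C\), where \(y_x=(y(k))_{k\in\omega}\) collects \(\pI\)'s auxiliary moves; hence \(f(x)\in\p(C)\), so \(f\) injects \(C(\lambda)\) continuously into \(\p(C)\).

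Part~\ref{thm:generalgameforPSP2-2} is the substantial one. I would follow the bad-set covering of Theorem~\ref{thm:generalgameforPSP}\ref{thm:generalgameforPSP-2}, but index bad sets by a position \emph{and} a proposed auxiliary value. Let \(\tau\) be a winning strategy for \(\pII\), let \(P\) be the (size \(\le\lambda\)) set of legal positions in which \(\pII\) followed \(\tau\) and \(\pI\) is to move, and for \(p\in P\) of length \(2k\) and \(\eta<\lambda\) call \(x\in X\) \emph{bad for \((p,\eta)\)} if \(x\in U_{\alpha^{k-1}_{i_{k-1}}}\) but for every legal \(\pI\)-move \((\alpha^k_i)_{i<\lambda_k}\) the reply \(i_k=\tau(p {}^\smallfrown ((\alpha^k_i)_i,\eta))\) satisfies \(x\notin U_{\alpha^k_{i_k}}\); write \(A_{p,\eta}\) for the corresponding set. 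The smallness claim \(|A_{p,\eta}|\le(\lambda_k)^\omega<\lambda\) is proved verbatim as the Claim in Theorem~\ref{thm:generalgameforPSP}\ref{thm:generalgameforPSP-2}: were \(A_{p,\eta}\) larger, Lemma~\ref{lem:r-spaces} (with \(\nu=\lambda_k\)) and Fact~\ref{fct:fromweighttosize} would yield an \(r\)-spaced \(B\subseteq A_{p,\eta}\) of size \(\lambda_k\) from which \(\pI\) builds a legal move \((\alpha^k_i)_i\) with \(U_{\alpha^k_i}\cap B\neq\emptyset\); playing \(((\alpha^k_i)_i,\eta)\) with the \emph{prescribed} value \(\eta\) and taking any \(x'\in U_{\alpha^k_{i_k}}\cap B\) contradicts \(x'\in A_{p,\eta}\).

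It remains to show \(\p(C)\subseteq\bigcup_{p\in P,\,\eta<\lambda}A_{p,\eta}\), and this is the main obstacle: the naive covering argument builds \emph{some} run, producing an arbitrary auxiliary sequence \(y'\), so \(\tau\) winning only gives \((x,y')\notin C\), which does not contradict \(x\in\p(C)\). The fix is to follow a \emph{fixed} witness. Fix \(x\in\p(C)\) together with \(y=(y(k))_{k\in\omega}\) such that \((x,y)\in C\), and attempt to build a \(\tau\)-run for \(x\) in which \(\pI\)'s auxiliary moves are exactly the \(y(k)\). Starting from \(p_0=\emptyset\): if at stage \(k\) the point \(x\) is bad for \((p_k,y(k))\) we stop, witnessing \(x\in A_{p_k,y(k)}\); otherwise there is a legal \((\alpha^k_i)_i\) such that \(\pII\)'s reply to \(((\alpha^k_i)_i,y(k))\) keeps \(x\) in the chosen open set, and we extend to \(p_{k+1}\). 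If the construction never stops, the resulting run has \(\pII\) following \(\tau\), \(\pI\)'s auxiliary moves equal to \(y\), and \(\bigcap_k U_{\alpha^k_{i_k}}=\{x\}\), so its outcome is precisely \((x,y)\in C\), contradicting that \(\tau\) is winning. Hence every \(x\in\p(C)\) lands in some \(A_{p,\eta}\); since each such set is well-orderable of size \(<\lambda\) and there are at most \(\lambda\) pairs \((p,\eta)\), Fact~\ref{fct:lambdaunionewithoutchoice}\ref{fct:lambdaunionewithoutchoice-2} gives that \(\p(C)\) is well-orderable with \(|\p(C)|\le\lambda\).

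Finally, the displayed consequence follows at once. If \(\lambda\)-\(G^*_u(C)\) is determined, then either \(\pI\) wins, in which case part~\ref{thm:generalgameforPSP2-1} together with \(C(\lambda)\approx B(\lambda)\) and Corollary~\ref{cor:equivPSP} yields a closed-in-\(X\) copy of \(B(\lambda)\) inside \(\p(C)\) (note \(|\p(C)|\ge|C(\lambda)|=\lambda^\omega>\lambda\)), so \(\lPSP(\p(C))\); or \(\pII\) wins, in which case part~\ref{thm:generalgameforPSP2-2} gives \(|\p(C)|\le\lambda\) and \(\lPSP(\p(C))\) holds trivially.
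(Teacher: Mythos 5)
Your proposal is correct and follows essentially the same route as the paper: part~(i) by the same run-the-strategy-against-\(x\in C(\lambda)\) construction underlying Theorem~\ref{thm:generalgameforPSP}\ref{thm:generalgameforPSP-1}, and part~(ii) by the identical bad-set covering, with bad sets \(A_{p,\eta}\) indexed by a \(\tau\)-position together with a candidate auxiliary ordinal, the same cardinality bound via Lemma~\ref{lem:r-spaces}, and the same key step of letting a fixed witness \(y\) with \((x,y)\in C\) dictate \(\pI\)'s auxiliary moves in the contradiction run. The subtlety you flag (that an arbitrary run only yields \((x,y')\notin C\)) is precisely the point the paper's proof handles the same way, so there is nothing to add.
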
 

\begin{proof}
\ref{thm:generalgameforPSP2-1}
It is easy to see that if \( \pI \) has a winning strategy in \( \lambda \)-\( G^*_u(C) \), then (s)he has a winning strategy in \( \lambda \)-\( G^*(\p(C)) \) as well, namely, the strategy obtained by forgetting the ordinal \( y(k) \) in each of them. Thus the conclusion follows from  Theorem~\ref{thm:generalgameforPSP}\ref{thm:generalgameforPSP-1}.

\ref{thm:generalgameforPSP2-2}
The proof is very similar to the proof of the akin statement in Theorem~\ref{thm:generalgameforPSP}\ref{thm:generalgameforPSP-2}, we just need to check that the presence of the ordinals \( y(k) \) does not significantly change the cardinality of the objects involved. Let again \( P \) be the set of all legal positions in \( \lambda \)-\( G^*_u(C) \) in which \( \pII \) followed his/her winning strategy \(\tau\) and \( \pI \) has to move next. Since there are only \( \lambda \)-many possible values for each \( y(k) \), arguing as in the proof of Theorem~\ref{thm:generalgameforPSP}\ref{thm:generalgameforPSP-2} it is easy to check that \( |P| \leq \lambda \) because we assumed \( 2^{< \lambda} = \lambda \). Next, given \( \gamma < \lambda \), \( p \in P \) of length \( 2k \), and \( x \in U_{\alpha^{k-1}_{i_{k-1}}} \) (where \( U_{\alpha^{k-1}_{i_{k-1}}} \) is the last basic open set selected by \( \pII \) along \( p \)), we say that \( x \) is \markdef{bad for \( (p,\gamma) \)} if \( x \in U_{\alpha^{k-1}_{i_{k-1}}} \) and the following holds: For every sequence \( (\alpha^k_i)_{i < \lambda_k} \), if the pair \( ((\alpha^k_i)_{i < \lambda_k}, \gamma) \) is a legal move for \( \pI \) and \( i_k < \lambda_k \) is \( \pII \)'s reply following \(\tau\), then \( x \notin U_{\alpha^k_{i_k}} \). Let
\[ 
A_{p,\gamma} = \{ x \in X \mid x \text{ is bad for } (p,\gamma) \} .
\] 
Notice that \( A_{p,\gamma} \subseteq U_{\alpha^{k-1}_{i_{k-1}}} \) by definition, and that if \( \pI \) cannot make any further legal move after \( p \) then \( A_{p,\gamma} = U_{\alpha^{k-1}_{i_{k-1}}} \). Arguing as in Claim~\ref{claim:generalgameforPSP}, one can then check that \( |A_{p,\gamma}| \leq (\lambda_k)^\omega < \lambda \). (Otherwise \( \mathrm{dens}(A_{p,\gamma}) > \lambda_k \), thus there would be an \( r \)-spaced set \( B \subseteq A_{p,\gamma} \) from which one could then extract a legal move \( ((\alpha^k_i)_{i < \lambda_k}, \gamma) \) for \( \pI \) such that \( U_{\alpha^k_i} \cap B \neq \emptyset \) for all \( i < \lambda_k \). Such a move would witness that the points in \( U_{\alpha^k_{i_k}} \cap B  \), where \( i_k \) is \( \pII \)'s reply according to \(\tau\), are not in \( A_{p,\gamma} \), a contradiction.) Finally, we claim that \( p(C) \subseteq \bigcup \{ A_{p,\gamma} \mid p \in P \wedge \gamma < \lambda \} \): this concludes the proof because then \( |\p(C)| \leq \lambda \) by the fact that \( |P| \leq \lambda \) and \( |A_{p,\gamma}| < \lambda \). Assume towards a contradiction that \( x \in \p(C) \setminus \bigcup \{ A_{p,\gamma} \mid p \in P \wedge \gamma < \lambda \} \). Let \( y \in B(\lambda) \) be such that \( (x,y) \in C \). Then using the inductive hypothesis \( x \notin A_{p_k, y(k)} \), one can recursively construct an increasing sequence of legal positions \( (p_k)_{k \in \omega} \) such that \( p_k \in P \) has length \( 2k \) and if \( k > 0 \)
\begin{itemizenew}
\item
the last move of \( \pI \) appearing in \( p_k \) is of the form \( ((\alpha^{k-1}_i)_{i < \lambda_{k-1}}, y(k-1)) \) (that is, \( y \) determines the second component of the pair), and
\item
\( x \in U_{\alpha^{k-1}_{i_{k-1}}} \), where \( i_{k-1} < \lambda_{k-1} \) is the last move of \( \pII \) in \( p_k \) (which is played according to \(\tau\) because \( p \in P \)).
\end{itemizenew}
Since by construction \( x \in \bigcap_{k \in \omega} U_{\alpha^k_{i_k}} \) and \( (x,y) \in C \), it then follows that \( \pI \) wins the run \( \bigcup_{k \in \omega} p_k \), contradicting the fact that \(\tau\) was a winning strategy for \( \pII \).
\end{proof}

\begin{remark}
Arguing as in Remark~\ref{rmk:avoidPSPforanalytic}, it is easy to define a variant of \( \lambda \)-\( G^*_u(C) \) in which the existence of a winning strategy for \( \pI \) directly implies that \( \p(C) \) contains a closed-in-\( X \) homeomorphic copy of \( C(\lambda) \) (without passing through Corollary~\ref{cor:equivPSP}). In this way we obtain an alternative purely game-theoretic proof of Corollary~\ref{cor:analyticPSP}. Indeed, if \( A \subseteq X \) is \(\lambda\)-analytic, then there is a closed set \( C \subseteq X \times B(\lambda) \) such that \( A = \p(C) \) by Proposition~\ref{prop:charanalytic}. This implies that the game \( \lambda \)-\( G^*_u(C) \), once coded as a Gale-Stewart game (with rules) on \( \lambda \), has a closed payoff set, and thus it is determined by Proposition~\ref{prop:closeddeterminacy}. By the analogue of Theorem~\ref{thm:generalgameforPSP2} for the modified game, we thus get that either \( A = \p(C) \) contains a closed-in-\( X \) homeomorphic copy of \( C(\lambda) \approx \pre{\lambda}{2} \) (if \( \pI \) has a winning strategy), or else \( |A| \leq \lambda \) (if \( \pII \) has a winning strategy).
\end{remark}

When \( \mathcal{B} \) is a tree-basis in the sense of Definition~\ref{def:tree-basis},
it is convenient to introduce further variants of 
\( \lambda \)-\( G^*(A) \) and \(\lambda\)-\( G^*_u(A) \) that better fit  the combinatorial structure of the index tree \( T \subseteq \pre{<\omega}{\lambda} \) of \( \mathcal{B} \). 

\begin{defin} \label{def:tildegame}
Let \( X \) be \(\lambda\)-Polish, and let \( \mathcal{B} = \{ U_s \mid s \in T \} \) be a tree-basis for \( X \) with index tree \( T \subseteq \pre{< \omega}{\lambda} \). Given \( A \subseteq X \) consider the following game \( \lambda \)-\( \widetilde{G}^*(A) \)
\[
\begin{array}{c||c|c|c|c|c}
\pI & (s^0_i)_{i < \lambda_0} & (s^1_i)_{i < \lambda_1} & \dotsc & (s^k_i)_{i < \lambda_k} & \dotsc \\
\hline
\pII & i_0 & i_1 & \dotsc & i_k & \dotsc
\end{array}
\]
where:
\begin{enumerate-(a)}
\item
\( \pI \) chooses a sequence \( (s^k_i)_{i < \lambda_k} \) of length \( \lambda_k \) of elements of \( T \) so that
\begin{itemizenew}
\item
there are \( 0 \neq j_k \in \omega \) and a cardinal \( \mu_k < \lambda \) such that \( s^k_i \in \pre{j_k}{\mu_k} \) for all \( i < \lambda_k \);
\item
\( U_{s^k_i} \cap U_{s^k_j} = \emptyset \) if \( i,j < \lambda_k \) are distinct;
\item
\( s^{k}_i \) is a proper extension of \( s^{k-1}_{i_{k-1}} \) for all \( i < \lambda_k \) (where \( s^{-1}_{i_{-1}} = \emptyset \)).
\end{itemizenew}
\item
After \( \pI \)'s move, \( \pII \) replies choosing some \( i_k < \lambda_k \).
\end{enumerate-(a)}
(Notice that the rules above impose that \( j_k \geq k+1 \), so that \( \mathrm{diam}(U_{s^k_i}) \leq 2^{-(k+1)} \) for all \( k \in \omega \) and \( i < \lambda_k \).)
The game ends if at some finite stage \( \pI \) cannot make any further legal move, in which case \( \pII \) wins, or after \(\omega\)-many rounds, in which case
\( \pI \) wins if and only if%
\footnote{Recall that \( f_{\mathcal{B}} \colon [T] \to X\) is the canonical continuous open surjection sending \( y \in [T] \) to the unique point in \( \bigcap_{k \in \omega} U_{y \restriction k} \).}
 $f_{\mathcal{B}}(\bigcup_{k\in\omega}s^k_{i_k})\in A$. 
\end{defin}

We are going to show that
 if \( 2^{< \lambda} = \lambda \), then \( \lambda \)-\( \widetilde{G}^*(A) \) is game-theoretically equivalent to \( \lambda \)-\( G^*(A) \). This will follow from the existence of a bijection 
\begin{equation} \label{eq:specialbijection} 
\phi \colon \pre{<\omega}{\lambda} \to \lambda 
\end{equation} 
such that
\begin{itemizenew}
\item
for all \( \alpha < \lambda \) there are \( j \in \omega \) and \( \mu < \lambda \) such that \( \phi^{-1}(\beta) \in \pre{j}{\mu} \) for all \( \beta < \alpha \) and, conversely,
\item
for every \( j \in \omega \) and \( \mu < \lambda \) the set \( \{ \phi(s) \mid s \in \pre{j}{\mu} \} \) is bounded in \(\lambda\).
\end{itemizenew}
(For example, \( \phi \) can be defined as the union over \( k \in \omega \) of bijections between \( \pre{\leq k+1}{\lambda_k} \setminus \pre{\leq k}{\lambda_{k-1}} \) and \( \lambda_k \setminus \lambda_{k-1} \), where  \( \lambda_{-1} = \emptyset \).)

\begin{theorem} \label{thm:equivalentgamesforPSP}
Assume that \( 2^{< \lambda} = \lambda \), and let \( X \), \( \mathcal{B} \), and \( T \subseteq \pre{<\omega}{\lambda} \) be as in Definition~\ref{def:tildegame}. Let \( A \subseteq X \). 
\begin{enumerate-(i)}
\item \label{thm:equivalentgamesforPSP-1}
\( \pI \) has a winning strategy in \(\lambda\)-\( \widetilde{G}^*(A) \) if and only if there is a continuous injection from $C(\lambda)$ into $A$. 
\item \label{thm:equivalentgamesforPSP-2}
\( \pII \) has a winning strategy in \(\lambda\)-\( \widetilde{G}^*(A) \) if and only if $|A|\leq\lambda$. 
\end{enumerate-(i)}
In particular, the game \(\lambda\)-\( \widetilde{G}^*(A) \) is game-theoretically equivalent to \( \lambda \)-\( G^*(A) \).
\end{theorem}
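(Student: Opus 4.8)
The plan is to prove Theorem~\ref{thm:equivalentgamesforPSP} by showing that \( \lambda \)-\( \widetilde{G}^*(A) \) has the same two characterizations (in terms of continuous injections from \( C(\lambda) \) and of \( |A| \leq \lambda \)) that Theorem~\ref{thm:generalgameforPSP} established for \( \lambda \)-\( G^*(A) \). Since each of those two games has a winning strategy for \( \pI \) exactly when there is a continuous injection from \( C(\lambda) \) into \( A \), and a winning strategy for \( \pII \) exactly when \( |A| \leq \lambda \), it will follow immediately that the two games are determined under exactly the same conditions, and that a player wins one precisely when (s)he wins the other. This yields the asserted game-theoretic equivalence. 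The real work is therefore in establishing parts~\ref{thm:equivalentgamesforPSP-1} and~\ref{thm:equivalentgamesforPSP-2}, and the arguments will closely parallel the proof of Theorem~\ref{thm:generalgameforPSP}, with the tree-basis structure replacing the basis enumeration.

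For part~\ref{thm:equivalentgamesforPSP-1}, the backward direction is handled just as in Theorem~\ref{thm:generalgameforPSP}\ref{thm:generalgameforPSP-1}: given a continuous injection from \( C(\lambda) \), by Corollary~\ref{cor:equivPSP} there is a closed-in-\( X \) copy \( C \subseteq A \) of \( B(\lambda) \), and I would describe a legal strategy for \( \pI \) ensuring \( U_{s^k_i} \cap C \neq \emptyset \) throughout, using Lemma~\ref{lem:r-spaces} to extract an \( r \)-spaced subset of \( U_{s^{k-1}_{i_{k-1}}} \cap C \) of size \( \lambda_k \) at each round. The only new point is that \( \pI \) must select nodes \( s^k_i \) all lying in a common \( \pre{j_k}{\mu_k} \); since metrizability lets one shrink the diameters, and since there are only \( \lambda_k \)-many points to cover while each \( \lambda_k \) is regular, a pigeonhole argument (exactly like the one bounding \( B_n \) in the proof of Theorem~\ref{thm:generalgameforPSP}) shows that \( \lambda_k \)-many of the chosen extensions can be taken with a common length \( j_k \) and common value-bound \( \mu_k \). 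For the forward direction, a winning strategy \( \sigma \) for \( \pI \) yields, for each \( x \in C(\lambda) \), the run where \( \pII \) plays \( x(k) \), and \( f(x) = f_{\mathcal{B}}(\bigcup_k s^k_{i_k}) \) defines the desired continuous injection into \( A \).

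For part~\ref{thm:equivalentgamesforPSP-2}, I would again mirror Theorem~\ref{thm:generalgameforPSP}\ref{thm:generalgameforPSP-2}. Given a winning strategy \( \tau \) for \( \pII \), let \( P \) be the set of legal positions in which \( \pII \) followed \( \tau \) and \( \pI \) moves next; using the bijection \( \phi \) from~\eqref{eq:specialbijection}, each \( \pI \)-move \( (s^k_i)_{i < \lambda_k} \) with \( s^k_i \in \pre{j_k}{\mu_k} \) can be coded by a bounded sequence of ordinals below \( \lambda \), so that \( |P| \leq \lambda \) follows from \( 2^{< \lambda} = \lambda \) as before. Defining the ``bad'' sets \( A_p \) verbatim and running the analogue of Claim~\ref{claim:generalgameforPSP} (extracting an \( r \)-spaced set and converting it, via the tree-basis and \( \phi \), into a legal \( \pI \)-move) gives \( |A_p| \leq (\lambda_{\lh(p)/2})^\omega < \lambda \), and the covering \( A \subseteq \bigcup_{p \in P} A_p \) yields \( |A| \leq \lambda \). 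The converse is the easy disjointness-plus-enumeration argument from Theorem~\ref{thm:generalgameforPSP}. The main obstacle, and the one genuinely new ingredient, is verifying that \( \pI \)'s constrained moves (all \( s^k_i \) living in a single rectangle \( \pre{j_k}{\mu_k} \)) do not weaken the game: this is exactly where the properties of \( \phi \) in~\eqref{eq:specialbijection} are needed, both to keep \( |P| \leq \lambda \) and to guarantee that the legal move extracted from an \( r \)-spaced set in the ``bad set'' argument genuinely satisfies the rectangle constraint. Everything else is a faithful transcription of the two proofs of Theorem~\ref{thm:generalgameforPSP}, so I would keep those parts brief and refer back to them.
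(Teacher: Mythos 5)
Your proposal is correct and follows essentially the same route as the paper: both reduce the ``in particular'' clause to parts~(i) and~(ii) combined with Theorem~\ref{thm:generalgameforPSP}, and both prove (i) and (ii) by transcribing the arguments for \( \lambda \)-\( G^*(A) \), using Lemma~\ref{lem:r-spaces} to produce \( r \)-spaced sets and a regularity-of-\( \lambda_k \) pigeonhole to fit the resulting nodes into a single rectangle \( \pre{j_k}{\mu_k} \). The one substantive local difference is in part~(ii): you define the bad sets \( A_p \) ``verbatim'', i.e.\ for points \( x \in X \), whereas the paper defines badness for \emph{branches} \( \tilde{x} \in [T] \), setting \( \widetilde{A}_p \subseteq [T] \) and \( A_p = f_{\mathcal{B}}(\widetilde{A}_p) \), which forces it to check that \( \widetilde{A}_p \) is closed and to select leftmost branches of preimage trees. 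Your pointwise version does go through: in the covering argument, if \( x \in U_{s^k_{i_k}} \) for every \( k \) then \( f_{\mathcal{B}}\bigl(\bigcup_{k} s^k_{i_k}\bigr) = x \) because the diameters shrink to \( 0 \), and in the cardinality claim the branch of \( T \) above \( s^{k-1}_{i_{k-1}} \) needed for each point of the \( r \)-spaced set can be chosen canonically (leftmost branch of the tree \( \{ t \in T \mid x \in U_t \} \)), so neither injectivity of \( f_{\mathcal{B}} \) nor any hidden choice is required. This is arguably a small simplification; the reason the paper insists on branches is that this is the formulation that survives the unfolding in Theorem~\ref{thm:equivalentgamesforPSP2}, where the payoff is a subset of \( B(\lambda) \times B(\lambda) \) and a pointwise notion of badness no longer makes sense. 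Two minor cosmetic deviations: in the forward direction of (i) and the backward direction of (ii) you re-run the direct constructions, while the paper instead transfers strategies between \( \lambda \)-\( \widetilde{G}^*(A) \) and \( \lambda \)-\( G^*(A) \) via the bijection \( \phi \) of~\eqref{eq:specialbijection}; note also that \( \phi \) is really only needed for such transfers and for bounding \( |P| \) --- the rectangle constraint in your bad-set argument comes from the pigeonhole, not from \( \phi \).
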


\begin{proof}
For the additional part, it is enough to combine the present result with Theorem~\ref{thm:generalgameforPSP}.

\ref{thm:equivalentgamesforPSP-1}
Suppose that \( \pI \) has a winning strategy in \(\lambda\)-\( \widetilde{G}^*(A) \). By identifying each of his/her legal moves \( (s^k_i)_{i < \lambda} \) with the sequence of ordinals \( (\phi(s^k_i))_{i < \lambda} \), where \( \phi \) is as in~\eqref{eq:specialbijection}, it is easy to check that such a strategy is also a winning strategy in \( \lambda \)-\( G^*(A) \) once we enumerate \( \mathcal{B} \) according to \( \phi \), hence there is a continuous injection from \( C(\lambda) \) into \( A \) by Theorem~\ref{thm:generalgameforPSP}\ref{thm:generalgameforPSP-1}. 

For the backward direction we argue as in the corresponding part of the proof of Theorem~\ref{thm:generalgameforPSP}\ref{thm:generalgameforPSP-1}. Suppose that there is a closed-in-\( X \) set \( C \subseteq A \) such that \( C \approx B(\lambda) \), whose existence is granted by Corollary~\ref{cor:equivPSP}. We can again recursively construct a legal strategy \( \sigma \) for \( \pI \) such that \( U_{s^k_i} \cap C \neq \emptyset \) for all \( k \in \omega \) and \( i < \lambda_k \), so that \(\sigma\) is automatically winning because \( C \) is closed. Indeed, once we know that \( U_{s^{k-1}_{i_{k-1}}} \cap C \neq \emptyset \), then by considering a suitable \( r \)-spaced subset of it we can find a \( j_k > \lh(s^{k-1}_{i_{k-1}}) \) and a sequence \( (t^k_i)_{i < \lambda_k} \) of elements of \( T \) such that the open sets \( U_{t^k_i} \) are pairwise disjoint, they still intersect \( C \), and moreover \( \lh(t^k_i) = j_k \) and \( t^k_i \supseteq s^{k-1}_{i_{k-1}} \) for every \( i < \lambda_k \). For each \( n \in \omega \), consider the set \( S_n \) of those \( t^k_i \) which belong to \( \pre{j_k}{\lambda_n} \). Since \( \lambda_k\) is regular, there is \( n \in \omega \) such that \( |S_n| = \lambda_k \). Let \( \bar{n} \) be smallest with such property, set \( \mu_k = \lambda_{\bar{n}} \), and let \( (s^k_i)_{i < \lambda_k} \) be an enumeration of \( S_{\bar{n}} \). Clearly, the move \( (s^k_i)_{i < \lambda_k} \) is then as required.

\ref{thm:equivalentgamesforPSP-2}
Assume that \( \pII \) has a winning strategy \( \tau \) in \(\lambda\)-\( \widetilde{G}^*(A) \). We basically follow the corresponding argument from Theorem~\ref{thm:generalgameforPSP}\ref{thm:generalgameforPSP-2}, although technically we need to be a bit more careful. Let \( P \) be the set of all legal positions \( p \) in \(\lambda\)-\( \widetilde{G}^*(A) \) in which \( \pII \) followed \( \tau \) and \( \pI \) has to move next, so that \( p \) has even length \( 2k \).  If \( k > 0 \), the last moves of \( \pI \) and \( \pII \) in \( p \) are of the form, respectively, \( (s^{k-1}_{i})_{i < \lambda_{k-1}} \) and \( i_{k-1} \), so that \( \pII \) has chosen \( s^{k-1}_{i_{k-1}} \). We extend this to the case \( k = 0 \) by setting \( s^{-1}_{i_{-1}} = \emptyset \). 
 Call \( \tilde{x} \in [T] \) \markdef{bad for \( p \)} if it extends \( s^{k-1}_{i_{k-1}} \) but for every legal move \( (s^k_i)_{i < \lambda_k} \) for \( \pI \) in the next turn, the reply \( i_k \) of \( \pII \) following \(\tau\) against \( \pI \)'s move is such that \( \tilde{x} \) does not extend \( s^k_{i_k} \). We let
\[ 
\widetilde{A}_{p} = \{ \tilde{x} \in [T] \mid \tilde{x} \text{ is bad for } p \}
 \] 
and
\[ 
A_{p} = \{ x \in X \mid \exists \tilde{x} \in \widetilde{A}_{p} \, (f_{\mathcal{B}}(\tilde{x}) = x) \} = f_\mathcal{B}(\widetilde{A}_p).
 \] 
 It is not hard to see that each \( \widetilde{A}_p\) is closed. Indeed, if \( \tilde{x} \in [T] \setminus \widetilde{A}_p \), it means that there is a legal move \( (s_i^k)_{i < \lambda_k} \) for \( \pI \) in the next turn such that \( \tilde{x} \supseteq s^k_{i_k} \), where \( i_k \) is \( \pII \)'s reply according to \(\tau\). It follows that the same move \( (s_i^k)_{i < \lambda_k} \) witnesses that \( \tilde{y} \notin \widetilde{A}_p \) for every \( \tilde{y} \in [T] \cap \Nbhd_{s^k_{i_k}} \), so that \( \Nbhd_{s^k_{i_k}} \) is an open neighborhood of \( \tilde{x} \) disjoint from \( \widetilde{A}_p \).
 Moreover, \( A_p \subseteq U_{s^{k-1}_{i_{k-1}}} \), and \( A_p = U_{s^{k-1}_{i_{k-1}}} \) if \( \pI \) cannot make any legal move beyond \( p \).


\begin{claim} \label{claim:gameforPSPclaim}
For every \( p \in P \) of length \( 2k \)
\[ 
|A_{p}| \leq (\lambda_k)^\omega,
 \] 
and thus \( |A_{p}| < \lambda \).
\end{claim}

\begin{proof}[Proof of the claim]
Assume towards a contradiction  that \( |A_{p}| > (\lambda_k)^\omega \), so that \( \dens(A_{p}) > \lambda_k \). Pick an \( r \)-spaced set \( B \subseteq A_{p} \) of size \( \lambda_k \) (for some real \( r > 0 \)). For each \( x \in B \subseteq A_{p} \), \( f^{-1}_{\mathcal{B}}(x) \) is closed because \( f_{\mathcal{B}} \) is continuous, hence so is \( f^{-1}_{\mathcal{B}}(x) \cap \widetilde{A}_p \): let \( \tilde{x} \in \widetilde{A}_{p} \) be the leftmost branch of the pruned \(\omega\)-tree \( T_x \) such that \( f^{-1}_{\mathcal{B}}(x) \cap \widetilde{A}_p = [T_x] \), so that \( f_{\mathcal{B}}(\tilde{x}) = x \) and  \( s^{k-1}_{i_{k-1}} \subseteq \tilde{x} \) (because \( \tilde{x} \in \widetilde{A}_p \)) for each such \( \tilde{x} \). Let \( j_k > j_{k-1} = \lh(s^{k-1}_{i_{k-1}}) \) be such that \( 2^{-{j_k}} < \frac{r}{2} \), so that \( |\{ \tilde{x} \restriction j_k \mid x \in B \} | = \lambda_k \) because \( B \) is \( r \)-spaced. 
Since \( \pre{j_k}{\lambda} = \bigcup_{n \in \omega} \pre{j_k}{\lambda_n} \) and \( \lambda_k \) is regular, there is (a minimal) \( \bar{n} \in \omega \) such that \( |S_{\bar{n}}| = \lambda_k \) where \( S_{\bar{n}} = \{  \tilde{x} \restriction j_k \mid  x \in B  \wedge {\tilde{x}_i \restriction j_k} \in \pre{j_k}{\lambda_{\bar{n}}} \} \). Let \( (s^k_i)_{i < \lambda_k} \) be an enumeration without repetitions of \( S_{\bar{n}} \), so that \(
(s^k_i)_{i < \lambda_k} \) becomes a legal move for \( \pI \) extending \( p \) in the game \(\lambda\)-\( \widetilde{G}^*(A) \) (as witnessed by \( j_k \) and \( \mu_k = \lambda_{\bar{n}} \)). Let also \( x_i \in B \subseteq A_p \) be such that \( \tilde{x}_i \restriction j_k = s^k_i \), for any \( i < \lambda_k \). If \( i_k \) is the reply of \( \pII \) to the above move of \( \pI \), then \( \tilde{x}_{i_k} \supseteq s^k_{i_k} \), contradicting \( \tilde{x}_{i_k} \in \widetilde{A}_{p} \).
\end{proof}

\begin{claim}
\( A \subseteq  \bigcup \{ A_{p} \mid p \in P   \} \).
\end{claim}

\begin{proof}[Proof of the claim]
Assume towards a contradiction that \( x \in A \setminus \bigcup \{ A_{p} \mid p \in P \} \), and let $\tilde{x}\in [T]$ be such that \( f_{\mathcal{B}}(\tilde{x}) = x \). We are going to construct an increasing sequence \( (p_k)_{k \in \omega} \) of positions in \( P \) of length \( 2k \) such that \(  s^{k}_{i_k} \subseteq \tilde{x} \) for all \( k \in \omega \): it follows that the run corresponding to the union of the positions \( p_k \) is won by \( \pI \), contradicting the fact that \( \pII \) is following the winning strategy \(\tau\).

The construction of the positions \( p_k \) is by recursion on \( k \), starting from the empty position \( p_0 = \emptyset \). For the inductive step, assume that \( p_k \) is as required, that is, that \( s^\ell_{i_\ell} \subseteq s^{k-1}_{i_{k-1}} \subseteq \tilde{x} \) for all \( \ell \leq k-1 \). 
Since \( x \notin A_{p_k} \), it follows that 
\( \tilde{x}' \notin \widetilde{A}_{p_k} \) for all \( \tilde{x}' \in [T] \) with \( f_{\mathcal{B}} (\tilde{x}') = x \): in particular, this holds for our \( \tilde{x} \). Thus by definition of \( \widetilde{A}_{p_k} \) there is \( (s^k_i)_{i < \lambda_k} \) which is a legal move for \( \pI \) such that \( \tilde{x} \) still extends \( s^k_{i_k} \), where \( i_k \) is the reply of \( \pII \) according to \( \tau \). Letting  \( p_{k+1} \) be the prolongation of \( p_k \) by such moves of \( \pI \) and \( \pII \), we get that \( p_{k+1} \in P \) is as required.
\end{proof}

Because of the size of \( P \) and each \( A_{p} \), we finally get \( |A| \leq \lambda \), as required.

Conversely, assume that \( |A| \leq \lambda \). By Theorem~\ref{thm:generalgameforPSP}\ref{thm:generalgameforPSP-2} there is a winning strategy \( \tau \) for \( \pII \) in \(\lambda\)-\( G^*(A) \). Using again the fact that every legal move for \( \pI \) in \(\lambda\)-\( \widetilde{G}^*(A) \) can be turned via \( \phi \) into a legal move for \( \pI \) in \(\lambda\)-\( G^*(A) \), we then get that \( \tau \) can be straightforwardly transformed into a winning strategy for \( \pII \) in \(\lambda\)-\( \widetilde{G}^*(A) \).
\end{proof}

The game \(\lambda\)-\( \widetilde{G}^*(A) \) admits an obvious unfolded version, where the payoff set \( C \) is a subset of \( X \times B(\lambda) \), and \( \pI \) additionally plays an ordinal \( y(k) < \lambda \) at each turn \( k \in \omega \). Assuming \( 2^{< \lambda} = \lambda \), one can mix the ideas from Theorems~\ref{thm:generalgameforPSP2} and~\ref{thm:equivalentgamesforPSP} and prove that if \( \pI \) has a winning strategy in such a game, then there is a continuous injection from \( C(\lambda) \) into \( \p(C) \), while if \( \pII \) has a winning strategy, then \( |\p(C) | \leq \lambda \); in particular, if the game is determined then \( \lPSP(\p(C) ) \).
However, for technical reasons (see Section~\ref{sec:representalePSP}, and in particular the proof of Theorem~\ref{thm:mainPSP}) we need to further generalize
 such a game by considering a version of it in which the payoff set \( \widetilde{C} \) is rather a subset of \( B(\lambda) \times B(\lambda) \), and the determinacy of the new game implies that \( f_\mathcal{B}(\p(\widetilde{C})) \) has the \( \lPSP \). (It is not hard to see that the simpler version of the unfolded game considered at the beginning of this paragraph corresponds then to the special case where \( \widetilde{C} = (f_\mathcal{B} \times \id)^{-1}(C) \), so that \( f_\mathcal{B}(\p(\widetilde{C})) =\p(C) \).)

\begin{defin} \label{def:unfoldedtildegame}
Let \( X \) be a \(\lambda\)-Polish space, and let \( \mathcal{B} = \{ U_s \mid s \in T \} \) be a tree-basis for \( X \) with index tree \( T \subseteq \pre{< \omega}{\kappa} \). Given \( \widetilde{C} \subseteq B(\lambda) \times B(\lambda) \), the game \( \lambda \)-\( \widetilde{G}^*_u(\widetilde{C}) \) 
\[
\begin{array}{c||c|c|c|c|c}
\pI & (s^0_i)_{i < \lambda_0}, y(0) & (s^1_i)_{i < \lambda_1}, y(1) & \dotsc & (s^k_i)_{i < \lambda_k}, y(k) & \dotsc \\
\hline
\pII & i_0 & i_1 & \dotsc & i_k & \dotsc
\end{array}
\]
is played as in Definition~\ref{def:tildegame} except that
at each turn \( \pI \) additionally plays \( y(k) < \lambda \). Player \( \pI \) wins if and only if (s)he can move (legally) for \(\omega\)-many rounds and \( (\tilde{x},y) \in \widetilde{C} \), where \( \tilde{x} = \bigcup_{k \in \omega} s^k_{i_k}  \) and \( y = (y(k))_{k \in \omega }  \).
\end{defin}

\begin{theorem} \label{thm:equivalentgamesforPSP2}
Assume that \( 2^{< \lambda} = \lambda \), and let \( X \), \( \mathcal{B} \), and \( T \subseteq \pre{<\omega}{\lambda} \) be as in Definition~\ref{def:unfoldedtildegame}. Let \( \widetilde{C} \subseteq B(\lambda) \times B(\lambda) \).
\begin{enumerate-(i)}
\item \label{thm:equivalentgamesforPSP2-1}
If \( \pI \) has a winning strategy in \(\lambda\)-\( \widetilde{G}_u^*(\widetilde{C}) \), then there is a continuous injection from \( C(\lambda) \) into \( f_{\mathcal{B}} (\p(\widetilde{C})) \).
\item \label{thm:equivalentgamesforPSP2-2}
If \( \pII \) has a winning strategy in \(\lambda\)-\( \widetilde{G}_u^*(\widetilde{C}) \), then \( f_{\mathcal{B}}(\p(\widetilde{C})) \) is well-orderable and \( |f_{\mathcal{B}}(\p(\widetilde{C}))| \leq \lambda \).
\end{enumerate-(i)}
Thus if the game  \(\lambda\)-\( \widetilde{G}_u^*(\widetilde{C}) \) is determined, then \( f_{\mathcal{B}}(\p(\widetilde{C})) \) has the \lPSP.
\end{theorem}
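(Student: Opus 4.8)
The plan is to derive Theorem~\ref{thm:equivalentgamesforPSP2} by mixing the reduction argument behind Theorem~\ref{thm:generalgameforPSP2} (which absorbs the unfolding, i.e.\ the extra ordinals \( y(k) \)) with the tree-basis bookkeeping from the proof of Theorem~\ref{thm:equivalentgamesforPSP}\ref{thm:equivalentgamesforPSP-2} (which governs the passage from branches of \( T \) to points of \( X \) via \( f_{\mathcal{B}} \)). Throughout I would write \( A = f_{\mathcal{B}}(\p(\widetilde{C})) \). For part~\ref{thm:equivalentgamesforPSP2-1} I would simply forget the \( y \)-coordinate: given a winning strategy \( \sigma \) for \( \pI \) in \( \lambda \)-\( \widetilde{G}^*_u(\widetilde{C}) \), let \( \sigma' \) be the strategy in \( \lambda \)-\( \widetilde{G}^*(A) \) obtained by dropping the ordinals \( y(k) \) from \( \pI \)'s moves. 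Every run of \( \lambda \)-\( \widetilde{G}^*(A) \) consistent with \( \sigma' \) lifts to a run of the unfolded game consistent with \( \sigma \) by re-inserting the ordinals \( \sigma \) dictates; since \( \sigma \) is winning we get \( (\tilde{x}, y) \in \widetilde{C} \), whence \( \tilde{x} \in \p(\widetilde{C}) \) and \( f_{\mathcal{B}}(\tilde{x}) \in A \), so \( \pI \) wins that run of \( \lambda \)-\( \widetilde{G}^*(A) \). Thus \( \sigma' \) is winning and Theorem~\ref{thm:equivalentgamesforPSP}\ref{thm:equivalentgamesforPSP-1} provides a continuous injection \( C(\lambda) \to A \).

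For part~\ref{thm:equivalentgamesforPSP2-2} I would adapt the proof of Theorem~\ref{thm:equivalentgamesforPSP}\ref{thm:equivalentgamesforPSP-2}, doubling the relevant notions by a parameter \( \gamma < \lambda \) exactly as in Theorem~\ref{thm:generalgameforPSP2}\ref{thm:generalgameforPSP2-2}. Fixing a winning strategy \( \tau \) for \( \pII \), let \( P \) be the set of legal positions of even length \( 2k \) in which \( \pII \) followed \( \tau \) and \( \pI \) is to move next. Since for each round there are only \( \lambda \)-many legal \( \pI \)-moves (the \( y(k) \) range over \( \lambda \), and the remaining data is well-orderable of size \( <\lambda \) by \( 2^{<\lambda}=\lambda \)), one gets \( |P| \leq \lambda \). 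For \( p \in P \) of length \( 2k \) and \( \gamma < \lambda \), I would call \( \tilde{x} \in [T] \) \emph{bad for} \( (p,\gamma) \) if \( \tilde{x} \) extends the last node \( s^{k-1}_{i_{k-1}} \) chosen by \( \pII \) in \( p \) but for every sequence \( (s^k_i)_{i < \lambda_k} \) such that \( ((s^k_i)_{i < \lambda_k}, \gamma) \) is a legal prolongation of \( p \), the reply \( i_k \) dictated by \( \tau \) satisfies \( s^k_{i_k} \nsubseteq \tilde{x} \). Then set \( \widetilde{A}_{p,\gamma} = \{ \tilde{x} \in [T] \mid \tilde{x} \text{ is bad for } (p,\gamma) \} \) and \( A_{p,\gamma} = f_{\mathcal{B}}(\widetilde{A}_{p,\gamma}) \).

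Three facts then need verification, each a straightforward transcription of the cited proofs. First, \( \widetilde{A}_{p,\gamma} \) is closed in \( [T] \): a move witnessing \( \tilde{x} \notin \widetilde{A}_{p,\gamma} \) witnesses the same for a whole basic neighborhood of \( \tilde{x} \). Second, \( |A_{p,\gamma}| \leq (\lambda_k)^\omega < \lambda \): if \( \dens(A_{p,\gamma}) > \lambda_k \), I would extract via Lemma~\ref{lem:r-spaces} an \( r \)-spaced set of size \( \lambda_k \), take for each of its points the leftmost branch of \( \widetilde{A}_{p,\gamma} \) above it, truncate these at a common length \( j_k \), use regularity of \( \lambda_k \) to retain a bounded subfamily of size \( \lambda_k \), and feed the resulting legal move together with the same \( \gamma \) to \( \tau \), reaching a contradiction exactly as in Claim~\ref{claim:gameforPSPclaim}. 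Third, \( A \subseteq \bigcup \{ A_{p,\gamma} \mid p \in P, \gamma < \lambda \} \): given \( x \in A \), fix \( \tilde{x} \in \p(\widetilde{C}) \) and \( y \) with \( f_{\mathcal{B}}(\tilde{x}) = x \) and \( (\tilde{x}, y) \in \widetilde{C} \); assuming \( x \) lies in no \( A_{p,\gamma} \), and in particular in no \( A_{p_k, y(k)} \), one recursively builds positions \( p_k \in P \) along which \( \pI \) plays the ordinals \( y(k) \) and \( \tilde{x} \) extends each \( \pII \)-node, producing a \( \tau \)-run won by \( \pI \), a contradiction. Since \( |P \times \lambda| \leq \lambda \) and each \( A_{p,\gamma} \) is well-orderable of size \( < \lambda \) (by Corollary~\ref{cor:cardinalityunderomegainaccessible} and the \( \omega \)-inaccessibility of \( \lambda \)), Fact~\ref{fct:lambdaunionewithoutchoice}\ref{fct:lambdaunionewithoutchoice-2} yields that \( A \) is well-orderable with \( |A| \leq \lambda \). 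The final clause then follows from parts~\ref{thm:equivalentgamesforPSP2-1}--\ref{thm:equivalentgamesforPSP2-2} together with \( C(\lambda) \approx B(\lambda) \approx \pre{\lambda}{2} \) and Corollary~\ref{cor:equivPSP}.

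The main obstacle I anticipate is purely organizational rather than conceptual: no new idea beyond those already deployed in Theorems~\ref{thm:generalgameforPSP2} and~\ref{thm:equivalentgamesforPSP} should be required, and the delicate point is making the two indexing schemes coexist. Concretely, one must thread the parameter \( \gamma \) correctly through the definition of \emph{bad for} \( (p,\gamma) \) so that both the closedness argument and the \( r \)-spaced extraction still go through verbatim, and one must check in the covering step that the ordinals \( y(k) \) produced along the recursively constructed \( \tau \)-run genuinely reassemble into the second coordinate \( y \) witnessing \( (\tilde{x}, y) \in \widetilde{C} \). The only quantitative subtlety is confirming that enlarging the cover from \( \{ A_p \} \) to \( \{ A_{p,\gamma} \} \) keeps the index set of size \( |P \times \lambda| \leq \lambda \), which is exactly what makes the union argument via Fact~\ref{fct:lambdaunionewithoutchoice} applicable.
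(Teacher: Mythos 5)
Your proposal is correct and follows essentially the same route as the paper's proof: part~(i) by forgetting the unfolding moves \( y(k) \) and invoking Theorem~\ref{thm:equivalentgamesforPSP}\ref{thm:equivalentgamesforPSP-1}, and part~(ii) by the ``bad for \( (p,\gamma) \)'' machinery with the same cardinality bookkeeping as in Claim~\ref{claim:gameforPSPclaim} and Theorem~\ref{thm:generalgameforPSP2}\ref{thm:generalgameforPSP2-2}, followed by the covering argument threading the ordinals \( y(k) \) through the recursively constructed \( \tau \)-run. The only difference is presentational: you spell out the closedness of \( \widetilde{A}_{p,\gamma} \) and the choice-free union step explicitly, whereas the paper compresses these into references to the earlier proofs.
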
 

\begin{proof}
\ref{thm:equivalentgamesforPSP2-1}
If \( \pI \) has a winning strategy in \( \lambda \)-\( \widetilde{G}^*_u(\widetilde{C}) \), then (s)he can also win \( \lambda \)-\( \widetilde{G}^*(f_\mathcal{B}(\p(\widetilde{C}))) \) by forgetting the additional moves \( y(k) \) played by \( \pI \) in \(\lambda\)-\( \widetilde{G}_u^*(\widetilde{C}) \), hence there is a continuous injection from \( C(\lambda) \) into \( f_\mathcal{B}(\p(\widetilde{C})) \) by Theorem~\ref{thm:equivalentgamesforPSP}.

\ref{thm:equivalentgamesforPSP2-2}
Let \( \tau \) be a winning strategy for \( \pII \) in \( \lambda \)-\( \widetilde{G}^*_u(\widetilde{C}) \). 
Let \( P \) the set of legal positions in \(\lambda\)-\( \widetilde{G}^*_u(\widetilde{C}) \) of even length where \( \pII \) followed \(\tau\). If \( p \) has length \( 2k \), the last moves of \( \pI \) and \( \pII \) in \( p \) are of the form \( ((s^{k-1}_i)_{i < \lambda_{k-1}}, y(k-1)) \) and \( i_{k-1} \), respectively. (If \( k = 0 \) we set \( s^{-1}_{i_{-1}} = \emptyset \) and \( i_{-1} = 0 \).) Given  \( \gamma < \lambda \), we call \( \tilde{x} \in [T] \) \markdef{bad for \( (p,\gamma) \)} if \( \tilde{x} \supseteq s^{k-1}_{i_{k-1}} \) and for every \( (s^k_i)_{i < \lambda_k} \), if the pair \( ((s^k_i)_{i < \lambda_k}, \gamma) \) is a legal move for \( \pI \) after \( p \), then \( \tilde{x} \not\supseteq s^k_{i_k} \) for \( i_k \) the reply by \( \pII \) according to \(\tau\). We then set
\[ 
\widetilde{A}_{p,\gamma} = \{  \tilde{x}\in [T] \mid \tilde{x} \text{ is bad for } (p, \gamma) \}
 \] 
and 
\[ 
A_{p,\gamma} = \{ x \in X \mid \exists \tilde{x} \in \widetilde{A}_{p,\gamma} \, (f_\mathcal{B}(\tilde{x}) = x) \} = f_\mathcal{B}(\widetilde{A}_{p,\gamma}).
 \] 
Notice in particular that \( A_{p,\gamma}  \subseteq U_{s^{k-1}_{i_{k-1}}} \), and that \( A_{p,\gamma}  = U_{s^{k-1}_{i_{k-1}}} \) if \( \pI \) cannot make any legal move after \( p \).

Arguing as in Claim~\ref{claim:gameforPSPclaim}, we get \( |A_{p,\gamma}| \leq (\lambda_k)^\omega < \lambda \). Moreover, \( \p(\widetilde{C}) \subseteq \bigcup \{ \widetilde{A}_{p,\gamma} \mid p \in P \wedge \gamma < \lambda \} \): if \( \tilde{x} \in \p(\widetilde{C}) \setminus \{ \widetilde{A}_{p,\gamma} \mid p \in P \wedge \gamma < \lambda \}  \),  picking any \(y \in B(\lambda) \) such that  \( (\tilde{x},y) \in \widetilde{C} \) we could then construct an increasing sequence of positions \( p_k \in P \) of length \( 2k \) so that each \( p_k \) is ``compatible'' with \( \widetilde{x} \) and \( y \), in the sense that those are the elements determined by the run \( \bigcup_{k \in \omega} p_k \); this contradicts the fact that \(\tau\) was winning for \( \pII \). 
It follows that 
\[ 
f_\mathcal{B}(\p(\widetilde{C})) \subseteq f_\mathcal{B}(\bigcup \{ \widetilde{A}_{p,\gamma} \mid p \in P \wedge \gamma < \lambda \}) = \bigcup \{ A_{p,\gamma} \mid p \in P \wedge \gamma < \lambda \},
\]
and since \( |P| \leq \lambda \) we conclude that \( |f_\mathcal{B}(\p(\widetilde{C}))| \leq \lambda \), as desired.
\end{proof}

Theorems~\ref{thm:equivalentgamesforPSP} and~\ref{thm:equivalentgamesforPSP2} become particularly significant when \( X \) is a closed subset of \( B(\lambda) \) (including the extreme case \( X = B(\lambda) \)) and the chosen tree-basis is the canonical one, that is, \( \{ \Nbhd_s(X) \mid s \in T \} \) for \( T = \{ s \in \pre{<\omega}{\lambda} \mid \Nbhd_s \cap X \neq \emptyset \} \). In such cases, \( f_\mathcal{B} \) is the identity map and the game \( \lambda\)-\( \widetilde{G}^*_u( \cdot) \) from Definition~\ref{def:unfoldedtildegame} is nothing else than the natural unfolding of the game \( \lambda \)-\( \widetilde{G}^*(\cdot) \) from Definition~\ref{def:tildegame}.

\section{Representable sets and \( \lPSP \)} \label{sec:representalePSP}

If we could have that all games of the form \( \lambda \)-\( G^*(A) \) are determined, we would conclude that all subsets of any $\lambda$-Polish space $X$ have the \lPSP. Unfortunately, as discussed in Section~\ref{sec:noBoreldeterminacy}, full determinacy is out of reach in the generalized context. But the existence of such a game-theoretic equivalent of \lPSP{} suggests a new development. Martin's proof that all $\boldsymbol{\Pi}^1_1$ sets of reals are determined if there exists a measurable cardinal (see \cite[Theorem 31.1]{Kanamori}) can be divided in two parts: in the first part, it is proven that if there exists a measurable cardinal, then all $\boldsymbol{\Pi}^1_1$ sets have a certain tree-structure; such sets were called afterwards ``homogeneously Souslin sets''. In the second part, it is proven that if the payoff of a game is a homogeneously Souslin set, then it is possible to define an unfolded version of the game, i.e.\ a closed  (and therefore determined, under \( \AC \)) game where:
\begin{itemizenew}
\item
one of the players has to play additional moves, which are designed to witness that if a run in the unfolded game goes on for $\omega$-many moves, then the play in the original game obtained by removing those additional moves is indeed winning for such player; 
\item
every winning strategy in the unfolded game for any of the two player yields a winning strategy in the original game for the same player.
\end{itemizenew}
The projection of a homogeneously Souslin set is called ``weakly homogeneously Souslin'' (see Definition \ref{defin:whS}). Such sets have been crucial in the proof of the consistency of \( \AD \) from the existence of large cardinals, and they enjoy the same regularity properties of determined set, even if they are not necessarily determined. The standard proof is the following: regularity properties like the \( \mathrm{PSP} \) have a game-theoretic equivalent, and the unfolded version of such game for weakly homogeneously Souslin sets is closed, therefore determined (under \( \AC \)). Then it is shown that 
the determinacy of the unfolded game implies the determinacy of a ``partially unfolded'' version of the game, and that the latter suffices to prove that the set enjoys the given regularity property. 
(See \cite[Chapter 32]{Kanamori} for more information.)

We are going to define a generalization of weakly homogeneously Souslin sets in $\lambda$-Polish spaces, called representable sets, and prove that representability implies the \lPSP. 
Our strategy for proving this will follow closely the one described in the previous paragraph in the context of weakly homogeneously Souslin sets. Indeed,
the game-theoretic equivalent \( \lambda \)-\( \widetilde{G}^*( \cdot) \) of the \( \lPSP \) and its ``partially unfolded'' version \( \lambda \)-\( \widetilde{G}^*_u( \cdot ) \) have been already introduced in Definitions~\ref{def:tildegame} and~\ref{def:unfoldedtildegame}, respectively; the ``fully'' unfolded version of \( \lambda \)-\( \widetilde{G}^*( \cdot) \), to be used with representable sets and denoted by \( G_{\pi,F} \), will instead appear in the proof of Theorem~\ref{thm:mainPSP}.

Our definition of representability is inspired by the notion of ``$\mathbb{U}(j)$-representable set'' under \( \mathsf{I0} \) coming from~\cite[Definition 110]{Woodin2011}, and we will see in Chapter~\ref{sec:applications} that our definition is indeed a direct generalization of both concepts.


\begin{defin} 
A family of ultrafilters \( \UU \) is \markdef{orderly} if there is a nonempty set \( K \), called \markdef{support of \( \UU \)}, such that every \( \U \in \UU \) concentrates on \( \pre{k}{K} \) for some \( k \in \omega \), that is \( \pre{k}{K} \in \U \); the natural number \( k \) is then called \markdef{level of \( \U \)}, and is denoted by \( \lev(\U) \).
An orderly family of ultrafilters \( \UU \) is \markdef{nonprincipal} if there is at least one \( \U \in \UU \) which is nonprincipal.
\end{defin}

When \( \UU \) is an orderly families of ultrafilters \( \UU \), we always denote by \( K \) its support. Notice that we can clearly assume that every \( \U \in \UU \) is actually an ultrafilter on \( \pre{\lev(\U)}{K} \). 

\begin{defin} \label{def:towers}
Let \( \UU \) be an orderly family of ultrafilters.
A \markdef{tower of ultrafilters} (\markdef{in \( \UU \)}) is a countable sequence \( (\U_k)_{k \in \omega } \) of ultrafilters from \( \UU \) such that for all \(  k \in \omega \) and \( \ell < k \)
\begin{enumerate-(a)}
\item
\( \lev(\U_k) = k \);
\item
\( \U_k \) projects onto \( \U_\ell \), that is, for every \( A \subseteq \pre{\ell}{K} \) we have
\[ 
A \in \U_\ell \iff  A^{\ell \to k } =  \{ s \in \pre{k}{K} \mid s \restriction \ell \in A \} \in \U_k.
 \] 
\end{enumerate-(a)}
A tower of ultrafilters \( (\U_k)_{k \in \omega} \) is \markdef{well-founded} if for every sequence of sets \( (A_k)_{k \in \omega} \) with \( A_k \in \U_k \) there is \( z \in \pre{\omega}{K} \) such that \( z \restriction k \in A_k \) for all \( k \in \omega \).
\end{defin}

\begin{remark}
The term ``well-founded'' comes from the analysis by Gaifman and Powell on such towers. If $\U$ and $\mathcal{V}$ are ultrafilters and $\U$ projects onto $\mathcal{V}$, then there exists an elementary embedding $j_{\mathcal{V},\U}:\Ult(V,\mathcal{V})\to\Ult(V,\U)$, where \( \Ult(V,\mathcal{V}) \) and \( \Ult(V,\mathcal{U}) \) are the ultrapowers of the universe of sets \( V \) by \( \mathcal{V} \) and \( \U \), respectively. Therefore a tower of ultrafilters \( (\U_k)_{k \in \omega} \) induces a directed system \( (\Ult(V,(\U_k)),j_{\U_k,\U_m})_{m\geq k } \), and the membership relation on the direct limit of such system is well-founded if the tower of ultrafilters is well-founded in the sense of Definition~\ref{def:towers} (or, in their terminology, countably complete; see~\cite{Gaifman1974} and~\cite{Powell1974}).
\end{remark}

\begin{defin} \label{def:repr}
Let \(\lambda\) be such that \( 2^{< \lambda} = \lambda \), \( \UU \) be an orderly family of ultrafilters, and 
\( \mathcal{B} = \{ U_s \mid s \in \pre{<\omega}{\lambda} \} \) be a tree-basis%
\footnote{Recall that by Proposition~\ref{prop:tree-basis} we can always assume without loss of generality that the index tree of the chosen tree-basis \( \mathcal{B} \) is the whole \( \pre{<\omega}{\lambda} \).}
 for a nonempty \(\lambda\)-Polish space \( X \). 
A \markdef{\( \UU \)-representation} for a set \( Z \subseteq X \) (with respect to \( \mathcal{B} \)) is a function 
\[ 
\pi \colon \bigcup_{k \in \omega} (\pre{k}{\lambda} \times \pre{k}{\lambda}) \to \UU 
 \] 
such that 
\begin{enumerate-(a)}
\item \label{def:repr-1}
for all $k\in\omega$ and all $s,t\in \pre{k}{\lambda}$, $\lev(\pi(s,t))=k$;
\item \label{def:repr-2}
for all \( \tilde{x}, y \in B(\lambda) \), the family \( T^\pi_{\tilde{x},y} =  (\pi(\tilde{x} \restriction k, y \restriction k))_{k \in \omega} \) is a tower of ultrafilters such that \( \pi({\tilde{x} \restriction k}, {y \restriction k}) \) is (at least) \( \lambda_k^+ \)-complete;
\item \label{def:repr-3}
for every \( \tilde{x} \in B(\lambda) \), \( f_{\mathcal{B}}(\tilde{x}) \in Z \) if and only if there is \( y \in B(\lambda) \) such that \( T^\pi_{\tilde{x},y} \) is well-founded. 
\end{enumerate-(a)}
A set \( Z \subseteq X \) will be called \markdef{\( \UU \)-representable} if there is a \( \UU \)-representation for \( Z \) (with respect to some tree-basis for \( X \)).
\end{defin}

The reference to \( \UU \) may be omitted when it is clear or irrelevant. 
Clearly, we can always take \( \UU = \ran(\pi) \), so that \( \pi \) is surjective: this will always be tacitly assumed when speaking of \( \UU \)-representable sets, unless otherwise stated.

\begin{remark} 
Condition~\ref{def:repr-3} in Definition~\ref{def:repr} is subtle, as it entails that for every \( x \in X \), the fact that \( x \in Z \) implies that \emph{for every \( \tilde{x} \in f^{-1}_{\mathcal{B}}(x) \)} there is \( y \in B(\lambda) \) such that \( T^\pi_{\tilde{x},y} \) is well-founded, while in order to prove that \( x \in Z \) it is enough to show that \emph{there is some \( \tilde{x} \in f^{-1}_{\mathcal{B}}(x) \)} such that \( T^\pi_{\tilde{x},y} \) is well-founded, for a suitable \( y \in B(\lambda) \). In other words,
\begin{align*}
x \in Z & \iff \forall \tilde{x} \in f^{-1}_{\mathcal{B}}(x) \, \exists y \in B(\lambda) \, (T^\pi_{\tilde{x},y} \text{ is well-founded}) \\
& \iff \exists \tilde{x} \in f^{-1}_{\mathcal{B}}(x) \, \exists y \in B(\lambda) \, (T^\pi_{\tilde{x},y} \text{ is well-founded}).
\end{align*}
In particular, \( Z \subseteq X \) is \( \UU \)-representable with respect to \( \mathcal{B} \) if and only if \( f^{-1}_{\mathcal{B}}(Z) \) is \( \UU \)-representable with respect to the canonical basis \( \{ \Nbhd_s \mid s \in \pre{<\omega}{\lambda} \} \) on \( B(\lambda) \).

Interestingly enough, all results except Proposition~\ref{prop:representpointclass} would continue to work if condition~\ref{def:repr-3} in Definition~\ref{def:repr} is weakened to
\begin{enumerate}[label={\upshape (c\( ' \))}, leftmargin=2pc]
\item
for every \( x \in X \), \( x \in Z \) if and only if there are \( \tilde{x} , y \in B(\lambda) \) such that \( f_\mathcal{B}(\tilde{x}) = x \) and  \( T^\pi_{\tilde{x},y} \) is well-founded. 
\end{enumerate}
\end{remark}

\begin{proposition} \label{prop:U-representabilityandmeasurablecardinals} 
Let \( \lambda \) be such that \( 2^{< \lambda} = \lambda \), and let \( \pi \) be a \( \UU \)-representation for a set \( Z \subseteq X \), for some \(\lambda\)-Polish space \( X \) and some orderly family of ultrafilters \( \UU = \ran(\pi) \). Then every ultrafilter in \( \UU \) is at least \( \lambda^+ \)-complete. 
If moreover \( \UU \) is nonprincipal, then there is a measurable cardinal \(\kappa > \lambda \) such that every ultrafilter in \( \UU \) is at least \( \kappa \)-complete.

In particular, if there is a \( \UU \)-representable set with \( \UU \) nonprincipal, then there is a measurable cardinal above \(\lambda\).
\end{proposition}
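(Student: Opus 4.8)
The plan is to prove the two completeness assertions in turn, the decisive observation being that completeness transfers \emph{downward} along the projections constituting a tower. First I would isolate the following elementary fact: if \( \V \) projects onto \( \U \) in the sense of Definition~\ref{def:towers} (so \( \V \) sits at a higher level than \( \U \)) and \( \V \) is \( \nu \)-complete, then \( \U \) is \( \nu \)-complete as well. Indeed, given \( (A_\alpha)_{\alpha < \delta} \subseteq \U \) with \( \delta < \nu \), each lift \( A_\alpha^{\ell \to n} \) lies in \( \V \); since the lifting operation \( A \mapsto A^{\ell \to n} \) of Definition~\ref{def:towers} commutes with intersections, we have \( \big( \bigcap_{\alpha < \delta} A_\alpha \big)^{\ell \to n} = \bigcap_{\alpha < \delta} A_\alpha^{\ell \to n} \in \V \) by \( \nu \)-completeness of \( \V \), whence \( \bigcap_{\alpha < \delta} A_\alpha \in \U \) by the projection property.

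With this in hand, the first assertion is quick. Fix \( \U \in \UU = \ran(\pi) \); by condition~\ref{def:repr-1} we may write \( \U = \pi(s,t) \) with \( s,t \in \pre{k}{\lambda} \) and \( \lev(\U) = k \). Choosing any \( \tilde{x}, y \in B(\lambda) \) extending \( s \) and \( t \) respectively, condition~\ref{def:repr-2} guarantees that \( T^\pi_{\tilde{x},y} = (\pi(\tilde{x}\restriction j, y\restriction j))_{j \in \omega} \) is a tower, so for each \( m > k \) the ultrafilter \( \pi(\tilde{x}\restriction m, y \restriction m) \) — which is \( \lambda_m^+ \)-complete — projects onto \( \U \). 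By the fact above, \( \U \) is \( \lambda_m^+ \)-complete for \emph{every} \( m \in \omega \), and since the \( \lambda_m \) are cofinal in \( \lambda \) this already gives closure under intersections of any length \( < \lambda \). To handle intersections of length exactly \( \lambda \) I would exploit \( \cf(\lambda) = \omega \): given \( (B_\alpha)_{\alpha<\lambda} \subseteq \U \), each \( C_n = \bigcap_{\alpha < \lambda_n} B_\alpha \) belongs to \( \U \) by \( \lambda_n^+ \)-completeness, and \( \bigcap_{\alpha<\lambda} B_\alpha = \bigcap_{n \in \omega} C_n \in \U \) by countable completeness — note that \( \U \) is at least \( \omega_1 \)-complete, as \( \lambda_k \geq \omega \) forces \( \lambda_k^+ \geq \omega_1 \). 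Hence \( \U \) is \( \lambda^+ \)-complete.

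For the second assertion, assume \( \UU \) is nonprincipal. Every \( \U \in \UU \) is now \( \lambda^+ \)-complete, hence countably complete, so the fact cited in Section~\ref{subsec:ultrafilters} (following \cite[Lemma 1.1.4]{Larson2004}) assigns to each nonprincipal \( \U \in \UU \) a completeness number \( \kappa_\U \) which is a measurable cardinal, with \( \kappa_\U > \lambda \) precisely because \( \U \) is \( \lambda^+ \)-complete. The only genuine point is to extract a \emph{single} \( \kappa \) serving all of \( \UU \) simultaneously: here I would simply put \( \kappa = \min \{ \kappa_\U \mid \U \in \UU \text{ nonprincipal} \} \), a minimum that exists as the least element of a nonempty collection of cardinals and is therefore attained by some nonprincipal \( \U_0 \in \UU \). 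Then \( \kappa = \kappa_{\U_0} \) is measurable and \( \kappa > \lambda \); every nonprincipal \( \U \in \UU \) is \( \kappa_\U \)-complete and hence \( \kappa \)-complete, while the principal members of \( \UU \) are trivially \( \nu \)-complete for all \( \nu \). The concluding ``in particular'' is then immediate.

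The step I expect to carry the real weight is the downward transfer of completeness through projections, as it is exactly what upgrades the level-dependent, a priori non-uniform hypothesis of condition~\ref{def:repr-2} into \( \lambda^+ \)-completeness of \emph{every} member of \( \UU \); by contrast, the uniformity of \( \kappa \) in the second part is a soft consequence of well-ordering the completeness numbers. I do not expect the well-foundedness clause~\ref{def:repr-3} to enter at all, since completeness is governed entirely by the tower structure of~\ref{def:repr-2}.
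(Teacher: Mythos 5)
Your proof is correct and takes essentially the same route as the paper's: there too the argument rests on the fact that completeness transfers downward along projections (stated there contrapositively, as the impossibility of projecting an ultrafilter onto one of strictly smaller completeness), then observes that every member of \( \UU = \ran(\pi) \) lies in some tower \( T^\pi_{\tilde{x},y} \), upgrades \( \lambda \)-completeness to \( \lambda^+ \)-completeness via singularity of \( \lambda \), and takes the least completeness number of a nonprincipal member of \( \UU \) as the measurable \( \kappa \). The only cosmetic differences are that you prove the transfer lemma directly (lifting commutes with intersections, plus the projection equivalence) rather than by contradiction, and that you spell out the singularity step explicitly using \( \cf(\lambda) = \omega \).
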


\begin{proof}
This essentially follows from the fact that it is not possible to project an ultrafilter to another ultrafilter with less completeness. Indeed, fix any infinite cardinal \( \nu \) and two natural numbers \( k > \ell \). Suppose that \( \U_\ell \)  is an ultrafilter on  \( \pre{\ell}{K}\) that is not \( \nu^+ \)-complete, and that \( \U_k \) is a \( \nu^+ \)-complete ultrafilter on \( \pre{k}{K} \) that projects onto \( \U_\ell \). Since $\U_\ell$ is not $\nu^+$-complete, there is a sequence $(A_\alpha)_{\alpha<\nu}$ of sets in $\U_\ell$ such that $A = \bigcap_{\alpha<\nu} A_\alpha\notin \U_\ell$. 
Setting \( B_\alpha = A_\alpha \setminus A \), we get a sequence \( (B_\alpha)_{\alpha < \nu} \) of sets in \( \U_\ell \) such that
$\bigcap_{\alpha<\nu} B_\alpha=\emptyset$. But then \( (B_\alpha^{\ell \to k})_{\alpha < \nu} \) is a sequence of sets in \(\U_k\) such that $\bigcap_{\alpha<\nu}B_\alpha^{\ell \to k}=\emptyset$, against the \( \nu^+ \)-completeness of $\U_k$.

Consider now any \( \tilde{x}, y \in B(\lambda) \) and the corresponding tower of ultrafilters \( T^\pi_{\tilde{x},y} \) from Definition~\ref{def:repr}\ref{def:repr-2}. The previous paragraph shows that each \( \U \in T^\pi_{\tilde{x},y} \) is at least \( \lambda_k^+ \)-complete, \emph{for every \( k \in \omega \)}. It follows that every ultrafilter in \( \UU = \ran(\pi) = \bigcup_{\tilde{x},y \in B(\lambda)} T^\pi_{\tilde{x},y} \) is \(\lambda\)-complete, and hence also \( \lambda^+ \)-complete because \( \lambda \) is singular.

Assume now that \( \UU \) is nonprincipal. Since the completeness number of a nonprincipal ultrafilter is always a measurable cardinal (see Section~\ref{subsec:ultrafilters}), the smallest measurable cardinal \( \kappa \) which is the completeness number of some nonprincipal \( \U \in \UU \) is as required.
\end{proof}

\begin{remark} \label{rmk:U-representabilityandmeasurablecardinals} 
It follows that in item~\ref{def:repr-2} of Definition~\ref{def:repr} we can equivalently require each of the ultrafilters \( \pi(\tilde{x} \restriction k, y \restriction k) \) to be (at least) \( \kappa \)-complete, for some fixed \( \kappa > \lambda \).
This reformulation of Definition~\ref{def:repr} makes sense also when \( \lambda = \omega \), and indeed we will show that in such case it becomes equivalent to the classical notion of \( \kappa \)-weakly homogeneous Souslin set
(see Section~\ref{sec:representabilityvsweaklyhomogeneouslysouslin}).
\end{remark}

\begin{remark} \label{rmk:principalrepresentations} 
Proposition~\ref{prop:U-representabilityandmeasurablecardinals} raises the question of which subsets \( Z \subseteq X \) of a \(\lambda\)-Polish space \( X \) are \( \UU \)-representable if \( \UU \) only contains principal ultrafilters. It is easy to see that this is the case for \( Z = X \). Conversely, we are now going to show that
if \( \pi \) is a \( \UU \)-representation for a set \( Z \subseteq X \) and every ultrafilter in \( \ran(\pi) \) is principal, then necessarily \( Z = X \). Indeed, fix any \( x \in X \). Let \( \tilde{x} \in B(\lambda) \) be such that \( f_{\mathcal{B}}(\tilde{x}) = x \), and pick an arbitrary \( y \in B(\lambda) \). For any \( k \in \omega \) there is  \( z_k \in \pre{k}{K} \) such that \( \{ z_k \} \in \pi(\tilde{x} \restriction k, y \restriction k) \) because \( \pi(\tilde{x} \restriction k, y \restriction k) \) is principal. 
But then for every \( \ell < k \) we have that \( \{ z_\ell \}^{\ell \to k} \cap \{ z_k\} \neq \emptyset \) because \( \pi(\tilde{x} \restriction k, y \restriction k) \) projects onto \( \pi(\tilde{x} \restriction \ell, y \restriction \ell) \), therefore \( z_\ell \subseteq z_k \). It follows that \( z = \bigcup_{k \geq 1} z_k \) witnesses that the tower \( T^\pi_{\tilde{x},y} \) is well-founded, and hence \( x \in Z \).
\end{remark}


One desirable property of weakly homogeneously Souslin subsets of the classical Baire space \( \pre{\omega}{\omega} \) is that, in particular, they are Souslin, i.e.\ they are the projection of a tree. For our definition of representable set, we will see that this is automatically true under \( \AC \) (this follows from Proposition~\ref{prop:TCautomatic} and the comment after it); however, it is not necessarily true in choiceless settings. For this reason, we define a stronger version of representability that does the job even if \( \AC \) is not assumed.

\begin{defin} \label{def:TC}
Let \( \lambda \), \( \UU \), \( X \), and \( \mathcal{B} \) be as in Definition~\ref{def:repr}. A \( \UU \)-representation \( \pi \) for a set \( Z \subseteq X \) has the \markdef{tower condition} if there is \( F \colon \ran(\pi) \to \bigcup \UU \) such that
\begin{enumerate-(a)}
\item \label{def:TC-1}
\( F(\U) \in \U \) for all \( \U \in \ran(\pi) \);
\item \label{def:TC-2}
for all \( \tilde{x},y \in B(\lambda) \), the tower \( T^\pi_{\tilde{x},y} \) is well-founded if and only if  there is \( z \in \pre{\omega}{K} \) (where \( K \) is the support of \( \UU \)) such that \( z \restriction k \in F(\pi(\tilde{x} \restriction k, y \restriction k)) \) for all \( k \in \omega \).
\end{enumerate-(a)}
If a set \( Z \subseteq X \) admits a \( \UU \)-representation with the tower condition, we will simply say that \( Z \) is \markdef{TC-representable}.
\end{defin}

\begin{remark} \label{rmk:TC}
The terms ``representable'' and ``tower condition'' are taken from~\cite{Woodin2011}. Woodin introduces these terms as a generalization of weakly homogeneously Souslin sets in $L(V_{\lambda+1})$, under the strong large cardinal assumption $\mathsf{I0}(\lambda)$. Such definitions are a particular case of our definition: we are going to discuss this in detail in Section~\ref{sec:representabilityvsU(j)-representability}.
\end{remark}

\begin{proposition}[\( \AC \)] \label{prop:TCautomatic}
Let \( \lambda \) be such that \( 2^{< \lambda} = \lambda \), and \( X \) be a \(\lambda\)-Polish space. If \( Z \subseteq X \) is \( \UU \) representable for some orderly family of ultrafilters \( \UU \), then it is also TC-representable.
\end{proposition}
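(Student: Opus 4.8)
The plan is to keep the function $\pi$ and the tree-basis $\mathcal{B}$ fixed and to manufacture the additional function $F$ of Definition~\ref{def:TC} from the ultrapower structure of the ultrafilters in $\ran(\pi)$. First I would record the trivial half of the tower condition: condition~\ref{def:TC-2} of Definition~\ref{def:TC} has its forward implication ``for free'' for \emph{any} choice of $F$ with $F(\U) \in \U$. Indeed, if $T^\pi_{\tilde{x},y} = (\U_k)_{k\in\omega}$ is well-founded in the sense of Definition~\ref{def:towers} and we set $A_k = F(\U_k) \in \U_k$, then by definition of well-foundedness there is a thread $z \in \pre{\omega}{K}$ with $z \restriction k \in F(\U_k) = F(\pi(\tilde{x}\restriction k, y \restriction k))$ for all $k$. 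So the entire content of the proposition lies in choosing $F$ so that the \emph{converse} holds, i.e.\ so that a thread through the canonical sets $\bigl(F(\U_k)\bigr)_k$ forces the tower to be well-founded; equivalently (contrapositive), so that every ill-founded tower in the system is ``exposed'' by its canonical sets having no thread.

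Here is where $\AC$ and the completeness of the ultrafilters enter. By Proposition~\ref{prop:U-representabilityandmeasurablecardinals} every $\U \in \ran(\pi)$ is $\lambda^+$-complete, hence in particular $\sigma$-complete, so each ultrapower $M_\U = \Ult(V,\U)$ is well-founded; I identify it with its transitive collapse, let $j_\U \colon V \to M_\U$ be the ultrapower embedding, and let $a_\U = [\id]_\U \in M_\U$ be the seed (a finite tuple from $j_\U(K)$ of length $\lev(\U)$). The crucial algebraic fact is the coherence of seeds under projections: if $\U$ at level $k$ projects onto $\V$ at level $\ell < k$, then the factor embedding $k_{\V,\U} \colon M_\V \to M_\U$ induced by the restriction map $s \mapsto s \restriction \ell$ satisfies $k_{\V,\U} \circ j_\V = j_\U$ and $k_{\V,\U}(a_\V) = a_\U \restriction \ell$. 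Thus along any tower $(\U_k)_k$ the ultrapowers form a directed system whose seeds cohere, and --- as recalled in the Remark following Definition~\ref{def:towers} (Gaifman--Powell) --- the tower is well-founded in the sense of Definition~\ref{def:towers} if and only if the direct limit of this system is well-founded.

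The construction of $F$ then proceeds by using $\AC$ to fix, simultaneously for all the (at most $2^\lambda$-many) ultrafilters $\U \in \ran(\pi)$, a function $h_\U \colon \pre{k}{K} \to V$ whose $\U$-class $[h_\U]_\U$ codes the Mostowski data of the seed $a_\U$ inside the transitive $M_\U$; I would then let $F(\U) \in \U$ be the set (measure-one, by the fundamental theorem of ultraproducts together with seed coherence) of those $s \in \pre{k}{K}$ on which $h_\U(s)$ is consistent with $h_{\V}(s \restriction \ell)$ for every projection $\V$ of $\U$ to a level $\ell \le k$. A thread $z$ through $\bigl(F(\U_k)\bigr)_k$ now produces a coherent $\omega$-sequence $\bigl(h_{\U_k}(z \restriction k)\bigr)_k$ of collapse data which assembles into an honest (transitive, well-founded) model into which the direct limit embeds; hence the direct limit is well-founded and, by the cited equivalence, the tower is well-founded. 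This is exactly the converse needed, and together with the automatic forward direction it gives the tower condition.

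I expect the main obstacle to be precisely this converse --- certifying that a \emph{single} thread through the specially chosen sets $F(\U_k)$ suffices to guarantee well-foundedness of the \emph{whole} tower (which by definition concerns threads through \emph{all} measure-one sequences). The point to get right is that countable completeness cannot be read off a universal thread directly, but only through the direct limit; so the verification must route through the seed-coherence computation and the Gaifman--Powell characterization rather than through any naive comparison of threads (indeed, the naive choice $F(\U) = \pre{\lev(\U)}{K}$ visibly fails, as it makes a thread exist through every tower). I would also need to double-check that the $\AC$-selection of the $h_\U$ is genuinely uniform across $\ran(\pi)$ and that $F$ depends only on the ultrafilter $\U$ and not on which pair $(s,t)$ produced it, as required by Definition~\ref{def:TC}.
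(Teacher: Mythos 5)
Your preliminary reductions are all sound and agree with the paper's proof: the forward half of condition~\ref{def:TC-2} of Definition~\ref{def:TC} is free for any $F$ with $F(\U)\in\U$; every $\U\in\ran(\pi)$ is $\lambda^+$-complete by Proposition~\ref{prop:U-representabilityandmeasurablecardinals}, so the ultrapowers are well-founded and the seeds cohere under the factor maps; and a tower is well-founded in the sense of Definition~\ref{def:towers} exactly when its direct limit $M_\infty$ is well-founded. The gap is the step you yourself flag as the main obstacle: the claim that a thread $z$ through your consistency sets $F(\U_k)$ produces ``collapse data assembling into a well-founded model into which the direct limit embeds''. As described, this cannot be verified, for a counting reason. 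Any structure built by evaluating representing functions along $z$ respects the direct-limit relations only on those pairs of functions $f,g$ for which $z$ meets the measure-one set $\{s \mid f(s\restriction k)<g(s\restriction l)\}$; to rule out every potential descending $\omega$-sequence in $M_\infty$, the sets $F(\U_m)$ would have to refine such sets for all relevant pairs, and there are far more pairs of representing functions than the completeness of the ultrafilters allows one to intersect. Moreover, your $F(\U)$ is built only from $\U$ and its projections, i.e.\ from data that is downward-local to level $\lev(\U)$, whereas well-foundedness of a tower is a global property of the whole $\omega$-sequence; nothing in your construction ties $F(\U_k)$ to the ill-foundedness witnesses of the particular towers of the system passing through $\U_k$.

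The paper's proof confronts exactly this point, by cases rather than by construction. Setting aside the trivial case in which every ultrafilter in $\ran(\pi)$ is principal (take $F(\U)$ to be a generating set), Proposition~\ref{prop:U-representabilityandmeasurablecardinals} gives a measurable $\kappa>\lambda$ such that every $\U\in\ran(\pi)$ is $\kappa$-complete, and --- this is where $\AC$ really enters --- $\kappa$ is then strong limit, so $\kappa>2^\lambda$. For each pair $(\tilde x,y)\in B(\lambda)\times B(\lambda)$ whose tower $T^\pi_{\tilde x,y}$ is ill-founded, one chooses by $\AC$ a witnessing sequence $(A^{\tilde x,y}_k)_{k\in\omega}$ of measure-one sets admitting no thread, and puts $A^{\tilde x,y}_k=\pre{k}{K}$ when the tower is well-founded; then for $s,t\in\pre{k}{\lambda}$ one sets $F(\pi(s,t))=\bigcap\{A^{\tilde x,y}_{k} \mid (\tilde x,y)\in\Nbhd_s\times\Nbhd_t\}$, an intersection of at most $2^\lambda<\kappa$ many sets in $\pi(s,t)$, hence still a member of $\pi(s,t)$. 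Now both halves of condition~\ref{def:TC-2} are immediate: well-founded towers have threads because the $F$-sets are measure one, and ill-founded towers have none because $F(\pi(\tilde x\restriction k,y\restriction k))\subseteq A^{\tilde x,y}_k$. (Your closing worry, that $F$ must depend on the ultrafilter rather than on the pair $(s,t)$, applies to this definition too, and is repaired by further intersecting over the at most $\lambda$-many pairs with the same $\pi$-value, again within the $\kappa$-completeness.) The diagnostic red flag in your proposal is that it never uses $\kappa>2^\lambda$, i.e.\ never compares the completeness of the ultrafilters with the number of towers that the single set $F(\U)$ must serve; that comparison is the actual content of the proposition.
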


\begin{proof}
Let \( \pi \) be a \( \UU \)-representation for \( Z \) (with respect to some tree-basis \( \mathcal{B} \) for \( X \)). If every \( \U \in \ran(\pi) \) is principal, it is enough to let \( F(\U) \) be the set generating \( \U \). Otherwise, by Proposition~\ref{prop:U-representabilityandmeasurablecardinals} there is a measurable cardinal \( \kappa > \lambda \) such that every \( \U \in \ran(\pi) \) is at least \( \kappa \)-complete. Since we are assuming \( \AC \), the cardinal \( \kappa \) is also strong limit, and thus \( \kappa > 2^{\lambda} \).

Let \( K \) be the support of \( \UU \).
Given any \( \tilde{x},y \in B(\lambda) \), consider the tower of ultrafilters \( T^\pi_{\tilde{x},y} \). If it is not well-founded, then there is a sequence \( (A^{\tilde{x},y}_k)_{k \in \omega} \) with \( A^{\tilde{x},y}_k \in \pi(\tilde{x} \restriction k, y \restriction k) \) witnessing this, i.e.\ such that there is no \( z \in \pre{\omega}{K} \) such that \( \forall k \in \omega \, ( z \restriction k \in A^{\tilde{x},y}_k )\). If instead \( T^\pi_{\tilde{x},y} \) is well-founded, let \( A^{\tilde{x},y}_k = \pre{k}{K} \) for all \( k \in \omega \). For every \( k \in \omega \) and \( s,t \in \pre{k}{\lambda} \), let
\[
F(\pi(s,t)) = \bigcap \{ A^{\tilde{x},y}_k \mid (\tilde{x},y) \in \Nbhd_s \times \Nbhd_t \}.
\]
We claim that the function \( F \colon \ran(\pi) \to \bigcup \UU \) we just defined witnesses that \( \pi \) has the tower condition. Condition~\ref{def:TC-1} of Definition~\ref{def:TC} is satisfied by choice of \( A^{\tilde{x},y}_k \) and the fact that  \( \pi(s,t) \) is a \( \kappa \)-complete ultrafilter for \( \kappa > 2^\lambda \). As for condition~\ref{def:TC-2} of the same definition, if \( T^{\pi}_{\tilde{x},y} \) is well-founded then, by definition, there is \( z \in \pre{\omega}{K} \) such that \( z \restriction k \in F(\pi(\tilde{x} \restriction k,y \restriction k)) \) for all \( k \in \omega \) simply because \( F(\pi(\tilde{x} \restriction k,y \restriction k)) \in \pi(\tilde{x} \restriction k,y \restriction k) \). 
If instead \( T^\pi_{\tilde{x},y} \) is not well-founded, then there cannot be \( z \in \pre{\omega}{K} \) as above by choice of \( A^{\tilde{x},y}_k \) and the fact that \( F(\pi(\tilde{x} \restriction k,y \restriction k)) \subseteq A^{\tilde{x},y}_k \).
\end{proof}

Thus the tower condition and TC-representability are more significant when we work in choiceless settings. Even in such situations, though,
every set \( Z \subseteq B(\lambda) \) which is TC-representable with respect to the canonical basis \( \mathcal{B} = \{ \Nbhd_s \mid s \in \pre{<\omega}{\lambda} \} \) for \( B(\lambda) \), so that \( f_{\mathcal{B}} \) is the identity map, can be written as \( Z = \p[T] \) for some pruned tree \( T \) over \( \lambda \times Y \), where \( Y = \lambda \times K \). Indeed, let
\[ 
T = \{ (s,(t,u)) \in \pre{k}{(\lambda \times Y)}  \mid k \in \omega \wedge u \in F(\pi(s,t)) \}. 
 \] 
Then \( x \in Z \) if and only if there is \( y \in B(\lambda) \) such that \( T^\pi_{x,y} \) is well-founded, if and only if there is \( z \in \pre{\omega}{K} \) such that \( (x,(y,z)) \in [T] \).

The main goal of this section is to prove the following general result.

\begin{theorem} \label{thm:mainPSP}
Assume that \( 2^{< \lambda} = \lambda \). Let \( X \) be a \(\lambda\)-Polish space, and let \( Z \subseteq X \) be a TC-representable set. Then either there is a continuous injection from \(  C(\lambda) \) into  \(Z \), or else \( Z \) is well-orderable and \( |Z| \leq \lambda \). In particular, \( Z \) has the \lPSP.
\end{theorem}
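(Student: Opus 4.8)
The plan is to reduce the statement to the determinacy of an unfolded Gale--Stewart game via the machinery of Section~\ref{sec:PSPgames}, and then use the tower condition to make that game closed (hence determined under the mild choice available). First I would fix a \( \UU \)-representation \( \pi \) for \( Z \) with the tower condition, together with its witnessing function \( F \colon \ran(\pi) \to \bigcup \UU \) and the associated tree-basis \( \mathcal{B} = \{ U_s \mid s \in \pre{<\omega}{\lambda} \} \) on \( X \) whose index tree is the full \( \pre{<\omega}{\lambda} \) (Proposition~\ref{prop:tree-basis}). By the reformulation following Definition~\ref{def:TC}, there is a pruned tree \( T \) on \( \lambda \times (\lambda \times K) \) such that \( f_{\mathcal{B}}(\tilde{x}) \in Z \) iff there are \( y \in B(\lambda) \) and \( z \in \pre{\omega}{K} \) with \( z \restriction k \in F(\pi(\tilde{x} \restriction k, y \restriction k)) \) for all \( k \), equivalently iff \( (\tilde x, (y,z)) \in [T] \). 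Thus \( f_\mathcal{B}^{-1}(Z) = \p(\widetilde{C}) \) for the closed set \( \widetilde{C} \subseteq B(\lambda) \times B(\lambda) \) obtained by pairing the \( y \)-coordinate with a coding of the \( z \in \pre{\omega}{K} \) into \( B(\lambda) \).

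Next I would invoke the fully unfolded game \( G_{\pi,F} \) hinted at in the introduction to this section: it is the game \( \lambda \)-\( \widetilde{G}^*(Z) \) from Definition~\ref{def:tildegame} in which, at each round \( k \), player \( \pI \) additionally plays an ordinal \( y(k) < \lambda \) and an element \( z(k) \in K \), subject to the extra rule that \( z \restriction (k+1) \in F(\pi(s^k_{i_k}, y \restriction(k+1))) \) — a legality constraint that can be checked from the finite position. (Formally this is the game \( \lambda \)-\( \widetilde{G}^*_u(\widetilde{C}) \) of Definition~\ref{def:unfoldedtildegame}, with \( \pI \)'s extra move carrying the pair \( (y(k),z(k)) \) coded as a single ordinal below \( \lambda \); this is possible since \( |K| \cdot \lambda = \lambda \).) The point of threading the \( z \)-moves through \( F \) is that \( \pI \), by moving for \( \omega \)-many rounds, automatically produces a branch \( z \in \pre{\omega}{K} \) witnessing well-foundedness of \( T^\pi_{\tilde x, y} \), so that the resulting \( \tilde x = \bigcup_k s^k_{i_k} \) satisfies \( f_\mathcal{B}(\tilde x) \in Z \). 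Once coded as a Gale--Stewart game with rules on \( \lambda \), the payoff set of \( G_{\pi,F} \) is \emph{closed}: the only way \( \pI \) loses is by being unable to make a legal move at some finite stage. Hence \( G_{\pi,F} \) is determined by the Gale--Stewart Theorem (Proposition~\ref{prop:closeddeterminacy}), noting \( \lambda \) is well-orderable.

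I would then transfer determinacy downward exactly as in Theorem~\ref{thm:equivalentgamesforPSP2}. If \( \pI \) has a winning strategy in \( G_{\pi,F} \), then forgetting the \( z \)-moves gives a winning strategy for \( \pI \) in \( \lambda \)-\( \widetilde{G}^*_u(\widetilde{C}) \), whence by Theorem~\ref{thm:equivalentgamesforPSP2}\ref{thm:equivalentgamesforPSP2-1} there is a continuous injection from \( C(\lambda) \approx B(\lambda) \) into \( f_\mathcal{B}(\p(\widetilde{C})) = Z \); this is the first alternative. If instead \( \pII \) has a winning strategy in \( G_{\pi,F} \), I claim the same strategy (which ignores \( \pI \)'s \( z \)-moves when replying) is winning for \( \pII \) in \( \lambda \)-\( \widetilde{G}^*_u(\widetilde{C}) \): here the tower condition is essential, since it guarantees that any full unfolded run of \( \lambda \)-\( \widetilde{G}^*_u(\widetilde{C}) \) that \( \pI \) would win (producing \( (\tilde x, y) \) with \( T^\pi_{\tilde x, y} \) well-founded) can be \emph{lifted} to a legal \( z \)-sequence for \( \pI \) in \( G_{\pi,F} \), contradicting that \( \pII \) wins the latter. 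Then Theorem~\ref{thm:equivalentgamesforPSP2}\ref{thm:equivalentgamesforPSP2-2} yields that \( Z = f_\mathcal{B}(\p(\widetilde{C})) \) is well-orderable with \( |Z| \leq \lambda \); this is the second alternative. Combining the two alternatives with Corollary~\ref{cor:equivPSP} (to upgrade the continuous injection to \( \lPSP(Z) \)) completes the proof.

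The main obstacle I anticipate is precisely the lifting argument in the \( \pII \)-winning case: one must verify that the tower condition's function \( F \) really lets \( \pI \) play legal \( z \)-moves incrementally and thereby build the required branch of \( \pre{\omega}{K} \), so that well-foundedness of \( T^\pi_{\tilde x, y} \) is equivalent to \( \pI \) surviving \( \omega \)-many rounds of \( G_{\pi,F} \). This is where Definition~\ref{def:TC}\ref{def:TC-2} is used in full strength, and care is needed to confirm that the extra legality rule on the \( z \)-moves is genuinely a closed (finitely-checkable) condition and does not accidentally introduce new ways for \( \pI \) to lose beyond the finite-stage failure. Everything else is bookkeeping: coding the pair \( (y(k), z(k)) \) below \( \lambda \), checking \( \widetilde{C} \) is closed, and citing the already-proved Theorem~\ref{thm:equivalentgamesforPSP2} for the two implications.
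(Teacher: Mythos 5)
Your overall architecture --- unfold the \( * \)-game to a closed game \( G_{\pi,F} \) using the tower-condition function \( F \), obtain determinacy, and push strategies back down via Theorem~\ref{thm:equivalentgamesforPSP2} --- is exactly the paper's plan, and your handling of the case where \( \pI \) wins is sound. The genuine gap is in the case where \( \pII \) wins, at precisely the point you yourself flagged as the main obstacle. You assert that \( \pII \)'s winning strategy \( \tau \) in \( G_{\pi,F} \) ``ignores \( \pI \)'s \( z \)-moves when replying,'' but nothing forces this: a strategy is a function of the entire position, and two legal positions with identical \( (s^k_i)_{i<\lambda_k} \) and \( y(k) \) parts but different \( z \)-parts may receive different replies from \( \tau \). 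Consequently there is no well-defined induced strategy for \( \pII \) in the partially unfolded game, and your lifting argument then collapses into a circularity: the witness \( z \) is produced only from the completed pair \( (\tilde x, y) \), but which pair gets completed depends on \( \pII \)'s replies, which in the lifted run of \( G_{\pi,F} \) depend on the \( z \)-values played along the way. So the lifted run need not be one in which \( \pII \) followed \( \tau \), and no contradiction results. The paper closes this gap with a nontrivial ``integration'' argument (Claim~\ref{claim:Gpi}): at round \( k \), player \( \pII \) computes \( \tau \)'s reply \( i_t \) for every admissible \( z \)-prefix \( t \) ranging over a measure-one set; since the ultrafilter \( \U_k = \pi(s^{k-1}_{i_{k-1}}\restriction k, (y(j))_{j<k}) \) is \( \lambda_k^+ \)-complete and there are only \( \lambda_k \) possible replies, a single reply \( i_k \) occurs on a set \( A^{(k)}_{i_k} \in \U_k \), which is moreover arranged to lie inside \( F(\U_k) \) and to cohere with the previous rounds. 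If \( \pI \) then wins the projected run, the definition of well-foundedness of towers yields a branch \( z \) through the measure-one sets \( A^{(k)}_{i_k} \subseteq F(\U_k) \); for \emph{that} \( z \) the \( z \)-moves are legal and \( \tau \)'s replies provably coincide with the \( i_k \) actually played, which is what makes the contradiction coherent. This completeness argument is the missing idea --- note that your sketch never invokes the \( \lambda_k^+ \)-completeness of the ultrafilters at all, even though it is an explicit hypothesis of Definition~\ref{def:repr} and is used in full in the paper's proof.

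Two smaller but real problems. First, your coding of the pair \( (y(k),z(k)) \) as a single ordinal below \( \lambda \) ``since \( |K|\cdot\lambda=\lambda \)'' is unjustified: the support \( K \) of an orderly family carries no cardinality bound (in the motivating examples it has at least the size of a measurable cardinal above \( \lambda \)), and in the paper's base theory \( \ZF + \AC_\lambda(\pre{\lambda}{2}) \) it need not even be well-orderable; for the same reason your closed set \( \widetilde{C} \) cannot be assumed to live in \( B(\lambda)\times B(\lambda) \), so Theorem~\ref{thm:equivalentgamesforPSP2} does not apply to it as stated. Second, and relatedly, your appeal to Proposition~\ref{prop:closeddeterminacy} (``noting \( \lambda \) is well-orderable'') does not determine \( G_{\pi,F} \), because \( \pI \)'s moves include elements of \( K \); one only gets quasideterminacy from Proposition~\ref{prop:closedquasideterminacy}, and must then argue separately that a winning quasistrategy refines to a strategy --- for \( \pII \) because all of \( \pII \)'s moves are ordinals below \( \lambda \), and for \( \pI \) only after projecting to a game whose move set is well-orderable. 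These repairs are routine; the strategy-transfer gap above is not.
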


%

The proof of Theorem~\ref{thm:mainPSP} is reminiscent of the proof that all \( \kappa \)-weakly homogeneously Souslin sets have the \( \mathrm{PSP} \) (see~\cite[Theorem 32.7]{Kanamori}).

\begin{proof}
Fix a \( \UU \)-representation \( \pi \) for \( Z \subseteq X \)
with the tower condition, and let \( F \) be a witness of the latter. We denote by \( K \) the support of \( \UU \).
Consider the following infinite two-players zero-sum game%
\footnote{The game \( G_{\pi,F} \) is clearly reminiscent of the game \( \lambda \)-\( \widetilde{G}^*_u(\cdot) \), except that now \( \pI \) has to additionally play elements \( z(k) \in K \) from the second turn on.}
 with perfect information \( G_{\pi, F} \):
\[
\begin{array}{c||c|c|c|c|c}
\pI & (s^0_i)_{i < \lambda_0} , y(0) & z(0), (s^1_i)_{i < \lambda_1}, y(1) & \dotsc & z(k-1), (s^k_i)_{i < \lambda_k}, y(k) & \dotsc \\
\hline
\pII & i_0 & i_1 & \dotsc & i_k & \dotsc
\end{array}
\]
The game is played alternatively with \( \pI \) playing first. At each round \( k \in \omega \):
\begin{enumerate-(a)}
\item \label{towergame-1}
\( \pI \) chooses a sequence \( (s^k_i)_{i < \lambda_k} \) and an ordinal \( y(k) < \lambda \) as in the game \( \lambda \)-\( \widetilde{G}^*_u(\cdot) \) from Definition~\ref{def:unfoldedtildegame}, the only difference being that now the index tree \( T \) of the tree-basis \( \mathcal{B} \) of \( X \) is assumed to be the entire \( \pre{<\omega}{\lambda} \).
\item
If \( k > 0 \), then \( \pI \) additionally chooses some \( z(k-1) \in K \), ensuring that
\[ 
(z(j))_{j < k} \in F(\pi(s^{k-1}_{i_{k-1}} \restriction k, (y(j))_{j < k})).
 \] 
(Notice that the rules in~\ref{towergame-1} entail that \( \lh(s^{k-1}_{i_{k-1}}) \geq k \) for all \( k \geq 1 \), so the requirement makes sense.)
\item
As in \( \lambda \)-\( \widetilde{G}^*_u(\cdot) \), after \( \pI \)'s move player \( \pII \) replies by choosing some \( i_k < \lambda_k \).
\end{enumerate-(a)}
The game ends after \(\omega\)-many rounds or when player \( \pI \) is no longer able to  move according to the rules above; in the latter case \( \pII \) wins, otherwise (i.e.\ if \( \pI \) can survive for infinitely many rounds) \( \pI \) wins. 


The game $G_{\pi,F}$ is the ``fully unfolded'' version of the game \(\lambda\)-\( \widetilde{G}^*(Z) \). Indeed,  if \( \pI \) wins a run of \( G_{\pi, F} \), then the sequence \( z = (z(k))_{k \in \omega} \in \pre{\omega}{K} \) is a witness, through the tower condition witnessed by \( F \), of the well-foundedness of the tower \( T^\pi_{\tilde{x},y} \), where \( \tilde{x} = \bigcup_{k \in \omega} s^k_{i_k} \) and \( y = (y(k))_{k \in \omega} \), and therefore \( f_\mathcal{B}(\tilde{x})\in Z \). 

The game \( G_{\pi,F} \) is closed for \( \pI \), hence it is quasidetermined by Proposition~\ref{prop:closeddeterminacy}.
If \( \pI \) has a winning quasistrategy in $G_{\pi,F}$, then (s)he also has a winning quasistrategy in the game \(\lambda\)-\( \widetilde{G}^*(Z) \) --- simply ``forget'' the $z$ and $y$ part, and play only the sequences $s_i^k$. By \( 2^{< \lambda} = \lambda \), the set of all possible moves in \(\lambda\)-\( \widetilde{G}^*(Z) \) is well-orderable (in fact, it has size \( \lambda \)), and hence the winning quasistrategy of \( \pI \) in \(\lambda\)-\( \widetilde{G}^*(Z) \) can be refined to a winning strategy. Therefore $C(\lambda)$ embeds into $Z$ by  Theorem~\ref{thm:equivalentgamesforPSP}. 

Suppose now that \( \pII \) has a winning quasistrategy in \( G_{\pi,F} \). Player \( \pII \)'s set of moves in \( G_{\pi,F} \) is the cardinal \(\lambda\), hence (s)he also has a winning strategy in \( G_{\pi,F} \): we want to show that this implies 
 $|Z|\leq\lambda$. One could be tempted to do as for \( \pI \), i.e.\ to just ignore $y$ and $z$, but there is a crucial obstacle. Suppose that $\tau$ is a winning strategy for \( \pII \) in the game $G_{\pi,F}$, and suppose that there are two different legal moves $p$ and $q$ for \( \pI \) with the same $(s_i^k)_{i < \lambda_k}$ but different additional parts $y(k)$ and $z(k)$. Then it is possible that $\tau(p)\neq\tau(q)$, while the strategy for \( \pII \) in \(\lambda\)-\( \widetilde{G}^*(Z) \) must choose just one $i_k$, independently of the forgotten part of \( \pI \)'s moves. We now prove that  we can indeed make a single choice for all possible values of $z(k)$, so that such part of \( \pI \)'s moves can indeed be discarded: this leaves us with a winning strategy for \( \pII \) in a game of the form \( \lambda \)-\( \widetilde{G}^*_u(\widetilde{C}) \) for a certain set \( \widetilde{C} \subseteq B(\lambda) \times B(\lambda) \), which by Theorem~\ref{thm:equivalentgamesforPSP2}\ref{thm:equivalentgamesforPSP2-2} will be enough to ensure \( |Z| \leq \lambda \).

Let us consider the following variant of the game \( G_{\pi,F} \), which does not depend on \( F \) and is denoted by \( G_\pi \):
\[
\begin{array}{c||c|c|c|c|c}
\pI & (s^0_i)_{i < \lambda_0} , y(0) &  (s^1_i)_{i < \lambda_1}, y(1) & \dotsc &  (s^k_i)_{i < \lambda_k}, y(k) & \dotsc \\
\hline
\pII & i_0 & i_1 & \dotsc & i_k & \dotsc
\end{array}
\]
The rules are exactly those of \( G_{\pi,F} \), except that now \( \pI \) does not have to play \( z(k) \in K \) in any of the rounds. The main difference is in the winning condition: \( \pI \)  wins if (s)he can survive for \(\omega\)-many rounds and the tower \( T^\pi_{\tilde{x},y} \) is well-founded, where \( \tilde{x} = \bigcup_{k \in \omega} s^k_{i_k} \) and \( y = (y(k))_{k \in \omega} \) are obtained from the run as usual. This is a kind of ``intermediate unfolding'' of \(\lambda\)-\( \widetilde{G}^*(Z) \), but, in contrast to the case of \( G_{\pi,F} \), the complexity of the payoff set of the game \( G_\pi \) is now quite high and strictly depends on the complexity of \( \pi \) (and thus of the represented set \( Z \)), so it is not clear \emph{a priori} that the game \( G_\pi \) is (quasi)determined. However, if \( \pI \) has a winning (quasi)strategy in \( G_{\pi, F} \) then (s)he can turn it into a winning (quasi)strategy in \( G_\pi \) by forgetting the additional moves \( z(k) \). We are now going to show that a similar result holds for \( \pII \).

\begin{claim} \label{claim:Gpi}
If \( \pII \) has a winning strategy in \( G_{\pi,F } \), then (s)he also has a winning strategy in \( G_\pi \).
\end{claim}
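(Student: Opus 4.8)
The plan is to fix a winning strategy $\tau$ for $\pII$ in $G_{\pi,F}$ and manufacture from it a winning strategy $\tau'$ for $\pII$ in $G_\pi$. The essential difficulty, already highlighted in the text, is that $\tau$ may react differently to two positions that agree on the ``public'' data $(s^k_i)_{i<\lambda_k}$, $y(k)$ but differ in the auxiliary moves $z(k)\in K$, whereas $\tau'$ must produce a response $i_k$ depending only on the $G_\pi$-position, i.e.\ not on any $z$-information. The idea to overcome this is a majority vote via the ultrafilters in the tower: at each round we let $\tau'$ copy the answer that $\tau$ gives for an ultrafilter-large set of legal $z$-prefixes, and we exploit the completeness of the ultrafilters to guarantee that such a dominant answer exists and is unique. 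At round $0$ the two games coincide (no $z$ has yet been played, since the rule constraining $z(k-1)$ only applies for $k>0$), so we simply set $i_0 = \tau((s^0_i)_{i<\lambda_0},y(0))$.

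For the recursive step I would maintain the following invariant: having defined $\pII$'s moves $i_0,\dots,i_k$ in $G_\pi$, there is a set $B_k\in\U_k$, where $\U_k=\pi(s^{k-1}_{i_{k-1}}\restriction k, (y(0),\dots,y(k-1)))$, such that $B_k\subseteq F(\U_k)$ and for every $\vec z\in B_k$ the strategy $\tau$, played against $\pI$'s moves whose $z$-part is $\vec z$, returns exactly $i_0,\dots,i_k$. When $\pI$ plays $(s^{k+1}_i)_{i<\lambda_{k+1}}, y(k+1)$ in $G_\pi$, set $\U_{k+1}=\pi(s^k_{i_k}\restriction(k+1), (y(0),\dots,y(k)))$ and consider $C=F(\U_{k+1})\cap B_k^{\,k\to(k+1)}$. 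Here one uses that $s^{k-1}_{i_{k-1}}\subseteq s^k_{i_k}$ by the game rules, so $\U_k=\pi(s^k_{i_k}\restriction k, (y(0),\dots,y(k-1)))$ and $\U_{k+1}$ is the successor of $\U_k$ in a common tower $T^\pi_{\tilde x,y}$; hence $\U_{k+1}$ projects onto $\U_k$ (Definition~\ref{def:repr}\ref{def:repr-2}) and $B_k^{\,k\to(k+1)}\in\U_{k+1}$, while $F(\U_{k+1})\in\U_{k+1}$ by the tower condition, so $C\in\U_{k+1}$. For each $\vec w\in C$ the move $z(k)=\vec w(k)$ is a legal extension and $\tau$ answers with some $i_{k+1}(\vec w)<\lambda_{k+1}$; this partitions $C$ into at most $\lambda_{k+1}$ pieces, and since $\U_{k+1}$ is $\lambda_{k+1}^+$-complete (Definition~\ref{def:repr}\ref{def:repr-2}) exactly one piece lies in $\U_{k+1}$. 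I would let $\tau'$ output the corresponding value $i_{k+1}$ and take $B_{k+1}$ to be that piece, restoring the invariant. Since every datum used is read off the $G_\pi$-position, $\tau'$ is a well-defined deterministic strategy.

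It then remains to see that $\tau'$ wins. Suppose toward a contradiction that $\pI$ wins a run of $G_\pi$ in which $\pII$ follows $\tau'$, i.e.\ $\pI$ survives for $\omega$ rounds and the tower $T^\pi_{\tilde x,y}$ is well-founded, where $\tilde x=\bigcup_k s^k_{i_k}$ and $y=(y(k))_k$. The invariant yields $B_k\in\U_k$ for all $k$, and since $B_{k+1}\subseteq B_k^{\,k\to(k+1)}$ the sets cohere under restriction; by well-foundedness of $T^\pi_{\tilde x,y}$ (Definition~\ref{def:towers}) there is $z\in\pre{\omega}{K}$ with $z\restriction k\in B_k$ for every $k$. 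Feeding $z$ back as $\pI$'s auxiliary moves produces a genuine run of $G_{\pi,F}$: the $z$-moves are legal because $z\restriction k\in B_k\subseteq F(\U_k)$, and by the invariant $\tau$ answers with exactly $i_0,i_1,\dots$, so $\pII$ is following $\tau$. But then $\pI$ survives for $\omega$ rounds in $G_{\pi,F}$ and hence wins there, contradicting the fact that $\tau$ is winning for $\pII$. I expect the bookkeeping around the coherence of the sets $B_k$ and the identification $\U_k=\pi(\tilde x\restriction k, y\restriction k)$, which hinges on $s^{k-1}_{i_{k-1}}\restriction k = s^k_{i_k}\restriction k$, to be the only delicate technical points; the completeness-based selection of $i_{k+1}$ is the conceptual heart and the genuine obstacle the text warns about, resolved precisely by $\lambda_{k+1}^+$-completeness of the ultrafilters.
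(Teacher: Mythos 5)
Your proposal is correct and follows essentially the same route as the paper's own proof: the paper likewise simulates the $G_\pi$-run inside $G_{\pi,F}$, maintaining at each round a $\tau$-consistent set $A^{(k)}_{i_k}\in\U_k$ with $A^{(k)}_{i_k}\subseteq (A^{(k-1)}_{i_{k-1}})^{k-1\to k}\cap F(\U_k)$ (your $B_k$), selecting $\pII$'s answer by $\lambda_{k+1}^+$-completeness applied to the partition of $F(\U_{k+1})\cap B_k^{k\to(k+1)}$ by $\tau$'s responses, and finally using well-foundedness of the tower to extract a $z$ threading the $B_k$'s, which yields a legal $\tau$-run of $G_{\pi,F}$ won by $\pI$, a contradiction. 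The points you flag as delicate (coherence $B_{k+1}\subseteq B_k^{k\to(k+1)}$ and the identification $\U_k=\pi(\tilde x\restriction k,y\restriction k)$ via $s^{k-1}_{i_{k-1}}\restriction k=\tilde x\restriction k$) are exactly the ones the paper's bookkeeping handles.
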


\begin{proof}[Proof of the claim]
We show how to transform any winning strategy \( \tau \) for \( \pII \) in \( G_{\pi,F} \) into a winning strategy for \( \pII \) in \( G_{\pi} \), by simulating a given run of \( G_\pi \) in the game \( G_{\pi,F} \).

The first move in \( G_{\pi,F} \) and \( G_\pi \) are of the same form, so \( \pII \) can just use \( \tau \) to reply to \( \pI \)'s first move \( ((s^0_i)_{i < \lambda_0}, y(0)) \) and play some \( i_0  < \lambda_0 \). 

Let \( p_0 \) be the position reached after this first round, and let 
\( ((s^1_i)_{i < \lambda_1}, y(1) ) \) 
be \( \pI \)'s next move in \( G_\pi \). Simulate this move in the game \( G_{\pi,F} \), letting \( \pI \) randomly choose which \( a \in K \) to play as \( z(0) \) to complete his/her move. More precisely, for each \( a \in K \) let \( i_a < \lambda_1 \) be \( \pII \)'s reply in \( G_{\pi,F} \) according to \(\tau\) when \( \pI \) plays 
\( (a, (s^1_i)_{i < \lambda_1} , y(1)) \)
in the second turn. Set \( \U_1 = \pi(s^0_{i_0} \restriction 1,  (y(0)))  \) and  \( A^{(1)} = F(\U_1) \).
 For each \( i < \lambda_1 \), let
\[ 
A^{(1)}_i = \{ a \in A^{(1)} \mid i_a = i \}.
 \] 
Since the sets \( A^{(1)}_i \) form a weak partition of \( A^{(1)} = F(\U_1) \in \U_1 \) and \( \U_1 \) is at least \(\lambda_1^+\)-complete, there is \( i_1 < \lambda_1 \) such that \( A^{(1)}_{i_1} \in \U_1 \): let \( \pII \) play such \( i_1 \) in the second turn of \( G_\pi \), and denote by \( p_1 \) the position reached in this way. Notice that by construction \( A^{(1)}_{i_1} \subseteq F(\U_1) \), hence for each \( 	\langle a \rangle \in A^{(1)}_{i_1} \) the position
\[ 
\begin{array}{c||c|c}
\pI & (s^0_i)_{i < \lambda_0}, y(0) & a, (s^1_i)_{i < \lambda_1}, y(1)  \\
\hline
\pII & i_0 & i_1 
\end{array}
 \] 
is legal in \( G_{\pi,F} \) and such that \( \pII \) has followed \( \tau \) in each round.

Now let 
\( ((s^2_i)_{i < \lambda_1}, y(2)) \)
be \( \pI \)'s next move in \( G_\pi \) after \( p_1 \). Consider the ultrafilter \( \U_2 = \pi (s^1_{i_1} \restriction 2, ( y(0), y(1) ) ) \), and notice that 
\[ 
 (A^{(1)}_{i_1})^{1 \to 2} = \{ t \in K^2 \mid t \restriction 1 \in A^{(1)}_{i_1} \} \in \U_2
\]
because \( s^1_{i_1} \supseteq s^0_{i_0} \) by the rules of \( G_\pi \), and hence the ultrafilter \( \U_2 \) projects onto \( \U_1 \) by definition of representation. Set
\[ 
A^{(2)} = (A^{(1)}_{i_1})^{1 \to 2} \cap F(\U_2),
 \] 
so that \( A^{(2)} \in \U_2 \) as well. For each \( t \in A^{(2)} \) let \( i_t \) be \( \pII \)'s reply according to \(\tau\) in the partial run in \( G_{\pi, F} \) played as follows:
\[ 
\begin{array}{c||c|c|c}
\pI & (s^0_i)_{i < \lambda_0}, y(0) & t(0), (s^1_i)_{i < \lambda_1}, y(1) & t(1), (s^2_i)_{i < \lambda_2} , y(2) \\
\hline
\pII & i_0 & i_1 & i_t
\end{array}
 \] 
(Notice that in this way \( \pII \) has so far  played according to \( \tau \) by definition of \( A^{(2)} \).) For each \( i < \lambda_2 \) let
\[
A^{(2)}_i = \{ t \in A^{(2)} \mid i_t = i \}.
\]
Since the sets \( A^{(2)}_i \) for a weak partition of \( A^{(2)} \in \U_2 \) and \( \U_2 \) is at least \(\lambda^+_2\)-complete, there is \( i_2 < \lambda_2 \) such that \( A^{(2)}_{i_2} \in \U_2 \): let \( \pII \) play such \( i_2 \) in the next turn in \( G_\pi \), and denote by \( p_2 \) the position reached in this way. Notice that still \( A^{(2)}_{i_2} \subseteq F(\U_2) \), so that
for each \( 	t \in A^{(2)}_{i_2} \) the position
\[ 
\begin{array}{c||c|c|c}
\pI & (s^0_i)_{i < \lambda_0}, y(0) & t(0), (s^1_i)_{i < \lambda_1}, y(1) & t(1), (s^2_i)_{i < \lambda_2} , y(2) \\
\hline
\pII & i_0 & i_1 & i_2
\end{array}
 \] 
is legal in \( G_{\pi,F} \) and such that \( \pII \) has played according to \( \tau \) so far.

Continuing in this way, after round \( k > 0 \) we will have reached a position \( p_{k} \) in \( G_\pi \) of the form
\[ 
\begin{array}{c||c|c|c|c}
\pI & (s^0_i)_{i < \lambda_0}, y(0) &  (s^1_i)_{i < \lambda_1}, y(1) & \dotsc &  (s^k_i)_{i < \lambda_k} , y(k) \\
\hline
\pII & i_0 & i_1 & \dotsc & i_k
\end{array}
 \] 
and, simultaneously, constructed a set 
\[ 
A^{(k)}_{i_k} \in \U_k =  \pi (s^{k-1}_{i_{k-1}} \restriction k, ( y(j) )_{j < k})
\]
such that \( A^{(k)}_{i_k} \subseteq A^{(k)} = (A^{(k-1)}_{i_{k-1}})^{k-1 \to k} \cap F(\U_k) \), and for every \( t \in A^{(k)}_{i_k} \) the position 
\[ 
\begin{array}{c||c|c|c|c}
\pI & (s^0_i)_{i < \lambda_0}, y(0) &  t(0), (s^1_i)_{i < \lambda_1}, y(1) & \dotsc & t(k-1), (s^k_i)_{i < \lambda_k} , y(k) \\
\hline
\pII & i_0 & i_1 & \dotsc & i_k
\end{array}
 \] 
is legal in \( G_{\pi,F} \) and such that \( \pII \) has played according to \( \tau \) (in particular, the moves of \( \pII \)  according to \( \tau \) in such a partial run are actually independent of the specific choice of \( t \in A^{(k)}_{i_k} \) by construction).

Suppose that \( ((s^{k+1}_i)_{i < \lambda_{k+1}}, y(k+1)) \) is the next move of \( \pI \) after \( p_k \) in the game \( G_\pi \). Let  \( \U_{k+1} = \pi(s^k_{i_k} \restriction (k+1),  (y(j))_{j < k+1}) \), and notice that
\[ 
 (A^{(k)}_{i_k})^{k \to k+1} =  \{ t \in K^{k+1}  \mid t \restriction k \in A^{(k)}_{i_k} \} \in \U_{k+1}
 \]
by definition of representation. (Here we again use the fact that, by the rules of \( G_\pi \), the sequence \( s^k_{i_k} \) extends \( s^{k-1}_{i_{k-1}} \) and thus \( \U_{k+1} \) projects onto \( \U_k \)). Let 
\[ 
A^{(k+1)} =  (A^{(k)}_{i_k})^{k \to k+1} \cap F(\U_{k+1}),
 \] 
 so that \( A^{(k+1)} \in \U_{k+1} \).
 For each \( t \in A^{(k+1)} \), let \( i_t \) be \( \pII \)'s reply according to \(\tau\) in the partial run in \( G_{\pi, F} \) played as follows:
\[ 
\begin{array}{c||c|c|c|c}
\pI & (s^0_i)_{i < \lambda_0}, y(0) &   \dotsc & t(k-1), (s^k_i)_{i < \lambda_k} , y(k) & t(k), (s^{k+1}_i)_{i < \lambda_{k+1}} , y(k+1) \\
\hline
\pII & i_0 &  \dotsc & i_k  & i_t
\end{array}
 \] 
(Notice that in this way \( \pII \) has always played according to \( \tau \) by the properties of \( A^{(k)}_{i_k} \).) For each \( i < \lambda_{k+1} \), let
\[
A^{(k+1)}_i = \{ t \in A^{(k+1)} \mid i_t = i \}.
\]
Since the sets \( A^{(k+1)}_i \) form a weak partition of \( A^{(k+1)} \in \U_{k+1} \) and \( \U_{k+1} \) is at least \(\lambda^+_{k+1}\)-complete, there is \( i_{k+1} < \lambda_{k+1} \) such that \( A^{(k+1)}_{i_{k+1}} \in \U_{k+1} \): let \( \pII \) play such \( i_{k+1} \) in the next turn in \( G_\pi \), and denote by \( p_{k+1} \) the position reached in this way. It is clear that all the properties described in the previous paragraph are preserved after this new step.

The above procedure describes a strategy for \( \pII \) in \( G_\pi \): we claim that it is a winning one. Suppose not, i.e.\ that \( \pI \) has been able to go on for \(\omega\)-many rounds in such a way that \( T^\pi_{\tilde{x}, y}  = (\U_k)_{k \geq 1} \) is well-founded, where \( \tilde{x} = \bigcup_{k \in \omega} s^k_{i_k} \) and \( y = (y(k))_{k \in \omega} \).  Since for each \( k \geq 1 \) we have \( A^{(k)}_{i_k} \in \U_k \), it follows that there is \( z \in \pre{\omega}{K} \) such that \( z \restriction k \in A^{(k)}_{i_k} \) for all \( k \geq 1 \). But then
\[ 
\begin{array}{c||c|c|c|c|c}
\pI & (s^0_i)_{i < \lambda_0}, y(0) &  z(0), (s^1_i)_{i < \lambda_1} , y(1) &  \dotsc & z(k-1), (s^k_i)_{i < \lambda_k} , y(k) & \dotsc \\
\hline
\pII & i_0 &  i_1 & \dotsc & i_k  & \dotsc
\end{array}
 \] 
is an infinite legal run in \( G_{\pi,F} \) (because by \( A^{(k)}_{i_k} \subseteq F(\U_k) \) the condition on the additional moves \( z(k) \) is verified) in which \( \pII \) followed \(\tau\): this means that \( \pI \) has won such run, contradicting the fact that \(\tau\) was winning for \( \pII \).
\end{proof}

By Claim~\ref{claim:Gpi}, we now know that \( \pII \) has a winning strategy in \( G_\pi \).
To complete the proof, just observe that \( G_\pi \) is of the form \(\lambda\)-\( \widetilde{G}^*_u(\widetilde{C}) \), where
\[ 
\widetilde{C} = \{ (\tilde{x},y) \in B(\lambda) \times B(\lambda) \mid T^\pi_{\tilde{x},y} \text{ is well-founded} \}, 
 \] 
 and that \( f_\mathcal{B}(\p(\widetilde{C})) = Z \) by definition of \( \UU \)-representation. Thus \( |Z| \leq \lambda \) by Theorem~\ref{thm:equivalentgamesforPSP2}.
\end{proof}

The proof of Theorem~\ref{thm:mainPSP} shows that, indeed, the game $G_\pi = \lambda$-\( \widetilde{G}^*_u(\widetilde{C}) \) is determined, but gives no direct information on the games \(\lambda\)-\( G^*(Z) \) and  \(\lambda\)-\( \widetilde{G}^*(Z) \). But because of Theorems~\ref{thm:generalgameforPSP} and~\ref{thm:equivalentgamesforPSP}, we can \emph{a posteriori} conclude that since \( Z \) has the \lPSP, then also \(\lambda\)-\( G^*(Z) \) and  \(\lambda\)-\( \widetilde{G}^*(Z) \) are determined.

\begin{corollary} \label{cor:determinacyPSPgames}
Assume that \( 2^{< \lambda} = \lambda \), and let \( X \) be a \(\lambda\)-Polish space.
If $Z \subseteq X$ is TC-representable, then \(\lambda\)-\( G^*(Z) \) is determined, and the same is true for its variant \(\lambda\)-\( \widetilde{G}^*(Z) \).
\end{corollary}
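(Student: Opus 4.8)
The plan is to deduce the determinacy of these two games directly from the fact, established in Theorem~\ref{thm:mainPSP}, that every TC-representable set $Z$ enjoys the \lPSP{}, combined with the game-theoretic characterizations of the \lPSP{} supplied by Theorems~\ref{thm:generalgameforPSP} and~\ref{thm:equivalentgamesforPSP}. No new combinatorics is required: under the present hypotheses the statement is a purely formal consequence of results already proved, as anticipated in the discussion preceding the corollary.

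First I would invoke Theorem~\ref{thm:mainPSP}, which under the standing assumption $2^{< \lambda} = \lambda$ yields the dichotomy for the TC-representable set $Z \subseteq X$: either there is a continuous injection from $C(\lambda)$ into $Z$, or else $Z$ is well-orderable with $|Z| \leq \lambda$. I would treat the two alternatives separately. In the first case, the backward implication of Theorem~\ref{thm:generalgameforPSP}\ref{thm:generalgameforPSP-1} produces a winning strategy for \( \pI \) in \(\lambda\)-\( G^*(Z) \), while the backward implication of Theorem~\ref{thm:equivalentgamesforPSP}\ref{thm:equivalentgamesforPSP-1} produces a winning strategy for \( \pI \) in \(\lambda\)-\( \widetilde{G}^*(Z) \). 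In the second case, the backward implication of Theorem~\ref{thm:generalgameforPSP}\ref{thm:generalgameforPSP-2} (respectively, of Theorem~\ref{thm:equivalentgamesforPSP}\ref{thm:equivalentgamesforPSP-2}) provides a winning strategy for \( \pII \) in \(\lambda\)-\( G^*(Z) \) (respectively, in \(\lambda\)-\( \widetilde{G}^*(Z) \)). In either alternative one of the two players has a winning strategy in each of the two games, which is exactly the assertion that these games are determined.

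I would also dispatch a couple of minor bookkeeping points. The games \(\lambda\)-\( G^*(Z) \) and \(\lambda\)-\( \widetilde{G}^*(Z) \) are defined relative to a fixed basis, respectively tree-basis, for the ambient \(\lambda\)-Polish space \( X \); the existence of the latter is guaranteed by Proposition~\ref{prop:tree-basis}, and the argument above is insensitive to the specific choice, since both the dichotomy of Theorem~\ref{thm:mainPSP} and the characterizations of Theorems~\ref{thm:generalgameforPSP} and~\ref{thm:equivalentgamesforPSP} hold for an arbitrary such (tree-)basis. If \( X = \emptyset \) (or more generally \( Z = \emptyset \)) the statement is trivial, as then \( \pII \) wins vacuously and $|Z| \leq \lambda$.

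There is no genuine obstacle here: the entire content of the statement lies in Theorem~\ref{thm:mainPSP} and in the equivalences of Theorems~\ref{thm:generalgameforPSP} and~\ref{thm:equivalentgamesforPSP}, all of which are already available. The only point requiring care is to invoke the correct directions of the biconditionals, namely to pass \emph{from} the geometric conclusion furnished by the \lPSP{} \emph{to} the existence of a winning strategy for the appropriate player, rather than the converse directions, which play no role in this deduction.
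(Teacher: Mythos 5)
Your proof is correct and is essentially the paper's own argument: the paper derives this corollary exactly as you do, by applying Theorem~\ref{thm:mainPSP} to get the \lPSP{} dichotomy for $Z$ and then invoking the appropriate directions of the game-theoretic characterizations in Theorems~\ref{thm:generalgameforPSP} and~\ref{thm:equivalentgamesforPSP} to produce a winning strategy for \( \pI \) or \( \pII \) in each case.
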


\chapter{More on representability}
\label{sec:applications}

In this chapter we will show that our concept of $\UU$-representability is a generalization of both weakly homogeneously Souslin-ness (for subsets of $\pre{\omega}{\omega}$) and $\UU(j)$-representability (for subsets of $V_{\lambda+1}$, where $j$ witnesses that $\mathsf{I0}(\lambda)$ holds). 
As a consequence, we wil argue that under \( \mathsf{I0}(\lambda) \), Theorem~\ref{thm:mainPSP} yields that all definable subsets of any \(\lambda\)-Polish space, and in particular all its \( \lambda \)-projective sets, have the \( \lPSP \). 


\section{Representability and weakly homogeneously Souslin sets} \label{sec:representabilityvsweaklyhomogeneouslysouslin}

The goal of this section is to show that if we set \( \lambda = \omega \) in Definition~\ref{def:repr} (see Remark~\ref{rmk:U-representabilityandmeasurablecardinals}), then we obtain an equivalent reformulation of the following classical notion (see e.g.\ \cite[pp. 453]{Kanamori}).

\begin{defin} \label{defin:whS}
Let \( \kappa > \omega \).
A set $Z\subseteq\pre{\omega}{\omega}$ is \markdef{\( \kappa \)-weakly homogeneously Souslin} if and only if there exist a set \( Y \) and a tree $T$ on $\omega\times Y$ such that:
\begin{enumerate-(a)}
\item \label{defin:whS-0} 
$T(s)=\{u \in \pre{k}{Y} \mid (s,u)\in T\} \neq \emptyset$ for every \( s \in \pre{<\omega}{\omega} \);
\item \label{defin:whS-1}
for every \( k \in \omega \) and every $s,t\in\pre{k}{\omega}$ there is a $\kappa$-complete  ultrafilter $\U_{s,t}$ over \( T(s) \) such that $\U_{s,t}$ projects onto $\U_{s \restriction \ell, t \restriction \ell}$ for all $\ell <  k$; 
\item \label{defin:whS-2}
$x\in Z$ if and only if there is a $y\in\pre{\omega}{\omega}$ such that the tower of ultrafilters  $(\U_{x \restriction k,y \restriction k})_{k\in \omega}$ is well-founded. 
\end{enumerate-(a)}
We say that \( Z \subseteq \pre{\omega}{\omega} \) is \markdef{weakly homogeneously Souslin} if it is \( \kappa \)-weakly homogeneously Souslin for some \( \kappa > \omega \).
\end{defin}


\begin{remark} \label{rmk:whS}
As for Definition~\ref{def:repr}, if at least one ultrafilter \( \U_{s,t} \) in item~\ref{defin:whS-1} of Definition~\ref{defin:whS} is nonprincipal, then we can again assume without loss of generality that the cardinal \( \kappa \) appearing in Definition~\ref{defin:whS} is actually a measurable cardinal. In particular, the existence of a weakly homogeneous set with such a property implies that there is a measurable cardinal.
\end{remark}

\begin{proposition} \label{prop:representablevsweaklyhomogeneous}
A set \( Z \subseteq  \pre{\omega}{\omega} = B(\omega) \) is weakly homogeneously Souslin if and only if it is \( \UU \)-representable (in the sense of Definition~\ref{def:repr} applied with \( \mathcal{B} = \{ \Nbhd_s \mid s \in \pre{<\omega}{\omega} \} \) the canonical basis for \( \pre{\omega}{\omega} \) and \( \lambda = \omega \); see Remark~\ref{rmk:U-representabilityandmeasurablecardinals}).
\end{proposition}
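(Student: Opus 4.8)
The plan is to verify that, once the dictionary between the two setups is fixed, the two definitions are literally the same notion after a harmless extension of the ultrafilters involved. Setting $\lambda=\omega$ (so that $2^{<\lambda}=\lambda$ holds trivially) and taking $\mathcal{B}=\{\Nbhd_s\mid s\in\pre{<\omega}{\omega}\}$ as the canonical basis for $B(\omega)=\pre{\omega}{\omega}$, the induced map $f_{\mathcal{B}}$ is the identity and the index tree of $\mathcal{B}$ is the full tree $\pre{<\omega}{\omega}$; moreover, by Remark~\ref{rmk:U-representabilityandmeasurablecardinals} I read condition~\ref{def:repr-2} of Definition~\ref{def:repr} in its equivalent form requiring each relevant ultrafilter to be at least $\kappa$-complete for a single fixed $\kappa>\omega$, which matches the $\kappa$-completeness demanded in Definition~\ref{defin:whS}. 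Since both ``weakly homogeneously Souslin'' and ``$\UU$-representable'' quantify existentially over such a $\kappa$, it suffices to prove, for a fixed $\kappa>\omega$, that $Z$ is $\kappa$-weakly homogeneously Souslin if and only if it admits a $\UU$-representation whose ultrafilters are all $\kappa$-complete.

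For the direction from representability to weak homogeneity I would argue directly. Given a $\UU$-representation $\pi$ with support $K$, I set $Y=K$, take the full tree $T=\bigcup_{k\in\omega}(\pre{k}{\omega}\times\pre{k}{K})$ so that $T(s)=\pre{\lh(s)}{K}$, which is nonempty because $K\neq\emptyset$ (giving condition~\ref{defin:whS-0}), and put $\U_{s,t}=\pi(s,t)$. Condition~\ref{defin:whS-1} then follows because each $\pi(s,t)$ is $\kappa$-complete and, for any $s,t\in\pre{k}{\omega}$ and $\ell<k$, choosing $\tilde{x},y\in B(\omega)$ extending $s,t$ and invoking condition~\ref{def:repr-2} shows that $\pi(s,t)=\U_{s,t}$ projects onto $\pi(s\restriction\ell,t\restriction\ell)=\U_{s\restriction\ell,t\restriction\ell}$. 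Finally, since $f_{\mathcal{B}}=\id$, the towers $T^\pi_{x,y}$ and the whS towers coincide verbatim, so the well-foundedness requirement~\ref{defin:whS-2} is exactly condition~\ref{def:repr-3}.

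The substantive direction is from weak homogeneity to representability, and the one step I expect to require care. Here the ultrafilters $\U_{s,t}$ live on the section trees $T(s)\subseteq\pre{k}{Y}$ rather than on all of $\pre{k}{Y}$, so I first extend each of them to an ultrafilter $\pi(s,t)$ on $\pre{\lh(s)}{Y}$ by declaring $A\in\pi(s,t)\iff A\cap T(s)\in\U_{s,t}$; because $T(s)\neq\emptyset$ this is a proper ultrafilter concentrating on $T(s)$, it inherits $\kappa$-completeness, and $\UU=\ran(\pi)$ is an orderly family with support $Y$ and $\lev(\pi(s,t))=\lh(s)$, giving condition~\ref{def:repr-1}. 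The two facts that need genuine checking are that this extension preserves the projection relation and well-foundedness. For projection I would use that $T$ is closed under initial segments, so $u\in T(s)$ forces $u\restriction\ell\in T(s\restriction\ell)$; this yields the identity $A^{\ell\to k}\cap T(s)=(A\cap T(s\restriction\ell))^{\ell\to k}\cap T(s)$ for $A\subseteq\pre{\ell}{Y}$, and combining it with the projection property of the $\U_{s,t}$ gives that $\pi(s,t)$ projects onto $\pi(s\restriction\ell,t\restriction\ell)$, so each $T^\pi_{\tilde{x},y}$ is a tower and condition~\ref{def:repr-2} holds. For well-foundedness I would show that a whS-tower $(\U_{x\restriction k,y\restriction k})_{k}$ is well-founded (in the sense of Definition~\ref{def:towers}) exactly when the extended tower $(\pi(x\restriction k,y\restriction k))_{k}$ is: the forward implication intersects a given sequence $(A_k)$, $A_k\in\pi(\cdots)$, with the $T(\cdots)$ before threading, while the backward implication uses that each $T(\cdots)\in\pi(\cdots)$, so any witnessing thread for the $\U$-tower is already a witnessing thread for the $\pi$-tower. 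Together with $f_{\mathcal{B}}=\id$ and condition~\ref{defin:whS-2}, this yields condition~\ref{def:repr-3} and completes the proof. The main obstacle, then, is precisely this extension argument: ensuring that moving the ultrafilters from the section trees $T(s)$ to the full sections $\pre{k}{Y}$ respects both the tower (projection) structure and the notion of well-foundedness, which hinges on $T$ being closed under initial segments.
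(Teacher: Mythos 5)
Your proposal is correct and follows essentially the same route as the paper: in one direction you extend each \( \U_{s,t} \) from the section tree \( T(s) \) to an ultrafilter on \( \pre{\lh(s)}{Y} \) (your clause \( A \cap T(s) \in \U_{s,t} \) is equivalent to the paper's upward-closure definition \( \pi(s,t) = \{ A \mid H \subseteq A \text{ for some } H \in \U_{s,t} \} \)), and in the other you take the complete tree on \( \omega \times K \) and set \( \U_{s,t} = \pi(s,t) \). The only difference is that you spell out the projection and well-foundedness verifications (correctly, using that \( T \) is closed under initial segments) which the paper dismisses as ``easy to see.''
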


\begin{proof}
Suppose first that $Z\subseteq\pre{\omega}{\omega}$ is \( \kappa \)-weakly homogeneously Souslin, for some \( \kappa > \omega \), as witnessed by the tree \( T \subseteq \pre{<\omega}{(\omega \times Y)} \).
For each \( k \in \omega \) and  \( s,t \in \pre{k}{\omega} \), let
\[
\pi(s,t) = \{ A \subseteq \pre{k}{Y} \mid H \subseteq A \text{ for some } H \in \U_{s,t} \},
\]
and let \( \UU = \ran(\pi) \). 
It is easy to see that $\UU$ is an orderly family of $\kappa$-complete ultrafilters with support $K=Y$, and that $\pi$ is a $\UU$-representation for $Z$ with respect to the canonical basis $\mathcal{B}$ of \( \pre{\omega}{\omega} \) (in which case \( f_{\mathcal{B}} \) is the identity map). 

On the other hand, let $\UU$ be an orderly family of $\kappa$-complete ultrafilters (for some $\kappa>\omega$) with support $K$, and let \( \pi \) be a \( \UU \)-representation for $Z$ (with respect to the canonical basis $\mathcal{B}$). Let \( Y = K \), and let $T = \pre{<\omega}{\omega \times Y}$ be the complete tree on $\omega \times Y$. Finally,   for every \( k \in \omega \) and \( s,t \in \pre{k}{Y} \), let $\U_{s,t}=\pi(s,t)$. It is easy to see that the tree $T$ and the ultrafilters $\U_{s,t}$ witness that $Z$ is $\kappa$-weakly homogeneously Souslin.
\end{proof}

As for \( \UU \)-representability, in the definition of weakly homogeneously Souslin set principal ultrafilters are allowed. However, arguing as in Remark~\ref{rmk:principalrepresentations} one can easily see that the existence of a proper weakly homogeneously Souslin subset \( Z \) of \( \pre{\omega}{\omega} \) implies that there is a measurable cardinal \( \kappa \). 
Indeed, if \( Z \neq \pre{\omega}{\omega} \) is weakly homogeneously Souslin, then some of the ultrafilters \( \U_{s,t} \) witnessing this need to be nonprincipal, and hence their completeness number is a measurable cardinal. Since under \( \AC \) any  measurable cardinal \( \kappa \) satisfies \( \kappa > 2^{\aleph_0} \), this entails that the tower condition from Definition~\ref{def:TC} is automatic when \( \lambda = \omega \) (see the proof of Proposition~\ref{prop:TCautomatic}), and this explains why such a condition does not explicitly appear in the literature on weakly homogeneously Souslin sets, where one usually works in \( \ZFC \).
It must also be noted that the equivalence among the different definitions of weakly homogeneously Souslin sets that can be found in the literature can break down when there is no measurable cardinal: for example, according to~\cite[Definition 33.32]{Jech2003} there are no weakly homogeneously Souslin sets if no measurable cardinal exists, while according to~\cite[Definition 1.3.4]{Larson2004}, in such a situation the only weakly homogeneously Souslin set is the empty set.

The existence of a measurable cardinal unlocks the possibility of having nontrivial weakly homogeneously Souslin subsets of \( \pre{\omega}{\omega} \). Indeed, it is known that if there exists a measurable cardinal, then all $\boldsymbol{\Sigma}^1_2$ sets are weakly homogeneously Souslin (see~\cite[Exercises 32.1 and 31.3]{Kanamori}). Moreover, if larger cardinal assumptions are made, then more and more subsets of \( \pre{\omega}{\omega} \) become weakly homogeneously Souslin. For example, Woodin showed that if there are $\omega$-many Woodin cardinals with a measurable cardinal above them, then all sets in $L(\RR)$ are weakly homogeneously Souslin (see~\cite[Theorem 32.13]{Kanamori}).

\section{Representability and \( \UU(j) \)-representable sets under \( \mathsf{I0}(\lambda) \)} \label{sec:representabilityvsU(j)-representability} 

In his seminal work~\cite{Woodin2011}, Woodin recognized that under \( \mathsf{I0}(\lambda) \) there are strong similarities between the theory of \( L(V_{\lambda+1}) \), and the theory of \( L(\RR) \) under \( \AD \).
For example, if $\mathsf{I0}(\lambda)$ holds, then $L(V_{\lambda+1}) \not\models \AC$ but \( L(V_{\lambda+1}) \models \DC_\lambda \), and this is the analogue of the fact that under \( \AD \) we have \( L(\RR) \not\models \AC \) but \( L(\RR) \models \DC_\omega \). Furthermore, under the mentioned assumptions we have that $\lambda^+$ is measurable in $L(V_{\lambda+1})$, just as $\omega_1$ is measurable in $L(\mathbb{R})$; a suitable version of Moschovakis' Coding Lemma holds in both \( L(V_{\lambda+1}) \) and \( L(\RR) \); and so on --- all of this is well described in~\cite{Woodin2011}. To further his case, Woodin isolates in~\cite[Definitions 103-111]{Woodin2011} an analogue of weakly homogeneously Souslin sets in the \( \mathsf{I0}(\lambda) \) context, namely, $\UU(j)$-representable sets. The definition of $\UU(j)$-representability is slightly more involved than our definition of $\UU$-representability, and needs some understanding of the model $L(V_{\lambda+1})$. 

Since under \( \mathsf{I0}(\lambda) \) the set $V_{\lambda+1}$ is not well-orderable in $L(V_{\lambda+1})$, 
in analogy with $L(\mathbb{R})$ we let 
\[
\Theta^{L(V_{\lambda+1})}=\sup\{\alpha \in \On \mid L(V_{\lambda+1}) \models \text{``there is a surjection } \pi \colon V_{\lambda+1}\to\alpha\text{''}\}. 
\]
This is a way to define the smallest cardinal ``larger'' than $|V_{\lambda+1}|$ in the case when the cardinality of $V_{\lambda+1}$, as computed in \( L(V_{\lambda+1}) \), is not a cardinal number (otherwise $\Theta$ is just $|V_{\lambda+1}|^+$). 

 


For the definition of $\mathbb{U}(j)$-representability, it is important to recall that if $j$ is an elementary embedding that witnesses $\mathsf{I0}(\lambda)$, then $j$ is iterable (see~\cite{Woodin2011} just before Definition 15 for the definition of an iterate, and Lemma 17 ibidem for the proof). We denote by $j^n$ the $n$-th iterate of \( j \).

The collection of ultrafilters $\mathbb{U}(j)$ is defined as a set of $L(V_{\lambda+1})$-ultrafilters that have the following three properties: they have size comparable to $V_{\lambda+1}$, they are fixed points for coboundedly many iterates of $j$, and they concentrate on elements that are fixed points for coboundedly many iterates of $j$ (see~\cite[Definition 103]{Woodin2011}).

\begin{defin} 
Given an elementary embedding \( j \) witnessing $\mathsf{I0}(\lambda)$, let $\mathbb{U}(j)$ be the collection of all $\U\in L(V_{\lambda+1})$ such that in $L(V_{\lambda+1})$ the following hold:
\begin{enumerate-(a)}
\item 
$\U$ is a $\lambda^+$-complete ultrafilter on some set \( K_\U \) satisfying \( K_\U \in L_{\xi}(V_{\lambda+1}) \), for some \( \xi <\Theta^{L(V_{\lambda+1})} \);
\item 
for some $\gamma<\Theta^{L(V_{\lambda+1})}$, \( \U \) is generated by $\U\cap L_\gamma(V_{\lambda+1})$;
\item 
$j^n(\U)=\U$ for all sufficiently large $n\in\omega$;
\item 
there is $A\in L(V_{\lambda+1})$ such that $A\subseteq K_\U$ and $\{a\in A \mid j^n(a)=a\}\in \U$.
 \end{enumerate-(a)}
\end{defin}

Woodin then defines a subfamily of $\mathbb{U}(j)$, denoted by $\mathbb{U}(j,\kappa, (a_i)_{i\in\omega})$, which is parametrized by a cardinal $\kappa<\Theta^{L(V_{\lambda+1})}$ and a sequence $(a_i)_{i\in\omega}$ of elements of $L_\kappa(V_{\lambda+1})$. Its exact definition goes beyond our purposes, and will thus be omitted: indeed, we mention the families \( \mathbb{U}(j,\kappa, (a_i)_{i\in\omega}) \) only to faithfully report~\cite[Definitions 110-111]{Woodin2011} in the definition below, but for the results of this section it is enough to keep in mind that \( \mathbb{U}(j,\kappa, (a_i)_{i\in\omega}) \subseteq \UU(j) \).

\begin{defin}
\label{def:representableforWoodin}
 Let $j$ be a witness for $\mathsf{I0}(\lambda)$, $\kappa<\Theta^{L(V_{\lambda+1})}$, and $(a_i)_{i\in\omega}$ be a sequence of elements of $L_\kappa(V_{\lambda+1})$.
A set $Z\subseteq V_{\lambda+1}$ belonging to $L(V_{\lambda+1})$ is \markdef{$\mathbb{U}(j,\kappa, (a_i)_{i\in\omega})$-representable} if there exists a strictly increasing sequence of infinite cardinals $(\lambda_k)_{k\in\omega}$ cofinal in $\lambda$ and a function 
\[
\pi \colon \bigcup\{V_{\lambda_k+1}\times V_{\lambda_k+1}\times\{k\} \mid k\in\omega\}\to \mathbb{U}(j,\kappa, (a_i)_{i\in\omega})
\]
such that the following conditions hold:
\begin{enumerate-(a)}
\item \label{def:representableforWoodin-1} 
for all $(a,b,k)\in\dom(\pi)$ there exists a set $A_{a,b,k}\subseteq \pre{k}{(L(V_{\lambda+1}))}$ such that $A_{a,b,k}\in\pi(a,b,k)$;
\item \label{def:representableforWoodin-2} 
for all $(a,b,k)\in\dom(\pi)$ and $\ell < k$ we have that $\pi(a,b,k)$ projects onto $\pi(a\cap V_{\lambda_\ell},b\cap V_{\lambda_\ell},\ell)$;
\item \label{def:representableforWoodin-3} 
for all $x\in V_{\lambda+1}$, $x\in Z$ if and only if there exists $y\in V_{\lambda+1}$ such that
\begin{itemizenew}
\item 
$(x\cap V_{\lambda_k},y\cap V_{\lambda_k},k)\in\dom(\pi)$ for all $k \in \omega$, and
\item 
the tower of ultrafilters $(\pi(x\cap V_{\lambda_k},y\cap V_{\lambda_k},k))_{k \in \omega}$ is well-founded.
\end{itemizenew}
\end{enumerate-(a)}
	
A set $Z\subseteq V_{\lambda+1}$ belonging to $L(V_{\lambda+1})$ is \markdef{$\mathbb{U}(j)$-representable} if and only if 
it is $\mathbb{U}(j,\kappa,(a_i)_{i\in\omega})$-representable for some \( \kappa \) and \( (a_i)_{i \in \omega} \) as above.
%
\end{defin}

We are going to show that our notion of $\UU$-representability is aligned with Woodin's Definition~\ref{def:representableforWoodin}, and since we will often work inside $L(V_{\lambda+1})$ under \( \mathsf{I0}(\lambda) \), we will also make sure to avoid any use of \( \AC \).
Notice that \( \mathsf{I0}(\lambda) \) implies that \( |V_\lambda| = \lambda \), and that any bijection witnessing this belongs to \( V_{\lambda+1} \); therefore $V_{\lambda+1}$ is a \(\lambda\)-Polish space homeomorphic to \( B(\lambda) \) in both \( V \) and \( L(V_{\lambda+1}) \) (see Theorem~\ref{thm:homeomorphictoCantor}\ref{thm:homeomorphictoCantor-3}), and it makes sense to consider $\UU$-representable subsets of it. 

First recall that for \( \U \) and \( \V \) ultrafilters on the sets \( X \) and \( Y \), respectively, their \markdef{tensor product} \( \U \otimes \V \) is the ultrafilter on \( X \times Y \) defined by 
\[
A \in \U \otimes \V \iff \{ x \in X \mid A_x \in \V \} \in \U,	
\]
where as usual \( A_x \) denotes the vertical section of \( A \) at \( x \). 
If \( X = \pre{k}{K} \) and \( Y = \pre{\ell}{K} \) for some set \( K \) and natural numbers \( k,\ell \in \omega \), we let \( \U {}^\smallfrown{} \V \) be the ultrafilter on \( \pre{k+\ell}{K} \) defined by stipulating that \( B \in \U {}^\smallfrown{} \V \) if and only if \( B \) is of the form
\[
B = \{ s {}^\smallfrown{} t \in \pre{k+\ell}{K} \mid (s,t) \in A \}
\]
for some \( A \in \U \otimes \V \).
It is easy to verify that for every \( \nu \in \Cn \), the ultrafilter \( \U {}^\smallfrown{}  \V \) is \( \nu \)-complete if so are \( \U \) and \( \V \), that \( \U {}^\smallfrown{} \V \) projects onto \( \U \), and that if \( \V \) projects onto \( \V' \) then \( \U {}^\smallfrown{} \V \) projects onto \( \U {}^\smallfrown{} \V' \).
In particular, if \( (\U_k)_{k \in \omega} \) and \( (\V_k)_{k \in \omega} \) are towers of ultrafilters in the same orderly family \( \U \), then for every \( n \in \omega \) the family \( (\mathcal{W}^{(n)}_k)_{k \in \omega} \) defined by
\[
\mathcal{W}^{(n)}_k = 
\begin{cases}
\U_k & \text{if } k \leq n \\
\U_n {}^\smallfrown{} \V_{k-(n+1)} & \text{if } k > n
\end{cases}
\] 
is a tower of ultrafilters in some orderly family \( \UU' \supseteq \UU \) with the same support \( K \).

\begin{proposition}
 \label{prop:representablethesame} 
 Let $j$ be a witness for $\mathsf{I0}(\lambda)$, and let $Z\subseteq V_{\lambda+1}$ be a set in \( L(V_{\lambda+1}) \). 
If \( Z \) is $\mathbb{U}(j)$-representable, then 
there is some orderly family of ultrafilters \( \UU \) such that
$Z$ is $\UU$-representable (with respect to a suitable tree-basis \( \mathcal{B} \) for \( V_{\lambda+1} \)).

\end{proposition}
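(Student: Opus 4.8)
The plan is to convert Woodin's data into the data required by Definition~\ref{def:repr}, working throughout inside $L(V_{\lambda+1})$: since $\mathsf{I0}(\lambda)$ gives $\beth_\lambda=\lambda$ and $L(V_{\lambda+1})\models\DC_\lambda$, the model $L(V_{\lambda+1})$ satisfies $\AC_\lambda(\pre{\lambda}{2})$ and $2^{<\lambda}=\lambda$, and $V_{\lambda+1}$ is there a $\lambda$-Polish space homeomorphic to $B(\lambda)$ by Theorem~\ref{thm:homeomorphictoCantor}\ref{thm:homeomorphictoCantor-3}. Fix the sequence $(\lambda_k)_{k\in\omega}$ and the map (call it $\pi_W$) witnessing that $Z$ is $\mathbb{U}(j,\kappa,(a_i)_{i\in\omega})$-representable. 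First I would build the tree-basis. Using the block decomposition from the proof of Theorem~\ref{thm:homeomorphictoCantor}\ref{thm:homeomorphictoCantor-3}, set $B_0=V_{\lambda_1}$ and $B_i=V_{\lambda_{i+1}}\setminus V_{\lambda_i}$, so that $\bigcup_{i<k}B_i=V_{\lambda_k}$ and each $\mathscr{P}(B_i)$ is well-orderable in $L(V_{\lambda+1})$ of size $<\lambda$ (as $V_\lambda$ is well-orderable there). Fix surjections $d_i\colon\lambda\to\mathscr{P}(B_i)$ and let $U_s=\{x\in V_{\lambda+1}\mid x\cap B_i=d_i(s(i))\text{ for all }i<\lh(s)\}$ for $s\in\pre{<\omega}{\lambda}$. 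This is a tree-basis $\mathcal{B}$ with index tree $\pre{<\omega}{\lambda}$ whose induced map $f_{\mathcal{B}}\colon B(\lambda)\to V_{\lambda+1}$, $f_{\mathcal{B}}(\tilde{x})=\bigcup_i d_i(\tilde{x}(i))$, is a continuous open surjection with $f_{\mathcal{B}}(\tilde{x})\cap V_{\lambda_k}=\bigcup_{i<k}d_i(\tilde{x}(i))=:a_{\tilde{x}\restriction k}$ depending only on $\tilde{x}\restriction k$. Thus each $s\in\pre{k}{\lambda}$ canonically codes the approximation $a_s=\bigcup_{i<k}d_i(s(i))\in V_{\lambda_k+1}$.

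Next I would arrange a uniform support. Each $\U=\pi_W(a,b,k)\in\mathbb{U}(j)$ is a $\lambda^+$-complete ultrafilter on some $K_\U$ carrying, by Definition~\ref{def:representableforWoodin}\ref{def:representableforWoodin-1}, a set $A_{a,b,k}\in\U$ with $A_{a,b,k}\subseteq\pre{k}{(L(V_{\lambda+1}))}$. Since $\dom(\pi_W)$ is a set, replacement in $L(V_{\lambda+1})$ produces a set $K$ with $A_{a,b,k}\subseteq\pre{k}{K}$ for all $(a,b,k)$. Pushing each $\U$ forward along the inclusion $A_{a,b,k}\hookrightarrow\pre{k}{K}$ yields a $\lambda^+$-complete ultrafilter $\U^*_{a,b,k}$ on $\pre{k}{K}$ with $A_{a,b,k}\in\U^*_{a,b,k}$. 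I then define $\pi(s,t)=\U^*_{a_s,b_t,\lh(s)}$ and $\UU=\ran(\pi)$: this is an orderly family with support $K$ and $\lev(\pi(s,t))=\lh(s)$, giving Definition~\ref{def:repr}\ref{def:repr-1}. For condition~\ref{def:repr-2}, note $a_{\tilde{x}\restriction k}\cap V_{\lambda_\ell}=a_{\tilde{x}\restriction\ell}$, so Woodin's clause~\ref{def:representableforWoodin-2} gives that $\pi_W(a_{\tilde{x}\restriction k},b_{y\restriction k},k)$ projects onto $\pi_W(a_{\tilde{x}\restriction\ell},b_{y\restriction\ell},\ell)$, which transfers to the $\U^*$'s along the restriction map $s\mapsto s\restriction\ell$; completeness is automatic since every ultrafilter in $\mathbb{U}(j)$ is $\lambda^+$-complete and $\lambda^+>\lambda_k^+$. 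For condition~\ref{def:repr-3}, the pushforward preserves well-foundedness of towers in the sense of Definition~\ref{def:towers}, and surjectivity of $f_{\mathcal{B}}$ lets me replace the quantifier $\exists y'\in V_{\lambda+1}$ in clause~\ref{def:representableforWoodin-3} by $\exists y\in B(\lambda)$ with $y'=f_{\mathcal{B}}(y)$, whence $y'\cap V_{\lambda_k}=b_{y\restriction k}$; this yields $f_{\mathcal{B}}(\tilde{x})\in Z\iff\exists y\,(T^\pi_{\tilde{x},y}\text{ is well-founded})$, as required.

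The main obstacle will be the coherence of the pushforwards: verifying that the $\U^*_{a,b,k}$ genuinely form an orderly family whose projection maps (restriction of finite sequences) agree with Woodin's projections and preserve well-foundedness, even though the original ultrafilters live on heterogeneous supports $K_\U$ and concentrate only on the sets $A_{a,b,k}$. The delicate point is an almost-everywhere bookkeeping step: to match projections I must use that $\{u\in A_{a,b,k}\mid u\restriction\ell\in A_{a\cap V_{\lambda_\ell},b\cap V_{\lambda_\ell},\ell}\}\in\U$, which follows by combining $A_{a,b,k}\in\U$ with Woodin's projection clause applied to $A_{a\cap V_{\lambda_\ell},b\cap V_{\lambda_\ell},\ell}$. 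One must also confirm that Woodin's ``projects onto'' is precisely restriction on $k$-sequences, so that it coincides with Definition~\ref{def:towers}\ref{def:repr-2}'s notion; granting this, the well-foundedness transfer (in both directions) and the preservation of $\lambda^+$-completeness are immediate from the definition of the pushforward. Everything else — the block decomposition producing $\mathcal{B}$, the translation of the existential quantifier in clause~\ref{def:representableforWoodin-3} via surjectivity of $f_{\mathcal{B}}$, and the fact that the whole construction is carried out in $L(V_{\lambda+1})$ without appeal to full $\AC$ — is routine and requires no deep input beyond the results already established.

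\medskip

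\emph{Remark on choicelessness.} Throughout I would make sure that the sequence $(d_i)_{i\in\omega}$, the support $K$, and the pushforwards are all definable from the well-order of $V_\lambda$ available in $L(V_{\lambda+1})$ and from $\pi_W$, so that no choice beyond what $L(V_{\lambda+1})$ already provides is used; this is exactly what makes the resulting $\UU$-representation usable inside $L(V_{\lambda+1})$ for the subsequent application of Theorem~\ref{thm:mainPSP}.
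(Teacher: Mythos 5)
Your proposal is correct, and at its core it is the same proof as the paper's: both arguments build a tree-basis for \( V_{\lambda+1} \) from a block decomposition of \( V_\lambda \) along \( (\lambda_k)_{k\in\omega} \), extract a uniform support \( K \) from (a choice of) the witnessing sets \( A_{a,b,k} \), replace each Woodin ultrafilter by its pushforward \( \{ B \subseteq \pre{k}{K} \mid B \cap A_{a,b,k} \in \pi_W(a,b,k) \} \), and then verify conditions~\ref{def:repr-1}--\ref{def:repr-3} of Definition~\ref{def:repr} using Woodin's projection clause together with exactly the almost-everywhere bookkeeping you flag. The one genuine divergence is your indexing: starting the blocks with \( B_0 = V_{\lambda_1} \) makes a length-\( k \) sequence code the trace on \( V_{\lambda_k} \), so \( \lev(\pi(s,t)) = \lh(s) \) holds on the nose; the paper's blocks make a length-\( k \) sequence code the trace on \( V_{\lambda_{k-1}} \), which produces a level mismatch that the paper repairs by concatenating a principal ultrafilter in front of each pushforward (\( \overline{\V} {}^\smallfrown{} \U_{a,b,k-1} \), with dummy ultrafilters at levels \( 0 \) and \( 1 \)). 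Your version avoids that padding entirely, at the price of one small inaccuracy: the identity \( a_{\tilde{x} \restriction k} \cap V_{\lambda_\ell} = a_{\tilde{x} \restriction \ell} \) that you invoke for condition~\ref{def:repr-2} fails at \( \ell = 0 \), since \( a_{\tilde{x} \restriction 0} = \emptyset \) while \( a_{\tilde{x} \restriction k} \cap V_{\lambda_0} = d_0(\tilde{x}(0)) \cap V_{\lambda_0} \) need not be empty. This is harmless — every ultrafilter of level \( k \) projects onto the unique ultrafilter on \( \pre{0}{K} = \{ \emptyset \} \), so the projection requirement at \( \ell = 0 \) is automatic and Woodin's clause~\ref{def:representableforWoodin-2} is only needed for \( 1 \leq \ell < k \) — but you should state this explicitly rather than assert the identity for all \( \ell \).
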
 

\begin{proof}
Let $(\lambda_k)_{k \in \omega}$ and $\pi'$ witness that \( Z \) is \( \UU(j) \)-representable. 
Recall from the proof of Theorem~\ref{thm:homeomorphictoCantor}\ref{thm:homeomorphictoCantor-3} the sets \( V^-_{\lambda_k+1} = \pow( V_{\lambda_{k}}\setminus V_{\lambda_{k-1}}) \), and for every \( k \in \omega \)
fix a surjection \( \sigma_k \colon \lambda \to V^-_{\lambda_k+1} \).
Recall also the basic open sets \( O_{(a,\alpha)} \) generating the Woodin's topology on \( V_{\lambda+1} \) (see Example~\ref{xmp:lambda-Polish}\ref{xmp:lambda-Polish-4}), which are clearly clopen sets.
Let \( U_\emptyset = V_{\lambda+1} \), and for every \( k > 0 \) and \( s \in \pre{k}{\lambda} \) let \( U_s = O_{(a(s),\alpha(s))} \), where \( \alpha(s) = \lambda_{k-1} \) and \( a(s) = \bigcup_{\ell < k} \sigma_\ell(s(\ell)) \).
Then \( \mathcal{B} = \{ U_s \mid s \in \pre{<\omega}{\lambda} \} \) is a  tree-basis for \( V_{\lambda+1} \), and clearly \( f_{\mathcal{B}}(\tilde{x})=\bigcup_{k \in \omega} \sigma_k(\tilde{x}(k)) \) for every \( \tilde{x} \in B(\lambda) \). 

For each \( (a,b,k) \in \dom(\pi') \), let \( K_{a,b,k} = \{ u(\ell) \mid u \in A_{a,b,k} \wedge \ell < k \} \), where \( A_{a,b,k} \) is as in condition~\ref{def:representableforWoodin-1} of Definition~\ref{def:representableforWoodin}. Let \( K = \bigcup_{(a,b,k) \in \dom(\pi')} K_{a,b,k} \), and fix any element \( p \in K \). Let \( \overline{\U} \) be the only proper ultrafilter on \( \pre{0}{K} = \{ \emptyset \} \),  \( \overline{\V} = \{ B \subseteq K \mid p \in B \} \) be the principal ultrafilter on \( K \) generated by \( p \), and for each \( (a,b,k) \in \dom(\pi') \), let \( \U_{a,b,k} = \{ B \subseteq \pre{k}{K} \mid B \cap A_{a,b,k} \in \pi'(a,b,k) \} \).  
Finally, let 
\[ 
\UU = \{ \overline{\U}, \overline{\V} \} \cup  \{ \overline{\V} {}^\smallfrown{}  \U_{a,b,k} \mid (a,b,k) \in \dom(\pi') \} . 
\]
By construction, \( \UU \) is an orderly family with support \( K \), and moreover 
\( \lev(\overline{\U}) = 0 \), \( \lev(\overline{\V}) = 1 \), and
\( \lev(\overline{\V} {}^\smallfrown{}  \U_{a,b,k}) = k+1 \) for every \( (a,b,k) \in \dom(\pi') \).
Now define
\[
 \pi \colon \bigcup_{k\in\omega}(\pre{k}\lambda\times\pre{k}\lambda)\to\UU
\]
by letting for every \( k \in \omega \) and \( s,t \in \pre{k}{K} \), \( \pi(s,t) = \overline{\U} \) if \( k = 0 \), \( \pi(s,t) = \overline{\V} \) if \( k = 1 \), and \( \pi(s,t) =\overline{\V} {}^\smallfrown{}  \U_{a(s),a(t),k-1} \) if \( k > 1 \).
%
We claim that \( \pi \) is a \( \UU \)-representation for \( Z \) with respect to the tree-basis \( \mathcal{B} \) defined above, i.e.\ that conditions~\ref{def:repr-1}--\ref{def:repr-3} from Definition~\ref{def:repr} are satisfied for our choice of \( \UU \), \( \mathcal{B} \), and \( \pi \).

Condition~\ref{def:repr-1} holds by construction. Notice also that all ultrafilters in \( \UU \) are \( \lambda^+ \)-complete, because so are \( \overline{\V} \) (being principal) and each \( \U_{a,b,k} \) (because \( \pi(a,b,k) \), being a member of \( \UU(j) \), is \( \lambda^+ \)-complete). In order to prove that \( T^\pi_{\tilde{x},y} = (\pi(\tilde{x} \restriction k, y \restriction k))_{k \in \omega} \) is a tower of ultrafilters for any \( \tilde{x},y \in B(\lambda) \), it is enough to show that for every \( 1 < \ell < k \), the ultrafilter \( \U_{a, b, k-1} \) projects onto \( \U_{a',b',\ell-1} \), where \( a = a(\tilde{x} \restriction k) \), \( b = y \restriction k \), \( a' = a(\tilde{x} \restriction \ell) \), and \( b' = y \restriction \ell \). This follows from the fact that by condition~\ref{def:representableforWoodin-2} in Definition~\ref{def:representableforWoodin}, \( \pi'(a,b,k-1) \) projects onto \( \pi'(a',b',\ell-1) \) because by construction \( a' = a \cap V_{\ell-1} \) and \( b' = b \cap V_{\ell-1} \). This shows that condition~\ref{def:repr-2} from Definition~\ref{def:repr} holds as well. Finally, we check that also condition~\ref{def:repr-3} from the same definition is satisfied. 
Let \( \tilde{x} \in B(\lambda) \), and let \( x = f_{\mathcal{B}}(\tilde{x}) \). Suppose first that \( x \in Z \). Let \( y \in V_{\lambda+1} \) be as in Definition~\ref{def:representableforWoodin}\ref{def:representableforWoodin-3}, and let \( \tilde{y} \in B(\lambda) \) be such that \( f_{\mathcal{B}}(\tilde{y}) = y \). By construction, for every \( k \in \omega \) we have that \( x \cap V_{\lambda_k} = a(\tilde{x} \restriction k+1)\) and \( y \cap V_{\lambda_k} = a(\tilde{y} \restriction k+1)\). Since by choice of \( y \) the tower of ultrafilters \( (\pi'(x \cap V_{\lambda_k}, y \cap V_{\lambda_k},k))_{k \in \omega} \) is well-founded, so is the tower \( (\U_{a(\tilde{x} \restriction k+1),a(\tilde{y} \restriction k+1),k})_{k \in \omega} \), which in turn implies that \( T^{\pi}_{\tilde{x},\tilde{y}} \) is well-founded. This proves the forward implication of Definition~\ref{def:repr}\ref{def:repr-3}. For the backward implication, we basically reverse the argument. If \(\tilde{y} \in B(\lambda) \) is such that \( T^\pi_{\tilde{x},\tilde{y}} \) is well-founded, then both \( (\U_{a(\tilde{x} \restriction k+1),a(\tilde{y} \restriction k+1),k})_{k \in \omega} \) and \( (\pi'(x \cap V_{\lambda_k}, y \cap V_{\lambda_k},k))_{k \in \omega} \) are well-founded, where again \( y = f_{\mathcal{B}}(\tilde{y}) \); therefore \( f_{\mathcal{B}}(\tilde{x}) =  x \in Z \) by Definition~\ref{def:representableforWoodin}\ref{def:representableforWoodin-3}.
%
%
%
%
\end{proof}

Arguing as in Proposition~\ref{prop:U-representabilityandmeasurablecardinals}, it is easy to see that if there is some \( \UU(j) \)-representable \( Z \subsetneq V_{\lambda+1} \), so that \( Z \) is also \( \UU \)-representable for a suitable (necessarily nonprincipal) orderly family of ultrafilters \( \UU \) by Proposition~\ref{prop:representablethesame}, then there is a measurable cardinal \( \kappa > \lambda \), and one can assume that all ultrafilters appearing in the range of both the \( \UU(j) \)-representation \( \pi' \) and the \( \UU \)-representation \( \pi \) for \( Z \) are at least \( \kappa \)-complete.
If \( \AC \) is assumed, then \( \pi \) would automatically have the tower condition (in the sense of our Definition~\ref{def:TC}, see~Proposition~\ref{prop:TCautomatic}), and hence every \( \UU(j) \)-representable set would be TC-representable. However, \( L(V_{\lambda+1}) \) does not satisfy \( \AC \) if \( \mathsf{I0}(\lambda) \) holds, and that is why Woodin introduced his own version of the tower condition in~\cite[Definition 121]{Woodin2011}.

\begin{defin} \label{def:WoodinTC}
 Let $j$ be a witness for $\mathsf{I0}(\lambda)$. Let $\mathbb{V}\subseteq\mathbb{U}(j)$ be such that $\mathbb{V} \in L(V_{\lambda+1})$ and $|\mathbb{V}|\leq\lambda$. Then the \markdef{tower condition holds for $\mathbb{V}$} if and only if there is a function $F \colon \mathbb{V} \to L(V_{\lambda+1})$ such that
\begin{enumerate-(a)}
\item \label{def:WoodinTC-1}
$F(\U)\in \U$ for each $\U\in A$;%
\footnote{This requirement actually is not in the original definition in~\cite{Woodin2011}, but it is in \cite[Definition 6.5]{Cramer2015}, where the crucial Theorem~\ref{thm:U(j)-representablesetshaveTC} is proved.}
\item \label{def:WoodinTC-2}
for each tower of ultrafilters%
\footnote{As one can expect, this means that for some set \( K \), each \( \U_k \) concentrates on \( \pre{k}{K} \), and for every \( k > \ell \) the ultrafilter \( \U_k \) projects onto \( \U_\ell \).}
$(\U_k)_{k\in\omega}$ in $\mathbb{V}$, the tower is well-founded in $L(V_{\lambda+1})$ if and only if there exists $z \in \pre{\omega}{L(V_{\lambda+1})}$ such that \( z \restriction k \in F(\U_k) \) for every \( k \in \omega \).
\end{enumerate-(a)}
\end{defin}

We are now going to show that, once again, Woodin's Definition~\ref{def:WoodinTC} aligns with our version of the tower condition from Definition~\ref{def:TC}. Notice that if \( \pi \) witnesses that a set is \( \UU(j) \)-representable, then \( |\ran(\pi)| \leq \lambda \), 
and hence it makes sense to ask if Woodin's tower condition holds for \( \mathbb{V} = \ran(\pi) \).
As for Proposition~\ref{prop:representablethesame}, also the next result is developed without assuming \( \AC \), and can thus be applied e.g.\ in \( L(V_{\lambda+1}) \).

\begin{proposition} 
 Let $j$ be a witness for $\mathsf{I0}(\lambda)$, and let \( Z \subseteq V_{\lambda+1} \) be a set in \( L(V_{\lambda+1} ) \).  If \( \pi' \) witnesses that $Z$ is $\mathbb{U}(j)$-representable and the tower condition (in the sense of Definition~\ref{def:WoodinTC}) holds for $\mathbb{V} = \ran(\pi')$, then $Z$ is TC-representable.
\end{proposition}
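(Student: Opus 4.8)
The plan is to build directly on the construction carried out in the proof of Proposition~\ref{prop:representablethesame}. Applying it verbatim yields an orderly family of ultrafilters \( \UU \) with support \( K \), a tree-basis \( \mathcal{B} = \{ U_s \mid s \in \pre{<\omega}{\lambda} \} \) for \( V_{\lambda+1} \), and a \( \UU \)-representation \( \pi \) for \( Z \) with respect to \( \mathcal{B} \). Recall the shape of \( \ran(\pi) = \UU \): besides the trivial \( \overline{\U} \) (on \( \pre{0}{K} \)) and the principal \( \overline{\V} \) (generated by the fixed \( p \in K \)), it consists of the ultrafilters \( \overline{\V} {}^\smallfrown{} \U_{a,b,k} \), where \( \U_{a,b,k} = \{ B \subseteq \pre{k}{K} \mid B \cap A_{a,b,k} \in \pi'(a,b,k) \} \) is the trace of \( \pi'(a,b,k) \in \mathbb{V} \) on \( \pre{k}{K} \) along the conull set \( A_{a,b,k} \). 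The task is to produce a function \( F \) witnessing that \( \pi \) has the tower condition in the sense of Definition~\ref{def:TC}, using as sole additional input the function \( F' \colon \mathbb{V} \to L(V_{\lambda+1}) \) witnessing Woodin's tower condition (Definition~\ref{def:WoodinTC}) for \( \mathbb{V} = \ran(\pi') \).

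First I would observe that \( \pi'(a,b,k) \) can be recovered from \( \U_{a,b,k} \) in a uniform, choiceless way: since \( A_{a,b,k} \subseteq \pre{k}{K} \) is \( \pi'(a,b,k) \)-conull, for every \( S \) in the support of \( \pi'(a,b,k) \) one has \( S \in \pi'(a,b,k) \) if and only if \( S \cap \pre{k}{K} \in \U_{a,b,k} \). As the map \( \overline{\V} {}^\smallfrown{} \U_{a,b,k} \mapsto \U_{a,b,k} \mapsto \pi'(a,b,k) \) is thereby well-defined on \( \ran(\pi) \), I can define \( F \) directly on ultrafilters, with no dependence on the choice of a representative triple \( (a,b,k) \). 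Concretely, set \( F(\overline{\U}) = \{ \emptyset \} \), \( F(\overline{\V}) = \{ \langle p \rangle \} \), and for a level-\( (k{+}1) \) ultrafilter \( \mathcal{W} = \overline{\V} {}^\smallfrown{} \U \) recovering \( \pi'(a,b,k) \), set
\[
F(\mathcal{W}) = \{ p {}^\smallfrown{} u \mid u \in F'(\pi'(a,b,k)) \cap \pre{k}{K} \}.
\]
Condition~\ref{def:TC-1} of Definition~\ref{def:TC} (i.e.\ \( F(\mathcal{W}) \in \mathcal{W} \)) then holds because \( F'(\pi'(a,b,k)) \in \pi'(a,b,k) \) and \( \pre{k}{K} \) is \( \pi'(a,b,k) \)-conull, so \( F'(\pi'(a,b,k)) \cap \pre{k}{K} \in \U \), and prepending the \( \overline{\V} \)-generator \( p \) keeps the set large in \( \overline{\V} {}^\smallfrown{} \U \).

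The heart of the argument is condition~\ref{def:TC-2} of Definition~\ref{def:TC}. Fix \( \tilde{x}, y \in B(\lambda) \) and write \( x = f_{\mathcal{B}}(\tilde{x}) \) and \( \bar{y} = f_{\mathcal{B}}(y) \); by the construction of \( \pi \), for \( k > 1 \) we have \( F(\pi(\tilde{x} \restriction k, y \restriction k)) = \{ p {}^\smallfrown{} u \mid u \in F'(\pi'(x \cap V_{\lambda_{k-1}}, \bar{y} \cap V_{\lambda_{k-1}}, k-1)) \cap \pre{k-1}{K} \} \), while the two bottom values force \( z(0) = p \). Hence a thread \( z \in \pre{\omega}{K} \) with \( z \restriction k \in F(\pi(\tilde{x} \restriction k, y \restriction k)) \) for all \( k \) is exactly a sequence \( z = p {}^\smallfrown{} z' \) whose shift \( z' \) satisfies \( z' \restriction m \in F'(\pi'(x \cap V_{\lambda_m}, \bar{y} \cap V_{\lambda_m}, m)) \) for all \( m \). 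For the implication \( (\Leftarrow) \) — existence of a thread forces \( T^\pi_{\tilde{x},y} \) to be well-founded — I would feed \( z' \) into Woodin's Definition~\ref{def:WoodinTC}\ref{def:WoodinTC-2} to conclude that the \( \mathbb{V} \)-tower \( (\pi'(x \cap V_{\lambda_m}, \bar{y} \cap V_{\lambda_m}, m))_{m \in \omega} \) is well-founded in \( L(V_{\lambda+1}) \), and then transfer this to the well-foundedness of \( T^\pi_{\tilde{x},y} \) through the chain of equivalences established in the proof of Proposition~\ref{prop:representablethesame}. For the converse \( (\Rightarrow) \), I would \emph{not} invoke \( F' \) as a characterization but rather the intrinsic definition of a well-founded tower (Definition~\ref{def:towers}): if \( T^\pi_{\tilde{x},y} \) is well-founded then so is the \( \mathbb{V} \)-tower, and applying the definition of well-foundedness to the specific \( \pi'(\dots,m) \)-large sets \( F'(\pi'(\dots,m)) \cap \pre{m}{K} \) produces a thread \( z' \) whose values lie in \( K \); then \( z = p {}^\smallfrown{} z' \) is the required witness in \( \pre{\omega}{K} \).

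The main obstacle I anticipate is bookkeeping rather than conceptual: correctly matching the level shift and the prepended coordinate \( p \) coming from the \( \overline{\V} \)-prefix in \( \pi \), and guaranteeing that the witnessing threads take values in the support \( K \) rather than in arbitrary elements of \( L(V_{\lambda+1}) \) (which is all that Woodin's \( F' \) a priori provides). Both points are resolved by the intersection with \( \pre{k}{K} \) built into \( F \) and by using the two directions of the equivalence asymmetrically — Woodin's tower condition for \( (\Leftarrow) \) and the bare definition of well-foundedness for \( (\Rightarrow) \). A secondary point to keep under control is that the whole construction remains inside \( L(V_{\lambda+1}) \) and uses no choice beyond what Proposition~\ref{prop:representablethesame} already requires, which is ensured by the definability of the recovery map \( \overline{\V} {}^\smallfrown{} \U \mapsto \pi'(a,b,k) \).
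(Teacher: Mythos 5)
Your proposal is correct and takes essentially the same approach as the paper: both use the representation \( \pi \) built in Proposition~\ref{prop:representablethesame} and define \( F \) by prepending \( p \) to the values of \( F' \) cut down so that they lie inside \( \pre{k}{K} \) (the paper does this by replacing \( F'(\pi'(a,b,k)) \) with \( F'(\pi'(a,b,k)) \cap A_{a,b,k} \) at the outset, you by intersecting with \( \pre{k}{K} \) inline). Your asymmetric treatment of the two directions of condition~(b) of Definition~\ref{def:TC} --- Woodin's tower condition for one direction, the bare definition of well-foundedness for the other --- is a cosmetic variant of the paper's argument, which derives both directions from the modified \( F' \).
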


\begin{proof}
We use the same notation as in the proof of Proposition~\ref{prop:representablethesame}.
Let \( F' \colon \ran(\pi') \to L(V_{\lambda+1} ) \) witness the fact that the tower condition holds for \( \ran(\pi') \).  Without loss of generality, we can assume that \( F'(\pi'(a,b,k)) \in \U_{a,b,k} \): if not, just systematically replace \( F'(\pi'(a,b,k)) \) with \( F'(\pi'(a,b,k)) \cap A_{a,b,k} \).
Let \( \pi \) be the \( \UU \)-representation of \( Z \) obtained from \( \pi' \) as in the proof of Proposition~\ref{prop:representablethesame}. Set \( F(\pi(\emptyset,\emptyset)) = \{ \emptyset \} \), \( F(\pi(s,t)) = \{ p \} \) for every \( s,t \in \pre{1}{\lambda} \), and for every \( k > 1 \) and \( s,t \in \pre{k}{\lambda} \) let 
\[
F(\pi(s,t)) = p {}^\smallfrown{} F'(\pi'(a(s),a(t),k-1)), 
\]
where for \( B \subseteq \pre{k-1}{K} \) we let \( p {}^\smallfrown{} B = \{ p {}^\smallfrown{}  u \in \pre{k}{K} \mid u \in B \} \). Notice that in all cases, \( F(\pi(s,t)) \in \pi(s,t) \) by construction. 
Recalling that for every \( x,y \in V_{\lambda+1} \) and every \( \tilde{x},\tilde{y} \in B(\lambda) \) such that \( f_{\mathcal{B}}(\tilde{x}) = x \) and \( f_{\mathcal{B}}(\tilde{y}) = y \), the tower of ultrafilters \( T^\pi_{\tilde{x},\tilde{y}} \) is well-founded if and only if so is \( (\pi'(x \cap V_{\lambda_k}, y \cap V_{\lambda_k},k))_{k \in \omega} \), it is easy to see that \( F \) satisfies also condition~\ref{def:TC-2} of Definition~\ref{def:TC}. 
Indeed, suppose that \( z \in \pre{\omega}{K} \) is such that
\( z \restriction k \in F(\pi(\tilde{x} \restriction k, \tilde{y} \restriction k)) \)
for all \( k \in \omega \).
Then \( z(0) = p \) by choice of \( F \), and thus there is \( z' \in \pre{\omega}{K} \) such that \( z = p {}^\smallfrown{}  z' \). But then by construction
\( z' \restriction k \in F'(\pi'(x \cap V_{\lambda_k}, y \cap V_{\lambda_k},k)) \)
for all \( k \in \omega \).
It follows that \(  (\pi'(x \cap V_{\lambda_k}, y \cap V_{\lambda_k},k))_{k \in \omega} \) is well-founded, and hence so is \( T^\pi_{\tilde{x},\tilde{y}} \).
\end{proof}

The above discussion naturally raises the question of which \( \mathbb{V} \subseteq \UU(j) \) satisfy Woodin's tower condition. Immediately after~\cite[Definition 121]{Woodin2011}, it is observed that, assuming \( \DC_\lambda \), this is always true if \( |\mathbb{V}| < \lambda \), and from this Woodin derived a rather weak form of the \( \lambda \)-Perfect Set Property for all \( \UU(j) \)-representable sets (see Lemma~\ref{lem:WoodinsweakPSP}).
Later on, Scott Cramer conclusively answered the above question by proving that, indeed, all families \( \mathbb{V} \) as in Definition~\ref{def:WoodinTC} satisfy the tower condition~\cite[Theorem 6.8]{Cramer2015}.

\begin{theorem}[\( \DC_\lambda \)] \label{thm:U(j)-representablesetshaveTC} 
 Let $j$ be a witness for $\mathsf{I0}(\lambda)$, and work in \( L(V_{\lambda+1}) \).%
\footnote{This means that we only consider sets \( Z \in L(V_{\lambda+1}) \), and that both the \( \UU(j) \)-representability and the \( \UU \)-representability have to be verified in \( L(V_{\lambda+1}) \).} 
 Then Woodin's tower condition holds for every  $\mathbb{V} \subseteq\mathbb{U}(j)$ such that $|\mathbb{V}|\leq\lambda$.  
\end{theorem}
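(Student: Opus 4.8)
The plan is to isolate the single nontrivial half of the tower condition and then to replace the Axiom of Choice, which is unavailable in \( L(V_{\lambda+1}) \), by canonical selections coming from the embedding \( j \). Fix \( \mathbb{V}\subseteq\mathbb{U}(j) \) with \( |\mathbb{V}|\le\lambda \); since \( \mathbb{V} \) is then well-orderable of size at most \( \lambda \), we may enumerate it as \( \mathbb{V}=\{\mathcal{U}_\alpha \mid \alpha<\delta\} \) for some \( \delta\le\lambda \). We seek \( F\colon\mathbb{V}\to L(V_{\lambda+1}) \) with \( F(\mathcal{U})\in\mathcal{U} \) satisfying condition~\ref{def:WoodinTC-2} of Definition~\ref{def:WoodinTC}. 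First I would record that the forward implication there is automatic: if a tower \( (\mathcal{U}_k)_{k\in\omega} \) in \( \mathbb{V} \) is well-founded, applying the definition of well-foundedness to the sets \( A_k=F(\mathcal{U}_k)\in\mathcal{U}_k \) produces the required thread \( z \). Hence the entire content is to arrange, with a \emph{single} \( F \), that every ill-founded tower in \( \mathbb{V} \) has no thread through the sets \( F(\mathcal{U}_k) \).

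Next I would pinpoint why the choiceless setting forces a new idea. Under \( \AC \) (cf.\ the proof of Proposition~\ref{prop:TCautomatic}) one selects, for each ill-founded tower, a witnessing sequence that kills its threads and then sets \( F(\mathcal{U}) \) to be the intersection of all such witnesses compatible with \( \mathcal{U} \), the \( \lambda^+ \)-completeness of \( \mathcal{U} \) keeping \( F(\mathcal{U}) \) large. This breaks down in \( L(V_{\lambda+1}) \) because there are up to \( \lambda^\omega=2^\lambda \) many towers in \( \mathbb{V}^\omega \) while \( V_{\lambda+1} \) is not well-orderable there, so no simultaneous choice of witnesses is available — even though, using \( \DC_\lambda \), the \( \delta\le\lambda \) many values \( F(\mathcal{U}_\alpha) \) could be chosen one by one. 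The difficulty is thus genuinely global: a per-ultrafilter selection must control exponentially many towers at once.

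Then I would exploit the defining features of \( \mathbb{U}(j) \) to build \( F \) canonically from \( j \). Every \( \mathcal{U}\in\mathbb{V} \) is \( \lambda^+ \)-complete, is fixed by the iterates \( j^n \) for all large \( n \), and concentrates on \( j \)-fixed points: by the last defining clause of \( \mathbb{U}(j) \) there is \( A\subseteq K_\mathcal{U} \) in \( L(V_{\lambda+1}) \) with \( \{a\in A \mid j^n(a)=a \text{ for all sufficiently large } n\}\in\mathcal{U} \). The plan is to define \( F(\mathcal{U}) \) from this set of eventual \( j \)-fixed points, refined by the seed data that the embedding attaches to \( \mathcal{U} \) via the direct-limit map of the iteration of \( j \). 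Because every ingredient — \( j \), the iterates \( j^n \), and the families \( \mathbb{U}(j) \), \( \mathbb{V} \) — is definable in \( L(V_{\lambda+1}) \) from \( j \) and a \( \lambda \)-enumeration of \( \mathbb{V} \), this \( F \) needs no choice beyond \( \DC_\lambda \). The heart of the argument is then a threading-versus-well-foundedness dichotomy: a thread through the \( F \)-images is a coherent \( \omega \)-sequence of \( j \)-fixed points which, by iterability of \( j \), amalgamates into a well-founded branch of the direct limit of the tower's ultrapowers, whence the tower is well-founded; conversely the canonical seeds guarantee a thread whenever it is well-founded.

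The hard part, I expect, will be exactly this last dichotomy, which is Cramer's \emph{inverse limit reflection} phenomenon: one must show that an \( F \)-thread cannot survive an ill-founded tower, i.e.\ that the embedding-based \( F \) detects ill-foundedness \emph{exactly}. I would attack it by reflecting the statement ``\( (\mathcal{U}_k)_k \) is ill-founded and threads through \( F \)'' down to a level \( L_\gamma(V_{\lambda+1}) \), with \( \gamma<\Theta^{L(V_{\lambda+1})} \), on which the generating sets \( \mathcal{U}\cap L_\gamma(V_{\lambda+1}) \) live and on which \( j \) still acts coherently, and then deriving a contradiction from a would-be ill-founded thread using the iterability of \( j \) and the amalgamation of inverse limits of the embeddings \( \langle j^n \rangle \). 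Controlling the interplay of \( \lambda^+ \)-completeness, the coherence of the tower under projections, and well-foundedness of the direct limit — all inside the choiceless model \( L(V_{\lambda+1}) \) and using only \( \DC_\lambda \) — is where the real technical weight lies, and where the large-cardinal strength of \( \mathsf{I0}(\lambda) \) is indispensable.
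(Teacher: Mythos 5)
The first thing to note is that the paper contains no proof of this theorem to compare against: it is imported wholesale as Cramer's result (\cite[Theorem 6.8]{Cramer2015}; see also the footnote to Definition~\ref{def:WoodinTC}), and the surrounding text makes clear that its proof --- via Cramer's inverse limit reflection machinery --- lies entirely outside the paper's scope. Your proposal is therefore an attempt to reconstruct a genuinely deep external theorem, and judged on its own terms it has a real gap.

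What is sound in your sketch: the forward half of Definition~\ref{def:WoodinTC}\ref{def:WoodinTC-2} is indeed automatic once \( F(\U)\in\U \); enumerating \( \mathbb{V} \) is legitimate since \( |\mathbb{V}|\leq\lambda \) entails well-orderability; and your diagnosis of why the argument of Proposition~\ref{prop:TCautomatic} fails in \( L(V_{\lambda+1}) \) (no simultaneous choice of killing sequences for the \( 2^\lambda \)-many towers, and no completeness beyond \( \lambda^+ \) available to intersect them) is accurate. What is missing is everything after that. You never actually define \( F \): ``the set of eventual \( j \)-fixed points, refined by the seed data that the embedding attaches to \( \U \) via the direct-limit map'' describes an intention, not a construction. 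Worse, the pivotal dichotomy is asserted rather than proved, and in the form you state it, it is not even plausible as a general mechanism: for an arbitrary assignment of measure-one sets \( A_k\in\U_k \), the existence of a thread through the \( A_k \) carries no information whatsoever about well-foundedness of the tower --- an ill-founded tower threads trivially through the sets \( A_k=\pre{k}{K} \), and a coherent sequence of \( j \)-fixed points does not, by any argument you give, amalgamate into a well-founded branch of the direct limit. The entire content of the theorem is to manufacture a \emph{single} \( F \) whose values are so constrained that threadability of the \( F \)-images becomes \emph{equivalent} to well-foundedness, uniformly across all towers in \( \mathbb{V} \), inside a model satisfying only \( \DC_\lambda \); that equivalence is precisely what Cramer's inverse limit reflection argument establishes, and your sketch stops exactly where that argument would have to begin. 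Naming the technique is correct attribution, but it is not a proof.
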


Since if \( \pi \) witnesses that a set \( X \subseteq V_{\lambda+1} \) is \( \UU(j) \)-representable then \( \mathbb{V} = \ran(\pi) \) has size at most \(\lambda\), 
combining Proposition~\ref{prop:representablethesame} and Theorem~\ref{thm:U(j)-representablesetshaveTC} we obtain:

\begin{corollary}[\( \DC_\lambda \)] 
\label{cor:fromTowertoTC}
Let $j$ be a witness for $\mathsf{I0}(\lambda)$, and work in \( L(V_{\lambda+1}) \). If $Z\subseteq V_{\lambda+1}$ is $\mathbb{U}(j)$-representable, then $Z$ is also TC-representable.
\end{corollary}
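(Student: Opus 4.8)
The plan is to obtain the corollary as a direct combination of the immediately preceding translation proposition with Cramer's Theorem~\ref{thm:U(j)-representablesetshaveTC}, all carried out inside \( L(V_{\lambda+1}) \). First I would fix, working in \( L(V_{\lambda+1}) \), a witness \( \pi' \) to the \( \UU(j) \)-representability of \( Z \), that is, a strictly increasing sequence \( (\lambda_k)_{k \in \omega} \) cofinal in \(\lambda\) and a map \( \pi' \colon \bigcup_{k \in \omega} (V_{\lambda_k+1} \times V_{\lambda_k+1} \times \{ k \}) \to \UU(j,\kappa,(a_i)_{i \in \omega}) \subseteq \UU(j) \) as in Definition~\ref{def:representableforWoodin}. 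Set \( \mathbb{V} = \ran(\pi') \subseteq \UU(j) \).

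The only genuine computation on the way is the cardinality bound \( |\mathbb{V}| \leq \lambda \), which is needed so that Cramer's theorem applies. For this I would use that \( \mathsf{I0}(\lambda) \) forces \( |V_\lambda| = \lambda \) (equivalently \( \beth_\lambda = \lambda \)) and that \(\lambda\) is strong limit, so each \( |V_{\lambda_k+1}| = 2^{|V_{\lambda_k}|} < \lambda \); since \( \dom(\pi') \) is a countable union of such products, \( |\dom(\pi')| \leq \lambda \), and hence \( |\mathbb{V}| \leq \lambda \). This computation is absolute between \( V \) and \( L(V_{\lambda+1}) \), as \( V_{\lambda+1} \) and everything below it coincide in the two models, and it is precisely the bound flagged in the sentence preceding the corollary.

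Next I would invoke Cramer's Theorem~\ref{thm:U(j)-representablesetshaveTC}. Since \( \mathsf{I0}(\lambda) \) guarantees \( L(V_{\lambda+1}) \models \DC_\lambda \) (see Definition~\ref{def:I0}), we may apply that theorem inside \( L(V_{\lambda+1}) \) to the family \( \mathbb{V} \subseteq \UU(j) \) of size at most \(\lambda\), concluding that Woodin's tower condition in the sense of Definition~\ref{def:WoodinTC} holds for \( \mathbb{V} = \ran(\pi') \). With this in hand, the hypotheses of the proposition immediately preceding the present corollary are met: \( \pi' \) witnesses \( \UU(j) \)-representability of \( Z \), and Woodin's tower condition holds for \( \ran(\pi') \). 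That proposition then yields directly that \( Z \) is TC-representable in the sense of Definition~\ref{def:TC}, via the \( \UU \)-representation \( \pi \) and the witness \( F \) it constructs out of \( \pi' \) and the Woodin-tower-condition witness \( F' \), building on the translation already carried out in Proposition~\ref{prop:representablethesame}.

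The main obstacle is not located in the corollary itself, whose proof is pure bookkeeping, but in the two results it rests on: Cramer's deep Theorem~\ref{thm:U(j)-representablesetshaveTC} (verifying Woodin's tower condition from \( \DC_\lambda \) alone), and the translation proposition that converts Woodin's tower condition for \( \ran(\pi') \) into our tower condition for the derived \( \UU \)-representation. Within the corollary the only points requiring care are the cardinality estimate above and the meta-level observation that every step — the choice of \( \pi' \), the formation of \( \mathbb{V} \), the availability of \( \DC_\lambda \), and both cited results — takes place internally in \( L(V_{\lambda+1}) \), so that no use of choice in \( V \) beyond \( \DC_\lambda \) is smuggled in.
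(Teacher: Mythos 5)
Your proposal is correct and follows essentially the same route as the paper: fix the \( \UU(j) \)-representation \( \pi' \), note \( |\ran(\pi')| \leq \lambda \), apply Cramer's Theorem~\ref{thm:U(j)-representablesetshaveTC} (using \( \DC_\lambda \) in \( L(V_{\lambda+1}) \)) to get Woodin's tower condition for \( \ran(\pi') \), and then invoke the translation proposition built on Proposition~\ref{prop:representablethesame} to conclude TC-representability. The only difference is that you spell out the cardinality estimate \( |\mathbb{V}| \leq \lambda \), which the paper merely asserts; that is a welcome addition, not a deviation.
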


\section{Which sets are $\UU$-representable?} \label{sec:whichsetsarerepresentable} 

The base theory for both weakly homogeneously Souslin-ness and $\UU(j)$-repre\-sentability is $\ZFC$, and for this reason 
we will assume \( \AC \) throughout this section and the next one, unless otherwise specified. 

In classical descriptive set theory, regularity properties like the Perfect Set Property tend to exhibit the following pattern:

\begin{enumerate-(i)}
 \item \label{regularitypattern-1}
All analytic sets enjoy the given regularity property (see~\cite[Theorem 12.2]{Kanamori}). 
 \item \label{regularitypattern-2}
If $\mathsf{V=L}$, then there is a low-level projective set without the regularity property (see~\cite[Corollary 13.10 and Theorem 13.12]{Kanamori}).
For example, if the regularity property is the Perfect Set Property, then in the constructible universe \( L \) we have a coanalytic counterexample, while if we consider e.g.\ the Baire Property or Lebesgue Measurability, then we have counterexamples within the pointclass $\boldsymbol{\Delta}^1_2$.
\item \label{regularitypattern-3}
If we assume sufficiently strong large cardinal assumptions, then all definable sets have the regularity property at hand. For example, 
if there are $\omega$-many Woodin cardinals and a measurable above, then $L(\RR)\models \AD$, and therefore all the subsets of \( \RR \) in $L(\RR)$, projective sets included, enjoy the regularity property (see e.g.\ \cite[Theorem 8.24]{Neeman2010}; the proof that determined sets enjoy all the main regularity properties can be found e.g.\ in \cite[Chapter 21]{Kechris1995}).
\end{enumerate-(i)}

One of the most interesting proof of~\ref{regularitypattern-3} factors through the notion of weakly homogeneously Souslin set briefly discussed in Section~\ref{sec:representabilityvsweaklyhomogeneouslysouslin}. Indeed, weakly homogeneously Souslin sets themselves follow a pattern very similar to the one above:

\begin{enumerate-(I)}
\item \label{whSpattern-1}
If there exists a measurable cardinal, then all analytic sets are weakly homogeneously Souslin; in fact, all $\boldsymbol{\Sigma}^1_2$ sets are (see e.g.\ \cite[Exercises 32.1 and 32.3]{Kanamori}).
\item \label{whSpattern-2}
If $\mathsf{V=L}$, no nontrivial set is weakly homogeneously Souslin: this is because in \( L \) there are no measurable cardinals, and hence all ultrafilters appearing in the definition of weakly homogeneusly Souslin-ness are necessarily principal.
\item \label{whSpattern-3}
If there are $\omega$-many Woodin cardinals and a measurable above, all sets in $L(\RR)$ are weakly homogeneously Souslin (see e.g.\ \cite[Theorem 32.13]{Kanamori}).
\end{enumerate-(I)}

Since every weakly homogeneously Souslin set enjoy all the main regularity properties (\cite[Theorem 32.7]{Kanamori}), we can think of weakly homogeneous Souslin-ness as a sort of ``super'' regularity property.
Note that the large cardinal hypothesis for~\ref{whSpattern-3} is the same implying \( L(\RR) \models \AD \) in~\ref{regularitypattern-3} above.

The goal of this section is to observe that, remarkably, we have a very similar situation when moving to the generalized context, as long as we consider the \( \lPSP \) as our  regularity property, and \( \UU \)-representable and TC-representable sets as higher analogues of weakly homogeneously Souslin sets (see Proposition~\ref{prop:representablevsweaklyhomogeneous} and the discussion following it). Indeed, we already proved that all $\lambda$-analytic sets have the $\lPSP$ (Theorem~\ref{thm:analyticPSP}), and that there are $\lambda$-coanalytics sets without the $\lPSP$ if \( \mathsf{V=L} \) (Corollary~\ref{cor:counterexampleinV=L});
these results are the exact counterparts of~\ref{regularitypattern-1} and~\ref{regularitypattern-2} above.
As for~\ref{regularitypattern-3}, there has been a lot of work aiming at showing that we can get analogues of it in the generalized context if we assume \( \mathsf{I0}(\lambda) \) and we replace the inner model \( L(\RR) \) with its higher analogue \( L(V_{\lambda+1}) \).
The first result of this kind is due to Woodin and appears in~\cite[Lemma 122]{Woodin2011}.

\begin{lemma}[\( \AC \)] \label{lem:WoodinsweakPSP} 
Let $j$ be a witness for $\mathsf{I0}(\lambda)$, and let $Z \subseteq V_{\lambda+1}$ be $\mathbb{U}(j)$-representable in $L(V_{\lambda+1})$. If $|Z|>\lambda$, then there is a continuous injective function from $\pre{\omega}2$ into $Z$.
\end{lemma}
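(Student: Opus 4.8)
The plan is to deduce the lemma from the general machinery of Chapter~\ref{chapter:lambda-PSP}, working inside the inner model \( L(V_{\lambda+1}) \) and then transferring the concrete conclusion back to \( V \). Since we are assuming \( \AC \) in \( V \), we in particular have \( \DC_\lambda \), and hence \( L(V_{\lambda+1}) \models \DC_\lambda \) (see the discussion in Section~\ref{sec:constructibilityetc}). Moreover, because \( \mathsf{I0}(\lambda) \) implies \( |V_\lambda| = \lambda \) with \( \cf(\lambda) = \omega \), Theorem~\ref{thm:homeomorphictoCantor}\ref{thm:homeomorphictoCantor-3} shows that \( V_{\lambda+1} \) is a \(\lambda\)-Polish space homeomorphic to \( C(\lambda) \approx B(\lambda) \approx \pre{\lambda}{2} \), both in \( V \) and in \( L(V_{\lambda+1}) \). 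The key observation is that all of this, together with \( 2^{<\lambda} = \lambda \), holds inside \( L(V_{\lambda+1}) \) as well, so that the whole apparatus of the present chapter is available there.

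First I would check that the background theory of the chapter is satisfied in \( L(V_{\lambda+1}) \). Since \( V_{\lambda+1}^{L(V_{\lambda+1})} = V_{\lambda+1}^V \) and the bijection witnessing \( |V_\lambda| = \lambda \) is itself an element of \( V_{\lambda+1} \subseteq L(V_{\lambda+1}) \), the set \( V_\lambda \) is well-orderable in \( L(V_{\lambda+1}) \); consequently \( \pow(\kappa)^{L(V_{\lambda+1})} = \pow(\kappa)^V \) is well-orderable of size \( 2^\kappa < \lambda \) for every \( \kappa < \lambda \), so \( 2^{<\lambda} = \lambda \) holds in \( L(V_{\lambda+1}) \). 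Together with \( \DC_\lambda \) (which entails \( \AC_\lambda(\pre{\lambda}{2}) \)), this puts us squarely within the base theory. Working inside \( L(V_{\lambda+1}) \), I would then invoke Corollary~\ref{cor:fromTowertoTC} (whose deep input is Cramer's Theorem~\ref{thm:U(j)-representablesetshaveTC}) to upgrade the \( \UU(j) \)-representability of \( Z \) to TC-representability, and apply Theorem~\ref{thm:mainPSP}. Since \( |Z| > \lambda \) in \( V \) and any injection of \( Z \) into \( \lambda \) would belong to \( V \), the set \( Z \) is not well-orderable of size \( \leq \lambda \) in \( L(V_{\lambda+1}) \) either; hence the first alternative of Theorem~\ref{thm:mainPSP} holds there, yielding a continuous injection \( f \colon C(\lambda) \to Z \) inside \( L(V_{\lambda+1}) \). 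Precomposing \( f \) with the continuous injection \( \iota \colon \pre{\omega}{2} \to C(\lambda) \) sending \( w \) to \( (w(i))_{i \in \omega} \) (viewing \( 2 \subseteq \lambda_i \)) produces a continuous injection \( f \circ \iota \colon \pre{\omega}{2} \to Z \).

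The last, and most delicate, step is the transfer back to \( V \), and it is exactly here that the weakness of the conclusion enters: the embedding \( f \) produced by Theorem~\ref{thm:mainPSP} has domain \( C(\lambda)^{L(V_{\lambda+1})} \), which need not coincide with \( C(\lambda)^V \), so \( f \) itself gives no \( V \)-embedding of the (larger) space \( \pre{\lambda}{2}^V \). By contrast, the Cantor space \( \pre{\omega}{2} \), its topology, the map \( \iota \), and the set \( Z \subseteq V_{\lambda+1} \) are all coded low in the cumulative hierarchy and hence are absolute between \( L(V_{\lambda+1}) \) and \( V \); the predicates ``\( f \circ \iota \) is continuous'' and ``\( f \circ \iota \) is injective'' are likewise absolute. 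Therefore \( f \circ \iota \) is a continuous injection of \( \pre{\omega}{2} \) into \( Z \) in \( V \), as required. I expect the main obstacle to be precisely the verification that \( L(V_{\lambda+1}) \) genuinely satisfies the hypotheses of Theorem~\ref{thm:mainPSP} — so that the tower-condition machinery is applied in the correct model — together with the careful bookkeeping of the \( V \)-versus-\( L(V_{\lambda+1}) \) absoluteness that both licenses this final step and explains why only the embedding of \( \pre{\omega}{2} \), rather than the full \( \lPSP \), survives the transfer.
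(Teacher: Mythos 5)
Your derivation is logically valid given the paper's toolkit, but it is not the proof the paper has in mind: the lemma is quoted from Woodin (2011), and as the paper explains immediately after stating it, Woodin's argument uses only the representability structure together with the fact, provable from \( \DC_\lambda \) alone, that the tower condition holds in \( L(V_{\lambda+1}) \) for every \( \mathbb{V} \subseteq \UU(j) \) with \( |\mathbb{V}| < \lambda \). That restricted tower condition is exactly what limits the conclusion to an embedding of \( \pre{\omega}{2} \). You instead route through Corollary~\ref{cor:fromTowertoTC}, whose engine is Cramer's Theorem~\ref{thm:U(j)-representablesetshaveTC} --- the tower condition for \emph{all} \( \mathbb{V} \) of size at most \( \lambda \), a much deeper result obtained only later --- and then apply Theorem~\ref{thm:mainPSP}. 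This yields the full \( \lPSP \) for \( Z \), of which the stated lemma is a trivial weakening; indeed it is the very route the paper follows to prove the far stronger Corollary~\ref{cor:allsetshavelpSP}. So your proof buys generality at the price of being anachronistic relative to the lemma: it erases the point of Woodin's result, namely that a weak form of the \( \lPSP \) is already extractable from the cheap (\( |\mathbb{V}| < \lambda \)) tower condition, without Cramer's machinery.

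There is also one genuine error in your final paragraph: it is not true that \( C(\lambda)^{L(V_{\lambda+1})} \) can differ from \( C(\lambda)^{V} \). Every function from \( \omega \) (indeed from \( \lambda \)) into \( \lambda \) is an element of \( V_{\lambda+1} \), so \( B(\lambda) \), \( C(\lambda) \), and the witnesses to the \( \lPSP \) are absolute between \( L(V_{\lambda+1}) \) and \( V \); this is precisely Remark~\ref{rem:PSPinandout}. Consequently your argument actually transfers the full \( C(\lambda) \)-embedding to \( V \), not merely its restriction to \( \pre{\omega}{2} \). The weakness of the conclusion of Lemma~\ref{lem:WoodinsweakPSP} is therefore not an absoluteness artifact, as you suggest, but a reflection of the weaker tower condition available in Woodin's original argument. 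This slip does not invalidate your proof --- you only ever use the (correct) absoluteness of \( \pre{\omega}{2} \) --- but your explanation of why the lemma's conclusion is weak is mistaken.
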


The proof of Lemma~\ref{lem:WoodinsweakPSP} exploits the representability structure and the fact that the tower condition in $L(V_{\lambda+1})$ holds for every $\mathbb{V} \subseteq \UU(j)$ with $|\mathbb{V}|<\lambda$. Notice that the property described in the conclusion of the theorem is much weaker than the \lPSP{} discussed in this paper, yet it was the first indication that \( \UU(j) \)-representable sets have interesting regularity properties. Another attempt was made by Shi in~\cite[Theorem 4.1]{Shi2015}.

\begin{theorem}[\( \AC \)] 
Assume that $\mathsf{I0}(\lambda)$ holds. Let $Z \subseteq V_{\lambda+1} \) be a set in \( L(V_{\lambda+1})$ that is definable in $V_{\lambda+1}$ (possibly with parameters in \( V_{\lambda+1} \)). If $|Z|>\lambda$, then there is a continuous injective function from $C(\lambda)$ into $Z$.
\end{theorem}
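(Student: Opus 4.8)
The plan is to derive the statement from the machinery of Chapter~\ref{chapter:lambda-PSP} by exhibiting \( Z \) as a TC-representable set inside \( L(V_{\lambda+1}) \) and then invoking Theorem~\ref{thm:mainPSP}. The key structural observations are that \( \mathsf{I0}(\lambda) \) implies \( |V_\lambda| = \lambda \), whence \( \beth_\lambda = \lambda \) and \( 2^{< \lambda} = \lambda \); moreover the bijection \( V_\lambda \to \lambda \) witnessing this belongs to \( V_{\lambda+1} \), so \( V_{\lambda+1} \) is a \(\lambda\)-Polish space homeomorphic to \( B(\lambda) \approx C(\lambda) \) (Theorem~\ref{thm:homeomorphictoCantor}) in both \( V \) and \( L(V_{\lambda+1}) \), and the spaces \( \pre{\omega}{\lambda} \), \( B(\lambda) \), \( C(\lambda) \) literally coincide between the two models because \( V_{\lambda+1} \in L(V_{\lambda+1}) \). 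Finally, \( L(V_{\lambda+1}) \models \DC_\lambda \), so the base theory of the paper is available in \( L(V_{\lambda+1}) \).

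First I would show that the definable set \( Z \) is \( \UU(j) \)-representable in \( L(V_{\lambda+1}) \), where \( j \) is a witness to \( \mathsf{I0}(\lambda) \). This is the deep, genuinely set-theoretic input --- the higher analogue of the Martin--Steel theorem (Theorem~\ref{thm:intro2}) --- and I expect it to be the main obstacle. It is Cramer's representation theorem: using the iterability of \( j \), the fine structure of the family \( \UU(j) \) of \( L(V_{\lambda+1}) \)-ultrafilters that are fixed by tails of the iterates of \( j \), and a Moschovakis-style Coding Lemma in \( L(V_{\lambda+1}) \), one produces for each \( Z \) definable over \( (V_{\lambda+1}, \in) \) (with parameters in \( V_{\lambda+1} \)) a representing function \( \pi \) as in Definition~\ref{def:representableforWoodin}. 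This step carries all the weight of the large cardinal hypothesis and has no counterpart in the elementary theory developed in the earlier chapters; it is the part for which one must genuinely work inside the \( \mathsf{I0} \)-framework.

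Granting the representation, the remaining steps are routine applications of results already established. By Corollary~\ref{cor:fromTowertoTC}, which holds precisely because \( L(V_{\lambda+1}) \) satisfies \( \DC_\lambda \), the set \( Z \) is TC-representable in \( L(V_{\lambda+1}) \). Since \( 2^{< \lambda} = \lambda \) holds in \( L(V_{\lambda+1}) \) and \( V_{\lambda+1} \) is \(\lambda\)-Polish there, Theorem~\ref{thm:mainPSP} applies inside \( L(V_{\lambda+1}) \) and yields the dichotomy: either \( Z \) is well-orderable with \( |Z| \leq \lambda \), or there is a continuous injection \( f \colon C(\lambda) \to Z \) --- both statements interpreted in \( L(V_{\lambda+1}) \).

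It then remains to transfer the conclusion to \( V \). The first horn is impossible: a well-ordering of \( Z \) of length at most \( \lambda \) living in \( L(V_{\lambda+1}) \subseteq V \) would also witness \( |Z| \leq \lambda \) in \( V \), contradicting the hypothesis \( |Z| > \lambda \). Hence the second horn holds, providing some \( f \in L(V_{\lambda+1}) \). Because \( C(\lambda) \) and \( V_{\lambda+1} \) are computed identically in \( V \) and in \( L(V_{\lambda+1}) \) (both being consequences of \( V_{\lambda+1} \in L(V_{\lambda+1}) \)), and since continuity and injectivity of a concrete function are absolute between the two models, \( f \) is a continuous injection from \( C(\lambda) \) into \( Z \) in \( V \) as well, completing the argument.
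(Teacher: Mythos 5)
Your argument is correct, and every step of it is available in the paper: since \( Z \) is definable over \( (V_{\lambda+1},{\in}) \) with parameters, it belongs to \( L_1(V_{\lambda+1}) \subseteq L_\eta(V_{\lambda+1}) \), so Cramer's representability theorem (Theorem~\ref{thm:cramerrepresentability1}) makes it \( \UU(j) \)-representable in \( L(V_{\lambda+1}) \); Corollary~\ref{cor:fromTowertoTC} (applicable because \( L(V_{\lambda+1}) \models \DC_\lambda \)) upgrades this to TC-representability; Theorem~\ref{thm:mainPSP} can then be applied \emph{inside} \( L(V_{\lambda+1}) \), since \( |V_\lambda| = \lambda \) and hence \( 2^{<\lambda} = \lambda \) hold there (the witnessing bijection lies in \( V_{\lambda+1} \)); and your transfer of the dichotomy back to \( V \) is exactly the content of Remark~\ref{rem:PSPinandout}, because the witnesses of either horn are coded by elements of \( \pre{\lambda}{\lambda} \subseteq V_{\lambda+1} \) and continuity and injectivity are absolute between the two models. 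However, this is not the proof the paper attaches to this particular statement: the theorem is quoted from Shi, whose original argument is genuinely different --- it proceeds via generic absoluteness for \( \mathsf{I0} \), a structural analysis of \( L(V_{\lambda+1}) \cap V_{\lambda+2} \), and generic extensions of iterates, with no representability involved. Your route is instead precisely the alternative derivation that the paper itself develops afterwards and uses to prove the stronger Corollaries~\ref{cor:PSPallPolishSpaces} and~\ref{cor:allsetshavelpSP}, which subsume Shi's theorem (the latter replacing Theorem~\ref{thm:cramerrepresentability1} by Cramer's full Theorem~\ref{thm:allrepresentable}). What each approach buys: Shi's method works directly from the embedding and forcing structure of \( \mathsf{I0} \), whereas the factorization through TC-representability isolates the large-cardinal content in a single black box and makes the descriptive-set-theoretic part uniform --- which is exactly the analogy with weakly homogeneously Souslin sets that the paper is promoting. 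One cosmetic caveat: your parenthetical sketch of how Cramer's representation theorem is proved (Coding Lemma, fine structure of \( \UU(j) \)) is speculative, but harmless, since you --- like the paper --- import that theorem as a black box.
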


In view of Theorem~\ref{thm:homeomorphictoCantor} and Corollary~\ref{cor:equivPSP}, Shi's theorem essentially states that all $\lambda$-projective sets satisfy the \( \lPSP \). The proof uses generic absoluteness for \( \mathsf{I0} \), a deep result about the structure of $L(V_{\lambda+1})\cap V_{\lambda+2}$, and the generic extensions of its iterations. Shi comments that the same proof can be used for a larger class of subsets of $V_{\lambda+1}$: if $\gamma$ is closed enough and every $A\in L_\gamma(V_{\lambda+1})$ is $\mathbb{U}(j)$-representable, then all the subsets of \( V_{\lambda+1} \) belonging to $L_\gamma(V_{\lambda+1})$ have the \( \lPSP \). Finally, the conclusive theorem is obtained by Cramer in~\cite[Theorem 5.1]{Cramer2015}.

\begin{theorem}[\( \AC \)]  \label{thm:allPSPCramer} 
Assume that $\mathsf{I0}(\lambda)$ holds. Let $Z \subseteq V_{\lambda+1} $ be any set in \( L(V_{\lambda+1}) \). If $|Z|>\lambda$, then there is a continuous injective function from $B(\lambda)$ into $Z$.
\end{theorem}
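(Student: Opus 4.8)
The plan is to derive Theorem~\ref{thm:allPSPCramer} by combining the ``accessible'' half of the theory developed in Chapter~\ref{chapter:lambda-PSP} (namely Theorem~\ref{thm:mainPSP}, the higher analogue of Theorem~\ref{thm:intro1}) with the deep large-cardinal input that every set in \( L(V_{\lambda+1}) \) is representable (the higher analogue of Theorem~\ref{thm:intro2}), which I would \emph{import} rather than reprove. First I would record the ambient facts. Since \( \mathsf{I0}(\lambda) \) holds, \( \cf(\lambda) = \omega \) and \( \beth_\lambda = \lambda \), equivalently \( |V_\lambda| = \lambda \); in particular \( 2^{< \lambda} = \lambda \), and by Theorem~\ref{thm:homeomorphictoCantor}\ref{thm:homeomorphictoCantor-3} the space \( V_{\lambda+1} \), endowed with Woodin's topology, is \(\lambda\)-Polish and homeomorphic to \( B(\lambda) \approx C(\lambda) \approx \pre{\lambda}{2} \). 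Crucially, \( L(V_{\lambda+1}) \models \DC_\lambda \) (and hence \( \AC_\lambda(\pre{\lambda}{2}) \)), while \( 2^{< \lambda} = \lambda \) is absolute to \( L(V_{\lambda+1}) \); thus all the machinery of this paper, and in particular Theorem~\ref{thm:mainPSP} and Corollary~\ref{cor:fromTowertoTC}, is available \emph{inside} \( L(V_{\lambda+1}) \).

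The core of the argument is to establish that \( Z \) has the \( \lPSP \) as computed in \( L(V_{\lambda+1}) \). The deep ingredient is Cramer's representability theorem, asserting that \emph{every} \( Z \subseteq V_{\lambda+1} \) with \( Z \in L(V_{\lambda+1}) \) is \( \UU(j) \)-representable in \( L(V_{\lambda+1}) \); this is the statement whose proof rests on the iterability of \( j \), generic absoluteness for \( \mathsf{I0} \), and the fine structure of \( L(V_{\lambda+1}) \cap V_{\lambda+2} \). Granting it, I would apply Corollary~\ref{cor:fromTowertoTC} (which uses \( \DC_\lambda \)) to conclude that \( Z \) is TC-representable in \( L(V_{\lambda+1}) \). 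Theorem~\ref{thm:mainPSP}, interpreted inside \( L(V_{\lambda+1}) \), then yields the \( \lPSP \) dichotomy for \( Z \) there: either \( Z \) is well-orderable with \( |Z| \leq \lambda \), or else there is a continuous injection \( f \colon C(\lambda) \to Z \), with both the witnessing bijection/well-order and the map \( f \) lying in \( L(V_{\lambda+1}) \).

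It remains to transfer this internal dichotomy to \( V \) and match it against the hypothesis \( |Z| > \lambda \) (computed in \( V \)). Here I would use that \( B(\lambda) \) and \( C(\lambda) \) depend only on \( \lambda \) and on the sequence \( \vec{\lambda} \), all of which belong to \( L(V_{\lambda+1}) \), and that every element of \( B(\lambda) \) or \( C(\lambda) \) lies in \( V_{\lambda+1} \subseteq L(V_{\lambda+1}) \); hence these spaces, and the property of a given function being a continuous injection between them, are absolute between \( L(V_{\lambda+1}) \) and \( V \). If the first alternative held in \( L(V_{\lambda+1}) \), a witnessing bijection of \( Z \) with a cardinal \( \leq \lambda \) would belong to \( L(V_{\lambda+1}) \subseteq V \), forcing \( |Z| \leq \lambda \) in \( V \) as well, contrary to assumption. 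Therefore the second alternative holds, and the continuous injection \( f \colon C(\lambda) \to Z \) given by \( L(V_{\lambda+1}) \) is a genuine continuous injection in \( V \); precomposing with a homeomorphism \( B(\lambda) \approx C(\lambda) \) (Theorem~\ref{thm:homeomorphictoCantor}\ref{thm:homeomorphictoCantor-1}), or invoking Corollary~\ref{cor:equivPSP}, delivers the desired continuous injection of \( B(\lambda) \) into \( Z \).

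I expect the genuine obstacle to lie entirely in the imported representability theorem: proving \emph{in \( \ZFC \)} that all sets of \( L(V_{\lambda+1}) \) are \( \UU(j) \)-representable is the hard, technical, set-theory-specific step, precisely the part that our unified framework is designed to isolate and black-box. Within the portion of the argument I actually carry out, the only subtle point is the absoluteness bookkeeping of the last paragraph, namely reconciling the \( L(V_{\lambda+1}) \)-internal \( \lPSP \) dichotomy (where \( V_{\lambda+1} \) is not well-orderable) with the external cardinality hypothesis on \( Z \); this is resolved cleanly because the relevant objects --- the spaces \( B(\lambda) \), \( C(\lambda) \), the candidate injections, and any small well-order of \( Z \) --- are absolute between the inner model and \( V \).
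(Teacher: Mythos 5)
Your proposal is correct and follows essentially the same route as the paper: the paper also black-boxes Cramer's theorem that every set in \( L(V_{\lambda+1}) \) is \( \UU(j) \)-representable (Theorem~\ref{thm:allrepresentable}), passes to TC-representability via Corollary~\ref{cor:fromTowertoTC}, applies Theorem~\ref{thm:mainPSP} inside \( L(V_{\lambda+1}) \), and transfers the dichotomy to \( V \) by exactly the absoluteness bookkeeping you describe (this is the content of Remark~\ref{rem:PSPinandout}), finishing with \( B(\lambda) \approx C(\lambda) \) and Corollary~\ref{cor:equivPSP}. Your closing observation about where the real difficulty lives also matches the paper's own framing, which contrasts this representability-based proof with Cramer's original inverse-limit argument.
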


Using again Theorem~\ref{thm:homeomorphictoCantor} and Corollary~\ref{cor:equivPSP}, this gives the ultimate result in this direction and an exact analogue of~\ref{regularitypattern-3} in the generalized context (under different large cardinal assumptions, of course).
Notably, Theorem \ref{thm:allPSPCramer} has no \( \UU(j) \)-representability assumption on the set \( Z \): its proof heavily uses the elementary embedding \( j \) that witnesses $\mathsf{I0}(\lambda)$, and its manipulations via inverse limits. 

\begin{remark} \label{rem:PSPinandout}
When dealing with $\mathsf{I0}(\lambda)$, there are two models of reference, namely, the universe of sets $V$, in which we assume \( \ZFC \), and the inner model $L(V_{\lambda+1})$, which instead only satisfies \( \ZF + \DC_\lambda \). Therefore, in principle we should be careful when considering the $\lPSP$, and always specify in which of the two models we intend it to be considered. 
However, if we restrict the attention to subsets of, say,%
\footnote{Similar observations hold if we replace \( B(\lambda) \) with one of the spaces $C(\lambda)$, $\pre{\lambda}{2}$, or $V_{\lambda+1}$.}
 the generalized Baire space $B(\lambda)$, the situation is rather indifferent: if \( A \subseteq B(\lambda) \) belongs to \( V_{\lambda+1} \), then \( A \) has the \( \lPSP \) in \( V_{\lambda+1} \) if and only if it has the \( \lPSP \) in \( V \).

To see this, the first thing to notice is that 
every function \( \phi \colon \lambda \to \lambda \) belongs to \( V_{\lambda+1} \), as its elements are pair of ordinals in \( \lambda \times \lambda \), which thus belong to \( V_\lambda \). It readily follows that
\( B(\lambda)^{L(V_{\lambda+1})} = B(\lambda)^V\). Also, since Theorem~\ref{thm:homeomorphictoCantor} is true in both \( V \) and \( L(V_{\lambda+1} ) \), it is enough to consider the \( \lPSP^* \) instead of the \( \lPSP \) (see the discussion after Definition~\ref{def:PSP}). Now it is easy to check that the witnesses of the \( \lPSP \) (in whatever model we are considering it) can be canonically coded as functions from \(\lambda\) into itself. Indeed, if \( f \colon B(\lambda) \to  B(\lambda) \) is an embedding of \( B(\lambda) \) into \( A \), then it is coded by the function \( \pre{<\omega}{\lambda} \to \pre{< \omega}{\lambda} \) sending each \( s \in \pre{<\omega}{\lambda} \) into the longest sequence \( t \in \pre{< \omega}{\lambda} \) such that \( \lh(t) \leq \lh(s) \) and \( f(\Nbhd_s) \subseteq \Nbhd_t \), which in turn can be converted into a function \( \phi \colon \lambda \to \lambda \) by identifying \( \pre{< \omega}{\lambda} \) with \( \lambda \) in the usual way.
If instead \( |A| \leq \lambda \), then \( A \) can be identified with a sequence \( (x_\beta)_{\beta < \lambda} \in \pre{\lambda}{(B(\lambda))} = \pre{\lambda}{(\pre{\omega}{\lambda})} \), which can then be canonically turned into a function \( \phi \colon \lambda \to \lambda \) using a suitable bijection between \( \lambda \times \omega \) and \( \lambda \). Our claim easily follows from the fact that, in any case, \( \phi \in L(V_{\lambda+1}) \).

Combining this observation with Corollary~\ref{cor:transferPSP}, one sees that when \( X \) is a \(\lambda\)-Polish space belonging to \( L(V_{\lambda+1}) \) and \( \boldsymbol{\Gamma} \) is a boldface \( \lambda \)-pointclass closed under \(\lambda\)-Borel preimages and such that \( \boldsymbol{\Gamma}(X) \subseteq L(V_{\lambda+1}) \), we can unambiguously say that all sets in \( \boldsymbol{\Gamma}(X) \) have the \( \lPSP \), without having to specify whether the truth of such assertion must be computed in $L(V_{\lambda+1})$ or in $V$.
\end{remark}

The tools developed in this paper allow us to see that, in analogy with what happens in the classical case, also Theorem~\ref{thm:allPSPCramer} has a proof that factors through representability. Indeed, we already showed in Theorem~\ref{thm:mainPSP} that all TC-repre\-sentable sets have the \lPSP, so it becomes crucial to understand which sets are representable, in one sense or another. Notice that the first part of the next result does not need any strong choice assumption, and thus it works also in inner models like \( L(V_{\lambda+1}) \). 

\begin{proposition}\label{prop:analyticrepresent} 
Let \(\lambda\) be such that \( 2^{< \lambda} = \lambda \), and assume that there is a measurable cardinal \( \kappa > \lambda \). Let $X$ be any $\lambda$-Polish space. Then there is an orderly family of ultrafilters \( \UU \) such that all \( \lambda \)-analytic subsets of \( X \) are \( \UU \)-representable (with respect to any tree-basis for \( X \)). If moreover we assume \( \AC \), then every \( Z \in \lS^1_1(X) \) is TC-representable.
\end{proposition}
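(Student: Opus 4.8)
The plan is to reduce to a single closed set on the generalized Baire space and to build, from the measurable cardinal, a coherent system of $\kappa$-complete ultrafilters witnessing $\UU$-representability, the existential witness $y$ of Definition~\ref{def:repr}\ref{def:repr-3} being played by the branch realizing the projection. First I would invoke the remark following Definition~\ref{def:repr}: a set is $\UU$-representable with respect to a tree-basis $\mathcal{B}$ for $X$ if and only if its $f_{\mathcal{B}}$-preimage is $\UU$-representable with respect to the canonical basis of $B(\lambda)$. Since $f_{\mathcal{B}}$ is continuous, $f_{\mathcal{B}}^{-1}(A)\in\lS^1_1(B(\lambda))$ by Proposition~\ref{prop:closurepropertiesofanalytic}\ref{prop:closurepropertiesofanalytic-ii}, so it suffices to represent an arbitrary $A\in\lS^1_1(B(\lambda))$ with respect to the canonical basis, and the same orderly family $\UU$ will then serve for every $A$ and every $\mathcal{B}$. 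By \eqref{eq:normalformforanalytic} write $A=\p[T]$ for an $\omega$-tree $T\subseteq\pre{<\omega}{(\lambda\times\lambda)}$, so that $x\in A$ iff some $z\in B(\lambda)$ satisfies $(x\restriction k,z\restriction k)\in T$ for all $k$, i.e.\ $w=(x,z)$ lies in the closed set $C=[T]$. Fixing a $\kappa$-complete nonprincipal normal ultrafilter $\mathcal{D}$ on the measurable cardinal $\kappa>\lambda$, the goal becomes to attach to each node $(s,t)\in\pre{k}{\lambda}\times\pre{k}{\lambda}$ a $\kappa$-complete ultrafilter $\pi(s,t)$ of level $k$ such that the tower $T^\pi_{x,z}=(\pi(x\restriction k,z\restriction k))_k$ is well-founded exactly when $(x,z)\in C$; granting this, $x\in A$ iff some $z$ has $(x,z)\in C$ iff some $z$ makes $T^\pi_{x,z}$ well-founded, which is precisely Definition~\ref{def:repr}\ref{def:repr-3} with $y=z$.

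Second, I would produce the measures by a homogeneous version of the Shoenfield-tree construction preceding Proposition~\ref{prop:Shoenfieldtree}, replacing the ordinal reservoir $\lambda^+$ by $\kappa$ and measuring the order-preserving labelings with $\mathcal{D}$. For $w=(x,z)$ one has $w\in C$ iff the \emph{exit-tree} $T^w$, whose nodes are the finite strictly increasing sequences of levels $k$ with $w\restriction k\notin T$, is well-founded: if $w\in C$ this tree is trivial, while if $w\notin C$ it has an infinite branch. Well-foundedness of $T^w$ is witnessed by an order-preserving map of $(T^w,\leq_{\mathrm{KB}})$ into $\kappa$, and I would let $\pi(s,t)$ concentrate, via the appropriate finite Fubini power of $\mathcal{D}$ (on a support $K$ of size $\kappa$ coding finite tuples of labels, arranged after routine reindexing so that $\pi(s,t)$ lives on $\pre{k}{K}$), on the partial such labelings determined by the node $(s,t)$. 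Since each level of $T^w$ is finite, every node-set of partial labelings is a nonempty set of strictly increasing finite tuples from $\kappa$, so $\pi$ is total; condition~\ref{def:repr-1} is the level bookkeeping, and condition~\ref{def:repr-2}, comprising both the projection property and $\lambda_k^+$-completeness, follows from the coherence of the Fubini powers together with the $\kappa$-completeness of $\mathcal{D}$ and $\kappa>\lambda\geq\lambda_k^+$. Because all these ultrafilters are derived from the single $\mathcal{D}$, the orderly family $\UU=\ran(\pi)$ does not depend on $A$, as required by the statement.

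The crux is condition~\ref{def:repr-3}, i.e.\ that $T^\pi_{x,z}$ is well-founded precisely when $(x,z)\in C$, which is the generalization of the classical fact that closed (and coanalytic) sets are $\kappa$-homogeneously Souslin. When $(x,z)\in C$ the exit-tree is trivial, the relevant ultrafilters are principal on singletons, and the tower is well-founded (Remark~\ref{rmk:principalrepresentations}). When $(x,z)\notin C$ the order $(T^w,\leq_{\mathrm{KB}})$ is ill-founded, hence admits no order-preserving map into the well-order $\kappa$; taking the full node-sets as a measure-one family then yields a tower with no branch, so it is ill-founded. The subtle point, and the main obstacle, is precisely that a consistent \emph{nonprincipal} $\kappa$-complete system with the projection property exists and detects non-membership: with principal ultrafilters the projection property would force coherent generators and hence always a branch, so it is the availability of $\kappa$-complete nonprincipal measures from the measurable cardinal that makes the ill-founded direction possible (in agreement with Proposition~\ref{prop:U-representabilityandmeasurablecardinals}). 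This argument uses no choice beyond the paper's base theory, so the first assertion holds inside inner models such as $L(V_{\lambda+1})$. Finally, the ``moreover'' clause is immediate: assuming $\AC$, every $\UU$-representable set is TC-representable by Proposition~\ref{prop:TCautomatic}, so each $Z\in\lS^1_1(X)$ is TC-representable.
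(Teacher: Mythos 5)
Your proposal is correct, and it shares the paper's outer skeleton: reduce via \( f_{\mathcal{B}} \)-preimages to a \(\lambda\)-analytic \( A \subseteq B(\lambda) \), write \( A = \p[T] \) as in~\eqref{eq:normalformforanalytic}, build every ultrafilter from finite powers of a single normal measure \( \mathcal{D} \) on \( \kappa \) (so that the ambient family \( \UU \) is independent of \( A \)), characterize membership by well-foundedness of the induced towers, and get the TC clause from Proposition~\ref{prop:TCautomatic}. Where you genuinely diverge is in the homogeneity system itself. The paper uses Rowbottom's theorem to form, for each \( k \), the measure \( \U_k \) on strictly \emph{increasing} \( k \)-tuples and the measure \( \V_k \) on strictly \emph{decreasing} \( k \)-tuples from \( \mathcal{D} \)-large sets; the tower \( (\U_k)_{k} \) is well-founded, any tower with a \( \V \)-tail is not, and \( \pi(s,t) \) is simply \( \U_k \) while \( (s,t) \) stays in \( T \) and switches to \( \U_n {}^\smallfrown{} \V_{k-(n+1)} \) once \( (s,t) \) exits \( T \) after level \( n \). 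You instead run the classical Martin construction: recast \( w \in [T] \) as well-foundedness of an exit tree, and let \( \pi(s,t) \) be the pushforward of a finite power of \( \mathcal{D} \) onto \( \leq_{\mathrm{KB}} \)-order-preserving labelings of the finite partial exit tree visible at \( (s,t) \). Both ill-founded directions reduce to the nonexistence of infinite descending sequences of ordinals, and both coherence arguments reduce to the same Rowbottom projection property, so your route works. What the paper's version buys is the elimination of all rank-function and reindexing bookkeeping (exit from \( T \) is recorded by a single switch of measure type rather than by a growing labeled tree); what yours buys is a completely explicit match with the classical proof that \( \boldsymbol{\Sigma}^1_2 \) sets are weakly homogeneously Souslin, and a template that would apply to well-foundedness conditions more general than ``projection of a closed set''.

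Three small inaccuracies to repair in a final write-up. First, the full exit tree \( T^w \) of a point \( w \notin [T] \) has \emph{infinite} levels (all increasing \( m \)-tuples from a final segment of \( \omega \)); what is finite, and what your construction actually uses, are the partial exit trees determined by finite nodes \( (s,t) \). Second, \( \ran(\pi) \) does depend on \( A \); the statement you need is that a single ambient family \( \UU \) — all reindexed finite-power measures attached to the finitely many possible exit patterns at each level — contains \( \ran(\pi) \) for every \( A \) and every tree-basis, which is exactly what the proposition requires. Third, the well-founded direction for \( w \in [T] \) needs only the trivial fact that a tower of principal ultrafilters with coherent generators is well-founded; Remark~\ref{rmk:principalrepresentations} makes a different (global) assertion and is not the right citation there.
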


\begin{proof} 
The additional part concerning TC-representability follows from the first one and Proposition~\ref{prop:TCautomatic}.

The following construction is somewhat implicit in Martin's proof that the existence of a measurable cardinal implies that every coanalytic set is determined (see~\cite[Theorem 31.1]{Kanamori}). Let $\U$ be a normal ultrafilter on $\kappa$. Recall that for every set \( H \) and every \( k \in \omega \), the symbol \( [H]^k \) denotes the collection of all subsets of \( H \) of size \( k \). When \( H \subseteq  \kappa \), we also let 
\begin{align*}
\pre{\uparrow k }{H} & = \{ s \in \pre{k}{H} \mid \forall i < j < k \, (s(i) < s(k)) \}, \quad \text{and} \\
\pre{\downarrow k }{H} & = \{ s \in \pre{k}{H} \mid \forall i < j < k \, (s(i) > s(k)) \}
\end{align*}
be the sets of all sequences in \( \pre{k}{H} \) which are strinctly increasing and strictly decreasing, respectively. Clearly, \( [H]^k \) can be naturally identified with both \( \pre{\uparrow k }{H} \) and \( \pre{\downarrow k}{H} \).
By Rowbottom's Theorem (see~\cite[Theorem 7.17]{Kanamori}), both
\begin{align*}
\U_k & =\{A \subseteq \pre{k}{\kappa} \mid \pre{\uparrow k}{H} \subseteq A \text{ for some }  H \in \U \}, \quad \text{and} \\
\V_k & =\{A \subseteq \pre{k}{\kappa} \mid \pre{\downarrow k}{H} \subseteq A \text{ for some }  H \in \U \} 
\end{align*}
are $\kappa$-complete ultrafilters on $\pre{k}{\kappa}$, and it is easy to see that if $k> \ell$ then $\U_k$ projects onto $\U_\ell$ and, similarly, \( \V_k \) projects onto \( \V_\ell \). This shows that both \( (\U_k)_{k \in \omega} \) and \( (\V_k)_{k \in \omega} \) are towers of \( \kappa \)-complete ultrafilters, the difference being that the former is well-founded (because \( \U \) is \( \kappa \)-complete and \( \kappa > \omega \)), while the latter is not (because there are no decreasing infinite sequences of ordinals).

Recall from the paragraph preceding Proposition~\ref{prop:representablethesame} the ``concatenation'' operation \( (\U,\V) \mapsto \U {}^\smallfrown{}  \V \) for ultrafilters based on the tensor product, and the ``mixed'' towers of \( \kappa \)-complete ultrafilters \( (\mathcal{W}^{(n)}_k)_{k \in \omega} \) obtained from the two towers \( (\U_k)_{k \in \omega} \) and \( (\V_k)_{k \in \omega} \), which in our cases are always non-well-founded because so is \( (\V_k)_{k \in \omega} \).
Let \( \UU = \{ \mathcal{W}^{(n)}_k \mid n \leq k \} \), 
and notice that the support of the orderly family \( \UU \) is \( K = \kappa \).
We claim that every \( Z \in \lS^1_1(X) \) is \( \UU \)-representable with respect to any tree-basis \( \mathcal{B} \) for \( X \).

Let \( Z' = f_{\mathcal{B}}^{-1}(Z) \). Then \( Z' \in \lS^1_1(B(\lambda)) \), hence there is an \( \omega \)-tree \( T \) on \( \lambda \times \lambda \) such that  \( Z' = \p[T] \).
For each \( k \in \omega \) and \( s,t \in \pre{k}{\lambda} \), let  \( \pi(s,t) = \mathcal{W}^{(n)}_k \), where \( n \leq k \) is largest such that \( (s \restriction n,t \restriction n) \in T \). We claim that \( \pi \) is a \( \UU \)-representation for \( Z \).

Consider any \( \tilde{x} \in B(\lambda) \). If \( f_{\mathcal{B}}(\tilde{x}) \in Z \), then \( \tilde{x} \in Z' \) and hence there is \( y \in B(\lambda) \) such that \( (\tilde{x},y) \in [T] \). Then \( T^\pi_{\tilde{x},y} = (\U_k)_{k \in \omega} \) is well-founded. 
If instead \( f_{\mathcal{B}}(\tilde{x}) \notin Z \), then \( \tilde{x} \notin Z' \), hence for every \( y \in B(\lambda) \) we must have \( (\tilde{x},y) \notin [T] \). Therefore \( T^\pi_{\tilde{x},y} = (\mathcal{W}^{(n)}_k)_{k \in \omega} \), where \( n \in \omega \) is largest such that \( ( \tilde{x} \restriction n, y \restriction n) \in T \), and so \( T^\pi_{\tilde{x},y} \) is not well-founded. 
\end{proof}


Next we show that, under the same hypotheses, representable sets give rise to a boldface \(\lambda\)-pointclass whose closure properties are only slightly weaker than those of \( \lS^1_1 \) (see Proposition~\ref{prop:closurepropertiesofanalytic}).
For comparison, we mention that in~\cite[Lemmas 114 and 115]{Woodin2011} it has been shown that some of these closure properties are enjoyed also by the collection of \( \UU(j) \)-representable subsets of \( V_{\lambda+1} \).

\begin{proposition} \label{prop:representpointclass}
Let \( \lambda \) be such that \( 2^{< \lambda} = \lambda \), and assume that there is a measurable cardinal \( \kappa > \lambda \). For any \(\lambda\)-Polish space \( X \), let \( \boldsymbol{\Gamma}_{\mathrm{repr}}(X) \) be the collection of all \( Z \subseteq X \) which are \( \UU \)-representable (with respect to \emph{some} tree-basis \( \mathcal{B} \) for \( X \)), where \( \UU \) varies over all orderly families of ultrafilters. Then: 
\begin{enumerate-(i)}
\item \label{prop:representpointclass-i}
\( \boldsymbol{\Gamma}_{\mathrm{repr}} \) is a boldface \( \lambda \)-poinclass containing \( \lS^1_1 \), in the sense that \( \lS^1_1(X) \subseteq \boldsymbol{\Gamma}_{\mathrm{repr}}(X) \) for every \(\lambda\)-Polish space \( X \).
\item \label{prop:representpointclass-ii}
\( \boldsymbol{\Gamma}_{\mathrm{repr}} \) is closed under projections, well-ordered unions of length at most \( \lambda \), and finite intersections.
\item \label{prop:representpointclass-iii}
\( \boldsymbol{\Gamma}_{\mathrm{repr}} \) is closed under \(\lambda\)-Borel images and preimages, in the following strong sense.
Let \( f \colon X \to Y \) be a \( \lambda \)-Borel function between the \( \lambda \)-Polish spaces \( X \) and \( Y \). Suppose that \( \mathcal{B}_X \) and \( \mathcal{B}_Y \) are tree-bases for \( X \) and \( Y \), respectively. 
\begin{itemizenew}
\item
If \( Z \subseteq X \) is \( \UU \)-representable with respect to \( \mathcal{B}_X \), then \( f(Z) \) is \( \mathbb{V} \)-respresentable with respect to \( \mathcal{B}_Y \), for some orderly family of ultrafilters \( \mathbb{V} \). 
\item
Conversely, if \( W \subseteq Y \) is \( \mathbb{V} \)-representable with respect to \( \mathcal{B}_Y \), then \( f^{-1}(W) \) is \( \UU \)-representable with respect to \( \mathcal{B}_Y \), for some orderly family of ultrafilters \( \UU \).
\end{itemizenew}
\end{enumerate-(i)}

If moreover we assume \( \AC \),%
\footnote{Notice that \( \AC \) is needed only to show the inclusion \( \lS^1_1(X) \subseteq \boldsymbol{\Gamma}_{\mathrm{repr}}(X) \) (which follows from Proposition~\ref{prop:analyticrepresent}), which in turn is used to show that \( \boldsymbol{\Gamma}_{\mathrm{TC}} \) is closed under \(\lambda\)-Borel images and preimages. However, our proof shows that even in a choiceless setting, \( \boldsymbol{\Gamma}_{\mathrm{TC}} \) is closed under projections, unions of size \( \lambda \), and finite intersections. Moreover, a different argument allows one to check that \( \boldsymbol{\Gamma}_{\mathrm{TC}} \) is at least closed under continuous preimages, i.e.\ it is a boldface \(\lambda\)-pointclass. } 
then the same holds for the pointclass \( \boldsymbol{\Gamma}_{\mathrm{TC}} \) obtained by letting \( \boldsymbol{\Gamma}_{\mathrm{TC}}(X) \) be the collection of all TC-representable sets \( Z \subseteq X \).
\end{proposition}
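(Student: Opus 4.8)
The plan is to first normalize the setting and then verify the closure clauses in an order that lets the later ones feed on the earlier ones. By the remark following Definition~\ref{def:repr}, a set is \( \UU \)-representable with respect to a tree-basis \( \mathcal{B} \) if and only if its \( f_{\mathcal{B}} \)-preimage is \( \UU \)-representable with respect to the canonical basis of \( B(\lambda) \); hence membership in \( \boldsymbol{\Gamma}_{\mathrm{repr}}(X) \) does not depend on the chosen tree-basis, and for the constructions I may always assume \( X = B(\lambda) \) with \( f_{\mathcal{B}} = \id \), transferring back to arbitrary tree-bases via the same remark. Moreover, fixing the measurable cardinal \( \kappa > \lambda \) of the hypothesis, by Proposition~\ref{prop:U-representabilityandmeasurablecardinals} and Remark~\ref{rmk:U-representabilityandmeasurablecardinals} I may require every ultrafilter occurring in a representation to be \( \kappa \)-complete; since \( \kappa > \lambda > \lambda_k \) this automatically yields the \( \lambda_k^+ \)-completeness demanded in Definition~\ref{def:repr}\ref{def:repr-2}, and \( \kappa \)-completeness is preserved by all the ultrafilter operations used below. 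Clause~\ref{prop:representpointclass-i} then splits in two: the inclusion \( \lS^1_1(X) \subseteq \boldsymbol{\Gamma}_{\mathrm{repr}}(X) \) is exactly Proposition~\ref{prop:analyticrepresent}, while the fact that \( \boldsymbol{\Gamma}_{\mathrm{repr}} \) is a boldface \( \lambda \)-pointclass will follow from closure under continuous preimages, a special case of~\ref{prop:representpointclass-iii}.

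For a projection \( \p(Z) \) of a representable \( Z \subseteq B(\lambda) \times B(\lambda) \), I would fold the projected coordinate into the existential witness: writing a point as a pair and a \( \UU \)-representation \( \pi \) witnessing that \( (x,w) \in Z \) iff there is \( y \) with \( T^\pi_{(x,w),y} \) well-founded, the new witness for \( x \in \p(Z) \) is the interleaving of \( w \) and \( y \), and the representation is obtained by relabelling \( \pi \) accordingly. For a union \( \bigcup_{\alpha<\lambda} Z_\alpha \) with representations \( \pi_\alpha \) on families \( \UU_\alpha \), I would amalgamate the \( \UU_\alpha \) into a single orderly family with support \( \lambda \times K' \) (where \( K' \) is a common support), using the first coordinate of each element to record the index; the new witness \( y \) has \( y(0) = \alpha \), and \( \pi(\tilde x \restriction k, y \restriction k) \) is the level-\( (k-1) \) ultrafilter of \( \pi_\alpha \) tensored with the principal ultrafilter pinning the first coordinate to \( \alpha \). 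Well-foundedness of the combined tower forces the first-coordinate sequence to be constantly \( \alpha \) and otherwise reproduces \( T^{\pi_\alpha} \), so \( x \) lies in the union iff some component tower is well-founded. Cylinders \( X \times Z \) over a representable \( Z \) are the trivial instance of the same bookkeeping, a witness ignoring the dummy coordinate.

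Finite intersections are the technical heart. Given representations \( \pi_0,\pi_1 \) for \( Z_0,Z_1 \) on families with supports \( K_0,K_1 \), I would build a representation for \( Z_0 \cap Z_1 \) on support \( K_0 \times K_1 \) by setting, at level \( k \), \( \pi(\tilde x \restriction k, y \restriction k) = \pi_0(\tilde x \restriction k, y_0 \restriction k) \otimes \pi_1(\tilde x \restriction k, y_1 \restriction k) \), where \( y \) interleaves the two witnesses \( y_0,y_1 \) and the tensor product is read as an ultrafilter on \( \pre{k}{(K_0 \times K_1)} \) (using the operations discussed before Proposition~\ref{prop:representablethesame}, which preserve \( \kappa \)-completeness and projections). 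The crux is the lemma that the tower of tensor products is well-founded if and only if both component towers are. The forward direction is projection; for the converse, given \( A_k \in \U^{(0)}_k \otimes \U^{(1)}_k \) one first applies well-foundedness of \( (\U^{(0)}_k)_k \) to the sets \( B_k = \{ s \mid (A_k)_s \in \U^{(1)}_k \} \in \U^{(0)}_k \) to extract \( z_0 \) with \( z_0 \restriction k \in B_k \), and then applies well-foundedness of \( (\U^{(1)}_k)_k \) to \( C_k = (A_k)_{z_0 \restriction k} \in \U^{(1)}_k \) to extract \( z_1 \), so that \( (z_0 \restriction k, z_1 \restriction k) \in A_k \) for all \( k \); here it is essential that Definition~\ref{def:towers} quantifies over \emph{arbitrary} sequences \( (A_k) \) with \( A_k \in \U_k \), not only tower-compatible ones. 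I expect the main obstacle to lie precisely in this tensor-product lemma together with the accompanying support, level, and completeness bookkeeping, since everything else reduces to it once \( \kappa \)-completeness is fixed in advance.

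Closure under \( \lambda \)-Borel images and preimages then follows formally: for \( \lambda \)-Borel \( f \colon X \to Y \), its graph is \( \lambda \)-Borel by Proposition~\ref{prop:borelvsgraph}, hence \( \lambda \)-analytic, hence representable by Proposition~\ref{prop:analyticrepresent}, and one has \( f^{-1}(W) = \p_X(\mathrm{graph}(f) \cap (X \times W)) \) and \( f(Z) = \p_Y(\mathrm{graph}(f) \cap (Z \times Y)) \); both are assembled from a cylinder, one finite intersection, and one projection, which explains why the inclusion \( \lS^1_1 \subseteq \boldsymbol{\Gamma}_{\mathrm{repr}} \) (and hence \( \AC \), in the choiceless footnote reading) enters only at this step. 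Finally, under \( \AC \), Proposition~\ref{prop:TCautomatic} gives \( \boldsymbol{\Gamma}_{\mathrm{TC}}(X) = \boldsymbol{\Gamma}_{\mathrm{repr}}(X) \) for every \( X \) (TC-representability trivially implies \( \UU \)-representability, and the converse is Proposition~\ref{prop:TCautomatic}), so all the closure properties transfer verbatim to \( \boldsymbol{\Gamma}_{\mathrm{TC}} \). For the sharper choiceless assertions in the footnote one instead carries a witnessing function \( F \) through each construction above, amalgamating and tensoring the \( F \)'s alongside the ultrafilters, which is routine once the representations themselves have been built.
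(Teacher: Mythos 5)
Your constructions themselves are essentially the paper's: the pairing-function relabelling for projections and cylinders, the tensor-product family \( \UU \otimes \mathbb{V} \) with the key lemma that a tower of tensor products is well-founded if and only if both component towers are (your two-stage extraction of \( z_0 \) and then \( z_1 \) is verbatim the paper's argument, and your observation that Definition~\ref{def:towers} quantifies over \emph{arbitrary} sequences \( (A_k) \) with \( A_k \in \U_k \) is exactly the point), the index-amalgamation for \( \lambda \)-unions (your mechanism, recording the index in the support via a principal tensor factor with a level shift, differs only cosmetically from the paper's \( \U^+ \)-lifts with the index read off the witness), and the decomposition \( f(Z) = \p_2(\mathrm{graph}(f) \cap (Z \times Y)) \), \( f^{-1}(W) = \p(\mathrm{graph}(f) \cap (X \times W)) \) with \( \mathrm{graph}(f) \) representable via Propositions~\ref{prop:borelvsgraph} and~\ref{prop:analyticrepresent}. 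Your treatment of the TC case is a genuine simplification: under \( \AC \), Proposition~\ref{prop:TCautomatic} gives \( \boldsymbol{\Gamma}_{\mathrm{TC}} = \boldsymbol{\Gamma}_{\mathrm{repr}} \) outright, so all closure properties transfer for free, whereas the paper carries a witnessing function \( F \) through every construction; the paper does so precisely to support the choiceless claims in its footnote, and you correctly note that those require the laborious route.

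The one genuine flaw is your opening move. You claim that membership in \( \boldsymbol{\Gamma}_{\mathrm{repr}}(X) \) is independent of the chosen tree-basis as a consequence of the remark following Definition~\ref{def:repr}, and you use this to normalize everything to \( B(\lambda) \) with the canonical basis. That remark only says that \( Z \) is representable with respect to \( \mathcal{B} \) if and only if \( f_{\mathcal{B}}^{-1}(Z) \) is representable with respect to the canonical basis; it does \emph{not} let you pass from \( f_{\mathcal{B}}^{-1}(Z) \) to \( f_{\mathcal{B}'}^{-1}(Z) \) for a different tree-basis \( \mathcal{B}' \), since at this stage there is no map \( g \colon B(\lambda) \to B(\lambda) \) with \( f_{\mathcal{B}} \circ g = f_{\mathcal{B}'} \) available. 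Basis-independence is in fact a \emph{consequence} of clause~\ref{prop:representpointclass-iii} (apply it with \( f = \id \)), as the paper notes right after the statement; but in your architecture clause~\ref{prop:representpointclass-iii} is proved using finite intersections, and your intersection step silently requires both sets to be representable with respect to the \emph{same} basis — which, after your normalization, is exactly what the claimed basis-independence was supposed to guarantee. As written, the argument is circular. The fix is the paper's ordering: prove the projection, cylinder, intersection, and union lemmas relative to compatible bases (the canonical one on \( B(\lambda) \), induced product bases on products), use the fact that Proposition~\ref{prop:analyticrepresent} makes \( \mathrm{graph}(f) \) representable with respect to \emph{any} tree-basis — this is what breaks the circle — to derive clause~\ref{prop:representpointclass-iii}, and only then conclude basis-independence and the fully general forms of clauses~\ref{prop:representpointclass-i} and~\ref{prop:representpointclass-ii}.
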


In particular, by Proposition~\ref{prop:representpointclass}\ref{prop:representpointclass-iii} the property of being representable is independent of the chosen tree-basis on the space.

\begin{proof}
The G\"odel pairing function \( \langle \cdot , \cdot \rangle \colon \lambda \times \lambda \to \lambda \) canonical induces bijections \( \pre{\omega}{\lambda} \times \pre{\omega}{\lambda} \to \pre{\omega}{\lambda} \) and \( \pre{k}{\lambda} \times \pre{k}{\lambda} \to \pre{k}{\lambda} \), for every \( k \in \omega \), which will still be denoted by \( \langle \cdot , \cdot \rangle \); their inverses will be denoted by \( ( \cdot )_0 \) and \( ( \cdot )_1 \), respectively, so that \( \langle (x)_0, (x)_1 \rangle = x \) for all elements \( x \) of interest.

We first prove that \( \boldsymbol{\Gamma}_{\mathrm{repr}} \) is closed under projections. Given two tree-bases \( \mathcal{B}_X = \{ U_s \mid s \in \pre{< \omega}{\lambda} \} \) and \( \mathcal{B}_Y = \{ V_s \mid s \in \pre{<\omega}{\lambda} \} \)  for the \(\lambda\)-Polish spaces \( X \) and \( Y \), respectively, we let \( \mathcal{B}_{X \times Y} = \{ W_s \mid s \in \pre{<\omega}{\lambda} \} \) be the tree-basis%
\footnote{To have that \( \mathcal{B}_{X \times Y} \) satisfies condition~\ref{def:tree-basis-4} of Definition~\ref{def:tree-basis}, the product \( X \times Y \) has to be equipped with the sup metric.}
 for their product \( X \times Y \) canonically induced by \( \mathcal{B}_X \) and \( \mathcal{B}_Y \), namely, \( W_s = U_{(s)_0} \times V_{(s)_1} \) for every \( s \in \pre{<\omega}{\lambda} \).
Notice that for every \( \tilde{x} \in B(\lambda) \), we have \( f_{\mathcal{B}_{X \times Y}}(\tilde{x}) = ( f_{\mathcal{B}_X((\tilde{x})_0)} , f_{\mathcal{B}_Y}((\tilde{x})_1) ) \).

\begin{claim} \label{claim:representpointclass1}
If \( C \subseteq X \times Y \) is \( \UU \)-representable with respect to \( \mathcal{B}_{X \times Y} \), then \( \p(C) \) is \( \UU \)-representable with respect to \( \mathcal{B}_X \) and, similarly, the projection \( \p_2(C) \) of \( C \) onto \( Y \) is \( \UU \)-representable with respect to \( \mathcal{B}_Y \).
\end{claim}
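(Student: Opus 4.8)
The plan is to read the projection $\p$ as an existential quantifier and to absorb it into the existential witness that is already built into the notion of a $\UU$-representation. Fix a $\UU$-representation $\pi$ for $C \subseteq X \times Y$ with respect to $\mathcal{B}_{X \times Y}$. For $f_{\mathcal{B}_X}(\tilde x) \in \p(C)$ we need some $y \in Y$, say $y = f_{\mathcal{B}_Y}(\tilde y)$, with $(f_{\mathcal{B}_X}(\tilde x), y) = f_{\mathcal{B}_{X\times Y}}(\langle \tilde x, \tilde y \rangle) \in C$, which by condition~\ref{def:repr-3} for $\pi$ means there is $v \in B(\lambda)$ such that $T^\pi_{\langle \tilde x, \tilde y\rangle, v}$ is well-founded. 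Thus membership in $\p(C)$ is governed by \emph{two} existential witnesses, $\tilde y$ and $v$, and the idea is to pack both into a single witness $r$ via the coordinatewise pairing. Concretely, I would define $\pi' \colon \bigcup_{k \in \omega}(\pre{k}{\lambda} \times \pre{k}{\lambda}) \to \UU$ by setting, for $s, r \in \pre{k}{\lambda}$,
\[
\pi'(s,r) = \pi\big(\langle s, (r)_0\rangle,\, (r)_1\big),
\]
and claim that $\pi'$ is a $\UU$-representation for $\p(C)$ with respect to $\mathcal{B}_X$. The symmetric choice $\pi''(s,r) = \pi(\langle (r)_0, s\rangle, (r)_1)$ will handle $\p_2(C)$.

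First I would check conditions~\ref{def:repr-1} and~\ref{def:repr-2}. Since $s, (r)_0, (r)_1 \in \pre{k}{\lambda}$ and $\langle s,(r)_0\rangle \in \pre{k}{\lambda}$, the two arguments fed to $\pi$ both have length $k$, so $\lev(\pi'(s,r)) = k$ by~\ref{def:repr-1} for $\pi$, giving~\ref{def:repr-1} for $\pi'$. The crucial structural fact is that the induced bijections commute with restrictions: because the pairing is applied coordinatewise, $\langle x,y\rangle \restriction k = \langle x\restriction k, y\restriction k\rangle$, and dually $(r\restriction k)_0 = (r)_0\restriction k$ and $(r\restriction k)_1 = (r)_1\restriction k$. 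Hence for every $\tilde x, r \in B(\lambda)$ one gets the literal identity of towers $T^{\pi'}_{\tilde x, r} = T^\pi_{\langle \tilde x, (r)_0\rangle,\, (r)_1}$, so $T^{\pi'}_{\tilde x, r}$ is a tower of ultrafilters and each $\pi'(\tilde x\restriction k, r\restriction k)$ is $\lambda_k^+$-complete by~\ref{def:repr-2} for $\pi$. This yields~\ref{def:repr-2} for $\pi'$, and simultaneously shows $\ran(\pi') \subseteq \UU$, so the representation stays inside the same orderly family $\UU$.

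It remains to verify condition~\ref{def:repr-3} for $\pi'$, using the identity $f_{\mathcal{B}_{X\times Y}}(\langle \tilde x,\tilde y\rangle) = (f_{\mathcal{B}_X}(\tilde x), f_{\mathcal{B}_Y}(\tilde y))$ recorded in the setup of the proposition. For the forward direction, if $f_{\mathcal{B}_X}(\tilde x) \in \p(C)$, pick $\tilde y$ with $(f_{\mathcal{B}_X}(\tilde x), f_{\mathcal{B}_Y}(\tilde y)) \in C$ (here surjectivity of $f_{\mathcal{B}_Y}$ is used), then $v$ with $T^\pi_{\langle\tilde x,\tilde y\rangle, v}$ well-founded, and set $r = \langle \tilde y, v\rangle$ so that $T^{\pi'}_{\tilde x, r} = T^\pi_{\langle \tilde x,\tilde y\rangle, v}$ is well-founded. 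Conversely, given $r$ with $T^{\pi'}_{\tilde x, r}$ well-founded, the same identity with $\tilde y = (r)_0$ and $v = (r)_1$ lets~\ref{def:repr-3} for $\pi$ conclude $(f_{\mathcal{B}_X}(\tilde x), f_{\mathcal{B}_Y}((r)_0)) \in C$, hence $f_{\mathcal{B}_X}(\tilde x) \in \p(C)$. The proof is mostly bookkeeping; the only point demanding care — and the one I would flag as the main obstacle — is the bidirectional matching in~\ref{def:repr-3}, i.e.\ confirming that combining the two existential witnesses into one does not smuggle in spurious members of $\p(C)$ nor omit genuine ones. This hinges precisely on the coordinatewise pairing respecting restrictions (so that towers correspond exactly) and on $f_{\mathcal{B}_{X\times Y}}$ factoring as the product of $f_{\mathcal{B}_X}$ and $f_{\mathcal{B}_Y}$; once these are isolated, both directions fall out directly.
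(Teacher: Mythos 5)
Your proposal is correct and is essentially the paper's own proof: the paper defines the new representation by exactly the same formula (with the roles of the names $\pi$ and $\pi'$ swapped), namely $\pi(s,t) = \pi'(\langle s,(t)_0\rangle,(t)_1)$, and verifies it via the same tower identity $T^{\pi}_{\tilde x, t} = T^{\pi'}_{\langle \tilde x,(t)_0\rangle,(t)_1}$, relying as you do on the coordinatewise pairing commuting with restrictions and on $f_{\mathcal{B}_{X\times Y}}$ factoring as the product of $f_{\mathcal{B}_X}$ and $f_{\mathcal{B}_Y}$. The only cosmetic difference is that the paper also observes $\ran$ of the new map equals $\ran$ of the old one (to keep the tacit surjectivity convention), whereas you only note the inclusion into $\UU$, which already suffices for the claim.
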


\begin{proof}[Proof of the claim]
Let \( \pi' \) be a \( \UU \)-representation for \( C \) with respect to \( \mathcal{B}_{X \times Y} \). Define \( \pi \colon \bigcup_{k \in \omega} (\pre{k}{\lambda} \times \pre{k}{\lambda}) \to \UU \) by setting
\[ 
\pi(s,t) = \pi'(\langle s , (t)_0 \rangle , (t)_1).
 \] 
Then \( \ran(\pi) = \ran(\pi') = \UU \), and
since for every \( \tilde{x},y \in B(\lambda) \) we have \( T^\pi_{\tilde{x},y} = T^{\pi'}_{\langle \tilde{x},(y)_0 \rangle, (y)_1} \) and \( T^{\pi'}_{\tilde{x},y} = T^\pi_{(\tilde{x})_0, \langle (\tilde{x})_1 , y \rangle} \), one can easily check that \( \pi \) is a \( \UU \)-representation for \( \p(C) \).
The case of \( \p_2(C) \) is similar, the only difference being that we have to set \( \pi(s,t) = \pi'(\langle (t)_0, s   \rangle , (t)_1) \).
\end{proof}

Conversely, we have the following result.

\begin{claim} \label{claim:representpointclass2}
If \( C \subseteq X \) is \( \UU \)-representable with respect to \( \mathcal{B}_X \), then \( C \times Y \) is \( \UU \)-representable with respect to \( \mathcal{B}_{X \times Y} \), where \( \mathcal{B}_Y \) is any tree-basis for \( Y \).
\end{claim}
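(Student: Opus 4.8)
The plan is to exhibit a $\UU$-representation of $C \times Y$ directly from the given $\UU$-representation $\pi'$ of $C$, by simply ignoring the second coordinate of the ambient point. The key observation is that, with the product tree-basis $\mathcal{B}_{X \times Y}$ set up as in the proof of Claim~\ref{claim:representpointclass1}, a point $\tilde{x} \in B(\lambda)$ encoding a pair $(f_{\mathcal{B}_X}((\tilde{x})_0), f_{\mathcal{B}_Y}((\tilde{x})_1))$ lands in $C \times Y$ if and only if its first component $f_{\mathcal{B}_X}((\tilde{x})_0)$ lands in $C$; the second component is irrelevant. So the representation should depend only on the $(\cdot)_0$-part of its arguments.

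Concretely, I would define $\pi \colon \bigcup_{k \in \omega} (\pre{k}{\lambda} \times \pre{k}{\lambda}) \to \UU$ by setting
\[
\pi(s,t) = \pi'((s)_0, (t)_0),
\]
where $(\cdot)_0$ denotes the first coordinate under the level-$k$ G\"odel pairing bijection $\pre{k}{\lambda} \to \pre{k}{\lambda} \times \pre{k}{\lambda}$. First I would check conditions~\ref{def:repr-1} and~\ref{def:repr-2} of Definition~\ref{def:repr}: the level requirement $\lev(\pi(s,t)) = k$ holds because $\lev(\pi'((s)_0,(t)_0)) = k$ (as $(s)_0, (t)_0 \in \pre{k}{\lambda}$); and for any $\tilde{x}, y \in B(\lambda)$ one has $T^\pi_{\tilde{x},y} = (\pi((\tilde{x} \restriction k),(y \restriction k)))_{k \in \omega} = (\pi'((\tilde{x})_0 \restriction k, (y)_0 \restriction k))_{k \in \omega} = T^{\pi'}_{(\tilde{x})_0,(y)_0}$, which is a tower of ultrafilters with the required completeness because $\pi'$ is a $\UU$-representation. (Here I would note that $(\tilde{x} \restriction k)$ under the level-$k$ pairing corresponds to $(\tilde{x})_0 \restriction k$, which follows from the G\"odel pairing function being monotone in an appropriate sense, exactly as used elsewhere in the excerpt.)

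For condition~\ref{def:repr-3} I would argue as follows. Given $\tilde{x} \in B(\lambda)$, write $x = f_{\mathcal{B}_{X \times Y}}(\tilde{x}) = (f_{\mathcal{B}_X}((\tilde{x})_0), f_{\mathcal{B}_Y}((\tilde{x})_1))$. Then $x \in C \times Y$ if and only if $f_{\mathcal{B}_X}((\tilde{x})_0) \in C$, which by the representability of $C$ (applied to the point $(\tilde{x})_0 \in B(\lambda)$) holds if and only if there is $w \in B(\lambda)$ with $T^{\pi'}_{(\tilde{x})_0, w}$ well-founded. Since $T^\pi_{\tilde{x},y} = T^{\pi'}_{(\tilde{x})_0,(y)_0}$ and the map $y \mapsto (y)_0$ is surjective onto $B(\lambda)$, the existence of such a $w$ is equivalent to the existence of $y \in B(\lambda)$ with $T^\pi_{\tilde{x},y}$ well-founded (take any $y$ with $(y)_0 = w$, and conversely). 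This establishes condition~\ref{def:repr-3}, so $\pi$ is a $\UU$-representation for $C \times Y$ with respect to $\mathcal{B}_{X \times Y}$.

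I do not expect a genuine obstacle here; this claim is the easy half of the image/preimage closure, and the main care needed is purely bookkeeping: getting the pairing and restriction identifications $(\tilde{x} \restriction k) \leftrightarrow ((\tilde{x})_0 \restriction k, (\tilde{x})_1 \restriction k)$ consistent with the conventions fixed just before the proof of Claim~\ref{claim:representpointclass1}, and verifying that the surjectivity of $y \mapsto (y)_0$ correctly transfers the ``$\exists y$'' quantifier. Once Claims~\ref{claim:representpointclass1} and~\ref{claim:representpointclass2} are in hand, closure of $\boldsymbol{\Gamma}_{\mathrm{repr}}$ under \(\lambda\)-Borel images and preimages in part~\ref{prop:representpointclass-iii} will follow by the usual graph argument, combining these claims with Proposition~\ref{prop:borelvsgraph} and the fact that $\lambda$-analytic (hence $\lambda$-Borel) sets are representable by Proposition~\ref{prop:analyticrepresent}.
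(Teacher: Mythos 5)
Your proof is correct and takes essentially the same approach as the paper's: the paper defines \( \pi(s,t) = \pi'((s)_0, t) \), keeping the witness coordinate whole, while you use \( \pi(s,t) = \pi'((s)_0,(t)_0) \); both yield the same kind of tower identity and the existential quantifier over witnesses transfers either way (in your case via the surjectivity of \( y \mapsto (y)_0 \), which you correctly flag). The bookkeeping you highlight, namely \( (\tilde{x} \restriction k)_0 = (\tilde{x})_0 \restriction k \), is exactly the coherence of the coordinatewise pairing that the paper also relies on, so there is no gap.
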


\begin{proof}[Proof of the claim]
Let \( \pi' \) be a \( \UU \)-representation for \( C \) with respect to \( \mathcal{B}_X \). 
Let \( \pi \colon \bigcup_{k \in \omega} (\pre{k}{\lambda} \times \pre{k}{\lambda}) \to \UU \) be defined by
\[
\pi(s,t) = \pi'((s)_0,t).
\]
Then \( \ran(\pi) = \ran(\pi') = \UU \), and \( \pi \) is a \( \UU \)-representation for \( C \times Y \) with respect to \( \mathcal{B}_{X \times Y} \).
\end{proof}

Of course, by a similar argument we have that if \( D \subseteq Y \) is \( \UU \)-representable with respect to \( \mathcal{B}_Y \), then \( X \times D \) is \( \UU \)-representable with respect to \( \mathcal{B}_{X \times Y} \), where \( \mathcal{B}_X \) is any tree-basis for \( X \).

Next we show that each \( \boldsymbol{\Gamma}_{\mathrm{repr}} \) is closed under finite intersections.
Given two orderly family of ultrafilters \( \UU \) and \( \mathbb{V} \) with support \( K_0 \) and \( K_1 \), respectively, we can consider their tensor product
\[
\UU \otimes \mathbb{V} = \{ \U \otimes \V \mid {\U \in \UU} \wedge {\V \in \mathbb{V}} \wedge {\lev(\U) = \lev(\V)} \}.
\]
As written, \( \UU \otimes \mathbb{V} \) is not an orderly family of ultrafilters, as if \( \lev(\U) = \lev(\V) = k \) then \( \U \otimes \V \) is an ultrafilter on \( \pre{k}{K_0} \times \pre{k}{K_1} \). However, such an ultrafilter can be straightforwardly identified with an ultrafilter on \( \pre{k}{(K_0 \times K_1)} \), still denoted by \( \U \otimes \V \): under this identification, \( \UU \otimes \mathbb{V} \) becomes an orderly family of ultrafilters with support \( K_0 \times K_1 \), and \( \lev(\U \otimes \V) = \lev(\U) = \lev(\V) \).

In the proof of the following claim we will tacitly use the above identification, and freely switch from one presentation (and notation) to the other one when convenient.
We will also need the following general facts concerning tensor products.
First of all, for any cardinal \( \kappa \) we have that \( \U \otimes \V \) is  \( \kappa \)-complete if and only if so are \( \U \) and \( \V \).
Moreover, \( \U' \otimes \V' \) projects onto \( \U \otimes \V \) whenever \( \U' \) projects onto \( \U \) and \( \V' \) projects onto \( \V \). In particular, if \( (\U_k)_{k \in \omega} \) is a tower of ultrafilters in \( \UU \) and \( (\V_k)_{k \in \omega} \) is a tower of ultrafilters in \( \mathbb{V} \), then \( (\U_k \otimes \V_k)_{k \in \omega } \) is a tower of ultrafilters in \( \UU \otimes \mathbb{V} \).  Finally, the tower \( (\U_k \otimes \V_k)_{k \in \omega} \) is well-founded if and only if so are \( (\U_k)_{k \in \omega} \) and \( (\V_k)_{k \in \omega} \).
To see this, suppose that both \( (\U_k)_{k \in \omega} \) and \( (\V_k)_{k \in \omega} \) are well-founded, and
pick any sequence of sets \( (A_k)_{k \in \omega} \) with \( A_k \in \U_k \otimes \V_k \). For each \( k \in \omega \), let \( B_k^{(0)} =  \{ b \in \pre{k}{K_0} \mid (A_k)_b \in \V_k \} \), so that \( B^{(0)}_k \in \U_k \) by choice of \( A_k \). Since \( (\U_k)_{k \in \omega} \) is well-founded, there is \( z_0 \in \pre{\omega}{K_0} \) such that \( z_0 \restriction k \in B^{(0)}_k \) for every \( k \in \omega \). 
Let  \( B^{(1)}_k = (A_k)_{z_0 \restriction k} \), so that \( B^{(1)}_k \in \V_k \). Since \( (\V_k)_{k \in \omega} \) is well-founded, there is \( z_1 \in \pre{\omega}{K_1} \) such that \( z_1 \restriction k \in B^{(1)}_k \) for every \( k \in \omega \).
Let \( z = (z_0(k),z_1(k))_{k \in \omega} \in \pre{\omega}{(K_0 \times K_1)} \). 
Then \( z \restriction k \in A_k \) for every \( k \in \omega \), and since \( (A_k)_{k \in \omega} \) was arbitrary, this shows that \( (\U_k \otimes \V_k)_{k \in \omega} \) is well-founded too. Conversely, pick any \( (A^{(0)}_k)_{k \in \omega} \) and \( (A^{(1)}_k)_{k \in \omega} \) such that \( A^{(0)}_k \in \U_k \) and \( A^{(1)}_k \in \V_k \) for every \( k \in \omega \). Then \( A_k = A^{(0)}_k \times A^{(1)}_k \in \U_k \otimes \V_k \). 
If \( (\U_k \otimes \V_k)_{k \in \omega} \) is well-founded, there is \( z \in \pre{\omega}{(K_0 \times K_1)} \) such that \( z \restriction k \in A_k \) for every \( k \in \omega \). Let \( z_0 \in \pre{\omega}{K_0 } \) and \( z_1 \in \pre{\omega}{K_1} \) be such that \( z(k) = (z_0(k),z_1(k)) \) for every \( k \in \omega \). Then \( z_0 \restriction k \in A^{(0)}_k \) and \( z_1 \restriction k \in A^{(1)}_k \) by choice of \( A_k \). Since \( (A^{(0)}_k)_{k \in \omega} \) and \( (A^{(1)}_k)_{k \in \omega} \) were arbitrary, this shows that both  \( (\U_k)_{k \in \omega} \) and \( (\V_k)_{k \in \omega} \) are well-founded.

\begin{claim} \label{claim:representpointclass3}
If \( C \subseteq X \) is \( \UU \)-representable with respect to some tree-basis \( \mathcal{B} \) for \( X \) and \( D \subseteq X \) is \( \mathbb{V} \)-representable with respect to the same \( \mathcal{B} \), then \( C \cap D \) is \( \UU \otimes \mathbb{V} \)-representable with respect to \( \mathcal{B} \).
\end{claim}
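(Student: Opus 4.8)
The plan is to glue the two given representations by means of the tensor product recalled just before the claim. Let $\pi^{(0)}$ be a $\UU$-representation for $C$ and $\pi^{(1)}$ a $\mathbb{V}$-representation for $D$, both with respect to the same tree-basis $\mathcal{B}$. Using the unpairing $(\cdot)_0,(\cdot)_1$ of sequences from the beginning of the proof, I would simply set
\[
\pi(s,t) = \pi^{(0)}(s,(t)_0) \otimes \pi^{(1)}(s,(t)_1)
\]
for $s,t \in \pre{k}{\lambda}$. Since $\lev(\pi^{(0)}(s,(t)_0)) = \lev(\pi^{(1)}(s,(t)_1)) = k$, the tensor has level $k$, so Definition~\ref{def:repr}\ref{def:repr-1} is immediate and $\ran(\pi) \subseteq \UU \otimes \mathbb{V}$, an orderly family with support $K_0 \times K_1$.

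The first technical point I would settle is that the Gödel pairing acts coordinatewise, hence commutes with restriction: $(y \restriction k)_i = (y)_i \restriction k$ for $i \in \{0,1\}$. Writing $y_0 = (y)_0$, $y_1 = (y)_1$, and denoting by $\U_k$ and $\V_k$ the $k$-th ultrafilters of the towers $T^{\pi^{(0)}}_{\tilde{x},y_0}$ and $T^{\pi^{(1)}}_{\tilde{x},y_1}$, this yields the key identity $\pi(\tilde{x} \restriction k, y \restriction k) = \U_k \otimes \V_k$, realizing $T^\pi_{\tilde{x},y}$ as the tensor of the two component towers. Granting this, Definition~\ref{def:repr}\ref{def:repr-2} follows at once from the general facts on tensor products stated before the claim: $(\U_k \otimes \V_k)_{k\in\omega}$ is a tower in $\UU \otimes \mathbb{V}$ because each factor is a tower, and $\U_k \otimes \V_k$ is $\lambda_k^+$-complete because both $\U_k$ and $\V_k$ are.

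The substance lies in verifying Definition~\ref{def:repr}\ref{def:repr-3}, and here the crucial input is the equivalence (also recorded before the claim) that $(\U_k \otimes \V_k)_{k\in\omega}$ is well-founded if and only if both $(\U_k)_{k\in\omega}$ and $(\V_k)_{k\in\omega}$ are. Using it, the argument is symmetric in the two directions: if $f_{\mathcal{B}}(\tilde{x}) \in C \cap D$, then \ref{def:repr-3} for $\pi^{(0)}$ and $\pi^{(1)}$ provides $y_0,y_1 \in B(\lambda)$ making the component towers well-founded, and $y = \langle y_0, y_1 \rangle$ then makes $T^\pi_{\tilde{x},y}$ well-founded; conversely, from a single witness $y$ I recover $y_0 = (y)_0$, $y_1 = (y)_1$ and read off well-foundedness of each component tower, whence $f_{\mathcal{B}}(\tilde{x}) \in C$ and $f_{\mathcal{B}}(\tilde{x}) \in D$. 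I do not expect a genuine obstacle, since all the ultrafilter-theoretic content has been isolated in the preamble to the claim; the only point demanding a little care is the bookkeeping that matches a single witness $y$ for $C \cap D$ with the pair $(y_0,y_1)$ of witnesses for $C$ and $D$ separately, which works precisely because the pairing is coordinatewise and so respects initial segments.
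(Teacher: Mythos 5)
Your proposal is correct and takes essentially the same route as the paper's proof: the identical definition \( \pi(s,t) = \pi_C(s,(t)_0) \otimes \pi_D(s,(t)_1) \), the same identification of \( T^\pi_{\tilde{x},y} \) with the tensor of the two component towers, and the same appeal to the previously established facts that tensor products preserve \( \lambda_k^+ \)-completeness, projection, and that the tensor tower is well-founded exactly when both component towers are. The only difference is that you make explicit the coordinatewise-pairing identity \( (y \restriction k)_i = (y)_i \restriction k \), which the paper uses tacitly in its displayed equation for \( T^\pi_{\tilde{x},y} \).
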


\begin{proof}[Proof of the claim] 

Let \( K_0 \) and \( K_1 \) be the supports of \( \UU \) and \( \mathbb{V} \), respectively.
Given a \( \UU \)-representation \( \pi_C \) for \( C \) with \( \ran(\pi_C) = \UU \) and a \( \mathbb{V} \)-representation \( \pi_D \) for \( D \) with \( \ran(\pi_D) = \mathbb{V} \), let \( \pi \colon \bigcup_{k \in \omega} (\pre{k}{\lambda} \times \pre{k}{\lambda}) \to \UU \otimes \mathbb{V} \) be defined by
\begin{equation} \label{eq:representpointclass3}
\pi(s,t) = \pi_C(s,(t)_0) \otimes \pi_D(s,(t)_1).
\end{equation}
Then \( \ran(\pi) \subseteq \UU \otimes \mathbb{V} \), and we claim that \( \pi \) is a \( \UU \otimes \mathbb{V} \)-representation for \( C \cap D \).
Fix any \( \tilde{x},y \in B(\lambda) \). Since 
\begin{equation} \label{eq:tensorproductoftowers}
T^\pi_{\tilde{x},y} = (\pi_C(\tilde{x} \restriction k,(y)_0 \restriction k) \otimes \pi_D(\tilde{x} \restriction k,(y)_1 \restriction k))_{k \in \omega} , 
\end{equation}
we get that \( T^\pi_{\tilde{x},y} \) is a tower of ultrafilters because so are \( T^{\pi_C}_{\tilde{x},(y)_0} \) and \( T^{\pi_D}_{\tilde{x},(y)_1} \). 
Moreover, each \( \pi( \tilde{x} \restriction k, y \restriction k) \) is at least \( \lambda_k^+ \)-complete because so are \( \pi_C( \tilde{x} \restriction k, y \restriction k) \) and \( \pi_D( \tilde{x} \restriction k, y \restriction k) \).

Consider any \( \tilde{x} \in B(\lambda) \). If \( f_{\mathcal{B}}(\tilde{x}) \in C \cap D \), then there are \( y_0,y_1 \in B(\lambda) \) such that both \( T^{\pi_C}_{\tilde{x},y_0} \) and \( T^{\pi_D}_{\tilde{x},y_1} \) are well-founded.
It follows that there is also \( y \in B(\lambda) \) such that \( T^{\pi}_{\tilde{x}, y} \) is well-founded: by~\eqref{eq:tensorproductoftowers}, it is enough to set \( y = \langle y_0,y_1 \rangle \).
Conversely, suppose that \( T^\pi_{\tilde{x},y} \) is well-founded for some \( y \in B(\lambda) \).
Then \( T^{\pi_C}_{\tilde{x},(y)_0} \) and \( T^{\pi_D}_{\tilde{x},(y)_1} \) are well-founded (because of~\eqref{eq:tensorproductoftowers} again), and hence \( (y)_0 \) witnesses \( f_{\mathcal{B}}(\tilde{x}) \in C \), while \( (y)_1 \) witness \( f_{\mathcal{B}}(\tilde{x}) \in D \). Therefore \( f_{\mathcal{B}}(\tilde{x}) \in C \cap D \), and the proof is complete.
\end{proof}

We are now ready to show that \( \boldsymbol{\Gamma}_{\mathrm{repr}} \) is closed under \(\lambda\)-Borel images and preimages, in the sense of~\ref{prop:representpointclass-iii}. First suppose that \( Z \subseteq X \) is \( \UU \)-representable with respect to a given tree-basis \( \mathcal{B}_X \). Let \( \mathcal{B}_Y \) be a tree-basis for the \(\lambda\)-Polish space \( Y \), and let \( f \colon X \to Y \) be a \( \lambda \)-Borel function, so that its graph \( D = \mathrm{graph}(f) \) is \(\lambda\)-Borel in \( X \times Y \) by Proposition~\ref{prop:borelvsgraph}.
By Claim~\ref{claim:representpointclass2}, the set \( C = Z \times Y \) is \( \UU \)-representable with respect to \( \mathcal{B}_{X \times Y} \), while by Proposition~\ref{prop:analyticrepresent} there is an orderly family of ultrafilters \( \UU' \) such that \( D \) is \( \UU' \)-representable with respect to \( \mathcal{B}_{X,Y} \). 
Then by Claim~\ref{claim:representpointclass3} the set \( C \cap D \) is \( \mathbb{V} \)-representable with respect to \( \mathcal{B}_{X \times Y} \), where \( \mathbb{V} = \UU \otimes \UU' \). Then \( f(Z) = \p_2(C \cap D) \) is \( \mathbb{V} \)-representable with respect to \( \mathcal{B}_Y \) by Claim~\ref{claim:representpointclass1}. The case of preimages under \( f \) is similar. If \( W \subseteq Y \) is \( \mathbb{V} \) representable with respect to \( \mathcal{B}_Y \), then \( (X \times W) \cap D \) is \( \UU \)-representable with respect to \( \mathcal{B}_{X \times Y} \) for \( \UU = \mathbb{V} \otimes \UU' \), and hence \( f^{-1}(W) = \p ((X \times W) \cap D) \) is \( \UU \)-representable with respect to \( \mathcal{B}_X \).

Closure under \(\lambda\)-Borel preimages entails that \( \boldsymbol{\Gamma}_{\mathrm{repr}} \) is a boldface \( \lambda \)-poinclass. Together with the fact that \( \boldsymbol{\Gamma}_{\mathrm{repr}}(X) \supseteq \lS^1_1(X) \) by Proposition~\ref{prop:analyticrepresent}, this proves~\ref{prop:representpointclass-i}. 

As for~\ref{prop:representpointclass-ii}, we already proved closure under projections and finite intersections in Claim~\ref{claim:representpointclass1} and Claim~\ref{claim:representpointclass3}, respectively, so we only need to show that \( \boldsymbol{\Gamma}_{\mathrm{repr}}(X) \) is closed under well-ordered unions of size \( \lambda \). In what follows, all representations refer to a tree-basis \( \mathcal{B} \) for \( X \) fixed in advance.. 
Suppose that for every \( \alpha < \lambda \) we are given a  set \( Z_\alpha \subseteq X \) and a \( \UU_\alpha \)-representation \( \pi_\alpha \) for \( Z_\alpha \), where \( \UU_\alpha = \ran(\pi_\alpha ) \) is an orderly family of ultrafilters with support \( K_\alpha \). Let \( K = \bigcup_{\alpha < \lambda} K_\alpha \). 
For each \( \alpha < \lambda \), let \( \UU_\alpha^+ = \{ \U^+ \mid \U \in \UU_\alpha \} \), where if \( \lev(\U) = k \) we let \( \U^+  = \{ A \subseteq \pre{k}{K} \mid A \cap \pre{k}{K_\alpha} \in \U \} \). Then \( \UU = \bigcup_{\alpha < \lambda} \UU^+_\alpha \) is an orderly family of ultrafilters with support \( K \).
For each \( \alpha < \lambda \), the map \( \pi_\alpha^+ \colon \bigcup_{k \in \omega} (\pre{k}{\lambda} \times \pre{k}{\lambda}) \to \UU^+_\alpha \) defined by \( \pi_\alpha^+(s,t) = (\pi_\alpha(s,t))^+ \) is a \( \UU^+_\alpha \)-representation  for \( Z_\alpha \) with \( \ran(\pi^+_\alpha) = \UU^+_\alpha \). Define \( \pi \colon \bigcup_{k \in \omega} (\pre{k}{\lambda} \times \pre{k}{\lambda}) \to \UU \) by letting \( \pi(\emptyset,\emptyset) \) be the only proper ultrafilter on \( \pre{0}{K} = \{ \emptyset \} \), and
\begin{equation} \label{eq:representpointclass-union}
\pi(s,t) = \pi^+_{(t(0))_0}(s,(t)_1)
\end{equation}
for every \( s,t \in \pre{k}{\lambda} \) with \( k > 0 \).
Then \( \ran(\pi) = \UU \), and
we claim that \( \pi \) is a \( \UU \)-representation for \( Z = \bigcup_{\alpha < \lambda} Z_\alpha \). 

Fix any \( \tilde{x} \in B(\lambda) \). 
Suppose first that \( f_{\mathcal{B}}(\tilde{x}) \in Z_\alpha \) for some \( \alpha < \lambda \). Let \( y \in B(\lambda) \) be such that \( T^{\pi^+_\alpha}_{\tilde{x},y} \) is well-founded, and let \( y' \in B(\lambda) \) be such that \( (y')_1 = y \) and \( (y'(0))_0 = \alpha \). Then \( T^{\pi}_{\tilde{x},y'} = T^{\pi^+_\alpha}_{\tilde{x},y} \), and thus \( T^{\pi}_{\tilde{x},y'} \) is well-founded too.
Conversely, suppose that there is \( y \in B(\lambda) \) such that \( T^\pi_{\tilde{x},y} \) is well-founded, and let \( \alpha = (y(0))_0 \). Since \( T^{\pi^+_\alpha}_{\tilde{x},(y)_1} = T^\pi_{\tilde{x},y} \), the tower \( T^{\pi^+_\alpha}_{\tilde{x},(y)_1} \) is well-founded as well, so \( f_{\mathcal{B}}(\tilde{x}) \in Z_\alpha \subseteq Z \). 

This concludes the proof for what concerns \( \boldsymbol{\Gamma}_{\mathrm{repr}} \). The case of \( \boldsymbol{\Gamma}_{\mathrm{TC}} \) can be treated similarly, the only difference being that we have to additionally check that the tower condition is preserved along the various transformations of the orderly families of ultrafilters involved. This is trivial for Claim~\ref{claim:representpointclass1} and Claim~\ref{claim:representpointclass2}, as the family remain the same. 
Moving to Claim~\ref{claim:representpointclass3},  let \( F_0 \) and \( F_1 \) on \( \UU \) and \( \mathbb{V} \) witnessing that \( \pi_C \) and \( \pi_D \) have the tower condition, and let \( \pi \) be obtained from \( \pi_C \) and \( \pi_D \) as in~\eqref{eq:representpointclass3}. Define \( F \) on \( \UU \otimes \mathbb{V} \) setting \( F(\U \otimes \V ) = F_0(\U) \times F_1(\V) \): we claim that the restriction of \( F \) to \( \ran(\pi) \) witnesses that \( \pi \) has the tower condition. 
Suppose to have a tower \( T^\pi_{\tilde{x},y} \) such that there is \(  z \in \pre{\omega}{(K_0 \times K_1) } \) satisfying \( z \restriction k \in F(\pi(\tilde{x} \restriction k, y \restriction k)) \) for every \( k \in \omega \). Let \( z_0 \in \pre{\omega}{K_0} \) and \( z_1 \in \pre{\omega}{K_1} \) be such that \( z(k) = (z_0(k),z_1(k)) \) for each \( k \in \omega \). Then \( z_0 \restriction k \in F_0(\pi_C(\tilde{x} \restriction k, (y)_0 \restriction k)) \) and \( z_1 \restriction k \in F_1(\pi_C(\tilde{x} \restriction k, (y)_1 \restriction k)) \) for all \( k \in \omega \), hence \( T^{\pi_C}_{\tilde{x}, (y)_0} \) and \( T^{\pi_D}_{\tilde{x},(y)_1} \) are well-founded, and thus so is \( T^\pi_{\tilde{x},y} \) by~\eqref{eq:tensorproductoftowers}. 

The last case that requires attention is closure under unions of size \(\lambda\). Suppose that for each \( \alpha < \lambda \) there is a function \( F_\alpha \) on \( \UU_\alpha \) witnessing that \( \pi_\alpha \) has the tower condition. Since \( \U \subseteq \U^+ \) for every \( \U \in \UU_\alpha \), we can lift \( F_\alpha \) to a function \( F^+_\alpha \) on \( \UU^+_\alpha \) by setting \( F^+_\alpha(\U^+) = F_\alpha(\U) \). The map \( F^+_\alpha \) is well-defined because the operation \( \U \mapsto \U^+ \) is injective on \( \UU_\alpha \), and it witnesses that \( \pi^+_\alpha \) has the tower condition, too. 
Moreover, without loss of generality we can suppose that the supports \( K_\alpha \) were pairwise disjoint, so that the families \( \UU^+_\alpha \) are pairwise disjoint as well. This means that the map \( F = \bigcup_{\alpha < \lambda} F_\alpha \) on \( \UU \) is well-defined, and it is easy to check that it witnesses that the representation \( \pi \) from~\eqref{eq:representpointclass-union} has the tower condition. 
\end{proof}

What is missing in Proposition~\ref{prop:representpointclass} is the closure under complements of \( \boldsymbol{\Gamma}_{\mathrm{repr}}(X) \) and \( \boldsymbol{\Gamma}_{\mathrm{TC}}(X) \): this is unavoidable because such closure property might fail, even in presence of a measurable cardinal \( \kappa > \lambda \). 
In fact, one measurable cardinal is enough to prove that the $\lambda$-analytic sets have the $\lPSP$ (Proposition~\ref{prop:analyticrepresent}), but work in~\cite{BDM} built on the results of Section~\ref{sec:noPSPcoanalytic} shows that the consistency strength of the statement ``all the $\lambda$-coanalytic sets have the $\lPSP$'' is much higher. Therefore, in a model of $\ZFC$ plus the existence of a measurable cardinal above $\lambda$ plus the negation of the consistency of larger cardinals, all the $\lambda$-analytic sets have the $\lPSP$ and there is a $\lambda$-coanalytic set (i.e., a complement of a $\lambda$-analytic set) without the $\lPSP$, so that it cannot be \( \UU \)-representable or TC-representable by Proposition~\ref{prop:TCautomatic} and Theorem~\ref{thm:mainPSP}.  

Proposition~\ref{prop:analyticrepresent} is the exact counterpart in the generalized setting of item~\ref{whSpattern-1} in the discussion on which subsets of \( \pre{\omega}{\omega} \) are weakly homogeneously Souslin at the beginning of this section, and if we assume \( \AC \) it implies Theorem~\ref{thm:analyticPSP} (under stronger hypotheses) via Theorem~\ref{thm:mainPSP}. The analogue of item~\ref{whSpattern-2} holds as well, and for the same reason: by Proposition~\ref{prop:U-representabilityandmeasurablecardinals} and Remark~\ref{rmk:principalrepresentations}, if there is no measurable cardinal above \(\lambda\), and in particular if \( \mathsf{V=L} \), then the only \( \UU \)-representable subset of a \(\lambda\)-Polish space \( X \) is the whole space itself. 

As for the generalized version of item~\ref{whSpattern-3}, the natural move is to see what happens under \( \mathsf{I0} \). In Corollary~\ref{cor:fromTowertoTC} we observed that if we are concerned with subset of \( V_{\lambda+1} \) living in \( L(V_{\lambda+1}) \), then TC-representability is implied by \( \UU(j) \)-representability, so it becomes crucial to understand when a set as above is \( \UU(j) \)-representable.
The following result come from~\cite[Corollary 6.11]{Cramer2015}.

\begin{theorem}[\( \AC \)] \label{thm:cramerrepresentability1} 
Let $j$ be a witness for $\mathsf{I0}(\lambda)$, let $\kappa=\lambda^+$, and let $\eta=\sup\{(\kappa^+)^{L[A]} \mid A\subseteq\lambda\}$. Then every subset of \( V_{\lambda+1} \) belonging to $L_\eta(V_{\lambda+1})$ is $\mathbb{U}(j)$-representable in $L(V_{\lambda+1})$.
\end{theorem}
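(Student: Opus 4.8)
The plan is to follow the classical template by which $\boldsymbol{\Sigma}^1_2$ sets become weakly homogeneously Souslin in the presence of a measurable cardinal, transported to $L(V_{\lambda+1})$ and organized around the two ingredients already isolated in this chapter: a stock of base cases (generalized Shoenfield-style representations) and the closure properties of the representable pointclass. Concretely, I would first reduce the theorem to two tasks: (i) exhibiting, for suitably ``low'' subsets of $V_{\lambda+1}$, an explicit $\mathbb{U}(j)$-representation in the sense of Definition~\ref{def:representableforWoodin}; and (ii) showing that representability propagates upward through the levels $L_\gamma(V_{\lambda+1})$, $\gamma < \eta$, using a Coding-Lemma-style parametrization together with the closure properties of $\boldsymbol{\Gamma}_{\mathrm{repr}}$ analysed in Proposition~\ref{prop:representpointclass} (and their $\mathbb{U}(j)$-versions in~\cite[Lemmas 114 and 115]{Woodin2011}). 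Since $\mathsf{I0}(\lambda)$ gives $|V_\lambda| = \lambda$ and hence $2^{<\lambda}=\lambda$, everything will be phrased inside $L(V_{\lambda+1})$, which satisfies only $\ZF + \DC_\lambda$, so all constructions must be carried out without full choice below $\lambda^+$.

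For the base case I would use that $\lambda^+$ is measurable in $L(V_{\lambda+1})$, fixing a normal $\lambda^+$-complete ultrafilter there. Given a $\lambda$-coanalytic set $Z$, which is $\lambda^+$-Souslin by (the $L(V_{\lambda+1})$-version of) Theorem~\ref{thm:coanalyticaresouslin} and is coded by an $\omega$-tree $T$ as in the Shoenfield analysis of Section~\ref{sec:absolutenessnew}, the section trees are well-founded exactly on $Z$, and an order-preserving map into $\lambda^+$ witnesses membership. Deriving the Rowbottom-style measures on $\pre{k}{(\lambda^+)}$ exactly as in the proof of Proposition~\ref{prop:analyticrepresent}, one homogenizes $T$ into a tower of $\lambda^+$-complete ultrafilters whose well-foundedness tracks that of the section trees. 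The delicate point is to arrange that these ultrafilters actually lie in $\mathbb{U}(j)$, i.e.\ that each is fixed by $j^n$ for all large $n$ and concentrates on eventual fixed points of $j^n$: I would secure this by choosing all parameters (the tree, the measure, the supports) captured at a fixed level $L_\gamma(V_{\lambda+1})$, so that $j$ moves the definable measures only through its action on those parameters, which stabilizes once the critical sequence of the iterates has climbed past $\gamma$.

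To climb to level $\eta$, I would invoke Cramer's inverse-limit reflection: a set $Z \in L_\eta(V_{\lambda+1})$ is definable over some $L_\gamma(V_{\lambda+1})$ with $\gamma < \eta$ from a parameter in $V_{\lambda+1}$, and the value $\eta = \sup\{(\kappa^+)^{L[A]} : A \subseteq \lambda\}$ with $\kappa = \lambda^+$ is precisely the threshold up to which such a $\gamma$ is reachable by constructibility from a subset of $\lambda$, so that the well-founded towers witnessing membership can be built inside, and verified within, an $L[A]$-model whose ordinal height is controlled. Feeding these parametrizations through the closure of the representable pointclass under projections, $\lambda$-sized unions, finite intersections, and $\lambda$-Borel preimages (Proposition~\ref{prop:representpointclass}) then lets me assemble a $\mathbb{U}(j)$-representation of $Z$ from representations of the simpler sets appearing in its definition. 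Combined with Corollary~\ref{cor:fromTowertoTC}, this also yields TC-representability, and hence the $\lPSP$ for $Z$ via Theorem~\ref{thm:mainPSP}.

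The hard part will be the interface between the two invariance demands on the ultrafilters: the $\mathbb{U}(j)$-membership conditions force the measures to be fixed points of the iterates of $j$ and to concentrate on fixed points, while the representation conditions force them to faithfully encode well-foundedness of the Shoenfield-type towers all the way up through level $\eta$. Reconciling these inside $L(V_{\lambda+1})$, where only $\DC_\lambda$ is available, is where the genuine work lies. I expect the critical step to be the uniform choice, across all nodes of the representing tree simultaneously, of the sets witnessing Woodin's tower condition (Definition~\ref{def:WoodinTC}) — exactly the content of Theorem~\ref{thm:U(j)-representablesetshaveTC} — carried out definably from $j$ and its inverse limits rather than by an appeal to full $\AC$.
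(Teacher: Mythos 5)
First, a point of order: the paper does not prove this theorem at all --- it is quoted verbatim from Cramer (\cite[Corollary 6.11]{Cramer2015}), with no argument given. So there is no ``paper's own proof'' to compare against; your proposal has to be judged on its own merits, and on those merits it has two genuine gaps.

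The first gap is the mechanism by which your measures are supposed to land in \( \mathbb{U}(j) \). Conditions (c) and (d) of the definition of \( \mathbb{U}(j) \) require each ultrafilter to be literally fixed by all sufficiently large iterates \( j^n \) and to concentrate on points that are eventually fixed. You build the measures Martin--Solovay style from a Shoenfield-type tree \( T \) on \( \lambda \times \lambda^+ \) attached to an arbitrary \( \lambda \)-coanalytic set with an arbitrary parameter in \( V_{\lambda+1} \). Such a \( T \) is, after coding, a subset of \( \lambda^+ \), and iterates of \( j \) do not fix it: the iterates eventually fix every element of \( V_\lambda \) (because their critical points climb along the critical sequence), but they do \emph{not} eventually fix elements of \( V_{\lambda+1} \), ordinals in the interval \( [\lambda,\lambda^+) \), or subsets of \( \lambda^+ \). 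Hence \( j^n \) sends a measure defined from \( (T,s,t,\mu) \) to the measure defined from \( (j^n(T), j^n(s), j^n(t),\mu) \), and there is no stabilization. Your proposed fix --- choosing the parameters ``captured at a fixed level \( L_\gamma(V_{\lambda+1}) \)'' so that things stabilize ``once the critical sequence has climbed past \( \gamma \)'' --- conflates two different things: the critical sequence is cofinal in \( \lambda \) and governs fixing of elements of \( V_\lambda \) only; it says nothing about parameters of rank \( \geq \lambda \). This is precisely why the actual constructions (Woodin's Lemma on representability, and Cramer's extension) derive the ultrafilters from the embedding \( j \) itself, so that the fixed-point and concentration requirements follow from Laver's structure theory of iterates rather than from bookkeeping of external parameters.

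The second gap is the ``climb to \( \eta \)''. You propose to propagate representability through the levels \( L_\gamma(V_{\lambda+1}) \), \( \gamma<\eta \), by feeding definitions through the closure properties of Proposition~\ref{prop:representpointclass}. But those closure properties are projections, well-ordered unions of length \( \leq\lambda \), finite intersections, and \( \lambda \)-Borel images/preimages --- crucially, \emph{not} complements, and the paper stresses (in the discussion following Proposition~\ref{prop:representpointclass}) that closure under complements must fail in general, since a single measurable above \( \lambda \) would otherwise give the \( \lPSP \) for \( \lambda \)-coanalytic sets, contradicting the consistency-strength lower bounds connected to Section~\ref{sec:noPSPcoanalytic}. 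First-order definability over a level \( L_\gamma(V_{\lambda+1}) \) involves arbitrary alternations of quantifiers, i.e.\ iterated projections \emph{and} complements, so no amount of shuffling through the listed closure properties can reach every set in \( L_\eta(V_{\lambda+1}) \). That obstruction is exactly where the substance of Cramer's theorem lies, and it is also where the specific value \( \eta=\sup\{(\kappa^+)^{L[A]} \mid A\subseteq\lambda\} \) enters, which in your sketch is only motivated by an unargued slogan. A smaller but related confusion: you cast Theorem~\ref{thm:U(j)-representablesetshaveTC} (the tower condition) as ``the critical step,'' but that theorem verifies the tower condition for a family \( \mathbb{V}\subseteq\mathbb{U}(j) \) that one already has; it plays no role in constructing the representation, which is what the present theorem demands.
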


Since all \( \lambda \)-projective subsets of \( V_{\lambda+1} \) already appear in \( L_1(V_{\lambda+1}) \), combining the above result with Corollary~\ref{cor:fromTowertoTC}, Theorem~\ref{thm:mainPSP}, and Corollary~\ref{cor:transferPSP} (taking into account also Remark~\ref{rem:PSPinandout}), we obtain:

\begin{corollary}[\( \AC \)] \label{cor:PSPallPolishSpaces} 
Assume $\mathsf{I0}(\lambda)$. Then all $\lambda$-projective sets have the \( \lPSP \), that is, \( \lPSP(\lS^1_n) \) holds for every natural number \( n \geq 1 \). 
\end{corollary}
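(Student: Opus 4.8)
The plan is to reduce the statement to a single space and then run the representability machinery inside the inner model $L(V_{\lambda+1})$, transferring the conclusion back to $V$ at the end. First I would record the consequences of $\mathsf{I0}(\lambda)$ that are needed: it implies $|V_\lambda| = \lambda$ (hence $\beth_\lambda = \lambda$ and in particular $2^{<\lambda} = \lambda$), and $L(V_{\lambda+1}) \models \DC_\lambda$. By Theorem~\ref{thm:homeomorphictoCantor}\ref{thm:homeomorphictoCantor-3}, $V_{\lambda+1}$ is then a $\lambda$-Polish space homeomorphic to $B(\lambda)$, both in $V$ and in $L(V_{\lambda+1})$, and $|V_{\lambda+1}| = 2^\lambda > \lambda$. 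Since each $\lS^1_n$ is a boldface $\lambda$-pointclass closed under $\lambda$-Borel preimages, Corollary~\ref{cor:transferPSP} reduces the proof of $\lPSP(\lS^1_n)$ to showing that every $A \in \lS^1_n(V_{\lambda+1})$ has the $\lPSP$, for the fixed space $X = V_{\lambda+1}$.

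So fix $n \geq 1$ and $Z \in \lS^1_n(V_{\lambda+1})$. The core chain of implications will be carried out inside $L(V_{\lambda+1})$, noting that the $\lambda$-projective subsets of $V_{\lambda+1}$ already appear in $L_1(V_{\lambda+1})$, hence in $L_\eta(V_{\lambda+1})$ with $\eta$ as in Theorem~\ref{thm:cramerrepresentability1}. I would then argue as follows: by Theorem~\ref{thm:cramerrepresentability1}, $Z$ is $\UU(j)$-representable in $L(V_{\lambda+1})$; by Corollary~\ref{cor:fromTowertoTC}, which uses $L(V_{\lambda+1}) \models \DC_\lambda$, the set $Z$ is TC-representable in $L(V_{\lambda+1})$; and since $L(V_{\lambda+1})$ satisfies $2^{<\lambda} = \lambda$ (the witnessing bijection of $V_\lambda$ with $\lambda$ lies in $V_{\lambda+1} \subseteq L(V_{\lambda+1})$), Theorem~\ref{thm:mainPSP} applies within $L(V_{\lambda+1})$ and yields that $Z$ has the $\lPSP$ there.

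Finally I would transfer this downward. By Remark~\ref{rem:PSPinandout}, for subsets of $V_{\lambda+1}$ belonging to $L(V_{\lambda+1})$ the $\lPSP$ is computed identically in $L(V_{\lambda+1})$ and in $V$, so $Z$ has the $\lPSP$ in $V$ as well. As $Z \in \lS^1_n(V_{\lambda+1})$ was arbitrary and $\lS^1_n(V_{\lambda+1}) \subseteq L(V_{\lambda+1})$, Corollary~\ref{cor:transferPSP} then upgrades this to $\lPSP(\lS^1_n)$ for every $n \geq 1$. The substance of the argument is entirely imported from the cited results---Cramer's representability theorem and the general Theorem~\ref{thm:mainPSP}---so the only real delicacy, and the step I expect to require the most care, is bookkeeping the model in which each assertion holds: verifying that representability, the tower condition, and the $\lPSP$ are all established \emph{inside} $L(V_{\lambda+1})$ (where $\DC_\lambda$ but not full $\AC$ is available), and that the absoluteness of the $\lPSP$ between $L(V_{\lambda+1})$ and $V$ supplied by Remark~\ref{rem:PSPinandout} genuinely applies to the $\lambda$-projective sets at hand.
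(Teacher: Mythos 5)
Your proposal is correct and follows exactly the paper's argument: the paper derives this corollary by the same chain — $\lambda$-projective sets lie in $L_1(V_{\lambda+1}) \subseteq L_\eta(V_{\lambda+1})$, Theorem~\ref{thm:cramerrepresentability1} gives $\UU(j)$-representability, Corollary~\ref{cor:fromTowertoTC} gives TC-representability, Theorem~\ref{thm:mainPSP} gives the $\lPSP$, and Remark~\ref{rem:PSPinandout} together with Corollary~\ref{cor:transferPSP} transfers the conclusion to all $\lambda$-Polish spaces. Your additional bookkeeping of which model ($V$ versus $L(V_{\lambda+1})$, where only $\DC_\lambda$ holds) hosts each step is precisely the delicacy the paper handles implicitly via Remark~\ref{rem:PSPinandout}, so there is nothing to correct.
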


Note that the same result holds for any boldface $\lambda$-pointclass $\boldsymbol{\Gamma}$ closed under $\lambda$-Borel preimages such that $\boldsymbol{\Gamma}(V_{\lambda+1}) \subseteq L_\eta(V_{\lambda+1})$. 

Theorem~\ref{thm:cramerrepresentability1} has been extended by Cramer himself to encompass all sets in \( L(V_{\lambda+1}) \). The following statement appears in~\cite[Theorem 84]{Cramer2017}; unfortunately, although its proof has been widely circulated among experts, it has not been published yet, and so we cannot give a better reference.

\begin{theorem}[\( \AC \)] \label{thm:allrepresentable} 
 Let $j$ be a witness for $\mathsf{I0}(\lambda)$, and work in \( L(V_{\lambda+1}) \). Then every $Z \subseteq V_{\lambda+1}$ is $\mathbb{U}(j)$-representable.
\end{theorem}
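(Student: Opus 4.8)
The plan is to climb the hierarchy \( (L_\alpha(V_{\lambda+1}))_{\alpha \in \On} \) by transfinite recursion, starting from the base case already secured by Theorem~\ref{thm:cramerrepresentability1}, which guarantees that every \( Z \subseteq V_{\lambda+1} \) lying in \( L_\eta(V_{\lambda+1}) \) is \( \UU(j) \)-representable (for \( \eta = \sup\{(\kappa^+)^{L[A]} \mid A \subseteq \lambda\} \) and \( \kappa = \lambda^+ \)). Working throughout inside \( L(V_{\lambda+1}) \), where only \( \ZF + \DC_\lambda \) is available, I would fix once and for all the critical sequence \( (\lambda_n)_{n \in \omega} \) of \( j \) (so that \( \lambda = \sup_n \lambda_n \) and each \( \lambda_n \) is measurable) and exploit the iterability of \( j \), i.e.\ the well-foundedness of all iterated ultrapowers giving \( j^n \), as the engine producing the required towers of ultrafilters in \( \UU(j) \).

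First I would re-establish, now internally to \( L(V_{\lambda+1}) \), the closure properties of the class of \( \UU(j) \)-representable sets proved for \( \boldsymbol{\Gamma}_{\mathrm{repr}} \) in Proposition~\ref{prop:representpointclass}: closure under projections, finite intersections, well-ordered unions of length at most \( \lambda \), and \( \lambda \)-Borel preimages, checking that the tensor-product and amalgamation constructions used there survive under the additional constraints imposed on the families \( \UU(j,\kappa,(a_i)_{i\in\omega}) \) in Definition~\ref{def:representableforWoodin} (size comparable to \( V_{\lambda+1} \), being a fixed point of coboundedly many iterates of \( j \), concentrating on such fixed points). The genuinely new ingredient, and the crux of the whole argument, is \emph{closure under complements}: given a \( \UU(j) \)-representation of \( Z \), one must manufacture a \( \UU(j) \)-representation of \( V_{\lambda+1} \setminus Z \). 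This is where Cramer's machinery of inverse limits of \( \mathsf{I0}(\lambda) \)-embeddings and the associated reflection enters, playing the role that the Martin--Solovay tree and the homogeneity of the \( \lambda_n \)-complete measures play in the classical proof that \( \kappa \)-weakly homogeneously Souslin sets are closed under complements (compare Theorem~\ref{thm:intro1} and the discussion in Section~\ref{sec:whichsetsarerepresentable}); the fixed-point structure of the iterates \( j^n \) supplies the \( \lambda^+ \)-complete (indeed \( \kappa \)-complete for a suitable \( \kappa > \lambda \), as in Proposition~\ref{prop:U-representabilityandmeasurablecardinals}) ultrafilters needed, and the iterability of \( j \) guarantees that the resulting towers are well-founded exactly off \( Z \).

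With complementation and the remaining closure properties in hand, I would carry out the transfinite induction to reach the levels beyond \( L_\eta(V_{\lambda+1}) \). Here I would use that every \( Z \in L(V_{\lambda+1}) \) is definable over some \( L_\beta(V_{\lambda+1}) \) from parameters in \( V_{\lambda+1} \) together with finitely many ordinals below \( \Theta^{L(V_{\lambda+1})} \); invoking the \( L(V_{\lambda+1}) \)-analogue of Moschovakis' Coding Lemma and the generic absoluteness available under \( \mathsf{I0}(\lambda) \), such a definition can be captured by applying the closure operations to representations of the simpler sets occurring in the defining formula, so that representability propagates up the hierarchy. The main obstacle I anticipate is precisely the complementation step: unlike the formal tower manipulations underlying the other closure properties, it requires the full strength of \( \mathsf{I0}(\lambda) \) through inverse limit reflection, and it is this step that forces the argument to be genuinely different from — and substantially deeper than — the proof of Proposition~\ref{prop:representpointclass}. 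Once the induction is complete, every \( Z \subseteq V_{\lambda+1} \) in \( L(V_{\lambda+1}) \) is \( \UU(j) \)-representable; combined with Corollary~\ref{cor:fromTowertoTC} and Theorem~\ref{thm:mainPSP}, this is exactly what yields the \lPSP{} for all sets in \( L(V_{\lambda+1}) \).
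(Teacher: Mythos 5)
A point of order before anything else: the paper does not prove this theorem. It states it as a citation of Cramer's \cite[Theorem 84]{Cramer2017}, remarking explicitly that the proof ``has been widely circulated among experts'' but has never been published. There is therefore no proof in the paper for your proposal to match, and your sketch has to stand entirely on its own. It does not: it is a plan whose two load-bearing steps are named but never executed, and each of them is, by itself, essentially the whole theorem.

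The decisive gap is complementation. You correctly isolate closure of \( \UU(j) \)-representability under complements as the crux, but you give no construction: no definition of the ultrafilters forming the towers for \( V_{\lambda+1} \setminus Z \), no verification that they satisfy the membership constraints for \( \UU(j) \) (fixed by all sufficiently large iterates \( j^n \), generated below \( \Theta^{L(V_{\lambda+1})} \), concentrating on fixed points of the iterates), and no proof that the new towers are well-founded exactly off \( Z \). Writing ``Cramer's machinery of inverse limits and the associated reflection'' is a pointer to the literature, not an argument; and the step cannot be soft, since --- as the paper stresses after Proposition~\ref{prop:representpointclass} --- closure under complements of the representable sets can fail outright even with a measurable above \( \lambda \), the statement ``all \( \lambda \)-coanalytic sets have the \( \lPSP \)'' having consistency strength far beyond that. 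Worse for your template, the classical pattern you are transferring breaks at exactly this point: by Steel's analysis of scales, the Suslin sets of \( L(\RR) \) under \( \AD \) are precisely the \( \boldsymbol{\Sigma}^2_1 \) sets, so under any formulation in which weak homogeneity yields a tree representation (cf.\ the tower condition of Definition~\ref{def:TC} and the discussion following it) the \emph{internal} analogue of your inductive hypothesis is false in \( L(\RR) \); the theorem that all sets in \( L(\RR) \) are weakly homogeneously Souslin is a theorem of \( V \) under \( \omega \) Woodin cardinals with a measurable above, not a theorem of \( L(\RR) \). Since the present statement is internal to \( L(V_{\lambda+1}) \), whatever makes it true is specific to \( \mathsf{I0} \), and that is precisely what your sketch omits. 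The second gap is the induction itself: even granting all closure properties including complements, ``\( Z \) is definable over some \( L_\beta(V_{\lambda+1}) \) from parameters, hence obtained from simpler sets by the closure operations'' does not follow, because definability over \( L_\beta(V_{\lambda+1}) \) quantifies over objects that are not elements of \( V_{\lambda+1} \), whereas the projections of Proposition~\ref{prop:representpointclass} only absorb quantification over \( V_{\lambda+1} \) (equivalently, over \( B(\lambda) \)). Bridging this requires a Coding-Lemma analysis of \( \Theta^{L(V_{\lambda+1})} \) and of surjections \( V_{\lambda+1} \to L_\beta(V_{\lambda+1}) \), and in the classical case that very machinery carries one only up to \( \boldsymbol{\Sigma}^2_1 \), not through all of \( L(\RR) \). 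Your appeals to the Coding Lemma and to generic absoluteness again name tools without supplying arguments, so the induction as described does not close.
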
 

Together with Corollary~\ref{cor:fromTowertoTC}, this provides the desired analogue of~\ref{whSpattern-3}: If \( \mathsf{I0}(\lambda) \) holds, then all subsets of \( V_{\lambda+1} \) in \( L(V_{\lambda+1}) \) are TC-representable (in \( L(V_{\lambda+1}) \)). Combining this with our results linking TC-representability to the \lPSP{} (and taking into account Remark~\ref{rem:PSPinandout}), we obtain another proof of Cramer's Theorem~\ref{thm:allPSPCramer}, which can now be stated in full generality.

\begin{corollary}[\( \AC \)] \label{cor:allsetshavelpSP}  
Assume that $\mathsf{I0}(\lambda)$ holds. Let \( X \) be a \(\lambda\)-Polish space in \( L(V_{\lambda+1}) \).%
\footnote{We can e.g.\ let \( X \) be one of the spaces \( B(\lambda) \), \( C(\lambda) \), \( \pre{\lambda}{2} \), or \( V_{\lambda+1} \), among the others.}
 Then all \( Z \subseteq X \) belonging to \( L(V_{\lambda+1}) \) have the \( \lPSP \).
\end{corollary}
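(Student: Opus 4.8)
The plan is to obtain the $\lPSP$ for all of $\pow(X) \cap L(V_{\lambda+1})$ by first settling the case $X = V_{\lambda+1}$ through the representability machinery, and then transferring to arbitrary $\lambda$-Polish spaces via Corollary~\ref{cor:transferPSP}. Throughout I would work \emph{inside} the inner model $L(V_{\lambda+1})$, which under $\mathsf{I0}(\lambda)$ satisfies $\DC_\lambda$ (hence in particular $\ZF + \AC_\lambda(\pre{\lambda}{2})$), so that all the theory developed in the previous chapters is available there. The first thing to record is that $\mathsf{I0}(\lambda)$ forces exactly the cardinal arithmetic we need: it implies $\cf(\lambda) = \omega$ and $|V_\lambda| = \lambda$, equivalently $\beth_\lambda = \lambda$; this makes $\lambda$ strong limit, whence $2^{<\lambda} = \lambda$ and $\lambda$ is $\omega$-inaccessible. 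In particular $V_{\lambda+1}$ is itself a $\lambda$-Polish space, homeomorphic to $B(\lambda)$ in both $V$ and $L(V_{\lambda+1})$ by Theorem~\ref{thm:homeomorphictoCantor}\ref{thm:homeomorphictoCantor-3}.

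Next I would run the representability chain for subsets of $V_{\lambda+1}$ inside $L(V_{\lambda+1})$. Fix a witness $j$ for $\mathsf{I0}(\lambda)$ and let $Z \subseteq V_{\lambda+1}$ belong to $L(V_{\lambda+1})$. By Cramer's Theorem~\ref{thm:allrepresentable}, $Z$ is $\mathbb{U}(j)$-representable in $L(V_{\lambda+1})$; by Corollary~\ref{cor:fromTowertoTC} (which uses $\DC_\lambda$) it is then TC-representable; and since $2^{<\lambda} = \lambda$, Theorem~\ref{thm:mainPSP} yields that $Z$ has the $\lPSP$. Thus every subset of $V_{\lambda+1}$ lying in $L(V_{\lambda+1})$ has the $\lPSP$, as computed in $L(V_{\lambda+1})$.

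To reach an arbitrary $\lambda$-Polish $X \in L(V_{\lambda+1})$, I would apply Corollary~\ref{cor:transferPSP} inside $L(V_{\lambda+1})$, taking for $\boldsymbol{\Gamma}$ the full power-set operator $\boldsymbol{\Gamma}(X) = \pow(X)^{L(V_{\lambda+1})} = \pow(X) \cap L(V_{\lambda+1})$; this is trivially a boldface $\lambda$-pointclass closed under $\lambda$-Borel preimages from the point of view of $L(V_{\lambda+1})$, since that model is closed under $f^{-1}$ whenever $f$ and $B$ belong to it. Since $V_{\lambda+1}$ is a $\lambda$-Polish space of size greater than $\lambda$ all of whose $L(V_{\lambda+1})$-subsets have the $\lPSP$ by the previous paragraph, Corollary~\ref{cor:transferPSP} gives $\lPSP(\boldsymbol{\Gamma})$, i.e.\ every $Z \in \pow(X) \cap L(V_{\lambda+1})$ has the $\lPSP$ for every $\lambda$-Polish $X$ in $L(V_{\lambda+1})$. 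Finally, since $\boldsymbol{\Gamma}$ is closed under $\lambda$-Borel preimages and $\boldsymbol{\Gamma}(X) \subseteq L(V_{\lambda+1})$, Remark~\ref{rem:PSPinandout} guarantees that the $\lPSP$ holds for such $Z$ regardless of whether it is evaluated in $L(V_{\lambda+1})$ or in $V$, which is exactly the asserted conclusion.

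As for the main difficulty: nearly all of the depth is imported, namely Cramer's Theorem~\ref{thm:allrepresentable} that under $\mathsf{I0}(\lambda)$ \emph{every} set of $L(V_{\lambda+1})$ is $\mathbb{U}(j)$-representable, together with the tower-condition result behind Corollary~\ref{cor:fromTowertoTC}. What remains for this proof is essentially bookkeeping, and the only genuinely delicate point is to ensure that every object and construction involved — the representation $\pi$, the function witnessing the tower condition, the $\lambda$-Borel isomorphism implicit in Corollary~\ref{cor:transferPSP}, and the coding of the $\lPSP$-witnesses — is carried out \emph{within} $L(V_{\lambda+1})$, so that the earlier theorems, proved in $\ZF + \AC_\lambda(\pre{\lambda}{2})$, genuinely apply there, and then to invoke Remark~\ref{rem:PSPinandout} to pass between the two ambient models.
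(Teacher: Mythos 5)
Your proof is correct and takes essentially the same route as the paper: Cramer's Theorem~\ref{thm:allrepresentable} plus Corollary~\ref{cor:fromTowertoTC} gives TC-representability of all $L(V_{\lambda+1})$-subsets of $V_{\lambda+1}$ inside $L(V_{\lambda+1})$, Theorem~\ref{thm:mainPSP} then yields the \lPSP{} there, and the passage to arbitrary $\lambda$-Polish spaces in $L(V_{\lambda+1})$ and to $V$ is exactly what Remark~\ref{rem:PSPinandout} (which already incorporates Corollary~\ref{cor:transferPSP}) provides. Your only difference is that you spell out the transfer step explicitly, applying Corollary~\ref{cor:transferPSP} inside $L(V_{\lambda+1})$ with $\boldsymbol{\Gamma}(X)=\pow(X)\cap L(V_{\lambda+1})$, which the paper does implicitly.
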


In particular, combining Corollary~\ref{cor:allsetshavelpSP} with Corollary~\ref{cor:transferPSP} we obtain:

\begin{corollary}[\( \AC \)] \label{cor:abitmorethanprojectivePSP}  
Assume that $\mathsf{I0}(\lambda)$ holds. If $\boldsymbol{\Gamma}$ is a boldface $\lambda$-pointclass closed under $\lambda$-Borel preimages and such that $\boldsymbol{\Gamma}(V_{\lambda+1}) \subseteq L(V_{\lambda+1})$, then $\lPSP(\boldsymbol{\Gamma})$.
\end{corollary}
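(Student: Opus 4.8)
The plan is to reduce the statement to the single test space $V_{\lambda+1}$ and then invoke the transfer principle of Corollary~\ref{cor:transferPSP}. Since we are assuming $\AC$, the axiom $\mathsf{I0}(\lambda)$ gives $|V_\lambda| = \lambda$, hence $\beth_\lambda = \lambda$, so $\lambda$ is strong limit and (being singular of cofinality $\omega$) satisfies $2^{<\lambda} = \lambda$; thus the cardinal-arithmetic hypothesis of Corollary~\ref{cor:transferPSP} is met. Moreover $V_{\lambda+1}$, equipped with Woodin's topology, is a $\lambda$-Polish space lying in $L(V_{\lambda+1})$ with $|V_{\lambda+1}| = 2^\lambda > \lambda$ (see Remark~\ref{rem:PSPinandout} and Example~\ref{xmp:lambda-Polish}\ref{xmp:lambda-Polish-4}), so it is a legitimate candidate for the test space $X$ in that corollary.

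First I would check that every set in $\boldsymbol{\Gamma}(V_{\lambda+1})$ has the $\lPSP$. By hypothesis $\boldsymbol{\Gamma}(V_{\lambda+1}) \subseteq L(V_{\lambda+1})$, so each $A \in \boldsymbol{\Gamma}(V_{\lambda+1})$ is a subset of $V_{\lambda+1}$ belonging to $L(V_{\lambda+1})$. Corollary~\ref{cor:allsetshavelpSP}, applied with the $\lambda$-Polish space $X = V_{\lambda+1}$, then yields $\lPSP(A)$ directly. Here one should keep in mind the ambiguity discussed in Remark~\ref{rem:PSPinandout}: for subsets of $V_{\lambda+1}$ (or, after transfer, of $B(\lambda)$), having the $\lPSP$ in the inner model $L(V_{\lambda+1})$ is equivalent to having it in $V$, so no clarification of the ambient model is needed.

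Having established $\lPSP(A)$ for all $A \in \boldsymbol{\Gamma}(V_{\lambda+1})$, the final step is an immediate application of Corollary~\ref{cor:transferPSP} with this particular $X = V_{\lambda+1}$: since $\boldsymbol{\Gamma}$ is a boldface $\lambda$-pointclass closed under $\lambda$-Borel preimages and $|V_{\lambda+1}| > \lambda$, the corollary propagates the $\lPSP$ from $\boldsymbol{\Gamma}(V_{\lambda+1})$ to $\boldsymbol{\Gamma}(Y)$ for every $\lambda$-Polish space $Y$, i.e.\ $\lPSP(\boldsymbol{\Gamma})$ holds. The argument itself is short, and I do not expect any genuine obstacle at this level: all the substantive work is upstream, packaged into Corollary~\ref{cor:allsetshavelpSP} (which rests on Cramer's Theorem~\ref{thm:allrepresentable}, stating that every subset of $V_{\lambda+1}$ in $L(V_{\lambda+1})$ is $\UU(j)$-representable, on Corollary~\ref{cor:fromTowertoTC} linking $\UU(j)$-representability to TC-representability, and on Theorem~\ref{thm:mainPSP} deriving the $\lPSP$ from TC-representability). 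The only point requiring care is the bookkeeping ensuring that closure under $\lambda$-Borel preimages is exactly the hypothesis making the passage from the single space $V_{\lambda+1}$ to all $\lambda$-Polish spaces legitimate, through the $\lambda$-Borel isomorphism between $V_{\lambda+1}$ and an arbitrary $\lambda$-Polish space of size greater than $\lambda$ provided by Theorem~\ref{thm:Borelisomorphism}.
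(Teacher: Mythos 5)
Your proof is correct and follows exactly the route the paper intends: the paper derives this corollary precisely by combining Corollary~\ref{cor:allsetshavelpSP} (applied to the test space $V_{\lambda+1}$) with the transfer principle of Corollary~\ref{cor:transferPSP}, which is what you do. Your additional bookkeeping (that $\mathsf{I0}(\lambda)$ gives $2^{<\lambda}=\lambda$, that $|V_{\lambda+1}|>\lambda$, and that Remark~\ref{rem:PSPinandout} removes any ambiguity about the ambient model) is exactly the right set of checks and matches the paper's setup.
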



The added value of this alternative proof is that it comes directly from the representability structure, and not from the embedding structure (as it was the case in Cramer's original proof). This actually reinforces the analogy between $\mathsf{I0}$ and (large cardinals implying) $\AD$, as it shows that not only there are parallels between single concepts, but also in the way they interact among each other. Moreover, it opens the possibility of finding new analogues structures, different than $\AD$ and $\mathsf{I0}(\lambda)$, by looking for $\lambda$-Polish spaces on which many sets are $\UU$-representable.

\section{Further results} \label{sec:furtherresults}






Recall that we are currently working in \( \ZFC \).
By Corollary~\ref{cor:allsetshavelpSP} and Remark~\ref{rem:PSPinandout}, under \( \mathsf{I0}(\lambda) \) we have that \( L(V_{\lambda+1}) \) satisfies
\begin{center}
``All subsets of an arbitrary \(\lambda\)-Polish space have the \( \lPSP \).''
\end{center}
One may ask whether $L(V_{\lambda+1})$ is the only model of set theory in which this happens. More generally, one can ask which (generalized) descriptive set theoretic consequences of $\mathsf{I0}(\lambda)$ can hold also in models where $\mathsf{I0}(\lambda)$ fails. It turns out that any property about $V_{\lambda+1}$ that can be expressed in a simple way can be reflected downwards, and therefore it is true also in a model in which $\mathsf{I0}$ does not hold. The key theorem is the following deep result by Woodin, which is part of~\cite[Theorem 137]{Woodin2011}.
\footnote{To recover Theorem~\ref{thm:DownwardReflectionI0} from \cite[Theorem 137]{Woodin2011}, set $A_0=\lambda_0$, so that $A=\lambda$: with this notation, Theorem~\ref{thm:DownwardReflectionI0} appears towards the end of the proof.}

\begin{theorem}[\( \AC \)] \label{thm:DownwardReflectionI0}
Let $j$ be a \emph{proper} elementary embedding witnessing \( \mathsf{I0}(\lambda) \), and let $\lambda_0 = \crt(j)<\lambda$. Then there exists a club $C$ in $\lambda_0$ such that for all $\lambda'\in C$ with $\cf(\lambda')=\omega$ and for all $\alpha<\lambda'$, 
\[
(L_\alpha(V_{\lambda'+1}),{\in},\lambda')\equiv (L_\alpha(V_{\lambda+1}),{\in},\lambda),
\]
where as usual \( \equiv \) denotes elementary equivalence.
\end{theorem}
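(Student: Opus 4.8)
The plan is to reduce the elementary equivalence of the two local structures to the existence of suitable elementary embeddings between the ambient rank-initial-segments \( V_{\lambda'+1} \) and \( V_{\lambda+1} \), and then to produce these embeddings on a club of countable-cofinality points via the inverse-limit machinery attached to a proper \( \mathsf{I0}(\lambda) \)-embedding. The key reduction is as follows. Suppose that for a given \( \lambda' < \lambda_0 \) there is an elementary embedding \( E \colon V_{\lambda'+1} \to V_{\lambda+1} \) with \( \crt(E) = \lambda' \) and \( E(\lambda') = \lambda \) (note that \( \lambda' \in V_{\lambda'+1} \) as a von Neumann ordinal, so \( E(\lambda') \) makes sense). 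Such an \( E \) canonically extends to an elementary embedding \( \hat E \colon L(V_{\lambda'+1}) \to L(V_{\lambda+1}) \), since \( L(V_{\lambda'+1}) \) is generated from \( V_{\lambda'+1} \) and the ordinals by the uniformly definable surjection \( \Phi \) of Section~\ref{sec:constructibilityetc}. Fix \( \alpha < \lambda' \). As \( \alpha < \crt(E) = \lambda' \), the extension \( \hat E \) fixes \( \alpha \), whence \( \hat E(L_\alpha(V_{\lambda'+1})) = L_{\hat E(\alpha)}(V_{E(\lambda')+1}) = L_\alpha(V_{\lambda+1}) \), and \( \hat E \) sends the distinguished constant \( \lambda' \) to \( \lambda \). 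Therefore \( \hat E \restriction L_\alpha(V_{\lambda'+1}) \) is an elementary embedding of \( (L_\alpha(V_{\lambda'+1}), {\in}, \lambda') \) into \( (L_\alpha(V_{\lambda+1}), {\in}, \lambda) \), and elementary embeddability implies elementary equivalence. Crucially, a \emph{single} \( E \) handles all \( \alpha < \lambda' \) simultaneously, precisely because \( \crt(E) = \lambda' \).

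The substance of the proof is thus to construct such embeddings \( E = E_{\lambda'} \) for club-many \( \lambda' < \lambda_0 \) of countable cofinality, and this is where the hypothesis that \( j \) is \emph{proper} enters. Recall the critical sequence \( \lambda_0 < \lambda_1 < \cdots \), with \( \lambda_{n+1} = j(\lambda_n) \) and \( \lambda = \sup_n \lambda_n \). From a proper embedding one forms \emph{inverse-limit} embeddings: given a strictly increasing \( \omega \)-sequence of suitable ``critical-type'' points converging to an ordinal \( \bar\lambda < \lambda_0 \), the inverse limit of the corresponding coherent system of restrictions of (iterates of) \( j \) yields an elementary map \( E \colon V_{\bar\lambda+1} \to V_{\lambda+1} \) with \( \crt(E) = \bar\lambda \) and \( E(\bar\lambda) = \lambda \). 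Properness is exactly the amenability and coherence condition that guarantees these inverse limits exist, are total and elementary on all of \( V_{\bar\lambda+1} \), and have the prescribed critical behaviour. Each such \( \bar\lambda \), being the supremum of an \( \omega \)-sequence, has \( \cf(\bar\lambda) = \omega \); and the set of attainable \( \bar\lambda \) is closed and unbounded in \( \lambda_0 \), by a continuity and Löwenheim--Skolem closure argument applied to the Skolem functions defining the construction.

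Taking \( C \) to be this club of reflection points then finishes the argument: for \( \lambda' \in C \) with \( \cf(\lambda') = \omega \), the inverse-limit embedding \( E_{\lambda'} \) has \( \crt(E_{\lambda'}) = \lambda' \) and \( E_{\lambda'}(\lambda') = \lambda \), so the reduction of the first paragraph yields \( (L_\alpha(V_{\lambda'+1}), {\in}, \lambda') \equiv (L_\alpha(V_{\lambda+1}), {\in}, \lambda) \) for every \( \alpha < \lambda' \), as required.

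I expect the genuinely hard part to be the second paragraph: verifying that the inverse limits arising from a proper \( \mathsf{I0}(\lambda) \)-embedding are \emph{total} elementary embeddings of \( V_{\bar\lambda+1} \) (rather than merely partial maps defined on bounded fragments), and that one can steer their critical points to land exactly on the desired \( \lambda' \) with \( E(\lambda') = \lambda \). This is the technical core of Woodin's inverse-limit reflection theory (and of the later analyses by Laver, Dimonte, and Cramer), and it is precisely the content supplied by \cite[Theorem 137]{Woodin2011}. By contrast, the remaining ingredients — the transfer of elementary equivalence from an elementary embedding, the canonical extension of \( E \) to \( L(V_{\bar\lambda+1}) \) together with the absoluteness of the \( L_\alpha(-) \)-hierarchy, and the club-ness of the set of countable-cofinality reflection points — are comparatively routine once the embeddings are in hand.
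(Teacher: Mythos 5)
First, a point of orientation: the paper does not actually prove this theorem. It is quoted as a deep result of Woodin, with a footnote explaining only how to extract the statement from the proof of \cite[Theorem 137]{Woodin2011} (by setting \( A_0 = \lambda_0 \)). So what must be judged is whether your reduction to Woodin's machinery is sound, and it is not: there is a genuine gap in your first paragraph. You claim that an elementary \( E \colon V_{\lambda'+1} \to V_{\lambda+1} \) with \( \crt(E) = \lambda' \) and \( E(\lambda') = \lambda \) ``canonically extends'' to an elementary \( \hat E \colon L(V_{\lambda'+1}) \to L(V_{\lambda+1}) \) because \( L(V_{\lambda'+1}) \) is generated from \( V_{\lambda'+1} \) and the ordinals via \( \Phi \). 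Generation gives at most \emph{uniqueness} of an extension, never \emph{existence}: to know that \( \Phi(\beta,\vec a) \mapsto \Phi(\beta,E(\vec a)) \) is well defined and elementary, you need exactly that \( (L_\alpha(V_{\lambda'+1}),{\in},\lambda',\vec a) \) and \( (L_\alpha(V_{\lambda+1}),{\in},\lambda,E(\vec a)) \) satisfy the same formulas, which is (a parametrized form of) the very conclusion being proved. Nor can this be inferred from elementarity of \( E \) on \( V_{\lambda'+1} \): for \( \alpha < \lambda' \) the structure \( L_\alpha(V_{\lambda'+1}) \) has cardinality \( 2^{\lambda'} \), so its satisfaction relation is coded by a subset of \( V_{\lambda'+1} \) (an element of \( V_{\lambda'+2} \)), about which \( E \) says nothing. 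This is not a pedantic worry in this area: it is precisely why Laver's hierarchy of axioms ``there is an elementary \( j \colon L_\alpha(V_{\lambda+1}) \to L_\alpha(V_{\lambda+1}) \)'' is strictly increasing in strength with \( \alpha \) (see~\cite{Laver2001}), and why \( \mathsf{I1} \) does not yield \( \mathsf{I0} \).

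Consequently, your closing assessment inverts where the difficulty lies. Producing total elementary inverse-limit embeddings \( E \colon V_{\bar\lambda+1} \to V_{\lambda+1} \) with \( \crt(E) = \bar\lambda \) and \( E(\bar\lambda) = \lambda \), for club-many \( \bar\lambda < \lambda_0 \) of countable cofinality, is the comparatively standard part of the theory. The actual core of Woodin's Theorem 137 --- and of Cramer's subsequent ``inverse limit reflection'' (\cite{Cramer2015,Cramer2017}) --- is exactly the step you labelled routine: showing that these inverse-limit embeddings extend to elementary maps \( L_\alpha(V_{\bar\lambda+1}) \to L_\alpha(V_{\lambda+1}) \), by a delicate induction on \( \alpha \) in which the properness of \( j \) does its real work. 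As written, your proposal defers to Woodin for the easier half and leaves the harder half as an unjustified ``canonical extension''; to repair it you would need to invoke (or reprove) inverse limit reflection itself, at which point the theorem is already in hand.
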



%

\begin{corollary}[\( \AC \)] \label{cor:DownwardReflectionI0b}
Let $j$ be a \emph{proper} elementary embedding witnessing \( \mathsf{I0}(\lambda) \), and let $\lambda_0 = \crt(j) <\lambda$. Let $\alpha<\lambda_0$, and let $\upvarphi(u)$ be a first-order formula in the language of set theory such that $(L_\alpha(V_{\lambda+1}),{\in}) \models \upvarphi[\lambda/u]$. Then there is  $\lambda' < \lambda$ such that \( \cf(\lambda') = \omega \), $\alpha < \lambda' < \lambda_0$, \( 2^{< \lambda'} = \lambda' \), and $(L_\alpha(V_{\lambda'+1}),{\in}) \models \upvarphi[\lambda'/u]$. 
\end{corollary}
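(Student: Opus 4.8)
The plan is to use the club \( C \) provided by Theorem~\ref{thm:DownwardReflectionI0} and to locate inside it a cardinal \( \lambda' \) that is simultaneously strong limit and of countable cofinality, so that both the hypotheses of that theorem and the extra arithmetic requirement in the statement are met. First I would recall that \( \lambda_0 = \crt(j) \) is a measurable cardinal, and hence inaccessible; in particular \( \lambda_0 \) is regular, uncountable, and strong limit. I fix the club \( C \subseteq \lambda_0 \) given by Theorem~\ref{thm:DownwardReflectionI0}.

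Next I would produce a second club catering to the strong-limit requirement. Since \( \lambda_0 \) is inaccessible, the beth function restricted below \( \lambda_0 \) is a normal function mapping \( \lambda_0 \) into itself with range cofinal in \( \lambda_0 \), so the set
\[
D = \{ \kappa < \lambda_0 \mid \beth_\kappa = \kappa \}
\]
of its fixed points is a club in \( \lambda_0 \), and every \( \kappa \in D \) is a strong limit cardinal (if \( \beth_\kappa = \kappa \), then \( 2^\mu \leq \beth_{\mu+1} < \beth_\kappa = \kappa \) for every infinite \( \mu < \kappa \)). Hence \( C \cap D \) is again a club in \( \lambda_0 \).

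Now I would build \( \lambda' \) explicitly. As \( C \cap D \) is unbounded in \( \lambda_0 \) and \( \alpha < \lambda_0 \), I choose a strictly increasing sequence \( (\kappa_n)_{n \in \omega} \) of elements of \( C \cap D \) with \( \kappa_0 > \alpha \), and set \( \lambda' = \sup_{n \in \omega} \kappa_n \). Because \( \lambda_0 \) is regular and uncountable, \( \lambda' < \lambda_0 \); because \( C \cap D \) is closed, \( \lambda' \in C \cap D \); and by construction \( \alpha < \lambda' \) and \( \cf(\lambda') = \omega \). Since \( \lambda' \in D \) it is strong limit, whence \( 2^{<\lambda'} = \lambda' \) (using \( \AC \)). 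Thus \( \lambda' \) meets all the cofinality and arithmetic demands of the statement, while also lying in \( C \).

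Finally, since \( \lambda' \in C \), \( \cf(\lambda') = \omega \), and \( \alpha < \lambda' \), Theorem~\ref{thm:DownwardReflectionI0} yields
\[
(L_\alpha(V_{\lambda'+1}), {\in}, \lambda') \equiv (L_\alpha(V_{\lambda+1}), {\in}, \lambda).
\]
Reading \( \upvarphi(u) \) as a sentence in the language of set theory expanded by a constant interpreted by the distinguished element (\( \lambda \) on the right, \( \lambda' \) on the left), the assumption \( (L_\alpha(V_{\lambda+1}),{\in}) \models \upvarphi[\lambda/u] \) transfers across this elementary equivalence to give \( (L_\alpha(V_{\lambda'+1}),{\in}) \models \upvarphi[\lambda'/u] \), as required. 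The only point needing care is reconciling the three simultaneous demands on \( \lambda' \)—membership in \( C \), countable cofinality, and being strong limit—and this is exactly what the intersection-of-clubs argument, together with the closure of \( C \cap D \) under \( \omega \)-suprema, resolves; everything else is a direct application of Theorem~\ref{thm:DownwardReflectionI0} and the routine expansion-by-constant reformulation of \( \upvarphi \).
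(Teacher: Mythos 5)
Your proof is correct, and it diverges from the paper's argument in one meaningful place: how the requirement \( 2^{<\lambda'} = \lambda' \) is secured. The paper takes \emph{any} \( \lambda' \in C \) of countable cofinality above \( \alpha \) and observes that "\( 2^{<\lambda} = \lambda \)" is itself expressible by a first-order formula in \( (L_\alpha(V_{\lambda+1}),{\in},\lambda) \) with \( \lambda \) as a parameter (quantifying over subsets of, and functions coded in, \( V_{\lambda+1} \)); since it is true at \( \lambda \), elementary equivalence transfers it to \( \lambda' \) for free. You instead secure it combinatorially before invoking elementarity: you use that \( \lambda_0 = \crt(j) \) is measurable, hence inaccessible, so the beth fixed points below \( \lambda_0 \) form a club \( D \), and then take an \( \omega \)-supremum inside \( C \cap D \) above \( \alpha \). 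Both routes are sound. The paper's is shorter and needs nothing about \( \lambda_0 \) beyond what Theorem~\ref{thm:DownwardReflectionI0} already supplies, but it does require the (mildly delicate) observation that strong-limitness is expressible and absolute for the structures at hand. Your route avoids that expressibility argument entirely, at the cost of club combinatorics and the inaccessibility of \( \lambda_0 \); in exchange it actually yields a slightly stronger conclusion, namely \( \beth_{\lambda'} = \lambda' \) (condition~\ref{requirementsonlambda-3} of Section~\ref{sec:setup}, i.e.\ \( |V_{\lambda'}| = \lambda' \)), not just \( 2^{<\lambda'} = \lambda' \), which is exactly what one wants if the target is to view \( V_{\lambda'+1} \) itself as a \( \lambda' \)-Polish space.
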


\begin{proof}
 The only part that is not immediately covered by Theorem~\ref{thm:DownwardReflectionI0} is \( 2^{< \lambda'} = \lambda' \), but this holds as it is expressible  in $(L_\alpha(V_{\lambda+1}),{\in},\lambda)$ by a first-order formula using $\lambda$ as a parameter.
\end{proof}

In order to apply Corollary~\ref{cor:DownwardReflectionI0b} in our context, we first need to understand how the property ``$Z$ has the $\lPSP$'' can be expressed in first-order logic, and for which ordinals $\alpha$ such property is true in $L_\alpha(V_{\lambda+1})$. (Exceptionally for the results in this section, the next proposition does not require \( \AC \).)


\begin{proposition} \label{prop:PSPformula}
Let \( \lambda \) be such that \( 2^{< \lambda} = \lambda \), 
$\alpha<\Theta^{L(V_{\lambda+1})}$ be an ordinal different from \( 0 \), and $Z \subseteq B(\lambda) \) be a set in $L_{\alpha}(V_{\lambda+1})$. Then there is a first-order formula $\uppsi(u,v)$ in the language of set theory such that $Z$ has the $\lPSP$ (as a subset of $B(\lambda)$) if and only if $(L_{\alpha}(V_{\lambda+1}),{\in}) \models \uppsi[\lambda/u,Z/v]$.
\end{proposition}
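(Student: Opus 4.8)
The goal is to express the statement ``$Z$ has the $\lPSP$'' as a first-order property of the structure $(L_\alpha(V_{\lambda+1}),{\in})$ with parameters $\lambda$ and $Z$. Recall from Definition~\ref{def:PSP} that, since $2^{<\lambda}=\lambda$, having the $\lPSP$ means: either $|Z|\leq\lambda$, or $\pre{\lambda}{2}$ (equivalently $B(\lambda)$, by Theorem~\ref{thm:homeomorphictoCantor}) embeds into $Z$ as a closed-in-$B(\lambda)$ set. By Corollary~\ref{cor:equivPSP}, this is equivalent to the disjunction ``either $|Z|\leq\lambda$, or there is a continuous injection $f\colon B(\lambda)\to Z$''. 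The plan is to show that both disjuncts are expressible by first-order formulas over $(L_\alpha(V_{\lambda+1}),{\in})$ using $\lambda$ and $Z$ as parameters, and that the relevant witnesses (enumerations or continuous injections) are themselves present inside $L_\alpha(V_{\lambda+1})$ whenever they exist in $V$, so that the truth of the property is correctly reflected by the structure.

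First I would handle the coding. As observed in Remark~\ref{rem:PSPinandout}, every $\phi\colon\lambda\to\lambda$ lies in $V_{\lambda+1}$, so $B(\lambda)^{L(V_{\lambda+1})}=B(\lambda)^V$, and both witnesses to the $\lPSP$ can be coded canonically as elements of $\pre{\lambda}{\lambda}$: a continuous injection $f\colon B(\lambda)\to B(\lambda)$ is coded by the monotone map $\pre{<\omega}{\lambda}\to\pre{<\omega}{\lambda}$ recording the longest $t$ with $f(\Nbhd_s)\subseteq\Nbhd_t$, while an enumeration of a set of size $\leq\lambda$ is coded by an element of $\pre{\lambda}{(\pre{\omega}{\lambda})}$, hence again of $\pre{\lambda}{\lambda}$. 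The key point is that each such code belongs to $V_{\lambda+1}\subseteq L_1(V_{\lambda+1})\subseteq L_\alpha(V_{\lambda+1})$ (using $\alpha\geq 1$), so the existential quantifiers ``there is a continuous injection'' and ``there is a surjection $\lambda\to Z$'' can be taken as quantifiers ranging over $V_{\lambda+1}$, which is a definable element of $L_\alpha(V_{\lambda+1})$ from the parameter $\lambda$ (namely $V_\lambda$, and then $V_{\lambda+1}=\pow(V_\lambda)$ relativized to the model). Then I would spell out the two disjuncts: ``$|Z|\leq\lambda$'' becomes ``$\exists g\,(g$ codes a surjection from $\lambda$ onto $Z)$'', and ``there is a continuous injection $f\colon B(\lambda)\to Z$'' becomes ``$\exists f\in V_{\lambda+1}\,(f$ codes a continuous function from $B(\lambda)$ into $B(\lambda)$ whose range is contained in $Z$ and which is injective$)$''. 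Each of the internal clauses — being a code for a continuous map, injectivity, range inclusion in $Z$, being a surjection onto $Z$ — is a first-order condition over the membership relation quantifying only over elements and functions available in $V_{\lambda+1}$, hence expressible in $(L_\alpha(V_{\lambda+1}),{\in})$ with $\lambda$ and $Z$ as parameters.

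The formula $\uppsi(u,v)$ is then the disjunction of the two formulas above with $u$ in place of $\lambda$ and $v$ in place of $Z$. The verification of the equivalence splits into two directions. If $Z$ has the $\lPSP$ in $V$, then by the coding discussion the witnessing object (a surjection or a continuous injection) is an element of $V_{\lambda+1}$, hence lies in $L_\alpha(V_{\lambda+1})$, and the internal first-order conditions it satisfies are absolute between $V$ and the model because they only assert membership facts and finitary combinatorial properties of codes; thus $(L_\alpha(V_{\lambda+1}),{\in})\models\uppsi[\lambda/u,Z/v]$. Conversely, if the structure satisfies $\uppsi$, then the witness it provides is an actual element of $V_{\lambda+1}$, and decoding it in $V$ yields a genuine surjection $\lambda\to Z$ or a genuine continuous injection $B(\lambda)\to Z$, so $Z$ has the $\lPSP$ by Corollary~\ref{cor:equivPSP}.

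The main obstacle I anticipate is the careful bookkeeping of absoluteness: one must ensure that the notions ``$V_\lambda$'', ``$B(\lambda)$'', ``$\Nbhd_s$'', ``continuous'', and ``closed'' are correctly captured by first-order formulas relativized to $L_\alpha(V_{\lambda+1})$, and in particular that $B(\lambda)$ is computed the same way inside the model as in $V$. This is exactly the content of Remark~\ref{rem:PSPinandout}, which guarantees $B(\lambda)^{L(V_{\lambda+1})}=B(\lambda)^V$ and that all $\lPSP$-witnesses are canonically coded by elements of $V_{\lambda+1}$; the same reasoning restricts to $L_\alpha(V_{\lambda+1})$ since $V_{\lambda+1}\in L_\alpha(V_{\lambda+1})$ for $\alpha\geq 1$. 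Once this absoluteness is secured, translating the two set-theoretic conditions of Corollary~\ref{cor:equivPSP} into the language of set theory is routine, and the construction of $\uppsi$ is complete.
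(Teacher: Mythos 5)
Your proposal is correct and follows essentially the same route as the paper's proof: both rest on Remark~\ref{rem:PSPinandout} to code the two possible witnesses (a $\lambda$-enumeration of $Z$ or a continuous injection/embedding of $B(\lambda)$ into $Z$) as elements of $\pre{\lambda}{\lambda}\subseteq V_{\lambda+1}\subseteq L_1(V_{\lambda+1})\subseteq L_\alpha(V_{\lambda+1})$, and then take $\uppsi$ to be the first-order disjunction asserting the existence of such a code, with absoluteness of the decoding guaranteed because $B(\lambda)$ is computed identically in the model and in $V$. Your explicit appeal to Corollary~\ref{cor:equivPSP} to replace ``closed-in-$X$ embedding'' by ``continuous injection'' is a harmless refinement already implicit in the paper's use of that remark.
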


\begin{proofsketch}
By the argument in Remark~\ref{rem:PSPinandout}, if \( Z \in L_\alpha(V_{\lambda+1})\) has the \( \lPSP \), then the witnesses of this are definable from a corresponding function \( \phi \colon \lambda \to \lambda \), which necessarily belongs to \( V_{\lambda+1} \); therefore they belong to \( L_1(V_{\lambda+1}) \subseteq L_\alpha(V_{\lambda+1}) \), and witness that \( Z \) has the \( \lPSP \) also in \( L_\alpha(V_{\lambda+1}) \). We then get that \( Z \) has the \( \lPSP \) (in \( V \) or, equivalently, in \( L(V_{\lambda+1}) \)) if and only if \( L_\alpha(V_{\lambda+1}) \) satisfies the sentence
\begin{center}
\hspace{-1cm}``\( \exists \phi \in \pre{\lambda}{\lambda} \, [\)\( \phi \) codes either a continuous embedding from \( B(\lambda) \) into \( Z \), 

\hfill or a sequence \( (x_\beta)_{\beta < \lambda} \) enumerating \( Z ]\)''.\quad\quad\quad\
\end{center}
The latter can be easily formalized by a first-order formula in the language of set theory using \( \lambda \) and \( Z \) as parameters, hence we are done.
\end{proofsketch}


\begin{corollary}[\( \AC \)] \label{cor:DownwardReflectionI0b2}
Assume that $\mathsf{I0}(\lambda)$ holds. Then there is $\lambda'<\lambda$ such that \( \cf(\lambda') = \omega \), \( 2^{< \lambda'} = \lambda' \), and all the $\lambda'$-projective sets in any $\lambda'$-Polish space have the $\lambda'$-\( \mathrm{PSP} \). 
\end{corollary}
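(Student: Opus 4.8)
The plan is to combine the large-cardinal reflection machinery provided by Woodin's Theorem~\ref{thm:DownwardReflectionI0} (in the form packaged as Corollary~\ref{cor:DownwardReflectionI0b}) with our syntactic analysis of the \( \lPSP \) from Proposition~\ref{prop:PSPformula}. The key point is that ``all \(\lambda'\)-projective subsets of every \(\lambda'\)-Polish space have the \(\lambda'\)-PSP'' is, despite first appearances, a \emph{first-order} property of a small initial segment of \( L(V_{\lambda+1}) \), and hence can be reflected down below the critical point of an \( \mathsf{I0}(\lambda) \)-embedding to some \(\lambda' < \lambda\) of countable cofinality. Since \( \mathsf{I0}(\lambda) \) holds, we already know from Corollary~\ref{cor:PSPallPolishSpaces} that all \(\lambda\)-projective sets have the \( \lPSP \); the task is to encode this fact at the level of a suitable \( L_\alpha(V_{\lambda+1}) \) and then push it downward.

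First I would fix a proper elementary embedding \( j \) witnessing \( \mathsf{I0}(\lambda) \) with \( \lambda_0 = \crt(j) \). (Here one uses that \( \mathsf{I0}(\lambda) \) can be witnessed by a \emph{proper} embedding, as required by Theorem~\ref{thm:DownwardReflectionI0}; this is standard in the \( \mathsf{I0} \) literature.) Next I would isolate a single ordinal \( \alpha < \lambda_0 \) and a first-order sentence \( \Phi \) (with \( \lambda \) as its only parameter) asserting, over \( (L_\alpha(V_{\lambda+1}),{\in}) \), the statement
\begin{quotation}
``For every \( n \geq 1 \) and every \(\lambda\)-projective set \( Z \in \lS^1_n \) of the space \( B(\lambda) \), \( Z \) has the \( \lPSP \).''
\end{quotation}
The reduction to the single space \( B(\lambda) \) is justified by Corollary~\ref{cor:transferPSP}: once all \(\lambda\)-projective subsets of \( B(\lambda) \) (a \(\lambda\)-Polish space of size \( > \lambda \)) have the \( \lPSP \), the same holds for every \(\lambda\)-Polish space, since each \( \lS^1_n \) is closed under \(\lambda\)-Borel preimages. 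I would then invoke Proposition~\ref{prop:PSPformula} to see that ``\( Z \) has the \( \lPSP \)'' is expressible by a first-order formula \( \uppsi(u,v) \) evaluated in \( L_\alpha(V_{\lambda+1}) \) with parameters \( \lambda \) and \( Z \); and I would note that the quantification ``for every \(\lambda\)-projective \( Z \)'' is itself first-order over such an \( L_\alpha(V_{\lambda+1}) \), since the \( \lS^1_n \)-codes (built from universal sets as in Sections~\ref{sec:lambda-projective} and~\ref{sec:defanalytic}) already appear in \( L_1(V_{\lambda+1}) \). Thus for a small fixed \( \alpha \) — essentially \( \alpha = 2 \) or so, large enough to contain the relevant universal sets and the \( \lPSP \)-witnesses but well below \( \lambda_0 \) — the sentence \( \Phi \) holds in \( (L_\alpha(V_{\lambda+1}),{\in}) \) by Corollary~\ref{cor:PSPallPolishSpaces}.

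With \( \Phi \) and \( \alpha \) in hand, the conclusion is a direct application of Corollary~\ref{cor:DownwardReflectionI0b}: there is \( \lambda' \) with \( \cf(\lambda') = \omega \), \( \alpha < \lambda' < \lambda_0 \), and \( 2^{< \lambda'} = \lambda' \), such that \( (L_\alpha(V_{\lambda'+1}),{\in}) \models \Phi[\lambda'/u] \). Since \( \lambda' \) is a strong limit singular cardinal of countable cofinality (the conditions \( \cf(\lambda')=\omega \) and \( 2^{<\lambda'}=\lambda' \) are exactly our standing hypotheses on the base cardinal), Proposition~\ref{prop:PSPformula} applies to \( \lambda' \) as well, and reading \( \Phi[\lambda'/u] \) back through that proposition tells us that every \(\lambda'\)-projective subset of \( B(\lambda') \) genuinely has the \(\lambda'\)-PSP. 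A final application of Corollary~\ref{cor:transferPSP} at \( \lambda' \) transfers this from \( B(\lambda') \) to all \(\lambda'\)-Polish spaces, yielding the desired statement.

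The main obstacle I anticipate is verifying with care that the entire assertion ``all \(\lambda\)-projective sets have the \( \lPSP \)'' is \emph{genuinely absolute} between \( V \) (or \( L(V_{\lambda+1}) \)) and the level \( L_\alpha(V_{\lambda+1}) \) for a \emph{fixed} \( \alpha < \lambda_0 \) independent of \( n \). Two things must be checked: that the \( \lS^1_n \)-codes and the universal sets witnessing non-collapse live uniformly in \( L_\alpha(V_{\lambda+1}) \) for this single \( \alpha \), and — more delicately — that the existential witnesses to the \( \lPSP \) (a continuous embedding of \( B(\lambda) \), or a \(\lambda\)-enumeration of \( Z \)) are themselves coded by functions in \( V_{\lambda+1} \subseteq L_1(V_{\lambda+1}) \), as established in Remark~\ref{rem:PSPinandout} and used in Proposition~\ref{prop:PSPformula}. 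Provided these absoluteness points are handled — and they are precisely what Proposition~\ref{prop:PSPformula} and Remark~\ref{rem:PSPinandout} were set up to guarantee — the reflection step and the two invocations of Corollary~\ref{cor:transferPSP} are routine, so the proof reduces to a clean citation of the results already in place.
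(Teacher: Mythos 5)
Your proposal is correct and follows the same overall skeleton as the paper's proof (proper embedding witnessing \( \mathsf{I0}(\lambda) \), reflection via Corollary~\ref{cor:DownwardReflectionI0b}, the formalization of the \( \lPSP \) from Proposition~\ref{prop:PSPformula}, and a final transfer by Corollary~\ref{cor:transferPSP}), but it diverges at the crucial choice of \emph{which sentence to reflect}, and this makes it a genuinely different execution. You reflect the statement ``every \(\lambda\)-projective subset of \( B(\lambda) \) has the \( \lPSP \),'' which forces you to internalize the projective hierarchy: you need a single first-order formula, uniform in \( n \), expressing ``\( Z \in \lS^1_n(B(\lambda)) \)'' over \( L_\alpha(V_{\lambda+1}) \), together with a two-way absoluteness check (internally projective sets must be genuinely projective, so that Corollary~\ref{cor:PSPallPolishSpaces} applies upward; genuinely \( \lambda' \)-projective sets must be recognized internally, so that the reflected sentence catches them downward). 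The paper instead reflects the syntactically trivial but logically \emph{stronger} statement ``\( \forall v \,[\)if every element of \( v \) is a function from \( \omega \) to \( u \), then \( \uppsi(u,v)] \)'' over \( L_1(V_{\lambda+1}) \) — i.e.\ \emph{every} subset of \( B(\lambda) \) in \( L_1(V_{\lambda+1}) \) has the \( \lPSP \) — which holds by Cramer's theorem for all sets of \( L(V_{\lambda+1}) \) (Corollary~\ref{cor:allsetshavelpSP}, via Theorem~\ref{thm:allrepresentable}), not merely its projective corollary. After reflection, the paper only needs the observation that \( \lambda' \)-projective subsets of \( B(\lambda') \) all lie in \( L_1(V_{\lambda'+1}) \), so they are caught by the reflected universal statement; no internal definition of projectiveness is ever required. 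In short: your route uses the weaker input \ref{cor:PSPallPolishSpaces} at the price of a real formalization burden; the paper's route uses the stronger input \ref{cor:allsetshavelpSP} to make the reflected sentence two lines long and \( \alpha = 1 \) sufficient.

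One point in your write-up is under-justified and is precisely where the extra work hides: the claim that quantification over \(\lambda\)-projective sets ``is itself first-order over \( L_\alpha(V_{\lambda+1}) \), since the \( \lS^1_n \)-codes already appear in \( L_1(V_{\lambda+1}) \).'' A formula uniform in \( n \) cannot simply list the \( n \) quantifier alternations (the formula would grow with \( n \)); you must code the recursion, e.g.\ by quantifying over finite sequences of hierarchy stages \( Z_1,\dotsc,Z_n \) (or of universal sets), and such sequences live a few \( \Def \)-steps above \( L_1 \), so you need to fix some slightly larger \( \alpha \) (any fixed \( \alpha < \crt(j) \), say \( \alpha = \omega \), works) and then verify that the coded definition is absolute because all its quantifiers range over \( B(\lambda) \subseteq V_{\lambda+1} \) and all witnesses (trees and intermediate stages) lie in the model. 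This is routine but must be said; as written, your ``essentially \( \alpha = 2 \) or so'' glosses over it. With that repaired, your proof is complete.
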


\begin{proof}
If $\mathsf{I0}(\lambda)$ holds, then there is a \emph{proper} elementary embedding \( j \) witnessing this (see the proof of~\cite[Lemma 5]{Woodin2011} or~\cite[Theorem 5.12 and Definition 5.13]{Dimonte2018}). This enables us to use, in what follows, Corollary~\ref{cor:DownwardReflectionI0b} with \( \alpha = 1 \), so that \( \alpha < \crt(j) \) is satisfied (independently of the choice of \( j \)).

Let $\uppsi(u,v)$ be as in Proposition~\ref{prop:PSPformula}, and let 
\( \upvarphi(u) \) be the formalization of 
\begin{center}
``\( \forall v \, [\,\)if every element of \( v \) is a function from \( \omega \) to \( u \), then \( \uppsi(u,v)] \)'',
\end{center}
Since every \( Z \subseteq B(\lambda) \) that belongs to \( L_1(V_{\lambda+1}) \) has the \( \lPSP \)  (Corollary~\ref{cor:allsetshavelpSP}), we have \( (L_1(V_{\lambda+1}),{\in}) \models \upvarphi[\lambda/u] \) by Proposition~\ref{prop:PSPformula}.
Then by Corollary~\ref{cor:DownwardReflectionI0b} there is
$1 < \lambda'<\lambda$ such that \( \cf(\lambda') = \omega \), \( 2^{< \lambda'} = \lambda' \), and $(L_{1}(V_{\lambda'+1}),{\in}) \models\upvarphi[\lambda'/u]$. Hence all the subsets of $B(\lambda')$ belonging to \( L_1(V_{\lambda'+1}) \), including the \( \lambda' \)-projective ones, have the $\lambda'$-\( \mathrm{PSP} \) by Proposition~\ref{prop:PSPformula} again. An application of Corollary~\ref{cor:transferPSP} allows us to transfer the $\lambda'$-\( \mathrm{PSP} \) from \( B(\lambda') \) to any \( \lambda' \)-Polish space, hence we are done. 
\end{proof}

\begin{corollary}[\( \AC \)] \label{cor:NoequivalenceI0PSP}
Assume that $\mathsf{I0}(\lambda)$ holds. Then there is a $\lambda'<\lambda$ such that $\mathsf{I0}(\lambda')$ does not hold, yet all $\lambda'$-projective subsets of any \( \lambda' \)-Polish space have the $\lambda'$-$\mathrm{PSP}$. 
\end{corollary}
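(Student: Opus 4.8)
The plan is to combine the previous Corollary~\ref{cor:DownwardReflectionI0b2} with a standard ``smallest counterexample'' reflection argument. The statement of Corollary~\ref{cor:DownwardReflectionI0b2} already gives us, from $\mathsf{I0}(\lambda)$, some $\lambda' < \lambda$ with $\cf(\lambda') = \omega$, $2^{<\lambda'} = \lambda'$, and all $\lambda'$-projective sets enjoying the $\lambda'$-$\mathrm{PSP}$. The only additional thing to secure is that $\lambda'$ can be chosen so that $\mathsf{I0}(\lambda')$ \emph{fails}. So the heart of the matter is to rule out that $\mathsf{I0}(\mu)$ holds for \emph{all} the candidate $\mu$'s produced by the reflection.

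First I would invoke the fact, recalled just before Definition~\ref{def:I0}, that $\mathsf{I0}(\lambda)$ is an extremely strong large cardinal hypothesis sitting at the very top of the hierarchy; in particular it is consistency-wise far above ``there is a $\mu < \lambda$ with $\mathsf{I0}(\mu)$''. The cleanest route is to consider the \emph{least} $\lambda$ for which $\mathsf{I0}(\lambda)$ holds (such a least witness exists since $\mathsf{I0}(\lambda)$ implies $\lambda$ has countable cofinality and is a strong limit, and the class of such $\lambda$ is a set-definable class of ordinals, hence has a minimum if nonempty). Working with this least $\lambda$, any $\lambda' < \lambda$ automatically satisfies $\neg\mathsf{I0}(\lambda')$ by minimality. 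Then I would apply Corollary~\ref{cor:DownwardReflectionI0b2} to this particular $\lambda$: it yields $\lambda' < \lambda$ with $\cf(\lambda') = \omega$, $2^{<\lambda'} = \lambda'$, and all $\lambda'$-projective sets having the $\lambda'$-$\mathrm{PSP}$. Since $\lambda' < \lambda$ and $\lambda$ was the least witness, $\mathsf{I0}(\lambda')$ fails, which is exactly the desired conclusion.

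An alternative, should one wish to avoid passing to the least witness, is to observe that the set $C$ (or the reflecting ordinals $\lambda'$) produced by Theorem~\ref{thm:DownwardReflectionI0} below $\crt(j)$ is large (a club intersected with the countable-cofinality points), whereas the set of $\mu < \crt(j)$ with $\mathsf{I0}(\mu)$ is bounded or even empty below $\crt(j)$: indeed $\mathsf{I0}(\mu)$ entails that $\mu$ is a limit of measurable (in fact $m$-huge) cardinals, and $\crt(j) = \lambda_0$ is itself only the first element of the critical sequence, so there is ample room below it where no such $\mu$ exists. Thus one can find $\lambda'$ in the reflecting set that simultaneously avoids being an $\mathsf{I0}$-witness. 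Either way, the combinatorial content is the same.

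The main obstacle, and the point requiring the most care, is justifying cleanly that we can pick a reflecting $\lambda'$ at which $\mathsf{I0}(\lambda')$ genuinely fails, rather than merely asserting it. The least-witness argument is the most transparent and I would present that one: it reduces the whole matter to the two facts that $\{\lambda \mid \mathsf{I0}(\lambda)\}$ has a minimum when nonempty and that Corollary~\ref{cor:DownwardReflectionI0b2} returns a \emph{strictly smaller} $\lambda'$. Both are immediate from what precedes, so the proof is short:

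\begin{proof}
Assume $\mathsf{I0}(\mu)$ holds for some cardinal $\mu$, and let $\lambda$ be the least cardinal for which $\mathsf{I0}(\lambda)$ holds. By Corollary~\ref{cor:DownwardReflectionI0b2} applied to $\lambda$, there is $\lambda' < \lambda$ with $\cf(\lambda') = \omega$ and $2^{<\lambda'} = \lambda'$ such that all $\lambda'$-projective subsets of any $\lambda'$-Polish space have the $\lambda'$-$\mathrm{PSP}$. Since $\lambda' < \lambda$ and $\lambda$ is the least witness of $\mathsf{I0}$, the assumption $\mathsf{I0}(\lambda')$ must fail. Thus $\lambda'$ is as required.
\end{proof}
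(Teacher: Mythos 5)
Your proof is correct and follows essentially the same route as the paper's own argument: pass to the least cardinal $\bar\lambda$ witnessing $\mathsf{I0}$, apply Corollary~\ref{cor:DownwardReflectionI0b2} to it to obtain $\lambda' < \bar\lambda$ with the $\lambda'$-$\mathrm{PSP}$ for all $\lambda'$-projective sets, and conclude $\neg\mathsf{I0}(\lambda')$ by minimality. The paper phrases it with $\bar\lambda \leq \lambda$ kept distinct from the given $\lambda$ (so that $\lambda' < \bar\lambda \leq \lambda$ is explicit), but this is only a notational difference from your version.
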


\begin{proof}
Let $\bar{\lambda}$ be smallest such that $\mathsf{I0}(\bar{\lambda})$ holds, so that in particular $\bar{\lambda}\leq \lambda$. Let $\lambda' < \bar \lambda$ be as in Corollary~\ref{cor:DownwardReflectionI0b2} applied to \( \bar \lambda \). Then all $\lambda'$-projective subset of any \( \lambda' \)-Polish space have the $\lambda'$-$\mathrm{PSP}$, but $\mathsf{I0}(\lambda')$ does not hold by choice of \( \bar \lambda \). 
\end{proof}

In particular, for a given \( \lambda \) the assertion 
\begin{equation}\tag{\( \star \)}\label{eq:allPSP}
\text{``All $\lambda$-projective subsets of an arbitrary \( \lambda \)-Polish space have the $\lPSP$''}
\end{equation}
is not equivalent to $\mathsf{I0}(\lambda)$ in \( \ZFC \) alone.
Nevertheless, the construction leading to Corollary~\ref{cor:NoequivalenceI0PSP} assumes $\mathsf{I0}(\lambda)$, so equiconsistency is not ruled out. In fact, the analysis in~\cite{Laver2001} shows that even the equivalence with very large cardinal properties is not ruled out. 
Indeed, if \( j \) is an elementary embedding witnessing \( \mathsf{I0}(\lambda) \) and \( \lambda_0 = \crt(j) \), then for every \( \alpha < \lambda_0 \) there is an elementary embedding \( j_\alpha \colon L_\alpha(V_{\lambda+1})\to L_\alpha(V_{\lambda+1}) \)
(see e.g.\ \cite[Theorem 3(ii)]{Laver2001} for a much stronger proposition). The existence of such elementary embeddings \( j_\alpha \) may be seen as a weak (but still high in the consistency hierarchy) form of $\mathsf{I0}$. We want to show that such property is retained at the level of the cardinal \( \lambda' \) appearing in the proof of Corollary~\ref{cor:NoequivalenceI0PSP}: to this aim, 
let us go once again over such argument.

In order to simplify the notation, suppose that \( \lambda \) itself is smallest such that \( \mathsf{I0}(\lambda) \) holds, as witnessed by the elementary embedding \( j \). Let also \( \lambda_0 = \crt(j) \), and let \( j_\alpha \colon L_\alpha(V_{\lambda+1})\to L_\alpha(V_{\lambda+1}) \), for \( \alpha < \lambda_0 \), be the elementary embeddings isolated by Laver. By~\cite[Theorem 2(v)]{Laver2001}, each $j_\alpha$ is defined from $j \restriction V_\lambda$: since the latter belongs to $V_{\lambda+1}$, we have that $j_\alpha\in L_{\alpha+1}(V_{\lambda+1})$ and that 
\[
L_{\alpha+1}(V_{\lambda+1}) \models \text{``there is an elementary embedding } j_\alpha \colon L_\alpha(V_{\lambda+1})\to L_\alpha(V_{\lambda+1})\text{''}.
\]
Since $\On^{L_{\alpha+1}(V_{\lambda+1})}=\lambda+1+\alpha+1$, the ordinal $\alpha$ is definable in $L_{\alpha+1}(V_{\lambda+1})$ from $\lambda$. Also, $V_{\lambda+1}$ is easily definable inside $L_\alpha(V_{\lambda+1})$.
Moreover, we can build $L_\alpha(V_{\lambda+1})$ as computed inside $L_{\alpha+1}(V_{\lambda+1})$, and by absoluteness the resulting set will be $L_\alpha(V_{\lambda+1})$ itself. Therefore $L_\alpha(V_{\lambda+1})$ is definable in $L_{\alpha+1}(V_{\lambda+1})$ from $\lambda$, and thus the first-order sentence formalizing 
\begin{center}
``there is an elementary embedding $j_\alpha \colon L_\alpha(V_{\lambda+1})\to L_\alpha(V_{\lambda+1})$''
\end{center}
uses only $\lambda$ as a parameter.
Recall that the $\lambda'<\lambda$ in Corollary~\ref{cor:NoequivalenceI0PSP} comes from an application of Theorem~\ref{thm:DownwardReflectionI0}. Therefore $\lambda'<\lambda_0$, and $(L_\alpha(V_{\lambda'+1}),{\in},\lambda')\equiv (L_\alpha(V_{\lambda+1}),{\in},\lambda)$ for all $\alpha<\lambda'$. It follows that for all $\alpha<\lambda'<\lambda_0$ 
\[
L_{\alpha+1}(V_{\lambda'+1}) \models \text{``there is an elementary embedding } j_\alpha \colon L_\alpha(V_{\lambda'+1})\to L_\alpha(V_{\lambda'+1})\text{''}.
\]
So while $\mathsf{I0}(\lambda')$ does not hold, still the fragment of $\mathsf{I0}(\lambda')$ corresponding to the existence of the elementary embeddings \( j_\alpha \)	still holds. 
This means that, as claimed, the proof of Corollary~\ref{cor:NoequivalenceI0PSP} cannot rule out that the statement~\eqref{eq:allPSP} is equivalent to weak forms of $\mathsf{I0}$.

\backmatter
\bibliographystyle{amsalpha}
\bibliography{GeneralizedDSTunderI0}
\printindex

\end{document}